\subjclass[2010]{11R56, 18B25, 11S40, 11R32, 11G09}
\newcommand{\mathbbm}[1]{\mathbf{#1}}
\newcommand{\A}{\mathbb{A}}
\newcommand{\C}{\mathbb{C}}
\newcommand{\bD}{\mathbb{D}}
\newcommand{\bK}{\mathbb{K}}
\newcommand{\Et}{\mathrm{Et}}
\newcommand{\F}{\mathbb{F}}
\newcommand{\Ga}{\mathbb{G}_a}
\newcommand{\Gm}{\mathbb{G}_m}
\newcommand{\I}{\mathbb{I}}
\newcommand{\bJ}{\mathbb{J}}
\newcommand{\Mbar}{\overline{M}}
\newcommand{\bbM}{\mathbb{M}}
\newcommand{\bM}{\mathbf{M}}
\newcommand{\N}{\mathbb{N}}
\newcommand{\bN}{\mathbf{N}}
\newcommand{\Q}{\mathbb{Q}}
\newcommand{\R}{\mathbb{R}}
\newcommand{\Z}{\mathbb{Z}}
\newcommand{\cA}{\mathcal{A}}
\newcommand{\cB}{\mathcal{B}}
\newcommand{\cC}{\mathcal{C}}
\newcommand{\cCo}[1]{\mathcal{C}^{#1}}
\newcommand{\cCXo}[1]{\mathcal{C}_{X}^{#1}}
\newcommand{\Cip}{{(\mathcal{C}_0,\iota_0)}}
\newcommand{\cD}{\mathcal{D}}
\newcommand{\cE}{\mathcal{E}}
\newcommand{\cF}{\mathcal{F}}
\newcommand{\cFC}{\mathcal{FC}}
\newcommand{\cFCo}[1]{\mathcal{FC}^{#1}}
\newcommand{\cFCot}[1]{\wt{\mathcal{FC}}^{#1}}
\newcommand{\cFCotu}[1]{\wt{\mathcal{FC}}_m^{#1}}
\newcommand{\cH}{\mathcal{H}}
\newcommand{\cHom}{{\mathscr{H}\mathit{om}}}
\newcommand{\sI}{\mathscr{I}}
\newcommand{\sJ}{\mathscr{J}}
\newcommand{\cK}{\mathcal{K}}
\newcommand{\cM}{\mathcal{M}}
\newcommand{\cO}{\mathcal{O}}
\newcommand{\cS}{\mathcal{S}}
\newcommand{\cT}{\mathcal{T}}
\newcommand{\eps}{\epsilon}
\newcommand{\vep}{\varepsilon}
\newcommand{\Aut}{\mathrm{Aut}}
\newcommand{\Bil}{\mathcal{BIL}}
\newcommand{\Bl}{\mathrm{Bl}}
\newcommand{\BS}{\mathrm{SB}}
\newcommand{\diag}{\mathrm{diag}}
\newcommand{\End}{\mathrm{End}}
\newcommand{\Ext}{\mathrm{Ext}}
\newcommand{\ffor}{\mathrm{for}}
\newcommand{\Fil}{\mathrm{Fil}}
\newcommand{\Frac}{\mathrm{Frac}}
\newcommand{\Gal}{\mathrm{Gal}}
\newcommand{\GL}{\mathrm{GL}}
\newcommand{\Gr}{\mathrm{Gr}}
\newcommand{\gr}{\mathrm{gr}}
\newcommand{\GSp}{\mathrm{GSp}}
\newcommand{\Hom}{\mathrm{Hom}}
\newcommand{\id}{\mathrm{id}}
\newcommand{\Inn}{\mathrm{Inn}}
\newcommand{\Isom}{\mathrm{Isom}}
\newcommand{\length}{\mathrm{length}}
\newcommand{\Ker}{\mathrm{Ker}}
\newcommand{\Lie}{\mathrm{Lie}}
\newcommand{\LS}{\mathcal{LS}}
\newcommand{\Mat}{\mathrm{Mat}}
\newcommand{\Mor}{\mathrm{Mor}}
\newcommand{\Obj}{\mathrm{Obj}}
\newcommand{\op}{\mathrm{op}}
\newcommand{\ord}{\mathrm{ord}}
\newcommand{\Presh}{\mathrm{Presh}}
\newcommand{\Pro}{\mathrm{Pro}}
\newcommand{\res}{\mathrm{res}}
\newcommand{\SCH}{\mathcal{SCH}}
\newcommand{\sm}{\mathrm{sm}}
\newcommand{\str}{\mu}
\newcommand{\uppr}{\dagger}
\newcommand{\Spec}{\mathrm{Spec}\,}
\newcommand{\Sub}{\mathrm{Sub}}
\newcommand{\Supp}{\mathrm{Supp}\,}
\newcommand{\tr}{\mathrm{tr}}
\newcommand{\frf}{\mathfrak{f}}
\newcommand{\frg}{\mathfrak{g}}
\newcommand{\frh}{\mathfrak{h}}
\newcommand{\frX}{\mathfrak{X}}
\newcommand{\frU}{\mathfrak{U}}
\newcommand{\inj}{\hookrightarrow}
\newcommand{\surj}{\twoheadrightarrow}
\newcommand{\resp}{resp.\ }
\newcommand{\xto}[1]{\xrightarrow{#1}}
\newcommand{\wt}[1]{\widetilde{#1}}
\newcommand{\wh}[1]{\widehat{#1}}
\newcommand{\advertisement}[1]{}
\newcommand{\bd}{{\mathbbm{d}}}
\newcommand{\pr}{\mathrm{pr}}
\newcommand{\modx}{\,\mathrm{mod}\,}
\newcommand{\bL}{\mathbb{L}}
\newcommand{\Shv}{\mathrm{Shv}}
\newcommand{\add}[1]{{\color{blue}#1}}
\newcommand{\fram}{\mathfrak{m}}
\newcommand{\bc}{\mathbf{c}}
\newcommand{\bE}{\mathbf{E}}
\newcommand{\bF}{\mathbf{F}}
\newcommand{\Fbar}{\overline{\F}}
\newcommand{\bi}{\mathbf{i}}
\newcommand{\bj}{\mathbf{j}}
\newcommand{\bS}{\mathbf{S}}
\newcommand{\bs}{\mathbf{s}}
\newcommand{\bT}{\mathbf{T}}
\newcommand{\bt}{\mathbf{t}}
\newcommand{\bw}{\mathbf{w}}
\newcommand{\bMbar}{\overline{\mathbf{M}}}
\newcommand{\Lat}{\mathbf{Lat}}
\newcommand{\Pair}{\mathbf{Pair}}
\newcommand{\quotobj}{quotient object\xspace}
\newcommand{\quotobjs}{quotient objects\xspace}
\newcommand{\fibr}{fibration\xspace}
\newcommand{\fibrs}{fibrations\xspace}
\newcommand{\Fibrs}{Fibrations\xspace}
\newcommand{\cofibr}{cofibration\xspace}
\newcommand{\cofibrs}{cofibrations\xspace}
\newcommand{\quot}[2]{{#2}\backslash{#1}}
\newcommand{\quotu}[2]{({#2}\backslash{#1})^\dagger}
\newcommand{\quotid}[1]{{\{\mathrm{id}_{#1} \}}\backslash{#1}}
\newcommand{\cCot}[1]{\wt{\mathcal{C}}^{#1}}
\newcommand{\cCotu}[1]{\wt{\mathcal{C}}_m^{#1}}
\newcommand{\frV}{\mathfrak{V}}
\theoremstyle{plain}
\newtheorem{thm}{Theorem}[chapter]
\newtheorem{lem}[thm]{Lemma}
\newtheorem{prop}[thm]{Proposition}
\newtheorem{cor}[thm]{Corollary}
\newtheorem{conj}[thm]{Conjecture}
\newtheorem{ex}[thm]{Example}
\theoremstyle{definition}
\newtheorem{defn}[thm]{Definition}
\theoremstyle{remark}
\newtheorem{rmk}[thm]{Remark}
\numberwithin{section}{chapter}
\numberwithin{equation}{chapter}
\begin{document}

\title[Distributions and Euler systems]
{Distributions and Euler Systems for the General Linear Group}

\author{Satoshi Kondo}
\address{
Middle East Technical University \\
Northern Cyprus Campus, Kalkanli \\
Guzelyurt, Mersin 10, Turkey;
Kavli Institute for the Physics and Mathematics of the Universe\\
University of Tokyo\\
5-1-5 Kashiwanoha\\
Kashiwa 277-8583\\ Japan
}
\author{Seidai Yasuda}
\address{
Department of Mathematics, Hokkaido University,
Kita 10, Nishi 8, Kita-Ku, Sapporo, Hokkaido, 060-0810, Japan 
}
\date{\add{\today}}

\subjclass[2010]{Primary 11R56, 18B25, 11S40, 11R32, 11G09}
\keywords{site, topos, Drinfeld modular variety, Euler system, motivic cohomology}

\begin{abstract}
The main aim is to make rigorous the slogan 
``the $d$-fold tensor product of distributions is an Euler system for 
$\GL_d$'' where $d$ is any positive integer.  
Of the few known examples of Euler systems, let us look at those of cyclotomic units and of Beilinson-Kato elements.   The cyclotomic units satisfy distribution property, and this is the key to the proof of the norm relation property for $\GL_1$.   
For the Beilinson-Kato elements, the Siegel units satisfy distribution property, and the 2-fold tensor product, giving rise to elements in the $K_2$ of 
modular curves, satisfies the norm relation for $\GL_2$.  
We make this common property clear, generalizing everything to $\GL_d$ for arbitrary $d$.

As an application (our main arithmetic result), we construct elements in the motivic cohomology of Drinfeld modular schemes with integral coefficient and 
show that the norm relation common to Euler systems (i.e., the norm of one element is described using the local L-factor times another element) hold.

We use the language of Y-sites which was introduced in \cite{Grids}
in order to simplify the computation common to the theory 
of automorphic forms.
Instead of double cosets, we work more systematically with torsion modules and Q-morphisms (of Quillen) between them.   
The idea is that torsion modules are (abstract) 
level structures, and Q-morphisms 
induce morphisms between some moduli spaces.  Chapters 1 and 3 serve as a sequel to our previous paper \cite{Grids}: Further generalities on Y-sites, more examples of Y-sites, and proofs of some statements in loc.cit are given.   An application is also given: We provide a group theoretic formulation of a conjecture of Tamagawa on affine curves over an algebraic closure of a finite field.
\end{abstract}

\maketitle
\tableofcontents
\chapter{Introduction}
\section{Introduction}
An Euler system is a useful tool in Iwasawa theory, but not many examples are known.
The main aim is to give a method in constructing (something similar to) 
Euler systems (norm compatible systems).
Using an abstract setup (using $Y$-sites), 
we show that any product of distributions gives rise to such a norm compatible system.
As an application, we provide a new Euler system in the (integral) 
motivic cohomology grouops of Drinfeld modular schemes.

Let us describe the contents of this Chapter.

The main result in this book is the construction of 
Euler systems (a system of elements satisfying some 
compatibility under norm maps, more on this below) in motivic cohomology of Drinfeld modular schemes over $A$, where $A$ 
is the ring of integers (in Drinfeld modular context) of a global field in positive characteristic (Theorem~\ref{thm:Drinfeld Euler}).
A less sharp version is given in Section~\ref{sec:intro Euler}.

We discuss Euler systems and how we understand it from the automorphic side 
in Section~\ref{sec:intro Euler automorphic}.

The tool ($Y$-sites) we use for the proof is provided by our 
paper \cite{Grids}.   This was a category theoretic result 
rather than arithmetic.  In this book, we develop more generalities 
concerning $Y$-sites, provide the proof of norm relation in this abstract setting (Theorem~\ref{main theorem}), 
and then apply it to arithmetic (the Drinfeld modular setup).
The overview of results on $Y$-sites is given in Section~\ref{sec:intro Y-sites}.   
Having introduced the notion of $Y$-sites,
we give slightly more technical overview of our result on Euler systems in Section~\ref{sec:intro Euler tech}.

\section{Euler systems on the automorphic side}
\label{sec:intro Euler automorphic}
We study Euler systems.   For the definition, history, applications, etc. of Euler systems, the reader is referred to Rubin's book \cite{Rubin}.
Our use of the term ``Euler system" is different from that of Rubin's in two respects.   One is that we are on the automorphic side rather than on the Galois side (in the sense of the Langlands correspondence).  
The other is that our definition concerns solely with the norm relation, while Rubin adds some other conditions so 
that there is an immediate consequence in Iwasawa theory.
Let us explain this in more detail in this Section.

Recall that an Euler system in loc.\ cit.\ is a collection of elements in the Galois cohomology of some $p$-adic Galois representation, indexed by the ideals of the ring of integers of some number field.   The main defining property is that the norm relation (the Euler system relation) holds.  That is, ``an element in the collection is sent to another element times the local $L$-factor under the norm map".   The norm relation does not readily give an application to Iwasawa theory type of result and it requires some more work.   The extra axioms in Rubin's book are one sufficient condition; there are other methods.    We do not consider this aspect in this book, and we take this norm relation as our definition of Euler system and nothing else.   

Note also that the $L$-factor mentioned above is the $L$-factor on the Galois side, i.e., it is 
written in terms of the characteristic polynomial of Frobenius.   Our definition is on the automorphic side.
That is, we use the $L$-factor in terms of Hecke operators.   Let us illustrate this in the following two examples.

The simplest example is the Euler system of cyclotomic units.   
(See Section~\ref{sec:cyclotomic Euler} 
where we worked out the 
technical details.) 
Cyclotomic units can be understood as elements in the multiplicative group of cyclotomic fields, 
and the norm map is the norm map of fields for extension of cyclotomic fields (a divisibility of two ideals, i.e., an integer $N_1$ dividing $N_2$, gives an extension of cyclotomic fields).   The cyclotomic field $\Q(\zeta_N)$ is the moduli space of 
the multiplicative group scheme $\Gm$ with level $\Z/N\Z$-structure.
   Then, cyclotomic units or elements of the form $1-\zeta_N$ form an Euler system.   (Here, the multiplicative group of a cyclotomic field is understood as a Galois cohomology (or something related) or as the $K_1$ group of the spectrum of the cyclotomic field).  
When $N$ is an integer and $p$ is prime which is prime to $N$, there is a norm map between the multiplicative groups of cyclotomic fields.     The norm relation states that the norm of $1-\zeta_{Np}$ can be expressed as the $L$-factor
at $p$ 
times $1-\zeta_{N}$.   The $L$-factor is in terms of Frobenius (see \cite[Ch.\ 3.2]{Rubin}), but it can also be understood as 
a polynomial in Hecke operators obtained by the morphisms induced by the change of level structure.

The other example is Kato's Euler system (see \cite[Prop.\ 2.4, p.126]{Kato}, \cite[Prop.\ 2.3.6, p.399]{Scholl}).  
The elements are products of Siegel units (Beilinson type elements) in motivic cohomology of (affine) moduli of elliptic curves indexed by the level structures.    When there are two levels $N_1$ and $N_2$, one dividing the other, there is a
norm map between the motivic cohomology groups (or K-groups) of the moduli spaces.    Kato's theorem is that 
one element is sent to another in the system multiplied by the L-factor in some Hecke operators.

We regard the two examples above in a uniform manner from the automorphic point of view.   
The cyclotomic case is for 
$\GL_1$ of $\Q$ and the modular curve case is for $\GL_2$ of $\Q$.     The $L$-factor is actually the $L$-factor on the automorphic side and may be interpreted in terms of Hecke operators.
This point of view enables us to generalize to the 
case for $\GL_d$ of $K$ where $K$ is a global field and $d \ge 1$. 
We therefore take as our definition of Euler system to be a collection of elements in an 
abelian group (with some functorial structures such as pushforward (norm), pullback) indexed by ideals in a Dedekind domain 
with finite residue fields, satisfying a norm relation that is described in terms of the $L$-factor of Hecke operators.

In the following section, we provide an example in the case 
for $\GL_d$ of $K$ where $K$ is a global field of positive characteristic.
It is our main result.    

\section{Our theorem on Drinfeld modular schemes}
\label{sec:intro Euler}
Let us state our main theorem concerning Euler systems in
the motivic cohomology of Drinfeld modular schemes.
Below is a simpler version of Theorem \ref{thm:Drinfeld Euler}.

\subsection{}
We recall the usual setup for arithmetic over function fields
and for Drinfeld modules.
Let $C$ be a smooth projective geometrically
irreducible curve over a finite
field $\F_q$ of $q$ elements.
Let $F$ denote the function field of $C$.
Fix a closed point $\infty$ of $C$.
Let $A=\Gamma(C\setminus \{\infty\},\cO_C)$ be the coordinate
ring of the affine $\F_q$-scheme
$C \setminus \{\infty\}$.

\subsection{}
Let $d \ge 1$ be an integer.
Let $U \subset \Spec A$
be an open subscheme.
Let $N$ be a finitely generated torsion 
$A$-module.
Let $\cM_{N, U}^d$ denote 
the moduli of Drinfeld modules of rank $d$ over $S$
with structure of level $N$.
For the representability and regularity, see Proposition~\ref{prop:level N moduli}.
(The base scheme $U$ can often be taken to be $\Spec A$.)

\subsection{}
We construct a (universal) theta function 
$\theta \in \Gamma(E \setminus {0}, \cO_E^\times)$
as an invertible function (unit)
on the universal Drinfeld module $E$ minus the zero section.
See Section \ref{sec:theta functions}.

When  $b \in N$, and $b: \cM^d_{N,U} \to E$
is the section given by the level $N$ structure,
we denote by $g_{N,b}^{(d)} \in \cO(\cM^d_{N,U})$
the pullback of the theta function to the moduli
by the section $b$.    These elements are units
when the section $b$ does not intersect the zero section.

\subsection{}
For $i=1, \dots, d$,
let $N_i \in \cC^d_U$
be a nonzero $A$-module of finite length
that is generated by one element.
Let $b_i \in N_i \setminus \{0\}$.
Set $\bN=\bigoplus_{i=1}^d N_i$.
For $i=1, \dots, d$,
let 
$\iota_i\colon N_i \to \bN$
be the inclusion into the $i$-th factor.
We take $b_i$'s so that 
$g_{N_i, b_i}^{(d)}$ is a unit.
Each $\iota_i$ induces
$f_i\colon  \cM_{\bN, U}^d \to \cM_{N_i, U}^d$.

\subsection{}
We use the motivic cohomology of 
Mazza-Voevodsky-Weibel
from \cite{MVW}.
We identify the group of units
$\cO(X)^\times$ and the 
motivic cohomology
$H^1_\cM(X, \Z(1))$ of a scheme $X$
smooth over a field.
Using the product structure 
$H^1_\cM(X, \Z(1))^{\otimes d} 
\to H^d_\cM(X, \Z(d))$
for positive integers $d$
of motivic cohomology, we set
\[
\wt{\kappa}_{\bN, U, (b_i)}
=g_{N_1, U, (b_1)} \otimes \cdots \otimes g_{N_d, U, (b_d)}
\in
H^d_\cM(\cM_{\bN, U}^d,
\Z(d)).
\]

\subsection{}
Let $N_i'$ be a nonzero quotient $A$-module of $N_i$
for $i=1, \dots, d$.
Let $b_i'$ denote the 
image of $b_i$ in $N_i'$.
We write
$\bN'=\bigoplus_{j=1}^d N_j'$
and $N_i''=\Ker(N_i \to N_i')$.
Let 
$m\colon  \cM^d_{\bN, U} \to \cM^d_{\bN', U}$
be the morphism induced by the surjection
$\bN \to \bN'$.
Since $m$ is finite surjective, we have the pushforward map
\[
m_*\colon 
H^d_\cM(\cM_{\bN, U}^d, \Z(d))
\to
H^d_\cM(\cM_{\bN', U}^d, \Z(d))
\]
between the motivic cohomology groups.

\begin{thm}[Theorem {\ref{thm:Drinfeld Euler}}]
\label{thm:intro Drinfeld Euler}

The following statements hold.
(1) If $\Supp N_i'' \subset \Supp N_j'$
for any 
$1 \le i,j \le d$, then
\[
m_* \wt{\kappa}_{\bN, (b_j), U}
=
\wt{\kappa}_{\bN', (b_j'), U}
\]

(2) Let $\wp$ be a closed point of $C$.
Suppose that
$\Supp N_i'' \subset \{\wp\}
\subset \Supp N_i$
for every $i$.
Let $e$ denote the number of $i$'s
with $\wp \notin \Supp N_i'$.
Then
\[
m_*
\wt{\kappa}_{\bN, (b_i), U}=
\sum_{r=0}^e
(-1)^r
q_\wp^{r(r-1)/2}
T_{[\wp^{\oplus r}]}
\wt{\kappa}_{\bN', (b_j'), U}
\]
where $T_{[\wp^{\oplus r}]}$
are the Hecke operators as 
defined in Section~\ref{sec:exam Hecke}.
\end{thm}
The $\wt{\kappa}$ above is a multiple of the $\kappa$ that 
appears in Theorem~\ref{thm:Drinfeld Euler}.
The statement in Theorem~\ref{thm:Drinfeld Euler} is sharper than
the one above.

\subsection{}
Statement (2) is what we call the Euler system relation.   The norm (pushforward)
of the element $\wt{\kappa}$ is sent to another $\wt{\kappa}$ multiplied
by a polynomial in Hecke operators.
The sum of Hecke operators is actually a local $L$-factor at the prime $\wp$.

\section{$Y$-sites}
\label{sec:intro Y-sites}
We introduced $Y$-sites in our paper \cite{Grids}.
This will serve as a tool in the proof of the theorem on Euler systems.
Let us describe the contents of this book 
concerning $Y$-sites in this section.
In Chapter~\ref{cha:Y-sites}, we develop some 
techniques concerning $Y$-sites.
This part contains no arithmetic, and may be read as a sequel to
\cite{Grids}, independently of other chapters.

Let us briefly recall the contents of \cite{Grids}.  
We ask the reader to read the introduction of loc.cit., 
especially Section 1.11.
A $Y$-site and a grid give rise to the associated absolute Galois monoid 
$M$, which is a topological monoid, and a fiber functor, i.e., a functor from the topos to the category of continuous $M$-sets.    Under some conditions (the examples treated in this book satisfy them), the fiber functor is an equivalence.     Notice that we focus on sites rather than toposes.    
The techniques introduced in this book will enable us to increase the number of objects of the underlying category of a Y-site.
We mention that something similar to $Y$-sites were considered by Caramello \cite{Caramello} and she also has considered 
these notions of enlarging the underlying categories of sites.

Let us illustrate the procedure using a (slightly artificial) example.
Let $F$ be a field.
Let $\cC$ be the category of finite Galois extensions of $F$.
We equip this category with the atomic topology $J$, and obtain a $Y$-site $(\cC, J)$.
There exists a grid for this $Y$-site and the associated absolute Galois monoid is the usual absolute Galois group $G_F$ of $F$.
The category of sheaves on this $Y$-site is equivalent to the category of continuous $G_F$-sets.
Now let us see the effect of the procedures 
of adding quotient objects (Section \ref{sec:add quotient}) 
and of adding finite coproducts (Section \ref{sec:add coproduct}).
The procedure of adding quotients applied to the category $\cC$
will give us a new category, say $\tilde{\cC}$, consisting of all separable extensions of $F$.
This is because any separable extension is a quotient of some Galois extension of $F$.
The procedure of adding finite coproducts to 
the category $\tilde{\cC}$
will give us a new category, 
denoted $\tilde{\cFC}$, whose objects are of the form
$\coprod_{i \in I} L_i$ where each $L_i$ is an object of $\tilde{\cC}$ and $I$ is a finite set.
We equip each of $\tilde{\cC}$ and $\tilde{\cFC}$ with a Grothendieck topology.   
Comparison Lemmas (Section \ref{sec:comparison lemmas}) 
imply that the associated toposes are equivalent to the starting one.    
Note that the starting category 
is not the usual \'etale site of the spectrum of $F$, but $\tilde{\cFC}$ is.
One merit of enlarging the category is that fiber products exist in
$\tilde{\cFC}$.    The following isomorphism is a key to Galois theory: 
$L \otimes_K L \cong \prod_{g \in G} L$ where 
$L/K$ is a Galois extension of Galois group $G$.   
We can consider this type of isomorphisms 
in the enlarged category.

We need the notion of norm (pushforward) for 
our application.
Assuming a finiteness 
condition on the underlying
category (e.g., the hom sets are finite), 
we define the notion of degree.    (In the example 
above, the degree is the degree of the field extension.)
Then we define presheaf with transfers on a $Y$-site.
(Transfer is another name for norm, pushforward.)
We see that a sheaf of abelian groups is equipped with 
the canonical structure of a presheaf with transfers.
It is also true that a presheaf with transfers with 
values in $\Q$-vector spaces is a sheaf.

We also construct what we call a compact induction functor.
We do not give the general definition but we illustrate this 
in the example above.   
Let $F \subset F_1 \subset F_2$ be 
field extensions (i.e., objects of $\tilde{\cC}$) and assume that all extensions are Galois.
Let $G_1, G_2$ be subgroups of $G_F$ corresponding to $F_1, F_2$ respectively.
Let $H=G_1/G_2$.
We obtain a functor from the category of $H$-sets to the category of $G_F$-sets by
inflation to $G_1$ and inducing up to $G_F$.    This is a typical example of the compact induction functor.
In our proof of the norm relation theorem, we wish to compute the norm with respect to groups such as 
$K=\Ker[\GL_2(\Z/N\Z)\to \GL_2(\Z/Np\Z)]$.   
We do not need to consider the whole group (e.g., $\GL_d(\A_F^\infty)$) 
for the computation.    We can study the problem not on the whole 
$Y$-site, but within 
the category of $K$-sets.   Then to translate the results in the category of $K$-sets, we use the compact induction functor.

Another aim is to give many examples of $Y$-sites.
We have given examples of $Y$-sites such that the associated
absolute Galois monoid is $\Z$ and $\N$ (the monoid of natural numbers)
in \cite{Grids}.  We study the category $\cC^d$ with the atomic topology
in detail in this paper.   The absolute Galois monoid is $\GL_d(\A_X)$ 
($\A_X$ is the ring of finite adeles; see Section~\ref{sec:adeles} for 
the precise definition).    
The first kind is for parabolic subgroups of the general linear groups.
The second kind is for classical similitude groups.
The third kind is for an arbitrary connected locally noetherian scheme over a base scheme.    
The absolute Galois monoid 
contains the \'etale fundamental group, but may not be profinite in 
general.   We give some example 
computations in the case of curves.
We formulate a conjecture concerning this group.
The fourth kind is from Riemannian symmetric spaces.   
We do not recover the (analytic) topology, but obtain something close.

\section{Euler systems on $Y$-sites}
\label{sec:intro Euler tech}
Having recalled 
$Y$-sites, we can give slightly more technical details on 
the method of proof of the Euler system relation theorem
(Theorem~\ref{main theorem}).

The setup is as follows.   
Let $d\ge 1$ be an integer.
We introduce a category $\cC^d$, which will be the 
underlying category of a $Y$-site.   
Let $R$ be a Dedekind domain with finite residue fields.
For our Drinfeld modular setup, we may take, for example,
$R=\F_q[T]$.
The objects of $\cC^d$ are torsion $R$-modules 
that are generated by at most $d$-elements.
The morphisms are $Q$-morphisms of Quillen.
We do not recall the definition here, but recall that
if $B$ is a subquotient of $A$, then it represents a 
morphisms from $A$ to $B$.   (We take arrows in 
the opposite direction of Quillen's.)
We consider the $Y$-site with atomic topology.

The following is actually simpler than the setup in the actual proof of the theorem.    (The theorem is more complicated since we need to consider the $(q_\infty^d-1)$-torsion.)    Here are the given data:
A presheaf $\BS'$ (called presheaf of distributions), a sheaf $F$, a morphism of presheaves $x\colon \BS' \to F$, a presheaf of rings with transfers $G$,  a morphism of presheaves $y\colon F \to G$.    

Let us take a nonzero ideal $I \subset R$ and set $N_I=(R/I)^d$
regarded as an object of $\cC^d$.  
We take $e_i=(0,\dots, 0, 1, 0, \dots, 0) \in N$ with $1$ in the $i$-th place for each $1 \le i \le d$.   As $\BS(N_I)=N_I$ as sets, we regard $e_i$ as elements of $\BS'(N_I)$.   Then we obtain elements $yx(e_1), \dots, yx(e_d) \in G(N_I)$, and the product $\kappa_I=\prod_{i=1}^d yx(e_i) \in G(N_I)$.   These are the elements which form an Euler system, as $I$ runs over the nonzero ideals.

The statement of the norm relation takes the following form.
Let $I$ be an ideal as above and let $\wp$ be a prime ideal prime to $I$.
Then we have canonical surjections $R/I \to R/I\wp$
and $N_I \to N_{I\wp}$.
We have a ($Q$-)morphism $m\colon N_{I\wp} \leftarrow N_I =N_I$ 
in $\cC^d$, where the arrow is the surjection. 
The transfer structure of $G$ gives a map (norm map) 
$m_*\colon G(N_{I\wp}) \to G(N_I)$.

The norm relation says that the norm image of something upstairs 
can be written as the local $L$-factor times something downstairs.
That is, $m_*\kappa_{I\wp}$ is equal to 
the local $L$-factor in Hecke operators times $\kappa_I$.

To describe the $L$-factor, we define Hecke operators as follows.
Let $N_i=(R/I)^d \oplus (R/\wp)^i$.
For each $i$, 
we have the canonical surjection 
$N_i \to N_I$ to the first factor,
 and the canonical inclusion 
$N_I \to N_i$  to the second factor. 
We obtain morphisms
$m_i\colon N_I \leftarrow N_i = N_i$
and 
$r_i\colon N_I \leftarrow N_I \subset N_i$
in $\cC^d$.
Then the $i$-th Hecke operator $T_i$ 
is defined to be $r_{i*} m_i^*$.

Then the $L$-factor is 
$\sum_{i=0}^d
(-1)^i (N\wp)^{i(i-1)/2} T_i
$.
This kind of $L$-factor is found in, for example, 
\cite[Thm. 3.21 Ch.\ 3]{Shimura}.
The norm relation reads
\[
m_* \kappa_{I\wp}
=
\sum_{i=0}^d
(-1)^i (N\wp)^{i(i-1)/2} T_i
\kappa_I.
\]

Let us illustrate this using the example of cyclotomic units.     
In this case $R=\Z$, $I=m\Z$ for some positive integer $m$, and 
$\wp=p\Z$ for a prime number $p$ prime to $m$.  In this case $d=1$.
We can take $F=G$ to be the sheaf such that $G(m)=F(m)=\Q(\zeta_m)^\times$
where $\zeta_m$ is an $m$-th root of unity.
The fact that this $F$ is a sheaf follows from that for any $m | m_1$,
we have 
$\Q(\zeta_m)=\Q(\zeta_{m_1})^G$ 
where $G$ is the Galois group of the extension $\Q(\zeta_{m_1})/\Q(\zeta_m)$.
(One can formulate a variant with the units in the 
ring of integers of $\Q(\zeta_m)$.)
We have elements $a/m \in \Z/m\Z=\BS'(I)$.
The map $x\colon \BS' \to F$ is given by sending 
$a/m$ to $1-\zeta_m^a$.    The fact that this is a 
map of presheaves follows from that these cyclotomic units 
satisfy distribution property.

Of the two Hecke operators, the Hecke operator $T_0$ is the identity map.
The computation of the Hecke operator $T_1$ is done 
in Section~\ref{sec:cyclotomic Euler}.
This comes to the action of the Frobenius, giving the usual $L$-factor.

In general, we have in our mind the moduli of ``something" (examples are $\Gm$, elliptic curves, and Drinfeld modules) 
with various level $N$ structures.
We set $F$ to be the presheaf of units (the (1,1)-motivic cohomology, $K_1$, etc.) 
of the moduli spaces, which turns out to be a sheaf.
For $G$, we are thinking of some $d$-th cohomology groups (the ($d,d$)-motivic cohomology, the $d$-th algebraic $K$-group $K_d$, etc.)



We remark that, when $G$ is a sheaf, the proof of the norm relation is considerably easier.  
This proof appears essentially in Grigorov \cite{Grigorov}, see also our paper \cite{Lepsilon}.
We also mention that this sheaf case has an application to
the computation of certain zeta integral \cite{KY:zeta}.


%

\section{Organization}
In Chapter~\ref{cha:Y-sites}, we give statements on $Y$-sites.
The prerequisite for this chapter is our paper~\cite{Grids}
and 
the chapter serves as a sequel to it.  The results of 
this section are used throughout the book.    

In Chapter~\ref{cha:Cd}, we prove the main norm relation 
theorem (Theorem~\ref{main theorem}) 
in an abstract setting of presheaves on $Y$-sites.   
The reader who is not interested in arithmetic applications
may skip this chapter.
We first introduce the category $\cC^d$ and 
give it a certain topology (for our main application,
we use the atomic topology).    Then we 
show that it is indeed a $Y$-site so that the results
of Chapter~\ref{cha:Y-sites} apply.   
Section~\ref{sec:Euler distributions} 
is devoted to the proof of the main theorem.
This realizes the slogan ``product of distributions
is an Euler system''.    The applications are given
in Chapter~\ref{cha:applications}.

In Chapter~\ref{cha:examples}, we provide more
examples of $Y$-sites.   This chapter depends on 
\cite{Grids} but logically on no other parts of this book.
We note that the example for classical groups 
(Section~\ref{sec:classical groups}) is a generalization 
of the category $\cC^d$ of Chapter~\ref{cha:Cd}.

Chapter~\ref{cha:applications} contains arithmetic applications of the main
norm relation theorem.   This chapter does not depend on 
Chapter~\ref{cha:examples}. In Section~\ref{sec:cyclotomic Euler},
we give an application to the norm computation of cyclotomic 
units.   The details are given to provide how our setup and 
this language of $Y$-sites act in practice, before going 
on to our main application for Drinfeld modular schemes.
In Section~\ref{sec:Hecke double}, we show how our 
definition of Hecke operators is compatible with the 
definition using double cosets.
Section~\ref{sec:universal Euler} contains the special case 
of our theorem when the presheaf with transfer is 
replaced by a sheaf.    This section explains how our
setup is related to the proof of norm relation by 
Colmez \cite{Colmez}.
Section~\ref{sec:Drinfeld Euler} contains our 
main theorem on the norm relation of 
elements in motivic cohomology of Drinfeld modular 
schemes.

\chapter*{Acknowledgment}
During this research, the first author was supported as a 
Twenty-First Century COE Kyoto Mathematics Fellow, was 
partially supported by JSPS Grant-in-Aid for Scientific
Research 17740016
and by World Premier International Research Center Initiative (WPI Initiative), MEXT, Japan.
The second author was partially supported by JSPS KAKENHI 
Grant Number 21H00969,15H03610, 24540018, 21540013, 16244120.
The second author would like to thank Akio Tamagawa
for his kind explanation of his unpublished result related to
Theorem 0.7 of \cite{Tama}.

\chapter{More generalities on $Y$-sites}
\label{cha:Y-sites}
In this chapter, we give some generalities on $Y$-sites.
We refer to Section~\ref{sec:intro Y-sites}
for the general introduction to 
the contents of this chapter.

\section{Comparison lemmas}
\label{sec:comparison lemmas}
Let $(\cC,J)$ and $(\cD,J')$ be sites and let $F\colon \cC \to \cD$ be a functor.
In \cite[EXPOSE III, Th\'eor\`eme 4.1]{SGA4} it is shown, under 
certain conditions, 
that the functor $\Presh(\cD) \to \Presh(\cC)$ given by the composition with $F$ induces an equivalence $\Shv(\cD,J') \to \Shv(\cC,J)$ of 
categories.
This result is called ``comparison lemma" in loc.\ cit., and some 
generalizations are known (see, for example, \cite[p.152]{KM}
or \cite[p.547, Thm 2.2.3]{Johnstone}).
In this paragraph we give two variants (Proposition \ref{prop:comp1} 
and Proposition \ref{prop:comp2} below) of comparison lemmas
which we will use in later sections.

\subsection{ }
We refer to \cite{SGA4} for set theory and 
the theory of sheaves.

Let $\frU$ be a (Grothendieck) universe which contains
an infinity.

\subsubsection{}
We define essentially $\frU$-small category below.
A set is called $\frU$-small if it 
is isomorphic to an element of $\frU$.
A category $\cC$ is said to be $\frU$-small
if the set of morphisms of $\cC$ is $\frU$-small
(\cite[TeX Exp I 1.0 footnote]{SGA4}.
A category $\cC$ is a $\frU$-category
if for any pair $x,y$ of objects of $\cC$,
the set $\Hom_\cC(x,y)$ is $\frU$-small.
\begin{defn}
A category $\cC$ is essentially $\frU$-small
if $\cC$ is a $\frU$-category 
and $\cC$ is equivalent to a $\frU$-small category.
\end{defn}
\subsubsection{}
\newcommand{\Usets}{($\frU$-$\mathrm{Ens}$)\,}
Let \Usets denote the category of sets that
belong to $\frU$.
Let $\cC$ be a category.
By a presheaf on $\cC$,
we mean a contravariant functor from $\cC$
to \Usets.   
We note that if $\cC$ is an essentially $\frU$-small category,
then $\Presh(\cC)$ is a $\frU$-category.

We let $\Presh(\cC)$ 
denote the category of 
presheaves on $\cC$.
For a $\frU$-category $\cC$, there is 
the fully faithful Yoneda embedding
\[
h_\cC\colon \cC \to \Presh(\cC)
\]
which sends an object $X\in \Obj(\cC)$
to the presheaf that $X$ represents.
For a presheaf $F$ on $\cC$ and 
an object $X$ of $\cC$, we let 
\[
y_{F,X}\colon \Hom_{\Presh(\cC)}(h_\cC(X), F) \to F(X)
\]
denote the isomorphism inverse to that 
given in \cite{SGA4}.

From here on, we assume there exist Grothendieck universes $\frU$ and $\frV$
such that $\frU$ contains an infinity and $\frU \in \frV$.   
Then the category of $\frU$-presheaves
on a $\frU$-category is a $\frV$-category.
We will write simply category, small category, essentially small category, etc. to
mean $\frU$-category, etc.

\subsection{Grothendieck topology}
\subsubsection{Sieves}
Let us recall the notion of a sieve (cf.\ 
\cite[EXPOSE I, D\'efinition 4.1]{SGA4}, 
\cite[Arcata, (6.1)]{SGA4h}).
Let $\cC$ be a category and 
let $X$ be an object of $\cC$.
A sieve on $X$ is a full subcategory $R$ of the overcategory
$\cC_{/X}$ satisfying the following condition: Let $f\colon Y \to X$
be an object of $\cC_{/X}$ and suppose that there exist
an object $g\colon Z \to X$ of $R$ and a morphism $h\colon Y \to Z$
in $\cC$ satisfying $f=g \circ h$. Then $f$ is an object of $R$.

For a sieve $R$ on $X$, we denote by $\frh_\cC(R)$ 
the following subpresheaf of $\frh_\cC(X)$: 
For each object $Y$ of $\cC$,
the subset $\frh_\cC(R)(Y) \subset \frh_\cC(X)(Y) 
\cong \Hom_\cC(Y,X)$ 
consists of the morphisms $f\colon Y \to X$ in $\cC$ 
such that $f$ is an object of $R$.
It follows from \cite[EXPOSE I, 4.2]{SGA4} that,
by associating $\frh_\cC(R)$ to $R$, we have a one-to-one
correspondence between the sieves on $X$ and the 
subpresheaves of $\frh_\cC(X)$.

\subsubsection{ }
Let $\cC$ and $\cD$ be categories,
let $X$ be an object of $\cC$, 
and let $Y$ be an object of $\cD$.
Suppose that a covariant functor $F\colon \cC_{/X}
\to \cD_{/Y}$ is given.
For a sieve $R$ on $Y$, we denote by $F^* R$
the full subcategory of $\cC_{/X}$ whose objects
are those objects $f\colon Z \to X$ of $\cC_{/X}$ 
such that $F(f)$ is an object of $R$.
It is then easy to check that $F^* R$ 
is a sieve on $X$.

Let $G\colon \cC \to \cD$ be a covariant functor.
Suppose that $G(X) = Y$ and that $F$ is equal to
the covariant functor $\cC_{/X} \to \cD_{/Y}$ 
induced by $G$. In this case we
denote the sieve $F^* R$ on $X$ by $G^* R$.

Let $f\colon X \to Z$ be a morphism in $\cC$. 
Suppose that $\cC=\cD$, $Y=Z$, and
$F$ is equal to the covariant functor 
$\cC_{/X} \to \cC_{/Z}$ 
which sends an object $g\colon W \to X$ of $\cC_{/X}$
to the object $f \circ g$ of $\cC_{/Z}$.
In this case we denote the sieve $F^* R$ 
on $Y$ by $R \times_Z X$ and call it the pullback
of $R$ with respect to the morphism $f$.

\subsubsection{Grothendieck topology}
\label{sec:topology}
Let $\cC$ be a category.
Let us recall the notion of Grothendieck topology 
(cf.\ 
\cite[EXPOSE II, D\'efinition 1.1]{SGA4}, 
\cite[Arcata, (6.2)]{SGA4h}).
A Grothendieck topology 
$J$ on $\cC$ is an assignment
of a set $J(X)$ of sieves on $X$ to each object $X$ of $\cC$
satisfying the following conditions:
\begin{enumerate}
\item For any object $X$ of $\cC$, the overcategory
$\cC_{/X}$ is an element of $J(X)$.
\item For any morphism $f\colon  Y \to X$ in $\cC$
and for any element $R$ of $J(X)$, the sieve
$R \times_X Y$ on $Y$ is an element of $J(Y)$.
\item Let $X$ be an object of $\cC$, and let $R$, $R'$ be 
two sieves on $X$. Suppose that $R$ is an element of $J(X)$
and that for any object $f\colon Y \to X$ of $R$, the sieve
$R' \times_X Y$ on $Y$ is an element of $J(Y)$.
Then $R'$ is an element of $J(X)$.
\end{enumerate}

Let $J$ be a Grothendieck topology on $\cC$ and
let $X$ be an object of $\cC$.
We say that a morphism $f\colon F \to \frh_\cC(X)$
of presheaves on $\cC$ is a covering of $X$ with respect to $J$ 
if the image of $f$ is equal to the subpresheaf 
$\frh_\cC(R)$ of $\frh_\cC(X)$ for some sieve 
$R$ on $X$ which belongs to $J(X)$.
We say that a morphism $f\colon F \to G$ of presheaves on $\cC$ 
is a covering with respect to $J$
if for any object
$X$ of $\cC$ and for any element $\xi \in G(X)$, the
first projection from the fiber product 
$\frh_\cC(X) \times_G F$ of the diagram
$$
\frh_\cC(X) \xto{y_{G,X}^{-1}(\xi)} G \xleftarrow{f} F
$$
to $\frh_\cC(X)$ is a covering of $X$ with respect to $J$.
When $G = \frh_\cC(X)$ for some object $X$ of $\cC$,
it follows from Condition (2) above
that $f$ is a covering with respect to $J$ if and only if
$f$ is a covering of $X$ with respect to $J$.
Let $(f_i\colon Y_i \to X)_{i \in I}$ be a family of 
objects of $\cC_{/X}$ indexed by a set $I$.
We say that $(f_i)_{i \in I}$ is a family covering $X$
with respect to $J$ if the sieve $R_{(f_i)_{i \in I}}$
on $X$ belongs to $J(X)$.

\subsection{Notation}

\subsubsection{ }\label{sec:IFX}
Let $\cC$ and $\cD$ be categories and
let $F\colon \cC \to \cD$ be a covariant functor.
For an object $X$ of $\cD$, we denote by
$I^F_X$ the following category.
%
The objects of $I^F_X$ are the pairs 
$(Y,f)$ of an object $Y$ of $\cC$ and 
a morphism $f\colon F(Y) \to X$ in $\cD$.
For two objects $(Y_1,f_1)$ and $(Y_2,f_2)$
of $I^F_X$, the morphisms from $(Y_1,f_1)$
to $(Y_2,f_2)$ in $I^F_X$ are the morphisms
$g\colon Y_1 \to Y_2$ in $\cC$ satisfying
$f_1 = f_2 \circ F(g)$.

\subsubsection{ }
For a morphism $f\colon Y \to X$ in a category $\cC$, we let $R_f$ denote the
full subcategory of $\cC_{/X}$ whose objects are the morphisms
$g\colon Z \to X$ in $\cC$ such that $g=f \circ h$ for some
morphism $h\colon Z \to Y$ in $\cC$. It is then easy to
check that $R_f$ is a sieve on $X$.

More generally, suppose that $X$ is an object of a category $\cC$ and
that a family $(f_i\colon Y_i \to X)_{i \in I}$ of objects of $\cC_{/X}$ 
indexed by a set $I$ is given.
We then let $R_{(f_i)_{i \in I}}$ denote the full subcategory of 
$\cC_{/X}$ whose objects are the morphisms 
$g \colon  Z \to X$ in $\cC$ such that $g = f_i \circ h$
for some $i\in I$ and for some morphism $h\colon  Z \to Y_i$
in $\cC$. It is then easy to
check that $R_{(f_i)_{i \in I}}$ is a sieve on $X$.

Let $X$ be an object of a category $\cC$.
We say that a sieve $R$ on $X$ has a small generator
if there exists a set $I$ and a family $(f_i\colon Y_i \to X)_{i \in I}$
of objects of $\cC_{/X}$ satisfying $R = R_{(f_i)_{i \in I}}$.

\subsubsection{ } \label{sec:Fshrink_sieve}
Let $\cC$ and $\cD$ be categories,
let $F\colon \cC \to \cD$ be a covariant functor, and
let $X$ be an object of $\cC$.
For a sieve $R$ on $X$, we denote by $F_! R$ the
full subcategory of $\cD_{/F(X)}$ whose objects are
the morphisms $f\colon Y \to F(X)$ in $\cD$ such that
$f=F(g)\circ h$ for some object $g\colon Z \to X$
in $R$ and for some morphism $h\colon Y \to F(Z)$ in $\cD$.
It is then easy to check that $F_! R$ is a sieve on $F(X)$.

\subsection{Grothendieck topologies and functors}
\label{sec:top_functor}

\subsubsection{}
Let $\cC$ and $\cD$ be categories, and let
$F\colon \cC \to \cD$ be a covariant functor.
For an object $X$ of $\cD$ and for an object
$(Y,f)$ of $I^F_X$, let $F_{Y,f}\colon  \cC_{/Y} \to \cD_{/X}$
denote the covariant functor which associates, to each
object $g\colon Z \to Y$ of $\cC_{/Y}$, the object 
$f \circ F(g)$ of $\cD_{/X}$.

\subsubsection{Continuous and cocontinuous functors}

Let $J$, $J'$ be Grothendieck topologies on $\cC$, $\cD$, respectively.
Following \cite[EXPOSE III, D\'efinition 1.1]{SGA4}, 
we say that $F$ is continuous
with respect to $J$ and $J'$ 
if for any sheaf $\cF$ on $D$, the presheaf $\cF \circ F^\op$ on
$\cC$ is a sheaf.
Following \cite[EXPOSE III, D\'efinition 2.1]{SGA4},
we say that $F$ is cocontinuous
 with respect to $J$ and $J'$
if for any object $X$ of $\cC$ and for any sieve $R$ on $F(X)$
which belongs to $J'(F(X))$, the sieve $F_{X,\id_{F(X)}}^* R$
on $X$ belongs to $J(X)$.

\subsubsection{Induced Grothendieck topology}
Let $J$ be a Grothendieck topology on the category $\cD$.
We let $F^* J$ denote the Grothendieck topology
induced on the category $\cC$ by $J$ with respect to the functor 
$F$ in the sense of \cite[EXPOSE III, 3.1]{SGA4}.
By definition, $F^* J$ is the finest Grothendieck topology on
$\cC$ such that $F$ is continuous with respect to $F^* J$ and $J$.

It follows from Proposition 5.1 in \cite[EXPOSE I]{SGA4}
that the pullback functor 
$F^* \colon  \Presh(\cD) \to \Presh(\cC)$
given by the composite with $F$ has a left adjoint,
which we denote by
$F_! \colon  \Presh(\cC) \to \Presh(\cD)$.
It follows from \cite[EXPOSE I, 5.2]{SGA4} that
the functor $F_!$ is left exact.
Let $X$ be an object of $\cC$.
It follows from \cite[EXPOSE I, 5.4(3)]{SGA4}
that we have $F_! \frh_\cC(X) = \frh_\cD(F(X))$.
If $R$ is a sieve on $X$, then an explicit construction
of the functor $F_!$, given in the proof of Proposition 5.1 
in \cite[EXPOSE I]{SGA4}, induces an isomorphism 
$F_! \frh_\cC(R) \cong \frh_\cD(F_!R)$,
where $F_! R$ is the sieve on $F(X)$ introduced in 
Section \ref{sec:Fshrink_sieve}, such that the diagram
$$
\begin{CD}
F_! \frh_\cC(R) @>>>
F_! \frh_\cC(X) \\
@V{\cong}VV @| \\
\frh_\cD(F_!R) @>>>
\frh_\cD(F(X))
\end{CD}
$$
is commutative.

\begin{lem}\label{lem:bicovering} 
Let $X$ be an object of $\cC$ and let $R$ be a sieve on $X$.
Then $R$ belongs to $F^*J (X)$ if and only if
the following two conditions hold for any morphism $f\colon Y\to X$ in $\cC$:
\begin{enumerate}
\item The sieve $F_!(R \times_X Y)$ on $F(Y)$ belongs to $J(F(Y))$.
\item Let $g_1\colon  Z_1 \to Y$ and $g_2\colon  Z_2 \to Y$ be two objects of 
$R \times_X Y$ and let
$$
\begin{CD}
W @>{h_1}>> F(Z_1) \\
@V{h_2}VV @VV{F(g_1)}V \\
F(Z_2) @>{F(g_2)}>> F(Y)
\end{CD}
$$
be a commutative diagram in $\cD$. Let $R'$ denote the full subcategory of 
$\cD_{/W}$ whose objects are the
morphisms $f'\colon V \to W$ in $\cD$ such that there exists an object $U$
of $\cC$, a morphism $f'' \colon  V \to F(U)$ in $\cD$, and a commutative
diagram
$$
\begin{CD}
U @>{h'_1}>> Z_1 \\
@V{h'_2}VV @VV{g_1}V \\
Z_2 @>{g_2}>> Y
\end{CD}
$$
in $\cC$ satisfying $h_1 \circ f' = F(h'_1) \circ f''$ and
$h_2 \circ f' = F(h'_2) \circ f''$.
Then $R'$ is a sieve on $W$ which belongs to $J(W)$.
\end{enumerate}
\end{lem}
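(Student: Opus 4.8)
The plan is to identify $F^*J(X)$ with the set $K(X)$ of sieves $R$ on $X$ satisfying conditions (1) and (2) for every morphism $f\colon Y\to X$. I would organize this around the principle that, for \emph{any} Grothendieck topology $T$ on $\cC$, the functor $F$ is continuous with respect to $T$ and $J$ if and only if every $T$-covering sieve belongs to $K$. Granting this principle, two observations finish the proof: since $F^*J$ is a topology for which $F$ is continuous, every $F^*J$-covering sieve lies in $K$, so $F^*J(X)\subseteq K(X)$; and, once one has checked separately that $K$ is itself a Grothendieck topology, the principle applied to $T=K$ shows $F$ is continuous for $K$, whence $K(X)\subseteq F^*J(X)$ because $F^*J$ is the \emph{finest} such topology. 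The two inclusions give $K=F^*J$. The standing tools are the isomorphism $F_!\frh_\cC(R)\cong\frh_\cD(F_!R)$ of Section~\ref{sec:top_functor}, compatible with the inclusions into $\frh_\cD(F(X))$, and the standard reformulation of continuity: $F$ is continuous with respect to $T$ and $J$ iff its left adjoint $F_!$ carries $T$-local isomorphisms of presheaves (morphisms inverted by $T$-sheafification) to $J$-local isomorphisms.

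That a $T$-covering sieve $R$ must satisfy (1) is quick when $F$ is $T$-continuous: for $f\colon Y\to X$, stability of $T$ under pullback makes $\frh_\cC(R\times_X Y)\hookrightarrow\frh_\cC(Y)$ a covering monomorphism, hence a $T$-local isomorphism, and $F_!$ sends it — via the identification above — to the monomorphism $\frh_\cD(F_!(R\times_X Y))\hookrightarrow\frh_\cD(F(Y))$, which is therefore a $J$-local isomorphism, hence a covering, i.e.\ $F_!(R\times_X Y)\in J(F(Y))$. The necessity of (2) is the delicate point. Given the commutative square in the statement, a direct computation with the colimit defining $F_!$ identifies $R'$ with the pullback, along the map $\frh_\cD(W)\to\frh_\cD(F(Z_1))\times_{\frh_\cD(F(Y))}\frh_\cD(F(Z_2))$ determined by $(h_1,h_2)$, of the image of the canonical morphism
\[
F_!\bigl(\frh_\cC(Z_1)\times_{\frh_\cC(Y)}\frh_\cC(Z_2)\bigr)\longrightarrow\frh_\cD(F(Z_1))\times_{\frh_\cD(F(Y))}\frh_\cD(F(Z_2)).
\]
One must then show that this morphism is locally surjective for $J$ once $g_1,g_2$ lie in a $T$-covering sieve: this is exactly where the failure of $F$ (equivalently of $F_!$) to preserve fibre products has to be absorbed by the continuity hypothesis, and it is the main obstacle of the whole proof. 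I expect it to come out of continuity applied to a carefully chosen $T$-local isomorphism on $\cC$ built from the covering family generating $R\times_X Y$ together with its diagonal.

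For the converse direction of the principle and the verification that $K$ is a topology: the axiom on the maximal sieve reduces to $F_!(\cC_{/Z})=\cD_{/F(Z)}$, and pullback-stability to the identities $(R\times_X Y)\times_Y Z=R\times_X Z$ together with the obvious functoriality of the data appearing in (2); the local-character axiom is the crux, and there condition (2) for the coarser sieve is precisely what lets one glue the coverings $F_!(R'\times_X Y)$, for $f\colon Y\to X$ in $R$, into a covering $F_!(R')$ of $F(X)$ and simultaneously propagate clause (2) --- again the same phenomenon as in the necessity of (2). Finally, if $T$ is a topology all of whose covering sieves lie in $K$, then $F$ is $T$-continuous: for a $J$-sheaf $\cG$ and a covering sieve $R$ of $X$, clause (1) identifies $\cG(F(X))$ with the limit of $\cG$ over the covering sieve $F_!R$, and clause (2) makes the comparison functor $R\to F_!R$ (over $\cC$ and over $\cD$ respectively) relate the two index categories closely enough that this limit agrees with the limit of $\cG\circ F^\op$ over $R$ --- that is, with $(F^*\cG)(X)$ satisfying the sheaf condition at $R$. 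This closes the loop and proves $K=F^*J$. (Alternatively, one can deduce the lemma by matching $K$ with the description of the topology induced by a functor in \cite[EXPOSE III, \S 3]{SGA4}, at the cost of reconciling the two forms of the covering condition.)
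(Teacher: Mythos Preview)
Your plan is more ambitious than needed, and the paper takes a much shorter route. The paper simply invokes Proposition~3.2 of \cite[EXPOSE III]{SGA4}, which says directly that $R\in F^*J(X)$ if and only if for every $f\colon Y\to X$ the morphism
\[
F_!\,\frh_\cC(R\times_X Y)\;\longrightarrow\;F_!\,\frh_\cC(Y)=\frh_\cD(F(Y))
\]
is a \emph{bicovering} in the sense of \cite[EXPOSE II, 5.2]{SGA4}. Then it unpacks ``bicovering'' as ``covering and the diagonal is covering'': the image of the displayed map is exactly $\frh_\cD(F_!(R\times_X Y))$, so the covering part is Condition~(1); and Condition~(2) is, after a short computation with the explicit colimit formula for $F_!$, precisely the statement that the diagonal into the self-fibre-product is a covering. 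That is the whole proof.

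By contrast, your proposal tries to rebuild this characterization from scratch: you introduce $K$, state a ``principle'' equating continuity with $T$-covering sieves lying in $K$, and then need both directions of that principle and the verification that $K$ is itself a Grothendieck topology. You correctly identify the hard steps --- the necessity of (2) in the principle, and the local-character axiom for $K$ --- but you do not carry them out; you say you ``expect'' the key local surjectivity to follow and that (2) is ``precisely what lets one glue'', without giving the argument. These are genuine gaps: each of them is essentially the content of the bicovering criterion you would be reproving. Your parenthetical ``alternatively'' at the end --- matching $K$ with the induced-topology description in \cite[EXPOSE III, \S3]{SGA4} --- is exactly what the paper does, and is the efficient path. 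I would recommend abandoning the $K$-is-a-topology detour and instead citing Proposition~3.2 directly, then identifying (1) with the covering condition and (2) with the diagonal condition; your colimit computation of $R'$ as a pullback is already the right ingredient for the second identification.
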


\begin{proof}
We apply Proposition 3.2 of \cite[EXPOSE III]{SGA4}.
It follows that $R$ belongs to $J(X)$ if and only if
for any morphism $f\colon Y\to X$ in $\cC$, the morphism 
\begin{equation}\label{eq:F_shrink}
F_! \frh_\cC(R \times_X Y) \to 
F_! \frh_\cC(Y) = \frh_\cD(F(Y))
\end{equation}
induced by the inclusion $\frh_\cC(R \times_X Y)
\inj \frh_\cC(Y)$ is a bicovering in the sense of
\cite[EXPOSE II, 5.2]{SGA4}.

It follows from the explicit description of the functor 
$F_!$ given in (5.1.1) of \cite[EXPOSE I]{SGA4} that
the image of the morphism \eqref{eq:F_shrink} is equal
to the subpresheaf $\frh_{\cD}(F_!(R\times_X Y))$
of $\frh_{\cD}(F(Y))$.
Hence the morphism \eqref{eq:F_shrink} is a covering
of $F(Y)$ with respect to $J$ if and only if
Condition (1) holds.
%
It follows from the definition 
of bicovering that the morphism \eqref{eq:F_shrink}
is a bicovering if and only if it is a covering
of $F(Y)$ with respect to $J$ and the diagonal 
morphism
$$
F_!\frh_\cC(R\times_X Y)
\to F_!\frh_\cC(R\times_X Y)
\times_{\frh_\cD(Y)}
F_!\frh_\cC(R\times_X Y)
$$
induced by the morphism \eqref{eq:F_shrink}
is a covering with respect to $J$.
Since Condition (2) is a straightforward paraphrase
of the last condition, the claim follows.
\end{proof}

\begin{cor}\label{cor:bicovering}
Let $X$ be an object of $\cC$ and let $\frf_\cC(X)\colon \cC_{/X} \to \cC$
denote the functor which associates, 
to each object $Y \to X$ of $\cC_{/X}$, the object $Y$ of $\cC$.
Let $f\colon  Y \to X$ be a morphism and let $G\colon (\cC_{/X})_{/f}
\to \cC_{/Y}$ denote the functor induced by $\frf_\cC(X)$.
Note that $G$ is an isomorphism of categories.
Let $R$ be a sieve on $f$ and let $G(R)$ be a sieve on
$Y$ corresponding to $R$ via the isomorphism $G$ of categories.
Then $R$ belongs to $\frf_\cC(X)^* J(f)$ if and only if
$G(R)$ belongs to $J(Y)$.
\end{cor}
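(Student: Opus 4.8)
The plan is to apply Lemma~\ref{lem:bicovering} to the covariant functor $F := \frf_\cC(X)\colon \cC_{/X}\to\cC$, taking the object ``$X$'' of that lemma to be the object $f$ of $\cC_{/X}$, the topology ``$J$ on $\cD$'' to be $J$ on $\cC$, and the sieve ``$R$'' to be our $R$. First I would record the purely formal fact that for every object $(Z,z)$ of $\cC_{/X}$ the functor $(\cC_{/X})_{/(Z,z)}\to\cC_{/Z}$ induced by $F$ (in the sense of Section~\ref{sec:top_functor}) is an isomorphism of categories, $G$ being the instance $(Z,z)=(Y,f)$; in particular morphisms of $\cC_{/X}$ with target $(Z,z)$ correspond bijectively to morphisms of $\cC$ with target $Z$, and a commutative square in $\cC_{/X}$ with a prescribed bottom-right vertex $(W,w)$ and prescribed bottom and right edges is the same datum as a commutative square in $\cC$ with bottom-right vertex $W$ and the corresponding edges, the structure morphisms to $X$ of the other vertices being forced.

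Next I would unwind Condition~(1). A morphism of $\cC_{/X}$ with target $f$ is the same as a morphism $\psi\colon W\to Y$ of $\cC$, the structure morphism of its source $W$ being then necessarily $f\psi$; write $(W,f\psi)$ for that source. Chasing the definition of the sieve $F_!$ from Section~\ref{sec:Fshrink_sieve}, applied to the pullback sieve $R\times_f(W,f\psi)$, and using that the objects of $R$ are exactly the morphisms of $\cC_{/X}$ whose underlying morphisms of $\cC$ lie in $G(R)$, I expect to identify $F_!\bigl(R\times_f(W,f\psi)\bigr)$ with the pullback sieve $G(R)\times_Y W$ on $F(W,f\psi)=W$ along $\psi$. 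Granting this, Condition~(1), demanded for every morphism with target $f$, becomes the assertion that $G(R)\times_Y W\in J(W)$ for every $\psi\colon W\to Y$; and this holds for every $\psi$ if and only if $G(R)\in J(Y)$ --- the ``only if'' direction being the case $\psi=\id_Y$ (for which $G(R)\times_Y Y=G(R)$), and the ``if'' direction being axiom~(2) of Section~\ref{sec:topology}.

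Finally I would check that Condition~(2) holds automatically. In the notation of that condition, let $P$ denote its object ``$W$'', equipped with morphisms $h_1,h_2$ from $P$ to the underlying objects of $Z_1,Z_2$ satisfying $F(g_1)h_1=F(g_2)h_2$. The claim is that the sieve $R'$ on $P$ is the whole of $\cC_{/P}$: given any $f'\colon V\to P$, one takes the auxiliary object ``$U$'' to be $P$ itself --- its structure morphism to $X$ being the composite of $F(g_1)h_1=F(g_2)h_2$ with the structure morphism of the target object $(W,f\psi)$ --- takes ``$f''$'' to be $f'$, and takes ``$h'_1$'', ``$h'_2$'' to be $h_1,h_2$. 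The commutative square in $\cC_{/X}$ demanded there is then supplied by the formal fact of the first paragraph, and the two identities required of $f'$ become tautologies. Hence $R'=\cC_{/P}$, which lies in $J(P)$ by axiom~(1) of Section~\ref{sec:topology}. Combining the previous two paragraphs, $R$ belongs to $\frf_\cC(X)^*J(f)$ if and only if $G(R)$ belongs to $J(Y)$.

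The only genuine work lies in the two bookkeeping identifications --- of $F_!\bigl(R\times_f(W,f\psi)\bigr)$ with $G(R)\times_Y W$, and of the sieve $R'$ with $\cC_{/P}$ --- each of which reduces, via the isomorphisms $(\cC_{/X})_{/(Z,z)}\cong\cC_{/Z}$, to matching two descriptions of one and the same sieve and invokes nothing beyond the axioms of a Grothendieck topology. I expect the main (and very mild) obstacle to be simply keeping the nested roles of $\cC$, $\cC_{/X}$, $f$, $(W,f\psi)$ and $P$ apart while transcribing the hypotheses of Lemma~\ref{lem:bicovering}.
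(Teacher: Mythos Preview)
Your proposal is correct and follows essentially the same approach as the paper's proof: apply Lemma~\ref{lem:bicovering} to $F=\frf_\cC(X)$, verify that Condition~(2) is vacuous because $R'$ is the whole overcategory, and identify $F_!(R\times_f(W,f\psi))$ with $G(R)\times_Y W$ so that Condition~(1) for all $\psi$ becomes $G(R)\in J(Y)$. The paper is terser---it simply asserts that $R'$ equals $\cD_{/W}$ and that $\frf_\cC(X)_!(R\times_f f\circ g)=G(R)\times_Y Z$ implies the claim---whereas you spell out the construction of $U$ for Condition~(2) and the two-way implication (via $\psi=\id_Y$ and axiom~(2) of a topology) explicitly, but the content is the same.
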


\begin{proof}
We apply Lemma \ref{lem:bicovering} when $\cC$, $\cD$, $F$, $X$ 
in loc.\ cit.\  are $\cC_{/X}$, $\cC$, $\frf_\cC(X)$, $f$, and $R$
respectively.
One can check that the full subcategory $R'$ of $\cD_{/W}$ 
in Condition (2) is equal to the entire $\cD_{/W}$.
Hence $R$ belongs to $\frf_\cC(X)^* J(f)$ if and only if
the sieve $\frf_\cC(X)_!(R \times_f f \circ g)$ on
$Z$ belongs to $J(Z)$ for any morphism $g\colon Z \to Y$ in $\cC$.
Since $\frf_\cC(X)_!(R \times_f f \circ g) = G(R) \times_Y Z$,
the claim follows.
\end{proof}

\subsection{Pushforwards of Grothendieck topologies}
\label{sec:pushforward}

\subsubsection{}
Let $(\cC, J)$ be a site and let $F\colon \cC \to \cD$ be 
a functor.   We know from 
\cite[Exp III Prop 3.7, p.287]{SGA4}
that there exists a finest Grothendieck topology
on $\cD$ such that the functor $F$ is 
cocontinuous.   We call this topology the 
pushforward of $J$ and denote it $F_*J$.
Let us give an explicit description of this 
pushforward topology.

\subsubsection{}\label{sec:J'}

Let $J$ be a Grothendieck topology on the category $\cC$.
For each object $X$ of $\cD$, we let $J'(X)$ denote the
set of sieves $R$ on $X$ satisfying the
following property: for any object $(Y,f)$ of $I^F_X$, 
the sieve $F_{Y,f}^* R$ belongs to $J(Y)$.

\begin{lem}
The assignment $J'$ of $J'(X)$ to each object $X$ of $\cD$
is a Grothendieck topology on the category $\cD$.
\end{lem}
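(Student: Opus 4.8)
The plan is to check directly that $J'$ satisfies the three axioms of a Grothendieck topology on $\cD$ recalled in Section~\ref{sec:topology}, each time reducing the assertion to the corresponding axiom for $J$ on $\cC$ by means of the defining property of $J'(X)$ together with two elementary identities for the operation $R \mapsto F_{Y,f}^* R$. The first identity is that for a morphism $f : Y \to X$ in $\cD$, an object $(Z,h)$ of $I^F_Y$, and a sieve $R$ on $X$ one has
\[
F_{Z,h}^*\bigl(R \times_X Y\bigr) = F_{Z, f \circ h}^* R
\]
as sieves on $Z$; indeed both consist precisely of the morphisms $g : V \to Z$ in $\cC$ for which $f \circ h \circ F(g)$ is an object of $R$, and one observes that $(Z, f \circ h)$ is then an object of $I^F_X$. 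The second identity is that for an object $(Z,f)$ of $I^F_X$, a morphism $h : V \to Z$ in $\cC$, and a sieve $R'$ on $X$ one has
\[
F_{V, \id_{F(V)}}^*\bigl(R' \times_X F(V)\bigr) = \bigl(F_{Z,f}^* R'\bigr) \times_Z V ,
\]
where on the left the pullback is taken along $f \circ F(h) : F(V) \to X$; here both sides consist of the morphisms $k : U \to V$ in $\cC$ for which $f \circ F(h \circ k)$ is an object of $R'$. Since $F_{Y,f}^* R$ is already known to be a sieve, both identities are immediate from the definitions, and I would record them as a preliminary observation.

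Granting these, axiom (1) is immediate: for any object $X$ of $\cD$ and any object $(Y,f)$ of $I^F_X$ one has $F_{Y,f}^*(\cD_{/X}) = \cC_{/Y}$, which lies in $J(Y)$, so $\cD_{/X} \in J'(X)$. For axiom (2), given $f : Y \to X$ in $\cD$ and $R \in J'(X)$, I take an arbitrary object $(Z,h)$ of $I^F_Y$; the first identity gives $F_{Z,h}^*(R \times_X Y) = F_{Z, f \circ h}^* R$, and the right-hand side lies in $J(Z)$ because $(Z, f \circ h)$ is an object of $I^F_X$ and $R \in J'(X)$. Hence $R \times_X Y \in J'(Y)$.

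Axiom (3) is where the real work lies. Suppose $R \in J'(X)$ and $R'$ is a sieve on $X$ such that $R' \times_X Y \in J'(Y)$ for every object $g : Y \to X$ of $R$; I must show $R' \in J'(X)$, so I fix an object $(Z,f)$ of $I^F_X$ and aim to show $F_{Z,f}^* R' \in J(Z)$. The sieve $F_{Z,f}^* R$ lies in $J(Z)$ because $R \in J'(X)$, so by the local-character axiom (3) for $J$ it is enough to prove that $(F_{Z,f}^* R') \times_Z V \in J(V)$ for every object $h : V \to Z$ of $F_{Z,f}^* R$. Fix such an $h$; by definition $f \circ F(h) : F(V) \to X$ is an object of $R$, so the hypothesis applied to it gives $R' \times_X F(V) \in J'(F(V))$, and evaluating this membership on the object $(V, \id_{F(V)})$ of $I^F_{F(V)}$ yields $F_{V, \id_{F(V)}}^*(R' \times_X F(V)) \in J(V)$. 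By the second identity this sieve is exactly $(F_{Z,f}^* R') \times_Z V$, which completes the verification.

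I do not anticipate any genuine obstacle; the only point that requires care is the bookkeeping of pullbacks of sieves along the three composites $f$, $F(h)$, and $f \circ F(h)$ in the argument for axiom (3), and that is precisely what the two preliminary identities are designed to absorb. (That $J'$ is moreover the finest Grothendieck topology on $\cD$ for which $F$ is cocontinuous, as asserted in Section~\ref{sec:pushforward}, is the content of \cite{SGA4} and is not reproved here.)
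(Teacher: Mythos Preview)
Your proposal is correct and follows essentially the same approach as the paper's proof: both verify the three axioms directly, reducing each to the corresponding axiom for $J$ via the same two identities $F_{Z,h}^*(R \times_X Y) = F_{Z,f\circ h}^* R$ and $(F_{Z,f}^* R') \times_Z V = F_{V,\id_{F(V)}}^*(R' \times_X F(V))$. The only cosmetic difference is that you isolate these identities as a preliminary observation, whereas the paper derives them inline during the verification of axioms (2) and (3).
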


\begin{proof}
We prove that $J'$ satisfies the three conditions in
Section \ref{sec:topology}.

Let $X$ be an object of $\cD$. For any object $Y$ of $\cC$ and for any 
morphism $f\colon F(Y) \to X$ in $\cD$, we have
$F_{Y,f}^* \cD_{/X} = \cC_{/Y}$.
By Condition (1) in
Section \ref{sec:topology} for $J$, we have $\cC_{/Y} \in J(Y)$.
Hence we have $\cD_{/X} \in F_*J(X)$.
Hence $J'$ satisfies Condition (1) in Section \ref{sec:topology}.

Let $f\colon Y \to X$ be a morphism in $\cD$ and let $R$ be a sieve on $X$
which belongs to $J'(X)$.
For any object $Z$ of $\cC$ and for any 
morphism $g\colon F(Z) \to Y$ in $\cD$, we have
$F_{Z,g}^* (R \times_X Y) = F_{Z,f \circ g}^* R$.
Since $R$ belongs to $J'(X)$, we have
$F_{Z,f \circ g}^* R \in J(Z)$.
Hence we have $R \times_X Y \in J'(Y)$.
This shows that 
$J'$ satisfies Condition (2) in Section \ref{sec:topology}.

Let $X$ be an object of $\cD$ and let $R$ be a sieve on $X$
which belongs to $J'(X)$.
Let $R'$ be another sieve on $X$ and suppose that $R' \times_X Y$
belongs to $J'(Y)$ for any object $Y \to X$ of $R$.
Let $Z$ be an object of $\cC$ and let $g\colon F(Z) \to X$ be a morphism in $\cD$.
For any morphism $h\colon W \to Z$ in $\cC$,
we have $(F_{Z,g}^* R')\times_Z W = F_{W,\id_{F(W)}}^* (R' \times_X F(W))$.
Here $R' \times_X F(W)$ denotes the pullback of $R'$
with respect to the composite $g \circ F(h)$.
Suppose that $h$ is an object of $F_{Z,g}^* R$.
Since $g \circ F(h)$ is an object of $R$,
the sieve $R' \times_X F(W)$ on $F(W)$ belongs to $J'(F(W))$.
Hence $(F_{Z,g}^* R')\times_Z W = F_{W,\id_{F(W)}}^* (R' \times_X F(W))$
belongs to $J(W)$.
By Condition (3) in
Section \ref{sec:topology} for $J$, we have $F_{Z,g}^* R' \in J(Z)$.
This shows that $R'$ belongs to $J'(X)$.
Hence $J'$ satisfies Condition (3) in Section \ref{sec:topology}.
This completes the proof.
\end{proof}

It is clear that $J'$ above is the finest topology on $\cD$
such that $F$ is cocontinuous.
Hence by definition, we have $F_*J=J'$.

We record the following transitivity property.
\begin{lem}\label{lem:pushforward_transitivity}
Let $\cE$ be another category and let $G\colon \cD \to \cE$
be a covariant functor. Then we have $G_*(F_*J)
=(G \circ F)_* J$.
\end{lem}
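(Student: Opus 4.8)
The plan is to compare the two Grothendieck topologies on $\cE$ directly, by means of the explicit description $F_*J=J'$ established just above. Applying that description to $(G\circ F)_*J$, a sieve $R$ on an object $X$ of $\cE$ belongs to $(G\circ F)_*J(X)$ if and only if $(G\circ F)_{Z,f}^* R\in J(Z)$ for every object $(Z,f)$ of $I^{G\circ F}_X$; and applying it to $G_*(F_*J)$ — using $F_*J=J'$ a second time for the inner topology — the sieve $R$ belongs to $G_*(F_*J)(X)$ if and only if $F_{Z,h}^*(G_{Y,g}^* R)\in J(Z)$ for every object $(Y,g)$ of $I^G_X$ and every object $(Z,h)$ of $I^F_Y$. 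So it suffices to match up these two families of conditions on $R$.

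The key point will be the identity of sieves on $Z$
\[
F_{Z,h}^*(G_{Y,g}^* R)\;=\;(G\circ F)_{Z,\,g\circ G(h)}^* R,
\]
valid for any object $Y$ of $\cD$, any morphism $g:G(Y)\to X$ in $\cE$, any object $Z$ of $\cC$, and any morphism $h:F(Z)\to Y$ in $\cD$, where $(Z,\,g\circ G(h))$ is an object of $I^{G\circ F}_X$ because $g\circ G(h):G(F(Z))\to X$. This follows by unwinding the definitions of the functors $F_{Z,h}$, $G_{Y,g}$, $(G\circ F)_{Z,\,g\circ G(h)}$ and of the pullback of a sieve: an object $\varphi:W\to Z$ of $\cC_{/Z}$ lies in the left-hand side if and only if $h\circ F(\varphi)$ lies in $G_{Y,g}^* R$, if and only if $g\circ G(h\circ F(\varphi))=(g\circ G(h))\circ (G\circ F)(\varphi)$ is an object of $R$, which is precisely the membership condition for the right-hand side.

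Granting this identity, both inclusions are formal bookkeeping. For $(G\circ F)_*J(X)\subseteq G_*(F_*J)(X)$: if $R\in (G\circ F)_*J(X)$, then for any object $(Y,g)$ of $I^G_X$ and any object $(Z,h)$ of $I^F_Y$ the identity gives $F_{Z,h}^*(G_{Y,g}^* R)=(G\circ F)_{Z,\,g\circ G(h)}^* R\in J(Z)$; letting $(Z,h)$ vary shows $G_{Y,g}^* R\in J'(Y)=F_*J(Y)$, and letting $(Y,g)$ vary shows $R\in G_*(F_*J)(X)$. For the reverse inclusion: if $R\in G_*(F_*J)(X)$ and $(Z,f)$ is an object of $I^{G\circ F}_X$, apply the identity with $Y=F(Z)$, $g=f$ and $h=\id_{F(Z)}$, so that $g\circ G(h)=f$; since $G_{F(Z),f}^* R\in F_*J(F(Z))=J'(F(Z))$, applying the defining property of $J'(F(Z))$ to the object $(Z,\id_{F(Z)})$ of $I^F_{F(Z)}$ yields $(G\circ F)_{Z,f}^* R=F_{Z,\id_{F(Z)}}^*(G_{F(Z),f}^* R)\in J(Z)$; letting $(Z,f)$ vary shows $R\in (G\circ F)_*J(X)$.

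The only genuine obstacle is the sieve identity above; once the functors $F_{Z,h}$, $G_{Y,g}$, $(G\circ F)_{Z,f}$ and the index categories $I^F$, $I^G$, $I^{G\circ F}$ are written out carefully it is a routine check, and the rest is the universal-property bookkeeping just sketched. One could instead argue abstractly, deducing $G_*(F_*J)\subseteq(G\circ F)_*J$ from the transitivity of cocontinuity together with the maximality of $(G\circ F)_*J$ as a pushforward topology, and the reverse inclusion from the cocontinuity of $G$ with respect to $F_*J$ and $(G\circ F)_*J$ together with the maximality of $G_*(F_*J)$; but those inputs rest on the same sieve identity, so going through the explicit description of $J'$ seems cleanest.
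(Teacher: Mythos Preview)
Your proof is correct and takes essentially the same approach as the paper: both unwind the explicit description $F_*J=J'$ and rest on the sieve identity $F_{Z,h}^*(G_{Y,g}^* R)=(G\circ F)_{Z,\,g\circ G(h)}^* R$. The paper packages both inclusions into the single observation that as $(Y,g,Z,h)$ ranges over all quadruples, the pair $(Z,\,g\circ G(h))$ ranges over all of $I^{G\circ F}_X$, whereas you split this into two inclusions (using $Y=F(Z)$, $h=\id_{F(Z)}$ for the nontrivial direction); the content is identical.
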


\begin{proof}
Let $X$ be an object of $\cE$ and let $R$ be a sieve on $X$.
Then $R$ belongs to $G_*(F_*J)(X)$ if and only if for any
object $Y$ of $\cD$, for any morphism $f\colon G(Y) \to X$ in $\cE$,
for any object $Z$ of $\cC$, and for any morphism $g\colon  F(Z) \to Y$ in $\cD$,
the full subcategory $R'$ of $\cC_{/Z}$, whose objects are the morphisms
$h\colon W \to Z$ such that $f \circ G(g \circ F(h))$ is an object of $R$,
is a sieve on $Z$ which belongs to $J(Z)$.
We set $f' = f \circ G(g)$.
When $(Y,Z,f,g)$ runs over the quadruples of
an object $Y$ of $\cD$, a morphism $f\colon G(Y) \to X$ in $\cE$,
an object $Z$ of $\cC$, and a morphism $g\colon  F(Z) \to Y$ in $\cD$,
the pair $(Z,f')$ runs through the pairs of an object $Z$ of
$\cC$ and a morphism $f'\colon G(F(Z)) \to X$ in $\cE$.
Hence $R$ belongs to $G_*(F_*J)(X)$ if and only if for any
object $Z$ of $\cC$ and for any morphism $f'\colon  G(F(Z)) \to X$ in $\cE$,
the full subcategory $R'$ of $\cC_{/Z}$, whose objects are the morphisms
$h\colon W \to Z$ such that $f' \circ G(F(h))$ is an object of $R$,
is a sieve on $Z$ which belongs to $J(Z)$.
This shows that $G_*(F_*J)(X)$ is equal to $(G \circ F)_* J(X)$,
which proves the claim.
\end{proof}

\begin{lem}\label{lem:induced}
Suppose that the functor $F$ is fully faithful.
Then we have $J = F^* F_* J$.
\end{lem}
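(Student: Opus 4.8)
The plan is to apply Lemma~\ref{lem:bicovering} with the topology on $\cD$ taken to be $F_*J$. That lemma characterizes, for a sieve $R$ on an object $X$ of $\cC$, when $R$ belongs to $F^*(F_*J)(X)$, and I will show that full faithfulness of $F$ makes Condition~(1) of the lemma (quantified over all $f\colon Y\to X$ in $\cC$) equivalent to $R\in J(X)$, and makes Condition~(2) automatic. This gives $F^*(F_*J)(X)=J(X)$ for every object $X$ of $\cC$, i.e.\ $J=F^*F_*J$.

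The preliminary observation is that, $F$ being fully faithful, every object of $I^F_{F(Y)}$ has the form $(Z,F(\bar h))$ for a unique morphism $\bar h\colon Z\to Y$ in $\cC$, and more generally every morphism $F(V)\to F(W)$ in $\cD$ is $F$ of a unique morphism of $\cC$. Now for Condition~(1): by the explicit description $F_*J=J'$ obtained in Section~\ref{sec:J'}, the sieve $F_!(R\times_X Y)$ on $F(Y)$ lies in $(F_*J)(F(Y))$ if and only if $F_{Z,F(\bar h)}^*\bigl(F_!(R\times_X Y)\bigr)\in J(Z)$ for every $\bar h\colon Z\to Y$. Unwinding the definition of $F_!$ of a sieve (Section~\ref{sec:Fshrink_sieve}) and repeatedly using that morphisms between objects in the image of $F$ lift uniquely to $\cC$, one identifies this last sieve with the pullback $(R\times_X Y)\times_Y Z$. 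By the stability axiom for $J$ (and specializing $\bar h=\id_Y$), Condition~(1) for a fixed $f$ is therefore equivalent to $R\times_X Y\in J(Y)$; letting $f$ range over all morphisms with target $X$ and specializing $f=\id_X$, Condition~(1) for all $f$ is equivalent to $R\in J(X)$.

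For Condition~(2): given the commutative square in $\cD$ over $F(Y)$ with apex $W$ and an object $(V_0,p)$ of $I^F_W$, the composites $h_1\circ p\colon F(V_0)\to F(Z_1)$ and $h_2\circ p\colon F(V_0)\to F(Z_2)$ lie between objects in the image of $F$, hence equal $F(c_1)$ and $F(c_2)$ for unique $c_i\colon V_0\to Z_i$ in $\cC$, and commutativity of the square forces $g_1\circ c_1=g_2\circ c_2$. For an arbitrary $q\colon V_1\to V_0$ one then exhibits $p\circ F(q)$ as an object of the sieve $R'$ (which is readily a sieve directly from its definition) by taking $U=V_1$, $f''=\id_{F(V_1)}$ and $h'_i=c_i\circ q$; consequently $F_{V_0,p}^*R'$ is the maximal sieve on $V_0$, which lies in $J(V_0)$ by the first axiom for a Grothendieck topology. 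Since $(V_0,p)$ was arbitrary, $R'\in (F_*J)(W)$, so Condition~(2) holds with no restriction on $R$. Combining the two analyses gives $R\in F^*(F_*J)(X)\iff R\in J(X)$ for every $X$ and every sieve $R$, which is the assertion.

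The one step that requires care is Condition~(2): one must check that the auxiliary commutative square in $\cC$ demanded by the definition of $R'$ can always be produced, and this is exactly where full faithfulness is indispensable --- without it the morphisms $h_i\circ p\colon F(V_0)\to F(Z_i)$ need not come from $\cC$ at all, so $R'$ would in general be a proper sieve and the equality $F^*F_*J=J$ would fail. The remaining manipulations, in particular the identification of $F_{Z,F(\bar h)}^*\bigl(F_!(R\times_X Y)\bigr)$ with a pullback sieve, are mechanical unwindings of the definitions of $F_!$ and of $F_*J=J'$.
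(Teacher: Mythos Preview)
Your proof is correct and follows essentially the same approach as the paper's: both apply Lemma~\ref{lem:bicovering} with topology $F_*J$, verify that Condition~(2) is automatic from full faithfulness by showing the relevant pulled-back sieve is maximal, and reduce Condition~(1) to the identification $F_{Y,F(f')}^*(F_!R)=R\times_X Y$ (again via full faithfulness). The paper organizes the Condition~(1) argument as separate ``if'' and ``only if'' directions rather than your two-step equivalence, but the content is the same.
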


\begin{proof}
We apply Lemma \ref{lem:bicovering}.
Let $X$ be an object of $\cC$ and let $R$ be a sieve on $X$.
Let us consider the two conditions for $R$ 
in Lemma \ref{lem:bicovering} with $J$ 
in loc.\ cit.\ replaced with $F_* J$.
We refer to these two conditions as Conditions (1)
and (2) for $F_* J$.
It suffices to show that $R$ belongs to $J(X)$ if and only if
Conditions (1) and (2) for $F_* J$
hold for any morphism $f\colon Y\to X$ in $\cC$.
Let $g_1\colon Z_1\to Y$, $g_2\colon Z_2 \to Y$,
$h_1\colon W \to F(Z_1)$ and $h_2\colon W \to F(Z_2)$
be as in Condition (2) in Lemma \ref{lem:bicovering}.
Let us consider the sieve $R'$ on $Y$
in Condition (2).
Let $V$ be an object of $\cC$ and let
$m\colon  F(V) \to W$ be a morphism in $\cD$.
Then it follows from the definition of the sieve
$R'$ that the sieve $F_{V,m}^* R'$ on $V$
is equal to the entire $\cC_{/V}$.
This shows that Condition (2) for $F_* J$ holds
for any $R$ and for any $f$.
Hence it suffices to show that $R$ belongs to $J(X)$ 
if and only if $F_!(R \times_X Y)$ belongs to
$F_* J(F(Y))$ for any morphism $f\colon Y\to X$ in $\cC$.

First we prove the ``if" part.
Suppose that for any morphism $f\colon Y\to X$ in $\cC$, 
the sieve $F_!(R \times_X Y)$ on $F(Y)$
belongs to $F_* J (F(Y))$.
This in particular implies that the sieve
$F_! R$ on $F(X)$
belongs to $F_* J (F(X))$.
Hence the sieve $F_{X,\id}^* F_! R$ on $X$
belongs to $J(X)$.
By definition the sieve $F_{X,\id}^* F_! R$
is the full subcategory of $\cC_{/X}$ whose objects are
the morphisms $f\colon Y \to X$ in $\cC$ such that
the morphism $F(Y) \to F(X)$ in $\cD$
given by the composition with $f$ is an object of 
$F_! R$.
Hence it follows from the definition of $F_! R$ that
the morphism $f\colon Y \to X$ in $\cC$ is an object of 
$F_{X,\id}^*F_! R$ if and only if 
there exist an object $g\colon Z \to X$ in $R$ and a
morphism $h\colon  F(Y) \to F(Z)$
in $\cD$ such that $F(f) = F(g) \circ h$.
Since $F$ is fully faithful, this shows that
the sieve $F_{X,\id}^* F_! R$ on $X$ is equal to $R$.
Hence we have $R \in J(X)$.

Next we prove the ``only if" part.
Suppose that $R$ belongs to $J(X)$.
We show that the sieve $F_!(R \times_X Y)$ on 
$F(Y)$ belongs to $F_* J (F(Y))$
for any morphism $f\colon Y\to X$ in $\cC$.
Since $R \times_X Y$ belongs to $J(Y)$, it suffices
to show that $F_! R$ belongs to $F_* J (F(X))$.
Let $Y$ be an object of $\cC$ and let $f\colon F(Y) \to F(X)$
be a morphism in $\cD$. We prove that $F_{Y,f}^* F_! R$
belongs to $J(Y)$.
Since $F$ is fully faithful, there exists
a morphism $f'\colon Y \to X$ in $\cC$ such that $f=F(f')$.
We claim that $F_{Y,f}^* F_! R$ is equal to
the pullback $R \times_X Y$ of $R$ with respect to $f'$.
Let $g\colon Z \to Y$ be a morphism in $\cC$.
Then $g$ is an object of $F_{Y,f}^* F_! R$
if and only if $f \circ F(g) = F(h_1) \circ h_2$
for some object $W$ of $\cC$, for some object
$h_1\colon  W \to X$ of $R$, and for some morphism $h_2\colon  F(Z) \to 
F(W)$ in $\cD$.
Since $F$ is fully faithful, this shows that
$g$ is an object of $F_{Y,f}^* F_! R$
if and only if $f' \circ g$ is an object of $R$.
Hence we have $F_{Y,f}^* F_! R = R \times_X Y$,
as we claimed.
This in particular implies that 
$F_{Y,f}^* F_! R$ belongs to $J(Y)$,
which completes the proof.
\end{proof}

\subsection{A full subcategory of the category of presheaves}
Let $\cC$ be an essentially small category.
\subsubsection{ }
Let $J$ be a Grothendieck topology on the category $\cC$.
Since $\cC$ is essentially small, there exists a set $G$
of objects of $\cC$ such that any object of $\cC$ is isomorphic
to an object of $\cC$ which belongs to $G$.
It is then clear that the set $G$ is a topologically
generating family of the site $(\cC,J)$ in the sense of
\cite[EXPOSE II, D\'efinition 3.0.1]{SGA4}.
It follows that $(\cC,J)$ is a $\frU$-site.


\subsubsection{}
Let $\cD$ be a full subcategory of $\Presh(\cC)$ 
satisfying the following condition: For any object $X$ of $\cC$, 
the representable presheaf $\frh_\cC(X)$ on $\cC$ is an object of $\cD$.    
We let $\iota_0\colon  \cC \to \cD$ denote the covariant functor
which sends an object $X$ of $\cC$ to the object 
$\frh_\cC(X)$ of $\cD$.

\begin{prop}\label{prop:comp1}
Let $J$ be a Grothendieck topology on the category $\cC$.
Then the functor $\Presh(\cD) \to \Presh(\cC)$ given by the
composition with $\iota_0$ induces an equivalence
$\Shv(\cD, (\iota_0)_* J) \xto{\cong} \Shv(\cC,J)$
of categories.
\end{prop}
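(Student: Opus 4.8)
The plan is to deduce this from the comparison lemma of \cite[EXPOSE III, Th\'eor\`eme 4.1]{SGA4} together with the pushforward-topology machinery developed above. The functor $\iota_0: \cC \to \cD$ is fully faithful, since $\cD$ is a full subcategory of $\Presh(\cC)$ and the Yoneda embedding $\frh_\cC$ is fully faithful. Hence by Lemma \ref{lem:induced} applied to $F = \iota_0$ we already know $J = \iota_0^* (\iota_0)_* J$; so $\iota_0$ is continuous with respect to $J$ and $(\iota_0)_* J$ by the very definition of $F^*J$ as the finest topology making $F$ continuous (note $\iota_0^*((\iota_0)_*J) = J$ forces continuity). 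By construction of $F_*J$ as the finest topology making $\iota_0$ cocontinuous, $\iota_0$ is also cocontinuous. So $\iota_0$ is simultaneously continuous and cocontinuous, and the composition functor $\iota_0^*: \Presh(\cD) \to \Presh(\cC)$ restricts to a functor $\Shv(\cD,(\iota_0)_*J) \to \Shv(\cC,J)$.

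The second step is to check that the hypotheses of the comparison lemma in \cite{SGA4} hold for $\iota_0$, so that this restricted functor is an equivalence. The key point to verify is that $\iota_0$ is "covering" in the appropriate sense and that the induced functor on representable objects is cofinal/surjective-up-to-covering: concretely, every object of $\cD$ should be covered (for $(\iota_0)_*J$) by objects in the image of $\iota_0$. Here I would use that an arbitrary object $P \in \cD \subset \Presh(\cC)$ is canonically a colimit of representables via the category of elements $I^{\iota_0}_P$ (equivalently $\cC_{/P}$), and that the corresponding sieve generated by all maps $\frh_\cC(X) \to P$ lies in $(\iota_0)_* J(P)$: indeed by the explicit description of $(\iota_0)_*J = J'$ in Section \ref{sec:J'}, one must show that for every object $(Y,f)$ of $I^{\iota_0}_P$ the sieve $(\iota_0)_{Y,f}^* R$ on $Y$ belongs to $J(Y)$, and this holds because that pullback sieve contains $\id_Y$ (the map $f: \frh_\cC(Y) \to P$ factors as $f \circ \iota_0(\id_Y)$), hence equals the whole $\cC_{/Y}$. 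This gives the density condition needed for the comparison lemma.

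The third step is essentially bookkeeping: having verified that $\iota_0$ is continuous, cocontinuous, fully faithful, and that every object of $\cD$ admits a covering by objects of $\iota_0(\cC)$, we invoke the comparison lemma (in the form stated in \cite[EXPOSE III, Th\'eor\`eme 4.1]{SGA4}, or the variant with a topologically generating family, using that $\iota_0(\cC)$ is such a family for $(\cD,(\iota_0)_*J)$) to conclude that $\iota_0^*$ induces the asserted equivalence $\Shv(\cD,(\iota_0)_*J) \xto{\cong} \Shv(\cC,J)$. The essential smallness of $\cC$ ensures all sites in sight are $\frU$-sites, so the set-theoretic hypotheses of \cite{SGA4} are met.

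The main obstacle I expect is the density/covering verification in the second step: one must be careful that the sieve on $P$ generated by all morphisms from representables is genuinely in $(\iota_0)_*J(P)$ and that this is exactly the condition needed to apply the comparison lemma — i.e., matching the SGA4 hypotheses (which are phrased in terms of the functor being continuous with a left-exact left adjoint inducing an equivalence) against our cocontinuity-based setup. This is where Lemma \ref{lem:induced} and the explicit formula for $J'$ in Section \ref{sec:J'} do the real work, reducing everything to the trivial observation that a pullback sieve containing the identity is the maximal sieve.
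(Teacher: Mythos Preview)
Your proposal is correct and follows essentially the same route as the paper: both apply Th\'eor\`eme 4.1 of \cite[EXPOSE III]{SGA4} after using Lemma~\ref{lem:induced} to identify $J$ with the topology induced from $(\iota_0)_*J$, and both verify the requisite covering condition for an arbitrary object $P$ of $\cD$ by taking the sieve generated by all maps from representables and observing that its pullback along any $f:\iota_0(Y)\to P$ contains $\id_Y$, hence equals $\cC_{/Y}$. The paper carries out the set-theoretic reduction to a $\frU$-small setting slightly more explicitly (via an equivalent $\frU$-small $\cC'$ and a skeleton), but otherwise the arguments match.
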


\begin{proof}
Let $\cD'$ denote the essential image 
of the composite $\cD \to \Presh(C) \to \Presh(C')$.
Then the Yoneda embedding factors as
$\cC' \to \cD' \to \Presh(C')$.
Let $J_\cD'$ denote the topology 
induced by $(\iota_0)_* J$ on $\cD'$.
It follows from Lemma \ref{lem:induced} that
$J'$ is equal to the Grothendieck topology induced on $\cC'$
by $J_\cD'$.
We apply Th\'eor\`eme 4.1 in \cite[EXPOSE III]{SGA4}.

Hence it suffices to show that, for any object $F$ in $\cD$,
there exist a set $I$, a family $(X_i)_{i \in I}$ of
objects of $\cC$ indexed by $I$, and a morphism
$f_i\colon \iota_0(X_i) \to F$ in $\cD$ for each $i \in I$
such that $(f_i)_{i \in I}$ is a family covering
$F$ with respect to $(\iota_0)_* J$.
Let us take a set $S$ of objects of $\cC$ such that
any object of $\cC$ is isomorphic to some element in $S$.
Let $F$ be an object of $\cD$.
We let $I = \coprod_{X \in S} F(X)$ denote the set of
pairs $(X,\xi)$ of an element $X \in S$ and an element $\xi \in F(X)$.
For $i=(X,\xi) \in I$, we set $X_i = X$.
We let $f_i \colon  \iota_0(X) \to F$ denote the morphism which is sent
to $\xi$ under the bijection  $y_{F,X}$. 

We claim that $(f_i)_{i \in I}$ is a family covering
$F$ with respect to $(\iota_0)_* J$.
Proving this claim is equivalent to proving that
for any object $Y$ of $\cC$ and for any morphism
$g\colon \iota_0(Y) \to F$, the sieve $(\iota_0)_{Y,g}^* R_{(f_i)_{i \in I}}$
on $Y$ belongs to $J(Y)$.
It follows from the definition of the set $S$ that
there exists an element $X \in S$ and an isomorphism
$h\colon  X \xto{\cong} Y$ in $\cC$. Let $\xi \in F(X)$ denote
the image of $g \circ \iota_0(h)$ under the bijection
$y_{F,X}$. We set $i=(X,\xi)$.
We then have $g = f_i \circ \iota_0(h^{-1})$.
This shows that the identity morphism $\id_Y$ of $Y$
belongs to the sieve $(\iota_0)_{Y,g}^* R_{(f_i)_{i \in I}}$ on $Y$.
Hence the sieve $(\iota_0)_{Y,g}^* R_{(f_i)_{i \in I}}$ is
equal to $\cC_{/Y}$. In particular it belongs to $J(Y)$, as we claimed.
This completes the proof.
\end{proof}

\subsection{On the canonical topology}

\subsubsection{ }
Let $\cC$ be an essentially small category.
Let $J$ be a Grothendieck topology on the category $\cC$.
Since $\cC$ is essentially small, there exists a set $G$
of objects of $\cC$ such that any object of $\cC$ is isomorphic
to an object of $\cC$ which belongs to $G$.
It is then clear that the set $G$ is a topologically
generating family of the site $(\cC,J)$ in the sense of
\cite[EXPOSE II, D\'efinition 3.0.1]{SGA4}.
It follows that $(\cC,J)$ is a $\frU$-site.

It is proved in \cite[EXPOSE II, Th\'eor\`eme 3.4]{SGA4} that the 
inclusion functor $\Shv(\cC,J) \inj \Presh(\cC)$ has a left adjoint,
which we denote by $a_J \colon  \Presh(\cC) \to \Shv(\cC,J)$.
For a presheaf $F$ on $\cC$ and for a $J$-sheaf $G$ on $\cC$,
we denote by $b_{F,G}$ the natural bijection
$$
b_{F,G}\colon \Hom_{\Presh(\cC)}(F,G) \xto{\cong} \Hom_{\Shv(\cC,J)}(a_J(F),G).
$$

\subsubsection{ }
We let $\eps_{\cC,J}\colon \cC \to \Shv(\cC,J)$ denote the
functor which associates, to each object $X$ of $\cC$,
the $J$-sheaf $a_J \frh_\cC(X)$ on $\cC$.

Let $J'$ denote the canonical Grothendieck
topology on $\Shv(\cC,J)$ (see \cite[EXPOSE II, D\'efinition 2.5]{SGA4} 
for the definition).
It follows from Proposition 3.5 of \cite[EXPOSE III]{SGA4}
that the functor $\eps_{\cC,J}$ is continuous in the sense 
of \cite[EXPOSE III, D\'efinition 1.1]{SGA4}, with respect to the 
Grothendieck topologies $J$ and $J'$.

\begin{lem}\label{lem:univ_epi}
Let $F$ be an object of $\Shv(\cC,J)$.
Then for a sieve $R$ on $F$, the following
two conditions are equivalent:
\begin{enumerate}
\item $R$ belongs to $J'(F)$.
\item For any object $X$ of $\cC$ and
for any element $\xi \in F(X)$, there exists
a sieve $R' \in J(X)$ on $X$ satisfying the following
condition: for any object $f\colon Y \to X$ of $R'$,
the composite $\eps_{\cC,J}(Y) 
\xto{\eps_{\cC,J}(f)} \eps_{\cC,J}(X)
\xto{\wt{\xi}} F$ is an object of $R$.
Here $\wt{\xi}\colon  \eps_{\cC,J}(X) \to F$
is the morphism in $\Shv(\cC,J)$ corresponding
to $\xi$ via the bijections
$F(X) \xleftarrow{y_{F,X}} \Hom_{\Presh(\cC)}
(\frh_C(X),F) \cong \Hom_{\Shv(\cC,J)}
(\eps_{\cC,J}(X),F)$.
\end{enumerate}
\end{lem}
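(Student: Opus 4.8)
The plan is to unwind both conditions in terms of the explicit description of the canonical topology on a topos, and to exploit the fact that $\eps_{\cC,J}$ is continuous (already noted in the excerpt, via \cite[EXPOSE III, Prop.~3.5]{SGA4}) together with the fact that every object $F$ of $\Shv(\cC,J)$ is covered by the objects $\eps_{\cC,J}(X)$. Concretely, recall that in a topos the canonical topology $J'$ is the topology of universal epimorphic families: a sieve $R$ on $F$ belongs to $J'(F)$ if and only if the family of all arrows in $R$ is universally epimorphic, equivalently (since $\Shv(\cC,J)$ has all colimits) if and only if $F$ is the colimit of the arrows belonging to $R$, equivalently again if and only if $\frh(R)\to\frh(F)$ becomes an isomorphism after sheafification with respect to $J'$ — but here $\frh(F)$ is already a $J'$-sheaf, so the condition is that the sieve $R$ "generates everything". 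First I would fix the bookkeeping: for an object $X$ of $\cC$ and $\xi\in F(X)$, write $\wt\xi:\eps_{\cC,J}(X)\to F$ for the corresponding morphism (this is literally the notation introduced in the statement). Since the objects $\eps_{\cC,J}(X)$ form a generating family of $\Shv(\cC,J)$ — this follows from $a_J$ being a left adjoint and $\frh_\cC(X)$ generating $\Presh(\cC)$ — a sieve $R$ on $F$ lies in $J'(F)$ iff for every such $(X,\xi)$ the pullback sieve $\wt\xi^{*}R$ on $\eps_{\cC,J}(X)$ lies in $J'(\eps_{\cC,J}(X))$.

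The heart of the argument is then the identification of $J'(\eps_{\cC,J}(X))$, and more precisely the translation of "$\wt\xi^{*}R\in J'(\eps_{\cC,J}(X))$" into the existence of a $J$-sieve $R'$ on $X$ as in Condition~(2). For the direction (2)$\Rightarrow$(1): given $(X,\xi)$, take the sieve $R'\in J(X)$ provided by~(2); since $\eps_{\cC,J}$ is continuous, the family $(\eps_{\cC,J}(f))_{f\in R'}$ generates a $J'$-covering sieve on $\eps_{\cC,J}(X)$, and by hypothesis every $\wt\xi\circ\eps_{\cC,J}(f)$ lies in $R$, so this covering sieve is contained in $\wt\xi^{*}R$; a sieve containing a $J'$-covering sieve is itself $J'$-covering, hence $\wt\xi^{*}R\in J'(\eps_{\cC,J}(X))$, and by the generating-family reduction above $R\in J'(F)$. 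For the converse (1)$\Rightarrow$(2): assume $R\in J'(F)$, fix $(X,\xi)$, so $\wt\xi^{*}R\in J'(\eps_{\cC,J}(X))$; now pull this sieve back along the canonical map $\frh_\cC(X)\to\eps_{\cC,J}(X)$ (adjunction unit composed with Yoneda) to obtain a sieve on $X$ in the presheaf category, and use that $\eps_{\cC,J}$ continuous means precisely that $\eps_{\cC,J}^{*}J'\subset$ — actually equals, by the universal property — a topology finer than or equal to $J$; more usefully, invoke that $a_J$ detects $J$-covers, i.e. $R'\in J(X)$ iff $a_J\frh_\cC(R')\to a_J\frh_\cC(X)=\eps_{\cC,J}(X)$ is epi, to read off an honest $J$-sieve $R'$ on $X$ whose image lands inside $R$. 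One checks directly from the definitions that this $R'$ satisfies the stated condition.

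The step I expect to be the main obstacle is the clean passage between "$\wt\xi^{*}R$ is a $J'$-covering sieve on the sheaf $\eps_{\cC,J}(X)$" and "there is a genuine $J$-covering sieve $R'$ on the object $X$ of $\cC$ mapping into it": one has to be careful that a morphism $\eps_{\cC,J}(Y)\to F$ in the sheaf category need not come from a morphism $Y\to X$ in $\cC$, so one cannot naively pull the sieve $R$ back to $\cC_{/X}$ — instead one must argue at the level of presheaves (pulling back along $\frh_\cC(X)\to F$ through the adjunction) and then use that sheafification $a_J$ turns presheaf-level "local surjectivity" into the $J$-covering condition. This is exactly where the hypothesis that $\cC$ is essentially small (so $(\cC,J)$ is a $\frU$-site and $a_J$ exists, as recorded just above the lemma) and the continuity of $\eps_{\cC,J}$ both get used; everything else is a diagram chase through the definitions of pullback sieve and of the canonical topology. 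I would organize the writeup as: (i) reduce to checking the condition after pulling back along each $\wt\xi$; (ii) prove (2)$\Rightarrow$(1) by the "contains a covering sieve" argument; (iii) prove (1)$\Rightarrow$(2) by descending through $a_J$; and keep the $\frU$-smallness caveats explicit so that $R'$ is genuinely a set-indexed family.
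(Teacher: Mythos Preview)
Your approach is correct but takes a different route from the paper. The paper argues directly via the characterization (cited from \cite[EXPOSE IV, Corollaire 4.3.12]{SGA3}) that a sieve $R$ on $F$ lies in the canonical topology $J'(F)$ if and only if the inclusion $F_R \inj F$ of the ``image subpresheaf'' $F_R(X) = \bigcup_{(G\to F)\in R}\mathrm{Im}(G(X)\to F(X))$ becomes an isomorphism after sheafification; it then observes that this local-surjectivity condition, unwound at each $(X,\xi)$, is exactly Condition~(2). Your plan instead first reduces to generators: pull $R$ back along each $\wt\xi:\eps_{\cC,J}(X)\to F$ and check those pullbacks. This makes (2)$\Rightarrow$(1) transparent via continuity of $\eps_{\cC,J}$, which is a pleasant argument. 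But for (1)$\Rightarrow$(2) your own diagnosis is exactly right---one cannot extract $R'\in J(X)$ from $\wt\xi^*R\in J'(\eps_{\cC,J}(X))$ by a naive descent, and the presheaf-level workaround you sketch (pull the image-subpresheaf back along $\frh_\cC(X)\to F$ and use that $a_J$ inverts $J$-local isomorphisms) is precisely the paper's direct argument, now applied after an unnecessary reduction step. So both approaches are valid; the paper's is more uniform and economical, while yours makes the roles of the generating family and of continuity explicit at the cost of a detour in one direction. If you write it up your way, be sure to invoke the image-subpresheaf characterization of $J'$ explicitly (it is the substantive input), rather than leaving it as ``$a_J$ detects $J$-covers.''
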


\begin{proof}
Let $\Sub(F)$ denote the set of
subpresheaves of $F$. 
Since $\cC$ is essentially $\frU$-small,
the set $\Sub(F)$ is a $\frU$-small set.
Let $\Sub(F)_R \subset \Sub(F)$ 
denote the subset whose elements
are the subsheaves $F' \subset F$ 
satisfying the following property:
$F'$ is equal to the image (in the category $\Presh(\cC)$) 
of a morphism $G \to F$ in $\Shv(\cC,J)$ 
which is an object of $R$.
Let $F_R \subset F$ denote the subpresheaf 
on $\cC$ which associates,
for each object $X$ of $\cC$, 
the subset $\bigcup_{F' \in \Sub(F)_R} F'(X)$
of $F(X)$.
It follows from 
\cite[EXPOSE IV, Corollaire 4.3.12]{SGA3}
that $R$ belongs to $J'(F)$ if and only if
the inclusion morphism $F_R \inj F$ in $\Presh(\cC)$
induces an isomorphism $a_J(F_R) \xto{\cong} F$ 
in $\Shv(\cC,J)$.
It is straightforward to check that the
latter condition is equivalent to Condition (2).
This proves the claim.
\end{proof}

\begin{prop}\label{prop:cocontinuous}
The functor $\eps_{\cC,J}$ is cocontinuous
with respect to the Grothendieck topologies $J$ and $J'$.
\end{prop}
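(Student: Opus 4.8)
The plan is to verify the definition of cocontinuity directly: for each object $X$ of $\cC$ and each sieve $R$ on $\eps_{\cC,J}(X)$ which belongs to $J'(\eps_{\cC,J}(X))$, I must show that the sieve $(\eps_{\cC,J})_{X,\id}^* R$ on $X$ belongs to $J(X)$. I would unwind both sides via Lemma \ref{lem:univ_epi}. By definition, $(\eps_{\cC,J})_{X,\id}^* R$ is the full subcategory of $\cC_{/X}$ consisting of those $f:Y \to X$ such that $\eps_{\cC,J}(f):\eps_{\cC,J}(Y) \to \eps_{\cC,J}(X)$ is an object of $R$; so it suffices to produce, inside this subcategory, a covering sieve of $X$ for $J$.

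The key step is to apply Lemma \ref{lem:univ_epi} to $F = \eps_{\cC,J}(X)$ and to the distinguished element $\xi = \id_X \in \frh_\cC(X)(X)$, whose image in $\eps_{\cC,J}(X)(X)$ I also call $\xi$; note that in this case the morphism $\wt{\xi}: \eps_{\cC,J}(X) \to \eps_{\cC,J}(X)$ of Lemma \ref{lem:univ_epi} is simply the identity. Since $R \in J'(\eps_{\cC,J}(X))$, Condition (2) of that lemma yields a sieve $R' \in J(X)$ on $X$ such that for every object $f:Y \to X$ of $R'$ the composite $\eps_{\cC,J}(Y) \xto{\eps_{\cC,J}(f)} \eps_{\cC,J}(X) \xto{\wt{\xi}} \eps_{\cC,J}(X)$, which equals $\eps_{\cC,J}(f)$, is an object of $R$. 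But this says precisely that every object $f$ of $R'$ is an object of $(\eps_{\cC,J})_{X,\id}^* R$, i.e. $R' \subset (\eps_{\cC,J})_{X,\id}^* R$ as subcategories of $\cC_{/X}$. Since $R' \in J(X)$ and $J(X)$ is stable under passing to larger sieves (an immediate consequence of Condition (3) in Section \ref{sec:topology}, or of Conditions (1) and (2) together), it follows that $(\eps_{\cC,J})_{X,\id}^* R \in J(X)$, which is exactly cocontinuity.

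I expect the only real subtlety to be the bookkeeping identifying the various incarnations of the ``tautological'' element: tracking $\id_X$ through the chain of bijections $\frh_\cC(X)(X) \xleftarrow{y} \Hom_{\Presh(\cC)}(\frh_\cC(X),\frh_\cC(X)) \cong \Hom_{\Shv(\cC,J)}(\eps_{\cC,J}(X),\eps_{\cC,J}(X))$ and checking that the resulting $\wt{\xi}$ is the identity, so that the composite appearing in Lemma \ref{lem:univ_epi}(2) collapses to $\eps_{\cC,J}(f)$. Everything else is formal manipulation of sieves and the upward-closure of Grothendieck topologies; the continuity of $\eps_{\cC,J}$ recalled just before the statement is not even needed here, only Lemma \ref{lem:univ_epi}.
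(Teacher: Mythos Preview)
Your proof is correct and follows essentially the same approach as the paper: apply Lemma \ref{lem:univ_epi} to produce a covering sieve $R' \in J(X)$ contained in the pullback sieve, then invoke upward closure (Condition (3) of Section \ref{sec:topology}) to conclude. The only cosmetic difference is that the paper works with an arbitrary object $F$ of $\Shv(\cC,J)$ and an arbitrary morphism $f:\eps_{\cC,J}(X)\to F$, applying Lemma \ref{lem:univ_epi} with the element $\xi\in F(X)$ corresponding to $f$, whereas you specialize to $F=\eps_{\cC,J}(X)$ and $f=\id$ (which is all the definition of cocontinuity requires); the argument is otherwise identical.
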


\begin{proof}
Let $F$ be an object of $\Shv(\cC,J)$ and let $R$ be a sieve on
$F$ which belongs to $J'(F)$. Let $X$ be an object of $\cC$ and
let $f\colon \eps_{\cC,J}(X) \to F$ be a morphism in $\Shv(\cC,J)$.
Let us consider the functor
$(\eps_{\cC,J})_{X,f}\colon  \cC_{/X} \to \Shv(\cC,J)_{/F}$ 
introduced in Section \ref{sec:top_functor}.
It then suffices to show that the sieve $((\eps_{\cC,J})_{X,f})^* R$
on $X$ belongs to $J(X)$.

It follows from Lemma \ref{lem:univ_epi} that
there exists a sieve $R'$ on $X$ which belongs
to $J(X)$ such that for any object $g\colon Y \to X$ of $R'$, 
the composite $\eps_{\cC,J}(Y) \xto{\eps_{\cC,J}(g)}
\eps_{\cC,J}(X) \to F$ is an object of $R$.
This shows that the sieve $R'$ is a full subcategory of 
the sieve $((\eps_{\cC,J})_{X,f})^* R$. Hence it follows from
Condition (3) in Section \ref{sec:topology} that
$((\eps_{\cC,J})_{X,f})^* R$ belongs to $J(X)$.
This completes the proof.
\end{proof}

\begin{prop}\label{prop:canonical}
Let $J'$ denote the canonical Grothendieck
topology on $\Shv(\cC,J)$.
Then we have $J' = (\eps_{\cC,J})_* J$.
\end{prop}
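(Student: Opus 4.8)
The plan is to prove the two inclusions $J' \subseteq (\eps_{\cC,J})_* J$ and $(\eps_{\cC,J})_* J \subseteq J'$ separately, using Proposition~\ref{prop:cocontinuous} for the first and the characterization of canonical topology together with Lemma~\ref{lem:univ_epi} for the second.

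For the inclusion $J' \subseteq (\eps_{\cC,J})_* J$: we have just shown in Proposition~\ref{prop:cocontinuous} that $\eps_{\cC,J}$ is cocontinuous with respect to $J$ and $J'$. By the very definition of the pushforward topology $(\eps_{\cC,J})_* J$ as the \emph{finest} Grothendieck topology on $\Shv(\cC,J)$ for which $\eps_{\cC,J}$ is cocontinuous (see Section~\ref{sec:pushforward}), any topology making $\eps_{\cC,J}$ cocontinuous is coarser than $(\eps_{\cC,J})_* J$. Hence $J'(F) \subseteq (\eps_{\cC,J})_* J(F)$ for every object $F$ of $\Shv(\cC,J)$.

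For the reverse inclusion $(\eps_{\cC,J})_* J \subseteq J'$: let $F$ be an object of $\Shv(\cC,J)$ and let $R$ be a sieve on $F$ belonging to $(\eps_{\cC,J})_* J(F)$. By the explicit description of the pushforward topology in Section~\ref{sec:J'}, this means that for every object $(X, f)$ of $I^{\eps_{\cC,J}}_F$ --- that is, every object $X$ of $\cC$ and every morphism $f: \eps_{\cC,J}(X) \to F$ in $\Shv(\cC,J)$ --- the sieve $(\eps_{\cC,J})_{X,f}^* R$ on $X$ belongs to $J(X)$. I want to deduce that $R$ satisfies Condition~(2) of Lemma~\ref{lem:univ_epi}. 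So fix an object $X$ of $\cC$ and an element $\xi \in F(X)$; let $\wt{\xi}: \eps_{\cC,J}(X) \to F$ be the corresponding morphism as in that lemma. Taking $f = \wt{\xi}$, the hypothesis gives that $R' := (\eps_{\cC,J})_{X,\wt\xi}^* R$ belongs to $J(X)$; and by definition of $(\eps_{\cC,J})_{X,\wt\xi}$, for any object $g: Y \to X$ of $R'$ the composite $\eps_{\cC,J}(Y) \xto{\eps_{\cC,J}(g)} \eps_{\cC,J}(X) \xto{\wt\xi} F$ is an object of $R$. This is precisely Condition~(2) of Lemma~\ref{lem:univ_epi}, so $R$ belongs to $J'(F)$.

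The two inclusions together give $J' = (\eps_{\cC,J})_* J$, completing the proof. The main point to be careful about is the bookkeeping of the correspondences: one must check that the morphism $f: \eps_{\cC,J}(X) \to F$ appearing in the description of $I^{\eps_{\cC,J}}_F$ is literally the same datum as the morphism $\wt\xi$ built from $\xi \in F(X)$ via the chain of bijections $F(X) \xleftarrow{y_{F,X}} \Hom_{\Presh(\cC)}(\frh_\cC(X), F) \cong \Hom_{\Shv(\cC,J)}(\eps_{\cC,J}(X), F)$ --- but this is exactly the content of the adjunction between $a_J$ and the inclusion $\Shv(\cC,J) \inj \Presh(\cC)$, so no real obstacle arises; it is the routine identification that makes Lemma~\ref{lem:univ_epi} and Section~\ref{sec:J'} line up. I expect essentially no hard step here: all the substantive work was done in Lemma~\ref{lem:univ_epi} and Proposition~\ref{prop:cocontinuous}.
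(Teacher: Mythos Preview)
Your proof is correct and follows essentially the same approach as the paper: both directions invoke exactly the same ingredients (Proposition~\ref{prop:cocontinuous} and the ``finest topology'' characterization of the pushforward for $J' \subseteq (\eps_{\cC,J})_* J$, and verification of Condition~(2) of Lemma~\ref{lem:univ_epi} via the explicit description in Section~\ref{sec:J'} for the reverse inclusion). Your closing remark about the adjunction bookkeeping is a helpful clarification but, as you note, not an additional difficulty.
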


\begin{proof}
Let $F$ be an object of $\Shv(\cC,J)$ and let $R$ be a sieve on $F$.
It follows from Proposition \ref{prop:cocontinuous} that
$R$ belongs to $(\eps_{\cC,J})_* J(F)$ if $R$ belongs to $J'(F)$.
We prove that $R$ belongs to $J'(F)$ 
if $R$ belongs to $(\eps_{\cC,J})_* J(F)$.
Suppose that $R$ belongs to $(\eps_{\cC,J})_* J(F)$.
We check that $F$ satisfies Condition (2) in
Lemma \ref{lem:univ_epi}.
Let $X$ be an object of $\cC$ and let $\xi \in F(X)$.
Let $\wt{\xi}\colon  \eps_{\cC,J}(X) \to F$ be
as in Condition (2) in Lemma \ref{lem:univ_epi}.
We set $R' = ((\eps_{\cC,J})_{X,\wt{\xi}})^* R$.
Since $R$ belongs to $(\eps_{\cC,J})_* J(F)$,
the sieve $R'$ on $X$ belongs to $J(X)$.
It follows from the definition of $R'$ that
for any object $f\colon Y \to X$ in $R'$,
the composite $\wt{\xi} \circ \eps_{\cC,J}(f)$
is an object in $R$.
This shows that Condition (2) 
in Lemma \ref{lem:univ_epi} is satisfied,
which proves the claim.
\end{proof}

\subsubsection{ }
Let $\cC'$ be a full subcategory of $\Shv(\cC,J)$ 
satisfying the following condition: for any object $X$ of $\cC$, 
the sheaf $a_J(\frh_\cC(X))$ on $\cC$ is an object of $\cC'$.
We let $\iota_0\colon  \cC \to \cC'$ denote the covariant functor
which sends an object $X$ of $\cC$ to the object 
$a_J(\frh_\cC(X))$ of $\cC'$.
We then have $\eps_{\cC,J} = \iota_{\cC'}
\circ \iota_0$, where 
$\iota_{\cC'} \colon  \cC' \inj \Shv(\cC,J)$
denotes the inclusion functor.

\begin{cor} \label{cor:4408}
Let the notation be as above.
Then we have $\iota_{\cC'}^* J' = (\iota_0)_* J$.
\end{cor}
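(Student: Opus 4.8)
We want $\iota_{\cC'}^* J' = (\iota_0)_* J$.

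The plan is to combine Proposition~\ref{prop:canonical}, which identifies $J'$ with $(\eps_{\cC,J})_* J$, with the transitivity of pushforward topologies recorded in Lemma~\ref{lem:pushforward_transitivity}, and then apply Lemma~\ref{lem:induced} to the fully faithful inclusion $\iota_{\cC'}$. Concretely, first I would observe that since $\cC'$ is a full subcategory of $\Shv(\cC,J)$ containing all the $a_J(\frh_\cC(X))$, the inclusion functor $\iota_{\cC'}:\cC' \inj \Shv(\cC,J)$ is fully faithful, and by construction $\eps_{\cC,J} = \iota_{\cC'} \circ \iota_0$.

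Next I would push forward $J$ in two ways. On one hand, by Proposition~\ref{prop:canonical} we have $(\eps_{\cC,J})_* J = J'$. On the other hand, by Lemma~\ref{lem:pushforward_transitivity} applied to the composite $\eps_{\cC,J} = \iota_{\cC'} \circ \iota_0$, we get $(\eps_{\cC,J})_* J = (\iota_{\cC'})_* \bigl( (\iota_0)_* J \bigr)$. Combining these two identities yields $(\iota_{\cC'})_* \bigl( (\iota_0)_* J\bigr) = J'$.

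Finally, I would apply Lemma~\ref{lem:induced} to the fully faithful functor $F = \iota_{\cC'}$ and the Grothendieck topology $(\iota_0)_* J$ on $\cC'$ playing the role of $J$ in that lemma: this gives $(\iota_0)_* J = \iota_{\cC'}^* (\iota_{\cC'})_* \bigl( (\iota_0)_* J \bigr)$. Substituting the identity from the previous step, the right-hand side equals $\iota_{\cC'}^* J'$, so $(\iota_0)_* J = \iota_{\cC'}^* J'$, as desired. I do not anticipate a serious obstacle here; the only point requiring a moment's care is checking that $\iota_{\cC'}$ is genuinely fully faithful (immediate, since it is the inclusion of a full subcategory) and that the hypotheses of Lemma~\ref{lem:induced} and Lemma~\ref{lem:pushforward_transitivity} are literally met by the functors in play, which they are by the setup preceding the corollary.
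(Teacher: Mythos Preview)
Your proof is correct and follows essentially the same route as the paper's: combine Proposition~\ref{prop:canonical} and Lemma~\ref{lem:pushforward_transitivity} to obtain $J' = (\iota_{\cC'})_* (\iota_0)_* J$, then apply Lemma~\ref{lem:induced} to the fully faithful inclusion $\iota_{\cC'}$ to conclude $\iota_{\cC'}^* J' = (\iota_0)_* J$.
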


\begin{proof}
By Proposition \ref{prop:canonical}
and Lemma \ref{lem:pushforward_transitivity},
we have $J' = (\eps_{\cC,J})_* J
= (\iota_{\cC'})_* (\iota_0)_* J$.
Hence by Lemma \ref{lem:induced}, we have 
$\iota_{\cC'}^* J' = (\iota_0)_* J$.
This proves the claim.
\end{proof}

\subsection{A full subcategory of the category of sheaves}
\subsubsection{}
Let $\cC$ be an essentially small category.
By definition, there exists a small category
$\cC'$ and a functor $F\colon \cC' \to \cC$
which is an equivalence of categories.
Let $\cC''$ denote the full subcategory of 
$\cC$ whose set of objects is $F(\Obj(\cC'))$.
Then $\cC''$ is small
and the inclusion $\cC'' \hookrightarrow \cC$
is an equivalence of categories.

\subsubsection{}
Let $J$ be a Grothendieck topology on $\cC$.
Let $J''$ denote the Grothendieck topology on
$\cC''$ induced by the inclusion.   Then the 
functor 
$\Shv(\cC, J) \to \Shv(\cC'', J'')$
induced by the restriction 
is an equivalence of categories.
It follows that $\Shv(\cC,J)$ is a $\frU$-topos.

\subsubsection{}
Let $\cD$ be a full subcategory of 
$\Shv(\cC,J)$, which is essentially small.
Assume that the following condition holds:
For any object $X$ of $\cC$, the sheaf
$a_J(h_\cC(X))$
is an object of $\cD$.

As $\cD$ is essentially small, 
we can find a small full subcategory $\cD''$
of $\cD$ such that the inclusion is an equivalence
of categories, in the manner similar to the case 
of $\cC$ above.

\subsubsection{}
Let $\cF$ be the full subcategory of $\Shv(\cC,J)$ whose 
set of objects is 
\[
\{a_J(h_{\cC}(X))\,|\, X \in \Obj(\cC'')\} 
\cup \Obj(\cD'').
\]
Then $\cF$ is small.
Let $\iota_\cF\colon  \cF \hookrightarrow \Shv(\cC,J)$
denote the inclusion. 

\subsubsection{}
We note here that by \cite[Prop.4.10, Exp II]{SGA4},
the set $\Obj(\cC'')$ is a set of 
topological generators for the canonical topology on 
$\Shv(\cC,J)$.

\subsubsection{}
Let $J_{can}$ denote the canonical topology
on $\Shv(\cC,J)$.
\begin{lem}
Let the notation be as above.  The functor
\[
\iota_\cF\colon  \Shv(\cF, \iota_\cF^* J_{can}) \to \Shv(\cC, J)
\]
induced by the inclusion 
is an equivalence of categories.
\end{lem}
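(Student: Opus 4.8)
The plan is to deduce this from the comparison lemma \cite[EXPOSE III, Th\'eor\`eme 4.1]{SGA4}, applied to the fully faithful inclusion $\iota_\cF$ of $\cF$ into the topos $\Shv(\cC,J)$ itself, combined with the standard fact that a $\frU$-topos is canonically equivalent, via the Yoneda embedding, to the category of sheaves on itself for the canonical topology (\cite[EXPOSE IV]{SGA4}). Writing $J_{can}$ for the canonical topology on $\Shv(\cC,J)$, I would first produce an equivalence $\Shv(\Shv(\cC,J),J_{can}) \xto{\cong} \Shv(\cF,\iota_\cF^* J_{can})$ by restriction along $\iota_\cF$, and then compose a quasi-inverse with the Yoneda equivalence $\Shv(\cC,J) \xto{\cong} \Shv(\Shv(\cC,J),J_{can})$; the resulting functor $\Shv(\cF,\iota_\cF^* J_{can}) \to \Shv(\cC,J)$ is, up to natural isomorphism, the one in the statement, because both the Yoneda embedding $\eps_{\cC,J}$ and the comparison functor are compatible with $\iota_\cF$.

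To run the comparison lemma I would check its hypotheses for $\iota_\cF$. Since $\cC$ and $\cD$ are essentially $\frU$-small, the categories $\cC''$, $\cD''$ and hence $\cF$ are $\frU$-small, while $(\Shv(\cC,J),J_{can})$ is a $\frU$-site because $\Shv(\cC,J)$ is a $\frU$-topos, so the lemma applies. The functor $\iota_\cF$ is fully faithful, and the topology on $\cF$ is by construction the induced topology $\iota_\cF^* J_{can}$, so $\iota_\cF$ is continuous; by Lemma \ref{lem:induced} this is the same as saying $\iota_\cF^*(\iota_\cF)_*(\iota_\cF^* J_{can}) = \iota_\cF^* J_{can}$. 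The decisive point is the covering condition: every object of $\Shv(\cC,J)$ must admit a covering family all of whose members lie in $\cF$. This holds because, by the remark preceding the statement (\cite[Exp.\ II, Prop.\ 4.10]{SGA4}), the sheaves $a_J(h_\cC(X))$ with $X\in\Obj(\cC'')$ form a topologically generating family of $(\Shv(\cC,J),J_{can})$; since $\cC''\hookrightarrow\cC$ is an equivalence, every $a_J(h_\cC(X))$ with $X\in\Obj(\cC)$ is isomorphic to one of these, and all of them belong to $\cF$ by definition. Granting these checks, \cite[EXPOSE III, Th\'eor\`eme 4.1]{SGA4} yields the equivalence of the first paragraph, completing the argument. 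As an alternative route for the topology bookkeeping, after replacing $\cC$ by $\cC''$ (so that $\cF$ then contains $a_J(h_\cC(X))$ for \emph{every} object $X$ of $\cC$), one may instead invoke Corollary \ref{cor:4408} with $\cC'=\cF$ to get $\iota_\cF^* J_{can}=(\iota_0)_* J$ and argue along the same lines.

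The step I expect to be the main obstacle is the set-theoretic bookkeeping underlying the application of the comparison lemma: one has to be sure $(\Shv(\cC,J),J_{can})$ is a legitimate $\frU$-site — this is precisely why the $\frU$-small category $\cF$ is built, to serve as a small site of definition — and one must check that the abstractly produced equivalence really is the functor named in the statement. A related subtlety worth flagging: the auxiliary functors $\eps_{\cC,J}$ and $\iota_0:\cC\to\cF$ are not fully faithful unless $J$ is subcanonical, so one cannot take a shortcut by applying Lemma \ref{lem:induced} or Proposition \ref{prop:comp1} to them directly; the argument really must pass through the fully faithful inclusion $\iota_\cF$ of $\cF$ into the whole topos and borrow the equivalence $\Shv(\cC,J)\simeq\Shv(\Shv(\cC,J),J_{can})$ from \cite{SGA4}.
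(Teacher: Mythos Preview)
Your proposal is correct and is essentially the same argument as the paper's, just unpacked: the paper proves the lemma in one line by citing \cite[Exp.\ IV, Cor.\ 1.2.1]{SGA4}, which is precisely the packaged statement that a small full subcategory of a $\frU$-topos containing a topologically generating family (here the $a_J(h_\cC(X))$ for $X\in\Obj(\cC'')$) yields an equivalence of sheaf categories via restriction. Your derivation of this from the comparison lemma \cite[Exp.\ III, Th.\ 4.1]{SGA4} together with the Yoneda equivalence for a topos is exactly how that corollary is proved in SGA4, and your remarks on the set-theoretic bookkeeping correctly identify why the $\frU$-small category $\cF$ was introduced.
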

\begin{proof}
This follows from \cite[Cor 1.2.1 Exp IV]{SGA4}
\end{proof}

\begin{prop}\label{prop:comp2}
Let $\iota_\cD\colon  \cD \to \Shv(\cC, J)$ denote the inclusion.
The functor $\Presh(\cD) \to \Presh(\cC)$ given by the
composition with $a_J(h_\cC(-))\colon \cC \to \cD$ induces an equivalence
$\Shv(\cD, \iota_{\cD}^* J_{can}) 
\xto{\cong} \Shv(\cC,J)$
of categories.
\end{prop}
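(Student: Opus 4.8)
The plan is to derive the statement from the same source as the preceding lemma, namely \cite[Cor 1.2.1 Exp IV]{SGA4}, which reconstructs a $\frU$-topos, equipped with its canonical topology, from any $\frU$-small full subcategory that contains a topologically generating family. By hypothesis $\cD$ is a full subcategory of the $\frU$-topos $\Shv(\cC,J)$ containing every sheaf $a_J(h_\cC(X))$, and the set of these sheaves --- already for $X \in \Obj(\cC'')$ --- is a topologically generating family for the canonical topology $J_{can}$, as recorded above using \cite[Prop.4.10, Exp II]{SGA4}. Since $\cD$ is only essentially $\frU$-small, the first step is to replace it by a genuinely $\frU$-small model, and the category $\cF$ introduced above is tailor-made for this.

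Concretely, $\cF$ is a full subcategory of $\cD$: all of its objects --- those of $\cD''$ together with the sheaves $a_J(h_\cC(X))$ for $X \in \Obj(\cC'')$ --- are objects of $\cD$, and fullness is inherited from $\Shv(\cC,J)$. The inclusion $\cF \inj \cD$ is an equivalence of categories, since $\cD'' \subset \cF$ already contains a representative of every isomorphism class of objects of $\cD$. Because the formation of induced topologies is transitive, $\iota_\cF^* J_{can}$ is also the topology induced on $\cF$ by $\iota_\cD^* J_{can}$, so the equivalence $\cF \inj \cD$ induces an equivalence $\Shv(\cD, \iota_\cD^* J_{can}) \xto{\cong} \Shv(\cF, \iota_\cF^* J_{can})$ of sheaf categories. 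Composing with the equivalence $\Shv(\cF, \iota_\cF^* J_{can}) \xto{\cong} \Shv(\cC, J)$ of the preceding lemma yields an equivalence $\Shv(\cD, \iota_\cD^* J_{can}) \xto{\cong} \Shv(\cC, J)$.

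It remains to identify this equivalence with the functor induced by composition with $a_J(h_\cC(-)) \colon \cC \to \cD$. Unwinding \cite[Cor 1.2.1 Exp IV]{SGA4}, a sheaf $\cG$ on $(\cD, \iota_\cD^* J_{can})$ corresponds to the object $G$ of $\Shv(\cC,J)$ determined by $\cG(D) \cong \Hom_{\Shv(\cC,J)}(D, G)$, naturally in $D \in \cD$; taking $D = a_J(h_\cC(X))$ and using the adjunction between $a_J$ and the inclusion $\Shv(\cC,J) \inj \Presh(\cC)$ followed by the Yoneda lemma gives $\cG(a_J(h_\cC(X))) \cong \Hom_{\Shv(\cC,J)}(a_J(h_\cC(X)), G) \cong \Hom_{\Presh(\cC)}(h_\cC(X), G) \cong G(X)$, naturally in $X \in \Obj(\cC)$. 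Hence the composite of our equivalence with the inclusion $\Shv(\cC,J) \inj \Presh(\cC)$ is exactly $\cG \mapsto \cG \circ (a_J(h_\cC(-)))^\op$, which is the functor in the statement. The one genuinely delicate point is the bookkeeping of the various induced topologies through the small models --- in particular matching $\iota_\cF^* J_{can}$ with the restriction of $\iota_\cD^* J_{can}$, and confirming that the equivalence supplied by the preceding lemma is characterised by the formula $\cG(D) \cong \Hom_{\Shv(\cC,J)}(D, G)$; beyond that, everything is a formal consequence of the cited results of \cite{SGA4}. (Alternatively, one may run the argument of Proposition~\ref{prop:comp1} almost verbatim for the non-fully-faithful functor $a_J(h_\cC(-))$, using Corollary~\ref{cor:4408} to see that $\iota_\cD^* J_{can}$ is the pushforward of $J$ along $a_J(h_\cC(-))$ and the continuity of $\eps_{\cC,J}$ to see that $a_J(h_\cC(-))$ is continuous, and then checking the covering condition of \cite[EXPOSE III, Th\'eor\`eme 4.1]{SGA4} exactly as there.)
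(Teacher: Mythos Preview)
Your proof is correct and follows essentially the same route as the paper: reduce to the $\frU$-small full subcategory $\cF \subset \cD$, observe that $\cF \inj \cD$ is an equivalence of categories, and invoke the preceding lemma (i.e.\ \cite[Cor 1.2.1 Exp IV]{SGA4}) for $\cF$. The paper's proof is two lines and omits the identification of the resulting equivalence with the restriction functor along $a_J(h_\cC(-))$; your additional paragraph unwinding this via $\cG(a_J(h_\cC(X))) \cong \Hom_{\Shv(\cC,J)}(a_J(h_\cC(X)),G) \cong G(X)$ fills a genuine gap in the paper's exposition, and your flagged bookkeeping point about matching $\iota_\cF^* J_{can}$ with the restriction of $\iota_\cD^* J_{can}$ is likewise something the paper simply asserts.
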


\begin{proof}
By construction, the inclusion 
$\cF \to \cD$ is an equivalence of categories.
Hence we have 
$\Shv(\cD, \iota_{\cD}^* J_{can})
\cong
\Shv(\cF, \iota_{\cF}^*J_{can}) .
$
The claim follows from the previous lemma.
\end{proof}
Let $\iota$ denote the functor
$a_J(h_\cC(-))\colon \cC \to \cD$.
\begin{cor} \label{cor:5563}
The functor $\Presh(\cD) \to \Presh(\cC)$ given by the
composition with $\iota$ induces an equivalence
\[
\Shv(\cD, \iota_* J) \to \Shv(\cC, J)
\]
of categories.
\end{cor}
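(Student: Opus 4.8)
The plan is to deduce this from Proposition~\ref{prop:comp2} by identifying the Grothendieck topology $\iota_* J$ on $\cD$ with the topology $\iota_\cD^* J_{can}$ appearing there, where $\iota_\cD:\cD \to \Shv(\cC,J)$ is the inclusion. Once this identification is in hand, Proposition~\ref{prop:comp2}, whose functor $\Presh(\cD)\to\Presh(\cC)$ is precisely composition with $\iota = a_J(h_\cC(-))$, yields the asserted equivalence verbatim.

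To identify the two topologies one invokes Corollary~\ref{cor:4408} with $\cC'$ taken to be $\cD$: the hypothesis there is exactly that $a_J(h_\cC(X))$ lies in $\cD$ for every object $X$ of $\cC$, which holds by assumption, and under this identification $\iota_0$ becomes $\iota$, $\iota_{\cC'}$ becomes $\iota_\cD$, and $J'$ is the canonical topology $J_{can}$. Corollary~\ref{cor:4408} then gives $\iota_\cD^* J_{can} = \iota_* J$. Alternatively one can rederive this directly: from $\eps_{\cC,J} = \iota_\cD \circ \iota$ and Proposition~\ref{prop:canonical} one gets $J_{can} = (\eps_{\cC,J})_* J$, which by the transitivity Lemma~\ref{lem:pushforward_transitivity} equals $(\iota_\cD)_*(\iota_* J)$; since $\iota_\cD$ is fully faithful, Lemma~\ref{lem:induced} applied to $\iota_\cD$ with the topology $\iota_* J$ on $\cD$ yields $\iota_\cD^*(\iota_\cD)_*(\iota_* J) = \iota_* J$, i.e.\ $\iota_\cD^* J_{can} = \iota_* J$.

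Substituting $\iota_* J$ for $\iota_\cD^* J_{can}$ in the statement of Proposition~\ref{prop:comp2} gives that composition with $\iota$ induces an equivalence $\Shv(\cD,\iota_* J)\xto{\cong}\Shv(\cC,J)$, which is the claim.

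The argument is entirely formal, and there is no real obstacle beyond bookkeeping. The one point that needs care is checking that the hypotheses of Corollary~\ref{cor:4408}, equivalently of Lemma~\ref{lem:induced} and Proposition~\ref{prop:canonical}, genuinely apply to $\cD$ as set up in this subsection: namely that $\cD$ is a full subcategory of $\Shv(\cC,J)$ containing all sheafified representables, so that $\iota_\cD$ is fully faithful and $\eps_{\cC,J}$ factors as $\iota_\cD\circ\iota$; the essential $\frU$-smallness of $\cD$ imposed here is only needed to make $\Presh(\cD)$ and $\Shv(\cD,-)$ legitimate $\frU$-categories and plays no role in the equality of topologies.
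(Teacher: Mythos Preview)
Your proof is correct and follows essentially the same approach as the paper: the paper's proof reads ``This follows from the previous proposition and Corollary~\ref{cor:4408},'' which is exactly your argument of invoking Corollary~\ref{cor:4408} (with $\cC'=\cD$) to identify $\iota_* J$ with $\iota_\cD^* J_{can}$ and then applying Proposition~\ref{prop:comp2}. Your alternative derivation simply unpacks the proof of Corollary~\ref{cor:4408}, so there is no substantive difference.
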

\begin{proof}
This follows from the previous proposition and 
Corollary~\ref{cor:4408}.
\end{proof}

\section{Adding quotient objects}
\label{sec:add quotient}
We briefly explained the motivation for adding quotient objects 
in Section~\ref{sec:intro Y-sites} with 
the example in the case where $(\cC, J)$ is the $Y$-site coming from the classical Galois theory.
The idea is, if one is given a Galois covering $Y \to X$ of Galois group $G$, to add all intermediary coverings $Z \to X$
where $Z=Y/H$ for subgroups $H$ of $G$.

Let a $Y$-site $(\cC, J)$ be given.   We have a canonical map $\cC \to \Shv(\cC, J)$ given by the Yoneda embedding and the sheafification functor.    Note that in $\Shv(\cC, J)$, a quotient object of any element exists.   We wish to take a suitable full subcategory $\cD \subset \Shv(\cC, J)$ so that it contains the (suitable, may not be all) quotient objects of objects coming from $\cC$.
We have a functor $\iota\colon  \cC \to \cD$ and endow $\cD$ 
with the pushforward topology.    
We show that the associated toposes are equivalent. 
Moreover, when $(\cC,J)$ is a $B$-site (\resp $Y$-site),
we show that $(\wt{\cC}, \iota_* J)$ is a $B$-site
(\resp $Y$-site).

We do not add all quotient objects but restrict ourselves to objects of 
the following form (called admissible).   Suppose given
a morphism $Y \to X$ which is a Galois covering of Galois group $G$.    
Then we add quotients $Y/H$ (more precisely, $Y$ is regarded via the 
functor above as a sheaf on $\cC$ and take the quotient sheaf).   We 
do not allow objects of the form 
$Y/H$ for arbitrary subgroup $H$ of $\Aut(Y)$.    There is a technical 
point  
that works fine with the restriction, and which may or may not work for 
general quotients.

\subsection{Preliminary on quotient objects}
We would like to apply the two comparison lemmas 
(Proposition \ref{prop:comp1} and Proposition \ref{prop:comp2}) 
to the case when $(\cC,J)$ is a $Y$-site.
As a preliminary, we recall in this paragraph the notion 
of a \quotobj in a general category and prove some basic facts 
on quotient objects.

\subsubsection{ }
First let us recall the notion of quotient object.
Let $\cC$ be a category, $Y$ an object in $\cC$,
and $G$ a subgroup of $\Aut_{\cC}(Y)$.
A \quotobj of $Y$ by $G$ is an object
in $\cC$, which we denote by $\quot{Y}{G}$, equipped with
a morphism $c\colon Y \to \quot{Y}{G}$ in $\cC$ satisfying the
following universal property: For any object $Z$
in $\cC$ and for any morphism
$f \colon  Y \to Z$ in $\cC$ satisfying $f\circ g=f$ for
all $g \in G$, there exists a unique morphism
$\overline{f} \colon  \quot{Y}{G} \to Z$ such that $f=\overline{f}\circ c$.
In other words, the \quotobj $\quot{Y}{G}$ is an
object in $\cC$ which co-represents the covariant functor
from $\cC$ to the category of sets which associates,
with each object $Z \in \cC$, the $G$-invariant part
$\Hom_{\cC}(Y,Z)^G$ of the set $\Hom_{\cC}(Y,Z)$.
We call the morphism $c\colon Y \to \quot{Y}{G}$ 
the canonical quotient morphism.

A \quotobj of $Y$ by $G$ is unique up to
unique isomorphism in the following sense. Suppose that
both $Y'_1$ and $Y'_2$ are \quotobjs of $Y$ by $G$.
We denote by $c_1\colon Y \to Y'_1$ and $c_2\colon Y\to Y'_2$ the
canonical quotient morphisms. Then there exists a unique
isomorphism $\alpha \colon  Y'_1 \xto{\cong} Y'_2$ satisfying
$\alpha \circ c_1 = c_2$.
This claim follows easily from the universality of
\quotobjs.

\begin{lem}\label{lem:quot_fullsub}
Let $\cC$ be a category and 
let $\cC' \subset \cC$ be a full subcategory.
Let $Y$ be an object in $\cC'$,
and $G$ be a subgroup of $\Aut_{\cC}(Y)$.
Suppose that a \quotobj $\quot{Y}{G}$
of $Y$ by $G$ in $\cC$ exists and that
$\quot{Y}{G}$ is isomorphic in $\cC$ to an object $Z$
in $\cC'$. Then $Z$ is a \quotobj
of $Y$ by $G$ in $\cC'$.
\end{lem}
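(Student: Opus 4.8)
The plan is to verify directly that $Z$, equipped with the composite $Y \to \quot{Y}{G} \xrightarrow{\cong} Z$ of the canonical quotient morphism with the given isomorphism, satisfies the defining universal property of a \quotobj in $\cC'$. Fix an isomorphism $\beta : \quot{Y}{G} \xrightarrow{\cong} Z$ in $\cC$, and let $c : Y \to \quot{Y}{G}$ be the canonical quotient morphism in $\cC$; set $c' = \beta \circ c : Y \to Z$. First I would check that $c'$ is $G$-invariant, i.e. $c' \circ g = c'$ for all $g \in G$; this is immediate since $c \circ g = c$ by the universal property of $\quot{Y}{G}$ in $\cC$, and one then post-composes with $\beta$.

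Next, let $W$ be any object of $\cC'$ and $f : Y \to W$ a morphism in $\cC'$ with $f \circ g = f$ for all $g \in G$. Since $\cC'$ is a \emph{full} subcategory, $f$ is also a morphism in $\cC$, and it is still $G$-invariant there; the universal property of $\quot{Y}{G}$ in $\cC$ then yields a unique $\bar f : \quot{Y}{G} \to W$ in $\cC$ with $f = \bar f \circ c$. Define $\bar f' = \bar f \circ \beta^{-1} : Z \to W$. Using fullness again, $\bar f'$ is a morphism in $\cC'$, and one computes $\bar f' \circ c' = \bar f \circ \beta^{-1} \circ \beta \circ c = \bar f \circ c = f$, so $\bar f'$ factors $f$ through $c'$.

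For uniqueness, suppose $h : Z \to W$ in $\cC'$ also satisfies $h \circ c' = f$. Then $h \circ \beta : \quot{Y}{G} \to W$ is a morphism in $\cC$ with $(h \circ \beta) \circ c = h \circ c' = f$, so by the uniqueness clause of the universal property of $\quot{Y}{G}$ in $\cC$ we get $h \circ \beta = \bar f$, hence $h = \bar f \circ \beta^{-1} = \bar f'$. This establishes that $(Z, c')$ co-represents $W \mapsto \Hom_{\cC'}(Y,W)^G$ on $\cC'$, i.e. $Z$ is a \quotobj of $Y$ by $G$ in $\cC'$.

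There is essentially no obstacle here: the only thing being used beyond formal manipulation of the universal property is that $\cC' \subset \cC$ is \emph{full}, which guarantees that a morphism out of $Y$ (or out of $Z$) into an object of $\cC'$ is the ``same'' whether viewed in $\cC'$ or in $\cC$, so that $G$-invariance and the factorization/uniqueness statements transfer verbatim between the two categories. The mild point worth stating explicitly is that one may freely replace $\quot{Y}{G}$ by the isomorphic object $Z$ because a \quotobj is unique up to unique isomorphism, as recorded just before the lemma.
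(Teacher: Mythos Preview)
Your proof is correct and is exactly the easy verification the paper alludes to: the paper's own proof consists of the single sentence ``The universality of $Z$ can be checked easily,'' and you have written out that check in full.
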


\begin{proof}
The universality of $Z$ can be checked easily.
\end{proof}

\begin{lem}\label{lem:quot_presheaf}
Let $\cC$ be an essentially small category.
Let $F$ be an object in $\Presh(\cC)$ and let $G \subset \Aut_{\Presh(\cC)}(F)$
be a subgroup. We define an object $F'$ in $\Presh(\cC)$ by setting
$F'(Y)=\quot{F(Y)}{G}$ for each object $Y$ in $\cC$.
Let $c\colon F\to F'$ be the morphism which consists of the quotient
maps $F(Y) \surj \quot{F(Y)}{G} =F'(Y)$ for all objects $Y$ in $\cC$.
Then the presheaf $F'$, together with the morphism $c\colon F \to F'$
is a \quotobj of $F$ by $G$ in the category $\Presh(\cC)$.
\end{lem}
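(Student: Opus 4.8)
The plan is to verify directly that the presheaf $F'$ together with $c:F\to F'$ has the required universal property, i.e.\ that for each object $Z$ of $\Presh(\cC)$ the map of sets
\[
\Hom_{\Presh(\cC)}(F',Z) \longrightarrow \Hom_{\Presh(\cC)}(F,Z)^G, \qquad \varphi \longmapsto \varphi\circ c,
\]
is a bijection. Since limits and colimits of presheaves are computed objectwise (in the category \Usets of $\frU$-sets), the key point is that for each object $Y$ of $\cC$ the set $F'(Y)=\quot{F(Y)}{G}$ is, by construction, the coequalizer (quotient) of $F(Y)$ by the $G$-action, and the maps of the inverse system $F'$ are the induced ones on quotients, so that $F'$ really is the colimit in $\Presh(\cC)$ of the diagram $G\to \Presh(\cC)$ picking out $F$ with its $G$-action. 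One then reads off the universal property from that of a colimit over $G$, noting that $\Hom_{\Presh(\cC)}(\colim_G F, Z)=\lim_G \Hom_{\Presh(\cC)}(F,Z)=\Hom_{\Presh(\cC)}(F,Z)^G$.

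Concretely I would proceed as follows. First I would observe that for a subgroup $G\subset \Aut_{\Presh(\cC)}(F)$, an element of $\Aut_{\Presh(\cC)}(F)$ is the same datum as a compatible family of bijections $F(Y)\to F(Y)$ natural in $Y$; hence $G$ acts on each set $F(Y)$ and the transition maps $F(Y')\to F(Y)$ attached to a morphism $Y'\to Y$ in $\cC$ are $G$-equivariant. Therefore the objectwise quotients $\quot{F(Y)}{G}$ assemble, via the induced transition maps, into a well-defined presheaf $F'$, and the surjections $F(Y)\surj \quot{F(Y)}{G}$ are natural in $Y$, giving the morphism of presheaves $c:F\to F'$. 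By the very definition of the quotient set $\quot{F(Y)}{G}$, the composite $F(Y)\to F'(Y)$ equalizes the $G$-action, so $c\circ g=c$ for all $g\in G$.

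Next I would check universality. Let $Z$ be a presheaf and $f:F\to Z$ a morphism with $f\circ g=f$ for all $g\in G$. For each object $Y$ the map $f_Y:F(Y)\to Z(Y)$ is constant on $G$-orbits, hence factors uniquely through a map $\overline{f}_Y:\quot{F(Y)}{G}=F'(Y)\to Z(Y)$ with $\overline{f}_Y\circ c_Y=f_Y$, by the universal property of the quotient set. The family $(\overline{f}_Y)_Y$ is automatically a morphism of presheaves: for a morphism $Y'\to Y$ the two ways around the relevant square agree after precomposition with the surjection $c_Y$, and surjectivity of $c_Y$ forces the square itself to commute. Uniqueness of $\overline{f}$ as a morphism of presheaves likewise follows from the objectwise uniqueness together with surjectivity of $c$. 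This establishes that $(F',c)$ co-represents $Z\mapsto \Hom_{\Presh(\cC)}(F,Z)^G$, which is exactly the assertion that $F'$ is the \quotobj of $F$ by $G$ in $\Presh(\cC)$.

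There is essentially no serious obstacle here; the statement is a routine ``limits and colimits in presheaf categories are computed pointwise'' argument. The only mild subtlety, and the one point I would be careful to spell out, is the passage from objectwise factorizations to a factorization in $\Presh(\cC)$ — i.e.\ checking naturality and uniqueness of $\overline{f}$ — which one handles uniformly by exploiting that each $c_Y$ is an epimorphism of sets. The essential-smallness hypothesis on $\cC$ plays no role beyond ensuring that $\Presh(\cC)$ is a well-behaved ($\frU$-)category so that these constructions stay within the intended universe.
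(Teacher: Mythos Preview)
Your proposal is correct and follows essentially the same approach as the paper: the paper's proof consists of the single sentence that this is a formal consequence of small colimits in $\Presh(\cC)$ being computed sectionwise, which is exactly the principle you invoke and then spell out in detail. Your explicit verification of naturality and uniqueness via the surjectivity of each $c_Y$ is a correct elaboration of that one-line argument.
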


\begin{proof}
This is a formal consequence of the fact that a small colimit
in the category $\Presh(\cC)$ can be taken sectionwisely.
\end{proof}

\begin{lem}\label{lem:quot_adjunction}
Let $\cC$, $\cD$ be categories and let $L\colon \cC \to \cD$ be
a functor which admits a right adjoint $R\colon \cD \to \cC$.
Let $Y$ be an object in $\cC$ and $H\subset \Aut_{\cC}(Y)$ be a
subgroup. Suppose that there exists a \quotobj
$\quot{Y}{G}$ of $Y$ by $G$ in $\cC$. Let $c\colon Y \to \quot{Y}{G}$
denote the canonical quotient morphism.
Let $G' \subset \Aut_{\cD}(L(Y))$ denote the image of $G$
under the homomorphism $\Aut_{\cC}(Y) \to \Aut_{\cD}(L(Y))$
induced by $L$.
Then $L(\quot{Y}{G})$ together with $L(c)\colon L(Y)\to L(\quot{Y}{G})$
is the \quotobj of $L(Y)$ by $G'$ in the
category $\cD$.
\end{lem}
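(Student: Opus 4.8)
The plan is to verify directly that $L(\quot{Y}{G})$, equipped with $L(c)$, co-represents the functor $W \mapsto \Hom_\cD(L(Y),W)^{G'}$ from $\cD$ to sets; by the uniqueness of \quotobjs recalled above, this is precisely the assertion. Conceptually this is an instance of the fact that a left adjoint preserves colimits, since $\quot{Y}{G}$ is the colimit of the diagram indexed by the one-object groupoid with automorphism group $G$ that picks out $Y$ together with its tautological $G$-action; but I prefer to write out the relevant chain of natural bijections so that the identification of the quotient morphism with $L(c)$ is transparent.

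Let $\eta:\id_\cC \to RL$ and $\epsilon: LR \to \id_\cD$ be the unit and counit of the adjunction $L \dashv R$. For an arbitrary object $W$ of $\cD$, I would consider the composite
\[
\Hom_\cD(L(\quot{Y}{G}),W) \xto{\cong} \Hom_\cC(\quot{Y}{G},R(W)) \xto{\cong} \Hom_\cC(Y,R(W))^G \xto{\cong} \Hom_\cD(L(Y),W),
\]
in which the first arrow is the adjunction bijection $h \mapsto R(h)\circ\eta_{\quot{Y}{G}}$, the second is the defining bijection of the \quotobj $\quot{Y}{G}$ (sending $\overline{f}$ to $\overline{f}\circ c$), and the third is the adjunction bijection $g \mapsto \epsilon_W \circ L(g)$.

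The one point that needs checking — routine, but the heart of the matter — is that the third bijection is equivariant for the action of $G$ on $\Hom_\cC(Y,R(W))$ by precomposition with the tautological automorphisms of $Y$ and the action of $G' = \Image(\Aut_\cC(Y)\to\Aut_\cD(L(Y)))$ on $\Hom_\cD(L(Y),W)$ by precomposition: indeed for $\sigma\in G$ one has $\epsilon_W\circ L(g\circ\sigma) = (\epsilon_W\circ L(g))\circ L(\sigma)$ and $L(\sigma)\in G'$, and conversely every element of $G'$ is of this form. Hence the third bijection restricts to a bijection $\Hom_\cC(Y,R(W))^G \xto{\cong} \Hom_\cD(L(Y),W)^{G'}$, so the whole composite is a bijection $\Hom_\cD(L(\quot{Y}{G}),W) \xto{\cong} \Hom_\cD(L(Y),W)^{G'}$, natural in $W$. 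Finally, tracing $\id_{L(\quot{Y}{G})}$ through the composite (with $W = L(\quot{Y}{G})$) gives in turn $\eta_{\quot{Y}{G}}$, then $\eta_{\quot{Y}{G}}\circ c$, then $\epsilon_{L(\quot{Y}{G})}\circ L(\eta_{\quot{Y}{G}})\circ L(c) = L(c)$ by the triangle identity for $L$; so the natural bijection above is precomposition with $L(c)$ (which is $G'$-invariant since $c\circ g = c$ for $g\in G$), and that says exactly that $(L(\quot{Y}{G}),L(c))$ is a \quotobj of $L(Y)$ by $G'$ in $\cD$. The only real obstacle is bookkeeping: keeping the two occurrences of the adjunction bijection and the three $\Hom$-set actions consistent; no category theory beyond the triangle identities is needed.
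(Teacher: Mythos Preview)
Your proof is correct and follows essentially the same approach as the paper: both use the adjunction bijections, verify their $G$-equivariance (with $G$ acting on $\Hom_\cD(L(Y),W)$ through the surjection $G\twoheadrightarrow G'$), and thereby transport the quotient property from $\cC$ to $\cD$. The paper packages the argument as a single commutative square with vertical maps $-\circ L(c)$ and $-\circ c$ rather than your three-step composite, but the content is the same; your extra verification via the triangle identity that the resulting bijection is precomposition with $L(c)$ is a nice touch that the paper leaves implicit.
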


\begin{proof}
It suffices to show that, for any object $Z \in \cD$, the map
$$
\Hom_{\cD}(L(\quot{Y}{G}),Z) \to \Hom_{\cD}(L(Y),Z)
$$
given by the composite with $L(c)$ is injective and its
image is equal to the $G'$-invariant part of 
$\Hom_{\cD}(L(Y),Z)$. 
Let $\iota_1\colon  \Hom_{\cD}(L(\quot{Y}{G}),Z)\xto{\cong} 
\Hom_{\cC}(\quot{Y}{G},R(Z))$
and $\iota_2\colon  \Hom_{\cD}(L(Y),Z) \xto{\cong} \Hom_{\cC}(Y,R(Z))$
denote the bijections given by the adjunction.
It follows from the functoriality of the adjunction
that the diagram
$$
\begin{CD}
\Hom_{\cD}(L(\quot{Y}{G}),Z) @>{\iota_1}>{\cong}> 
\Hom_{\cC}(\quot{Y}{G},R(Z)) \\
@V{-\circ L(c)}VV @V{-\circ c}VV \\
\Hom_{\cD}(L(Y),Z) @>{\iota_2}>{\cong}> \Hom_{\cC}(Y,R(Z))
\end{CD}
$$
is commutative and the isomorphism $\iota_2$ is $G$-equivariant
where $G$ acts on $\Hom_{\cD}(L(Y),Z)$ via the surjection $G \to G'$
induced by $L$. Since $\quot{Y}{G}$ is a \quotobj of $Y$ by $G$,
the right vertical map is injective and its image is
equal to the $G$-invariant part of $\Hom_{\cC}(Y,R(Z))$.
Hence the claim follows.
\end{proof}

\subsubsection{ }
Let $Y$ be an object in a category $\cC$ and let $G$ be a subgroup of
$\Aut_{\cC}(Y)$. Let $H$ be a normal subgroup of $G$ and
suppose that a \quotobj $\quot{Y}{H}$ of $Y$ by $H$
exists. Let $c\colon Y \to \quot{Y}{H}$ denote the canonical quotient morphism.
Let $g \in G$. Since $H$ is a normal subgroup of $G$, 
the composite $c \circ g \circ h$ is equal to $c \circ g$
for any $h \in H$. Hence it follows from the universality of
the \quotobj that there exists a unique morphism
$\alpha_g \colon  \quot{Y}{H} \to \quot{Y}{H}$ 
such that $c \circ g = \alpha_g \circ c$.
It is then easy to check that $\alpha_{g_1 \circ g_2} 
= \alpha_{g_1} \circ \alpha_{g_2}$ for $g_1,g_2 \in G$ 
and that $\alpha_{h} = \id_{\quot{Y}{H}}$ for $h \in H$.
Hence the map $g \mapsto \alpha_g$ gives a group homomorphism
$G \to \Aut_{\cC}(\quot{Y}{H})$ which factors through the quotient map 
$G \surj G/H$. Let $\overline{G} \subset \Aut_{\cC}(\quot{Y}{H})$
denote the image of this homomorphism.

\begin{lem}\label{lem:quot_general}
Under the notation as above, suppose that a
\quotobj $\quot{(\quot{Y}{H})}{\overline{G}}$ of 
$\quot{Y}{H}$ by $\overline{G}$
exists. Let $c'\colon \quot{Y}{H} \to \quot{(\quot{Y}{H})}{\overline{G}}$ denote the
canonical quotient map. Then $\quot{(\quot{Y}{H})}{\overline{G}}$ together with
the composite $c' \circ c \colon  Y \to \quot{(\quot{Y}{H})}{\overline{G}}$
is a \quotobj of $Y$ by $G$.
\end{lem}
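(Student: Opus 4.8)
Let $Z$ be an arbitrary object of $\cC$, and consider a morphism $f:Y\to Z$ satisfying $f\circ g=f$ for all $g\in G$. I must produce a unique morphism $\overline{f}:\quot{(\quot{Y}{H})}{\overline{G}}\to Z$ with $f=\overline{f}\circ c'\circ c$. The natural route is a two-step factorization: first factor $f$ through $c:Y\to\quot{Y}{H}$, then factor the resulting map through $c':\quot{Y}{H}\to\quot{(\quot{Y}{H})}{\overline{G}}$.

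\emph{First step.} Since $H\subset G$, the hypothesis $f\circ g=f$ for all $g\in G$ gives in particular $f\circ h=f$ for all $h\in H$. By the universal property of the \quotobj $\quot{Y}{H}$, there is a unique morphism $f_1:\quot{Y}{H}\to Z$ with $f=f_1\circ c$.

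\emph{Second step.} I claim $f_1\circ\alpha_g=f_1$ for every $g\in G$, where $\alpha_g\in\overline{G}$ is as constructed just before the statement. Indeed, using $c\circ g=\alpha_g\circ c$ and then $f=f_1\circ c$ together with $f\circ g=f$, we get
\[
f_1\circ\alpha_g\circ c \;=\; f_1\circ c\circ g \;=\; f\circ g \;=\; f \;=\; f_1\circ c.
\]
Since $c$ is the canonical quotient morphism, it is an epimorphism in the only sense we need here — more precisely, the factorization of $f$ through $c$ is \emph{unique}, so from $(f_1\circ\alpha_g)\circ c = f_1\circ c$ and the fact that both $f_1\circ\alpha_g$ and $f_1$ are morphisms $\quot{Y}{H}\to Z$ factoring $f$, uniqueness forces $f_1\circ\alpha_g=f_1$. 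As $\overline{G}$ is by definition the image of $g\mapsto\alpha_g$, this says $f_1$ is $\overline{G}$-invariant. By the universal property of $\quot{(\quot{Y}{H})}{\overline{G}}$, there is a unique $\overline{f}:\quot{(\quot{Y}{H})}{\overline{G}}\to Z$ with $f_1=\overline{f}\circ c'$, hence $f=\overline{f}\circ c'\circ c$, establishing existence.

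\emph{Uniqueness.} Suppose $f=\overline{f}'\circ (c'\circ c)$ for some $\overline{f}':\quot{(\quot{Y}{H})}{\overline{G}}\to Z$. Then $\overline{f}'\circ c'$ is a morphism $\quot{Y}{H}\to Z$ whose composite with $c$ is $f$, so by uniqueness in the first step $\overline{f}'\circ c'=f_1$; then by uniqueness in the second step $\overline{f}'=\overline{f}$. This completes the verification, and since a \quotobj is unique up to unique isomorphism, $\quot{(\quot{Y}{H})}{\overline{G}}$ together with $c'\circ c$ is indeed a \quotobj of $Y$ by $G$.

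\textbf{Main obstacle.} The argument is entirely formal, so there is no deep difficulty; the one point requiring care is that the factorization maps through $c$ must be invoked via the \emph{uniqueness} clause of the universal property rather than by asserting $c$ is an epimorphism (which is not given). Concretely, the step $f_1\circ\alpha_g=f_1$ must be deduced from the uniqueness of the factorization of $f$ through $c$, exactly as in the displayed computation above. Everything else — associativity bookkeeping and the two uniqueness arguments — is routine.
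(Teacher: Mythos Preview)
Your proof is correct and follows essentially the same approach as the paper: both arguments reduce to the two-step factorization through $c$ and then $c'$, with the key step being the use of the relation $c\circ g=\alpha_g\circ c$ to transfer $G$-invariance of $f$ to $\overline{G}$-invariance of $f_1$. The paper packages this as showing that $-\circ c$ is $G$-equivariant (identifying $\Hom(\quot{Y}{H},Z)$ with the $H$-invariants and $\Hom(\quot{(\quot{Y}{H})}{\overline{G}},Z)$ with the $\overline{G}$-invariants therein), while you unwind the same content elementwise via the universal property; your careful remark about invoking uniqueness rather than an epimorphism assumption is exactly right.
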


\begin{proof}
It suffices to check that, for any object $Z$ in $\cC$,
the composite
$\Hom_{\cC}(\quot{(\quot{Y}{H})}{\overline{G}},Z) 
\xto{-\circ c'}
\Hom_{\cC}((\quot{Y}{H}),Z) \xto{-\circ c}
\Hom_{\cC}(Y,Z)$
is injective and its image is equal to the $G$-invariant
part of $\Hom_{\cC}(Y,Z)$.
We know that the first map $-\circ c'$ is injective
and its image is equal to the $\overline{G}$-invariant
part of $\Hom_{\cC}((\quot{Y}{H}),Z)$, and that
the second map $-\circ c$ is injective
and its image is equal to the $H$-invariant
part of $\Hom_{\cC}((\quot{Y}{H}),Z)$.
Hence it suffices to prove that the second map
$-\circ c$ is $G$-equivariant, where $G$ acts on
$\Hom_{\cC}((\quot{Y}{H}),Z)$ via the surjection $G \surj \overline{G}$.
The last claim follows from the equality 
$c \circ g = \alpha_g \circ c$. This completes the proof.
\end{proof}

\subsection{Adding quotient objects}
Let $(\cC, J)$ be a $\frU$-site. We assume that
$J$ is an $A$-topology in the sense of \cite[2.4]{Grids}.

\begin{defn}
We say that a subgroup $H \subset \Aut_{\cC}(X)$
is moderate if there exists a morphism
$f\colon X \to Y$ in $\cC$ such that $f$ belongs to
$\cT(J)$ and that $H$ is contained in $\Aut_Y(X)$.
\end{defn}

\subsubsection{}\label{sec:Ctil}
Let $X$ be an object in $\cC$ and
$H$ be a subgroup of $\Aut_{\cC}(X)$.
We denote by $\quot{X}{H}$ 
the sheaf associated with the presheaf
$\quot{\frh_{\cC}(X)}{H}$.

Let $\wt{\cC^m}$ 
denote the full subcategory of the category 
$\Shv(\cC,J)$ of sheaves on $(\cC,J)$ 
whose objects are sheaves of the form $\quot{X}{H}$ 
with $X$ in $\cC$
and $H$ a moderate subgroup of $\Aut_{\cC}(X)$.
We note that, for two pairs $(X_1,H_1)$ and $(X_2,H_2)$
such that $X_i$ is an object of $\cC$ 
and $H_i \subset \Aut_{\cC}(X_i)$ is a moderate subgroup
for $i=1,2$, the equality $\quot{X_1}{H_1}
= \quot{X_2}{H_2}$ does not imply that
$X_1$ and $X_2$ are isomorphic.
To avoid such a problem, we will sometimes use 
the following category $\wt{\cC^m}^\str$ instead of $\wt{\cC^m}$.
The objects of $\wt{\cC^m}^\str$ is a pair
$(X,H)$ of an object $X$ of $\cC$ 
and a moderate subgroup $H \subset \Aut_{\cC}(X)$.
For two objects $(X_1,H_1)$ and $(X_2,H_2)$ of $\wt{\cC^m}^\str$,
a morphism from $(X_1,H_1)$ to $(X_2,H_2)$ in $\wt{\cC^m}^\str$ is
a morphism $\quot{X_1}{H_1} \to \quot{X_2}{H_2}$ of
sheaves on $(\cC,J)$.
Let $\imath^{m\str}\colon  \cC \to \wt{\cC^m}^\str$ denote
the functor that associates, with each object $X$ of $\cC$,
the object $(X,\{\id_X\})$ of $\wt{\cC^m}^\str$.
For an object $(X,H)$ of $\wt{\cC^m}^\str$, we let
$q_{X,H}\colon  \imath^{m\str}(X) \to (X,H)$ denote the
morphism in $\wt{\cC^m}^\str$ induced by the
quotient morphism $\frh_\cC(X) \to \quot{\frh_\cC(X)}{H}$.
It is clear that the functor $\wt{\cC^m}^\str \to \wt{\cC^m}$
that associates, with each object $(X,H)$ of $\wt{\cC^m}^\str$,
the object $\quot{X}{H}$ is an equivalence of categories.
We let $\imath^m\colon \cC \to \wt{\cC}$ denote the
composite $\cC \xto{\imath^{m\str}} \wt{\cC^m}^\str \to \wt{\cC^m}$.
We sometimes regard $q_{X,H}$ as a morphism
$\imath^m(X) \to \quot{X}{H}$ in $\wt{\cC^m}^\str$.

We remark that if $\cC$ is an essentially small
category, then so are $\wt{\cC^m}^\str$ and $\wt{\cC^m}$.

\subsubsection{}
We also consider full subcategories $\wt{\cC}$ of 
$\wt{\cC^m}$,
that contain objects (isomorphic to 
objects of the form) $\quot{X}{\{\id_X\}}$
for all $X \in \cC$.
The category $\wt{\cC}^\str$ is defined in a similar manner
as above.  
We impose that if $(Z, H_Z) \cong (X, H)$
and $(X, H)$ belongs to $\wt{\cC}^\str$,
then $(Z, H_Z) \in \wt{\cC}^\str$.
We have functors
$\imath^\str\colon \cC \to \wt{\cC}^\str$
and
$\imath\colon \cC \to \wt{\cC}$.

\begin{lem}\label{lem:quot_FCd}
Let $X$ be an object in $\cC$ and let $H$ be a subgroup of
$\Aut(X)$. 
Then the sheaf $\quot{X}{H}$ on $(\cC,J)$
together with the morphism $\imath(X) \to \quot{X}{H}$
is a \quotobj of $\quotid{X}$ by $H$ in the category
$\Shv(\cC,J)$.
In particular the object $\quot{X}{H}$ of $\wt{\cC}$
together with the morphism $\imath(X) \to \quot{X}{H}$
is a \quotobj of $\imath(X)$ by $H$.
\end{lem}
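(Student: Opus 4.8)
The plan is to reduce the claim to the universal property of the quotient object in $\Presh(\cC)$, transport it to $\Shv(\cC,J)$ via the sheafification adjunction, and then use Lemma~\ref{lem:quot_fullsub} to descend from $\Shv(\cC,J)$ to the full subcategory $\wt{\cC}$. First I would observe that, by Lemma~\ref{lem:quot_presheaf}, the presheaf $\quot{\frh_\cC(X)}{H}$ together with the sectionwise quotient map $\frh_\cC(X) \to \quot{\frh_\cC(X)}{H}$ is the \quotobj of $\frh_\cC(X)$ by $H$ in $\Presh(\cC)$, where $H$ acts on $\frh_\cC(X)$ through the embedding $\Aut_\cC(X) \hookrightarrow \Aut_{\Presh(\cC)}(\frh_\cC(X))$ coming from the (fully faithful) Yoneda embedding. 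Note that $\quotid{X}$ in $\Shv(\cC,J)$ is by definition $a_J\frh_\cC(X) = \imath(X)$ with the trivial group, so the target object and morphism in the statement match up.

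Next I would invoke Lemma~\ref{lem:quot_adjunction} with $L = a_J : \Presh(\cC) \to \Shv(\cC,J)$ the sheafification functor, which admits the inclusion $\Shv(\cC,J)\hookrightarrow\Presh(\cC)$ as a right adjoint. Applying that lemma to $Y = \frh_\cC(X)$ and $H$ as above, we conclude that $a_J(\quot{\frh_\cC(X)}{H}) = \quot{X}{H}$, together with the morphism $a_J$ applied to the canonical quotient map, is the \quotobj of $a_J\frh_\cC(X) = \imath(X)$ by the image $H'$ of $H$ in $\Aut_{\Shv(\cC,J)}(\imath(X))$, taken in the category $\Shv(\cC,J)$. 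The only subtlety here is to check that the homomorphism $H \to \Aut_{\Shv(\cC,J)}(\imath(X))$ is injective, so that $H' \cong H$ and the $H'$-invariants coincide with the $H$-invariants under this identification; this follows because for any sheaf $Z$ the composite $\Hom_{\Shv}(\imath(X),Z) \cong \Hom_{\Presh}(\frh_\cC(X),Z) \cong Z(X)$ is $\Aut_\cC(X)$-equivariant and $\Aut_\cC(X)$ already acts faithfully on $\frh_\cC(X)$ by Yoneda, hence on the functor $Z \mapsto Z(X)$.

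Finally, to pass from $\Shv(\cC,J)$ to the full subcategory $\wt{\cC}$: by the construction of $\wt{\cC}$ in Section~\ref{sec:Ctil}, the sheaf $\quot{X}{H}$ is an object of $\wt{\cC}$ (it is one of the generating objects, and $H$ is moderate by hypothesis — or, in the bare statement, $\wt{\cC}=\wt{\cC^m}$ contains all such $\quot{X}{H}$), and $\imath(X)$ is an object of $\wt{\cC}$ as well. Since $\wt{\cC}$ is a full subcategory of $\Shv(\cC,J)$, Lemma~\ref{lem:quot_fullsub} applies directly: a quotient object computed in the ambient category, when it happens to lie in the full subcategory, is a quotient object in the full subcategory. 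This gives the ``in particular'' assertion. I expect the main obstacle to be purely bookkeeping — namely tracking the various identifications ($\quotid{X}$ versus $\imath(X)$, the group $H$ versus its images under Yoneda and under $a_J$) carefully enough that the universal properties line up; the mathematical content is entirely contained in Lemmas~\ref{lem:quot_presheaf}, \ref{lem:quot_adjunction}, and \ref{lem:quot_fullsub}, so no genuinely new argument is needed.
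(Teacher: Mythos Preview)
Your proposal is correct and follows essentially the same three-step route as the paper: Lemma~\ref{lem:quot_presheaf} for the quotient in $\Presh(\cC)$, Lemma~\ref{lem:quot_adjunction} applied to $a_J$ to pass to $\Shv(\cC,J)$, and Lemma~\ref{lem:quot_fullsub} to descend to $\wt{\cC}$. You are in fact slightly more careful than the paper in checking that $H \to \Aut_{\Shv(\cC,J)}(\imath(X))$ is injective (so that the quotient by the image $H'$ really is the quotient by $H$); the paper simply elides this point.
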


\begin{proof}
It follows from Lemma \ref{lem:quot_presheaf} that
the presheaf $\quot{\frh_\cC(X)}{H}$ on $\cC$ together with the
surjection $\frh_\cC(X) \surj \quot{\frh_\cC(X)}{H}$
is a \quotobj of the presheaf $\frh_\cC(X)$ by $H$.
It follows from Lemma \ref{lem:quot_adjunction} that
$\quot{X}{H}$ together with the
morphism $\quotid{X} \to \quot{X}{H}$ is a \quotobj
of $\quotid{X}$ by $H$
in the category $\Shv(\cC,J)$. 
Hence by Lemma \ref{lem:quot_fullsub},
it is also a \quotobj of $\quotid{X}$ by $H$
in the category $\wt{\cC}$.
This proves the claim.
\end{proof}

\subsection{The semi-localizing collection $\wt{\cT}^\str$}
Let $(\cC, J)$ be a $\frU$-site. 
Let $\wt{\cC}^\str$ be as in Section \ref{sec:Ctil}.
We construct a semi-localizing collection 
$\wt{\cT}^\str$ of morphisms in $\wt{\cC}^\str$.
We show that when $(\cC, J)$ is a $B$-site, 
so is $(\wt{\cC}^\str, J_{\wt{\cT}^\str})$.
We will show later that this topology is
equal to the pushforward topology.
We assume that the topology 
$J$ is an $A$-topology 
(\cite[Definition 2.4.1]{Grids}).

\begin{defn}
Let $f\colon (X,H) \to (Y,K)$ be a morphism in $\wt{\cC}^\str$. 
A representative of $f$ in $\cC$ is a diagram 
$X \xleftarrow{m} X' \xto{f'} Y$ in $\cC$ 
such that $m$ belongs to $\cT(J)$ and that 
the diagram
\begin{equation} \label{diagram:pentagon}
\xymatrix{
\imath^\str (X') \ar[dr]^{\imath^\str(f')} 
\ar[d]_{\imath^\str(m)} & \\
\imath^\str(X) \ar[d]_{q_{X,H}} 
& \imath^\str(Y) \ar[d]_{q_{Y,K}} \\
(X,H) \ar[r]_{f} 
& (Y,K)
}
\end{equation}
in $\wt{\cC^\str}$ is commutative.
We use the notation $(f; m, f')$.
\end{defn}

\begin{lem} \label{lem:pentagon}
Suppose $(\cC, J)$ is a site with $A$-topology.
Let $f\colon (X,H) \to (Y,K)$ be
a morphism in $\wt{\cC}^\str$. Then there exists
a representative of $f$ in $\cC$.
\end{lem}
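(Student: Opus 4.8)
The plan is to read off the datum $(m,f')$ from $f$ by unwinding the definition of a morphism of $\wt{\cC}^\str$ as a morphism of sheaves, and then applying the explicit form of the sheafification functor available for an $A$-topology. First I would precompose $f: \quot{X}{H}\to\quot{Y}{K}$ with the quotient morphism $q_{X,H}: \imath^\str(X)\to (X,H)$. Since $\imath^\str(X)$ is the sheaf $a_J(\frh_\cC(X))$, the sheafification adjunction between $a_J$ and the inclusion $\Shv(\cC,J)\inj\Presh(\cC)$, composed with the Yoneda isomorphism, identifies the morphism $f\circ q_{X,H}: \imath^\str(X)\to (Y,K)$ with an element $\xi\in (\quot{Y}{K})(X)$, where $\quot{Y}{K}=a_J(\quot{\frh_\cC(Y)}{K})$ is regarded as a presheaf.

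Next I would exhibit $m$ and $f'$. The unit morphism $\quot{\frh_\cC(Y)}{K}\to\quot{Y}{K}$ of the sheafification adjunction is a covering of presheaves with respect to $J$ in the sense recalled above. Applying this at the element $\xi$, the sieve $R$ on $X$ consisting of those $g:Z\to X$ for which $\xi|_Z$ lies in the image of $(\quot{\frh_\cC(Y)}{K})(Z)\to(\quot{Y}{K})(Z)$ belongs to $J(X)$; since $J$ is an $A$-topology, $R$ contains a morphism $m:X'\to X$ belonging to $\cT(J)$ (\cite{Grids}). As $m\in R$, the element $\xi|_{X'}=(\quot{Y}{K})(m)(\xi)$ is the image of some $\overline{f'}\in \quot{\Hom_\cC(X',Y)}{K}=(\quot{\frh_\cC(Y)}{K})(X')$. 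Choosing any morphism $f':X'\to Y$ in $\cC$ representing the $K$-orbit $\overline{f'}$, I take $(f;m,f')$ as the candidate representative.

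It remains to check that the pentagon \eqref{diagram:pentagon} commutes, that is, $q_{Y,K}\circ\imath^\str(f')=f\circ q_{X,H}\circ\imath^\str(m)$ as morphisms $\imath^\str(X')\to(Y,K)$. Both sides are morphisms out of the sheaf $a_J(\frh_\cC(X'))$, so by the same adjunction and Yoneda it is enough to check that the corresponding elements of $(\quot{Y}{K})(X')$ coincide. By functoriality of Yoneda, the right-hand side corresponds to $\xi|_{X'}$, because $f\circ q_{X,H}$ corresponds to $\xi$ and precomposition with $\imath^\str(m)$ is restriction along $m$; the left-hand side corresponds to the image of $f'\in\Hom_\cC(X',Y)$ under $\frh_\cC(Y)(X')\to(\quot{\frh_\cC(Y)}{K})(X')\to(\quot{Y}{K})(X')$, which is the image of $\overline{f'}$, i.e.\ $\xi|_{X'}$ by construction. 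Hence the two morphisms agree, and $(f;m,f')$ is a representative of $f$ in $\cC$.

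The step I expect to be the main obstacle is the middle one: making precise the explicit description of $a_J$ for an $A$-topology, so that an arbitrary section of $\quot{Y}{K}$ over $X$ becomes, after pulling back along a single morphism of $\cT(J)$, induced by an honest morphism $X'\to Y$ in $\cC$. Once this concretization is available, the commutativity of the pentagon is a formal manipulation of the Yoneda and sheafification adjunctions; note in particular that the subgroups $H$ and $K$ and any moderateness hypothesis play no role in this lemma.
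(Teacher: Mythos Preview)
Your proposal is correct and follows essentially the same approach as the paper: both translate $f\circ q_{X,H}$ into an element of $(\quot{Y}{K})(X)$ via the sheafification adjunction and Yoneda, then use the $A$-topology hypothesis to refine along some $m\in\cT(J)$ so that the restricted element comes from an honest $f'\in\Hom_\cC(X',Y)$ modulo $K$. The only cosmetic difference is that the paper invokes the explicit colimit formula for $a_J$ from \cite[4.4.2, Lem.~2.5.2]{Grids}, whereas you phrase the same step via the general fact that the sheafification unit is a local epimorphism; your added verification of the pentagon commutativity is a helpful spelling-out of what the paper leaves implicit.
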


\begin{proof}
Let us consider the isomorphisms
given by adjunctions:
\[
\begin{array}{l}
\Hom_{\wt{\cC}^\str}(\imath^\str(X),
(Y,K))
= 
\Hom_{\Shv(\cC,J)}(a_J(\frh_\cC(X)), \quot{Y}{K})
\\
\cong
\Hom_{\Presh(\cC)}(\frh_\cC(X), \quot{Y}{K})
\cong (\quot{Y}{K})(X).
\end{array}
\]
Note that the sieves $R_f$ where $f$ is a 
morphism in $\cT(J)$ with target $X$ 
are cofinal in $J(X)$.
From the explicit description in \cite[4.4.2]{Grids} of the sheafification
functor $a_J$, we further have, using 
\cite[Lem 2.5.2]{Grids},
\[
\begin{array}{l}
(\quot{Y}{K})(X)
=(a_J(\quot{h_\cC(Y)}{K})(X)
\\
=\varinjlim_{(m\colon X' \to X)\in \cT(J)}
\quot{
[\Hom_\cC(X',Y) \rightrightarrows
\Hom_{\Presh(\cC)}
(h_\cC(X') 
\times_{h_\cC(X)}
h_\cC(X')
,Y)]}{K}.
\end{array}
\]
Let $a \in (\quot{Y}{K})(X)$ denote the element
corresponding to the composite
$\quot{X}{\{\id_{X}\}} \to \quot{X}{H}
\xto{f} \quot{Y}{K}$.
Take any pair $(m\colon X' \to X) \in \cT(J)$
and $f' \in \Hom_\cC(X',Y)$ which corresponds
to $a$ in the equality above.
Then the diagram $X \xleftarrow{m} X' \xto{f'} Y$
is a representative of $f$ in $\cC$.
This completes the proof.
\end{proof}

\subsubsection{ }\label{sec:cTmu}
Let $\wt{\cT}^\str$ denote the following set of morphisms
in $\wt{\cC}^\str$.   
Let $f\colon  X \to Y$ be a morphism
in $\wt{\cC}^\str$.  
Then $f$ belongs to $\wt{\cT}^\str$
if and only if there exists a representative $(f; m,f')$
with $f' \in \cT(J)$.

\subsubsection{ }
In this paragraph we assume that $(\cC,J)$ is a $B$-site.
\begin{lem} \label{lem:omega_faithful1}
The functor $\cC \to \Shv(\cC,J)$ that
associates an object $X$ of $\cC$ with
$a_J(\frh_\cC(X))$ is faithful.
\end{lem}
\begin{proof}
Let $X$ and $Y$ be two objects of $\cC$.
We prove that the map
$\Hom_\cC(X,Y) \to \Hom_{\Shv(\cC,J)}
(a_J(\frh_\cC(X)),a_J(\frh_\cC(Y)))$ is injective.
Since we have isomorphisms
\[
\begin{array}{l}
\Hom_{\Shv(\cC,J)}
(a_J(\frh_\cC(X)),a_J(\frh_\cC(Y)))
\\
\cong 
\Hom_{\Presh(\cC)}
(\frh_\cC(X),a_J(\frh_\cC(Y)))
\cong a_J(\frh_\cC(Y))(X),
\end{array}
\]
It suffices to show that the map
$(\frh_\cC(Y))(X) \to a_J(\frh_\cC(Y))(X)$
induced by the adjunction morphism
$\frh_\cC(Y) \to a_J(\frh_\cC(Y))$ is injective.
It follows from an explicit
description of the sheafification 
functor given in \cite[4.4.2]{Grids}
we have, using \cite[Lem 2.5.2]{Grids},
\begin{equation} \label{eq:aJ1}
a_J(\frh_\cC(Y))(X) \\
\cong 
\varinjlim_{(X',f)} 
\quot{[\Hom_\cC(X',Y)
\rightrightarrows
\Hom_{\Presh(\cC)}
(h_\cC(X') 
\times_{h_\cC(X)}
h_\cC(X'),
Y)]}
{K}
\end{equation}
where $(X',f)$ runs over the morphisms
in $\cT(J)$ with target $X$.
(The set of morphisms may not be 
$\frU$-small, but the set of 
isomorphism classes of such morphisms
is $\frU$-small.  Hence, the 
right-hand side above is $\frU$-small.)
Since $\cC$ is an $E$-category, it follows that the
transition maps in the colimit \eqref{eq:aJ1} are injective.
This proves that
$(\frh_\cC(Y))(X) \to a_J(\frh_\cC(Y))(X)$ is injective.
\end{proof}

\begin{lem} \label{lem:exists_g}
Let $f\colon  (X,H) \to (Y,K)$ be a morphism
in $\wt{\cC}^\str$. Let 
$X \xleftarrow{m} X' \xto{f'_1} Y$
and $X \xleftarrow{m} X' \xto{f'_2} Y$
be two representatives of $f$ with common 
$X'$ and $m$. Then there exists an element
$g \in K$ satisfying $f'_2 = g \circ f'_1$.
\end{lem}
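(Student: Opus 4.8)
The statement asserts that two representatives $(f;m,f'_1)$ and $(f;m,f'_2)$ of the same morphism $f:(X,H)\to(Y,K)$ sharing the same $X'$ and $m$ differ by the $K$-action, i.e.\ $f'_2=g\circ f'_1$ for some $g\in K$. The plan is to translate the defining commutativity of diagram \eqref{diagram:pentagon} for each representative into an equality of sections of the sheaf $\quot{Y}{K}$ over $X'$, and then use the explicit colimit description of $(\quot{Y}{K})(X')=a_J(\quot{\frh_\cC(Y)}{K})(X')$ from \cite[4.4.2]{Grids} and \cite[Lem 2.5.2]{Grids} (the same formula already invoked in the proof of Lemma \ref{lem:pentagon}) to detect the $K$-action at a finite level.

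Concretely, first I would observe that both diagrams \eqref{diagram:pentagon} being commutative forces $q_{Y,K}\circ\imath^\str(f'_1)=q_{Y,K}\circ\imath^\str(f'_2)$ as morphisms $\imath^\str(X')\to(Y,K)$ in $\wt{\cC}^\str$, since both equal $f\circ q_{X,H}\circ\imath^\str(m)$. Under the adjunction isomorphisms
\[
\Hom_{\wt{\cC}^\str}(\imath^\str(X'),(Y,K))
\cong\Hom_{\Shv(\cC,J)}(a_J(\frh_\cC(X')),\quot{Y}{K})
\cong(\quot{Y}{K})(X'),
\]
the images of $f'_1$ and $f'_2$ under the composite
$\Hom_\cC(X',Y)\to(\quot{\frh_\cC(Y)}{K})(X')\to(\quot{Y}{K})(X')$
therefore coincide. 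The first map here is just the quotient by $K$ applied sectionwise (Lemma \ref{lem:quot_presheaf}), so $f'_1$ and $f'_2$ already have the same image in $(\quot{\frh_\cC(Y)}{K})(X')=\quot{\Hom_\cC(X',Y)}{K}$ \emph{provided} the sheafification map $(\quot{\frh_\cC(Y)}{K})(X')\to(\quot{Y}{K})(X')$ is injective on the relevant classes; and then $f'_2=g\circ f'_1$ for some $g\in K$ is exactly what it means for the classes of $f'_1,f'_2$ in $\quot{\Hom_\cC(X',Y)}{K}$ to agree.

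So the crux is the injectivity. Here I would use the explicit colimit formula for $a_J$ at $X'$: write
\[
(\quot{Y}{K})(X')\cong\varinjlim_{(m':X''\to X')\in\cT(J)}
\quot{[\Hom_\cC(X'',Y)\rightrightarrows\Hom_{\Presh(\cC)}(\frh_\cC(X'')\times_{\frh_\cC(X')}\frh_\cC(X''),Y)]}{K},
\]
with the level $m'=\id_{X'}$ giving $\quot{\Hom_\cC(X',Y)}{K}$. Exactly as in Lemma \ref{lem:omega_faithful1}, since $\cC$ is an $E$-category the transition maps in this colimit are injective, so an element of $\quot{\Hom_\cC(X',Y)}{K}$ maps to $0$-equivalent elements in $(\quot{Y}{K})(X')$ only if they already agree one level up, and chasing back down the $E$-category property forces them to agree at the $\id_{X'}$ level. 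Thus the classes of $f'_1$ and $f'_2$ in $\quot{\Hom_\cC(X',Y)}{K}$ coincide, which yields $g\in K$ with $f'_2=g\circ f'_1$.

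The main obstacle I anticipate is bookkeeping the passage between $\wt{\cC}^\str$-morphisms, $\Shv(\cC,J)$-morphisms, and sections of $\quot{Y}{K}$ cleanly — in particular making sure that ``the two representatives give the same section of $\quot{Y}{K}$ over $X'$'' really is forced by the commutativity of \eqref{diagram:pentagon} for each (as opposed to only after further localization), and that the $E$-category argument of Lemma \ref{lem:omega_faithful1} applies verbatim to the quotiented colimit \eqref{eq:aJ1} rather than to $\frh_\cC(Y)$ itself. Once those identifications are in place, the conclusion is immediate from the definition of the quotient presheaf.
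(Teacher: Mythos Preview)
Your approach is essentially the same as the paper's: both reduce to showing that the sheafification map $(\quot{\frh_\cC(Y)}{K})(X')\to(\quot{Y}{K})(X')$ is injective, and both invoke the $E$-category hypothesis together with the explicit colimit description of $a_J$. The paper phrases the key step slightly differently: rather than arguing that the transition maps in the quotiented colimit are injective, it observes directly that $K$ acts \emph{freely} on $\Hom_\cC(Z,Y)$ for every $Z$ (an immediate consequence of every morphism in $\cC$ being an epimorphism), and this freeness is what makes the passage to $K$-quotients preserve injectivity --- precisely the point you flagged as the anticipated bookkeeping obstacle in your last paragraph.
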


\begin{proof}
We have $q_{Y,K} \circ\imath^\str(f'_1) 
= q_{Y,K} \circ \imath^\str(f'_2)$.
The two morphisms $q_{Y,K} \circ\imath(f'_1)$
and $q_{Y,K} \circ \imath(f'_2)$ give two
elements of $(\quot{Y}{K})(X')$ which belong
to the image of the map 
\begin{equation} \label{eq:YKX'}
(K \backslash \frh_\cC(Y))(X') \to (\quot{Y}{K})(X').
\end{equation}
Since $\cC$ is an $E$-category, it follows that
the group $K$ acts freely on the set
$(\frh_\cC(Y))(Z)$ for any object $Z$ of $\cC$.
Hence it follows from the description of the
sheafification functor $a_J$ given in
\eqref{eq:aJ1} that the map \eqref{eq:YKX'} is injective.
This shows that the two elements
of $K\backslash (\frh_\cC(Y))(X')$ 
given by $f'_1$ and $f'_2$ coincide.
Hence there exists an element $g \in K$
satisfying $f'_2 = g \circ f'_1$.
This completes the proof.
\end{proof}

\begin{lem}
\label{lem:any representative}
Let $f\colon (X,G) \to (Y,H)$ be a morphism which belongs to $\wt{\cT}^\str$.
Then for any representative 
$(f; m', f')$, the morphism $f'$
belongs to $\cT(J)$.
\end{lem}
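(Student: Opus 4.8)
The plan is to show that if $(f;m,f')$ is a representative with $f'\in\cT(J)$ (such a one exists because $f\in\wt{\cT}^\str$) and $(f;m',f'_*)$ is an arbitrary representative, then $f'_*$ also belongs to $\cT(J)$. The first step is to reduce to the case of a \emph{common} second term: given the two representatives $X\xleftarrow{m}X'\xto{f'}Y$ and $X\xleftarrow{m'}X''\xto{f'_*}Y$, I would form a third object $X'''$ dominating both $X'$ and $X''$ over $X$. Concretely, since $(\cC,J)$ is a $B$-site with $A$-topology, the collection $\cT(J)$ has good stability properties (it is a semi-localizing collection), so there is an object $X'''$ and morphisms $n:X'''\to X'$, $n':X'''\to X''$ in $\cT(J)$ with $m\circ n=m'\circ n'$. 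Composing, $(f;m\circ n, f'\circ n)$ and $(f;m'\circ n', f'_*\circ n')$ are both representatives of $f$ with common first term $m\circ n=m'\circ n'\in\cT(J)$ and common second term $X'''$.

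The second step applies Lemma~\ref{lem:exists_g}: since these two representatives share the same $X'''$ and the same $m\circ n$, there is $g\in H$ (the automorphism group in the target $(Y,H)$) with $f'_*\circ n' = g\circ (f'\circ n)$. Now $g$ is an automorphism of $Y$, hence $g\in\cT(J)$ by the axioms of an $A$-topology (automorphisms lie in $\cT(J)$, or at least $g$ is invertible so $g\circ(f'\circ n)\in\cT(J)$ iff $f'\circ n\in\cT(J)$). Since $f'\in\cT(J)$ and $n\in\cT(J)$, the composite $f'\circ n\in\cT(J)$ by closure of $\cT(J)$ under composition; therefore $f'_*\circ n' = g\circ f'\circ n\in\cT(J)$.

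The third step is the key descent: we know $f'_*\circ n'\in\cT(J)$ and $n'\in\cT(J)$, and we want to conclude $f'_*\in\cT(J)$. This is exactly the ``right cancellation'' property of $\cT(J)$ for a $B$-site: if a composite and the first (right) factor both lie in $\cT(J)$, then so does the second factor. I expect this cancellation property to be the main obstacle — it is the only place where we genuinely use that $(\cC,J)$ is a $B$-site rather than merely an $A$-topology, and one must check it is among the stated axioms (or derive it from the defining properties of $\cT(J)$ as a semi-localizing collection in \cite{Grids}). If that property is available, the proof concludes immediately: $f'_*\in\cT(J)$, as desired. A minor subtlety to handle along the way is that ``representative'' was defined with the commutativity of the pentagon~\eqref{diagram:pentagon}, so I should verify that composing a representative with $n\in\cT(J)$ on the right yields again a valid representative — this is routine diagram-chasing using functoriality of $\imath^\str$ and the fact that $m\circ n\in\cT(J)$.
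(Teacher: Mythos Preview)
Your proposal is correct and follows essentially the same approach as the paper's proof: both dominate the two representatives by a common refinement via the semi-localizing property, invoke Lemma~\ref{lem:exists_g} to relate the two resulting morphisms to $Y$ by an element of $H$, and then use the cancellation property $\cT(J)=\wh{\cT}$ of a $B$-site to conclude. One small remark: semi-localizing only directly yields \emph{one} of $n,n'$ in $\cT(J)$, but this suffices (you only need $n\in\cT(J)$ to get $f'\circ n\in\cT(J)$, and the final cancellation step $f'_*\circ n'\in\cT(J)\Rightarrow f'_*\in\cT(J)$ does not require $n'\in\cT(J)$); alternatively, Condition~(3) of a $B$-site gives that both factors of a composite in $\cT(J)$ lie in $\cT(J)$, so $n\in\cT(J)$ follows from $m\circ n = m'\circ n'\in\cT(J)$ anyway.
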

\begin{proof}
Let $(f; m'\colon X'_1 \to X, f'\colon X'_1 \to Y)$ 
be a representative 
where $f' \in \cT(J)$.
Take another representative
$(f;m''\colon X'_2 \to X, f''\colon  X_2' \to Y)$.

We can complete the diagram 
$X_1' \xto{m'} X \xleftarrow{m''} X_2'$ 
to a square
\[
\begin{CD}
X_3' @>{g}>>   X_2'
\\
@V{h}VV     @VV{m''}V
\\
X_1'  @>{m'}>> X
\end{CD}
\]
with $h \in \cT(J)$ 
using Property (3) of semi-localizing 
collection. 
Note that both $(f;m'\circ h,f'\circ h)$ and 
$(f;m''\circ g, f''\circ g)$ are representatives
of $f$. It follows from Lemma \ref{lem:exists_g} that
we have $\sigma \circ f' \circ h = f'' \circ g$
for some $\sigma \in H$.
The morphism $\sigma$ belongs to $\cT(J)$ since
it is an isomorphism.
Hence the composite $\sigma \circ f' \circ h
= f'' \circ g$ belongs to $\cT(J)$.
By using that $\cT(J) = \wh{\cT}$,
we see that $f'' \in \cT(J)$.
This proves the claim.
\end{proof}

\begin{lem}
\label{lem:tilde semi-localizing}
The set $\wt{\cT}^\str$ is semi-localizing.
Moreover, we have $\wh{\wt{\cT}^\str} = \wt{\cT}^\str$.
\end{lem}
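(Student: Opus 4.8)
The plan is to check the axioms of a semi-localizing collection for $\wt{\cT}^\str$ one at a time, in each case lifting the morphisms of $\wt{\cC}^\str$ that occur to representatives in $\cC$ (Lemma~\ref{lem:pentagon}) and reducing to the corresponding fact about $\cT(J)$, which is available because $(\cC,J)$ is a $B$-site with $A$-topology. Three facts make this reduction mechanical: Lemma~\ref{lem:pentagon} produces a representative of every morphism; Lemma~\ref{lem:exists_g} says that two representatives of one morphism sharing the left leg $X\xleftarrow{m}X'$ differ in the right leg only by an element of the target group; and Lemma~\ref{lem:any representative} says that once $f\in\wt{\cT}^\str$, \emph{every} representative $(f;m',f')$ has $f'\in\cT(J)$, so membership in $\wt{\cT}^\str$ may be tested on, and never depends on, the chosen representative.

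The identity $\id_{(X,H)}$ has the representative $X\xleftarrow{\id_X}X\xto{\id_X}X$ whose right leg lies in $\cT(J)$, so $\id_{(X,H)}\in\wt{\cT}^\str$; applying this to an isomorphism and to its inverse, together with the facts that $\cT(J)$ contains the isomorphisms and is closed under the cancellation generating $\wh{(-)}$, shows that every isomorphism of $\wt{\cC}^\str$ lies in $\wt{\cT}^\str$. For closure under composition, take $f\colon(X,H)\to(Y,K)$ and $g\colon(Y,K)\to(Z,L)$ in $\wt{\cT}^\str$ with representatives $(f;m_1\colon X'\to X,\,f'\colon X'\to Y)$ and $(g;m_2\colon Y'\to Y,\,g'\colon Y'\to Z)$, where $f',g'\in\cT(J)$. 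Since $f',m_2\in\cT(J)$, the diagram-completion property of $\cT(J)$ produces an apex $W$ with maps $p\colon W\to X'$ and $q\colon W\to Y'$ in $\cT(J)$ satisfying $f'\circ p=m_2\circ q$. Then $m_1\circ p,\,g'\circ q\in\cT(J)$, and chasing the two instances of the pentagon~\eqref{diagram:pentagon} for $f$ and for $g$ against this square shows that $(g\circ f;\,m_1\circ p,\,g'\circ q)$ represents $g\circ f$; hence $g\circ f\in\wt{\cT}^\str$.

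The substantive axiom is Property~(3), the diagram-completion property for $\wt{\cT}^\str$ itself: given $u\colon(X_1,G_1)\to(X,G)$ in $\wt{\cC}^\str$ and $v\colon(X_2,G_2)\to(X,G)$ in $\wt{\cT}^\str$, one must complete this cospan in $\wt{\cC}^\str$ to a commutative square whose leg over $(X_1,G_1)$ lies in $\wt{\cT}^\str$. I would choose representatives $(u;\,m\colon U\to X_1,\,a\colon U\to X)$ and $(v;\,m'\colon V\to X_2,\,b\colon V\to X)$ with $b\in\cT(J)$, apply Property~(3) for $\cT(J)$ to $U\xto{a}X\xleftarrow{b}V$ to obtain $W$ with $q\colon W\to U$ in $\cT(J)$, $p\colon W\to V$, and $a\circ q=b\circ p$, and then take $\imath^\str(W)$ with the maps to $(X_1,G_1)$ and $(X_2,G_2)$ induced by $m\circ q$ and $m'\circ p$. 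The leg over $(X_1,G_1)$ has the representative $W\xleftarrow{\id_W}W\xto{m\circ q}X_1$ with $m\circ q\in\cT(J)$, so it lies in $\wt{\cT}^\str$, and commutativity of the square in $\wt{\cC}^\str$ follows from the pentagons for $u$ and $v$ together with $a\circ q=b\circ p$. The bookkeeping needed to see that these diagrams commute in $\wt{\cC}^\str$ — i.e.\ after passage to the quotient sheaves $\quot{X_i}{G_i}$ — rather than merely in $\cC$ is where Lemma~\ref{lem:exists_g} is used to pin down the residual ambiguity, and I expect it to be the \textbf{main obstacle}.

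Finally, to prove $\wh{\wt{\cT}^\str}=\wt{\cT}^\str$: the inclusion $\wt{\cT}^\str\subset\wh{\wt{\cT}^\str}$ is formal, and for the reverse one shows that $\wt{\cT}^\str$ is already closed under the cancellation rule generating $\wh{(-)}$. Given composable morphisms of $\wt{\cC}^\str$ for which the relevant composite and cofactor lie in $\wt{\cT}^\str$, pass to representatives, form a common apex exactly as in the composition step, and invoke $\wh{\cT(J)}=\cT(J)$ — valid since $(\cC,J)$ is a $B$-site — to conclude that the right leg of a representative of the remaining morphism lies in $\cT(J)$; Lemma~\ref{lem:any representative} then promotes this to membership in $\wt{\cT}^\str$. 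This is the computation in the proof of Lemma~\ref{lem:any representative} run in the opposite direction, so it should need nothing new beyond once more controlling the target-group ambiguity via Lemma~\ref{lem:exists_g}.
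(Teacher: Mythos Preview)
Your proposal is correct and follows essentially the same route as the paper: lift to representatives via Lemma~\ref{lem:pentagon}, build a common apex in $\cC$ using the semi-localizing property of $\cT(J)$, and read off membership in $\wt{\cT}^\str$ from the right legs, invoking $\wh{\cT(J)}=\cT(J)$ for the saturation statement. The one point where you overestimate the difficulty is the commutativity of the square in Property~(3): once $a\circ q=b\circ p$ holds in $\cC$, both composites $\imath^\str(W)\to(X_i,G_i)\to(X,G)$ are literally $q_{X,G}\circ\imath^\str(a\circ q)=q_{X,G}\circ\imath^\str(b\circ p)$ by the pentagon identities, so Lemma~\ref{lem:exists_g} is not needed there and the paper does not invoke it.
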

\begin{proof}
Let $(X,G)$ be an object of $\wt{\cC}^\str$ and 
let $\id_{(X,G)}$ denote the identity morphism.
Then $(\id_{(X,G)};\id_X, \id_X)$ is a representative
of $\id_{(X,G)}$.  Since $\id_X$ belongs to 
$\cT(J)$, it follows that $\id_{(X,G)}$ belongs to 
$\wt{\cT}^\str$.

Let $f\colon (X,G) \to (Y,H)$ and 
$g\colon (Y,H) \to (Z,K)$ be two morphisms
in $\wt{\cT}^\str$.
Let $(f; m'\colon X' \to X, f'\colon X' \to Y)$ 
be a representative of $f$.
Let $(g; n'\colon Y' \to Y, g'\colon Y' \to Z)$ 
be a representative of $g$.
We can complete the diagram 
$X' \xto{f} Y \xleftarrow{n'} Y'$
to a commutative square
\[
\begin{CD}
Z'  @>{f''}>> Y'    
\\
@V{n''}VV   @V{n'}VV
\\
X' @>{f'}>> Y
\end{CD}
\]
with $n'' \in \cT(J)$
using Property (3) of semi-localizing collection 
(\cite[Def 2.3.1]{Grids}).  
Then 
$(g\circ f; m' \circ n'', g' \circ f'')$ is a representative
of the composite $g\circ f$.
Now suppose that $f$ and $g$ belong to $\wt{\cT}^\str$.
It follows from Lemma \ref{lem:any representative} that
both $f'$ and $g'$ belong to $\cT(J)$.
Then using Property (3) of B-site
(\cite[Def 4.2.1]{Grids}), we also have $f'' \in \cT(J)$. 
Since $g' \circ f'' \in \cT(J)$, the composite
$g \circ f$ 
hence belongs to $\wt{\cT}^\str$.
So $\wt{\cT}^\str$ is closed under composition.
Conversely, suppose that $g \circ f$ belongs to 
$\wt{\cT}^\str$. 
It follows from Lemma \ref{lem:any representative} that
$g' \circ f''$ belongs to $\cT(J)$.
Since $\wh{\cT(J)} = \cT(J)$, we have $g' \in \cT(J)$. 
Hence $g$ belongs to $\wt{\cT}^\str$.
This proves that $\wh{\wt{\cT}^\str} = \wt{\cT}^\str$.

It remains to check that Property (3) holds for $\wt{\cT}^\str$.
Given morphisms $f\colon (X, G) \to (Y,H)$ and 
$g\colon (Z,K) \to (Y,H)$, suppose $f \in \wt{\cT}^\str$.
Take representatives 
$(f; m', f')$ and $(g; m'', f'')$ with $f' \in \cT(J)$.
We can complete the diagram 
$X' \xto{f'} Y \xleftarrow{f''} Y'$ 
to a commutative square
\[
\begin{CD}
Z' @>{\ell}>>   Y'
\\
@V{h}VV @VV{f''}V
\\
X' @>{f'}>> Y
\end{CD}
\]
with $\ell \in \cT(J)$.
We obtain a morphism $i\colon \imath^\str(Z') \to (Z,K)$
as the composite 
$q_{Z,K} \circ 
\imath^\str(m'' \circ \ell)$.
We can take a representative 
$(i; \id_{Z'}, m'' \circ \ell)$.
As $m'' \circ \ell \in \cT(J)$, we have that 
$i \in \wt{\cT}^\str$.
Then we have the desired commutative square: 
\[
\begin{CD}
Z' @>{i}>>  (Z,K) \\
@VVV   @VV{g}V   \\
(X,G) @>>{f}> (Y,H).
\end{CD}
\]
\end{proof}

\subsection{ }
\label{sec:cT}
Let $(\cC, J)$ be a $B$-site. 
Let $\wt{\cC}$ and $\wt{\cC}^\str$ be as in Section \ref{sec:Ctil}.
We let $\wt{\cT}$ denote the set of morphisms
$f\colon F_1 \to F_2$ in $\wt{\cC}$ satisfying the following conditions:
There exist a morphism $f'\colon (X_1,H_1) \to (X_2,H_2)$ in $\wt{\cC}^\str$
and isomorphisms $\alpha_i \colon  \quot{X_i}{H_i} \xto{\cong} F_i$ for $i=1,2$
such that $f'$ belongs to $\wt{\cT}^\str$ and that 
the morphism $f'$ regarded as a morphism $\quot{X_1}{H_1} \to \quot{X_2}{H_2}$
in $\wt{\cC}$ is equal to $\alpha_2^{-1} \circ f  \circ \alpha_1$.
It follows from Lemma \ref{lem:tilde semi-localizing} that
$\wt{\cT}$ is semi-localizing.
We show that $(\wt{\cC}, J_{\wt{\cT}})$ 
is a $B$-site.
\begin{prop} \label{prop:Ctil_Bsite}
Let $(\cC, J)$ be a $B$-site.
Then the site $(\wt{\cC}, J_{\wt{\cT}})$
is a B-site.
\end{prop}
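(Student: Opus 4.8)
The plan is to verify directly the conditions of \cite[Def 4.2.1]{Grids} defining a $B$-site for the pair $(\wt{\cC}, J_{\wt{\cT}})$. Since the definition of $\wt{\cT}$ in Section \ref{sec:cT} expresses it, via the equivalence $\wt{\cC}^\str \to \wt{\cC}$ and isomorphisms of objects, in terms of $\wt{\cT}^\str$, the first step is to reduce everything to the pair $(\wt{\cC}^\str, J_{\wt{\cT}^\str})$ and to transport along that equivalence only at the end. Lemma \ref{lem:tilde semi-localizing} already provides that $\wt{\cT}^\str$ is a semi-localizing collection with $\wh{\wt{\cT}^\str} = \wt{\cT}^\str$, so $J_{\wt{\cT}^\str}$ is well defined, and what remains are the purely categorical $B$-site conditions: that $\wt{\cC}^\str$ is an $E$-category, that Property (3) of a $B$-site holds for $\wt{\cT}^\str$, and (if it is part of the definition) finiteness of hom-sets. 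The uniform device throughout will be to replace a morphism $f$ of $\wt{\cC}^\str$ by a representative $(f;m,f')$ in $\cC$, which exists by Lemma \ref{lem:pentagon}, and to deduce the property in question for $f$ from the corresponding property of $f'$ in the $B$-site $(\cC,J)$, using that $\imath^\str(m)$ for $m\in\cT(J)$ and the canonical maps $q_{X,H}$ are epimorphisms in $\Shv(\cC,J)$.

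For the $E$-category axiom I would take a morphism $f\colon (X,H)\to (Y,K)$ with a representative $X\xleftarrow{m}X'\xto{f'}Y$, use the commutative diagram \eqref{diagram:pentagon} to write $q_{Y,K}\circ\imath^\str(f')=f\circ q_{X,H}\circ\imath^\str(m)$ with each of $\imath^\str(m)$, $q_{X,H}$, $q_{Y,K}$ epimorphic, and conclude the required cancellation property for $f$ from that for $f'$, which holds because $\cC$ is an $E$-category. The related fact that $\Aut$ acts freely on the representable sections --- which is exactly what makes $\cC$ an $E$-category --- has already been isolated and exploited in Lemmas \ref{lem:exists_g} and \ref{lem:any representative}, and this same input, applied to the sections of $\quot{Y}{K}$, gives the remaining half of the $E$-category condition for $\wt{\cC}^\str$. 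If finiteness of hom-sets is part of the definition, it is inherited: by the colimit formula for $(\quot{Y}{K})(X)$ in the proof of Lemma \ref{lem:pentagon}, every hom-set of $\wt{\cC}^\str$ is a filtered colimit with injective transition maps of quotients of subsets of hom-sets of $\cC$, hence finite.

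The hard part will be Property (3) of a $B$-site. The weak-pullback content of Property (3) for the semi-localizing collection $\wt{\cT}^\str$ was already carried out at the end of the proof of Lemma \ref{lem:tilde semi-localizing}; what remains is to see that, in the completing square produced there for a cospan $(X,G)\xto{f}(Y,H)\xleftarrow{g}(Z,K)$ with $f\in\wt{\cT}^\str$, the side that should lie in $\wt{\cT}^\str$ really does so, because it descends from a genuine weak-pullback square of representatives in $\cC$ to which Property (3) for the $B$-site $(\cC,J)$ applies. Concretely I would: choose a representative $(f;m',f')$ with $f'\in\cT(J)$, which is legitimate by Lemma \ref{lem:any representative}, and any representative $(g;m'',f'')$; complete $X'\xto{f'}Y\xleftarrow{f''}Y'$ in $\cC$ using Property (3) of the semi-localizing collection $\cT(J)$; check that the two composites around the resulting square coincide after composition with the quotient maps $q_{-,-}$, the only discrepancy being, by Lemma \ref{lem:exists_g}, a single element $\sigma\in H$, which is absorbed by replacing $f'$ with $\sigma\circ f'$; and finally verify that the subgroup attached to the new vertex is moderate, which holds because composing $Z'\to Z$ with a morphism of $\cT(J)$ that trivializes $K$ again lies in $\cT(J)$, so that subgroup sits inside the automorphisms of $Z'$ over that target. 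I expect the bookkeeping of mutually compatible representatives, together with checking that the square built in $\wt{\cC}^\str$ is the image of the one built in $\cC$, to be the delicate point. Once that is settled, $(\wt{\cC}^\str, J_{\wt{\cT}^\str})$ satisfies the $B$-site axioms, and transporting along the equivalence $\wt{\cC}^\str\to\wt{\cC}$ yields the proposition; there is nothing separate to check for the topology, since $J_{\wt{\cT}}$ is by construction the topology attached to the semi-localizing collection $\wt{\cT}$.
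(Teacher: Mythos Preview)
Your proposal misidentifies what Property~(3) in \cite[Def.~4.2.1]{Grids} actually says. It is not a weak-pullback condition on cospans; it is the two-out-of-three property for the distinguished class of morphisms: for any diagram $Z \xto{g} Y \xto{f} X$, the composite $f\circ g$ belongs to $\wt{\cT}$ if and only if both $f$ and $g$ do. This is precisely what the paper isolates as Lemma~\ref{lem:tilde cond 3}. Closure under composition is already part of being semi-localizing, and the implication ``composite in $\wt{\cT}$ $\Rightarrow$ outer factor in $\wt{\cT}$'' is the statement $\wh{\wt{\cT}^\str}=\wt{\cT}^\str$ proved in Lemma~\ref{lem:tilde semi-localizing}. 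What your proposal never addresses is the remaining implication: if $f\circ g\in\wt{\cT}$ then the \emph{inner} factor $g$ lies in $\wt{\cT}$. The paper handles this by taking representatives $(f;m',f')$ and $(g;n',g')$ and the auxiliary square from the composition argument, applying Property~(3) of the $B$-site $(\cC,J)$ to the composite representative to force $f''\in\cT(J)$, and then reading $f'\in\cT(J)$ off the commutative square $n'\circ f'' = f'\circ n''$.

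The long paragraph you devote to ``Property~(3) of a $B$-site'' is instead re-arguing the completing-square clause of the semi-localizing definition, which Lemma~\ref{lem:tilde semi-localizing} already established, so that effort is misplaced. Your $E$-category sketch is essentially the paper's Lemma~\ref{lem:tilde E-category} and is fine. The finiteness-of-hom-sets remark is irrelevant here: that is not one of the $B$-site axioms (it enters later, in the cardinality hypotheses of \cite[5.8.1]{Grids}). Finally, the first $B$-site axiom (that the topology is an $A$-topology) is automatic since $J_{\wt{\cT}}$ is by construction the $A$-topology attached to $\wt{\cT}$; the paper just says so.
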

\begin{proof}
We need to check the three conditions
in \cite[Def.~4.2.1]{Grids}.
The topology
$J_{\wt{\cT}}$ is by 
definition an $A$-topology.
It is shown that $\wt{\cC}$ 
is an $E$-category in
Lemma~\ref{lem:tilde E-category}.
We show that the third condition 
in Lemma~\ref{lem:tilde cond 3}
holds below.
\end{proof}

\begin{lem}
\label{lem:tilde E-category}
The category $\wt{\cC}$ is an $E$-category,
i.e., all the morphisms are epimorphisms.
\end{lem}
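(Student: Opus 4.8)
The plan is to deduce this from the fact that $(\cC,J)$ is a $B$-site — in particular that $\cC$ is an $E$-category — using the presentation of objects and morphisms of $\wt{\cC}$ established above. Since $\wt{\cC}$ is by construction a full subcategory of $\Shv(\cC,J)$, it suffices to show that every morphism of $\wt{\cC}$ is an epimorphism already in $\Shv(\cC,J)$, because an epimorphism of the ambient category whose source and target lie in a full subcategory remains an epimorphism there.

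So let $f\colon\quot{X_1}{H_1}\to\quot{X_2}{H_2}$ be a morphism of $\wt{\cC}$. By Lemma~\ref{lem:quot_FCd} the morphism $q_{X_1,H_1}\colon\imath(X_1)\to\quot{X_1}{H_1}$ exhibits its target as a quotient object of $\imath(X_1)$, hence is an epimorphism; so it is enough to prove that $f\circ q_{X_1,H_1}\colon\imath(X_1)\to\quot{X_2}{H_2}$ is an epimorphism. Identifying $\wt{\cC}$ with $\wt{\cC}^\str$, choose a representative $(f;m,f')$ of $f$ in $\cC$ (Lemma~\ref{lem:pentagon}): thus $m\colon X'\to X_1$ lies in $\cT(J)$, $f'\colon X'\to X_2$, and the commutative pentagon \eqref{diagram:pentagon} gives
\[
f\circ q_{X_1,H_1}\circ\imath(m)\;=\;q_{X_2,H_2}\circ\imath(f').
\]
Since $q_{X_2,H_2}$ is an epimorphism (again by Lemma~\ref{lem:quot_FCd}), once we know that $\imath(f')\colon\imath(X')\to\imath(X_2)$ is an epimorphism of sheaves, the right-hand side above is an epimorphism; then so is the left-hand side, whence $f\circ q_{X_1,H_1}$, whence $f$, using repeatedly that a left factor of an epimorphism is an epimorphism.

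This last point — that $\imath$ sends the morphism $f'$, indeed any morphism of $\cC$, to an epimorphism of $\Shv(\cC,J)$ — is where the $B$-site hypothesis is genuinely used, and I expect it to be the main obstacle. Concretely, one must show that $f'$ generates a $J$-covering sieve on $X_2$; this should follow from $J$ being an $A$-topology on the $E$-category $\cC$, since there every morphism of $\cC$ is an epimorphism and this is precisely the condition guaranteeing that it generates a covering sieve. (If the cleanest route is to avoid a statement about $\imath$ of an arbitrary morphism, it suffices to prove the weaker assertion that the image of $\imath(f')$ together with its $H_2$-translates generates $\imath(X_2)$, so that $q_{X_2,H_2}\circ\imath(f')$ is already an epimorphism; and one can equivalently carry the argument out inside $\wt{\cC}$, testing against objects $\quot{X_3}{H_3}$ and invoking Lemma~\ref{lem:exists_g}, but it reduces to the same point.) Granting it, $q_{X_2,H_2}\circ\imath(f')$ is an epimorphism and hence so is $f$, which proves the lemma.
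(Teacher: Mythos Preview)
The main gap is the claim that $\imath(f')\colon\imath(X')\to\imath(X_2)$ is an epimorphism in $\Shv(\cC,J)$ for an \emph{arbitrary} morphism $f'$ of $\cC$. This would amount to the sieve $R_{f'}$ generated by $f'$ being $J$-covering; but for the $A$-topology on a $B$-site a sieve is covering precisely when it contains some morphism of $\cT(J)$, and nothing forces such a morphism to factor through $f'$ when $f'\notin\cT(J)$. A concrete counterexample: take $\cC$ with two objects $X,Y$ and a single nonidentity arrow $f\colon X\to Y$, and set $\cT=\{\id_X,\id_Y\}$ so that $J$ is the trivial topology. Then $(\cC,J)$ is a $B$-site, $\Shv(\cC,J)=\Presh(\cC)$, and $\imath(f)=\frh_\cC(f)$ is not an epimorphism of presheaves (its component at $Y$ is the map $\emptyset\to\{\id_Y\}$). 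Yet $\wt{\cC}\simeq\cC$ is trivially an $E$-category, so the lemma holds; your route through $\Shv$ is simply attempting to prove too strong a statement.

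Your parenthetical alternative --- work directly in $\wt{\cC}^\str$ by testing against a third object and invoking Lemma~\ref{lem:exists_g} --- is exactly what the paper does, and it does \emph{not} reduce to the same problematic point. There one takes $g_1,g_2\colon(Y,H)\to(Z,K)$ with $g_1\circ f=g_2\circ f$, chooses representatives sharing a common $\cT(J)$-part $g'$, and uses Lemma~\ref{lem:exists_g} together with the fact that morphisms in $\cC$ (not in $\Shv$!) are epimorphisms to get $g_1''=k\circ g_2''$ for some $k\in K$. The conclusion $g_1=g_2$ then only requires that $q_{Y,H}\circ\imath(g')$ be an epimorphism, and since $g'\in\cT(J)$ this follows immediately from the $A$-topology. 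Your argument demands the analogous statement for the arbitrary morphism $f'$, and that is where it breaks.
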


\begin{proof}
%
It suffices to prove that $\wt{\cC}^\str$ is an $E$-category.
Let us look at the following 
diagram~\eqref{diagram:trapezoid} in $\wt{\cC}^\str$.
We are given morphisms
$f, g_1, g_2$ such that 
$g_1 \circ f=g_2 \circ f$.
We want to show that $g_1=g_2$.
We have taken representatives
$(f; f', f'')$, $(g_1;g', g_1'')$ and 
$(g_2;g', g_2'')$.   Note that we can 
take representatives so that we 
have the same $g'$ for $g_1$ and $g_2$.
We complete the diagram
$B \xto{f''} Y \xleftarrow{g'} A$
to the square $g' \circ i= f'' \circ h$
with $h \in \cT(J)$.

We then obtain representatives
$(g_i\circ f; f' \circ h, i \circ g_i'')$
for $i=1,2$.
As $g_1 \circ f=g_2 \circ f$,
it follows from Lemma \ref{lem:exists_g} that
there exists $k \in K$ satisfying
$g_1' \circ i = k \circ g_2' \circ i$.
Since $i$ is an epimorphism,  
we have $g_1' = k \circ g_2'$.
Note that 
$(g_1; f'' \circ h, g_1'' \circ i)$
and $(g_2; f'' \circ h, k \circ g_2'' \circ i)$
are representatives.
Hence we obtain 
$g_1=g_2$ as desired.
\begin{equation} 
\label{diagram:trapezoid}
\xymatrix{
\imath^\str (C) 
\ar[dr]^{\imath^\str(i)} 
\ar[d]_{\imath^\str(h)} & 
\\
\imath^\str(B) 
\ar[d]_{\imath^\str(f')}
\ar[dr]_{\imath^\str(f'')}
&
\imath^\str(A) 
\ar[d]_{\imath^\str(g')}
\ar@<-.5ex>[dr]_{\imath^\str(g_1'')}
\ar@<.5ex>[dr]^{\imath^\str(g_2'')}
\\
\imath^\str(X)
\ar[d]_{q_{X,G}} 
& 
\imath^\str(Y)
\ar[d]_{q_{Y,H}}
& 
\imath^\str(Z)
\ar[d]_{q_{Z,K}}
\\
(X,G)
\ar[r]_{f}
&
(Y,H)
\ar@<-.5ex>[r]_{g_1}
\ar@<.5ex>[r]^{g_2}
&
(Z,K)
}
\end{equation}
\end{proof}

\begin{lem}
\label{lem:tilde cond 3}
The condition (3) in \cite[Def.~4.2.1]{Grids} 
holds:
For any diagram
$Z\backslash K \xto{f} Y \backslash H \xto{g} X \backslash G$ in $\wt{\cC}$,
the composite $g \circ f$ belongs
to $\wt{\cT}$ if and 
only if both $f$ and $g$ belong to
$\wt{\cT}$.
\end{lem}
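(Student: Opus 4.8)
The plan is to transport the statement --- which concerns the collection $\wt{\cT}$ in $\wt{\cC}$ --- to the analogous statement about $\wt{\cT}^\str$ in $\wt{\cC}^\str$, and then to prove the latter using representatives of morphisms together with Property~(3) of the $B$-site $(\cC,J)$. First I would observe that the equivalence $(X,H)\mapsto\quot{X}{H}$ of Section~\ref{sec:Ctil} restricts to an equivalence $\wt{\cC}^\str\to\wt{\cC}$ under which $\wt{\cT}$ is exactly the image of $\wt{\cT}^\str$; equivalently, a morphism $\psi$ of $\wt{\cC}^\str$ belongs to $\wt{\cT}^\str$ if and only if the corresponding morphism of $\wt{\cC}$ belongs to $\wt{\cT}$. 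The one point needing proof is that every isomorphism $\sigma$ of $\wt{\cC}^\str$ lies in $\wt{\cT}^\str$: since $\sigma\circ\sigma^{-1}=\id\in\wt{\cT}^\str$ and $\wh{\wt{\cT}^\str}=\wt{\cT}^\str$ (Lemma~\ref{lem:tilde semi-localizing}), we get $\sigma\in\wt{\cT}^\str$; combined with closure of $\wt{\cT}^\str$ under composition and the definition of $\wt{\cT}$ in Section~\ref{sec:cT}, this gives the stated equivalence. Fixing preimages in $\wt{\cC}^\str$ of $X$, $Y$, $Z$ and lifting $f$, $g$ to composable morphisms $\tilde f$, $\tilde g$ of $\wt{\cC}^\str$, the lemma becomes: $\tilde f\circ\tilde g\in\wt{\cT}^\str$ if and only if both $\tilde f,\tilde g\in\wt{\cT}^\str$.

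The implication ``$\Leftarrow$'' is immediate from the closure of $\wt{\cT}^\str$ under composition, part of Lemma~\ref{lem:tilde semi-localizing}. For ``$\Rightarrow$'', assume $\tilde f\circ\tilde g\in\wt{\cT}^\str$. By Lemma~\ref{lem:pentagon} I would fix representatives $(\tilde g;a,b)$ of $\tilde g$ and $(\tilde f;c,d)$ of $\tilde f$, so that $a,c\in\cT(J)$; since $b$ and $c$ share a target and $c\in\cT(J)$, Property~(3) of semi-localizing collections yields a commutative square with one leg $p\in\cT(J)$ and the other leg $q$ satisfying $b\circ p=c\circ q$, and then --- exactly as in the composition step of the proof of Lemma~\ref{lem:tilde semi-localizing} --- the triple $(\tilde f\circ\tilde g;a\circ p,d\circ q)$ is a representative of the composite. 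By Lemma~\ref{lem:any representative}, $d\circ q\in\cT(J)$. Now Property~(3) of the $B$-site $(\cC,J)$ (the two-out-of-three property for $\cT(J)$) forces $d\in\cT(J)$ and $q\in\cT(J)$. The first, via the representative $(\tilde f;c,d)$, shows $\tilde f\in\wt{\cT}^\str$; the second gives $b\circ p=c\circ q\in\cT(J)$, hence $b\in\cT(J)$ by Property~(3) once more, so $(\tilde g;a,b)$ shows $\tilde g\in\wt{\cT}^\str$.

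I expect the main obstacle to be the ``$\Rightarrow$'' direction. Lemma~\ref{lem:tilde semi-localizing}, through $\wh{\wt{\cT}^\str}=\wt{\cT}^\str$, only yields $\tilde f\in\wt{\cT}^\str$; one still needs the ``inner factor'' $\tilde g\in\wt{\cT}^\str$, and this is exactly where the full two-out-of-three property of $\cT(J)$ in $(\cC,J)$ is used --- the weaker fact $\wh{\cT(J)}=\cT(J)$ would not yield the conclusion $q\in\cT(J)$ needed above. Making this work requires combining that property with Lemma~\ref{lem:any representative} applied to the particular representative $(\tilde f\circ\tilde g;a\circ p,d\circ q)$ of the composite constructed in the previous step. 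The remaining points --- the passage between $\wt{\cT}$ and $\wt{\cT}^\str$, and the verification that $(\tilde f\circ\tilde g;a\circ p,d\circ q)$ really is a representative (identical to the composition step of Lemma~\ref{lem:tilde semi-localizing}) --- are routine.
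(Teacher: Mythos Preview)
Your proposal is correct and follows essentially the same approach as the paper: reduce to $\wt{\cC}^\str$, quote Lemma~\ref{lem:tilde semi-localizing} for the ``if'' direction, and for the ``only if'' direction build the composite representative exactly as in that lemma, apply Lemma~\ref{lem:any representative} to conclude $d\circ q\in\cT(J)$, and then use the two-out-of-three property of $\cT(J)$ in the $B$-site $(\cC,J)$ twice to get $d,q\in\cT(J)$ and then $b\in\cT(J)$. The paper's proof is terser (it simply refers back to the notation of Lemma~\ref{lem:tilde semi-localizing} and skips the explicit passage between $\wt{\cT}$ and $\wt{\cT}^\str$), but the argument is the same.
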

\begin{proof}
We have seen 
in 
Lemma~\ref{lem:tilde semi-localizing}
the if direction.

Suppose $g \circ f$ 
belongs to $\wt{\cT}$.
We show that $f, g \in \wt{\cT}$.
Let us use the notation in
the proof of Lemma~\ref{lem:tilde semi-localizing}.
From Lemma~\ref{lem:any representative}, 
we know that 
$g' \circ f'' \in \cT(J)$.
Hence $g', f'' \in \cT(J)$.   Then it follows from 
$n'' \circ f' =f'' \circ n' \in \cT(J)$ that $f' \in \cT(J)$.
By the definition of $\wt{\cT}$, we see that $f, g \in \wt{\cT}$.
\end{proof}

\subsection{Comparing the pushforward topology and $J_{\wt{\cT}}$}

\begin{lem} \label{lem:compare_JT}
The topology $J_{\wt{\cT}}$ associated with
the semi-localizing collection $\wt{\cT}$
equals the pushforward topology
$\iota_* J$.
\end{lem}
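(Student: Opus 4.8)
The plan is to show that for every object $F$ of $\wt\cC$ the two topologies have the same covering sieves. First I would reduce to the ``strict'' picture: the functor $\wt\cC^\str\to\wt\cC$ is an equivalence of categories carrying $\wt\cT^\str$ to (a collection with the same associated $A$-topology as) $\wt\cT$ and intertwining $\imath^\str$ with $\iota$, so by Lemma~\ref{lem:pushforward_transitivity} it transforms $\imath^\str_*J$ into $\iota_*J$; hence it suffices to prove $J_{\wt\cT^\str}=\imath^\str_*J$. Now recall two things. Since $J$ and $J_{\wt\cT^\str}$ are $A$-topologies, a sieve is a covering sieve precisely when it contains a sieve $R_\phi$ with $\phi$ in the relevant semi-localizing collection; and by the explicit description of the pushforward in Section~\ref{sec:J'}, a sieve $R$ on $F$ belongs to $(\imath^\str_*J)(F)$ if and only if $(\imath^\str)_{Y,g}^*R\in J(Y)$ for every object $(Y,g)$ of $I^{\imath^\str}_F$. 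Since Grothendieck topologies are closed under enlarging covering sieves, it therefore suffices to establish: (a) $R_\phi\in(\imath^\str_*J)(F)$ for every $\phi\in\wt\cT^\str$ with target $F$; and (b) every $(\imath^\str_*J)$-covering sieve on $F$ contains a sieve $R_\phi$ with $\phi\in\wt\cT^\str$.

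For (b), write $F=(X,H)$ and let $q_{X,H}\colon\imath^\str(X)\to(X,H)$ be the canonical quotient morphism, which lies in $\wt\cT^\str$ since $(q_{X,H};\id_X,\id_X)$ is a representative and $\id_X\in\cT(J)$. Given $R\in(\imath^\str_*J)(F)$, apply the description of the pushforward to the object $(X,q_{X,H})$ of $I^{\imath^\str}_F$: the sieve $(\imath^\str)_{X,q_{X,H}}^*R$ on $X$ belongs to $J(X)$, hence contains $R_m$ for some $m\colon X'\to X$ in $\cT(J)$ because $J$ is an $A$-topology. In particular $q_{X,H}\circ\imath^\str(m)$ lies in the sieve $R$, and $\phi:=q_{X,H}\circ\imath^\str(m)\colon\imath^\str(X')\to F$ has representative $(\phi;\id_{X'},m)$ with $m\in\cT(J)$, so $\phi\in\wt\cT^\str$ and $R_\phi\subseteq R$, as required.

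For (a) it suffices, by the description of the pushforward, to fix $\phi=f\in\wt\cT^\str$ with target $F=(X,H)$, say $f\colon(X_1,H_1)\to(X,H)$, and an object $(Y,g)$ of $I^{\imath^\str}_F$, and to show $(\imath^\str)_{Y,g}^*R_f\in J(Y)$. Choose a representative $(g;m_Y\colon Y'\to Y,\,g'\colon Y'\to X)$ of $g$ (Lemma~\ref{lem:pentagon}) and a representative $(f;m_1\colon X_1'\to X_1,\,f_1'\colon X_1'\to X)$ of $f$; by Lemma~\ref{lem:any representative} we may take $f_1'\in\cT(J)$. Using Property~(3) of the semi-localizing collection, complete the pair $Y'\xto{g'}X\xleftarrow{f_1'}X_1'$ to a commutative square with edges $c\colon W\to Y'$ and $d\colon W\to X_1'$, the edge $c$ lying over $f_1'$ and hence in $\cT(J)$; put $m:=m_Y\circ c\in\cT(J)$. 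The commutativity of the two pentagon diagrams gives $g\circ\imath^\str(m_Y)=q_{X,H}\circ\imath^\str(g')$ and $q_{X,H}\circ\imath^\str(f_1')=f\circ q_{X_1,H_1}\circ\imath^\str(m_1)$, whence
\[
g\circ\imath^\str(m)=q_{X,H}\circ\imath^\str(g'\circ c)=q_{X,H}\circ\imath^\str(f_1'\circ d)=f\circ q_{X_1,H_1}\circ\imath^\str(m_1)\circ\imath^\str(d),
\]
which factors through $f$. Therefore $m\in(\imath^\str)_{Y,g}^*R_f$, so $R_m\subseteq(\imath^\str)_{Y,g}^*R_f$, and the latter sieve lies in $J(Y)$ since $J$ is an $A$-topology and $m\in\cT(J)$.

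The only genuinely non-formal step is the square-completion in (a), and this is exactly where the $B$-site hypothesis is consumed: Property~(3) of semi-localizing collections provides the square with $c\in\cT(J)$, while the stability $\wh{\cT(J)}=\cT(J)$ together with Lemma~\ref{lem:any representative} is what makes $f_1'\in\cT(J)$ legitimate for the chosen (arbitrary) representative. I expect that, plus the bookkeeping needed to match the strict and non-strict pictures and to keep the two pentagons compatible in the displayed identity, to be the main obstacle; the rest is manipulation of sieves and the universal properties already in hand.
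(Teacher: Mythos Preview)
Your proof is correct and follows essentially the same approach as the paper's: both reduce to the strict category $\wt{\cC}^\str$, characterize $J_{\wt{\cT}^\str}$ and $\imath^\str_*J$ via sieves containing a single morphism in the relevant collection, and handle the two inclusions separately by (b) specializing the pushforward condition at the quotient morphism $q_{X,H}$ and (a) completing a commutative square to produce the needed morphism in $\cT(J)$. The only organizational difference is that in (a) you work directly with representatives and complete one square in $\cC$, whereas the paper first completes a square in $\wt{\cC}^\str$ using Property~(3) for $\wt{\cT}^\str$ and then passes to representatives and completes a second square in $\cC$; your route is slightly more direct but not substantively different.
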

\begin{proof}
Let $(Z,K) \in \Obj(\wt{\cC}^\str)$.
A sieve $R$ of $(Z,K)$ belongs to 
$J_{\wt{\cT}^\str}((Z,K))$ if and only if there 
exists a morphism $(X,G) \to (Z,K) \in \wt{\cT}^\str$
which belongs to $R$.
A sieve $R$ of $(Z,K)$ belongs to $(\iota^\str_*J)((Z,K))$
if and only if 
for any morphism $\iota^\str(Y) \to (Z,K)$,
there exists a morphism
$W \to Y \in \cT(J)$
such that $\iota^\str(W) \to \iota^\str(Y) \to Z$ 
belongs to $R$.

It is then clear that 
$(\iota^\str_*J)((Z,K)) \subset J_{\wt{\cT}^\str}((Z,K))$.
Let us prove the other inclusion.
Let $R \in J_{\wt{\cT}^\str}((Z,K))$.
Take $f\colon  (X,G) \to (Z,K) \in \wt{\cT}^\str$.
Note that this means $R$ contains the sieve
$R_f$.   Let $Y \in \cC$ and 
$g\colon (Y, \{\id_Y\}) \to (Z,K)$ be an arbitrary morphism.
Using Property (3) of the semi-localizing 
collection $\wt{\cT}^\str$,
we obtain a commutative square:
\[
\begin{CD}
(V, S) @>>>   (X, G)  \\
@V{i}VV      @VV{f}V  \\
(Y, \{\id_Y\}) @>{g}>> (Z,K)
\end{CD}
\]
with $i \in \wt{\cT}^\str$.
Take representatives 
$(i; i_1\colon U \to V, i_2\colon U \to Y)$
and 
$(g; g_1\colon A \to Y, g_2\colon A \to Z)$.
We complete the diagram
$U \xto{i_2} Y \xleftarrow{g_1}A$ 
to a commutative square
\[
\begin{CD}
W @>{b_1}>>  A \\
@V{b_2}VV    @V{g_1}VV \\
U  @>{i_2}>> Y
\end{CD}
\]
with $b_1 \in \cT(J)$.   
We consider the object $W \in \cC$
with the morphism $j=g_1 \circ b_1=i_2\circ b_2$
to $Y$.   We need to show that 
$g \circ \iota^\str(j)$ belongs 
to $R$.   This follows from that $R$ contains 
$R_f$ and that 
$g \circ \iota^\str(j)$ factors through
$(X,G)$ by construction.
\end{proof}

\subsection{$Y$-sites}
Let $(\cC,J)$ be a $B$-site such that $\cT(J)$ has enough
Galois coverings.

\subsubsection{}

\begin{lem} \label{lem:aJ_properties}
The functor $a_J\colon  \Presh(\cC) \to \Shv(\cC,J)$
commutes with finite limits and arbitrary small colimits.
\end{lem}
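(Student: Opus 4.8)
The plan is to establish the two halves of the statement by quite different arguments. For arbitrary small colimits this is formal: by \cite[EXPOSE II, Th\'eor\`eme 3.4]{SGA4}, recalled above, $a_J$ is left adjoint to the inclusion $\Shv(\cC,J) \inj \Presh(\cC)$, and a left adjoint preserves every colimit that exists in its domain; since $\Presh(\cC)$ has all $\frU$-small colimits, $a_J$ commutes with all small colimits. No hypothesis on $(\cC,J)$ is needed for this part.

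For finite limits I would re-derive the left-exactness of $a_J$ from the explicit construction of sheafification, using the description of \cite[4.4.2]{Grids} that was already used in the proofs of Lemma~\ref{lem:omega_faithful1} and Lemma~\ref{lem:exists_g}. Let $L:\Presh(\cC)\to\Presh(\cC)$ denote the plus-construction-type functor of \cite[4.4.2]{Grids}, so that on objects $a_J F$ is a finite iterate of $L$ applied to $F$. That description exhibits $(LF)(X)$, for each object $X$ of $\cC$, as a colimit --- indexed by the covering sieves on $X$, equivalently, since $\cT(J)$ has enough Galois coverings, by the cofinal system of Galois coverings $X'\to X$ --- of the equalizer of the pair of restriction maps associated with such a cover. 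I would then check three points: (i) this index is filtered, because $J(X)$ is nonempty and stable under finite intersection (the axioms of a Grothendieck topology), and in the $B$-site case, by Property~(3) together with the enough-Galois-coverings hypothesis, any two covers of $X$ are refined by a common Galois cover; (ii) for each fixed cover, the functor of $F$ given by that equalizer preserves finite limits, since it is built, via an equalizer (a finite limit) in $\Sets$, from functors of the form $F\mapsto \Hom_{\Presh(\cC)}(Q,F)$ for suitable presheaves $Q$, and each such functor preserves all limits; (iii) filtered colimits commute with finite limits in the category of sets. Combining (i)--(iii) shows that $L$, hence also $a_J$, preserves finite limits, these being computed sectionwise in $\Presh(\cC)$. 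Finally, since the inclusion $\Shv(\cC,J)\inj\Presh(\cC)$ is a right adjoint and so preserves limits, finite limits in $\Shv(\cC,J)$ agree with those in $\Presh(\cC)$, and we conclude that $a_J:\Presh(\cC)\to\Shv(\cC,J)$ commutes with finite limits. One could instead simply invoke the left-exactness of the associated-sheaf functor in \cite[EXPOSE II]{SGA4}, but it seems cleaner to stay within the framework of \cite{Grids}.

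The step I expect to be the main obstacle is point (i): checking carefully that the indexing system in the explicit colimit of \cite[4.4.2]{Grids} is genuinely filtered, which is precisely where the $B$-site axioms and the existence of enough Galois coverings enter --- one refines a given pair of covers by a common Galois cover and uses Property~(3) of a $B$-site to organise the comparison maps. Once filteredness is in hand, the interchange of finite limits with filtered colimits in $\Sets$ is standard, and the remainder of the verification is routine.
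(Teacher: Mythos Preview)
Your proposal is correct and follows essentially the same route as the paper. Both halves match: colimits via the left-adjoint property, and finite limits by reducing to each $X$, expressing $a_J(F)(X)$ as a filtered colimit over Galois covers of functors in $F$ that preserve finite limits, and then invoking that filtered colimits commute with finite limits in $\Sets$. The paper's version is marginally more direct: rather than phrasing things via a plus-construction $L$ and equalizers, it quotes from \cite[4.4.2]{Grids} the formula $a_J(F)(X)\cong\varinjlim_{(Y,f)\in\Gal/X}F(Y)^{\Aut_X(Y)}$, notes that $\Gal/X$ is already known to be cofiltered there, and observes that $F\mapsto F(Y)^{\Aut_X(Y)}$ obviously preserves finite limits --- so your point~(i), which you flag as the main obstacle, is simply cited rather than re-argued.
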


\begin{proof}
$a_J$ commutes with arbitrary small colimits since
it is left adjoint to the inclusion functor
$j\colon \Shv(\cC,J) \inj \Presh(\cC)$.
To prove that $a_J$ commutes with finite limits,
it suffices to prove that the composite $j \circ a_J$
commutes with finite limits.
Hence it suffices to prove that, for any object $X$ of $\cC$,
the functor $s_X$ from $\Presh(\cC)$ to the category of sets
which associates $a_J(F)(X)$ to each presheaf $F$ on $\cC$
commutes with finite limits.
Let us fix an object $X$ of $\cC$.
Let us consider the cofiltered category $\Gal/X$ introduced
in Section 4.4.2 of \cite{Grids}.
For an object $(Y,f)$ of $\Gal/X$, let $s_{Y,f}$ denote the
functor from $\Presh(\cC)$ to the category of sets
which associates $F(Y)^{\Aut_X(Y)}$ to each presheaf $F$ on $\cC$.
One can check easily that the functor $s_{Y,f}$ commutes with
finite limits.
It follows from the argument in Section 4.4.2 of \cite{Grids}
that the functor $s_X$ is isomorphic to the functor
that associates $\varinjlim_{(Y,f)} s_{Y,f}(F)$
with any presheaf $F$ on $\cC$.
Since a filtered colimit commutes with finite limits
(cf.\ \cite{MacLane} Ch.\ IX), it follows that the functor
$s_X$ commutes with finite limits.
This proves the claim.
\end{proof}

\begin{lem} \label{lem:omega_faithful}
The functor $\cC \to \Shv(\cC,J)$ thatociates an object $X$ of $\cC$ to
$a_J(\frh_\cC(X))$ is faithful.
\end{lem}

\begin{proof}
Let $X$ and $Y$ be two objects of $\cC$.
We prove that the map
$\Hom_\cC(X,Y) \to \Hom_{\Shv(\cC,J)}
(a_J(\frh_\cC(X)),a_J(\frh_\cC(Y)))$ is injective.
Since we have isomorphisms
$$
\Hom_{\Shv(\cC,J)}
(a_J(\frh_\cC(X)),a_J(\frh_\cC(Y)))
\cong 
\Hom_{\Presh(\cC)}
(\frh_\cC(X),a_J(\frh_\cC(Y)))
\cong a_J(\frh_\cC(Y))(X),
$$
It suffices to show that the map
$(\frh_\cC(Y))(X) \to a_J(\frh_\cC(Y))(X)$
induced by the adjunction morphism
$\frh_\cC(Y) \to a_J(\frh_\cC(Y))$ is injective.
It follows from the argument in Section 4.4.2 of \cite{Grids}
that we have
\begin{equation} \label{eq:aJ}
a_J(\frh_\cC(Y)) \cong 
\varinjlim_{(Z,f)} ((\frh_\cC(Y))(Z))^{\Aut_Y(Z)}
\end{equation}
where $(Z,f)$ runs over the object of $\Gal/Y$
introduced in Section 4.4.2 of \cite{Grids}.
Since $\cC$ is an $E$-category, it follows that the
transition maps in the colimit \eqref{eq:aJ} are injective.
This proves that
$(\frh_\cC(Y))(X) \to a_J(\frh_\cC(Y))(X)$ is injective.
\end{proof}

\begin{lem} \label{lem:i_Galois}
The functor $\imath$ sends a Galois covering in $\cC$
to a Galois covering in $\wt{\cC}$.
Moreover, for any Galois covering $f\colon X\to Y$ 
in $\cC$, the functor $\imath$ induces an isomorphism
$\Aut_{Y}(X) \xto{\cong} \Aut_{\quotid{Y}}(\quotid{X})$
of groups.
\end{lem}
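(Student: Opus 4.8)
The plan is to reduce each assertion about $\imath(f)$ to an assertion about $f$ in $\cC$ via the representative calculus of Lemma~\ref{lem:pentagon} and Lemma~\ref{lem:exists_g}. The key simplification is that the object $\imath(Y)$ carries the \emph{trivial} group $\{\id_Y\}$, so the ``ambiguity group'' appearing in Lemma~\ref{lem:exists_g} is trivial; combined with the fact that $\wt{\cC}^{\str}$ is an $E$-category (Lemma~\ref{lem:tilde E-category}), so that $\imath^{\str}(m)$ is an epimorphism for every $m$, this means a morphism of $\wt{\cC}^{\str}$ is determined by any one of its representatives. Throughout I identify $\imath(X)$ with $(X,\{\id_X\})$ under the equivalence $\wt{\cC}^{\str}\simeq\wt{\cC}$ and note that $q_{X,\{\id_X\}}$ is the identity. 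As a first step: if $f\colon X\to Y$ is a Galois covering in $\cC$ then $f\in\cT(J)$, and $X\xleftarrow{\id_X}X\xrightarrow{f}Y$ is a representative $(\imath(f);\id_X,f)$ of $\imath(f)$ (the pentagon \eqref{diagram:pentagon} commutes because $\imath(f)=\imath^{\str}(f)$ and the vertical maps are identities); hence $\imath(f)\in\wt{\cT}^{\str}$, so $\imath(f)$ lies in $\wt{\cT}=\cT(J_{\wt{\cT}})$ and is in particular a covering in $\wt{\cC}$. (From the topos side this is just that $a_J$ carries the covering morphism $\frh_\cC(f)$ to an epimorphism.)

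Next I would establish the ``moreover'' part. Put $G=\Aut_Y(X)$. Injectivity of $g\mapsto\imath(g)$ is immediate from the faithfulness of the functor $\cC\to\Shv(\cC,J)$, $X\mapsto a_J(\frh_\cC(X))$ (Lemma~\ref{lem:omega_faithful}), which is the composite of $\imath\colon\cC\to\wt{\cC}$ with the inclusion $\wt{\cC}\inj\Shv(\cC,J)$. For surjectivity, take $\phi\in\Aut_{\imath(Y)}(\imath(X))$, so $\imath(f)\circ\phi=\imath(f)$, and by Lemma~\ref{lem:pentagon} choose a representative $(\phi;m\colon X'\to X,\phi'\colon X'\to X)$ with $m\in\cT(J)$. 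Composing representatives, $(\imath(f)\circ\phi;m,f\circ\phi')$ represents $\imath(f)\circ\phi$, while pulling $(\imath(f);\id_X,f)$ back along $m$ shows $(\imath(f);m,f\circ m)$ represents $\imath(f)$; since $\imath(f)\circ\phi=\imath(f)$, Lemma~\ref{lem:exists_g} applied with the trivial target group $\{\id_Y\}$ forces $f\circ\phi'=f\circ m$. The Galois property of $f$ then yields $g\in G$ with $\phi'=g\circ m$. Then $(\phi;m,g\circ m)$ and $(\imath(g);m,g\circ m)$ represent $\phi$ and $\imath(g)$ respectively, and as $\imath^{\str}(m)$ is an epimorphism these morphisms of $\wt{\cC}^{\str}$ coincide, i.e.\ $\phi=\imath(g)$. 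Thus $\imath$ induces the claimed group isomorphism $\Aut_Y(X)\xrightarrow{\cong}\Aut_{\imath(Y)}(\imath(X))$.

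Finally I would check the remaining defining conditions for $\imath(f)$ to be a Galois covering of $\wt{\cC}$; the crucial one is that for every object $(Z,K)$ of $\wt{\cC}$ and every pair $a,b\colon(Z,K)\to\imath(X)$ with $\imath(f)\circ a=\imath(f)\circ b$ there is $g\in G$ with $b=\imath(g)\circ a$. Choosing representatives of $a$ and $b$ (Lemma~\ref{lem:pentagon}) and, by Property~(3) of the semi-localizing collection, refining them to representatives $(a;n\colon W\to Z,a')$ and $(b;n,b')$ over a common $n\in\cT(J)$, composition with $(\imath(f);\id_X,f)$ produces representatives of $\imath(f)\circ a$ and $\imath(f)\circ b$ over $n$; Lemma~\ref{lem:exists_g} with trivial target group gives $f\circ a'=f\circ b'$, the Galois property of $f$ gives $g\in G$ with $b'=g\circ a'$, and comparing the representatives $(b;n,g\circ a')$ and $(\imath(g)\circ a;n,g\circ a')$ gives $b=\imath(g)\circ a$. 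Combined with the previous paragraph this exhibits $\imath(f)$ as a Galois covering of $\wt{\cC}$ with group $\imath(G)=\Aut_{\imath(Y)}(\imath(X))$.

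I expect the genuine work to be organisational rather than deep: one has to (i) match the definition of ``Galois covering'' for the $B$-site $(\wt{\cC},J_{\wt{\cT}})$ used in \cite{Grids} with what the steps above verify, checking in particular that it is formulated without fibre products so that the representative calculus is applicable, and (ii) be scrupulous that at each use of Lemma~\ref{lem:exists_g} the relevant group is indeed the trivial group $\{\id_Y\}$, which is exactly what turns ``a common representative'' into an honest equality of morphisms. If the definition of Galois covering additionally requires the source to be a connected object, one also notes that $\imath(X)=a_J(\frh_\cC(X))$ is connected in $\Shv(\cC,J)$: the explicit description of $a_J$ gives $\underline{S}(X)\cong S$ for every constant sheaf $\underline{S}$, so $\imath(X)$ admits no nontrivial decomposition.
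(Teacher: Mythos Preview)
Your proof is correct but proceeds by a genuinely different route from the paper's. The paper argues topos-theoretically: since $f:X\to Y$ is a Galois covering with group $G=\Aut_Y(X)$, the square
\[
\begin{CD}
\coprod_{g\in G} X @>{\coprod_g g}>> X\\
@V{\coprod_g \id_X}VV @VV{f}V\\
X @>{f}>> Y
\end{CD}
\]
is cartesian in $\Presh(\cC)$; applying the sheafification functor $a_J$, which by Lemma~\ref{lem:aJ_properties} commutes with finite limits and arbitrary small colimits, yields the corresponding cartesian square in $\Shv(\cC,J)$, and this square exhibits $\iota(f)$ as a Galois covering in $\wt{\cC}$ with Galois group isomorphic to $G$. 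The ``moreover'' clause then follows from this identification of the Galois group (implicitly via \cite[Lemma 3.1.3]{Grids} for $E$-categories).

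Your approach instead stays entirely inside the representative calculus of $\wt{\cC}^\str$, reducing each step to Lemma~\ref{lem:exists_g} with trivial target group $\{\id_Y\}$ and the $E$-category property (Lemma~\ref{lem:tilde E-category}). This is more elementary in that it avoids invoking the exactness of $a_J$ and the cartesian-square characterisation of Galois coverings; it also makes transparent exactly where the hypotheses are used. On the other hand the paper's argument is shorter and more conceptual, and has the virtue that the same one-line mechanism (apply $a_J$ to a cartesian diagram) recurs in several later proofs. Your hedging at the end about matching the \cite{Grids} definition is unnecessary: that definition is precisely the pseudo-torsor condition you verify, and connectedness is not required.
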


\begin{proof}
Let $f\colon  X \to Y$ be a Galois covering in $\cC$.
Let us write $G = \Aut_Y(X)$. Then
the commutative diagram
$$
\begin{CD}
\coprod_{g \in G} X @>{\coprod_g g}>> X \\
@V{\coprod_g \id_X}VV @VV{f}V \\
X @>{f}>> Y
\end{CD}
$$
in the category $\Presh(\cC)$ is cartesian.
We apply the sheafification functor to the above diagram.
It follows from Lemma \ref{lem:aJ_properties} that
the diagram
$$
\begin{CD}
\coprod_{g \in G} \iota(X) @>{\coprod_{g} \iota(g)}>> \iota(X) \\
@V{\coprod_{g} \id_{\iota(X)}}VV @VV{\iota(f)}V \\
\iota(X) @>{\iota(f)}>> \iota(Y)
\end{CD}
$$
in $\Shv(\cC,J)$ is cartesian.
This implies that $\iota(f)$ is a Galois covering in $\wt{\cC}$ 
and its Galois group is isomorphic to $G$. This proves the claim.
\end{proof}

\begin{lem} \label{lem:Galois2}
Let $\quot{X}{H}$ be an object of
$\wt{\cC}$. Then the morphism $q_{X,H}\colon \imath({X})
\to \quot{X}{H}$ in $\wt{\cC}$ is a Galois covering
which belongs to $\cT(\imath_* J)$,
and its Galois group is isomorphic to $H$
via the composite $H \subset \Aut(X) \to
\Aut(\quot{X}{\{\id_X\}})$.
\end{lem}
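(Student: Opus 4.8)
The plan is to establish the two assertions separately. For membership in $\cT(\imath_*J)$: by Lemma~\ref{lem:compare_JT} the pushforward topology $\imath_*J$ coincides with the topology $J_{\wt{\cT}}$ attached to the semi-localizing collection $\wt{\cT}$, and by Lemma~\ref{lem:tilde semi-localizing} (which gives $\wh{\wt{\cT}^\str}=\wt{\cT}^\str$) one has $\cT(\imath_*J)=\wt{\cT}$, so it suffices to check $q_{X,H}\in\wt{\cT}$. Viewing $q_{X,H}$ as the morphism $(X,\{\id_X\})\to(X,H)$ of $\wt{\cC}^\str$, the diagram $X\xleftarrow{\id_X}X\xto{\id_X}X$ is a representative $(q_{X,H};\id_X,\id_X)$ of it in $\cC$ (the pentagon \eqref{diagram:pentagon} commutes trivially), and since $\id_X\in\cT(J)$ this shows $q_{X,H}\in\wt{\cT}^\str$, hence $q_{X,H}\in\cT(\imath_*J)$.

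For the Galois covering assertion, I would reproduce the mechanism of the proof of Lemma~\ref{lem:i_Galois}: build the relevant cartesian square first in $\Presh(\cC)$ and then push it through the sheafification functor. The decisive point is that $\cC$ is an $E$-category, so every $\phi\in\Hom_\cC(Z,X)$ is an epimorphism; hence for $h\in H\subset\Aut_\cC(X)$ the relation $h\circ\phi=\phi$ forces $h=\id_X$, and $H$ acts freely on $\frh_\cC(X)(Z)=\Hom_\cC(Z,X)$ for every object $Z$ of $\cC$ (the same freeness exploited in Lemma~\ref{lem:exists_g}). Therefore, sectionwise, $\frh_\cC(X)(Z)\to\quot{\frh_\cC(X)(Z)}{H}$ is an $H$-torsor of sets, so that the commutative square in $\Presh(\cC)$ with vertices $\coprod_{h\in H}\frh_\cC(X)$, $\frh_\cC(X)$, $\frh_\cC(X)$, $\quot{\frh_\cC(X)}{H}$ — the two maps out of the first vertex being the fold map and the $H$-action — is cartesian, and the canonical map $\frh_\cC(X)\surj\quot{\frh_\cC(X)}{H}$ is an epimorphism of presheaves. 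Applying $a_J$, which by Lemma~\ref{lem:aJ_properties} commutes with finite limits and small colimits, turns this into a cartesian square in $\Shv(\cC,J)$ with vertices $\coprod_{h\in H}\imath(X)$, $\imath(X)$, $\imath(X)$, $\quot{X}{H}$ in which $q_{X,H}$ is an epimorphism. Exactly as in Lemma~\ref{lem:i_Galois}, this exhibits $q_{X,H}$ as a Galois covering in $\wt{\cC}$ whose Galois group is the image $\overline{H}$ of $H$ under $H\subset\Aut(X)\to\Aut(\quot{X}{\{\id_X\}})=\Aut_{\wt{\cC}}(\imath(X))$; since the $H$-action stays free after sheafification (via the description \eqref{eq:aJ}), the map $H\to\overline{H}$ is an isomorphism and $\overline{H}=\Aut_{\quot{X}{H}}(\imath(X))$, which is the asserted statement.

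The step I expect to be the main obstacle is the last one: matching the presheaf-level torsor square against the precise definition of a Galois covering in a $B$-site from \cite{Grids}, and in particular verifying that $\overline{H}$ is the \emph{whole} automorphism group $\Aut_{\quot{X}{H}}(\imath(X))$ and not merely a subgroup of it. This is precisely where freeness of the $H$-action and its persistence through $a_J$ are needed; everything else is a direct transcription of the argument already carried out for Lemma~\ref{lem:i_Galois}.
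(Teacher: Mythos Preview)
Your proof is correct and takes a genuinely different route from the paper's. The paper argues the Galois-covering property directly at the level of representatives: given any two lifts $h_1,h_2:Z\to\imath^\str(X)$ of a morphism $Z\to(X,H)$ in $\wt{\cC}^\str$, it chooses representatives of $h_1,h_2$ in $\cC$ with a common source and invokes Lemma~\ref{lem:exists_g} to produce $g\in H$ with $h_2=\imath^\str(g)\circ h_1$; uniqueness of $g$ comes from Lemma~\ref{lem:omega_faithful} together with the $E$-category property. Your approach is instead sheaf-theoretic and structural: you build the cartesian torsor square in $\Presh(\cC)$ from the free $H$-action and push it through $a_J$ via Lemma~\ref{lem:aJ_properties}, following the template of Lemma~\ref{lem:i_Galois}. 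This makes the parallel with Lemma~\ref{lem:i_Galois} transparent and avoids manipulating representatives, but it implicitly relies on the fact that every object of $\wt{\cC}$ is connected in $\Shv(\cC,J)$ (so that $\Hom(Z,\coprod_{h\in H}\imath(X))=\coprod_{h}\Hom(Z,\imath(X))$ for $Z\in\wt{\cC}$); the paper establishes this only later as Lemma~\ref{lem:wtcC_conn}, whereas its direct representative argument needs no such forward reference. Your verification that $q_{X,H}\in\cT(\imath_*J)$ via the trivial representative $(\id_X,\id_X)$ is cleaner than what appears in the paper, which leaves this point implicit in the proof of Lemma~\ref{lem:Galois2} itself.
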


\begin{proof}
It suffices to prove a similar statement for 
the morphism $q_{X,H}\colon \imath^\str(X) \to (X,H)$
regarded as a morphism in $\wt{\cC}^\str$.
Let $f\colon  Z \to (X,H)$ be a morphism in $\wt{\cC}^\str$
and let $h_1,h_2\colon  Z \to \imath^\str(X)$ be two morphisms
in $\wt{\cC}^\str$ over $(X,H)$. It suffices to
prove that there exists a unique element $g \in H$
satisfying $h_2 = \imath^\str(g) \circ h_1$.
The uniqueness of such $g \in H$ follows from 
Lemma \ref{lem:omega_faithful}
and from that $\wt{\cC}$ is an $E$-category.
Hence it suffices to prove the existence of $g$.

Let us write $Z = (Y,K)$.
For $i \in\{1,2\}$, let us choose a representative 
$Y \xleftarrow{m_i} Y'_i \xto{h'_i} X$ of $h_i$ in $\cC$.
Since $\cC(\cT(J))$ is semi-cofiltered, we may choose the
representatives of $h_1$ and $h_2$ in such a way that
$Y'_1 = Y'_2$ and $m_1 = m_2$ holds.
Since the two representatives can be regarded as two 
representatives of $f$, it follows from Lemma \ref{lem:exists_g} that
there exists $g \in H$ satisfying $h'_2 = g \circ h'_1$.
Since $\wt{\cC}^\str$ is an $E$-category,
this implies that $h_2 = \imath^\str(g) \circ h_1$.
This proves the claim.
\end{proof}

\subsubsection{ }
From now on until the end of this section,
we assume that $(\cC, J)$ is a $Y$-site.

\begin{lem}\label{lem:imath_enough_Galois}
$\cT(\imath_* J)$ contains enough Galois coverings.
\end{lem}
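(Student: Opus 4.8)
The plan is to verify directly the cofinality property defining ``enough Galois coverings''. Since $\cT(\imath_*J)=\wt{\cT}$ by Lemma~\ref{lem:compare_JT} (together with $\wh{\wt{\cT}}=\wt{\cT}$ from Lemma~\ref{lem:tilde semi-localizing}), it suffices to show: for every object of $\wt{\cC}$ and every morphism of $\wt{\cT}$ with that target, there is a Galois covering in $\wt{\cT}$ that factors through it. I would carry this out in the equivalent category $\wt{\cC}^\str$ with the collection $\wt{\cT}^\str$, where representatives are available. Fix $(X,H)$ and $f\colon(Y,K)\to(X,H)$ in $\wt{\cT}^\str$. Composing with the Galois covering $q_{Y,K}\colon\imath^\str(Y)\to(Y,K)$ of Lemma~\ref{lem:Galois2} (which lies in $\wt{\cT}^\str$, and $f\circ q_{Y,K}\in\wt{\cT}^\str$ by Lemma~\ref{lem:tilde semi-localizing}), it is enough to refine $g:=f\circ q_{Y,K}\colon\imath^\str(Y)\to(X,H)$. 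Pick a representative $(g;m\colon Y'\to Y,\,f'\colon Y'\to X)$ of $g$, which exists by Lemma~\ref{lem:pentagon} and has $f'\in\cT(J)$ by Lemma~\ref{lem:any representative}. The pentagon identity reads $g\circ\imath^\str(m)=q_{X,H}\circ\imath^\str(f')$, so any Galois covering in $\wt{\cT}^\str$ refining $q_{X,H}\circ\imath^\str(f')\colon\imath^\str(Y')\to(X,H)$ also refines $g$, hence $f$; thus I am reduced to refining $q_{X,H}\circ\imath^\str(f')$.

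Now I would use that $H$ is moderate to fix $n\colon X\to W$ in $\cT(J)$ with $H\subseteq\Aut_W(X)$. Since $(\cC,J)$ is a $Y$-site, $\cT(J)$ has enough Galois coverings, so there is a Galois covering $h\colon Z\to W$ in $\cT(J)$ with a factorization $h=n\circ f'\circ k$, $k\colon Z\to Y'$. Put $e:=f'\circ k\colon Z\to X$; then $e\in\cT(J)$ (because $n\circ e=h\in\cT(J)$ and $\wh{\cT(J)}=\cT(J)$), and $e$ is itself a Galois covering in $\cC$, the Galois property of $Z\to W$ descending along the intermediate covering $e$. By Lemma~\ref{lem:i_Galois}, $\imath^\str(e)\colon\imath^\str(Z)\to\imath^\str(X)$ is a Galois covering in $\wt{\cC}^\str$ with group $\Aut_X(Z)$; it lies in $\wt{\cT}^\str$ via the representative $(\imath^\str(e);\id_Z,e)$; and $q_{X,H}\circ\imath^\str(e)\colon\imath^\str(Z)\to(X,H)$ then lies in $\wt{\cT}^\str$ and refines $q_{X,H}\circ\imath^\str(f')$ through $\imath^\str(k)$. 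So everything comes down to showing that $q_{X,H}\circ\imath^\str(e)$ is a Galois covering of $(X,H)$.

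This last step is where I expect the real difficulty, and it is the main obstacle. The key input is the lifting of automorphisms of an intermediate covering along a Galois covering, available in the framework of \cite{Grids}: since $Z\to W$ is Galois and the $H$-action on $X$ is over $W$, every $\sigma\in H$ lifts to some $\tilde\sigma\in\Aut_W(Z)$ with $e\circ\tilde\sigma=\sigma\circ e$. Let $L\subseteq\Aut_W(Z)$ be the subgroup generated by $\Aut_X(Z)$ and all such lifts; then $1\to\Aut_X(Z)\to L\to H\to1$ is exact, the surjection $L\to H$ sending $\tau$ to the unique $\sigma$ with $e\circ\tau=\sigma\circ e$ (unique because $e$, hence $\imath^\str(e)$, is an epimorphism). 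To finish I would splice the cartesian square witnessing that $\imath^\str(e)$ is Galois with group $\Aut_X(Z)$ into the one witnessing that $q_{X,H}$ is Galois with group $H$ (Lemma~\ref{lem:Galois2}), using that $a_J$ preserves finite limits (Lemma~\ref{lem:aJ_properties}): this identifies $\imath^\str(Z)\times_{(X,H)}\imath^\str(Z)$ with $\coprod_{\sigma\in H}\bigl(\imath^\str(Z)\times_{\imath^\str(X)}\imath^\str(Z)\bigr)$, which, reindexed through the lifts $\tilde\sigma$, is $\coprod_{L}\imath^\str(Z)$; hence $q_{X,H}\circ\imath^\str(e)$ is a Galois covering of $(X,H)$ with group $L$, and by the reductions above it refines $f$. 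The delicate points to pin down are: (i) stating and citing the lifting property correctly in the monoid, not group, setting of \cite{Grids}; (ii) verifying that the reindexing bijection in the fiber-product computation is independent of the chosen lifts and compatible with both projections.
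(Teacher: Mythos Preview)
Your reduction is essentially the paper's: pass to $\wt{\cC}^\str$, take a representative with $f'\in\cT(J)$, use moderateness of the target group $H$ to find $n\colon X\to W$ in $\cT(J)$ with $H\subset\Aut_W(X)$, and refine so that $h=n\circ f'\circ k\colon Z\to W$ is a Galois covering in $\cC$. Up to this point the two arguments coincide.

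The difference is in the last step. You propose to show $q_{X,H}\circ\imath^\str(e)$ is Galois by lifting $H$ through $e$ to build the group $L$ and then computing the fiber product $\imath^\str(Z)\times_{(X,H)}\imath^\str(Z)$ directly. This is correct in principle, and your concerns (i) and (ii) are genuine but resolvable (the lifting is Lemma 4.2.6(1) of \cite{Grids} applied in the $B$-site $\cC$, and the reindexing is routine). However, the paper bypasses all of this with a single stroke: since $h=n\circ e$ is Galois in $\cC$, Lemma~\ref{lem:i_Galois} gives that $\imath^\str(h)\colon\imath^\str(Z)\to\imath^\str(W)$ is Galois in $\wt{\cC}^\str$; and because $H\subset\Aut_W(X)$, the morphism $\imath^\str(n)$ factors through $q_{X,H}$, so $\imath^\str(h)$ factors as $\imath^\str(Z)\xrightarrow{q_{X,H}\circ\imath^\str(e)}(X,H)\to\imath^\str(W)$. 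Then Lemma 4.1.3 of \cite{Grids} (an intermediate factor of a Galois covering is Galois) gives immediately that $q_{X,H}\circ\imath^\str(e)$ is Galois. You already invoked this very principle one line earlier (when deducing that $e$ is Galois from $h$ being Galois); applying it once more, now in $\wt{\cC}^\str$, eliminates the fiber-product computation and both of your delicate points.
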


\begin{proof}
It suffices to show that
$\cT((\imath^\str)_* J)$ contains enough Galois coverings.
Let $f\colon  (X,H) \to (Y,K)$ be a morphism in $\wt{\cC}$
which belongs to $\cT((\imath^\str)_* J)$.
Let us choose a representative 
$X \xleftarrow{m} X' \xto{f'} Y$ of $f$ in $\cC$.
It follows from Lemma \ref{lem:any representative} that
$f'$ belongs to $\cT(J)$. 
Let us choose a morphism $m' \colon  Y \to Z$ in $\cC$
such that $m'$ belongs to $\cT(J)$ and that $K$
is a subgroup of $\Aut_Z(Y)$.
Since $\cT(J)$ contains enough Galois coverings,
we may assume, by changing the representative of $f$ in $\cC$
if necessary, that the composite $m' \circ f'$ is a Galois
covering in $\cC$.
It follows from Lemma \ref{lem:i_Galois} that
$\imath(m' \circ f')$ is a Galois covering in $\wt{\cC}$.
Hence $\imath^\str(m' \circ f')$ is a Galois covering 
in $\wt{\cC}^\str$.

Since $\imath^\str(m')$ factors through the morphism
$q_{Y,K}$, by Lemma 4.1.3 of \cite{Grids},
the morphism $f \circ q_{X,H} \circ
\imath^\str(m) = q_{Y,K} \circ \imath^\str(f')$
is a Galois covering in $\wt{\cC}^\str$.
This proves that $\cT((\imath^\str)_* J)$ contains
enough Galois coverings.
\end{proof}

\begin{cor} \label{cor:Ctil_Y_site}
The site $(\wt{\cC}, \imath_* J)$ is a $Y$-site
in the sense of \cite[Definition 5.4.2]{Grids}.
\end{cor}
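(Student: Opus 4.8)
The plan is to read off the definition of a $Y$-site from \cite[Definition 5.4.2]{Grids} and verify each of its clauses by quoting material already established above; there is no new content to produce, only an assembly. Recall that a $Y$-site is a $B$-site whose collection of coverings has enough Galois coverings (possibly together with the bookkeeping requirement that the topology be of the form $J_{\cT}$ for a semi-localizing collection $\cT$). So the points to check for $(\wt{\cC}, \imath_* J)$ are: that $\imath_* J$ is of the form $J_{\cT}$; that $(\wt{\cC}, \imath_* J)$ is a $B$-site; and that $\cT(\imath_* J)$ has enough Galois coverings.

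First I would invoke Lemma~\ref{lem:compare_JT}, which identifies $\imath_* J$ with the topology $J_{\wt{\cT}}$ associated to the collection $\wt{\cT}$ of Section~\ref{sec:cT} (semi-localizing by Lemma~\ref{lem:tilde semi-localizing}); this settles the first point and lets me replace $\imath_* J$ by $J_{\wt{\cT}}$ throughout. Next, Proposition~\ref{prop:Ctil_Bsite} already asserts that $(\wt{\cC}, J_{\wt{\cT}})$ is a $B$-site: its proof checks the three axioms of \cite[Def.~4.2.1]{Grids}, namely that $J_{\wt{\cT}}$ is an $A$-topology by construction, that $\wt{\cC}$ is an $E$-category (Lemma~\ref{lem:tilde E-category}), and that $\wt{\cT}$ has the two-out-of-three property (Lemma~\ref{lem:tilde cond 3}). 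Finally, the enough-Galois-coverings clause is exactly Lemma~\ref{lem:imath_enough_Galois}, whose proof reduces a given covering in $\wt{\cC}$ to a representative $X \xleftarrow{m} X' \xto{f'} Y$ in $\cC$, refines $f'$ to a Galois covering using that $\cT(J)$ has enough Galois coverings, and transports the result back to $\wt{\cC}$ via Lemmas~\ref{lem:i_Galois} and~\ref{lem:Galois2}. Combining these three inputs shows that $(\wt{\cC}, \imath_* J)$ meets every clause of \cite[Definition 5.4.2]{Grids}.

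The hard part is not in the corollary itself but in the lemmas it draws on — in particular the verification that $\wt{\cC}$ is an $E$-category (the ``trapezoid'' diagram of Lemma~\ref{lem:tilde E-category}) and the construction of enough Galois coverings in Lemma~\ref{lem:imath_enough_Galois}. The only point requiring care in stitching things together is hygiene about hypotheses: Proposition~\ref{prop:Ctil_Bsite} and Lemma~\ref{lem:imath_enough_Galois} presuppose that $(\cC,J)$ is a $B$-site, resp.\ a $Y$-site, with $J$ an $A$-topology, and these are precisely the standing assumptions in force when the corollary is stated, so no additional verification is needed and the proof is a one-line combination of the quoted results.
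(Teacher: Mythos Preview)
Your proposal has a genuine gap: you have misremembered the definition of a $Y$-site. A $Y$-site in the sense of \cite[Definition~5.4.2]{Grids} is not merely a $B$-site with enough Galois coverings; it additionally requires that the underlying category be essentially small and, crucially, that $\cC(\cT(J))$ be $\Lambda$-connected. You verify neither of these. The essential smallness is indeed bookkeeping (since $\cC$ is essentially small, so is $\wt{\cC}$), but $\Lambda$-connectedness is a substantive condition that does not follow from anything you have cited.

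The paper's proof addresses exactly this missing point. It observes that for any object $(X,H)$ of $\wt{\cC}^\str$, the canonical morphism $q_{X,H}:\imath^\str(X)\to(X,H)$ belongs to $\cT((\imath^\str)_*J)$ (by Lemma~\ref{lem:any representative}), so every object of $\wt{\cC}^\str$ receives a morphism in $\cT$ from something in the image of $\imath^\str$. Combining this with the $\Lambda$-connectivity of $\cC(\cT(J))$ --- which is part of the standing hypothesis that $(\cC,J)$ is a $Y$-site --- one deduces that $\wt{\cC}^\str(\cT((\imath^\str)_*J))$ is $\Lambda$-connected. Your argument, as written, stops short of this and therefore does not establish the corollary.
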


\begin{proof}
The category $\wt{\cC}$ is essentially small
since $\cC$ is essentially small.
It follows from Lemma \ref{lem:imath_enough_Galois}
that $\cT(\imath_* J)$ contains enough Galois coverings.
It follows from Lemma \ref{lem:any representative} that
the morphism $q_{X,H}$ belongs to $\cT((\imath^\str)_*J)$ 
for any object $(X,H)$ of $\wt{\cC}^\str$.
By using this and the $\Lambda$-connectivity 
(see Definition 5.4.1 in \cite{Grids}) 
of $\cC(\cT(J))$,
one can check easily that $\wt{\cC}^\str(\cT((\imath^\str)_* J))$
is $\Lambda$-connected. Hence
$(\wt{\cC}, \imath_* J)$ is a $Y$-site.
\end{proof}

\subsection{Inheriting the finiteness condition}
\begin{lem}
Suppose the cardinality condition (1) in Section 5.8.1
of \cite{Grids} holds for the overcategory
$\cC_{/X}$ 
for any object $X$ of $\cC$.
Then the same cardinality condition 
is satisfied for the overcategory 
$\wt{\cC}(\cT(\imath_* J))_{/Z}$
for any object $Z$ of $\wt{\cC}$.
\end{lem}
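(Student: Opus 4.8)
The plan is to transfer the condition along the functor $\imath^\str$, after first replacing $\wt{\cC}$ by the strictified category $\wt{\cC}^\str$ of Section~\ref{sec:Ctil}, and then exploiting that every object of $\wt{\cC}^\str$ carries a canonical Galois covering by an object of the form $\imath^\str(X)$ with $X$ in $\cC$. The equivalence $\wt{\cC}^\str \to \wt{\cC}$ of Section~\ref{sec:Ctil} intertwines $\imath^\str$ with $\imath$ and, by Lemma~\ref{lem:compare_JT}, carries $\wt{\cT}^\str$ to the semi-localizing collection defining $\imath_* J = J_{\wt{\cT}}$; hence for every object $(Y,K)$ of $\wt{\cC}^\str$ it induces an equivalence $\wt{\cC}^\str(\cT((\imath^\str)_* J))_{/(Y,K)} \xto{\cong} \wt{\cC}(\cT(\imath_* J))_{/\quot{Y}{K}}$. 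Since every object of $\wt{\cC}$ is isomorphic to $\quot{Y}{K}$ for some object $(Y,K)$ of $\wt{\cC}^\str$, it suffices to verify condition~(1) for $\wt{\cC}^\str(\cT((\imath^\str)_* J))_{/(Y,K)}$ with $(Y,K)$ arbitrary.

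Fix $(Y,K)$. By the hypothesis applied to $\cC_{/Y}$, the finiteness in condition~(1) makes $\Aut_\cC(Y)$, hence its subgroup $K$, finite, and by Lemma~\ref{lem:Galois2} the canonical morphism $q_{Y,K}\colon \imath^\str(Y) \to (Y,K)$ is a Galois covering in $\cT((\imath^\str)_* J)$ with group $K$. \emph{Step 1.} Given an object $f\colon (W,S) \to (Y,K)$ of the overcategory, Lemma~\ref{lem:Galois2} gives $q_{W,S}\colon \imath^\str(W) \to (W,S)$ in $\cT((\imath^\str)_* J)$, with finite group $S$, so $f \circ q_{W,S}\colon \imath^\str(W) \to (Y,K)$ again lies in $\cT((\imath^\str)_* J)$; thus every object of the overcategory is dominated, through a Galois covering of finite degree, by one whose source is $\imath^\str(W)$ for some $W$ in $\cC$.

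\emph{Step 2.} For an object $g\colon \imath^\str(W) \to (Y,K)$ of $\cT((\imath^\str)_* J)$, choose by Lemma~\ref{lem:pentagon} a representative $W \xleftarrow{m} W' \xto{f'} Y$ in $\cC$; then $m \in \cT(J)$ and, by Lemma~\ref{lem:any representative}, also $f' \in \cT(J)$, and since $\imath^\str(m)$ is an epimorphism ($\wt{\cC}^\str$ is an $E$-category, Lemma~\ref{lem:tilde E-category}) and $q_{W,\{\id_W\}} = \id$, the morphism $g$ is determined by the pair $(m,f')$, while by Lemma~\ref{lem:exists_g} two representatives with the same $(W',m)$ yield the same $g$ exactly when their $f'$ differ by the action of $K$. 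So the isomorphism class of $(\imath^\str(W),g)$ in the overcategory is controlled by the isomorphism class of the object $f'\colon W' \to Y$ of $\cC(\cT(J))_{/Y}$, a $\cT(J)$-morphism $W' \to W$, and an element of the finite group $K$. Feeding in condition~(1) for the overcategories $\cC_{/Y}$ and $\cC_{/W'}$ (for the $W'$ that occur) and absorbing the finite groups $K$ and $S$, one obtains the required bound for $\wt{\cC}^\str(\cT((\imath^\str)_* J))_{/(Y,K)}$.

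The hard part will be Step~2: an object $(W,S)$ of $\wt{\cC}^\str$ lying over $(Y,K)$ is \emph{a priori} only dominated by $\imath^\str(W')$ for some $W'$ mapping to $Y$ in $\cC$, rather than literally being of that form, so one must show that passing from $W'$ down to $W$ — together with the choice of moderate subgroup $S$ and the $K$-ambiguity of $f'$ — does not create more isomorphism classes than condition~(1) permits. Controlling this uses that $\cC(\cT(J))$ is semi-cofiltered, so that the refinements appearing in the colimit computation of the sheafification that underlies Lemma~\ref{lem:pentagon} can be chosen compatibly, and that $\wt{\cC}^\str$ is an $E$-category; it is exactly here that one needs the hypothesis for \emph{all} overcategories $\cC_{/X}$, not merely for $\cC_{/Y}$.
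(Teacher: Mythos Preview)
Your proposal misidentifies what condition~(1) of \cite[5.8.1]{Grids} asserts. That condition says that for any two objects $f_1,f_2$ of the overcategory, the set $\Hom(f_1,f_2)$ is \emph{finite}; it says nothing about the number of isomorphism classes of objects. Your Steps~1 and~2 are organized around parametrizing (isomorphism classes of) objects of $\wt{\cC}^\str(\cT((\imath^\str)_*J))_{/(Y,K)}$ by data in $\cC$, and your concluding sentence makes this explicit: ``does not create more isomorphism classes than condition~(1) permits.'' But condition~(1) does not bound isomorphism classes at all --- there may well be infinitely many --- so the argument as written proves the wrong thing. Even read charitably, the ``bound'' you extract in Step~2 involves an arbitrary object $f'\colon W'\to Y$ of $\cC(\cT(J))_{/Y}$ together with an arbitrary $\cT(J)$-morphism $W'\to W$; these are not finite choices, so no finiteness of anything follows.

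The paper's proof proceeds differently and directly: it fixes two objects $f_1\colon (Y_1,K_1)\to (X,H)$ and $f_2\colon (Y_2,K_2)\to (X,H)$ and shows $\Hom_{(\wt{\cC}^\str)_{/(X,H)}}(f_1,f_2)$ is finite. The reductions are (i) replace $(X,H)$ by $\imath^\str(W)$ for some $W$ with $H\subset\Aut_W(X)$ (this only enlarges the Hom-set since the map factors); (ii) replace $f_1$ by its composite with $q_{Y_1,K_1}\circ\imath^\str(m_1)$, so that the source becomes $\imath^\str(Y_1')$ with $Y_1'$ in $\cC$ (again this injects the Hom-set, by the $E$-category property); (iii) show that any morphism $t\colon \imath^\str(Y_1')\to (Y_2,K_2)$ over $\imath^\str(X)$ is determined by a morphism $U_0\to Y_2'$ in $\cC$ over $X$, where $U_0$ is a \emph{fixed} Galois cover of $X$ through which everything factors. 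Step~(iii) is the substantive one, and the finiteness then comes directly from condition~(1) applied to $\cC_{/X}$. Your outline contains pieces of (i) and (ii) but never fixes the second object $f_2$, and so never arrives at a Hom-set to bound.

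A smaller point: your claim that condition~(1) on $\cC_{/Y}$ makes $\Aut_\cC(Y)$ finite is not justified; what one gets is that $\Aut_{Y_0}(Y)$ is finite for any $Y_0$ receiving a $\cT(J)$-morphism from $Y$, and this suffices to make the moderate subgroup $K$ finite.
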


\begin{proof}
Let us write $Z = \quot{X}{H}$, where $X$ is
an object of $\cC$ and $H$ is a moderate subgroup 
of $\Aut_\cC(X)$.
It suffices to prove a similar statement for
the overcategory 
$\wt{\cC}^\str(\cT((\imath^\str)_*J))_{/(X,H)}$.
Let $f_1 \colon (Y_1,K_1) \to (X,H)$ and
$f_2\colon  (Y_2,K_2) \to (X,H)$ be two morphisms
in $\wt{\cC}^\str$ which belong to $\cT((\imath^\str)_*J)$. 
We will prove that the set 
$\Hom_{(\wt{\cC}^\str)_{/(X,H)}}(f_1,f_2)$ is a finite set.

Let us choose a morphism $h\colon X \to W$ in $\cC$
which belongs to $\cT(J)$ such that
$H$ is a subgroup of $\Aut_W (X)$.
Then the morphism $\imath^\str(h)$ factors through 
$q_{X,H}$. Let $\alpha\colon \quot{X}{H} \to \imath^\str(W)$
denote the induced morphism.
Observe that $\Hom_{(\wt{\cC}^\str)_{/(X,H)}}(f_1,f_2)$ is a 
subset of $\Hom_{(\wt{\cC}^\str)_{/\imath^\str(W)}}
(\alpha \circ f_1, \alpha \circ f_2)$.
Hence by replacing $(X,H)$ with $(Y,\{\id_Y\})$ if necessary,
we may and will assume that $H=\{\id_X\}$, i.e.,
$(X,H) = \imath^\str(X)$.

For each $i \in \{1,2\}$, let us choose a representative
$Y_i \xleftarrow{m_i} Y'_i \xto{f'_i} X$ of 
$f_i\colon  (Y_i,K_i) \to \imath^\str(X)$ in $\cC$.
It follows from 
Lemma \ref{lem:any representative} that $f'_i$ belongs to $\cT(J)$.
Since $\wt{\cC}^\str$ is an $E$-category,
the map 
$\Hom_{(\wt{\cC}^\str)_{/\imath^\str(X)}}(f_1,f_2)
\to \Hom_{(\wt{\cC}^\str)_{/\imath^\str(X)}}(f'_1,f_2)$
given by the composition with 
$q_{Y_1,K_1} \circ \imath^\str(m_1)$ is injective.
Hence by replacing $f_1$ with $f'_1$ if necessary,
we may and will assume that $K_1=\{\id_{Y_1}\}$ and
$f_1 = \imath^\str(f_0)$ for some morphism
$f_0\colon  Y_1 \to X$ in $\cC$ which belongs to $\cT(J)$.

Let $t\colon  \imath^\str(Y_1) \to \quot{Y_2}{K_2}$
be an arbitrary morphism in $\wt{\cC}^\str$.
Let us choose a representative
$Y_1 \xleftarrow{m'_1} U \xto{t'} Y_2$
of $t$. Since $\cT(J)$ is semi-localizing,
one can extend the diagram
$U \xto{t'} Y_2 \xleftarrow{m_2} Y'_2$
to a commutative diagram
$$
\begin{CD}
U' @>{t''}>> Y'_2 \\
@V{m'_2}VV @VV{m_2}V \\
U @>{t'}>> Y_2
\end{CD}
$$
in $\cC$ such that the morphism $m'_2$ belongs to $\cT(J)$.
Since $\cT(J)$ contains enough Galois coverings,
there exists a morphism $V \to U$ in
$\cC$ which belongs to $\cT(J)$ such that the 
composite $f'_0 \colon  V \to U' \xto{m'_2} U \xto{f_0\circ m'_1} X$ is 
a Galois covering in $\cC$.
By replacing $f_0 \circ m'_1$ with $f'_0$ if necessary, 
we may will assume that $f_0 \circ m'_1 \colon  U \to X$ is a Galois
covering in $\cC$.
In summary, we have the following: 
For any morphism $t\colon  \imath^\str(Y_1) \to \quot{Y_2}{K_2}$
in $\wt{\cC}^\str$,
there exists a diagram
\begin{equation} \label{eq:rep_U}
Y_1 \xleftarrow{m'_1} U \xto{t''} Y'_2
\end{equation}
in $\cC$ such that 
$Y_1 \xleftarrow{m'_1} U \xto{m_2 \circ t''} Y_2$
is a representative of $t$ in $\cC$
and that $f_0 \circ m'_1$ is a Galois covering 
in $\cC$. 

If $\Hom_{(\wt{\cC}^\str)_{/\imath^\str(X)}}(f_1,f_2)$ 
is an empty set, then there is nothing to prove. 
Suppose that there exists a morphism 
$t_0\colon \imath^\str(Y_1) \to (Y_2,K_2)$ in $\wt{\cC}^\str$
over $\imath^\str(X)$.
Let us choose a diagram
$Y_1 \xleftarrow{m'_0} U_0 \xto{t''_0} Y'_2$
in $\cC$ such that 
$Y_1 \xleftarrow{m'_0} U_0 \xto{m_2 \circ t''_0} Y_2$
is a representative of $t_0$ in $\cC$
and that $f_0 \circ m'_0$ is a Galois covering 
in $\cC$. 
For an arbitrary morphism 
$t\colon  \imath^\str(Y_1) \to \quot{Y_2}{K_2}$
in $\wt{\cC}^\str$, let us choose a diagram \eqref{eq:rep_U}
as above. Since $\cC(\cT(J))$ is semi-cofiltered
and $\cT(J)$ contains enough Galois coverings,
we may assume, by changing $U$ if necessary, 
that there exists a morphism $\alpha\colon  U \to U_0$
satisfying $m'_1 = m'_0 \circ \alpha$.
It follows from Corollary 4.2.4 in \cite{Grids} that
there exists a morphism $t'''\colon  U_0 \to Y'_2$ satisfying
$t''=t''' \circ \alpha$.
Since $\wt{\cC}^\str$ is an $E$-category, $\imath^\str(\alpha)$
is an epimorphism in $\wt{\cC}^\str$.
This implies that the diagram
$Y_1 \xleftarrow{m'_0} U_0 \xto{m_2 \circ t'''} Y_2$
is a representative of $t$ in $\cC$.
Hence by replacing the diagram \eqref{eq:rep_U} with the
diagram $Y_1 \xleftarrow{m'_0} U_0 \xto{t'''} Y'_2$,
we may assume that $U=U_0$ and $m'_1 = m'_0$.
One can check, by using that $\wt{\cC}^\str$ is an $E$-category,
that the morphism $t''$ uniquely determines the morphism $t$.
It follows from Lemma \ref{lem:omega_faithful} that
$t''\colon  U_0 \to Y'_2$ is a morphism over $X$.
Since both $f_0 \circ m'_0$ and $f'_2$ belongs to $\cT(J)$
and $(\cC,J)$ is a $B$-site, any morphism $U_0 \to Y'_2$ 
over $X$ belongs to $\cT(J)$.
Since the set of morphisms $U_0 \to Y'_2$ over $X$ is a finite set,
it follows that the set 
$\Hom_{\wt{\cC}^\str(\cT((\imath^\str)_*J))_{/\imath^\str(X)}}
(f_1,f_2)$ is a finite set.
This completes the proof.
\end{proof}

\subsubsection{ }
Let $\Cip$ be a grid of the $Y$-site $(\cC,J)$.
Suppose that the cardinality condition (1) in \cite[5.8.1]{Grids}
is satisfied for the overcategory $\cC(\cT(J))_{/X}$ for any
object $X$ of $\cC$.
Let $\cD$ denote the category of smooth left $M_{(\cC_0,\imath_0)}$-sets.
It follows from \cite[Thm.~5.8.1]{Grids} that the functor
$\omega_\Cip$ gives an equivalence of categories.
Let $\omega\colon \cC \to \cD$ denote the composite of the functor
$\omega_\Cip\colon \Presh(\cC) \to \cD$ and the Yoneda
embedding $\frh_\cC \colon \cC \to \Presh(\cC)$.
In this paragraph, we give a description
of the Grothendieck topology $\omega_* J$
in terms of the category $\cD$ 
without any reference to the categories
$\cC$, $\Shv(\cC,J)$ or 
the functors $\omega$, $\omega_\Cip$.

\begin{lem} \label{lem:pushforward_canonical}
$\omega_* J$ is equal to the canonical Grothendieck
topology on $\cD$.
\end{lem}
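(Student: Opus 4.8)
The plan is to factor $\omega$ through the canonical functor $\eps_{\cC,J}\colon\cC\to\Shv(\cC,J)$ and then reduce the statement, via transitivity of pushforward topologies, to the assertion that the pushforward of the canonical topology along an equivalence of categories is again the canonical topology. Concretely: since the fiber functor $\omega_\Cip$ factors through the sheafification functor $a_J$ (by its construction in \cite{Grids}), it sends the adjunction morphism $\frh_\cC(X)\to a_J\frh_\cC(X)$ to an isomorphism for every object $X$ of $\cC$, and its restriction to $\Shv(\cC,J)$ is an equivalence $\Phi\colon\Shv(\cC,J)\xto{\cong}\cD$. Hence $\omega=\Phi\circ\eps_{\cC,J}$, and by Lemma~\ref{lem:pushforward_transitivity} together with Proposition~\ref{prop:canonical} we get
\[
\omega_* J=\Phi_*\bigl((\eps_{\cC,J})_* J\bigr)=\Phi_* J',
\]
where $J'$ is the canonical Grothendieck topology on $\Shv(\cC,J)$.

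It then remains to prove that $\Phi_* J'$ equals the canonical topology $J_{can}$ on $\cD$. The key input is that an equivalence of categories is cocontinuous with respect to the canonical topologies of its source and target: this is standard (the canonical topology is intrinsic to a topos, since $\Shv(\cE,J_{can})\cong\cE$ for every topos $\cE$), and can also be seen directly from the fact that $\Phi$ induces, for each object $X$, an equivalence of overcategories $\Shv(\cC,J)_{/X}\xto{\cong}\cD_{/\Phi(X)}$ carrying the pullback sieve $\Phi_{X,\id}^* R$ to $R$. Granting this for $\Phi$ and for a chosen quasi-inverse $\Psi\colon\cD\to\Shv(\cC,J)$, we proceed as follows. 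Because $\Phi_* J'$ is the finest topology on $\cD$ for which $\Phi$ is cocontinuous, cocontinuity of $\Phi$ with respect to $(J',J_{can})$ gives $J_{can}(X')\subset(\Phi_* J')(X')$ for every object $X'$ of $\cD$. Conversely, cocontinuity of $\Psi$ with respect to $(J_{can},J')$ gives $J'\subset\Psi_* J_{can}$; since the pushforward operation is monotone and depends on the functor only up to natural isomorphism (immediate from the description in Section~\ref{sec:J'}), applying $\Phi_*$ and Lemma~\ref{lem:pushforward_transitivity} yields $\Phi_* J'\subset\Phi_*(\Psi_* J_{can})=(\Phi\circ\Psi)_* J_{can}=J_{can}$. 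Combining the two inclusions gives $\Phi_* J'=J_{can}$, and therefore $\omega_* J=J_{can}$.

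The one point that needs genuine care, and which I expect to be the main obstacle, is the cocontinuity of an equivalence of categories for the canonical topologies — equivalently, the assertion that transport of structure along an equivalence carries the canonical topology to the canonical topology. Everything else in the argument is a formal manipulation with the pushforward operation already set up in this section, and the delicate point itself reduces at once to the standard intrinsic characterization of the canonical topology in \cite{SGA4}.
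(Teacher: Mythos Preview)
Your proposal is correct and follows essentially the same route as the paper's proof: factor $\omega$ as $\Phi\circ\eps_{\cC,J}$ with $\Phi$ the equivalence $\Shv(\cC,J)\xto{\cong}\cD$ (the paper cites \cite[Lemma 10.1.2]{Grids} and \cite[Theorem 5.8.1]{Grids} for this), then combine Lemma~\ref{lem:pushforward_transitivity} with Proposition~\ref{prop:canonical}. The only difference is that the paper compresses the final step ($\Phi_* J'$ equals the canonical topology on $\cD$) into a single clause, whereas you spell it out carefully via cocontinuity of $\Phi$ and a quasi-inverse; your extra detail is sound and arguably clarifies what the paper leaves implicit.
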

\begin{proof}
It follows from \cite[Lemma 10.1.2]{Grids} that
the functor $\omega_\Cip\colon \Presh(\cC) \to \cD$ 
is isomorphic to the
composite of the sheafification functor
$a_J$ with the restriction of $\omega_\Cip$
to $\Shv(\cC,J)$.
It follows from Theorem 5.8.1 in \cite{Grids} that
the restriction of $\omega_\Cip$
to $\Shv(\cC,J)$ is an equivalence of categories.
Hence the claim follows from Proposition \ref{prop:canonical}.
\end{proof}

\begin{lem}\label{lem:Galoiscovering}
Let $f\colon F \to F'$ be a Galois covering in $\wt{\cC}$.
Then the object $F'$  together with
the map $f\colon F \to F'$ 
is a \quotobj of $F$ by $\Aut_{F'}(F)$
in the category $\Shv(\cC,J)$.
In particular it is a \quotobj of $F$ 
by $\Aut_{F'}(F)$
in the category $\wt{\cC}$.
\end{lem}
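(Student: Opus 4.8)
The plan is to verify the universal property of the \quotobj directly in $\Shv(\cC,J)$; the assertion inside $\wt{\cC}$ then follows from Lemma~\ref{lem:quot_fullsub}, since $\wt{\cC}$ is a full subcategory of $\Shv(\cC,J)$ containing $F'$ and, by fullness, $\Aut_{F'}(F)$ is computed the same in either category. Set $G=\Aut_{F'}(F)$. One has to show that for every sheaf $Z$ on $(\cC,J)$ the map $-\circ f\colon\Hom_{\Shv(\cC,J)}(F',Z)\to\Hom_{\Shv(\cC,J)}(F,Z)$ is injective with image equal to the $G$-invariant part $\Hom_{\Shv(\cC,J)}(F,Z)^{G}$.

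First I would record that $f$ is an epimorphism in $\Shv(\cC,J)$. Being a Galois covering in $\wt{\cC}$, the morphism $f$ lies in $\cT(\imath_{*}J)$, so the sieve generated by $f$ is a covering sieve of $F'$; passing to the associated sheaves on $(\wt{\cC},\imath_{*}J)$ and using the equivalence of Corollary~\ref{cor:5563} together with the fact that $\imath_{*}J$ is subcanonical on $\wt{\cC}$ (by Corollary~\ref{cor:4408} it is induced by the canonical topology on $\Shv(\cC,J)$, under which the representable sheaves on $\wt{\cC}$ are identified with $F$ and $F'$) shows that $f$ is an epimorphism of sheaves. In particular $-\circ f$ is injective. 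Moreover $f\circ g=f$ for each $g\in G=\Aut_{F'}(F)$, so the image of $-\circ f$ is contained in $\Hom_{\Shv(\cC,J)}(F,Z)^{G}$.

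The heart of the matter is the reverse inclusion: given $\phi\colon F\to Z$ with $\phi\circ g=\phi$ for all $g\in G$, I must factor $\phi$ through $f$. Since $\Shv(\cC,J)$ is a Grothendieck topos, the epimorphism $f$ is the coequalizer of the two projections $p_{1},p_{2}\colon F\times_{F'}F\rightrightarrows F$, so it suffices to check $\phi\circ p_{1}=\phi\circ p_{2}$. The sheaves $\imath(X)=a_{J}(\frh_{\cC}(X))$, for $X$ an object of $\cC$, form a generating family of $\Shv(\cC,J)$ (the site $(\cC,J)$ admits a topologically generating family of objects), so it is enough to prove $\phi\circ p_{1}\circ a=\phi\circ p_{2}\circ a$ for every such $X$ and every morphism $a\colon\imath(X)\to F\times_{F'}F$. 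For such an $a$, the morphisms $p_{1}a,\,p_{2}a\colon\imath(X)\to F$ satisfy $f\circ(p_{1}a)=f\circ(p_{2}a)$; since $f$ is a Galois covering, $\imath(X)$ is an object of $\wt{\cC}$, and the defining property of a Galois covering (cf.\ the proof of Lemma~\ref{lem:Galois2}) may be tested against any object of $\wt{\cC}$, there is a unique $g\in G$ with $p_{2}a=g\circ(p_{1}a)$; hence $\phi\circ p_{2}\circ a=\phi\circ g\circ(p_{1}a)=\phi\circ p_{1}\circ a$ by $G$-invariance of $\phi$. This produces $\overline{\phi}\colon F'\to Z$ with $\overline{\phi}\circ f=\phi$, and uniqueness of $\overline{\phi}$ is the injectivity already shown. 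This completes the verification.

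The step I expect to be the main obstacle is the first one: making precise that a Galois covering in $\wt{\cC}$ is an epimorphism of sheaves, which requires keeping track of representable sheaves and the equivalence of Corollary~\ref{cor:5563} (equivalently, subcanonicity of $\imath_{*}J$), and confirming that the torsor property of a Galois covering is available against every object of $\wt{\cC}$ — in particular against each $\imath(X)$, which is exactly the input used in the coequalizer step. Everything else is formal once the computation is carried out in the ambient topos $\Shv(\cC,J)$.
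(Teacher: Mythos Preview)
Your argument is correct, and it takes a genuinely different route from the paper's. The paper's proof is a two-line appeal to Lemma~\ref{lem:pushforward_canonical} together with \cite[Lemma~3.2.4]{Grids}: it transports the question to the category $\cD$ of smooth $M_{\Cip}$-sets via the fiber functor $\omega_{\Cip}$, observes that the pushforward of $J$ there is the canonical topology, and then invokes the general lemma from \cite{Grids} that a Galois covering in such a setting is a quotient object. Your approach stays entirely inside the topos $\Shv(\cC,J)$ and argues directly: $f\in\wt{\cT}$ forces $f$ to be an epimorphism of sheaves (via Corollary~\ref{cor:4408}, since $\imath_*J$ is induced from the canonical topology), every epimorphism in a Grothendieck topos is the coequalizer of its kernel pair, and the pseudo-torsor property of a Galois covering tested against the generators $\imath(X)$ collapses the kernel pair modulo $G$. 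The paper's route is shorter because the work is hidden in \cite{Grids} and in the equivalence with $M$-sets (which in turn uses the cardinality hypothesis in force in this subsection); your route is more self-contained and in fact does not need the fiber functor or the cardinality condition, only that $(\cC,J)$ is a $Y$-site so that Corollary~\ref{cor:4408} applies. The point you flag as the main obstacle---that $f$ is an epimorphism of sheaves---is handled cleanly by Corollary~\ref{cor:4408}; alternatively one could cite Lemma~\ref{lem:TJ_surj} and Lemma~\ref{lem:epi_surj}, though that would reintroduce the dependence on the fiber functor.
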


\begin{proof}
This follows from Lemma \ref{lem:pushforward_canonical} and
\cite[Lemma 3.2.4]{Grids}.
\end{proof}

Let $f\colon X\to X'$ be a morphism in $\cC$ which belongs to $\cT$.
We denote by $\Aut^{[X]}(X')$ the set of automorphisms of
$X'$ in $\cC$ which ascends (see \cite[4.2.1]{Grids} for definition)
 to an automorphism of $X$ via $f$.
Then $\Aut^{[X]}(X')$ is a subgroup of $\Aut_{\cC}(X')$.
We denote by $\Aut_{[X']}(X)$ the set of automorphisms of
$X$ in $\cC$ which descends (see \cite[4.2.1]{Grids} for definition)
to an automorphism of $X'$ via $f$.
Then $\Aut_{[X']}(X)$ is a subgroup of $\Aut_{\cC}(X)$.
There exists a group homomorphism $\Aut_{[X']}(X) \to \Aut_{\cC}(X')$
which sends $\wt{\alpha}$ to the automorphism $\alpha$ of $X'$ to
which $\wt{\alpha}$ descends. The automorphism $\alpha$
is uniquely determined by $\wt{\alpha}$ since $f$ is an epimorphism.
By definition the image of this homomorphism is equal to
$\Aut^{[X]}(X')$. Hence we have a short exact sequence
\begin{equation}\label{eq:3_automs}
1 \to \Aut_{X'}(X) \to \Aut_{[X']}(X) \to \Aut^{[X]}(X') \to 1
\end{equation}
of groups.

\begin{lem}\label{lem:double_quot}
Let $f\colon X \to X'$ be a Galois covering in $\cC$
which belongs to $\cT$.
Let $H \subset \Aut^{[X]}(X')$ be a subgroup and let
$\wt{H} \subset \Aut_{[X']}(X)$ denote the inverse image of $H$
under the second homomorphism in \eqref{eq:3_automs}.
Then the morphism $f$ in $\cC$ induces an isomorphism
$\quot{X}{\wt{H}} \cong \quot{X'}{H}$ in $\wt{\cC}$ 
such that the diagram
$$
\begin{CD}
\quotid{X} @>{f}>> \quotid{X'} \\
@VVV @VVV \\
\quot{X}{\wt{H}} @>{\cong}>> \quot{X'}{H},
\end{CD}
$$
in $\wt{\cC}$
is commutative, 
where the vertical arrows are the canonical
quotient maps.
\end{lem}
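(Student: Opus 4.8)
The plan is to establish the isomorphism inside $\Shv(\cC,J)$ and then observe that, $\wt{\cC}$ being a full subcategory of $\Shv(\cC,J)$, this is automatically an isomorphism in $\wt{\cC}$. Write $G_0 = \Aut_{X'}(X)$ for the Galois group of $f$. Since $\wt{H}$ is by definition the inverse image of $H$ under the second homomorphism in \eqref{eq:3_automs}, whose kernel is $G_0$, I would first record that $G_0$ is a normal subgroup of $\wt{H}$ and that the assignment $g \mapsto \overline{g}$ — where $\overline{g}$ is the automorphism of $X'$ to which $g$ descends, characterised by $f \circ g = \overline{g} \circ f$ in $\cC$ and unique because $f$ is an epimorphism — induces an isomorphism $\wt{H}/G_0 \xto{\cong} H$.

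Next I would pass to $\Shv(\cC,J)$ through the functor $\imath$, identifying $\quotid{X}$ with $\imath(X)$, the morphism $f$ in $\wt{\cC}$ with $\imath(f)$, and so on. By Lemma \ref{lem:i_Galois} the morphism $\imath(f)$ is a Galois covering in $\wt{\cC}$ and $\imath$ induces an isomorphism $G_0 \xto{\cong} \Aut_{\imath(X')}(\imath(X))$; by Lemma \ref{lem:Galoiscovering}, $\imath(X')$ together with $\imath(f)$ is a \quotobj of $\imath(X)$ by $G_0$. I would then apply Lemma \ref{lem:quot_general} in the category $\Shv(\cC,J)$ with $\imath(X)$ in place of $Y$, with $\wt{H}$ (acting through the faithful functor $\imath$, cf.\ Lemma \ref{lem:omega_faithful}) in place of the ambient group, and with $G_0$ as the normal subgroup. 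This realizes $\imath(X')$ as the intermediate quotient $\quot{\imath(X)}{G_0}$ and equips it with a residual action of $\wt{H}/G_0$.

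The key computational point — and the one I expect to be the \emph{main obstacle} — is to identify this residual action concretely. Lemma \ref{lem:quot_general} gives a homomorphism $\wt{H} \to \Aut(\quot{\imath(X)}{G_0}) = \Aut(\imath(X'))$, $g \mapsto \alpha_g$, determined by $\imath(f) \circ \imath(g) = \alpha_g \circ \imath(f)$. Applying $\imath$ to the descent relation $f \circ g = \overline{g} \circ f$ and cancelling the epimorphism $\imath(f)$ gives $\alpha_g = \imath(\overline{g})$, so that the image $\overline{\wt{H}} \subset \Aut(\imath(X'))$ of this homomorphism equals the image of $H$ under $\imath : \Aut_\cC(X') \to \Aut(\imath(X'))$. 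The content here is exactly that the abstractly defined residual action agrees with the tautological action of $H \subset \Aut_\cC(X')$, which is where the exact sequence \eqref{eq:3_automs} is used.

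Finally I would assemble the conclusion. By Lemma \ref{lem:quot_FCd}, the sheaf $\quot{X'}{H}$ together with its canonical map from $\imath(X')$ is a \quotobj of $\imath(X')$ by $H$ acting through $\imath$, that is, by the subgroup $\overline{\wt{H}}$; hence I may take it as the outer quotient $\quot{(\quot{\imath(X)}{G_0})}{\overline{\wt{H}}}$ occurring in Lemma \ref{lem:quot_general}, which then shows that $\quot{X'}{H}$, together with the composite $\imath(X) \xto{\imath(f)} \imath(X') \to \quot{X'}{H}$, is a \quotobj of $\quotid{X}$ by $\wt{H}$. On the other hand, Lemma \ref{lem:quot_FCd} applied directly to $X$ and $\wt{H}$ says the same of $\quot{X}{\wt{H}}$ with its canonical quotient map. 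Uniqueness of quotient objects up to unique compatible isomorphism then yields the isomorphism $\quot{X}{\wt{H}} \xto{\cong} \quot{X'}{H}$, and the requirement that it carry one quotient map to the other is precisely the commutativity of the square in the statement. Apart from the identification of the residual action, every step is a routine invocation of universal properties, so I expect no further difficulty.
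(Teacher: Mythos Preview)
Your proposal is correct and follows essentially the same approach as the paper: both reduce to showing that $\quot{X'}{H}$ together with the composite $\quotid{X} \to \quotid{X'} \to \quot{X'}{H}$ is a quotient object of $\quotid{X}$ by $\wt{H}$, using the exact sequence $1 \to \Aut_{X'}(X) \to \wt{H} \to H \to 1$ together with Lemma \ref{lem:quot_general} and Lemma \ref{lem:Galoiscovering}. You are more explicit about identifying the residual $\wt{H}/G_0$-action on $\imath(X')$ with the tautological $H$-action via $\imath$, a point the paper leaves implicit; the paper also cites Lemma \ref{lem:Galois2} rather than Lemma \ref{lem:quot_FCd}, but these serve the same purpose here.
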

\begin{proof}
It suffices to prove that the object $\quot{X'}{H}$ together with
the composite $\quotid{X} \to \quotid{X'} \to \quot{X'}{H}$
is a \quotobj of $\quotid{X}$ by $\wt{H}$
in $\wt{\cC}$. Since we have an exact sequence
$1 \to \Aut_{X'}(X) \to \wt{H} \to H \to 1$ and
since $\quotid{X'}$ together with the morphism
$\quotid{X} \to \quotid{X'}$ is the \quotobj
of $\quotid{X}$ by $\Aut_{X'}(X)$, the claim
follows from Lemma \ref{lem:quot_general}, 
Lemma \ref{lem:Galois2}
and Lemma \ref{lem:Galoiscovering}.
\end{proof}

\begin{lem}\label{lem:mono}
Let $f\colon  Y \to X$ be a morphism in $\wt{\cC}$ which belongs
to $\wt{\cT}$. Suppose that $f$ is a monomorphism.
Then $f$ is an isomorphism.
\end{lem}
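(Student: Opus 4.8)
The plan is to view $f\colon Y\to X$ as a morphism in the ambient topos $\Shv(\cC,J)$, to show that there it is simultaneously a monomorphism and an epimorphism, and then to conclude by the fact that a Grothendieck topos is balanced. Since $\wt{\cC}$ is a full subcategory of $\Shv(\cC,J)$, once $f$ is an isomorphism in $\Shv(\cC,J)$ its inverse $f^{-1}\colon X\to Y$ is a morphism between two objects of $\wt{\cC}$, hence lies in $\wt{\cC}$, so that $f$ is an isomorphism in $\wt{\cC}$.

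First I would treat the monomorphism part. The category $\wt{\cC}$ contains, for every object $X'$ of $\cC$, an object isomorphic to $\imath(X')=a_J(\frh_\cC(X'))$, and the objects $a_J(\frh_\cC(X'))$, with $X'$ running over $\cC$, form a generating family of the topos $\Shv(\cC,J)$ (they are a set of topological generators for the canonical topology; cf.\ \cite[Exp II, Prop 4.10]{SGA4}). Since $f$ is a monomorphism in $\wt{\cC}$ and $\wt{\cC}$ is full in $\Shv(\cC,J)$, the map $\Hom_{\Shv(\cC,J)}(\imath(X'),Y)\to\Hom_{\Shv(\cC,J)}(\imath(X'),X)$ is injective for every $X'$; because these objects generate, $f$ is a monomorphism in $\Shv(\cC,J)$.

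For the epimorphism part I would use the hypothesis $f\in\wt{\cT}$. By the definition of $\wt{\cT}$ in Section~\ref{sec:cT}, composing $f$ on both sides with isomorphisms (which changes neither the monomorphism, nor the epimorphism, nor the isomorphism property) reduces us to the case where $f$ is a morphism $f'\colon(X_1,H_1)\to(X_2,H_2)$ in $\wt{\cC}^\str$ belonging to $\wt{\cT}^\str$. By the definition in Section~\ref{sec:cTmu} there is a representative $X_1\xleftarrow{m}X'\xto{g}X_2$ of $f'$ in $\cC$ with $g\in\cT(J)$, and $m\in\cT(J)$ by the definition of a representative. Transported to $\wt{\cC}$, the commutativity of the pentagon~\eqref{diagram:pentagon} reads
\[
f'\circ\bigl(q_{X_1,H_1}\circ\imath(m)\bigr)=q_{X_2,H_2}\circ\imath(g).
\]
The morphism $q_{X_2,H_2}$ is $a_J$ applied to the surjection of presheaves $\frh_\cC(X_2)\surj\quot{\frh_\cC(X_2)}{H_2}$, hence an epimorphism since $a_J$ is a left adjoint; and $\imath(g)=a_J(\frh_\cC(g))$ is an epimorphism because $g\in\cT(J)$ forces $R_g\in J(X_2)$, so that the factorization $\frh_\cC(X')\surj\frh_\cC(R_g)\inj\frh_\cC(X_2)$ becomes, after applying $a_J$, $a_J(\frh_\cC(X'))\surj a_J(\frh_\cC(R_g))\xto{\cong}a_J(\frh_\cC(X_2))$. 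Thus the right-hand side, and therefore $f'$, is an epimorphism in $\Shv(\cC,J)$; hence so is $f$. Since $\Shv(\cC,J)$ is a Grothendieck topos, it is balanced, so $f$ is an isomorphism in $\Shv(\cC,J)$, and hence in $\wt{\cC}$ by the first paragraph.

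The step I expect to be the main obstacle is the very first one: making sure that a monomorphism in the full subcategory $\wt{\cC}$ really is a monomorphism in $\Shv(\cC,J)$. This is exactly where one uses that $\wt{\cC}$ contains a generating family of the topos; everything else is unwinding the definitions of $\wt{\cT}$, $\wt{\cT}^\str$ and of representatives, together with the standard facts that $a_J$ is a left exact left adjoint and that Grothendieck topoi are balanced.
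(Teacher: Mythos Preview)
Your proof is correct and takes a genuinely different route from the paper's. The paper's argument stays inside the Galois-covering framework: it chooses $g:Z\to Y$ in $\wt{\cT}$ so that $f\circ g$ is a Galois covering (hence $g$ is too by \cite[Lemma~4.1.3]{Grids}), uses the monomorphism hypothesis to show that the inclusion $\Gal(g)\subset\Gal(f\circ g)$ is an equality, and then appeals to Lemma~\ref{lem:Galoiscovering} and the uniqueness of quotient objects to conclude that $f$ is an isomorphism.

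Your approach is more topos-theoretic: lift to $\Shv(\cC,J)$, check mono there via the generating family $\{a_J(\frh_\cC(X'))\}$ (all of which lie in $\wt{\cC}$), check epi there by unwinding the representative of $f\in\wt{\cT}^\str$, and finish with the balanced property. This buys you a bit more generality: you never invoke enough Galois coverings or Lemma~\ref{lem:Galoiscovering} (which in the paper's development relies on Lemma~\ref{lem:pushforward_canonical} and hence on the cardinality hypothesis of \cite[5.8.1]{Grids}), so your argument goes through already in the $B$-site setting of Section~\ref{sec:cT}. The paper's argument, by contrast, keeps everything internal to the $Y$-site machinery and makes the role of the Galois group visible, which is consistent with how $\wt{\cC}$ is used downstream.
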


\begin{proof}
Let us choose a morphism $g\colon Z \to Y$ in $\wt{\cC}$
such that $g$ belongs to $\wt{\cT}$ and that the composite
$f \circ g$ is a Galois covering in $\wt{\cC}$.
It follows from Lemma 4.1.3 of \cite{Grids} that $g$ is 
also a Galois covering in $\wt{\cC}$.
Let us consider the following commutative diagram
$$
\begin{CD}
\Hom_{\cC}(Z,Z) @>{g_*}>> \Hom_{\cC}(Z,Y) \\
@| @VV{f_*}V \\
\Hom_{\cC}(Z,Z) @>{(f\circ g)_*}>> \Hom_{\cC}(Z,X)
\end{CD}
$$
of sets. Since $f$ is monomorphism, the map $f_*$ is
injective. This implies that $g_*^{-1}(g)$ and 
$(f \circ g)_*^{-1}(f \circ g)$ are equal as subsets
of $\Hom_{\cC}(Z,Z)$. Since $g_*^{-1}(g)$ is a $\Gal(g)$-torsor
and $(f \circ g)_*^{-1}(f \circ g)$ is a $\Gal(f \circ g)$-torsor,
the canonical map $\Gal(f \circ g) \to \Gal(g)$ is an isomorphism.
It follows from Lemma \ref{lem:Galoiscovering} that
$g\colon Z \to Y$ is a quotient object of $Z$ by $\Gal(g)$,
and that $f \circ g\colon Z \to X$ is a quotient object of $Z$ by 
$\Gal(f \circ g)$. The uniqueness of quotient objects 
implies that $f$ is an isomorphism.
\end{proof}

\section{Adding finite coproducts}
\label{sec:add coproduct}
The example case of a $Y$-site of a Galois extensions of a field is given in Section~\ref{sec:intro Y-sites}.

At this point, we have in our mind a $Y$-site where we have added quotients.   
The next step is to add finite coproducts.   Again, we have an embedding 
$\cC \to \Shv(\cC, J)$.     There are coproducts of objects in $\Shv(\cC,J)$
so we consider such subcategory.    We note that the resulting category is no longer
a $Y$-site, but we still have control over this category in that 
when equipped with the pushforward topology, the new site is 
still equivalent to the original one.
The main merit or the result of this construction is that the fiber 
products exist
in this larger category.

\subsection{$F$-category}
\subsubsection{ }
Let $\cF$ be a category which has an initial object and
finite coproducts.
We say that an object $Y$ of $\cF$ is connected 
if for any finite set $I$, 
and for any family $(N_i)_{i \in I}$ of objects
of $\cF$ indexed by the elements in $I$,
the map $\Hom_{\cF}(Y,N_i) \to \Hom_{\cF}(Y,
\coprod_{i \in I} N_i)$ given by the composition
with the inclusion morphism 
$N_i \to \coprod_{i \in I} N_i$
for each $i \in I$ induces a bijection
$\coprod_{i \in I} \Hom_{\cF}(Y,N_i) \to \Hom_{\cF}(Y,
\coprod_{i \in I} N_i)$.
\begin{defn}
We say that a category $\cF$ is an
{\em $F$-category} if the following two conditions are
satisfied:
\begin{enumerate}
\item $\cF$ has an initial object and
finite coproducts.
\item Any object of $\cF$ is isomorphic to a coproduct 
of a family $(N_i)_{i\in I}$ of connected objects of $\cF$
indexed by a finite set $I$.
\end{enumerate}
When $\cF$ is an $F$-category, 
we let $\cF^{(0)} \subset \cF$ denote 
the full subcategory of connected objects.
\end{defn}

\subsubsection{ }
\label{sec:fin_conn_morph}
Let $\cF$ be an $F$-category.
Let $M$, $N$ be two objects of $\cF$.
Let us write $M$ and $N$ as coproducts
$M= \coprod_{i\in I} M_i$ and 
$N= \coprod_{j\in J} N_j$ of connected 
objects of $\cF$ in such a way that both
$I$ and $J$ are finite sets.
\begin{lem}\label{lem:fin_conn}
We have an isomorphism
$$
\Hom_{\cF}(M,N) \cong \prod_{i \in I}
\coprod_{j \in J} \Hom_{\cF^{(0)}}(M_i,N_j).
$$
\end{lem}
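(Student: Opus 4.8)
The plan is to obtain the asserted bijection by composing two elementary identifications, one coming from the universal property of the coproduct on the source side and one from the very definition of ``connected'' applied on the target side. First I would write $M \cong \coprod_{i\in I} M_i$ and use that a coproduct corepresents the product of hom-functors: the morphisms $\Hom_\cF(M,N)\to\Hom_\cF(M_i,N)$ induced by the canonical inclusions $M_i\hookrightarrow M$ assemble into a bijection
\[
\Hom_\cF(M,N)\xto{\cong}\prod_{i\in I}\Hom_\cF(M_i,N).
\]
(Finiteness of $I$ is not needed here, but does no harm.)

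Next, for each $i\in I$ the object $M_i$ is connected and $J$ is a finite set, so the defining property of a connected object, applied to the finite family $(N_j)_{j\in J}$, furnishes a bijection
\[
\coprod_{j\in J}\Hom_\cF(M_i,N_j)\xto{\cong}\Hom_\cF\Bigl(M_i,\coprod_{j\in J}N_j\Bigr)=\Hom_\cF(M_i,N).
\]
Finally, since $\cF^{(0)}$ is by definition the \emph{full} subcategory of connected objects and each $M_i$ and each $N_j$ belongs to $\cF^{(0)}$, we have $\Hom_{\cF^{(0)}}(M_i,N_j)=\Hom_\cF(M_i,N_j)$. Substituting the last two identifications into the first gives
\[
\Hom_\cF(M,N)\cong\prod_{i\in I}\coprod_{j\in J}\Hom_{\cF^{(0)}}(M_i,N_j),
\]
which is the claim.

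There is essentially no genuine obstacle: the content of the lemma is exactly the compatibility of the coproduct adjunction on the source with the connectedness splitting on the target, and everything is a formal diagram chase. The only points requiring a moment of bookkeeping are to record that the composite bijection is the expected one (it sends $f\colon M\to N$ to the family whose $(i,j)$-component, when it is defined, is the $N_j$-component of the restriction $f\circ(M_i\hookrightarrow M)$), and to check the degenerate cases where $I$ or $J$ is empty: then $M$ or $N$ is the initial object and both sides collapse consistently to a one-point set, respectively to the empty set, using that $\coprod_{j\in\emptyset}\Hom_\cF(M_i,N_j)=\emptyset$ forces $\Hom_\cF(M_i,\text{(initial object)})=\emptyset$.
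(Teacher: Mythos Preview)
Your proof is correct and follows essentially the same two-step argument as the paper: first use the universal property of the coproduct $M=\coprod_i M_i$ to rewrite $\Hom_\cF(M,N)$ as a product, then use connectedness of each $M_i$ to split $\Hom_\cF(M_i,N)$ as a disjoint union over $j$. Your extra remarks on the fullness of $\cF^{(0)}$ and the degenerate cases are fine but not needed for the argument.
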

\begin{proof}
It follows from the definition of coproducts that
we have $\Hom_{\cF}(M,N) \cong \prod_{i \in I}
\Hom_{\cF^{(0)}}(M_i,N)$.
Since $M_i$ is a connected object for each $i \in I$, 
we have $\Hom_{\cF^{(0)}}(M_i,N) 
= \coprod_{j \in J} \Hom_{\cF^{(0)}}(M_i,N_j)$.
This proves the claim.
\end{proof}

\subsubsection{ }
\label{sec:cond12}
Let $\cF$ be a category which has an initial object and
finite coproducts.
We consider the following two conditions for
a presheaf $F$ on $\cF$.
\begin{enumerate}
\item $F$ sends the initial object $\emptyset$ in $\cC$
to a final object $*$ in the category of sets in $\frU$.
\item For any finite set $I$ 
and for any family $(N_i)_{i \in I}$ of 
objects in $\cC$ indexed by the set $I$,
the map $F(\coprod_{i \in I} N_i) \to F(N_i)$
given by the inclusion morphism $N_i \to
\coprod_{i \in I} N_i$ in $\cC$ induces a
bijection $F( \coprod_{i\in I} N_i) \to
\prod_{i \in I} F(N_i)$.
\end{enumerate}

\subsubsection{ }\label{sec:pi_0}
Let $\cF$ be an $F$-category.
Let $N$ be an object of $\cF$. 
Then $N$ is of the form $N = \coprod_{i \in I} N_i$
where $I$ is a finite set and $N_i$ is an object of
$\cF^{(0)}$ for each $i \in I$.
We set $\pi_0(N) = I$ and call it the set of connected
components of $N$.
For $i \in I = \pi_0(N)$,
the object $N_i$ of $\cF^{(0)}$ is called the
component at $i$ of $N$.

It can be checked easily, 
by using Lemma \ref{lem:fin_conn}
that the set $\pi_0(N)$ is well-defined 
in the sense that it does not
depend on the choice of the presentation
$N = \coprod_{i \in I} N_i$ of $N$, 
up to canonical isomorphisms.
%
%
%
%
It is immediate from 
the definition that an object $N$ of
$\cF$ is an initial object (\resp connected) 
if and only if $\pi_0(N)$ is an empty set 
(\resp $\pi_0(N)$ consists of a single element).

A morphism $f\colon M \to N$ induces a map $\pi_0(M) \to \pi_0(N)$
which we denote by $\pi_0(f)$.
Using Lemma \ref{lem:fin_conn} 
we can regard a morphism $f\colon M \to N$
as a data $(\pi_0(f), (f_i)_{i \in \pi_0(M)})$
where 
$f_i \colon  M_i \to N_{\pi_0(f)(i)}$ is a morphism in
$\cF^{(0)}$ for each $i \in \pi_0(M)$.
%
%
For $i \in I$, the morphism $f_i$ in $\cF^{(0)}$ is called the
component at $i$ of $f$.

%
\begin{lem}\label{lem:aux1}
Let $\cF$ be an $F$-category.
Let $f\colon M \to N$ be a morphism in $\cF$. 
Suppose that $N$ is a coproduct $N = N_1 \amalg N_2$ 
of two objects $N_1$, $N_2$ of $\cF$.
For $j \in \pi_0(M)$, we denote by $M_j$ the component at $j$ of $M$.
For $i=1,2$, let $\iota_i\colon N_i \to N$ denote the $i$-th inclusion and
let $S_i \subset \pi_0(M)$ denote the inverse image
under the map $\pi_0(f)\colon \pi_0(M) \to \pi_0(N)$ of the image
of the map $\pi_0(\iota_i) \colon  \pi_0(N_i) \to \pi_0(N)$.
We set $M_i = \coprod_{j \in S_i} M_j$.
Let $f_i\colon M_i \to N_1$ denote the morphism 
whose component at $j \in S_i$ is equal to
the component at $j$ of $f$.
Then for $i=1,2$, the fiber product $N_i \times_N M$ exists and
the commutative diagram
\begin{equation}\label{eq:cartesian_Mi}
\begin{CD}
M_i @>{\iota'_i}>> M \\
@V{f_i}VV @V{f}VV \\
N_i @>{\iota_i}>> N,
\end{CD}
\end{equation}
where $\iota'_i \colon M_i \to M$ denotes the inclusion,
gives an isomorphism $M_i \cong N_i \times_N M$.
\end{lem}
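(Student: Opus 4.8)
The plan is to reduce the statement to the combinatorial description of morphisms in an $F$-category provided by Lemma \ref{lem:fin_conn}, together with the formalism of connected components from Section \ref{sec:pi_0}. First I would record a few preliminaries. Since $N = N_1 \amalg N_2$, the canonical maps $\pi_0(N_i) \to \pi_0(N)$ identify $\pi_0(N)$ with the disjoint union $\pi_0(N_1) \amalg \pi_0(N_2)$; hence the subsets $S_1, S_2 \subset \pi_0(M)$ of the statement form a partition of $\pi_0(M)$, and the presentation of $M$ as a coproduct of its connected components restricts to an isomorphism $M \cong M_1 \amalg M_2$ under which $\iota'_1$ and $\iota'_2$ become the two coproduct inclusions. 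Next, using Lemma \ref{lem:fin_conn} one sees that for any object $T$ of $\cF$ the map $\Hom_\cF(T, M_i) \to \Hom_\cF(T, M)$ induced by $\iota'_i$ is injective, being induced by the inclusion of index sets $S_i \hookrightarrow \pi_0(M)$; thus $\iota'_i$ is a monomorphism. Finally, a direct comparison of the component data of $\iota_i \circ f_i$ and $f \circ \iota'_i$ (using that every component morphism of the inclusion $\iota_i$ is an identity) shows that the square \eqref{eq:cartesian_Mi} commutes.

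Then I would verify the universal property of the fiber product directly. Let $T$ be an object of $\cF$ equipped with morphisms $a : T \to N_i$ and $b : T \to M$ satisfying $\iota_i \circ a = f \circ b$. Write $T = \coprod_{k \in \pi_0(T)} T_k$ and decompose, via Section \ref{sec:pi_0}, $a = (\pi_0(a),(a_k)_k)$ and $b = (\pi_0(b),(b_k)_k)$; write $f_j : M_j \to N_{\pi_0(f)(j)}$ for the component of $f$ at $j \in \pi_0(M)$. The key step is to split the hypothesis $\iota_i \circ a = f \circ b$ into its two parts. On the level of $\pi_0$ it reads $\pi_0(\iota_i) \circ \pi_0(a) = \pi_0(f) \circ \pi_0(b)$, so the image of $\pi_0(b)$ lies in $\pi_0(f)^{-1}(\mathrm{im}\,\pi_0(\iota_i)) = S_i$; by Lemma \ref{lem:fin_conn} this says exactly that $b$ factors as $b = \iota'_i \circ c$ for a (necessarily unique, by the monomorphism property) morphism $c : T \to M_i$ whose component at $k$ is $b_k$. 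On the level of component morphisms, since every component morphism of $\iota_i$ is an identity, the hypothesis reads $a_k = f_{\pi_0(b)(k)} \circ b_k$ for every $k$. As the component of $f_i$ at $j \in S_i$ is by construction $f_j$, the component of $f_i \circ c$ at $k$ equals $f_{\pi_0(b)(k)} \circ b_k = a_k$, whence $f_i \circ c = a$. This proves existence of the required factorization, and uniqueness is immediate because $\iota'_i$ is a monomorphism and already determines $c$ from $b$. Hence $(M_i, \iota'_i, f_i)$ enjoys the universal property of $N_i \times_N M$, which yields both the existence of the fiber product and the asserted isomorphism $M_i \cong N_i \times_N M$.

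The argument is essentially bookkeeping, and I expect the only delicate point to be the clean separation of the identity $\iota_i \circ a = f \circ b$ into its $\pi_0$ part and its component-morphism part, together with keeping track of the canonical identification $\pi_0(\iota_i)(\pi_0(a)(k)) = \pi_0(f)(\pi_0(b)(k))$ under which the component morphisms on the two sides of this identity are to be compared. Once the $\pi_0$ statement is in hand, the factorization of $b$ through $M_i$ and the equality $f_i \circ c = a$ require no further structural input about $\cF$ beyond Lemma \ref{lem:fin_conn}.
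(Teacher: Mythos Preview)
Your proof is correct and follows essentially the same approach as the paper's: a direct verification of the universal property of the fiber product using the combinatorial description of morphisms in an $F$-category from Lemma~\ref{lem:fin_conn}. The paper compresses the entire argument into a single sentence (``it follows immediately from the definition of the morphisms in $\cF$ that the map $h \mapsto (\iota'_i \circ h, f_i \circ h)$ is a bijection\ldots''), whereas you carry out explicitly the splitting into $\pi_0$-data and component morphisms and the monomorphism observation for $\iota'_i$; the underlying idea is identical.
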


\begin{proof}
Let $i \in \{1,2\}$.
For each object $M'$ of $\cF$, it follows immediately
from the definition of the morphisms in $\cF$ that
the map that sends the morphism $h\colon M' \to M_i$
to the pair $(\iota'_i \circ h, f_i \circ h)$ gives
a bijection from $\Hom_{\cF}(M',M_i)$ to the set of pairs
$(\iota'',f')$ of morphisms such that the diagram
$$
\begin{CD}
M' @>{\iota''}>> M \\
@V{f'}VV @V{f}VV \\
N_i @>{\iota_i}>> N,
\end{CD}
$$
is commutative. This shows that the diagram 
(\ref{eq:cartesian_Mi}) is cartesian.
This proves the claim.
\end{proof}

\begin{lem}\label{lem:fiber_prod_basic}
Let $\cF$ be an $F$-category.
\begin{enumerate}
\item Let $N_1 \xto{f_1} N' \xleftarrow{f_2} N_2$ 
be a diagram in $\cF$. Suppose that
the images of $\pi_0(f_i)\colon \pi_0(N_i) \to \pi_0(N')$
for $i=1,2$ do not intersect.
Then the fiber product $N_1 \times_{N'} N_2$
exists in $\cF$ and is isomorphic to an
initial object of $\cF$.
\item Let $N_1, N_2, N_3$ and $N'$ be objects of
$\cF$. For $i=1,2,3$, let $f_i \colon  N_i \to N'$
be a morphism in $\cF$. We set $N=N_1 \amalg N_2$.
Let $f\colon N \to N'$ denote the morphism in $\cF$ 
given by $f_1$ and $f_2$.
Then the fiber product $N \times_{N'} N_3$
of $f$ and $f_3$ exists if and only if
the fiber product $N_i \times_{N'} N_3$
exists for each $i=1, 2$.
Moreover if fiber product $N \times_{N'} N_3$
of $f$ and $f_3$ exists, then the morphism 
$(N_1 \times_{N'} N_3) \amalg
(N_2 \times_{N'} N_3) \to N \times_{N'} N_3$
induced by the commutative diagrams
\begin{equation}\label{eq:diag_pentagon}
\xymatrix{
& N_i \times_{N'} N_3 \ar[r] \ar[dl] 
& N_3 \ar[d]^{f_3} \\
N_i \ar[r]^{\subset} & N \ar[r]^{f} & N'
}
\end{equation}
for $i=1,2$ is an isomorphism in $\cF$.
\end{enumerate}
\end{lem}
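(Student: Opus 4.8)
The plan is to prove both statements by identifying the functors that the asserted (co)limits must represent, thereby reducing everything to the $\pi_0$-combinatorics packaged in Lemma~\ref{lem:fin_conn} and to the ``pullback of a coproduct inclusion'' computation of Lemma~\ref{lem:aux1}.

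For part (1), fix a test object $M'$ and write $M' = \coprod_{j \in \pi_0(M')} M'_j$ with each $M'_j$ connected. A pair $(g_1,g_2)$ with $g_i \colon M' \to N_i$ and $f_1 g_1 = f_2 g_2$ restricts on each $M'_j$, by Lemma~\ref{lem:fin_conn}, to morphisms landing in single connected components $N_{1,a(j)}$ of $N_1$ and $N_{2,b(j)}$ of $N_2$, and the equality $f_1 g_1 = f_2 g_2$ then forces $\pi_0(f_1)(a(j)) = \pi_0(f_2)(b(j))$ in $\pi_0(N')$. Since the images of $\pi_0(f_1)$ and $\pi_0(f_2)$ are disjoint by hypothesis, no such $j$ can exist; hence $\pi_0(M') = \emptyset$, i.e.\ $M'$ is an initial object, and then $(g_1,g_2)$ is the unique such pair. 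So the functor $M' \mapsto \{(g_1,g_2) : f_1 g_1 = f_2 g_2\}$ coincides with $\Hom_{\cF}(-,\emptyset)$, which (again by Lemma~\ref{lem:fin_conn}, since $\pi_0(\emptyset)=\emptyset$) is a one-point set on initial objects and empty otherwise; hence an initial object represents $N_1 \times_{N'} N_2$.

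For part (2), write $P_i = N_i \times_{N'} N_3$ when it exists. Assume first that $P_1$ and $P_2$ exist; I would show $P_1 \amalg P_2$ represents the functor $M' \mapsto \{(a,b) : a\colon M'\to N,\ b\colon M'\to N_3,\ f a = f_3 b\}$. Using Lemma~\ref{lem:fin_conn} together with $f \circ (N_i \hookrightarrow N) = f_i$, a morphism $M' \to P_1 \amalg P_2$ amounts to a function $\phi\colon \pi_0(M')\to\{1,2\}$ plus, for $i=1,2$, a morphism from the sub-coproduct $M'_{\phi^{-1}(i)} \subset M'$ into $P_i$, i.e.\ a compatible pair into $N_i$ and $N_3$; dually, giving $a\colon M'\to N=N_1\amalg N_2$ is the same as giving such a $\phi$ together with morphisms $M'_{\phi^{-1}(i)} \to N_i$, and $b$ splits accordingly, the condition $fa = f_3 b$ becoming the condition on each block separately. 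These descriptions agree naturally in $M'$, so $P_1 \amalg P_2 \cong N \times_{N'} N_3$; unwinding the identification shows the comparison morphism is the one induced by \eqref{eq:diag_pentagon}. Conversely, suppose $P := N \times_{N'} N_3$ exists. Applying Lemma~\ref{lem:aux1} to the structure morphism $\pr_N\colon P \to N$ and the decomposition $N = N_1 \amalg N_2$, the fiber product $N_i \times_N P$ exists for $i=1,2$; and for any test object $M'$ one has $\Hom_{\cF}(M', N_i \times_N P) = \Hom_{\cF}(M',N_i) \times_{\Hom_{\cF}(M',N)} \bigl(\Hom_{\cF}(M',N) \times_{\Hom_{\cF}(M',N')} \Hom_{\cF}(M',N_3)\bigr) = \Hom_{\cF}(M',N_i) \times_{\Hom_{\cF}(M',N')} \Hom_{\cF}(M',N_3)$, the last equality because the composite $\Hom_{\cF}(M',N_i)\to\Hom_{\cF}(M',N)\to\Hom_{\cF}(M',N')$ is $f_i\circ(-)$. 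Hence $N_i \times_N P$ represents $N_i \times_{N'} N_3$, which therefore exists.

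The routine but error-prone step is the bookkeeping in part (2): translating a morphism into $N_1 \amalg N_2$ (resp.\ into $P_1 \amalg P_2$) into a partition of $\pi_0(M')$ plus morphisms on the blocks, and checking it matches the fiber-product data on the nose, naturally in $M'$. I expect no genuine obstacle beyond keeping the $\pi_0$-indexing consistent; all the real content is in Lemma~\ref{lem:aux1}, Lemma~\ref{lem:fin_conn}, and the pasting law for fiber products.
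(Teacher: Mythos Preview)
Your proposal is correct and follows essentially the same approach as the paper: both argue part~(1) by showing any test object with a compatible pair must have empty $\pi_0$, and both handle part~(2) by checking the universal property via the $\pi_0$-decomposition from Lemma~\ref{lem:fin_conn} in one direction and by invoking Lemma~\ref{lem:aux1} plus the pasting law for fiber products in the other. The paper phrases the forward direction of~(2) in presheaf language (a morphism $h_{P_1}\amalg h_{P_2}\to h_N\times_{h_{N'}} h_{N_3}$ checked on connected objects and extended via Conditions~(1),(2) of Section~\ref{sec:cond12}), whereas you work directly with hom-sets and block decompositions of the test object; your formulation is arguably cleaner and avoids the slight notational ambiguity in the paper's presheaf coproduct.
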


\begin{proof}
Let the notation be as in the claim (1).
Let
\begin{equation}\label{eq:comm_diagram}
\begin{CD}
N @>{g_2}>> N_2 \\
@V{g_1}VV @VV{f_2}V \\
N_1 @>{f_1}>> N'
\end{CD}
\end{equation}
be a commutative diagram in $\cF$.
We then have $\pi_0(f_2) \circ \pi_0(g_2)
= \pi_0(f_1) \circ \pi_0(g_1)$.
Since the images of $\pi_0(f_1)$ and
$\pi_0(f_2)$ are disjoint, it follows that
$\pi_0(N)$ is an empty set.
This shows that, for an object $N$ of $\cF$,
there does not exist a pair $(g_1,g_2)$ of 
morphisms $g_1\colon N \to N_1$
and $g_2\colon N \to N_2$ such that the diagram
(\ref{eq:comm_diagram}) is commutative
if $N$ is not an initial object of $\cF$.
If $N$ is an initial object 
of $\cF$, then it is easy to check that
there exists a unique such pair $(g_1,g_2)$ 
of morphisms.
On the other hand, it follows from
Lemma \ref{lem:fin_conn} that, 
for an object $N$ of $\cF$,
there does not exist a morphism
from $N$ to an initial object of $\cF$
if $N$ is not an initial object of $\cF$.
Hence the fiber product $N_1 \times_{N'} N_2$
exists in $\cF$ and is isomorphic to an
initial object of $\cF$.
This proves the claim (1).

We prove the claim (2).
For an object $M$ of $\cF$, we let 
$h_M$ denote the presheaf $\cF$ represented
by $M$. It follows from Lemma \ref{lem:fin_conn}
that $h_M$ satisfies Conditions (1) and (2)
in Section \ref{sec:cond12}.
Let the notation be as in the claim (2).
First suppose that the fiber product
$N_i \times_{N'} N_3$ exists for each $i=1,2$.
Let $\phi\colon  h_{N_1 \times_{N'} N_3}
\amalg h_{N_2 \times_{N'} N_3}
\to h_{N} \times_{h_{N'}} h_{N_3}$
denote the morphism of presheaves on $\cF$
induced by the diagram (\ref{eq:diag_pentagon})
for $i=1,2$.
Here $h_{N_1 \times_{N'} N_3}
\amalg h_{N_2 \times_{N'} N_3}$ and
$h_{N} \times_{h_{N'}} h_{N_3}$
denote the coproduct and the fiber product,
respectively, in the category of presheaves on $\cF$.
It then follows from Lemma \ref{lem:fin_conn}
that $\phi(M)\colon  h_{N_1 \times_{N'} N_3}(M)
\amalg h_{N_2 \times_{N'} N_3}(M)
\to h_{N}(M) \times_{h_{N'}(M)} h_{N_3}(M)$
is bijective.
Since any representable presheaf on $\cF$ satisfies
Conditions (1) and (2) in Section \ref{sec:cond12},
it follows that the morphism $\phi$ of presheaves
on $\cF$ induces an isomorphism 
$h_{N_1 \times_{N'} N_3} \amalg 
h_{N_2 \times_{N'} N_3}
\xto{\cong} h_{N} \times_{h_{N'}} h_{N_3}$
of presheaves on $\cF$.
This shows that the fiber product $N \times_{N'} N_3$
exists in $\cF$ and the diagrams 
(\ref{eq:diag_pentagon}) for $i=1,2$ induce 
an isomorphism $(N_1 \times_{N'} N_3) \amalg 
(N_2 \times_{N'} N_3) \xto{\cong}
N \times_{N'} N_3$.

Next suppose that the fiber product
$N \times_{N'} N_3$ of $f$ and $f_3$ exists.
It follows from Lemma \ref{lem:aux1} that for $i=1,2$
the fiber product $N_i \times_{N} (N \times_{N'} N_3)$ exists.
This shows that the fiber product
$N_i \times_{N'} N_3$ exists for $i=1,2$.
This completes the proof.
\end{proof}

\begin{cor}\label{cor:fiber_product}
Let $\cF$ be an $F$-category and let $N_1 \xto{f_1} N' \xleftarrow{f_2} N_2$ 
be a diagram in $\cF$.
For $i =1,2$ and for $j \in \pi_0(N_i)$,
let $N_{i,j}$ denote the component at $j$ of $N_i$.
For $j \in \pi_0(N')$, let $N'_j$ denote the component
at $j$ of $N'$.
Then the fiber product $N_1 \times_{N'} N_2$ exists
if and only if the fiber product
$N_{1,j} \times_{N'_{\pi_0(f_1)(j)}} N_{2,j'}$ exists
for any pair $(j,j') \in \pi_0(N_1) \times \pi_0(N_2)$
satisfying $\pi_0(f_1)(j) = \pi_0(f_2)(j)$.
Moreover if the fiber product $N_1 \times_{N'} N_2$ exists,
then it is isomorphic to the coproduct
$\coprod_{(j,j')} N_{1,j} \times_{N'_{\pi_0(f_1)(j)}} N_{2,j'}$,
where $(j,j')$ runs over the pairs 
$(j,j') \in \pi_0(N_1) \times \pi_0(N_2)$
satisfying $\pi_0(f_1)(j) = \pi_0(f_2)(j)$.
\end{cor}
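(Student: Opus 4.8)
The plan is to reduce the statement to repeated applications of Lemma~\ref{lem:fiber_prod_basic}, after recording one elementary fact. For each $k \in \pi_0(N')$ the inclusion $\iota_k : N'_k \to N'$ of the component at $k$ into $N'$ is a monomorphism: by Lemma~\ref{lem:fin_conn} (applied componentwise on the source), composition with $\iota_k$ identifies $\Hom_\cF(X, N'_k)$ with a summand of $\Hom_\cF(X, N')$ for every object $X$, hence is injective. Consequently, if morphisms $g_1 : A \to N'$ and $g_2 : B \to N'$ both factor through $\iota_k$, say $g_i = \iota_k \circ g_i'$, then $g_1 a = g_2 b$ holds in $\Hom_\cF(X, N')$ if and only if $g_1' a = g_2' b$ holds in $\Hom_\cF(X, N'_k)$; thus the fiber product $A \times_{N'} B$ exists if and only if $A \times_{N'_k} B$ exists, and the two coincide. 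I will also use freely that coproduct with an initial object changes nothing, so initial summands may be discarded from any coproduct decomposition.

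First I would sort the sources by their images in $\pi_0(N')$. Using the description of morphisms in an $F$-category from Section~\ref{sec:pi_0}, write $N_i = \coprod_{k \in \pi_0(N')} N_i^{(k)}$ with $N_i^{(k)} = \coprod_{j \in \pi_0(f_i)^{-1}(k)} N_{i,j}$ for $i = 1,2$, and observe that the restriction of $f_i$ to $N_i^{(k)}$ factors through $\iota_k$. Applying Lemma~\ref{lem:fiber_prod_basic}(2) finitely many times --- a straightforward induction on the number of summands, splitting off one $N_1^{(k)}$ at a time --- shows that $N_1 \times_{N'} N_2$ exists if and only if every $N_1^{(k)} \times_{N'} N_2$ does, and then equals their coproduct; decomposing the second factor the same way, $N_1^{(k)} \times_{N'} N_2$ is the coproduct over $l \in \pi_0(N')$ of the $N_1^{(k)} \times_{N'} N_2^{(l)}$. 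For $l \neq k$ the images of $\pi_0$ of the two legs of $N_1^{(k)} \to N' \leftarrow N_2^{(l)}$ are disjoint, so by Lemma~\ref{lem:fiber_prod_basic}(1) this term exists and is an initial object; discarding it, we conclude that $N_1 \times_{N'} N_2$ exists if and only if every $N_1^{(k)} \times_{N'} N_2^{(k)}$ does, and equals $\coprod_{k \in \pi_0(N')} N_1^{(k)} \times_{N'} N_2^{(k)}$. By the first paragraph, $N_1^{(k)} \times_{N'} N_2^{(k)} = N_1^{(k)} \times_{N'_k} N_2^{(k)}$.

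Finally I would handle $N_1^{(k)} \times_{N'_k} N_2^{(k)}$, whose base $N'_k$ is now connected. Writing $N_1^{(k)} = \coprod_{j \in \pi_0(f_1)^{-1}(k)} N_{1,j}$ and $N_2^{(k)} = \coprod_{j' \in \pi_0(f_2)^{-1}(k)} N_{2,j'}$ as coproducts of connected objects and applying Lemma~\ref{lem:fiber_prod_basic}(2) repeatedly over both factors, one gets that $N_1^{(k)} \times_{N'_k} N_2^{(k)}$ exists if and only if $N_{1,j} \times_{N'_k} N_{2,j'}$ exists for every $(j,j') \in \pi_0(f_1)^{-1}(k) \times \pi_0(f_2)^{-1}(k)$, and then equals $\coprod_{(j,j')} N_{1,j} \times_{N'_k} N_{2,j'}$. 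Combining with the previous step and regrouping the iterated coproduct over $k$ as a single coproduct over the pairs $(j,j') \in \pi_0(N_1) \times \pi_0(N_2)$ with $\pi_0(f_1)(j) = \pi_0(f_2)(j') \; (= k)$, so that $N'_{\pi_0(f_1)(j)} = N'_k$, gives precisely the asserted criterion and isomorphism.

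The argument has no genuine difficulty: the whole content sits in Lemma~\ref{lem:fiber_prod_basic}, together with the observation that component inclusions into a coproduct are monomorphisms. The only thing that needs care is the bookkeeping --- turning the binary statement of Lemma~\ref{lem:fiber_prod_basic}(2) into the finitary one by induction on the number of components, and making sure the final index set (pairs with matching image in $\pi_0(N')$) and the base $N'_{\pi_0(f_1)(j)}$ of each factor are recorded consistently.
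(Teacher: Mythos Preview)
Your proof is correct and follows essentially the same route as the paper: induct on the number of connected components using Lemma~\ref{lem:fiber_prod_basic}. The only cosmetic difference is that where the paper cites Lemma~\ref{lem:aux1} to change base from $N'$ to $N'_k$, you instead observe directly (via Lemma~\ref{lem:fin_conn}) that the component inclusion $\iota_k : N'_k \to N'$ is a monomorphism; these two devices accomplish the same thing.
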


\begin{proof}
This can be proved by induction on the cardinality of 
$\pi_0(N_1) \amalg \pi_0(N_2)$ using Lemma \ref{lem:aux1}
and Lemma \ref{lem:fiber_prod_basic}.
\end{proof}

\subsection{ }
\begin{thm} \label{thm:sheaf_F-conn}
Let $\cF$ be an $F$-category and let
%
Let $\jmath\colon \cC:= \cF^{(0)} \to \cF$ denote 
the inclusion functor.
Suppose that a Grothendieck topology $J$ on $\cC$
is given.
Let $F$ be a presheaf on $\cF$.
Then $F$ is a sheaf with respect to $\jmath_* J$
if and only if its restriction to $\cC$ is a sheaf
with respect to $J$ and Conditions (1) and (2)
in Section \ref{sec:cond12} are satisfied.
\end{thm}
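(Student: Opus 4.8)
The plan is to treat the two implications separately; both rest on two preliminary observations. First, since $\cC = \cF^{(0)}$ is a full subcategory of $\cF$ the functor $\jmath$ is fully faithful, so Lemma~\ref{lem:induced} gives $\jmath^*(\jmath_*J) = J$; in particular $\jmath$ is continuous with respect to $J$ and $\jmath_*J$, so the restriction to $\cC$ of any $\jmath_*J$-sheaf is a $J$-sheaf. Second, every object $X$ of $\cF$ is covered, in $\jmath_*J$, by the inclusions $\iota_i : X_i \to X$ of its connected components: if $Y$ is connected and $f : \jmath(Y) \to X$ is any morphism, then by Lemma~\ref{lem:fin_conn} $f$ factors through a unique $\iota_i$, so the pullback along $\jmath_{Y,f}$ of the sieve $R_0$ generated by $(\iota_i)_{i \in \pi_0(X)}$ is the maximal sieve on $Y$, whence $R_0 \in \jmath_*J(X)$. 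For the same reason $I^\jmath_\emptyset$ is empty (no connected object admits a morphism to an initial object, by Lemma~\ref{lem:fin_conn}), so every sieve on an initial object $\emptyset$, in particular the empty one, lies in $\jmath_*J(\emptyset)$.

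Assume first that $F$ is a $\jmath_*J$-sheaf. Then $F|_\cC$ is a $J$-sheaf by the first observation. Applying the sheaf property to the empty covering sieve of an initial object $\emptyset$ forces $F(\emptyset)$ to be a singleton, which is Condition~(1) of Section~\ref{sec:cond12}. For Condition~(2), let $N = \coprod_{i \in I} N_i$ with $I$ finite and each $N_i$ connected; applying the sheaf property to the covering sieve $R_0$ above identifies $F(N)$ with $\Hom_{\Presh(\cF)}(\frh_\cF(R_0),F)$. Using that each morphism in $R_0$ with connected source factors uniquely through exactly one $\iota_i$, that each $\iota_i$ is a monomorphism, and that the relevant fiber products are given by Corollary~\ref{cor:fiber_product} and Lemma~\ref{lem:fiber_prod_basic}(1) (namely $N_i \times_N N_j$ is an initial object for $i \neq j$ and is $N_i$ for $i = j$), a direct computation — invoking Condition~(1) to handle the terms with $i \neq j$ — identifies $\Hom_{\Presh(\cF)}(\frh_\cF(R_0),F)$ with $\prod_i F(N_i)$ compatibly with the canonical map. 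Passing from connected $N_i$ to an arbitrary finite family by first decomposing each $N_i$ into its connected components then yields Condition~(2) in general.

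For the converse, suppose $F|_\cC$ is a $J$-sheaf and $F$ satisfies Conditions~(1) and~(2); we must show that for every object $X$ of $\cF$ and every $R \in \jmath_*J(X)$ the canonical map $F(X) \to \Hom_{\Presh(\cF)}(\frh_\cF(R),F)$ is bijective. We first record the connected case: if $X$ is connected it is an object of $\cC$, and taking $(X,\id_X) \in I^\jmath_X$ the sieve $R' := \jmath_{X,\id_X}^*R$ lies in $J(X)$ by the definition of $\jmath_*J$. One checks, using Lemma~\ref{lem:fin_conn}, that $\frh_\cF(R)|_\cC = \frh_\cC(R')$ and that restriction along $\jmath$ is a bijection $\Hom_{\Presh(\cF)}(\frh_\cF(R),F) \xto{\cong} \Hom_{\Presh(\cC)}(\frh_\cC(R'),F|_\cC)$: a morphism on the right extends uniquely to $\cF$ because, by Conditions~(1) and~(2), $F$ is determined on disconnected objects by its values on connected ones, and because every morphism into $R$ with connected source lies in $R'$, while every morphism into $R$ restricts, along the inclusions of the connected components of its source, to such morphisms. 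Since $F|_\cC$ is a $J$-sheaf and $R' \in J(X)$, the right-hand side equals $F|_\cC(X) = F(X)$, giving the claim for connected $X$. For general $X$, write $X = \coprod_i X_i$ into connected components; Condition~(2) gives $F(X) \cong \prod_i F(X_i)$, the pullback sieves $R_i := R \times_X X_i$ lie in $\jmath_*J(X_i)$, and a further application of Lemma~\ref{lem:fin_conn} (decomposing an arbitrary morphism into $R$ and reassembling sections by means of Conditions~(1) and~(2)) identifies $\Hom_{\Presh(\cF)}(\frh_\cF(R),F)$ with $\prod_i \Hom_{\Presh(\cF)}(\frh_\cF(R_i),F)$; the connected case applied to each $X_i$ then finishes.

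The whole argument is powered by the description of morphisms in an $F$-category in Lemma~\ref{lem:fin_conn}, which lets one pass back and forth between a section over an object and the tuple of sections over its connected components; there is no deeper conceptual input. The main obstacle is therefore purely bookkeeping: one must verify that the reconstructed maps $\Hom_{\Presh(\cF)}(\frh_\cF(R),F) \cong \Hom_{\Presh(\cC)}(\frh_\cC(R'),F|_\cC)$ and $\Hom_{\Presh(\cF)}(\frh_\cF(R),F) \cong \prod_i \Hom_{\Presh(\cF)}(\frh_\cF(R_i),F)$ really are natural transformations and are mutually inverse to the obvious restriction maps, and likewise that the identification $\Hom_{\Presh(\cF)}(\frh_\cF(R_0),F) \cong \prod_i F(N_i)$ in the first implication is natural — routine checks that nonetheless contain all of the actual work.
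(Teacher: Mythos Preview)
Your proposal is correct and follows essentially the same approach as the paper. The only differences are organizational: the paper begins by unwinding the definition of $\jmath_*J$ to give an explicit description of its covering sieves in terms of the sieves $R_i$ on the connected components $X_i$, and then treats both directions somewhat tersely (the forward direction is dispatched with ``one can see easily''), whereas you invoke Lemma~\ref{lem:induced} and the covering by component inclusions to handle the forward direction more carefully, and in the converse you separate out the connected case before reducing the general case to it. The paper's converse argument goes directly to general $X$ and establishes the bijection $\Hom_{\Presh(\cF)}(\frh_\cF(R),F) \cong \prod_{i} \Hom_{\Presh(\cC)}(\frh_\cC(R_i),F|_\cC)$ in one step, but this is just your two steps composed.
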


\begin{proof}
Let $X$ be an object of $\cF$ and $R$ a sieve on $X$.
It follows from the definition of pushforwards given in
Section \ref{sec:pushforward}, 
the sieve $R$ belongs to $\jmath_* J(X)$ if and only
if the following condition is satisfied: For any object
$Y$ of $\cC$ and for any morphism $f\colon Y \to X$ in $\cF$,
the full subcategory of $\cC_{/Y}$ whose objects are
the morphisms $g\colon Z \to Y$ in $\cC$ such that the
composite $Z \xto{g} Y \xto{f} X$ is a sieve on $Y$
which belongs to $J(Y)$.
For $i \in \pi_0(X)$, let $X_i$ denote the component of $X$
at $i$. Then latter condition is equivalent to the following
condition: For any $i \in \pi_0(X)$, 
the full subcategory of $\cC_{/X_i}$ whose objects are
the morphisms $g\colon Z \to X_i$ in $\cC$ such that the
composite $Z \xto{g} X_i \xto{f} X$ is a sieve on $Y$
which belongs to $J(X_i)$.
From this one can see easily that if $F$ is a sheaf on
$(\cF, \jmath_* J)$, then 
its restriction to $\cC$ is a sheaf
with respect to $J$ and Conditions (1) and (2)
in Section \ref{sec:cond12} are satisfied.

It remains to prove the converse. Suppose that a presheaf
$F$ on $\cF$ satisfies Conditions (1) and (2)
in Section \ref{sec:cond12} and that 
its restriction to $\cC$ is a sheaf
with respect to $J$.
Let $X$ be an object of $\cF$ and let $R$ be
a sieve on $X$ which belongs to $(\jmath_* J)(X)$.
We will prove that the natural map
$F(X) \cong \Hom_{\cF}(\frh_{\cF}(X),F)
\to \Hom_{\cF}(\frh_{\cF}(R),F)$ is bijective.

To do this let us introduce some more notation.
For $i \in \pi_0(X)$,
 let $X_i$ denote the component of $X$ at $i$.
Let $R_i$ denote the full subcategory of $\cC_{/X_i}$
whose objects are the morphisms $h\colon Y \to X_i$ such that
the composite $Y \to X_i \to X$ belongs to $R$.
Then $R_i$ is a sieve on $X_i$ regarded as an object of $\cC$.
%
%
%
For any object $Y$ of $\cC$, we have a natural bijection
$$
\frh_{\cF}(R)(Y)
\xto{\cong} \coprod_{i \in \pi_0(X)}\frh_{\cC}(R_i)(Y)
$$
and for any object $Y$ of $\cF$ the map
$$
\frh_{\cF}(R)(Y)
\to \prod_{j \in \pi_0(Y)}
\frh_{\cF}(R) (Y_j)
$$
is injective, 
where $Y_j$ denotes the component of $Y$ at $j$.
Since $F$ satisfies Conditions (1) and (2) in Section
\ref{sec:cond12}, we have a natural bijection
$$
\Hom_{\Presh(\cF)}(\frh_{\cF}(R),F) 
\cong \prod_{i \in \pi_0(X)} 
\Hom_{\Presh(\cC)}(\frh_{\cC}(R_i),F|_{\cC}).
$$
Since $F|_{\cC}$ is a sheaf with respect to $J$,
we have
$$
\prod_{i \in \pi_0(X)} \Hom_{\Presh(\cC)}(\frh_{\cC}(R_i),F|_{\cC})
\cong \prod_{i \in \pi_0(X)} F(X_i)
\cong F(X).
$$
This shows that $F$ is a sheaf with respect to $\jmath_*J$.
This proves the claim.
\end{proof}

\subsection{ } \label{sec:Galois_F}
Let $\cF$ be an $F$-category and
Let $f\colon Y \to X$ be a morphism in $\cF$.
We say that $f$ is a Galois covering if there exists a group
$G$ and a homomorphism $\rho\colon G \to \Aut_X(Y)$ such that the
following condition is satisfied: for any connected object
$Z$ of $\cF$, the map $\Hom_\cF(Z,Y) \to \Hom_\cF(Z,X)$
given by the composition with $f$ is a pseudo G-torsor.
When $f$ is a Galois covering and a homomorphism $\rho$ in the
above definition of Galois covering is specified, 
we say that $f$ is a Galois covering with respect to $\rho$.

The authors would like to apologize for that this notion
of Galois covering is not equivalent to that given in
\cite[Def.\ 3.1.2]{Grids}. The difference lies in that,
in our new notion of Galois covering, only connected
objects are allowed as test objects $Z$.
For an $E$-category, \cite[Def.\ 3.1.2]{Grids} gives an appropriate 
notion of Galois covering. However, it is not suitable
for an $F$-category.

We remark that, unlike the notion in \cite[Def.\ 3.1.2]{Grids}, 
an analogous statement of \cite[Lemma 3.1.3]{Grids} is false 
for our new notion of Galois covering for an $F$-category. 
When $f\colon Y \to X$ is a Galois covering in an $F$-category,
the homomorphism $\rho\colon  G \to \Aut_X(Y)$ is not necessarily bijective.
Moreover $\Aut_X(Y)$ may not be equal to $\End_X(Y)$ in general.
The morphism $\rho$ is always injective unless $Y$ is not an initial object.
However, the image of $\rho$ is not uniquely determined by $f$.
The same morphism $f$ may simultaneously be a Galois covering
with respect to two homomorphism $\rho_1,\rho_2$ with different images.

\begin{lem}\label{lem:Galcov}
Let $\cF$ be an $F$-category and
let $f\colon  N \to N'$ be a morphism in $\cF$.
Let us write $N$ and $N'$ as finite coproducts
$N = \coprod_{i \in \pi_0(N)} N_i$ and
$N' = \coprod_{j \in \pi_0(N')} N'_j$
of connected objects in $\cF$.
Let $G \subset \Aut_{N'}(N)$ be a subgroup.
Let us choose a complete set $S \subset \pi_0(N)$
of representatives of $\quot{\pi_0(N)}{G}$.
For each $s \in S$, let $G_s \subset G$ denote
the stabilizer of $s$.
Then $f$ is a Galois covering in $\cF$ with
Galois group $G$ if and only if the following
conditions are satisfied:
\begin{enumerate}
\item For each $s \in S$, the homomorphism
$G_s \to \Aut_{N'_{\pi_0(f)(s)}}(N_s)$ induced by
the action of $G$ on $N$ is injective.
\item For each $s \in S$, the component
$N_s \to N'_{\pi_0(f)(s)}$ at $s$ of the morphism $f$ 
is a Galois covering in $\cC$ with Galois group $G_s$.
Here we regard $G_s$ as a subgroup of 
$\Aut_{N'_{\pi_0(f)(s)}}(N_s)$ via the injective
homomorphism in (1).
\end{enumerate}
\end{lem}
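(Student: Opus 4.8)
The plan is to unwind the definition of Galois covering by testing against an arbitrary connected object $Z$ of $\cF$ (equivalently, an arbitrary object of $\cC$), decompose the relevant $\Hom$-sets along connected components, and match the resulting pieces with conditions (1) and (2).

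First I would record the combinatorial skeleton. By Lemma~\ref{lem:fin_conn}, for a connected $Z$ there are canonical decompositions $\Hom_\cF(Z,N)=\coprod_{i\in\pi_0(N)}\Hom_\cC(Z,N_i)$ and $\Hom_\cF(Z,N')=\coprod_{j\in\pi_0(N')}\Hom_\cC(Z,N'_j)$, under which $f_*$ sends the summand $\Hom_\cC(Z,N_i)$ into $\Hom_\cC(Z,N'_{\pi_0(f)(i)})$ by post-composition with the component $f_i$, and $g\in G$ sends $\Hom_\cC(Z,N_i)$ onto $\Hom_\cC(Z,N_{\pi_0(g)(i)})$ by post-composition with the component $g_i$. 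Reading the equality $f\circ g=f$ off componentwise at $s\in S$ gives the key identity $f_{\pi_0(g)(s)}\circ g_s=f_s$. Grouping summands over the $G$-orbits in $\pi_0(N)$, for each $s\in S$ the ``$s$-block'' $B_s(Z):=\coprod_{i\in Gs}\Hom_\cC(Z,N_i)$ is $G$-stable, and by the key identity $f_*$ carries $B_s(Z)$ into the single summand $\Hom_\cC(Z,N'_{\pi_0(f)(s)})$; moreover, once $G_s$ acts on $N_s$ as in condition~(1), $B_s(Z)$ is $G$-equivariantly the induced $G$-set $G\times_{G_s}\Hom_\cC(Z,N_s)$, and the key identity shows that the restriction of $f_*$ to $B_s(Z)$ is $(g,\phi)\mapsto (f_s)_*(\phi)$.

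Both directions then rest on a standard fact about induced sets: for a $G_s$-set $A$ with a $G_s$-invariant map to a set $C$, the induced map $G\times_{G_s}A\to C$ is a pseudo $G$-torsor onto its image if and only if $A\to C$ is a pseudo $G_s$-torsor onto its image. For the ``only if'' direction I would specialise the test object: taking $Z=N_s$ and tracking $\id_{N_s}$, freeness of the $G$-action on $\Hom_\cF(N_s,N)$ forces $G_s$ to act freely on $\Hom_\cC(N_s,N_s)$ by post-composition, which yields injectivity of $G_s\to\Aut_{N'_{\pi_0(f)(s)}}(N_s)$ (the image lands in $\Aut_{N'_{\pi_0(f)(s)}}(N_s)$ because $g\in G_s$ satisfies $f_s\circ g_s=f_s$), i.e.\ condition~(1); applying the standard fact blockwise, the pseudo $G$-torsor property of $f_*$ restricted to $B_s(Z)$ for every connected $Z$ is exactly the assertion that $(f_s)_*$ is a pseudo $G_s$-torsor for every connected $Z$, i.e.\ that $f_s$ is a Galois covering in $\cC$ with group $G_s$, which is condition~(2). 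The ``if'' direction is the same computation read in the other order: granting (1) and (2) one forms $B_s(Z)=G\times_{G_s}\Hom_\cC(Z,N_s)$, observes $f_*|_{B_s(Z)}$ is $(g,\phi)\mapsto (f_s)_*(\phi)$, and concludes from (2) that $f_*|_{B_s(Z)}$ is a pseudo $G$-torsor onto its image for every connected $Z$.

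The step I expect to be the real work is the \emph{global} assembly along $\pi_0(N')$: that each $f_*|_{B_s(Z)}$ is a pseudo $G$-torsor onto its image does not by itself give that $f_*\colon\Hom_\cF(Z,N)\to\Hom_\cF(Z,N')$ is a pseudo $G$-torsor, since a priori several $s\in S$ could share the same value $\pi_0(f)(s)$ and their blocks would then land in the same summand $\Hom_\cC(Z,N'_j)$, destroying the torsor property of a fibre. One must therefore pin down that $\pi_0(f)$ descends to an injection $G\backslash\pi_0(N)\hookrightarrow\pi_0(N')$ (equivalently, $N'$ has one component per $G$-orbit of components of $N$, apart from extraneous components with empty fibres) — the normalisation implicit in calling $f$ a Galois covering with group $G$ in an $F$-category, and the place where the $\pi_0$-combinatorics must be handled carefully. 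Once this is in hand, the $G$-action preserves each block, the blocks are pairwise disjoint over $\pi_0(N')$, and the fibrewise torsor conditions on the individual blocks glue to the global one; the remaining verifications — well-definedness of $\pi_0$ and of components via Lemma~\ref{lem:fin_conn}, together with the elementary facts about induced $G$-sets and pseudo-torsors — are routine.
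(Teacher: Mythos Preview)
Your approach is the same as the paper's: decompose $\Hom_\cF(Z,N)$ and $\Hom_\cF(Z,N')$ along components via Lemma~\ref{lem:fin_conn}, group the domain into $G$-stable blocks $B_s(Z)=\coprod_{i\in Gs}\Hom_\cC(Z,N_i)\cong G\times^{G_s}\Hom_\cC(Z,N_s)$, and reduce the pseudo $G$-torsor condition blockwise to the pseudo $G_s$-torsor condition for each $\phi_s$. The paper's proof does exactly this, asserting in one sentence that ``$\phi$ is a pseudo $G$-torsor if and only if for each $s\in S$ the homomorphism $G_s\to\Aut_{N'_{\pi_0(f)(s)}}(N_s)$ is injective and $\phi_s$ is a pseudo $G_s$-torsor'' and then stopping.

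You are right to flag the assembly step, and in fact you have located a genuine gap that the paper's proof does not address: the ``if'' direction of the lemma as stated is false without the extra hypothesis that $\pi_0(f)$ is injective on $G\backslash\pi_0(N)$. A clean counterexample lives in the $F$-category of finite sets. Take $N=\{a,b,c,d\}$, $N'=\{e\}$, $f$ the unique map, and $G=\Z/2$ swapping $a\leftrightarrow b$ and $c\leftrightarrow d$. Then $S=\{a,c\}$, each $G_s$ is trivial, and conditions (1) and (2) hold (each component map $\{s\}\to\{e\}$ is a Galois covering with trivial group). But for $Z=\{*\}$ the fibre of $f_*$ over $e$ has two $G$-orbits, so $f$ is not a Galois covering with group $G$. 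Thus the injectivity of $\pi_0(f)$ on $G$-orbits is not a ``normalisation implicit in the definition'' that can be recovered; it is an independent condition that must be added to (1) and (2) for the equivalence to hold. The paper's applications of the lemma (Proposition~\ref{prop:cFCot_enoughGalois}, Lemma~\ref{lem:last}) happen to satisfy this extra condition by construction or by working over a connected base, so nothing downstream breaks, but you should not expect to close the gap in the generality stated.
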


\begin{proof}
Let $M$ be a connected object in $\cC$.
We have isomorphisms 
$\Hom_{\cF}(M,N) \cong \coprod_{i \in \pi_0(N)}
\Hom_{\cF^{(0)}}(M,N_i)$
and
$\Hom_{\cF}(M,N') \cong \coprod_{j \in \pi_0(N')}
\Hom_{\cF^{(0)}}(M,N'_j)$.
Via these isomorphisms, the map
$\phi \colon  \Hom_{\cF}(M,N) \to \Hom_{\cF}(M,N')$
given by the composition with $f$ is
identified with a map
$\coprod_{i \in \pi_0(N)}
\Hom_{\cF^{(0)}}(M,N_i) \to
\coprod_{j \in \pi_0(N')}
\Hom_{\cF^{(0)}}(M,N'_j)$
which we denote by $\phi'$.
For each $i \in \pi_0(M)$, let
$f_i \colon  N_i \to N'_{\pi_0(f)(i)}$ denote
the component at $i$ of $N$ and
let $\phi_i \colon  \Hom_{\cF}(M,N_i) \to 
\Hom_{\cF}(M,N'_{\pi_0(f)(i)})$
denote the map given by the composition with $f_i$.
Then the map $\phi'$ is given by the map $\phi_i$
for each $i \in \pi_0(N)$.
Hence the map $\phi$ is a pseudo $G$-torsor
if and only if for each $s \in S$, the homomorphism
$G_s \to \Aut_{N'_{\pi_0(f)(s)}}(N_s)$ induced by
the action of $G$ on $N$ is injective and $\phi_s$
is a pseudo $G_s$-torsor.
Hence the claim follows from the definition of
Galois covering.
\end{proof}

\subsection{ }\label{sec:wtcFC}

Let $(\cC,J)$ be a $B$-site. Let $\wt{\cC}$ be as in Section \ref{sec:Ctil}.
Let $\wt{\cFC}$ denote the full subcategory of $\Shv(\cC,J)$ 
whose objects are the finite coproducts 
of objects of $\wt{\cC}$.
For a finite collection $(F_i)_{i \in I}$ of sheaves on 
$(\cC,J)$, we denote by $\coprod_{i \in I} F_i$ a coproduct
in the category $\Shv(\cC,J)$, i.e., 
an object isomorphic to the sheafification
of a coproduct in the category $\Presh(\cC)$.

\begin{lem} \label{lem:wtcC_conn}
Any object of $\wt{\cC}$ is a connected object of the category $\Shv(\cC,J)$.
\end{lem}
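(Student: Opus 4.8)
The plan is to make the defining bijection of ``connected object'' transparent by first identifying $\Hom_{\Shv(\cC,J)}(F,-)$ for $F$ an object of $\wt{\cC}$. Up to isomorphism we may write $F = \quot{X}{H}$ with $X$ an object of $\cC$ and $H \subset \Aut_{\cC}(X)$ a moderate subgroup; recall that $\imath(X) = a_J(\frh_{\cC}(X))$ and that, by Lemma~\ref{lem:quot_FCd}, $\quot{X}{H}$ together with $\imath(X) \to \quot{X}{H}$ is a \quotobj of $\imath(X)$ by $H$ in $\Shv(\cC,J)$. Hence $\quot{X}{H}$ co-represents the functor $Z \mapsto \Hom_{\Shv(\cC,J)}(\imath(X),Z)^{H}$, and composing with the adjunction and Yoneda isomorphisms $\Hom_{\Shv(\cC,J)}(\imath(X),Z) \cong \Hom_{\Presh(\cC)}(\frh_{\cC}(X),Z) \cong Z(X)$ gives a natural isomorphism $\Hom_{\Shv(\cC,J)}(\quot{X}{H},Z) \cong Z(X)^{H}$, where $H$ acts on $Z(X)$ through $H \subset \Aut_{\cC}(X)$ and the presheaf structure of $Z$; the $H$-equivariance here is just the Yoneda lemma together with the functoriality of $a_J$. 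With this in hand, for a finite family $(F_i)_{i \in I}$ of sheaves the map $\coprod_{i\in I}\Hom_{\Shv(\cC,J)}(\quot{X}{H},F_i) \to \Hom_{\Shv(\cC,J)}(\quot{X}{H},\coprod_{i\in I}F_i)$ is identified with the map $\coprod_{i\in I} F_i(X)^{H} \to (\coprod_{i\in I}F_i)(X)^{H}$ induced by the coproduct inclusions, so it suffices to show the latter is bijective (the coproducts being taken in $\Shv(\cC,J)$).

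The key computation, which I would carry out next, is that the canonical map $\coprod_{i\in I} F_i(X) \to (\coprod_{i\in I}F_i)(X)$ is a bijection for every object $X$ of $\cC$. A coproduct of sheaves is the sheafification of the corresponding coproduct of presheaves, which is computed sectionwise; and by the explicit ``Galois descent'' description of $a_J$ used in the proof of Lemma~\ref{lem:aJ_properties} (Section~4.4.2 of \cite{Grids}), one has $a_J(G)(X) \cong \varinjlim_{(Y,f)} G(Y)^{\Aut_X(Y)}$ for any presheaf $G$, the colimit being filtered and taken over $\Gal/X$. Applying this with $G = \coprod_{i\in I}F_i$ computed sectionwise, and using that the $\Aut_X(Y)$-action on $\coprod_{i\in I}F_i(Y)$ is componentwise (it is given by the presheaf actions $F_i(g)$), the fixed-point functor commutes with the coproduct; then, since colimits commute with coproducts, the finite coproduct may be pulled outside the colimit; and finally $\varinjlim_{(Y,f)} F_i(Y)^{\Aut_X(Y)} = a_J(F_i)(X) = F_i(X)$ since each $F_i$ is a sheaf. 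Running the same computation with an element $\sigma \in \Aut_{\cC}(X)$ in place of an element of $\Aut_X(Y)$ shows that this identification $(\coprod_{i\in I}F_i)(X) \cong \coprod_{i\in I}F_i(X)$ is $\Aut_{\cC}(X)$-equivariant for the componentwise action, hence $H$-equivariant; therefore $(\coprod_{i\in I}F_i(X))^{H} = \coprod_{i\in I}F_i(X)^{H}$, and the map at the end of the previous paragraph is the identity. (The case $I = \emptyset$ is the assertion that an initial object $Z$ of $\Shv(\cC,J)$ has $Z(X)^H = \emptyset$, which falls out of the same formula.)

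The main obstacle is precisely this coproduct computation: for a general site the evaluation functor $Z \mapsto Z(X)$ does \emph{not} commute with coproducts of sheaves, so the step genuinely uses the hypothesis that $(\cC,J)$ is a $B$-site whose $\cT(J)$ has enough Galois coverings, via the concrete formula for $a_J$ from \cite{Grids}. Everything else --- naturality of the adjunction/Yoneda identifications, compatibility of the coproduct inclusions with the identification of sections, and the bookkeeping of the $H$-action --- is routine. One small point to keep track of is that, in the isomorphism $\Hom_{\Shv(\cC,J)}(\imath(X),Z) \cong Z(X)$, the automorphism $\imath(\sigma)$ of $\imath(X)$ induced by $\sigma \in \Aut_{\cC}(X)$ corresponds to the presheaf action $Z(\sigma)$ on $Z(X)$; this is what makes the two occurrences of ``$H$-action'' above match up.
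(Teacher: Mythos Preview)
Your proposal is correct and follows essentially the same approach as the paper: identify $\Hom_{\Shv(\cC,J)}(\quot{X}{H},Z)$ with $Z(X)^H$, then use the explicit description of the sheafification functor from \cite{Grids} to show that evaluation at $X$ commutes with finite coproducts of sheaves, and finally pass to $H$-invariants componentwise. The paper's proof is terser (it treats the case $|I|=2$ and cites ``(4.2) of \cite{Grids}'' directly), but the content is the same; your version just unpacks the filtered-colimit formula and the equivariance bookkeeping more explicitly.

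One small remark: the section in which this lemma sits assumes only that $(\cC,J)$ is a $B$-site, not that $\cT(J)$ has enough Galois coverings, so the $\Gal/X$ formula $a_J(G)(X) \cong \varinjlim_{(Y,f)} G(Y)^{\Aut_X(Y)}$ you invoke (via Lemma~\ref{lem:aJ_properties}) is slightly stronger than what is formally available. The paper's citation of ``(4.2) of \cite{Grids}'' presumably refers to the more general $B$-site description $a_J(G)(X) = \varinjlim_{(m:X'\to X)\in\cT(J)}\mathrm{eq}\bigl(G(X')\rightrightarrows \cdots\bigr)$ used in Lemma~\ref{lem:omega_faithful1}. Your argument goes through verbatim with that formula in place of the $\Gal/X$ one, since equalizers of componentwise maps and filtered colimits both commute with finite coproducts of sets; so this is only a cosmetic adjustment.
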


\begin{proof}
Let $F$ be an object of $\wt{\cC}$.
Then $F$ is of the form $F = \quot{X}{H}$ 
where $X$ is an object of $\cC$
and $H$ is a moderate subgroup of $\Aut_{\cC}(X)$.
Then for any two presheaves $F_1$, $F_2$ on $\cC$, 
it follows from the explicit description, given in (4.2) of \cite{Grids},
of the sheafification functor $a_J$ that we have
$a_J(F_1)(X) \amalg a_J(F_2)(X) \cong a_J(F_1 \amalg F_2)(X)$.
Hence we have $(a_J(F_1)(X))^{H} \amalg (a_J(F_2)(X))^{H}
\cong (a_J(F_1 \amalg F_2)(X))^{H}$.
Hence for any two sheaves $F_1$, $F_2$ on $(\cC,J)$, we have
$\Hom_{\Shv(\cC,J)}(\quot{X}{H},F_1)
\amalg \Hom_{\Shv(\cC,J)}(\quot{X}{H},F_2)
\cong \Hom_{\Shv(\cC,J)}(\quot{X}{H},F_1 \amalg F_2)$.
%
Hence $F \cong \quot{X}{H}$ is a connected object in the
category $\Shv(\cC,J)$. This completes the proof.
\end{proof}

\begin{cor} \label{cor:FC is an F-category}
The category $\wt{\cFC}$ is an $F$-category. Moreover, an object
of $\wt{\cFC}$ is connected if and only if it is isomorphic in $\wt{\cFC}$ 
to an object of $\wt{\cC}$.
\end{cor}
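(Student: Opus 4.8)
The plan is to check the two axioms of an $F$-category directly for $\wt{\cFC}$, and then to extract the description of its connected objects from the well-definedness of $\pi_0$ established in Section~\ref{sec:pi_0}. First I would dispose of the structural axiom: the empty coproduct in $\Shv(\cC,J)$ is an initial object and is trivially a (nullary) finite coproduct of objects of $\wt{\cC}$, hence lies in $\wt{\cFC}$; and a coproduct in $\Shv(\cC,J)$ of two objects of $\wt{\cFC}$ is, upon unravelling the definition of $\wt{\cFC}$, again a finite coproduct of objects of $\wt{\cC}$, hence lies in $\wt{\cFC}$. Since $\wt{\cFC}$ is a full subcategory of $\Shv(\cC,J)$ closed under these operations, it has an initial object and finite coproducts, both computed in $\Shv(\cC,J)$; in particular $\Hom_{\wt{\cFC}}(-,-)=\Hom_{\Shv(\cC,J)}(-,-)$ and the coproducts agree. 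This is the first axiom.

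The crux is then to show that every object $F$ of $\wt{\cC}$ is a connected object of $\wt{\cFC}$. By Lemma~\ref{lem:wtcC_conn}, $F$ is a connected object of the ambient category $\Shv(\cC,J)$, i.e. $\Hom_{\Shv(\cC,J)}(F,-)$ sends finite coproducts to coproducts of sets. Given a finite family $(N_i)_{i\in I}$ of objects of $\wt{\cFC}$, the coproduct $\coprod_{i\in I}N_i$ and all the Hom-sets involved coincide with those taken in $\Shv(\cC,J)$ by the previous step, so the canonical map $\coprod_{i\in I}\Hom_{\wt{\cFC}}(F,N_i)\to\Hom_{\wt{\cFC}}(F,\coprod_{i\in I}N_i)$ is bijective; hence $F$ is connected in $\wt{\cFC}$. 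Since every object of $\wt{\cFC}$ is by definition a finite coproduct of objects of $\wt{\cC}$, the second axiom follows, and $\wt{\cFC}$ is an $F$-category.

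It remains to identify the connected objects. One direction is immediate: if $N$ is isomorphic in $\wt{\cFC}$ to an object of $\wt{\cC}$, then $N$ is connected by the previous paragraph, connectedness being stable under isomorphism. For the converse, let $N$ be connected and write $N=\coprod_{i\in I}F_i$ with $I$ finite and each $F_i$ an object of $\wt{\cC}$; by the previous paragraph this exhibits $N$ as a coproduct of a finite family of connected objects, so by the well-definedness of $\pi_0$ (Section~\ref{sec:pi_0}) one has $\pi_0(N)\cong I$ with component at $i$ equal to $F_i$. Since $N$ is connected, $\pi_0(N)$ consists of a single element, whence $|I|=1$ and $N$ is isomorphic to the unique $F_i$, which is an object of $\wt{\cC}$.

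The argument is essentially bookkeeping, and I do not expect a serious obstacle; the one point that genuinely needs care is that the notion of ``connected object'' is the same whether tested against all of $\Shv(\cC,J)$ or only against $\wt{\cFC}$, which rests precisely on the fact, recorded in the first paragraph, that coproducts and Hom-sets in $\wt{\cFC}$ are inherited from the topos $\Shv(\cC,J)$.
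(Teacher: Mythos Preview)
Your proof is correct and follows essentially the same approach as the paper's: both verify closure under finite coproducts, invoke Lemma~\ref{lem:wtcC_conn} to see that objects of $\wt{\cC}$ are connected in $\wt{\cFC}$ (via the ambient topos), and then use the $\pi_0$ characterization from Section~\ref{sec:pi_0} for the ``moreover'' clause. You are a bit more explicit than the paper about why connectedness in $\Shv(\cC,J)$ transfers to connectedness in $\wt{\cFC}$, but the argument is the same.
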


\begin{proof}
It follows from the definition of $\wt{\cFC}$ that, 
any finite coproduct in $\Shv(\cC,J)$ of objects of $\wt{\cFC}$ 
is isomorphic in $\Shv(\cC,J)$ to an object of $\wt{\cFC}$.
Hence $\wt{\cFC}$ has an initial object and finite coproducts.
Any object of $\wt{\cC}$ is connected as an object of $\wt{\cFC}$
since it follows from Lemma \ref{lem:wtcC_conn} that it is
connected as an object of $\Shv(\cC,J)$.
Since any object of $\wt{\cFC}$ is a coproduct in $\Shv(\cC,J)$
of objects of $\wt{\cC}$, it is also a coproduct in $\wt{\cFC}$
of objects of $\wt{\cFC}$.
This proves that the category $\wt{\cFC}$ is an $F$-category.
As we have remarked in Section \ref{sec:pi_0}, an object $F$ of
$\wt{\cFC}$ is connected if and only if $\pi_0(F)$ consists of
a single element. Hence the second claim follows.
\end{proof}

Let $F$ be an object of $\wt{\cFC}$. 
For any $i \in \pi_0(F)$, we may and will regard,
by Corollary \ref{cor:FC is an F-category}, the component $F_i$ at $i$
as an object of $\wt{\cC}$. Moreover, for any morphism 
$f\colon F \to G$ in $\wt{\cFC}$, we regard the component of $f$ at $i$
as a morphism in $\wt{\cC}$.

\newcommand{\cFT}{\mathcal{FT}}
Let $\wt{\cFT}$ denote the set of morphisms
$f\colon F_1 \to F_2$ in $\wt{\cFC}$ satisfying the following conditions:
For any $i \in \pi_0(F_1)$, the component of $f$ at $i$,
regarded as a morphism in $\wt{\cC}$, belongs to $\wt{\cT}$.
One can show that the set $\wt{\cFT}$ is semi-localizing in $\wt{\cFC}$
in the sense of \cite[Definition 2.3.1]{Grids}.
However, contrary to the case of $\wt{\cT}$, the $A$-topology on
$\wt{\cFC}$ given by $\wt{\cFT}$ is not interesting, since
any morphism from the initial object of $\wt{\cFC}$ belongs to $\wt{\cFT}$.

\subsection{}
We study fiber products.   The main proposition here is that 
fiber products exist in $\wt{\cFC^m}$.   Of course, if we consider general
full subcategories $\wt{\cC} \subset \wt{\cC^m}$,
fiber products may not exist.
\begin{prop} \label{prop:FC_product}
Suppose that $(\cC,J)$ satisfies the following condition:
For any morphism $X \to Y$ in $\cT(J)$, the group $\Aut_Y(X)$ is
a finite group.
Let $F_1 \xto{f_1} F \xleftarrow{f_2} F_2$ be a diagram in $\wt{\cFC^m}$.
Suppose that $f_2$ belongs to $\wt{\cFT^m}$.
Then the fiber product $F_1 \times_F F_2$ exists in $\wt{\cFC^m}$ and
the morphism $F_1 \times_F F_2 \to F_1$ belongs to $\wt{\cFT^m}$.
\end{prop}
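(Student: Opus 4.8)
The plan is to reduce to connected objects and then compute the fibre product inside the topos $\Shv(\cC,J)$, checking that it lands in $\wt{\cFC^m}$. Since $\wt{\cFC^m}$ is an $F$-category (Corollary \ref{cor:FC is an F-category}) and $\wt{\cFT^m}$ is defined componentwise, Corollary \ref{cor:fiber_product} reduces the statement to the following: given connected objects $Y_1,Y_2,Y'$ of $\wt{\cC^m}$, a morphism $g_1\colon Y_1\to Y'$, and a morphism $g_2\colon Y_2\to Y'$ lying in $\wt{\cT^m}=\cT(\imath_*J)$ (the identification is Lemma \ref{lem:compare_JT}, so that $g_2$ is a covering in the $Y$-site $(\wt{\cC^m},\imath_*J)$ of Corollary \ref{cor:Ctil_Y_site}), the fibre product $Y_1\times_{Y'}Y_2$ — which exists in $\Shv(\cC,J)$ — is isomorphic to an object of $\wt{\cFC^m}$, and the projection $Y_1\times_{Y'}Y_2\to Y_1$ has all its components in $\cT(\imath_*J)$.

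For the core case I would first bring $g_2$ into a standard shape. Write $Y'=\quot{X'}{H'}$, $Y_2=\quot{X_2}{H_2}$, $Y_1=\quot{X_1}{H_1}$, and choose a representative $X_2\leftarrow X_2'\to X'$ of $g_2$ (Lemma \ref{lem:pentagon}); since $g_2\in\wt{\cT^m}$, Lemma \ref{lem:any representative} gives $g_2'\colon X_2'\to X'$ in $\cT(J)$. Using that $\cT(J)$ has enough Galois coverings and that $\cC(\cT(J))$ is semi-cofiltered, I would replace $X_2'$ by a finer object so that $X_2'\to X'$ and $X_2'\to X_2$ are Galois coverings in $\cC$ and $X_2'$ dominates objects of $\cC$ witnessing the moderateness of $H'$ and of $H_2$; Lemma \ref{lem:double_quot} then identifies $Y'\cong\quot{X_2'}{L'}$ and $Y_2\cong\quot{X_2'}{L_2}$ for moderate subgroups $L_2\subseteq L'$ of $\Aut_\cC(X_2')$, with $g_2$ becoming the canonical quotient map. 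Now base-change along the $L'$-torsor $\imath(X_2')\to Y'$ ($L'$ acts freely since $\cC$ is an $E$-category, cf.\ the proof of Lemma \ref{lem:exists_g}): one gets $\imath(X_2')\times_{Y'}Y_2\cong\coprod_{L'/L_2}\imath(X_2')$, and $P_1:=Y_1\times_{Y'}\imath(X_2')$ is a torsor over $Y_1$ under the \emph{finite} group $L'$. Feeding in a representative $X_1\leftarrow X_1'\to X_2'$ of $g_1$ (with $X_1'\to X_1$ in $\cT(J)$) and the pentagon identity, one computes $\imath(X_1')\times_{Y_1}P_1\cong\imath(X_1')\times_{\imath(X_2')}(\imath(X_2')\times_{Y'}\imath(X_2'))\cong\coprod_{L'}\imath(X_1')$, i.e.\ $P_1$ becomes the trivial $L'$-torsor over $\imath(X_1')$. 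Refining $X_1'$ so that $\imath(X_1')\to Y_1$ is a Galois covering and running descent, $P_1$ is a finite coproduct of objects $\quot{X_1'}{K_k}$ with $K_k$ moderate, and finally $Y_1\times_{Y'}Y_2\cong P_1/L_2$ is, by Lemma \ref{lem:quot_general}, again such a finite coproduct — hence an object of $\wt{\cFC^m}$.

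It then remains to see that the projection to $Y_1$ lies in $\wt{\cFT^m}$: it is the base change in $\Shv(\cC,J)$ of the covering $g_2$, hence a covering, and by Theorem \ref{thm:sheaf_F-conn} (describing the pushforward topology on an $F$-category componentwise) together with Lemma \ref{lem:compare_JT}, a morphism of $\wt{\cFC^m}$ whose underlying morphism of sheaves is a covering has all of its components in $\cT(\imath_*J)=\wt{\cT^m}$. The main obstacle is the moderateness bookkeeping in the middle paragraph: one must check that the finitely many connected components produced by the torsor/descent argument for $P_1$, and then for $P_1/L_2$, really are quotients $\quot{W}{K}$ by \emph{moderate} subgroups. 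This is exactly where the hypothesis that $\Aut_Y(X)$ is finite for every $X\to Y$ in $\cT(J)$ enters: it guarantees that the relevant groups ($L'$, the Galois group of the refinement $\imath(X_1')\to Y_1$, and the stabilisers $K_k$) are finite, and — via Lemma \ref{lem:i_Galois} and Lemma \ref{lem:Galoiscovering} — that each such stabiliser is realised as a subgroup of $\Aut_Z(W)$ for an appropriate $Z$ reached by a morphism of $\cT(J)$, so that all the intermediate objects stay inside $\wt{\cC^m}$.
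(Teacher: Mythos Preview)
Your overall strategy coincides with the paper's: both reduce to connected objects via Corollary~\ref{cor:fiber_product}, normalise $f_2$ so that source and target are quotients of a common object of $\cC$ (the paper does this in one step via Lemma~\ref{lem:ZHK}, you via Galois refinement and Lemma~\ref{lem:double_quot}), and then exhibit the fibre product as the quotient, by a finite group acting through a homomorphism $\psi$ into the Galois group of $f_2$, of a finite coproduct of copies of one object of $\cC$. Your $\coprod_{L'}\imath(X_1')$, $G_1$, and $\psi\colon G_1\to L'$ are the paper's $G=\coprod_{k\in K}\quotid{Z}$, $L'$, and $\psi\colon L'\to K$; your step $P_1\mapsto P_1/L_2$ is the paper's use of Lemma~\ref{lem:limit_of_quotient}. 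The paper handles the moderateness you flag by never leaving $\Aut_{Y_0}(Z)$ for a fixed $Y_0\in\cC$ reached in $\cT(J)$, so that every subgroup appearing is moderate by construction; your route through $\Aut_{Y_1}(\imath(X_1'))$ works too, but only after refining so that $X_1'\to W$ is Galois in $\cC$ for some $W$ below $Y_1$, so that Lemma~\ref{lem:i_Galois} identifies $G_1$ with a subgroup of $\Aut_W(X_1')$.

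There is, however, a genuine error in your final paragraph. The assertion that a morphism in $\wt{\cFC^m}$ which is a covering in $\Shv(\cC,J)$ must have all its components in $\wt{\cT^m}$ is false in general: if $A_1\to B$ lies in $\wt{\cT^m}$ but $A_2\to B$ does not (which happens whenever $\cT(J)$ is not all morphisms of $\cC$), then $A_1\amalg A_2\to B$ is still an epimorphism of sheaves, yet its $A_2$-component fails to lie in $\wt{\cT^m}$. Theorem~\ref{thm:sheaf_F-conn} and Lemma~\ref{lem:compare_JT} do not rule this out. The correct argument, which the paper gives as a short lemma inside the proof and which also drops out of your own construction, is direct: each component of the projection has the form $\quot{X_1'}{K}\to Y_1$ with $K\subset G_1$, so the Galois covering $\imath(X_1')\to Y_1$ in $\wt{\cT^m}$ factors through it; then the $B$-site property $\wh{\wt{\cT^m}}=\wt{\cT^m}$ (Lemma~\ref{lem:tilde semi-localizing} or Lemma~\ref{lem:tilde cond 3}) forces $\quot{X_1'}{K}\to Y_1\in\wt{\cT^m}$.
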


To prove Proposition \ref{prop:FC_product}, we need
some preliminaries. In the preliminaries we do not impose
the finiteness condition on $(\cC,J)$ in 
Proposition \ref{prop:FC_product}.

\begin{lem} \label{lem:ZHK}
Let $f\colon (X,H) \to (Y,K)$ be a morphism in $\wt{\cC}^\str$
which belongs to $\wt{\cT}$.
Then there exist an object $Z$ of $\cC$, moderate subgroups
$H', K' \subset \Aut_{\cC}(Z)$ with $H' \subset K'$, 
and isomorphisms $\alpha\colon  (Z,H') \cong (X,H)$ and 
$\beta\colon  (Z,K') \xto{\cong} (Y,K)$ in $\wt{\cC}^\str$ such that
the diagram
$$
\begin{CD}
(Z,H') @>{c}>> (Z,K') \\\
@V{\alpha}VV @VV{\beta}V \\
(X,H) @>{f}>> (Y,K)
\end{CD}
$$
in $\wt{\cC}^\str$, where the upper horizontal arrow $c$ is the morphism 
in $\wt{\cC}^\str$ induced by the identity $Z \xto{=} Z$, is commutative.
Moreover, one can choose $Z$ and $K'$ so that there exists
a morphism $h\colon Z \to Y_0$ in $\cT$ such that $h$ is a Galois
covering and $K' \subset \Aut_{Y_0}(Z)$.
\end{lem}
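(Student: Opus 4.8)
The plan is to unwind the definition of $\wt{\cT}$ and of a representative to produce the desired object $Z$ together with the two subgroups. First I would choose, using the definition of $\wt{\cT}$ (Section \ref{sec:cT}) and Lemma \ref{lem:pentagon}, a representative $(f;m,f')$ of $f$ in $\cC$ with $f' \in \cT(J)$: that is, a diagram $X \xleftarrow{m} X' \xto{f'} Y$ with $m, f' \in \cT(J)$ and the pentagon \eqref{diagram:pentagon} commutative. Since $H$ is moderate, fix $a:X\to X_1$ in $\cT(J)$ with $H\subset\Aut_{X_1}(X)$; likewise fix $b:Y\to Y_1$ in $\cT(J)$ with $K\subset\Aut_{Y_1}(Y)$. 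The idea is to take $Z$ to be $X'$ (after a further refinement), with $H'$ the group of automorphisms of $Z$ coming from $H$ via $m$, and $K'$ the group coming from $K$ via $f'$; one must arrange $H'\subseteq K'$ and that both are moderate.

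The key steps, in order. (i) Replace $X'$ by a common refinement so that the automorphisms of $X$ lying in $H$ lift to automorphisms of $X'$: concretely, using that $\cC(\cT(J))$ is semi-cofiltered together with Property (3) of a semi-localizing collection and the fact that $\cT(J)$ has enough Galois coverings, choose $X'\to X$ factoring through a Galois covering $\wt X\to X$ dominating both $m$ and $a$, so that every $g\in H$ induces an automorphism $\wt g$ of $X'$ over $X_1$; let $H'\subset\Aut_\cC(X')$ be the image. By Lemma \ref{lem:i_Galois} and Lemma \ref{lem:double_quot} (applied to $\wt X \to X$ with $H$) one gets $\quot{X'}{H'}\cong (X,H)$ in $\wt\cC^\str$, i.e.\ the isomorphism $\alpha$. (ii) Do the analogous refinement on the $Y$-side: since $f'\in\cT(J)$, the composite $X'\to Y\to Y_1$ lies in $\cT(J)$, and one can further refine $X'$ so that $f'$ becomes a Galois covering onto its image and every $k\in K$ lifts along $f'$ to an automorphism of $X'$ over $Y_1$; let $K'\subset\Aut_\cC(X')$ be the resulting image, giving $\quot{X'}{K'}\cong (Y,K)$, i.e.\ $\beta$, again via Lemma \ref{lem:double_quot}. (iii) Check $H'\subseteq K'$: an element of $H'$ comes from $g\in H\subset\Aut_X(X')$-compatible data, and since the pentagon \eqref{diagram:pentagon} commutes, $f'\circ g$ descends through $q_{Y,K}$ to $f\circ q_{X,H}\circ\imath^\str(g)=f\circ q_{X,H}$, so the induced automorphism of $X'$ lies over $Y_1$ and maps into $K$ — hence into $K'$; here Lemma \ref{lem:exists_g} is the tool that produces the element of $K$. (iv) Verify moderacy: $H',K'\subset\Aut_{Y_1}(X')$ after arranging that $X'\to X_1$ factors $X'\to Y_1$ (possible by refining once more using semi-cofilteredness of $\cC(\cT(J))$), and $X'\to Y_1$ lies in $\cT(J)$, so both subgroups are moderate. (v) For the last sentence, invoke again that $\cT(J)$ has enough Galois coverings: refine $X'$ once more to $Z$ so that the composite $Z\to X'\to Y_1$, or rather a suitable $Z\to Y_0$ in $\cT(J)$ through which $X'\to Y_1$ factors, is a Galois covering with $K'\subset\Aut_{Y_0}(Z)$; Lemma \ref{lem:double_quot} shows this last refinement does not change $(X,H)$ or $(Y,K)$ up to the required isomorphisms, and Corollary 4.2.4 of \cite{Grids} is used to propagate $f'$ and $m$ along the refinement. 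Finally, the commutativity of the square with the top arrow $c = $ (the morphism induced by $\id_Z$) is exactly the statement that $\beta\circ c = f\circ\alpha$, which follows by construction since both sides are the morphism $\quot{Z}{H'}\to\quot{Z}{K'}\to(Y,K)$ and the pentagon commutes.

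The main obstacle I expect is step (ii)–(iii): arranging \emph{simultaneously} that $f'$ is (essentially) a Galois covering, that the $H$-automorphisms and the $K$-automorphisms both lift to the \emph{same} refined object $Z$ over the \emph{same} ambient Galois covering, and that the containment $H'\subseteq K'$ comes out — all while keeping everything inside $\cT(J)$. This is a bookkeeping problem about iterated refinements in $\cC(\cT(J))$, and the delicate point is that each refinement must be compatible with the previously chosen data; the tools are semi-cofilteredness of $\cC(\cT(J))$, Property (3) of semi-localizing collections, ``enough Galois coverings'', Lemma \ref{lem:exists_g}, and Lemma \ref{lem:double_quot}, but orchestrating them in the right order is where the real work lies. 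The rest (moderacy, commutativity of the final square, the two isomorphisms) is then formal.
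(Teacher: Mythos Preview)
Your plan is workable but more laborious than the paper's argument, and the difficulty you flag in steps (ii)--(iii) is precisely what the paper's approach sidesteps.

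The paper does not try to lift $H$ and $K$ separately to automorphism groups of a refinement and then verify the containment. Instead it makes a \emph{single} refinement: take a representative $X \xleftarrow{m} Z \xto{f'} Y$, choose $t:Y\to Y_0$ in $\cT(J)$ witnessing that $K$ is moderate, and refine $Z$ once so that the composite $t\circ f':Z\to Y_0$ is a Galois covering in $\cC$. Then, rather than defining $H'$ and $K'$ as groups of lifts of $H$ and $K$ along $m$ and $f'$, the paper defines them top-down as the Galois groups computed in $\wt{\cC}^\str$: namely $H' = \phi^{-1}(\Aut_{(X,H)}(\imath^\str(Z)))$ and $K' = \phi^{-1}(\Aut_{(Y,K)}(\imath^\str(Z)))$, where $\phi:\Aut_{Y_0}(Z)\xto{\cong}\Aut_{\imath^\str(Y_0)}(\imath^\str(Z))$ is the isomorphism from Lemma \ref{lem:i_Galois}. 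Since $\imath^\str(t\circ f')$ is Galois and factors through both $q_{X,H}\circ\imath^\str(m)$ and $q_{Y,K}\circ\imath^\str(f')$, Lemma 4.1.3 of \cite{Grids} makes both of these Galois in $\wt{\cC}^\str$; the containment $H'\subset K'$ is then immediate from the factorization $\imath^\str(Z)\to(X,H)\to(Y,K)$, moderacy is immediate since both groups sit inside $\Aut_{Y_0}(Z)$, and the isomorphisms $\alpha,\beta$ come directly from Lemma 3.2.4 of \cite{Grids} (Galois coverings are quotient objects) together with the universality of quotients. The commutativity of the square is then checked after precomposing with the epimorphism $q_{Z,H'}$.

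Your bottom-up approach (lift $H$ along $m$, lift $K$ along $f'$, check containment via Lemma \ref{lem:exists_g}) would ultimately produce the same groups, but it forces you to juggle two separate Galois refinements and to verify that the $H$-lifts survive the second refinement. The paper's top-down definition eliminates that bookkeeping: one Galois covering over $Y_0$ controls everything, and no separate treatment of the $X$-side is needed at all. Your use of Lemma \ref{lem:double_quot} is also heavier machinery than required; the paper gets by with the simpler Lemma 3.2.4 of \cite{Grids}.
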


\begin{proof}
Let us take a representative $X \xleftarrow{m} Z \xto{f'} Y$ of $f$.
Let us choose a morphism $t\colon Y \to Y_0$ in $\cC$ such that
$t$ belongs to $\cT(J)$ and that $K$ is a subgroup of $\Aut_{Y_0}(Y)$.
It follows from Lemma \ref{lem:any representative} that $f'$ belongs
to $\cT(J)$. Since $\cT(J)$ has enough Galois coverings, we may assume,
by replacing $m$ and $f'$ with their composites with a suitable
morphism to $Z$ which belongs to $\cT(J)$, that the composite 
$t \circ f'$ is a Galois covering in $\cC$. 
It follows from Lemma \ref{lem:i_Galois} that $\imath^\str(t \circ f)$
is a Galois covering in $\wt{\cC}^\str$.
Moreover, Lemma 3.1.3 of \cite{Grids} and the argument 
of the proof of Lemma \ref{lem:i_Galois} shows that the 
homomorphism
$\phi\colon \Aut_{Y_0}(Z) \to \Aut_{\imath^\str(Y_0)}(\imath^\str(Z))$
of groups induced by the functor $\imath^\str$ is an isomorphism.
Since the morphism $\imath^\str(t \circ f)$ factors through
both $q_{X,H} \circ \imath^\str(m)$ and
$q_{Y,K} \circ \imath^\str(f)$,
it follows that
the two morphisms 
$q_{X,H} \circ \imath^\str(m)$ and
$q_{Y,K} \circ \imath^\str(f)$
are Galois coverings in $\wt{\cC}^\str$.
Let $H' = \phi^{-1}(\Aut_{(X,H)}(\imath^\str(Z)))$
and $K' = \phi^{-1}(\Aut_{(Y,K)}(\imath^\str(Z)))$.
The two groups $H'$ and $K'$ are moderate subgroups of
$\Aut(Z)$ since they are subgroups of $\Aut_{Y_0}(Z)$.
Since $q_{Y,K} \circ \imath^\str(f)$ factors through
$q_{X,H} \circ \imath^\str(m)$, we have $H' \subset K'$.
It is clear that $(Z,H')$ and $(Z,K')$ are
quotient objects of $Z$ by $H'$ and $K'$, respectively,
and that the morphisms $q_{Z,H'}$ and $q_{Z,K'}$
are the canonical quotient morphisms.
Hence it follows from Lemma 3.2.4 of \cite{Grids} 
and the universality of quotient objects 
that there exist unique isomorphisms
$\alpha\colon  (Z,H') \xto{\cong} (X,H)$ and
$\beta\colon  (Z,K') \xto{\cong} (Y,K)$ satisfying
$q_{X,H} \circ \imath^\str(m) = \alpha \circ q_{Z,H'}$
and $q_{Y,K} \circ \imath^\str(f) = \beta \circ q_{Z,K'}$.
This implies that the equality
$c \circ \alpha^{-1} \circ \imath^\str(m)
= \beta^{-1} \circ f \circ \imath^\str(m)$ holds.
Since $\imath^\str(m)$ is an epimorphism, 
we have $c \circ \alpha^{-1} = \beta^{-1} \circ f$, as desired.
This completes the proof of the first claim.
The second claim follows from our construction of 
$Z$ and $K'$.
\end{proof}

\begin{lem} \label{lem:quot_Ctil}
Let $F$ be an object of $\wt{\cC}$ and let $H$ be
a subgroup of $\Aut_{\wt{\cC}}(F)$. Suppose that
there exists a morphism $f\colon F \to F_0$ in $\wt{\cC}$
such that $f$ belongs to $\wt{\cT}$ and that
$H$ is a subgroup of $\Aut_{F_0}(F)$.
Let $\quot{F}{H}$ denote the sheaf on $(\cC,J)$ associated 
with the presheaf that
sends an object $X$ of $\cC$ to the set $\quot{(F(X))}{H}$.
Then $\quot{F}{H}$ is isomorphic in $\Shv(\cC,J)$ 
to an object of $\wt{\cC^m}$.
\end{lem}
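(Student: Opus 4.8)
The plan is to put $f$ into a normal form via Lemma~\ref{lem:ZHK} and then finish with the double-quotient Lemma~\ref{lem:quot_general}. First I would observe that the sheaf $\quot{F}{H}$ of the statement really is the quotient object of $F$ by $H$ in $\Shv(\cC,J)$: by Lemma~\ref{lem:quot_presheaf} the presheaf $X \mapsto \quot{F(X)}{H}$ is the quotient of the presheaf $F$ by $H$ in $\Presh(\cC)$, and applying the left adjoint $a_J$ and invoking Lemma~\ref{lem:quot_adjunction} identifies its sheafification with the quotient of $a_J(F)=F$ by $H$. Since $f$ belongs to $\wt{\cT}$, by the definition of $\wt{\cT}$ in Section~\ref{sec:cT} I may lift $f$ to a morphism $f'$ in $\wt{\cC}^\str$ belonging to $\wt{\cT}^\str$, together with isomorphisms identifying its source, target and image with $F$, $F_0$ and $f$. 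Applying Lemma~\ref{lem:ZHK} to $f'$ then yields an object $Z$ of $\cC$, moderate subgroups $H_0 \subseteq K_0$ of $\Aut_\cC(Z)$, and isomorphisms $\quot{Z}{H_0} \cong F$ and $\quot{Z}{K_0} \cong F_0$ in $\wt{\cC}$ under which $f$ is identified with the canonical morphism $\quot{Z}{H_0} \to \quot{Z}{K_0}$.

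Next I would identify $\Aut_{F_0}(F)$ with $N_{K_0}(H_0)/H_0$. By Lemma~\ref{lem:Galois2} (and faithfulness of $\imath$, Lemma~\ref{lem:omega_faithful}) the canonical morphisms $q_{Z,H_0}\colon \imath(Z) \to \quot{Z}{H_0}$ and $q_{Z,K_0}\colon \imath(Z) \to \quot{Z}{K_0}$ are Galois coverings in $\wt{\cC}$ with Galois groups $H_0$ and $K_0 \subseteq \Aut_\cC(Z)$, and $q_{Z,K_0}$ is the composite of $q_{Z,H_0}$ with the canonical map $\quot{Z}{H_0} \to \quot{Z}{K_0}$. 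A standard Galois-theoretic argument for this tower (using Lemma~\ref{lem:Galoiscovering} to read off the intermediate object as a quotient) shows that an element $g \in K_0$ descends along $q_{Z,H_0}$ to an automorphism of $\quot{Z}{H_0}$ over $\quot{Z}{K_0}$ exactly when $g$ normalizes $H_0$, that the descended automorphism is trivial precisely when $g \in H_0$, and that every element of $\Aut_{F_0}(F)$ arises this way; hence there is a surjection $N_{K_0}(H_0) \surj \Aut_{F_0}(F)$ with kernel $H_0$. Let $\wt{H} \subseteq N_{K_0}(H_0)$ be the preimage of $H$. Then $H_0$ is a normal subgroup of $\wt{H}$, the image of $\wt{H}$ in $\Aut_{\Shv(\cC,J)}(\quot{Z}{H_0})$ is exactly $H$, and $\wt{H}$ is moderate since $\wt{H} \subseteq K_0$ and $K_0$ is moderate.

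Finally I would apply Lemma~\ref{lem:quot_general} in the category $\Shv(\cC,J)$, taking $Y = \imath(Z)$, taking ``$H$'' there to be $H_0$, and taking ``$G$'' there to be $\wt{H}$ (regarded as a subgroup of $\Aut_{\Shv(\cC,J)}(\imath(Z))$ via the faithful functor $\imath$). Since $\quot{\imath(Z)}{H_0} \cong \quot{Z}{H_0} \cong F$ by Lemma~\ref{lem:quot_FCd}, since the image of $\wt{H}$ in $\Aut(\quot{Z}{H_0})$ is $H$, and since $\quot{(\quot{Z}{H_0})}{H} \cong \quot{F}{H}$ exists in $\Shv(\cC,J)$ by the first step, Lemma~\ref{lem:quot_general} gives $\quot{F}{H} \cong \quot{\imath(Z)}{\wt{H}} \cong \quot{Z}{\wt{H}}$. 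As $Z$ is an object of $\cC$ and $\wt{H} \subseteq \Aut_\cC(Z)$ is moderate, $\quot{Z}{\wt{H}}$ is by definition an object of $\wt{\cC^m}$, which proves the claim. The step I expect to be the main obstacle is the middle one: making precise the Galois theory of the composite covering $\imath(Z) \to \quot{Z}{H_0} \to \quot{Z}{K_0}$, that is, the identification $\Aut_{F_0}(F) \cong N_{K_0}(H_0)/H_0$ together with the bookkeeping that $\wt{H}$ surjects onto $H$ with kernel exactly $H_0$.
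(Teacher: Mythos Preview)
Your proposal is correct and follows essentially the same route as the paper: reduce via Lemma~\ref{lem:ZHK} to $F=\quot{Z}{H_0}$, $F_0=\quot{Z}{K_0}$, lift $H$ to a moderate subgroup of $\Aut_\cC(Z)$ sitting between $H_0$ and $K_0$, and conclude by a double-quotient argument. The paper's notation is $X,K,L$ in place of your $Z,H_0,K_0$, and it phrases the lift as ``elements of $\Aut_{\quotid{X_0}}(\quotid{X})$ that descend to $H$ via $q_{X,K}$'' rather than as the preimage of $H$ in $N_{K_0}(H_0)$, but these are the same subgroup. The obstacle you flag (surjectivity of the map from lifts onto $\Aut_{F_0}(F)$) is exactly where the paper invokes Lemma~4.2.6~(1) of \cite{Grids}; the paper also uses the extra datum from the second sentence of Lemma~\ref{lem:ZHK} (the Galois covering $h:X\to X_0$ with $L\subset\Aut_{X_0}(X)$) to set this up cleanly, whereas you work directly with the tower $\imath(Z)\to\quot{Z}{H_0}\to\quot{Z}{K_0}$, which is equivalent.
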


\begin{proof}
By using Lemma \ref{lem:ZHK},
we may and will assume that $F = \quot{X}{K}$,
$F_0 = \quot{X}{L}$ for some object $X$ of $\cC$
and moderate subgroups $K,L \subset \Aut_{\cC}(X)$
with $K \subset L$, that $f\colon \quot{X}{K} \to \quot{X}{L}$
is the morphism induced by the identity morphism of $X$,
and that there exists a morphism $h\colon X \to X_0$ in $\cC$
such that $h$ is a Galois covering which belongs to $\cT$ 
and that $L$ is a subgroup of $\Aut_{X_0}(X)$.

The morphism $\iota(h)\colon \quotid{X} \to \quotid{X_0}$ in $\wt{\cC}$
factors through the morphism $q_{X,L}$ regarded as a morphism
$\quotid{X} \to \quot{X}{L}$ in $\wt{\cC}$,
and the latter morphism factors through the morphism $q_{X,K}$
regarded as a morphism
$\quotid{X} \to \quot{X}{K}$ in $\wt{\cC}$.
This implies that $H$ is a subgroup of $\Aut_{\quotid{X_0}}(\quot{X}{K})$.
It follows from Lemma \ref{lem:i_Galois} that 
$\iota(h)\colon \quotid{X} \to \quotid{X_0}$
is a Galois covering in $\wt{\cC}$.
Let $H'$ denote the set of elements of 
$\Aut_{\quotid{X_0}}(\quotid{X})$ which descends to an element of $H$
via $q_{X,K}$. Clearly $H'$ is a subgroup of 
$\Aut_{\quotid{X_0}}(\quotid{X})$ which contains $K$.
It is clear that the map $H' \to H$ that sends $h' \in H'$ 
to the unique element to which $h$ descends is a homomorphism of
groups. It follows from Lemma 4.2.6 (1) of \cite{Grids} that
this map is surjective. Thus, we have the exact sequence
\begin{equation} \label{eq:KH'H}
1 \to K \to H' \to H \to 1.
\end{equation}

It follows from Lemma \ref{lem:i_Galois} that
the functor $\iota$ induces an isomorphism
$\Aut_{X_0}(X) \xto{\cong} \Aut_{\quotid{X_0}}(\quotid{X})$
of groups. Let $H''$ denote the subgroup of $\Aut_{X_0}(X)$
that corresponds to the subgroup $H'$ of 
$\Aut_{\quotid{X_0}}(\quotid{X})$
via this isomorphism.
Clearly $H''$ is a moderate subgroup of $\Aut_{\cC}(X)$.
By the exact sequence of \eqref{eq:KH'H}, we see that
$\quot{F}{H}$ is isomorphic to $\quot{X}{H''}$
in $\Shv(\cC,J)$. This proves the claim.
\end{proof}

For a sheaf $F$ on $(\cC,J)$ and for a subgroup
$H \subset \Aut_{\Shv(\cC,J)}(F)$, let
$\quot{F}{H}$ denote the sheaf on $(\cC,J)$ associated 
with the presheaf that
sends an object $X$ of $\cC$ to the set $\quot{(F(X))}{H}$.

\begin{lem}\label{lem:quot_FCtil}
Let $F$ be an object of $\wt{\cFC}$ and let $H$ be
a subgroup of $\Aut_{\wt{\cFC}}(F)$. Suppose that
there exists a morphism $f\colon F \to F'$ in $\wt{\cFC}$
such that $f$ belongs to $\wt{\cFT}$ and that
$H$ is a subgroup of $\Aut_{F'}(F)$.
Then $\quot{F}{H}$ is an object of $\wt{\cFC^m}$.
\end{lem}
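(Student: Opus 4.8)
The plan is to reduce to the connected case, Lemma~\ref{lem:quot_Ctil}, by decomposing $F$ into connected components and tracking how $H$ permutes them.

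First I would write $F = \coprod_{i \in \pi_0(F)} F_i$ with $\pi_0(F)$ finite and, by Corollary~\ref{cor:FC is an F-category}, each $F_i$ regarded as an object of $\wt{\cC}$. Every $h \in H$ induces a permutation $\pi_0(h)$ of the finite set $\pi_0(F)$, so $H$ acts on $\pi_0(F)$. I would choose a set $S \subset \pi_0(F)$ of representatives of the $H$-orbits, and for $s \in S$ let $H_s \subset H$ be the stabilizer of $s$. For $h \in H_s$ the component $(h)_s$ of $h$ at $s$ is an automorphism of $F_s$ in $\Shv(\cC,J)$, and $h \mapsto (h)_s$ is a group homomorphism $H_s \to \Aut_{\wt{\cC}}(F_s)$; let $\wt{H_s}$ denote its image.

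Next I would identify $\quot{F}{H}$ with $\coprod_{s\in S}\quot{F_s}{\wt{H_s}}$ in $\Shv(\cC,J)$. By Lemma~\ref{lem:quot_presheaf} and Lemma~\ref{lem:quot_adjunction} (the latter applied to the adjunction $a_J \dashv (\Shv(\cC,J)\hookrightarrow\Presh(\cC))$), the sheaf $\quot{F}{H}$ introduced before the statement is the quotient object of $F$ by $H$ in $\Shv(\cC,J)$, hence corepresents $Z \mapsto \Hom_{\Shv(\cC,J)}(F,Z)^H$; likewise each $\quot{F_s}{\wt{H_s}}$ corepresents $Z \mapsto \Hom(F_s,Z)^{\wt{H_s}} = \Hom(F_s,Z)^{H_s}$. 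Using the coproduct identification $\Hom(F,Z) \cong \prod_{i \in \pi_0(F)}\Hom(F_i,Z)$, a direct Mackey-type bookkeeping over the $H$-action on $\pi_0(F)$ shows that an $H$-invariant family $(\phi_i)_i$ is freely determined by the subfamily $(\phi_s)_{s\in S}$, subject only to each $\phi_s$ being $H_s$-invariant; thus $\Hom(F,Z)^H \cong \prod_{s\in S}\Hom(F_s,Z)^{H_s}$ naturally in $Z$, and Yoneda yields the desired isomorphism. Since $S$ is finite, it then suffices to prove $\quot{F_s}{\wt{H_s}} \in \wt{\cC^m}$ for each $s \in S$.

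Finally, to invoke Lemma~\ref{lem:quot_Ctil} for $F_s$ and $\wt{H_s}$ I need a morphism out of $F_s$ in $\wt{\cC}$, lying in $\wt{\cT}$, under which $\wt{H_s}$ becomes a group of automorphisms over the target. I would take the component $f_s : F_s \to F'_{\pi_0(f)(s)}$ of the given morphism $f$: since $f \in \wt{\cFT}$, the morphism $f_s$ lies in $\wt{\cT}$ by the very definition of $\wt{\cFT}$; and since $f \circ h = f$ for all $h \in H$, comparing components at $s$ (using $\pi_0(h)(s)=s$ for $h\in H_s$) gives $f_s \circ (h)_s = f_s$, i.e. $\wt{H_s} \subset \Aut_{F'_{\pi_0(f)(s)}}(F_s)$. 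Hence Lemma~\ref{lem:quot_Ctil} shows $\quot{F_s}{\wt{H_s}}$ is isomorphic to an object of $\wt{\cC^m}$ for every $s\in S$, so $\quot{F}{H} \cong \coprod_{s\in S}\quot{F_s}{\wt{H_s}}$ is a finite coproduct of objects of $\wt{\cC^m}$, hence an object of $\wt{\cFC^m}$. I expect the only mildly delicate point to be the first identification: because coproducts in $\Shv(\cC,J)$ are sheafifications of presheaf coproducts, I would carry it out through corepresentability rather than sectionwise, but no genuine obstacle arises there; in particular no finiteness hypothesis on $H$ is needed, in contrast with Proposition~\ref{prop:FC_product}.
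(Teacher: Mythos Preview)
Your proof is correct and follows the same approach as the paper: decompose $F$ into connected components, track the $H$-action on $\pi_0(F)$, identify $\quot{F}{H}$ with $\coprod_{s\in S}\quot{F_s}{H_s}$, and reduce to Lemma~\ref{lem:quot_Ctil}. You are in fact more careful than the paper, which leaves implicit both the passage from $H_s$ to its image $\wt{H_s}\subset\Aut_{\wt{\cC}}(F_s)$ and the verification (via the component $f_s$) of the hypothesis needed to invoke Lemma~\ref{lem:quot_Ctil}.
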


\begin{proof}
Since $F$ is an object of $\wt{\cFC}$, it is of the form
$F = \coprod_{i \in I} F_i$, where $I$ is a finite set
and $F_i$ is an object of $\wt{\cC}$ for each $i \in I$.
The action of $H$ on $F$ induces an action of $H$ on $I$.
Let us choose a complete set $S \subset I$ 
of representatives of $\quot{I}{H}$.
For $i \in S$ let $H_i \subset H$ denote the stabilizer of
$i \in I$. Then
the action of $H$ on $F$ induces an action of $H_i$ on $F_i$.
It is easy to check that $\quot{F}{H}$ is a coproduct 
$\coprod_{i \in S} \quot{F_i}{H_i}$ 
in the category $\Shv(\cC,J)$.
Hence the claim follows from Lemma \ref{lem:quot_Ctil}.
\end{proof}

\begin{lem} \label{lem:aJ_quot}
Let $F$ be a presheaf on $\cC$
and let $H$ be a subgroup of $\Aut_{\Presh(\cC)}(F)$.
Suppose that $F$ satisfies the following two conditions:
\begin{enumerate}
\item For any morphism $f\colon Y\to X$, the map
$F(f) \colon  F(X) \to F(Y)$ is injective.
\item For any object $X$ of $\cC$, there exists
a Galois covering $f\colon Y \to X$ in $\cC$ which belongs to $\cT(J)$
such that $H$ acts faithfully on the set $F(Y)$.
\end{enumerate}
Let $\quot{(F(-))}{H}$ denote the presheaf on $\cC$
that associates $\quot{F(X)}{H}$ with any object $X$ of $\cC$.
Then the morphism $\quot{(F(-))}{H} \to \quot{(a_J(F)(-))}{H}$ 
in $\Presh(\cC)$ given by the adjunction morphism 
$F \to a_J(F)$ induces an isomorphism 
$a_J(\quot{(F(-))}{H})\xto{\cong} a_J(\quot{(a_J(F)(-))}{H})$
in $\Shv(\cC,J)$.
\end{lem}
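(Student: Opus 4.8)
The plan is to prove the lemma by verifying that the morphism of presheaves
\[
q\colon \quot{(F(-))}{H}\longrightarrow \quot{(a_J(F)(-))}{H}
\]
induced by the adjunction unit $\eta\colon F\to a_J(F)$ is a \emph{bicovering} in the sense of \cite[EXPOSE II, 5.2]{SGA4}; since a morphism of presheaves is a bicovering precisely when $a_J$ sends it to an isomorphism, this is exactly the assertion. Here I use that for an arbitrary presheaf the unit $\eta$ is itself a bicovering, hence both a covering with respect to $J$ and a local monomorphism, and that the $H$-action on $F$ by presheaf automorphisms is carried by $a_J$ to an $H$-action on $a_J(F)$ for which $\eta$ is equivariant; in particular $q$ is well defined and commutes with restrictions, and the remaining work is to transfer the two halves of ``bicovering'' through the sectionwise passage to $H$-orbits.

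That $q$ is a covering is immediate and uses no hypothesis on $F$. Given an object $X$ and a class $\bar\xi\in\quot{(a_J(F)(X))}{H}$, choose a representative $\xi\in a_J(F)(X)$; since $\eta$ is a covering, the sieve $R$ on $X$ consisting of those $f\colon Y\to X$ with $\xi|_Y\in\Image(\eta_Y)$ lies in $J(X)$, and for every $f\in R$ passing to $H$-orbits gives $\bar\xi|_Y\in\Image(q_Y)$. Hence the sieve witnessing that $\bar\xi$ is locally in the image of $q$ contains $R$, so it belongs to $J(X)$.

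The step I expect to carry the weight is showing that $q$ is a monomorphism, for then its diagonal is an isomorphism and $q$ is a bicovering. This is where hypothesis (1) enters: it makes $F$ separated for $J$, since in a $B$-site the sieves $R_g$ with $g\in\cT(J)$ are cofinal in $J(X)$, and two sections of $F$ that agree on such a sieve agree after applying $F(g)$, hence coincide. Therefore $\eta_X\colon F(X)\to a_J(F)(X)$ is injective for every $X$. Now let $\bar s,\bar t\in\quot{(F(X))}{H}$ satisfy $q_X(\bar s)=q_X(\bar t)$; choosing lifts $s,t\in F(X)$ produces $h\in H$ with $\eta_X(t)=h\cdot\eta_X(s)=\eta_X(h\cdot s)$, whence $t=h\cdot s$ by injectivity and $\bar s=\bar t$. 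Combining the three observations, $q$ is a bicovering, so $a_J(q)$ is an isomorphism. The genuinely delicate point is this use of separatedness: for a non-separated $F$ distinct $H$-orbits in $F(X)$ can fuse inside $a_J(F)(X)$, and one would then only be able to conclude that $q$ is a local monomorphism, obtained by tracking the local-monomorphism property of $\eta$ through a covering sieve on which the two orbits become visibly equal; it is also the only place where the shape of the topology is used. Hypothesis (2) plays no role in the argument above; it is the faithfulness of the $H$-actions that will be needed when this lemma is applied, e.g.\ in the proof of Proposition \ref{prop:FC_product}.
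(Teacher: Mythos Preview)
Your argument is correct, and it is genuinely different from the paper's proof. The paper computes the map on sections over each object $X$ directly, using the explicit description $a_J(G)(X)=\varinjlim_{(Y,f)\in\Gal/X} G(Y)^{\Aut_X(Y)}$ from \cite[\S4.4.2]{Grids}: it first treats the case where $H$ acts faithfully on $F(X)$ by comparing $\quot{(F(Y)^{\Aut_X(Y)})}{H}$ with $(\quot{F(Y)}{H})^{\Aut_X(Y)}$, and then reduces the general case to this one by invoking condition~(2) to pass to a Galois cover $Y\to X$ on which $H$ acts faithfully and taking $\Aut_X(Y)$-invariants. Your route through the bicovering criterion is shorter and more conceptual; in particular, your observation that condition~(1) alone already forces $F$ to be separated, hence $\eta_X$ injective, and therefore $q$ a genuine monomorphism of presheaves (not merely a local one), makes condition~(2) unnecessary for the conclusion of the lemma as stated. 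The paper's explicit computation has the advantage of identifying the sections of the two sheaves concretely, which is in the spirit of the surrounding arguments, but as you note it pays for this by needing the extra hypothesis; your remark that condition~(2) is really there for the downstream application in the proof of Proposition~\ref{prop:FC_product} is well taken.
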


\begin{proof}
Let $X$ be an object of $\cC$.
It suffices to prove that the map
\begin{equation} \label{eq:aJ_quot}
a_J(\quot{(F(-))}{H})(X) \to a_J(\quot{(a_J(F)(-))}{H})(X)
\end{equation}
is bijective.

First let us assume that $H$ acts faithfully on the
set $F(X)$. 
Let us consider the 
cofiltered category $\Gal/X$ introduced
in Section 4.4.2 of \cite{Grids}.
Let $(Y,f)$ be an object of $\Gal/X$. Suppose that
$\sigma \in \Aut_X(Y)$ and $h \in H$ satisfying
that the composite $F(Y) \xto{F(\sigma)} F(Y) \xto{h} F(Y)$
is equal to the identity.
This and Condition (2) on $F$ implies that $F(X) \xto{h} F(X)$
is equal to the identity.
Hence it follows from our assumption on $X$ that $h=1$.
This implies that the map $\quot{(F(Y)^{\Aut_X(Y)})}{H}
\to (\quot{F(Y)}{H})^{\Aut_X(Y)}$ is bijective.
Passing to the colimit with respect to $(Y,f)$ we conclude
that the map $a_J(\quot{(F(-))}{H})(X) \to (\quot{(a_J(F)(-))}{H})(X)$
is bijective when $H$ acts faithfully on the set $F(X)$. 
Condition (1) on $F$ implies that
$H$ acts faithfully on the set $F(Y)$ for any object $(Y,f)$
of $\Gal/X$.
This shows that the map \eqref{eq:aJ_quot} is bijective as desired
when $H$ acts faithfully on the set $F(X)$.

Now we let $X$ be an arbitrary object of $\cC$.
By Condition (2) on $F$, we can find a Galois covering $f\colon Y \to X$
such that $f$ belongs to $\cT(J)$ and that $H$ acts faithfully on $F(Y)$.
Let $\alpha$ denote the map  \eqref{eq:aJ_quot} with $X$ 
replaced by $Y$. The argument in the last paragraph shows that
the map $\alpha$ is bijective.
It is clear that the map $\alpha$ is $\Aut_Y(X)$-equivariant.
Since both $a_J(\quot{(F(-))}{H})$ and $a_J(\quot{(a_J(F)(-))}{H})$
are sheaves, we conclude, by taking the $\Aut_Y(X)$-invariant parts
of the domain and the codomain of the map $\alpha$,
that the map \eqref{eq:aJ_quot} is bijective.
\end{proof}

We use the symbol $\times^\Presh$ to denote fiber products
in the category $\Presh(\cC)$.

\begin{lem} \label{lem:cFC_limit}
Let $F_1 \xto{f_1} F \xleftarrow{f_2} F_2$ be a diagram in $\wt{\cFC}$.
Suppose that a fiber product $F_1 \times_F F_2$ exists in $\wt{\cFC}$.
Then the natural morphism
$F_1 \times_F F_2 \to F_1 \times^\Presh_F F_2$
in $\Presh(\cC)$ is an isomorphism.
\end{lem}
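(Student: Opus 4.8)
The plan is to show that the fiber product $P := F_1\times_F F_2$ formed in $\wt{\cFC}$ already satisfies the universal property of a fiber product in the ambient topos $\Shv(\cC,J)$, and then to use that the inclusion $\Shv(\cC,J)\inj\Presh(\cC)$ preserves limits; from these two facts the assertion drops out by uniqueness of fiber products.

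First I would record two preliminary facts. (a) Since the inclusion $j\colon\Shv(\cC,J)\inj\Presh(\cC)$ is right adjoint to $a_J$, it preserves all limits; in particular the sectionwise fiber product $Q := F_1\times^\Presh_F F_2$ of the three \emph{sheaves} $F_1,F_2,F$ is again a sheaf, and it is a fiber product of $F_1\to F\leftarrow F_2$ in $\Shv(\cC,J)$. (Alternatively one checks by hand that a sectionwise fiber product of sheaves satisfies the sheaf axiom, using that finite limits of sets commute with the equalizer defining the sheaf condition.) (b) Every object $T$ of $\Shv(\cC,J)$ is a colimit of objects of $\wt{\cC}$: regarded as a presheaf, $T$ is the canonical colimit $\varinjlim_{(X,s)}\frh_\cC(X)$ over its (essentially $\frU$-small) category of elements, and applying $a_J$, which commutes with colimits by Lemma \ref{lem:aJ_properties} and fixes the sheaf $T$, exhibits $T$ as $\varinjlim_{(X,s)}\imath(X)$, with each $\imath(X)=a_J(\frh_\cC(X))$ an object of $\wt{\cC}\subset\wt{\cFC}$.

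The key step is to prove that for every object $T$ of $\Shv(\cC,J)$ the canonical comparison map
\[
\Hom_{\Shv(\cC,J)}(T,P)\to \Hom_{\Shv(\cC,J)}(T,F_1)\times_{\Hom_{\Shv(\cC,J)}(T,F)}\Hom_{\Shv(\cC,J)}(T,F_2)
\]
is a bijection. For $T$ an object of $\wt{\cFC}$ this is exactly the universal property of $P$ as a fiber product in $\wt{\cFC}$, because $\wt{\cFC}$ is a full subcategory of $\Shv(\cC,J)$. For a general $T$ I would write $T=\varinjlim_d T_d$ with each $T_d$ in $\wt{\cC}$ as in (b); since $\Hom_{\Shv(\cC,J)}(-,-)$ carries colimits in the first variable to limits, and since a limit of a diagram of fiber products of sets is the fiber product of the limits, the displayed map is a limit of bijections, hence a bijection, and a routine naturality check shows it is compatible with the two projections. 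Thus $P$, with its projections to $F_1$ and $F_2$, is a fiber product of $F_1\to F\leftarrow F_2$ in $\Shv(\cC,J)$. Combined with (a), both $P$ and $Q$ are fiber products of the same diagram in $\Shv(\cC,J)$, so the unique morphism $P\to Q$ compatible with the projections — which is precisely the natural morphism $F_1\times_F F_2\to F_1\times^\Presh_F F_2$ of the statement — is an isomorphism in $\Shv(\cC,J)$, hence in $\Presh(\cC)$.

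The only delicate points are bookkeeping: phrasing the density statement (b) as an honest colimit presentation valid inside the $\frU$-topos $\Shv(\cC,J)$, and verifying that the bijection produced in the key step is literally the map induced by the projections rather than merely some bijection between the two sides. Neither is a genuine obstacle. I note that this argument uses nothing about the $F$-category structure of $\wt{\cFC}$ beyond its being a full subcategory of $\Shv(\cC,J)$ containing every $\imath(X)$; a more computational alternative would reduce to the case of connected $F_1,F_2,F$ via Corollary \ref{cor:fiber_product} and then analyze $\quot{X_1}{H_1}\times_{\quot{Y}{K}}\quot{X_2}{H_2}$ through the explicit sheafification formula, but the density argument is shorter.
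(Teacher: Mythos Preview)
Your argument is correct, and shares with the paper's proof the essential observation that every $\imath(X)=a_J(\frh_\cC(X))$ lies in $\wt{\cFC}$, so the universal property of $P$ in $\wt{\cFC}$ can be tested against these objects. The difference is that the paper is more direct: rather than showing $P$ is a fiber product in all of $\Shv(\cC,J)$ via your density step (b), it simply computes sections. For any $X\in\cC$ one has $F'(X)\cong\Hom_{\wt{\cFC}}(\imath(X),F')$ for every object $F'$ of $\wt{\cFC}$ (Yoneda plus the sheafification adjunction plus fullness of $\wt{\cFC}\subset\Shv(\cC,J)$), and then the fiber-product universal property in $\wt{\cFC}$ applied to the single test object $\imath(X)$ gives $P(X)\cong F_1(X)\times_{F(X)}F_2(X)$ directly. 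Your colimit presentation (b) and the ``limit of bijections'' step are thus unnecessary detours: testing only against representables already pins down a presheaf, so there is no need to pass through arbitrary $T\in\Shv(\cC,J)$. Your route does establish the slightly stronger intermediate statement that $P$ is a fiber product in the whole topos, which is true but not required here.
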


\begin{proof}
For any object $F'$ of $\wt{\cFC}$ and for any object $X$ of $\cC$, we have
isomorphisms
$$
F'(X) \cong \Hom_{\Presh(\cC)}(\frh_X,F')
\cong \Hom_{\Shv(\cC,J)}(a_J(\frh_X),F')
\cong \Hom_{\wt{\cFC}}(a_J(\frh_X),F').
$$
Hence
\begin{align*}
(F_1 \times_F F_2)(X)
\cong & \Hom_{\wt{\cFC}}(a_J(\frh_X),F_1 \times_F F_2) \\
\cong & \Hom_{\wt{\cFC}}(a_J(\frh_X),F_1)
\times_{\Hom_{\wt{\cFC}}(a_J(\frh_X),F)}
\Hom_{\wt{\cFC}}(a_J(\frh_X),F_2) \\
\cong & F_1(X) \times_{F(X)} F_2(X).
\end{align*}
This shows that $F_1 \times_F F_2$ is a fiber product
of the diagram $F_1 \xto{f_1} F \xleftarrow{f_2} F_2$ 
in the category $\Presh(\cC)$,
which proves the claim.
\end{proof}

\begin{cor}
The natural inclusion $\wt{\cFC} \subset \wt{\cFC^m}$
preserves fiber products.
\end{cor}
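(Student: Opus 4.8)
The plan is to reduce everything to the computation of fiber products in the ambient topos $\Shv(\cC,J)$, using Lemma~\ref{lem:cFC_limit}. Suppose that a diagram $F_1 \xto{f_1} F \xleftarrow{f_2} F_2$ in $\wt{\cFC}$ admits a fiber product $P := F_1 \times_F F_2$ in $\wt{\cFC}$, with projections $p_i : P \to F_i$; I must show that $(P,p_1,p_2)$ is a fiber product of the same diagram in $\wt{\cFC^m}$.

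First I would apply Lemma~\ref{lem:cFC_limit}: the canonical morphism $P \to F_1 \times^\Presh_F F_2$ in $\Presh(\cC)$ is an isomorphism, so $P$ represents the presheaf $X \mapsto F_1(X) \times_{F(X)} F_2(X)$. Since fiber products of presheaves are formed sectionwise and $F_1$, $F$, $F_2$ are sheaves, this presheaf is itself a sheaf; and since the inclusion $\Shv(\cC,J) \inj \Presh(\cC)$ is right adjoint to $a_J$, it preserves limits, so $(P,p_1,p_2)$ is in fact a fiber product of $F_1 \xto{f_1} F \xleftarrow{f_2} F_2$ in $\Shv(\cC,J)$ itself, not merely inside $\wt{\cFC}$.

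The last step is formal. The category $\wt{\cFC^m}$ is a full subcategory of $\Shv(\cC,J)$ containing $F_1$, $F$, $F_2$ and also $P$, because $P \in \wt{\cFC} \subset \wt{\cFC^m}$. Hence for every object $G$ of $\wt{\cFC^m}$, fullness identifies $\Hom_{\wt{\cFC^m}}(G,-)$ with $\Hom_{\Shv(\cC,J)}(G,-)$ on these objects, and the universal property of $P$ established above yields $\Hom_{\wt{\cFC^m}}(G,P) \cong \Hom_{\wt{\cFC^m}}(G,F_1) \times_{\Hom_{\wt{\cFC^m}}(G,F)} \Hom_{\wt{\cFC^m}}(G,F_2)$, naturally in $G$. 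Thus $(P,p_1,p_2)$ is a fiber product in $\wt{\cFC^m}$, so the inclusion sends the given pullback square to a pullback square, as claimed.

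I do not expect a genuine obstacle here: the one point needing care is precisely the upgrade carried out in the second paragraph, namely that a fiber product existing in $\wt{\cFC}$ is, by Lemma~\ref{lem:cFC_limit}, the honest fiber product in $\Shv(\cC,J)$ computed sectionwise; once that is in hand, stability under passing to any intermediate full subcategory is automatic. Alternatively one could invoke Lemma~\ref{lem:cFC_limit} directly for $\wt{\cFC^m}$, which is the special case of $\wt{\cFC}$ with $\wt{\cC}=\wt{\cC^m}$, but this is not necessary.
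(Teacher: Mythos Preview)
Your proof is correct and is exactly the argument the paper has in mind: the paper's proof is the single line ``Immediate from the lemma,'' and what you have written is a careful unpacking of that line, passing through $\Shv(\cC,J)$ (equivalently $\Presh(\cC)$) via Lemma~\ref{lem:cFC_limit} and then restricting to the full subcategory $\wt{\cFC^m}$.
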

\begin{proof}
Immediate from the lemma.
\end{proof}

\begin{lem} \label{lem:limit_of_quotient}
Let $F_1 \xto{f_1} F \xleftarrow{f_2} F_2$ be a diagram in $\wt{\cFC}$.
Let $H$ be a subgroup of $\Aut_{\wt{\cFC}}(F_2)$.
Suppose that $H \subset \Aut_{F}(F_2)$ and that
there exists morphism $f_2'\colon  F_2 \to F_3$ in $\wt{\cFT}$
such that $H$ is a subgroup of $\Aut_{F_3}(F_2)$.
Suppose that the sheaf $\quot{H}{F_2}$ on $(\cC, J)$
is an object of $\wt{\cFC}$.
Suppose that a fiber product $F_1 \times_F F_2$ exists in $\wt{\cFC}$
and that the quotient $\quot{(F_1 \times_F F_2)}{H}$ is
an object of $\wt{\cFC}$.
Then a fiber product $F_1 \times_F (\quot{F_2}{H})$
exists in $\wt{\cFC}$ and is isomorphic to 
$\quot{(F_1 \times_F F_2)}{H}$.
\end{lem}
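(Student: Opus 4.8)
The plan is to carry out the whole construction inside the Grothendieck topos $\Shv(\cC,J)$, where quotients by group actions are colimits and colimits are universal, and then to transfer the conclusion back to the full subcategory $\wt{\cFC}$.

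First I would reduce to a statement about sheaves. By Lemma~\ref{lem:cFC_limit} the fiber product $P:=F_1\times_F F_2$ formed in $\wt{\cFC}$ agrees with the one formed in $\Presh(\cC)$, and since a fiber product of sheaves taken in $\Presh(\cC)$ is again a sheaf, $P$ also agrees with the fiber product $F_1\times_F F_2$ taken in $\Shv(\cC,J)$. The inclusion $H\subset\Aut_F(F_2)$ makes $F_2$ an object with an $H$-action by automorphisms over $F$, hence endows $P$ with the induced $H$-action over $F$ (and over $F_1$), the automorphism attached to $h\in H$ being $\id_{F_1}\times_F h$; it is with respect to this action that the sheaf $\quot{P}{H}$, associated to the presheaf $X\mapsto\quot{(P(X))}{H}$, is assumed by hypothesis to lie in $\wt{\cFC}$. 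Since $f_2\colon F_2\to F$ coequalizes the $H$-action it factors uniquely through $\quot{F_2}{H}$, which is why the diagram $F_1\to F\leftarrow\quot{F_2}{H}$ in $\Shv(\cC,J)$, and hence the fiber product in question, is defined.

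Next I would identify $\quot{F_2}{H}$ with the colimit of the one-object diagram $\mathrm{B}H\to\Shv(\cC,J)$ sending $\ast$ to $F_2$: colimits in $\Presh(\cC)$ are computed sectionwise, so this colimit in $\Presh(\cC)$ is the sectionwise-quotient presheaf $X\mapsto\quot{(F_2(X))}{H}$, and applying the left adjoint $a_J$ yields $\quot{F_2}{H}$; the same computation with $F_2$ replaced by $P$ identifies $\quot{P}{H}$ with $\varinjlim_{\mathrm{B}H}P$. Since $\mathrm{B}H$ is connected, these colimits are computed in a slice category $\Shv(\cC,J)/S$ as in $\Shv(\cC,J)$, and the pullback functors between slices of a topos preserve colimits (they admit right adjoints); applying pullback along $f_1$ to the colimit diagram exhibiting $\quot{F_2}{H}$ therefore produces a canonical isomorphism $F_1\times_F(\quot{F_2}{H})\cong\varinjlim_{\mathrm{B}H}P=\quot{P}{H}$ of sheaves. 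Finally, $\quot{P}{H}=\quot{(F_1\times_F F_2)}{H}$ is an object of $\wt{\cFC}$ by hypothesis, and a limit cone in $\Shv(\cC,J)$ whose objects and morphisms all lie in the full subcategory $\wt{\cFC}$ is a limit cone there; hence the fiber product $F_1\times_F(\quot{F_2}{H})$ exists in $\wt{\cFC}$ and is isomorphic to $\quot{(F_1\times_F F_2)}{H}$.

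The step I expect to be the main obstacle is the middle one: carefully identifying the naively defined quotient sheaf $\quot{F_2}{H}$ (sheafification of the sectionwise quotient) with the categorical colimit over $\mathrm{B}H$, and checking that pulling back along $f_1$ commutes with this colimit; everything else is then formal. The remaining hypotheses --- the existence of $f_2'\colon F_2\to F_3$ in $\wt{\cFT}$ with $H\subset\Aut_{F_3}(F_2)$, and $\quot{F_2}{H}\in\wt{\cFC}$ --- are used, via the discussion around Lemma~\ref{lem:quot_FCtil}, only to guarantee that the relevant quotient sheaves are genuinely objects of $\wt{\cFC}$, so that the statement is meaningful and the final descent applies.
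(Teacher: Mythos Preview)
Your argument is correct and takes a genuinely different route from the paper. You work in the topos $\Shv(\cC,J)$ and invoke universality of colimits: identifying $\quot{F_2}{H}$ with the colimit over $\mathrm{B}H$, the pullback functor $f_1^*$ between slices preserves it, giving $F_1\times_F(\quot{F_2}{H})\cong\quot{(F_1\times_F F_2)}{H}$ in one stroke. The paper instead argues explicitly at the presheaf level: it rewrites $\quot{(F_1\times_F F_2)}{H}$ as $a_J\bigl(F_1\times_F^{\Presh}\quot{(F_2(-))}{H}\bigr)$ and then shows that the comparison map to $a_J\bigl(F_1\times_F^{\Presh}(\quot{F_2}{H})\bigr)$ becomes an isomorphism after sheafification, proving injectivity from the $E$-category property of $\wt{\cC}$ and surjectivity by chasing sections through a tower of Galois coverings using the explicit description of $a_J$ from \cite{Grids}. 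Your approach is shorter and uses nothing specific to $Y$-sites; the paper's approach is more self-contained within the machinery already built and does not appeal to general topos theory. Two minor remarks: the connectedness of $\mathrm{B}H$ is not needed, since forgetful functors from slice categories create all colimits; and the ``main obstacle'' you flag is in fact routine---the sectionwise quotient is the coequalizer in $\Presh(\cC)$, and $a_J$ preserves it because it is a left adjoint.
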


\begin{proof}
The presheaf $F' = F_1 \times^\Presh_F F_2$
satisfies Conditions (1) (2) in Lemma \ref{lem:aJ_quot}.
It follows from Lemma \ref{lem:cFC_limit}
that
the natural morphism
$F_1 \times_F F_2 \to F'$
in $\Presh(\cC)$ is an isomorphism.
In particular we have
$\quot{F'}{H} \cong \quot{(F_1 \times_F F_2)}{H}$. 
By definition, we have 
$$
\quot{F'}{H} = a_J(\quot{(F'(-))}{H})
\cong a_J(F_1 \times^\Presh_F \quot{(F_2(-))}{H}).
$$
Hence it suffices to show that the morphism
$a_J(F_1 \times^\Presh_F \quot{(F_2(-))}{H})
\to a_J(F_1 \times^\Presh_F \quot{F_2}{H})$
is an isomorphism.
Since $\wt{\cC}$ is an $E$-category and
since 
$$F_2(X) \cong \Hom_{\wt{\cFC}}(\quotid{X},F_2)$$
for any object $X$ of $\cC$, it follows that
the map $F_2(X) \to F_2(Y)$ is injective for any
morphism $f\colon Y \to X$ in $\cC$.
This implies that the map $\quot{F_2(X)}{H}
\to (\quot{F_2}{H})(X)$ is injective for any object $X$
of $\cC$. Hence the map
$$
(F_1 \times^\Presh_F \quot{(F_2(-))}{H})(X)
\to (F_1 \times^\Presh_F \quot{F_2}{H})(X)
$$
is injective for any object $X$ of $\cC$.

Let $X$ be an arbitrary object of $\cC$.
It suffices to show that the map 
\begin{equation} \label{F_1FF_2}
a_J(F_1 \times^\Presh_F \quot{(F_2(-))}{H})(X)
\to a_J(F_1 \times^\Presh_F \quot{F_2}{H})(X)
\end{equation}
is surjective.
Let $x$ be an element of 
$a_J(F_1 \times^\Presh_F \quot{F_2}{H})(X)$
then there exist an object $(Y,f)$ of $\Gal/X$ and
an object $(Z,h)$ of $\Gal/Y$ such that $f \circ h$
is a Galois covering and that $x$ belongs to
the image of the map 
$(F_1(Y) \times_{F(Y)} (\quot{F_2(Z)}{H})^{\Aut_Y(Z)})^{\Aut_X(Y)}
\to a_J(F_1 \times^\Presh_F \quot{F_2}{H})(X)$.
Lemma 4.2.6 of \cite{Grids} gives an isomorphism
$(F_1(Y) \times_{F(Y)} (\quot{F_2(Z)}{H})^{\Aut_Y(Z)})^{\Aut_X(Y)}
\cong (F_1(Z) \times_{F(Z)} \quot{F_2(Z)}{H})^{\Aut_X(Z)}$.
This shows that $x$ belongs to the image of the map \eqref{F_1FF_2}.
Hence the map \eqref{F_1FF_2} is surjective.
This completes the proof.
\end{proof}

\begin{proof}[Proof of Proposition \ref{prop:FC_product}]
We use Corollaries \ref{cor:FC is an F-category} and
\ref{cor:fiber_product}. It suffices to prove the claim
when $F_1$, $F_2$, $F$ are objects of $\wt{\cC^m}$.
By using Lemma \ref{lem:ZHK}, we may assume that 
$F_2= \quot{X}{H}$, $F=\quot{X}{K}$ for some object $X$ of $\cC$
and for some moderate subgroups $H,K \subset \Aut_\cC(X)$ with
$H \subset K$, and that $f_2$ is the morphism induced by the
identity morphism of $X$.
Moreover, by using Lemmas \ref{lem:quot_FCtil} and 
\ref{lem:limit_of_quotient}, we are
reduced to the case where $H=\{1\}$.
Let us take an object $(Y,L)$ of $\wt{\cC^m}^\str$ satisfying 
$F_1 = \quot{Y}{L}$. Let us regard $f_1$ as a morphism
$(Y,L) \to (X,K)$ in $\wt{\cC^m}^\str$. Let us take a representative
$Y \xleftarrow{m} Z \xto{f} X$ of $f_1$.
Let us choose a morphism $h\colon Y \to Y_0$ in $\cC$ which belongs to $\cT$
such that $L$ is a subgroup of $\Aut_{Y_0}(Y)$.
By replacing $m$ and $f$ with their composites with a suitable
morphism to $Z$ which belongs to $\cT$, we may assume that
the composite $h \circ m$ is a Galois covering in $\cC$ 
which belongs to $\cT$.
Let $L'$ denote the set of elements $\sigma \in \Aut_{Y_0}(Z)$ such
that $\sigma$ descends to an element of $L$ via $m$ in the sense of
Section 4.2.1 of \cite{Grids}. It is clear that
$L'$ is a subgroup of $\Aut_{Y_0}(Z)$.
The last sentence in the first paragraph of
Section 4.2.1 of \cite{Grids} gives a map 
$\phi\colon  L' \to L$ characterized
by the following property: Any $\sigma \in L'$ 
descends to $\phi(\sigma) \in L$ via $m$.
It is easy to check that the map $\phi$ is a homomorphism 
of groups.
It follows from Lemma 4.2.6 (1) of \cite{Grids} that
$\phi$ is surjective.
From this one can see easily that 
the morphism $m\colon Z \to Y$ induces an isomorphism
$\quot{Z}{L'} \xto{\cong} \quot{Y}{L} = F_1$.
Let $\sigma \in L'$. Since $q_{Y,L} \circ \imath^\str(m \circ \sigma^{-1})
= q_{Y,L} \circ \imath^\str(\phi(\sigma^{-1}) \circ m) 
= q_{Y,L} \circ m$, the triple
$(f_1;m,f\circ \sigma)$ is a representative of $f_1$.
Hence it follows from Lemma \ref{lem:exists_g} that
there exists an element $k \in K$ satisfying $f \circ \sigma = k \circ f$,
i.e., $\sigma$ descends to $k$ via $f$.
Since $f$ is an epimorphism in $\cC$, such an element $k$ is unique.
By sending $\sigma$ to $k$, we obtain a map $\psi\colon L' \to K$.
It is easy to check that this map is a homomorphism of groups.

Let $G = \coprod_{k \in K} \quotid{Z}$.
It follows from the finiteness condition on $(\cC,J)$
that $G$ is an object of $\wt{\cFC^m}$.
For $\sigma \in L'$, let $\sigma'\colon G \to G$
be the morphism in $\wt{\cFC^m}$ defined as follows:
$\pi_0(\sigma')\colon  K \to K$ is the map that sends $k$ for 
$k \in K$ to $k\psi(\sigma)^{-1}$, and for any $k \in K$ the component of
$\sigma'$ at $k$ is equal to $\iota(\sigma)\colon  \quotid{Z} \to \quotid{Z}$.
It is clear that $\sigma'$ is an automorphism of $G$
in $\wt{\cFC^m}$ and the map $\eta\colon L' \to \Aut_{\wt{\cFC^m}}(G)$
which sends $\sigma$ to $\sigma'$ is a homomorphism of groups.
Via this map we let $L'$ act from the left on the object 
$G$ of $\wt{\cFC^m}$.
Let $\quot{G}{L'}$ denote the sheaf on $(\cC,J)$ associated 
with the presheaf which
sends an object $X$ of $\cC$ to the set $\quot{(G(X))}{L'}$.
It is an object of $\wt{\cFC^m}$ since it is a coproduct
in $\Shv(\cC,J)$ of the family $(\quot{Z}{\ker \psi})_{k \in S}$,
where $S \subset K$ is a complete set of representatives of 
$K/\psi(L')$.
Let $m'\colon G \to \quotid{Y}$ denote the map all of whose components
are the map $\iota(m)\colon \quotid{Z} \to \quotid{Y}$.
The morphism $m'$ is $L'$-equivariant, where $L'$
acts on $\quotid{Y}$ via the homomorphism $\phi\colon L' \to L$.
Hence $m'$ induces the morphism 
$f'_2\colon \quot{G}{L'} \to \quot{Y}{L} = F_1$.

In this paragraph we construct a morphism
$\quot{G}{L'} \to \quotid{X}$. 
Let $G \to \quotid{X}$ be a morphism in $\wt{\cFC^m}$
whose component at $k \in K$ is equal to the morphism
$\imath(k\circ f)$.
We let the group $K$ act on $G$ as follows:
For any $k \in K$ and the action of $k$ on $\pi_0(G)=K$
is the multiplication by $k$ from the left and 
the action of $k$ on each component of $G$ is
the identity morphism of $\quotid{Z}$.
Then the morphism $\wt{f}$ is $K$-equivariant.
Observe that the action of $L'$ on $G$ commutes with the action of $K$ 
on $G$. One can check easily from the definition of the
homomorphism $\wt{\eta}$ that $\wt{\eta}(L')$ is a subgroup of 
$\Aut_{\quotid{X}}(G)$.
This implies that $\eta(L')$ is a subgroup of $\Aut_{\quotid{X}}(G)$.
Hence the morphism $\wt{f}_1$ induces a morphism 
$f'_1\colon  \quot{G}{L'} \to \quotid{X}=F_2$.
Thus, we obtain a diagram
$$
\begin{CD}
\quot{G}{L'} @>{f'_1}>> F_2 \\
@V{f'_2}VV @VV{f_2}V \\
F_1 @>{f_1}>> F
\end{CD}
$$
\label{diag:cartesian}
in $\wt{\cFC^m}$. Let us consider the map
$p'\colon  G \to \quot{G}{L'}$ induced by the identity map $G \to G$.
One can check that $p'$ is an epimorphism in $\wt{\cFC^m}$ 
by using that $\wt{\cC^m}$ is an $E$-category and that
$\pi_0(p')$ is surjective.
Since $f_1 \circ f'_2 \circ p' = f_2 \circ f'_1 \circ p'$,
it follows that the diagram above is commutative.

Since we have an isomorphism
$\quot{Z}{L'} \xto{\cong} F_1$
and since $G$ is a fiber product 
$\quotid{Z} \times_F F_2$ in $\wt{\cFC^m}$, 
it follows from Lemma \ref{lem:limit_of_quotient}
that the diagram above is cartesian.

Now suppose $f_2$ belongs to $\wt{\cFT^m}$.
Then by the following lemma, we 
see that the morphism
$F_1 \times_F F_2 \to F_1$
belongs to $\wt{\cFT^m}$.
\begin{lem}
Let $f\colon  Z \to Y$ be a morphism in $\wt{\cC}$
that belongs to $\wt{\cT}$.   
Let $H_Z \subset \Aut(Z)$ and 
$H_Y \subset \Aut(Y)$ be subgroups.
Suppose there is a group homomorphism $H_Z \to H_Y$
and that the morphism $f$ is equivariant with respect to the $H_Z$-action.
Then the induced morphism 
$\quot{H_Z}{Z} \to \quot{H_Y}{Y}$
belongs to $\wt{\cT}$. 
\end{lem}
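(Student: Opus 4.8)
I will write $\bar f\colon\quot{Z}{H_Z}\to\quot{Y}{H_Y}$ for the induced morphism and $q_Z\colon Z\to\quot{Z}{H_Z}$, $q_Y\colon Y\to\quot{Y}{H_Y}$ for the canonical quotient morphisms. The plan is to deduce $\bar f\in\wt{\cT}$ from the cancellation property of $\wt{\cT}$ established in Lemma~\ref{lem:tilde semi-localizing} and Lemma~\ref{lem:tilde cond 3}, once a single nontrivial input is in place: that $q_Y$ itself lies in $\wt{\cT}$. Throughout I take as understood --- as is forced for the statement to parse, and as is arranged in the application to Proposition~\ref{prop:FC_product} through the auxiliary morphisms to objects $\quotid{X}$ built there --- that $H_Z$ and $H_Y$ are moderate, so that $\quot{Z}{H_Z}$ and $\quot{Y}{H_Y}$ are objects of $\wt{\cC^m}$ and Lemma~\ref{lem:quot_Ctil} applies to both.

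First I would handle $q_Y$. Applying Lemma~\ref{lem:quot_Ctil} to $Y$ and $H_Y$ and reading off its proof --- which via Lemma~\ref{lem:ZHK} presents $Y\cong\quot{W}{M}$ and $\quot{Y}{H_Y}\cong\quot{W}{\wt{H_Y}}$ for a single object $W$ of $\cC$ and moderate subgroups $M\subseteq\wt{H_Y}\subseteq\Aut_\cC(W)$ fitting in an exact sequence $1\to M\to\wt{H_Y}\to H_Y\to1$ as in \eqref{eq:KH'H}, and then invokes Lemma~\ref{lem:quot_general} --- one obtains that under these identifications $q_Y$ becomes the morphism $\quot{W}{M}\to\quot{W}{\wt{H_Y}}$ induced by $\id_W$; in particular $q_Y\circ q_{W,M}=q_{W,\wt{H_Y}}$ as morphisms $\imath(W)\to\quot{W}{\wt{H_Y}}$. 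Since $\cT(\imath_*J)=\wt{\cT}$ (Lemma~\ref{lem:compare_JT} and Lemma~\ref{lem:tilde semi-localizing}), Lemma~\ref{lem:Galois2} gives $q_{W,M}\in\wt{\cT}$ and $q_{W,\wt{H_Y}}\in\wt{\cT}$, hence $q_Y\circ q_{W,M}\in\wt{\cT}$, and the implication in Lemma~\ref{lem:tilde cond 3} that a composite belonging to $\wt{\cT}$ has both its factors in $\wt{\cT}$ yields $q_Y\in\wt{\cT}$.

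The remainder is formal. Because $f$ is equivariant with respect to the homomorphism $H_Z\to H_Y$, the composite $q_Y\circ f\colon Z\to\quot{Y}{H_Y}$ is invariant under the $H_Z$-action on $Z$; hence, by the universal property of the quotient object $q_Z\colon Z\to\quot{Z}{H_Z}$ in $\wt{\cC^m}$ (Lemma~\ref{lem:quot_Ctil} and Lemma~\ref{lem:quot_fullsub}), it factors uniquely through $q_Z$, and the resulting morphism is precisely $\bar f$. Thus $\bar f\circ q_Z=q_Y\circ f$, which lies in $\wt{\cT}$ by Lemma~\ref{lem:tilde semi-localizing} as a composite of two morphisms belonging to $\wt{\cT}$; one last application of Lemma~\ref{lem:tilde cond 3} then forces $\bar f\in\wt{\cT}$.

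The hard part will be the bookkeeping in the second paragraph: extracting from Lemma~\ref{lem:quot_Ctil}, and from the instance of Lemma~\ref{lem:ZHK} buried inside its proof, the precise identity $q_Y\circ q_{W,M}=q_{W,\wt{H_Y}}$ --- that is, verifying that the isomorphism $\quot{Y}{H_Y}\cong\quot{W}{\wt{H_Y}}$ produced there is the one compatible with canonical quotient morphisms. This is exactly the content of the explicit commutative square appearing in Lemma~\ref{lem:double_quot} (which is what Lemma~\ref{lem:quot_general} furnishes, and what is used within the proof of Lemma~\ref{lem:quot_Ctil}), so the obstacle is one of careful tracking of identifications rather than of genuine mathematical difficulty.
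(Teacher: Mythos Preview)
Your proof is correct but takes a different, longer route than the paper's. The paper dispatches the lemma in one line: the diagram $Z \xleftarrow{\id} Z \xto{f} Y$ is a representative (in the sense of the definition preceding Lemma~\ref{lem:pentagon}) of the induced morphism $(Z,H_Z)\to(Y,H_Y)$ in $\wt{\cC}^\str$, and since $f\in\cT(J)$ the definition of $\wt{\cT}^\str$ in Section~\ref{sec:cTmu} gives the conclusion immediately. This reading treats $Z,Y$ as objects of $\cC$ embedded via $\imath$, which is all that is needed in the application inside Proposition~\ref{prop:FC_product}. Your argument instead works for arbitrary $Z,Y\in\wt{\cC}$: you first establish $q_Y\in\wt{\cT}$ by unwinding the proof of Lemma~\ref{lem:quot_Ctil} together with Lemma~\ref{lem:Galois2}, and then use the factorization $\bar f\circ q_Z=q_Y\circ f$ and the two-out-of-three property of $\wt{\cT}$ from Lemma~\ref{lem:tilde cond 3}. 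What you gain is generality and an argument that never touches representatives directly; what you pay is the bookkeeping in your second paragraph, which the paper's one-line approach avoids entirely. In the special case $Z,Y\in\cC$ your second paragraph collapses as well, since then $q_Y=q_{Y,H_Y}$ is already in $\wt{\cT}$ by Lemma~\ref{lem:Galois2}.
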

\begin{proof}
The claim readily follows by taking the representative of the induced map
$(Z, H_Z) \to (Y, H_Y)$ to be
$Z \xleftarrow{=} Z \xto{f} Y$. 
\end{proof}

This completes the proof.
\end{proof}

\begin{rmk}
In a category where fiber products exist,
there is a simpler and more familiar definition 
of a Galois covering.   Since by 
Proposition \ref{prop:FC_product}
fiber products exist, we have the following 
criterion.

Let $f\colon F_1 \to F_2$ be a morphism in $\wt{\cFC^m}$ and
let $\rho\colon  G \to \Aut_{F_2}(F_1)$ be a homomorphism of groups. 
Then, 
the morphism $f$ is a Galois covering if and only if the diagram
$$
\begin{CD}
\coprod_{g \in G} F_1 @>{\coprod_g \rho(g)}>> F_1 \\
@V{\coprod_g \id_{F_1}}VV @VV{f}V \\
F_1 @>{f}>> F_2
\end{CD}
$$
in $\wt{\cFC}$ is cartesian.
\end{rmk}

\begin{lem} \label{lem:FC_product_isom}
Suppose that, for each object $X$ of $\cC$, 
the overcategory $\cC(\cT(J))_{/X}$
satisfies at least one of the two conditions 
in Section 5.8.1 of \cite{Grids}.
Let us fix a grid $(\cC_0,\iota_0)$ of $(\cC,J)$.
Let $\omega = \omega_{(\cC_0,\iota_0)}$ denote the
associated fiber functor.
Let $F_1 \xto{f_1} F \xleftarrow{f_2} F_2$ be a diagram in $\wt{\cFC}$.
Suppose that a fiber product $F_1 \times_F F_2$ exists in $\wt{\cFC}$.
Then the natural morphism
$\omega(F_1 \times_F F_2) \to \omega(F_1) \times_{\omega(F)} \omega(F_2)$
of sets, where the right-hand side is the fiber product in the category
of sets, is an isomorphism.
\end{lem}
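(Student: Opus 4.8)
The plan is to derive the statement formally from Lemma~\ref{lem:cFC_limit} together with the left-exactness of the fiber functor. First I would use Lemma~\ref{lem:cFC_limit}: since the fiber product $F_1 \times_F F_2$ is assumed to exist in $\wt{\cFC}$, the natural morphism $F_1 \times_F F_2 \to F_1 \times^\Presh_F F_2$ to the fiber product formed in $\Presh(\cC)$ is an isomorphism. Hence it is enough to show that $\omega = \omega_\Cip$ carries the presheaf fiber product $F_1 \times^\Presh_F F_2$ to the fiber product of the underlying sets of $\omega(F_1)$, $\omega(F)$, $\omega(F_2)$.

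Second, I would show that $\omega_\Cip : \Presh(\cC) \to \cD$ commutes with finite limits. Recall from Lemma~10.1.2 of \cite{Grids} that $\omega_\Cip$ is isomorphic to the composite of the sheafification functor $a_J : \Presh(\cC) \to \Shv(\cC,J)$ with the restriction of $\omega_\Cip$ to $\Shv(\cC,J)$. Under the standing hypotheses, $(\cC,J)$ is a $B$-site whose $\cT(J)$ has enough Galois coverings, so $a_J$ commutes with finite limits by Lemma~\ref{lem:aJ_properties}; and the restriction of $\omega_\Cip$ to $\Shv(\cC,J)$ is an equivalence of categories by Theorem~5.8.1 of \cite{Grids}, whose hypothesis is exactly the cardinality condition assumed here, hence it commutes with all limits. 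Consequently $\omega_\Cip$ commutes with finite limits, and the canonical morphism
\[
\omega(F_1 \times^\Presh_F F_2) \longrightarrow \omega(F_1) \times_{\omega(F)} \omega(F_2)
\]
is an isomorphism in $\cD$, where for the moment the right-hand side denotes the fiber product formed in $\cD$.

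Finally, I would observe that the forgetful functor from $\cD$ to the category of sets preserves fiber products: the fiber product of two smooth $M$-sets over a third is their set-theoretic fiber product equipped with the diagonal action, and this action is again smooth because the stabilizer of a pair $(x,y)$ is the intersection of the (open) stabilizers of $x$ and of $y$, hence open. Combining this with the preceding two steps yields the desired isomorphism of sets. I do not expect a serious obstacle here; the only point requiring a little care is the second step, namely making sure that the factorization $\omega_\Cip \simeq (\omega_\Cip|_{\Shv(\cC,J)}) \circ a_J$ and the left-exactness of $a_J$ are genuinely available under the hypotheses in force — which they are, since a $Y$-site carrying a grid is in particular a $B$-site with enough Galois coverings.
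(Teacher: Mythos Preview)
Your proof is correct and follows essentially the same approach as the paper. The paper's version is slightly more streamlined: rather than factoring $\omega_\Cip$ as $(\omega_\Cip|_{\Shv(\cC,J)}) \circ a_J$ and invoking left-exactness of $a_J$, it simply observes that since $F_1$, $F$, $F_2$ are already sheaves, their presheaf fiber product (which Lemma~\ref{lem:cFC_limit} identifies with $F_1 \times_F F_2$) is automatically a fiber product in $\Shv(\cC,J)$, and then applies the equivalence of Theorem~5.8.1 of \cite{Grids} and the set-theoretic nature of fiber products in smooth $M$-sets directly. Your detour through $a_J$ arrives at the same place and is equally valid.
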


\begin{proof}
It follows from Lemma \ref{lem:cFC_limit} that the 
$F_1 \times_F F_2$ is a fiber product in the category $\Presh(\cC)$.
In particular it is a fiber product in the category $\Shv(\cC,J)$.
It follows from \cite[Thm.~5.8.1]{Grids} 
that the functor $\omega$ induces the equivalence of categories
between $\Shv(\cC,J)$ and the category of smooth left 
$M_{(\cC_0,\iota_0)}$-sets.
Since fiber products in the latter category 
can be computed set theoretically, the claim follows.
\end{proof}

\begin{lem}\label{lem:last}
Let
\begin{equation}\label{eq:diagram_last}
\begin{CD}
F_1 @>{h}>> F_2 \\
@V{f_1}VV @V{f_2}VV \\
F'_1 @>{h'}>> F'_2
\end{CD}
\end{equation}
be a commutative diagram in $\wt{\cC}$.
Suppose that $f_1$ and $f_2$ are Galois coverings
in $\wt{\cC}$ with finite Galois groups $G_1$ and $G_2$,
respectively, which belong to $\wt{\cT}$, and that
there exists an isomorphism
$\phi\colon G_1 \xto{\cong} G_2$
satisfying $h \circ g = \phi(g) \circ h$
for all $g \in G_1$.
Then the diagram \eqref{eq:diagram_last}
is cartesian in $\wt{\cFC}$.
\end{lem}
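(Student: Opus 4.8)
The plan is to prove the stronger statement that the square \eqref{eq:diagram_last} is cartesian already in the ambient topos $\Shv(\cC,J)$. Since $\wt{\cFC}$ is, by construction (Section \ref{sec:wtcFC}), a full subcategory of $\Shv(\cC,J)$, once $F_1$ is exhibited as a fibre product of $F'_1 \to F'_2 \leftarrow F_2$ in $\Shv(\cC,J)$ it is automatically a fibre product of the same diagram in $\wt{\cFC}$, so this suffices. Write $\cE=\Shv(\cC,J)$ and let $G_i=\Aut_{F'_i}(F_i)$ be the Galois group of $f_i$ for $i=1,2$; since $\wt{\cC}$ is an $E$-category the structure homomorphism identifies the abstract Galois group with $G_i$, and the hypothesis reads $h\circ g=\phi(g)\circ h$ for $g\in G_1$, with $\phi\colon G_1\xto{\cong}G_2$. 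First I record the two structural facts that drive the proof. By Lemma \ref{lem:Galoiscovering}, $f_i$ exhibits $F'_i$ as the quotient $\quot{F_i}{G_i}$ in $\cE$; moreover the $G_i$-action on $F_i$ is \emph{free} in $\cE$, because $\wt{\cC}$ is an $E$-category (Lemma \ref{lem:tilde E-category}), so $G_i$ acts freely on $\Hom_{\wt{\cC}}(W,F_i)$ for every $W\in\wt{\cC}$, and the objects $\imath(X)=\quotid{X}$ with $X\in\cC$ form a generating family of $\cE$; hence $G_i\times F_i\to F_i\times F_i$, $(g,y)\mapsto(gy,y)$, is a monomorphism. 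Consequently the square
\[
\begin{CD}
\coprod_{g\in G_i}F_i @>{\coprod_g g}>> F_i\\
@V{\coprod_g \id_{F_i}}VV @VV{f_i}V\\
F_i @>{f_i}>> F'_i
\end{CD}
\]
is cartesian in $\cE$ and $f_i$ is an effective epimorphism.

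Now form $P:=F'_1\times_{F'_2}F_2$ in $\cE$, with projections $p_1,p_2$, and let $c=(f_1,h)\colon F_1\to P$ be the morphism determined by $f_1$ and $h$ (it exists because $h'\circ f_1=f_2\circ h$). It suffices to show that $c$ is an isomorphism, and since $\cE$ is a topos it is enough to show that $c$ is both a monomorphism and an epimorphism. For the monomorphism part, suppose $a,b\colon W\to F_1$ satisfy $ca=cb$; then $f_1 a=f_1 b$, so by the displayed cartesian square there is a decomposition $W=\coprod_{g\in G_1}W_g$ with $a|_{W_g}=g\cdot b|_{W_g}$, and applying $h$ and using $h\circ g=\phi(g)\circ h$ gives $\phi(g)\circ (h\,b|_{W_g})=h\,b|_{W_g}$; freeness of the $G_2$-action on $F_2$ forces $W_g$ to be initial for $g\neq 1$, so $a=b$. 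For the epimorphism part, note that $p_1\colon P\to F'_1$ is the base change of the effective epimorphism $f_2$ along $h'\colon F'_1\to F'_2$, hence is itself an effective epimorphism, and base-changing the displayed cartesian square for $f_2$ along $h'$ shows that $p_1$ exhibits $F'_1$ as $\quot{P}{G_2}$ for the (still free) $G_2$-action on $P$ induced by its action on the factor $F_2$. The relation $h\circ g=\phi(g)\circ h$ says precisely that $c$ intertwines the $G_1$-action on $F_1$ with the $G_2$-action on $P$ via $\phi$, so the image of $c$ is a $G_2$-stable subobject of $P$; it surjects onto $F'_1=\quot{P}{G_2}$ because $p_1\circ c=f_1$ is an epimorphism, hence (the $G_2$-action on $P$ being free) it is all of $P$. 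Thus $c$ is an isomorphism and \eqref{eq:diagram_last} is cartesian.

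The step I expect to require the most care is the passage, made in the first paragraph, from ``Galois covering in $\wt{\cC}$'' to the two structural facts in $\cE$: that such a morphism becomes the quotient map of a free action of the constant group $G_i$, so that the displayed squares are cartesian and remain so after base change. This is where the $E$-category hypothesis, Lemma \ref{lem:Galoiscovering}, and the standard topos-theoretic facts (colimits are universal, effective epimorphisms and monomorphisms are stable under pullback, a generating family detects monomorphisms, and mono $+$ epi $\Rightarrow$ iso) enter; everything afterwards is then formal. If one is willing to assume in addition that $(\cC,J)$ admits a grid, a shorter alternative is available: transport the situation along the fibre functor $\omega$ (using Lemma \ref{lem:FC_product_isom} and the fact that $\omega$ is an equivalence onto a category of $M$-sets in which fibre products are computed set-theoretically), whereupon the claim reduces to the elementary observation that for free actions of a finite group one has $\quot{X}{G_1}\times_{\quot{Y}{G_2}}Y\cong X$, the bijection being constructed by matching $G_2$-orbits through $\phi$.
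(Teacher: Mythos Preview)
Your proof is correct, and it takes a genuinely different route from the paper's. The paper first reduces to $\wt{\cC}=\wt{\cC^m}$, invokes Proposition~\ref{prop:FC_product} to know that the fibre product $P=F'_1\times_{F'_2}F_2$ exists in $\wt{\cFC}$ with $\pr_1\in\wt{\cFT}$, and then argues in two steps: (i) a $\pi_0$-count using Lemma~\ref{lem:Galcov} to show $P$ is connected, so that $h\colon F_1\to P$ is a morphism in $\wt{\cC}$ lying in $\wt{\cT}$; (ii) a monomorphism check (essentially your mono argument, phrased via the pseudo-torsor property) followed by Lemma~\ref{lem:mono}, which says a monomorphism in $\wt{\cT}$ is an isomorphism. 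By contrast, you work entirely in the ambient topos $\cE=\Shv(\cC,J)$: you extract from Lemma~\ref{lem:Galoiscovering} and the $E$-category property that each $f_i$ is the quotient by a \emph{free} $G_i$-action and hence an effective epimorphism with $F_i\times_{F'_i}F_i\cong\coprod_{g}F_i$, and then run a clean mono/epi argument using only universality of coproducts, stability of effective epimorphisms under pullback, strictness of the initial object, and ``mono $+$ epi $\Rightarrow$ iso'' in a topos. Your approach avoids both the reduction to $\wt{\cC^m}$ and the connectedness/$\pi_0$ bookkeeping, and it does not need Proposition~\ref{prop:FC_product} or Lemma~\ref{lem:mono}; the paper's approach, on the other hand, stays inside the combinatorics of $\wt{\cC}$, $\wt{\cFC}$ and $\wt{\cT}$ that the rest of the chapter develops, so it illustrates those tools at work. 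Your alternative via the fibre functor $\omega$ is also valid under the grid hypothesis and is close in spirit to how the paper handles related compatibilities (cf.\ Lemma~\ref{lem:FC_product_isom}).
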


\begin{proof}
We may assume that $\wt{\cC}=\wt{\cC^m}$.
It follows from Proposition \ref{prop:FC_product}
that the fiber product $F'_1 \times_{F'_2} F_2$
exists in $\wt{\cFC}$ and the first projection
morphism $\pr_1 \colon  F'_1 \times_{F'_2} F_2 \to F'_1$
belongs to $\wt{\cFT}$.
Hence it follows from Lemma \ref{lem:tilde semi-localizing}
that the morphism $h\colon F_1 \to
F'_1 \times_{F'_2} F_2$
induced by the commutative diagram \eqref{eq:diagram_last}
belongs to $\wt{\cFT}$.

We claim that $F'_1 \times_{F'_2} F_2$ is a connected object of
$\wt{\cFC}$.
Let us consider the homomorphism
$\psi\colon G_2 \to 
\Aut_{F'_1}(F'_1 \times_{F'_2} F_2)$
of groups that sends $g \in G_2$
to $\id_{F'_1} \times g$.
It can be checked easily that the the
homomorphism $\psi$ is injective and that
the morphism $\pr_1$ is a Galois covering with
the Galois group $\psi(G_2)$.
It follows from Lemma \ref{lem:Galcov},
that the maps $\pi_0(f_1)\colon \pi_0(F_1) \to \pi_0(F'_1)$ 
and $\pi_0(\pr_1)\colon \pi_0(F'_1 \times_{F'_2} F_2)
\to \pi_0(F'_1)$ induce bijections
$\quot{\pi_0(F_1)}{G_1} \xto{\cong} \pi_0(F'_1)$
and $\quot{\pi_0(F'_1 \times_{F'_2} F_2)}{\psi(G_2)}
\xto{\cong} \pi_0(F'_1)$.
By assumption,
we have $h \circ g = \psi(\phi(g)) \circ h$
for any $g \in G_1$,
Hence the map
$\quot{\pi_0(F_1)}{G_1} \to
\quot{\pi_0(F'_1 \times_{F'_2} F_2)}{\psi(\phi(G_1))}$
induced by the map $\pi_0(h)\colon \pi_0(F_1) \to
\pi_0(F'_1 \times_{F'_2} F_2)$ is bijective.
This shows that the map $\pi_0(h)$ is surjective,
which proves the claim.

Let us regard $F'_1 \times_{F'_2} F_2$ as an object of $\wt{\cC}$.
We apply Lemma \ref{lem:mono} to the morphism $h$
in $\wt{\cC}$ which belongs to $\wt{\cT}$.
To prove that $h$ is an isomorphism,
it suffices to show that $h$
is a monomorphism in $\wt{\cC}$.

Let $G$ be an object in $\wt{\cC}$.
To prove that $h$ is a monomorphism,
it suffices to show that the map
$\varphi\colon  \Hom_{\wt{\cC}}(G,F_1) \to
\Hom_{\cCotu{d}}(G,F'_1)
\times_{\Hom_{\wt{\cC}}(G,F'_2)}
\Hom_{\wt{\cC}}(G,F_2)$
induced by the commutative 
diagram \eqref{eq:diagram_last} is injective.
Since both $f_1$ and $\pr_1$ are Galois coverings,
the maps 
$\Hom_{\wt{\cC}}(G,F_1) \to \Hom_{\wt{\cC}}(G,F'_1)$
and
$\Hom_{\wt{\cC}}(G,F'_1 \times_{F'_2} F_2) 
\to \Hom_{\wt{\cC}}(G,F'_1)$
given by the compositions with $f_1$ and $\pr_1$,
respectively, are pseudo $G_1$-torsors.
Here the group $G_1$ acts on the set
$\Hom_{\wt{\cC}}(G,F'_1 \times_{F'_2} F_2)$
via the composite $\psi \circ \phi$.
Since $f_1 = \pr_1 \circ h$, this shows
that the map $\varphi$ is injective.
This completes the proof.
\end{proof}

\subsection{On the topology $\iota'_* J$} \label{sec:iota'}
Let $\iota'$ denote the composite $\cC \xto{\iota} \wt{\cC}
\inj \wt{\cFC}$. It follows from Corollary \ref{cor:5563} that
the functor $\Presh(\wt{\cFC}) \to \Presh(\cC)$ given by the
composition with $\iota'$ induces an equivalence
\begin{equation} \label{eq:iota'}
\Shv(\wt{\cFC},\iota'_* J) \xto{\cong} \Shv(\cC,J)
\end{equation}
of categories.

\begin{prop} \label{prop:cFT_main}
Let $F$ be a presheaf on $\wt{\cFC}$.
Then $F$ is a sheaf with respect to $\iota'_* J$
if and only if its restriction to $\wt{\cC}$ is a sheaf
with respect to $\iota_* J$ and Conditions (1) and (2)
in Section \ref{sec:cond12} are satisfied.
\end{prop}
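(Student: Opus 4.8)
The plan is to deduce the proposition directly from Theorem \ref{thm:sheaf_F-conn}, applied to the $F$-category $\wt{\cFC}$. First I would recall from Corollary \ref{cor:FC is an F-category} that $\wt{\cFC}$ is an $F$-category whose connected objects are exactly those isomorphic in $\wt{\cFC}$ to an object of $\wt{\cC}$; in particular the inclusion $\wt{\cC} \inj \wt{\cFC}$ identifies $\wt{\cC}$, up to equivalence, with the full subcategory $\wt{\cFC}^{(0)}$ of connected objects. Writing $\jmath:\wt{\cC} \inj \wt{\cFC}$ for this inclusion, so that $\iota' = \jmath \circ \iota$, I would equip $\wt{\cC} \simeq \wt{\cFC}^{(0)}$ with the Grothendieck topology $\iota_* J$ and apply Theorem \ref{thm:sheaf_F-conn} with the roles of $\cF$, $\cF^{(0)}$ and $J$ played by $\wt{\cFC}$, $\wt{\cC}$ and $\iota_* J$. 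This gives: a presheaf $F$ on $\wt{\cFC}$ is a sheaf for $\jmath_*(\iota_* J)$ if and only if its restriction to $\wt{\cC}$ is a sheaf for $\iota_* J$ and Conditions (1) and (2) in Section \ref{sec:cond12} hold.

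It then remains to identify the pushforward topology $\jmath_*(\iota_* J)$ on $\wt{\cFC}$ with $\iota'_* J$. This is immediate from the transitivity of pushforwards: since $\iota' = \jmath \circ \iota$, Lemma \ref{lem:pushforward_transitivity} gives $\iota'_* J = (\jmath \circ \iota)_* J = \jmath_*(\iota_* J)$. Substituting this into the conclusion of the previous paragraph yields exactly the statement of the proposition.

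The step I expect to demand the most care --- though it remains routine --- is the identification of $\wt{\cC}$ with $\wt{\cFC}^{(0)}$: Theorem \ref{thm:sheaf_F-conn} is stated for the honest subcategory $\cF^{(0)}$ of connected objects, whereas our hypothesis is phrased in terms of $\wt{\cC}$, which a priori is only equivalent to $\wt{\cFC}^{(0)}$ rather than equal to it. I would dispose of this by factoring $\jmath$ as $\wt{\cC} \xto{k} \wt{\cFC}^{(0)} \xto{\jmath_0} \wt{\cFC}$ with $k$ an equivalence of categories; transporting the topology $\iota_* J$ along $k$ to $k_*(\iota_* J)$ on $\wt{\cFC}^{(0)}$ turns ``$F|_{\wt{\cC}}$ a sheaf for $\iota_* J$'' into ``$F|_{\wt{\cFC}^{(0)}}$ a sheaf for $k_*(\iota_* J)$'', and another application of Lemma \ref{lem:pushforward_transitivity} gives $(\jmath_0)_*(k_*(\iota_* J)) = \jmath_*(\iota_* J) = \iota'_* J$; so Theorem \ref{thm:sheaf_F-conn} applies with no change. (If $\wt{\cC}$ has been chosen closed under isomorphism in $\Shv(\cC,J)$, as the discussion of $\wt{\cC}^\str$ in Section \ref{sec:Ctil} allows, then $\wt{\cC} = \wt{\cFC}^{(0)}$ and this bookkeeping is unnecessary.) Beyond invoking Theorem \ref{thm:sheaf_F-conn}, Corollary \ref{cor:FC is an F-category} and Lemma \ref{lem:pushforward_transitivity}, no further argument is needed.
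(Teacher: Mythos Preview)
Your proposal is correct and follows essentially the same route as the paper's proof: both invoke Theorem \ref{thm:sheaf_F-conn} for the $F$-category $\wt{\cFC}$ and use Lemma \ref{lem:pushforward_transitivity} to identify $\iota'_* J = \jmath_*(\iota_* J)$. The paper's version additionally names $\wt{J} = J_{\wt{\cT}}$ and cites Lemma \ref{lem:compare_JT} for $\wt{J} = \iota_* J$, but this is a cosmetic detour; your extra bookkeeping about $\wt{\cC}$ versus $\wt{\cFC}^{(0)}$ is a point the paper simply glosses over.
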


\begin{proof}
Let $\wt{J} = J_{\wt{\cT}}$ and let 
$\jmath\colon \wt{\cC} \inj \wt{\cFC}$ denote the natural inclusion.
By Lemma \ref{lem:compare_JT} we have $\wt{J} = \imath_* J$.
Hence it follows from Lemma \ref{lem:pushforward_transitivity}
that we have $\iota'_* J = \jmath_* \wt{J}$.
Hence the claim follows from Theorem \ref{thm:sheaf_F-conn}
\end{proof}

One can construct a quasi-inverse 
$\nu\colon  \Shv(\cC,J) \to \Shv(\wt{\cFC},\iota'_* J)$ 
to the functor \eqref{eq:iota'}
explicitly follows: Let $F$ be a sheaf on $(\cC,J)$,
then for an object $\coprod_{i\in I} \quot{X_i}{H_i}$
of $\wt{\cFC}$, we set
$$
\nu(F)(\coprod_{i\in I} \quot{X_i}{H_i})
= \prod_{i \in I} F(X_i)^{H_i}.
$$
It is then easy to check that the assignment 
of $\nu(F)(\coprod_{i\in I} \quot{X_i}{H_i})$ to each
object $\coprod_{i\in I} \quot{X_i}{H_i}$ of $\wt{\cFC}$
yields a sheaf $\nu(F)$, well-defined and unique up to 
canonical isomorphisms, on $\wt{\cFC}$.
By sending $\nu(F)$ to any sheaf $F$ on $(\cC,J)$, we obtain
a functor $\nu$.
It is straightforward to check that $\nu$ is a quasi-inverse
to the functor \eqref{eq:iota'}.

\subsection{A complement on the topology $\iota'_* J$}
\label{sec:cFT*}
Let $\wt{\cFT}^* \subset \wt{\cFT}$ denote the subset of
morphisms $f$ which belongs to $\wt{\cFT}$ such that
$\pi_0(f)$ is surjective.

\begin{prop} \label{prop:cFCot_enoughGalois}
Suppose that $Aut_Y(X)$ is a finite group for any 
morphism $f\colon X \to Y$ of $\wt{\cC}$ which belongs to $\wt{\cC}$.
Then $\wt{\cFT}^*$ has enough Galois coverings in the following sense.
For any morphism $f\colon F_1 \to F_2$ in $\wt{\cFC}$ which belongs to 
$\wt{\cFT}^*$, there exists a morphism 
$g\colon F_0 \to F_1$ in $\wt{\cFC}$ which belongs to $\wt{\cFT}^*$
such that the composite $f \circ g$ is a Galois covering in 
$\wt{\cFC}$ in the sense of Section \ref{sec:Galois_F}.
\end{prop}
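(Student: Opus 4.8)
The plan is to reduce the statement about the $F$-category $\wt{\cFC}$ to the known fact (Lemma~\ref{lem:imath_enough_Galois}) that $\cT(\imath_*J) = \cT(\wt{\cT})$ has enough Galois coverings on $\wt{\cC}$, using the component-wise description of morphisms in an $F$-category from Section~\ref{sec:pi_0} and the criterion for Galois coverings in Lemma~\ref{lem:Galcov}. First I would fix a morphism $f:F_1 \to F_2$ in $\wt{\cFC}$ belonging to $\wt{\cFT}^*$, so that $\pi_0(f):\pi_0(F_1)\to\pi_0(F_2)$ is surjective and each component $f_i:F_{1,i}\to F_{2,\pi_0(f)(i)}$ (regarded as a morphism in $\wt{\cC}$) belongs to $\wt{\cT}$. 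Writing $F_2 = \coprod_{j\in\pi_0(F_2)} F_{2,j}$, for each $j$ I would look at the fibre $S_j = \pi_0(f)^{-1}(j)$, which is nonempty by surjectivity, pick a representative $i(j)\in S_j$, and apply Lemma~\ref{lem:imath_enough_Galois} to the component $f_{i(j)}:F_{1,i(j)}\to F_{2,j}$ in $\wt{\cC}$: there is a morphism $g_j:G_j \to F_{1,i(j)}$ in $\wt{\cC}$ belonging to $\wt{\cT}$ with $g_j$ composed with $f_{i(j)}$ a Galois covering in $\wt{\cC}$ (hence in $\wt{\cFC}$, by Lemma~\ref{lem:i_Galois} or directly).

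The point where I expect the work to be is assembling these component-wise Galois coverings into a single $g:F_0\to F_1$ in $\wt{\cFC}$ whose composite with $f$ is Galois in the sense of Section~\ref{sec:Galois_F}, i.e.\ with respect to a suitable homomorphism $\rho:\Gamma\to\Aut_{F_2}(F_0)$. The natural candidate for $F_0$ is obtained by taking, for each $i\in\pi_0(F_1)$, a copy of the object $G_{\pi_0(f)(i)}$ produced above (so that $\pi_0(F_0) = \pi_0(F_1)$ and the map $\pi_0(F_0)\to\pi_0(F_1)$ is the identity, which makes $g\in\wt{\cFT}^*$ automatic), with component $g_i:G_{\pi_0(f)(i)}\to F_{1,i}$ suitably chosen so that $g_i$ composed with $f_i$ is the fixed Galois covering over $F_{2,\pi_0(f)(i)}$; this uses that two objects of $\wt{\cC}$ with a morphism in $\wt{\cT}$ to a common $F_{2,j}$ can be dominated, via Lemma~\ref{lem:any representative} and the $Y$-site axioms, so one can arrange all components of a given fibre to factor through the same Galois covering $G_j$. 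Then $h := g$ composed with $f$ has all components Galois coverings over the various $F_{2,j}$ with a common Galois group $H_j := \Aut_{F_{2,j}}(G_j)$, and I must exhibit a group $\Gamma$ and $\rho:\Gamma\to\Aut_{F_2}(F_0)$ witnessing that $h$ is a Galois covering. Here I would take $\Gamma = \prod_{j\in\pi_0(F_2)} (H_j \wr \Sigma_{S_j})$, the product over $j$ of the wreath products of $H_j$ by the symmetric group on the fibre $S_j$, with $\rho$ acting on $F_0 = \coprod_{j}\coprod_{i\in S_j} G_j$ by permuting the $S_j$-indexed copies of $G_j$ and acting through $H_j$ on each copy; this is exactly the shape of Galois group predicted by Lemma~\ref{lem:Galcov}, whose hypotheses (1) and (2) I would verify using that each $G_j \to F_{2,j}$ is Galois with group $H_j$ and that $\Gamma$ acts transitively with stabilizer $H_j$ on the $S_j$-orbit.

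To finish I would check the defining property of a Galois covering from Section~\ref{sec:Galois_F}: for every \emph{connected} object $Z$ of $\wt{\cFC}$ — equivalently, by Corollary~\ref{cor:FC is an F-category}, an object of $\wt{\cC}$ — the map $\Hom_{\wt{\cFC}}(Z,F_0)\to\Hom_{\wt{\cFC}}(Z,F_2)$ given by composition with $h$ is a pseudo $\Gamma$-torsor. Using Lemma~\ref{lem:fin_conn}, a morphism $Z\to F_0$ is a choice of $j\in\pi_0(F_2)$, an index $i\in S_j$, and a morphism $Z\to G_j$ in $\wt{\cC}$; similarly $Z\to F_2$ is a choice of $j$ and a morphism $Z\to F_{2,j}$; and under this description the fibres of composition with $h$ over a morphism $Z\to F_{2,j}$ are $S_j\times(\text{the }H_j\text{-torsor of lifts }Z\to G_j)$, which is precisely a pseudo-torsor under $H_j\wr\Sigma_{S_j}$ and hence under $\Gamma$. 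The only genuine subtlety beyond bookkeeping is making the choices of the $g_i$ coherent within each fibre — that all components over a fixed $j$ factor through one common $G_j$ compatibly with the $\Sigma_{S_j}$-action — and this is where I would spend the most care, invoking the semi-cofiltered/enough-Galois-coverings properties of $\cC(\cT(J))$ and the argument pattern already used in the proof that $\wt{\cC}^{\dagger *}(\cT((\imath^\str)_*J))$ is $\Lambda$-connected; the rest is a direct unwinding of the $F$-category formalism.
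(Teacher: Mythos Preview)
Your setup --- decomposing into components, using that $\wt{\cT}$ has enough Galois coverings to produce a single $G_j \to F_{2,j}$ dominating all the $f_i$ for $i \in S_j := \pi_0(f)^{-1}(j)$ --- is correct and matches the paper (which cites \cite[Cor.~4.3.2]{Grids} for exactly this step). The gap is in your assembly of $F_0$ and your choice of $\Gamma$.

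Your proposed $F_0 = \coprod_j \coprod_{i \in S_j} G_j$ gives, over a fixed $Z \to F_{2,j}$, a fibre of size $|S_j| \cdot |H_j|$ (you compute this correctly). But $H_j \wr \Sigma_{S_j}$ has order $|H_j|^{|S_j|} \cdot |S_j|!$, so the fibre is \emph{not} a torsor under it --- your claim ``which is precisely a pseudo-torsor under $H_j\wr\Sigma_{S_j}$'' is a cardinality error. More seriously, the fibre size $|S_j| \cdot |H_j|$ depends on $j$, so with your $F_0$ there is no single group $\Gamma$ whatsoever making $h$ a Galois covering in the sense of Section~\ref{sec:Galois_F}: all nonempty fibres of $\Hom(Z,F_0)\to\Hom(Z,F_2)$ must have the same cardinality $|\Gamma|$. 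The paper's fix is to \emph{inflate}: choose for each $j$ an auxiliary finite group $I_j$ of order $|S_j|$, set $H = \prod_j (H_j \times I_j)$, and replace the $j$-th block of $F_0$ by $\coprod_{H/H_j} G_j$ (i.e.\ $|H|/|H_j|$ copies of $G_j$), so that every fibre now has size $|H|$ and $H$ acts simply transitively by translation on the index set and via $H_j$ on each $G_j$. After inflating, the map down to $F_1$ still lies in $\wt{\cFT}^*$, and the verification via Lemma~\ref{lem:Galcov} goes through.
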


\begin{proof}
For $j \in \{1,2\}$ and for $i \in \pi_0(F_j)$, let
$F_{j,i}$ denote the component at $i$ of $F_j$.
For $i \in \pi_0(F_1)$, let $f_i \colon  F_{1,i} \to F_{2,\pi_0(f)(i)}$
denote the component at $i$ of $f$.
Let $j \in \pi_0(F_2)$.
Since $\wt{\cT}$ has enough Galois coverings, it follows
from \cite[Cor.\ 4.3.2]{Grids} that there exists
an object $G_j$ of $\wt{\cC}$ and a morphism
$g_i \colon  G_j \to F_{1,i}$ in $\wt{\cC}$ which belongs to
$\wt{\cT}$ for each $i \in \pi_0(f)^{-1}(j)$
such that $h_j = f_i \circ g_i$ does not depend on $i$ and
$g_i$ and $h_j$ are Galois coverings in $\wt{\cC}$.
Let $H_j =\Aut_{F_{2,j}}(G_j)$ and fix a finite group
$I_j$ whose order is equal to the cardinality of 
the set $\pi_0(f)^{-1}(j)$. Fix a bijection
$\psi_j \colon  I_j \xto{\cong} \pi_0(f)^{-1}(j)$.
Set $F_{1,(j)} = \coprod_{i \in \pi_0(f)^{-1}(j)} F_{1,i}$
and $G_{(j)} = \coprod_{s \in H_j} G_j$.
Let $f_{(j)} \colon  F_{1,(j)} \to F_{2,j}$ denote the
morphism induced by $f$.
Let $q_j \colon  G_{(j)} \to F_{1,(j)}$ denote the morphism
in $\wt{\cFC}$ such that $\pi_0(q_j) = \psi_j$
and that for any $s \in H_j$, the component at $s$ of
$q_j$ is equal to $g_{\psi_j(s)}$.
Let $H^j = \prod_{k \in \pi_0(F_2) \setminus \{j\}} 
(H_k \times I_k)$ and set 
$G^{(j)} = \coprod_{t \in H^j} G_{(j)}$.
Set $q^j \colon  G^{(j)} \to F_{1,(j)}$ to be
$\prod_{t \in H^j} q_j$.
We let the group $H = \prod_{j \in \pi_0(F_2)}
(H_j \times I_j)$ act on $G^{(j)}$
over $F_{2,j}$ from the left via the bijections $\psi_j$.
Let $\rho_j \colon  H \to \Aut_{F_{2,j}}(G^{(j)})$
denote the homomorphism given by this action.
Then it is easy to see that $f_{(j)} \circ q^j$ 
is a Galois covering 
in $\wt{\cFC}$ with respect to $\rho_j$.
Let $G = \coprod_{j \in \pi_0(F_2)}$ and
let $q = \coprod_{j \in \pi_0(F_2)} q_j \colon  G \to F_1$.
By construction $q$ belongs to $\wt{\cFT}^*$.
Then $\rho_j$ for each $j \in \pi_0(F_2)$ gives a natural
action of the group $H$ on $G$ over $F_2$ and
$f \circ q$ is a Galois covering with respect to this action.
This shows that $\wt{\cFT}^*$ has enough Galois coverings.
\end{proof}

\begin{lem}\label{lem:cFT semi-localizing}
The set $\wt{\cFT}^*$ is a semi-localizing collection of morphisms in $\wt{\cFC}$
in the sense of \cite[Def.\ 2.3.1]{Grids}.
Moreover, we have $\wh{\wt{\cFT}^*} = \wt{\cFT}^*$.
\end{lem}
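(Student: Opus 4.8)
The plan is to mirror the proof of Lemma~\ref{lem:tilde semi-localizing}, reducing every assertion about $\wt{\cFT}^*$ to the corresponding componentwise assertion about $\wt{\cT}$ together with elementary facts about surjections of finite sets. Recall that $\wt{\cT}$ is a semi-localizing collection in $\wt{\cC}$ and that $(\wt{\cC},J_{\wt{\cT}})$ is a $B$-site, so in particular $a\circ b\in\wt{\cT}$ if and only if $a\in\wt{\cT}$ and $b\in\wt{\cT}$ (Lemma~\ref{lem:tilde semi-localizing} and Lemma~\ref{lem:tilde cond 3}); for a morphism $f$ of $\wt{\cFC}$ and $i$ in the set $\pi_0$ of its source, $f_i$ denotes the component of $f$ at $i$, an arrow of $\wt{\cC}$. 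Every identity $\id_F$ lies in $\wt{\cFT}^*$ since $\pi_0(\id_F)$ is bijective and each component is $\id_{F_i}\in\wt{\cT}$; likewise every isomorphism of $\wt{\cFC}$ lies in $\wt{\cFT}^*$. If $f:F_1\to F_2$ and $g:F_2\to F_3$ lie in $\wt{\cFT}^*$, then $\pi_0(g\circ f)=\pi_0(g)\circ\pi_0(f)$ is a composite of surjections, hence surjective, and $(g\circ f)_i=g_{\pi_0(f)(i)}\circ f_i$ is a composite of arrows of $\wt{\cT}$, hence lies in $\wt{\cT}$; so $g\circ f\in\wt{\cFT}^*$.

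Next I would verify Property~(3) of a semi-localizing collection. Given $f:F_1\to F$ in $\wt{\cFT}^*$ and an arbitrary $g:F_2\to F$ in $\wt{\cFC}$, I construct a weak pullback square componentwise. For each pair $(i,l)\in\pi_0(F_1)\times\pi_0(F_2)$ with $\pi_0(f)(i)=\pi_0(g)(l)$, apply Property~(3) for $\wt{\cT}$ to $f_i\in\wt{\cT}$ and $g_l$ to obtain an object $W_{i,l}$ of $\wt{\cC}$ with a commuting square consisting of arrows $W_{i,l}\to F_{1,i}$ and $W_{i,l}\to F_{2,l}$ over $F$, the leg $W_{i,l}\to F_{2,l}$ belonging to $\wt{\cT}$. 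Set $F'=\coprod_{(i,l)}W_{i,l}$, a finite coproduct and hence an object of $\wt{\cFC}$, and let $F'\to F_1$ and $F'\to F_2$ be the maps induced by the legs above. They form a commuting square over $F$, and $\pi_0(F'\to F_2)$, which sends $(i,l)$ to $l$, is surjective because $\pi_0(f)$ is; since all its components lie in $\wt{\cT}$, the map $F'\to F_2$ belongs to $\wt{\cFT}^*$. This establishes Property~(3), so $\wt{\cFT}^*$ is semi-localizing.

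For the ``moreover'', suppose $h:F_2\to F_3$ satisfies $h\circ k\in\wt{\cFT}^*$ for some $k:F_1\to F_2$ in $\wt{\cFT}^*$; I must show $h\in\wt{\cFT}^*$. Surjectivity of $\pi_0(h)\circ\pi_0(k)$ forces $\pi_0(h)$ surjective. Given $j\in\pi_0(F_2)$, choose $i$ with $\pi_0(k)(i)=j$, which is possible precisely because $\pi_0(k)$ is surjective --- this is where the surjectivity built into $\wt{\cFT}^*$, as opposed to $\wt{\cFT}$ (cf. the remark at the end of Section~\ref{sec:wtcFC}), is used. Then $h_j\circ k_i=(h\circ k)_i\in\wt{\cT}$, whence $h_j\in\wt{\cT}$ because $(\wt{\cC},J_{\wt{\cT}})$ is a $B$-site (Lemma~\ref{lem:tilde cond 3}). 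Thus every component of $h$ lies in $\wt{\cT}$ and $\pi_0(h)$ is surjective, i.e. $h\in\wt{\cFT}^*$, which proves $\wh{\wt{\cFT}^*}=\wt{\cFT}^*$.

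The only step requiring genuine care --- and hence the main obstacle --- is the bookkeeping in Property~(3): one must confirm that the individual squares produced over the various components of $F$ by Property~(3) in $\wt{\cC}$ really assemble into a single commuting square over $F$ in $\wt{\cFC}$. This amounts to unwinding the description of morphisms in an $F$-category in terms of $\pi_0$ and components (Lemma~\ref{lem:fin_conn}), and to the observation that the admissible pairs $(i,l)$ exhaust $\pi_0(F_2)$ exactly because $\pi_0(f)$ is surjective. No finiteness hypothesis on $(\cC,J)$ is needed, since every index set occurring is finite by the definition of $\wt{\cFC}$.
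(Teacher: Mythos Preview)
Your proof is correct and follows essentially the same approach as the paper: reduce everything to componentwise statements about $\wt{\cT}$ together with surjectivity of $\pi_0$. The only cosmetic difference is in Property~(3), where the paper indexes its object $Z$ by $\pi_0(Y_2)$ alone (choosing a single preimage $j_i \in \pi_0(Y_1)$ for each $i \in \pi_0(Y_2)$), whereas you index $F'$ by the full fiber product $\pi_0(F_1)\times_{\pi_0(F)}\pi_0(F_2)$; both constructions work and yield the same conclusion. Your explicit verification of $\wh{\wt{\cFT}^*}=\wt{\cFT}^*$ is more detailed than the paper's ``one can check easily,'' and your remark that surjectivity of $\pi_0(k)$ is exactly what is needed here (and what distinguishes $\wt{\cFT}^*$ from $\wt{\cFT}$) is apt.
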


\begin{proof}
Let $Y_1 \xto{f_1} X \xleftarrow{f_2} Y_2$ be a diagram in $\wt{\cFC}$.
Suppose that $f_1$ belongs to $\wt{\cFT}^*$.
For each $i \in \pi_0(Y_2)$, set $k_i = \pi_0(f_2)(i)$.
Let us choose an element $j_i \in \pi_0(Y_1)$
satisfying $\pi_0(f_1)(j_i) = k_i$. 
Let $Y_{1,j_i}$ (\resp $Y_{2,i}$) denote the component at $j_i$
(\resp $i$) of $Y_1$ (\resp $Y_2$).
It follows from Lemma \ref{lem:tilde semi-localizing} that
$\wt{\cT}$ is semi-localizing. Hence for each $i \in \pi_0(Y_2)$,
there exists a diagram $Y_{1,j_i} 
\xleftarrow{g_{1,i}} Z_i \xto{g_{2,i}} Y_{2,i}$
in $\wt{\cC}$ with $g_{2,i} \in \wt{\cT}$ satisfying 
$f_{1,j_i} \circ g_{1,i} = f_{2,i} \circ g_{2,i}$.
We set $Z = \coprod_{i \in \pi_0(Y_2)} Z_i$. 
For $j=1,2$ let $g_j \colon  Z \to Y_j$ denote the morphism in $\wt{\cFC}$
whose component at $i \in \pi_0(Y_2)$ is given by $g_{j,i}$.
Then $g_2$ belongs to $\wt{\cFT}^*$ 
and we have $f_1 \circ g_1 = f_2 \circ g_2$.
This proves that $\wt{\cFT}^*$ is semi-localizing.
One can check easily that $\wh{\wt{\cFT}^*} = \wt{\cFT}^*$.
\end{proof}

By Lemma \ref{lem:cFT semi-localizing}, one can consider the $A$-topology
$J_{\wt{\cFT}^*}$ on $\wt{\cFC}$ given by $\wt{\cFT}^*$. 
This Grothendieck topology on $\wt{\cFC}$ is coarser 
than $\iota'_* J$, and does not play an important role in the sequel. 
Proposition \ref{prop:cFT_main} implies that $\iota'_* J$ is the Grothendieck
topology generated by the covering families which are either singleton sets in
$\wt{\cFC}$ or of the form
$\{ X_i \to X \}_{i \in \pi_0(X)}$ for some object $X$ of $\wt{\cFC}$.
In particular a presheaf $F$ on $(\wt{\cFC},\iota'_* J)$ is a sheaf if and only if it is a sheaf on $(\wt{\cFC},J_{\wt{\cFT}^*})$ and Conditions (1) and (2)
in Section \ref{sec:cond12} are satisfied.

\section{Degree} \label{sec:degree}
In this section, we assume certain finiteness condition on the 
hom sets of the category $\cC$.   For example, all the hom sets
in $\cC$ are finite.    Then we have the notion of the degree 
of a morphism.   The aim of this section
is to study this notion.
The degree of a morphism
is used in the definition (Definition~\ref{def:transfer}) 
of presheaves with transfers.

\subsection{}
Let $(\cC,J)$ be a $Y$-site. We assume that
for any diagram $X \xto{f} Z \xleftarrow{g} Y$ in $\cC$
with $f,g$ in $\cT = \cT(J)$, there exist only finitely many
morphisms from $X$ to $Y$ over $Z$.

Let $f\colon X \to Y$ be a morphism in $\cT$.
Since $\cT$ has enough Galois coverings, there exists
a morphism $g\colon  Z \to X$ in $\cT$ such that the composite
$f \circ g$ is a Galois covering.
We choose such a morphism $g$ and consider the
diagram $Z \xto{f\circ g} Y \xleftarrow{f} X$.
We define the degree $\deg f$ of $f$ to be the number of 
the morphisms from $Z$ to $X$ over $Y$.
It follows from Lemma 4.1.3 of \cite{Grids} that
$g$ is also a Galois covering.
It follows from Lemma 4.2.3 (2) of \cite{Grids} that
the group $\Aut_Y(Z)$ acts transitively on $\Hom_Y(Z,X)$
and the stabilizer of $g$ is equal to $\Aut_X(Z)$.
Hence by definition, we have
\begin{equation} \label{eq:deg_defn}
\deg f = \sharp \Aut_Y(Z)/ \sharp \Aut_X(Z).
\end{equation}
We claim that the degree of $f$ does not depend on the
choice of $g$.
Let $g'\colon Z' \to X$ be another morphism in $\cT$ such
that $f \circ g'$ is a Galois covering.
To prove the claim, we may assume that there exists
a morphism $h\colon Z' \to Z$ over $Y$.
Since it follows from Lemma 4.2.3 (2) of \cite{Grids}
that $\Aut_Y(Z')$ acts transitively on $\Hom_Y(Z',X)$, 
we may assume, by replacing $h$ with
its composite with a suitable element in $\Aut_Y(Z')$,
that the morphism $h$ is over $X$.
By Lemma 4.2.6 (4) of \cite{Grids} it follows that, we have
short exact sequences:
$$
1 \to \Aut_Z(Z') \to \Aut_X(Z') \to \Aut_X(Z) \to 1
$$
and
$$
1 \to \Aut_Z(Z') \to \Aut_Y(Z') \to \Aut_Y(Z) \to 1.
$$
From this we have
$$
\sharp \Aut_X(Z') / \sharp  \Aut_X(Z)
= \sharp \Aut_Z(Z') =
\sharp \Aut_Y(Z') / \sharp \Aut_Y(Z).
$$
Hence $\sharp \Aut_Y(Z') / \sharp  \Aut_X(Z')
= \sharp \Aut_Y(Z) / \sharp \Aut_X(Z)$.
This shows that the degree of $f$ does not depend on the
choice of $g$.

\begin{lem}\label{lem:degree is multiplicative}
\begin{enumerate}
\item If $f\colon X \to Y$ is an isomorphism in $\cC$, then
we have $\deg f =1$.
\item Let $f\colon X \to Y$ and $g\colon Y \to Z$ be morphisms in $\cC$
which belongs to $\cT$. Then we have 
$\deg g \circ f = \deg f \deg g$.
\end{enumerate}
\end{lem}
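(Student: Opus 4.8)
The plan is to reduce both statements to the explicit formula \eqref{eq:deg_defn} by choosing Galois coverings cleverly. For part (1), I would take $g = \id_X$ in the definition of $\deg f$: since $f$ is an isomorphism it is a Galois covering, hence $f \circ \id_X = f$ is a Galois covering and $\deg f$ equals the number of morphisms $X \to X$ over $Y$ (via $f$). As $f$ is a monomorphism, $\id_X$ is the only such morphism, so $\deg f = 1$; equivalently, both $\Aut_Y(X)$ and $\Aut_X(X)$ are trivial, so \eqref{eq:deg_defn} gives $1$.

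For part (2), the idea is to compute all three degrees with a single common source. Note first that $g \circ f$ belongs to $\cT$ since $\cT$ is closed under composition. Since $\cT$ has enough Galois coverings, I would choose a morphism $h : W \to X$ in $\cT$ such that the composite $(g \circ f) \circ h : W \to Z$ is a Galois covering. Set $u = f \circ h : W \to Y$; then $u \in \cT$, and $g \circ u : W \to Z$ is the Galois covering just chosen, so by Lemma 4.1.3 of \cite{Grids} the morphism $u$ is itself a Galois covering. Thus $h \in \cT$ with $f \circ h = u$ Galois, $u \in \cT$ with $g \circ u$ Galois, and $h \in \cT$ with $(g \circ f) \circ h$ Galois.

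Now I would apply \eqref{eq:deg_defn} three times with $W$ as the source: $\deg(g \circ f) = \sharp \Aut_Z(W) / \sharp \Aut_X(W)$, $\deg f = \sharp \Aut_Y(W) / \sharp \Aut_X(W)$, and $\deg g = \sharp \Aut_Z(W) / \sharp \Aut_Y(W)$, where $\Aut_X(W)$, $\Aut_Y(W)$, $\Aut_Z(W)$ denote the automorphisms of $W$ over $h$, over $u$, over $g \circ u$ respectively, so that $\Aut_X(W) \subseteq \Aut_Y(W) \subseteq \Aut_Z(W)$. Multiplying the last two fractions telescopes to the first, giving $\deg f \cdot \deg g = \deg(g \circ f)$.

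The only point requiring care is that the degree is independent of the chosen Galois covering, but that is exactly what is established immediately before the lemma, so no new argument is needed. Accordingly the main (and rather minor) obstacle is purely organizational: ensuring the $\cT$-membership and Galois-ness hypotheses align so that \eqref{eq:deg_defn} is applicable in each of the three cases, which is handled by the single invocation of Lemma 4.1.3 above.
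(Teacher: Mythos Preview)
Your proposal is correct and follows essentially the same argument as the paper: both choose a single $h:W\to X$ in $\cT$ with $g\circ f\circ h$ Galois and then telescope \eqref{eq:deg_defn}. You are slightly more explicit in invoking Lemma~4.1.3 of \cite{Grids} to ensure $f\circ h$ is Galois (so that $h$ legitimately computes $\deg f$), a step the paper leaves implicit.
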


\begin{proof}
The claim (1) is obvious from the definition of $\deg f$.
We prove the claim (2). Let us choose a morphism
$h\colon  W \to X$ in $\cC$ which belongs to $\cT$ such that
the composite $g \circ f \circ h$ is a Galois covering in $\cC$.
Then by \eqref{eq:deg_defn} we have
$$
\deg g \circ f = \sharp \Aut_Z(W)/ \sharp \Aut_X(W)
= (\sharp \Aut_Z(W)/ \sharp \Aut_Y(W))
\cdot (\sharp \Aut_Y(W)/ \sharp \Aut_X(W))
= \deg g \deg f,
$$
which proves the claim (2).
\end{proof}

\subsubsection{}
Let us consider the site $(\wt{\cC},\iota_*J)$
introduced in Section 6.
By Corollary~\ref{cor:Ctil_Y_site} this is a $Y$-site.
One then has the notions of degree 
both for morphisms in $\cT$ and for those in $\wt{\cT}$.
Lemma \ref{lem:i_Galois} implies that these two notions
of degree are compatible with respect to the functor 
$\iota\colon  \cC \to \wt{\cC}$.
Let us consider the category $\wt{\cFC}$ and the class
$\wt{\cFT}$ of morphisms in $\wt{\cFC}$ introduced 
in Section \ref{sec:wtcFC}.
We can naturally extend the notion of
degree to the morphisms in $\wt{\cFT}$ as follows.
Let $f \colon  X \to Y$ be a morphism in $\wt{\cFT}$.
For $i \in \pi_0(X)$, let $f_i$ denote the component
at $i$ of $f$.
Then the degree of $f$ is the $\Z$-valued function on
$\pi_0(Y)$ whose value at $j \in \pi_0(Y)$
is equal to the sum 
$$
\sum_{\genfrac{}{}{0pt}{}{i \in \pi_0(X) }{\pi_0(f)(i)=j}}
\deg\, f_i.
$$

Let us fix a grid $(\cC_0,\imath_0)$ of $(\cC,J)$ and set
$M=M_\Cip$. 
%
Let $\omega \colon  \Shv(\cC,J) \to (M\text{-set})_\mathrm{sm}$ 
denote the fiber functor, introduced in Section 5.7.3 of \cite{Grids}, 
associated with the grid $(\cC_0,\imath_0)$.

\begin{lem} \label{lem:epi_surj}
Let $f\colon S \to T$ be a morphism of smooth left $M$-sets.
Then $f$ is an epimorphism in $(M\text{-set})_\mathrm{sm}$
if and only if $f$ is surjective as a map of sets.
\end{lem}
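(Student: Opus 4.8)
The plan is to prove the two implications separately; the formal one is ``surjective $\Rightarrow$ epimorphism'', and the substance lies in ``epimorphism $\Rightarrow$ surjective'', which I would establish by contraposition via an explicit quotient construction.

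For the easy direction, suppose $f$ is surjective as a map of sets and let $g_1,g_2\colon T\to U$ be morphisms of smooth left $M$-sets with $g_1\circ f=g_2\circ f$. For $t\in T$ pick $s\in S$ with $f(s)=t$; then $g_1(t)=g_1(f(s))=g_2(f(s))=g_2(t)$, so $g_1=g_2$ and $f$ is an epimorphism. There is no obstacle here.

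For the converse, assume $f$ is not surjective. First I would observe that $T':=f(S)\subseteq T$ is an $M$-stable subset, since $f$ is $M$-equivariant. Then I would form the left $M$-set $U:=(T\sqcup T)/{\sim}$ — the pushout in the category of $M$-sets of the two inclusions $T'\hookrightarrow T$ — where $\sim$ identifies $(t,1)$ with $(t,2)$ for each $t\in T'$ and makes no other identifications (one checks directly that this is already an equivalence relation). Since $T'$ is $M$-stable, $\sim$ is $M$-equivariant, so $M$ acts on $U$ and the quotient map $q\colon T\sqcup T\to U$ is $M$-equivariant; since $T$, hence $T\sqcup T$, is smooth and $q$ is $M$-equivariant, $U$ is again smooth, because each orbit map $m\mapsto mu$ of $U$ is $q$ applied to an orbit map of $T\sqcup T$, hence continuous for the discrete topology on $U$. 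Now let $\iota_1,\iota_2\colon T\to U$ be the morphisms of smooth $M$-sets induced by the two inclusions $T\hookrightarrow T\sqcup T$. Then $\iota_1\circ f=\iota_2\circ f$, both sending $s$ to the common class of $(f(s),1)$ and $(f(s),2)$, which coincide because $f(s)\in T'$; on the other hand, choosing any $t\in T\setminus T'$, the classes of $(t,1)$ and $(t,2)$ are distinct, so $\iota_1\neq\iota_2$. Hence $f$ is not an epimorphism, which is the contrapositive of what we want.

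The only step requiring genuine care is the verification that $U$ is smooth, i.e.\ that smoothness of $M$-sets is inherited by $M$-equivariant quotients; this is immediate from the factorization of orbit maps through $q$, so I do not expect any real obstacle, and the remaining points (that $T'$ is $M$-stable, that $\sim$ is an $M$-equivariant equivalence relation, and that $\iota_1(t)\neq\iota_2(t)$ for $t\notin T'$) are routine.
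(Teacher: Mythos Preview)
Your proof is correct. The easy direction matches the paper's, and for the contrapositive you build the cokernel pair: the pushout $U=T\sqcup_{T'}T$ with its two canonical maps $\iota_1,\iota_2$. The paper instead collapses the image $f(S)$ to a single point, forming $U=T/R$ where $R$ is the equivalence relation generated by identifying all of $f(S)$; it then compares the quotient map $g\colon T\to U$ with the \emph{constant} map $g'$ to the collapsed point. Both are standard witnesses that a non-surjection is not epi, but they differ in one practical respect: the paper's constant map $g'$ is $M$-equivariant only because the collapsed point is an $M$-fixed point, and to have such a point one needs $f(S)\neq\emptyset$, forcing a separate (trivial) treatment of the case $S=\emptyset$. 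Your pushout construction avoids this case split, since when $S=\emptyset$ you simply get $U=T\sqcup T$ and the two inclusions are visibly distinct whenever $T\neq\emptyset$. Your check that quotients of smooth $M$-sets remain smooth is the same point the paper uses implicitly.
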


\begin{proof}
If $f$ is surjective, then $f$ is an epimorphism
since the forgetful functor from $\cD$
to the category of sets is faithful.
Suppose that $f$ is not surjective.
We will prove that $f$ is not an epimorphism.
The claim is clear when $S$ is an empty set.
Let us assume that $S$ is non-empty and choose
an element $s \in S$.
Let $R \subset T \times T$ denote the union
of the diagonal subset $T \subset T \times T$ and
$f(S) \times f(S)$.
Then $R$ is an equivalence relation on the set $T$.
Let $U$ denote the quotient of the set $T$ under
the equivalence relation $R$.
Since $R$ is stable under the diagonal action of
$M_\Cip$ on $T \times T$, the action of $M_\Cip$ on $T$
induces a structure of a smooth left $M_\Cip$-set on $U$. 
Let $g\colon T \to U$ denote the quotient map and let
$g'\colon T \to U$ denote the constant map whose value
is the class of $f(s)$ in $U$.
Then both $g$ and $g'$ are morphisms in $\cD$
and we have $g \circ f = g' \circ f$.
However $g$ is not equal to $g'$ since $f$ is not surjective.
This shows that the map $f$ is not an epimorphism in $\cD$.
\end{proof}

\begin{lem} \label{lem:TJ_surj}
Let $f$ be a morphism in $\wt{\cC}$
which belongs to $\wt{\cT}$.
Then $\omega(f)$ is surjective as a map of sets.
\end{lem}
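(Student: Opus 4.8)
The plan is to reduce the surjectivity of $\omega(f)$ first to the statement that $\omega(f)$ is an epimorphism in $(M\text{-set})_{\mathrm{sm}}$, and then to the statement that $f$ is an epimorphism in $\Shv(\cC,J)$; the latter will turn out to be essentially a reformulation of the definition of the pushforward topology $\iota_* J$.

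First I would invoke Lemma \ref{lem:epi_surj}, which says that a morphism of smooth left $M$-sets is surjective as a map of sets if and only if it is an epimorphism. Hence it suffices to show that $\omega(f)$ is an epimorphism in $(M\text{-set})_{\mathrm{sm}}$. Since $\omega$ is a fiber functor it preserves epimorphisms (in fact, under the finiteness hypothesis in force, it is an equivalence of categories by \cite[Theorem 5.8.1]{Grids}), so it is enough to prove that $f$, regarded via the inclusion $\wt{\cC}\subset\Shv(\cC,J)$ as a morphism of sheaves on $(\cC,J)$, is an epimorphism in $\Shv(\cC,J)$.

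To see this, I would use that $f$ belongs to $\wt{\cT}$. By Lemma \ref{lem:compare_JT} the topology $\iota_* J$ coincides with the $A$-topology $J_{\wt{\cT}}$, and since the sieve $R_f$ on $Y$ generated by $f$ contains the morphism $f\in\wt{\cT}$, it is a covering sieve, i.e. $R_f\in(\iota_*J)(Y)$. Unwinding the description of the pushforward topology from Section \ref{sec:pushforward}, this membership means: for every object $W$ of $\cC$ and every morphism $a:\iota(W)\to Y$ in $\wt{\cC}$, the sieve $\iota_{W,a}^*R_f$ on $W$ lies in $J(W)$. Concretely, writing $\iota(W)=a_J(\frh_\cC(W))=\eps_{\cC,J}(W)$ and viewing $a$ as a section $a\in Y(W)$, the sieve consisting of those $g:U\to W$ in $\cC$ for which $a|_U\in Y(U)$ lies in the image of $f_U:X(U)\to Y(U)$ is a $J$-covering sieve on $W$. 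Since the objects of $\cC$ form a topologically generating family of $\Shv(\cC,J)$, this is exactly the criterion for $f:X\to Y$ to be an epimorphism of sheaves, which completes the chain of reductions.

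The step I expect to require the most care is this last translation: namely, that the local-surjectivity condition extracted from the definition of $\iota_* J$ is precisely the standard criterion for a morphism of sheaves on $(\cC,J)$ to be an epimorphism (a morphism of sheaves is an epimorphism if and only if it is locally surjective, and this may be tested on a generating family of objects). The remaining ingredients — Lemma \ref{lem:epi_surj}, the fact that $\omega$ preserves epimorphisms, and the identification $\iota_* J=J_{\wt{\cT}}$ of Lemma \ref{lem:compare_JT} — are invoked purely formally.
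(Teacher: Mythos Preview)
Your proof is correct and takes a genuinely different route from the paper's. The paper argues by reducing to two special cases: morphisms of the form $\iota(f')$ with $f'\in\cT(J)$, and quotient morphisms $\iota(X)\to\quot{X}{H}$. For the quotient case it appeals to the explicit description of $\quot{X}{H}$ and Lemma~10.1.2 of \cite{Grids}; for the $\iota(f')$ case it uses that $\cC$ is an $E$-category (so $F(f')$ is injective for every sheaf $F$, making $a_J(\frh_\cC(f'))$ an epimorphism), and then the equivalence of Theorem~5.8.1 of \cite{Grids} together with Lemma~\ref{lem:epi_surj}. Your argument avoids the case split entirely: you observe that $R_f$ is an $(\iota_*J)$-covering sieve directly from Lemma~\ref{lem:compare_JT}, and then unwind the definition of the pushforward topology to obtain exactly the local-surjectivity criterion for $f$ to be an epimorphism of sheaves on $(\cC,J)$. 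This is more uniform and conceptually cleaner, since it uses only the abstract machinery (the identification $\iota_*J=J_{\wt{\cT}}$ and the standard epimorphism criterion) rather than any structural feature of the two generating classes of morphisms in $\wt{\cT}$. The paper's approach, on the other hand, makes the surjectivity more concrete in each case and does not require the reader to unpack the pushforward topology.
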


\begin{proof}
By the definition of $\wt{\cT}$, we may and will assume that
$f$ is either of the form $\iota(f')$ for some morphism
$f'\colon  X \to Y$ in $\cC$ which belongs to $\cT(J)$, 
where $\iota \colon  \cC \to \wt{\cC}$
is as in Section \ref{sec:Ctil}, or of the form
$f\colon  \iota(X) \to \quot{X}{H}$ for some object $X$ in $\cC$
and a subgroup $H \subset \Aut_{\cC}(X)$.
For the latter case the claim follows immediately from
the definition of $\quot{X}{H}$ and Lemma 10.1.2 of \cite{Grids}.
From now on we assume that we are in the former case.
We set $f''=\frh_\cC(f')\colon \frh_\cC(X) \to \frh_\cC(Y)$.
Then $F(f')\colon  F(Y) \to F(X)$ is injective for any sheaf 
$F$ on $(\cC,J)$.
Hence $a_J(f'') \colon  a_J(\frh_\cC(X)) \to a_J(\frh_\cC(Y))$ is an 
epimorphism in the category $\Shv(\cC,J)$.
It follows from Theorem 5.8.1 of \cite{Grids} and
Lemma 10.1.2 of \cite{Grids} that 
$\omega(f'')$ is an epimorphism in the category 
of smooth $M$-sets.
By Lemma 10.1.2 of \cite{Grids}, we can identify $\omega(f)$
with $\omega_\Cip(f')$.
Hence it follows from Lemma \ref{lem:epi_surj} that
$\omega(f)$ is surjective as a map of sets.
\end{proof}

\subsubsection{}

\begin{lem} \label{lem:deg_omega}
Let $f\colon X \to Y$ be a morphism in $\wt{\cT}$. Then for any element 
$y$ of the set $\omega(Y)$, the cardinality of its fiber
$\omega(f)^{-1}(y)$ is equal to $\deg f$.
\end{lem}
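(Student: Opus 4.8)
The plan is to reduce to the case of a Galois covering and then read off both quantities from a single cartesian square.

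First I would reduce to Galois coverings. Since $\cT(\iota_* J)=\wt{\cT}$ has enough Galois coverings (Lemma~\ref{lem:imath_enough_Galois}), choose a morphism $g:Z\to X$ in $\wt{\cT}$ such that $h:=f\circ g$ is a Galois covering in $\wt{\cC}$; then $g$ is itself a Galois covering by Lemma 4.1.3 of \cite{Grids}, and $h$ belongs to $\wt{\cT}$ by Lemma~\ref{lem:tilde semi-localizing}. Assuming the assertion is known for Galois coverings, fix $y\in\omega(Y)$. From $\omega(h)=\omega(f)\circ\omega(g)$ we get $\omega(h)^{-1}(y)=\coprod_{x\in\omega(f)^{-1}(y)}\omega(g)^{-1}(x)$, and $\omega(f)$, $\omega(g)$ are surjective (Lemma~\ref{lem:TJ_surj}). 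Using the Galois case for $g$ and for $h$ together with Lemma~\ref{lem:degree is multiplicative},
\[
\deg f\cdot\deg g=\deg h=\sharp\,\omega(h)^{-1}(y)=\sum_{x\in\omega(f)^{-1}(y)}\deg g=\sharp\,\omega(f)^{-1}(y)\cdot\deg g .
\]
The finiteness hypothesis of Section~\ref{sec:degree}, inherited by $(\wt{\cC},\iota_* J)$, makes $\deg g$ a nonzero positive integer, so dividing gives $\sharp\,\omega(f)^{-1}(y)=\deg f$ for every $y$.

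For the Galois case, let $h:X\to Y$ in $\wt{\cT}$ be a Galois covering in $\wt{\cC}$ with group $G=\Aut_Y(X)$. Taking $Z=X$ and $g=\id_X$ in the definition of the degree (legitimate since $h$ is already Galois), formula \eqref{eq:deg_defn} gives $\deg h=\sharp\Aut_Y(X)/\sharp\Aut_X(X)=\sharp G$. Using the finiteness hypothesis so that Proposition~\ref{prop:FC_product} applies, I would pass to $\wt{\cFC^m}$; by the Remark following Proposition~\ref{prop:FC_product} the square
\[
\begin{CD}
\coprod_{g\in G} X @>{\coprod_g g}>> X\\
@V{\coprod_g \id_X}VV @VV{h}V\\
X @>{h}>> Y
\end{CD}
\]
is cartesian in $\wt{\cFC}$. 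Applying the fiber functor $\omega$, which is an equivalence onto the category of smooth left $M$-sets (Theorem~5.8.1 of \cite{Grids}) and hence preserves finite coproducts, and which preserves this fibre product by Lemma~\ref{lem:FC_product_isom}, produces an isomorphism $\coprod_{g\in G}\omega(X)\xto{\cong}\omega(X)\times_{\omega(Y)}\omega(X)$ over $\omega(X)$. Restricting this bijection to the fibres over a point $x\in\omega(X)$ of the two maps to $\omega(X)$ (the fold map on the left, the first projection on the right) gives $\sharp G=\sharp\,\omega(h)^{-1}(\omega(h)(x))$. Since $\omega(h)$ is surjective (Lemma~\ref{lem:TJ_surj}), every $y\in\omega(Y)$ is of the form $\omega(h)(x)$, so $\sharp\,\omega(h)^{-1}(y)=\sharp G=\deg h$ for all $y$, completing the Galois case.

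The main obstacle is not the fibre count or the arithmetic but the bookkeeping in the Galois step: one must verify that a Galois covering in $\wt{\cC}$ remains a Galois covering with the same group once viewed inside $\wt{\cFC^m}$ (so that the cited Remark applies), and that the notion of degree and the functor $\omega$ are compatible with the inclusions $\wt{\cC}\hookrightarrow\wt{\cC^m}\hookrightarrow\wt{\cFC^m}$; these follow from Lemma~\ref{lem:i_Galois}, Lemma~\ref{lem:wtcC_conn}, and the construction of $\omega$, but should be spelled out. One should also record explicitly that the finiteness hypothesis of Section~\ref{sec:degree} forces the relevant automorphism groups, and hence all the degrees appearing above, to be finite.
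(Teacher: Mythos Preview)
Your proof is correct, and the overall skeleton matches the paper: first settle the Galois case, then reduce an arbitrary $f\in\wt{\cT}$ by choosing $g$ with $f\circ g$ Galois and using multiplicativity of degree. Your reduction step is exactly what the paper does, only spelled out more explicitly.

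Where you diverge is in the Galois step. The paper argues directly from the construction of $\omega$ as a filtered colimit over the grid: given two preimages $x,x'\in\omega(f)^{-1}(y)$, it finds an object $Z$ of $\cC_0$ with both $x,x'$ in the image of $X(\iota_0(Z))\to\omega(X)$, uses that $X(\iota_0(Z))\to Y(\iota_0(Z))$ is a pseudo $G$-torsor to produce a unique $g\in G$ with $x'=gx$, and invokes the $E$-category property for injectivity of $X(\iota_0(Z))\to\omega(X)$. Your route is more categorical: you pass to $\wt{\cFC^m}$, invoke the Remark after Proposition~\ref{prop:FC_product} to get the cartesian square, and then push it through $\omega$ via Lemma~\ref{lem:FC_product_isom}. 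Your approach is cleaner once that machinery is in place, but it leans on Proposition~\ref{prop:FC_product}, the unproved Remark characterizing Galois coverings via the cartesian square, and the compatibility of the two notions of Galois covering (in $\wt{\cC}$ versus in $\wt{\cFC}$ in the sense of Section~\ref{sec:Galois_F}); the paper's argument avoids all of this by staying inside $\wt{\cC}$ and working section-by-section. Both are fine; the paper's is more self-contained, yours is more structural.
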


\begin{proof}
First we prove the case where $f$ is a Galois covering.
It follows from Lemma \ref{lem:TJ_surj} that the fiber
$\pi_0(f)^{-1}(y)$ is non-empty.
Let $G=\Aut_Y(X)$ and let $x,x' \in \pi_0(f)^{-1}(y)$.
It suffices to show that there exists a unique element
$g \in G$ satisfying $x' = gx$.
Let us choose an object $Z$ of $\cC_0$ such that
both $x$ and $x'$ belong to the image of
the map $X(\imath_0(Z)) \to \omega(X)$.
Note that $X(\imath_0(Z)) \cong \Hom_{\wt{\cC}}(\iota(Z),X)$.
Since $f$ is a Galois covering, the map 
$X(\imath_0(Z)) \to Y(\imath_0(Z))$ between the sections on 
$\imath_0(Z)$ is a pseudo $G$-torsor.
Hence there exists a unique element
$g \in G$ satisfying $x' = gx$.
It follows from Lemma \ref{lem:tilde E-category} that
the map $X(\imath_0(Z)) \to \omega(X)$ is injective.
This proves the claim when $f$ is a Galois covering.

For general $f$, let us take a morphism $g\colon Z \to X$
in $\wt{\cT}$ such that the composite $h = f \circ g$ is
a Galois covering. Then by applying the claim
for the Galois coverings $g$ and $h$, we obtain the
claim for $f$. This completes the proof.
\end{proof}

\begin{cor} \label{cor:deg_omega_F}
Let $f\colon  F_1 \to F_2$ be a morphism in $\wt{\cFC}$.
For $i \in \pi_0(F_2)$, let $F_{2,i}$ denote the component
of $F_2$ at $i$. Let us identify $\omega(F_2)$ with 
$\coprod_{i \in \pi_0(F_2)} \omega(F_{2,i})$.
Then for any $i \in \pi_0(F_{2,i})$ and for any
$y \in \omega(F_{2,i})$ the degree $\deg(f)(i)$ of $f$ at $i$ is
equal to the cardinality of the fiber 
$\omega(f)^{-1}(y)$.
\end{cor}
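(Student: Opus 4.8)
Corollary~\ref{cor:deg_omega_F} is essentially a componentwise repackaging of Lemma~\ref{lem:deg_omega}, so the plan is to separate the statement into its connected components and apply that lemma on each. Since the degree $\deg(f)$ is defined only for morphisms lying in $\wt{\cFT}$, we take $f\colon F_1\to F_2$ in $\wt{\cFT}$; then, by the definition of $\wt{\cFT}$ in Section~\ref{sec:wtcFC}, for each $k\in\pi_0(F_1)$ the component $f_k\colon F_{1,k}\to F_{2,\pi_0(f)(k)}$ of $f$ is a morphism in $\wt{\cT}$ (regarded as a morphism in $\wt{\cC}$), and by construction $\deg(f)(i)=\sum_{k\in\pi_0(F_1),\,\pi_0(f)(k)=i}\deg f_k$ for every $i\in\pi_0(F_2)$.

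The first point to check is that the fiber functor respects the decomposition into connected components. Since $\omega\colon\Shv(\cC,J)\to(M\text{-set})_\mathrm{sm}$ is an equivalence of categories by Theorem~5.8.1 of \cite{Grids}, it preserves finite coproducts; coproducts in $\wt{\cFC}$ are by construction those of $\Shv(\cC,J)$, and coproducts of smooth left $M$-sets are disjoint unions with componentwise action, so there are canonical identifications $\omega(F_j)\cong\coprod_{k\in\pi_0(F_j)}\omega(F_{j,k})$ for $j=1,2$. Moreover these are compatible with $\omega(f)$: the map $\omega(f)$ carries the summand $\omega(F_{1,k})$ into the summand $\omega(F_{2,\pi_0(f)(k)})$, where it is induced by $\omega(f_k)$. (One could alternatively read this off from the explicit quasi-inverse $\nu$ of Section~\ref{sec:iota'}, or from Lemma~\ref{lem:FC_product_isom}, but the equivalence-preserves-colimits argument is cleanest.)

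Granting this, fix $i\in\pi_0(F_2)$ and $y\in\omega(F_{2,i})$. From the compatibility just recorded,
\[
\omega(f)^{-1}(y)\;=\;\coprod_{\genfrac{}{}{0pt}{}{k\in\pi_0(F_1)}{\pi_0(f)(k)=i}}\omega(f_k)^{-1}(y),
\]
and Lemma~\ref{lem:deg_omega}, applied to the morphism $f_k\in\wt{\cT}$ and the element $y\in\omega(F_{2,i})$, gives $\sharp\,\omega(f_k)^{-1}(y)=\deg f_k$ for each such $k$. Summing over $k$ then yields $\sharp\,\omega(f)^{-1}(y)=\sum_{k\colon\pi_0(f)(k)=i}\deg f_k=\deg(f)(i)$, which is the claim.

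The argument is little more than bookkeeping once Lemma~\ref{lem:deg_omega} is in hand; the only step requiring (routine) care is the second paragraph, namely verifying that $\omega$ is compatible with the $\pi_0$-decomposition of objects of $\wt{\cFC}$ and with the componentwise description of their morphisms. This is where the equivalence-of-categories property of $\omega$ is genuinely used, and it is the place I would write out in full detail.
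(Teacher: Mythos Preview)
Your proof is correct and takes essentially the same approach as the paper, which simply states that the corollary follows immediately from Lemma~\ref{lem:deg_omega}. You have spelled out the bookkeeping that the paper leaves implicit: the compatibility of $\omega$ with finite coproducts and the resulting componentwise decomposition of the fiber $\omega(f)^{-1}(y)$.
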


\begin{proof}
This follows immediately from Lemma \ref{lem:deg_omega}.
\end{proof}

\begin{cor} \label{cor:deg_limit}
Let
$$
\begin{CD}
F'_1 @>>> F_1 \\
@V{f'}VV @VV{f}V \\
F'_2 @>{g}>> F_2
\end{CD}
$$
be a cartesian diagram in $\wt{\cFC}$.

Then for any $i \in \pi_0(F'_2)$, we have
$$
\deg(f')(i) = (\deg(f))(\pi_0(g)(i)).
$$
\end{cor}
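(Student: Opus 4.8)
The plan is to reduce the statement to a purely set-theoretic fact about pullbacks by passing through the fiber functor $\omega = \omega_\Cip$. By Corollary~\ref{cor:deg_omega_F}, for a morphism $\phi$ in $\wt{\cFC}$ and a component index $i$ of its target, the degree $\deg(\phi)(i)$ is the cardinality of the fiber $\omega(\phi)^{-1}(y)$ over any point $y$ in the $\omega$-image of the $i$-th component of the target; so it suffices to compare such fibers for $f'$ and for $f$. Note first that $\deg(f')$ makes sense: by Proposition~\ref{prop:FC_product} the base change along $g$ of the morphism $f$, which belongs to $\wt{\cFT}$, has its first projection $f'$ again in $\wt{\cFT}$.

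First I would apply $\omega$ to the given cartesian square. Since, by Lemma~\ref{lem:cFC_limit}, the fiber product $F'_1 \cong F'_2 \times_{F_2} F_1$ formed in $\wt{\cFC}$ is already a fiber product in $\Presh(\cC)$, hence in $\Shv(\cC,J)$, Lemma~\ref{lem:FC_product_isom} shows that
\[
\omega(F'_1) \xto{\ \cong\ } \omega(F'_2) \times_{\omega(F_2)} \omega(F_1)
\]
is a bijection; that is, applying $\omega$ to the square yields a cartesian square of sets. Moreover $\omega$ carries finite coproducts to disjoint unions, so the decompositions $\omega(F_2) = \coprod_{k \in \pi_0(F_2)} \omega(F_{2,k})$ and $\omega(F'_2) = \coprod_{i \in \pi_0(F'_2)} \omega(F'_{2,i})$ into connected components are respected by $\omega(g)$, which on the level of index sets induces $\pi_0(g)$.

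Next, fix $i \in \pi_0(F'_2)$, put $j = \pi_0(g)(i)$, and choose a point $y'$ in $\omega(F'_{2,i})$. Such a point exists because $F'_{2,i}$ is a non‑initial sheaf: it is of the form $\quot{X}{H}$ with $X$ in $\cC$, the covering $\imath(X) \to \quot{X}{H}$ is an epimorphism, and $\imath(X)$ is non‑initial, so $\omega(F'_{2,i})$ is a nonempty $M$-set by the equivalence of Theorem~5.8.1 of \cite{Grids} (one may also invoke Lemma~\ref{lem:TJ_surj} to see directly that $\omega(\imath(X)) \surj \omega(\quot{X}{H})$). Set $y = \omega(g)(y') \in \omega(F_{2,j})$. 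For a cartesian square of sets the projection to the first factor restricts, over each point $y'$ of the bottom-left corner, to a bijection between $\omega(f')^{-1}(y')$ and $\omega(f)^{-1}(y)$. Combining this with Corollary~\ref{cor:deg_omega_F} applied to both $f'$ and $f$ gives
\[
\deg(f')(i) = \#\,\omega(f')^{-1}(y') = \#\,\omega(f)^{-1}(y) = \deg(f)\bigl(\pi_0(g)(i)\bigr),
\]
which is the desired identity.

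I do not expect any genuine obstacle: the whole argument is a bookkeeping assembly of Lemma~\ref{lem:FC_product_isom} (that $\omega$ preserves the fiber product), the compatibility of $\omega$ with coproducts, and Corollary~\ref{cor:deg_omega_F} (that degrees are computed as $\omega$-fiber cardinalities). The only point warranting a line of justification is the nonemptiness of $\omega(F'_{2,i})$, so that the reference point $y'$ can be chosen; this is dispatched by the remark above. Everything else is the standard behaviour of fibers under pullback of sets.
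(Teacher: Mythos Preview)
Your proof is correct and follows essentially the same route as the paper, which simply cites Corollary~\ref{cor:deg_omega_F} and Lemma~\ref{lem:FC_product_isom}; you have spelled out the bookkeeping that these two citations encode. The nonemptiness of $\omega(F'_{2,i})$ that you take care to justify is a point the paper leaves implicit.
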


\begin{proof}
This follows from Corollary \ref{cor:deg_omega_F} and Lemma \ref{lem:FC_product_isom}.
\end{proof}

\section{Presheaves with transfers}
We introduce the notion of presheaves with transfers. 
We have in mind the presheaf of (motivic) cohomology of moduli spaces.
The change-of-level morphisms 
(usually) induce pullback and pushforward 
morphisms between the 
cohomology groups of the moduli spaces.
The axioms we state here 
are the expected properties of such pushforwards.
We refer to Section~\ref{sec:Drinfeld Euler} 
for the example of
motivic cohomology groups of Drinfeld modular schemes.
The easier example of the multiplicative groups of 
cyclotomic fields (i.e., the $K_1$ of fields) is 
treated in Section~\ref{sec:cyclotomic Euler}.

The transfer structure is not unrelated to the topology.
We show that a sheaf of abelian groups is equipped with a canonical structure 
of presheaf with transfers (Proposition~\ref{prop:transfer}).    In the 
other direction, we see that a presheaf 
with transfer with values in 
$\Q$-vector spaces is a sheaf 
(Corollary~\ref{cor:Q-coefficient is sheaf}).

With the transfer structure, we are able to define Hecke operators
as a pullback followed by a pushforward.
We also introduce a presheaf of rings with transfers to
be a presheaf of rings that satisfies the projection formula.

\subsection{Definition and basic properties}
Let $(\cC,J)$ be a $Y$-site. Let $\wt{\cC}$ be as in Section \ref{sec:Ctil}.
Let $\wt{\cFC}$ be as in Section \ref{sec:wtcFC}.
We assume that for any object $X$ of $\cC$, the overcategory $\cC(\cT(J))_{/X}$
satisfies Condition (1) of \cite[5.8.1]{Grids}, that is, 
for any objects $F_1, F_2$, the hom set 
$\Hom_{\cC(\cT(J))_{/X}}(F_1, F_2)$ is a finite set.

\begin{defn}\label{def:transfer}
An {\em abelian presheaf with transfers} 
on $\wt{\cFC}$
is a presheaf $G$ of abelian groups on $\wt{\cFC}$
satisfying  Conditions (1) and (2) in Section \ref{sec:cond12},
equipped with, for each morphism
$f\colon F\to F'$ in $\wt{\cFC}$ which belongs to $\wt{\cFT}$,
a homomorphism $f_{*}\colon G(F)\to G(F')$ satisfying the
following properties:
\begin{enumerate}
\item For any two composable morphisms
$f$ and $f'$ which belong to $\wt{\cFT}$, we have
$(f \circ f')_{*} = f_{*}\circ f'_{*}$.
\item For any cartesian diagram
$$
\xymatrix{
F_1 \ar[r]^{g_1} \ar@{}[dr]|\square \ar[d]_{f}
& F'_1 \ar[d]^{f'} \\
F_2 \ar[r]^{g_2} 
& F'_2
}
$$
in $\wt{\cFC}$
such that $f'$ belongs to $\wt{\cFT}$,
we have $g_2^{*} \circ f'_* = f_{*} \circ g_1^{*}$.
\item The composite $f_*\circ f^*$ is 
the multiplication by $\deg f$, which means that,
if we write $F' = \coprod_{i \in \pi_0(F')} F'_i$, then
the endomorphism of $\prod_{i \in \pi_0(F')}
G(F'_i)$ induced by the endomorphism 
$f_* \circ f^*$ of $G(F')$ is equal to the
homomorphism that sends $(x_i)_{i \in \pi_0(F')} \in 
\prod_{i \in \pi_0(F')} G(F'_i)$ to
$(\deg(f)(i) x_i)_{i \in \pi_0(F')}$.
\end{enumerate}
\end{defn}


The following lemma assures us that Condition (2) in 
Definition \ref{def:transfer} for $\wt{\cFC}$ makes sense.

We note that 
by our assumption $(\cC,J)$ satisfies the condition in 
Proposition \ref{prop:FC_product}.
Hence for any diagram 
$F_1 \xto{f_1} F' \xleftarrow{f_2} F_2$ diagram in $\wt{\cFC}$
with $f_2$ in $\wt{\cFT}$, the fiber product
$F_1 \times_{F'} F_2$ exists and
the first projection 
$\pr_1 \colon  F_1 \times_{F'} F_2 \to F_1$ belongs to $\wt{\cFT}$.

\begin{lem}\label{lem:transfer_isom}
Let $f\colon F \to F'$ be an isomorphism in the category
$\wt{\cFC}$.
Let $G$ be an abelian presheaf with transfers on 
$\wt{\cFC}$.
Then the composite $f_* f^*$ is the identity on $G(F')$ and 
the composite $f^* f_*$ is the identity on $G(F)$.
\end{lem}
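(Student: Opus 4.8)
The plan is to deduce Lemma~\ref{lem:transfer_isom} directly from Property~(3) of Definition~\ref{def:transfer}, using that an isomorphism has degree one and that an isomorphism is its own (two-sided) inverse up to composition. First I would record that if $f:F\to F'$ is an isomorphism in $\wt{\cFC}$, then $f$ belongs to $\wt{\cFT}$: indeed $f$ is a morphism in $\wt{\cC^m}$ (or more generally $\wt{\cFC}$) and, taking the representative $F \xleftarrow{=} F \xto{f} F'$ at the level of $\wt{\cC}^\str$ componentwise — isomorphisms lie in $\cT(J)$ since $\cT(J)=\wh{\cT(J)}$ contains all isomorphisms — one sees componentwise that $f\in\wt{\cFT}$. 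Hence $f_*:G(F)\to G(F')$ is defined. Moreover by Lemma~\ref{lem:degree is multiplicative}(1) (transported to $\wt{\cFC}$ via the discussion in Section~\ref{sec:degree}, where degree was extended to $\wt{\cFT}$ componentwise) we have $\deg(f)(i)=1$ for every $i\in\pi_0(F')$, so Property~(3) says that $f_*\circ f^*=\id_{G(F')}$.

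Next I would handle $f^*\circ f_*$ by applying the same reasoning to $f^{-1}$. Since $f^{-1}:F'\to F$ is also an isomorphism, it too belongs to $\wt{\cFT}$, and by Property~(3) applied to $f^{-1}$ we get $(f^{-1})_*\circ (f^{-1})^*=\id_{G(F)}$. It remains to identify $(f^{-1})^*$ with $f_*$, or rather to relate $(f^{-1})_*$ with $f^*$. Here I would use functoriality of both the pullback (a genuine presheaf, so contravariantly functorial) and the pushforward (Property~(1) of Definition~\ref{def:transfer}): from $f\circ f^{-1}=\id_{F'}$ and $f^{-1}\circ f=\id_F$ we get $(f^{-1})^*\circ f^*=\id$ and $f^*\circ (f^{-1})^*=\id$ on the relevant groups, i.e. $f^*$ is invertible with inverse $(f^{-1})^*$; similarly $(f^{-1})_*\circ f_* = (\id_F)_* = \id_{G(F)}$ and $f_*\circ (f^{-1})_* = \id_{G(F')}$ by Property~(1), so $f_*$ is invertible with inverse $(f^{-1})_*$. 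Combining $f_*\circ f^*=\id_{G(F')}$ (already shown) with the invertibility of $f_*$ forces $f^* = (f_*)^{-1} = (f^{-1})_*$. Then $f^*\circ f_* = (f^{-1})_*\circ f_* = \id_{G(F)}$, which is the second assertion.

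I do not expect any genuine obstacle here; the lemma is essentially bookkeeping. The one point that needs a little care is making sure Property~(3) is being read correctly: for a morphism landing in a possibly disconnected object $F'=\coprod_{i\in\pi_0(F')}F'_i$, the statement $f_*\circ f^*=\text{``mult.\ by }\deg f\text{''}$ means componentwise multiplication by $\deg(f)(i)$ on $G(F'_i)$ under the identification $G(F')\cong\prod_i G(F'_i)$ (valid by Condition~(2) in Section~\ref{sec:cond12}), and since each $\deg(f)(i)=1$ this is just the identity on each factor, hence on $G(F')$. A symmetric remark applies when invoking Property~(3) for $f^{-1}$ with target $F$. With these conventions in place the argument above goes through verbatim.
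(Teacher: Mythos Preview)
Your argument is correct in spirit and reaches the goal, but there is one small slip and the paper's route for the second assertion is a bit cleaner. The slip: you write ``$(f^{-1})_*\circ f_* = (\id_F)_* = \id_{G(F)}$ \dots\ by Property~(1)'', but Property~(1) only gives $(f^{-1})_*\circ f_* = (\id_F)_*$; it does \emph{not} by itself yield $(\id_F)_*=\id_{G(F)}$. This is easily repaired by applying Property~(3) to $\id_F$ (degree $1$, and $(\id_F)^*=\id$), so no real harm done.

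For comparison, the paper avoids this detour entirely. After obtaining $f_*f^*=\id_{G(F')}$ exactly as you do, it simply observes that $f^*$ is an isomorphism (immediate from presheaf functoriality, since $f$ is an isomorphism), composes on the left to get $f^*f_*f^*=f^*$, and cancels the invertible $f^*$ on the right to conclude $f^*f_*=\id_{G(F)}$. This sidesteps any need to invoke Property~(1) or to verify $(\id_F)_*=\id$, and is a line shorter.
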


\begin{proof}
Since $f$ is an isomorphism, the map $f^*$ is an isomorphism.
%
Note that any isomorphism in $\wt{\cFC}$ belongs to $\wt{\cFT}$.
It follows from Lemma \ref{lem:degree is multiplicative} (1) 
that we have $\deg f=1$. 
Hence the composite $f_* f^*$ is equal to the identity on $G(F')$.
Since $f^* f_* f^* = f^*$ and $f^*$ is an isomorphism, the
composite $f^* f_*$ is equal to the identity on $G(F)$.
This proves the claim.
\end{proof}

\begin{lem}\label{lem:sum_formula}
Let $f\colon F \to F'$ be a morphism
in $\wt{\cFC}$ which belongs to $\wt{\cFT}$.
Suppose that $F$ is isomorphic to 
the coproduct $F \cong F_1 \amalg \cdots \amalg F_n$
for some objects $F_1, \ldots, F_n$ in $\wt{\cFC}$.
For $i=1,\cdots,n$ let 
$\iota_i\colon F_i \to F$ denote the $i$-th inclusion morphism
and set $f_i = f \circ \iota_i$.
Let $G$ be an abelian presheaf with transfers on $\wt{\cFC}$.
Then we have $f_*=\sum_{i=1}^n (f_i)_* \iota_i^* \colon 
G(F) \to G(F')$.
\end{lem}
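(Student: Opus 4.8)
The plan is to reduce the statement to the two-summand case $n=2$ and then induct on $n$. For the base case $n=2$, write $F \cong F_1 \amalg F_2$ and consider the inclusions $\iota_1 : F_1 \to F$, $\iota_2 : F_2 \to F$. The key observation is that, by Lemma \ref{lem:aux1} (applied with $M = N = F$, using the trivial morphism $F \to F$, or more directly by the universal property of coproducts in $\wt{\cFC}$), each square
\[
\begin{CD}
F_i @>{\iota_i}>> F \\
@| @| \\
F_i @>{\iota_i}>> F
\end{CD}
\]
is not quite what we want; instead I would use that the diagram expressing $F$ as a coproduct gives, for the identity morphism $\id_F$, a decomposition, and more to the point I would look at the cartesian squares obtained by pulling back $\iota_i$ along $\id_F$. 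The cleanest route: since $F = F_1 \amalg F_2$, for each $i$ the square with top $F_i \xrightarrow{\iota_i} F$, left and right identities is trivially cartesian, but the relevant input is Condition (2) of Definition \ref{def:transfer} applied to the cartesian square
\[
\xymatrix{
F_i \ar[r]^{\iota_i} \ar[d]_{\iota_i} & F \ar[d]^{\id_F} \\
F \ar[r]^{\id_F} & F
}
\]
which is indeed cartesian because $\iota_i$ is a monomorphism onto a connected component. Hmm, that is circular; let me restate the real approach below.

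The correct approach uses the pullback-compatibility of transfers together with the behaviour of $f^*$ on a coproduct. First I would note that $G$, satisfying Conditions (1) and (2) of Section \ref{sec:cond12}, gives $G(F) \cong G(F_1) \times \cdots \times G(F_n)$, with $\iota_i^* : G(F) \to G(F_i)$ the $i$-th projection (after identifying $G(F_i)$ with the corresponding factor). Thus an element $x \in G(F)$ is determined by the tuple $(\iota_i^* x)_i$, and to prove $f_* x = \sum_i (f_i)_*(\iota_i^* x)$ it suffices to check this after applying $\iota_j^*$ for each $j \in \pi_0(F')$ — wait, $f_*$ and $(f_i)_*$ land in $G(F')$, not a coproduct over $\pi_0(F')$; I should instead argue componentwise over $\pi_0(F')$. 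Decompose $F' = \coprod_{j} F'_j$ into connected components; then $G(F') \cong \prod_j G(F'_j)$, and it suffices to prove the identity after projecting to each $G(F'_j)$. For fixed $j$, the component $F'_j$ pulls back along $f$ to $\coprod_{i \in S_j} F_i$ where $S_j \subset \{1,\dots,n\}$ indexes the components mapping into $F'_j$ (here I use Corollary \ref{cor:fiber_product} / Lemma \ref{lem:aux1} to identify the fibre product $F'_j \times_{F'} F$ with this sub-coproduct, the square being cartesian). Applying Condition (2) of Definition \ref{def:transfer} to the cartesian square
\[
\xymatrix{
\coprod_{i \in S_j} F_i \ar[r] \ar[d] \ar@{}[dr]|\square & F \ar[d]^{f} \\
F'_j \ar[r] & F'
}
\]
reduces us to the case where $F'$ is connected and $f$ surjects onto it, i.e. $S_j = \pi_0(F)$ in the reduced situation.

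So I am reduced to: $F' $ connected, $F = F_1 \amalg \cdots \amalg F_n$, $f: F \to F'$ in $\wt{\cFT}$, and I must show $f_* = \sum_i (f_i)_* \iota_i^*$ as maps $G(F) \to G(F')$. For this I would induct on $n$, the case $n=1$ being trivial ($\iota_1 = \id$). For the inductive step, write $F = F' ' \amalg F_n$ with $F'' = F_1 \amalg \cdots \amalg F_{n-1}$; it suffices to prove $f_* = (f|_{F''})_* \circ (\text{incl})^* + (f_n)_* \circ \iota_n^*$, i.e. the two-summand case, and then apply the inductive hypothesis to $f|_{F''}$. For the two-summand case $F = F'' \amalg F_n$ with $F'$ connected: the key point is that $f^*: G(F') \to G(F) \cong G(F'') \times G(F_n)$ is the diagonal-type map $x \mapsto ((f|_{F''})^* x, f_n^* x)$ (functoriality of $(-)^*$ along the inclusions), so for $x \in G(F')$ we have $(f|_{F''})_*(\text{incl})^* f^* x + (f_n)_* \iota_n^* f^* x = (f|_{F''})_* (f|_{F''})^* x + (f_n)_* f_n^* x$. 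This is a statement about $f_* f^*$ versus a sum of $(f_i)_* f_i^*$, which one can handle directly via Condition (3) of Definition \ref{def:transfer} together with the additivity of degree, $\deg(f)(j) = \sum_{i : \pi_0(f)(i) = j} \deg(f_i)(j')$ implicit in Section \ref{sec:degree} (and Corollary \ref{cor:deg_omega_F}). But to get the identity for a general element $x \in G(F)$, not just one in the image of $f^*$, I would instead argue: both sides of the desired identity are additive in $x$ and $x$ decomposes as $\iota_1^* x$-part plus $\iota_n^* x$-part under $G(F) \cong G(F'') \times G(F_n)$; writing $x = \iota_1^! y + \iota_n^! z$ is not available, so better: it suffices to check the identity on elements supported on a single component, i.e. on the images of the maps $G(F_i) \to G(F)$ that are sections of $\iota_i^*$ (these exist because $G(F) = \prod_i G(F_i)$ canonically). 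For $x$ supported on $F_i$, one has $\iota_k^* x = 0$ for $k \neq i$ (where $0$ means the element whose further components vanish — more precisely $\iota_k^* x$ is the zero element of the group $G(F_k)$), hence $\sum_k (f_k)_* \iota_k^* x = (f_i)_* \iota_i^* x$, and separately $f_* x = f_*(\iota_i)_! \iota_i^* x$; since $(\iota_i)_! := $ the section equals (by Lemma \ref{lem:transfer_isom}-type reasoning, as $\iota_i$ is not an isomorphism this needs care) — actually the cleanest is to note $f \circ \iota_i = f_i$ and use functoriality of transfer, Condition (1), after observing that $x = (\iota_i)_* (\text{something})$? That requires $\iota_i \in \wt{\cFT}$, which it need not be.

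The main obstacle, then, is precisely this last point: the inclusion $\iota_i : F_i \to F$ of a proper sub-coproduct does \emph{not} lie in $\wt{\cFT}$, so there is no pushforward $(\iota_i)_*$ to play with, and I cannot simply write an arbitrary $x \in G(F)$ as $(\iota_i)_*$ of something. The resolution I would adopt is to avoid pushing forward along $\iota_i$ altogether and instead use the cartesian-square axiom (Condition (2) of Definition \ref{def:transfer}) with the squares
\[
\xymatrix{
F_i \ar[r]^{\iota_i} \ar[d]_{f_i} \ar@{}[dr]|\square & F \ar[d]^{f} \\
F'_j \ar@{=}[r] & F'_j
}
\]
No — that is wrong because $\iota_i$ is not the pullback of $\id$. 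The genuinely correct tool is: since $F' $ is connected and $F = \coprod_i F_i$, the map $f$ itself, pulled back along $\id_{F'}$, is $f$; but pulled back along the \emph{identity component inclusion} there is no gain. I think the honest plan is the one that works in practice: prove the identity by evaluating both sides on each object $a_J(\frh_\cC(Z))$ via the fibre functor $\omega$ of Section \ref{sec:degree}, where $\omega(F) = \coprod_i \omega(F_i)$ and the transfer $f_*$ is, concretely, "sum over the fibre" (this is how Condition (3) forces it, via Corollary \ref{cor:deg_omega_F}); summing over a fibre of $\omega(f)$ visibly splits as the sum over $i$ of the sums over the fibres of $\omega(f_i)$, which is exactly the claimed formula. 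So the real plan is: use $\omega$ to reduce to a set-theoretic fibre-counting identity, invoking Lemma \ref{lem:FC_product_isom}, Corollary \ref{cor:deg_omega_F} and the explicit "summation over fibres" description of transfers that these force, and the hard part is checking that the transfer $f_*$ is indeed characterised by "on each $\omega$-section, sum over the fibre of $\omega(f)$" — which follows from Conditions (1)–(3) of Definition \ref{def:transfer} together with the fact that every morphism in $\wt{\cFT}$ is dominated by a Galois covering, so that $f_*$ is pinned down by its values on Galois coverings where Condition (3) and Condition (2) determine it completely.
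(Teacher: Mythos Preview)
Your proposal has a genuine gap stemming from a single misconception: you assert that the inclusion $\iota_i : F_i \to F$ ``need not be'' in $\wt{\cFT}$, and this blocks you from the direct argument. But look at the definition of $\wt{\cFT}$ in Section~\ref{sec:wtcFC}: a morphism lies in $\wt{\cFT}$ if and only if its component at each element of $\pi_0$ of the \emph{domain} lies in $\wt{\cT}$. For $\iota_i$, each such component is an identity morphism in $\wt{\cC}$, hence lies in $\wt{\cT}$. So $\iota_i \in \wt{\cFT}$ always (you may be confusing $\wt{\cFT}$ with $\wt{\cFT}^*$ of Section~\ref{sec:cFT*}, which additionally requires surjectivity on $\pi_0$). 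Once you have this, the paper's argument is immediate: it suffices to treat $f = \id_F$, since for general $f$ one composes with the identity case using Condition~(1) of Definition~\ref{def:transfer} to get $f_* = f_* \circ \sum_i (\iota_i)_* \iota_i^* = \sum_i (f \circ \iota_i)_* \iota_i^*$. For $f = \id_F$, Lemma~\ref{lem:transfer_isom} gives $f_* = \id$, while Condition~(3) gives $(\iota_i)_* \iota_i^* = \deg(\iota_i)$, which is the characteristic function of $\pi_0(F_i) \subset \pi_0(F)$; summing over $i$ gives the constant function $1$, and you are done.

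Your fallback plan via $\omega$ also fails, and it is worth seeing why: the lemma is stated for an \emph{arbitrary} abelian presheaf with transfers $G$, where the transfers $f_*$ are extra data, not determined by the underlying presheaf. The fibre functor $\omega$ computes $\omega(F)$ for objects $F$ of $\wt{\cFC}$; it says nothing about the abstract maps $f_* : G(F) \to G(F')$. The ``sum over fibres'' description you invoke is the content of Proposition~\ref{prop:transfer} for \emph{sheaves}, and relies on $g^*$ being injective for Galois coverings $g$; for a general presheaf with transfers this injectivity is unavailable, so Lemma~\ref{lem:transfer_nonGal} does not pin down $f_*$ uniquely. Any correct proof here must use only the axioms in Definition~\ref{def:transfer}, as the paper's does.
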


\begin{proof}
It suffices to prove the claim when $F=F'$ and $f\colon F\to F$
is the identity morphism.
Suppose that $f=\id_F$. By Lemma \ref{lem:transfer_isom},
$f_*$ is the identity map on $G(F)$.
Since $(f_i)_* \iota_i^* = (\iota_i)_* \iota_i^*$
is equal to the multiplication by $\deg \iota_i$ for each $i$,
it suffices to prove that $\sum_{i=1}^n \deg \iota_i = 1$.
For $i=1,\ldots,n$, let $\pi_0(F)_i$ denote the image
of the map $\pi_0(\iota_i)\colon \pi_0(F_i) \to \pi_0(F)$.
Then the set $\pi_0(F)$ is equal to the disjoint union
$\pi_0(F) = \coprod_{i=1}^n \pi_0(F)_i$.
It can be checked that the degree of the morphism
$\iota_i$ is equal to $1$ on $\pi_0(F)_i$, and is
zero on the complement $\pi_0(F) \setminus \pi_0(F)_i$.
Hence we have $\sum_{i=1}^n \deg \iota_i = 1$.
This proves the claim.
\end{proof}

\begin{lem}\label{lem:transfer_nonGal}
Let $f\colon F \to F'$ be a morphism in $\wt{\cC}$
and let $g\colon F_1\to F'$ be a Galois covering in $\wt{\cC}$
which belongs to $\wt{\cT}$.
Suppose that there exists a morphism $h\colon F_1\to F$ in $\wt{\cC}$
satisfying $g=f\circ h$.
Let $\Hom_{F'}(F_1,F) \subset \Hom_{\wt{\cFC}}(F_1,F)$ 
denote the subset of such morphisms $h$.
Then for any abelian presheaf $G$ with transfers on $\wt{\cFC}$ 
we have $g^* f_* = \sum_{h \in \Hom_{F'}(F_1,F)} h^*\colon G(F) \to G(F_1)$.
\end{lem}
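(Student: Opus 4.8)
The plan is to obtain the formula by base-changing $f$ along $g$, identifying the resulting fibre product as a coproduct of copies of $F_1$ indexed by $\Hom_{F'}(F_1,F)$, and then invoking the transfer axioms. First I would record two reductions. Since $g=f\circ h$ is a Galois covering lying in $\wt{\cT}$, Lemma~\ref{lem:tilde cond 3} (condition (3) of the $B$-site $(\wt{\cC},\iota_*J)$) forces $f,h\in\wt{\cT}$, and then Lemma 4.1.3 of \cite{Grids} shows that $h\colon F_1\to F$ is itself a Galois covering in $\wt{\cC}$. Set $G=\Aut_{F'}(F_1)$ and $H=\Aut_{F}(F_1)$; then $H\subseteq G$ (any automorphism of $F_1$ over $F$ is over $F'$), and by Lemma 4.2.3(2) of \cite{Grids} the group $G$ acts transitively on $\Hom_{F'}(F_1,F)$ with $\mathrm{Stab}_G(h)=H$. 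Thus $a\mapsto h\circ a$ identifies the coset space $\quot{G}{H}$ with $\Hom_{F'}(F_1,F)$.

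Next, by Proposition~\ref{prop:FC_product} the fibre product $P=F_1\times_{F'}F$ exists and $\pr_{F_1}\colon P\to F_1$ belongs to $\wt{\cFT}$. The key step is to show that $P$ is isomorphic, in $\wt{\cFC}$, to $\coprod_{h'\in\Hom_{F'}(F_1,F)}F_1$, the copy indexed by $h'$ being included via the unique section $s_{h'}\colon F_1\to P$ with $\pr_{F_1}\circ s_{h'}=\id_{F_1}$ and $\pr_{F}\circ s_{h'}=h'$. I would deduce this from $F\cong\quot{F_1}{H}$ (Lemma~\ref{lem:Galoiscovering} applied to the Galois covering $h$, since $H=\Aut_F(F_1)$) together with Lemma~\ref{lem:limit_of_quotient}, which gives $F_1\times_{F'}(\quot{F_1}{H})\cong\quot{(F_1\times_{F'}F_1)}{H}$; here $F_1\times_{F'}F_1\cong\coprod_{a\in G}F_1$ by the Galois property of $g$ (the remark following Proposition~\ref{prop:FC_product}), and the second-factor action of $H$ translates the indexing copies through $G$, so the quotient is $\coprod_{\quot{G}{H}}F_1$. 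Chasing these identifications (the iso $\quot{(F_1\times_{F'}F_1)}{H}\cong F_1\times_{F'}F$ being induced by $\id_{F_1}\times h$) shows that the coproduct inclusions are precisely the $s_{h'}$, $h'=h\circ a$. Alternatively this can be checked after applying the fibre functor $\omega$: by Lemma~\ref{lem:FC_product_isom} and Lemma~\ref{lem:deg_omega} the comparison map $\coprod_{h'}\omega(F_1)\to\omega(F_1)\times_{\omega(F')}\omega(F)$, $(h',x)\mapsto(x,\omega(h')(x))$, restricts to a bijection on each fibre over $\omega(F_1)$ (the fibres of $\omega(g)$ being $G$-torsors and those of $\omega(h)$ being $H$-torsors), and $\omega$ reflects isomorphisms on the full subcategory $\wt{\cFC}\subseteq\Shv(\cC,J)$.

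Finally I would assemble the formula. Applying axiom (2) of Definition~\ref{def:transfer} to the cartesian square with vertical maps $\pr_{F_1}$ and $f$ (the latter in $\wt{\cFT}$, and $\pr_{F_1}\in\wt{\cFT}$ by the previous paragraph) gives $g^{*}f_{*}=(\pr_{F_1})_{*}\circ\pr_{F}^{*}$. Applying Lemma~\ref{lem:sum_formula} to $\pr_{F_1}\colon\coprod_{h'}F_1\to F_1$ with the inclusions $s_{h'}$, and using $(\pr_{F_1}\circ s_{h'})_{*}=(\id_{F_1})_{*}=\id$ by Lemma~\ref{lem:transfer_isom}, yields $(\pr_{F_1})_{*}=\sum_{h'}s_{h'}^{*}$. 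Composing, $g^{*}f_{*}=\sum_{h'}s_{h'}^{*}\circ\pr_{F}^{*}=\sum_{h'}(\pr_{F}\circ s_{h'})^{*}=\sum_{h'\in\Hom_{F'}(F_1,F)}(h')^{*}$, as claimed. The main obstacle is the middle step: getting the precise decomposition $P\cong\coprod_{\Hom_{F'}(F_1,F)}F_1$ \emph{together with} the correct matching of its coproduct inclusions with the sections $s_{h'}$; once this is in hand, the rest is a formal manipulation of the transfer axioms.
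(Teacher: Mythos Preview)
Your proof is correct and follows essentially the same approach as the paper: establish that the diagram with $\coprod_{h\in\Hom_{F'}(F_1,F)}F_1$ in the upper-left corner is cartesian, then apply transfer axiom~(2) and Lemma~\ref{lem:sum_formula}. The paper asserts the cartesian diagram in one line by citing Proposition~\ref{prop:Ctil_Bsite}, Lemma~\ref{lem:aJ_properties}, and Lemma~4.2.3(2) of \cite{Grids}, whereas you unpack this more carefully (via $F\cong\quot{F_1}{H}$, Lemma~\ref{lem:limit_of_quotient}, and the Galois decomposition of $F_1\times_{F'}F_1$, or alternatively via the fibre functor $\omega$); the final assembly using axiom~(2) and Lemma~\ref{lem:sum_formula} is identical.
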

\begin{proof}
It follows from Proposition \ref{prop:Ctil_Bsite}, 
Lemma \ref{lem:aJ_properties},
and Lemma 4.2.3 (2) of \cite{Grids} that the diagram
$$
\begin{CD}
\coprod_{h \in \Hom_{F'}(F_1,F)} F_1 @>{f'_1}>> F_1 \\
@V{f'_2}VV @V{g}VV \\
F @>{f}>> F'
\end{CD}
$$
is cartesian. Here $f'_1$ is the morphism whose component at
$h$ is the identity morphism of $F_1$ for each $h \in \Hom_{F'}(F_1,F)$, 
and $f'_2$ is the morphism whose component at $h \in \Hom_{F'}(F_1,F)$ is
the morphism $h\colon F_1 \to F$.
For $h \in \Hom_{F'}(F_1,F)$, let $i_h\colon F_1 \inj 
\coprod_{h \in \Hom_{F'}(F_1,F)} F_1$ denote
the inclusion to the component at $h$.
We have $f'_1 \circ i_h = \id_{F_1}$ 
and $f'_2 \circ i_h = h$ for each $h \in \Hom_{F'}(F_1,F)$.
Hence, by Lemma \ref{lem:sum_formula}, we have
$$
g^* f_* = (f'_1)_* {f'_2}^*
= \sum_{h \in \Hom_{F'}(F_1,F)} (f'_1 \circ i_h)_*
i_h^* {f'_2}^* = \sum_{h \in \Hom_{F'}(F_1,F)} {\id_{F_1}}_* h^*.
$$
Therefore
$$
g^* f_* = \sum_{h \in \Hom_{F'}(F_1,F)} h^*.
$$
This proves the claim.
\end{proof}

\begin{cor}
\label{cor:Q-coefficient is sheaf}
Let $F$ be a presheaf with transfers with values in $\Q$-vector spaces.   Then $F$ is a sheaf.
\end{cor}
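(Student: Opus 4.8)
The strategy is to verify the sheaf condition on $(\wt{\cFC},\iota'_*J)$ by reducing, via Proposition~\ref{prop:cFT_main}, to two separate checks: (a) the restriction of $F$ to $\wt{\cC}$ is a sheaf for $\iota_*J$, and (b) Conditions (1) and (2) in Section~\ref{sec:cond12} hold for $F$. Condition (b) is already part of the hypothesis that $F$ is a presheaf with transfers on $\wt{\cFC}$, so nothing needs to be done there. For (a), since $(\wt{\cC},\iota_*J)$ is a $Y$-site (Corollary~\ref{cor:Ctil_Y_site}) and in particular an $A$-topology whose coverings are generated by the morphisms in $\wt{\cT}$ with a fixed target, it suffices to check the sheaf axiom against a single covering morphism $g:F_1\to F'$ belonging to $\wt{\cT}$; and since $\wt{\cT}$ has enough Galois coverings, we may further reduce to the case where $g$ is a Galois covering in $\wt{\cC}$ with finite Galois group $G=\Aut_{F'}(F_1)$.

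First I would fix such a Galois covering $g:F_1\to F'$ with group $G$, finite by the standing finiteness hypothesis. The sheaf condition to verify is that the equalizer of the two maps $G(F_1)\rightrightarrows G(F_1\times_{F'}F_1)$ is $g^*\colon G(F')\hookrightarrow G(F_1)$, where the fiber product exists in $\wt{\cFC}$ by Proposition~\ref{prop:FC_product}. The key computational input is Lemma~\ref{lem:transfer_nonGal}: applied with $f=g$ (so $\Hom_{F'}(F_1,F_1)$ is exactly the set $G$, since $g$ is Galois), it gives
\[
g^*g_* \;=\; \sum_{\sigma\in G}\sigma^*\colon G(F_1)\to G(F_1).
\]
Because $F$ takes values in $\Q$-vector spaces and $\deg g=\sharp G$ is invertible in $\Q$, Condition (3) in Definition~\ref{def:transfer} shows $g_*g^*$ is multiplication by $\sharp G$ (componentwise on $F'$, but $F'$ may be assumed connected in this reduction), so the operator $e:=\tfrac{1}{\sharp G}g^*g_*=\tfrac{1}{\sharp G}\sum_{\sigma\in G}\sigma^*$ is idempotent, $G$-invariant in its image, and satisfies $e\circ g^*=g^*$. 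This exhibits $g^*$ as a split injection onto the $G$-invariants $G(F_1)^G$, with retraction $\tfrac1{\sharp G}g_*$: indeed $g^*$ lands in $G(F_1)^G$ since $\sigma^*g^*=g^*$ for all $\sigma$, and $e$ restricted to $G(F_1)^G$ is the identity.

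The remaining point is to identify $G(F_1)^G$ with the equalizer of the Čech pair $G(F_1)\rightrightarrows G(F_1\times_{F'}F_1)$. For a Galois covering, the canonical isomorphism $\coprod_{\sigma\in G}F_1\xrightarrow{\cong}F_1\times_{F'}F_1$ (the "key isomorphism" $L\otimes_KL\cong\prod_{g\in G}L$ of Galois theory, valid in $\wt{\cFC}$ by Proposition~\ref{prop:FC_product} together with Lemma~\ref{lem:Galcov}) together with Condition (2) in Section~\ref{sec:cond12} for $F$ gives $G(F_1\times_{F'}F_1)\cong\prod_{\sigma\in G}G(F_1)$, and under this identification the two projections become $x\mapsto(x)_\sigma$ and $x\mapsto(\sigma^*x)_\sigma$; their equalizer is precisely $G(F_1)^G$. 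Combining with the previous paragraph yields the sheaf axiom for the covering $g$. Since every covering of $(\wt{\cC},\iota_*J)$ is generated by such $g$'s, the restriction $F|_{\wt{\cC}}$ is a sheaf, and Proposition~\ref{prop:cFT_main} finishes the argument.

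**Main obstacle.** The routine but delicate step is the identification of the fiber product $F_1\times_{F'}F_1$ with $\coprod_{\sigma\in G}F_1$ in $\wt{\cFC}$, i.e.\ the Galois-theoretic decomposition; here one must invoke Proposition~\ref{prop:FC_product} to know the fiber product exists in $\wt{\cFC^m}$, Lemma~\ref{lem:Galcov} (and the remark following Proposition~\ref{prop:FC_product}) for the cartesian-square characterization of Galois coverings, and then transport everything through the equivalence (\ref{eq:iota'}) to make sense of the Čech complex. Everything else is a formal manipulation with the idempotent $\tfrac1{\sharp G}\sum_\sigma\sigma^*$, which is exactly the averaging argument already sketched in the introduction for the case "$R$ and $\Q$".
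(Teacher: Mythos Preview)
Your proposal is correct and follows essentially the same approach as the paper: reduce to a Galois covering $m:M\to N$ with group $G$, use Lemma~\ref{lem:transfer_nonGal} to get $m^*m_*=\sum_{\sigma\in G}\sigma^*$ and the transfer axiom to get $m_*m^*=\deg m=\sharp G$, then observe that $\tfrac{1}{\sharp G}m_*$ inverts $m^*\colon F(N)\to F(M)^G$. The paper's proof is three lines and takes the reduction to Galois coverings (and the characterization of the sheaf condition as $F(N)\xrightarrow{\sim}F(M)^G$) for granted from the general $Y$-site machinery in \cite{Grids}; your version spells out the reduction via Proposition~\ref{prop:cFT_main} and the \v{C}ech equalizer, which is more detailed but not different in substance. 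Your ``main obstacle'' (the decomposition $F_1\times_{F'}F_1\cong\coprod_{\sigma\in G}F_1$) is already absorbed into the paper's framework for $B$-sites with enough Galois coverings, so it is less of an obstacle here than you suggest.
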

\begin{proof}
Let $m\colon  M \to N$ be a Galois covering of group $G$.
It suffices to prove that the map 
$F(N) \to F(M)^G$ induced by $m^*$ is an isomorphism.
By the definition of transfers, we have 
$\deg m=m_*m^*$.   
By Lemma~\ref{lem:transfer_nonGal},
we have 
$m^*m_*=\sum_{\sigma \in G} \sigma$.
Using these, one can check that 
$\frac{1}{|G|}m_*$ is the inverse of the map above
induced by $m^*$.
\end{proof}

\subsection{The transfer structure on sheaves}
\label{sec:with transfers}

\begin{prop}\label{prop:transfer}
Let $G$ be an abelian sheaf on $\wt{\cFC}$.
Then $G$ has the structure of presheaf with transfers.
The structure is uniquely determined.
\end{prop}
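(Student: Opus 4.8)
The plan is to construct the pushforward maps $f_*$ from a sheaf structure alone, using the fact that (by Proposition \ref{prop:cFCot_enoughGalois}) the class $\wt{\cFT}^*$ has enough Galois coverings, and then to verify the three axioms of Definition \ref{def:transfer}. First I would reduce to the case of connected objects: by Corollary \ref{cor:FC is an F-category} every object of $\wt{\cFC}$ is a finite coproduct of objects of $\wt{\cC}$, and Conditions (1), (2) in Section \ref{sec:cond12} (which $G$ satisfies since it is a sheaf, by Proposition \ref{prop:cFT_main}) force $G$ of a coproduct to be the product of the values on components; so it suffices to define $f_*$ for a morphism $f:F\to F'$ in $\wt{\cFT}$ whose target is connected, and then extend by the formula of Lemma \ref{lem:sum_formula} (which we would then be \emph{defining} rather than deriving). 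For such an $f$, choose a morphism $g:F_0\to F$ in $\wt{\cT}$ so that the composite $f\circ g$ is a Galois covering of $\wt{\cC}$ with (finite) Galois group $H$; this is possible by the enough-Galois-coverings property. Then $F_0$, $F$ become quotient objects of $F_0$ by subgroups of $H$ (Lemma \ref{lem:Galoiscovering}, Lemma \ref{lem:double_quot}), and one defines $f_*$ as a suitable ``sum over a coset space'' followed by the canonical identification $G(F)\cong G(F_0)^{\Aut_F(F_0)}$ coming from $G$ being a sheaf. Concretely, if $K=\Aut_F(F_0)\subset H$, then $G(F)$ is the $K$-invariants of $G(F_0)$ and $G(F')$ is the $H$-invariants, and $f_*$ should be (the descent of) $x\mapsto \sum_{h\in H/K} h\cdot x$.

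The next step is to check this is well-defined, i.e., independent of the choice of $g$. Given two such choices $g_1:F_0\to F$, $g_2:F_0'\to F$, one uses that $\wt{\cC}(\wt{\cT})$ is semi-cofiltered (which holds since $(\wt{\cC},\iota_*J)$ is a $Y$-site by Corollary \ref{cor:Ctil_Y_site}) to find a common refinement dominating both, and then reduces to the case where one factors through the other; there the compatibility is a short exact sequence computation of automorphism groups, of the type carried out around \eqref{eq:deg_defn} and in Lemma \ref{lem:double_quot} via Lemma \ref{lem:quot_general}. With well-definedness in hand, functoriality (Axiom (1)) follows by choosing a single $F_0$ dominating the whole chain, where it becomes the transitivity of ``sum over cosets'' for a tower of finite groups. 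Axiom (3), that $f_*f^*=\deg f$, is essentially the definition: on the Galois cover $f\circ g$ one has $\sum_{h\in H/K}h$ applied after the inclusion $G(F')^{?}\hookrightarrow G(F_0)$, and comparing cardinalities of coset spaces with \eqref{eq:deg_defn} gives exactly multiplication by $\deg f$ on each connected component, matching Corollary \ref{cor:deg_omega_F}.

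Axiom (2), base change, I expect to be the main obstacle. Given a cartesian square in $\wt{\cFC}$ with $f'\in\wt{\cFT}$, one must show $g_2^*f'_*=f_*g_1^*$. The strategy is to pick a Galois cover dominating $f'$ \emph{simultaneously} with the pullback: choose $g':F_0'\to F_1'$ making $f'\circ g'$ Galois, pull the whole square back by $g_2$ (using that fiber products exist and behave well — Proposition \ref{prop:FC_product}, Lemma \ref{lem:cFC_limit}, and the compatibility of degree with base change, Corollary \ref{cor:deg_limit}), and observe via Lemma \ref{lem:last} that the resulting squares remain cartesian with the same finite Galois groups. Then both sides reduce, on the level of $G$ evaluated on the Galois covers, to the same sum over a coset space, because taking $M$-set-level fibers (Lemma \ref{lem:FC_product_isom}, Lemma \ref{lem:deg_omega}) identifies the relevant index sets. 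The delicate point is bookkeeping of which subgroup of which Galois group indexes the sum on each side, and checking the canonical sheaf identifications $G(-)\cong G(-)^{(\text{Aut})}$ intertwine correctly; this is where Lemma \ref{lem:last} and the exact sequence \eqref{eq:3_automs} do the real work. Finally, uniqueness: any transfer structure must satisfy Lemma \ref{lem:transfer_nonGal}, which for $g=f\circ h$ a Galois cover expresses $g^*f_*$ as a sum of $h^*$ over $\Hom_{F'}(F_1,F)$; since $g^*$ is injective (the sheaf $G$ is separated and $g$ is a covering), this pins down $f_*$ uniquely, so the structure we built is the only one.
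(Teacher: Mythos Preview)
Your proposal is correct and follows essentially the same route as the paper: construct $f_*$ by choosing a Galois cover $F_0\to F'$ dominating $f$, define it as the coset sum $\sum_{h\in H/K}h^*$ descended along the sheaf identifications $G(F)\cong G(F_0)^K$ and $G(F')\cong G(F_0)^H$, check independence of the choice via a common refinement, and deduce uniqueness from Lemma~\ref{lem:transfer_nonGal} together with injectivity of pullback along a cover. Axioms (1) and (3) are handled exactly as you describe.

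The one place your write-up diverges slightly from the paper is the verification of the base-change axiom (2). The paper does not invoke Lemma~\ref{lem:last}; that lemma goes in the direction ``same Galois groups $\Rightarrow$ cartesian,'' which is the converse of what you need here. Instead the paper chooses a connected Galois cover $F_3\to F_2$ dominating all the $F_{1,i}$, writes both $g_2^*f'_*$ and $f_*g_1^*$ as sums over explicit $\Hom$-sets indexed by morphisms over $F_2$ and $F'_2$ respectively, and then uses the cartesian property of the original square to produce a bijection $\coprod_i\Hom_{F_2}(F_3,F_{1,i})\xrightarrow{\sim}\Hom_{F'_2}(F_3,F'_1)$ (composition with $g_{1,i}$) identifying the two index sets. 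Your pullback-of-the-Galois-cover approach can be made to work as well, but you should expect the fiber product $F'_0\times_{F'_2}F_2$ to land in $\wt{\cFC}$ rather than $\wt{\cC}$, so the Galois covering is in the $F$-category sense of Section~\ref{sec:Galois_F} and the component bookkeeping has to be done explicitly; this is effectively what the paper's argument unwinds.
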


Before proving Proposition \ref{prop:transfer},
we give a candidate for the transfer
homomorphism for each morphism in $\wt{\cFC}$
which belongs to $\wt{\cFT}$.

Let $f\colon  F \to F'$ be a morphism in $\wt{\cC}$
which belongs to $\wt{\cT}$.
It follows from Lemma \ref{lem:imath_enough_Galois}
that there exists an object $F_1$ in $\wt{\cC}$ 
and a morphism $f_1 \colon  F_1 \to F$ which belongs to
$\wt{\cT}$ such that the composite $f \circ f_1$
is a Galois covering in $\wt{\cC}$.
We then define the transfer map $f_*\colon  G(F) \to G(F')$ to be
the composite of the map $G(F) \to G(F_1)^{\Aut_{F'}(F_1)}$
that sends $x \in G(F)$ to 
$\sum_{f'_1 \in \Hom_{F'}(F_1,F)} f'^*_1(x)$
and the inverse of the isomorphism $G(F') \xto{\cong} 
G(F_1)^{\Aut_{F'}(F_1)}$.
The following lemma shows that the map
$f_*$ does not depend on the choice of $F_1$.

\begin{lem}
Let $\cF$ be an $F$-category and let $\cF^{(0)} \subset \cF$ 
denote the full subcategory of connected objects.
Suppose that the category $\cF^{(0)}$ is semi-cofiltered
in the sense of \cite[Def.~2.4.4]{Grids}
and that any morphism in $\cF^{(0)}$ is an epimorphism.
Let $N \to N' \xleftarrow{f_1} N_1
\xleftarrow{f_2} N_2$ be a diagram in $\cF^{(0)}$
such that $f_1$ and $f_1 \circ f_2$ are Galois coverings
in $\cF^{(0)}$.
Suppose that the set $\Hom_{N'}(N_1,N)$ is non-empty.
Then the map $\Hom_{N'}(N_1,N) \to \Hom_{N'}(N_2,N)$
given by the composition with $f_2$ is bijective.
\end{lem}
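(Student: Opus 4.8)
The plan is to prove bijectivity by exhibiting the inverse map, using the fact that $f_1$ and $f_1 \circ f_2$ are Galois coverings. First I would set $h_1 = f_1$, $h_2 = f_1 \circ f_2$, and $G_i = \Aut_{N'}(N_i)$ for the Galois group of $h_i$; note that composition with $f_2$ gives a canonical surjective homomorphism $G_2 \to G_1$, and that $\Aut_{N_1}(N_2) = \ker(G_2 \to G_1)$ fits into an exact sequence $1 \to \Aut_{N_1}(N_2) \to G_2 \to G_1 \to 1$ by Lemma 4.2.6 of \cite{Grids} (which applies since $f_2$, $h_1$, $h_2$ are Galois coverings in the $E$-category $\cF^{(0)}$). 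The key observation is that $\Hom_{N'}(N_1, N)$ is a $G_1$-set via precomposition and $\Hom_{N'}(N_2, N)$ is a $G_2$-set, and the map $- \circ f_2$ is equivariant along $G_2 \to G_1$.

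Next I would analyze the structure of these two Hom-sets. Since $N \to N'$ is a morphism in $\cF^{(0)}$ and $\cF^{(0)}$ is semi-cofiltered, one can find a common refinement: choose a morphism $N_3 \to N$ in $\cF^{(0)}$ with $N_3 \to N \to N'$ Galois (using that $\cF^{(0)}$ has enough Galois coverings, which follows from the semi-cofiltered hypothesis — or, if needed, factor through $N_1 \times_{N'}\!$-type constructions via the $F$-category machinery and Corollary~\ref{cor:fiber_product}). By Lemma 4.2.3~(2) of \cite{Grids}, $G_1 = \Aut_{N'}(N_1)$ acts transitively on $\Hom_{N'}(N_1, N)$ with stabilizer of a chosen element $h \in \Hom_{N'}(N_1,N)$ equal to the image of $\Aut_N(N_1)$; similarly $G_2$ acts transitively on $\Hom_{N'}(N_2,N)$. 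So both sets are homogeneous spaces, and the map $-\circ f_2$ being equivariant, to check it is bijective it suffices to compare cardinalities, i.e. to compare the stabilizers.

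The main step, then, is the stabilizer computation. Pick $h \in \Hom_{N'}(N_1,N)$ (nonempty by hypothesis); then $h \circ f_2 \in \Hom_{N'}(N_2,N)$. I would show that the stabilizer in $G_2$ of $h \circ f_2$ equals the full preimage under $G_2 \to G_1$ of the stabilizer in $G_1$ of $h$. One inclusion is automatic from equivariance. For the reverse: if $\sigma \in G_2$ maps to $\bar\sigma \in G_1$ fixing $h$, i.e. $h \circ \bar\sigma = h$, then $h \circ f_2 \circ \sigma = h \circ \bar\sigma \circ f_2 = h \circ f_2$ — this uses the relation $f_2 \circ \sigma = \bar\sigma \circ f_2$ coming from the definition of $G_2 \to G_1$. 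Combined with transitivity and the orbit–stabilizer count $|\Hom_{N'}(N_1,N)| = |G_1|/|\mathrm{Stab}(h)|$ and likewise downstairs, together with $|G_2|/|G_1| = |\Aut_{N_1}(N_2)| = |\ker(G_2\to G_1)|$ and the corresponding relation between the stabilizers, the cardinalities match, giving bijectivity. The hard part will be making the semi-cofiltered / enough-Galois-coverings input rigorous enough to guarantee that the relevant Hom-sets really are single $G_i$-orbits and that the stabilizer identity holds on the nose; once that homogeneous-space picture is in place, the rest is the orbit–stabilizer bookkeeping above.
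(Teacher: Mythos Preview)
Your orbit--stabilizer approach works in principle but is far more elaborate than needed, and has a couple of gaps. First, the aside about ``enough Galois coverings following from the semi-cofiltered hypothesis'' is wrong and irrelevant: you already have the Galois coverings $f_1$ and $f_1\circ f_2$, and Lemma~4.2.3(2) of \cite{Grids} gives transitivity of $G_i=\Aut_{N'}(N_i)$ on $\Hom_{N'}(N_i,N)$ directly from these, with no auxiliary $N_3$. Second, your cardinality comparison $|G_i|/|\mathrm{Stab}|$ assumes the groups are finite, which is not in the hypotheses; you should argue instead that a $G_2$-equivariant map between transitive $G_2$-sets with the same stabilizer at corresponding basepoints is automatically bijective. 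Third, your stabilizer computation has the inclusions swapped: the direction $\mathrm{Stab}_{G_2}(h\circ f_2)\subset\text{preimage of }\mathrm{Stab}_{G_1}(h)$ is the one that genuinely needs $f_2$ to be an epimorphism (to cancel $f_2$ from $h\circ\bar\sigma\circ f_2=h\circ f_2$), not the one you label ``automatic''.

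The paper's proof bypasses all of this. Injectivity is a one-liner: $f_2$ is an epimorphism in $\cF^{(0)}$ by hypothesis. For surjectivity, given $h\in\Hom_{N'}(N_2,N)$, a single application of Lemma~4.2.3(1) of \cite{Grids} to the commutative square (with $f_1$ Galois and $\Hom_{N'}(N_1,N)$ nonempty) produces $g\in\Aut_{N'}(N_1)$ such that $h$ factors through $f_2$; no exact sequence of Galois groups or orbit--stabilizer bookkeeping is required.
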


\begin{proof}
The injectivity follows since $f_2$ is an epimorphism 
in $\cF^{(0)}$.
We prove the surjectivity.
Let $h\colon N_2 \to N$ be a morphism over $N'$.
We apply Lemma 4.2.3 (1) of \cite{Grids} to the diagram
$$
\begin{CD}
N_2 @>{f_2}>> N_1 \\
@V{h}VV @V{f_1}VV \\
N @>>> N'.
\end{CD}
$$
There exists an automorphism $g\in \Aut_{N'}(N_1)$
such that $h = f_1 \circ g \circ f_2$.
Hence the claim follows.
\end{proof}

Let $f\colon  F \to F'$ be a morphism in $\wt{\cFC}$
which belongs to $\wt{\cFT}$.
For $i \in \pi_0(F)$, let $F_i$ denote the
component at $i$ of $F$ and let $f_i$ denote the
component at $i$ of $f$.
For $j \in \pi_0(F')$ we let $F'_j$ denote the
component at $j$ of $F'$.
We let $f_* \colon  G(F) \to G(F')$ denote the
unique homomorphism such that the diagram
$$
\begin{CD}
G(F) @>{f_*}>> G(F') \\
@V{\cong}VV @V{\cong}VV \\
\prod_{i \in \pi_0(F)}G(F_i)
@>>> \prod_{j \in \pi_0(F')}G(F'_j)
\end{CD}
$$
is commutative.
Here the bottom horizontal map is the homomorphism
that sends $(x_i)_{i \in \pi_0(F)} \in
\prod_{i \in \pi_0(F)}G(F_i)$ to
$(y_j)_{j \in \pi_0(F')} \in \prod_{j \in \pi_0(F')}G(F'_j)$,
where $y_j = \sum_{\pi_0(f)(i) = j} (f_i)_* x_i$ for each
$j \in \pi_0(F')$.

\begin{proof}[Proof of Proposition \ref{prop:transfer}]
It follows from Lemma \ref{lem:sum_formula} and
Lemma \ref{lem:transfer_nonGal} that
for any structure of presheaf with transfers 
on $G$, the transfer homomorphism for a morphism $f$ is 
equal to the homomorphism $f_*$ introduced above.
This proves the uniqueness of the structure of
presheaf with transfers on $G$.

It remains to prove that the collection $(f_*)$ 
of maps $f_*$ introduced above satisfies the 
three properties in Definition \ref{def:transfer}.

We prove that the collection $(f_*)$ satisfies
the property (1) in Definition \ref{def:transfer}.
Let $F \xto{f} F' \xto{f'} F''$ be a diagram
in $\wt{\cFC}$ such that $f$ and $f'$ belong to $\wt{\cFT}$.
We prove that $(f' \circ f)_*$ is equal to 
$f'_* \circ f_*$.
We are easily reduced to the case when $f$ and $f'$
are morphisms in $\wt{\cC}$.
Let us take a morphism 
$h \colon  F_1 \to F$ in $\wt{\cC}$ which belongs to $\wt{\cT}$
such that the composite $f' \circ f \circ h$ is 
Galois covering in $\wt{\cC}$.
Let us consider the three sets
$S=\Hom_{F''}(F_1,F)$, $S'=\Hom_{F''}(F_1,F')$,
and $S_1=\Hom_{F'}(F_1, F)$.
By definition $S_1$ is a subset of $S$ and
the composition with $f$ gives a map $S \to S'$.
It follows from \cite[Lemma 4.2.3]{Grids}
that the group $\Aut_{F''}(F_1)$ acts transitively
on the sets $S$ and $S'$ and that the group
$\Aut_{F'}(F_1)$ acts transitively
on the set $S_1$.
Since the stabilizer of $h \in S$ and $f \circ h \in S'$ 
in $\Aut_{F''}(F_1)$ are equal to $\Aut_{F'}(F_1)$ and
$\Aut_{F}(F_1)$ respectively, we have
$(f \circ h)^* \circ f_* (x) = 
\sum_{g \in \quot{\Aut_{F'}(F_1)}{\Aut_{F}(F_1)}}
g^* \circ h^* (x)$
and
$(f' \circ f \circ h)^* \circ (f'\circ f)_* (x) = 
\sum_{g \in \quot{\Aut_{F''}(F_1)}{\Aut_{F}(F_1)}}
g^* \circ h^* (x)$
for any $x \in G(F)$ and
$(f' \circ f \circ h)^* \circ f'_* (y) = 
\sum_{g \in \quot{\Aut_{F''}(F_1)}{\Aut_{F'}(F_1)}}
g^* \circ (f\circ h)^* (y)$
for any $y \in G(F')$.
Hence the equality $(f' \circ f)_* =f'_* \circ f_*$ holds.
This proves that the collection $(f_*)$ satisfies
the property (1) in Definition \ref{def:transfer}.

Let the notation be as in the 
property (2) in Definition \ref{def:transfer}.
We prove the equality $g_2^* \circ f'_* = f_* \circ g_1^*$.
By using Corollary \ref{cor:fiber_product} we are easily
reduced to the case when $F_2$, $F'_1$ and $F'_2$
are objects in $\wt{\cC}$.
Let us take a morphism
$h' \colon  F' \to F'_1$ in $\wt{\cC}$ which belong to $\wt{\cT}$
such that $f' \circ h'$ is a Galois covering in 
$\wt{\cC}$.
It follows from Proposition \ref{prop:Ctil_Bsite} 
and \cite[Lemma 4.2.3]{Grids} that
there exist an object $F$ in $\wt{\cC}$
a morphism $g\colon F \to F'$ in $\wt{\cC}$
and a morphism $h\colon F \to F_2$ in
$\wt{\cC}$ which belongs to $\wt{\cT}$ satisfying
$g_2 \circ h = f' \circ h' \circ g$.
For each $i \in \pi_0(F_1)$, let 
$F_{1,i}$ denote the component at $i$ of $F_1$
and let $f_i\colon F_{1,i} \to F_2$ and $g_{1,i}\colon F_{1,i} \to F'_2$
denote the component at $i$ of $f$ and $g_1$,
respectively.
It follows from Corollary 4.3.2 of \cite{Grids} that
there exist an object $F_3$ in $\wt{\cC}$ 
a morphism $f_3 \colon  F_3 \to F$ in
$\wt{\cC}$ which belongs to $\wt{\cT}$, and a morphism
$f_{3,i}\colon F_3 \to F_{1,i}$ which belongs to $\wt{\cT}$ for
each $i \in \pi_0(F_1)$ such that
$h \circ f_3 = f_i \circ f_{3,i}$ holds for each 
$i \in \pi_0(F)$ and that $h \circ f_3$ is a Galois
covering in $\wt{\cC}$.
It follows from the construction of the transfer 
homomorphisms that we have
$(h \circ f_3)^* \circ f_* \circ g_1^*
= \sum_{i \in \pi_0(F)} \sum_{h_i \in \Hom_{F_2}(F_3,F_{1,i})} 
h_i^* \circ g_{1,i}^*$.
Since the diagram in the 
property (2) in Definition \ref{def:transfer}
is cartesian, the map
$\coprod_{i \in \pi_0(F)} \Hom_{F_2}(F_3, F_{1,i})
\to \Hom_{F'_2}(F_3, F'_1)$
given by the composition with $g_{1,i}$ for each $i \in \pi_0(F)$
is bijective.
Hence we have
$(h \circ f_3)^* \circ f_* \circ g_1^*
= \sum_{h'' \in \Hom_{F'_2}(F_3, F'_1)} h''^* $.
It follows from Lemma 4.2.3 (1) of \cite{Grids} that
for any $h'' \in \Hom_{F'_2}(F_3, F'_1)$, there exists
an automorphism $\alpha \in \Aut_{F'_2}(F')$
satisfying $h'' = h' \circ \alpha \circ g \circ f_3$.
Since it follows from Proposition \ref{prop:Ctil_Bsite} that 
$g \circ f_3$ is an epimorphism in $\wt{\cC}$ 
the map $\Hom_{F'_2}(F',F'_1) \to \Hom_{F'_2}(F_3,F'_1)$
given by the composition with $g \circ f_3$ is bijective.
Therefore we have
$(h \circ f_3)^* \circ f_* \circ g_1^*
= (g \circ f_3)^* \circ (\sum_{h''' \in \Hom_{F'_2}(F', F'_1)} h'''^*)
= (g \circ f_3)^* \circ h'^* \circ f'_*
= (h \circ f_3)^* \circ g_2^* \circ f'_*$.
This proves that the collection $(f_*)$ satisfies
the property (2) in Definition \ref{def:transfer}.

We prove that the collection $(f_*)$ satisfies
the property (3) in Definition \ref{def:transfer}.
Let $f\colon F \to F'$ be a morphism 
in $\wt{\cFC}$ which belongs to $\wt{\cFT}$.
We prove that the composite $f_* \circ f^*$
is equal to the multiplication by $\deg f$.
We are easily reduced to the case when $f$ is
a morphism in $\wt{\cC}$.
Let us take a morphism
$h \colon  F_1 \to F$ in $\wt{\cC}$ which belongs to $\wt{\cT}$
such that the composite $f \circ h$ is 
Galois covering in $\wt{\cC}$.
Let $s$ denote the cardinality of the set
$\Hom_{F'}(F_1,F)$.
It follows from the construction of the transfer
homomorphism $f_*$ that the composite
$(f \circ h)^* \circ f_*$ is equal to
the homomorphism $(f \circ h)^*$ multiplied by $s$.
It follows from Corollary \ref{cor:deg_limit} 
and Lemma 4.2.3 of \cite{Grids} that we have $s=\deg f$.
This proves that the composite 
is equal to the multiplication by $\deg f$.
This completes the proof.
\end{proof}

In a similar manner, one can show that
any abelian sheaf 
on $\wt{\cFC}$ 
has a unique structure of abelian presheaf with
transfers on $\wt{\cFC}$.

\subsection{Hecke operators}

\subsubsection{ }
A homomorphism of abelian presheaves with transfers
is a homomorphism of abelian presheaves compatible
with $f_*$. If $F$ is an abelian sheaf
on $\wt{\cFC}$,
any homomorphism of 
abelian presheaves
from an abelian presheaf with transfers
to $F$ is compatible with $f_*$.

\subsubsection{Hecke operators}
\label{sec:def Hecke}
Let $D =[X \xleftarrow{f}Z \xto{g} Y]$ be a diagram in $\wt{\cFC}$
such that $g$ belongs to $\wt{\cFT}$.
Let $G$ be an abelian presheaf with transfer on $\wt{\cFC}$. We define
the Hecke operator $T_D$ for $G$ to be the composite
$$
T_D = g_* \circ f^* \colon  G(X) \to G(Y).
$$

\subsubsection{Examples of Hecke operators}
\label{sec:exam Hecke}
We will give an interpretation of the Hecke operators
with the usual Hecke operators in terms of 
double cosets in Section~\ref{sec:Hecke double}.

Let $d \ge 1$ be an integer. Let $X$ be as in Section \ref{sec:4},
i.e., $X$ is a regular noetherian scheme of pure Krull dimension one
such that the residue field at each closed point is finite.
Let us consider the category $\cC^d$ introduced in Section \ref{sec:Cd_defn}
and the Grothendieck topologies $J^d$ and $J^d_m$ on $\cC^d$
introduced in the statement of Theorem \ref{thm:section2}.
It follows from Theorem \ref{thm:section2} that 
$(\cC^d,J^d)$ and $(\cC^d,J^d_m)$ are $Y$-sites.
Let us denote by $\cFCot{d}$ (\resp by $\cFCotu{d}$) 
the category $\wt{\cFC}$ introduced in Section \ref{sec:wtcFC}
constructed from the $Y$-site 
$(\cC^d,J^d)$ (\resp $(\cC^d,J^d_m)$).

Let $N$ be an object in $\cCo{d}$, 
and $F'$ be an object in $\cFCot{d}$
(\resp in $\cFCotu{d}$). 
Suppose that $F'$ is of the form
$F' = \coprod_j \quot{N'_j}{H_j}$
(\resp $F' = \coprod_j \quotu{N'_j}{H_j}$)
such that $N'_j \oplus N$ is an object
in $\cCo{d}$ for every $j$. 
We define an object $F'\oplus[N]$ in $\cFCot{d}$
(\resp in $\cFCotu{d}$)
by
$$
F'\oplus [N] = \coprod_j 
\quot{(N'_j\oplus N)}{(H_j\times \Aut_{\cO_X}(N))}.
$$
The two morphisms
$N'_j = N'_j \hookrightarrow N'_j \oplus N$
and $N'_j
\twoheadleftarrow N'_j \oplus N = N'_j \oplus N$
induce the morphisms
$$
r_{F'\oplus [N],F'}, 
m_{F'\oplus [N],F'}\colon  F'\oplus [N] \to F'
$$
in $\cFCot{d}$
(\resp in $\cFCotu{d}$).
When $F'$ is an object in $\cFCotu{d}$,
the morphism $m_{F'\oplus [N],F'}$ is a \fibr
in $\cFCotu{d}$.

Let $G$ be an abelian presheaf with
transfers on $\cFCot{d}$
(\resp on $\cFCotu{d}$). 
The composite 
$$
(m_{F'\oplus [N],F'})_{*} 
r_{F'\oplus [N],F'}^* \colon  G(F')\to G(F')
$$ 
is called the Hecke operator
for $[N]$ (\resp $[N]^{\uppr}$) 
and is denoted by $T_{[N]}$
(\resp $T_{[N]^{\uppr}}$).


\subsection{Presheaf of rings with transfers}
\begin{defn}
A {\em presheaf of rings with transfers} on $\wt{\cFC}$
is a presheaf $G$ of rings on $\wt{\cFC}$
equipped with a structure of abelian presheaf 
with transfers 
satisfying the following property:
\begin{itemize}
\item For any morphism
$f\colon F\to F'$ in $\wt{\cFT}$, 
for any $x\in G(F)$, and for any $y\in G(F')$, we have
$f_*(x\cdot f^* y) = f_*(x)\cdot y$ and $f_*(f^* y \cdot x)
= y \cdot f_*(x)$.
\end{itemize}
\end{defn}
\label{def:ring transfer}
Any sheaf of rings on $\wt{\cFC}$ 
has a unique structure of presheaf of rings with
transfers on $\wt{\cFC}$.
This can be proved as follows. 
Suppose that $G$ is a sheaf of rings on $\wt{\cFC}$. 
It follows from Proposition \ref{prop:transfer} that
$G$, regarded as an abelian sheaf with respect to the
addition, has a unique structure of presheaves
with transfers on $\wt{\cFC}$.
We claim that the collection $(f_*)$ of the transfer homomorphisms 
satisfies the condition above.
To prove the claim, we may and will assume that $f$ is
a morphism in $\wt{\cC}$ which belongs to $\wt{\cT}$.
Let us take a morphism $h\colon F_1 \to F$ in $\wt{\cC}$ which belongs
to $\wt{\cT}$
such that the composite $f \circ h$ is a Galois covering in $\wt{\cC}$.
Then for any $x\in G(F)$ and for any $y\in G(F')$, we have
$(f\circ h)^* f_*(x \cdot f^* y) = 
\sum_{h' \in \Hom_{F'}(F_1,F)} h'^* (x \cdot f^* y)
= \sum_{h' \in \Hom_{F'}(F_1,F)} (h'^* x \cdot (f \circ h)^* y)
= (\sum_{h' \in \Hom_{F'}(F_1,F)} h'^* x)\cdot (f \circ h)^* y
= (f\circ h)^* f_*(x) \cdot (f \circ h)^* y
= (f \circ h)^* (f_*(x) \cdot y)$.
Since $(f \circ h)^*$ is injective, we have
one of the desired equalities 
$f_*(x \cdot f^* y) = f_*(x) \cdot y$.
In a manner similar to that, we can prove
the other equality $f_*(f^* y \cdot x) = y \cdot f_*(x)$.

%

\begin{lem}\label{lem:ring_transfer}
Let $G$ be a presheaf of rings with transfers on $\wt{\cFC}$.
Let $f_1\colon F_1 \to F'$ and $f_2\colon F_2 \to F'$ be morphisms 
in $\wt{\cFC}$ which belong to $\wt{\cFT}$.
For $i=1,2$, let $\pr_i\colon  F_1 \times_{F'} F_2 \to F_i$
denote the $i$-th projection. We set $f=f_1 \circ \pr_1
= f_2 \circ \pr_2$.
Then for any $x \in G(F_1)$ and for any $y \in G(F_2)$,
we have $f_*(\pr_1^* x \cdot \pr_2^* y) = (f_1)_* x
\cdot (f_2)_* y$.
\end{lem}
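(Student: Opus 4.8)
The plan is to reduce the claim to the projection formula built into the notion of presheaf of rings with transfers, applied in two steps — along $f_1$ and along the projection $\pr_1$ — glued together by the base change identity of Definition \ref{def:transfer}. First I would set up the square. Since $f_1, f_2$ belong to $\wt{\cFT}$, Proposition \ref{prop:FC_product} (and the remark preceding Lemma \ref{lem:transfer_isom}, which records that $(\cC,J)$ satisfies the hypothesis of that proposition) guarantees that the fiber product $F_1 \times_{F'} F_2$ exists in $\wt{\cFC}$ and that the projection $\pr_1 : F_1 \times_{F'} F_2 \to F_1$ belongs to $\wt{\cFT}$; consequently $f = f_1 \circ \pr_1$ belongs to $\wt{\cFT}$ as well. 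The square with top-left vertex $F_1 \times_{F'} F_2$, top-right vertex $F_2$, bottom-left vertex $F_1$, bottom-right vertex $F'$, and edges $\pr_2$, $\pr_1$, $f_2$, $f_1$ is cartesian by construction. Applying property (2) of Definition \ref{def:transfer} to this square, transferring along $f_2 \in \wt{\cFT}$, yields the base change identity $f_1^{*} \circ (f_2)_{*} = (\pr_1)_{*} \circ \pr_2^{*} : G(F_2) \to G(F_1)$.

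Next I would carry out the computation. By property (1) of Definition \ref{def:transfer} and $f = f_1 \circ \pr_1$ we have $f_{*}(\pr_1^{*} x \cdot \pr_2^{*} y) = (f_1)_{*}\big((\pr_1)_{*}(\pr_1^{*} x \cdot \pr_2^{*} y)\big)$. The projection formula applied to $\pr_1 \in \wt{\cFT}$ — in the form $g_{*}(g^{*}u \cdot v) = u \cdot g_{*}(v)$, here with $u = x \in G(F_1)$ and $v = \pr_2^{*} y$ — rewrites the inner term as $x \cdot (\pr_1)_{*}(\pr_2^{*} y)$, which by the base change identity above equals $x \cdot f_1^{*}\big((f_2)_{*} y\big)$. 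A second application of the projection formula, now along $f_1 \in \wt{\cFT}$ in the form $g_{*}(v \cdot g^{*}u) = g_{*}(v) \cdot u$ with $v = x$ and $u = (f_2)_{*} y \in G(F')$, gives $(f_1)_{*}\big(x \cdot f_1^{*}((f_2)_{*} y)\big) = (f_1)_{*} x \cdot (f_2)_{*} y$. Chaining these identities produces $f_{*}(\pr_1^{*} x \cdot \pr_2^{*} y) = (f_1)_{*} x \cdot (f_2)_{*} y$, which is the assertion.

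I do not expect a serious obstacle here: this is a formal manipulation with the transfer axioms, and the projection formula and the base change identity are both already stated at the level of $\wt{\cFC}$ (with $\wt{\cFC}$-level fiber products, by Corollary \ref{cor:fiber_product}), so no reduction to connected objects is needed. The only points requiring attention are bookkeeping ones: checking that every morphism that a transfer or a base change formula is applied to genuinely lies in $\wt{\cFT}$ (all handled by Proposition \ref{prop:FC_product} and closure of $\wt{\cFT}$ under composition), and being careful about which of the two symmetric forms of the projection formula to use at each step, since the argument $\pr_1^{*} x$ occupies the left factor of the product while $x$ occupies it after the first rewriting.
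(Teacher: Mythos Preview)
Your proof is correct and follows essentially the same route as the paper: factor $f_* = (f_1)_* \circ (\pr_1)_*$, apply the projection formula along $\pr_1$, use the base change identity $(\pr_1)_* \pr_2^* = f_1^* (f_2)_*$, then apply the projection formula along $f_1$. The paper's proof is more terse (it omits the justification that the fiber product exists and that the relevant morphisms lie in $\wt{\cFT}$), but the argument is the same.
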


\begin{proof}
Since $f= f_1 \circ \pr_1$, we have
$f_*(\pr_1^* x \cdot \pr_2^* y) = (f_1)_*
(x \cdot (\pr_1)_* \pr_2^* y)$.
It follows from Condition (2) in Definition \ref{def:transfer}
that we have $(\pr_1)_* \circ \pr_2^* = f_1^* (f_2)_*$.
Hence
$f_*(\pr_1^* x \cdot \pr_2^* y) = (f_1)_*
(x \cdot f_1^* (f_2)_* y)
= (f_1)_* x \cdot (f_2)_* y$.
This proves the claim.
\end{proof}

\section{The compact induction functor}
\label{sec:descent}
Let $(\cC,J)$ be a $Y$-site. Let $\wt{\cC}$ be as in Section \ref{sec:Ctil}.
Let $\wt{\cFC}$ be as in Section \ref{sec:wtcFC}.
We assume that for any object $X$ of $\cC$, the overcategory $\cC(\cT(J))_{/X}$
satisfies the condition (1) of \cite[5.8.1]{Grids}.
We note that the last condition is equivalent to the following condition:
For any morphism $f\colon Y \to X$ in $\cC$ which belongs to
$\cT(J)$, there exists only finitely many endomorphisms $Y \to Y$
over $X$.

Suppose that $\bM$ is a topological monoid and $\bK, \bK' \subset \bM^\times$
are open subgroups such that $\bK'$ is a normal subgroup of $\bK$.
Set $G = \bK/\bK'$ and let $\cB(G)$ denote the category of
finite left $G$-sets. Then by sending an object $S$ of $\cB(G)$ to
the smooth $\bM$-set $\bM \times^{\bK} S := \bM \times S/\sim$,
where $\sim$ is the equivalence relation given by $(mk,s) \sim (m,ks)$
for any $m\in \bM$, $k \in \bK$, and $s \in S$, we obtain a functor
from $\cB(G)$ to the category of smooth left $\bM$-sets.
Below we construct a similar functor to $\wt{\cFC}$, which we call
the compact induction functor.

For example, we take $\bM=\GL_d(L)$ 
where $L$ is a nonarchimedean local field,
$\bK=\GL_d(\cO_L)$ for the ring of integer $\cO_L$ of $L$,
and $\bK'=\Ker[\GL_d(\cO_L) \to \GL_d(\cO_L/\varpi \cO_L)]$
where $\varpi$ is a uniformizer.   

\subsection{Preliminary on the overcategory $\wt{\cFC}_{/F'}$}
\subsubsection{ }
Let $F'$ be an object in $\wt{\cFC}$.
Let us consider the overcategory $\wt{\cFC}_{/F'}$.
We let $\wt{\cC}_{/F'} \subset \wt{\cFC}_{/F'}$ denote the full subcategory
whose objects are the morphisms $F \to F'$ in $\wt{\cFC}$ such that
$F$ is an object in $\wt{\cC}$.
This notation may be confused with that of overcategories.
However there is no risk of serious confusion, since
if $F'$ is an object of $\wt{\cC}$, then
$\wt{\cC}_{/F'}$ is equal to the overcategory of morphisms 
to $F'$ in $\cC$.
It then can be checked easily that the category
$\wt{\cFC}_{/F'}$ is an $F$-category, and an object 
of $\wt{\cFC}_{/F'}$ is connected if and only if 
it is isomorphic to an object of $\wt{\cC}_{/F'}$.

Let us write $F' = \coprod_{i \in \pi_0(F')} F'_i$.
We regard each $F'_i$ as an object of $\wt{\cC}$.
Observe that $\wt{\cC}_{/F'}$ is equal to the categorical sum
$\coprod_{i \in \pi_0(F')} \wt{\cC}_{/F'_i}$.
For $i \in \pi_0(F')$, let $\jmath_i\colon  \wt{\cC}_{/F'_i} \inj \wt{\cC}$
denote the inclusion functor.
Let $\wt{\cT}_{/F'_i}$ denote the set of morphisms $f$ in
$\wt{\cC}_{/F'_i}$ such that $\jmath_i(f)$ belongs to $\wt{\cT}$.
We saw in Proposition \ref{prop:Ctil_Bsite}
that $(\wt{\cC},\iota_* J)$ is a $B$-site.
It follows that $\wt{\cT}_{/F'_i}$ is semi-localizing
and any morphism in $\wt{\cC}_{/F'_i}$ is an epimorphism.
It follows from Corollary \ref{cor:bicovering} that
the Grothendieck topology $\jmath_i^* \iota_* J$ on 
$\wt{\cC}_{/F'_i}$ is equal to the $A$-topology given by
$\wt{\cT}_{/F'_i}$.
Moreover, it follows from Lemma \ref{lem:imath_enough_Galois}
that the category $\wt{\cC}_{/F'_i}(\wt{\cT}_{/F'_i})$ 
has enough Galois coverings.
Let $\wt{\cFT}_{/F'}$ denote the set of morphisms
in $\wt{\cFC}_{/F'}$ which belongs to $\wt{\cFT}$ as
a morphism in $\wt{\cFC}$. 

Let $\iota' \colon  \cC \inj \wt{\cFC}$ denote the composite 
$\cC \xto{\iota} \wt{\cC} \inj \wt{\cFC}$ and let
$\jmath\colon  \wt{\cFC}_{/F'} \inj \wt{\cFC}$ 
denote the inclusion functor.
Let us consider the Grothendieck topology $\jmath^* \iota'_* J$.
Then it is not hard to check, by using Corollary \ref{cor:bicovering} that
a contravariant functor $G$ from the category $\wt{\cFC}_{/F'}$ 
to the category of sets is a sheaf
on $(\wt{\cFC}_{/F'}, \jmath^* \iota'_* J)$
if and only if $G$
satisfies Conditions (1) and (2) in Section \ref{sec:cond12}
and the following condition:
\begin{enumerate}
\setcounter{enumi}{2}
\item For each $i \in \pi_0(F')$, the restriction of $G$ to
$\wt{\cC}_{/F'_i}$ is a sheaf on $(\wt{\cC}_{/F'_i}, \jmath_i^* \iota_* J)$.
\end{enumerate}

\subsubsection{ }
Let $f\colon  F \to F'$ be a morphism in $\wt{\cFC}$.
Let $\iota_f \colon  \wt{\cFC}_{/F} \to \wt{\cFC}_{/F'}$ denote the functor
given by the composition with $f$.
It is clear that the functor $\iota_f$ commutes with
 fiber products. Hence it follows from the definition
of sheaves on $\wt{\cFC}_{/F}$ that
for any sheaf $G$ on $\wt{\cFC}_{/F'}$, its composite with
$\iota_f$ is a sheaf on $\wt{\cFC}_{/F}$.
We denote the sheaf on $\wt{\cFC}_{/F}$ by
$f^* G$ and call it the pullback of $G$ with respect to
the morphism $f$.
By associating $f^*G$ to each sheaf 
$G$ on $\wt{\cFC}_{/F'}$, we obtain a functor $f^*$
from the category of sheaves on 
$\wt{\cFC}_{/F'}$ to the category of sheaves
on $\wt{\cFC}_{/F}$.

\begin{prop}\label{prop:Gal_descent}
Let $f\colon  F \to F'$ be a morphism in $\wt{\cC}$
which belongs to $\wt{\cT}$.
Let $\phi\colon G_1 \to G_2$ be a morphism of sheaves 
on $\wt{\cFC}_{/F'}$.
Then $\phi$ is an isomorphism if its pullback 
$f^*(\phi)\colon  f^* G_1 \to f^* G_2$ with respect to $f$
is an isomorphism.
\end{prop}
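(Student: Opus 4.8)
The plan is to prove directly that $f^{*}$ is conservative, by descent along $f$. Write $\cS = \wt{\cFC}_{/F'}$ with its Grothendieck topology $\jmath^{*}\iota'_{*}J$, and recall the concrete description of its sheaves from Section~\ref{sec:descent}. First I would pin down the essential image of the functor $\iota_{f}\colon \wt{\cFC}_{/F}\to\cS$ given by composition with $f$: an object $(A,b)$ of $\cS$ lies there precisely when the structure morphism $b\colon A\to F'$ factors through $f$. Since $f^{*}G_{i}=G_{i}\circ\iota_{f}$ and $f^{*}\phi=\phi\circ\iota_{f}$, the hypothesis that $f^{*}\phi$ is an isomorphism says exactly that $\phi_{(A,b)}\colon G_{1}(A,b)\to G_{2}(A,b)$ is bijective for every $(A,b)$ whose structure morphism factors through $f$.

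Next I would show that these objects are ``enough''. For an arbitrary object $g\colon G\to F'$ of $\cS$, consider the pullback sieve $g^{*}R_{f}$ on $G$, where $R_{f}$ is the sieve on $F'$ generated by $f$. Every $\alpha\colon A\to G$ in $g^{*}R_{f}$ has $g\circ\alpha\in R_{f}$ by definition, i.e. $g\circ\alpha$ factors through $f$, so $(A,g\circ\alpha)$ lies in the image of $\iota_{f}$. That $g^{*}R_{f}$ is a covering sieve follows, by Condition~(2) in Section~\ref{sec:topology} (and by Corollary~\ref{cor:bicovering}, which reduces covering sieves on $\cS$ to covering sieves in $\wt{\cFC}$), once one knows $R_{f}\in\iota'_{*}J(F')$; and this is where the hypothesis $f\in\wt{\cT}$ is used. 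Indeed $F'$ is a connected object of $\wt{\cFC}$ by Corollary~\ref{cor:FC is an F-category}, and by Lemma~\ref{lem:compare_JT} together with Proposition~\ref{prop:cFT_main} a sieve on a connected object of $\wt{\cFC}$ lies in $\iota'_{*}J$ exactly when it contains a morphism belonging to $\wt{\cT}$ — here the sieve $R_{f}$ contains $f$ itself.

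Finally I would conclude by the sheaf property. For a sheaf $G_{i}$ on $\cS$ one has a natural identification $G_{i}(G,g)\cong\varprojlim_{(A,\alpha)\in g^{*}R_{f}}G_{i}(A,g\circ\alpha)$, the limit taken over the covering sieve $g^{*}R_{f}$. By the two preceding steps the natural transformation $\phi$ induces a termwise isomorphism between the diagrams computing $G_{1}(G,g)$ and $G_{2}(G,g)$, hence an isomorphism on the limits; by naturality this is $\phi_{(G,g)}$. As $(G,g)$ was arbitrary, $\phi$ is an isomorphism of sheaves. I expect the only genuinely non-formal point to be the verification that $R_{f}$ (hence $g^{*}R_{f}$) is a covering sieve for $\iota'_{*}J$, since that is where $f\in\wt{\cT}$ enters and it requires unwinding the identification of $\iota'_{*}J$ on connected objects with the $A$-topology attached to $\wt{\cT}$; everything else is the definition of a sheaf plus the naturality of $\phi$. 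In particular this route needs neither a reduction to Galois coverings nor the existence of fibre products in $\wt{\cFC}$.
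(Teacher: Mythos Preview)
Your argument is correct and takes a genuinely different route from the paper. The paper first reduces to the case where $f$ is a Galois covering, then for a connected object $g':F'_1\to F'$ of $\wt{\cFC}_{/F'}$ uses the semi-localizing property of $\wt{\cT}$ and enough Galois coverings to produce a Galois covering $F_2\to F'_1$ in $\wt{\cT}$ whose source lies over $F$; it then reads off the bijectivity of $\phi(F'_1)$ from the bijectivity of $\phi(F_2)$ by taking $\Aut_{F'_1}(F_2)$-invariants. Your approach bypasses both the Galois reduction and the explicit invariants by observing that $R_f$ is already a covering sieve and invoking the sheaf condition as a limit over the pulled-back sieve. This is cleaner and more general: it works in any site where the single morphism $f$ generates a covering sieve, and it makes transparent that the result is a formal consequence of $f^*$ being conservative on sheaves.

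One small point of bookkeeping: Proposition~\ref{prop:cFT_main} as stated characterizes sheaves, not covering sieves, so it does not literally give you ``a sieve on a connected object of $\wt{\cFC}$ lies in $\iota'_*J$ exactly when it contains a morphism in $\wt{\cT}$.'' The direction you actually need (that $R_f\in\iota'_*J(F')$) follows directly from the definition of the pushforward topology: write $\iota'_*J=\jmath_*(\iota_*J)$ via Lemma~\ref{lem:pushforward_transitivity}, identify $\iota_*J=J_{\wt{\cT}}$ by Lemma~\ref{lem:compare_JT}, and then for any $h\colon Y\to F'$ with $Y\in\wt{\cC}$ apply the semi-localizing property of $\wt{\cT}$ to the diagram $F\xto{f}F'\xleftarrow{h}Y$ to produce a morphism $W\to Y$ in $\wt{\cT}$ lying in $\jmath_{Y,h}^*R_f$. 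This is exactly what the paper does implicitly in its step using Proposition~\ref{prop:Ctil_Bsite}, just phrased at the level of sieves rather than particular objects. With that adjustment your proof goes through as written.
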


\begin{proof}
We may and will assume that $f$ is a Galois covering which
belongs to $\wt{\cT}$.
Let $g'\colon F'_1 \to F'$ be an object in $\wt{\cFC}_{/F'}$.
We prove that the map $\phi(F'_1) \colon  G_1(F'_1) \to G_2(F'_1)$
given by $\phi$ is bijective.
Since $G_1$ and $G_2$ are sheaves on $\wt{\cFC}_{/F'}$,
they satisfy Conditions (1) and (2) in Section \ref{sec:cond12}.
Hence to prove $\phi(F'_1)$ is bijective, we may assume that 
$g'\colon F'_1 \to F'$ is an object in $\cC_{/F'}$.
It follows from Proposition \ref{prop:Ctil_Bsite} that
there exist an object $F_1$ in $\wt{\cC}$ and morphisms
$f_1 \colon F_1 \to F'_1$ and $g\colon F_1 \to F$ satisfying
$g' \circ f_1 = f \circ g$.
We may and will assume,
by using Lemma \ref{lem:tilde semi-localizing}, 
that $f_1$ belongs to $\wt{\cT}$.
It then follows from Lemma \ref{lem:imath_enough_Galois}
that there exist an object $F_2$ in $\wt{\cC}$ 
and a morphism $h\colon F_2 \to F_1$ which belongs to $\wt{\cT}$
such that $f_1 \circ h\colon F_2 \to F'_1$ is a Galois covering
in $\wt{\cC}$ which belongs to $\wt{\cT}$.
Since $f^*(\phi)$ is an isomorphism,
it follows that $\phi(F_2)\colon G_1(F_2) \to G_2(F_2)$ 
is bijective. Hence $\phi(F_2)$ induces a
bijection $G_1(F_2)^{\Aut_{F'_1}(F_2)} 
\to G_2(F_2)^{\Aut_{F'_1}(F_2)}$.
Since $G_1$ and $G_2$ are sheaves on $\wt{\cFC}_{/F'}$,
it follows that the map
$\phi(F'_1)\colon G_1(F'_1) \to G_2(F'_1)$
is bijective. This proves the claim.
\end{proof}

\begin{cor}\label{cor:cartesian_descent}
Let $f\colon  F \to F'$ be a morphism in $\wt{\cC}$
which belongs to $\wt{\cT}$.
Then a commutative square in the category
$\wt{\cFC}_{/F'}$ is cartesian if its base change 
to the square in the category $\wt{\cFC}_{/F}$ is cartesian.
\end{cor}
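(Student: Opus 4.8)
The plan is to reduce the statement about cartesian squares to the isomorphism statement of Proposition~\ref{prop:Gal_descent}. Given a commutative square
\[
\begin{CD}
G_1 @>>> G_2 \\
@VVV @VVV \\
G_3 @>>> G_4
\end{CD}
\]
in $\wt{\cFC}_{/F'}$ whose base change along $f:F \to F'$ to $\wt{\cFC}_{/F}$ is cartesian, I want to show the original square is cartesian. First I would form the fiber product $P = G_3 \times_{G_4} G_2$ in $\wt{\cFC}_{/F'}$ (this exists, and indeed the relevant fiber products in $\wt{\cFC}_{/F'}$ can be computed just as in $\wt{\cFC}$, by Proposition~\ref{prop:FC_product} together with Lemma~\ref{lem:cFC_limit}, since the question only concerns the underlying sheaves). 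The commutative square induces a canonical morphism $\phi: G_1 \to P$ in $\wt{\cFC}_{/F'}$, and the square is cartesian if and only if $\phi$ is an isomorphism.

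Next I would apply $f^*$, the pullback functor on sheaves on the respective overcategories introduced just before Proposition~\ref{prop:Gal_descent}. The key points are that $f^*$ commutes with fiber products (noted in the excerpt: $\iota_f$ commutes with fiber products, and $f^*$ is composition with $\iota_f$), so $f^* P \cong f^* G_3 \times_{f^* G_4} f^* G_2$, and that $f^*$ carries the canonical comparison morphism $\phi$ to the analogous comparison morphism for the base-changed square. By hypothesis the base-changed square in $\wt{\cFC}_{/F}$ is cartesian, which is exactly the assertion that $f^*(\phi)$ is an isomorphism. Then Proposition~\ref{prop:Gal_descent} applies directly to conclude that $\phi$ itself is an isomorphism, hence the original square is cartesian.

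The one point requiring a little care — and the place where I expect the main (minor) obstacle — is the bookkeeping identifying $f^*$ of the comparison morphism $\phi: G_1 \to G_3 \times_{G_4} G_2$ with the comparison morphism $f^* G_1 \to f^* G_3 \times_{f^* G_4} f^* G_2$ for the pulled-back diagram, i.e.\ checking that the canonical isomorphism $f^*(G_3 \times_{G_4} G_2) \cong f^* G_3 \times_{f^* G_4} f^* G_2$ intertwines the two comparison maps. This is a formal diagram chase using that $\iota_f$ is a functor commuting with the relevant limits, so it is routine; there is no genuine difficulty, just the need to be precise about what ``base change of a commutative square'' means as a morphism of diagrams. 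With that identification in hand, the corollary is immediate from Proposition~\ref{prop:Gal_descent}.
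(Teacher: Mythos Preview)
There is a gap: you form the fiber product $P = G_3 \times_{G_4} G_2$ in $\wt{\cFC}_{/F'}$ by invoking Proposition~\ref{prop:FC_product}, but that proposition requires one of the two legs to lie in $\wt{\cFT^m}$, and the corollary places no such hypothesis on the square. So $P$ need not exist as an object of $\wt{\cFC}_{/F'}$, and your comparison map $\phi:G_1\to P$ is not available in general. A related issue is that Proposition~\ref{prop:Gal_descent} is a statement about morphisms of \emph{sheaves on} $\wt{\cFC}_{/F'}$, not morphisms in $\wt{\cFC}_{/F'}$; even when $P$ exists, you would still need to pass through Yoneda to feed $\phi$ into that proposition.

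The paper's proof handles both points at once by working with representable functors from the start. It first checks that for each object $G$ of $\wt{\cFC}$ the presheaf $h_G$ on $\wt{\cFC}$ is actually a sheaf (using Proposition~\ref{prop:cFT_main} together with Corollary~3.1.5 of \cite{Grids} and Lemma~\ref{lem:Galoiscovering}). Then the fiber product $h_{G_3}\times_{h_{G_4}} h_{G_2}$ taken in the category of presheaves on $\wt{\cFC}$ always exists and is again a sheaf; the induced comparison map
\[
\phi:\ h_{G_1}|_{\wt{\cFC}_{/F'}} \longrightarrow (h_{G_3}\times_{h_{G_4}} h_{G_2})|_{\wt{\cFC}_{/F'}}
\]
is now genuinely a morphism of sheaves on $\wt{\cFC}_{/F'}$, so Proposition~\ref{prop:Gal_descent} applies, and by Yoneda $\phi$ being an isomorphism is exactly the assertion that the original square is cartesian. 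Your strategy becomes correct once you replace ``fiber product in $\wt{\cFC}_{/F'}$'' by ``fiber product of the representable sheaves''; that is precisely the fix the paper makes.
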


\begin{proof}
For an object $G$ in $\wt{\cFC}$, we denote by
$h_G$ the presheaf on $\wt{\cFC}$ represented by $G$.
One can check easily that $h_G$ satisfies Conditions (1) and (2)
in Section \ref{sec:cond12}.
It follows from Corollary 3.1.5 of \cite{Grids} and
Lemma \ref{lem:Galoiscovering} that
the restriction of $h_G$ to $\wt{\cC}$ is a sheaf on $\wt{\cC}$.
Hence it follows from Proposition \ref{prop:cFT_main} that 
$h_G$ is a sheaf on $\wt{\cFC}$.
Thus, its pullback $h_G|_{\wt{\cFC}_{/F'}}$ to
$\wt{\cFC}_{/F'}$ is a sheaf on $\wt{\cFC}_{/F'}$.

Let
\begin{equation}\label{eq:comm_sq}
\begin{CD}
F_1 @>{g}>> F_2 \\
@V{f_1}VV @V{f_2}VV \\
F'_1 @>{g'}>> F'_2
\end{CD}
\end{equation}
be a commutative square in the category
$\wt{\cFC}_{/F'}$.
Suppose that its base change
$$
\begin{CD}
F_1 \times_{F'} F @>>> F_2 \times_{F'} F \\
@VVV @VVV \\
F'_1 \times_{F'} F @>>> F'_2 \times_{F'} F
\end{CD}
$$
is cartesian in the category $\wt{\cFC}_{/F}$.
The commutative diagram (\ref{eq:comm_sq})
gives a morphism
$\phi \colon  h_{F_1}|_{\wt{\cFC}_{/F'}} \to (h_{F'_1} 
\times_{h_{F'_2}} h_{F_2})|_{\wt{\cFC}_{/F'}}$
of sheaves on $\wt{\cFC}_{/F'}$.
Here $h_{F'_1} \times_{h_{F'_2}} h_{F_2}$
denotes the fiber product in the category
of presheaves on $\wt{\cFC}$. 
It follows from Lemma \ref{lem:aJ_properties} and 
Lemma \ref{lem:cFC_limit}
that $h_{F'_1} \times_{h_{F'_2}} h_{F_2}$ is a sheaf on $\wt{\cFC}$
and hence $(h_{F'_1} 
\times_{h_{F'_2}} h_{F_2})|_{\wt{\cFC}_{/F'}}$
is a sheaf on $\wt{\cFC}_{/F'}$.
It follows from our assumption that the
pullback $f^*(\phi)$ with respect to $f$
is an isomorphism. Hence it follows from
Proposition \ref{prop:Gal_descent} that
$\phi$ is an isomorphism.
Hence the diagram (\ref{eq:comm_sq}) is cartesian.
This proves the claim.
\end{proof}

\subsection{The category $\cB(G)$}
For a finite group $G$, we denote by $\cB(G)$
the following category: An object in $\cB(G)$ 
is a finite set with a left $G$-action,
and a morphism in $\cB(G)$ is a $G$-equivariant map.
It can be checked easily that the category
$\cB(G)$ is an $F$-category and an object $S$ in
$\cB(G)$ is connected if and only if $S$ is non-empty and
the group $G$ acts transitively on $S$.

\subsection{The compact induction functor from $\cB(G)$}
\label{sec:functor M_G}
Let $c\colon F \to F'$ be a Galois covering in $\wt{\cC^m}$
which belongs to $\wt{\cT^m}$.
Let $G = \Aut_{F'}(F)$ be its Galois group.
We note that, as a consequence of Lemma \ref{lem:Galoiscovering},
$c\colon F \to F'$ is a quotient object of $F$ by $G$.
In this paragraph we construct a functor $M_G$ from 
$\cB(G)$ to $\wt{\cFC^m}$. 

Let $S$ be an object in $\cB(G)$.
We let the group $G$ act on $\coprod_{s \in S} F$
in such a way that for each $g \in G$, the map
$\pi_0(g)\colon  S \to S$ 
is given by the action of $g$ on $S$, and that
the component at $s$ of $g\colon \coprod_{s \in S} F \to \coprod_{s \in S} F$
is equal to the morphism $g\colon F \to F$.
In this way, we can regard $G$ as a subgroup of 
$\Aut_{\wt{\cFC}}(\coprod_{s \in S} F)$
when $S$ is non-empty.
We set $M_G(S) = M_{G,F}(S) = \quot{\coprod_{s \in S} F}{G}$.
Here for the notation $\quot{\coprod_{s \in S} F}{G}$ we refer 
to the statement of Lemma \ref{lem:quot_Ctil}.
When $S$ is empty, the symbol $\quot{\coprod_{s \in S} F}{G}$
stands for an initial object
in $\wt{\cFC}$.

The argument in the proof of Lemma \ref{lem:quot_FCd} shows that
$M_G(S)$ is a \quotobj in $\Shv(\cC,J)$ 
of $\coprod_{s \in S} F$ by $G$.
Hence the morphism $\coprod_{s \in S} F \to F$ in $\wt{\cFC^m}$,
whose component at $s$ is equal to the identity morphism $\id_F$
for each $s\in S$, 
induces a morphism $M_G(S) \to F'$ in $\Shv(\cC,J)$.
Hence it follows from Lemma \ref{lem:quot_FCtil} that
$M_G(S)$ is an object of $\wt{\cFC^m}$ and
that $M_G(S)$ is a \quotobj in $\wt{\cFC^m}$ 
of $\coprod_{s \in S} F$ by $G$.
We will frequently use this fact in this section.
The morphism $M_G(S) \to F'$ constructed above 
gives an object in $\wt{\cFC^m}_{/F'}$,
which we denote by $\wt{M}_G(S)$ by abuse of notation.
For a morphism $f\colon S \to T$ in $\cB(G)$, we construct
a morphism $M_G(f)\colon M_G(S) \to M_G(T)$ in $\wt{\cFC^m}_{/F'}$ as follows.
Let $\alpha$ denote the morphism
$\coprod_{s \in S} F \to \coprod_{t \in T} F$
characterized by the following property: 
The map $\pi_0(\alpha)\colon  S \to T$ is equal to 
$f$ and the component $\alpha$ at each $s \in S$ is
the identity morphism of $F$.
Let us consider the composite 
$\coprod_{s \in S} F \to M_G(T)$ 
of the morphism $\alpha$
with the canonical quotient map
$\coprod_{t \in T} F \to M_G(T)$.
It follows from the universality of
the \quotobjs that this composite factors through
the canonical morphism 
$\coprod_{s \in S} F \to M_G(S)$.
Hence we obtain a morphism $M_G(S) \to M_G(T)$
which we denote by $M_G(f)$.
It can be checked easily that the morphism $M_G(f)$
in $\wt{\cFC^m}$ is over $F'$. Hence $M_G(f)$ gives a morphism
$\wt{M}_G(S) \to \wt{M}_G(T)$ in $\wt{\cFC^m}_{/F'}$ which we denote by
$\wt{M}_G(f)$.
We thus obtain functors $M_G\colon \cB(G) \to \wt{\cFC^m}$ and
$\wt{M}_G\colon \cB(G) \to \wt{\cFC^m}_{/F'}$ such that
$M_G$ is equal to the composite of $\wt{M}_G$
with the forgetful functor $\wt{\cFC^m}_{/F'} \to \wt{\cFC^m}$ that
sends an object $F_1 \to F'$ in $\wt{\cFC^m}_{/F'}$ to
the object $F_1$ in $\wt{\cFC^m}$.

The following lemma follows immediately from
the definition of the functor $M_G$.

\begin{lem}\label{lem:M_G_2}
For an object $S$ in $\cB(G)$, we have a bijection
$\pi_0(S) \xto{\cong} \pi_0(M_G(S))$ which is functorial 
in $S$.
\qed
\end{lem}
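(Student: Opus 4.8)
The plan is to trace the construction of the functor $M_G$ through $\pi_0$ and to read off the asserted bijection from the explicit coproduct decomposition of a quotient object that is established in the proof of Lemma~\ref{lem:quot_FCtil}. First I would record the two identifications that make the statement meaningful. In the $F$-category $\cB(G)$ an object is connected exactly when $G$ acts transitively on it, so for $S \in \cB(G)$ the set $\pi_0(S)$ is canonically the orbit set $\quot{S}{G}$, with the canonical map $S \to \pi_0(S)$ of Section~\ref{sec:pi_0} being the orbit projection. On the other side, since $F$ is an object of $\wt{\cC^m}$ it is a connected object of $\wt{\cFC^m}$ by Lemma~\ref{lem:wtcC_conn} and Corollary~\ref{cor:FC is an F-category}; hence $\pi_0\bigl(\coprod_{s\in S}F\bigr)$ is canonically the set $S$, and by the very definition of the $G$-action on $\coprod_{s\in S}F$ this action induces, under this identification, the given action of $G$ on $S$.

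Next I would bring in the canonical quotient morphism $q_S : \coprod_{s\in S}F \to M_G(S)$. Applying the computation in the proof of Lemma~\ref{lem:quot_FCtil} to $\coprod_{s\in S}F$ and the subgroup $G$, I would write $M_G(S)$ as the coproduct in $\wt{\cFC^m}$ of the family $(\quot{F}{G_{\bar{s}}})_{\bar{s}}$, where $\bar{s}$ runs over a complete set of representatives of $\quot{S}{G}$ and $G_{\bar{s}}$ is the stabilizer of $\bar{s}$; each summand is an object of $\wt{\cC^m}$, hence connected, so $\pi_0(M_G(S))$ is canonically indexed by $\quot{S}{G}$, and under this indexing the map $\pi_0(q_S) : S \to \pi_0(M_G(S))$ is precisely the orbit projection $S \to \quot{S}{G}$. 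Composing with the identification $\pi_0(S) = \quot{S}{G}$ of the first step then yields the desired bijection $\pi_0(S) \xto{\cong} \pi_0(M_G(S))$, which is manifestly independent of the chosen set of orbit representatives because both $\pi_0(S) \to \quot{S}{G}$ and $\pi_0(M_G(S)) \to \quot{S}{G}$ are canonical. For functoriality I would use that, for a morphism $f:S\to T$ in $\cB(G)$, the morphism $\alpha:\coprod_{s\in S}F \to \coprod_{t\in T}F$ used to define $M_G(f)$ satisfies $\pi_0(\alpha)=f$ and $q_T\circ\alpha = M_G(f)\circ q_S$; applying $\pi_0$ and cancelling the surjective orbit projections identifies $\pi_0(M_G(f))$ with $\pi_0(f)$.

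I do not expect a genuine obstacle here — this is exactly why the statement is claimed to be immediate from the definition of $M_G$. The only point that needs a little care is verifying that $\pi_0(q_S)$ is the orbit projection and not merely some surjection, and this is precisely what the coproduct decomposition in the proof of Lemma~\ref{lem:quot_FCtil} supplies: the sub-coproduct of $\coprod_{s\in S}F$ over a single $G$-orbit $\bar{s}$ is carried by $q_S$ onto the single component $\quot{F}{G_{\bar{s}}}$ of $M_G(S)$. If one prefers to avoid choosing representatives, one can instead note that $\pi_0(q_S)$ is $G$-invariant (as $G$ acts on $\coprod_{s\in S}F$ over $M_G(S)$), hence factors through $\quot{S}{G}\to\pi_0(M_G(S))$, and check that this factorization is bijective by the same decomposition.
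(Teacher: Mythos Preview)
Your proposal is correct and is exactly the argument the paper has in mind: the lemma is stated with a \qed{} and justified only by the sentence ``follows immediately from the definition of the functor $M_G$,'' and what you have written is precisely the unpacking of that sentence via the coproduct decomposition of $M_G(S)$ established in (the proof of) Lemma~\ref{lem:quot_FCtil}. There is no alternative route here to compare.
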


\begin{lem}\label{lem:M_base_change}
For an object $S$ in $\cB(G)$, the diagram
$$
\begin{CD}
\coprod_{s \in S} F @>{(1)}>> F \\
@V{(2)}VV @VV{c}V \\
M_G(S) @>{(3)}>> F'
\end{CD}
$$
in $\wt{\cFC^m}$ is cartesian.
Here (1) is the morphism 
whose component at $s$ is the identity morphism
$\id_F$ for each $s \in S$,
the morphism (2) is the canonical morphism
$\coprod_{s \in S} F \to M_G(S)$,
and (3) is the structure morphism of the object
$\wt{M}_G(S)$ in $\wt{\cFC}_{/F'}$.
\end{lem}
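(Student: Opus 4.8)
The plan is to reduce the assertion to a statement about representable presheaves and then invoke the sheaf property together with the description of quotient objects already established. Concretely, I would first recall that $M_G(S)$, by its construction in Section~\ref{sec:functor M_G}, is a \quotobj in $\wt{\cFC^m}$ of $\coprod_{s \in S} F$ by $G$, with canonical quotient morphism given by arrow (2); and that $c: F \to F'$ is, by Lemma~\ref{lem:Galoiscovering}, a \quotobj of $F$ by $G = \Aut_{F'}(F)$. The case $S = \emptyset$ is trivial since then both $\coprod_{s\in S} F$ and $M_G(S)$ are initial objects and the square is vacuously cartesian, so I would assume $S$ non-empty throughout.

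Next I would test the square against an arbitrary object $G_1$ of $\wt{\cC^m}$ (connected objects suffice by Corollary~\ref{cor:FC is an F-category} together with Corollary~\ref{cor:fiber_product}, since all four presheaves $h_{(-)}$ satisfy Conditions (1) and (2) of Section~\ref{sec:cond12}). Unwinding, I must show that the map
\[
\Hom_{\wt{\cFC^m}}(G_1, \coprod_{s\in S} F)
\to
\Hom_{\wt{\cFC^m}}(G_1, M_G(S)) \times_{\Hom_{\wt{\cFC^m}}(G_1, F')} \Hom_{\wt{\cFC^m}}(G_1, F)
\]
is a bijection. Since $G_1$ is connected, $\Hom_{\wt{\cFC^m}}(G_1, \coprod_{s\in S} F) \cong \coprod_{s\in S}\Hom_{\wt{\cFC^m}}(G_1, F)$. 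On the other hand, using that $c$ and arrow (2) are quotient maps by $G$, the universal property of quotient objects identifies $\Hom_{\wt{\cFC^m}}(G_1, F')$ with $\quot{\Hom_{\wt{\cFC^m}}(G_1,F)}{G}$ (via $c_*$) and $\Hom_{\wt{\cFC^m}}(G_1, M_G(S))$ with $\quot{(\coprod_{s\in S}\Hom_{\wt{\cFC^m}}(G_1,F))}{G}$, compatibly with the structure morphisms. So the claim becomes a purely set-theoretic statement: for a $G$-set $T := \Hom_{\wt{\cFC^m}}(G_1,F)$ on which $G$ acts (freely, since $\wt{\cC^m}$ is an $E$-category and $c$ is Galois — this is the key point making the quotients behave), the canonical map $S \times T \to \quot{(S\times T)}{G} \times_{\quot{T}{G}} T$ is a bijection, where $G$ acts diagonally on $S\times T$. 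This is elementary once freeness of the $G$-action on $T$ is in hand: an element of the fiber product is a pair $(\overline{(s,t)}, t')$ with $\overline{t} = \overline{t'}$, hence $t' = gt$ for a unique $g$, and $(gs, t')$ is the unique preimage.

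The main obstacle, and the step I would be most careful about, is justifying that all the Hom-sets $\Hom_{\wt{\cFC^m}}(G_1, -)$ appearing above really are computed as honest quotients by a free $G$-action, so that the set-theoretic identity applies verbatim. This rests on $\wt{\cC^m}$ being an $E$-category (Lemma~\ref{lem:tilde E-category}), on $c$ being a Galois covering with Galois group $G$ acting freely on sections (as in the proof of Lemma~\ref{lem:Galois2} and Lemma~\ref{lem:exists_g}), and on the quotient-object descriptions of $M_G(S)$ and $F'$ established in Section~\ref{sec:functor M_G} via Lemma~\ref{lem:quot_FCd}, Lemma~\ref{lem:quot_Ctil}, and Lemma~\ref{lem:quot_FCtil}. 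Once these are assembled, checking Conditions (1) and (2) of Section~\ref{sec:cond12} for the fiber-product presheaf (to pass from connected test objects to all of $\wt{\cFC^m}$) is routine via Lemma~\ref{lem:fin_conn} and Lemma~\ref{lem:cFC_limit}, and the square is cartesian by Yoneda.
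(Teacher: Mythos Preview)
Your reduction to a set-theoretic identity is clean, but the identification
\[
\Hom_{\wt{\cFC^m}}(G_1, M_G(S)) \cong \quot{\bigl(\textstyle\coprod_{s\in S}\Hom_{\wt{\cFC^m}}(G_1,F)\bigr)}{G}
\quad\text{and}\quad
\Hom_{\wt{\cFC^m}}(G_1, F') \cong \quot{\Hom_{\wt{\cFC^m}}(G_1,F)}{G}
\]
is not what the universal property of quotient objects gives you. That property controls morphisms \emph{out} of the quotient, not into it: $\Hom(\quot{Y}{G},Z) \cong \Hom(Y,Z)^G$. What you actually have from $c$ being a Galois covering is only that $\Hom(G_1,F)\to\Hom(G_1,F')$ is a pseudo-$G$-torsor, which says the map is injective after passing to $G$-orbits but says nothing about surjectivity. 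The same issue arises for arrow~(2): even granting it is Galois (which you have not shown), you only get that the image of $\coprod_s\Hom(G_1,F)$ in $\Hom(G_1,M_G(S))$ is the orbit space, not that every element of $\Hom(G_1,M_G(S))$ lying over the image of $T$ lifts back to $S\times T$. That lifting claim is precisely the nontrivial content of the lemma.

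The paper avoids this by not testing against arbitrary $G_1$ at all. Instead it reduces to transitive $S$, identifies $M_G(S)\cong\quot{F}{G_s}$, and exploits the fact that $c:F\to F'$ is Galois to invoke the fiber-product formula along a Galois covering (Lemma~4.2.3(2) of \cite{Grids}): this gives $M_G(S)\times_{F'}F \cong \coprod_{h\in\Hom_{F'}(F,M_G(S))} F$, and then the same lemma yields the needed surjectivity of $\Hom_{F'}(F,F)\to\Hom_{F'}(F,\quot{F}{G_s})$, so $\Hom_{F'}(F,M_G(S))\cong G/G_s\cong S$. The key point is that the test object is $F$ itself, for which the Galois hypothesis on $c$ supplies exactly the surjectivity your argument is missing for a general $G_1$.
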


\begin{proof}
First suppose that $S$ is a connected object in $\cB(G)$.
Let us choose $s \in S$ and let $G_s \subset G$
denote the stabilizer of $s$.
Then the inclusion $F \inj \coprod_{s' \in S} F$
of the component at $s$
induces an isomorphism $\quot{F}{G_s} \xto{\cong} M_G(S)$.
Let $f_1\colon  \quot{F}{G_s} \xto{f_1} F' \xleftarrow{f_2} F$ 
be the morphism in $\wt{\cFC^m}$
induced by the identity 
morphism on $F$.

It follows from Lemma \ref{lem:Galois2} that $c$ is a Galois covering
and that we have 
$$
\Hom_{F'}(F,F) \cong \Aut_{F'}(F) = G.
$$
It follows from Lemma \ref{lem:Galois2}
that the canonical morphism $c_s\colon F \to \quot{F}{G_s}$
is a Galois covering.
It follows from Lemma 4.2.3 (2) of \cite{Grids} that the map
$$
\Hom_{F'}(F,F) 
\to \Hom_{F'}(F,\quot{F}{G_s})
$$
given by the composition with $c_s$ is surjective.
Hence it follows from the definition of a Galois covering that
we have a bijection 
$\Hom_{F'}(F,\quot{F}{G_s})
\cong \quot{G}{G_s}$.
Thus it follows from Lemma 4.2.3 (2) of \cite{Grids} that 
we have an isomorphism
$M_G(S) \times_{F'} F \cong \coprod_{s \in S} F$.
This proves the claim when $S$ is a connected object
in $\cB(G)$.

Since $\cB(G)$ is an $F$-category, 
we obtain, by using Lemma \ref{lem:fiber_prod_basic},
an isomorphism
$\phi_S \colon  M_G(S) \times_{F'} F 
\xto{\cong} \coprod_{s \in S} F$
for any object $S$ in $\cB(G)$.
This proves the claim.
\end{proof}

\begin{lem}\label{lem:M_fiber_products}
The functors $\wt{M}_G$ and $M_G$ commute with fiber products.
\end{lem}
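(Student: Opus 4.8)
The plan is to descend the statement along the Galois covering $c:F\to F'$, after which the fiber products in question become transparent coproducts of copies of $F$.

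First I would reduce to a statement about $M_G$ alone. The forgetful functor $U:\wt{\cFC^m}_{/F'}\to\wt{\cFC^m}$ creates fiber products and reflects isomorphisms, and $M_G=U\circ\wt{M}_G$; moreover the forgetful functor from $\cB(G)$ to the category of finite sets creates finite limits, so for a diagram $S_1\to S_0\leftarrow S_2$ in $\cB(G)$ the fiber product $S_1\times_{S_0}S_2$ is the usual one equipped with the diagonal $G$-action. Hence it suffices to show that the canonical morphism
\[
M_G(S_1\times_{S_0}S_2)\longrightarrow M_G(S_1)\times_{M_G(S_0)}M_G(S_2)
\]
in $\wt{\cFC^m}$ is an isomorphism, the target existing by Proposition \ref{prop:FC_product}. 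I would also record, from the construction of $M_G(f)$ and of the isomorphisms $\phi_S$ built in the proof of Lemma \ref{lem:M_base_change}, that the family $\phi_S:M_G(S)\times_{F'}F\xto{\cong}\coprod_{s\in S}F$ is natural in $S$: for $f:S\to T$ in $\cB(G)$, the base change of $M_G(f)$ along $c$ is carried by $\phi_S,\phi_T$ to the morphism $\coprod_{s\in S}F\to\coprod_{t\in T}F$ with index map $f$ and all components equal to $\mathrm{id}_F$. I expect this naturality to be the delicate point requiring care.

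Next, since $c:F\to F'$ is a Galois covering in $\wt{\cC^m}$ belonging to $\wt{\cT^m}$, I would invoke Corollary \ref{cor:cartesian_descent}: the commutative square of the $M_G(S_i)$, viewed in $\wt{\cFC^m}_{/F'}$ via the structure maps $M_G(S_i)\to F'$, is cartesian provided its base change along $c$ to $\wt{\cFC^m}_{/F}$ is cartesian. By Lemma \ref{lem:M_base_change} together with the naturality just mentioned, that base-changed square is isomorphic to
\[
\begin{CD}
\coprod_{s\in S_1\times_{S_0}S_2}F @>>> \coprod_{s\in S_2}F \\
@VVV @VVV \\
\coprod_{s\in S_1}F @>>> \coprod_{s\in S_0}F,
\end{CD}
\]
where every component of every morphism is $\mathrm{id}_F$. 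Finally I would check that this square is cartesian in $\wt{\cFC^m}$ — equivalently in $\wt{\cFC^m}_{/F}$, the forgetful functor creating limits — using Corollary \ref{cor:fiber_product}: passing to connected components yields the sets $S_1\times_{S_0}S_2$, $S_2$, $S_1$, $S_0$; each component is the connected object $F$; the pairs with matching image in $S_0$ are exactly $S_1\times_{S_0}S_2$; and the fiber product $F\times_F F$ along the identities is $F$. Thus the canonical morphism is an isomorphism, giving the claim for $M_G$ and hence for $\wt{M}_G$. The routine ingredients are the limit-creation by the forgetful functors and the identification of components in the last step; the main obstacle is verifying the naturality in $S$ of the base-change isomorphisms coming from Lemma \ref{lem:M_base_change}.
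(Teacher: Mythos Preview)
Your proposal is correct and follows essentially the same approach as the paper: base change along $c:F\to F'$ via Lemma \ref{lem:M_base_change}, verify naturality of the isomorphisms $\phi_S$, then apply Corollary \ref{cor:cartesian_descent}. The only cosmetic difference is that you reduce from $\wt{M}_G$ to $M_G$ using that the forgetful functor creates fiber products, whereas the paper reduces in the other direction; and you spell out the final step via Corollary \ref{cor:fiber_product} where the paper just says ``we can see easily''.
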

\begin{proof}
Since the forgetful functor
$\wt{\cFC^m}_{/F'} \to \wt{\cFC^m}$ commutes with fiber products,
it suffices to show that 
the functor $\wt{M}_G$ commutes with fiber products.

By Lemma \ref{lem:M_base_change}, we have an isomorphism
$\phi_S \colon  M_G(S) \times_{F'} F 
\xto{\cong} \coprod_{s \in S} F$
in $\wt{\cFC^m}$ for any object $S$ in $\cB(G)$.
It can be checked easily that for a morphism $f\colon S \to T$
in $\cB(G)$, the morphism 
$M_G(S) \times_{F'} F \to 
M_G(T) \times_{F'} F$
induced by $M_G(f)$ is identified, via the
isomorphisms $\phi_S$ and $\phi_T$,
with the morphism 
$\phi_f \colon  \coprod_{s \in S} F \to 
\coprod_{t \in T} F$
such that the map $\pi_0(\phi_f)$ 
is equal to the map $f$ and for each $s \in S$, the component
at $s$ of $\phi_f$ is equal to the identity morphism on $F$.
From this we can see easily that the functor
$\cB(G) \to \wt{\cFC^m}_{/F}$
which associates to each object $S$ in $\cB(G)$
the object $M_G(S) \times_{F'} F$
commutes with fiber products.
Hence the claim follows from Corollary 
\ref{cor:cartesian_descent}.
\end{proof}

\begin{lem}\label{lem:M_G_fibr}
For any morphism $f\colon S \to T$ in $\cB(G)$, the
morphism $M_G(f)$ in $\wt{\cFC^m}_{/F'}$ belongs to
$\wt{\cFT^m}_{/F'}$.
\end{lem}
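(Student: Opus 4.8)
The plan is to reduce to the case where $S$ and $T$ are connected objects of $\cB(G)$, and then to exhibit $M_G(f)$ as a factor of a morphism already known to lie in $\wt{\cT^m}$, using the $B$-site axiom for $\wt{\cC^m}$. First I would observe that, since $\wt{\cFT^m}$ is defined component-wise on the source and $\pi_0(M_G(S)) \cong \pi_0(S)$ by Lemma \ref{lem:M_G_2}, it suffices to treat connected $S$ and $T$. Indeed, writing $S = \coprod_i S_i$ and $T = \coprod_j T_j$ as the decompositions into connected (transitive) $G$-sets, the functor $M_G$ preserves these finite coproducts — quotients by group actions are colimits and commute with coproducts along $G$-stable decompositions, and the $G$-action on $\coprod_{s\in S}F$ respects the decomposition $\coprod_i\bigl(\coprod_{s\in S_i}F\bigr)$ — so $M_G(S) = \coprod_i M_G(S_i)$ and the component of $M_G(f)$ at $i \in \pi_0(S)$ is $M_G(f|_{S_i}: S_i \to T_{j(i)})$, where $f(S_i) \subseteq T_{j(i)}$.

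For connected $S$ and $T$, I would fix $s \in S$, set $t = f(s)$, and let $G_s \subseteq G_t$ be the respective stabilizers in $G$ (the inclusion holds because $f$ is $G$-equivariant). By Lemma \ref{lem:quot_Ctil}, applied to $c: F \to F'$ and $G_s \subseteq G = \Aut_{F'}(F)$, the sheaf $\quot{F}{G_s}$ is an object of $\wt{\cC^m}$, and as in the proof of Lemma \ref{lem:M_base_change} the inclusion of the $s$-th component $F \inj \coprod_{s'\in S}F$ induces an isomorphism $\quot{F}{G_s} \xto{\cong} M_G(S)$ identifying the canonical quotient map $q_{F,G_s}$ with the composite $F \inj \coprod_{s'\in S}F \to M_G(S)$; the analogous statement holds for $t$, $G_t$ and $q_{F,G_t}$. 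Unwinding the construction of $M_G(f)$ — its defining morphism $\alpha: \coprod_{s'\in S}F \to \coprod_{t'\in T}F$ has $\pi_0(\alpha) = f$ and all components equal to $\id$, hence carries the $s$-th inclusion to the $t$-th inclusion — then gives $M_G(f) \circ q_{F,G_s} = q_{F,G_t}$. Similarly, since $c$ is the canonical quotient of $F$ by $G$ (Lemma \ref{lem:Galoiscovering}) and the structure morphism of $\wt{M}_G(S)$ is the one induced by the identity-components morphism $\coprod_{s'\in S}F \to F$, the structure morphism $p_s: \quot{F}{G_s} \cong M_G(S) \to F'$ satisfies $p_s \circ q_{F,G_s} = c$, and likewise $p_t \circ q_{F,G_t} = c$.

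To finish, I would invoke that $(\wt{\cC^m},\iota_* J)$ is a $B$-site (Proposition \ref{prop:Ctil_Bsite}), so the collection $\wt{\cT^m}$ satisfies the axiom verified in Lemma \ref{lem:tilde cond 3}: a composite $v \circ u$ of morphisms in $\wt{\cC^m}$ lies in $\wt{\cT^m}$ if and only if both $u$ and $v$ do. From $c = p_s \circ q_{F,G_s} \in \wt{\cT^m}$ this yields $q_{F,G_s} \in \wt{\cT^m}$, and likewise $q_{F,G_t} \in \wt{\cT^m}$; then $q_{F,G_t} = M_G(f) \circ q_{F,G_s} \in \wt{\cT^m}$ forces $M_G(f) \in \wt{\cT^m}$. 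Regarding $M_G(f)$ as a morphism of $\wt{\cFC^m}$, this is precisely the statement that it belongs to $\wt{\cFT^m}$, i.e.\ that $M_G(f)$ belongs to $\wt{\cFT^m}_{/F'}$.

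The main obstacle is purely bookkeeping: checking, from the explicit construction of $M_G$ and of the quotient of the ``induced'' object $\coprod_{s\in S}F$ by $G$, that under the identifications $M_G(S) \cong \quot{F}{G_s}$ and $M_G(T) \cong \quot{F}{G_t}$ the morphism $M_G(f)$ really is the canonical quotient $\quot{F}{G_s} \to \quot{F}{G_t}$, and that $c$ factors as $p_s \circ q_{F,G_s}$; once these commutative diagrams are in place, the $B$-site axiom delivers the conclusion at once. As an alternative one could combine Lemma \ref{lem:M_base_change} and Lemma \ref{lem:M_fiber_products} to see that the base change of $M_G(f)$ along $c$ is the morphism $\coprod_{s\in S}F \to \coprod_{t\in T}F$ with identity components, which manifestly lies in $\wt{\cFT^m}$ — but that route would require a descent statement for membership in $\wt{\cFT}$ along a covering, which the factorization argument avoids.
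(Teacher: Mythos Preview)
Your argument is correct, and the core factorization step --- writing $q_{F,G_t} = M_G(f)\circ q_{F,G_s}$ with both $q_{F,G_s},q_{F,G_t}\in\wt{\cT^m}$ and invoking the $B$-site axiom $\wh{\wt{\cT^m}}=\wt{\cT^m}$ --- is exactly the mechanism the paper uses as well. The organizational difference is in the preliminary reduction. The paper factors an arbitrary $f$ as an injection followed by a surjection: for injective $f$ it observes $M_G(T)\cong M_G(S)\amalg M_G(T\setminus S)$ so $M_G(f)$ is a coproduct inclusion, manifestly in $\wt{\cFT^m}$; for surjective $f$ it runs your factorization argument at the level of the full objects $\coprod_{s\in S}F$ and $\coprod_{t\in T}F$, using $M_G(f)\circ c_S = c_T\circ f'$ with $c_S,c_T,f'\in\wt{\cFT^m}^*$ and $\wh{\wt{\cFT^m}^*}=\wt{\cFT^m}^*$. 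You instead reduce to connected $S$ and $T$ first; since a $G$-map between transitive $G$-sets is automatically surjective, your reduction lands you directly in (a one-component instance of) the paper's surjective case, so the separate treatment of injections never arises. Your route is slightly more economical; the paper's route has the minor advantage of staying entirely inside $\wt{\cFC^m}$ and $\wt{\cFT^m}^*$ without passing through the single-component identifications $M_G(S)\cong F/G_s$.
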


\begin{proof}
Since any morphism in $\cB(G)$ is written as a composite 
$i \circ j$ of morphisms in $\cB(G)$ with $i$ injective and
$j$ surjective, we may assume that $f$ is either injective 
or surjective.
Suppose that $f$ is injective. Then $T$ is a coproduct in $\cB(G)$
of $S$ and $T\setminus S$. Then $M_G(T)$ is identified with a
coproduct of $M_G(S)$ and $M_G(T \setminus S)$ and
under this identification the morphism $M_G(f)$ is equal to
the canonical morphism $M_G(S) \inj M_G(S) \amalg M_G(T \setminus S)$.
This in particular implies that $M_G(f)$ belongs to $\wt{\cFT^m}_{/F'}$
in this case.

Suppose that $f$ is surjective.
Let $c_S\colon  \coprod_{s \in S} F \to M_G(S)$ and
$c_T\colon \coprod_{t \in T} F \to M_G(T)$
denote the canonical morphisms.
Let us denote by $f'$ the morphism
$$
\coprod_{s \in S} F
\to
\coprod_{t \in T} F
$$
characterized by the following property: 
the map $\pi_0(f')\colon  S \to T$ is equal to 
$f$ and the component of $f'$ at each $s \in S$ is
the identity morphism of $F$.
We then have $M_G(f) \circ c_S = c_T \circ f'$.
Since $c_S$, $c_T$, and $f'$ belong to $\wt{\cFT^m}^*$, 
it follows from Lemma \ref{lem:cFT semi-localizing} 
that $M_G(f)$ belongs to $\wt{\cFT^m}^*$.
This proves the claim.
\end{proof}

\begin{lem}\label{lem:M_G_deg}
Let $f\colon S \to T$ be a morphism in $\cB(G)$.
The map $S \to \Z$ which sends $s \in S$
to the cardinality of $f^{-1}(s)$ factors
through the surjective map $S \surj
\quot{S}{G} = \pi_0(S)$.
We denote the induced map $\pi_0(S) \to \Z$
by $\deg f$.
Then $\deg f$ is equal to the composite
of the bijection $\pi_0(S)\xto{\cong}
\pi_0(M_G(S))$ in Lemma \ref{lem:M_G_2}
with the map $\deg M_G(f)\colon  \pi_0(M_G(f)) \to \Z$.
\end{lem}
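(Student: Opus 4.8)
The plan is to reduce the statement to the compatibility of degree with base change (Corollary~\ref{cor:deg_limit}), using the explicit description of the base change of $M_G(f)$ along $c\colon F\to F'$ that is already available. First I would recall, from Lemma~\ref{lem:M_base_change} and its use in the proof of Lemma~\ref{lem:M_fiber_products}, two facts: for every object $S$ of $\cB(G)$ the canonical square gives an isomorphism $\phi_S\colon M_G(S)\times_{F'}F\xto{\cong}\coprod_{s\in S}F$ over $F$, with the bottom horizontal arrow the canonical quotient map $\coprod_{s\in S}F\to M_G(S)$; and for a morphism $f$ in $\cB(G)$ the base change $M_G(f)\times_{F'}F$ corresponds, under these isomorphisms, to the morphism $\phi_f$ between the two coproducts of copies of $F$ whose map on $\pi_0$ is $f$ and all of whose components are $\id_F$.

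Next I would assemble the cartesian square to which Corollary~\ref{cor:deg_limit} applies. Since $\wt{M}_G$ lands in $\wt{\cFC^m}_{/F'}$, the structure morphism of $M_G(S)$ factors through $M_G(f)$, so there is an isomorphism $M_G(S)\times_{F'}F\cong M_G(S)\times_{M_G(T)}\bigl(M_G(T)\times_{F'}F\bigr)$; hence the square with top and bottom arrows the canonical quotient maps $\coprod F\to M_G(S)$ and $\coprod F\to M_G(T)$, left arrow $\phi_f$, and right arrow $M_G(f)$ is cartesian in $\wt{\cFC}$. All four morphisms lie in $\wt{\cFT^m}$: for $c$ and $M_G(f)$ this is the hypothesis on $c$ together with Lemma~\ref{lem:M_G_fibr}, and for the quotient maps it is immediate; so all the relevant degrees are defined.

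Then I would compute. On the one hand, by the definition of the degree of a morphism in $\wt{\cFT}$ and $\deg\id_F=1$ (Lemma~\ref{lem:degree is multiplicative}(1)), the value of $\deg\phi_f$ at the component of $\coprod F$ indexed by an element of the target index set is the number of preimages of that element under $f$, i.e.\ the cardinality of the corresponding fibre of $f$. On the other hand, Corollary~\ref{cor:deg_limit} applied to the cartesian square identifies $\deg\phi_f$ with the composite of $\pi_0$ of the pertinent quotient map with $\deg M_G(f)$; under $\phi_S$ (\resp $\phi_T$) together with Lemma~\ref{lem:M_G_2}, that map on $\pi_0$ is exactly the orbit map $S\surj\pi_0(S)=\quot{S}{G}$ (\resp $T\surj\pi_0(T)$). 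Comparing the two expressions shows simultaneously that the fibre-cardinality function is constant on $G$-orbits — hence factors through the orbit map and defines $\deg f$ — and that $\deg f$ is carried to $\deg M_G(f)$ by the bijection of Lemma~\ref{lem:M_G_2}, which is the assertion.

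I do not expect a genuine obstacle here: all the geometric input — the cartesian squares describing $M_G$ under base change, and the base-change invariance of degrees — is already established. The only point demanding care is keeping the several copies of $\pi_0$ aligned, in particular checking that the horizontal arrows of the cartesian square are the canonical quotient morphisms and that their effect on $\pi_0$ is the orbit map, and matching the indexing of the fibres (over $S$ versus over $T$) to the convention in the statement; this is bookkeeping rather than a substantive difficulty.
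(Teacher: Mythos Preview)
Your proposal is correct and follows essentially the same approach as the paper: set up the cartesian square
\[
\begin{CD}
\coprod_{s\in S}F @>>> M_G(S)\\
@V{\phi_f}VV @VV{M_G(f)}V\\
\coprod_{t\in T}F @>>> M_G(T)
\end{CD}
\]
coming from Lemma~\ref{lem:M_base_change} (as noted in the proof of Lemma~\ref{lem:M_fiber_products}), then apply Corollary~\ref{cor:deg_limit}. Your write-up supplies more of the bookkeeping (the factorisation through $F'$, the check that the morphisms lie in $\wt{\cFT^m}$, and the explicit computation of $\deg\phi_f$), but the paper's proof is just the two-line version of the same argument.
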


\begin{proof}
%
The argument in the proof of Lemma \ref{lem:M_fiber_products}
shows that the diagram
$$
\begin{CD}
\coprod_{s \in S} F @>>> M_G(S) \\
@VVV @VV{M_G(f)}V \\\
\coprod_{t \in T} F @>>> M_G(T)
\end{CD}
$$
in $\wt{\cFC^m}$ is cartesian. Hence the claim follows from
Corollary \ref{cor:deg_limit}.
\end{proof}

\subsection{Composition law}
\label{sec:M_G_H}
Let $H \subset G$ be a subgroup.
We set $F''=\quot{F}{H}$.
It follows from Lemma \ref{lem:quot_FCd} that
the canonical morphism $F \to F'$
factors through the canonical morphism
$F \to F''$.
We denote by $c^{/H}_{/G}$ the induced morphism 
$F''\to F'$.

For an object $S$ in $\cB(G)$, we denote by $r^G_H(S)$ the
object in $\cB(H)$ obtained by restricting the action of $G$
on $S$ to $H$. By definition we have $r^G_H(S) =S$ as a set.
%
By associating $r^G_H(S)$ for each object $S$ in $\cB(G)$,
we obtain a functor $\cB(G) \to \cB(H)$ which we denote by $r^G_H$.
For an object $S$ in $\cB(G)$, we let
$c_{/G,S}\colon \coprod_{s \in S} F \to M_G(S)$
denote the canonical morphism.
Then the morphism $c_{/G,S}$
factors through the canonical morphism $c_{/H,r^G_H(S)}$.
We denote by $c^{/H}_{/G,S}\colon M_H(r^G_H(S)) \to M_G(S)$ the induced morphism.

For an object $S$ in $\cB(H)$,
we denote by $G \times^H S$ the quotient
of the direct product $G \times S$ as sets 
under the following equivalence relation:
we say that two elements $(g,s), (g',s')
\in G \times S$ are equivalent if there exists
an element $h \in H$ which satisfies
$(g',s')=(gh^{-1},hs)$.
For $g \in G$ and for an element $x \in G \times^H S$
represented by $(g',s) \in G \times S$, we define
$gx$ to be the class of $(gg',s)$.
This gives an action from the left of the group $G$
on the set $G \times^H S$.
Since $G$ and $H$ are finite groups, 
the $G$-set $G \times^H S$ is an object in $\cB(G)$.
We thus obtain a functor $G \times^H - \colon 
\cB(H) \to \cB(G)$ which sends
an object $S$ in $\cB(H)$ to the object
$G \times^H S$ in $\cB(G)$.
For an object $S$ in $\cB(H)$, we
let $j_{G,S} \colon  S \to G \times^H S$ denote the map
of sets which sends $s \in S$ to the class of
$(1,s) \in G \times S$.

Let $S$ be an object in $\cB(H)$ and let
$T$ be an object in $\cB(G)$. Suppose that
a morphism $f\colon S \to r^G_H(T)$ in $\cB(H)$
is given.
The map $G \times S \to T$ which sends
$(g,s) \in G \times S$ to $g f(s)$ factors through
the quotient $G \times S \surj G \times^H S$.
We denote the induced map $G \times^H S \to T$
by $i^G_H(f)$.
It is easy to see that the map $i^G_H(f)$ is a
morphism in $\cB(G)$.
The map $\Hom_{\cB(H)}(S,r^G_H(T)) \to \Hom_{\cB(G)}(G \times^H S,T)$
that sends $f \in \Hom_{\cB(H)}(S,r^G_H(T))$ to $i^G_H(f)$
is bijective. In fact, its inverse map is given by sending
a morphism $f'\colon  G \times^H S \to T$ in $\cB(G)$ to its composite
with the map $j_{G,S}\colon S \to G \times^H S$.
Hence the functor 
$G \times^H - \colon  \cB(H) \to \cB(G)$ is left
adjoint to the functor $r^G_H \colon  \cB(G) \to \cB(H)$.

Let $c^{/H}_{/G} \circ-\colon  \wt{\cFC^m}_{/F''} \to \wt{\cFC^m}_{/F'}$ 
denote the functor
that sends an object $f\colon F_1 \to F''$
in $\wt{\cFC^m}_{/F''}$ to the object
$c^{/H}_{/G} \circ f\colon F_1 \to F'$ in $\wt{\cFC^m}_{/F'}$.
\begin{lem}\label{lem:M_G_H}
Let the notation be as above.
Then the functor $(c^{/H}_{/G} \circ -)\circ \wt{M}_H$ is 
canonically isomorphic to the functor 
$\wt{M}_G \circ (G \times^H -)$.
\end{lem}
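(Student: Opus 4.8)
The plan is to compare the two functors $\cB(H) \to \wt{\cFC^m}_{/F'}$ on objects first, then check compatibility with morphisms, using the universal properties of quotient objects established earlier. Fix an object $S$ in $\cB(H)$. On the one hand, $\wt{M}_G(G \times^H S)$ is the quotient object in $\Shv(\cC,J)$ of $\coprod_{x \in G \times^H S} F$ by $G$, equipped with the structure morphism to $F'$. On the other hand, $c^{/H}_{/G} \circ \wt{M}_H(S)$ is $M_H(S) = \quot{\coprod_{s \in S} F}{H}$, which by the argument in the proof of Lemma~\ref{lem:quot_FCd} and Lemma~\ref{lem:quot_FCtil} is the quotient object of $\coprod_{s \in S} F$ by $H$, with the morphism to $F'$ obtained by composing the canonical morphism to $F''$ with $c^{/H}_{/G}$. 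First I would produce a canonical isomorphism $\coprod_{x \in G \times^H S} F \cong \coprod_{(g,s) \in G \times S} F / \sim$ (where $\sim$ identifies the copy indexed by $(gh^{-1},hs)$ with the one indexed by $(g,s)$ via the automorphism $h$ of $F$), and observe that $G$ acts on this via left multiplication on the $G$-factor and the corresponding automorphisms of $F$. Then the quotient of $\coprod_{(g,s)} F$ by the $G$-action (left multiplication on the first coordinate) is canonically $\coprod_{s \in S} F$, and the residual action on this quotient is exactly the $H$-action defining $M_H(S)$; this realizes $M_G(G \times^H S)$ as an iterated quotient, so Lemma~\ref{lem:quot_general} gives the desired isomorphism with $M_H(S)$.

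The key steps in order are: (1) unwind the definition of $G \times^H S$ and present $\coprod_{x \in G\times^H S} F$ as a quotient of $\coprod_{G \times S} F$ by a free $H$-action that commutes with the left $G$-action, using Lemma~\ref{lem:quot_adjunction} (applied to the sheafification, a left adjoint) and Lemma~\ref{lem:quot_presheaf} to compute these quotients sectionwise on presheaves before sheafifying; (2) apply Lemma~\ref{lem:quot_general} to the normal subgroup situation $H \triangleleft$ inside the semidirect-type group acting on $\coprod_{G\times S}F$ (more precisely, the subgroup generated by the $H$-identifications is normalized by $G$), to identify the iterated quotient $\quot{(\quot{\coprod_{G\times S}F}{H})}{G}$ with $M_G(G\times^H S)$; (3) identify $\quot{\coprod_{G\times S}F}{(\text{left }G)}$ with $\coprod_{s\in S}F$ carrying the $H$-action, so that the other order of quotients gives $\quot{\coprod_{s\in S}F}{H} = M_H(S)$, whence the composite $c^{/H}_{/G}\circ \wt{M}_H(S)$; (4) check that the structure morphisms to $F'$ agree under this isomorphism — both are induced by the morphism $\coprod F \to F$ with all components $\id_F$, so this is immediate from the universal property; (5) verify functoriality in $S$ by checking the isomorphism is compatible with morphisms $f \colon S \to S'$ in $\cB(H)$, which follows because on the level of the presentations $\coprod_{G\times S}F$ all the maps involved have components equal to $\id_F$ and the index maps are the evident set maps, so naturality reduces to naturality of the adjunction $G\times^H - \dashv r^G_H$ already established above.

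I expect the main obstacle to be step (2): making precise the ``iterated quotient'' bookkeeping so that Lemma~\ref{lem:quot_general} applies cleanly. The group acting on $\coprod_{(g,s)\in G\times S}F$ is not literally a product $G \times H$; rather one has the $G$-action by left translation on the index and the $H$-action by the identification automorphisms, and these commute, so the full acting group is $G \times H$, but one must be careful that the $H$-part acts freely (which holds because $\cC$ is an $E$-category, as exploited in Lemma~\ref{lem:exists_g}) so that the quotient by $H$ is again a coproduct of copies of $F$ indexed by $(G\times S)/H = G\times^H S$, and then that the residual $G$-action on this quotient is precisely the one in the definition of $M_G(G\times^H S)$. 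Once this is set up, the two ways of forming the quotient of $\coprod_{G\times S}F$ by $G\times H$ — first mod $H$ then mod $G$, versus first mod $G$ then mod $H$ — both exist (by Lemma~\ref{lem:quot_FCtil}, since the relevant subgroups are moderate) and agree by Lemma~\ref{lem:quot_general}, and the identification with $c^{/H}_{/G}\circ\wt{M}_H$ and with $\wt{M}_G\circ(G\times^H-)$ drops out. The remaining steps are formal manipulations with universal properties and should not require any genuinely new input.
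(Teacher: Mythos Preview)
Your overall strategy---realizing both sides as iterated quotients of the auxiliary object $\coprod_{(g,s) \in G \times S} F$ and applying Lemma~\ref{lem:quot_general}---can be made to work, but the specific actions you describe do not fit together. You let $H$ identify the copy at $(g,s)$ with the copy at $(gh^{-1},hs)$ \emph{via the automorphism $h$ of $F$}, and let $G$ act by left multiplication on the first factor together with its automorphism of $F$. Conjugating the $h$-action by $g' \in G$ then gives the map $(g,s,x) \mapsto (gh^{-1}, hs, (g')^{-1}hg' \cdot x)$, which lies in your $H$-subgroup only when $g'$ and $h$ commute; so the $H$-identifications are not normalized by $G$ unless $H$ is central in $G$, and the hypothesis of Lemma~\ref{lem:quot_general} fails. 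The fix is simple: let $H$ act with \emph{identity} components on $F$ (so that modding out by $H$ is pure reindexing by $G \times^H S$), while keeping the $G$-action with component $g'$. These two actions genuinely commute; the quotient by $H$ yields $\coprod_{G \times^H S} F$ carrying the correct $G$-action for $M_G$, and a short computation shows the residual $H$-action on $(\coprod_{G\times S} F)/G \cong \coprod_s F$ (via the identification $(g,s,x) \mapsto (s, g^{-1}x)$) is exactly the one defining $M_H(S)$. With this correction your steps (1)--(5) go through.

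For comparison, the paper's argument is shorter and avoids the auxiliary object entirely. It writes down the morphism $j_{G,S,*} \colon \coprod_{s\in S} F \to \coprod_{t \in G\times^H S} F$ with $\pi_0(j_{G,S,*})$ the map $s \mapsto [1,s]$ and all components equal to $\id_F$, checks directly that this induces a bijection $\quot{(\coprod_s F(-))}{H} \to \quot{(\coprod_t F(-))}{G}$ of presheaves on $\cC$, and then sheafifies. No appeal to Lemma~\ref{lem:quot_general} is needed, and functoriality in $S$ is immediate from the construction. Your approach, once repaired, amounts to deriving this same bijection via a double-quotient bookkeeping; the paper simply writes the bijection down.
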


\begin{proof}
Let $S$ be an object in $\cB(H)$.
Let $j_{G,S,*} \colon  \coprod_{s \in S} F
\to \coprod_{t \in G \times^H S} F$ denote the
morphism in $\wt{\cFC}$ such that the map
$\pi_0(j_{G,S,*})$ is equal to $j_{G,S}\colon S \to G \times^H S$
and that for each $s \in S$, the component of $j_{G,S,*}$
at $s$ is the identity morphism $\id_F$.
It is then easy to check that the morphism
$j_{G,S,*}$ induces a bijection
$\quot{(\coprod_{s \in S} F(-))}{H}
\xto{\cong} 
\quot{(\coprod_{t \in G \times^H S} F(-))}{G}$
of presheaves on $\cC$.
By applying the sheafifcation functor, 
we obtain an isomorphism
$\quot{(\coprod_{s \in S} F)}{H} \xto{\cong} 
\quot{(\coprod_{t \in G \times^H S} F)}{G}$ 
in $\wt{\cFC^m}$.
It is easy to check that this isomorphism induces an isomorphism
$M_H(S) \xto{\cong} M_G(G \times^H S)$ 
in $\wt{\cFC}_{/F'}$.
By construction, this isomorphism is functorial in $S$.
Thus, we obtain an isomorphism  $(c^{/H}_{/G} \circ -)\circ \wt{M}_H
\xto{\cong} \wt{M}_G \circ (G \times^H -)$ of functors.
This proves the claim.
\end{proof}

\begin{lem}\label{lem:M_G_H2}
Let $S$ be an object in $\cB(G)$.
Let $a^G_{H,S}\colon  G \times^H r^G_H(S) \to S$ denote the map
which sends the class of $(g,s) \in G \times S$
in $G \times^H r^G_H(S)$ to $g s \in S$.
Then $a^G_{H,S}$ is a morphism in $\cB(G)$ and
the morphism $c^{/H}_{/G,S} \colon  M_H(r^G_H(S))
\to M_G(S)$ is equal to the composite
of the isomorphism 
$\psi_S \colon M_H(r^G_H(S)) \cong M_G(G \times^H r^G_H(S))$ 
given by Lemma \ref{lem:M_G_H}
with the morphism $M_G(a^G_{H,S})\colon M_G(G \times^H r^G_H(S))
\to M_G(S)$.
\end{lem}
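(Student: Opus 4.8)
\textbf{Proof plan for Lemma \ref{lem:M_G_H2}.}
The claim is a purely formal compatibility between the adjunction isomorphism of Lemma \ref{lem:M_G_H} and the restriction morphism $c^{/H}_{/G,S}$. The plan is to trace both sides back to the level of the coproducts $\coprod F$ before sheafification, where all morphisms are described explicitly by their effect on $\pi_0$ and by identity morphisms on components, and then to invoke the universality of quotient objects to conclude.

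First I would record that $a^G_{H,S}$ is a morphism in $\cB(G)$: the map $G\times r^G_H(S)\to S$, $(g,s)\mapsto gs$, is $G$-equivariant and descends along the quotient $G\times r^G_H(S)\surj G\times^H r^G_H(S)$ precisely because for $h\in H$ we have $(gh^{-1})(hs) = gs$; since $G$ and $r^G_H(S)$ are finite, $G\times^H r^G_H(S)$ is a finite $G$-set, so $a^G_{H,S}$ is indeed a morphism in $\cB(G)$. Next I would make the isomorphism $\psi_S$ of Lemma \ref{lem:M_G_H} explicit: it is induced (after sheafification) by the morphism $j_{G,S,*}: \coprod_{s\in S} F \to \coprod_{t\in G\times^H r^G_H(S)} F$ whose $\pi_0$ is $j_{G,r^G_H(S)}: s\mapsto [(1,s)]$ and all of whose components are $\id_F$. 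On the other hand $c^{/H}_{/G,S}$ is, by its construction in Section \ref{sec:M_G_H}, induced by the canonical morphism $c_{/G,S}:\coprod_{s\in S}F\to M_G(S)$ factoring through $c_{/H,r^G_H(S)}$; concretely it comes from the morphism $\coprod_{s\in S} F \to \coprod_{s\in S} F$ which is the identity on $\pi_0$ and on each component, viewed as landing in $M_G(S)$ via the canonical quotient map $\coprod_{s\in S}F\surj M_G(S)$ that uses the inclusion $S = r^G_H(S)\hookrightarrow$ (the $S$-indexed coproduct sitting inside the construction of $M_G(S)$).

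The core computation is then the following. By the universality of the quotient object $M_H(r^G_H(S))$ (Lemma \ref{lem:quot_FCd}, Lemma \ref{lem:quot_FCtil}), it suffices to check that $c^{/H}_{/G,S}$ and $M_G(a^G_{H,S})\circ\psi_S$ become equal after precomposition with the canonical morphism $\coprod_{s\in S} F \to M_H(r^G_H(S))$. Both composites are then morphisms $\coprod_{s\in S} F \to M_G(S)$; using that $M_G(S)$ is a quotient object of $\coprod_{u\in S} F$, it is enough to compare them after further precomposition/identification at the level of $\coprod F$, where everything is determined by a map of index sets together with the identity on each copy of $F$. Concretely, the composite $M_G(a^G_{H,S})\circ\psi_S$ at this level is: send the $s$-th copy of $F$ identically to the $[(1,s)]$-th copy (via $j_{G,S,*}$), then apply $M_G(a^G_{H,S})$, which sends the $[(g',s')]$-th copy identically to the $(g's')$-th copy; composing, the $s$-th copy goes identically to the $a^G_{H,S}([(1,s)]) = s$-th copy. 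That is exactly the map underlying $c^{/H}_{/G,S}$. Hence the two composites agree, and by universality $c^{/H}_{/G,S} = M_G(a^G_{H,S})\circ\psi_S$.

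I expect the main obstacle to be purely bookkeeping: keeping straight the several index sets ($S$ as an $H$-set versus $G\times^H r^G_H(S)$ as a $G$-set), the various canonical quotient maps (by $H$, by $G$), and verifying that the identifications are compatible with the \emph{over}-$F'$ structure so that all equalities take place in $\wt{\cFC^m}_{/F'}$ and not merely in $\wt{\cFC^m}$. No genuinely new tool beyond Lemma \ref{lem:M_G_H}, Lemma \ref{lem:quot_FCd}, Lemma \ref{lem:quot_FCtil}, and the universal property of quotient objects is needed; the functoriality assertion built into Lemma \ref{lem:M_G_H} already guarantees that $\psi_S$ is natural in $S$, which is what makes the level-of-$\coprod F$ comparison legitimate.
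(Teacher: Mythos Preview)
Your proposal is correct and matches the paper's proof essentially line for line: both arguments first check that $a^G_{H,S}$ is a $G$-map, then reduce to a comparison after precomposing with the canonical morphism $c_{/H,r^G_H(S)}$ (the paper phrases this as using that $c_{/H,r^G_H(S)}$ is an epimorphism rather than invoking universality of the quotient, but these are equivalent), and finally conclude from the identity $a^G_{H,S}\circ j_{G,r^G_H(S)} = \id_S$ on index sets. Your concern about the over-$F'$ structure is unnecessary here, since the lemma and its proof take place in $\wt{\cFC^m}$ rather than in $\wt{\cFC^m}_{/F'}$.
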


\begin{proof}
It is easy to check that the map $a^G_{H,S}$ is a morphism
in $\cB(G)$. 
It follows from Lemma 3.2.2 of \cite{Grids} that
the canonical morphism $c_{/H,r^G_H(S)}$ is an epimorphism. 
By the definition of the morphism $c^{/H}_{/G,S}$,
we have $c^{/H}_{/G,S} \circ c_{/H,r^G_H(S)}= c_{/G,S}$.
Hence it suffices to prove that $c_{/G,S}$ is equal to
the composite $M_G(a^G_{H,S}) \circ \psi_S \circ c_{/H,r^G_H(S)}$.
It can be checked easily that the composite
$\psi_S \circ c_{/H,r^G_H(S)}$ is equal to the composite
of the morphism $\iota(j_{H,r^G_H(S),*})$
with $c_{/G,G \times^H r^G_H(S)}$.
%
Since the composite 
$$
S = r^G_H(S) \xto{j_{H,r^G_H(S)}}
G \times^H r^G_H(S) \xto{a^G_{H,S}} S
$$
is equal to the identity, we obtain the claim.
\end{proof}

\begin{lem}\label{lem:M_G_H3}
Let $S$ be an object in $\cB(H)$ and let
$T$ be an object in $\cB(G)$. Let
$f\colon S \to r^G_H(T)$ be a morphism in $\cB(H)$.
Then the composite of $M_H(f) \colon M_H(S) \to M_H(r^G_H(T))$
with $c^{/H}_{/G,T} \colon  M_H(r^G_H(T)) \to M_G(T)$
is equal to the composite of the isomorphism
$M_H(S) \cong M_G(G \times^H S)$ given by Lemma \ref{lem:M_G_H}
with $M_G(i^G_H(f))\colon  M_G(G \times^H S) \to M_G(T)$.
\end{lem}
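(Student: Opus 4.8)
The plan is to deduce Lemma~\ref{lem:M_G_H3} formally from Lemma~\ref{lem:M_G_H} and Lemma~\ref{lem:M_G_H2}, using the naturality of the canonical isomorphism of functors supplied by Lemma~\ref{lem:M_G_H} together with one elementary identification of morphisms in $\cB(G)$; no new input on the category $\wt{\cFC^m}$ should be needed.

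First I would fix notation, writing $\psi$ for the natural isomorphism $(c^{/H}_{/G}\circ -)\circ \wt{M}_H \xto{\cong} \wt{M}_G\circ(G\times^H -)$ of functors $\cB(H)\to\wt{\cFC^m}_{/F'}$ given by Lemma~\ref{lem:M_G_H}, and $\psi_U : M_H(U)\xto{\cong} M_G(G\times^H U)$ for the underlying isomorphism in $\wt{\cFC^m}$ at an object $U$ of $\cB(H)$. With this convention the isomorphism $M_H(S)\cong M_G(G\times^H S)$ appearing in the statement is $\psi_S$, and Lemma~\ref{lem:M_G_H2}, applied with its $S$ replaced by $T$, reads $c^{/H}_{/G,T}=M_G(a^G_{H,T})\circ\psi_{r^G_H(T)}$. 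A small point to keep in mind is that the symbol written $\psi_S$ in Lemma~\ref{lem:M_G_H2} denotes the value of $\psi$ at $r^G_H(S)$, not at $S$.

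Then I would compute. Beginning from the left-hand side of the asserted identity,
\[
c^{/H}_{/G,T}\circ M_H(f)
= M_G(a^G_{H,T})\circ\psi_{r^G_H(T)}\circ M_H(f),
\]
and applying naturality of $\psi$ to the morphism $f:S\to r^G_H(T)$ of $\cB(H)$ gives $\psi_{r^G_H(T)}\circ M_H(f)=M_G(G\times^H f)\circ\psi_S$. Hence the right-hand side equals $M_G\!\big(a^G_{H,T}\circ(G\times^H f)\big)\circ\psi_S$. It then remains to verify $a^G_{H,T}\circ(G\times^H f)=i^G_H(f)$ as morphisms $G\times^H S\to T$ in $\cB(G)$: this is immediate, since both send the class of $(g,s)\in G\times S$ to $g\,f(s)$ — it is precisely the standard formula for the adjoint transpose of $f$ through the counit $a^G_{H,-}$ of the adjunction $G\times^H -\dashv r^G_H$. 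Substituting yields $c^{/H}_{/G,T}\circ M_H(f)=M_G(i^G_H(f))\circ\psi_S$, which is the claim.

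I do not expect a genuine obstacle here. The only thing that demands care is the bookkeeping: aligning the indexing of the isomorphisms $\psi$ across Lemma~\ref{lem:M_G_H} and Lemma~\ref{lem:M_G_H2}, and being precise about the functoriality of $G\times^H -$ so that naturality of $\psi$ can legitimately be invoked against $f$ rather than against some restriction of it.
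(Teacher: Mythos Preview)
Your proof is correct and follows the same approach as the paper: decompose $c^{/H}_{/G,T}$ via Lemma~\ref{lem:M_G_H2}, invoke naturality of the isomorphism from Lemma~\ref{lem:M_G_H}, and use the identity $a^G_{H,T}\circ(G\times^H f)=i^G_H(f)$. The paper's version is terser, leaving the naturality step implicit in the phrase ``the claim follows,'' but the argument is the same.
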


\begin{proof}
It follows from Lemma \ref{lem:M_G_H2} that
$c^{/H}_{/G,T}$ is equal to the composite
of the isomorphism 
$\psi_T \colon M_H(r^G_H(T)) \cong M_G(G \times^H r^G_H(T))$ 
given by Lemma \ref{lem:M_G_H}
with the morphism $M_G(a^G_{H,T})\colon M_G(G \times^H r^G_H(T))
\to M_G(T)$.
Since the composite
$$
G \times^H S \xto{G \times^H f} G \times^H r^G_H(T)
\xto{a^G_{H,T}} T
$$
is equal to $i^G_H(f)$, the claim follows.
\end{proof}

\subsection{}
Given full subcategories $\wt{\cC} \subset \wt{\cC^m}$
and $\wt{\cFC} \subset \wt{\cFC^m}$,
we consider the full subcategory
$\cB(G)_m \subset \cB(G)$ 
consisting of objects 
$S$ such that 
$\wt{M}_G(S) \in \wt{\cFC^m}$ 
is isomorphic to an object of 
$\wt{\cFC}$.
We will use the same notations 
such as $M_G$, $\wt{M}_G, c^{/H}_{/G,T}$, etc.

\chapter{The category $\cC^d$ and finite adeles}
\label{cha:Cd}
The aim of this chapter is to introduce 
$Y$-sites whose underlying category is
$\cC^d$.    
We give the definition of 
$\cC^d$ in Section~\ref{sec:Cd_defn} 
and introduce some Grothendieck topologies 
on it.
We give a proof that they are indeed $Y$-sites.
Then, we construct grids for those $Y$-sites.
The computation of the absolute Galois monoids
is given in the following Section~\ref{sec:4}.

We present one of our main theorems 
(Theorem~\ref{main theorem}) 
in Section~\ref{sec:Euler distributions}.
This is about the norm relation of elements
in some sections of 
presheaves with transfers on the $Y$-site
introduced in this chapter.   Its applications are
presented in Chapter~\ref{cha:applications}.

\section{The category $\cCo{d}$ and some Grothendieck topologies}
\label{sec:3}
We introduce the category $\cCo{d}$ and some Grothendieck 
topologies on it.  Then we show that they are $Y$-sites and 
construct grids for them.   

Let us recall the analogous statements in the 
case of the Galois theory of a field $F$.
The objects of the category are separable extensions
of the field $F$.   The morphisms are 
$F$-morphisms.    With the atomic topology,
it becomes a $Y$-site.
A grid can be constructed by introducing a fixed separable closure of $F$.
An object of the grid is a finite extension contained in
this separable closure.   Note that it is a poset.   
The Galois group is computed
as the limit of the (finite) Galois groups of the Galois 
extensions $L/F$ as $L$ runs over finite Galois extensions
in the fixed separable closure.

For our category $\cC^d$,
we do not introduce an analogue of a fixed separable closure,
but we introduce the category (poset) of lattices 
contained in $\cO_X^{\oplus d}$.    
Corresponding to the (poset of) 
finite Galois extensions is the category (poset) of 
pairs of lattices.    

The actual computation of the absolute Galois monoid
is done in the next section.   See its introduction 
for more detail.

Let $X$ be a regular noetherian scheme of
pure Krull dimension one.
%
We do not assume that $X$ is separated.

\subsection{Definition of the category $\cCo{d}$}\label{subsec:21}
Let $d \ge 1$ be a positive integer.

\subsubsection{ }\label{sec:Cd_defn}
We define the category $\cCo{d} =\cCXo{d}$ as
follows.
An object in $\cCo{d}$ is a coherent
$\cO_X$-module of finite length which admits a
surjection from $\cO_X^{\oplus d}$.
For two objects $N$ and $N'$ in $\cCo{d}$,
the set $\Hom_{\cCo{d}}(N,N')$ of morphisms from
$N$ to $N'$ is the set of isomorphism classes of diagrams
\[
N' \twoheadleftarrow N''  \hookrightarrow N
\]
in the category of coherent $\cO_X$-modules
where the left arrow is surjective and
the right arrow is injective. 
Here two diagrams
$N' \twoheadleftarrow N''  \hookrightarrow N$
and $N' \twoheadleftarrow N'''  \hookrightarrow N$
are considered to be isomorphic if there exists
an isomorphism $N''\xto{\cong} N'''$ of 
$\cO_X$-modules such that the diagram
$$
\begin{array}{ccccc}
N' & \twoheadleftarrow & N'' & \inj & N \\
{\Large \parallel} & & 
{\Large \downarrow} \cong & & 
{\Large \parallel} \\
N' & \twoheadleftarrow & N''' & \inj & N \\
\end{array}
$$
is commutative.
The composition of two morphisms 
$N' \twoheadleftarrow M \hookrightarrow N$
and 
$N'' \twoheadleftarrow M' \hookrightarrow N'$
is seen in the following diagram:
\[
\begin{array}{ccccc}
 &&&& N \\
 &&&& \uparrow \\
 &&N' &\twoheadleftarrow & M \\
 &&\uparrow & \ \begin{small}\square\end{small} & \uparrow \\
 N'' & \twoheadleftarrow & M' & \twoheadleftarrow & M\times_{N'} M'
 \end{array}
\]
where the box means that the square is cartesian.
This definition of
morphisms is due to Quillen (\cite{Qu})
except that here we take morphisms in the opposite
direction.

An object of the category $\cCo{d}$ can be regarded as an object of
the category of $\cO_X$-modules. Since these two categories have
quite different types of morphisms, it may cause serious confusion
when we discuss the morphisms of the category $\cCo{d}$.
To avoid such confusion, we use the following terminology:
For two objects $N$ and $N'$ of $\cCo{d}$, a morphism
from $N$ to $N'$ in $\cCo{d}$ is referred to as a morphism from
$N$ to $N'$, while a morphism from $N$ to $N'$ in the category
of $\cO_X$-modules is referred to as a homomorphism from
$N$ to $N'$.

\begin{rmk} \label{rmk1}
For a coherent $\cO_X$-module $M$ of finite length,
the condition that $M$ admits a
surjection from $\cO_X^{\oplus d}$ is equivalent to the
condition that $M$ admits a surjection from a locally
free $\cO_X$-module of rank $d$.
It is clear that the former condition implies the latter condition.
Let us prove that the latter condition implies the former condition.
To do this we need to introduce some notation.
For an effective Cartier divisor $Z \subset X$,
we let $i_Z \colon  Z \inj X$ denote the canonical inclusion
and $\sI_Z \subset \cO_X$ the sheaf of ideals
defining $Z$.
We note that $Z$ is affine and artinian.
We say that an $\cO_X$-module $M$ is annihilated by $\sI_Z$
if the adjunction homomorphism $M \to {i_Z}_* i_Z^* M$ 
is an isomorphism.
Let $M$ be a coherent $\cO_X$-module of finite length,
the condition that $M$ admits a surjection 
$\phi \colon  \cF \surj M$ from a locally
free $\cO_X$-module $\cF$ of rank $d$.
By our assumption, there exists an effective Cartier divisor
$Z \subset X$ such that $M$ is annihilated by $\sI_Z$.
Since $Z$ is artinian,
there exists an isomorphism 
$\alpha\colon  \cO_Z^{\oplus d} \xto{\cong} i_Z^* \cF$
of $\cO_Z$-modules. Since $i_Z^*$ is right exact and ${i_Z}_*$ is
exact, the composite
$$
\cO_X^{\oplus d} \to {i_Z}* i_Z^* \cO_X^{\oplus d} =
{i_Z}_* \cO_Z^{\oplus d}
\xto{{i_Z}_*(\alpha)} {i_Z}_* i_Z^* \cF 
\xto{{i_Z}_* i_Z^*(\phi)} {i_Z}_* i_Z^* M \xto{\cong} M
$$
is a surjective homomorphism of $\cO_X$-modules.
Hence $M$ admits a
surjection from $\cO_X^{\oplus d}$.
\end{rmk}

\subsubsection{ }
Suppose that $X=\Spec(A)$ is an affine scheme. Then
the functor that sends $N$ to the $A$-module of the global
section $\Gamma(X,N)$ gives an equivalence of categories 
from the category of coherent $\cO_X$-modules of finite length 
to the category of $A$-modules of finite length.
For an $A$-module $M$ of finite length,
(\resp a morphism $f$ of $A$-modules of finite length),
we denote, by abuse of notation, the corresponding 
coherent $\cO_X$-module of finite length 
(\resp the corresponding morphism of coherent $\cO_X$-modules 
of finite length)
by the same symbol $M$ (\resp $f$) if there is 
no risk of confusion.

\subsubsection{ }
\label{sec:notationz}
Let $N$ be an object in $\cCo{d}$. For an automorphism
$\alpha\colon N \xto{\cong} N$ of $N$ as an $\cO_X$-module,
we let $z(\alpha)$ denote the morphism $N \to N$ 
in $\cCo{d}$ represented by the diagram
$$
N \xleftarrow{\alpha} N \xto{=} N
$$
of $\cO_X$-modules.
It is easy to see that any endomorphism of $N$ in $\cCo{d}$
is an automorphism and the map 
$z \colon  \Aut_{\cO_X}(N) \to \Aut_{\cCo{d}}(N)$
that sends $\alpha$ to $z(\alpha)$ is an isomorphism 
of groups.
For an automorphism $\alpha\colon N \xto{\cong} N$ of $N$ as an $\cO_X$-module,
we denote, by abuse of notation, the automorphism
$z(\alpha)$ of $N$ in $\cCo{d}$ by the same symbol $\alpha$
if there is no risk of confusion.

\subsubsection{ }
We often consider the following two types
of morphisms in $\cCo{d}$.
Let $N$ be an object in $\cCo{d}$.
For an $\cO_X$-submodule $N'$ of $N$,
the morphism $N' = N' \inj N$ in $\cCo{d}$
is denoted by $r_{N,N'} \colon  N \to N'$.
For a quotient $\cO_X$-module $N''$ of $N$,
the morphism $N'' \twoheadleftarrow N =N$ in $\cCo{d}$
is denoted by $m_{N,N''} \colon  N \to N''$.

\subsection{The set $C(N,N')$}

\subsubsection{ }\label{sec:triple}
In Section \ref{sec:Cd_defn}, 
we defined the morphisms in $\cCo{d}$
as some equivalences classes of diagrams.
However, the definition is often inconvenient for practical use
since we are forced to make some efforts in checking if
two diagrams belong to the same equivalence class.
In this paragraph we will give an alternative
description of the morphisms in $\cCo{d}$.
Although the latter description is a little more ad-hoc,
it is more suitable for the actual computation.

Let $N$ and $N'$ be objects in $\cCo{d}$.
%
Let $C(N,N')$ denote the set of triples
$(N_1,N_2,\alpha)$ of 
$\cO_X$-submodules $N_1 \subset N_2 \subset N$
and an isomorphism $\alpha\colon N_2/N_1 \xto{\cong} N'$ of
$\cO_X$-modules.
For $(N_1,N_2,\alpha) \in C(N,N')$, we denote by
$\wt{\alpha} \colon  N_2 \to N'$ the
composite of $\alpha$ with the surjection $N_2 \surj N_2/N_1$.
%
To each triple $(N_1,N_2,\alpha) \in C(N,N')$ we
associate the morphism $N \to N'$ in $\cCo{d}$ represented
by the diagram
$$
N' \xleftarrow{\wt{\alpha}} N_2 \xto{\subset} N.
$$
This gives a map $C(N,N') \to \Hom_{\cCo{d}}(N,N')$.

\begin{lem}\label{lem:4_0}
The map $C(N,N') \to \Hom_{\cCo{d}}(N,N')$
is bijective.
\end{lem}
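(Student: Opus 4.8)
The plan is to construct an explicit inverse map $\Hom_{\cCo{d}}(N,N') \to C(N,N')$ and check that the two composites are the respective identities. First I would unwind the definition: a morphism $N \to N'$ in $\cCo{d}$ is by definition an equivalence class of diagrams $N' \twoheadleftarrow N'' \inj N$ in the category of coherent $\cO_X$-modules, where the left arrow $p:N'' \surj N'$ is surjective and the right arrow $j:N'' \inj N$ is injective. Given such a diagram, I would set $N_1 = j(\Ker p)$ and $N_2 = j(N'')$, so that $N_1 \subset N_2 \subset N$, and let $\alpha : N_2/N_1 \xto{\cong} N'$ be the isomorphism induced by $p$ through the identification $N'' \cong N_2$ via $j$ (this is well-defined because $j$ is injective, so $j$ identifies $N''$ with $N_2$ and $\Ker p$ with $N_1$). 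This produces a triple $(N_1,N_2,\alpha) \in C(N,N')$, and the resulting map is clearly a two-sided inverse on representatives, so the main point is to verify it is well-defined on equivalence classes — but that is immediate, since if $N'' \xto{\cong} N'''$ is an isomorphism making the two diagrams compatible, it carries $\Ker p$ to $\Ker p'$ and $j(N'')$ to $j'(N''')$, hence induces the same submodules $N_1,N_2$ and the same $\alpha$.

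For the composite $C(N,N') \to \Hom_{\cCo{d}}(N,N') \to C(N,N')$, I would start with a triple $(N_1,N_2,\alpha)$, form the diagram $N' \xleftarrow{\wt\alpha} N_2 \xto{\subset} N$, and observe that applying the construction above recovers exactly $N_1 = \Ker \wt\alpha$ (since $\wt\alpha = \alpha \circ (N_2 \surj N_2/N_1)$ and $\alpha$ is an isomorphism), $N_2$ itself as the image of the inclusion, and $\alpha$ as the induced isomorphism $N_2/N_1 \xto{\cong} N'$. So this composite is the identity on $C(N,N')$.

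For the other composite $\Hom_{\cCo{d}}(N,N') \to C(N,N') \to \Hom_{\cCo{d}}(N,N')$, I would take a morphism represented by $N' \xleftarrow{p} N'' \xto{j} N$, produce the triple $(j(\Ker p), j(N''), \alpha)$ as above, and then produce the diagram $N' \xleftarrow{\wt\alpha} j(N'') \xto{\subset} N$. The isomorphism $j : N'' \xto{\cong} j(N'')$ of $\cO_X$-modules then fits into the required commutative diagram comparing $N' \xleftarrow{p} N'' \xto{j} N$ with $N' \xleftarrow{\wt\alpha} j(N'') \xto{\subset} N$: the right square commutes because $j$ followed by the inclusion $j(N'') \inj N$ is $j$, and the left square commutes because $\wt\alpha \circ j = p$ by construction of $\alpha$. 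Hence the two diagrams represent the same morphism, and this composite is also the identity.

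I do not expect any serious obstacle here; the statement is essentially a bookkeeping lemma translating Quillen's $Q$-construction morphisms into explicit sub-subquotient data. The only place requiring a little care is checking that the assignment on equivalence classes is well-defined (the first paragraph), and that $\alpha$ really is induced — i.e., that $j$ carries $\Ker p$ precisely onto $j(N'') \cap$ (the submodule we called $N_1$), which is automatic from injectivity of $j$. Everything else is a direct diagram chase, so I would present it tersely.
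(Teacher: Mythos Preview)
Your proposal is correct and follows essentially the same approach as the paper: construct the inverse by sending a representing diagram $N' \xleftarrow{p} N'' \xto{j} N$ to the triple $(j(\Ker p), j(N''), \alpha)$ with $\alpha$ induced by $p$, then verify it is a two-sided inverse. The paper's proof is terser (it omits the explicit well-definedness and composite checks you spell out, saying only ``it is then easy to check that this map is the desired inverse''), but the argument is the same.
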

\begin{proof}
We construct the inverse map.
Let $f \colon  N \to N'$ be a morphism represented by a diagram
$$
N' \xleftarrow{q} N'' \xto{i} N.
$$
We set $N_1=i(\Ker\, q)$ and $N_2 = i(N'')$.
The homomorphism $i$ induces an isomorphism
$N''/\Ker\, q \cong N_2/N_1$. Let 
$\alpha\colon N_2/N_1 \xto{\cong} N'$ denote the composite of the
inverse of this isomorphism with the
isomorphism $N''/\Ker\, q \cong N'$ induced by $q$.
%
%
By sending $f$ to the triple $(N_1,N_2,\alpha)$
we obtain a map $\Hom_{\cCo{d}}(N,N') \to C(N,N')$.
%
It is then easy to check that this map is the desired
inverse.
%
\end{proof}

\begin{cor}\label{cor:finite}
Suppose that the residue fields of all closed points
of $X$ are finite fields.
Then for any two objects $N$, $N'$ in $\cCo{d}$,
the set $\Hom_{\cCo{d}}(N,N')$ is a finite set.
\end{cor}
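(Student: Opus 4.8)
\textbf{Proof proposal for Corollary \ref{cor:finite}.}
The plan is to use the bijection $C(N,N') \xto{\cong} \Hom_{\cCo{d}}(N,N')$ of Lemma \ref{lem:4_0} and show that, under the stated hypothesis on the residue fields of $X$, the set $C(N,N')$ of triples $(N_1,N_2,\alpha)$ with $N_1 \subset N_2 \subset N$ sub $\cO_X$-modules and $\alpha : N_2/N_1 \xto{\cong} N'$ an isomorphism of $\cO_X$-modules is finite. First I would reduce to the affine case: the question is Zariski-local on $X$, and since $N$ and $N'$ have finite length, their supports consist of finitely many closed points; replacing $X$ by an affine open (or a disjoint union of affine opens) containing the supports does not change $N$, $N'$, or the relevant categories of $\cO_X$-modules, by the equivalence recalled in the paragraph after Remark \ref{rmk1}. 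So we may assume $X = \Spec(A)$ with $A$ noetherian of Krull dimension one, and $N$, $N'$ are $A$-modules of finite length.

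Next I would bound the number of choices. Since $N$ is an $A$-module of finite length, it has only finitely many submodules: indeed $N$ is supported at finitely many maximal ideals, and localizing at each, $N_{\fram}$ is a finite-length module over the noetherian local ring $A_{\fram}$; its residue field $A/\fram$ is finite by hypothesis, so $N_{\fram}$ is a finite set, hence has finitely many submodules, and a submodule of $N$ is determined by the tuple of its localizations. This shows there are only finitely many choices of the pair $(N_1, N_2)$. For each such pair, $N_2/N_1$ is again a finite-length $A$-module, so $\Hom_A(N_2/N_1, N')$ is a subset of the finite set $\Hom_{\text{set}}(N_2/N_1, N')$ (both $N_2/N_1$ and $N'$ being finite sets), hence finite; in particular the set of isomorphisms $\alpha : N_2/N_1 \xto{\cong} N'$ is finite. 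Therefore $C(N,N')$ is a finite union of finite sets, hence finite, and by Lemma \ref{lem:4_0} so is $\Hom_{\cCo{d}}(N,N')$.

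I do not expect a serious obstacle here; the only point requiring a little care is the passage from ``$A$-module of finite length with finite residue fields'' to ``finite set as a module,'' which should be argued via localization at the finitely many points of the support and induction on the length using the composition series (each composition factor being $A/\fram$ for a closed point $\fram$ with $A/\fram$ finite). An alternative, perhaps cleaner, route that avoids localization is to invoke Remark \ref{rmk1}: $N$ admits a surjection $\cO_X^{\oplus d} \surj N$, so $N$ is annihilated by $\sI_Z$ for some effective Cartier divisor $Z \subset X$; then $N$ is a finite-length module over $\cO_Z$, and $Z$ is a finite scheme over the finite residue fields, so $\Gamma(Z, \cO_Z)$ is a finite ring and $N$ is a finite set. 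Either way the counting argument in the previous paragraph then concludes the proof.
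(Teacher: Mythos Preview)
Your proposal is correct and follows the same approach as the paper: use the bijection of Lemma \ref{lem:4_0} and show $C(N,N')$ is finite. The paper's proof is in fact a single sentence (``Since the set $C(N,N')$ is a finite set, the claim follows from Lemma \ref{lem:4_0}''), leaving the finiteness of $C(N,N')$ as obvious; your version simply fills in the details that the paper omits.
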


\begin{proof}
Since the set $C(N,N')$ is a finite set,
the claim follows from Lemma \ref{lem:4_0}.
\end{proof}

We describe how the composition of morphisms in $\cCo{d}$
is expressed via the bijection in Lemma~\ref{lem:4_0}.

\begin{lem} \label{lem:4_0_2}
Let $N \xto{f} N' \xto{g} N''$ be a diagram in $\cCo{d}$.
Let $(N_1,N_2,\alpha) \in C(N,N')$ and
$(N'_1,N'_2,\beta) \in C(N',N'')$ be triples corresponding
to the morphisms $f$ and $g$, respectively.
Then the triple in $C(N,N'')$ corresponding to 
$g \circ f$ is equal to
$(\wt{\alpha}^{-1}(N'_1), \wt{\alpha}^{-1}(N'_2),\beta \circ
\overline{\alpha})$. Here 
$\overline{\alpha} \colon  \wt{\alpha}^{-1}(N'_2)/\wt{\alpha}^{-1}(N'_1)
\xto{\cong} N'_2/N'_1$ is the isomorphism induced by
$\wt{\alpha}$.
\end{lem}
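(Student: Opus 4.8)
The statement to be proven is Lemma~\ref{lem:4_0_2}, the compatibility of the bijection $C(N,N') \xto{\cong} \Hom_{\cCo{d}}(N,N')$ of Lemma~\ref{lem:4_0} with composition. The plan is to unwind both sides by hand, using only the recipe for composition of $Q$-morphisms given in Section~\ref{sec:Cd_defn} (the cartesian square) and the explicit inverse bijection constructed in the proof of Lemma~\ref{lem:4_0}. First I would represent $f$ and $g$ by the diagrams associated to the given triples, namely $N' \xleftarrow{\wt{\alpha}} N_2 \hookrightarrow N$ with $N_1 = \Ker(\wt{\alpha} : N_2 \surj N')$, and $N'' \xleftarrow{\wt{\beta}} N'_2 \hookrightarrow N'$ with $N'_1 = \Ker(\wt{\beta})$. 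Then the composite $g \circ f$ is represented by the outer diagram of
\[
\begin{array}{ccccc}
 &&&& N \\
 &&&& \uparrow \\
 &&N' &\xleftarrow{\wt{\alpha}} & N_2 \\
 &&\uparrow & \square & \uparrow \\
 N'' & \xleftarrow{\wt{\beta}} & N'_2 & \xleftarrow{} & N_2 \times_{N'} N'_2
\end{array}
\]
so the task reduces to identifying $N_2 \times_{N'} N'_2$, its image in $N$, and the induced surjection onto $N''$.

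The key computation is the following: since $N'_2 \subset N'$ and $\wt\alpha : N_2 \to N'$ is a surjection, the fiber product $N_2 \times_{N'} N'_2$ is canonically identified with $\wt\alpha^{-1}(N'_2) \subset N_2$, and under this identification the projection $N_2 \times_{N'} N'_2 \surj N'_2$ becomes the corestriction of $\wt\alpha$ to $\wt\alpha^{-1}(N'_2)$, while the projection to $N_2$ is the inclusion. Composing with $N_2 \hookrightarrow N$, the ``injective leg'' of the diagram for $g \circ f$ is the inclusion $\wt\alpha^{-1}(N'_2) \hookrightarrow N$. The ``surjective leg'' is the composite $\wt\alpha^{-1}(N'_2) \surj N'_2 \xrightarrow{\wt\beta} N''$. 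I would then apply the explicit inverse map from Lemma~\ref{lem:4_0} to this representative diagram $N'' \xleftarrow{} \wt\alpha^{-1}(N'_2) \hookrightarrow N$: the recipe says to take $N_1^{\mathrm{new}}$ to be the image in $N$ of the kernel of the surjection, and $N_2^{\mathrm{new}}$ the image in $N$ of the middle term. Here the middle-to-$N$ map is already injective, so $N_2^{\mathrm{new}} = \wt\alpha^{-1}(N'_2)$ and $N_1^{\mathrm{new}} = \Ker(\wt\beta \circ (\wt\alpha|_{\wt\alpha^{-1}(N'_2)})) = \wt\alpha^{-1}(\Ker \wt\beta) = \wt\alpha^{-1}(N'_1)$. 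The isomorphism $N_2^{\mathrm{new}}/N_1^{\mathrm{new}} \xto{\cong} N''$ is the one induced by $\wt\beta \circ \wt\alpha$, which is exactly $\beta \circ \overline{\alpha}$ after noting that $\wt\alpha$ descends to the claimed isomorphism $\overline\alpha : \wt\alpha^{-1}(N'_2)/\wt\alpha^{-1}(N'_1) \xto{\cong} N'_2/N'_1$ (this uses surjectivity of $\wt\alpha$ once more). This matches the asserted triple.

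The main obstacle, such as it is, is purely bookkeeping: one must be careful that the fiber product in the category of $\cO_X$-modules of the diagram $N_2 \xrightarrow{\wt\alpha} N' \hookleftarrow N'_2$ really is $\wt\alpha^{-1}(N'_2)$ with the stated projections — this is a standard fact for a surjection pulled back along a monomorphism, but it is the one place where surjectivity of $\wt\alpha$ (equivalently, that the right leg of $f$'s diagram is the surjective leg and the left leg the injective one, in Quillen's convention reversed here) is genuinely used, so I would state it as a small sublemma or an inline remark. Everything else is a diagram chase through the definitions, and I expect the write-up to be three or four sentences after that point. I would close by remarking that the identification is independent of the choice of representatives, which is automatic since $\Hom_{\cCo{d}}$ was defined via equivalence classes and Lemma~\ref{lem:4_0} is already known to be a bijection.
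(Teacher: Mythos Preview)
Your proposal is correct and follows essentially the same approach as the paper: identify the fiber product $N_2 \times_{N'} N'_2$ with $\wt{\alpha}^{-1}(N'_2)$ via the cartesian square, obtain the representative diagram $N'' \xleftarrow{\wt{\beta}\circ\wt{\alpha}} \wt{\alpha}^{-1}(N'_2) \hookrightarrow N$ for $g\circ f$, and read off the triple by computing $\Ker(\wt{\beta}\circ\wt{\alpha}) = \wt{\alpha}^{-1}(N'_1)$. The paper's write-up is terser but the logical content is identical.
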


\begin{proof}
The diagram
$$
\begin{array}{ccc}
N' & \xleftarrow{\wt{\alpha}} & N_2 \\
\uparrow & & \uparrow \\
N'_2 & \xleftarrow{\wt{\alpha}} & \wt{\alpha}^{-1}(N'_2),
\end{array}
$$
where the vertical maps are the inclusions,
is commutative and cartesian.
It then follows from the definition of 
the composite of the morphisms in $\cCo{d}$
that $g \circ f$ is represented by the diagram
$$
N'' \xleftarrow{\wt{\beta} \circ \wt{\alpha}}
\wt{\alpha}^{-1}(N'_2) \xto{\subset} N.
$$
Since the kernel of the homomorphism
$\wt{\beta} \circ \wt{\alpha} \colon  \wt{\alpha}^{-1}(N'_2)\to N''$
is equal to $\wt{\alpha}^{-1}(N'_1)$, the claim follows.
\end{proof}

Let $N$ be an object of $\cCo{d}$. Let $\Pair(N)$
denote the following poset. The elements of $\Pair(N)$
are the pairs $(N_1,N_2)$ of $\cO_X$-submodules of $N$
with $N_1 \subset N_2$. For two elements 
$(N_1,N_2)$ and $(N'_1,N'_2)$ of $\Pair(N)$, we have
$(N_1,N_2) \le (N'_1,N'_2)$ if and only if
$N'_1 \subset N_1 \subset N_2 \subset N'_2$.
By associating, with each element $(N_1,N_2)$ of
$\Pair(N)$, the equivalence class of the diagram
$$
N_2/N_1 \twoheadleftarrow N_2 \inj N,
$$
we obtain a contravariant functor from $\Pair(N)$
regarded as a category to the undercategory 
$\cCo{d}_{N/}$. Let us denote this functor by 
$\jmath_N$

\begin{lem} \label{lem:epi}
The functor $\jmath$ is an anti-equivalence of categories.
\end{lem}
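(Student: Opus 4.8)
The plan is to construct an explicit quasi-inverse to $\jmath_N$ and check that the two composites are naturally isomorphic to the identities. First I would unwind what the statement says: for each object $N$ of $\cCo{d}$, the functor $\jmath_N$ sends an element $(N_1,N_2)$ of $\Pair(N)$ to the object of $\cCo{d}_{N/}$ given by the morphism $r\circ m: N \to N_2/N_1$ (represented by $N_2/N_1 \twoheadleftarrow N_2 \inj N$), and it reverses the order of $\Pair(N)$. So I must show this is an anti-equivalence, i.e., an equivalence $\Pair(N)^{\op} \to \cCo{d}_{N/}$.

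The key input is Lemma~\ref{lem:4_0}, which identifies $\Hom_{\cCo{d}}(N,N')$ with the set $C(N,N')$ of triples $(N_1,N_2,\alpha)$ with $N_1\subset N_2\subset N$ and $\alpha: N_2/N_1 \xto{\cong} N'$. An object of $\cCo{d}_{N/}$ is a pair $(N', f)$ with $f: N\to N'$; under the bijection this is the same as a pair $(N', (N_1,N_2,\alpha))$ where $\alpha: N_2/N_1\xto{\cong} N'$. The functor $\jmath_N$ sends $(N_1,N_2)$ to $(N_2/N_1, (N_1,N_2,\id))$. So on objects I would define a candidate quasi-inverse $\Phi$ sending $(N',f)$, with associated triple $(N_1,N_2,\alpha)$, to the element $(N_1,N_2)\in\Pair(N)$; the composite $\Phi\circ\jmath_N$ is literally the identity on $\Pair(N)$, and $\jmath_N\circ\Phi$ sends $(N',f)$ to $(N_2/N_1,(N_1,N_2,\id))$, which is canonically isomorphic to $(N',f)$ via the isomorphism $\alpha: N_2/N_1\xto{\cong}N'$ (one must check this is a morphism in $\cCo{d}_{N/}$, i.e., compatible with the structure maps from $N$ — but this is exactly the statement that $z(\alpha)\circ(r_{N,N_2/N_1}\circ m) = f$, which follows from Lemma~\ref{lem:4_0_2} or directly from the definition of composition).

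The remaining and most delicate point is the behaviour on morphisms: I must show $\jmath_N$ induces a bijection $\Hom_{\Pair(N)^{\op}}((N_1,N_2),(N'_1,N'_2)) \to \Hom_{\cCo{d}_{N/}}(\jmath_N(N_1,N_2),\jmath_N(N'_1,N'_2))$. The left-hand side is a singleton when $(N'_1,N'_2)\le (N_1,N_2)$, i.e. $N_1\subset N_1'$, wait — when $N'_1\subset N_1\subset N_2\subset N'_2$ — and empty otherwise. So I must check: there is a morphism in $\cCo{d}$ from $N_2/N_1$ to $N'_2/N'_1$ commuting with the projections from $N$ if and only if $N'_1\subset N_1$ and $N_2\subset N'_2$, and that when it exists it is unique. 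Using Lemma~\ref{lem:4_0}, a morphism $g: N_2/N_1\to N'_2/N'_1$ corresponds to a triple $(M_1, M_2, \gamma)$ with $M_1\subset M_2\subset N_2/N_1$; the commutativity condition with the maps out of $N$ forces (via Lemma~\ref{lem:4_0_2}) that $M_2$ be the image of $N'_2\cap N_2$ and $M_1$ the image of $(N'_1\cap N_2) + N_1$ in $N_2/N_1$, and that $\gamma$ be the induced identification; commutativity then holds precisely when these coincide with all of $N_2/N_1$ on the source side of the relevant maps, which is equivalent to $N_2\subset N'_2$ and $N'_1\subset N_1$. I expect this verification — carefully translating the cartesian-square composition law of Lemma~\ref{lem:4_0_2} into submodule conditions and extracting exactly the inclusion relations of $\Pair(N)$, together with uniqueness — to be the main obstacle; everything else is bookkeeping. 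Faithfulness and fullness together then give that $\jmath_N$ is fully faithful, and essential surjectivity follows because every object $(N',f)$ is isomorphic to $\jmath_N(N_1,N_2)$ for the $(N_1,N_2)$ extracted from its triple, completing the proof.
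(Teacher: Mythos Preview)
Your approach is essentially the paper's: use Lemma~\ref{lem:4_0} to get essential surjectivity, and Lemma~\ref{lem:4_0_2} to characterize exactly when a morphism in the undercategory exists (forcing the submodules $M_1,M_2$ and the isomorphism $\alpha$ to be the canonical ones), giving full faithfulness. The explicit quasi-inverse $\Phi$ you build is harmless but redundant once you have fully faithful plus essentially surjective.

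One genuine slip: your self-correction on the ordering is backwards. Your first guess, ``$(N'_1,N'_2)\le(N_1,N_2)$'', was right. Unwinding Lemma~\ref{lem:4_0_2}, a morphism $g:N_2/N_1\to N'_2/N'_1$ satisfies $g\circ\jmath_N(N_1,N_2)=\jmath_N(N'_1,N'_2)$ iff the preimages in $N_2$ of the submodules $M_1,M_2\subset N_2/N_1$ defining $g$ are exactly $N'_1$ and $N'_2$; this forces $N_1\subset N'_1\subset N'_2\subset N_2$, which is $(N'_1,N'_2)\le(N_1,N_2)$, not the opposite. The paper's proof records precisely this: the condition is equivalent to $(N'_1,N'_2)\le(N_1,N_2)$ together with $M_1=N'_1/N_1$, $M_2=N'_2/N_1$, and $\alpha$ the canonical identification. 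Once you straighten out the direction, your argument goes through.
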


\begin{proof}
It follows from Lemma \ref{lem:4_0} and the bijectivity of
the map $z$ in Section \ref{sec:notationz} that
the functor $\jmath$ is essentially surjective.

Let $(N_1,N_2)$ and $(N'_1,N'_2)$ be two
elements of $\Pair(N)$. Let $(M_1,M_2,\alpha)$ be
an element of $C(N_2/N_1,N'_2/N'_1)$ and let
$f\colon  N_2/N_1 \to N'_2/N'_1$ denote the corresponding 
morphism in $\cCo{d}$. Then it follows from Lemma \ref{lem:4_0_2}
that the equality $\jmath_N(N_1,N_2) = f \circ \jmath_N(N'_1,N'_2)$
holds if and only if $(N'_1,N'_2) \le (N_1,N_2)$,
$M_1 = N'_1/N_1$, $M_2 = N'_2/N_1$, and $\alpha$ is equal to
the inverse of the isomorphism $N'_2/N'_1 \xto{\cong} 
(N'_2/N_1)/(N'_1/N_1)$ induced by the quotient homomorphism
$N'_2 \to N'_2/N_1$.
This shows that the functor $\jmath$ is fully faithful. 
\end{proof}

\begin{ex}\label{ex:automorphisms}
We explain how to compute the group $\Aut_{N'}(N)$ for
a morphism $f \colon  N \to N'$ in $\cCo{d}$.
Let $(N_1,N_2,\alpha) \in C(N,N')$ be
the triple corresponding to $f$.
If $\beta\colon N\xto{\cong} N$ is an $\cO_X$-linear
automorphism, then it follows from Lemma \ref{lem:4_0_2}
that the composite of the automorphism
$N \cong N$ corresponding to $(N,0,\beta)$ 
with $f$ is given by the triple
$(\beta^{-1}(N_1),\beta^{-1}(N_2), 
\alpha\circ \overline{\beta})$,
where $\overline{\beta}\colon \beta^{-1}(N_2)/\beta^{-1}(N_1)
\xto{\cong} N_2/N_1$ is the isomorphism induced by $\beta$.
Hence the group $\Aut_{N'}(N)$ is isomorphic to
the subgroup of elements $\beta \in \Aut_{\cO_X}(N)$
such that $\beta(N_i) = N_i$ for $i=1, 2$ and 
that the automorphism of $N_2/N_1$ induced by $\beta$
is equal to the identity.
\end{ex}

\subsection{\Fibrs and \cofibrs}\label{sec:modelstr}

\subsubsection{ }
A morphism $f\colon N \to N'$ in $\cCo{d}$ is called a {\em \fibr} 
(\resp a {\em \cofibr})
if it is represented by a diagram
$$
N' \stackrel{p}{\twoheadleftarrow} N''
\stackrel{i}{\inj} N
$$
of $\cO_X$-modules, in which $i$ (\resp $p$) is an isomorphism.
In particular for a quotient $\cO_X$-module (\resp an $\cO_X$-submodule) 
$N'$ of an object $N \in \cCo{d}$, the morphism $m_{N,N'}$ (\resp $r_{N,N'}$)
is a \fibr (\resp a \cofibr).
We will use the following properties satisfied by \fibrs and \cofibrs.
\begin{lem}\label{lem:pre_model}
\begin{enumerate}
\item Let
$$
\begin{CD}
M @>{f}>> N \\
@V{r}VV @V{m}VV \\
M' @>{f'}>> N'
\end{CD}
$$
be a commutative diagram in $\cCo{d}$,
where $r$ is a \cofibr and
$m$ is a \fibr. Then there exists a morphism
$h\colon M' \to N$ in $\cCo{d}$ satisfying
$f = h\circ r$ and $f' = m \circ h$.
\item Any morphism $f$ in $\cCo{d}$ is of the form
$f = m \circ r$ where $r$ is a \cofibr and 
$m$ is a \fibr.
\item Any isomorphism in $\cCo{d}$ is a \fibr.
\item 
Let $M \xto{f} N \xto{g} N'$ be morphisms in $\cCo{d}$.
Then the composite $g \circ f$ is a \fibr if and only if
$f$ and $g$ are \fibrs.
\end{enumerate}
\end{lem}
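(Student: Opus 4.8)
My plan is to reduce all four assertions to the combinatorial description of morphisms in $\cCo{d}$ furnished by Lemma~\ref{lem:4_0} together with the composition rule of Lemma~\ref{lem:4_0_2}. Recall that under Lemma~\ref{lem:4_0} a morphism $N \to N'$ corresponds to a triple $(N_1,N_2,\alpha)$ with $N_1\subseteq N_2\subseteq N$ and $\alpha\colon N_2/N_1\xto{\cong}N'$. The key observation is that in this language a morphism is a \fibr exactly when its triple has $N_2 = N$ (the full source), and a \cofibr exactly when its triple has $N_1 = 0$; moreover $r_{M,M'}$ has triple $(0,M',\id)$, while $m_{N,N'}$, for $N' = N/K$, has triple $(K,N,\id)$. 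With this dictionary the assertions become elementary manipulations of submodules.

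For (3), given an isomorphism $f\colon N\to N'$ with triple $(N_1,N_2,\alpha)$ and an inverse $g$ with triple $(M_1,M_2,\beta)$, I would apply Lemma~\ref{lem:4_0_2} to $g\circ f = \id_N$, whose triple is $(0,N,\id)$: the middle term of the triple of $g\circ f$ is $\wt{\alpha}^{-1}(M_2)$, which must therefore equal $N$, and since $\wt{\alpha}^{-1}(M_2)\subseteq N_2\subseteq N$ this forces $N_2 = N$, so $f$ is a \fibr. For (2), I would factor an arbitrary $f$ with triple $(N_1,N_2,\alpha)$ as $m\circ r_{N,N_2}$, where $r_{N,N_2}\colon N\to N_2$ is the evident \cofibr and $m\colon N_2\to N'$ is the morphism with triple $(N_1,N_2,\alpha)\in C(N_2,N')$; the middle term of this triple equals the source $N_2$, so $m$ is a \fibr, and Lemma~\ref{lem:4_0_2} identifies $m\circ r_{N,N_2}$ with $f$.

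Assertion (4) falls out of the composition formula alone. If $f\colon M\to N$ and $g\colon N\to N'$ have triples $(A_1,A_2,\alpha)$ and $(B_1,B_2,\beta)$, then $g\circ f$ has triple $(\wt{\alpha}^{-1}(B_1),\wt{\alpha}^{-1}(B_2),\beta\circ\overline{\alpha})$, where $\wt{\alpha}\colon A_2\to N$ is surjective. If $f$ and $g$ are \fibrs, then $A_2 = M$ and $B_2 = N$, so $\wt{\alpha}^{-1}(B_2) = \wt{\alpha}^{-1}(N) = A_2 = M$ and $g\circ f$ is a \fibr. Conversely, if $g\circ f$ is a \fibr then $\wt{\alpha}^{-1}(B_2) = M$; since $\wt{\alpha}^{-1}(B_2)\subseteq A_2\subseteq M$ this gives $A_2 = M$, so $f$ is a \fibr, and then $B_2\supseteq\wt{\alpha}(A_2) = N$, so $B_2 = N$ and $g$ is a \fibr.

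The remaining assertion (1) is the only one that genuinely uses the commutativity of the square, and is the part I expect to demand the most careful bookkeeping. Writing $r = r_{M,M'}$ with $M'\subseteq M$, $m = m_{N,N'}$ with $N' = N/K$, and letting $(A_1,A_2,\alpha)$ and $(B_1,B_2,\gamma)$ be the triples of $f$ and $f'$, comparison of the triples of $m\circ f$ and $f'\circ r$ via Lemma~\ref{lem:4_0_2} yields $A_2 = B_2\subseteq M'$ and $B_1 = \wt{\alpha}^{-1}(K)$; in particular $A_1\subseteq A_2\subseteq M'$, so $(A_1,A_2,\alpha)$ is an admissible element of $C(M',N)$ and defines a morphism $h\colon M'\to N$. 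One then checks, once more from the composition formula, that $h\circ r$ has triple $(A_1,A_2,\alpha)\in C(M,N)$, hence equals $f$, and that $m\circ h$ has the same triple $(\wt{\alpha}^{-1}(K),A_2,\gamma)$ as $f'$, hence equals $f'$. The only subtlety is to keep the various submodules in their correct ambient modules; the commutativity hypothesis is precisely what makes the candidate triple for $h$ well defined.
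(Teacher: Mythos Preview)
Your proposal is correct and follows essentially the same approach as the paper: both rely on the triple description of morphisms (Lemma~\ref{lem:4_0}) together with the composition formula (Lemma~\ref{lem:4_0_2}), and your characterization of fibrations and cofibrations in terms of $N_2 = N$ and $N_1 = 0$ is exactly what drives the paper's argument as well. The only organizational differences are minor: in (1) you reduce (harmlessly, though without explicit justification) to the case $r = r_{M,M'}$ and $m = m_{N,N'}$ and then verify both identities $f = h\circ r$ and $f' = m\circ h$ directly, whereas the paper works with general triples and deduces the second identity from the first using that $r$ is an epimorphism; and in (3) you spell out what the paper dismisses as obvious.
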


\begin{proof}
Let the notation be as in the claim (1) and
let $(M_1,M_2,\alpha_1) \in C(M,N)$,
$(M_3,M_4,\alpha_2) \in C(M,M')$ and
$(M_5,M_6,\alpha_3) \in C(M,N')$ denote the
triple corresponding to the morphisms
$f$, $r$, and $m \circ f$.
Since $m \circ f$ factors through both $f$ and $r$, we have
$M_1 \subset M_5 \subset M_6 \subset M_2$ and
$M_3 \subset M_5 \subset M_6 \subset M_4$.
Since $r$ is a \cofibr, we have $M_3=0$.
Since $m$ is a \fibr, we have $M_6=M_2$.
This shows that $M_3 \subset M_1 \subset M_2 \subset M_4$.
For $i=1,2$, let $M'_i$ denote the image of $M_i$ under the
isomorphism $\alpha_2\colon M_4 = M_4/M_3 \xto{\cong} M'$.
We define the isomorphism $\beta\colon M'_2/M'_1 \to N$ to be
the composite of the inverse of the isomorphism
$M_2/M_1 \xto{\cong} M'_2/M'_1$ 
induced by $\alpha_2$ with the isomorphism $\alpha_1$.
Let $h\colon M' \to N$ be the morphism in $\cCo{d}$
corresponding to the triple $(M'_1,M'_2,\beta)$.
Then it is easy to check that $f = h \circ r$.
Since $f' \circ r = m \circ f = m \circ h \circ r$
and $r$ is an epimorphism in $\cCo{d}$, we have
$f' = m \circ h$. This proves the claim (1).

Let $f\colon N \to N'$ be a morphism in $\cCo{d}$
represented by a diagram $N' \stackrel{p}{\twoheadleftarrow}
N'' \stackrel{i}{\inj} N$. Then $f$ is equal to the
composite of the \cofibr represented by
$N'' \xleftarrow{=} N'' \stackrel{i}{\inj} N$
and the \fibr
$N'  \stackrel{p}{\twoheadleftarrow} N'' \xto{=} N''$.
This proves the claim (2).

The claim (3) is obvious.

Let the notation be as in the claim (4).
Let $(M_1,M_2,\alpha_1) \in C(M,N)$ and
$(M_3,M_4,\alpha_2) \in C(M,N')$ denote the triples
corresponding to the morphisms $f$ and $g \circ f$,
respectively. Since $g \circ f$ factors through $f$, we have
$M_1 \subset M_3 \subset M_4 \subset M_2$.
It can be checked that $f$ (\resp $g$, \resp $g\circ f$) 
is a \fibr if and only if $M_2=M$ (\resp $M_4=M$, \resp $M_4=M_2$).
Hence the claim (4) follows.

\end{proof}

\begin{rmk}
We adopt the terminologies ``fibration" and ``cofibration"
from the model category theory.
These terminologies are justified by the following observation.
Let us define the weak equivalences in $\cC^d$ to be
all the morphisms in $\cC^d$. Then
the three subcategories of $\cC^d$ that consist
of the weak equivalences, the fibrations, and the cofibrations
give a model structure of $\cC^d$ in the sense of 
Hovey~\cite[Def.~1.1.3]{Hovey}.
However, the category $\cC^d$ is not a model category,
since it is neither complete nor cocomplete.

While the model category theory considers the localization
with respect to the weak equivalences, we consider the
Grothendieck topology whose covering morphisms are given 
by the weak equivalences.
\end{rmk}

\begin{rmk}
The statements of Lemma \ref{lem:pre_model} remain valid if
we exchange the roles of fibrations and those of cofibrations.
We can check this by using the functor $\bD$ which will be
introduced later.
For example, in the notation of the
claim (4), $f$ and $g$ are \cofibrs if $g \circ f$ is a \cofibr.
We did not include this statement as a part of Lemma \ref{lem:pre_model}
since we will not use this in the sequel.
\end{rmk}

\subsection{Lattices}
\label{sec:lattices}
Let $\pi_0(X)$ denote the set of connected components of $X$.
The set $\pi_0(X)$ is a finite set and
each $X' \in \pi_0(X)$ is an integral scheme.
We let $\eta_{X'}$ denote the generic point of $X'$.
Set $\eta_{X} = \coprod_{X' \in \pi_0(X)} \eta_{X'}$ and let
$j_X \colon  \eta_X \to \coprod_{X' \in \pi_0(X)} X' =X$ 
denote the canonical morphism.
We set $\cK_X = {j_X}_* j_X^* \cO_X$ and call it the
sheaf of total ring of fractions on $X$.
We regard $\cO_X$ as an $\cO_X$-submodule of $\cK_X$
via the adjunction homomorphism $\cO_X \to \cK_X$.

\subsubsection{ }
We set 
$$
V = \cK_X^{\oplus d}.
$$
We say that an $\cO_X$-submodule $L$ of $V$ is
an $\cO_X$-lattice of $V$ if 
$L$ is coherent and $j_X^* V/L = 0$.
For example, the $\cO_X$-submodule $\cO_X^{\oplus d}$ 
of $\cK^{\oplus d}$ is an $\cO_X$-lattice of $V$.

\begin{lem}\label{lem:loc_freeness}
Let $L$ be an $\cO_X$-lattice of $V$.
Then $L$ is a locally free $\cO_X$-module of rank $d$.
\end{lem}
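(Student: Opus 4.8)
The plan is to establish local freeness by a stalk-wise argument followed by the standard spreading-out. First I would reduce to showing that $L_x$ is a free $\cO_{X,x}$-module of rank $d$ for every $x \in X$: once this is known, choosing a basis of $L_x$ and lifting its elements to sections on a small affine open $U \ni x$ gives a homomorphism $\cO_U^{\oplus d} \to L|_U$ which is an isomorphism on stalks at $x$; its kernel and cokernel are coherent and vanish at $x$, hence on a neighbourhood of $x$ since the support of a coherent sheaf on a noetherian scheme is closed, so $L$ is free of rank $d$ near $x$.

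Next I would analyse the stalks. Since $X$ is regular it is normal, so its connected components are its irreducible components, and consequently every point $x$ lies on a unique component $X' \in \pi_0(X)$ with $\cO_{X,x}$ a domain; as only the generic point $\eta_{X'}$ specializes to $x$, the stalk of $\cK_X = {j_X}_* j_X^* \cO_X$ at $x$ is the function field $K' := \kappa(\eta_{X'})$, so $V_x = (K')^{\oplus d}$. Because $X$ has pure Krull dimension one, each point is either some $\eta_{X'}$ or a closed point.

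For $x = \eta_{X'}$ the local ring is the field $K'$ and the lattice condition $j_X^* V/L = 0$ is exactly the assertion $L_{\eta_{X'}} = V_{\eta_{X'}} = (K')^{\oplus d}$, which is free of rank $d$. For $x$ a closed point, $\cO_{X,x}$ is a regular local ring of dimension one, i.e. a discrete valuation ring $R$ with $\Frac(R) = K'$; the stalk $L_x$ is finitely generated over $R$ by coherence of $L$ and lies in $V_x = (K')^{\oplus d}$, hence is torsion-free, hence free over the principal ideal domain $R$, and $L_x \otimes_R K' = L_{\eta_{X'}} = (K')^{\oplus d}$ forces the rank to be $d$. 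This settles freeness of rank $d$ at every stalk.

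The argument is routine; the one place that asks for a little care is the identification of $(\cK_X)_x$ with $\kappa(\eta_{X'})$, which uses regularity of $X$ to guarantee that $x$ lies on a single irreducible component, together with the observation that in a pure one-dimensional scheme every point is either a generic point of a component or a closed point.
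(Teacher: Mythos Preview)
Your proof is correct and takes essentially the same approach as the paper: both reduce to the local situation and invoke the fact that a finitely generated torsion-free module over a DVR is free, with the rank pinned down by the generic fiber. The paper's write-up is terser (it simply localizes to $X=\Spec R$ with $R$ a DVR), whereas you spell out the spreading-out step and treat generic and closed points separately, but there is no substantive difference.
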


\begin{proof}
Since the question is local, we may assume that $X = \Spec\, R$
is a spectrum of discrete valuation ring. Let $K$ denote the
field of fractions of $R$. It suffices to show that any
finite generated $R$-submodule $N$ of $K^{\oplus d}$ 
such that $(K^{\oplus d}/N) \otimes_R K = 0$ is free of rank $d$.
The last claim follows from the well-known fact that 
any finitely generated torsion-free $R$-module is free.
\end{proof}

\begin{lem}\label{lem:lattices_V}
Let $L_1$, $L_2$ be $\cO_X$-lattices in $V$.
Then both $L_1 \cap L_2$ and $L_1+L_2$ are
$\cO_X$-lattices of $V$.
\end{lem}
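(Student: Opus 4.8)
\textbf{Proof proposal for Lemma \ref{lem:lattices_V}.}

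The plan is to reduce to the local case, exactly as in the proof of Lemma \ref{lem:loc_freeness}. The two properties defining an $\cO_X$-lattice of $V$—being coherent, and having $j_X^*(V/L)=0$—are both local on $X$, and the formation of $L_1\cap L_2$ and $L_1+L_2$ commutes with restriction to an open affine (these are just kernel and image of the map $L_1\oplus L_2\to V$, and $\cO_X$ is noetherian so the flat base change to an open causes no trouble). So it suffices to treat the case $X=\Spec R$ with $R$ a discrete valuation ring, $K=\Frac(R)$, $V=K^{\oplus d}$, and $L_1,L_2\subset K^{\oplus d}$ finitely generated $R$-submodules with $L_i\otimes_R K=K^{\oplus d}$.

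First I would handle $L_1+L_2$: it is the image of $L_1\oplus L_2\to K^{\oplus d}$, hence a finitely generated $R$-submodule of $K^{\oplus d}$, and it contains each $L_i$, so $(K^{\oplus d}/(L_1+L_2))$ is a quotient of $K^{\oplus d}/L_1$, which vanishes after $\otimes_R K$; hence $(L_1+L_2)\otimes_R K=K^{\oplus d}$, so $L_1+L_2$ is a lattice. For $L_1\cap L_2$ the only real point is to check it is still ``full'', i.e. $(L_1\cap L_2)\otimes_R K=K^{\oplus d}$; finite generation is automatic since $R$ is noetherian and $L_1\cap L_2\subset L_1$. For fullness, I would use that over the DVR $R$ one has for any two lattices $\varpi^n L_1\subset L_2$ for $n\gg0$ (where $\varpi$ is a uniformizer): indeed $L_1$ is finitely generated and $L_1\otimes K=K^{\oplus d}=L_2\otimes K$, so each of the finitely many generators of $L_1$ lies in $\varpi^{-n}L_2$ for $n$ large. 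Then $\varpi^n L_1\subset L_1\cap L_2$, and $\varpi^n L_1$ is already a lattice (it is an isomorphic copy of $L_1$ under multiplication by $\varpi^n$, which is a $K$-linear automorphism of $K^{\oplus d}$), so $(L_1\cap L_2)\otimes_R K\supset (\varpi^n L_1)\otimes_R K=K^{\oplus d}$, giving the claim.

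I do not expect a genuine obstacle here; the lemma is elementary once the problem is localized. The only thing to be mildly careful about is that $X$ is not assumed separated or connected, so $\cK_X$ is a product of function fields over the connected components and ``$\cO_X$-lattice'' must be tested component by component and then locally on each component; but since each connected component is integral of Krull dimension one and regular, its local rings at closed points are discrete valuation rings, and the argument above applies verbatim. After assembling the local statements one concludes that $L_1\cap L_2$ and $L_1+L_2$ are coherent with $j_X^*(V/-)=0$, i.e. $\cO_X$-lattices of $V$.
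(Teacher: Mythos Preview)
Your proof is correct, but it takes a different route from the paper for the intersection case. The paper argues globally without localizing: coherence of $L_1\cap L_2$ follows since it is a quasi-coherent submodule of the coherent module $L_1$ over a noetherian scheme, and fullness follows from the observation that the diagonal map $V/(L_1\cap L_2)\hookrightarrow V/L_1\oplus V/L_2$ is injective, so $j_X^*\bigl(V/(L_1\cap L_2)\bigr)$ embeds into $j_X^*(V/L_1)\oplus j_X^*(V/L_2)=0$. This one-line argument sidesteps the reduction to a DVR and the uniformizer-power trick entirely. Your localization approach is perfectly valid and perhaps more hands-on, but the paper's argument is shorter and works uniformly at the sheaf level; for $L_1+L_2$ the two proofs are essentially the same.
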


\begin{proof}
The $\cO_X$-module $L_1 \cap L_2$ is coherent since
it is a quasi-coherent $\cO_X$-submodule of $L_1$
and $X$ is noetherian.
We have $j_X^* V/(L_1 \cap L_2) = 0$ since it is 
isomorphic to an
$\cO_{\eta_X}$-submodule
of $j_X^* (V/L_1 \oplus V/L_2) = 0$. 
This shows that $L_1 \cap L_2$ is an $\cO_X$-lattice of $V$.

The $\cO_X$-module $L_1 + L_2$ is coherent since it is
quasi-coherent and finitely generated, and $X$ is noetherian.
We have $j_X^* V/(L_1+L_2) = 0$ since it is a quotient
of $j_X^* V/L_1 = 0$. This shows that $L_1 + L_2$ is
an $\cO_X$-lattice of $V$.
\end{proof}

\begin{lem} \label{lem:lattice_V}
Let $L$ be an $\cO_X$-lattice of $V$.
Then for an $\cO_X$-submodule $L'$ of $V$,
the following conditions are equivalent:
\begin{enumerate}
\item Both $(L+L')/L$ and $(L+L')/L'$ are coherent $\cO_X$-modules 
of finite length.
\item Both $L/(L \cap L')$
and $L'/(L \cap L')$ are coherent $\cO_X$-modules 
of finite length.
\item $L'$ is an $\cO_X$-lattice of $V$.
\end{enumerate}
\end{lem}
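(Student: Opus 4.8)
The plan is to note first that conditions (1) and (2) are reformulations of one another, and then to relate (2) with (3) via an analysis of $L\cap L'$ together with the behaviour of coherent sheaves under the localization functor $j_X^*$ at the generic points of $X$. For the equivalence (1)$\iff$(2): the second isomorphism theorem for $\cO_X$-modules gives canonical isomorphisms $(L+L')/L\cong L'/(L\cap L')$ and $(L+L')/L'\cong L/(L\cap L')$, under which the assertion of (1) becomes literally the assertion of (2). So it suffices to prove the implications (3)$\Rightarrow$(2) and (2)$\Rightarrow$(3).

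For (3)$\Rightarrow$(2): assuming $L'$ is an $\cO_X$-lattice of $V$, Lemma \ref{lem:lattices_V} shows that $L\cap L'$ is again an $\cO_X$-lattice of $V$; in particular $L/(L\cap L')$ and $L'/(L\cap L')$ are coherent, being quotients of coherent sheaves. Applying the exact functor $j_X^*$ to the short exact sequences $0\to L\cap L'\to L\to L/(L\cap L')\to 0$ and $0\to L\cap L'\to L'\to L'/(L\cap L')\to 0$, and using that $j_X^*(L\cap L')=j_X^*L=j_X^*L'=j_X^*V$ (each of $L$, $L'$ and $L\cap L'$ being a lattice), one finds that $j_X^*$ kills $L/(L\cap L')$ and $L'/(L\cap L')$. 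Since a coherent $\cO_X$-module whose stalks at all generic points of $X$ vanish is supported on finitely many closed points — here one uses that $X$ is a regular noetherian scheme of pure Krull dimension one — such a module has finite length, which is (2).

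For (2)$\Rightarrow$(3): assuming $L/(L\cap L')$ and $L'/(L\cap L')$ are coherent of finite length, the submodule $L\cap L'=\ker(L\to L/(L\cap L'))$ is the kernel of a morphism of coherent sheaves on the noetherian scheme $X$, hence coherent; applying $j_X^*$ to $0\to L\cap L'\to L\to L/(L\cap L')\to 0$ and using that a coherent $\cO_X$-module of finite length is annihilated by $j_X^*$ gives $j_X^*(L\cap L')=j_X^*L=j_X^*V$, so that $L\cap L'$ is an $\cO_X$-lattice of $V$. Then $L'$ fits in the short exact sequence $0\to L\cap L'\to L'\to L'/(L\cap L')\to 0$ with coherent outer terms, so $L'$ is coherent, and $j_X^*L'=j_X^*(L\cap L')=j_X^*V$, i.e.\ $j_X^*(V/L')=0$; thus $L'$ is an $\cO_X$-lattice of $V$, which is (3).

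The argument is essentially homological bookkeeping, so I do not expect a real obstacle; the only points deserving a little care are the exactness of $j_X^*$, the passage between coherence of two of the terms and of the third in a short exact sequence on a noetherian scheme, and the identification of the coherent $\cO_X$-modules of finite length with the coherent $\cO_X$-modules whose stalks at the generic points vanish, which uses that $X$ has pure Krull dimension one.
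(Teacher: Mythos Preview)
Your proof is correct and follows essentially the same approach as the paper: both reduce (1)$\iff$(2) to the second isomorphism theorem, and both handle the bridge to (3) by tracking coherence through short exact sequences and using that $j_X^*$ is exact together with the characterization of finite-length coherent sheaves on a one-dimensional noetherian scheme as those vanishing at the generic points. The only cosmetic difference is that the paper closes the cycle via (2)$\Rightarrow$(3)$\Rightarrow$(1), working with $L+L'$ in the last step, whereas you close it via (3)$\Rightarrow$(2)$\Rightarrow$(3), invoking Lemma~\ref{lem:lattices_V} to get that $L\cap L'$ is a lattice directly; this is a harmless shortcut.
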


\begin{proof}
The equivalence of the first two conditions is obvious since
$L/(L \cap L') \cong (L+L')/L'$
and
$L'/(L \cap L') \cong (L+L')/L$.

Suppose that $L$ satisfies Condition (2).
The $\cO_X$-module $L \cap L'$ is coherent since
both $L$ and $L/(L \cap L')$ are coherent.
Since $L'/(L \cap L')$ are coherent, it follows
that $L'$ is coherent.
Observe that $j_X^* V/L'$ is a quotient of
$j_X^* V/(L \cap L')$.
Since both $j_X^* V/L$ and
$j_X^* L/(L \cap L')$ vanish,
we have $j_X^* V/(L \cap L') = 0$.
Hence $j_X^* V/L' = 0$. This proves that
Condition (2) implies Condition (3).

Suppose that $L'$ satisfies Condition (3).
Since both $j_X^* V/L'$ and $j_X^* V/L$
vanish, we have 
\begin{equation} \label{jX}
j_X^* (L+L')/L =
j_X^* (L+L')/L' = 0.
\end{equation}
Note that the $\cO_X$-modules $(L+L')/L$ and $(L+L')/L'$ are coherent
since they are quasi-coherent and finitely generated,
and $X$ is noetherian.
Since $X$ is noetherian of Krull dimension one,
\eqref{jX} shows that both $(L+L')/L$ and
$(L+L')/L'$ are of finite length.
Hence Condition (3) implies Condition (1).
\end{proof}

\subsection{Enough Galois coverings}
In this paragraph, we prove that the set $\Mor(\cCo{d})$
of the morphisms in $\cCo{d}$ contains enough Galois coverings.

\begin{prop}\label{prop:cofinality0}
The set of the morphisms in $\cCo{d}$ contains enough Galois coverings.
Moreover, if $f\colon N \to N'$ is a \fibr in $\cCo{d}$,
then there exists a \fibr $h\colon M \to N$ in $\cCo{d}$
such that the composite $f \circ h$ is
a Galois covering in $\cCo{d}$.
\end{prop}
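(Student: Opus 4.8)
The plan is to identify the Galois coverings of $\cCo{d}$ among the morphisms whose source is a \emph{free} module $E_Z:=(\cO_X/\sI_Z)^{\oplus d}$ attached to an effective Cartier divisor $Z$, and then to produce such a refinement for any given morphism. By Remark~\ref{rmk1}, every object $N$ of $\cCo{d}$ is annihilated by $\sI_Z$ for some $Z$ and admits a surjection of $\cO_X$-modules $p\colon E_Z\twoheadrightarrow N$; write $h=m_{E_Z,N}\colon E_Z\to N$ for the corresponding \fibr in $\cCo{d}$ (note $E_Z\in\cCo{d}$, as $Z$ is artinian). Given an arbitrary $f\colon N\to N'$, the composite $f\circ h\colon E_Z\to N'$ then has free source; and if $f$ is a \fibr, then so is $f\circ h$ by Lemma~\ref{lem:pre_model}(4), while $h$ itself is a \fibr — this is exactly the shape required by the ``moreover'' clause, provided we know that $f\circ h$ is a Galois covering.

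The core of the argument is therefore the claim that every morphism $g\colon E_Z\to N'$ in $\cCo{d}$ is a Galois covering, with Galois group $\Aut_{N'}(E_Z)$ (computed as in Example~\ref{ex:automorphisms}). I would prove this using the explicit description $C(E_Z,N')\xto{\sim}\Hom_{\cCo{d}}(E_Z,N')$ of Lemma~\ref{lem:4_0} together with the composition formula of Lemma~\ref{lem:4_0_2}. Writing $g$ as a triple $(N_1,N_2,\beta)$ with $N_1\subset N_2\subset E_Z$, one computes, for a test object $T$, the action of $\Aut_{N'}(E_Z)$ on $\Hom_{\cCo{d}}(T,E_Z)$ (post-composition, which fixes the pair of submodules underlying an element) and the fibres of $g_*\colon\Hom_{\cCo{d}}(T,E_Z)\to\Hom_{\cCo{d}}(T,N')$; the pseudo-torsor property then reduces to showing that, for an element $(S_1,S_2,\delta)$ of the fibre over a fixed $y=(T_1,T_2,\gamma)$, the submodules $S_1\subset S_2\subset T$ are \emph{uniquely determined} by $y$, while the isomorphisms $\delta\colon S_2/S_1\xto{\sim}E_Z$ lying over $y$ form a single $\Aut_{N'}(E_Z)$-torsor. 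Since every object has finite support, one may reduce here to the semilocal situation, where $\cO_X/\sI_Z$ is a finite product of quotients $R/\varpi^{n}$ of discrete valuation rings; then the uniqueness of $S_1$ given $S_2$, and the torsor statement, follow from a local rigidity fact: a surjection from an at most $d$-generated $R/\varpi^{n}$-module onto the free module $(R/\varpi^n)^{\oplus d}$ is an isomorphism (Nakayama), and $\GL_d$ of a local ring acts simply transitively on ordered bases of a free module — this forces $S_1=\sI_ZS_2$ and pins down $\delta$ up to $\Aut_{N'}(E_Z)$.

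The hard part will be the uniqueness of the larger submodule $S_2$ when $g$ is not a \fibr (i.e. $N_2\subsetneq E_Z$): there $S_2\supseteq T_2$ with $S_2/T_2\cong E_Z/N_2$, and one must rule out that a given $T$ contains several submodules $S_2$ with $S_2/\sI_ZS_2\cong E_Z$ extending a prescribed $T_2$. I expect this to follow, after the reduction to $R/\varpi^{n}$-modules, from an induction on $\length(T)$, using that $S_2$ is itself an at most $d$-generated submodule of $T$ and that $E_Z$ occupies a special position among objects of $\cCo{d}$ annihilated by $\sI_Z$; but it is the delicate point. For the ``moreover'' clause this obstacle is absent, since there $g=f\circ h$ is a \fibr and $S_2=T_2$ automatically. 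An alternative route to the first assertion that avoids the non-\fibr case is to factor $f=m\circ r$ into a \fibr after a \cofibr (Lemma~\ref{lem:pre_model}(2)), to treat \cofibrs by the duality functor $\bD$ of $\cCo{d}$ — which exchanges \fibrs and \cofibrs and sends $E_Z$ to another free module of rank $d$, making the \cofibr case formally dual to the \fibr case — and then to splice the resulting two Galois refinements together.
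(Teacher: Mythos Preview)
Your treatment of the ``moreover'' clause is correct and, in that case, coincides with the paper's: when $f$ is a \fibr the paper's construction also specializes to $M=E_Z$ and $h=m_{E_Z,N}$, and your direct torsor computation via $C(T,E_Z)$ is essentially a re-derivation of Lemma~\ref{lem:Gal} in the situation $N_1=0$, $N_2=N'$ (where $S_2=T_2$ forces $S_1=\sI_Z T_2$, so the fibre is visibly an $\Aut_{N'}(E_Z)$-torsor).

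For the first assertion, however, there is a real gap. Your formulation of the ``hard part'' --- uniqueness of $S_2$ extending a prescribed $T_2$ with $S_2/\sI_Z S_2\cong E_Z$ --- is simply false. Take $d=2$, $E_Z=(\Z/p^2)^2$, $g$ the morphism with triple $(p\Z/p^2\oplus 0,\ \Z/p^2\oplus p\Z/p^2,\ \alpha)$, and $T=(\Z/p^3)^2$: over $(T_1,T_2)=(p^2T,\,pT)$ there are $p+1$ submodules $S_2\supset pT$ with $S_2/p^2S_2\cong E_Z$, namely all index-$p$ submodules of $T$. What actually singles out $(S_1,S_2)$ in the fibre is the full datum $\gamma$, not $(T_1,T_2)$ alone, and your sketch does not explain how $\gamma$ enters. (In this example $g$ is in fact Galois --- one can even verify Lemma~\ref{lem:Gal} with a suitable $N_1'=N_2'\subset N'$ --- but by a mechanism your argument does not capture; and there are morphisms out of $E_Z$ to which Lemma~\ref{lem:Gal} does not apply at all, e.g.\ the one with $M_1=M_2=\Z/p^2\oplus 0$.) Your alternative via the duality $\bD$ also does not close the gap: after producing Galois refinements of the \fibr and \cofibr factors separately, you still have to ``splice'' them into a single $h\colon M\to N$ with $f\circ h$ Galois, and composing a Galois covering with a further morphism need not remain Galois.

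The paper avoids the question of whether \emph{every} morphism out of a free module is Galois. Given $f$ with triple $(N_1,N_2,\alpha)$, it chooses \emph{two} divisors $Z,W$ (with $\sI_Z$ killing $N_2$ and $\sI_W$ killing $N/N_2$), sets $M=L/\sI_Z\sI_W L$ for $L=\phi^{-1}(N_2)\subset\cO_X^{\oplus d}$, and builds a specific $h\colon M\to N$ which is \emph{not} a \fibr when $N_2\subsetneq N$. The construction is rigged so that $f\circ h$ carries, by design, a free ``kernel block'' $\cong(\cO_X/\sI_Z)^{\oplus d}$ and a free ``cokernel block'' $\cong(\cO_X/\sI_W)^{\oplus d}$, exactly matching the hypothesis of Lemma~\ref{lem:Gal}. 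This two-sided choice of divisors is precisely what your one-sided $E_Z$ approach lacks.
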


To prove Proposition \ref{prop:cofinality0} we
use the following lemma.

\begin{lem}\label{lem:Gal}
Let $f\colon N \to N'$ be a morphism in $\cCo{d}$
given by the diagram 
$N'\stackrel{p}{\twoheadleftarrow} N''
\stackrel{i}{\hookrightarrow} N$.
Suppose that there exist $\cO_X$-submodules
$N_1, N_2 \subset N'$ and effective Cartier divisors
$Z_1, Z_2 \subset X$ such that
$p^{-1}(N_1) \cong {i_{Z_1}}_* \cO_{Z_1}^{\oplus d}$ and
$N/i(p^{-1}(N_2)) \cong {i_{Z_2}}_* \cO_{Z_2}^{\oplus d}$.
Then $f$ is a Galois covering in $\cCo{d}$.
\end{lem}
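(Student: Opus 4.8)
The plan is to verify the definition of a Galois covering in $\cCo{d}$ directly: I must produce a group $G$ together with a homomorphism $\rho: G \to \Aut_{N'}(N)$ such that for every object $M$ of $\cCo{d}$, the map $\Hom_{\cCo{d}}(M,N) \to \Hom_{\cCo{d}}(M,N')$ given by composition with $f$ is a pseudo $G$-torsor. The natural candidate for $G$ is $\Aut_{N'}(N)$ itself, with $\rho$ the identity; by Example~\ref{ex:automorphisms} this group is identified with the subgroup of $\beta \in \Aut_{\cO_X}(N)$ fixing $N''$ (i.e.\ $\beta(N'') = N''$) and inducing the identity on $N''/\Ker\,p \cong N'$.

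First I would reduce to an explicit linear-algebra situation. Since $p^{-1}(N_1) \cong {i_{Z_1}}_* \cO_{Z_1}^{\oplus d}$ is free over $\cO_{Z_1}$ and $N/i(p^{-1}(N_2)) \cong {i_{Z_2}}_* \cO_{Z_2}^{\oplus d}$ is free over $\cO_{Z_2}$, I can choose compatible $\cO_X$-bases and describe $N$, $N''$ and their submodules by lattices squeezed between ${i_{Z_1}}_*\cO_{Z_1}^{\oplus d}$-type and ${i_{Z_2}}_*\cO_{Z_2}^{\oplus d}$-type modules, working locally at each closed point of $\Supp N$ (the question is local and $X$ is affine there by Remark~\ref{rmk1}-type considerations). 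Using Lemma~\ref{lem:4_0} I translate a morphism $M \to N$ into a triple $(M_1,M_2,\alpha) \in C(M,N)$ and, via Lemma~\ref{lem:4_0_2}, I translate the composite $M \to N \to N'$ into the corresponding triple in $C(M,N')$, obtained by pulling back the triple $(N_1'', N'', \wt p)$ (the triple for $f$, where $N_1'' = \Ker\,p$) along $\wt\alpha$.

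Then the core step: given two morphisms $g_1, g_2: M \to N$ with $f \circ g_1 = f \circ g_2$, I must show there is a unique $\beta \in \Aut_{N'}(N)$ with $g_2 = \beta \circ g_1$, and I must also show surjectivity, i.e.\ every morphism $M \to N'$ factoring set-theoretically through the diagram lifts to a morphism $M \to N$. Uniqueness is formal since every morphism in $\cCo{d}$ is an epimorphism (the category is an $E$-category, as $p$ is surjective and $i$ injective). For existence/lifting I would argue that the hypotheses force the relevant sub- and quotient-modules to be free over the artinian rings $\cO_{Z_1}$, $\cO_{Z_2}$, so that any isomorphism of subquotients of $M$ prescribed on the $N'$-level extends to an automorphism of $N$ respecting the flag $N_1'' \subset N'' \subset N$ — this is where freeness (projectivity) over the local artinian rings is used to extend and lift homomorphisms, analogous to the splitting argument in Remark~\ref{rmk1}. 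I expect the main obstacle to be precisely this lifting: showing that the two freeness hypotheses (one on $p^{-1}(N_1)$, one on $N/i(p^{-1}(N_2))$) together suffice to realize \emph{every} candidate morphism downstairs and to make the $\Aut_{N'}(N)$-action simply transitive on the fibers, rather than merely transitive — in particular ruling out extra automorphisms or missing lifts by a careful bookkeeping of the two Cartier divisors $Z_1, Z_2$ and how $M$ maps into the resulting sandwich. Once the pseudo-torsor property is checked for all test objects $M$, the statement follows by definition of Galois covering.
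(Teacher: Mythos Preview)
Two issues.

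First, surjectivity of $\Hom(M,N)\to\Hom(M,N')$ is neither required nor generally true. A Galois covering asks that this map be a \emph{pseudo} $G$-torsor: each nonempty fibre is a $G$-torsor, but empty fibres are allowed. Drop that part of the plan.

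Second, your plan for transitivity misses the actual mechanism. Say $g_1,g_2:M\to N$ have triples $(M_1^{(i)},M_2^{(i)},\alpha_i)$ and $fg_1=fg_2$. By Example~\ref{ex:automorphisms}, precomposing with any $\beta\in\Aut_{N'}(N)$ changes only the isomorphism $\alpha_i$, never the flag $M_1^{(i)}\subset M_2^{(i)}$ in $M$; so before any ``extending'' can happen you must first prove the two flags coincide. Projectivity is the wrong lever here. The paper argues as follows. From the triple of $x=fg_i$ one reads off submodules $s(F'_1),s(F'_2)\subset M$ (the preimages, via $\alpha_i$, of $i(p^{-1}(N_1))$ and $i(p^{-1}(N_2))$), and these depend only on $x$. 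Now $s(F'_1)/M_1^{(i)}\cong p^{-1}(N_1)\cong\cO_{Z_1}^{\oplus d}$; since $s(F'_1)$, being a submodule of $M$, is locally generated by at most $d$ elements, the surjection $s(F'_1)/\sI_{Z_1}s(F'_1)\surj\cO_{Z_1}^{\oplus d}$ goes from an $\cO_{Z_1}$-module with $\le d$ generators onto a free one of rank $d$, hence is an isomorphism, and so $M_1^{(i)}=\sI_{Z_1}\cdot s(F'_1)$. Dually $M_2^{(i)}/s(F'_2)\cong\cO_{Z_2}^{\oplus d}$ forces $M_2^{(i)}/s(F'_2)$ to be the entire $\sI_{Z_2}$-torsion of $M/s(F'_2)$. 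Thus both $M_1^{(i)}$ and $M_2^{(i)}$ are determined by $x$ alone. The freeness hypotheses function as \emph{rigidity} constraints---rank $d$ inside something generated by $d$ elements leaves no room---not as projectivity for lifting maps. Once the flags agree, $\alpha_2\circ\alpha_1^{-1}$ is visibly an element of $\Aut_{N'}(N)$ carrying $g_1$ to $g_2$.
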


\begin{proof}
Let $M$ be an object in $\cCo{d}$. 
It suffices to
show that the group $\Aut_{N'}(N)$ acts freely on
the set $\Hom_{\cCo{d}}(M,N)$ and that the map 
$\alpha_M\colon \Hom_{\cCo{d}}(M,N) \to \Hom_{\cCo{d}}(M,N')$
given by the composition with $f$ induces an injective
map from $\quot{\Hom_{\cCo{d}}(M,N)}{\Aut_{N'}(N)}$ 
to $\Hom_{\cCo{d}}(M,N')$.

Lemma \ref{lem:epi} shows that any morphism in $\cCo{d}$
is an epimorphism. It follows that the group $\Aut_{N'}(N)$ 
freely acts on the set $\Hom_{\cCo{d}}(M,N)$.
Hence it suffices to prove that,
for any $x \in \Hom_{\cCo{d}}(M,N')$, the group $\Aut_{N'}(N)$
acts transitively on the set $\alpha_{M}^{-1}(x)$ if 
$\alpha_{M}^{-1}(x)$ is non-empty.

Take an element $x\in \Hom_{\cCo{d}}(M,N')$ and let us
consider the set $\alpha_M^{-1}(x)$. Suppose 
$y\in \alpha_M^{-1}(x)$ is given by the diagram
$N \stackrel{s'}{\twoheadleftarrow} F
\stackrel{s}{\hookrightarrow} M$.
We let $F_1'=s^{\prime -1}(i(p^{-1}(N_1)))$,
$F_2'=s^{\prime -1}(i(p^{-1}(N_2)))$, and 
$F''=\Ker\, s'$.

Let $\sI_1, \sI_2 \subset \cO_X$ denote the sheaves of ideals 
defining the closed subschemes $Z_1$, $Z_2$, respectively.
Since $F'_1/F'' \cong {i_{Z_1}}_* \cO_{Z_1}^{\oplus d}$, 
$F/F'_2\cong {i_{Z_2}}_* \cO_{Z_2}^{\oplus d}$,
and $M$ is generated by $d$ elements, it follows that
$F'_1/F'' \cong {i_{Z_1}}_* i_{Z_1}^* F'_1$ 
and $F/F'_2$ is equal to the maximal $\cO_X$-submodule of
$M/F'_2$ annihilated by $\sI_2$.
Hence $F''=\sI_1 F'_1$ and $F$ is equal to the maximal $\cO_X$-submodule 
$M'$ of $M$ satisfying $\sI_2 M' \subset F'_2$.
Since $s(F'_1)$ and $s(F'_2)$ 
depend only on $x$, 
the $\cO_X$-submodules
$s(F)$ and $s(F'')$ of 
$M$ are uniquely determined by $x$ and are
independent of the choice of 
$y$. Note that $y$ is the composite of 
the canonical morphism $s(F)/s(F'') 
\twoheadleftarrow s(F) \hookrightarrow M$ 
and an isomorphism $s(F)/s(F'') \cong N$. 
Thus the set $\alpha_M^{-1}(x)$ is
canonically isomorphic to the subset of 
the set of isomorphisms $s(F)/s(F'')
\cong N$ such that the composite $M \to s(F)/s(F'')
\cong N\to N'$ equals the morphism $x$.
Hence the group $\Aut_{N'}(N)$ acts transitively on the set
$\alpha_M^{-1}(x)$. This proves the claim.
\end{proof}

\begin{proof}[Proof of Proposition \ref{prop:cofinality0}]
Let $f\colon N \to N'$ be a morphism in $\cCo{d}$.
Let $(N_1,N_2,\alpha) \in C(N,N')$ be the triple
corresponding to the morphism $f$.
Let us choose a surjection $\phi\colon \cO_X^{\oplus d} \surj N$
and let $L=\phi^{-1}(N_2)$.
Let us take two effective Cartier divisors $Z, W \subset X$
such that $N_2$ and $N/N_2$ are annihilated by
$\sI= \sI_Z$ and $\sJ=\sI_W$, respectively.
We note that $\sI$ and $\sJ$ are invertible $\cO_X$-modules.
%
%
Let us consider the product $\sI\sJ$ of ideals.
We set $M=L/\sI\sJ L$.
Since $N/N_2$ is annihilated by $\sJ$,
we have $\sJ^{\oplus d} \subset L$. We denote by 
$i'\colon \sJ^{\oplus d}/\sI \sJ L \inj M$
the injective homomorphism induced by the inclusion
$\sJ^{\oplus d} \subset L$.
Since $\sJ$ is an invertible $\cO_X$-module,
there exists an isomorphism $b\colon  \cO_X/\sI\sJ \xto{\cong} \sJ/\sI\sJ^2$ of 
$\cO_X$-modules. Let $i\colon  \cO_X^{\oplus d}/\sI L \inj M$ denote 
the composite of $i'$ with the isomorphism 
$\cO_X^{\oplus d}/\sI L \cong \sJ^{\oplus d}/\sI \sJ L$ induced by $b$.
Since $N_2$ is annihilated by $\sI$, the submodule 
$\sI L$ is contained in the kernel of $\phi$. Hence the
homomorphism $\phi$ induces a surjective homomorphism
$p\colon \cO_X^{\oplus d}/IL \surj N$.
%
Let $h\colon M \to N$ denote the morphism represented by the
diagram
$$
N \stackrel{p}{\leftarrow} \cO_X^{\oplus d}/\sI L
\stackrel{i}{\to} M.
$$
It then follows from Lemma \ref{lem:Gal} that the composite
$f \circ h$ is a Galois covering.
This proves the claim.
\end{proof}

\subsection{The category $\cC_0^d$}

\subsubsection{ }
\label{sec:Pair}
We let $\Pair^d$ denote the following poset.
The elements of $\Pair^d$ are the pairs 
$(L_1,L_2)$ of $\cO_X$-lattices of $V$ with $L_1 \subset L_2$.
For two elements $(L_1,L_2)$ and $(L'_1,L'_2)$
in $\Pair^d$, we have $(L_1,L_2) \le (L'_1,L'_2)$
if and only if $L'_1 \subset L_1 \subset L_2 \subset L'_2$.

%

We let $\cC_0^d$ denote the opposite category of the poset
$\Pair^d$ regarded as a small category. By definition,
the set of objects in $\cC_0^d$ is that of $\Pair^d$
and for two elements $(L_1,L_2)$ and $(L'_1,L'_2)$, the set
$\Hom_{\cC_0^d}((L_1,L_2),(L'_1,L'_2))$ consists
of a single element if $(L_1,L_2) \ge (L'_1,L'_2)$,
and is the empty set otherwise. Here the symbol $\ge$
denotes the ordering of the poset $\Pair^d$.
To avoid confusion, we never use the symbols $\ge$ and 
$\le$ to denote the ordering of the dual poset $\cC_0^d$.

\subsubsection{ }\label{sec:ratio_V}
Let $\iota_0^d \colon \cC_0^d \to \cCo{d}$ denote the following functor.
For $(L_1,L_2) \in \Pair^d$, we set $\iota_0^d((L_1,L_2))=L_2/L_1$,
which, by Lemma \ref{lem:lattice_V}, Lemma \ref{lem:loc_freeness}
and Remark \ref{rmk1}, is an object in $\cCo{d}$.
For two elements $(L_1,L_2), (L'_1,L'_2) \in \Pair^d$ 
with $(L_1,L_2) \le (L'_1,L'_2)$, the functor $\iota_0^d$ sends
the unique morphism $f\colon (L'_1,L'_2) \to (L_1,L_2)$
in $\cC_0^d$ to the morphism in $\cCo{d}$ represented by
the diagram
$$
L_2/L_1 \twoheadleftarrow L_2/L'_1 \inj L'_2/L'_1.
$$

We introduce the following full subcategories $\cC_{0,m}^d$ and
$\cC_{0,r}^d$ of $\cC_0^d$: The objects of $\cC_{0,m}^d$ 
(\resp $\cC_{0,r}^d$) are the objects $(L_1,L_2)$ of $\cC_0^d$
satisfying $L_1 \subset L_2 \subset \cO_X^{\oplus d}$ 
(\resp $\cO_X^{\oplus d} \subset L_1 \subset L_2$).
Let $\iota_{0,m}^d$ and $\iota_{0,r}^d$ denote the restriction
of $\iota_0^d$ to $\cC_{0,m}^d$ and $\cC_{0,r}^d$, respectively.

\begin{lem} \label{lem:pair_directed_V}
The categories $\cC_0^d$, $\cC_{0,m}^d$ and
$\cC_{0,r}^d$ are $\Lambda$-connected.
\end{lem}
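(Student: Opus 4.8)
The statement to prove is that the categories $\cC_0^d$, $\cC_{0,m}^d$, and $\cC_{0,r}^d$ are $\Lambda$-connected. Recall that $\cC_0^d$ is the opposite of the poset $\Pair^d$ of pairs $(L_1,L_2)$ of $\cO_X$-lattices in $V=\cK_X^{\oplus d}$ with $L_1\subset L_2$, ordered by $(L_1,L_2)\le(L_1',L_2')$ iff $L_1'\subset L_1\subset L_2\subset L_2'$. The notion of $\Lambda$-connectedness (from \cite{Grids}) concerns, roughly, the property that for any two objects there is a common refinement and that certain diagrams of morphisms can be completed — in a poset this amounts to a directedness/cofiltered-type condition, possibly with the extra $\Lambda$-bookkeeping of the connectivity data. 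Since $\cC_0^d$ is (the opposite of) a poset, morphisms are unique when they exist, so the only real content is the existence of suitable upper/lower bounds.

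The plan is as follows. First I would translate $\Lambda$-connectedness for a poset-derived category into the concrete combinatorial statement it becomes here, citing the relevant definition in \cite{Grids}; the main point will be: given finitely many objects $(L_1^{(i)},L_2^{(i)})$ of $\Pair^d$, there exists an object $(L_1,L_2)$ with $(L_1^{(i)},L_2^{(i)})\le(L_1,L_2)$ for all $i$, i.e. $L_1\subset\bigcap_i L_1^{(i)}$ and $\bigcup_i L_2^{(i)}\subset L_2$. The candidate is obvious: take $L_1=\bigcap_i L_1^{(i)}$ and $L_2=\sum_i L_2^{(i)}$. By Lemma~\ref{lem:lattices_V}, finite intersections and sums of $\cO_X$-lattices of $V$ are again $\cO_X$-lattices of $V$, and clearly $L_1\subset L_1^{(i)}\subset L_2^{(i)}\subset L_2$, so $(L_1,L_2)$ lies in $\Pair^d$ and dominates each $(L_1^{(i)},L_2^{(i)})$. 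This handles the "enough common refinements" part of $\Lambda$-connectedness for $\cC_0^d$. Any diagram-completion axiom is automatic since in the opposite of a poset every diagram of morphisms with a common source (or target) commutes trivially, and the $\Lambda$-structure compatibility should reduce to a formal check once the common-refinement statement is in place.

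For $\cC_{0,m}^d$ and $\cC_{0,r}^d$ I would run the same argument inside the respective full subcategories. For $\cC_{0,m}^d$, whose objects satisfy $L_1\subset L_2\subset\cO_X^{\oplus d}$: given objects $(L_1^{(i)},L_2^{(i)})$ with each $L_2^{(i)}\subset\cO_X^{\oplus d}$, the refinement $L_1=\bigcap_i L_1^{(i)}$, $L_2=\sum_i L_2^{(i)}$ still satisfies $L_2\subset\cO_X^{\oplus d}$ (a sum of submodules of $\cO_X^{\oplus d}$ stays inside $\cO_X^{\oplus d}$), so it is again an object of $\cC_{0,m}^d$ dominating all the $(L_1^{(i)},L_2^{(i)})$; one still needs to note $\cO_X^{\oplus d}$ itself is an $\cO_X$-lattice of $V$ (stated just after the definition of lattice) so the subcategory is nonempty and the bound is legitimate. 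Symmetrically, for $\cC_{0,r}^d$, with $\cO_X^{\oplus d}\subset L_1\subset L_2$: take $L_1=\bigcap_i L_1^{(i)}\supset\cO_X^{\oplus d}$ and $L_2=\sum_i L_2^{(i)}$; the intersection of lattices each containing $\cO_X^{\oplus d}$ still contains $\cO_X^{\oplus d}$, so we stay in $\cC_{0,r}^d$.

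I expect the only genuine obstacle to be bookkeeping rather than mathematics: namely, pinning down exactly which conditions the definition of $\Lambda$-connectivity in \cite{Grids} requires beyond the bare directedness — in particular whether there is a connectivity/$\Lambda$-labelling compatibility that must be verified, and confirming that for a poset-opposite category it is vacuous or trivially satisfied. Once that is unwound, each of the three cases is a one-line verification using Lemma~\ref{lem:lattices_V} (closure of the lattice condition under finite $\cap$ and $+$) together with the elementary observation that $\bigcap$ of submodules containing a fixed submodule again contains it, and $\sum$ of submodules contained in a fixed one is again contained in it. So the proof will consist of: (1) recall the definition; (2) exhibit $L_1=\bigcap L_1^{(i)}$, $L_2=\sum L_2^{(i)}$ as the common refinement; (3) check it lands in each of the three (sub)categories; (4) observe the remaining axioms are automatic for a poset.

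\begin{proof}
Recall that $\cC_0^d$ is the opposite category of the poset $\Pair^d$, and that $\cC_{0,m}^d$, $\cC_{0,r}^d$ are full subcategories. In particular, in each of these categories there is at most one morphism between any two objects. Hence the only condition in the definition of $\Lambda$-connectivity (\cite[Def.~2.4.4]{Grids}) that is not automatically satisfied is the following: for any finite family of objects, there exists an object admitting a morphism to each member of the family.

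We first treat $\cC_0^d$. Let $(L_1^{(i)},L_2^{(i)})$ for $i$ in a finite set $I$ be objects of $\cC_0^d$, i.e.\ elements of $\Pair^d$. Set
$$
L_1 = \bigcap_{i \in I} L_1^{(i)}, \qquad L_2 = \sum_{i \in I} L_2^{(i)}.
$$
By Lemma \ref{lem:lattices_V}, applied inductively, both $L_1$ and $L_2$ are $\cO_X$-lattices of $V$. Since $L_1 \subset L_1^{(i)} \subset L_2^{(i)} \subset L_2$ for each $i \in I$, we have $L_1 \subset L_2$, so $(L_1,L_2)$ is an element of $\Pair^d$, and moreover $(L_1^{(i)},L_2^{(i)}) \le (L_1,L_2)$ in $\Pair^d$ for each $i \in I$. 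Regarded as an object of $\cC_0^d$, $(L_1,L_2)$ therefore admits a morphism to each $(L_1^{(i)},L_2^{(i)})$. This shows that $\cC_0^d$ is $\Lambda$-connected.

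Next we treat $\cC_{0,m}^d$. Here each object $(L_1^{(i)},L_2^{(i)})$ satisfies $L_1^{(i)} \subset L_2^{(i)} \subset \cO_X^{\oplus d}$. With $L_1$ and $L_2$ as above, we have $L_2 = \sum_{i \in I} L_2^{(i)} \subset \cO_X^{\oplus d}$, since each $L_2^{(i)}$ is contained in $\cO_X^{\oplus d}$. Hence $(L_1,L_2)$ is an object of $\cC_{0,m}^d$ dominating each $(L_1^{(i)},L_2^{(i)})$. (When $I = \emptyset$ one takes $(0, \cO_X^{\oplus d})$, which is an object of $\cC_{0,m}^d$ since $\cO_X^{\oplus d}$ is an $\cO_X$-lattice of $V$.) Therefore $\cC_{0,m}^d$ is $\Lambda$-connected.

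Finally we treat $\cC_{0,r}^d$. Here each object $(L_1^{(i)},L_2^{(i)})$ satisfies $\cO_X^{\oplus d} \subset L_1^{(i)} \subset L_2^{(i)}$. With $L_1$ and $L_2$ as above, we have $\cO_X^{\oplus d} \subset \bigcap_{i \in I} L_1^{(i)} = L_1$, since $\cO_X^{\oplus d}$ is contained in each $L_1^{(i)}$. Hence $(L_1,L_2)$ is an object of $\cC_{0,r}^d$ dominating each $(L_1^{(i)},L_2^{(i)})$. (When $I = \emptyset$ one takes $(\cO_X^{\oplus d},\cO_X^{\oplus d})$.) Therefore $\cC_{0,r}^d$ is $\Lambda$-connected.
\end{proof}
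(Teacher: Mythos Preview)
Your proof is correct and follows essentially the same approach as the paper: take the intersection of the first components and the sum of the second components, invoke Lemma~\ref{lem:lattices_V} to see this is again a pair of lattices, and check membership in the relevant subcategory. The paper only treats two objects at a time (which is all that $\Lambda$-connectedness requires), while you handle arbitrary finite families; this is harmless but unnecessary. One small slip: in your parenthetical remark for $\cC_{0,m}^d$ with $I=\emptyset$, the pair $(0,\cO_X^{\oplus d})$ is not an object of $\Pair^d$, since $0$ is not an $\cO_X$-lattice of $V$ (it fails $j_X^* V/L = 0$); but the empty-family case is not needed anyway.
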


\begin{proof}
Let $\cC_0$ denote one of the three categories 
$\cC_0^d$, $\cC_{0,m}^d$ and $\cC_{0,r}^d$.
Let $(L_1,L_2)$ and $(L'_1,L'_2)$ be objects of $\cC_0$.
We set $L''_1 = L_1 \cap L'_1$ and
$L''_2 = L_2 + L'_2$.
By Lemma \ref{lem:lattices_V}, we have
$(L''_1,L''_2) \in \Pair^d$.
One can check easily that $(L''_1,L''_2)$ is an object
of $\cC_0$.
Since $L''_1 \subset L_1 \subset L_2 \subset L''_2$
and $L''_1 \subset L'_1 \subset L'_2 \subset L''_2$,
we have a diagram 
$(L_1,L_2) \leftarrow (L''_1,L''_2) \to (L'_1,L'_2)$
in $\cC_0$.
This proves the claim.
\end{proof}

\subsection{The first main theorem}

Let $\cT^d$ (\resp $\cT^d_m$, \resp $\cT^d_r$) denote the
set of morphisms (\resp fibrations, \resp cofibrations)
in $\cC^d$. The main result of this section is the following:
\begin{thm} \label{thm:section2}
The sets $\cT^d$, $\cT^d_m$, and $\cT^d_r$ are semi-localizing 
collections of morphisms in $\cCo{d}$.
Let $J^d$, $J_m^d$,and  $J_r^d$ denote the Grothendieck topologies on
$\cCo{d}$ associated with $\cT^d$, $\cT^d_m$, and $\cT^d_r$, respectively,
in the sense of \cite[Lemma 2.3.4]{Grids}.
Then $(\cCo{d},J^d)$, $(\cCo{d},J_m^d)$, and $(\cCo{d},J_r^d)$
are $Y$-sites in the sense of \cite[Def.\ 5.4.2]{Grids}.
Moreover, the pairs $(\cC_0^d,\iota_0^d)$,
$(\cC_{0,m}^d,\iota_{0,m}^d)$, and $(\cC_{0,r}^d,\iota_{0,r}^d)$
are grids of $(\cCo{d},J^d)$, $(\cCo{d},J_m^d)$, and $(\cCo{d},J_r^d)$,
respectively in the sense of \cite[Definition 5.5.3]{Grids}.
\end{thm}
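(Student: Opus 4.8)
The plan is to verify, in order, the three packages of assertions in Theorem~\ref{thm:section2}: (i) that $\cT^d$, $\cT^d_m$, $\cT^d_r$ are semi-localizing collections; (ii) that the associated $A$-topologies $J^d$, $J^d_m$, $J^d_r$ make $(\cCo{d},\cdot)$ into a $Y$-site; and (iii) that the stated pairs are grids. For (i), the only substantial axiom is the amalgamation property (Property~(3) of a semi-localizing collection): given $N_1 \xto{f} N \xleftarrow{g} N_2$ with $f \in \cT^d$ (\resp a \fibr, \resp a \cofibr), one must produce a square completing it with the parallel leg in the same class. I would translate everything through the bijection $C(N,N') \cong \Hom_{\cCo{d}}(N,N')$ of Lemma~\ref{lem:4_0} and the composition rule of Lemma~\ref{lem:4_0_2}: a morphism is a triple $(N_1\subset N_2\subset N,\alpha)$, it is a \fibr iff $N_2 = N$, a \cofibr iff $N_1 = 0$. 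The amalgamation square is then built from the obvious $\cO_X$-module fiber product, and closure under composition plus the "two-out-of-three" type statements for \fibrs are exactly Lemma~\ref{lem:pre_model}(4). The fact that $\wh{\cT^d} = \cT^d$ (and similarly for the other two) is immediate since every morphism of $\cCo{d}$ is an epimorphism by Lemma~\ref{lem:epi}.

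For (ii), by \cite[Def.~5.4.2]{Grids} a $Y$-site is a $B$-site whose category $\cC(\cT)$ is $\Lambda$-connected; and a $B$-site requires that $\cCo{d}$ be an $E$-category, that the topology be an $A$-topology (automatic from (i) and \cite[Lemma~2.3.4]{Grids}), and the two-out-of-three condition (3), which we already obtained in (i). That $\cCo{d}$ is an $E$-category — all morphisms are epimorphisms — is precisely Lemma~\ref{lem:epi}. For the $Y$-site condition one also needs $\cT^d$ (\resp $\cT^d_m$, $\cT^d_r$) to contain enough Galois coverings: this is Proposition~\ref{prop:cofinality0}, whose second sentence gives exactly the refinement of a \fibr by a \fibr needed for the $J^d_m$ case; the \cofibr case follows by applying the functor $\bD$ alluded to after Lemma~\ref{lem:pre_model} (duality exchanging \fibrs and \cofibrs), and the $J^d$ case is the full statement. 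Finally, $\Lambda$-connectivity of the relevant full subcategories of morphisms reduces, via the enough-Galois-coverings property, to $\Lambda$-connectivity of the posets $\cC_0^d$, $\cC_{0,m}^d$, $\cC_{0,r}^d$, which is Lemma~\ref{lem:pair_directed_V}.

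For (iii), showing $(\cC_0^d,\iota_0^d)$ (and its $m$- and $r$-variants) is a grid in the sense of \cite[Definition~5.5.3]{Grids} amounts to checking: $\cC_0^d$ is a $\Lambda$-connected small category (Lemma~\ref{lem:pair_directed_V}); the functor $\iota_0^d$ lands in $\cT^d$ on all morphisms and sends every object to an object, which is how $\iota_0^d$ was defined in Section~\ref{sec:ratio_V} using Lemmas~\ref{lem:lattice_V}, \ref{lem:loc_freeness} and Remark~\ref{rmk1}; and the essential-surjectivity/cofinality condition, namely that every object $N$ of $\cCo{d}$ admits a \fibr (\resp cofibration) from some $\iota_0^d(L_1,L_2)$ and that these morphisms are cofinal among morphisms in $\cT^d$ with target $N$. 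The latter is proved by choosing a surjection $\cO_X^{\oplus d} \surj N$ and writing $N$ as $L_2/L_1$ for lattices (exactly the construction in the proof of Proposition~\ref{prop:cofinality0}), then checking cofinality by the amalgamation property of lattices (Lemma~\ref{lem:lattices_V}): given two such presentations, intersect the sub-lattices and sum the super-lattices. For the $m$-variant one keeps $L_2 \subset \cO_X^{\oplus d}$, for the $r$-variant $\cO_X^{\oplus d}\subset L_1$, both possible by rescaling by an effective Cartier divisor annihilating $N$ (resp.\ its "dual"), as in Remark~\ref{rmk1}.

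The main obstacle I anticipate is not any single hard argument but the bookkeeping of running all three variants ($\cT^d$, $\cT^d_m$, $\cT^d_r$) in parallel, and in particular making the duality functor $\bD$ rigorous enough to transfer the \fibr statements of Lemma~\ref{lem:pre_model} and Proposition~\ref{prop:cofinality0} to \cofibrs without re-proving everything by hand; the grid cofinality condition for $\cC_{0,m}^d$ and $\cC_{0,r}^d$ (ensuring one can always shrink/enlarge a given lattice presentation of $N$ to fit inside, resp.\ contain, the standard lattice $\cO_X^{\oplus d}$) is the place where the hypothesis that $X$ is regular noetherian of Krull dimension one is genuinely used, via the structure of lattices in Section~\ref{sec:lattices}, so that step deserves the most care.
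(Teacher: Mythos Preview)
Your identification of most of the auxiliary inputs (Lemma~\ref{lem:epi}, Lemma~\ref{lem:pre_model}, Proposition~\ref{prop:cofinality0}, Lemma~\ref{lem:pair_directed_V}, and the duality $\bD$ for the $\cT^d_r$ case) is correct, and your overall three-part organization is sensible. However, there is a genuine gap in your treatment of the grid condition (iii), and it propagates back into (i).

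The grid axiom you paraphrase as ``essential-surjectivity/cofinality'' is the requirement that for each fixed $X=(L_1,L_2)$ in $\cC_0^d$, the overcategory functor $(\cC_0^d)_{/X}\to(\cCo{d})_{/\iota_0^d(X)}$ be essentially surjective: \emph{every} morphism $N\to L_2/L_1$ in $\cCo{d}$, represented by an arbitrary diagram $L_2/L_1\twoheadleftarrow M\hookrightarrow N$, must come from a morphism of lattice pairs $(L'_1,L'_2)\to(L_1,L_2)$. Your argument (``intersect the sub-lattices and sum the super-lattices'') only shows that the poset $\cC_0^d$ is directed, which is Lemma~\ref{lem:pair_directed_V} and is a much weaker statement. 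The paper isolates the actual lifting as Proposition~\ref{prop:sec2_below} and explicitly calls it ``the technical heart of this section''; its proof is not formal but uses projective covers and injective hulls over the artinian scheme $Z$ cutting out the support, and this is where regularity and Krull dimension one are genuinely used. You do not mention this step at all. Relatedly, your claim in (i) that the amalgamation square for $\cT^d$ is ``built from the obvious $\cO_X$-module fiber product'' is not justified: morphisms in $\cCo{d}$ are equivalence classes of spans rather than homomorphisms, and the paper does not construct any such fiber product; instead it deduces semi-cofilteredness of $\cCo{d}$ as a consequence of Proposition~\ref{prop:grid_atomic}, whose hypothesis~(4) is exactly Proposition~\ref{prop:sec2_below}. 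So the missing lifting lemma is load-bearing for both (i) and (iii).
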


\subsection{Proof of Theorem \ref{thm:section2} for $\cT^d$}

First, we prove Theorem \ref{thm:section2} for $\cT^d$.
We use the following:

%
%
%
%
\begin{prop} \label{prop:grid_atomic}
Let $\cC$ be an essentially small category.
%
Let $\Cip$ be a pair of a small category $\cC_0$ 
and a functor $\iota_0 \colon  \cC_0 \to \cC$. 
Suppose that the following conditions are satisfied.
\begin{enumerate}
\item The category $\cC_0$ is a poset and is $\Lambda$-connected,
\item The set of the morphisms in $\cC$ contains 
enough Galois coverings,
\item The functor $\iota_0$ is essentially surjective,
\item For any object $X$ of $\cC_0$,
$\iota_{0,/X} \colon  \cC_{0,/X} \to \cC_{/\iota_0(X)}$ is essentially
surjective,
\item For any object $X$ of $\cC_0$, the functor
$\iota_{0,X/} \colon  \cC_{0,X/} \to \cC_{\iota_0(X)/}$ is an
equivalence of categories.
\end{enumerate}
Then the category $\cC$ is semi-cofiltered, the pair
$(\cC,J)$, where $J$ denotes the atomic topology on $\cC$,
is a $Y$-site, and the pair $\Cip$ is a grid of $(\cC,J)$.
\end{prop}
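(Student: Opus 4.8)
Here is the plan I would follow; the whole argument is a matter of unwinding the definitions of \cite{Grids} and checking that hypotheses (1)--(5) translate into them, so I will organize it as: first semi-cofilteredness, then the $B$-site axioms, then the $Y$-site axioms, then the grid axioms.

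\textbf{Step 1: $\cC$ is semi-cofiltered.} Given a cospan $A\to C\leftarrow B$ in $\cC$, I would use essential surjectivity (3) to write $C\cong\iota_0(X)$ for some object $X$ of $\cC_0$, then use (4) to replace the two structure morphisms, regarded as objects of $\cC_{/\iota_0(X)}$, by objects of the form $\iota_0(A'\to X)$ and $\iota_0(B'\to X)$ with $A',B'$ in $\cC_0$. Since $\cC_0$ is a $\Lambda$-connected poset (1), there is an object $D'$ admitting morphisms to both $A'$ and $B'$; applying $\iota_0$ and using that $\cC_0$ is a poset (so the two composites $D'\to X$ coincide), $\iota_0(D')$ completes the original cospan to a commutative square. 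With $\cC_0$ nonempty this yields semi-cofilteredness, hence the atomic topology $J$ is a Grothendieck topology, and by construction $J$ is the $A$-topology attached to the semi-localizing collection $\cT=\Mor(\cC)$, so $\cT(J)=\Mor(\cC)$.

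\textbf{Step 2: $(\cC,J)$ is a $B$-site, then a $Y$-site.} Checking \cite[Def.~4.2.1]{Grids}: that $J$ is an $A$-topology is how it was built; the composition condition ($g\circ f\in\cT(J)$ iff $f,g\in\cT(J)$) is vacuous since $\cT(J)=\Mor(\cC)$; and to see $\cC$ is an $E$-category I would argue, for $f\colon X'\to X''$ with $g\circ f=h\circ f$: pick $X$ in $\cC_0$ with $\iota_0(X)\cong X'$ by (3), view $g$ and $h$ as morphisms with the same source and target in the coslice $\cC_{\iota_0(X)/}$, and invoke (5), which identifies that coslice with $\cC_{0,X/}$, a slice of a poset; full faithfulness of the equivalence transports the "at most one morphism'' property, so $g=h$. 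Thus $(\cC,J)$ is a $B$-site. Since $\cC$ is essentially small by hypothesis, $\cT(J)=\Mor(\cC)$ has enough Galois coverings by (2), and $\cC$ is $\Lambda$-connected (connectedness from (1) together with (3); semi-cofilteredness from Step 1), Definition~5.4.2 of \cite{Grids} holds and $(\cC,J)$ is a $Y$-site.

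\textbf{Step 3: $\Cip$ is a grid.} Here the axioms of \cite[Definition~5.5.3]{Grids} should be hypotheses (1), (3), (4), (5) read back verbatim: $\cC_0$ is a $\Lambda$-connected poset by (1); the requirement that $\iota_0$ send morphisms to $\cT(J)$-morphisms is automatic because $\cT(J)=\Mor(\cC)$; $\iota_0$ is essentially surjective by (3); the slice functor $\iota_{0,/X}\colon\cC_{0,/X}\to \cC(\cT(J))_{/\iota_0(X)}=\cC_{/\iota_0(X)}$ is essentially surjective by (4); and the coslice functor $\iota_{0,X/}\colon\cC_{0,X/}\to\cC(\cT(J))_{\iota_0(X)/}=\cC_{\iota_0(X)/}$ is an equivalence by (5). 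I expect the only real obstacle to be one of reconciliation rather than of mathematics: pinning down the precise formulations in \cite{Grids} of "$B$-site'', "$Y$-site'' and "grid'' (in particular whether $\Lambda$-connectedness of a general category is stated as connectedness plus cospan-completion, and the exact list of grid axioms), and making the deduction of the $E$-category property from (5) fully airtight. No new construction or estimate is needed beyond (1)--(5) and the cited results of \cite{Grids}.
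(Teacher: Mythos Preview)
Your proposal is correct and matches the paper's proof essentially step for step: the paper also derives semi-cofilteredness by lifting a cospan through (3) and (4) and then using that $\cC_0$ is a $\Lambda$-connected poset, deduces the $E$-category property from (5) (stated there as ``since $\cC_0$ is a poset, any morphism in $\cC_0$ is an epimorphism, hence Condition (5) implies that any morphism in $\cC$ is an epimorphism''), obtains $\Lambda$-connectedness of $\cC$ from (1) and (3), and notes that the grid axioms follow immediately from Conditions (1)--(5). Your explicit unpacking of the $E$-category argument via the poset structure of the coslice is exactly what the paper's one-line deduction amounts to.
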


\begin{rmk}
The conditions (1), (3), (4), and (5) above are identical to the
four conditions, given in \cite[Definition 5.5.1]{Grids}, 
for the pair $\Cip$ to be a pregrid
when $\cC$ is a $\Lambda$-connected category.
The reason why we restate these four conditions instead of imposing that
$\Cip$ is a pregrid is that we would like to apply 
Proposition \ref{prop:grid_atomic} to the situation where 
we do not a priori know if $\cC$ is $\Lambda$-connected,
and where $\cC$ turns out to be $\Lambda$-connected
as a consequence of Proposition \ref{prop:grid_atomic}.
\end{rmk}

\begin{proof}
First, we prove that $\cC$ is semi-cofiltered.
Let $Y_1 \xto{f_1} X \xleftarrow{f_2} Y_2$ be a diagram
in $\cC$. By Condition (3), one can take an object $X'$ 
of $\cC_0$ and an isomorphism $\iota_0(X') \cong X$ in $\cC$.
By Condition (4), one can choose a diagram
$Y'_1 \xto{f'_1} X' \xleftarrow{f'_2} Y'_2$ and isomorphisms
$\beta_1 \colon  \iota_0(Y'_1) \cong Y_1$,
$\beta_2 \colon  \iota_0(Y'_2) \cong Y_2$ such that the diagram
$$
\begin{CD}
\iota_0(Y'_1) @>{\iota_0(f'_1)}>> 
\iota_0(X') @<{\iota_0(f'_2)}<< \iota_0(Y'_2) \\
@V{\beta_1}VV @V{\alpha}VV @VV{\beta_2}V \\
Y_1 @>{f_1}>> X @<{f_2}<< Y_2
\end{CD}
$$
is commutative. Since the category $\cC_0$ is $\Lambda$-connected,
there exists a diagram $Y'_1 \xleftarrow{g_1} Z' \xto{g_2} Y'_2$
in $\cC_0$. Since $\cC_0$ is a poset, we have $f'_1 \circ g_1
= f'_2 \circ g_2$. Set $Z = \iota_0(Z')$. We have a commutative diagram
$$
\begin{CD}
Z @>{\beta_2 \circ \iota_0(g_2)}>> Y_2 \\
@V{\beta_1 \circ \iota_0(g_1)}VV @VV{f_2}V \\
Y_1 @>{f_1}>> X
\end{CD}
$$
in $\cC$. This shows that the category $\cC$ is semi-cofiltered.

Let $J$ denote the atomic topology on $\cC$.
Since $\cC_0$ is a poset, any morphism in $\cC_0$ is an epimorphism.
Hence Condition (5) implies that any morphism in $\cC$ is an epimorphism.
This shows that $(\cC,J)$ is a $B$-site in the sense of \cite[Def.\ 4.2.1]{Grids}.
Since $\cC_0$ is $\Lambda$-connected and $\iota_0$ is essentially
surjective, the category $\cC$ is also $\Lambda$-connected.
This shows that $(\cC,J)$ is a $Y$-site.
The claim that $\Cip$ is a grid of $(\cC,J)$ follows
immediately from Conditions (1)-(5).
\end{proof}

\begin{rmk}
Let $\Cip$ be a grid of a $Y$-site $(\cC,J)$, and let
$\iota_0^* \cT(J)$ denote the set of morphisms in $\cC_0$ of type $J$
(see \cite[Def.\ 5.5.2]{Grids}).
Then $\iota_0^*\cT(J)$ is semi-localizing in the sense of \cite[Def.\ 2.3.1]{Grids}.
Let $\iota_0^* J$ denote the Grothendieck topology on $\cC_0$
associated with $\iota_0^* \cT(J)$ in the sense of \cite[Lemma 2.3.4]{Grids}.
Then $(\cC_0, \iota_0^* J)$ is a $B$-site, and satisfies
Conditions (1) and (3) of \cite[Definition 5.4.2]{Grids}.
However $(\cC_0, \iota_0^* J)$ is not necessarily a
$Y$-site, since $\cC_0(\iota_0^* J)$ is not necessarily 
$\Lambda$-connected.
\end{rmk}

\begin{proof}[Proof of Theorem \ref{thm:section2} for $\cT^d$]
It suffices to show that the category $\cC = \cC^d$ and the
pair $\Cip = (\cC_0^d,\iota_0^d)$ satisfy 
Conditions (1)-(5) in Proposition \ref{prop:grid_atomic}.

Condition (1) follows from Lemma \ref{lem:pair_directed_V}.
Condition (2) follows from Proposition \ref{prop:cofinality0}.

Let $N$ be an arbitrary object of $\cC$. Let us take
a surjection $p\colon  \cO_X^{\oplus d} \surj N$ and let $L$
denote the kernel of $p$. Then the pair
$(L,\cO_{X}^{\oplus d})$ is an object of $\cC_0$
and $p$ induces an isomorphism 
$\cO_X^{\oplus d}/L \cong N$. This proves that
Condition (3) is satisfied.

Condition (4) follows from Proposition \ref{prop:sec2_below} below.

Let us prove that Condition (5) is satisfied.
Let $X$ be an object of $\cC_0$ and let $N = \iota_0(X)$.
Then the undercategory $\cC_{0,X/}$ is isomorphic to the
dual of the poset $\Pair(N)$. Hence it follows from
Lemma \ref{lem:epi} that the functor
$\iota_{0,X/} \colon  \cC_{0,X/} \to \cC_{\iota_0(X)/}$ is an
equivalence of categories. This proves that Condition (5)
is satisfied.
\end{proof}

The following statement is the technical heart of this section.
\begin{prop} \label{prop:sec2_below}
Let $(L_1,L_2)$ be an object of $\cC_0^d$, and let
$N \to L_2/L_1$ be a morphism in $\cCo{d}$ represented by
a diagram
$$
L_2/L_1 \stackrel{p}{\twoheadleftarrow} M 
\stackrel{j}{\inj} N.
$$
Then there exists an object $(L'_1,L'_2)$ of $\cC_0^d$
with $(L'_1,L'_2) \ge (L_1,L_2)$ and isomorphisms
$f\colon  L_2/L'_1 \xto{\cong} M$ and
$g\colon  L'_2/L'_1 \xto{\cong} N$ of $\cO_X$-modules such that 
the diagram
$$
\begin{CD}
L_2/L_1 @<<< L_2/L'_1 @>{\subset}>> L'_2/L'_1 \\
@| @V{f}VV @VV{g}V \\
L_2/L_1 @<{p}<< M @>{j}>> N
\end{CD}
$$
of $\cO_X$-modules is commutative.
\end{prop}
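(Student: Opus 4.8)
The plan is to reduce the statement to two special cases via the factorization of a morphism into a \cofibr followed by a \fibr, and then to settle each case by a dual construction.

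\emph{Reduction.} By Lemma~\ref{lem:pre_model}(2) the given morphism $N \to L_2/L_1$ in $\cCo{d}$, represented by $L_2/L_1 \xleftarrow{p} M \xhookrightarrow{j} N$, factors as $N \xto{r} M \xto{m} L_2/L_1$ with $r = r_{N,M}$ a \cofibr and $m = m_{M,L_2/L_1}$ a \fibr. If the proposition is known for $m$ — which is exactly the case where $j$ is an isomorphism — one obtains a pair $(L_1'',L_2) \ge (L_1,L_2)$ and an isomorphism $f_1 : L_2/L_1'' \xto{\cong} M$ with $p\circ f_1$ equal to the canonical surjection $L_2/L_1'' \to L_2/L_1$; applying the proposition to $r$, viewed via $f_1$ as a morphism $N \to L_2/L_1''$ — the case where $p$ is an isomorphism — one then enlarges $L_2$ to a lattice $L_2' \supseteq L_2$ and gets an isomorphism $g : L_2'/L_1'' \xto{\cong} N$ restricting to $j\circ f_1$ on $L_2/L_1''$. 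The pair $(L_1'',L_2')$ together with $f := f_1$ and $g$ is then as required, and the two displayed squares commute. So it suffices to prove the proposition when $j$ is an isomorphism and when $p$ is an isomorphism, and since each of these is a genuine special case, treating them directly involves no circularity.

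\emph{Case $j$ an isomorphism}, i.e.\ $M = N$ and $p : N \twoheadrightarrow L_2/L_1$ is a surjective homomorphism of $\cO_X$-modules. I would construct a surjective homomorphism $\mu : L_2 \twoheadrightarrow N$ of $\cO_X$-modules with $p\circ \mu = q$, where $q : L_2 \twoheadrightarrow L_2/L_1$ is the canonical map. Granting this, set $L_1' := \Ker \mu$; then $L_1' \subseteq L_1$, and $L_1'$ is an $\cO_X$-lattice of $V$ by the equivalence of Conditions (1) and (3) in Lemma~\ref{lem:lattice_V} applied to the submodule $L_1' \subseteq L_2$ (using $L_2/L_1' \cong N$ of finite length), while $\mu$ induces an isomorphism $f : L_2/L_1' \xto{\cong} N$ compatible with the projections to $L_2/L_1$, and one takes $L_2' = L_2$, $g = f$. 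To build $\mu$: choose an effective Cartier divisor $Z \subseteq X$ with $\sI_Z$ annihilating $N$, hence also $L_2/L_1$ (a quotient of $N$). Since $L_2$ is locally free of rank $d$ by Lemma~\ref{lem:loc_freeness}, $i_Z^* L_2$ is a locally free $\cO_Z$-module of rank $d$ over the zero-dimensional affine $Z$, hence free of rank $d$; moreover $q$ factors as $L_2 \to i_Z^* L_2 \xto{\bar q} L_2/L_1$ with $\bar q$ surjective, and since $i_Z^* L_2$ is projective over $\cO_Z$ the surjection $p$ lets us lift $\bar q$ to $\tilde\mu_0 : i_Z^* L_2 \to N$ with $p\tilde\mu_0 = \bar q$. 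This $\tilde\mu_0$ need not be surjective, but $\tilde\mu_0(i_Z^* L_2) + \Ker p = N$. I would then correct $\tilde\mu_0$ by a homomorphism into $\Ker p$: fixing a basis $e_1,\dots,e_d$ of $i_Z^* L_2$, at each of the finitely many closed points $x$ of $\Supp N$ the images of $\tilde\mu_0(e_1),\dots,\tilde\mu_0(e_d)$ together with the image of $(\Ker p)_x$ span the $k(x)$-vector space $N_x/\mathfrak{m}_x N_x$, whose dimension is at most $d$ since $N$ is an object of $\cCo{d}$; by an elementary linear-algebra correction and Nakayama's lemma one chooses $\kappa_{i,x}\in(\Ker p)_x$ with $\tilde\mu_0(e_i)+\kappa_{i,x}$ generating $N_x$, assembles the $\kappa_{i,x}$ into $\kappa_i \in \Ker p$ (legitimate since $\Ker p$ has finite length and so is the direct sum of its localizations), and lets $\mu$ be the composite of $L_2 \to i_Z^* L_2$ with $e_i \mapsto \tilde\mu_0(e_i)+\kappa_i$.

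\emph{Case $p$ an isomorphism}, i.e.\ $M = L_2/L_1$ and $j : L_2/L_1 \hookrightarrow N$. This is dual to the previous case (one could also deduce it from that case via the duality functor $\bD$ mentioned after Lemma~\ref{lem:pre_model}), but I would argue it directly. Since $N$ has finite support, a homomorphism $N \to V/L_1$ is the same as a family, over the closed points $x \in \Supp N$, of $\cO_{X,x}$-homomorphisms $N_x \to (V/L_1)_x$, and $(V/L_1)_x \cong (\Frac(\cO_{X,x})/\cO_{X,x})^{\oplus d}$ is a finite direct sum of copies of the injective hull of the residue field $k(x)$, hence an injective $\cO_{X,x}$-module. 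Using this injectivity I would extend the natural embedding $L_2/L_1 \hookrightarrow V/L_1$ along $j$ to a homomorphism $\phi : N \to V/L_1$, chosen — by the argument dual to the correction above, now using that the socle of $N_x$ has $k(x)$-dimension at most $d$ (again because $N$ is an object of $\cCo{d}$) while the socle of $(V/L_1)_x$ has dimension exactly $d$ — so that $\phi$ is injective on the socle of $N$, hence injective. Letting $L_2' \subseteq V$ be the $\cO_X$-submodule with $L_2'/L_1 = \phi(N)$, one has $L_2' \supseteq L_2$ (since $\phi\circ j$ is the natural inclusion), $L_2'$ is a lattice by Lemma~\ref{lem:lattice_V}, and $\phi^{-1}$ on $\phi(N)$ is the desired isomorphism $g : L_2'/L_1 \xto{\cong} N$ restricting to $j$ on $L_2/L_1$; here $L_1' = L_1$ and $f = \mathrm{id}$.

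\emph{Main difficulty.} Lifting $q$ along $p$ in the first case (respectively extending the embedding in the second) is immediate from projectivity (respectively injectivity); the real point is to arrange the lift to be surjective (respectively the extension to be injective), and this is precisely where the defining hypothesis that every object of $\cCo{d}$ is generated by $d$ elements is used, through the count of generators — dually, of socle dimension — at each of the finitely many points of the support, together with Nakayama's lemma. Everything else is bookkeeping with Lemma~\ref{lem:lattice_V} and the description of morphisms of $\cCo{d}$ in Lemma~\ref{lem:4_0}.
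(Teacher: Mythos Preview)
Your proof is correct and follows the same two--step skeleton as the paper's (first produce $L_1'$ from the surjection, then $L_2'$ from the injection), but the mechanism in each step is genuinely different. The paper does not factor the morphism into a \cofibr\ and a \fibr\ at the outset; instead it works directly with the given diagram and, over the artinian thickening $Z$, takes \emph{projective covers} $P_1 \twoheadrightarrow i_Z^*(L_2/L_1)$ and $P_2 \twoheadrightarrow i_Z^* M$. The point of using projective covers rather than an arbitrary projective lift is that the essential--surjection property forces the induced maps $i_Z^* L_2 \to P_1$ and $P_2 \to P_1$ to be surjective, after which a rank comparison of the (locally free) kernels --- using that $M$ is generated by at most $d$ elements, so $\mathrm{rank}\,P_2 \le d$ --- yields a surjection between them and hence the desired surjection $L_2 \twoheadrightarrow M$. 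The injective step is strictly dual, via injective hulls and a corank comparison.

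Your route is more elementary: you take an arbitrary projective lift $\tilde\mu_0$ and then repair it by hand, adding correction terms in $\Ker p$ and invoking Nakayama pointwise; dually, you extend along the injective $(V/L_1)_x$ and correct on socles. Both arguments ultimately hinge on the same numerical fact --- $d$ generators, hence cotangent (resp.\ socle) dimension $\le d$ at each point --- but the paper encodes this via the rank of a projective cover, whereas you use it directly. What the paper's packaging buys is a cleaner, coordinate--free construction (no basis of $i_Z^* L_2$, no point--by--point assembly of the $\kappa_i$); what your approach buys is that it avoids the projective--cover\,/\,injective--hull machinery entirely and makes the role of the hypothesis ``generated by $d$ elements'' completely transparent.
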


\begin{proof}
Let us take an effective Cartier divisor
$Z \subset X$ such that $M$ is annihilated by the sheaf $\sI_Z$ of
ideals defining $Z$.
Let $\alpha$ denote the quotient homomorphism $\alpha \colon  L_2 \to L_2/L_1$.
Since $Z$ is artinian, one can take projective covers
$\beta_1 \colon  P_1 \surj i_Z^* (L_2/L_1)$ and $\beta_2 \colon  P_2 \surj i_Z^* M$
in the category of quasi-coherent $\cO_Z$-modules.
Then there exist homomorphisms
$\gamma_1 \colon  i_Z^* L_2 \to P_1$
and $\gamma_2\colon  P_2 \to P_1$ satisfying 
$i_Z^* (\alpha) = \beta_1 \circ \gamma_1$
and $i_Z^*(p) \circ \beta_2 = \beta_1 \circ \gamma_2$.
Since projective covers are essential surjections,
the homomorphisms $\gamma_1$
and $\gamma_2$ are surjective.
Hence by the projectivity of $P_1$ and $P_2$, 
one can choose a right inverse $s_i$ of $\gamma_i$ for $i \in \{1,2\}$.
This in particular implies that $\Ker\, \gamma_i$ is locally free.
Since $M$ admits a surjection from $\cO_X^{\oplus d}$,
one can show, by comparing the local ranks of
$\Ker\, \gamma_1$ and $\Ker\, \gamma_2$, that
there exists a surjective homomorphism 
$f_1\colon  \Ker\, \gamma_1 \surj \Ker\, \gamma_2$.
We have a decomposition of $i_Z^* L_2$ into the direct sum
$i_Z^* L_2 = s_1(P_1) \oplus \Ker\, \gamma_1$.
We define 
$f_2 \colon  i_Z^* L_2 \to P_2$.
to be the following homomorphism: The restriction
of $f_2$ to $s_1(P_1)$ is equal to the composite
$$
s_1(P_1) \cong P_1 \xto{s_2} P_2
$$
and the restriction of $f_2$ to $\Ker\, \gamma_1$ is
equal to the composite
$$
\Ker\, \gamma_1 \xto{f_1} \Ker\, \gamma_2 \inj P_2.
$$
It is easy to check that $f_2$ is a surjective homomorphism.
We let $f'$ denote the composite
$$
L_2 \to {i_Z}_*i_Z^* L_2 
\xto{{i_Z}_* (f_2)} {i_Z}_* P_2
\xto{{i_Z}_* (\beta_2)} {i_Z}_* i_Z^* M \cong M.
$$
Then the homomorphism $f'$ is surjective.

Let $L'_1$ denote the kernel of $f'$ and $f$ the
isomorphism $L_2/L'_1 \xto{\cong} M$ induced by $f'$.
It follows from Lemma \ref{lem:lattice_V} 
that $L'_1$ is an $\cO_X$-lattice in $V$.
Then $f'$ induces an injective homomorphism
$\alpha' \colon  M \inj V/L'_1$.
%
%
Let us take injective hulls $\beta'_1\colon  M \inj J_1$ and $\beta'_2\colon N \inj J_2$
of $M$ and $N$, respectively, in the category of 
$\cO_X$-modules. 
Then there exist homomorphisms
$\gamma'_1 \colon  J_1 \to V/L'_1$
and $\gamma'_2\colon  J_1 \to J_2$ satisfying $\alpha' = \gamma'_1 \circ \beta'_1$
and $\beta'_2 \circ j = \gamma'_2 \circ \beta'_1$.
Since injective hulls are essential injections,
the homomorphisms $\gamma'_1$ and $\gamma'_2$ are injective.
Hence by the injectivity of $J_1$ and $J_2$, 
one can choose a left inverse $t_i$ of $\gamma'_i$ for $i \in \{1,2\}$.
The two $\cO_X$-modules $\Ker\, t_1$ and
$\Ker\, t_2$ are torsion divisible $\cO_X$-modules
which are co-finitely generated.
One can show, by comparing the local coranks of
$\Ker\, t_1$ and $\Ker\, t_2$, that
there exists an injective homomorphism
$g_1\colon  \Ker\, t_2 \surj \Ker\, t_1$.
We have a decomposition of $J_2$ into the direct sum
$J_2 = \gamma'_2(J_1) \oplus \Ker\, t_2$.
We define $g_2 \colon  J_2 \to V/L'_1$
to be the following homomorphism: The restriction of 
$g_2$ to $\gamma'_2(J_1)$ is equal to the composite
$$
\gamma'_2(J_1) \cong J_1 \xto{\gamma'_1} V/L'_1
$$
and the restriction of $g_2$ to $\Ker\, t_2$ is
equal to the composite
$$
\Ker\, t_2 \xto{g_1}
\Ker\, t_1 \inj V/L'_1.
$$
Then the homomorphism $g_2$ is injective.
We let $g'\colon  N \to V/L'_1$ denote the composite 
$N \xto{\beta'_2} J_2 \xto{g_2} V/L'_1$.
Let $L'_2$ denote the inverse image under the homomorphism
$V \surj V/L'_1$ of the image of $g'$.
We let $g$ denote the inverse of  isomorphism
$N \xto{\cong} L'_2/L_1$ induced by $g'$.
By construction, we have the following 
commutative diagram
$$
\begin{CD}
L_2/L_1 @<<< L_2/L'_1 @>{\subset}>> L'_2/L'_1 \\
@| @V{f}V{\cong}V @V{\cong}V{g}V \\
L_2/L_1 @<{p}<< M @>{j}>> N
\end{CD}
$$
of $\cO_X$-modules, where the vertical arrows are
isomorphisms. This proves the claim.
\end{proof}

\subsection{Proof of Theorem \ref{thm:section2} for $\cT_m^d$}

\begin{lem}\label{lem:Galois_fibration}
Let $f\colon N \to N'$ be a Galois covering in $\cCo{d}$.
Suppose that $f$ is the composite $f=m\circ r$ of a
\cofibr $r$ and a \fibr $m$. Then $m$ is a Galois
covering in $\cCo{d}$
\end{lem}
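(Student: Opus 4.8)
The plan is to argue directly with the module-theoretic description of $\cCo{d}$, using $r$ itself (rather than an auxiliary Galois covering supplied by Proposition~\ref{prop:cofinality0}) to transport the Galois property from $f$ to $m$. First I would record the shape of the factorization: if $(N_1,N_2,\alpha)\in C(N,N')$ is the triple corresponding to $f$ (Lemma~\ref{lem:4_0}), then, because $r$ is a \cofibr and $m$ a \fibr with $m\circ r=f$, one may identify $r$ with $r_{N,N_2}:N\to N_2$ for the submodule $N_2\subseteq N$ and $m$ with the morphism $N_2\to N'$ corresponding to $(N_1,N_2,\alpha)\in C(N_2,N')$. Since $f=m\circ r$ is a Galois covering, Lemma~4.1.3 of \cite{Grids} shows $r$ is a Galois covering; its Galois group is $\Aut_{N_2}(N)$, and by Lemma~3.2.4 of \cite{Grids} the pair $(N_2,r)$ is a quotient object of $N$ by $\Aut_{N_2}(N)$.

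The next step is a short computation with Example~\ref{ex:automorphisms}. Both $\Aut_{N_2}(N)$ and $G:=\Aut_{N'}(N)$ sit inside $\Aut_{\cO_X}(N)$, cut out respectively by the conditions ``$\beta(N_2)=N_2$ and $\beta|_{N_2}=\id$'' and ``$\beta(N_1)=N_1$, $\beta(N_2)=N_2$ and $\beta$ induces $\id$ on $N_2/N_1$''. From this one reads off both the inclusion $\Aut_{N_2}(N)\subseteq G$ and --- here the \cofibr hypothesis is essential, so that $N_2$ really is the top submodule of the triple of $f$ and the conditions nest --- the normality $\Aut_{N_2}(N)\triangleleft G$. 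Normality together with the quotient description of $(N_2,r)$ lets every $\beta\in G$ descend to an automorphism $\overline{\beta}$ of $N_2$ over $N'$ (the ``$\overline{G}$'' construction of the paragraph before Lemma~\ref{lem:quot_general}), yielding a homomorphism $G\to\Aut_{N'}(N_2)$ with kernel exactly $\Aut_{N_2}(N)$. Thus $G/\Aut_{N_2}(N)$ embeds into $\Aut_{N'}(N_2)$, and this group will serve as the Galois group of $m$.

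It then remains to check that for every object $Z$ of $\cCo{d}$ the map $m_*:\Hom_{\cCo{d}}(Z,N_2)\to\Hom_{\cCo{d}}(Z,N')$ is a pseudo-$\bigl(G/\Aut_{N_2}(N)\bigr)$-torsor. Freeness of the action is automatic: $\cCo{d}$ is an $E$-category (Lemma~\ref{lem:epi}) and the action factors through the embedding into $\Aut_{N'}(N_2)$. For the torsor property, suppose $k_1,k_2:Z\to N_2$ satisfy $mk_1=mk_2$. Using that $\cCo{d}$ is semi-cofiltered (established in the proof of Theorem~\ref{thm:section2} for $\cT^d$), I would complete the cospans $Z\xto{k_i}N_2\xleftarrow{r}N$ successively to obtain an epimorphism $u:Z'\to Z$ and morphisms $\kappa_i:Z'\to N$ with $k_iu=r\kappa_i$; then $f\kappa_1=mr\kappa_1=mk_1u=mk_2u=f\kappa_2$, so the Galois property of $f$ produces $\beta\in G$ with $\kappa_2=\beta\kappa_1$, whence $k_2u=r\beta\kappa_1=\overline{\beta}\,r\kappa_1=\overline{\beta}(k_1u)$ and, cancelling the epimorphism $u$, $k_2=\overline{\beta}k_1$.

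The main obstacle I expect is precisely this last step. The map $m_*$ is in general far from surjective and a morphism $Z\to N_2$ need not lift through $r$, so the reduction to liftable test objects via semi-cofilteredness --- carried out once for each of $k_1$ and $k_2$ --- is unavoidable; and the passage from $\beta\in G$ to the automorphism $\overline{\beta}$ of $N_2$ is not formal but rests on the normality of $\Aut_{N_2}(N)$ in $G$, which is where the \cofibr hypothesis genuinely enters. The hypothesis that $m$ is a \fibr is used only to pin down the factorization and otherwise plays no role.
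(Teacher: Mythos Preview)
Your argument is correct, but it takes a genuinely different route from the paper's proof. The paper invokes Lemma~4.2.6 of \cite{Grids} (a descent criterion: $m$ is Galois once every element of $\Aut_{N'}(N)$ descends along $r$), and then obtains the descent in one line from the lifting property of Lemma~\ref{lem:pre_model}~(1): given $\wt{g}\in\Aut_{N'}(N)$, the square
\[
\begin{CD}
N @>{r\circ\wt{g}}>> M \\
@V{r}VV @VV{m}V \\
M @>{m}>> N'
\end{CD}
\]
has a diagonal filler $g:M\to M$ because $r$ is a \cofibr and $m$ a \fibr; since every endomorphism in $\cCo{d}$ is an automorphism, $g\in\Aut_{N'}(M)$. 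That is the whole proof.

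You instead identify the groups explicitly via Example~\ref{ex:automorphisms}, read off normality of $\Aut_{N_2}(N)$ in $\Aut_{N'}(N)$ directly from the module description, and verify the pseudo-torsor property for $m_*$ by hand using semi-cofilteredness. This is longer but entirely self-contained: you avoid the black-box Lemma~4.2.6 from \cite{Grids}, and your remark that the \fibr hypothesis on $m$ is used only to pin down the factorization is a genuine observation---in the paper's proof that hypothesis is essential to the lifting step. The paper's approach is quicker and highlights the model-categorical structure of $\cCo{d}$; yours is more concrete and slightly sharper in what it actually requires.
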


\begin{proof}
We apply Lemma 4.2.6 of \cite{Grids} to the $B$-site 
$(\cC^d,J^d)$.
%
It suffices to show that any automorphism in $\Aut_{N'}(N)$
descends to an automorphism in $\Aut_{N'}(M)$, where $M$ denotes the
domain of the morphism $m$. Let $\wt{g} \in \Aut_{N'}(N)$ and
consider the commutative diagram
$$
\begin{CD}
N @>{r \circ \wt{g}}>> M \\
@V{r}VV @V{m}VV \\
M @>{m}>> N'
\end{CD}
$$
in $\cCo{d}$. It follows from Lemma \ref{lem:pre_model} (1) that
there exists a morphism $g\colon  M \to M$ over $N'$ such that
$r \circ \wt{g} = g\circ r$. Since any endomorphism of $M$
is an automorphism, we have $g \in \Aut_{N'}(M)$.
This shows that $\wt{g}$ descends to the automorphism $g$.
This proves the claim.
\end{proof}

\begin{lem}
$\cT_m^d$ is a semi-localizing collection of morphisms in $\cCo{d}$. 
\end{lem}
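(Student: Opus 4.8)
The plan is to verify directly the three conditions in the definition of a semi-localizing collection (\cite[Def.\ 2.3.1]{Grids}) for the set $\cT_m^d$ of \fibrs. The first two conditions are immediate from Lemma \ref{lem:pre_model}: every isomorphism is a \fibr by part (3), so in particular all identities lie in $\cT_m^d$; and part (4) tells us that a composite $g \circ f$ is a \fibr precisely when $f$ and $g$ are, which gives closure under composition.

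The substantive point is the amalgamation property: given a \fibr $f : X \to Y$ and an arbitrary morphism $g : Z \to Y$ in $\cCo{d}$, one must produce an object $W$ of $\cCo{d}$, a \fibr $m : W \to Z$, and a morphism $h : W \to X$ with $f \circ h = g \circ m$. The idea is to begin from the weak pullback supplied by semi-cofilteredness and then correct it using the lifting property of \fibrs against \cofibrs. Concretely, since $(\cCo{d},J^d)$ is a $Y$-site by the already-established part of Theorem \ref{thm:section2}, the category $\cCo{d}$ is semi-cofiltered; applying this to the diagram $X \xto{f} Y \xleftarrow{g} Z$ yields an object $W_0$ together with morphisms $p : W_0 \to X$ and $q : W_0 \to Z$ satisfying $f \circ p = g \circ q$. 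Next I would factor $q = m \circ r$ with $r : W_0 \to W$ a \cofibr and $m : W \to Z$ a \fibr, using Lemma \ref{lem:pre_model} (2). The square with sides $p$, $r$, $g \circ m$, $f$ then commutes, because $f \circ p = g \circ q = g \circ m \circ r$, so Lemma \ref{lem:pre_model} (1) — applied with the \cofibr $r$ on the left and the \fibr $f$ on the right — provides a morphism $h : W \to X$ with $p = h \circ r$ and $f \circ h = g \circ m$. Since $m$ is a \fibr, the triple $(W,m,h)$ is exactly the amalgamation required by the third condition, which finishes the proof.

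The only real obstacle is this amalgamation step, and the crucial observation there is that one cannot simply take the weak pullback produced by semi-cofilteredness: its leg over $Z$ need not be a \fibr, so one must first factor that leg through a \cofibr and then transport the other leg across the \cofibr by the lifting property Lemma \ref{lem:pre_model} (1). I do not expect any difficulty with finiteness or representability of the objects that appear, since every object constructed arises from objects of $\cCo{d}$ via operations already covered by Lemma \ref{lem:pre_model}.
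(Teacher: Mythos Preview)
Your proof is correct and matches the paper's approach essentially line for line: both establish the first two conditions from Lemma \ref{lem:pre_model} (3) and (4), and for the amalgamation condition both pull back via semi-cofilteredness, factor the leg over the non-fibration side as a \cofibr followed by a \fibr, and then apply the lifting property of Lemma \ref{lem:pre_model} (1) against the given \fibr.
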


\begin{proof}
It follows from (3) and (4) of
Lemma \ref{lem:pre_model} that $\cT_m^d$ contains all the
identity morphisms in $\cCo{d}$ and is closed under the
composition. Let $Y_1 \xto{f_1} X \xleftarrow{f_2} Y_2$
be a diagram in $\cCo{d}$ such that the morphism $f_2$ belongs to
$\cT_m^d$. Since we have shown that $\cCo{d}$ is semi-cofiltered,
there exists a diagram $Y_1 \xleftarrow{g_1} Z \xrightarrow{g_2} Y_2$ 
in $\cCo{d}$ satisfying $f_1 \circ g_1 = f_2 \circ g_2$.
It follows from (2) of Lemma \ref{lem:pre_model} that 
one can write $g_1$ as the composite $g_1 = m \circ r$
of a fibration $m\colon W \to Y_1$ with a cofibration $r\colon Z \to W$.
By applying (1) of Lemma \ref{lem:pre_model} to the diagram
$$
\begin{CD}
Z @>{g_2}>> Y_2 \\
@V{r}VV @VV{f_2}V \\
W @>{f_1 \circ m}>> X,
\end{CD}
$$
we obtain a morphism $h\colon  W \to Y_2$ satisfying
$f_1 \circ m = f_2 \circ h$, i.e, the diagram
$$
\begin{CD}
W @>{h}>> Y_2 \\
@V{m}VV @VV{f_2}V \\
Y_1 @>{f_1}>> X,
\end{CD}
$$
is commutative. This proves that $\cT_m^d$ is a semi-localizing 
collection of morphisms in $\cCo{d}$.
\end{proof}

Let $J_m^d$ denote the $A$-topology on $\cC^d$ associated with $\cT_m^d$.
Recall that we have shown that any morphism in $\cCo{d}$ 
is an epimorphism. Hence it follows from (4) of Lemma \ref{lem:pre_model} 
that the site $(\cCo{d},J_m^d)$ is a $B$-site and we have
$\cT_m^d = \cT(J_m^d)$ where in the right-hand side we used
the notation in \cite[Def.\ 2.4.2]{Grids}.

\begin{defn}
Let $\cC_0$, $\cC$ be categories and 
let $\iota_0\colon  \cC_0 \to \cC$ be a functor.
Let $\cT$ be a set of morphisms in $\cC$.

A morphism $f$ in $\cC_0$ is called of type $\cT$
if $\iota_0(f)$ belongs to $\cT$.
An object $X$ of $\cC_0$ is called an edge object of $\cC_0$
with respect to $\cT$ if any morphism $f\colon Y \to X$ in $\cC_0$ 
is of type $\cT$.
\end{defn}

\begin{lem} \label{lem:explicit_edge}
Let $(L_1,L_2)$ be an object of $\cC_{0,m}^d$. Then $(L_1,L_2)$
is an edge object of $\cC_{0,m}^d$ with respect to $\cT_m^d$
if and only if $L_2 = \cO_X^{\oplus d}$.
\end{lem}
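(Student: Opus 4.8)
The statement characterizes the edge objects of $\cC_{0,m}^d$ with respect to $\cT_m^d$: an object $(L_1,L_2)$ with $L_1 \subset L_2 \subset \cO_X^{\oplus d}$ is an edge object precisely when $L_2 = \cO_X^{\oplus d}$. The plan is to unwind the definitions on both sides and match them up. Recall that a morphism $(L_1',L_2') \to (L_1,L_2)$ in $\cC_0^d$ exists exactly when $(L_1',L_2') \ge (L_1,L_2)$, i.e.\ $L_1' \subset L_1 \subset L_2 \subset L_2'$, and for such a morphism $\iota_{0,m}^d$ sends it to the morphism in $\cCo{d}$ represented by the diagram $L_2/L_1 \twoheadleftarrow L_2/L_1' \inj L_2'/L_1'$. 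By definition this is a fibration (i.e.\ belongs to $\cT_m^d$) if and only if the injection $L_2/L_1' \inj L_2'/L_1'$ is an isomorphism, which happens if and only if $L_2 = L_2'$. Thus $(L_1,L_2)$ is an edge object of $\cC_{0,m}^d$ with respect to $\cT_m^d$ if and only if every object $(L_1',L_2')$ of $\cC_{0,m}^d$ with $(L_1',L_2') \ge (L_1,L_2)$ already satisfies $L_2' = L_2$.

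\textbf{The ``if'' direction.} Suppose $L_2 = \cO_X^{\oplus d}$. If $(L_1',L_2')$ is any object of $\cC_{0,m}^d$ lying over $(L_1,L_2)$, then by the constraint defining $\cC_{0,m}^d$ we have $L_2' \subset \cO_X^{\oplus d} = L_2$, while the ordering forces $L_2 \subset L_2'$. Hence $L_2' = L_2$, so by the discussion above $\iota_{0,m}^d$ of the corresponding morphism is a fibration. Since this holds for every such $(L_1',L_2')$, the object $(L_1,L_2)$ is an edge object with respect to $\cT_m^d$.

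\textbf{The ``only if'' direction.} Conversely, suppose $L_2 \subsetneq \cO_X^{\oplus d}$. I would exhibit a morphism $(L_1',L_2') \to (L_1,L_2)$ in $\cC_{0,m}^d$ whose image under $\iota_{0,m}^d$ is not a fibration. Take $(L_1',L_2') = (L_1, \cO_X^{\oplus d})$; then $L_1 \subset L_2 \subset \cO_X^{\oplus d}$ shows $(L_1',L_2') \ge (L_1,L_2)$ and $L_1' \subset L_2' \subset \cO_X^{\oplus d}$ shows it lies in $\cC_{0,m}^d$. The induced morphism in $\cCo{d}$ is represented by $L_2/L_1 \twoheadleftarrow L_2/L_1 \inj \cO_X^{\oplus d}/L_1$, whose right-hand injection is an isomorphism only if $L_2 = \cO_X^{\oplus d}$, which fails by hypothesis. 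Hence this morphism is not a fibration, so $(L_1,L_2)$ is not an edge object. This completes the proof. I do not expect a genuine obstacle here; the only thing to be careful about is keeping straight the direction of the arrows (Quillen's convention is reversed in this paper, so fibrations in $\cCo{d}$ correspond to the injective leg being an isomorphism) and checking that the objects constructed genuinely lie in the full subcategory $\cC_{0,m}^d$ rather than merely in $\cC_0^d$.
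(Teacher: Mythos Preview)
Your proof is correct and follows essentially the same approach as the paper: both directions use the same constructions (in particular, the witness $(L_1,\cO_X^{\oplus d})$ for the ``only if'' direction), with your version spelling out in more detail why a morphism $(L_1',L_2') \to (L_1,L_2)$ in $\cC_{0,m}^d$ maps to a fibration precisely when $L_2' = L_2$.
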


\begin{proof}
Suppose that $L_2 = \cO_X^{\oplus d}$. Then any object
$(L'_1,L'_2)$ of $\cC_{0,m}^d$ with $(L_1,L_2) \le (L'_1,L'_2)$
satisfies $L'_2 = \cO_X^{\oplus d}$. This shows that
$(L_1,L_2)$ be an edge object of $\cC_{0,m}^d$.

Suppose that $L_2 \neq \cO_X^{\oplus d}$. Then the
unique morphism from $(L_1,\cO_X^{\oplus d})$ to $(L_1,L_2)$
in $\cC_{0,m}^d$ is not of type $\cT^d_m$. Hence
$(L_1,L_2)$ is not an edge object of $\cC_{0,m}^d$.
This completes the proof.
\end{proof}

\begin{cor} \label{cor:0me}
The full subcategory $\cC_{0,m,e}^d \subset \cC_{0,m}^d$ of 
edge objects with respect to $\cT_m^d$ is $\Lambda$-connected.
The restriction of the functor $\iota_{0,m}^d$ to 
$\cC_{0,m,e}^d$ is essentially surjective.
\end{cor}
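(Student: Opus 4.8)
\textbf{Proof proposal for Corollary \ref{cor:0me}.}

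The plan is to deduce both assertions from Lemma \ref{lem:explicit_edge}, which identifies the edge objects of $\cC_{0,m}^d$ with respect to $\cT_m^d$ as precisely the pairs $(L_1,\cO_X^{\oplus d})$ with $L_1 \subset \cO_X^{\oplus d}$ an $\cO_X$-lattice of $V$. So throughout we work with the full subcategory $\cC_{0,m,e}^d$ whose objects are these pairs $(L_1,\cO_X^{\oplus d})$.

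For $\Lambda$-connectedness, I would mimic the proof of Lemma \ref{lem:pair_directed_V}. Given two edge objects $(L_1,\cO_X^{\oplus d})$ and $(L'_1,\cO_X^{\oplus d})$, set $L''_1 = L_1 \cap L'_1$. By Lemma \ref{lem:lattices_V} the intersection $L''_1$ is again an $\cO_X$-lattice of $V$, and clearly $L''_1 \subset \cO_X^{\oplus d}$ since $L_1 \subset \cO_X^{\oplus d}$; hence $(L''_1, \cO_X^{\oplus d})$ is an object of $\cC_{0,m,e}^d$. Since $L''_1 \subset L_1 \subset \cO_X^{\oplus d}$ and $L''_1 \subset L'_1 \subset \cO_X^{\oplus d}$, the ordering of $\Pair^d$ gives $(L''_1,\cO_X^{\oplus d}) \ge (L_1,\cO_X^{\oplus d})$ and $(L''_1,\cO_X^{\oplus d}) \ge (L'_1,\cO_X^{\oplus d})$, hence a diagram $(L_1,\cO_X^{\oplus d}) \leftarrow (L''_1,\cO_X^{\oplus d}) \to (L'_1,\cO_X^{\oplus d})$ in $\cC_{0,m,e}^d$. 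This gives $\Lambda$-connectedness (the further conditions on pairs of parallel morphisms are vacuous because $\cC_{0,m,e}^d$, being a full subcategory of the poset $\cC_0^d$, has at most one morphism between any two objects).

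For essential surjectivity of the restriction of $\iota_{0,m}^d$ to $\cC_{0,m,e}^d$, let $N$ be an arbitrary object of $\cCo{d}$. As in the proof of Theorem \ref{thm:section2} for $\cT^d$, choose a surjection $p:\cO_X^{\oplus d} \surj N$ of $\cO_X$-modules and let $L = \ker p$. Then $L$ is an $\cO_X$-lattice of $V$ contained in $\cO_X^{\oplus d}$ (it is coherent as a submodule of the noetherian $\cO_X^{\oplus d}$, and $j_X^*V/L = 0$ since $j_X^* V/\cO_X^{\oplus d}=0$), so $(L,\cO_X^{\oplus d})$ is an object of $\cC_{0,m,e}^d$, and $p$ induces an isomorphism $\iota_{0,m}^d((L,\cO_X^{\oplus d})) = \cO_X^{\oplus d}/L \xto{\cong} N$ of $\cO_X$-modules, which is in particular an isomorphism in $\cCo{d}$ (a cofibration followed by the identity). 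This proves the claim.

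I do not expect a genuine obstacle here; the only point requiring a little care is checking that the lattice produced in each step actually lies in $\cC_{0,m,e}^d$ rather than merely in $\cC_0^d$ or $\cC_{0,m}^d$, i.e. that its upper member is exactly $\cO_X^{\oplus d}$ — which is automatic from the constructions above and from the characterization in Lemma \ref{lem:explicit_edge}.
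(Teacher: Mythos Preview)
Your proposal is correct and follows essentially the same approach as the paper: for $\Lambda$-connectedness you use the intersection $(L_1 \cap L'_1,\cO_X^{\oplus d})$ as the common source, and for essential surjectivity you invoke the defining surjection $\cO_X^{\oplus d} \surj N$ and take its kernel. The paper's proof is simply a terser version of what you wrote.
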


\begin{proof}
For any two objects $(L_1,\cO_X^{\oplus d})$ and
$(L'_1,\cO_X^{\oplus d})$ of $\cC_{0,m,e}^d$, we have
a diagram $(L_1,\cO_X^{\oplus d}) \leftarrow 
(L_1 \cap L'_1,\cO_X^{\oplus d}) \to (L'_1,\cO_X^{\oplus d})$
in $\cC_{0,m,e}^d$. Hence $\cC_{0,m,e}^d$ is $\Lambda$-connected.
The second claim follows since any object $N$ of $\cCo{d}$
admits a surjective homomorphism from $\cO_X^{\oplus d}$.
\end{proof}

\begin{lem}
$(\cCo{d},J_m^d)$ is a $Y$-site.
\end{lem}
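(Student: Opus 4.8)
The plan is to verify that $(\cCo{d}, J_m^d)$ satisfies the three defining conditions of a $Y$-site (\cite[Def.\ 5.4.2]{Grids}): the underlying site is a $B$-site, the topology is an $A$-topology, and the relevant category with its collection of morphisms of type $J$ is $\Lambda$-connected. The first two of these are already in hand: we have observed above that $(\cCo{d}, J_m^d)$ is a $B$-site and that $\cT_m^d = \cT(J_m^d)$, and $J_m^d$ is by construction the $A$-topology associated to the semi-localizing collection $\cT_m^d$. So the only real content is the $\Lambda$-connectivity of $\cCo{d}(\cT_m^d)$.

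First I would reduce the $\Lambda$-connectivity statement to something about edge objects, using the grid $(\cC_{0,m}^d, \iota_{0,m}^d)$ together with Corollary~\ref{cor:0me}. The point is that $\iota_{0,m}^d$ restricted to the subcategory $\cC_{0,m,e}^d$ of edge objects is essentially surjective and $\cC_{0,m,e}^d$ is $\Lambda$-connected; moreover every morphism in $\cC_0$ is an epimorphism and $\cC_0$ is a poset, so morphisms between edge objects are automatically of type $\cT_m^d$. Concretely, given two objects $N_1, N_2$ of $\cCo{d}$, choose surjections $\cO_X^{\oplus d} \surj N_i$ with kernels $L_i$, so that $(L_i, \cO_X^{\oplus d})$ are edge objects with $\iota_{0,m}^d(L_i, \cO_X^{\oplus d}) \cong N_i$; then $(L_1 \cap L_2, \cO_X^{\oplus d})$ maps to both in $\cC_{0,m,e}^d$, and applying $\iota_{0,m}^d$ gives a diagram $N_1 \leftarrow M \to N_2$ in $\cCo{d}$ whose two legs are fibrations, hence in $\cT_m^d$. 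This establishes that $\cCo{d}(\cT_m^d)$ is $\Lambda$-connected.

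Having these three ingredients, I would conclude by invoking the definition of a $Y$-site directly: $(\cCo{d}, J_m^d)$ is a $B$-site with $\Lambda$-connected $\cCo{d}(\cT_m^d)$, hence a $Y$-site. This essentially mirrors the structure of the proof of Theorem~\ref{thm:section2} for $\cT^d$ via Proposition~\ref{prop:grid_atomic}, except that here the topology $J_m^d$ is not the atomic topology, so we cannot invoke that proposition verbatim; instead we argue with the already-established $B$-site property and the explicit $\Lambda$-connectivity. I expect the main (mild) obstacle to be bookkeeping around the fact that $J_m^d$ is the $A$-topology attached to $\cT_m^d$ rather than the atomic one, and making sure the notion of ``morphism of type $J$'' used in the definition of $Y$-site coincides with membership in $\cT_m^d$ — but this is exactly the content of the identity $\cT_m^d = \cT(J_m^d)$ recorded just before the statement, so no new difficulty arises. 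The $\Lambda$-connectivity argument itself, being the only substantive step, is short once Corollary~\ref{cor:0me} and Lemma~\ref{lem:pre_model} are in place.
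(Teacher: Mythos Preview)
Your list of defining conditions for a $Y$-site is incomplete, and the missing one is precisely the substantive part of the paper's proof. The $A$-topology condition is already part of being a $B$-site (see \cite[Def.~4.2.1]{Grids}), so it is not a separate item; what you have omitted is the requirement that $\cT(J_m^d) = \cT_m^d$ contain enough Galois coverings. This is not automatic: Proposition~\ref{prop:cofinality0} gives enough Galois coverings for $\cT^d$ (all morphisms), but you need to show that given a fibration $f:Y \to X$, there is a \emph{fibration} $g:Z \to Y$ with $f \circ g$ Galois. The paper handles this by taking any $g$ in $\cT^d$ with $f \circ g$ Galois, factoring $g = m \circ r$ via Lemma~\ref{lem:pre_model}~(2), and then invoking Lemma~\ref{lem:Galois_fibration} to conclude that $f \circ m$ is a Galois covering which is a fibration. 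Without this step, your argument does not close.

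Your treatment of $\Lambda$-connectivity via edge objects and Corollary~\ref{cor:0me} matches the paper's and is fine.
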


\begin{proof}
First, we prove that $\cT^d_m$ contains enough Galois coverings.
Let $f\colon Y\to X$ be a morphism which belongs to $\cT^d_m$.
Since $\cT^d$ contains enough Galois coverings, there exists
a morphism $g\colon  Z \to Y$ in $\cCo{d}$ such that the composite
$f \circ g$ is a Galois covering in $\cCo{d}$.
Let us write, by using (2) of Lemma \ref{lem:pre_model}, 
the morphism $g$ as a composite $g = m \circ r$ of a
fibration $m\colon  W \to Y$ with a cofibration $r\colon Z \to W$.
Then it follows from Lemma \ref{lem:Galois_fibration} that
$f \circ m$ is a fibration which is a Galois covering.
This proves that  $\cT^d_m$ contains enough Galois coverings.

To complete the proof,
it suffices to prove that the category $\cCo{d}(\cT^d_m)$ is 
$\Lambda$-connected. Let $X$ and $Y$ be two objects of 
$\cCo{d}(\cT^d_m)$. By Corollary \ref{cor:0me} one can
take edge objects $X'$ and $Y'$ with respect to $\cT_m^d$ and
isomorphisms $\iota_{0,m}(X') \cong X$ and
$\iota_{0,m}(Y') \cong Y$ in $\cCo{d}$.
It follows again from Corollary \ref{cor:0me} that
there exists a diagram $X' \leftarrow Z' \to Y'$.
We set $Z = \iota_{0,d}(Z')$. We thus obtain a diagram 
$X \leftarrow Z \to Y$ in $\cCo{d}$.
This proves that the category $\cCo{d}(\cT^d_m)$ is 
$\Lambda$-connected and the proof is complete.
\end{proof}

\begin{proof}[Proof of Theorem \ref{thm:section2} for $\cT_m^d$]
It suffices to prove that the $Y$-site $(\cC,J) = (\cCo{d},J^d_m)$
and the pair $\Cip = (\cC_{0,m}^d,\iota_{0,m})$
satisfy the four conditions in Definition 5.5.3 of \cite{Grids}.

Condition (1) follows from Lemma \ref{lem:pair_directed_V}, and
Condition (2) follows from Corollary \ref{cor:0me}.

We check Condition (3).
Let $X=(L_1,L_2)$ be an object of $\cC_{0,m}^d$ and set 
$X' = \iota_{0,m}^d(X)$. Let $f'\colon  Y' \to X'$ be a morphism in
$\cCo{d}$ which belongs to $\cT_m^d$.
It follows from Proposition \ref{prop:sec2_below} that
there exist an object $Y=(L'_1,L'_2)$ of $\cC_0^d$,
a morphism $f\colon  Y \to X$ in $\cC_0^d$ and an isomorphism
$\iota_0^d(Y) \cong Y'$ such that the diagram
$$
\begin{CD}
\iota_0^d(Y) @>{\iota_0^d(f)}>> \iota_{0,m}^d(X) \\
@V{\cong}VV @| \\
Y' @>{f'}>> X'
\end{CD}
$$
is commutative. Since $f'$ belongs to $\cT_m^d$,
it follows that $L'_2 = L_2$. This implies that
$Y$ is an object of $\cC_{0,m}^d$.
Hence $f$ is a morphism in $\cC_{0,m}^d$ and 
Condition (3) is satisfied.

Let $X$ be an object of $\cC_{0,m}^d$.
One can check easily that the functor
$\cC_{0,m,X/}^d \to \cC_{0,X/}^d$ induced by the inclusion
$\cC_{0,m}^d \inj \cC_{0}^d$ is an equivalence of categories.
We set $X' = \iota_{0,m}^d(X)$.
It follows from (5) of Lemma \ref{lem:pre_model} that
$\cC^d_{m, X'/} \to \cC^d_{X'/}$ induced by the inclusion
$\cC^d_m \inj \cCo{d}$ is an equivalence of categories.
Hence Condition (4) follows from the corresponding condition for
$\cT^d$.
\end{proof}

\subsection{The dual functors $\bD$ and $\bD_0$}\label{sec:dual}

We introduce two dual functors $\bD$ and $\bD_0$.
The first functor $\bD$ is an anti-auto-equivalence of
the category $\cCo{d}$, which is induced by
a variant of the theory of duality over artinian local rings.
The second functor $\bD_0$ is an anti-auto-equivalence of
the category $\cC_0^d$, which is given by taking 
the $\cO_X$-linear duals of locally free $\cO_X$-modules.
%
%
\subsubsection{ }
We set $D = \cK_X/\cO_X$.
For an $\cO_X$-module $N$ of finite length we set
$\bD(N) = \cHom_{\cO_X}(N,D)$.
For a homomorphism $f\colon N \to N'$ of $\cO_X$-modules of finite
length, we let $\bD(f)\colon  \bD(N') \to \bD(N)$ denote
the homomorphism of $\cO_X$-modules given by the
composite with $f$.
It follows from the theory of duality over artinian local rings
that $\bD(N)$ is an $\cO_X$-module of finite length and the
natural homomorphism $N \to \bD(\bD(N))$ is an isomorphism.
Moreover, if the homomorphism $f$ is injective
(\resp surjective), then the homomorphism $\bD(f)$ is
surjective (\resp injective).
We can also check that for each $\cO_X$-module $N$ of finite length,
the $\cO_X$-module $\bD(N)$ is (non-canonically) isomorphic to $N$.
In particular if $N$ is an object in $\cCo{d}$, then
$\bD(N)$ is also an object in $\cCo{d}$.

\subsubsection{ }
Let $N$ and $N'$ be two objects of $\cCo{d}$ and let
$f$ be a morphism from $N$ to $N'$ in $\cCo{d}$.
Let us choose a diagram
\begin{equation} \label{choice1}
N' \xleftarrow{p} N'' \xto{i} N
\end{equation}
of $\cO_X$-modules which represents $f$. By applying
the functor $\bD$ to this diagram, we obtain
$$
\bD(N') \xto{\bD(p)} \bD(N'') \xleftarrow{\bD(i)} \bD(N).
$$
We note that $\bD(p)$ is injective and $\bD(i)$ is
surjective. Let $M \subset \bD(N)$ denote the inverse image under
$\bD(i)$ of the image of $\bD(p)$. Then the homomorphism
$\bD(i)$ induces the surjective homomorphism
$q\colon  M \to \bD(N')$ of $\cO_X$-modules. 
Let $\bD(f)$ denote the equivalence class of the diagram
$$
\bD(N') \xleftarrow{q} M \xto{\subset} \bD(N).
$$
One then can check that $\bD(f)$ is a morphism from $\bD(N)$
to $\bD(N')$ in $\cC^d$ and is independent of the
choice of a diagram \eqref{choice1} which represents $f$.
By associating $\bD(f)$ for each morphism $f$ in $\cCo{d}$
we obtain a covariant functor $\bD \colon \cCo{d} \to \cCo{d}$.
Moreover, the isomorphism $N \to \bD(\bD(N))$ 
for each object of $\cCo{d}$ induces an isomorphism 
$\id_{\cCo{d}} \to \bD \circ \bD$ of functors.

\begin{prop} \label{prop:bD}
Let $f$ be a morphism in $\cCo{d}$.
Then $f$ is a cofibration (\resp a fibration) 
if and only if $\bD(f)$ is a fibration (\resp a cofibration).
\end{prop}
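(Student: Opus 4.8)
The plan is to read everything off the explicit construction of $\bD(f)$ recorded above, together with the natural isomorphism $\id_{\cCo{d}} \xto{\cong} \bD \circ \bD$. Since that isomorphism exhibits $f$ as isomorphic, as a morphism of $\cCo{d}$, to $\bD(\bD(f))$, and since isomorphisms are simultaneously \fibrs and \cofibrs while composing a \fibr (\resp \cofibr) with an isomorphism on either side again yields a \fibr (\resp \cofibr), it suffices to prove the two implications: (a) if $f$ is a \cofibr then $\bD(f)$ is a \fibr, and (b) if $f$ is a \fibr then $\bD(f)$ is a \cofibr. Granting (a) and (b), the remaining implications follow on applying (b), resp.\ (a), to the morphism $\bD(f)$ and invoking $\bD\bD(f) \cong f$ together with the stability of \fibrs/\cofibrs under composition with isomorphisms.

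For (a), I would represent $f$ by a diagram $N' \stackrel{p}{\twoheadleftarrow} N'' \stackrel{i}{\inj} N$ of $\cO_X$-modules in which $p$ is an isomorphism, possible by the definition of a \cofibr. Dualizing yields $\bD(N') \xto{\bD(p)} \bD(N'') \xleftarrow{\bD(i)} \bD(N)$ with $\bD(p)$ an isomorphism, so the submodule $M \subset \bD(N)$ occurring in the construction of $\bD(f)$ --- namely the inverse image under $\bD(i)$ of $\Image\bD(p)$ --- is all of $\bD(N)$. Hence the diagram $\bD(N') \xleftarrow{q} M \xto{\subset} \bD(N)$ representing $\bD(f)$ has its right arrow an isomorphism, i.e.\ $\bD(f)$ is a \fibr; this is legitimate since $\bD(f)$ is independent of the chosen representative. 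For (b), I would instead take the representative with $i$ an isomorphism; then $\bD(i)$ is an isomorphism, hence so is its restriction $M \to \Image\bD(p)$, and composing with the inverse of the isomorphism $\bD(p)\colon \bD(N') \xto{\cong} \Image\bD(p)$ shows the induced surjection $q\colon M \to \bD(N')$ to be an isomorphism; thus the representing diagram for $\bD(f)$ has its left arrow an isomorphism, so $\bD(f)$ is a \cofibr.

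An essentially equivalent route, which I would fall back on if the manipulation of representatives above is felt to need more care, is to pass through the bijection $C(N,N') \cong \Hom_{\cCo{d}}(N,N')$ of Lemma \ref{lem:4_0}: if $f$ corresponds to $(N_1,N_2,\alpha)$ then $f$ is a \fibr exactly when $N_2 = N$ and a \cofibr exactly when $N_1 = 0$ (contained in the proof of Lemma \ref{lem:pre_model} (4) and its dual), whereas one checks, using that $\bD$ is exact and interchanges injections with surjections, that $\bD(f)$ corresponds to the triple with first term $\bD(N/N_2)$, second term $\pi^{-1}(W)$, and third term the isomorphism induced by $\pi$, where $\pi\colon \bD(N) \twoheadrightarrow \bD(N_2)$ is dual to $N_2 \inj N$ and $W = \ker[\bD(N_2)\twoheadrightarrow \bD(N_1)]$; the equivalence $\bD(A)=0 \Leftrightarrow A = 0$ then gives the claim. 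In either approach the only real work is the bookkeeping identifying the output of the $\bD(f)$-construction with these explicit data, and all the module-theoretic input (exactness of $\bD$, $\bD\bD\cong\id$, interchange of injections and surjections) has already been recorded above, so I do not anticipate a genuine obstacle --- the one point to watch is keeping straight which leg of the representing diagram is being controlled.
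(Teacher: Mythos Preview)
Your proposal is correct and is precisely the unpacking of what the paper means by ``immediate from the definition of the functor $\bD$'': choosing a representative with the relevant leg an isomorphism and tracing the construction of $\bD(f)$, then using $\bD\bD\cong\id$ for the converse. The paper omits all of this detail, so your write-up is more explicit but follows the same route.
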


\begin{proof}
It is immediate from the definition of the functor $\bD$.
\end{proof}

\subsubsection{ } \label{sec:bD_0}
For a locally free $\cO_X$-module $L$ of finite rank,
we set $L^\vee = \cHom_{\cO_X}(L,\cO_X)$.
If $L = \cO_X^{\oplus d}$, then $L^\vee$ is a free
$\cO_X$-module of rank $d$.
Let us fix an isomorphism $\alpha \colon  
(\cO_X^{\oplus d})^\vee \xto{\cong} \cO_X^{\oplus d}$
of $\cO_X$-modules.

Let $L$ be an $\cO_X$-lattice of $V$.
We regard $L^\vee$ as an $\cO_X$-lattice of $V$ in a way
depending on $\alpha$ as follows.
We set 
$$
V^* = {j_X}_* \cHom_{\cO_{\eta_X}}(j_X^* V,\cO_{\eta_X}).
$$
For any $\cO_X$-lattice $L'$, the inclusion
$L' \inj V$ induces a morphism
$j_X^* L' \cong j_X^* V$. 
This gives an isomorphism
\begin{equation} \label{Kvee}
\cHom_{\cO_{\eta_X}}(j_X^* V,\cO_{\eta_X}) 
\cong j_X^* L'^\vee.
\end{equation}
The isomorphism \eqref{Kvee} applied to 
$L' = \cO_X^{\oplus d}$ implies that the
isomorphism $\alpha$ induces an isomorphism
$V^* \cong V$.
The inverse of the isomorphism \eqref{Kvee} 
for $L' = L$ induces, by adjunction,
a homomorphism $L'^\vee \to V^*$.
By composing this with the inverse of the isomorphism
$V^* \cong V$ above, we obtain a homomorphism
$\beta_L \colon  L^\vee \to V$.
It is easy to check that $\beta_L$ is injective and
its image is an $\cO_X$-lattice of $V$.
We regard $L^\vee$ as an $\cO_X$-lattice of $V$ by
identifying $L^\vee$ with its image under $\beta_L$.

By associating $(L_2^\vee,L_1^\vee)$ to each object $(L_1,L_2)$
of $\cC_0^d$, we obtain a contravariant functor $\bD_0$
from $\cC_0^d$ to itself. One can check easily that
$\bD_0 \circ \bD_0$ is equal to the identity functor.
It is clear that an object $X$ of $\cC_0^d$ is an object 
of $\cC_{0,m}^d$ (\resp of $\cC_{0,r}^d$) if and only if $\bD_0(X)$ 
is an object $\cC_{0,r}^d$ (\resp of $\cC_{0,m}^d$).

\subsubsection{ }

Let $(L_1,L_2)$ be an object of $\cC_0^d$.
We construct an isomorphism 
$c(L_1,L_2) \colon L_1^\vee/L_2^\vee \xto{\cong} \bD(L_2/L_1)$
as follows.
Let us consider the functor $\bD' = R \cHom_{\cO_X}^\bullet(-,\cO_X)$
from the category of $\cO_X$-modules to the derived category $\cD_X$ 
of $\cO_X$-modules. 
We regard the category of $\cO_X$-modules 
as a full subcategory of $\cD_X$.
For a locally free $\cO_X$-module $L$ of finite rank, 
we have a natural isomorphism $\bD'(L) \cong L^\vee$.
Observe that the complex $0 \to \cK_X \to D \to 0$ 
gives an injective resolution of the complex $\cO_X$. 
Hence for a coherent
$\cO_X$-module $N$ of finite length we have a canonical
isomorphism $\bD'(N) \cong \bD(N)[-1]$.

By applying the functor $\bD'$ to the short exact sequence
$0 \to L_1 \to L_2 \to L_2/L_1 \to 0$, we obtain
a distinguished triangle 
$\bD'(L_2) \to \bD'(L_1) \to \bD'(L_2/L_1)[1] \xto{+1}$
which can be regarded as a short exact sequence
$0 \to L_2^\vee \to L_1^\vee \to \bD(L_2/L_1) \to 0$.
The last exact sequence induces an isomorphism 
$L_1^\vee/L_2^\vee \xto{\cong} \bD(L_2/L_1)$
which we denote by $c(L_1,L_2)$.

\subsection{Proof of Theorem \ref{thm:section2} for $\cT_r^d$}

\begin{proof}[Proof of Theorem \ref{thm:section2} for $\cT_r^d$]
The isomorphism $c(L_1,L_2)$ for each object $(L_1,L_2)$
of $\cC_0^d$ induces an isomorphism
$\iota_0^d \circ \bD_0 \cong \bD \circ \iota_0^d$
of functors. In particular, we have a diagram
$$
\begin{CD}
\cC_{0,r}^d @>{\bD_0}>> \cC_{0,m}^d \\
@V{\iota_{0,r}^d}VV @VV{\iota_{0,m}^d}V \\
\cCo{d} @>{\bD}>> \cCo{d}
\end{CD}
$$
of categories and functors, which is commutative up to isomorphisms
of functors.
Hence the assertions of Theorem \ref{thm:section2} for $\cT_r^d$
follows from those for $\cT_m^d$
\end{proof}

This completes the proof of Theorem \ref{thm:section2}.

\section{Computation of the absolute Galois monoids}
\label{sec:4}
We introduced $Y$-sites with 
underlying category $\cC^d$ and grids
in the previous Section~\ref{sec:3}.
The goal of this section 
is to compute  
the associated Galois monoid, 
which is a topological monoid.
The answer (Theorem~\ref{cor:M_isom})
is, in the case of 
the $Y$-site with atomic topology,
that the Galois monoid is isomorphic to 
$\GL_d(\A_X)$.

The grid for $\cC^d$ is described 
in terms of $\cO_X$-lattices, but 
the adelic objects are closer to 
$\wh{\cO}_X$-lattices. Therefore,
we introduce the poset of 
$\wh{\cO}_X$-lattices
and the poset of pairs of such lattices.
The comparison of these two types of lattices,
hence of the grids and abolute Galois monoids,
is given in the earlier part of this section.

The steps in proving the theorem are as follows.
We introduced the category of 
 $\cO_X$-lattices in $\cK_X^d$
(see Section~\ref{sec:lattices})
and poset of pairs of lattices.
In the analogy with the classical Galois theory of a base field,
a lattice corresponds to a finite separable extension
of the base field contained in a fixed separable closure
of the base field and a pair of lattices corresponds 
to an inclusion of two finite separable extensions.
In order to compare with the finite adeles,
we introduce the category of $\wh{\cO}_X$-lattices
and the poset of pairs of lattices.
Using these, we form a $Y$-site and a grid,
and a morphism from the original category.
The absolute Galois monoid of this 
latter pair of a $Y$-site and a grid 
can be computed to be $\GL_d(\A_X)$
(or something similar depending on the
Grothendieck topology).

In Section~\ref{sec:adeles}, we recall the topological ring
structure of the ring of finite adeles.
There is nothing new, but we supplied details 
since it was not so easy to find them in the 
literature
and we use them for the proof of the main statements 
of this section.

Let $X$ be as in the previous section, i.e., $X$ is a 
regular noetherian scheme of pure Krull dimension one.
In this section we assume that the residue field at 
each closed point is finite.
We let $|X|$ denote the set of closed points of $X$.


\subsection{The ring $\A_X$ of finite adeles on $X$}
\label{sec:adeles}

\subsubsection{ }
For a closed point $x \in |X|$,
let $\wh{\cO}_{X,x}$ denote the completion 
of the local ring $\cO_{X,x}$ at $x$, and $\Frac\, \wh{\cO}_{X,x}$ 
denote the field of fractions.
We let $\A_X=\prod'_{x\in |X|} \Frac\, \wh{\cO}_{X,x}$ 
denote the ring of (finite) adeles on $X$,
and $\wh{\cO}_X=\prod_{x\in |X|} \wh{\cO}_{X,x} \subset \A_X$ 
its ring of integers.
Here $\prod'$ means the restricted direct product
with respect to the inclusions $\wh{\cO}_{X,x} \subset 
\Frac\, \wh{\cO}_{X,x}$.
For each $x \in |X|$, we regard the ring $\wh{\cO}_{X,x}$
as a topological ring with respect to the $\fram_{X,x}$-adic topology,
where $\fram_{X,x} \subset \cO_{X,x}$ denotes the maximal ideal.
We regard the ring $\wh{\cO}_X$ as a topological ring
with respect to the product topology.
We endow $\A_X$ with the finest topology such that
for any $a \in \A_X$, the map $\wh{\cO}_X \to \A_X$
which sends $b$ to $a+b$ is continuous.
In other words, we say that a subset 
$U \subset \A_X$ is open if the subset
$\{ b \in \wh{\cO}_X \ |\ a+ b \in U \}$
of $\wh{\cO}_X$ is open for any $a \in \A_X$.
%
We will prove in Corollary \ref{cor:top_ring} below 
that $\A_X$ is a topological ring with respect to
this topology.
When $X$ is the spectrum of the ring of integers
of a finite extension $F$ of $\Q$,
the topological ring $\A_X$ is the usual
(topological) ring of finite adeles of $F$.

\begin{lem}\label{lem:topology A_X}
\begin{enumerate}
\item $\wh{\cO}_X$ is an open subset of $\A_X$.
\item If $U \subset \A_X$ is an open subset,
then $a + U$ is an open subset of $\A_X$ for
any $a \in \A_X$.
\item Let $U \subset \wh{\cO}_X$ be a subset.
Then $U$ is an open subset of $\A_X$
if and only if $U$ is an open subset of $\wh{\cO}_X$.
\item Let $U \subset \A_X$ be a subset.
Then $U$ is an open subset of $\A_X$ if and only
if for any $a \in U$ there exists an open neighborhood
$U_a$ of $0$ in $\wh{\cO}_X$ 
satisfying $a + U_a \subset U$.
\end{enumerate}
\end{lem}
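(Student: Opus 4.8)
The plan is to unwind the definition of the topology on $\A_X$ directly. Recall that, for $a \in \A_X$ and a subset $U \subset \A_X$, writing $U_a := \{b \in \wh{\cO}_X : a + b \in U\}$, the set $U$ is declared open in $\A_X$ precisely when $U_a$ is open in $\wh{\cO}_X$ for every $a \in \A_X$. I will use freely that $\wh{\cO}_X$, being the product of the $\fram_{X,x}$-adically topologized rings $\wh{\cO}_{X,x}$, is a topological ring, so that translation by any fixed element of $\wh{\cO}_X$ is a self-homeomorphism of $\wh{\cO}_X$; I will also use the elementary identity $(a + U)_c = U_{c-a}$, valid for all $a, c \in \A_X$.

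For (1) I would take $U = \wh{\cO}_X$ and compute $U_a$ coordinatewise. If $a = (a_x)_x$ lies in $\wh{\cO}_X$, then $a + b \in \wh{\cO}_X$ for every $b \in \wh{\cO}_X$, so $U_a = \wh{\cO}_X$. Otherwise there is a closed point $x$ with $a_x \notin \wh{\cO}_{X,x}$; since $\wh{\cO}_{X,x}$ is an additive subgroup of $\Frac\,\wh{\cO}_{X,x}$, no $b_x \in \wh{\cO}_{X,x}$ can satisfy $a_x + b_x \in \wh{\cO}_{X,x}$ (else $a_x = (a_x + b_x) - b_x \in \wh{\cO}_{X,x}$), whence $U_a = \emptyset$. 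In either case $U_a$ is open in $\wh{\cO}_X$, so $\wh{\cO}_X$ is open. Statement (2) is then immediate: for $U$ open and any $a, c \in \A_X$ we have $(a + U)_c = U_{c - a}$, which is open in $\wh{\cO}_X$ by openness of $U$, so $a + U$ is open.

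For (3), the ``only if'' direction follows by taking $a = 0$ in the definition, since $U_0 = U \cap \wh{\cO}_X = U$ when $U \subset \wh{\cO}_X$. For the ``if'' direction, suppose $U$ is open in $\wh{\cO}_X$ and let $a \in \A_X$; since $a + b \in U \subset \wh{\cO}_X$ forces $a + b \in \wh{\cO}_X$, the computation in (1) shows $U_a = \emptyset$ unless $a \in \wh{\cO}_X$, in which case $U_a = (-a) + U$ is a translate of the open set $U$ inside the topological group $\wh{\cO}_X$, hence open. Finally, for (4) the forward implication is trivial: if $U$ is open in $\A_X$ and $a \in U$, then $U_a$ itself is an open neighbourhood of $0$ in $\wh{\cO}_X$ with $a + U_a \subset U$. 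For the converse I would show that each $U_c$ ($c \in \A_X$) is a neighbourhood of each of its points: if $b_0 \in U_c$, put $a := c + b_0 \in U$; the hypothesis furnishes an open neighbourhood $U_a$ of $0$ in $\wh{\cO}_X$ with $a + U_a \subset U$, and then $b_0 + U_a \subset U_c$. Since translation makes $b_0 + U_a$ an open neighbourhood of $b_0$, this proves $U_c$ open, so $U$ is open in $\A_X$.

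None of the steps presents a genuine obstacle; the only point requiring a moment's care is the coordinatewise argument in (1) that $\wh{\cO}_X$ meets the translate $(-a) + \wh{\cO}_X$ only when $a \in \wh{\cO}_X$. This is precisely what makes the chosen topology interact correctly with the restricted-product structure of $\A_X$, and it is reused in the proof of (3); everything else is formal manipulation with translations in the topological group $\wh{\cO}_X$.
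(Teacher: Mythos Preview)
Your proof is correct and follows essentially the same approach as the paper: both compute the sets $\{b \in \wh{\cO}_X : a+b \in U\}$ directly from the definition, exploit that translation in the topological ring $\wh{\cO}_X$ is a homeomorphism, and observe the dichotomy (full set versus empty) for $U=\wh{\cO}_X$. The only cosmetic difference is in the converse of (4): the paper invokes (2) and (3) to write $U=\bigcup_{a\in U}(a+U_a)$ as a union of $\A_X$-open sets, whereas you verify the defining condition directly by showing each $U_c$ is a neighbourhood of each of its points; both arguments are equally short.
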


\begin{proof}
The claim (1) follows from the definition of
the topology of $\A_X$, since the subset
$\{ b \in \wh{\cO}_X\ |\ a+b \in \wh{\cO}_X \}$
of $\wh{\cO}_X$ is equal to $\wh{\cO}_X$
if $a \in \wh{\cO}_X$, and is empty otherwise.

The claim (2) is obvious from the definition
of the topology of $\A_X$.

The ``only if" part of the claim (3) is
obvious since the inclusion map
$\wh{\cO}_X \inj \A_X$ is continuous.
We prove the ``if" part.
Suppose that $U \subset \wh{\cO}_X$ is an
open set of $\wh{\cO}_X$.
Since $\wh{\cO}_X$ is a topological ring,
$a + U$ is an open set of $\wh{\cO}_X$ for any
$a \in \wh{\cO}_X$.
The claim follows from the definition of
the topology of $\A_X$, since the subset
$\{ b \in \wh{\cO}_X\ |\ a+b \in U \}$
of $\wh{\cO}_X$ is equal to $(-a) + U$
if $a \in \wh{\cO}_X$, and is empty otherwise.
This proves the claim (3).

The ``if" part of the claim (4) follows from
the claim (3). We prove the ``only if" part.
Let $U \subset \A_X$ be an open subset and
let $a \in U$. Then $U_a = \{ b \in \wh{\cO}_X\ |\ 
a+b \in U \}$ is an open subset of $\wh{\cO}_X$
containing $0$.
Since $a + U_a \subset U$, the claim follows.
\end{proof}

\begin{cor}\label{cor:nbd A_X}
The set of open ideals of $\wh{\cO}_X$
forms a fundamental system of neighborhoods of
$0$ both in $\wh{\cO}_X$ and in $\A_X$.
\end{cor}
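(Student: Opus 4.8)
\textbf{Proof plan for Corollary~\ref{cor:nbd A_X}.}

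The plan is to deduce the statement from Lemma~\ref{lem:topology A_X} together with the fact, standard for a product of local rings, that the open ideals of $\wh{\cO}_X$ form a fundamental system of neighborhoods of $0$ in $\wh{\cO}_X$ itself. First I would recall why the latter holds: the topology on $\wh{\cO}_X$ is the product topology, so a basic open neighborhood of $0$ has the form $\prod_{x \notin S} \wh{\cO}_{X,x} \times \prod_{x \in S} \fram_{X,x}^{n_x}$ for a finite set $S \subset |X|$ and integers $n_x \ge 1$; this set is an ideal of $\wh{\cO}_X$ (being a product of ideals $I_x \subset \wh{\cO}_{X,x}$ with $I_x = \wh{\cO}_{X,x}$ for all but finitely many $x$), and it is open. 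Conversely any open ideal contains such a basic open neighborhood. Hence the open ideals of $\wh{\cO}_X$ cofinally refine any system of neighborhoods of $0$ in $\wh{\cO}_X$, which is exactly the assertion ``in $\wh{\cO}_X$''.

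For the assertion ``in $\A_X$'', I would argue as follows. Each open ideal $I$ of $\wh{\cO}_X$ is, by Lemma~\ref{lem:topology A_X}(3), also an open subset of $\A_X$; since $0 \in I$, it is an open neighborhood of $0$ in $\A_X$. Conversely, let $U \subset \A_X$ be any open neighborhood of $0$. By Lemma~\ref{lem:topology A_X}(4) applied at the point $a = 0$, there is an open neighborhood $U_0$ of $0$ in $\wh{\cO}_X$ with $U_0 = 0 + U_0 \subset U$. By the first paragraph, $U_0$ contains an open ideal $I$ of $\wh{\cO}_X$, and then $I \subset U_0 \subset U$. This shows the open ideals form a fundamental system of neighborhoods of $0$ in $\A_X$ as well, completing the proof.

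I do not expect any serious obstacle here; the only point requiring a little care is the verification that a basic product neighborhood $\prod_{x \notin S}\wh{\cO}_{X,x} \times \prod_{x \in S}\fram_{X,x}^{n_x}$ is genuinely an ideal of $\wh{\cO}_X$ (closure under addition is clear componentwise, and closure under multiplication by arbitrary elements of $\wh{\cO}_X$ follows since each $\fram_{X,x}^{n_x}$ is an ideal of $\wh{\cO}_{X,x}$), together with the bookkeeping that Lemma~\ref{lem:topology A_X} lets one pass freely between the subspace topology on $\wh{\cO}_X \subset \A_X$ and the intrinsic topology on $\wh{\cO}_X$. Both are routine, so the corollary is essentially a formal consequence of the preceding lemma.
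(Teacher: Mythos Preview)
Your proposal is correct and follows essentially the same approach as the paper: first show the open ideals form a fundamental system of neighborhoods of $0$ in $\wh{\cO}_X$ using the product topology and the local description at each $x$, then invoke Lemma~\ref{lem:topology A_X} to transfer this to $\A_X$. You are slightly more explicit than the paper about which parts of Lemma~\ref{lem:topology A_X} are being used (namely (3) and (4)), but the argument is the same.
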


\begin{proof}
Let $U$ be an open set of $\wh{\cO}_X$
containing $0$.
Since the open ideals of $\wh{\cO}_{X,x}$
forms a fundamental system of neighborhoods
of $0$ in $\wh{\cO}_{X,x}$, there exist
a finite subset $S \subset |X|$ and
an open ideal $I_s \subset \wh{\cO}_{X,s}$
for each $s \in S$ such that $U$ contains
the open ideal $\prod_{x \not\in S} \wh{\cO}_{X,x}
\times \prod_{s \in S} I_s$.
This proves that the open ideals of
$\wh{\cO}_X$ form a fundamental system of
neighborhoods of $0$ in $\wh{\cO}_X$.
It follows from Lemma \ref{lem:topology A_X}
that they also form a fundamental system of
neighborhoods of $0$ in $\A_X$.
\end{proof}

\begin{lem}\label{lem:topology A_X_2}
For any $a \in \A_X$ the map
$\A_X \to \A_X$ which sends $b$ to $ab$
is continuous.
\end{lem}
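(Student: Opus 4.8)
The statement to prove is Lemma \ref{lem:topology A_X_2}: for any $a \in \A_X$, the map $\mu_a : \A_X \to \A_X$ sending $b \mapsto ab$ is continuous. The plan is to reduce the problem, via Lemma \ref{lem:topology A_X} (4), to a statement about neighborhoods of $0$ in $\wh{\cO}_X$, and then to exploit the product structure of $\wh{\cO}_X$ together with the continuity of multiplication on each local factor $\Frac\,\wh{\cO}_{X,x}$.

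First I would fix $a = (a_x)_{x \in |X|} \in \A_X$ and an arbitrary $b \in \A_X$, and show that $\mu_a$ is continuous at $b$. By Lemma \ref{lem:topology A_X} (4), a subset of $\A_X$ is open if and only if around each of its points it contains a translate $a' + U_{a'}$ of an open neighborhood $U_{a'}$ of $0$ in $\wh{\cO}_X$; hence it suffices to show that for every open ideal $I \subset \wh{\cO}_X$ (these form a fundamental system of neighborhoods of $0$ by Corollary \ref{cor:nbd A_X}) there is an open ideal $I' \subset \wh{\cO}_X$ with $a(b + I') \subset ab + I$, i.e. $aI' \subset I$. Thus the lemma reduces to the following purely ``local at $0$'' claim: for each $a \in \A_X$ and each open ideal $I \subset \wh{\cO}_X$ there exists an open ideal $I' \subset \wh{\cO}_X$ with $aI' \subset I$.

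To prove that claim I would use that $a \in \A_X$ means $a_x \in \wh{\cO}_{X,x}$ for all but finitely many $x$; let $S \subset |X|$ be the finite set where $a_x \notin \wh{\cO}_{X,x}$. Shrinking $I$, I may assume $I = \prod_{x \notin T} \wh{\cO}_{X,x} \times \prod_{x \in T} I_x$ for a finite set $T \subset |X|$ and open ideals $I_x \subset \wh{\cO}_{X,x}$. For each $x \in |X|$ I need an open ideal $I'_x \subset \wh{\cO}_{X,x}$ with $a_x I'_x \subset I_x$ (where $I_x = \wh{\cO}_{X,x}$ for $x \notin T$), and with $I'_x = \wh{\cO}_{X,x}$ for all but finitely many $x$. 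For $x \notin S \cup T$, take $I'_x = \wh{\cO}_{X,x}$: then $a_x I'_x = a_x \wh{\cO}_{X,x} \subset \wh{\cO}_{X,x} = I_x$. For the finitely many $x \in S \cup T$, I use that $\Frac\,\wh{\cO}_{X,x}$ is a topological field with the $\fram_{X,x}$-adic (valuation) topology on $\wh{\cO}_{X,x}$, so multiplication by $a_x$ is continuous there; concretely, writing $v$ for the valuation, if $a_x \neq 0$ one may take $I'_x = \fram_{X,x}^{N}$ for $N$ large enough that $v(a_x) + N \ge $ the level defining $I_x$, and if $a_x = 0$ any $I'_x$ works. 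Then $I' = \prod_x I'_x$ is an open ideal of $\wh{\cO}_X$ (since $I'_x = \wh{\cO}_{X,x}$ off a finite set) with $aI' \subset I$, as required. Assembling these, $\mu_a^{-1}(ab + I) \supset b + I'$, so by Lemma \ref{lem:topology A_X} (4) the preimage under $\mu_a$ of any open set is open, proving continuity.

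The only mildly delicate point is the bookkeeping that keeps $I'$ an \emph{open ideal of} $\wh{\cO}_X$ — that is, that only finitely many components are shrunk — which is exactly why one first reduces $I$ to a ``cylindrical'' open ideal supported on a finite set $T$ and then handles the finite set $S \cup T$ by the local valuation estimate while leaving every other component equal to $\wh{\cO}_{X,x}$. Everything else is routine: the translation-invariance of the topology (Lemma \ref{lem:topology A_X} (2)), the criterion (Lemma \ref{lem:topology A_X} (4)), and the fundamental system of neighborhoods (Corollary \ref{cor:nbd A_X}) do all the structural work, and the local continuity of multiplication on a complete discrete valuation ring is standard.
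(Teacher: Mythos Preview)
Your proof is correct and follows essentially the same approach as the paper: both reduce to finding, for each open ideal $I \subset \wh{\cO}_X$, an open ideal $I'$ with $aI' \subset I$, and both construct $I'$ componentwise by handling the finitely many bad places (where $a_x \notin \wh{\cO}_{X,x}$ or where $I$ is genuinely shrunk) via the local valuation and leaving all other components equal to $\wh{\cO}_{X,x}$. Your write-up is slightly more explicit than the paper's in spelling out why continuity at $0$ suffices (via additivity of $\mu_a$), but the argument is the same.
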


\begin{proof}
Let $a = (a_x)_{x \in |X|} \in \A_X$ and
let $\I \subset \wh{\cO}_X$ be an open ideal.
By Corollary \ref{cor:nbd A_X}, it suffices to
prove that the set $\{b \in \A_X\ |\ ab \in \I\}$
contains an open ideal of $\wh{\cO}_X$.
There exists a finite subset $S_1 \subset |X|$
such that $a_x \in \wh{\cO}_{X,x}$ for $x \not\in S_1$.
There exist a finite subset $S \subset |X|$ containing
$S_1$ and an open ideal $I_s \subset \wh{\cO}_{X,s}$
for each $s \in S$ such that $\I$ contains
the open ideal $\prod_{x \not\in S} \wh{\cO}_{X,x}
\times \prod_{s \in S} I_s$.
Let $s \in S$.
It is easy to see that
there exists an open ideal $J_s \subset \wh{\cO}_{X,s}$
satisfying $a_s J_s \subset I_s$.
We set $\bJ = \prod_{x \not\in S} \wh{\cO}_{X,x}
\times \prod_{s \in S} J_s$.
Then $\bJ$ is an open ideal of $\wh{\cO}_X$
which satisfies $a \bJ \subset \I$.
This proves the claim.
\end{proof}

\begin{cor}\label{cor:top_ring}
The addition $\A_X \times \A_X \to \A_X$
and the multiplication $\A_X \times \A_X \to \A_X$
are continuous.
\end{cor}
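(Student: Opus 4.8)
\textbf{Proof plan for Corollary \ref{cor:top_ring}.}

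The plan is to deduce the continuity of addition and multiplication from the lemmas already established, using the characterization of open sets in $\A_X$ given in Lemma \ref{lem:topology A_X} (4) together with the fact (Corollary \ref{cor:nbd A_X}) that open ideals of $\wh{\cO}_X$ form a fundamental system of neighborhoods of $0$ in $\A_X$. For addition, given $a,b\in\A_X$ and an open ideal $\I\subset\wh{\cO}_X$, one checks that the shifted open set $(a+b)+\I$ has preimage under the addition map containing $(a+\I)\times(b+\I)$: indeed if $a'\in a+\I$ and $b'\in b+\I$ then $a'+b'\in (a+b)+\I$ since $\I$ is an additive subgroup. By Lemma \ref{lem:topology A_X} (2), $a+\I$ and $b+\I$ are open in $\A_X$, so this exhibits an open neighborhood of $(a,b)$ mapped into the prescribed basic open neighborhood of $a+b$; since the sets $(a+b)+\I$ form a neighborhood basis at $a+b$, addition is continuous.

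For multiplication, fix $a,b\in\A_X$ and an open ideal $\I\subset\wh{\cO}_X$; we seek open ideals $\I_1,\I_2$ such that $(a+\I_1)(b+\I_2)\subset ab+\I$. Expanding, $(a+u)(b+v)=ab+av+ub+uv$ for $u\in\I_1$, $v\in\I_2$. It suffices to arrange $av\in\I$, $ub\in\I$, and $uv\in\I$ for all such $u,v$. By Lemma \ref{lem:topology A_X_2}, the map $x\mapsto ax$ is continuous, so there is an open ideal $\bJ_1$ with $a\bJ_1\subset\I$; similarly there is an open ideal $\bJ_2$ with $b\bJ_2\subset\I$ (using continuity of $x \mapsto bx$). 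Taking $\I_1$ contained in both $\bJ_2$ and some fixed open ideal, and $\I_2$ contained in both $\bJ_1$ and $\wh{\cO}_X$, and further shrinking so that $\I_1\I_2\subset\I$ (which holds as soon as $\I_1\subset\I$ or $\I_2\subset\I$, since $\I$ is an ideal and $\I_1,\I_2\subset\wh{\cO}_X$), one gets $av\in b\bJ_1\cdot$... more carefully: choose $\I_2\subset\bJ_1$ so $a\I_2\subset a\bJ_1\subset\I$ — wait, that needs $a\bJ_1$; rather choose $\I_2\subset\bJ_1'$ where $a\bJ_1'\subset\I$. Let me restate cleanly: pick an open ideal $K_a$ with $aK_a\subset\I$ and an open ideal $K_b$ with $bK_b\subset\I$; set $\I_1=K_b\cap\I$ and $\I_2=K_a\cap\I$. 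Then $av\in aK_a\subset\I$, $ub\in bK_b\subset\I$, and $uv\in\I_1\cdot\wh{\cO}_X\subset\I$ since $\I_1\subset\I$ and $\I$ is an ideal. Hence $(a+\I_1)(b+\I_2)\subset ab+\I$, and again by Lemma \ref{lem:topology A_X} (2) the sets $a+\I_1$, $b+\I_2$ are open, so multiplication is continuous.

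The only mild subtlety — and the step I would be most careful about — is the cross term $uv$ and making sure all the shrinking is done consistently so that the chosen $\I_1,\I_2$ are genuinely open ideals of $\wh{\cO}_X$; this is where one uses that a finite intersection of open ideals is again an open ideal, which follows from Corollary \ref{cor:nbd A_X}. Everything else is a routine verification against the neighborhood basis, and no point-set pathology of the (non-first-countable, non-locally-compact in the non-separated case) topology on $\A_X$ actually intervenes because we only ever work with the translates of open ideals.
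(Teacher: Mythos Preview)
Your proposal is correct and follows essentially the same approach as the paper: for addition you use that translates of open ideals form a neighborhood basis, and for multiplication you expand $(a+u)(b+v)$, invoke Lemma~\ref{lem:topology A_X_2} to find open ideals $K_a,K_b$ with $aK_a\subset\I$ and $bK_b\subset\I$, and intersect with $\I$ to control the $uv$ term. The paper's argument is identical up to notation (it writes $\bJ_i=\bJ'_i\cap\I$ where you write $\I_1=K_b\cap\I$, $\I_2=K_a\cap\I$), though its treatment of addition is more terse, simply citing Lemma~\ref{lem:topology A_X}~(2).
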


\begin{proof}
The claim for the addition follows immediately
from Lemma \ref{lem:topology A_X} (2).
We prove the claim for the multiplication.
Let $U\subset \A_X$ be an open set and
let $(a,b) \in \A_X \times \A_X$ be a pair
satisfying $ab \in U$.
By Lemma \ref{lem:topology A_X} (2) and
Corollary \ref{cor:nbd A_X},
it suffices to prove that there exist
open ideals $\bJ_1, \bJ_2 \subset \wh{\cO}_X$
satisfying $(a+\bJ_1)(b+\bJ_2) \subset U$.
It follows from Lemma \ref{lem:topology A_X} (2) and
Corollary \ref{cor:nbd A_X}
that there exists an open ideal
$\I \subset \A_X$ satisfying $ab + \I \subset U$.
It follows from Lemma \ref{lem:topology A_X_2}
that there exist open ideals $\bJ'_1, \bJ'_2 \subset
\wh{\cO}_X$ satisfying $a \bJ'_2 \subset \I$ and
$b \bJ'_1 \subset \I$.
We set $\bJ_i =\bJ'_i \cap \I$ for $i=1,2$.
Then $\bJ_1$ and $\bJ_2$ are open ideals of
$\wh{\cO}_X$ and we have
$(a+\bJ_1)(b+\bJ_2) \subset
ab + a \bJ_2 + b \bJ_1 + \bJ_1 \bJ_2
\subset ab + \I + \I + \I^2 \subset \I$.
This proves the claim.
\end{proof}

\begin{lem}\label{lem:topology whO_X}
\begin{enumerate}
\item An element $a=(a_x)_{x \in |X|} 
\in \wh{\cO}_X$ is invertible in $\A_X$
if and only if $a_x \neq 0$ for all $x \in |X|$
and $a_x \in \wh{\cO}_{X,x}^\times$ for
all but finitely many $x \in |X|$.
\item If $a \in \wh{\cO}_X$ is invertible in
$\A_X$, then the ideal $a \wh{\cO}_X$ generated
by $a$ is an open ideal of $\wh{\cO}_X$.
\item Any open ideal $\I \subset \wh{\cO}_X$ is
generated by an element $a \in \wh{\cO}_X$
which is invertible in $\A_X$.
\item The open ideals of $\wh{\cO}_X$ form a
fundamental system of neighborhoods of 
$0 \in \wh{\cO}_X$.
\end{enumerate}
\end{lem}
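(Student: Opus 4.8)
The plan is to prove Lemma~\ref{lem:topology whO_X} by establishing the four claims essentially in the stated order, treating claim (4) as an immediate consequence of what was already shown in Corollary~\ref{cor:nbd A_X}. For claim (1), I would argue componentwise: an element $a=(a_x)_{x\in|X|}\in\wh{\cO}_X$ has an inverse $b=(b_x)_{x\in|X|}$ in $\A_X$ if and only if each $a_x$ is invertible in $\Frac\,\wh{\cO}_{X,x}$ (forcing $a_x\neq 0$) and the family $(b_x)$ lies in the restricted product, i.e.\ $b_x=a_x^{-1}\in\wh{\cO}_{X,x}$ for all but finitely many $x$. Since $\wh{\cO}_{X,x}$ is a discrete valuation ring (as $X$ is regular of dimension one), $a_x^{-1}\in\wh{\cO}_{X,x}$ is equivalent to $a_x\in\wh{\cO}_{X,x}^\times$, giving the claimed characterization. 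The only mild subtlety is checking that such a $b$ is genuinely an element of $\A_X$, which is exactly the restricted-product condition just verified.

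For claim (2), given $a\in\wh{\cO}_X$ invertible in $\A_X$, I would use claim (1) to write the finite exceptional set $S=\{x : a_x\notin\wh{\cO}_{X,x}^\times\}$; for $x\notin S$ the local ideal $a_x\wh{\cO}_{X,x}=\wh{\cO}_{X,x}$, and for $x\in S$ the ideal $a_x\wh{\cO}_{X,x}$ is a nonzero ideal of the DVR $\wh{\cO}_{X,x}$, hence of the form $\fram_{X,x}^{n_x}$ for some $n_x\geq 0$, which is open. Therefore $a\wh{\cO}_X=\prod_{x\notin S}\wh{\cO}_{X,x}\times\prod_{x\in S}a_x\wh{\cO}_{X,x}$ is a product of open ideals with all but finitely many factors equal to the whole ring, so it is open in the product topology on $\wh{\cO}_X$. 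For claim (3), conversely, given an open ideal $\I\subset\wh{\cO}_X$, I would invoke the description from the proof of Corollary~\ref{cor:nbd A_X}: $\I$ contains, hence (being an ideal) equals on almost all components, a product $\prod_{x\notin S}\wh{\cO}_{X,x}\times\prod_{s\in S}I_s$ with each $I_s\subset\wh{\cO}_{X,s}$ open; but actually I should be careful that $\I$ need not be a product of its local components, so the cleaner route is: for each $x$, let $\I_x$ be the image (equivalently, the $x$-component) of $\I$, a nonzero open ideal of the DVR $\wh{\cO}_{X,x}$, equal to $\wh{\cO}_{X,x}$ for $x\notin S$; pick a generator $a_x$ of $\I_x$ (a uniformizer power, or $1$ outside $S$); then $a=(a_x)$ is invertible in $\A_X$ by claim (1), and one checks $a\wh{\cO}_X=\prod_x\I_x$. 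Finally I would need to confirm $\I=\prod_x\I_x$; this holds because $\I$, being open, contains $\prod_{x\notin S}\wh{\cO}_{X,x}\times\prod_{s\in S}\fram_{X,s}^{N}$ for suitable $N$ and finite $S$, which already forces $\I$ to be a product of ideals in the remaining finitely many DVR factors.

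The main obstacle I anticipate is precisely this last point in claim (3): an open ideal of an infinite product of rings need not be the full product of its projections in general, so I must use the specific structure — that $\wh{\cO}_X$ carries the product topology and its open ideals contain basic open ideals of the form ``everything outside a finite set, times open ideals at finitely many places'' — to reduce to the finitely many nontrivial local factors, where each factor is a DVR and hence every nonzero ideal is principal and open. Once that reduction is in hand, everything is a routine componentwise computation. Claim (4) then requires no new work: it is literally the statement of Corollary~\ref{cor:nbd A_X} restricted to $\wh{\cO}_X$, so I would simply cite that corollary.
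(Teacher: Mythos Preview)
Your proposal is correct and follows essentially the same approach as the paper: componentwise analysis for (1), product-of-local-ideals for (2), and for (3) the same reduction you describe---use that an open ideal contains a basic product $\prod_{x\notin S}\wh{\cO}_{X,x}\times\prod_{s\in S}I_s$ to force $\I=\prod_{x\notin S}\wh{\cO}_{X,x}\times I_S$, then decompose the finite product $I_S$ into DVR factors and pick local generators. Your citation of Corollary~\ref{cor:nbd A_X} for (4) is perfectly valid (the paper instead re-derives it directly from the product topology, which is mildly redundant).
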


\begin{proof}
The claim (1) is obvious from the definition of
$\A_X$.

Let $a=(a_x)_{x \in |X|} \in \wh{\cO}_X$
be an element which is invertible in $\A_X$. The ideal $a \wh{\cO}_X$
generated by $a$ is equal, as a subset
of $\A_X$, to the direct product of the
ideals $a_x \wh{\cO}_{X,x}$ of $\wh{\cO}_{X,x}$.
It follows from the claim (1) that
$a_x \wh{\cO}_{X,x}$ is a non-zero ideal of
$\wh{\cO}_{X,x}$ for all $x \in |X|$ and
$a_x \wh{\cO}_{X,x} = \wh{\cO}_{X,x}$ for
all but finitely many $x \in |X|$.
Since any non-zero ideal of $\wh{\cO}_{X,x}$ is open,
the claim (2) follows from the definition of
the topology on $\wh{\cO}_X$.

Let $\I \subset \wh{\cO}_X$ be an open ideal.
Since the open ideals of $\wh{\cO}_{X,x}$ 
form a fundamental system of neighborhoods
of $0 \in \wh{\cO}_{X,x}$ for each $x \in |X|$,
there exist a finite subset $S \subset |X|$
and an open ideal $J_s \subset \wh{\cO}_{X,s}$
for each $s \in S$ such that $\I$ contains
the direct product $\prod_{x \not\in S}
\wh{\cO}_{X,x} \times \prod_{s \in S} J_s$.
This in particular implies that 
$\I$ is of the form $\I = \prod_{x \not\in S}
\wh{\cO}_{X,x} \times I_S$
for some open ideal 
$I_S \subset \prod_{s \in S} \wh{\cO}_{X,s}$.
Since $\prod_{s \in S} \wh{\cO}_{X,s}$
is a direct product of finitely many rings,
there exists an ideal $I_s$ for each
$s \in S$ such that $I_S = \prod_{s \in S} I_s$
as a subset of $\prod_{s \in S} \wh{\cO}_{X,s}$.
Since $I_s \supset J_s$, the ideal $I_s$ is
open in $\wh{\cO}_{X,s}$ and hence is generated
by a non-zero element $a_s \in \wh{\cO}_{X,s}$.
We set $a_x = 1$ for $x \in |X| \setminus S$.
Then the element $a =(a_x)_{x \in |X|}$ is
invertible in $\A_X$ and the ideal $\I$ is generated
by the element $a$. This proves the claim (3).

The claim (4) follows from the definition of
the topology of $\wh{\cO}_X$ and the well-known
fact that for each $x \in |X|$, the open ideals
of $\wh{\cO}_{X,x}$ form a fundamental system
of neighborhood of $0 \in \wh{\cO}_{X,x}$.
\end{proof}

\begin{cor}\label{cor:compact}
\begin{enumerate}
\item Any open ideal of $\wh{\cO}_X$ is compact.
\item For any open ideals $\I_1,\I_2 \subset \wh{\cO}_X$,
with $\I_1 \subset \I_2$, the quotient $\I_2/\I_1$
is finite as an abelian group.
\item The natural map $\wh{\cO}_X
\to \varprojlim_{\I} \wh{\cO}_X/\I$ is bijective,
where $\I$ runs over the open ideals of $\wh{\cO}_X$.
\end{enumerate}
\end{cor}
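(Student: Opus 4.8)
\textbf{Proof plan for Corollary \ref{cor:compact}.}
The three claims all reduce to local statements at the closed points together with the explicit product description of $\wh{\cO}_X$, so the plan is to handle each $x \in |X|$ first and then assemble. First I would recall that $\wh{\cO}_{X,x}$ is a complete noetherian local ring with finite residue field $k(x)$; by the standard structure theory of complete local rings, $\wh{\cO}_{X,x}$ is compact in its $\fram_{X,x}$-adic topology, being the inverse limit $\varprojlim_n \wh{\cO}_{X,x}/\fram_{X,x}^n$ of finite rings (each quotient is a finite-length module over the artinian ring $\wh{\cO}_{X,x}/\fram_{X,x}$, hence finite since $k(x)$ is finite), and an inverse limit of finite discrete spaces is compact (Tychonoff, or the profinite set argument).

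For claim (1): by Lemma \ref{lem:topology whO_X} (3), an open ideal $\I \subset \wh{\cO}_X$ has the form $\I = \prod_{x \notin S} \wh{\cO}_{X,x} \times \prod_{s \in S} I_s$ for a finite $S \subset |X|$ and open (hence nonzero, hence finite-colength) ideals $I_s \subset \wh{\cO}_{X,s}$. Each factor $\wh{\cO}_{X,x}$ ($x \notin S$) is compact by the previous paragraph, and each $I_s$ is a closed subgroup of the compact group $\wh{\cO}_{X,s}$, hence compact. Therefore $\I$ is a product of compact spaces, hence compact by Tychonoff; since $\I$ is open in $\wh{\cO}_X$ and $\wh{\cO}_X$ carries the product topology, the subspace topology on $\I$ agrees with the product topology used here, so this is legitimate. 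For claim (2): given open ideals $\I_1 \subset \I_2$, write $\I_j = \prod_{x \notin S} \wh{\cO}_{X,x} \times \prod_{s \in S} I_{j,s}$ over a common finite $S$ (enlarging $S$ if necessary); then $\I_2/\I_1 \cong \bigoplus_{s \in S} I_{2,s}/I_{1,s}$, and each $I_{2,s}/I_{1,s}$ is a finite-length module over $\wh{\cO}_{X,s}$ with finite residue field, hence a finite set; a finite direct sum of finite sets is finite. For claim (3): the open ideals form a fundamental system of neighborhoods of $0$ by Corollary \ref{cor:nbd A_X} (or Lemma \ref{lem:topology whO_X} (4)), and $\bigcap_{\I} \I = 0$ because the intersection already vanishes in each local factor ($\bigcap_n \fram_{X,x}^n = 0$ by Krull's intersection theorem), so the canonical map $\wh{\cO}_X \to \varprojlim_{\I} \wh{\cO}_X/\I$ is injective; surjectivity follows because any coherent system $(\bar a_{\I})$ can be realized componentwise at each $x$ (the open ideals of $\wh{\cO}_{X,x}$ are cofinal among the $\I$ restricted to the $x$-component, and $\wh{\cO}_{X,x} = \varprojlim_n \wh{\cO}_{X,x}/\fram_{X,x}^n$), and these local elements assemble into an element of the restricted product $\wh{\cO}_X$ since away from a finite set the component is forced to lie in $\wh{\cO}_{X,x}$ itself.

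I do not expect a serious obstacle here; the only mild care needed is bookkeeping with the finite exceptional set $S$ and checking that the product topology on $\wh{\cO}_X$ and the subspace topologies behave compatibly, which is exactly what Lemma \ref{lem:topology A_X} and Corollary \ref{cor:nbd A_X} have set up. If anything, the ``hard'' part is purely expository: making sure that the passage from the product description of open ideals to the compactness/finiteness conclusions is stated cleanly rather than the mathematics itself.
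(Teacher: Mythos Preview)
Your proposal is correct and follows essentially the same strategy as the paper: reduce to the local factors $\wh{\cO}_{X,x}$ via the product description of open ideals from Lemma \ref{lem:topology whO_X}. The one minor difference worth noting is in claim (2): where you compute $\I_2/\I_1$ explicitly as a finite direct sum of finite-length modules, the paper instead observes that $\I_2/\I_1$ with the quotient topology is simultaneously discrete (since $\I_1$ is open) and compact (by claim (1)), hence finite---a slightly slicker topological argument that avoids the explicit decomposition.
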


\begin{proof}
Let $\I \subset \wh{\cO}_X$ be an open ideal.
It follows from Lemma \ref{lem:topology whO_X}
that $\I$ is equal to the direct product of
open ideals of $\wh{\cO}_{X,x}$ where
$x$ runs over the closed points of $X$.
Since any open ideal of $\wh{\cO}_{X,x}$ is
compact, the claim (1) follows.

Let the notation be as in the claim (2).
Then it follows from the claim (1) that
$\I_2/\I_1$ with the quotient topology is discrete
and compact. Hence $\I_2/\I_1$ is a finite set.
This proves the claim (2).

It follows from Lemma \ref{lem:topology whO_X} (3) 
that any open ideals of $I \subset \wh{\cO}_X$ 
is of the form $\I = \prod_{x \in |X|} I_x$,
where $I_x \subset \wh{\cO}_{X,x}$ is an open ideal
for each $x \in |X|$ satisfying $I_x = \wh{\cO}_{X,x}$
for all but finitely many $x \in |X|$.
Hence the limit
$\varprojlim_{\I} \wh{\cO}_X/\I$ is equal to
the direct product
$\prod_{x \in |X|} \varprojlim_{I_x}
\wh{\cO}_{X,x}/I_x$, where for each $x \in |X|$,
$I_x$ runs over the open ideals of $\wh{\cO}_{X,x}$.
Since $\wh{\cO}_{X,x}$ is the completion
of $\cO_{X,x}$, we have
$\wh{\cO}_{X,x} \cong \varprojlim_{I_x}
\wh{\cO}_{X,x}/I_x$.
This proves the claim (3).
\end{proof}

Let $\I \subset \wh{\cO}_X$ be an open ideal.
It then follows from Lemma \ref{lem:topology whO_X} (3)
that $\I = a \wh{\cO}_X$ for some element
$a \in \wh{\cO}_X$ which is invertible in $\A_X$.
We let $\I^{-1}$ denote the $\wh{\cO}_X$-submodule
of $\A_X$ generated by $a^{-1}$. It is clear
that $\I^{-1}$ does not depend on the choice of $a$.

For an $\wh{\cO}_X$-submodule $\bL$ of an
$\A_X$-module $W$, we let $\I^{-1} \bL$ denote
the $\wh{\cO}_X$-submodule $a^{-1} \bL$ of $W$.

\subsection{The posets $\Lat^d$ and $\Lat^d_\A$}
\label{sec:poset Lat}
Let us fix an integer $d \ge 1$.
We use the notation in the previous section.
Let $\Lat^d$ denote the set of $\cO_X$-lattices
in $V = \cK_X^{\oplus d}$.

The aim of this paragraph is to introduce
the notion of an $\wh{\cO}_X$-lattice in $\A_X^{\oplus d}$
and relate $\wh{\cO}_X$-lattices in $\A_X^{\oplus d}$
with $\cO_X$-lattices in $V$.
\subsubsection{ }
Let $\A_X^{\oplus d}$ be the free $\A_X$-module of
rank $d$ over $\A_X$, equipped
with the product topology.
An $\wh{\cO}_X$-lattice of $\A_X^{\oplus d}$
is an $\wh{\cO}_X$-submodule of $\A_X^{\oplus d}$
which is compact and open in $\A_X^{\oplus d}$.
Let $\Lat^d_\A$ denote the set of the $\wh{\cO}_X$-lattices
in $\A_X^{\oplus d}$. We regard $\Lat^d_\A$ as a poset
with respect to the inclusions.

\begin{lem}\label{lem:lattice_basics}
\begin{enumerate}
\item $\wh{\cO}_X^{\oplus d} \subset
\A_X^{\oplus d}$ is an $\wh{\cO}_X$-lattice.
\item The set $\Lat^d_\A$ forms a fundamental system
of neighborhoods of $0$ in $\A_X^{\oplus d}$.
\item Let $\bL$ be an $\wh{\cO}_X$-lattice in
$\A_X^{\oplus d}$. Then for any open
ideal $\I \subset \wh{\cO}_X$, the $\wh{\cO}_X$-submodules
$\I \bL$ and $\I^{-1}\bL$ of $\A_X^{\oplus d}$ are
$\wh{\cO}_X$-lattices in $\A_X^{\oplus d}$.
\item Let $\bL$ be an $\wh{\cO}_X$-lattice in
$\A_X^{\oplus d}$. Then for any finite set 
$S \subset \A_X^{\oplus d}$,
the $\wh{\cO}_X$-submodule of $\A_X^{\oplus d}$
generated by $\bL$ and $S$ is an 
$\wh{\cO}_X$-lattice in $\A_X^{\oplus d}$.
\end{enumerate}
\end{lem}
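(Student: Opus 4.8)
\textbf{Proof plan for Lemma \ref{lem:lattice_basics}.}
The four claims are all local-to-global statements about $\wh{\cO}_X$-submodules of $\A_X^{\oplus d}$, and the plan is to reduce each of them to the corresponding well-known fact at a single place $x \in |X|$, using that $\wh{\cO}_X = \prod_{x \in |X|} \wh{\cO}_{X,x}$ and $\A_X = \prod'_{x} \Frac\, \wh{\cO}_{X,x}$ as topological rings, together with the description of open ideals of $\wh{\cO}_X$ from Lemma \ref{lem:topology whO_X} and the compactness statements of Corollary \ref{cor:compact}. The first observation to record is that an $\wh{\cO}_X$-submodule $\bL \subset \A_X^{\oplus d}$ which is open and compact is automatically of the form $\prod_{x \in |X|} \bL_x$ with $\bL_x \subset (\Frac\,\wh{\cO}_{X,x})^{\oplus d}$ a (full) $\wh{\cO}_{X,x}$-lattice for every $x$ and $\bL_x = \wh{\cO}_{X,x}^{\oplus d}$ for all but finitely many $x$; this is the analogue of the structure theorem for open ideals in Lemma \ref{lem:topology whO_X} (3) and is proved the same way, by intersecting with the open compact subgroups $\wh{\cO}_X^{\oplus d}$ and using that the projection of $\bL$ to each factor is open and compact.

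For claim (1), I would simply note that $\wh{\cO}_X^{\oplus d}$ is a product of the compact rings $\wh{\cO}_{X,x}^{\oplus d}$ and hence is compact by Tychonoff, while openness follows from the definition of the product topology on $\A_X^{\oplus d}$ and Corollary \ref{cor:nbd A_X}. For claim (2), I would invoke Lemma \ref{lem:topology whO_X} (4) coordinatewise: any open neighborhood of $0$ in $\A_X^{\oplus d}$ contains a set of the form $\I_1 \times \cdots \times \I_d$ for open ideals $\I_j \subset \wh{\cO}_X$, hence contains the $\wh{\cO}_X$-lattice $\prod_j \I_j$, which reduces (2) to the fact that open ideals of $\wh{\cO}_X$ form a fundamental system of neighborhoods of $0$, i.e. Corollary \ref{cor:nbd A_X}. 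For claim (3), writing $\I = a\wh{\cO}_X$ with $a \in \wh{\cO}_X$ invertible in $\A_X$ (Lemma \ref{lem:topology whO_X} (3)), the submodules $\I\bL = a\bL$ and $\I^{-1}\bL = a^{-1}\bL$ are images of $\bL$ under the homeomorphisms $b \mapsto a^{\pm 1} b$ of $\A_X^{\oplus d}$ (these are homeomorphisms by Lemma \ref{lem:topology A_X_2} applied to $a$ and $a^{-1}$), so they remain compact and open; alternatively one checks it place by place, where it is the elementary fact that scaling a lattice in $(\Frac\,\wh{\cO}_{X,x})^{\oplus d}$ by a nonzero scalar gives a lattice.

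For claim (4), one reduces to the case $S = \{s\}$ a single element, and then to showing that $\bL + \wh{\cO}_X s$ is open and compact. Openness is immediate since $\bL \subset \bL + \wh{\cO}_X s$. For compactness, write $s = (s_x)_x$; since $\bL$ is a lattice, $s_x \in \bL_x$ for all but finitely many $x$, so $\bL + \wh{\cO}_X s$ differs from $\bL$ only in finitely many coordinates, where $\bL_x + \wh{\cO}_{X,x} s_x$ is again a finitely generated $\wh{\cO}_{X,x}$-submodule of $(\Frac\,\wh{\cO}_{X,x})^{\oplus d}$ containing the open compact $\bL_x$, hence is itself open and compact (it is contained in $\varpi_x^{-n}\bL_x$ for suitable $n$). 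The only mild subtlety — and the step that needs the most care — is making the ``only finitely many coordinates differ'' argument precise, i.e. verifying that $\bL + \wh{\cO}_X s = \prod_x (\bL_x + \wh{\cO}_{X,x} s_x)$ as subsets of $\A_X^{\oplus d}$, which is a small diagram chase with the restricted product but causes no real difficulty. Everything else is a routine packaging of standard local facts about lattices over complete discrete valuation rings.
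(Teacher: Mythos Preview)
Your arguments for (1), (2), (3) are essentially identical to the paper's: openness and compactness of $\wh{\cO}_X^{\oplus d}$, Corollary~\ref{cor:nbd A_X} for the fundamental system, and the homeomorphism given by scalar multiplication by a generator of $\I$.

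For (4) your route diverges from the paper's. The paper does not use any product decomposition $\bL = \prod_x \bL_x$; instead it observes directly that the map
\[
\bL \times \prod_{s\in S}\wh{\cO}_X \longrightarrow \A_X^{\oplus d},\qquad (x,(a_s)_s)\longmapsto x+\sum_{s\in S} a_s s
\]
is continuous (Corollary~\ref{cor:top_ring}), the source is compact, and the image is exactly the submodule generated by $\bL$ and $S$. Openness is immediate since this submodule contains $\bL$. This one-line argument bypasses entirely the place-by-place bookkeeping you flag as ``the only mild subtlety.''

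Your approach is correct, but note that the product decomposition you want to record as a ``first observation'' is not available at this point in the paper; its content is essentially that of Lemma~\ref{lem:freeness1}, which comes afterwards and in fact relies on the present lemma (via Lemmas~\ref{lem:finite_index}--\ref{lem:I1I2}). So if you insist on the local-to-global route you must give a self-contained proof of that decomposition first --- doable, along the lines you sketch, but it is extra work compared to the paper's compactness-of-continuous-image argument, which needs nothing beyond Corollary~\ref{cor:top_ring}.
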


\begin{proof}
The claim (1) follows from Lemma \ref{lem:topology A_X} (1)
and Corollary \ref{cor:compact} (1).

It follows from Corollary \ref{cor:nbd A_X} that
the set 
$$
\{ \I_1\oplus \cdots \oplus \I_d\ |\ 
\I_1,\ldots,\I_d \subset \wh{\cO}_X
\text{ are open ideals}\}
$$
forms a fundamental system of neighborhoods of $0$
in $\A_X^{\oplus d}$.
It follows from Corollary \ref{cor:compact} (1)
that $\I_1\oplus \cdots \oplus \I_d \in \Lat^d_\A$
for any open ideals
$\I_1,\ldots,\I_d \subset \wh{\cO}_X$.
This proves the claim (2).

It follows from Lemma \ref{lem:topology whO_X} (3)
that $\I$ is generated by an element $a \in \wh{\cO}_X$
which is invertible in $\A_X$.
It then follows from Lemma \ref{lem:topology A_X_2}
that the map $\A_X^{\oplus d} \to \A_X^{\oplus d}$
which sends $x$ to $a x$ is a homeomorphism.
Hence both $\I\bL = a\bL$ and $\I^{-1} \bL = a^{-1} \bL$
is an $\wh{\cO}_X$-lattice in $\A_X^{\oplus d}$.
This proves the claim (3).

We prove the claim (4).
Let $\bL'$ denote 
the $\wh{\cO}_X$-submodule of $\A_X^{\oplus d}$
generated by $\bL$ and $S$ is an 
$\wh{\cO}_X$-lattice in $\A_X^{\oplus d}$.
Since $\bL'$ contains $\bL$ as an $\wh{\cO}_X$-submodule,
$\bL'$ is an open submodule of $\A_X^{\oplus d}$.
It follows from Corollary \ref{cor:top_ring}
that the map $L \times \prod_{s \in S} \wh{\cO}_X
\to \A_X^{\oplus d}$ 
which sends $(x,(a_s)_{s \in S})$
to $x + \sum_{s \in S} a_s s \in \A_X^{\oplus d}$
is continuous.
Since $\bL \times \prod_{s \in S} \wh{\cO}_X$
is compact, its image $\bL'$ is a compact
subset of $\A_X^{\oplus d}$.
Hence $\bL'$ is an $\wh{\cO}_X$-lattice in $\A_X^{\oplus d}$.
This proves the claim (4).
\end{proof}

\begin{lem}\label{lem:finite_index}
For $\bL_1$, $\bL_2$ in $\Lat^d_\A$ with $\bL_1 \le \bL_2$, the
quotient $\bL_2/\bL_1$ equipped with the quotient topology 
is a discrete $\wh{\cO}_X$-module of finite length.
\end{lem}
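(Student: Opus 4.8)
The plan is to reduce to the case of a single closed point and then to the finiteness statement for $\wh{\cO}_{X,x}$-modules already established in Corollary~\ref{cor:compact}. First I would observe that by Lemma~\ref{lem:topology whO_X}~(3) one can find an open ideal $\I \subset \wh{\cO}_X$ with $\I \bL_2 \subset \bL_1$; such an $\I$ exists because $\bL_2/\bL_1$ is, by the definition of lattices, a quotient of the open submodule $\bL_2$ by the open submodule $\bL_1$, so $\bL_1$ is open in $\bL_2$ and contains a neighborhood of $0$, which by Lemma~\ref{lem:lattice_basics}~(2) we may take of the form $\I\bL_2$ for a suitable open ideal $\I$ (using that $\bL_2$ is compact, finitely many translates of $\I\bL_2$ cover it). Hence $\bL_2/\bL_1$ is a quotient of $\bL_2/\I\bL_2$, so it suffices to prove that $\bL_2/\I\bL_2$ is discrete and of finite length.

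Next I would handle $\bL_2/\I\bL_2$. By Lemma~\ref{lem:lattice_basics}~(3) both $\bL_2$ and $\I\bL_2$ are $\wh{\cO}_X$-lattices, so $\I\bL_2$ is open in $\bL_2$, which gives discreteness of the quotient with the quotient topology; since $\bL_2$ is compact, the discrete quotient $\bL_2/\I\bL_2$ is also compact, hence finite as a set. Finiteness as a set of a module over $\wh{\cO}_X$ (which is a noetherian ring, being a product of complete local noetherian rings) immediately gives that it has finite length as an $\wh{\cO}_X$-module. This takes care of $\bL_2/\I\bL_2$, and therefore of its quotient $\bL_2/\bL_1$: a quotient of a module of finite length has finite length, and a quotient of a discrete compact module by a submodule is again discrete and compact.

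There is a small subtlety about whether $\bL_2/\bL_1$ as defined intrinsically (with the subspace topology inherited via the quotient map from $\A_X^{\oplus d}$) coincides with the topology coming from writing it as a quotient of $\bL_2/\I\bL_2$; but this is routine, since the quotient topology on $\bL_2/\bL_1$ induced from $\bL_2$ agrees with the one induced from $\A_X^{\oplus d}$ (the inclusion $\bL_2 \hookrightarrow \A_X^{\oplus d}$ is an open embedding by the lattice condition), and passing to a further quotient of a discrete module keeps it discrete. I expect the only point requiring any care — and thus the "main obstacle," though it is mild — is making the reduction to $\bL_2/\I\bL_2$ clean: one must argue that $\bL_1$ open in $\bL_2$ forces $\bL_1 \supset \I\bL_2$ for some open ideal $\I$, which uses compactness of $\bL_2$ together with Lemma~\ref{lem:lattice_basics}~(2) to extract a basic neighborhood $\I\bL_2 \subset \bL_1$. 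Everything else is a direct appeal to Corollary~\ref{cor:compact} and Lemma~\ref{lem:lattice_basics}.
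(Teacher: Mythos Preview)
Your proof is correct, but it takes an unnecessarily circuitous route compared to the paper. The paper argues directly: since $\bL_1$ is open (by the definition of an $\wh{\cO}_X$-lattice), the quotient $\bL_2/\bL_1$ is discrete; since $\bL_2$ is compact and covered by the open cosets $a+\bL_1$, finitely many cosets suffice, so $\bL_2/\bL_1$ is a finite set and hence of finite length. Your detour through an auxiliary ideal $\I$ with $\I\bL_2 \subset \bL_1$ is not needed --- in fact, the argument you give in your step for $\bL_2/\I\bL_2$ (open submodule gives discrete quotient, compact module gives finite quotient) applies verbatim to $\bL_2/\bL_1$ itself, so the reduction step buys nothing.

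One small correction: your parenthetical remark that $\wh{\cO}_X$ is noetherian because it is a product of complete local noetherian rings is false in general --- an infinite product of noetherian rings need not be noetherian. Fortunately you do not actually need this: a module that is finite as a set has only finitely many submodules, so any chain of submodules is finite and the module has finite length over any ring.
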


\begin{proof}
Since $\bL_1$ is open, the quotient $\bL_2/\bL_1$ equipped with the
quotient topology is a discrete $\wh{\cO}_X$-module.
Let us consider the open covering 
$\bL_2 = \bigcup_{a \in \bL_2}(a+\bL_1)$ of $\bL_2$.
Since $\bL_2$ is compact, there exists a finite subset $S \subset \bL_2$
such that $\bL_2 = \bigcup_{a \in S} (a+\bL_1)$. This shows that
$\bL_2/\bL_1$ is a finite set. Hence $\bL_2/\bL_1$ is of finite length
as an $\wh{\cO}_X$-module.
\end{proof}

\begin{lem} \label{lem:lattices}
For $\bL_1$, $\bL_2$ in $\Lat^d_\A$, the intersection
$\bL_1 \cap \bL_2$ and the sum $\bL_1+\bL_2$ in $\A_X^{\oplus d}$
are in $\Lat^d_\A$.
\end{lem}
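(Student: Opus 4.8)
The statement to prove is that for $\bL_1, \bL_2 \in \Lat^d_\A$, both $\bL_1 \cap \bL_2$ and $\bL_1 + \bL_2$ are $\wh{\cO}_X$-lattices in $\A_X^{\oplus d}$; that is, each is an $\wh{\cO}_X$-submodule which is compact and open in $\A_X^{\oplus d}$. The plan is to verify the two required properties (compactness and openness) for each of the two submodules separately, using the results already established in Lemma~\ref{lem:lattice_basics} and Lemma~\ref{lem:finite_index}, together with elementary topology. Both $\bL_1 \cap \bL_2$ and $\bL_1 + \bL_2$ are visibly $\wh{\cO}_X$-submodules of $\A_X^{\oplus d}$, so there is nothing to check on that point.

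For the intersection $\bL_1 \cap \bL_2$: it is closed in $\A_X^{\oplus d}$ because each $\bL_i$, being compact in the Hausdorff space $\A_X^{\oplus d}$, is closed; hence $\bL_1 \cap \bL_2$ is a closed subset of the compact set $\bL_1$, and therefore compact. For openness, I would argue that $\bL_2$ is open, so $\bL_1 \cap \bL_2$ is open in the subspace topology of $\bL_1$; combined with the fact that $\bL_1$ is itself open in $\A_X^{\oplus d}$, it follows that $\bL_1 \cap \bL_2$ is open in $\A_X^{\oplus d}$. Alternatively, and perhaps more cleanly, I can invoke Lemma~\ref{lem:lattice_basics}(2): since the lattices form a neighborhood basis of $0$, there is some $\bL' \in \Lat^d_\A$ with $\bL' \subset \bL_1$ and some $\bL'' \subset \bL_2$, hence $\bL' \cap \bL'' \subset \bL_1 \cap \bL_2$; but I still need $\bL_1 \cap \bL_2$ to contain \emph{some} open neighborhood of $0$, which again follows since $\bL_1 \cap \bL_2$ is open by the subspace argument. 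I will use whichever is shortest; the subspace-topology argument is self-contained.

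For the sum $\bL_1 + \bL_2$: openness is immediate since $\bL_1 + \bL_2 \supset \bL_1$ and $\bL_1$ is open, so $\bL_1 + \bL_2$ is a union of translates of an open set. For compactness, I would write $\bL_1 + \bL_2$ as the image of the compact space $\bL_1 \times \bL_2$ under the addition map $\A_X^{\oplus d} \times \A_X^{\oplus d} \to \A_X^{\oplus d}$, which is continuous by Corollary~\ref{cor:top_ring} (addition on $\A_X$ is continuous, hence so is addition componentwise on $\A_X^{\oplus d}$ with the product topology); a continuous image of a compact set is compact. This handles both properties for the sum.

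The only point that requires a little care — and what I expect to be the main (though still minor) obstacle — is ensuring that the topology on $\A_X^{\oplus d}$ is Hausdorff, so that ``compact implies closed'' is legitimate in the intersection argument. This follows from the fact that $\A_X$ is a Hausdorff topological ring: the open ideals $\I \subset \wh{\cO}_X$ form a neighborhood basis of $0$ (Lemma~\ref{lem:topology whO_X}(4) and Corollary~\ref{cor:nbd A_X}) and their intersection is $0$, so points are separated; the product topology on $\A_X^{\oplus d}$ is then also Hausdorff. If I prefer to avoid even that remark, I can sidestep Hausdorffness entirely for the intersection by noting directly that $\bL_1 \cap \bL_2$ is a closed subgroup of the compact group $\bL_1$ — closed because $\bL_2$ is open (hence its complement is open, hence $\bL_1 \setminus (\bL_1 \cap \bL_2)$ is open in $\bL_1$), so $\bL_1 \cap \bL_2$ is closed in $\bL_1$ and therefore compact. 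That phrasing uses only that $\bL_2$ is open and $\bL_1$ is compact, both given, and is what I would actually write.
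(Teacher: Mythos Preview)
Your proof is correct. The treatment of $\bL_1 \cap \bL_2$ matches the paper's: the paper also shows each $\bL_i$ is closed (arguing that the complement of an open subgroup is a union of its open cosets) and then takes a closed subset of a compact set. One small phrasing slip: in your ``sidestep'' you write ``$\bL_2$ is open (hence its complement is open\ldots)''---openness alone does not give that; you need the open-subgroup-is-closed fact, which is exactly the coset argument the paper uses. Your primary Hausdorff route is fine as stated.

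For $\bL_1 + \bL_2$ you take a genuinely different route from the paper. You prove compactness directly as the continuous image of $\bL_1 \times \bL_2$ under addition, invoking only Corollary~\ref{cor:top_ring}. The paper instead first uses the just-proved fact that $\bL_1 \cap \bL_2 \in \Lat^d_\A$ together with Lemma~\ref{lem:finite_index} to conclude that $(\bL_1+\bL_2)/\bL_1 \cong \bL_2/(\bL_1 \cap \bL_2)$ is finite, and hence that $\bL_1+\bL_2$ is a finite union of translates of the compact open set $\bL_1$. Your argument is shorter and self-contained; the paper's argument yields the extra structural fact that the sum has finite index over each summand, which is useful elsewhere but not needed for the lemma itself.
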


\begin{proof}
The subset $\bL_1 \subset \A_X^{\oplus d}$
is closed since its complement is a union of translations
of $\bL_1$, each of which is an open subset of $\A_X^{\oplus d}$.
Hence $\bL_1 \cap \bL_2$ is a closed subset of the compact set $\bL_2$.
This shows that $\bL_1 \cap \bL_2$ is compact.
Since $\bL_1 \cap \bL_2$ is open and compact, we have 
$\bL_1 \cap \bL_2 \in \Lat^d_\A$.

By Lemma \ref{lem:finite_index} the quotient
$(\bL_1+\bL_2)/\bL_1 \cong \bL_2/(\bL_1 \cap \bL_2)$ is finite
as an abelian group. Hence $\bL_1 + \bL_2$ is a union of
a finite number of translations of $\bL_1$.
This shows that $\bL_1 + \bL_2$ is open and compact.
Hence $\bL_1 + \bL_2 \in \Lat^d_\A$.
\end{proof}

\begin{lem}\label{lem:I1I2}
Let $\bL_1,\bL_2 \in \Lat^d_\A$. Then there exist
open ideals $\I_1,\I_2 \subset \wh{\cO}_X$
satisfying $\I_1 \bL_2 \subset \bL_1 \subset \I_2^{-1} \bL_2$.
\end{lem}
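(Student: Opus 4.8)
The plan is to reduce the statement to the single claim: \emph{for any $\bL, \bL' \in \Lat^d_\A$ there exists an open ideal $\I \subset \wh{\cO}_X$ with $\I \bL' \subset \bL$}. Granting this, I would apply it once with $(\bL,\bL') = (\bL_2,\bL_1)$ to obtain an open ideal $\I_2$ with $\I_2 \bL_1 \subset \bL_2$; since $\I_2$ is generated by an element of $\wh{\cO}_X$ invertible in $\A_X$ (Lemma \ref{lem:topology whO_X}(3)), so that $\I_2^{-1}$ is defined, this is equivalent to $\bL_1 \subset \I_2^{-1}\bL_2$. Applying the claim a second time with $(\bL,\bL') = (\bL_1,\bL_2)$ gives an open ideal $\I_1$ with $\I_1 \bL_2 \subset \bL_1$. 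Together these are the two asserted inclusions.

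To prove the claim, the first step is an elementary denominator-clearing fact: for a single element $a \in \A_X^{\oplus d}$ and any $\bL \in \Lat^d_\A$ there is an open ideal $\I_a \subset \wh{\cO}_X$ with $\I_a a \subset \bL$. Indeed, $\bL$ is an open neighbourhood of $0$ in $\A_X^{\oplus d}$, so by Corollary \ref{cor:nbd A_X} (applied in each of the $d$ coordinates) it contains $\J^{\oplus d}$ for some open ideal $\J \subset \wh{\cO}_X$. Writing $a = (a^{(1)},\dots,a^{(d)})$ with $a^{(k)} \in \A_X$, the proof of Lemma \ref{lem:topology A_X_2} supplies, for each $k$, an open ideal $\I^{(k)} \subset \wh{\cO}_X$ with $a^{(k)}\I^{(k)} \subset \J$, and then $\I_a := \bigcap_{k=1}^d \I^{(k)}$ is an open ideal satisfying $\I_a a \subset \J^{\oplus d} \subset \bL$.

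The second step reduces the claim to finitely many generators. By Lemma \ref{lem:lattices} the sum $\bL + \bL'$ lies in $\Lat^d_\A$, and by Lemma \ref{lem:finite_index} the quotient $(\bL+\bL')/\bL$ is finite; choosing representatives $a_1,\dots,a_n \in \bL+\bL'$ of its elements gives $\bL + \bL' = \bL + \sum_{i=1}^n \wh{\cO}_X a_i$. Taking $\I_{a_i}$ as in the first step and setting $\I := \bigcap_{i=1}^n \I_{a_i}$, which is a finite intersection of open ideals of $\wh{\cO}_X$ and hence again an open ideal, one gets
$$
\I(\bL+\bL') = \I\bL + \sum_{i=1}^n \I a_i \subset \bL + \sum_{i=1}^n \I_{a_i} a_i \subset \bL ,
$$
where $\I\bL \subset \bL$ because $\I \subset \wh{\cO}_X$ and $\bL$ is an $\wh{\cO}_X$-module. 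In particular $\I\bL' \subset \I(\bL+\bL') \subset \bL$, which proves the claim, and with it the lemma.

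I do not expect a genuine obstacle here: the content is entirely bookkeeping with the adelic topology. The one point requiring care is that the ``denominators'' must be cleared coordinatewise and one generator at a time — this is precisely what the continuity of multiplication on $\A_X$ (Lemma \ref{lem:topology A_X_2}) together with the finiteness of lattice indices (Lemma \ref{lem:finite_index}) allow — and that passing from $\I_2\bL_1 \subset \bL_2$ to $\bL_1 \subset \I_2^{-1}\bL_2$ uses the description of open ideals in Lemma \ref{lem:topology whO_X}(3) so that $\I_2^{-1}$ makes sense.
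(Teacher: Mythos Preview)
Your proof is correct and follows essentially the same approach as the paper: both pass to $\bL_1+\bL_2$, use Lemmas \ref{lem:lattices} and \ref{lem:finite_index} to get finiteness of the quotients, and then find an open ideal annihilating each quotient. The only difference is that the paper phrases this last step in one line --- a discrete $\wh{\cO}_X$-module of finite length has open annihilator --- whereas you unpack it explicitly by clearing denominators coordinatewise via Lemma \ref{lem:topology A_X_2}; both arrive at the same conclusion.
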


\begin{proof}
It follows from Lemma \ref{lem:lattices}
and Lemma \ref{lem:finite_index} that
$(\bL_1+ \bL_2)/\bL_1$ and $(\bL_1+\bL_2)/\bL_2$ 
are discrete $\wh{\cO}_X$-modules of finite length.
Hence there exist 
open ideals $\I_1,\I_2 \subset \wh{\cO}_X$
such that $(\bL_1+\bL_2)/\bL_1$ is annihilated by
$\I_1$ and that $(\bL_1+\bL_2)/\bL_2$ is annihilated by $\I_2$.
Hence we have $\I_1 \bL_2 \subset \bL_1 \subset \I_2^{-1} \bL_2$.
This proves the claim.
\end{proof}

We regard an element in $\A_X^{\oplus d}$
as a row vector. Hence the group $\GL_d(\A_X)$ acts
on $\A_X^{\oplus d}$ via the multiplication
from the right. 

Let $g \in \GL_d(\A_X)$.
For an $\wh{\cO}_X$-lattice 
$\bL \subset \A_X^{\oplus d}$, we set
$\bL g = \{ vg\ |\ v \in \bL \}$.

\begin{lem}\label{lem:lattice_translation}
Let $\bL \in \Lat^d_\A$ and $g \in \GL_d(\A_X)$.
Then we have $\bL g \in \Lat^d_\A$.
\end{lem}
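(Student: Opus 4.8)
The statement to prove is Lemma~\ref{lem:lattice_translation}: if $\bL \in \Lat^d_\A$ and $g \in \GL_d(\A_X)$, then $\bL g \in \Lat^d_\A$. By definition of $\Lat^d_\A$, I need to show that $\bL g$ is an $\wh{\cO}_X$-submodule of $\A_X^{\oplus d}$ which is both compact and open. The key idea is that right multiplication by $g$ is a homeomorphism of $\A_X^{\oplus d}$ onto itself, so it preserves the topological properties, while the $\wh{\cO}_X$-module structure is preserved because $\wh{\cO}_X$ is commutative (the right action of $\GL_d(\A_X)$ commutes with the left scalar action of $\wh{\cO}_X \subset \A_X$).

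\textbf{Step 1: $\bL g$ is an $\wh{\cO}_X$-submodule.} Since $\bL$ is an $\wh{\cO}_X$-submodule and scalar multiplication by elements of $\wh{\cO}_X$ (viewed as diagonal, i.e.\ entrywise multiplication on row vectors) commutes with multiplication by $g$ on the right, for any $a \in \wh{\cO}_X$ and $v \in \bL$ we have $a(vg) = (av)g \in \bL g$; closure under addition is immediate. So $\bL g$ is an $\wh{\cO}_X$-submodule of $\A_X^{\oplus d}$.

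\textbf{Step 2: right multiplication by $g$ is a homeomorphism.} The map $\mu_g : \A_X^{\oplus d} \to \A_X^{\oplus d}$, $v \mapsto vg$, is $\A_X$-linear, hence its components are $\A_X$-linear combinations of the coordinate functions; by Corollary~\ref{cor:top_ring} (continuity of addition and multiplication on $\A_X$) and Lemma~\ref{lem:topology A_X_2}, each such map $\A_X^{\oplus d} \to \A_X$ is continuous, so $\mu_g$ is continuous. Since $g \in \GL_d(\A_X)$, the inverse map $\mu_{g^{-1}} = \mu_g^{-1}$ is continuous by the same reasoning. Hence $\mu_g$ is a homeomorphism of $\A_X^{\oplus d}$ onto itself.

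\textbf{Step 3: conclude.} Because $\bL$ is open in $\A_X^{\oplus d}$ and $\mu_g$ is a homeomorphism, $\bL g = \mu_g(\bL)$ is open. Because $\bL$ is compact and $\mu_g$ is continuous, $\bL g = \mu_g(\bL)$ is compact. Combining with Step~1, $\bL g$ is a compact open $\wh{\cO}_X$-submodule of $\A_X^{\oplus d}$, i.e.\ $\bL g \in \Lat^d_\A$, as desired.

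The only mild subtlety—the ``hard part''—is Step~2, making sure the topology on $\A_X^{\oplus d}$ is well-behaved under $\A_X$-linear maps; but this reduces entirely to the continuity of addition and multiplication on $\A_X$, which has already been established in Corollary~\ref{cor:top_ring} and Lemma~\ref{lem:topology A_X_2}. Everything else is formal.
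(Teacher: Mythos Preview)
Your proof is correct, but it takes a genuinely different route from the paper's. The paper argues by a sandwich: it finds open ideals $\I_1,\I_2,\bJ_1,\bJ_2 \subset \wh{\cO}_X$ with $\I_1\bJ_2\,\wh{\cO}_X^{\oplus d} \subset \bL g \subset \I_2^{-1}\bJ_1^{-1}\,\wh{\cO}_X^{\oplus d}$ (using Lemma~\ref{lem:I1I2} and bounds on the entries of $g$ and $g^{-1}$), observes that the quotient of the outer lattices has finite length, and concludes via Lemma~\ref{lem:lattice_basics}~(4) that $\bL g$ is a lattice. Only afterwards does the paper deduce, as Corollary~\ref{cor:translation}, that right multiplication by $g$ is a homeomorphism. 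You invert this order: you prove the homeomorphism claim directly from the continuity of addition and of scalar multiplication (Corollary~\ref{cor:top_ring} and Lemma~\ref{lem:topology A_X_2}), and then the lattice property is immediate. Your argument is shorter and more conceptual, and in effect absorbs the content of Corollary~\ref{cor:translation} into the proof of the lemma; the paper's approach is more hands-on and keeps the two statements logically separated. Either ordering is fine, and there is no circularity in yours since Lemma~\ref{lem:topology A_X_2} and Corollary~\ref{cor:top_ring} are proved independently of Lemma~\ref{lem:lattice_translation}.
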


\begin{proof}
We set $\bL_0 = \wh{\cO}_X^{\oplus d}$.
It follows from Lemma \ref{lem:I1I2}
that there exist open ideals 
$\I_1, \I_2 \subset \wh{\cO}_X$ satisfying
$\I_1 \bL_0 \subset \bL \subset \I_2^{-1} \bL_0$.
It follows from Lemma \ref{lem:lattice_basics} (4)
and Lemma \ref{lem:I1I2} applied for $d=1$ that
there exist open ideals 
$\bJ_1, \bJ_2 \subset \wh{\cO}_X$ such that
all entries of the matrix $g$ belong to $\bJ_1^{-1}$
and that all entries of the matrix 
$g$ belong to $\bJ_2^{-1}$.
Since $\bL g \subset \I_2^{-1} \bL_0 g
\subset \I_2^{-1} \bJ_1^{-1} \bL_0$
and $\I_1 \bL_0 g^{-1} \subset
\I_1 \bJ_2^{-1} \bL_0 \subset \bJ_2^{-1} L$,
we have 
$\I_1 \bJ_2 \bL_0 \subset \bL g \subset \I_2^{^1} \bJ_1^{-1} \bL_0$.
It follows from Lemma \ref{lem:finite_index}
that $\I_2^{^1} \bJ_1^{-1} \bL_0/ \I_1 \bJ_2 \bL_0$
is a discrete $\wh{\cO}_X$-module of finite
length. 
Hence $\bL_g$ is generated by
$\I_1 \bJ_2 \bL_0$ and a finite set.
Hence it follows from Lemma \ref{lem:lattice_basics}
that $\bL g$ is an $\wh{\cO}_X$-lattice
in $\A_X^{\oplus d}$.
This proves the claim.
\end{proof}

\begin{cor}\label{cor:translation}
For $g \in \GL_d(\A_X)$, the map
$\A_X^{\oplus d} \to \A_X^{\oplus d}$ given
by the right multiplication by $g$
is a homeomorphism.
\end{cor}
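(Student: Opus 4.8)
The plan is to reduce Corollary~\ref{cor:translation} to Lemma~\ref{lem:lattice_translation} together with the basic properties of the topology on $\A_X^{\oplus d}$ established above. Since right multiplication by $g \in \GL_d(\A_X)$ is an $\A_X$-linear bijection with inverse given by right multiplication by $g^{-1}$, it suffices to prove that this map is continuous; applying the same statement to $g^{-1}$ then shows the inverse is continuous as well, so the map is a homeomorphism.

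First I would recall that, by Lemma~\ref{lem:lattice_basics}~(2), the set $\Lat^d_\A$ of $\wh{\cO}_X$-lattices in $\A_X^{\oplus d}$ forms a fundamental system of neighborhoods of $0$, and that (as in the proof of Lemma~\ref{lem:topology A_X}~(2) for the additive structure) translation by any element of $\A_X^{\oplus d}$ is a homeomorphism. Hence continuity of $v \mapsto vg$ at an arbitrary point $v_0$ follows from continuity at $0$, i.e.\ it suffices to show: for every $\bL \in \Lat^d_\A$ there exists $\bL' \in \Lat^d_\A$ with $\bL' g \subset \bL$. Taking $\bL' = \bL g^{-1}$, which lies in $\Lat^d_\A$ by Lemma~\ref{lem:lattice_translation} applied to $g^{-1}$, we get $\bL' g = \bL$, so the required inclusion holds (with equality). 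This proves continuity at $0$, hence continuity everywhere.

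Applying the argument just given to $g^{-1}$ in place of $g$ shows that right multiplication by $g^{-1}$ is also continuous. Since this map is the set-theoretic inverse of right multiplication by $g$, we conclude that $v \mapsto vg$ is a homeomorphism of $\A_X^{\oplus d}$ onto itself.

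There is no real obstacle here: the content is entirely carried by Lemma~\ref{lem:lattice_translation} (stability of lattices under the $\GL_d(\A_X)$-action) and Lemma~\ref{lem:lattice_basics}~(2) (lattices form a neighborhood basis of $0$). The only point requiring a line of care is the reduction from continuity at a general point to continuity at $0$, which uses that additive translations on $\A_X^{\oplus d}$ are homeomorphisms; this is immediate from the definition of the product topology on $\A_X^{\oplus d}$ and Lemma~\ref{lem:topology A_X}~(2) applied coordinatewise.
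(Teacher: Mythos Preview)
Your proof is correct and follows essentially the same approach as the paper, which simply cites Lemma~\ref{lem:lattice_basics} and Lemma~\ref{lem:lattice_translation}; you have spelled out the details that the paper leaves implicit.
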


\begin{proof}
This follows from Lemma \ref{lem:lattice_basics}
and Lemma \ref{lem:lattice_translation}.
\end{proof}

\begin{lem}\label{lem:freeness1}
Any open $\wh{\cO}_X$-submodule of $\wh{\cO}_X^{\oplus d}$ 
is free of rank $d$ as an $\wh{\cO}_X$-module.
\end{lem}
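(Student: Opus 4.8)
The plan is to reduce the statement to a local question about modules over the complete local rings $\wh{\cO}_{X,x}$, and then invoke the standard structure theory over complete discrete valuation rings. First I would observe that by Lemma~\ref{lem:topology whO_X}~(3) (and the description of open ideals as products over $|X|$) an open $\wh{\cO}_X$-submodule $\bL \subset \wh{\cO}_X^{\oplus d}$ contains an open ideal $\I = \prod_{x} I_x$ of $\wh{\cO}_X$, hence $\I^{\oplus d} \subset \bL \subset \wh{\cO}_X^{\oplus d}$; since $\wh{\cO}_X = \prod_{x \in |X|} \wh{\cO}_{X,x}$ as topological rings and $I_x = \wh{\cO}_{X,x}$ for all but finitely many $x$, the submodule $\bL$ must itself be of the form $\bL = \prod_{x \in |X|} \bL_x$ where $\bL_x \subset \wh{\cO}_{X,x}^{\oplus d}$ is an open $\wh{\cO}_{X,x}$-submodule and $\bL_x = \wh{\cO}_{X,x}^{\oplus d}$ for all but finitely many $x$. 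This decomposition step uses only that an $\wh{\cO}_X$-submodule sandwiched between $\I^{\oplus d}$ and $\wh{\cO}_X^{\oplus d}$ is determined by its images in the finitely many factors where $I_x \neq \wh{\cO}_{X,x}$, together with the full factors elsewhere.

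Next I would handle the local factors. For each $x \in |X|$, the ring $\wh{\cO}_{X,x}$ is a complete (hence, being regular of dimension one, a) discrete valuation ring, and $\bL_x$ is a finitely generated torsion-free $\wh{\cO}_{X,x}$-module (it is a submodule of the free module $\wh{\cO}_{X,x}^{\oplus d}$), so $\bL_x$ is free; since it is open, it spans $\Frac(\wh{\cO}_{X,x})^{\oplus d}$ over the fraction field, so its rank is exactly $d$. Concretely one may choose a uniformizer $\varpi_x$ and a basis $e_{x,1},\dots,e_{x,d}$ of $\wh{\cO}_{X,x}^{\oplus d}$ adapted to $\bL_x$, i.e.\ $\bL_x = \bigoplus_i \varpi_x^{n_{x,i}} \wh{\cO}_{X,x} e_{x,i}$ for suitable integers $n_{x,i} \ge 0$ (elementary divisors); this exhibits $\bL_x$ as free of rank $d$.

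Finally I would assemble a global basis. Away from the finite set $S = \{x : \bL_x \neq \wh{\cO}_{X,x}^{\oplus d}\}$ the submodule $\bL$ agrees with $\wh{\cO}_X^{\oplus d}$, so the standard basis vectors work there; for $x \in S$ one patches in the local bases. More precisely, using that $\wh{\cO}_X = \prod_x \wh{\cO}_{X,x}$, a choice of local basis for each $\bL_x$ assembles into $d$ global elements $f_1,\dots,f_d \in \bL$ whose image in each factor is an $\wh{\cO}_{X,x}$-basis of $\bL_x$, and such a family is automatically an $\wh{\cO}_X$-basis of $\bL = \prod_x \bL_x$. I expect the only mildly delicate point to be bookkeeping the passage between the product decomposition of $\wh{\cO}_X$ and $\wh{\cO}_X$-module structures — in particular verifying that an $\wh{\cO}_X$-submodule between $\I^{\oplus d}$ and $\wh{\cO}_X^{\oplus d}$ genuinely decomposes as a product over $|X|$ — but this follows formally from the product ring structure and the fact that $I_x = \wh{\cO}_{X,x}$ outside a finite set; the rest is the classical structure theorem over a complete DVR.
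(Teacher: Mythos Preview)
Your proposal is correct and follows essentially the same approach as the paper: decompose $\wh{\cO}_X$ as a product over $|X|$, identify a finite set of places where $\bL$ differs from $\wh{\cO}_X^{\oplus d}$, apply the structure theorem over each complete DVR $\wh{\cO}_{X,x}$, and reassemble. The paper reaches the finite set by looking at the support of the finite-length quotient $\wh{\cO}_X^{\oplus d}/\bL$ rather than first sandwiching $\bL$ above an open ideal, but this is a cosmetic difference.
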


\begin{proof}
We let $\bL_0 = \wh{\cO}_X^{\oplus d}$.
Let $\bL$ be an open $\wh{\cO}_X$-submodule of $\bL_0$.
Set $N = \wh{\cO}_X^{\oplus d}/\bL$. Then $N$ is discrete and compact.
Hence $N$ is a discrete $\wh{\cO}_X$-module of finite length.
Let $S \subset |X|$ denote the support of the
$\cO_X$-module of finite length corresponding to $N$.
Then $S$ is a finite set. 
We set $\wh{\cO}_X^S = \prod_{x \in |X| \setminus S} 
\wh{\cO}_{X,x}$. Since the topological ring 
$\wh{\cO}_X$ is equal to the direct product
$\wh{\cO}_X = \wh{\cO}_X^S \times \prod_{s \in S} \cO_{X,s}$,
the $\wh{\cO}_X$-modules $\bL_0$ and $\bL$
have the corresponding decompositions 
$\bL_0 = \bL^S \times \prod_{s \in S} \bL_{0,s}$ and
$\bL = \bL^S \times \prod_{s \in S} \bL_s$.
It follows from the definition of $S$ that
we have $\bL^S = \bL_0^S = (\wh{\cO}_X^S)^{\oplus d}$.
For $s \in S$, the module $\bL_s$ is 
an $\wh{\cO}_{X,s}$-submodule of $\wh{\cO}_{X,s}^{\oplus d}$
of finite index.
Hence $\bL_s$ is free of rank $d$ over $\wh{\cO}_X$.
This proves that $\bL$ is a free $\wh{\cO}_X$-module of 
rank $d$ for any $s \in S$.
This proves the claim.
\end{proof}

\begin{lem} \label{lem:freeness}
Any $\bL \in \Lat^d_\A$ is free of rank $d$ 
as an $\wh{\cO}_X$-module.
Moreover the homomorphism 
$i_\bL \colon  \bL \otimes_{\wh{\cO}_X} \A_X \to \A_X^{\oplus d}$ 
induced by the inclusion $\bL \subset \A_X^{\oplus d}$ 
is an isomorphism of $\A_X$-modules.
\end{lem}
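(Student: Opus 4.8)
The plan is to prove the two assertions of Lemma~\ref{lem:freeness} together, reducing the global statement over $\wh{\cO}_X$ to the local statements over each $\wh{\cO}_{X,x}$, and then invoking the structure theory of finitely generated modules over a complete discrete valuation ring. First I would recall that $\wh{\cO}_X = \prod_{x\in|X|}\wh{\cO}_{X,x}$ as a topological ring and that, by Lemma~\ref{lem:I1I2} applied to $\bL$ and $\bL_0 = \wh{\cO}_X^{\oplus d}$, there exist open ideals $\I_1,\I_2\subset\wh{\cO}_X$ with $\I_1\bL_0\subset\bL\subset\I_2^{-1}\bL_0$. Writing $\I_1 = a\wh{\cO}_X$ and $\I_2 = b\wh{\cO}_X$ with $a,b\in\wh{\cO}_X$ invertible in $\A_X$ (Lemma~\ref{lem:topology whO_X}(3)), multiplication by $b$ is a homeomorphism of $\A_X^{\oplus d}$ (Corollary~\ref{cor:translation} applied to the scalar matrix, or Lemma~\ref{lem:topology A_X_2}), so after replacing $\bL$ by $b\bL$ we may assume $\I_1\bL_0\subset\bL\subset\bL_0$. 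Thus it suffices to treat open $\wh{\cO}_X$-submodules of $\bL_0$, which is exactly the situation of Lemma~\ref{lem:freeness1}; that lemma already gives that $\bL$ is free of rank $d$ as an $\wh{\cO}_X$-module. (If one prefers a self-contained argument: the support $S$ of $\wh{\cO}_X^{\oplus d}/\bL$ is finite by Lemma~\ref{lem:finite_index} and Corollary~\ref{cor:compact}(2), off $S$ the module $\bL$ agrees with $\bL_0$, and at each $s\in S$ one has a finite-index $\wh{\cO}_{X,s}$-submodule of $\wh{\cO}_{X,s}^{\oplus d}$, which is free of rank $d$ since $\wh{\cO}_{X,s}$ is a complete DVR.)

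Next I would deduce the statement about $i_\bL$. Having fixed an $\wh{\cO}_X$-basis $v_1,\dots,v_d$ of $\bL$, the map $i_\bL\colon \bL\otimes_{\wh{\cO}_X}\A_X\to\A_X^{\oplus d}$ sends the $\A_X$-basis $v_1\otimes1,\dots,v_d\otimes1$ of the left-hand side to $v_1,\dots,v_d\in\A_X^{\oplus d}$. So $i_\bL$ is an isomorphism of $\A_X$-modules if and only if $v_1,\dots,v_d$ form an $\A_X$-basis of $\A_X^{\oplus d}$, i.e.\ if and only if the matrix $g\in\Mat_d(\A_X)$ whose rows are $v_1,\dots,v_d$ lies in $\GL_d(\A_X)$. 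To see this, use again the containments $a\bL_0\subset\bL\subset\bL_0$: the inclusion $\bL\subset\bL_0$ says $g\in\Mat_d(\wh{\cO}_X)$, and the inclusion $a\bL_0\subset\bL$ says that $a\bL_0 = a\wh{\cO}_X^{\oplus d}$ lies in the $\wh{\cO}_X$-span of the rows of $g$, hence there is $h\in\Mat_d(\wh{\cO}_X)$ with $h g = a\cdot\id$. Working componentwise over each $x\in|X|$, the reduction of $g$ modulo $a$ has a left inverse over $\wh{\cO}_{X,x}/a\wh{\cO}_{X,x}$; but for a finite local ring (or more generally by Nakayama at each maximal ideal of $\wh{\cO}_{X,x}/a$) a matrix with a one-sided inverse is invertible, so $g_x\in\GL_d(\wh{\cO}_{X,x}/a\wh{\cO}_{X,x})$, and a standard determinant/lifting argument over the complete local ring $\wh{\cO}_{X,x}$ shows $\det g_x\in(\Frac\,\wh{\cO}_{X,x})^\times$ with $\det g_x\cdot(\det g_x)^{-1}\in\wh{\cO}_{X,x}$ bounded denominator; thus $g_x\in\GL_d(\Frac\,\wh{\cO}_{X,x})$ for all $x$ and $\det g = (\det g_x)_x\in\A_X^\times$ by Lemma~\ref{lem:topology whO_X}(1), so $g\in\GL_d(\A_X)$.

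Alternatively, and perhaps more cleanly, I would argue the isomorphy of $i_\bL$ directly: it is injective because $\bL$ is a free $\wh{\cO}_X$-module and $\wh{\cO}_X\to\A_X$ is flat (indeed $\A_X$ is a localization-type $\wh{\cO}_X$-algebra, being a restricted product of the flat $\wh{\cO}_{X,x}\to\Frac\,\wh{\cO}_{X,x}$), and it is surjective because its image is an $\A_X$-submodule of $\A_X^{\oplus d}$ containing $\bL$, which is open, hence containing $\I_2^{-1}\bL_0\supset\bL_0$ after scaling, and an $\A_X$-submodule of $\A_X^{\oplus d}$ containing $\wh{\cO}_X^{\oplus d}$ is all of $\A_X^{\oplus d}$ since every element of $\A_X^{\oplus d}$ has bounded denominators by Lemma~\ref{lem:lattice_basics}(4) and Lemma~\ref{lem:I1I2}. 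The one genuinely delicate point — the step I expect to be the main obstacle — is the surjectivity/invertibility input: making precise that a finite-index $\wh{\cO}_{X,x}$-submodule of $\wh{\cO}_{X,x}^{\oplus d}$ is free of rank $d$ (needing that $\wh{\cO}_{X,x}$ is a DVR, which holds since $X$ is regular of dimension one) and that the rational span of a lattice is everything; all other steps are bookkeeping with the topology of $\A_X$ already established in Section~\ref{sec:adeles}. I would keep the write-up short by citing Lemma~\ref{lem:freeness1} for the freeness and then spending the bulk of the proof on the clean flatness-plus-openness argument for $i_\bL$ being an isomorphism.
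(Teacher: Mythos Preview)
Your approach is essentially the same as the paper's: reduce freeness to Lemma~\ref{lem:freeness1} by scaling $\bL$ into $\bL_0$, then for the isomorphism $i_\bL$ use flatness of $\A_X$ over $\wh{\cO}_X$ together with the fact that any lattice spans $\A_X^{\oplus d}$ over $\A_X$. The paper writes flatness explicitly as $\A_X = \varinjlim_{\I} \I^{-1}\wh{\cO}_X$ with each $\I^{-1}\wh{\cO}_X$ free of rank one, and then compares $\bL$, $\bL+\bL_0$, and $\bL_0$ via two short exact sequences whose torsion quotients die after tensoring with $\A_X$; your second argument is a minor repackaging of this.

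Two slips to fix. First, in your surjectivity step you invoke the wrong containment: writing $\I_2^{-1}\bL_0 \supset \bL_0$ is irrelevant, since $\bL \subset \I_2^{-1}\bL_0$ goes the wrong way. What you need is $\I_1\bL_0 \subset \bL$: the image of $i_\bL$ is an $\A_X$-submodule containing $\bL \supset a\bL_0$ with $a \in \A_X^\times$, hence contains $a^{-1}\cdot a\bL_0 = \bL_0$, hence equals $\A_X^{\oplus d}$. Second, your matrix argument contains a genuine error: from $hg = a\cdot\id$ you do \emph{not} get that $g$ has a left inverse modulo $a$ (rather $hg \equiv 0 \pmod a$). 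The clean salvage is to take determinants: $\det h \cdot \det g = a^d$ with $a \in \A_X^\times$, so $\det g \in \A_X^\times$ and $g \in \GL_d(\A_X)$. Since you already prefer the flatness argument, just drop the matrix paragraph.
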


\begin{proof}
We set $\bL_0=\wh{\cO}^{\oplus d} \in \Lat^d_\A$.
It follows from Lemma \ref{lem:lattices}
that $\bL + \bL_0$ is in $\Lat^d_\A$.
It follows from Lemma \ref{lem:finite_index}
that $N = (\bL+\bL_0)/\bL_0$ is a discrete
$\wh{\cO}_X$-module of finite length.
Since the annihilator of the
$\wh{\cO}_X$-module $N$ is an open ideal of 
$\wh{\cO}_X$, it follows that there
exists an element in $\wh{\cO}_X \cap \A_X^\times$
such that $a N = 0$.
Hence we have $a\bL \subset \bL_0$.
Since $a\bL$ is an open submodule of 
$\bL_0 = \wh{\cO}_X^{\oplus d}$, it follows from
Lemma \ref{lem:freeness1} that $\bL$
is a free $\wh{\cO}_X$-module of rank $d$.

The ring $\A_X$ can be written as a colimit
$\A_X = \varinjlim_{\I} \I^{-1}\wh{\cO}_X$,
where $\I$ runs over the ideals of $\wh{\cO}_X$
of the form $\I = a \wh{\cO}_X$ for some
$a \in \wh{\cO}_X \cap \A_X^{\times}$,
and $\I^{-1} \wh{\cO}_X$ denotes the 
$\wh{\cO}_X$-submodule of $x \in \A_X$ consisting of
elements $x$ satisfying 
$\I x \subset \wh{\cO}_X$.
Since $\I \subset \wh{\cO}_X$ is a principal ideal 
generated by a non-zero-divisor, $\I^{-1} \wh{\cO}_X$
is isomorphic to $\wh{\cO}_X$ as an $\wh{\cO}_X$-module.
Hence $\A_X$ is a flat $\wh{\cO}_X$-module.

Since $\A_X$ is a flat $\wh{\cO}_X$-module,
we have, for two lattices $\bL_1, \bL_2 \in \Lat^d_\A$
with $\bL_1 \le \bL_2$, a short exact sequence
$0\to \bL_1\otimes_{\wh{\cO}_X} \A_X 
\to \bL_2\otimes_{\wh{\cO}_X} \A_X
\to (\bL_2/\bL_1) \otimes_{\wh{\cO}_X} \A_X \to 0$.
Since $\bL_2/\bL_1$ is a discrete $\wh{\cO}_X$-modules
of finite length, it is annihilated by an
element $a \in \wh{\cO}_X \cap \A_X^\times$,
which shows that $(\bL_2/\bL_1) \otimes_{\wh{\cO}_X} \A_X = 0$.
Hence the homomorphism $\bL_1\otimes_{\wh{\cO}_X} \A_X 
\to \bL_2\otimes_{\wh{\cO}_X} \A_X$ is an isomorphism.

It follows from the commutativity of the diagram
$$
\begin{CD}
\bL\otimes_{\wh{\cO}_X} \A_X @>{\cong}>>
(\bL+\bL_0) \otimes_{\wh{\cO}_X} \A_X
@<{\cong}<< 
\bL_0 \otimes_{\wh{\cO}_X} \A_X \\
@V{i_\bL}VV @V{i_{\bL+\bL_0}}VV 
@V{i_{\bL_0}}V{\cong}V \\
\A_X^{\oplus d} @= \A_X^{\oplus d}
@= \A_X^{\oplus d}
\end{CD}
$$
that the homomorphism
$i_\bL \colon  \bL \otimes_{\wh{\cO}_X} \A_X \to \A_X^{\oplus d}$ 
is an isomorphism.
This proves the claim.
\end{proof}

\subsubsection{ }\label{sec:nuX}

We set $\wh{X} = \Spec\, \wh{\cO}_X$.

Let us construct a morphism $\nu_X\colon \wh{X} \to X$
of schemes. For a finite subset $S \subset |X|$ let 
$\wh{\cO}_X^S = \prod_{x \in |X| \setminus S} \wh{\cO}_{X,x}$.
The projection homomorphism $\wh{\cO}_X \to \wh{\cO}_X^S$
induces the morphism $\Spec \wh{\cO}_X^S \to \wh{X}$ which
is an open immersion. The schemes $\Spec \wh{\cO}_X^S$ form
an open basis of the underlying topological space of $\wh{X}$.
Let $U \subset X$ be an affine open dense subscheme 
and set $S = |X| \setminus |U|$.
Then $S$ is a finite set and the natural map $\Gamma(U,\cO_X) \to 
\prod_{x \in |U|} \cO_{X,x} \to \wh{\cO}_X^S$ induces a morphism
$\nu_U \colon \Spec\, \wh{\cO}_X^S \to U$ of schemes. By patching $\nu_U$'s 
for various $U$, we obtain a morphism $\nu_X \colon  \wh{X} \to X$
of schemes.

\begin{lem}\label{lem:nu_flat}
The morphism $\nu_X \colon  \wh{X} \to X$ is flat.
\end{lem}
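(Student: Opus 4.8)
The morphism $\nu_X:\wh{X}\to X$ was built by patching the morphisms $\nu_U:\Spec\,\wh{\cO}_X^S\to U$ for $U\subset X$ affine open dense with $S=|X|\setminus|U|$, so flatness is a local question on both source and target and it suffices to prove that each $\nu_U$ is flat. Since $X$ is regular noetherian of pure Krull dimension one, $\Gamma(U,\cO_X)$ is a noetherian ring all of whose local rings at maximal ideals are discrete valuation rings (or fields, at generic points). The plan is therefore to reduce, via the standard fibrewise criterion for flatness over a one-dimensional regular base, to checking the two kinds of points of $U$ separately: the generic points, where flatness is automatic because $\wh{\cO}_X^S$ is torsion-free over the product of fraction fields; and the closed points $x\in|U|$, where the local ring of the source is $\wh{\cO}_{X,x}$, the completion of the discrete valuation ring $\cO_{X,x}$.

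First I would record that $\Gamma(U,\cO_X)\to\wh{\cO}_X^S=\prod_{x\in|U|}\wh{\cO}_{X,x}$ factors as $\Gamma(U,\cO_X)\to\prod_{x\in|U|}\cO_{X,x}\to\prod_{x\in|U|}\wh{\cO}_{X,x}$, and observe that the second arrow is flat because each $\cO_{X,x}\to\wh{\cO}_{X,x}$ is the completion of a noetherian local ring, hence flat, and a product of flat maps over the corresponding product of base rings is flat. The remaining point is the flatness of $\Gamma(U,\cO_X)\to\prod_{x\in|U|}\cO_{X,x}$. For this I would use that flatness over a noetherian ring can be checked after localization at each prime, and that over a one-dimensional noetherian domain a module is flat if and only if it is torsion-free. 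The localization of $\prod_{x\in|U|}\cO_{X,x}$ at a prime of $\Gamma(U,\cO_X)$ lying over a closed point $y$ recovers $\cO_{X,y}$ (the other factors become zero after inverting an element not in the maximal ideal at $y$), which is the localization of a domain hence torsion-free; at the generic point one gets the fraction field, again torsion-free. Alternatively, and perhaps more cleanly, I would invoke directly that $\cO_{X,x}$ is a localization of $\Gamma(U,\cO_X)$ and localizations are flat, so $\prod_{x\in|U|}\cO_{X,x}$ is flat as a (possibly infinite) product — here one must be slightly careful, since infinite products of flat modules need not be flat in general, but over a noetherian ring an arbitrary product of flat modules is flat, which applies because $\Gamma(U,\cO_X)$ is noetherian.

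Assembling these, $\nu_U$ is a composite of flat morphisms and hence flat, and since the $\nu_U$ cover $\nu_X$ compatibly, $\nu_X$ is flat. The one genuine subtlety — the main obstacle — is justifying that the infinite product $\prod_{x\in|U|}\cO_{X,x}$ (resp.\ $\prod_{x\in|U|}\wh{\cO}_{X,x}$) is flat over $\Gamma(U,\cO_X)$; this is where I would appeal to the Chase-type theorem that over a (left) noetherian ring every direct product of flat modules is flat, combined with each factor being flat (a localization, resp.\ a completion of a localization). Everything else is routine bookkeeping about the patching data and about flatness being local on the base.
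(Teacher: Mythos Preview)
Your approach is correct but considerably more elaborate than the paper's. The paper's proof is a single line: since $U=\Spec R$ with $R$ a Dedekind domain (or a finite product of such, one for each connected component), a module is flat over $R$ if and only if it is torsion-free, and $\wh{\cO}_X^S=\prod_{x\in|U|}\wh{\cO}_{X,x}$ is visibly torsion-free because each factor is a domain receiving an injection from $R$. You in fact invoke this torsion-free criterion yourself midway through, but only for one step of a multi-step factorization; the paper simply applies it once to the whole module.

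Your route via Chase's theorem (over a noetherian ring, arbitrary products of flat modules are flat) is valid and has the virtue of not using the dimension-one hypothesis as heavily, but you make it harder than necessary by factoring through $\prod_x \cO_{X,x}$. The intermediate step ``a product of flat maps over the corresponding product of base rings is flat'' is not as immediate as you suggest for infinite products, and the claimed computation of the localization of $\prod_x\cO_{X,x}$ at a closed point $y$ is not right (localizing an infinite product does not simply kill the other factors). Both detours are avoidable: each $\wh{\cO}_{X,x}$ is flat over $R$ as a composite of a localization and a completion, so Chase applied directly to the family $(\wh{\cO}_{X,x})_x$ over the noetherian ring $R$ gives flatness of the product in one stroke.
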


\begin{proof}
Let $U = \Spec\, R \subset X$ be an affine open dense subscheme
and let $S = |X| \setminus |U|$. Since $\wh{\cO}_X^S$ is
a torsion-free $R$-module, the claim follows.
\end{proof}

For an $\cO_X$-module $\cF$, we set 
$\Gamma_\A(\cF) = \Gamma(\wh{X},\nu_X^* \cF)$.
Then $\Gamma_\A(\cF)$ is a $\wh{\cO}_X$-module.
This construction gives a functor $\Gamma_\A$ from
the category of $\cO_X$-modules to the category of
$\wh{\cO}_X$-modules.

\begin{lem} \label{lem:Gamma_exact}
The functor $\Gamma_\A$ is exact and commutes with filtered colimits.
\end{lem}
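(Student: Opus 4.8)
The statement asserts that the functor $\Gamma_\A$ from $\cO_X$-modules to $\wh{\cO}_X$-modules, defined by $\Gamma_\A(\cF) = \Gamma(\wh{X}, \nu_X^* \cF)$, is exact and commutes with filtered colimits. The strategy is to decompose $\Gamma_\A$ into two pieces: the pullback $\nu_X^*$ along the scheme morphism $\nu_X : \wh{X} \to X$, and the global sections functor $\Gamma(\wh{X}, -)$ on $\wh{X}$. The key observation is that $\wh{X} = \Spec\, \wh{\cO}_X$ is affine, so $\Gamma(\wh{X}, -)$ is an exact equivalence between quasi-coherent $\cO_{\wh{X}}$-modules and $\wh{\cO}_X$-modules, and in particular commutes with all colimits. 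Thus everything reduces to properties of $\nu_X^*$.

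First I would record that $\nu_X^*$ commutes with filtered colimits: this is a general fact, since pullback is a left adjoint (to pushforward), and in the affine-target case it is just the base-change functor $- \otimes_{\cO_X(U)} \wh{\cO}_X^S$ on each affine open $U \subset X$ with $S = |X| \setminus |U|$, which visibly commutes with filtered colimits. Composing with $\Gamma(\wh{X}, -)$, which also commutes with filtered colimits (being an exact equivalence on quasi-coherents, as $\wh{X}$ is affine), gives that $\Gamma_\A$ commutes with filtered colimits. For exactness, the point is that $\nu_X$ is flat, which was established in Lemma \ref{lem:nu_flat}; hence $\nu_X^*$ is exact on $\cO_X$-modules. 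More concretely: exactness of a sequence of $\cO_X$-modules can be checked on the affine cover $\{U\}$, and on each $U$ the functor $\nu_X^*$ restricts to $- \otimes_{\cO_X(U)} \wh{\cO}_X^S$, which is exact because $\wh{\cO}_X^S = \prod_{x \in |X|\setminus|U|} \wh{\cO}_{X,x}$ is a flat $\cO_X(U)$-module (a product of flat modules over a one-dimensional noetherian domain, being torsion-free). Then applying $\Gamma(\wh{X}, -)$, which is exact on quasi-coherent sheaves over the affine scheme $\wh{X}$, preserves exactness. Combining the two steps yields the claim.

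The main subtlety — though it is a minor one rather than a genuine obstacle — is that $\nu_X^*$ as written takes values in $\cO_{\wh{X}}$-modules, and one must make sure that $\nu_X^*\cF$ is quasi-coherent so that the equivalence $\Gamma(\wh{X},-)$ applies; but $\nu_X^*$ of any $\cO_X$-module is by construction a quasi-coherent $\cO_{\wh{X}}$-module (pullback of $\cO_X$-modules lands in quasi-coherent sheaves when applied to quasi-coherent sheaves, and more generally one may restrict attention to quasi-coherent $\cF$ — which is the only case used in the sequel, in particular in Lemma \ref{lem:Gamma_exact}'s applications). The only thing requiring care is the patching: the functor $\Gamma_\A$ is a priori a composite of two functors each defined globally, so no patching argument is actually needed for the functoriality; the local description on affine opens is invoked purely to verify flatness and exactness, and this is already packaged in Lemma \ref{lem:nu_flat}. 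I would therefore present the proof in two short paragraphs — one for filtered colimits, one for exactness — each of length a few lines, citing Lemma \ref{lem:nu_flat} and the affineness of $\wh{X}$.
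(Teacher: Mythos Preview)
Your proposal is correct and follows essentially the same approach as the paper: the paper's proof is a terse two-sentence version of what you wrote, invoking affineness of $\wh{X}$ together with Lemma \ref{lem:nu_flat} for exactness, and dismissing the filtered-colimit claim as easily checked. Your expanded account (decomposing $\Gamma_\A$ as $\Gamma(\wh{X},-)\circ \nu_X^*$ and analyzing each piece) is exactly the reasoning behind that terse proof.
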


\begin{proof}
Since $\wh{X}$ is affine, the first claim follows from
Lemma \ref{lem:nu_flat}. The second claim can be checked easily.
\end{proof}

\begin{lem} \label{lem:bI_open}
If $Z \subset X$ is an effective Cartier divisor,
then $\Gamma_\A(\sI_Z)$ is an open ideal of $\wh{\cO}_X$.
Here we regard $\Gamma_\A(\sI_Z)$ as an ideal of $\wh{\cO}_X$
via the homomorphism $\Gamma_\A(\sI_Z) \to \Gamma_\A(\cO_X)
= \wh{\cO}_X$ which is injective due to Lemma \ref{lem:Gamma_exact}.
Conversely, any open ideal of $\wh{\cO}_X$ is of the form
$\Gamma_\A(\sI_Z)$ for some effective Cartier divisor on $X$.
\end{lem}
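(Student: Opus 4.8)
The plan is to prove both directions by reducing to the local structure of $\wh{\cO}_X = \prod_{x \in |X|} \wh{\cO}_{X,x}$ and the classification of open ideals given in Lemma \ref{lem:topology whO_X}. First I would establish the easy bookkeeping: for an effective Cartier divisor $Z \subset X$, the sheaf of ideals $\sI_Z \subset \cO_X$ is invertible, and since $\Gamma_\A$ is exact (Lemma \ref{lem:Gamma_exact}) the inclusion $\sI_Z \inj \cO_X$ induces an injection $\Gamma_\A(\sI_Z) \inj \Gamma_\A(\cO_X) = \wh{\cO}_X$, so $\Gamma_\A(\sI_Z)$ is genuinely an ideal of $\wh{\cO}_X$. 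The quotient $\cO_X/\sI_Z = {i_Z}_* \cO_Z$ is a coherent $\cO_X$-module of finite length, so applying $\Gamma_\A$ to $0 \to \sI_Z \to \cO_X \to \cO_X/\sI_Z \to 0$ gives that $\wh{\cO}_X/\Gamma_\A(\sI_Z)$ is a discrete $\wh{\cO}_X$-module of finite length; in particular $\Gamma_\A(\sI_Z)$ is open by the characterization of open ideals (it contains, hence equals, a product of open ideals supported at the finitely many points of $Z$). This handles the first assertion.

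For the converse I would argue locally. Given an open ideal $\I \subset \wh{\cO}_X$, Lemma \ref{lem:topology whO_X} (3) writes $\I = \prod_{x \in |X|} I_x$ where $I_x \subset \wh{\cO}_{X,x}$ is open for all $x$ and $I_x = \wh{\cO}_{X,x}$ for all but finitely many $x$, say outside a finite set $S$. For each $x \in S$, the open ideal $I_x$ of the complete discrete valuation ring $\wh{\cO}_{X,x}$ is $\fram_{X,x}^{n_x} \wh{\cO}_{X,x}$ for some integer $n_x \ge 0$; since $\cO_{X,x} \to \wh{\cO}_{X,x}$ is faithfully flat and both are DVRs with the same uniformizer, $I_x = \fram_{X,x}^{n_x} \cO_{X,x} \otimes_{\cO_{X,x}} \wh{\cO}_{X,x}$. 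I would then take $Z \subset X$ to be the effective Cartier divisor $\sum_{x \in S} n_x [x]$, i.e. the closed subscheme with $\sI_Z = \prod_{x \in S} \fram_{X,x}^{n_x}$ (using that $X$ is regular of dimension one, so every such finite formal sum of closed points is an effective Cartier divisor, $\sI_Z$ being locally principal). It remains to identify $\Gamma_\A(\sI_Z)$ with $\I$. Because $\Gamma_\A$ commutes with the localization maps $\cO_X \to \cO_{X,x}$ followed by completion — more precisely, the morphism $\nu_X : \wh{X} \to X$ restricts over a small enough affine neighborhood of $x$ to $\Spec \wh{\cO}_{X,x}$ (together with the complementary factors) — the stalk-wise description gives $\Gamma_\A(\sI_Z) = \prod_{x \in S} \fram_{X,x}^{n_x} \wh{\cO}_{X,x} \times \prod_{x \notin S} \wh{\cO}_{X,x} = \I$.

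The main obstacle I anticipate is the last identification: carefully justifying that $\Gamma_\A(\sI_Z)$, computed as the global sections of $\nu_X^* \sI_Z$ over $\wh{X}$, decomposes as the predicted product over the points of $X$. This requires unwinding the construction of $\nu_X$ from the excerpt — that $\wh{X}$ has an open cover by the affines $\Spec \wh{\cO}_X^S$ and that $\nu_U : \Spec \wh{\cO}_X^S \to U$ is induced by $\Gamma(U,\cO_X) \to \wh{\cO}_X^S$ — and then observing that $\nu_X^* \sI_Z$, being the pullback of an invertible ideal sheaf supported at $S$, is $\prod_{x \in S} \fram_{X,x}^{n_x} \wh{\cO}_{X,x} \times \prod_{x \notin S}\wh{\cO}_{X,x}$ inside $\nu_X^* \cO_X = \wh{\cO}_X$, because pullback of an invertible sheaf along $\Spec \wh{\cO}_{X,x} \to \Spec \cO_{X,x}$ is just extension of the defining ideal, and completion does not change the exponent of a principal ideal in a DVR. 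Everything else — exactness of $\Gamma_\A$, finiteness, openness — is already available from Lemmas \ref{lem:Gamma_exact}, \ref{lem:nu_flat}, and \ref{lem:topology whO_X}, so the bulk of the work is this local-global compatibility of $\Gamma_\A$ with the valuation-theoretic description of open ideals.
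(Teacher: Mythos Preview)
Your proposal is correct and follows the same strategy as the paper: both directions rest on Lemma~\ref{lem:topology whO_X} and the product decomposition $\wh{\cO}_X = \prod_x \wh{\cO}_{X,x}$, and for the converse you both read off the effective Cartier divisor from the local data at the finitely many points where the ideal is nontrivial. The only cosmetic difference is in the forward direction: the paper picks a local generator $a'_x$ of $\sI_{Z,x}$ at each closed point, forms $a = (a_x) \in \wh{\cO}_X$, observes it is invertible in $\A_X$ because $Z$ is artinian, and invokes Lemma~\ref{lem:topology whO_X}(2) directly, rather than passing through the exact sequence and the finite-length quotient as you do. One small caution about your route: in an infinite product of DVRs the implication ``$\wh{\cO}_X/\I$ has finite length $\Rightarrow$ $\I$ is open'' is not automatic (there are non-open maximal ideals), so your parenthetical about the support at the finitely many points of $Z$ is doing real work---it amounts to the same pointwise computation the paper carries out explicitly.
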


\begin{proof}
Let $Z \subset X$ be an effective Cartier divisor.
For each closed point $x \in |X|$, let us choose an
element $a'_x \in \cO_{X,x}$ such that the ideal $I_{Z,x}$
of $\cO_{X,x}$ is generated by $a'_x$. Let $a_x$ denote the
image of $a'_x$ in the completion $\wh{\cO}_{X,x}$.
Then by definition the ideal $\Gamma_\A(\sI_Z)$ of $\wh{\cO}_X$
is generated by a single element $a=(a_x)_{x \in |X|} 
\in \wh{\cO}_X$. Since $Z$ is artinian, $a'_x$ is a non-zero-divisor
for any $x \in |X|$ and $a'_x$ is a unit of $\wh{\cO}_{X,x}$
except for finitely many $x$'s. This proves that the
ideal $\Gamma_\A(\sI_Z)$ of $\wh{\cO}_X$ is open.

Let $\I \subset \wh{\cO}_X$ be an open ideal.
It follows from Lemma \ref{lem:topology whO_X} (3)
that the ideal $\I$ is generated by a single element $a =(a_x)_{x \in |X|}
\in \wh{\cO}_X$ which is invertible in $\A_X$.
Let us take a finite subset $S \subset |X|$ such that
$a_x$ is a unit of $\wh{\cO}_{X,x}$ for any $x \in |X| \setminus S$.
Let $R = \prod_{x\in S} \wh{\cO}_{X,x}/a_x \wh{\cO}_{X,x}$
and set $Z =\Spec\, R$. Since the composite
$\cO_{X,x} \inj \wh{\cO}_{X,x} \surj \wh{\cO}_{X,x}/a_x \wh{\cO}_{X,x}$
is surjective for any $x \in S$, one can regard $Z$ as a
closed subscheme of $X$. It is then easy to check that
the ideal $\Gamma_\A(\sI_Z)$ of $\wh{\cO}_X$ is equal to $\I$.
This completes the proof.
\end{proof}

Let $Z \subset X$ be an effective Cartier divisor.
Since $Z$ is artinian, $Z$ is affine and 
$\Gamma(Z,\cO_Z) \cong \prod_{z \in Z} \cO_{Z,z}$.
The inclusion $Z \inj X$ induces a surjective homomorphism
$\cO_{X,z} \surj \cO_{Z,z}$ for each $z \in Z$. 
Hence we obtain a surjective homomorphism
$\pi_Z\colon \wh{\cO}_X \to \Gamma(Z,\cO_Z)$.

Let us consider the diagram
\begin{equation} \label{ZX}
\begin{CD}
Z @>>> \wh{X} \\
@| @VV{\nu_X}V \\
Z @>{i_Z}>> X
\end{CD},
\end{equation}
where the upper horizontal arrow is the 
closed immersion $Z \inj \wh{X}$ given by $\pi_Z$.
It is easy to check that the diagram 
\eqref{ZX} is commutative.

\begin{lem} \label{lem:ZX_cartesian}
The diagram \eqref{ZX} is cartesian.
\end{lem}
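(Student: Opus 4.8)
The plan is to reduce the claim to an elementary computation with rings. Both the formation of the fibre product and the property of a commutative square being cartesian are local on the base, so I would first cover $X$ by affine opens and thereby assume $X = \Spec R$, with $R$ a regular noetherian ring of Krull dimension one; over such an open the corresponding part of $\wh{X}$ is $\Spec \wh{\cO}_X$ with $\wh{\cO}_X = \prod_{x \in |X|} \wh{\cO}_{X,x}$, and $\nu_X$ is induced by the ring map $R \to \wh{\cO}_X$. Write $I \subset R$ for the invertible ideal defining $Z$, so that $Z = \Spec(R/I)$, $i_Z$ is $\Spec$ of $R \to R/I$, and the closed immersion across the top of \eqref{ZX} is $\Spec$ of the projection $\pi_Z \colon \wh{\cO}_X \to \Gamma(Z,\cO_Z)$. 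Then $Z \times_X \wh{X} = \Spec\bigl( (R/I) \otimes_R \wh{\cO}_X \bigr) = \Spec(\wh{\cO}_X / I\wh{\cO}_X)$, so it suffices to exhibit a natural isomorphism $\wh{\cO}_X / I\wh{\cO}_X \xto{\ \cong\ } \Gamma(Z,\cO_Z)$ under which the two projections of the fibre product become, respectively, the map $\Spec \pi_Z$ and the identity. (Equivalently, one could argue without reducing to the affine case by using that $\nu_X$ is flat with $\wh{X}$ affine and that $\Gamma_\A$ is exact, cf. Lemmas \ref{lem:nu_flat}, \ref{lem:Gamma_exact} and \ref{lem:bI_open}, applying $\Gamma_\A$ to $0 \to \sI_Z \to \cO_X \to {i_Z}_*\cO_Z \to 0$.)

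Next I would compute $\wh{\cO}_X / I\wh{\cO}_X$ factor by factor. Since $R$ is noetherian, $I$ is finitely generated, and a short computation with generators shows that inside the product $\wh{\cO}_X = \prod_x \wh{\cO}_{X,x}$ one has $I\wh{\cO}_X = \prod_x I\wh{\cO}_{X,x}$; hence $\wh{\cO}_X / I\wh{\cO}_X \cong \prod_{x \in |X|} \wh{\cO}_{X,x}/I\wh{\cO}_{X,x}$. For $x \notin Z$ the factor is zero because $I\cO_{X,x} = \cO_{X,x}$. For $x \in Z$ the ring $\cO_{X,x}$ is a discrete valuation ring and $I\cO_{X,x}$ is a power $\fram_{X,x}^{m}$ of the maximal ideal; since $\cO_{X,x}/\fram_{X,x}^{m}$ is artinian it is already $\fram_{X,x}$-adically complete, so the comparison map $\cO_{X,x}/I\cO_{X,x} \to \wh{\cO}_{X,x}/I\wh{\cO}_{X,x}$ is an isomorphism and the factor is $\cO_{Z,x}$. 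Taking the product over $x$ identifies $\wh{\cO}_X / I\wh{\cO}_X$ with $\prod_{x \in Z} \cO_{Z,x} = \Gamma(Z,\cO_Z)$. I would then check that this identification is exactly $\pi_Z$ (both are the canonical projections), and that under it the first projection $Z \times_X \wh{X} \to Z$ becomes the canonical isomorphism $\Spec \Gamma(Z,\cO_Z) \cong \Spec(R/I)$ coming from the decomposition of the artinian ring $R/I$ into its localizations at the points of $\Supp Z$; commutativity of \eqref{ZX} being already granted, this shows the square is cartesian.

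The computations above are routine; the only delicate points, and hence the main (mild) obstacle, are organizational. One must make the reduction to the affine case precise — a dense affine open $\Spec R$ of $X$ exists because each connected component of $X$ is integral, and one should verify that over such an open the restriction of $\nu_X$ is indeed $\Spec$ of the map $R \to \prod_{x \in |X|} \wh{\cO}_{X,x}$ used in the construction of $\nu_X$, with $Z$ restricting to the expected effective Cartier divisor. One must also verify carefully that the isomorphism $\wh{\cO}_X / I\wh{\cO}_X \cong \Gamma(Z,\cO_Z)$ is compatible with all the maps in sight, so that it genuinely exhibits $Z$ together with $\id_Z$ and $\Spec \pi_Z$ as the fibre product, rather than merely an abstractly isomorphic scheme. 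Neither point is hard, but together they constitute the substance of the argument.
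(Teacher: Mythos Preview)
Your proposal is correct and follows essentially the same approach as the paper: reduce to the affine case $X=\Spec R$, then identify $\wh{\cO}_X/I\wh{\cO}_X$ with $\prod_{x\in Z}\cO_{Z,x}=\Gamma(Z,\cO_Z)$ factor by factor. The paper's version is slightly more streamlined in that it further localizes so that $\sI_Z$ is principal (automatic on a regular one-dimensional scheme), which lets it skip your verification that $I\wh{\cO}_X=\prod_x I\wh{\cO}_{X,x}$; otherwise the arguments coincide.
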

\begin{proof}
It suffices to prove the claim Zariski locally on $X$.
Hence we may assume that $X = \Spec\, R$ is affine
and the ideal sheaf $\sI_Z$ is generated by a single element
$a \in \Gamma(X,\cO_X) =R$.
Then for any closed point $x \in |X|$ we have
$\cO_{Z,x} = \cO_{X,x}/a \cO_{X,x}$.
Since $\Gamma(Z,\cO_Z) \cong \prod_{x \in Z} \cO_{Z,x}$,
we have $\Gamma(Z,\cO_Z) \cong \wh{\cO}_X/a \wh{\cO}_X$
and the claim is proved.
\end{proof}

\begin{lem} \label{lem:XN}
Let $Z \subset X$ be an effective Cartier divisor and
let $N$ be an $\cO_X$-module annihilated by $\sI_Z$.
Then there exists a natural isomorphism
\begin{equation} \label{XN}
\Gamma_\A(N) \cong \Gamma(X, N)
\end{equation}
of $\wh{\cO}_X$-modules.
Here we regard the right-hand side as an $\wh{\cO}_X$-module
via the isomorphism $\Gamma(X,N) \cong \Gamma(X,{i_Z}_* i_Z^* N)
= \Gamma(Z,i_Z^*N)$ and the homomorphism 
$\pi_Z\colon  \wh{\cO}_X \to \Gamma(Z,\cO_Z)$.
\end{lem}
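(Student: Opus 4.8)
The plan is to construct the isomorphism \eqref{XN} and then verify naturality by reducing to the affine case. First I would use Lemma \ref{lem:ZX_cartesian}, which identifies $Z$ as the fiber product $Z \times_X \wh{X}$. Since $N$ is annihilated by $\sI_Z$, the adjunction homomorphism $N \to {i_Z}_* i_Z^* N$ is an isomorphism, so $\nu_X^* N \cong \nu_X^* {i_Z}_* i_Z^* N$. By flat base change along the cartesian square \eqref{ZX} (using that $\nu_X$ is flat by Lemma \ref{lem:nu_flat}, or more simply that $i_Z$ is a closed immersion and everything in sight is affine), the pullback $\nu_X^* {i_Z}_* i_Z^* N$ is the pushforward along $Z \inj \wh{X}$ of $i_Z^* N$ pulled back to $Z$; but the left vertical arrow in \eqref{ZX} is the identity on $Z$, so this pullback is just $i_Z^* N$ itself regarded as a sheaf on $Z \subset \wh{X}$.

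Taking global sections over $\wh{X}$ then gives
\[
\Gamma_\A(N) = \Gamma(\wh{X}, \nu_X^* N)
\cong \Gamma(Z, i_Z^* N) \cong \Gamma(X, {i_Z}_* i_Z^* N)
\cong \Gamma(X, N),
\]
where the middle isomorphism holds because $Z \inj \wh{X}$ is a closed immersion (so $\Gamma$ of the pushforward equals $\Gamma$ over $Z$), and the last isomorphism is induced by the adjunction isomorphism $N \cong {i_Z}_* i_Z^* N$. The $\wh{\cO}_X$-module structure on the right-hand side is exactly the one described in the statement: it arises from the $\Gamma(Z,\cO_Z)$-module structure on $\Gamma(Z, i_Z^* N)$ together with $\pi_Z : \wh{\cO}_X \to \Gamma(Z,\cO_Z)$, and this is compatible with the $\wh{\cO}_X$-module structure on $\Gamma_\A(N)$ because $\nu_X$ factors through $Z$ on the closed subscheme cut out by $\Gamma_\A(\sI_Z)$ (Lemma \ref{lem:bI_open}).

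For naturality in $N$: given a homomorphism $N \to N'$ of $\cO_X$-modules each annihilated by $\sI_Z$ (for a common $Z$, which always exists), all the isomorphisms above are functorial --- adjunction $N \to {i_Z}_* i_Z^* N$ is a natural transformation, $i_Z^*$ and ${i_Z}_*$ are functors, and $\Gamma$ is a functor --- so the square commutes. One should also check independence of the choice of $Z$: if $Z \subset Z'$ are two effective Cartier divisors with $\sI_{Z'} \subset \sI_Z$ both annihilating $N$, the two resulting isomorphisms agree because $i_Z^* N \cong i_{Z'}^* N$ compatibly (both equal the restriction of $N$ to its support) and $\pi_Z$ factors through $\pi_{Z'}$. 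I expect the main obstacle to be purely bookkeeping: carefully checking that the $\wh{\cO}_X$-module structure transported through the chain of isomorphisms is literally the one in the statement rather than merely an abstractly isomorphic one, which requires unwinding the definition of $\Gamma_\A$ and of $\pi_Z$ and using Lemma \ref{lem:ZX_cartesian} to see that $\nu_X$ restricted to the vanishing locus of $\Gamma_\A(\sI_Z)$ is the composite $Z \xto{=} Z \xto{i_Z} X$. Everything else is a routine application of flat (indeed closed-immersion) base change.
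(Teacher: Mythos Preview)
Your proof is correct and follows essentially the same approach as the paper: both arguments use that the diagram \eqref{ZX} is cartesian (Lemma \ref{lem:ZX_cartesian}), that $\nu_X$ is flat (Lemma \ref{lem:nu_flat}), and then apply flat base change to $i_Z^* N$. The paper compresses all of this into a single citation of \cite[Lemme (2.3.1)]{EGAIV}, whereas you unpack the base-change isomorphism explicitly and additionally spell out the compatibility of $\wh{\cO}_X$-module structures and the naturality in $N$, which the paper leaves implicit.
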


\begin{proof}
Since $\nu_X$ is flat and the diagram \eqref{ZX} is cartesian, 
we obtain the desired isomorphism \eqref{XN}
by applying 
\cite[Lemme (2.3.1)]{EGAIV} to $i_Z^* N$.
\end{proof}

\begin{cor} \label{cor:submod}
Let $N$ be an $\cO_X$-module of finite length.
Then the functor $\Gamma_\A$ gives a bijection from
the set of $\cO_X$-submodules of $N$ to
the set of $\wh{\cO}_X$-submodules of $\Gamma_\A(N)$.
\end{cor}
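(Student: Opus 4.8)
The plan is to deduce Corollary~\ref{cor:submod} from Lemma~\ref{lem:XN} together with the exactness of $\Gamma_\A$ (Lemma~\ref{lem:Gamma_exact}). First I would reduce to a situation in which Lemma~\ref{lem:XN} applies: since $N$ is an $\cO_X$-module of finite length, it is annihilated by $\sI_Z$ for some effective Cartier divisor $Z \subset X$, and any $\cO_X$-submodule $N' \subset N$ is also annihilated by $\sI_Z$. Thus for every submodule $N' \subset N$ the natural isomorphism \eqref{XN} identifies $\Gamma_\A(N')$ with $\Gamma(X,N') = \Gamma(Z,i_Z^*N')$, compatibly with the inclusion into $\Gamma_\A(N) \cong \Gamma(Z,i_Z^*N)$; here one uses that the isomorphisms of Lemma~\ref{lem:XN} are natural in $N$, so they commute with the inclusion maps. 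Since $\Gamma_\A$ is exact (Lemma~\ref{lem:Gamma_exact}), the map $\Gamma_\A(N') \to \Gamma_\A(N)$ is injective, so $\Gamma_\A(N')$ is a genuine $\wh{\cO}_X$-submodule of $\Gamma_\A(N)$ and the assignment $N' \mapsto \Gamma_\A(N')$ is a well-defined, order-preserving map from $\cO_X$-submodules of $N$ to $\wh{\cO}_X$-submodules of $\Gamma_\A(N)$.

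Next I would show this map is injective. If $N'_1, N'_2 \subset N$ are $\cO_X$-submodules with $\Gamma_\A(N'_1) = \Gamma_\A(N'_2)$ as subsets of $\Gamma_\A(N)$, then by the naturality identification this says $\Gamma(Z,i_Z^* N'_1) = \Gamma(Z,i_Z^* N'_2)$ inside $\Gamma(Z,i_Z^* N)$; since $Z \inj X$ is a closed immersion through which the supports of all these modules factor, taking global sections over the affine artinian scheme $Z$ is faithful on submodules of a fixed module, whence $N'_1 = N'_2$. (Alternatively one can argue directly: $\nu_X$ is flat, hence $N' \mapsto \nu_X^* N'$ is injective on submodules, and $\Gamma(\wh{X},-)$ is exact since $\wh{X}$ is affine.)

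For surjectivity, let $\bM \subset \Gamma_\A(N)$ be an $\wh{\cO}_X$-submodule. Under the identification $\Gamma_\A(N) \cong \Gamma(Z,i_Z^*N)$, the $\wh{\cO}_X$-action factors through the surjection $\pi_Z: \wh{\cO}_X \surj \Gamma(Z,\cO_Z)$, so $\bM$ is automatically a $\Gamma(Z,\cO_Z)$-submodule; by the equivalence of categories between coherent $\cO_X$-modules of finite length supported on $Z$ (equivalently $\Gamma(Z,\cO_Z)$-modules, as $Z$ is affine artinian) and their global sections, $\bM = \Gamma(Z,i_Z^* N')$ for a unique $\cO_X$-submodule $N' \subset N$, and then $\Gamma_\A(N') = \bM$. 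I do not expect any real obstacle here; the only point requiring care is the compatibility of the isomorphism of Lemma~\ref{lem:XN} with inclusions of submodules, i.e.\ checking that the ``natural'' isomorphism is genuinely functorial in $N$ — this follows by inspecting the construction via \cite[Lemme (2.3.1)]{EGAIV}, which produces a base-change isomorphism natural in the module being pulled back.

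\begin{proof}
Since $N$ has finite length, there is an effective Cartier divisor $Z \subset X$ with $\sI_Z N = 0$; every $\cO_X$-submodule $N' \subset N$ also satisfies $\sI_Z N' = 0$. By Lemma~\ref{lem:XN} we have, naturally in such modules, $\Gamma_\A(N') \cong \Gamma(X,N') \cong \Gamma(Z,i_Z^* N')$, and these isomorphisms are compatible with inclusions of submodules of $N$. By Lemma~\ref{lem:Gamma_exact} the functor $\Gamma_\A$ is exact, so $\Gamma_\A$ sends the inclusion $N' \inj N$ to an inclusion $\Gamma_\A(N') \inj \Gamma_\A(N)$; hence $N' \mapsto \Gamma_\A(N')$ defines a map from the set of $\cO_X$-submodules of $N$ to the set of $\wh{\cO}_X$-submodules of $\Gamma_\A(N)$.

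This map is injective: the morphism $\nu_X : \wh{X} \to X$ is flat by Lemma~\ref{lem:nu_flat}, so $N' \mapsto \nu_X^* N'$ is injective on $\cO_X$-submodules of $N$, and since $\wh{X}$ is affine the global section functor $\Gamma(\wh{X},-)$ is exact; composing, $N' \mapsto \Gamma_\A(N')$ is injective.

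Finally, the map is surjective. Let $\bM$ be an $\wh{\cO}_X$-submodule of $\Gamma_\A(N)$. Under the identification $\Gamma_\A(N) \cong \Gamma(Z,i_Z^* N)$, the $\wh{\cO}_X$-module structure factors through the surjection $\pi_Z : \wh{\cO}_X \to \Gamma(Z,\cO_Z)$, so $\bM$ is a $\Gamma(Z,\cO_Z)$-submodule of $\Gamma(Z,i_Z^* N)$. Since $Z$ is affine and artinian, the functor of global sections gives an equivalence between coherent $\cO_Z$-modules and $\Gamma(Z,\cO_Z)$-modules, and an equivalence between coherent $\cO_X$-modules annihilated by $\sI_Z$ and coherent $\cO_Z$-modules; hence there is an $\cO_X$-submodule $N' \subset N$ with $\Gamma(Z,i_Z^* N') = \bM$. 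Then $\Gamma_\A(N') = \bM$. This proves the claim.
\end{proof}
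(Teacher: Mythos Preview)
Your proof is correct and follows essentially the same route as the paper: choose an effective Cartier divisor $Z$ annihilating $N$, pass via Lemma~\ref{lem:XN} to $\Gamma(Z,\cO_Z)$-submodules of $\Gamma(Z,i_Z^*N)$, and use that $Z$ is affine artinian so this lattice of submodules is in bijection with $\cO_X$-submodules of $N$. The paper phrases this more tersely as a composite of two bijections ($\cO_X$-submodules of $N$ $\leftrightarrow$ $\cO_Z$-submodules of $i_Z^*N$ $\leftrightarrow$ $\Gamma(Z,\cO_Z)$-submodules of $\Gamma(Z,i_Z^*N)$) followed by Lemma~\ref{lem:XN}, whereas you argue injectivity and surjectivity separately and explicitly invoke Lemma~\ref{lem:Gamma_exact} and Lemma~\ref{lem:nu_flat}; but the underlying content is the same.
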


\begin{proof}
Let us take an effective Cartier divisor $Z \subset X$
such that $N$ is annihilated by $\sI_Z$.
Then the functor $i_Z^*$ gives a bijection from
the set of $\cO_X$-submodules of $N$ to
the set of $\cO_Z$-submodules of $i_Z^* N$.
Since $Z$ is affine and the underlying topological space of $Z$
is discrete, the global section functor gives
a bijection from the set of $\cO_Z$-submodules of $i_Z^* N$
to the set of $\Gamma(Z,\cO_Z)$-submodules of 
$\Gamma(Z,i_Z^* N) = \Gamma(X, {i_Z}_* i_Z^* N)
\cong \Gamma(X,N)$. Hence the claim follows from Lemma \ref{lem:XN}.
\end{proof}

\begin{lem} \label{lem:isom_A}
We have a canonical isomorphism 
$\Gamma_\A({j_X}_* \cO_{\eta_X}) \cong \A_X$.
\end{lem}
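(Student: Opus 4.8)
The plan is to identify $\Gamma_\A({j_X}_* \cO_{\eta_X})$ with $\A_X$ by working Zariski-locally and taking a suitable filtered colimit. First I would observe that ${j_X}_* \cO_{\eta_X} = \cK_X$ is the sheaf of total rings of fractions, which can be written as the filtered colimit $\cK_X = \varinjlim_{Z} \sI_Z^{-1}$, where $Z$ runs over the effective Cartier divisors on $X$ and $\sI_Z^{-1} = \cHom_{\cO_X}(\sI_Z, \cO_X)$ is regarded as an $\cO_X$-submodule of $\cK_X$. Since $\Gamma_\A$ commutes with filtered colimits by Lemma \ref{lem:Gamma_exact}, we get $\Gamma_\A(\cK_X) \cong \varinjlim_Z \Gamma_\A(\sI_Z^{-1})$.

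Next I would compute each term $\Gamma_\A(\sI_Z^{-1})$. Since $\sI_Z$ is an invertible $\cO_X$-module, applying the exact functor $\Gamma_\A$ (Lemma \ref{lem:Gamma_exact}) to the inclusion $\sI_Z \hookrightarrow \cO_X$ and to a local trivialization shows that $\Gamma_\A(\sI_Z^{-1})$ is naturally identified with $(\Gamma_\A(\sI_Z))^{-1} \wh{\cO}_X$, the $\wh{\cO}_X$-submodule of $\Frac$-type generated by the inverse of a local generator; more precisely, by Lemma \ref{lem:bI_open} the ideal $\Gamma_\A(\sI_Z)$ is an open ideal $\I_Z \subset \wh{\cO}_X$, and one checks that the canonical pairing $\sI_Z \otimes_{\cO_X} \sI_Z^{-1} \to \cO_X$ induces, after applying $\Gamma_\A$, an identification of $\Gamma_\A(\sI_Z^{-1})$ with $\I_Z^{-1} = \I_Z^{-1}\wh{\cO}_X$ as $\wh{\cO}_X$-submodules of $\A_X$. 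Since every open ideal of $\wh{\cO}_X$ arises as $\Gamma_\A(\sI_Z)$ for some $Z$ (the converse assertion in Lemma \ref{lem:bI_open}), the colimit $\varinjlim_Z \I_Z^{-1}$ runs cofinally over all open ideals, and by the description of $\A_X$ in Section \ref{sec:adeles} (in particular $\A_X = \varinjlim_{\I} \I^{-1} \wh{\cO}_X$, used already in the proof of Lemma \ref{lem:freeness}) this colimit is exactly $\A_X$. Assembling these identifications gives the canonical isomorphism $\Gamma_\A({j_X}_* \cO_{\eta_X}) \cong \A_X$.

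The main obstacle I anticipate is checking that the identification $\Gamma_\A(\sI_Z^{-1}) \cong \I_Z^{-1}\wh{\cO}_X$ is canonical and compatible with the transition maps in the two colimits, so that passing to the limit yields a genuinely canonical (basis-independent) isomorphism rather than one depending on choices of local generators. This is handled by phrasing everything in terms of the duality pairing $\sI_Z \otimes \sI_Z^{-1} \to \cO_X$ and the multiplication $\cK_X \otimes \cK_X \to \cK_X$, both of which are intrinsic, and using that $\Gamma_\A$ is a tensor-compatible functor on the relevant modules because $\nu_X$ is flat (Lemma \ref{lem:nu_flat}) and $\sI_Z$ is invertible; the remaining verifications are routine diagram chases using Lemma \ref{lem:Gamma_exact} and the cartesian square \eqref{ZX}.
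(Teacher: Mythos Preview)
Your proof is correct but takes a genuinely different route from the paper's. The paper reduces to the case $X=\Spec R$ affine and integral, identifies $\Gamma_\A({j_X}_* \cO_{\eta_X})$ with $\wh{\cO}_X \otimes_R F$ where $F=\Frac R$, and then constructs an explicit ring homomorphism $\lambda:\wh{\cO}_X \otimes_R F \to \A_X$ from the diagonal embedding $F\hookrightarrow \A_X$; surjectivity of $\lambda$ is obtained by clearing denominators (for any $a\in\A_X$ there is $b\in R\setminus\{0\}$ with $ba\in\wh{\cO}_X$), and injectivity by factoring through $\A_X\otimes_R F\cong\A_X$. Your approach instead writes $\cK_X=\varinjlim_Z \sI_Z^{-1}$ as a filtered colimit over effective Cartier divisors and invokes Lemmas~\ref{lem:Gamma_exact} and~\ref{lem:bI_open} together with the description $\A_X=\varinjlim_{\I}\I^{-1}\wh{\cO}_X$ already used in the proof of Lemma~\ref{lem:freeness}. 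Your route has the advantage of working globally on $X$ without the reduction to the affine integral case and of reusing earlier lemmas cleanly; the paper's route is more elementary (no need to verify the tensor-compatibility of $\Gamma_\A$ on invertible sheaves) and makes the ring-homomorphism nature of the map immediate. Both identify the same canonical map.
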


\begin{proof}
To prove the claim, we may assume that $X = \Spec\, R$ 
is affine and integral.
Then $R$ is a Dedekind domain.
Let $F$ denote the field of fractions of $R$.
Then we have $\Gamma_\A({j_X}_* \cO_{\eta_X}) =
\wh{\cO}_X \otimes_R F$, where we regard
$\wh{\cO}_X$ as an $R$-algebra via the diagonal
map $R \to \prod_{x \in |X|} \cO_{X,x} \to \wh{\cO}_X$.
Since for any element $a \in F$, the composite
$F \to \Frac\, \cO_{X,x} \to \Frac\, \wh{\cO}_{X,x}$
maps $a$ to an element of $\wh{\cO}_{X,x}$ except for
finitely many closed points $x \in |X|$.
Hence the diagonal map 
$F \to \prod_{x \in |X|} \Frac\, \wh{\cO}_{X,x}$
induces a homomorphism $F \to \A_X$ which makes the
diagram
$$
\begin{CD}
R @>{\subset}>> F \\
@VVV @VVV \\
\wh{\cO}_X @>{\subset}>>\A_X
\end{CD}
$$
commutative. This diagram induces a ring homomorphism
$\lambda \colon  \Gamma_\A({j_X}_* \cO_{\eta_X}) \to \A_X$.
The homomorphism $\lambda$ is surjective since 
for any element $a$ of $\A_X$, there exists a non-zero
element $b\in R$ such that $ba$ belongs to 
$\wh{\cO}_X$.
The homomorphism $\lambda$ is injective since it is
equal to the composite of the injective homomorphism
$\Gamma_\A({j_X}_* \cO_{\eta_X}) =
\wh{\cO}_X \otimes_R F \inj \A_X \otimes_{R} F$
with the inverse of the isomorphism 
$\A_X \cong \A_X \otimes_{R} F$.
\end{proof}

Lemma \ref{lem:isom_A} provides an isomorphism
$\Gamma_\A(V) \cong \A_X^{\oplus d}$. We denote this
isomorphism by $\lambda^d$.

Let us construct a map $T_\A\colon  \Lat^d \to \Lat^d_\A$.
Later we will prove that this map is bijective.
Let $L \in \Lat^d$. Since the functor $\Gamma_\A$
is exact, the inclusion $L \inj V$ induces
an injective homomorphism $\Gamma_\A(L) \inj \A_X^{\oplus d}$
of $\wh{\cO}_X$-modules.
Via this injection we regard 
$\Gamma_\A(L)$ as an $\wh{\cO}_X$-submodule of $\A_X^{\oplus d}$.
\begin{lem}
$\Gamma_\A(L)$ is an $\wh{\cO}_X$-lattice of $\A_X^{\oplus d}$.
\end{lem}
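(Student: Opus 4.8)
The plan is to show that $\Gamma_\A(L)$ is both open and compact in $\A_X^{\oplus d}$, which are exactly the two conditions defining an element of $\Lat^d_\A$. The key observation is that an $\cO_X$-lattice $L$ in $V = \cK_X^{\oplus d}$ is sandwiched between two ``standard'' lattices: since $j_X^* V/L = 0$ and since $L$ is coherent, the quotients $(L + \cO_X^{\oplus d})/L$ and $(L + \cO_X^{\oplus d})/\cO_X^{\oplus d}$ are coherent $\cO_X$-modules of finite length by Lemma \ref{lem:lattice_V}. Hence there exist effective Cartier divisors $Z, W \subset X$ with $\sI_Z \cdot (L + \cO_X^{\oplus d}) \subset \cO_X^{\oplus d}$ and $\sI_W \cdot (L + \cO_X^{\oplus d}) \subset L$, i.e.
\[
\sI_W^{\oplus d} \cdot \cO_X^{\oplus d} \subset L \subset \sI_Z^{-\oplus d}\cdot \cO_X^{\oplus d}
\]
(interpreting the right-hand inclusion after tensoring with the invertible sheaf $\sI_Z^{-1}$). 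Applying the exact functor $\Gamma_\A$ (Lemma \ref{lem:Gamma_exact}), and using that $\Gamma_\A(\cO_X^{\oplus d}) = \wh{\cO}_X^{\oplus d}$ together with the identification $\Gamma_\A(\sI_Z) = \I_Z$ for an open ideal $\I_Z \subset \wh{\cO}_X$ (Lemma \ref{lem:bI_open}), we obtain
\[
\I_W \wh{\cO}_X^{\oplus d} \subset \Gamma_\A(L) \subset \I_Z^{-1} \wh{\cO}_X^{\oplus d}
\]
inside $\A_X^{\oplus d} = \Gamma_\A(V)$ (via $\lambda^d$).

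From here the conclusion is routine. The lower bound $\I_W \wh{\cO}_X^{\oplus d} \subset \Gamma_\A(L)$ shows that $\Gamma_\A(L)$ contains an $\wh{\cO}_X$-lattice (by Lemma \ref{lem:lattice_basics} (1) and (3), $\I_W \wh{\cO}_X^{\oplus d} \in \Lat^d_\A$), hence $\Gamma_\A(L)$ is open. For compactness, note that by the upper bound $\Gamma_\A(L)$ is an $\wh{\cO}_X$-submodule of the lattice $\I_Z^{-1}\wh{\cO}_X^{\oplus d}$ containing $\I_W\wh{\cO}_X^{\oplus d}$; since the quotient $\I_Z^{-1}\wh{\cO}_X^{\oplus d}/\I_W\wh{\cO}_X^{\oplus d}$ is finite (Lemma \ref{lem:finite_index}), $\Gamma_\A(L)$ is a finite union of cosets of $\I_W\wh{\cO}_X^{\oplus d}$, each compact (Lemma \ref{lem:lattice_basics} (1)), so $\Gamma_\A(L)$ is compact. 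Being open and compact, $\Gamma_\A(L) \in \Lat^d_\A$.

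The main point requiring care — though not a deep obstacle — is making the sandwich estimate precise: one must choose the Cartier divisors $Z, W$ correctly and verify that $\Gamma_\A$ carries the inclusions $\sI_W^{\oplus d}\cO_X^{\oplus d} \subset L$ and $L \subset \sI_Z^{-1}\cdot\cO_X^{\oplus d}$ (the latter obtained by twisting) to the claimed inclusions of $\wh{\cO}_X$-modules, using exactness of $\Gamma_\A$ and the identification of $\Gamma_\A(\sI_Z)$ with the open ideal $\I_Z$. Once the two-sided bound is in place, everything else follows from the basic finiteness and compactness lemmas already established about $\Lat^d_\A$.
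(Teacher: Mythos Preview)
Your proof is correct and follows essentially the same strategy as the paper: exhibit a standard lattice inside $\Gamma_\A(L)$ to get openness, and then use a finite-index argument to get compactness. The paper is slightly more economical in that it uses only a single Cartier divisor $Z$ with $\sI_Z^{\oplus d} \subset L$, and obtains compactness by invoking Lemma~\ref{lem:XN} to identify $\Gamma_\A(L)/\Gamma_\A(\sI_Z)^{\oplus d} \cong \Gamma(X, L/\sI_Z^{\oplus d})$, which is finite since $L/\sI_Z^{\oplus d}$ has finite length; your two-sided sandwich plus Lemma~\ref{lem:finite_index} achieves the same end by a minor variant.
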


\begin{proof}
Let us choose an effective Cartier divisor $Z \subset X$
such that $\cO_X^{\oplus d}/(L\cap \cO_X^{\oplus d})$
is annihilated by $\sI_Z$.
Then we have $L \supset \sI_Z^{\oplus d}$.
It follows from Lemma \ref{lem:bI_open} that
$\Gamma_\A(\sI_Z)^{\oplus d}$ is an $\wh{\cO}_X$-lattice of $\A_X^{\oplus d}$. 
Since $\Gamma_\A(L) \supset \Gamma_\A(\sI_Z)^{\oplus d}$,
it follows from Lemma \ref{lem:bI_open} that $\Gamma_\A(L)$ is an
open $\wh{\cO}_X$-submodule of $\A_X^{\oplus d}$.
It follows from Lemma \ref{lem:XN} that we have
isomorphisms $\Gamma_\A(L)/\Gamma_\A(\sI_Z)^{\oplus d}
\cong \Gamma_\A(L/\sI_Z^{\oplus d})
\cong \Gamma(X, L/\sI_Z^{\oplus d})$.
Since $L/\sI_Z^{\oplus d}$ is of finite length,
it follows that $\Gamma_\A(L)/\Gamma_\A(\sI_Z)^{\oplus d}$
is a finite abelian group.
Since $\Gamma_\A(\sI_Z)^{\oplus d}$ is compact, 
it follows that $\Gamma_\A(L)$ is compact.
This proves that $\Gamma_\A(L)$ is an $\wh{\cO}_X$-lattice of 
$\A_X^{\oplus d}$.
\end{proof}

\begin{lem} \label{lem:T_A_injective}
The map $T_\A \colon  \Lat^d \to \Lat^d_\A$ given by $\Gamma_\A$ is injective.
\end{lem}

\begin{proof}
Let $L,L' \in \Lat^d$ and suppose that $\Gamma_\A(L) = \Gamma_\A(L')$
as $\wh{\cO}_X$-lattices of $\A_X^{\oplus d}$.
Since $\Gamma_\A$ is exact, we have 
$\Gamma_\A(L/(L\cap L')) \cong \Gamma_\A(L'/(L\cap L')) \cong 0$.
It follows from Lemma \ref{lem:XN} that
$L/(L\cap L') \cong L/(L\cap L') \cong 0$.
Hence we have $L = L'$.
\end{proof}

\begin{lem} \label{lem:T_A_surjective}
The map $T_\A \colon  \Lat^d \to \Lat^d_\A$ given by $\Gamma_\A$ is surjective.
\end{lem}

\begin{proof}
It suffices to prove the surjectivity.
Let $\bL$ be an arbitrary $\wh{\cO}_X$-lattice of $\A_X^{\oplus d}$.
Then there exists open ideals $\I, \bJ \subset \wh{\cO}_X$ satisfying
$(\I^{-1})^{\oplus d} \subset \bL \subset \bJ^{\oplus d}$.
It follows from Lemma \ref{lem:bI_open} that there exist effective
Cartier divisors $Z,W \subset X$ satisfying 
$\I = \Gamma_\A(\sI_Z)$ and 
$\bJ = \Gamma_\A(I_W)$ as ideals of $\wh{\cO}_X$.
It follows from Corollary \ref{cor:submod} that there exists a unique
$\cO_X$-submodule $N$ of $(\sI_Z^{-1})^{\oplus d}/I_W^{\oplus d}$ such that
$\Gamma_\A(N) = \bL/\bJ^{\oplus d}$ as $\wh{\cO}_X$-submodules 
of $(\I^{-1})^{\oplus d}/\bL \subset \bJ^{\oplus d}$.
Let $L$ denote the inverse image of $N$ under the
quotient homomorphism 
$(\sI_Z^{-1})^{\oplus d} \surj (\sI_Z^{-1})^{\oplus d}/I_W^{\oplus d}$.
It follows from Lemma \ref{lem:lattice_V} that 
$L$ is an $\cO_X$-lattice of $V$.
It is then straightforward to check that
$\Gamma_\A(L) = \bL$ as an $\wh{\cO}_X$-submodule of $\A_X^{\oplus d}$.
This prove that the map $T_\A$ is surjective.
\end{proof}

It follows from Lemma \ref{lem:T_A_injective} and 
Lemma \ref{lem:T_A_surjective}, that the map $T_\A$ is bijective.
it is easy to check that the map $T_\A$ preserves the orderings.
Hence the map $T_\A$ is an isomorphism of posets.

\subsubsection{}
\label{sec:Pair adele}
We let $\Pair^d_\A$ denote the following poset.
The elements of $\Pair^d_\A$ are the pairs 
$(\bL_1,\bL_2)$ of elements in $\Lat^d_\A$ with $\bL_1 \le \bL_2$.
For two elements $(\bL_1,\bL_2)$ and $(\bL'_1,\bL'_2)$
in $\Pair^d_\A$, we have $(\bL_1,\bL_2) \le (\bL'_1,\bL'_2)$
if and only if $\bL'_1 \le \bL_1 \le \bL_2 \le \bL'_2$.
The isomorphism $T_\A$ induces an isomorphism 
$\Pair^d \xto{\cong} \Pair^d_\A$ of posets.
%

We let $\cC_{\A,0}^d$ denote the opposite category of the poset
$\Pair^d_\A$ regarded as a small category. By definition,
the set of objects in $\cC_{\A,0}^d$ is that of $\Pair^d_\A$
and for two elements $(\bL_1,\bL_2)$ and $(\bL'_1,\bL'_2)$, the set
$\Hom_{\cC_0^d}((\bL_1,\bL_2),(\bL'_1,\bL'_2))$ consists
of a single element if $(\bL_1,\bL_2) \ge (\bL'_1,\bL'_2)$,
and is the empty set otherwise. Here the symbol $\ge$
denotes the ordering of the poset $\Pair^d_\A$.
To avoid confusion, we never use the symbols $\ge$ and 
$\le$ to denote the ordering of the dual poset $\cC_{\A,0}^d$.

The isomorphism $T_\A$ induces an isomorphism 
$\cC_0^d \xto{\cong} \cC_{\A,0}^d$ of categories.
We let $\iota_{\A,0} \colon  \cC_{\A,0}^d \to \cC^d$ denote
the composite of the inverse of this isomorphism with
the functor $\iota_0^d$.

We introduce the following full subcategories $\cC_{\A,0,m}^d$ and
$\cC_{\A,0,r}^d$ of $\cC_{\A,0}^d$: the objects of $\cC_{\A,0,m}^d$ 
(\resp $\cC_{\A,0,r}^d$) are the objects $(\bL_1,\bL_2)$ of $\cC_{\A,0}^d$
satisfying $\bL_1 \subset \bL_2 \subset \wh{\cO}_X^{\oplus d}$ 
(\resp $\wh{\cO}_X^{\oplus d} \subset \bL_1 \subset \bL_2$).
Let $\iota_{\A,0,m}^d$ and $\iota_{\A,0,r}^d$ denote the restriction
of $\iota_{\A,0}^d$ to $\cC_{\A,0,m}^d$ and $\cC_{\A,0,r}^d$, respectively.

Since the isomorphism $T_\A$ induces isomorphisms 
$\cC_0^d \xto{\cong} \cC_{\A,0}^d$,
$\cC_{0,m}^d \xto{\cong} \cC_{\A,0,m}^d$,
and
$\cC_{0,r}^d \xto{\cong} \cC_{\A,0,r}^d$
of categories,
it follows from Theorem \ref{thm:section2} that
the pairs $(\cC_{\A,0}^d,\iota_{\A,0}^d)$,
$(\cC_{\A,0,m}^d,\iota_{\A,0,m}^d)$, and $(\cC_{\A,0,r}^d,\iota_{\A,0,r}^d)$
are grids of $(\cCo{d},J^d)$, $(\cCo{d},J_m^d)$, and $(\cCo{d},J_r^d)$,
respectively in the sense of \cite[Definition 5.5.3]{Grids}.

For an object $(L_1,L_2)$ of $\cC_0^d$,
the isomorphism in Lemma \ref{lem:XN} induces an
isomorphism $T_\A(L_2)/T_\A(L_1) \cong \Gamma(X,L_2/L_1)$ of
$\wh{\cO}_X$-modules, which is functorial
in the sense that it induces an isomorphism of functors
from $\cC_0^d$ to the category of $\wh{\cO}_X$-modules.
Here we regard $\Gamma(X,L_2/L_1)$ as an 
$\wh{\cO}_X$-module via the isomorphism 
$\Gamma(X,L_2/L_1) \cong \Gamma(X,{i_Z}_* i_Z^* (L_2/L_1))
= \Gamma(Z,i_Z^*(L_2/L_1))$ and the homomorphism
$\pi_Z\colon  \wh{\cO}_X \to \Gamma(Z,\cO_Z)$,
where $Z \subset X$ is an effective Cartier divisor such that
$L_2/L_1$ is annihilated by $\sI_Z$.

Let $(\cC,J)=(\cCo{d},J^d)$ 
(\resp $(\cCo{d},J^d_m)$, \resp $(\cCo{d},J^d_r)$).
Let $\Cip = (\cC_0^d,\iota_0^d)$
(\resp $(\cC_{0,m}^d,\iota_{0,m}^d)$, \resp 
$(\cC_{0,r}^d,\iota_{0,r}^d)$)
and let $(\cC_{\A,0},\iota_{\A,0}) = (\cC_{\A,0}^d,\iota_{\A,0}^d)$
(\resp $(\cC_{\A,0,m}^d,\iota_{\A,0,m}^d)$, \resp 
$(\cC_{\A,0,r}^d,\iota_{\A,0,r}^d)$).
Then the isomorphism $T_\A$ induces an isomorphism 
$M_\Cip \cong M_{(\cC_{\A,0},\iota_{\A,0})}$
of monoids.
In the following paragraphs we will compute the monoid 
$M_{(\cC_{\A,0},\iota_{\A,0})}$.

\subsubsection{ } \label{sec:hom_phi}
When $\cC=\cCo{d}$ (\resp $\cC_m^d$, \resp $\cC_r^d$),
we set $\bbM = \GL_d(\A_X)$ 
(\resp $\{g \in \GL_d(\A_X)\ |\ g^{-1} \in \Mat_d(\wh{\cO}_X) \}$,
$\GL_d(\A_X) \cap \Mat_d(\wh{\cO}_X)$).
Then $\bbM$ is a submonoid of $\GL_d(\A_X)$.

Let $(\bL,g) \in \Lat^d_\A \times \GL_d(\A_X)$.
It follows from Lemma \ref{lem:lattice_translation}
that $\bL g$ is an element of $\Lat^d_\A$.
The map $\Lat^d_\A \times \GL_d(\A_X)$ which sends
$(\bL,g)$ to $\bL g^{-1}$ induces an action from the left
of the group $\GL_d(\A_X)$ on the set $\Lat^d_\A$.
It is clear that for any $g \in \GL_d(\A_X)$, the
action of $g^{-1}$ on $\Lat^d_\A$ gives an automorphism
$\Lat^d_\A \to \Lat^d_\A$ of posets.
Hence it induces an automorphism
of the category $\cC_{\A,0}^d$.
Moreover if $g$ belongs to the monoid $\bbM$, then it induces
an automorphism of the full subcategory $\cC_{\A,0}$
of $\cC_{\A,0}^d$. We denote the last automorphism
by $\alpha_g\colon \cC_{\A,0} \to \cC_{\A,0}$.

\subsubsection{ }

We construct a map 
$\phi \colon  \bbM \to M_\Cip$ as follows.
Let $g \in \bbM$. Then for any object
$(\bL_1,\bL_2)$ of $\cC_{\A,0}$, the automorphism
$\bL_2 \xto{\cong} \bL_2 g^{-1}$ given by the right multiplication
by $g^{-1}$ induces an isomorphism
\begin{equation} \label{rightg}
\bL_2/\bL_1 \xto{\cong} \bL_2 g^{-1}/\bL_1 g^{-1}
\end{equation}
of $\wh{\cO}_X$-modules.
It follows from Lemma \ref{lem:XN} that we have
isomorphisms 
$$
\Gamma(X,\iota_{\A,0}(\bL_1,\bL_2))
\cong \bL_2/\bL_1
$$
and
$$
\Gamma(X,\iota_{\A,0}(\alpha_g(\bL_1,\bL_2)))
\cong \bL_2 g^{-1}/ \bL_1 g^{-1}.
$$
Hence the isomorphism \eqref{rightg} induces
an isomorphism 
$\iota_{\A,0}(\bL_1,\bL_2)
\cong \iota_{\A,0}(\alpha_g(\bL_1,\bL_2))$.
It is straightforward to check that this isomorphism
is functorial in $(\bL_1,\bL_2)$. We thus obtain
an isomorphism $\gamma_g\colon \iota_{\A,0} \to \iota_{\A,0} \circ \alpha_g$ 
of functors. By sending $g \in \bbM$ to the pair
$\phi(g) = (\alpha_g,\gamma_g)$ we obtain a map 
$\phi\colon \bbM \to M_{(\cC_{\A,0},\iota_{\A,0})}$.
It is easy to check that the map $\phi$ is a homomorphism
of monoids. 

We will prove that the homomorphism $\phi$ is an isomorphism.
First let us introduce the object 
$$
Y = (\wh{\cO}_X^{\oplus d}, \wh{\cO}_X^{\oplus d})
$$ of 
$\cC_{\A,0}^d$.

\begin{lem}
The object $Y$ is an edge object of $\cC_{\A,0}$. 
\end{lem}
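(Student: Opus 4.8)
Recall that, for the $Y$-site $(\cCo{d},J)$ (resp.\ with $J^d_m$, resp.\ $J^d_r$) and the grid $(\cC_{\A,0},\iota_{\A,0})$, an object $X$ of $\cC_{\A,0}$ is an \emph{edge object} with respect to $\cT(J)$ precisely when every morphism $Z \to X$ in $\cC_{\A,0}$ is of type $\cT(J)$ (in the sense of the Definition preceding Lemma~\ref{lem:explicit_edge}). So the statement to prove is: for every object $(\bL_1,\bL_2)$ of $\cC_{\A,0}$ with a morphism to $Y = (\wh{\cO}_X^{\oplus d},\wh{\cO}_X^{\oplus d})$, the image under $\iota_{\A,0}$ of that morphism belongs to $\cT(J)$. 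The first step is simply to unwind what such a morphism is: by definition of the opposite-of-a-poset category $\cC_{\A,0}^d$, a morphism $(\bL_1,\bL_2) \to Y$ exists if and only if $(\bL_1,\bL_2) \ge Y$ in $\Pair^d_\A$, i.e.\ $\wh{\cO}_X^{\oplus d} \subset \bL_1 \subset \bL_2 \subset \wh{\cO}_X^{\oplus d}$; but since $\bL_2 \subset \wh{\cO}_X^{\oplus d} \subset \bL_1 \subset \bL_2$ forces $\bL_1 = \bL_2 = \wh{\cO}_X^{\oplus d}$, the \emph{only} object of $\cC_{\A,0}$ admitting a morphism to $Y$ is $Y$ itself, and that morphism is the identity.

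Next I would transfer this to $\cCo{d}$. Under the isomorphism $T_\A : \Pair^d \xto{\cong} \Pair^d_\A$ and the functor $\iota_{\A,0}^d$, the object $Y$ corresponds to the zero object $\wh{\cO}_X^{\oplus d}/\wh{\cO}_X^{\oplus d} = 0$ of $\cCo{d}$; concretely $\iota_{\A,0}(Y) \cong \Gamma(X, \cO_X^{\oplus d}/\cO_X^{\oplus d}) = 0$ using the functorial isomorphism $T_\A(L_2)/T_\A(L_1) \cong \Gamma(X,L_2/L_1)$ of Lemma~\ref{lem:XN}. Since the only morphism to $Y$ in $\cC_{\A,0}$ is $\id_Y$, and $\iota_{\A,0}$ carries it to $\id_{\iota_{\A,0}(Y)}$, it suffices to observe that every identity morphism of $\cCo{d}$ lies in $\cT^d$ (resp.\ $\cT^d_m$, resp.\ $\cT^d_r$): the identity is represented by the diagram $N \xleftarrow{=} N \xto{=} N$, in which the left arrow is surjective and the right arrow injective (and both are isomorphisms), so it is simultaneously a \fibr and a \cofibr by Lemma~\ref{lem:pre_model}~(3) and its dual. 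Hence the unique morphism to $Y$ is of type $\cT(J)$ in all three cases, and $Y$ is an edge object.

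\textbf{Main obstacle.} There is essentially no analytic content here; the only thing to be careful about is the bookkeeping of directions, namely that $\Pair^d_\A$ is ordered by $(\bL_1,\bL_2) \le (\bL'_1,\bL'_2) \iff \bL'_1 \le \bL_1 \le \bL_2 \le \bL'_2$ while $\cC_{\A,0}^d$ is its \emph{opposite}, so that ``a morphism to $Y$'' means $(\bL_1,\bL_2)$ sits \emph{above} $Y$ in $\Pair^d_\A$, which collapses it to $Y$. Once that is pinned down, the proof is the one-line observation above; the only subtlety worth a remark is to note that this argument works uniformly for the three topologies $J^d$, $J^d_m$, $J^d_r$ since in each case $\cT$ contains all identities, so that the lemma (and the edge-object description needed later for computing $M_{(\cC_{\A,0},\iota_{\A,0})}$) holds without change.
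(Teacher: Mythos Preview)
Your proof has a genuine error in the order bookkeeping that invalidates the argument. You write that a morphism $(\bL_1,\bL_2) \to Y$ in $\cC_{\A,0}^d$ exists iff $(\bL_1,\bL_2) \ge Y$ in $\Pair^d_\A$, i.e.\ $\wh{\cO}_X^{\oplus d} \subset \bL_1 \subset \bL_2 \subset \wh{\cO}_X^{\oplus d}$. But the poset is defined by $(L_1,L_2) \le (L'_1,L'_2) \iff L'_1 \subset L_1 \subset L_2 \subset L'_2$, so $(\bL_1,\bL_2) \ge Y$ means $Y \le (\bL_1,\bL_2)$, i.e.\ $\bL_1 \subset \wh{\cO}_X^{\oplus d} \subset \wh{\cO}_X^{\oplus d} \subset \bL_2$. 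This does \emph{not} collapse $(\bL_1,\bL_2)$ to $Y$; there are many objects over $Y$ (e.g.\ any $(\bL_1,\bL_2)$ with $\bL_1 \subset \wh{\cO}_X^{\oplus d} \subset \bL_2$ in the full case), so your reduction to ``the only morphism to $Y$ is $\id_Y$'' fails.

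The paper's argument treats the three cases separately. For $J = J^d$ one has $\cT^d = $ all morphisms, so every object is trivially an edge object. For $J^d_m$ the full subcategory $\cC_{\A,0,m}^d$ already imposes $\bL_2 \subset \wh{\cO}_X^{\oplus d}$, and combined with $\wh{\cO}_X^{\oplus d} \subset \bL_2$ this forces $\bL_2 = \wh{\cO}_X^{\oplus d}$; then the image morphism in $\cCo{d}$ is represented by $0 \twoheadleftarrow \wh{\cO}_X^{\oplus d}/\bL_1 \xto{=} \wh{\cO}_X^{\oplus d}/\bL_1$, which is a fibration by definition (this is exactly Lemma~\ref{lem:explicit_edge} transported via $T_\A$). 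The case $J^d_r$ is dual via $\bD_0$: one gets $\bL_1 = \wh{\cO}_X^{\oplus d}$ and the morphism is a cofibration. The fix to your write-up is simply to redo the inequality in the correct direction and then give these short case-by-case checks.
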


\begin{proof}
The claim is clear when $\cC = \cC^d$. 
When $\cC = \cC^d_m$,
the claim follows from Lemma \ref{lem:explicit_edge}.
When $\cC = \cC^d_r$, it follows from Lemma \ref{lem:explicit_edge}
and the construction of the functor $\bD_0$ introduced in
Section \ref{sec:bD_0} that the object $(L_1,L_2)$ of the category
$\cC_{0,r}^d$ is an edge object if and only if $L_1= \cO_X^{\oplus d}$.
This implies that $Y$ is an edge object of $\cC_{\A,0}$ when
$\cC = \cC^d_r$.
\end{proof}

\begin{lem} \label{lem:phi_injective}
The homomorphism $\phi$ is injective.
\end{lem}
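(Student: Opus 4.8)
The plan is to show that the monoid homomorphism $\phi : \bbM \to M_{(\cC_{\A,0},\iota_{\A,0})}$ is injective by testing it on the edge object $Y = (\wh{\cO}_X^{\oplus d}, \wh{\cO}_X^{\oplus d})$, for which $\iota_{\A,0}(Y) = \wh{\cO}_X^{\oplus d}$. First I would unwind what an element $(\alpha,\gamma)$ of $M_{(\cC_{\A,0},\iota_{\A,0})}$ does at $Y$: since the absolute Galois monoid acts on the fiber functor, the component $\gamma_g$ at $Y$ is an $\cO_X$-module isomorphism of $\Gamma_\A(\iota_{\A,0}(Y)) \cong \wh{\cO}_X^{\oplus d}$ onto $\Gamma_\A(\iota_{\A,0}(\alpha_g(Y)))$. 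By construction of $\phi$, for $g \in \bbM$ the automorphism $\alpha_g$ sends $Y = (\wh{\cO}_X^{\oplus d},\wh{\cO}_X^{\oplus d})$ to $(\wh{\cO}_X^{\oplus d} g^{-1}, \wh{\cO}_X^{\oplus d} g^{-1})$, and $\gamma_g$ at $Y$ is (via Lemma \ref{lem:XN}) precisely the isomorphism $\wh{\cO}_X^{\oplus d} \xto{\cong} \wh{\cO}_X^{\oplus d} g^{-1}$ given by right multiplication by $g^{-1}$.

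Next I would argue that $g$ is recovered from this data. Suppose $\phi(g_1) = \phi(g_2)$. Then in particular $\alpha_{g_1}(Y) = \alpha_{g_2}(Y)$, i.e.\ $\wh{\cO}_X^{\oplus d} g_1^{-1} = \wh{\cO}_X^{\oplus d} g_2^{-1}$ as $\wh{\cO}_X$-lattices in $\A_X^{\oplus d}$, and moreover the two isomorphisms $\wh{\cO}_X^{\oplus d} \to \wh{\cO}_X^{\oplus d} g_1^{-1} = \wh{\cO}_X^{\oplus d} g_2^{-1}$ given respectively by right multiplication by $g_1^{-1}$ and by $g_2^{-1}$ coincide. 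Applying both sides to the standard basis vectors $e_i \in \wh{\cO}_X^{\oplus d}$ gives $e_i g_1^{-1} = e_i g_2^{-1}$ for all $i$, i.e.\ the $i$-th rows of $g_1^{-1}$ and $g_2^{-1}$ agree; hence $g_1^{-1} = g_2^{-1}$ in $\Mat_d(\A_X)$, so $g_1 = g_2$.

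The one point requiring care is the identification, via Lemma \ref{lem:XN} and the functoriality statement at the end of Section \ref{sec:Pair adele}, of the map $M_{(\cC_{\A,0},\iota_{\A,0})} \to \Aut(\Gamma_\A(\iota_{\A,0}(Y)))$ given by evaluating the component $\gamma$ at $Y$ — one must check it genuinely records the matrix $g^{-1}$ and not merely its coset, and that the identification $\Gamma_\A(\iota_{\A,0}(\bL_1,\bL_2)) \cong \bL_2/\bL_1$ is compatible with the right-multiplication action used to define $\phi$. This is a matter of tracing through the definitions of $\iota_{\A,0}$, of the isomorphism \eqref{rightg}, and of Lemma \ref{lem:XN}; it is essentially bookkeeping. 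The main obstacle, such as it is, lies in being careful that $Y$ being an edge object lets the evaluation-at-$Y$ map detect elements of $M_{(\cC_{\A,0},\iota_{\A,0})}$ faithfully at all — but since $g$ acts on the entire poset $\Lat^d_\A$ compatibly, and $Y$ already pins down all matrix entries of $g^{-1}$, no further objects are needed. I would then conclude that $\phi$ is injective.
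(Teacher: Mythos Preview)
Your argument has a genuine error at the very first step: $\iota_{\A,0}(Y)$ is not $\wh{\cO}_X^{\oplus d}$ but the \emph{quotient} $\wh{\cO}_X^{\oplus d}/\wh{\cO}_X^{\oplus d} = 0$. Recall that $\iota_{\A,0}$ sends a pair $(\bL_1,\bL_2)$ to the $\cO_X$-module corresponding to $\bL_2/\bL_1$; for the edge object $Y=(\wh{\cO}_X^{\oplus d},\wh{\cO}_X^{\oplus d})$ this quotient is zero. Consequently the isomorphism $\gamma_g(Y)$ in \eqref{rightg} is the identity map $0\to 0$ and carries no information about the matrix entries of $g$. Your attempt to ``apply both sides to the standard basis vectors $e_i$'' fails because those vectors are zero in the quotient.

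What evaluating at $Y$ \emph{does} give you is the equality of lattices $\wh{\cO}_X^{\oplus d}g_1^{-1}=\wh{\cO}_X^{\oplus d}g_2^{-1}$, which only says $g_1g_2^{-1}\in\GL_d(\wh{\cO}_X)$, not $g_1=g_2$. The paper's proof proceeds exactly from this point: assuming $\phi(g)$ is the identity, one first deduces $g\in\GL_d(\wh{\cO}_X)$ from $\alpha_g(Y)=Y$, and then tests $\gamma_g$ on the objects $Y(\I)=(\I^{\oplus d},\wh{\cO}_X^{\oplus d})$ (or the analogous objects in the other cases), whose images under $\iota_{\A,0}$ are the nonzero modules $(\wh{\cO}_X/\I)^{\oplus d}$. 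The vanishing of $\gamma_g$ on each $Y(\I)$ forces $g\equiv 1\pmod{\I}$, and passing to the limit over open ideals $\I$ gives $g=1$. You need this family of test objects; the single object $Y$ does not suffice.
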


\begin{proof}
Let $g \in \bbM$ and suppose that $\phi(g)=(\alpha_g,\gamma_g)$ is 
the unit element of $M_{(\cC_{\A,0},\iota_{\A,0})}$.
Since $\alpha_g(Y) = Y$, we have 
$\wh{\cO}_X^{\oplus d} g^{-1} = \wh{\cO}_X^{\oplus d}$.
It follows that $g$ belongs to $\GL_d(\wh{\cO}_X)$.
Let $\I \subset \wh{\cO}_X$ be an arbitrary open ideal.
When $\cC=\cCo{d}$ (\resp $\cC_m^d$, \resp $\cC_r^d$),
we set $Y(\I) = (\I^{\oplus d}, \wh{\cO}_X^{\oplus d})$
(\resp $(\I^{\oplus d}, \wh{\cO}_X^{\oplus d})$,
\resp $(\wh{\cO}_X^{\oplus d}, (\I^{-1})^{\oplus d})$.
Then $Y(\I)$ is an object of $\cC_{\A,0}$.
Since $\phi(g)$ is 
the unit element of $M_{(\cC_{\A,0},\iota_{\A,0})}$,
we have $\alpha_g(Y(\I)) = Y(\I)$ and
the automorphism of $\iota_{\A,0}(Y(\I))$
given by $\gamma_g$ is equal to the identity.
Hence the automorphism $(\wh{\cO}_X/\I)^{\oplus d}$
(\resp $(\wh{\cO}_X/\I)^{\oplus d}$, \resp
$(\I^{-1}/\wh{\cO}_X)^{\oplus d}$) given by the
right multiplication by $g^{-1}$ is equal to the identity.
This shows that $g$ belongs to the kernel of the
map $\GL_d(\wh{\cO}_X) \to \GL_d(\wh{\cO}_X/\I)$ 
induced by the quotient homomorphism
$\wh{\cO}_X \surj \wh{\cO}_X/\I$.
Since $\I$ is arbitrary and the map
$\GL_d(\wh{\cO}_X) \to \varprojlim_{\I} 
\GL_d(\wh{\cO}_X/\I)$ is bijective, it follows that
$g$ is equal to the unit element of $\bbM$.
This proves that the map $\phi$ is injective.
\end{proof}

\begin{lem}
The group $\GL_d(\A_X)$ acts transitively on the set
$\Lat^d_\A$.
\end{lem}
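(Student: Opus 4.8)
The statement is that $\GL_d(\A_X)$ acts transitively on $\Lat^d_\A$, where the action is the right multiplication (via $g \mapsto (\bL \mapsto \bL g)$, or equivalently the left action $\bL g^{-1}$). The natural strategy is to show that every $\wh{\cO}_X$-lattice $\bL \subset \A_X^{\oplus d}$ can be carried onto the standard lattice $\bL_0 = \wh{\cO}_X^{\oplus d}$ by a suitable $g \in \GL_d(\A_X)$. First I would invoke Lemma \ref{lem:freeness}: any $\bL \in \Lat^d_\A$ is free of rank $d$ as an $\wh{\cO}_X$-module, and the inclusion $\bL \subset \A_X^{\oplus d}$ induces an isomorphism $i_\bL : \bL \otimes_{\wh{\cO}_X} \A_X \xto{\cong} \A_X^{\oplus d}$. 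Choosing an $\wh{\cO}_X$-basis $v_1,\dots,v_d$ of $\bL$, the elements $v_1,\dots,v_d$ then form an $\A_X$-basis of $\A_X^{\oplus d}$; writing the $v_i$ in terms of the standard basis $e_1,\dots,e_d$ gives a matrix $g \in \Mat_d(\A_X)$ which, because $i_\bL$ is an isomorphism, lies in $\GL_d(\A_X)$. By construction $\bL_0 g = \bL$ (or $\bL g^{-1} = \bL_0$ for the left action), so $\bL$ and $\bL_0$ are in the same orbit, and transitivity follows since $\bL_0$ is a fixed reference point.

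The one point that needs a little care is passing from ``$i_\bL$ is an $\A_X$-module isomorphism'' to ``the change-of-basis matrix $g$ is invertible in $\Mat_d(\A_X)$''. The cleanest way is to note that $i_\bL$ is precisely the $\A_X$-linear map $\A_X^{\oplus d} \to \A_X^{\oplus d}$ sending the standard basis vector dual to $v_i$ to the vector $v_i \in \A_X^{\oplus d}$, so under the identification $\bL \otimes_{\wh{\cO}_X} \A_X \cong \A_X^{\oplus d}$ (using the basis $v_1,\dots,v_d$) the map $i_\bL$ is given by right (or left) multiplication by $g$; invertibility of $i_\bL$ is then literally the statement that $g \in \GL_d(\A_X)$. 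I would spell this identification out carefully, since conflating the two copies of $\A_X^{\oplus d}$ is where an error could creep in.

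\textbf{Expected main obstacle.} There is no deep obstacle here; the real content has already been extracted into Lemma \ref{lem:freeness}. The only mildly technical step is the bookkeeping above — making the identification between $i_\bL$ and multiplication by a matrix precise — and being consistent about whether one uses the right action $\bL \mapsto \bL g$ or the associated left action $\bL \mapsto \bL g^{-1}$, so that the transitivity statement matches the convention fixed earlier in Section \ref{sec:poset Lat}. Once that is settled, the proof is essentially two sentences.

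\textbf{Proof.} Let $\bL \in \Lat^d_\A$ and set $\bL_0 = \wh{\cO}_X^{\oplus d}$. By Lemma \ref{lem:freeness}, $\bL$ is a free $\wh{\cO}_X$-module of rank $d$; choose a basis $v_1,\dots,v_d$. The same lemma shows that the homomorphism $i_\bL : \bL \otimes_{\wh{\cO}_X} \A_X \to \A_X^{\oplus d}$ induced by the inclusion is an isomorphism of $\A_X$-modules, so $v_1,\dots,v_d$ form an $\A_X$-basis of $\A_X^{\oplus d}$. Let $g \in \Mat_d(\A_X)$ be the matrix whose $i$-th row expresses $v_i$ in the standard basis $e_1,\dots,e_d$. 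Then, viewing elements of $\A_X^{\oplus d}$ as row vectors acted on from the right, we have $\bL_0 g = \bL$; and since $v_1,\dots,v_d$ is an $\A_X$-basis, $g$ is invertible, i.e.\ $g \in \GL_d(\A_X)$. Hence $\bL$ lies in the orbit of $\bL_0$ under $\GL_d(\A_X)$. As $\bL$ was arbitrary, the action of $\GL_d(\A_X)$ on $\Lat^d_\A$ is transitive. \qed
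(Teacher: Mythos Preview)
Your proof is correct and essentially identical to the paper's: both invoke Lemma \ref{lem:freeness} to obtain an $\wh{\cO}_X$-basis of $\bL$, form the matrix with those basis vectors as rows, use the isomorphism $i_\bL$ from the second part of Lemma \ref{lem:freeness} to conclude the matrix lies in $\GL_d(\A_X)$, and deduce $\bL_0 g = \bL$. The only cosmetic difference is that the paper names this matrix $h$ and then sets $g = h^{-1}$ to match the left-action convention $\bL \mapsto \bL g^{-1}$ used elsewhere.
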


\begin{proof}
Let $\bL \in \Lat^d_\A$. It follows from Lemma \ref{lem:freeness}
that $\bL$ is free of rank $d$ over $\wh{\cO}_X$.
Let us choose an $\wh{\cO}_X$-basis $b_1,\ldots,b_d$ of $\bL$.
Recall that  $\bL$ is an $\wh{\cO}_X$-submodule of $\A_X^{\oplus d}$.
Hence we may regard $b_1,\ldots,b_d$ as row vectors with coefficients
in $\A_X$. Let $h$ denote the $d \times d$ matrix whose $i$-th row
is equal to $b_i$ for $i=1,\ldots,d$.
It then follows from the second statement of Lemma \ref{lem:freeness}
that the matrix $h$ belongs to $\GL_d(\A_X)$.
We set $g = h^{-1}$.
By construction we have $\bL = \wh{\cO}_X^{\oplus d} g^{-1}$.
This proves that the action of $\GL_d(\A_X)$ on
$\Lat^d_\A$ is transitive.
\end{proof}

\begin{lem} \label{lem:phi_surjective}
The homomorphism $\phi$ is surjective.
\end{lem}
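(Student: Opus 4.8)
The goal is to prove that the monoid homomorphism $\phi : \bbM \to M_{(\cC_{\A,0},\iota_{\A,0})}$ is surjective. The plan is to take an arbitrary element $(\alpha,\gamma) \in M_{(\cC_{\A,0},\iota_{\A,0})}$, where $\alpha$ is an automorphism of the grid category $\cC_{\A,0}$ compatible with the semi-localizing structure and $\gamma : \iota_{\A,0} \xto{\cong} \iota_{\A,0}\circ\alpha$ is an isomorphism of functors, and produce an element $g \in \bbM$ with $\phi(g) = (\alpha,\gamma)$. First I would look at what $\alpha$ does to the edge object $Y = (\wh{\cO}_X^{\oplus d}, \wh{\cO}_X^{\oplus d})$. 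Since (by the previous lemma) $\GL_d(\A_X)$ acts transitively on $\Lat^d_\A$, there is some $h \in \GL_d(\A_X)$ with $\alpha(Y) = (\wh{\cO}_X^{\oplus d}h, \wh{\cO}_X^{\oplus d}h)$ — modulo checking that $\alpha(Y)$ is again of the ``diagonal'' form, which follows from $\alpha$ being an automorphism of $\cC_{\A,0}$ that preserves edge objects. One then sets $g = h^{-1}$, and replacing $(\alpha,\gamma)$ by $\phi(g)^{-1}\cdot(\alpha,\gamma)$ reduces to the case $\alpha(Y) = Y$; I must also check $g$ lands in the correct submonoid $\bbM$ in each of the three cases ($\cCo{d}$, $\cC_m^d$, $\cC_r^d$), which follows from the description of edge objects in the relevant grid.

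Next, assuming $\alpha(Y) = Y$, I would show $\alpha$ is the identity on objects and that $\gamma$ is trivial, which forces $(\alpha,\gamma)$ to be the unit. The key point is that every object of $\cC_{\A,0}$ receives a morphism from an edge object in a suitably cofinal way (the $\Lambda$-connectivity of $\cC_{\A,0}$ together with Condition (2)/(3) of the grid axioms in \cite[Definition 5.5.3]{Grids}), and $\alpha$ is determined by its effect on these together with the coherence data. Concretely, for each open ideal $\I \subset \wh{\cO}_X$ the object $Y(\I)$ (as defined in the proof of Lemma \ref{lem:phi_injective}) is an object of $\cC_{\A,0}$ dominated by $Y$, so $\alpha$ preserves the chain $\{Y(\I)\}$; since $\gamma$ acts on $\iota_{\A,0}(Y(\I))$ and these quotients $(\wh{\cO}_X/\I)^{\oplus d}$ (or their duals) exhaust the finite quotients, triviality of $\alpha(Y)$ propagates to triviality on all $Y(\I)$, and then to all objects by cofinality. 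This is essentially the dual of the argument already carried out for injectivity: there the hypothesis $\phi(g)=1$ forced $g=1$ by passing to the inverse limit $\GL_d(\wh{\cO}_X) = \varprojlim_{\I}\GL_d(\wh{\cO}_X/\I)$; here the hypothesis $\alpha(Y)=Y$ plus triviality of the residual data is exactly what pins down $(\alpha,\gamma)$ as the unit.

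The main obstacle I anticipate is the bookkeeping needed to show that an arbitrary grid-automorphism $\alpha$, a priori just a compatible automorphism of the poset category $\cC_{\A,0}$ together with a functor isomorphism, actually comes from the $\GL_d(\A_X)$-action on lattices rather than from some exotic automorphism. The resolution is that the poset structure on $\Lat^d_\A$, together with the additional structure coming from $\iota_{\A,0}$ and the semi-localizing collection, rigidifies things: an order automorphism of $\Lat^d_\A$ preserving the edge objects and compatible with the realization functor must respect the $\wh{\cO}_X$-module structure on the subquotients, and by Lemma \ref{lem:freeness} and the transitivity lemma this is implemented by right multiplication by a unique element of $\GL_d(\A_X)$. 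Once $\alpha$ is known to be $\alpha_g$ for some $g$, the compatibility of $\gamma$ with $\gamma_g$ follows because an automorphism of the identity functor on $\cC_{\A,0}$ (which is what $\gamma_g^{-1}\circ\gamma$ becomes) that is compatible with the $\wh{\cO}_X$-structure and fixes the edge object must be trivial, again by the inverse-limit argument. Combining injectivity (Lemma \ref{lem:phi_injective}) with this surjectivity yields that $\phi$ is an isomorphism of monoids, which in turn (via the isomorphism $T_\A$ and Section \ref{sec:Pair adele}) identifies $M_\Cip$ with $\bbM$, completing the computation toward Theorem \ref{cor:M_isom}.
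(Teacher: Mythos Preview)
Your first reduction step (using transitivity on $\Lat^d_\A$ to arrange $\alpha(Y)=Y$) matches the paper's, but there is a genuine gap in your second paragraph. The claim that $\alpha(Y)=Y$ forces $(\alpha,\gamma)$ to be the unit is false: the set $\bK_Y$ of such elements is isomorphic to $\GL_d(\wh{\cO}_X)$, not the trivial group. For any $g\in\GL_d(\wh{\cO}_X)$ one has $\phi(g)\in\bK_Y$ since $\wh{\cO}_X^{\oplus d}g^{-1}=\wh{\cO}_X^{\oplus d}$, yet $\gamma_g$ acts nontrivially on $\iota_{\A,0}(Y(\I))\cong(\wh{\cO}_X/\I)^{\oplus d}$ and $\alpha_g(L_1,\wh{\cO}_X^{\oplus d})=(L_1 g^{-1},\wh{\cO}_X^{\oplus d})$ need not equal $(L_1,\wh{\cO}_X^{\oplus d})$. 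So neither $\alpha$ nor $\gamma$ is forced to be trivial.

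What the paper does at this point (Lemma~\ref{lem:image_phi}) is precisely the missing step: given $(\alpha,\gamma)\in\bK_Y$, one first shows $\alpha(Y(\I))=Y(\I)$ for every open ideal $\I$ (this needs that $\iota_{\A,0}(f_\I)$ is a Galois covering, together with Corollary~7.1.2 of \cite{Grids}), and then \emph{reads off from $\gamma$} a compatible family $\gamma(Y(\I))\in\GL_d(\wh{\cO}_X/\I)$; the inverse limit produces $g'\in\GL_d(\wh{\cO}_X)$ with $\phi(g')=(\alpha,\gamma)$. The inverse-limit argument is used to \emph{construct} $g'$, not to prove triviality. Your third paragraph gestures toward this but has the logic backwards: you cannot recover $g$ from $\alpha$ alone (the action on $\Lat^d_\A$ has kernel $\wh{\cO}_X^\times\cdot\id$), and the residual natural automorphism $\gamma_g^{-1}\circ\gamma$ of $\iota_{\A,0}$ is not automatically trivial---its values on the $Y(\I)$ are exactly what tell you how to correct the choice of $g$.

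A smaller point: in the non-atomic cases ($J^d_m$ and $J^d_r$) the monoid $\bbM$ is not a group, so ``replacing $(\alpha,\gamma)$ by $\phi(g)^{-1}\cdot(\alpha,\gamma)$'' is not available. The paper instead invokes Lemma~8.1.5 of \cite{Grids} (valid for edge objects) to factor $(\alpha,\gamma)=\phi(g)\cdot h$ with $h\in\bK_Y$ directly, without inverting anything.
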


\begin{proof}
Let $(\alpha,\gamma_\alpha)$ be an element of $M_\Cip$.
Since $\iota(\alpha(Y))$ is isomorphic to $\iota(Y)$ in $\cC$,
the object $\alpha(Y)$ is of the form $(\bL,\bL)$ for some
$\bL \in \Lat^d_\A$. Moreover if $\cC = \cC_m^d$ (\resp $\cC = \cC_r^d$),
then we have $\bL \subset \wh{\cO}_X^{\oplus d}$ (\resp 
$\wh{\cO}_X^{\oplus d} \subset \bL$).
Since the group $\GL_d(\A_X)$ acts transitively on $\Lat^d_\A$,
there exists an element $g \in \GL_d(\A_X)$ such that
$\bL = \wh{\cO}_X^{\oplus d} g^{-1}$. 
If $\cC = \cC_m^d$ (\resp $\cC = \cC_r^d$),
then we have $g^{-1} \in \Mat_d(\wh{\cO}_X)$ 
(\resp $g \in \Mat_d(\wh{\cO}_X)$).
Hence in any case we have $g \in \bbM$.
Since $Y$ is an edge object, it follows from Lemma 8.1.5 of \cite{Grids}
that there exists an element $h \in \bK_Y$ satisfying 
$(\alpha,\gamma_\alpha) = \phi(g) h$.
Hence the claim follows from Lemma \ref{lem:image_phi}
below.
\end{proof}

\begin{lem}
\label{lem:image_phi}
Any element of $\bK_Y$ belongs to the image of $\phi$.
\end{lem}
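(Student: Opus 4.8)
The plan is to reconstruct, from an element $h\in\bK_Y$, a matrix $g\in\GL_d(\wh{\cO}_X)\subset\bbM$ by recording the action of $h$ on the objects $Y(\I)$ (for $\I$ an open ideal of $\wh{\cO}_X$) that were introduced in the proof of Lemma~\ref{lem:phi_injective}, and then to verify that $\phi(g)=h$.

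First I would recall the description of the subgroup $\bK_Y\subset M_{(\cC_{\A,0},\iota_{\A,0})}$ attached to the edge object $Y=(\wh{\cO}_X^{\oplus d},\wh{\cO}_X^{\oplus d})$ given in \cite[Section~8]{Grids}; concretely, the point to extract is that an element $(\alpha,\gamma_\alpha)\in\bK_Y$ fixes $Y$ together with each of the edge objects $Y(\I)$ lying below $Y$ in $\cC_{\A,0}^d$, and induces an automorphism of each $\iota_{\A,0}(Y(\I))$. I would also note that $\phi(g)\in\bK_Y$ whenever $g\in\GL_d(\wh{\cO}_X)$: indeed $\wh{\cO}_X^{\oplus d}g^{-1}=\wh{\cO}_X^{\oplus d}$, and hence $\I^{\oplus d}g^{-1}=\I(\wh{\cO}_X^{\oplus d}g^{-1})=\I^{\oplus d}$ and $(\I^{-1})^{\oplus d}g^{-1}=(\I^{-1})^{\oplus d}$, so $\alpha_g$ fixes $Y$ and all the $Y(\I)$.

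Now let $h=(\alpha,\gamma_\alpha)\in\bK_Y$. For every open ideal $\I$ the object $Y(\I)$ is fixed by $\alpha$, so $\gamma_\alpha$ induces an automorphism of $\iota_{\A,0}(Y(\I))$ in $\cCo{d}$. Since this $\cO_X$-module is isomorphic to $(\wh{\cO}_X/\I)^{\oplus d}$ (\resp to $(\I^{-1}/\wh{\cO}_X)^{\oplus d}$ in the case of $\cCo{d}_r$), and since by Section~\ref{sec:notationz} the map $z$ identifies its automorphism group in $\cCo{d}$ with $\GL_d(\wh{\cO}_X/\I)$, this automorphism equals $z(\bar{g}_\I^{\,-1})$ for a unique $\bar{g}_\I\in\GL_d(\wh{\cO}_X/\I)$. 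The naturality of $\gamma_\alpha$ with respect to the morphisms $Y(\I')\to Y(\I)$ of $\cC_{\A,0}^d$ for $\I'\subset\I$ shows that $(\bar{g}_\I)_\I$ is a compatible family, i.e.\ an element of $\varprojlim_\I\GL_d(\wh{\cO}_X/\I)$. As the natural map $\GL_d(\wh{\cO}_X)\to\varprojlim_\I\GL_d(\wh{\cO}_X/\I)$ is bijective (this was used already in the proof of Lemma~\ref{lem:phi_injective}, and follows from Corollary~\ref{cor:compact}), there is a unique $g\in\GL_d(\wh{\cO}_X)$ with $g\equiv\bar{g}_\I$ modulo $\I$ for every $\I$. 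In each of the three cases one has $\GL_d(\wh{\cO}_X)\subset\bbM$, so $g\in\bbM$.

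Finally I would check that $\phi(g)=h$. By construction both $\phi(g)$ and $h$ lie in $\bK_Y$ and induce the same automorphism $z(\bar{g}_\I^{\,-1})$ on each $\iota_{\A,0}(Y(\I))$; hence $\phi(g)^{-1}h\in\bK_Y$ induces the identity on every $\iota_{\A,0}(Y(\I))$. Since every edge object below $Y$ receives a morphism from some $Y(\I)$ (any lattice $\bL_1\subset\wh{\cO}_X^{\oplus d}$ contains $\I^{\oplus d}$ for a suitable open ideal $\I$, by Lemma~\ref{lem:finite_index}), the $Y(\I)$ with their transition morphisms form a cofinal system among these edge objects, so an element of $\bK_Y$ inducing the identity on all of them is the unit of $M_{(\cC_{\A,0},\iota_{\A,0})}$; this rigidity is supplied by the grid and is the same mechanism as in the proof of Lemma~\ref{lem:phi_injective}. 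Therefore $h=\phi(g)\in\Image\phi$. The main obstacle I anticipate is the bookkeeping in the first step, namely matching the abstract group $\bK_Y$ of \cite{Grids} with the stabilizer of the standard lattice and verifying that its elements fix the $Y(\I)$ and act $\wh{\cO}_X$-linearly on the $\iota_{\A,0}(Y(\I))$; the uniqueness invoked in the last step is of the same nature and should be handled along the lines of Lemma~\ref{lem:phi_injective}.
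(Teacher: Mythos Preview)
Your proposal is correct and follows essentially the same route as the paper: extract from $h\in\bK_Y$ a compatible family of automorphisms of $\iota_{\A,0}(Y(\I))\cong(\wh{\cO}_X/\I)^{\oplus d}$, lift to $g\in\GL_d(\wh{\cO}_X)$ via $\GL_d(\wh{\cO}_X)\xto{\cong}\varprojlim_\I\GL_d(\wh{\cO}_X/\I)$, and check $\phi(g)=h$ by the same cofinality/injectivity argument used in Lemma~\ref{lem:phi_injective}.

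One point worth sharpening: the fact that $\alpha$ fixes each $Y(\I)$ is not merely bookkeeping extracted from the definition of $\bK_Y$ in \cite{Grids}. The paper obtains it by first observing (via Lemma~\ref{lem:Gal}) that the morphism $\iota_{\A,0}(f_\I):\iota_{\A,0}(Y(\I))\to\iota_{\A,0}(Y)$ is a Galois covering in $\cC$, and then invoking Corollary~7.1.2 of \cite{Grids}, which says that an element of $\bK_Y$ must send a Galois covering of $\iota_{\A,0}(Y)$ to itself in the grid. You correctly flag this step as the main obstacle, but be aware that it has real content beyond unwinding definitions: the Galois-covering input is what forces $\alpha(Y(\I))=Y(\I)$ rather than some other object mapping to $Y$.
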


\begin{proof}
Let $(\alpha,\gamma_\alpha) \in \bK_Y$.
Let $\I \subset \wh{\cO}_X$ be an arbitrary open ideal.
and let us consider the object $Y(\I)$ of $\cC_{\A,0}$.
Let $f_\I$ denote the unique morphism from $Y(\I)$
to $Y$ in $\cC_{\A,0}$.
Since $(\alpha,\gamma_\alpha) \in \bK_Y$, we have 
$\alpha(\wh{\cO}_X^{\oplus d})
= \wh{\cO}_X^{\oplus d}$. 
It follows from Lemma \ref{lem:Gal} 
that the morphism $\iota_{\A,0}(f)$ in $\cC$
is a Galois covering in $\cC$.
Since the diagram
$$
\begin{CD}
\iota_{\A,0}(Y(\I)) @>{\cong}>> \iota_{\A,0}(\alpha(Y(\I))) \\
@V{\iota_{\A,0}(f)}VV @VV{\iota_{\A,0}(\alpha(f))}V \\
\iota_{\A,0}(Y) @>{\id_{\iota_{\A,0}(Y)}}>> \iota_{\A,0}(Y)
\end{CD},
$$
where the upper horizontal arrow is the isomorphisms in $\cC$
given by $\gamma_\alpha$, is commutative, it follows from Corollary 7.1.2
of \cite{Grids} that $\alpha(Y(\I))=Y(\I)$ and
$\iota_{\A,0}(\alpha(f)) = \iota_{\A,0}(f)$.
Let $\alpha_\I$ denote the unique element of
$\Aut_{\cO_X}(\iota_{\A,0}(Y(\I))) $ such that
$z(\alpha)$ is equal to the automorphism
of $\iota_{\A,0}(Y(\I))$ given by $\gamma_\alpha$,
where $z$ is the isomorphism of groups introduced
in Section \ref{sec:notationz}.
It follows from the construction of the functor
$\iota_{\A,0}$ that $\Gamma_\A(\iota_{\A,0}(Y(\I)))$
is canonically isomorphic to $(\wh{\cO}_X/\I)^{\oplus d}$
or $(\I^{-1}/\wh{\cO}_X)^{\oplus d}$.
Hence the automorphism $\Gamma_\A(\alpha_\I)$ of the
$\wh{\cO}_X$-module $\Gamma_\A(\iota_{\A,0}(Y(\I)))$
can naturally be regarded as the right multiplication
by the inverse of an element of
$\GL_d(\wh{\cO}_X/\I)$.
Since $\gamma_\alpha$ is an isomorphism of functors,
it follows that the family
$(\Gamma_\A(\alpha_\I))_\I$, where $\I$ runs over the
open ideals of $\wh{\cO}_X$, gives an element of
the projective limit $\varprojlim_{\I} 
\GL_d(\wh{\cO}_X/\I)$. Since the map
$\GL_d(\wh{\cO}_X) \to \varprojlim_{\I} 
\GL_d(\wh{\cO}_X/\I)$ is bijective, there exists
an element $g \in \GL_d(\wh{\cO}_X)$ such that
for any open ideal $\I$ of $\wh{\cO}_X$,
the automorphism $\Gamma_\A(\alpha_\I)$ is given by
the right multiplication by the inverse of
the image of $g$ in $\GL_d(\wh{\cO}_X/\I)$.
It is then straightforward to check that
the element $(\alpha,\gamma_\alpha)$ of
$M_{\cC_{\A,0},\iota_{\A,0}}$ is equal to $\phi(g)$.
This proves the claim.
\end{proof}

\subsubsection{ }
We endow the group $\GL_d(\A_X)$ with the
coarsest topology such that the two subsets
$$
U^1_{K,P} = \{ g\in \GL_d(\A_X)\ |\ K g \subset P \}
$$
and
$$
U^2_{K,P} = \{ g\in \GL_d(\A_X)\ |\ K g^{-1} \subset P \}
$$
of $\GL_d(\A_X)$ are open for
any compact subset $K \subset \A_X^{\oplus d}$ 
and for any open subset $P \subset \A_X^{\oplus d}$.

When $X$ is the spectrum of the ring of integers
of a finite extension $F$ of $\Q$, this topology on 
$\GL_d(\A_X)$ coincides with the usual topology 
on the group $\GL_d(\A_X)$.

\begin{lem}\label{lem:continuous}
\begin{enumerate}
\item The map $\GL_d(\A_X) \to \GL_d(\A_X)$
that sends $g$ to $g^{-1}$ is a homeomorphism.
\item If $U\subset \GL_d(\A_X)$ is an open subset,
then $Ug$ and $gU$ are open subsets of $\GL_d(\A_X)$
for any $g \in \GL_d(\A_X)$.
\item The map $m\colon  \A_X^{\oplus d} \times \GL_d(\A_X)
\to \A_X^{\oplus d}$ thath sends $(x,g)$ to $xg$
is continuous.
\end{enumerate}
\end{lem}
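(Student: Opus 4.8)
The plan is to verify the three claims directly from the definition of the topology on $\GL_d(\A_X)$, using the characterization of that topology via the subbasic open sets $U^1_{K,P}$ and $U^2_{K,P}$ together with the topological ring structure of $\A_X$ established in Corollary \ref{cor:top_ring} and the description of the topology on $\A_X^{\oplus d}$ given by the lattices $\Lat^d_\A$ (Lemma \ref{lem:lattice_basics}).

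For claim (1), the inversion map $g \mapsto g^{-1}$ visibly interchanges the two families of subbasic open sets, $U^1_{K,P} \leftrightarrow U^2_{K,P}$, so it is a bijection that pulls back a subbasis to a subbasis; hence it is a homeomorphism (being its own inverse). For claim (2), I would show that right translation by $g$ sends $U^1_{K,P}$ to $U^1_{Kg^{-1},Pg^{-1}}$ and $U^2_{K,P}$ to $U^2_{K,Pg^{-1}}$, or some such formula; the point is that by Corollary \ref{cor:translation} the right multiplication by $g^{-1}$ is a homeomorphism of $\A_X^{\oplus d}$, so it carries compact sets to compact sets and open sets to open sets, and therefore the preimage of a subbasic open set under right translation is again subbasic open. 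Left translation by $g$ is then handled by combining this with claim (1): $gU = (U^{-1} g^{-1})^{-1}$, reducing left translation to right translation and inversion. These first two parts are essentially formal bookkeeping with the subbasis.

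The real content is claim (3), continuity of the action map $m: \A_X^{\oplus d} \times \GL_d(\A_X) \to \A_X^{\oplus d}$. I would fix a point $(x_0, g_0)$ and an element $y_0 = x_0 g_0$, and take a basic open neighborhood of $y_0$ of the form $y_0 + \bL$ for $\bL \in \Lat^d_\A$ (using Lemma \ref{lem:lattice_basics} (2)). Writing $xg - x_0 g_0 = (x - x_0)g + x_0(g - g_0)$, I need to find a lattice neighborhood $x_0 + \bL_1$ of $x_0$ and an open neighborhood $W$ of $g_0$ in $\GL_d(\A_X)$ such that for $x \in x_0 + \bL_1$ and $g \in W$ one has $(x - x_0)g \in \bL$ and $x_0(g - g_0) \in \bL$. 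For the first term: by Lemma \ref{lem:lattice_translation} and continuity of right multiplication, one can choose a lattice $\bL_1$ with $\bL_1 g_0 \subset \bL$, and then shrink a neighborhood $W$ of $g_0$ so that $\bL_1 g \subset \bL$ for all $g \in W$ — here I expect to use that $W$ can be taken inside $U^1_{\bL_1, \bL}$, which by definition is exactly the set of $g$ with $\bL_1 g \subset \bL$; since $\bL_1$ is compact (being a lattice) and $\bL$ is open, $U^1_{\bL_1,\bL}$ is open and contains $g_0$. For the second term: $x_0(g - g_0)$ is an entrywise expression, and using that $\{x_0\}$ is a (finite, hence compact) set and the subbasic open set $U^1_{\{x_0\} \cdot(\text{suitable}), \bL}$ — more precisely, I want the condition ``$x_0 h \in \bL$'' to cut out an open neighborhood of $0$ in a suitable additive sense; this requires translating the multiplicative condition $g - g_0 \in (\text{small})$ into the subbasis language, perhaps by observing that $g \mapsto x_0 g$ is continuous from $\GL_d(\A_X)$ to $\A_X^{\oplus d}$ (which itself follows from the subbasis: the preimage of $y_0 + \bL$ under $g \mapsto x_0 g$ contains $g_0 \cdot U$ for an appropriate subbasic $U$). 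The main obstacle is precisely this last bookkeeping: the topology on $\GL_d(\A_X)$ is defined by a rather indirect subbasis, and one must carefully extract from it the ``obvious'' continuity statements for the individual maps $g \mapsto xg$ (for fixed $x$) and $x \mapsto xg$ (for fixed $g$, already done in Corollary \ref{cor:translation}), and then assemble them, using continuity of addition in $\A_X^{\oplus d}$, into joint continuity of $m$. I expect no genuine difficulty beyond keeping track of which compact and open sets to plug into $U^1$ and $U^2$.
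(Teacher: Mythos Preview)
Your approach to claims (1) and (2) is essentially the paper's. For (2), your trial formula $U^1_{K,P}\,g = U^1_{Kg^{-1},Pg^{-1}}$ is not quite right (the paper computes $U^1_{K,P}\,g = U^1_{K,Pg}$, $U^2_{K,P}\,g = U^2_{Kg,P}$, $g\,U^1_{K,P} = U^1_{Kg^{-1},P}$, $g\,U^2_{K,P} = U^2_{K,Pg^{-1}}$), but you flag this yourself, and your justification via Corollary~\ref{cor:translation} is exactly right: whichever slot $g$ lands in, the resulting set is again of the required compact/open type. The paper handles left translation directly rather than via your reduction $gU = (U^{-1}g^{-1})^{-1}$, but both are fine.

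For claim (3) your argument is correct but more laborious than necessary. Your two-term splitting $xg - x_0g_0 = (x-x_0)g + x_0(g-g_0)$ does go through: take $\bL_1 = \bL g_0^{-1}$ and $W = U^1_{\bL_1,\bL} \cap U^1_{\{x_0\}, y_0 + \bL}$ (the second factor encodes exactly the condition $x_0 g \in y_0 + \bL$ that you were circling around). The paper instead does it in one stroke: given $xg \in P$, note $Pg^{-1}$ is open by Corollary~\ref{cor:translation}, choose a lattice $\bL$ with $x + \bL \subset Pg^{-1}$, and observe that $(x+\bL) \times U^1_{x+\bL,\,P}$ is already an open neighbourhood of $(x,g)$ mapping into $P$ --- because $x+\bL$ is compact (a translate of a lattice) and by definition every $h \in U^1_{x+\bL,P}$ sends all of $x+\bL$ into $P$. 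This exploits the subbasis more directly and avoids splitting into terms; your route buys nothing extra here, though it would be the natural thing to try if the subbasis were defined differently.
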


\begin{proof}
The map $\GL_d(\A_X) \to \GL_d(\A_X)$
which sends $g$ to $g^{-1}$
is bijective and sends $U^1_{K,P}$, $U^2_{K,P}$
to $U^2_{K,P}$, $U^1_{K,P}$, respectively.
Hence the claim (1) follows.

For any $g \in \GL_d(\A_X)$, we have
$g U^1_{K,P} = U^1_{K g^{-1},P}$,
$g U^2_{K,P} = U^2_{K, P g^{-1}}$,
$U^1_{K,P} g = U^1_{K,Pg}$,
and $U^2_{K,P} g = U^2_{Kg,P}$.
Hence the claim (2) follows from 
Corollary \ref{cor:translation}.

Let $P \subset \A_X^{\oplus d}$ be an open set.
Let $(x,g) \in \A_X^{\oplus d} \times \GL_d(\A_X)$
be a pair satisfying $xg \in P$.
Then it follows from Corollary \ref{cor:translation}
that $P g^{-1}$ is an open neighborhood of
$x$ in $\A_X^{\oplus d}$.
Since any open subgroup of $\wh{\cO}_X^{\oplus d}$ 
is compact, it follows from the definition of the
topology on $\A_X$ that there exists a lattice 
$\bL \in \Lat^d_\A$ satisfying $x + \bL \subset P g^{-1}$.
Hence $(x+\bL) \times U^1_{x+\bL,P}$
is an open neighborhood of $(x,g)$ contained
in $m^{-1}(P)$.
This shows that $m^{-1}(P)$ is an open
subset of $\A_X^{\oplus d} \times \GL_d(\A_X)$.
This proves the claim (3).
\end{proof}

For an element $(\bL_1,\bL_2) \in \Pair^d_\A$,
we let $\bK_{\bL_1,\bL_2} \subset \GL_d(\A_X)$ 
denote the subgroup of the elements 
$g\in \GL_d(\A_X)$ such that
$\bL_i g = \bL_i$ for $i=1,2$ and
the map induced by $g$ on $\bL_2/\bL_1$ is the identity.

\begin{lem}\label{lem:basic_bK}
\begin{enumerate}
\item For any $(\bL_1,\bL_2) \in \Pair^d_\A$,
the group  $\bK_{\bL_1,\bL_2}$ is an open
subgroup of $\GL_d(\A_X)$.
\item For two elements $(\bL_1,\bL_2), (\bL'_1,\bL'_2) \in \Pair^d_\A$
with $(\bL_1,\bL_2) \le (\bL'_1,\bL'_2)$, the group
$\bK_{\bL'_1,\bL'_2}$ is a subgroup of $\bK_{\bL_1,\bL_2}$
of finite index.
\item For $(\bL_1,\bL_2) \in \Pair^d_\A$ and
for $g \in \GL_d(\A_X)$, we have
$g^{-1} \bK_{\bL_1,\bL_2} g = \bK_{\bL_1 g, \bL_2 g}$.
\item For any finite number of
elements $(\bL_{1,1},\bL_{1,2}), (\bL_{2,1},\bL_{2,2}),
\ldots, (\bL_{r,1},\bL_{r,2}) \in \Pair^d_\A$,
there exists an element $(\bL'_1,\bL'_2) \in  \Pair^d_\A$
satisfying $\bK_{\bL'_1,\bL'_2}
\subset \bigcap_{i=1}^r \bK_{\bL_{i,1},\bL_{i,2}}$.
\item For any open subset 
$U \subset \GL_d(\A_X)$ and for any
$g \in U$, there exists an element
$(\bL_1,\bL_2) \in \Pair^d_\A$ satisfying
$\bK_{\bL_1,\bL_2} g \subset U$ and
$g \bK_{\bL_1,\bL_2} \subset U$.
\end{enumerate}
\end{lem}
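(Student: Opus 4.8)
The plan is to prove the five assertions of Lemma~\ref{lem:basic_bK} in order, using the explicit description of the topology on $\GL_d(\A_X)$ in terms of the subsets $U^1_{K,P}$ and $U^2_{K,P}$, together with the lattice-theoretic facts established in Section~\ref{sec:poset Lat} (in particular Lemma~\ref{lem:finite_index}, Lemma~\ref{lem:lattices}, Lemma~\ref{lem:I1I2}, and Lemma~\ref{lem:lattice_translation}).

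First, for (1), I would observe that $\bK_{\bL_1,\bL_2}$ is the intersection of the subgroup $\{g\mid \bL_1 g = \bL_1,\ \bL_2 g = \bL_2\}$ with the subgroup of elements inducing the identity on $\bL_2/\bL_1$; the condition $\bL_i g = \bL_i$ can be rewritten as $\bL_i g \subset \bL_i$ and $\bL_i g^{-1} \subset \bL_i$, so the stabilizer is an intersection of finitely many sets of the form $U^1_{\bL_i,\bL_i}\cap U^2_{\bL_i,\bL_i}$, hence open. For the condition of inducing the identity on $\bL_2/\bL_1$, the key point is that by Lemma~\ref{lem:finite_index} the module $\bL_2/\bL_1$ is finite, so ``$g$ acts as the identity on $\bL_2/\bL_1$'' is equivalent to ``$(v g - v)\in \bL_1$ for each $v$ in a finite set of representatives of $\bL_2/\bL_1$'', i.e.\ to a finite intersection of translated sets $U^1_{v+\bL_1,\,\text{something}}$; more precisely, the set of $g$ with $vg - v \in \bL_1$ is an open condition because right multiplication is continuous (Lemma~\ref{lem:continuous}(3)) and $\bL_1$ is open. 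For (2), I would use that $\bK_{\bL'_1,\bL'_2}\subset \bK_{\bL_1,\bL_2}$ is clear once one checks the stabilizer/identity conditions are monotone in the obvious direction (this is where one must be slightly careful about which inclusions go which way); finiteness of the index then follows because $\bK_{\bL_1,\bL_2}/\bK_{\bL'_1,\bL'_2}$ injects into the group of automorphisms of the finite module $\bL'_2/\bL'_1$ (again finite by Lemma~\ref{lem:finite_index}) fixing appropriate submodules, hence is finite. Assertion (3) is a direct computation: $g^{-1}\bK_{\bL_1,\bL_2}g$ consists of those $h$ with $g h g^{-1}\in \bK_{\bL_1,\bL_2}$, and $\bL_i (ghg^{-1}) = \bL_i$ is equivalent, after applying Lemma~\ref{lem:lattice_translation} to replace $\bL_i$ by $\bL_i g$, to $(\bL_i g) h = \bL_i g$, and similarly for the induced-identity condition; so $g^{-1}\bK_{\bL_1,\bL_2}g = \bK_{\bL_1 g,\bL_2 g}$.

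For (4), I would proceed by induction on $r$, reducing to the case $r = 2$; given $(\bL_{1,1},\bL_{1,2})$ and $(\bL_{2,1},\bL_{2,2})$, set $\bL'_1 = \bL_{1,1}\cap \bL_{2,1}$ and $\bL'_2 = \bL_{1,2} + \bL_{2,2}$, which lie in $\Lat^d_\A$ by Lemma~\ref{lem:lattices}, and verify directly that any $g$ stabilizing $\bL'_1$ and $\bL'_2$ and acting trivially on $\bL'_2/\bL'_1$ automatically stabilizes each $\bL_{i,j}$ and acts trivially on each $\bL_{i,2}/\bL_{i,1}$, since $\bL_{i,1}$ and $\bL_{i,2}$ sit between $\bL'_1$ and $\bL'_2$ and the quotient maps are compatible; this gives $\bK_{\bL'_1,\bL'_2}\subset \bK_{\bL_{1,1},\bL_{1,2}}\cap\bK_{\bL_{2,1},\bL_{2,2}}$.

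Finally, assertion (5) is the crux and the main obstacle: one must show that the sets $\bK_{\bL_1,\bL_2}$ form a neighborhood basis of the identity in $\GL_d(\A_X)$, and then conclude by Lemma~\ref{lem:continuous}(2). I would argue as follows. By definition of the topology it suffices to handle a subbasic open set $U = U^i_{K,P}$ containing $g$; by Lemma~\ref{lem:continuous}(1)--(2) and the relations $gU^1_{K,P} = U^1_{Kg^{-1},P}$ etc.\ computed in the proof of that lemma, it is enough to show that when $\mathrm{id}\in U^i_{K,P}$ there is $(\bL_1,\bL_2)$ with $\bK_{\bL_1,\bL_2}\subset U^i_{K,P}$. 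Here $K$ is compact and $P$ open in $\A_X^{\oplus d}$ with $K\subset P$ (using $K\cdot\mathrm{id} = K$); by compactness of $K$ and Lemma~\ref{lem:lattice_basics}(2) one can cover $K$ by finitely many translates $v_j + \bL$ with $v_j + \bL\subset P$ for a single lattice $\bL\in\Lat^d_\A$; enlarging $\bL$ if necessary (using Lemma~\ref{lem:lattice_basics}(4) to absorb the finitely many $v_j$) one may arrange that $K$ and all $v_j$ lie in a lattice $\bL_2$ with $\bL_1 := \bL\subset \bL_2$. Then for $g\in \bK_{\bL_1,\bL_2}$ and $w\in K$, writing $w = v_j + u$ with $u\in\bL_1$ and using that $g$ fixes $\bL_2/\bL_1$ pointwise and stabilizes $\bL_1$, one gets $wg = v_j g + ug \in (v_j + \bL_1) + \bL_1 = v_j + \bL_1\subset P$, so $\bK_{\bL_1,\bL_2}\subset U^1_{K,P}$; the case of $U^2_{K,P}$ follows by also imposing the condition on $g^{-1}$, which $\bK_{\bL_1,\bL_2}$ already satisfies since it is a group. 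The delicate points to get right are the bookkeeping that ensures a \emph{single} lattice works for all of $K$ (compactness) and that the chosen pair $(\bL_1,\bL_2)$ genuinely lies in $\Pair^d_\A$, i.e.\ $\bL_1\subset\bL_2$ with both in $\Lat^d_\A$ — both are handled by the lemmas of Section~\ref{sec:poset Lat}.
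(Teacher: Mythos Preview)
Your outline is largely sound and follows the same overall strategy as the paper, but there is one genuine gap in your argument for part~(2), in the finite-index step.

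You write that $\bK_{\bL_1,\bL_2}/\bK_{\bL'_1,\bL'_2}$ injects into the automorphism group of the finite module $\bL'_2/\bL'_1$. The problem is that this map is not defined: an element $g\in\bK_{\bL_1,\bL_2}$ stabilizes $\bL_1$ and $\bL_2$, but there is no reason it should stabilize the larger lattice $\bL'_2$ or the smaller lattice $\bL'_1$, so it does not act on $\bL'_2/\bL'_1$ at all. What saves the situation is that $g$ \emph{does} stabilize any ideal multiple of $\bL_2$. Using Lemma~\ref{lem:I1I2}, choose open ideals $\I_1,\I_2$ with $\I_1\bL_2\subset\bL'_1$ and $\bL'_2\subset\I_2^{-1}\bL_2$; then $\bK_{\bL_1,\bL_2}$ acts on the finite module $\I_2^{-1}\bL_2/\I_1\bL_2$, and the kernel of this action lies in $\bK_{\bL'_1,\bL'_2}$ (since acting trivially on the big quotient forces preservation of the intermediate lattices $\bL'_1,\bL'_2$ and trivial action on $\bL'_2/\bL'_1$). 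This is essentially what the paper does, phrased there as an action on a finite set $C'$ of pairs (lattice pair, isomorphism) with $\bK_{\bL'_1,\bL'_2}$ appearing as a stabilizer.

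On the positive side, your treatment of~(5) is cleaner than the paper's: you reduce at the outset to the case $g=\id$ by passing to the open neighborhood $g^{-1}U\cap Ug^{-1}$ of the identity (legitimate by Lemma~\ref{lem:continuous}(2)), and then handle a subbasic $U^i_{K,P}$ containing~$\id$ directly. The paper instead keeps the general $g$ throughout, works with the translated lattices $\bL_{sg}g^{-1}$, and only at the end invokes~(3) and~(4) to pass between $\bK_{\bL'_1,\bL'_2}g$ and $g\bK_{\bL'_1 g,\bL'_2 g}$. Your route avoids that bookkeeping. One small point to make explicit in your argument: after extracting a finite subcover $\{v_j+\bL_{v_j}\}$ of $K$, you replace all the $\bL_{v_j}$ by the single lattice $\bL=\bigcap_j\bL_{v_j}$; the resulting sets $v_j+\bL$ need not cover $K$, but since each $\bL_{v_j}/\bL$ is finite (Lemma~\ref{lem:finite_index}) you can refine to a finite cover by $\bL$-translates, each still contained in~$P$.
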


\begin{proof}
Let us choose a complete set $S \subset \bL_2$
of representatives of $\bL_2/\bL_1$.
It follows from Lemma \ref{lem:finite_index}
that $S$ is a finite set.
We note that $s + \bL_1$ is a compact open
subset of $\A_X^{\oplus d}$ for each $s \in S$.
By definition, the group 
$\bK_{\bL_1,\bL_2}$ is equal to the intersection
$$
\bK_{\bL_1,\bL_2} =
\bigcap_{s \in S} 
(U^1_{s+\bL_1,s+\bL_1}
\cap U^2_{s+\bL_1,s+\bL_1}).
$$
Hence $\bK_{\bL_1,\bL_2}$ is an open subset of $\GL_d(\A_X)$.
This proves the claim (1).

Let $(\bL_1,\bL_2), (\bL'_1,\bL'_2) \in \Pair^d_\A$ be
two elements with $(\bL_1,\bL_2) \le (\bL'_1,\bL'_2)$.
Let $g \in \bK_{\bL'_1,\bL'_2}$.
Since $\bL_i$ is the inverse image of 
$\bL_i/\bL'_1$ under the surjection 
$\bL'_2 \surj \bL'_2/\bL'_1$
for $i=1,2$, it follows from the definition
of $\bK_{\bL'_1,\bL'_2}$ that we have 
$\bL_1 g = \bL_1$ and $\bL_2 g = \bL_2$.
Since the right multiplication by $g$ induces
the identity map on $\bL'_2/\bL'_1$,
it also induces the identity map on
$\bL_2/\bL_1$. Hence we have $g \in \bK_{\bL_1,\bL_2}$.
This proves that $\bK_{\bL'_1,\bL'_2}
\subset \bK_{\bL_1,\bL_2}$.
It follows from Lemma \ref{lem:I1I2} that
there exist open ideals 
$\I_1,\I_2 \subset \wh{\cO}_X$
satisfying $\I_1 \bL_2 \subset \bL'_1 \subset \bL'_2
\subset \I_2^{-1} \bL_2$.
We let $C'$
denote the set of pairs $((\bL''_1,\bL''_2),\alpha)$
of an element $(\bL''_1,\bL''_2) \in \Pair^d_\A$
with $(\bL''_1,\bL''_2) \le (\I_1 \bL_2, \I_2^{-1} \bL_2)$
and an isomorphism $\alpha \colon 
\bL''_2/\bL''_1 \xto{\cong} \bL'_2/\bL'_1$ of 
$\wh{\cO}_X$-modules.
It follows from Lemma \ref{lem:finite_index}
that there exist only finitely many 
$\wh{\cO}_X$-lattices $\bL \in \Lat^d_\A$ satisfying
$\I_1 \bL_2 \subset \bL \subset \I_2^{-1} \bL_2$ and that
the group $\Aut_{\wh{\cO}_X}(\bL'_2/\bL'_1)$
is a finite group.
Hence the set $C'$ is a finite set.
The group $\bK_{\bL_1,\bL_2}$ acts on the set
$C'$ by setting $g \cdot ((\bL''_1,\bL''_2),\alpha)
= ((\bL''_1 g^{-1}, \bL''_2 g^{-1}),\alpha \circ \beta_g)$,
where $\beta_g \colon  \bL''_2 g^{-1}/\bL''_1 g^{-1}
\xto{\cong} \bL''_2/\bL''_1$ is the homomorphism
of $\wh{\cO}_X$-modules induced by the right
multiplication by $g$.
It can be checked easily that 
the subgroup $\bK_{\bL'_1,\bL'_2}\subset \bK_{\bL_1,\bL_2}$ 
is equal to the stabilizer of the element
$((\bL'_1,\bL'_2),\id_{\bL'_2/\bL'_1})$.
Hence $\bK_{\bL'_1,\bL'_2}$ is a subgroup of
$\bK_{\bL_1,\bL_2}$ of finite index.
This proves the claim (2).

The claim (3) is clear from the definition
of $\bK_{\bL_1 g, \bL_2 g}$.

The claim (4) follows from the claim (2)
since it follows from Lemma \ref{lem:pair_directed_V}
that the poset $\Pair^d_\A$ is filtered.

We prove the claim (5).
By definition of the topology on $\GL_d(\A_X)$,
there exists an open neighborhood $U'$ of $g$
such that $U' \subset U$ and that
$U'$ is an intersection of finitely many
subsets of $\GL_d(\A_X)$, each of which
is either of the form $U^1_{K,P}$ or of the
form $U^2_{K,P}$ for some compact subset
$K \subset \GL_d(\A_X)$ and for 
some open subset $P \subset \GL_d(\A_X)$.
Hence it follows from the claim (4) that,
to prove the claim (5), we may assume that
$U = U^1_{K,P}$ or $U = U^2_{K,P}$.
First suppose that $U=U^1_{K,P}$.
Since any open subgroup of $\wh{\cO}_X^{\oplus d}$ 
is compact, it follows from the definition of the
topology on $\A_X$ that for any $y \in U$,
there exists a lattice $\bL_y \in \Lat^d_\A$
satisfying $y + \bL_y \subset U$.
Let $g \in U^1_{K,P}$.
Since $K \subset \bigcup_{x \in K} (x + \bL_{x g} g^{-1})$
and $K$ is compact, there exists a finite subset
$S \subset K$ satisfying $K \subset 
\bigcup_{s \in S} (s + \bL_{s g}g^{-1})$.
We set $\bL'_1 = \bigcap_{s \in S} (\bL_{s g} g^{-1})$
and let $\bL'_2$ be the $\wh{\cO}_X$-submodule
of $\A_X^{\oplus d}$ generated by 
$\sum_{s \in S} (\bL_{s g} g^{-1})$ and $S$.
Then we have $(\bL'_1,\bL'_2) \in \Pair^d_\A$.
Let $k \in \bK_{\bL'_1,\bL'_2}$.
It follows from the definition of
the pair $(\bL'_1,\bL'_2)$ that 
$(s + \bL_{s g} g^{-1}) k =s + \bL_{s g} g^{-1}$
for any $s \in S$.
Hence we have $K k g \subset
\bigcup_{s \in S} (sg + \bL_{s g})
\subset P$. This shows that
$\bK_{\bL'_1,\bL'_2} g \subset U^1_{K,P}$.
By the claim (3), we have 
$g^{-1} \bK_{\bL'_1,\bL'_2} g = \bK_{\bL'_1 g, \bL'_2 g}$.
Hence $g \bK_{\bL'_1 g, \bL'_2 g} \subset U^1_{K,P}$.
It follows from the claim (4) that
there exists an element $(\bL_1,\bL_2) \in \Pair^d_\A$
satisfying $\bK_{\bL_1,\bL_2} \subset \bK_{\bL'_1,\bL'_2}
\cap \bK_{\bL'_1 g, \bL'_2 g}$. We then have
$\bK_{\bL_1,\bL_2} g \subset U^1_{K,P}$
and $g \bK_{\bL_1,\bL_2} \subset U^1_{K,P}$.
Next suppose that $U = U^2_{K,P}$.
Let $g \in U^2_{K,P}$.
Since $g^{-1} \in U^1_{K,P}$, there
exists an element $(\bL_1,\bL_2) \in \Pair^d_\A$
satisfying $\bK_{\bL_1,\bL_2} g^{-1} \subset U^1_{K,P}$
and $g^{-1} \bK_{\bL_1,\bL_2} \subset U^1_{K,P}$.
Hence we have 
$g \bK_{\bL_1,\bL_2} \subset U^2_{K,P}$
and $\bK_{\bL_1,\bL_2} g \subset U^2_{K,P}$.
This proves the claim (5).
\end{proof}

\begin{cor}\label{cor:bK_continuous}
\begin{enumerate}
\item The set $\{\bK_{\bL_1,\bL_2}\ |\ 
(\bL_1,\bL_2) \in \Pair^d_\A \}$ forms a
fundamental system of neighborhoods of $1$
in $\GL_d(\A_X)$.
\item The multiplication map 
$\GL_d(\A_X) \times \GL_d(\A_X) \to \GL_d(\A_X)$
is continuous.
\end{enumerate}
\end{cor}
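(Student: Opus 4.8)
The plan is to deduce both assertions directly from Lemma \ref{lem:basic_bK} together with Lemma \ref{lem:continuous}, so that essentially no new work is needed.

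For part (1), I would first note that each $\bK_{\bL_1,\bL_2}$ contains the identity matrix and is an open subset of $\GL_d(\A_X)$ by Lemma \ref{lem:basic_bK} (1), hence is an open neighbourhood of $1$. For the converse, given an open $U \subset \GL_d(\A_X)$ with $1 \in U$, I would apply Lemma \ref{lem:basic_bK} (5) with $g = 1$ to obtain $(\bL_1,\bL_2) \in \Pair^d_\A$ with $\bK_{\bL_1,\bL_2} = \bK_{\bL_1,\bL_2}\cdot 1 \subset U$. This yields that $\{\bK_{\bL_1,\bL_2} \mid (\bL_1,\bL_2)\in\Pair^d_\A\}$ is a fundamental system of neighbourhoods of $1$.

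For part (2), it suffices to show that for any $g,h \in \GL_d(\A_X)$ and any open $U$ with $gh \in U$ there are open sets $V \ni g$ and $W \ni h$ with $VW \subset U$. Using Lemma \ref{lem:basic_bK} (5) applied to $gh \in U$, I would pick $\bK = \bK_{\bL_1,\bL_2}$ with $gh\,\bK \subset U$, and then set $W = h\bK$ and $V = g(h\bK h^{-1})$. Lemma \ref{lem:basic_bK} (3) identifies $h\bK h^{-1}$ with $\bK_{\bL_1 h^{-1},\bL_2 h^{-1}}$, which is open by Lemma \ref{lem:basic_bK} (1), so Lemma \ref{lem:continuous} (2) makes $V$ and $W$ open; clearly $g \in V$ and $h \in W$. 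Since $\bK$ is a group, $VW = g(h\bK h^{-1})(h\bK) = gh\,\bK\bK = gh\,\bK \subset U$, which gives continuity of multiplication.

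I do not expect any real obstacle here: all the substance sits in Lemma \ref{lem:basic_bK}. The one place to be slightly careful is the conjugation step in part (2), where one needs that the translate $h\bK h^{-1}$ used to form the neighbourhood of $g$ is again one of the distinguished open subgroups $\bK_{\bL'_1,\bL'_2}$; this is precisely the content of Lemma \ref{lem:basic_bK} (3).
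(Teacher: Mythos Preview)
Your proof is correct and matches the paper's approach: part (1) is Lemma \ref{lem:basic_bK} (5) applied at $g=1$, and for part (2) the paper likewise chooses $\bK' = \bK_{\bL'_1,\bL'_2}$ with $gh\,\bK' \subset U$, sets $\bK = h\bK' h^{-1}$ via Lemma \ref{lem:basic_bK} (3), and takes the open neighbourhoods $g\bK$ and $\bK h = h\bK'$, which is exactly your $V$ and $W$.
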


\begin{proof}
The claim (1) is an immediate consequence of
Lemma \ref{lem:basic_bK} (5).

Let $U \subset \GL_d(\A_X)$ be an open set
and let $g_1,g_2 \in \GL_d(\A_X)$ be elements
satisfying $g_1 g_2 \in U$.
It follows from Lemma \ref{lem:basic_bK} (5)
that there exists $(\bL'_1,\bL'_2) \in \Pair^d_\A$
such that $g_1 g_2 \bK_{\bL'_1,\bL'_2} \subset U$.
We set $(\bL_1,\bL_2) = (\bL'_1 g_2^{-1}, \bL'_2 g_2^{-1})$.
Then $g_1 \bK_{\bL_1,\bL_2} g_2 \subset U$.
It follows from Lemma \ref{lem:continuous} (2)
that $g_1 \bK_{\bL_1,\bL_2} \times
\bK_{\bL_1,\bL_2} g_2$ is an open subset of
$\GL_d(\A_X) \times \GL_d(\A_X)$
containing $(g_1,g_2)$ whose image under
the multiplication map is contained in $U$.
This shows that the multiplication map is
continuous. This proves the claim (2).
\end{proof}

\begin{thm} \label{cor:M_isom}
The map $\phi\colon  \bbM \to M_{(\cC_{\A,0},\iota_{\A,0})}$ is 
an isomorphism of topological monoids. Here we
regard $M_{(\cC_{\A,0},\iota_{\A,0})}$ as a topological
monoid as in Corollary 11.1.3 of \cite{Grids}.
\end{thm}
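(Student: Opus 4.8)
The statement to prove is that the monoid homomorphism $\phi:\bbM \to M_{(\cC_{\A,0},\iota_{\A,0})}$ is an isomorphism of topological monoids. By Lemma~\ref{lem:phi_injective} and Lemma~\ref{lem:phi_surjective}, we already know that $\phi$ is a bijective homomorphism of monoids, so the only remaining point is that $\phi$ is a homeomorphism; equivalently, that $\phi$ and $\phi^{-1}$ are both continuous. The plan is to compare the two natural bases of neighborhoods of the identity: on the source side, Corollary~\ref{cor:bK_continuous}~(1) tells us that the subgroups $\bK_{\bL_1,\bL_2}$, as $(\bL_1,\bL_2)$ ranges over $\Pair^d_\A$, form a fundamental system of neighborhoods of $1$ in $\GL_d(\A_X)$ (hence in $\bbM$, which carries the subspace topology). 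On the target side, the topology on $M_{(\cC_{\A,0},\iota_{\A,0})}$ is the one described in Corollary~11.1.3 of \cite{Grids}, whose defining fundamental system of neighborhoods of the unit is given by the stabilizer subgroups $\bK_X$ attached to objects $X$ of the grid $\cC_{\A,0}$.

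The key computation is therefore to match these two families explicitly. First I would show that for every object $(\bL_1,\bL_2)$ of $\cC_{\A,0}$, the subgroup $\phi^{-1}(\bK_{(\bL_1,\bL_2)})$ of $\bbM$ is exactly $\bK_{\bL_1,\bL_2}$. This is essentially unwinding the definitions: by construction $\phi(g)=(\alpha_g,\gamma_g)$, where $\alpha_g$ acts on $\Pair^d_\A$ by $\bL \mapsto \bL g^{-1}$ and $\gamma_g$ is induced by right multiplication by $g^{-1}$ via the identifications $\Gamma_\A(\iota_{\A,0}(\bL_1,\bL_2)) \cong \bL_2/\bL_1$ from Lemma~\ref{lem:XN}; so $\phi(g)$ fixes the object $(\bL_1,\bL_2)$ and induces the identity on $\iota_{\A,0}(\bL_1,\bL_2)$ if and only if $\bL_i g = \bL_i$ for $i=1,2$ and the map on $\bL_2/\bL_1$ induced by $g$ is the identity, which is precisely the defining condition of $\bK_{\bL_1,\bL_2}$. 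Since the isomorphism $T_\A$ identifies the objects of $\cC_{\A,0}$ with those of $\cC_0$ and both sides run over cofinal families, this gives a bijection of the two fundamental systems that is compatible with $\phi$.

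Granting that matching, continuity of $\phi$ follows: a basic open neighborhood of $\phi(g_0)$ in $M_{(\cC_{\A,0},\iota_{\A,0})}$ is of the form $\phi(g_0)\,\bK_{(\bL_1,\bL_2)}$ (using that the grid monoid is a topological monoid so left translations are homeomorphisms, by Corollary~11.1.3 of \cite{Grids}), and its preimage under $\phi$ contains $g_0\,\bK_{\bL_1,\bL_2}$, which is an open neighborhood of $g_0$ in $\bbM$ by Lemma~\ref{lem:continuous}~(2) together with Corollary~\ref{cor:bK_continuous}~(1). Continuity of $\phi^{-1}$ is the symmetric statement: a basic open neighborhood of $g_0$ in $\bbM$ contains some $g_0\,\bK_{\bL_1,\bL_2}$, and its image under $\phi$ contains $\phi(g_0)\,\bK_{(\bL_1,\bL_2)}$, which is open in the grid monoid. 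Hence $\phi$ is a homeomorphism, and combined with Lemma~\ref{lem:phi_injective} and Lemma~\ref{lem:phi_surjective} it is an isomorphism of topological monoids.

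\textbf{Main obstacle.} The routine-looking steps (bijectivity, translation arguments) are already in place; the one place that requires genuine care is verifying that $\phi^{-1}(\bK_{(\bL_1,\bL_2)}) = \bK_{\bL_1,\bL_2}$, i.e.\ that the abstract stabilizer subgroup in the grid monoid $M_{(\cC_{\A,0},\iota_{\A,0})}$ — defined in terms of fixing an object and acting trivially on its image under $\iota_{\A,0}$ — really corresponds under $\phi$ to the concrete adelic congruence subgroup. This amounts to tracking the identification $\Gamma_\A(\iota_{\A,0}(\bL_1,\bL_2)) \cong \bL_2/\bL_1$ of Lemma~\ref{lem:XN} through the definition of $\gamma_g$ and checking that ``$\gamma_g$ induces the identity automorphism'' translates faithfully into ``$g$ acts as the identity on $\bL_2/\bL_1$''. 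Once that dictionary is pinned down, the rest is formal manipulation of fundamental systems of neighborhoods.
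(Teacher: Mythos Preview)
Your proposal is correct and follows essentially the same approach as the paper: the paper's proof is extremely terse, simply citing Lemma~\ref{lem:phi_injective} and Lemma~\ref{lem:phi_surjective} for bijectivity and then Corollary~\ref{cor:bK_continuous}~(1) for the homeomorphism, leaving the identification $\phi^{-1}(\bK_{(\bL_1,\bL_2)}) = \bK_{\bL_1,\bL_2}$ implicit. You have correctly unpacked what that citation means and identified the one nontrivial verification (matching the abstract stabilizer in the grid monoid with the concrete adelic congruence subgroup via the construction of $\gamma_g$), so your write-up is a faithful expansion of the paper's argument.
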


\begin{proof}
It follows from Lemma \ref{lem:phi_injective} and 
Lemma \ref{lem:phi_surjective} that
$\phi$ is a bijective homomorphism of monoids.
It follows from Corollary \ref{cor:bK_continuous} (1)
that the map $\phi$ is a homeomorphism. Hence the claim follows.
\end{proof}

For a presheaf $F$ of sets on $\cC$, we set
$$
\omega_\Cip(F)
= \varinjlim_{(L_1,L_2) \in \Obj \cC_{0}} F(L_2/L_1).
$$
As is explained in Section 5.7.3 of \cite{Grids},
the set $\omega_\Cip(F)$ has a natural structure 
of a left $M_\Cip$-set. Via the isomorphisms
$$
M_\Cip \cong M_{(\cC_{\A,0},\iota_{\A,0})} 
\xleftarrow[\cong]{\phi} \bbM
$$
of topological monoids, we regard $\omega_\Cip(F)$
as a left $\bbM$-set.
By associating $\omega_\Cip(F)$ to each presheaf
$F$ of sets on $\cC$, we obtain a functor $\omega_\Cip$
from the category of presheaves on $\cC$ to the category of
left $\bbM$-sets.

\begin{cor} \label{cor:main_section3}
The restriction of the functor $\omega_\Cip$
to the category $\Shv(\cC,J)$ of sheaves on $(\cC,J)$
gives an equivalence from the category $\Shv(\cC,J)$ to
the category of smooth left $\bbM$-sets. Here a left $\bbM$-set
$S$ is called smooth if for any $s \in S$, there is an open
subgroup $\bK \subset \bbM$ such that $gs =s$ holds for
every $g \in \bK$.
\end{cor}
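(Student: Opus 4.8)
The statement to prove is Corollary~\ref{cor:main_section3}: the restriction of $\omega_\Cip$ to $\Shv(\cC,J)$ gives an equivalence onto the category of smooth left $\bbM$-sets. My plan is to reduce everything to the general machinery of \cite{Grids} together with the identification of monoids already established in Theorem~\ref{cor:M_isom}.

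First I would recall from Theorem~\ref{thm:section2} that $(\cCo{d},J)$ is a $Y$-site with grid $(\cC_{\A,0},\iota_{\A,0})$ (after transporting the grid $(\cC_0^d,\iota_0^d)$, or its $m$- and $r$-variants, across the isomorphism $T_\A$ of posets as done in Section~\ref{sec:Pair adele}). Next, I would invoke the main representability theorem of \cite{Grids}, namely \cite[Thm.~5.8.1]{Grids}: for a $Y$-site equipped with a grid, the fiber functor $\omega_\Cip$ restricted to the category of sheaves induces an equivalence $\Shv(\cC,J) \xto{\cong} (M_\Cip\text{-set})_{\mathrm{sm}}$ onto the category of smooth left $M_\Cip$-sets. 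To apply this I must check the relevant finiteness/cardinality hypothesis on the overcategories; here it follows from Corollary~\ref{cor:finite}, since the residue fields at all closed points of $X$ are assumed finite, so the hom-sets of $\cCo{d}$ are finite and in particular the overcategory condition of \cite[5.8.1]{Grids} holds.

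Then the only remaining point is to rewrite the target. By Theorem~\ref{cor:M_isom}, the homomorphism $\phi:\bbM \to M_{(\cC_{\A,0},\iota_{\A,0})}$ is an isomorphism of topological monoids, and by the construction in Section~\ref{sec:Pair adele} the isomorphism $T_\A$ induces an isomorphism $M_\Cip \cong M_{(\cC_{\A,0},\iota_{\A,0})}$ of (topological) monoids; composing, $M_\Cip \cong \bbM$ as topological monoids. An isomorphism of topological monoids carries open subgroups to open subgroups and hence induces an equivalence between the category of smooth left $M_\Cip$-sets and the category of smooth left $\bbM$-sets, compatible with the forgetful functors to $\Sets$. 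Combining this with the equivalence from \cite[Thm.~5.8.1]{Grids}, and noting that the composite is exactly the functor $\omega_\Cip$ viewed through the chosen identifications (this is how $\omega_\Cip$ was defined as an $\bbM$-set-valued functor just before the corollary), yields the claim.

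The main obstacle is not conceptual but bookkeeping: one must make sure that ``smooth'' in the sense used in \cite{Grids} for $M_\Cip$-sets matches the explicit definition given in the corollary (existence, for each $s$, of an open subgroup $\bK \subset \bbM$ with $gs=s$ for all $g \in \bK$), and that this condition is preserved under the monoid isomorphism $\phi$. Since $\phi$ is a homeomorphism by Theorem~\ref{cor:M_isom} and, by Corollary~\ref{cor:bK_continuous}(1), the groups $\bK_{\bL_1,\bL_2}$ form a fundamental system of open neighborhoods of $1$ in $\bbM$, this compatibility is immediate once spelled out. With these verifications in place, the proof is a short concatenation of the cited results.

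\begin{proof}
By Theorem \ref{thm:section2}, $(\cCo{d},J)$ is a $Y$-site with grid
$(\cC_{\A,0},\iota_{\A,0})$. Since the residue field at each closed point of
$X$ is finite, Corollary \ref{cor:finite} shows that all hom-sets of $\cCo{d}$
are finite; in particular the cardinality condition (1) of \cite[5.8.1]{Grids}
holds for the overcategory $\cC(\cT(J))_{/X}$ for every object $X$. Hence by
Theorem 5.8.1 of \cite{Grids} the restriction of $\omega_\Cip$ to
$\Shv(\cC,J)$ is an equivalence from $\Shv(\cC,J)$ to the category of smooth
left $M_\Cip$-sets.

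By the construction in Section \ref{sec:Pair adele}, the isomorphism $T_\A$
induces an isomorphism $M_\Cip \cong M_{(\cC_{\A,0},\iota_{\A,0})}$ of
topological monoids, and by Theorem \ref{cor:M_isom} the homomorphism $\phi$
is an isomorphism of topological monoids $\bbM \xto{\cong}
M_{(\cC_{\A,0},\iota_{\A,0})}$. Composing, we obtain an isomorphism
$M_\Cip \cong \bbM$ of topological monoids, compatible with the definition
of $\omega_\Cip$ as a functor to left $\bbM$-sets.

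It remains to note that this isomorphism of topological monoids induces an
equivalence between the category of smooth left $M_\Cip$-sets and the
category of smooth left $\bbM$-sets, compatibly with the forgetful functors
to $\Sets$. Indeed, an isomorphism of topological monoids carries open
subgroups to open subgroups; and by Corollary \ref{cor:bK_continuous} (1)
the subgroups $\bK_{\bL_1,\bL_2}$ with $(\bL_1,\bL_2) \in \Pair^d_\A$ form
a fundamental system of open neighborhoods of $1$ in $\GL_d(\A_X)$, so an
$\bbM$-set is smooth in the sense of the statement precisely when, for each
of its elements $s$, the stabilizer of $s$ is open in $\bbM$. This condition
is manifestly preserved under the isomorphism $M_\Cip \cong \bbM$.

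Combining these, the functor $\omega_\Cip$ restricted to $\Shv(\cC,J)$ is an
equivalence onto the category of smooth left $\bbM$-sets. This proves the
claim.
\end{proof}
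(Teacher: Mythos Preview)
Your proof is correct and follows exactly the same approach as the paper, which simply writes ``This follows from Theorem~\ref{cor:M_isom} and Theorem 5.8.1 of \cite{Grids}.'' You have just spelled out in detail the bookkeeping (the finiteness hypothesis via Corollary~\ref{cor:finite}, the matching of the two notions of smoothness, and the transport along the isomorphism $\phi$) that the paper leaves implicit in its one-line citation.
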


\begin{proof}
This follows from Theorem~\ref{cor:M_isom} and 
Theorem 5.8.1 of \cite{Grids}.
\end{proof}

\subsection{}
We record the following for later use.
\begin{prop}
\label{prop:old cat equiv}
Let $(L_1, L_2)$ be an object of $\Pair^d$.
Let $H$ be a subgroup of 
$\Aut_{\cO_X}(L_2/L_1)$.
Let $\bK_{L_1, L_2, H} \subset
\GL_d(\A_X)$
denote the 
compact open subgroup of the 
elements $g \in \GL_d(\A_X)$
such that 
$L_i g=L_i$ for $i=1,2$ and the action
of $g$ on $L_2/L_1$ lies in $H$.
Then the following assertions hold.
\begin{enumerate}
\item 
There is a unique $\GL_d(\A_X)$-equivariant
isomorphism
$\omega(H \backslash (L_2/L_1)) 
\cong \GL_d(\A_X)/\bK_{L_1,L_2, H}$
that sends the element in 
$\omega(H \backslash(L_2/L_1))$
represented by the element $\id_{L_2/L_1}$
in $H \backslash \Hom(L_2/L_1, L_2/L_1)$
to the class of the identity matrix in 
$\GL_d(\A_X)/\bK_{L_1, L_2, H}$.
\item
For any presheaf $F$ on $\cFC^d$,
the map
$
\Hom(H \backslash (L_2/L_1), F^a)
\to F^a(L_2/L_1) \to
\omega(F^a)
\cong \omega(F)$
induces an isomorphism
$\Hom(H\backslash (L_2/L_1), F^a)
\cong \omega(F)^{\bK_{L_1,L_2, H}}$.
\end{enumerate}
\end{prop}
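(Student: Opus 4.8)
The plan is to deduce both assertions from the equivalence $\omega$ between $\Shv(\cCo{d},J^d)$ and the category of smooth left $\GL_d(\A_X)$-sets (Corollary \ref{cor:main_section3} together with Theorem \ref{cor:M_isom}), transported to $\cFCot{d}$ via the comparison equivalence $\Shv(\cFCot{d},\iota'_*J^d)\cong\Shv(\cCo{d},J^d)$ of Section \ref{sec:iota'}, and extended to presheaves; here $\omega(F^a)\cong\omega(F)$ because restriction to $\cCo{d}$ intertwines the two sheafifications and $\omega_{(\cC_0^d,\iota_0^d)}$ factors through $a_{J^d}$, as in the proof of Lemma \ref{lem:pushforward_canonical}. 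Write $N=L_2/L_1$, viewed as an object of $\cCo{d}$, and identify $\Aut_{\cO_X}(N)$ with $\Aut_{\cCo{d}}(N)$ via the isomorphism $z$ of Section \ref{sec:notationz}, so that $H\subset\Aut_{\cCo{d}}(N)$ acts on $\frh_{\cCo{d}}(N)$. First I would observe that $H$ is moderate: the fibration $m_{N,0}:N\to 0$ has $\Aut_0(N)=\Aut_{\cCo{d}}(N)$ by Example \ref{ex:automorphisms}, so every subgroup is moderate, and therefore $H\backslash N$ is an object of $\cFCot{d}$. Next, by Lemma \ref{lem:Galois2} the morphism $q_{N,H}:\imath(N)\to H\backslash N$ is a Galois covering in $\cT(\iota_*J^d)$ with group $H$, and by Lemma \ref{lem:Galoiscovering} it exhibits $H\backslash N$ as the quotient object of $\imath(N)$ by $H$ in $\Shv(\cCo{d},J^d)$.

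The structural input I would extract from the construction of the grid $(\cC_0^d,\iota_0^d)$, from the homomorphism $\phi$ of Theorem \ref{cor:M_isom}, and from the fiber functor $\omega_{(\cC_0^d,\iota_0^d)}$ of \cite{Grids} is twofold: first, that $\omega(\imath(N))$ is $\GL_d(\A_X)$-equivariantly isomorphic to $\GL_d(\A_X)/\bK_{L_1,L_2}$, where $\bK_{L_1,L_2}=\bK_{L_1,L_2,\{1\}}$ and the class of $\id_N$ corresponds to the class of the identity matrix; and second, that the action of $\sigma\in\Aut_{\cO_X}(N)$ on $\omega(\imath(N))$ induced by post-composition on $\frh_{\cCo{d}}(N)$ is right translation by an arbitrary lift of $\sigma$ along the surjection $\{g\in\GL_d(\A_X):L_ig=L_i\}\twoheadrightarrow\Aut_{\cO_X}(N)$, whose kernel is $\bK_{L_1,L_2}$.

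Granting this, part (1) would follow as follows. Since $\omega$ is an equivalence it preserves the coequalizer displaying $H\backslash N$ as $\imath(N)/H$, so $\omega(H\backslash N)\cong H\backslash\omega(\imath(N))\cong H\backslash(\GL_d(\A_X)/\bK_{L_1,L_2})$ with $H$ acting by right translations; since $\bK_{L_1,L_2,H}$ is exactly the preimage of $H$ under $\{g:L_ig=L_i\}\to\Aut_{\cO_X}(N)$, this identifies $\omega(H\backslash N)$ with $\GL_d(\A_X)/\bK_{L_1,L_2,H}$, the class of $\id_N$ going to the class of the identity matrix. As this target is a transitive smooth $\GL_d(\A_X)$-set and a $\GL_d(\A_X)$-equivariant self-map of $\GL_d(\A_X)/\bK_{L_1,L_2,H}$ fixing the base coset is the identity, the isomorphism is unique. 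For part (2), since $H\backslash N$ is a connected object of $\wt{\cCo{d}}$ (Lemma \ref{lem:wtcC_conn}) its representable presheaf on $\cFCot{d}$ is a sheaf (Proposition \ref{prop:cFT_main}), so $\Hom(H\backslash N,F^a)=F^a(H\backslash N)\cong\Hom_{\GL_d(\A_X)}(\omega(H\backslash N),\omega(F^a))=\Hom_{\GL_d(\A_X)}(\GL_d(\A_X)/\bK_{L_1,L_2,H},\omega(F))\cong\omega(F)^{\bK_{L_1,L_2,H}}$, the last map being evaluation at the base coset. It then remains to check that this composite is the isomorphism induced by the map in the statement: this is a diagram chase showing that pulling a morphism $H\backslash N\to F^a$ back along $q_{N,H}$ to an element of $F^a(N)$, and then mapping it into $\omega(F^a)$ by the colimit structure map, corresponds to evaluating the associated equivariant map at the base coset — using that the structure map $F^a(N)\to\omega(F^a)$ is, under the equivalence, precisely the inclusion $\omega(F^a)^{\bK_{L_1,L_2}}\hookrightarrow\omega(F^a)$ by base-coset evaluation.

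The hard part will be establishing the two structural facts about $\omega(\imath(N))$ in the paper's conventions — in particular the description of the $\Aut_{\cO_X}(N)$-action as right translation — which requires unwinding the definition of $\phi$ in Theorem \ref{cor:M_isom} (recall $\alpha_g$ is right multiplication by $g^{-1}$ and $\gamma_g$ the induced isomorphism) and of $\omega_{(\cC_0^d,\iota_0^d)}$ as a filtered colimit over $\cC_0^d$, and then tracking left-versus-right actions and inverses carefully enough that the base point of $\omega(\imath(N))$, namely the class of $\id_N$ over the index $(L_1,L_2)$, really matches $e\bK_{L_1,L_2}$ on the nose. Once these matching issues are settled, the remainder is formal manipulation of coset spaces together with the fact that $\omega$ is an equivalence.
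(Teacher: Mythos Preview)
Your proposal is correct and follows essentially the same approach as the paper: first establish the case $H=\{1\}$ by identifying $\omega(\imath(N))$ with $\GL_d(\A_X)/\bK_{L_1,L_2}$ together with the correct $\Aut_{\cO_X}(N)$-action, then pass to the quotient by $H$ using that $\omega$ is an equivalence. The paper's proof is much terser because it outsources your ``hard part'' entirely to cited results from \cite{Grids}: the identification $\omega(\imath(N))\cong\GL_d(\A_X)/\bK_{L_1,L_2}$ with base point is \cite[Cor.~7.6.3, Lem.~7.6.2]{Grids} after checking that the group $H_X$ of \cite[7.6.1]{Grids} equals $\bK_{L_1,L_2}$, and the lifting of each $h\in H$ to an element of $\bK_{L_1,L_2,H}$ is \cite[Lem.~7.7.1]{Grids}. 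Your unwinding of $\phi$ and the left/right conventions is exactly what those citations encode.
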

\begin{proof}
First, we prove the case where $H$ is a singleton.
Let $H_{L_1/L_2}$ be the group introduced in 
Section 7.6.1 of \cite{Grids} for $X=L_1/L_2$.
One can compute directly that $H_{L_1/L_2}$ is
isomorphic to $\bK_{L_1, L_2}$.
Then \cite[Cor. 7.6.3, Lem. 7.6.2]{Grids}
imply the assertions by unfolding the definitions 
of the functor $\omega$ there.

Let $h \in H$.  Then by \cite[Lem 7.7.1]{Grids}
(or one can show directly),
there exists an element $g$ of $\GL_d(\A_X)$
such that $g \in \bK_{L_1, L_2, H}$ inducing an action
$h$ on $L_2/L_1$.   
Then the sheaf 
$H\backslash (L_2/L_1)$ 
corresponds via the functor $\omega$
to $\GL_d(\A_X)/\bK_{L_1,L_2,H}$
as quotients of $L_2/L_1$
and $\GL_d(\A_X)/\bK_{L_1, L_2}$
via the isomorphism above.

The rest of the assertions follow from the 
construction of the functor $\omega$.
\end{proof}

\section{An Euler system of distributions}\label{sec:Euler distributions}
In this section, we prove our main (abstract) theorem
 (Theorem~\ref{main theorem}).
This theorem is a realization of the slogan 
``tensor product of $d$ distributions is an Euler system for $\GL_d$''.   
An application in the context of Drinfeld modular schemes is given in 
Theorem~\ref{thm:Drinfeld Euler}.

The assumptions or the setup for the theorem (namely, 
Situations I and II)
may look complicated.   
The reason for this complication is that in 
the Drinfeld modular setup, 
the theta functions are defined only up to 
$(q_\infty^d-1)$-st roots of unity.
Things would be a lot easier and become simpler 
if we are to deal only with the elements that are
$(q_\infty^d-1)^{d-1}$ times what we consider.
Then the simpler setup explained 
in~\ref{sec:intro Euler tech} applies, but the result
is less sharp.

The result of this section for the case when $d=1$
may also be applied to proving the norm compatibility 
of cyclotomic units (see Section~\ref{sec:cyclotomic Euler}). 
The case when $d=2$ is closely related to the Euler system of Kato
(but in the motivic cohomology rather than in K-theory).
We warn that the normalization is different from \cite{Kato}; we choose
$(m_{*,*})_*$ (see text for the notation) for our norm maps, while
Kato chooses $(r_{*,*})_*$ as his norm maps.  

\subsection{Distributions}

\subsubsection{}
Let $d \ge 1$ be an integer. Let $X$ be as in Section \ref{sec:4},
i.e., $X$ is a regular noetherian scheme of pure Krull dimension one
such that the residue field at each closed point is finite.
Let us consider the category $\cC^d$ introduced in Section \ref{sec:Cd_defn}
and the Grothendieck topology $J^d_m$ on $\cC^d$
introduced in the statement of Theorem \ref{thm:section2}.
It follows from Theorem \ref{thm:section2} that 
$(\cC^d,J^d_m)$ is a $Y$-site.
We denote by $\cCot{d}_m$, $\wt{\cC}^{d,\mu}_m$ and
$\cFCot{d}_m$ the categories $\wt{\cC}$, $\wt{\cC}^\mu$ and 
$\wt{\cFC}$ introduced in Sections \ref{sec:Ctil}
and \ref{sec:wtcFC} for $(\cC,J) = (\cC^d,J^d_m)$.
We say that a morphism $f$ in $\cCot{d}_m$ (\resp in $\wt{\cC}^{d,\mu}_m$, 
\resp in $\cFCot{d}_m$)
is a fibration if it belongs to $\wt{\cT}$ (\resp $\wt{\cT}^\mu$ 
\resp $\wt{\cFT}^*$) introduced in Section \ref{sec:cT}
(\resp Section \ref{sec:cTmu}, \resp Section \ref{sec:cFT*})
for $(\cC,J) = (\cC^d,J^d_m)$.

\subsubsection{ }
We define two special abelian presheaves $\BS'=\BS'_d$,
and ${\BS^*}'={\BS^*}'_d$ on $\cCo{d}$.
For an object $N$ in $\cCo{d}$, let $\BS'(N)$ (\resp ${\BS^*}'(N)$)
be the free abelian group generated by the set $\Gamma(X,N)$
of global sections of $N$
(\resp $\Gamma(X,N) \setminus \{0\}$).
For a morphism $N' \stackrel{p}{\twoheadleftarrow} N''
\stackrel{i}{\hookrightarrow} N$
from $N$ to $N'$ in $\cCo{d}$, define a homomorphism
$\BS'(N') \to \BS'(N)$ (\resp ${\BS^*}'(N') \to {\BS^*}'(N)$)
by sending $x \in \Gamma(X,N')$ 
(\resp $x \in \Gamma(X,N')\setminus\{0\}$)
to the element $\sum_{p(y)=x}i(y)$. 
%
Let us consider the sheaves $(\BS')^a$ and 
$({\BS^*}')^a$ 
on $(\cC^d,J^d_m)$
associated with the presheaves $\BS'$ 
and ${\BS^*}'$, respectively
Let us consider the functor $\iota' \colon  \cC^d \to \wt{\cFC}^d_m$
introduced in Section \ref{sec:iota'} for $(\cC,J) = (\cC^d,J^d_m)$.
By applying the functor $\nu\colon \Shv(\cC,J) \to \Shv(\wt{\cFC}^d_m,\iota'_*J)$ 
in Section \ref{sec:iota'}
we obtain the sheaves $\nu((\BS')^a)$ and 
$\nu(({\BS^*}')^a)$ on $\cFCotu{d}$.
We denote the sheaves $\nu((\BS')^a)$ and 
$\nu(({\BS^*}')^a)$ on $\cFCotu{d}$
by $\BS$ and $\BS^{*}$,
respectively and call
them the Schwartz-Bruhat sheaf (of rank $d$) and
the punctured Schwartz-Bruhat sheaf (of rank $d$)
respectively.

\subsubsection{ }
\label{sec:BS functions}
Let $(\cC_0,\iota_0) = (\cC_{0,m}^d,\iota^d_{0,m})$, where
the notation is as in Section \ref{sec:Pair}.
Set $\omega = \omega_{(\cC_0,\iota_0)}$.
It follows from Corollary \ref{cor:5563} and
Corollary \ref{cor:main_section3} that the composite
$\omega \circ \iota'^*$ gives an equivalence from the category of
abelian sheaves on $\cFCot{d}$ to the category of
smooth left $\Z[\bbM]$-module, where
$$
\bbM = \{ g \in \GL_d(\A_X)\ |\ g^{-1} \in \Mat_d(\wh{\cO}_X) \}.
$$
Let us compute the smooth left $\Z[\bbM]$-modules
$\omega\circ \iota'^* (\BS)$ and $\omega \circ \iota'^* (\BS^*)$.
As we discussed in Section \ref{sec:iota'}, the functor
$\nu$ is a quasi-inverse to the functor $\iota'^*$.
Hence it follows from Lemma 10.1.2 of \cite{Grids} that
for a presheaf $F$ on $\cCo{d}$, we have an
$\bbM$-equivariant isomorphism
$\omega(F) \xto{\cong} \omega \circ \iota'^* (\nu(F^a))$ 
which is functorial in $F$.
In particular we have isomorphisms
$\omega(\BS') \cong \omega\circ \iota^* (\BS)$ 
and $\omega({\BS^*}') \cong \omega \circ \iota^* (\BS^*)$
of left $\Z[\bbM]$-modules.

Let $\cS(\wh{\cO}_X^{\oplus d})$
denote the set of locally constant, compactly supported
$\Z$-valued functions on $\wh{\cO}_X^{\oplus d}$.
We regard $\cS(\wh{\cO}_X^{\oplus d})$ as the abelian group
whose addition is defined pointwisely.
\begin{lem}\label{lem:SB}
For any $f \in \cS(\A_X^{\oplus d})$,
there exists an $\wh{\cO}_X$-lattice $\bL \subset \wh{\cO}_X^{\oplus d}$ 
of $\A_X^{\oplus d}$ such that $f$ 
factors through the quotient map $\wh{\cO}_X^{\oplus d} 
\surj \wh{\cO}_X^{\oplus d}/\bL$.
\end{lem}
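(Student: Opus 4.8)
The statement asserts that every locally constant, compactly supported $\Z$-valued function $f$ on $\wh{\cO}_X^{\oplus d}$ (regarded inside $\cS(\A_X^{\oplus d})$, so really $f \in \cS(\wh{\cO}_X^{\oplus d})$) factors through $\wh{\cO}_X^{\oplus d}/\bL$ for some $\wh{\cO}_X$-lattice $\bL \subset \wh{\cO}_X^{\oplus d}$. The plan is to exploit compactness of $\wh{\cO}_X^{\oplus d}$ together with the fact, recorded in Lemma \ref{lem:lattice_basics} (2), that the $\wh{\cO}_X$-lattices contained in $\A_X^{\oplus d}$ form a fundamental system of neighborhoods of $0$; by Corollary \ref{cor:nbd A_X} one may even take these to be of the form $\I_1 \oplus \cdots \oplus \I_d$ with $\I_j$ open ideals of $\wh{\cO}_X$, and such lattices are contained in $\wh{\cO}_X^{\oplus d}$.

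First I would use local constancy of $f$: for each point $a \in \wh{\cO}_X^{\oplus d}$ there is a lattice $\bL_a$ with $\bL_a \subset \wh{\cO}_X^{\oplus d}$ such that $f$ is constant on $a + \bL_a$. The translates $a + \bL_a$ form an open cover of $\wh{\cO}_X^{\oplus d}$; since $\wh{\cO}_X^{\oplus d}$ is compact (it is a product of the compact rings $\wh{\cO}_{X,x}$, or one may invoke Corollary \ref{cor:compact} (1) componentwise), there is a finite subcover indexed by $a_1, \ldots, a_n$. Set $\bL = \bL_{a_1} \cap \cdots \cap \bL_{a_n}$; by Lemma \ref{lem:lattices} this is again an element of $\Lat^d_\A$, and it is still contained in $\wh{\cO}_X^{\oplus d}$. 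Then for any $b \in \wh{\cO}_X^{\oplus d}$, writing $b \in a_i + \bL_{a_i}$ for some $i$, one has $b + \bL \subset a_i + \bL_{a_i}$, so $f$ is constant on $b + \bL$; this says exactly that $f$ factors through the quotient map $\wh{\cO}_X^{\oplus d} \surj \wh{\cO}_X^{\oplus d}/\bL$.

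The only point requiring a moment's care is the reduction from "$f \in \cS(\A_X^{\oplus d})$ supported in $\wh{\cO}_X^{\oplus d}$" to "$f$ is locally constant on the compact set $\wh{\cO}_X^{\oplus d}$": this is immediate since $\wh{\cO}_X^{\oplus d}$ is open in $\A_X^{\oplus d}$ (Lemma \ref{lem:topology A_X} (1) applied coordinatewise, or directly), so the restriction of a locally constant function stays locally constant, and the neighborhoods witnessing local constancy can be shrunk to lie inside $\wh{\cO}_X^{\oplus d}$. There is no serious obstacle here; the argument is a standard compactness-plus-uniformity argument, and the main thing to get right is simply keeping track that all the lattices produced remain inside $\wh{\cO}_X^{\oplus d}$ so that the resulting $\bL$ is an admissible choice.
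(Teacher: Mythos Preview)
Your proposal is correct and follows essentially the same approach as the paper's proof: local constancy gives a lattice neighborhood at each point, compactness of $\wh{\cO}_X^{\oplus d}$ yields a finite subcover, and the intersection of the finitely many lattices (still a lattice by Lemma~\ref{lem:lattices}) works. The paper's argument is slightly terser but structurally identical, invoking the same auxiliary lemmas.
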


\begin{proof}
Let $x \in \wh{\cO}_X^{\oplus d}$.
Since $f$ is locally constant, it follows from 
Lemma~\ref{lem:lattice_basics}~(1) 
that there exists
a lattice $\bL_x \in \Lat^d_\A$ with 
$\bL_x \subset \wh{\cO}_X^{\oplus d}$ 
such that $f$ is constant
on 
$x + \bL_x$. Since 
$\wh{\cO}_X^{\oplus d}$ is compact, there exists a
finite subset $S \subset \wh{\cO}_X^{\oplus d}$ 
satisfying $\wh{\cO}_X^{\oplus d} = \bigcup_{s \in S}
(s + \bL_s)$.
We set $\bL =\bigcap_{s \in S} \bL_s$.
It follows from Lemma~\ref{lem:lattices} that $\bL \in \Lat^d_\A$.
It is clear from the construction that $\bL$
satisfies the desired property.
This completes the proof.
\end{proof}

For a lattice $\bL \in \Lat^d_\A$ with 
$\bL \subset \wh{\cO}_X^{\oplus d}$,
let $\cS(\wh{\cO}_X^{\oplus d})_{\bL}
\subset \cS(\wh{\cO}_X^{\oplus d})$ denote the
subset of functions $f$ such that
$f$ factors through the quotient map $\wh{\cO}_X^{\oplus d} 
\surj \wh{\cO}_X^{\oplus d}/\bL$.
For each $\bL \in \Lat^d_\A$ with 
$\bL \subset \wh{\cO}_X^{\oplus d}$,
$$
\xi_{\bL} \colon \cS(\A_X^{\oplus d})_{\bL} \to \BS(\wh{\cO}_X^{\oplus d}/\bL)
$$
denote the following map. For $f \in \cS(\wh{\cO}_X^{\oplus d})_{\bL}$,
let $f'\colon \wh{\cO}_X^{\oplus d}/\bL \to \Z$ denote the map induced by $f$.
Then we set $\xi_{\bL}(f) = \sum_{y \in \wh{\cO}_X^{\oplus d}/\bL} f'(y) y
\in \BS(\wh{\cO}_X^{\oplus d}/\bL)$.
It can be checked easily that the map $\xi_{\bL}$
is bijective and that for two lattices $\bL$, $\bL'$
with $\bL' \subset \bL \subset \wh{\cO}_X^{\oplus d}$, 
the diagram
$$
\begin{CD}
\cS(\wh{\cO}_X^{\oplus d})_{\bL} @>{\subset}>>
\cS(\wh{\cO}_X^{\oplus d})_{\bL'} \\
@V{\xi_{\bL}}VV @V{\xi_{\bL'}}VV \\
\BS'(\wh{\cO}_X^{\oplus d}/\bL_1) @>{\BS'(\iota_{\A,0}^d(g))}>> 
\BS'(\wh{\cO}_X^{\oplus d}/\bL')
\end{CD}
$$
is commutative. Here $g\colon (\wh{\cO}_X^{\oplus d},\bL') 
\to (\wh{\cO}_X^{\oplus d},\bL)$ denotes
the unique morphism in $\cC^d_{0,m}$.
It follows from Lemma \ref{lem:SB} that
$\cS(\wh{\cO}_X^{\oplus d})$ is equal to the union
$\cS(\wh{\cO}_X^{\oplus d}) = 
\bigcup_{\bL \subset \wh{\cO}_X^{\oplus d}}
\cS(\wh{\cO}_X^{\oplus d})_{\bL}$.
Hence by taking the colimit of the bijections $\xi_{\bL}$ 
with respect to $\bL$, we obtain a bijection
$$
\xi\colon  \cS(\wh{\cO}_X^{\oplus d}) \xto{\cong} \omega(\BS').
$$

Let us regard an element $f \in \cS(\wh{\cO}_X^{\oplus d})$
as a function on $\A_X^{\oplus d}$ which is identically 
zero outside $\wh{\cO}_X^{\oplus d}$.
For $f \in \cS(\wh{\cO}_X^{\oplus d})$ and for
$g \in \bbM$, we let $gf$ denote the
function on $\wh{\cO}_X^{\oplus d}$ whose value at
$x \in \wh{\cO}_X^{\oplus d}$ is equal to $f(xg)$.
It follows from Corollary \ref{cor:translation}
that $gf$ belongs to $\cS(\wh{\cO}_X^{\oplus d})$.
The map $\bbM \times \cS(\wh{\cO}_X^{\oplus d})
\to \cS(\wh{\cO}_X^{\oplus d})$ which sends $(g,f)$ to $gf$
gives $\cS(\wh{\cO}_X^{\oplus d})$ a structure of
a left $\Z[\bbM]$-module.
It then follows from the definition of the action of
$\bbM$ on $\omega(\BS')$ that the bijection
$\xi\colon  \cS(\wh{\cO}_X^{\oplus d}) \xto{\cong} \omega(\BS')$
is an isomorphism of left $\Z[\bbM]$-modules.

Since ${\BS^*}'$ is a subpresheaf of $\BS'$,
it follows from the construction of $\omega$ that
the $\bbM$-equivariant map 
$\omega({\BS^*}') \to \omega(\BS')$ induced
by the inclusion ${\BS^*}' \inj \BS'$ is injective.
It can be checked easily that the image of
the map $\omega({\BS^*}') \to \omega(\BS')$
is equal to the image under $\xi$ of
the submodule of $\cS(\wh{\cO}_X^{\oplus d})$
of the functions $f$ with $f(0)=0$.

\begin{defn}
Let $F$ be an abelian sheaf 
on $\cFCotu{d}$.
A {\it{distribution}}
(\resp {\it{punctured distribution}})
with values in $F$ is a homomorphism
$\BS \to F$ (\resp $\BS^{*}\to F$ )
of abelian sheaves
on $\cFCotu{d}$.
\end{defn}

\subsection{Construction of Euler systems}

\subsubsection{}
\label{sec:Situations}

Let $G$ be a presheaf of rings with transfers 
on $\cFCotu{d}$. Suppose for each $i=1,\ldots,d$, the
following data are given.
\begin{enumerate}
\item An abelian sheaf $F_i$ on $\cFCotu{d}$,
and an abelian subpresheaf $F'_i \subset F_i$ equipped with a structure
of an abelian presheaf with transfers such that the inclusion
$F'_i \subset F_i$ is a homomorphism of abelian presheaves with transfers.
\item A homomorphism $\alpha_i\colon F'_i\to G$ of abelian presheaves 
with transfers.
\item An object $N_i$ in $\cCo{1}$ and a 
generator $b_i\in \Gamma(X,N_i)$ as 
$\wh{\cO}_X$-module.
\item A quotient $\cO_X$-module $N'_i$
of $N_i$.
\end{enumerate}
Let $b'_i$ denote
the image of $b_i$ in $N'_i$.
We write $\bN=\bigoplus_{j=1}^d N_j$, 
$\bN' =\bigoplus_{j=1}^d N'_j$
and $N''_i=\Ker(N_i\surj N'_i)$.

We consider the following two settings which 
we call Situation~I and Situation~II.
\begin{description}
\item[Situation I] we have a morphism 
$g'_i \colon  \BS' \to \iota'^* F'_i$ of 
abelian presheaves for each $i$.
Let us consider the composite 
$\BS' \to \iota'^* F'_i \subset \iota'^* F_i$.
The adjunction property of the sheafification functor
$(-)^a$ induces the morphism
$(\BS')^a \to \iota'^* F_i$.
By applying $\nu$ in Section \ref{sec:iota'},
we obtain a distribution $g_i\colon \BS \to F_i$ with 
values in $F_i$.   This morphism $g_i$ 
factors through $F_i'$ as $\BS \to F_i' \to F_i$.
\item[Situation II] we assume $N'_i\neq 0$
and we have 
a morphism $g'_i \colon  {\BS^*}' \to \iota'^* F'_i$ of 
abelian presheaves for each $i$.
Let us consider the composite ${\BS^*}' \to \iota'^* F'_i 
\subset \iota'^* F_i$.
The adjunction property of the sheafification functor
$(-)^a$ induces the morphism
$({\BS^*}')^a \to \iota'^* F_i$.
By applying $\nu$ in Section \ref{sec:iota'},
we obtain a punctured distribution $g_i\colon \BS^* \to F_i$ with 
values in $F_i$.
This morphism $g_i$ factors through $F_i'$ as
$\BS^* \to F_i' \to F_i$.
\end{description}

We set
\[
\kappa_{\bN, (b_j)}=
\prod_{j=1}^{d}
\pr_j^*(\alpha_j g'_j(N_j)([b_j]))
\in G(\quotid{\bN})
\]
where $\pr_j$ $(j=1,\dots, d)$ is the morphism
$N_j = N_j \inj \bN$
from $\bN$ to $N_j$ 
given by the inclusion of $N_j$ into the $j$-th factor
of $\bN$.  The product notation $\prod_{i=1}^d a_i$ 
means the product $a_1 \cdots a_d$ taken in this order.

In the two settings above, if $F'_i$ is a sheaf, 
then we may take $F_i=F'_i$.  This would make the 
exposition much clearer, but in application 
we have a case where $F'_i$ may not be a sheaf.
\begin{thm}\label{main theorem}
Suppose that we are either in Situation
I or in II.

Then the following statements hold.
\begin{enumerate}
\item
If $\Supp(N''_i) \subset \Supp(N'_j)$
for any $1 \le i,j \le d$, then
\[
(m_{\bN,\bN'})_*
\kappa_{\bN,(b_j)}
=\kappa_{\bN',(b'_j)}.
\]
\item
Let $\wp$ be a closed point of $X$.
Suppose that 
$\Supp(N''_i)\subset \{\wp\}
\subset \Supp(N_i)$
for every $i$. 
Let $e$ denote the number of $i$'s
with $\wp\not\in \Supp(N'_i)$.
Then
\[
(m_{\bN, \bN'})_*
\kappa_{\bN,(b_j)}
=\sum_{r=0}^e (-1)^r q_\wp^{r(r-1)/2}
T_{[\kappa(\wp)^{\oplus r}]}
\kappa_{\bN', (b'_j)},
\]
where $\kappa(\wp)$ is the residue field at $\wp$ and 
$q_{\wp}$ is the cardinality of $\kappa(\wp)$.
\end{enumerate}
\end{thm}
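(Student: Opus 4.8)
The plan is to reduce the theorem to an explicit computation on the side of smooth $\bbM$-sets, where $\bbM = \{g \in \GL_d(\A_X) \mid g^{-1} \in \Mat_d(\wh{\cO}_X)\}$, using the equivalence of Corollary~\ref{cor:main_section3} together with the compact induction functor $\wt{M}_G$ of Section~\ref{sec:functor M_G}. First I would set up notation: write $N_i = \wh{\cO}_X/\mathfrak{a}_i$ (via the identification $\cC^1$-objects $\leftrightarrow$ cyclic torsion modules), so that $\bN$ corresponds to a lattice pair and the various morphisms $m_{\bN,\bN'}$, $r_{N,N'}$, $\pr_j$ get translated into explicit maps of $\GL_d(\A_X)/\bK$-type objects via Proposition~\ref{prop:old cat equiv}. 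The key structural input is that the class $\kappa_{\bN,(b_j)} = \prod_j \pr_j^*(\alpha_j g'_j(N_j)([b_j]))$ lives in $G(\quotid{\bN})$, which under $\omega$ becomes $\omega(G)^{\bK_{\bN}}$, and that the transfer $(m_{\bN,\bN'})_*$ and the Hecke operator $T_{[\kappa(\wp)^{\oplus r}]}$ are computed by summing pullbacks over explicit coset representatives (Lemma~\ref{lem:transfer_nonGal}, the definition of $T_{[N]}$ in Section~\ref{sec:exam Hecke}, and Corollary~\ref{cor:deg_omega_F}).

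Next I would prove statement (1) as the basic case. Under the hypothesis $\Supp(N''_i) \subset \Supp(N'_j)$ for all $i,j$, the morphism $m_{\bN,\bN'}: \quotid{\bN} \to \quotid{\bN'}$ has a well-controlled fiber: combining the multiplicativity of the distribution maps $g'_i$ (i.e.\ that they are morphisms of presheaves, hence commute with the $\BS'$-transfer which is summation over preimages) with the projection formula for the ring-with-transfers structure on $G$ (Lemma~\ref{lem:ring_transfer}), the pushforward $(m_{\bN,\bN'})_*$ of a product $\prod_j \pr_j^*(\cdots)$ factors as a product of pushforwards along the one-dimensional pieces $N_j \to N'_j$. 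For each such piece, $(m_{N_j,N'_j})_* \alpha_j g'_j(N_j)([b_j]) = \alpha_j g'_j(N'_j)([b'_j])$ precisely because $g'_j$ is a morphism of presheaves (the defining distribution relation) and $\alpha_j$ commutes with transfers; here the support hypothesis guarantees that the relevant fiber products in $\cC^1$ are as expected, i.e.\ $b_j$ has exactly the preimages $b'_j$ accounts for. Assembling these via Lemma~\ref{lem:ring_transfer} over $j = 1,\dots,d$ gives $(m_{\bN,\bN'})_* \kappa_{\bN,(b_j)} = \kappa_{\bN',(b'_j)}$. In Situation~II one must additionally check that the punctured hypothesis $N'_i \neq 0$ guarantees $b'_i \neq 0$, so that all elements $[b'_j]$ genuinely lie in $\BS^{*}$.

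For statement (2), which is the main content, I would localize at $\wp$ and perform an inclusion–exclusion on the one-dimensional factors indexed by the set $E$ of $i$ with $\wp \notin \Supp(N'_i)$ (so $|E| = e$). The point is that when $\wp \notin \Supp(N'_i)$, the map $N_i \to N'_i$ collapses the entire $\wp$-part, and the pushforward along $m_{N_i,N'_i}$ picks up not a single term but a sum over the $q_\wp^{?}$-many preimages of $b'_i$ in $N_i$; when $\wp \in \Supp(N'_i)$ (i.e.\ $i \notin E$, and $\Supp(N''_i) \subset \{\wp\}$ forces $N''_i = 0$), the map is an isomorphism on the relevant part and contributes trivially. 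Expanding the product of these local pushforwards and regrouping the resulting sum according to which subset $S \subseteq E$ of factors "jumps," each cardinality-$r$ subset contributes a term that, after applying the projection formula Lemma~\ref{lem:ring_transfer}, is recognized as $(-1)^r q_\wp^{r(r-1)/2}$ times $\pr^*$ of a Hecke correspondence $T_{[\kappa(\wp)^{\oplus r}]}$ applied to $\kappa_{\bN',(b'_j)}$; the sign and the power $q_\wp^{r(r-1)/2}$ come from the combinatorics of counting flags/bases of $\kappa(\wp)^{\oplus r}$ inside $\kappa(\wp)^{\oplus e}$, exactly as in the classical Gaussian-binomial identity $\sum_r (-1)^r q^{r(r-1)/2} \binom{e}{r}_q = 0$ for $e \geq 1$. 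I expect the main obstacle to be this last bookkeeping: matching the explicit sum over preimages coming from iterated transfers with the definition of $T_{[\kappa(\wp)^{\oplus r}]} = (m_{\cdot \oplus [\kappa(\wp)^{\oplus r}]})_* r^*$ via the compact induction functor, and verifying that the combinatorial weights land exactly on $(-1)^r q_\wp^{r(r-1)/2}$. This requires carefully tracking the cartesian diagrams in $\cFCotu{d}$ (using Proposition~\ref{prop:FC_product} and Corollary~\ref{cor:deg_limit}) that express the fiber of $m_{\bN,\bN'}$ as a coproduct of Hecke-type correspondences, and then invoking Lemma~\ref{lem:transfer_nonGal} to convert pushforward-then-pullback into an honest sum. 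Once the $d=1$ local computation is done cleanly, the general case follows by the product formula Lemma~\ref{lem:ring_transfer} exactly as in part (1).
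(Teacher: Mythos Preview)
Your overall architecture --- prove (1) by a projection-formula factorization plus the distribution relation, prove (2) by localizing at $\wp$ and doing Gaussian-binomial combinatorics --- matches the paper's, but two of your key steps have real gaps.

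For (1), the assertion that Lemma~\ref{lem:ring_transfer} lets you factor $(m_{\bN,\bN'})_*\prod_j \pr_j^*(\cdots)$ as a product of pushforwards ``along the one-dimensional pieces $N_j \to N'_j$'' is not justified: Lemma~\ref{lem:ring_transfer} requires $\bN$ to be a \emph{fiber product over $\bN'$}, and the $N_j$ are not objects over $\bN'$ (the $\pr_j$'s point the wrong way). The paper supplies exactly this missing structure. After an inductive reduction it arranges that $m_{\bN,\bN'}$ is a Galois covering with abelian Galois group $H \cong \Hom_{\cO_X}(\bN',\bN'') = \prod_i H_i$, $H_i = \Hom(N'_i,\bN'')$. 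Setting $\overline{N}_i = \bN/H'_i$ with $H'_i = \prod_{j\neq i} H_j$, one gets $\bN \cong \overline{N}_1 \times_{\bN'} \cdots \times_{\bN'} \overline{N}_d$, and \emph{this} is the fiber product to which Lemma~\ref{lem:ring_transfer} applies. Relating $\overline{N}_i$ back to $N_i$ goes through the preimage $\wt{N}_i$ of $N'_i$ in $\bN$ and a cartesian square (Lemma~\ref{lem:last}); the distribution relation enters as the identity $\delta'_*[y] = \epsilon^*[y']$ in the \emph{sheaf} $\BS$, which is why one needs $F_j$ (not just $F'_j$) to be a sheaf.

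For (2), your plan is in the same spirit but organized differently, and your ``expand the product and regroup by subsets $S \subseteq E$'' would need substantial work to match the Hecke operators. The paper first reduces (via the functor $\wt{\eta}_{N^\wp}$, Lemma~\ref{lem:reduction1_4}) to $X = X_\wp$, and then via part (1) to the extreme case $N_1 = \cdots = N_e \cong \kappa(\wp)$, $N'_i = 0$ for $i \le e$, $N_i = N'_i$ for $i > e$. It then constructs, using the compact induction functor $M_{H_r}$, objects $E_r$, $E'_r$, $\bE_r = E_r^{\times_{E'_r} e}$ and proves a pure degree identity $\deg \bi = \sum_r (-1)^r q_\wp^{r(r-1)/2} \deg \mathbf{f}_{e-r}$ (Proposition~\ref{q-calculus}) by counting in $\cB(H_e)$ and Gauss' binomial formula. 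The norm relation then follows from this degree identity plus ring-with-transfers manipulations (Lemmas~\ref{lem:3_5_4_1}, \ref{lem:3_5_4_2}, \ref{Lemma 8_1}, \ref{Lemma 8_2}); a separate lemma (Lemma~\ref{lem:wtg_i}) checks that the intermediate elements land in $F'_j$ and not merely in $F_j$.

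Finally, your opening plan to pass to smooth $\bbM$-sets via $\omega$ is risky: $G$ is only a presheaf of rings with transfers, not a sheaf, so Corollary~\ref{cor:main_section3} does not apply to it. The paper works entirely inside $\cFCotu{d}$ for this reason; the adelic picture you sketch is the ``universal'' sheaf case of Section~\ref{sec:universal Euler}, where the proof is indeed simpler, but that special case does not cover the theorem as stated.
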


\subsection{Proof of Theorem~\ref{main theorem} (1)}
\subsubsection{ }
We set $\bN''= \bigoplus_i N''_i$.
By decomposing the surjection $N_i \surj N'_i$ 
into the composite of two surjections and 
by using induction, we may assume that 
$\length_{\cO_{X,x}} N'_i \otimes_{\cO_X} \cO_{X,x} \ge 
\length_{\cO_{X,x}}N''_j \otimes_{\cO_X} \cO_{X,x}$
for any $x\in X$ and for any $i,j$.

Under the assumption, it follows from 
Lemma \ref{lem:Gal}
that the morphism $m_{\bN,\bN'}\colon  \bN \to \bN'$ 
is a Galois covering in $\cCo{d}$ which is a \fibr.
Let us compute the group $H = \Aut_{\bN'}(\bN)$.
As we have explained in Example \ref{ex:automorphisms},
the group $H$ consists of the automorphisms
of the form $z(\sigma)$ (see Section~\ref{sec:notationz}
for the notation $z(-)$),
where $\sigma \colon  \bN \to \bN$ is an automorphism 
of the $\cO_X$-module $\bN$ such that
$\sigma(\bN'') = \bN''$ and that the automorphism of 
the $\cO_X$-module $\bN'$ induced by $\sigma$ is 
equal to the identity.
For such an automorphism $\sigma \colon  \bN \to \bN$,
the difference $\sigma - \id_N$ gives an $\cO_X$-linear
homomorphism $\bN \to \bN''$.
Conversely, for an $\cO_X$-linear
homomorphism $\bN \to \bN''$,
we set $\sigma_f = \id_N + \iota \circ f$,
where $\iota\colon  \bN'' \inj \bN$ denotes the
inclusion homomorphism.
We can check, by using the assumption that
$\Supp(N''_i) \subset \Supp(N'_j)$ holds for any $1 \le i,j \le d$
and by Nakayama's lemma, that
$\sigma_f$ is an $\cO_X$-linear automorphism of $\bN$.
This shows that the map $\Hom_{\cO_X}(\bN,\bN'')
\to H$ which sends $f \in \Hom_{\cO_X}(\bN,\bN'')$ to 
$z(\sigma_f)$ is bijective.
By the assumption on the length of $N_i'$ and of $N_i''$, 
the map $\Hom_{\cO_X}(\bN',\bN'') \to \Hom_{\cO_X}(\bN,\bN'')$
given by the composition of the quotient map $\bN \surj \bN'$
is an isomorphism of groups.
By using this, we can check that the composite
$\Hom_{\cO_X}(\bN',\bN'') \xto{\cong} \Hom_{\cO_X}(\bN,\bN'')
\xto{\cong} H$ is an isomorphism of groups.
In particular, $H$ is an abelian group.
The abelian group $\Hom_{\cO_X}(\bN',\bN'')$
is canonically isomorphic to the direct product
$\Hom_{\cO_X}(\bN',\bN'') \cong \prod_{i=1}^d \Hom_{\cO_X}(N'_i, \bN'')$. 
Via the isomorphism $\Hom_{\cO_X}(\bN',\bN'') \xto{\cong} H$
constructed above, we obtain the direct product decomposition
$H = \prod_{i=1}^d H_i$ of the group $H$.

\subsubsection{ }
Given an object $M$ in $\cCo{d}$, by abuse of notation, 
we denote simply by $M$ the sheaf $\quotid{M}$.  
In this subsection, we define the objects and the morphisms 
of the following diagram in $\cFCotu{d}$
and show that it is commutative and that the middle square
is cartesian:
\begin{equation}
\label{eqn:diagram1}
\xymatrix{
\bN \ar[r]^{\beta_i}
\ar@(ur,ul)[rr]^{r_{\bN,\wt{N}_i}}
\ar[dr]_{m_{\bN,\bN'}}
& \overline{N}_i 
\ar[r]^{\gamma_i}
\ar[d]^{\delta_i}
\ar@{}[dr]|\square
& \wt{N}_{i} 
\ar[d]^{\delta'_i}
\ar[r]^{r_{\wt{N}_i,N_i}} 
\ar[dr]^{m_{\wt{N}_i,N'_i}}
& N_i \\
& \bN' 
\ar[r]^{\gamma'_i}
\ar@(dr,dl)[rr]_{r_{\bN',N'_i}} 
& \quot{\wt{N}_i}{\wt{H}_i}
\ar[r]^{\epsilon_i}
& N'_i.
}
\end{equation}

Consider the subgroup 
$H'_i= \prod_{j \neq i} H_j \subset H$ for each $i=1,\dots, d$.
We set $\overline{N}_i=\quot{\bN}{H_i'}$
and let $\beta_i\colon \bN \to \overline{N}_i$ denote the canonical morphism.
We can check, by using Lemma \ref{lem:M_fiber_products} that
the morphism
$$
\beta_1 \times \cdots\times \beta_d \colon 
\bN \to \overline{N}_1\times_{\bN'}
\cdots\times_{\bN'} \overline{N}_d
$$
in $\cFCotu{d}$ is an isomorphism.

Let $\wt{N}_i$ denote the inverse image 
of $N'_i \subset \bN'$
under the $\cO_X$-homomorphism $\bN
\to \bN'$. Let $\wt{H}_i$ denote the group
of automorphisms of the object $\wt{N}_i$ in $\cCo{d}$
of the form $z(\sigma)$, where $\sigma\colon \wt{N}_i \xto{\cong}
\wt{N}_i$ is an automorphism of the $\cO_X$-module $\wt{N}_i$
such that $\sigma(\bN'')= \bN''$ and that the endomorphisms
of $\bN''$ and $N'_i = \wt{N}_i/\bN''$ induced by $\sigma$
are the identity homomorphisms.
Let $\delta'_i \colon  \wt{N}_i
\to \quot{\wt{N}_i}{\wt{H}_i}$ denote the canonical morphism.
By using Lemma \ref{lem:4_0_2} we can check that the equality
$m_{\wt{N}_i,N_i} \circ h = m_{\wt{N}_i,N_i}$ holds for any $h \in \wt{H}_i$.
Hence it follows from Lemma \ref{lem:quot_FCd}
that the morphism $m_{\wt{N}_i,N_i}$ factors through 
the canonical morphism $\delta'_i$.
We denote the induced morphism $\quot{\wt{N}_i}{\wt{H}_i}\to N_i$ by $\epsilon_i$.



%
%

Let us consider the morphism
$r_{\bN,\wt{N}_i}\colon \bN \to \wt{N}_i$.
By using Lemma \ref{lem:4_0_2} we can check that the equality
$r_{\bN,\wt{N}_i} \circ h' = r_{\bN,\wt{N}_i}$ holds for any $h' \in H'_i$.
Hence it follows from Lemma \ref{lem:quot_FCd}
that the morphism $r_{\bN,\wt{N}_i}$ factors 
through the canonical morphism $\beta_i\colon  \bN \to \overline{N}_i$.
We denote the induced morphism $\overline{N}_i \to \wt{N}_i$ by $\gamma_i$.

It follows from the construction of the isomorphism
$\Hom_{\cO_X}(\bN',\bN'') \xto{\cong} H$ that
for any automorphism $\sigma$ of the $\cO_X$-module
$\bN$ such that $z(\sigma) \in H$,
we have $\sigma (\wt{N}_i) = \wt{N}_i$ and
the automorphism $\sigma_i$ of $\wt{N}_i$ 
induced by $\sigma$ satisfies $z(\sigma_i) \in \wt{H}_i$.
Hence it follows from Lemma \ref{lem:Galoiscovering}
that the composite
$$
\bN \xto{r_{\bN,\wt{N}_i}} \wt{N}_i
\to \quot{\wt{N}_i}{\wt{H}_i}
$$
factors through the morphism
$m_{\bN,\bN'}\colon \bN \to \bN'$.
We denote by $\gamma'_i$ the induced morphism 
$\gamma'_i\colon \bN' \to \quot{\wt{N}_i}{\wt{H}_i}$
in $\cCotu{d}$.

We have $\delta'_i \circ \gamma_i 
\circ \beta_i = \delta'_i \circ r_{\bN,\wt{N}_i}
= \gamma'_i \circ m_{\bN,\bN'}
= \gamma'_i \circ \delta_i \circ \beta_i$.
It follows from 
Proposition \ref{prop:Ctil_Bsite} that 
$\beta_i$ is an epimorphism in $\cCotu{d}$.
Hence we have $\delta'_i \circ \gamma_i 
= \gamma'_i \circ \delta_i$.

Observe that $m_{\wt{N}_i,N'_i} \circ r_{\bN,\wt{N}_i}
= r_{\bN',N'_i} \circ m_{\bN,\bN'}$.
Hence we have
$\epsilon_i \circ \gamma'_i \circ m_{\bN,\bN'}
= \epsilon_i \circ \delta'_i \circ r_{\bN,\wt{N}_i}
= m_{\wt{N}_i,N'_i} \circ r_{\bN,\wt{N}_i}
= r_{\bN',N'_i} \circ m_{\bN,\bN'}$.
It follows from 
Proposition \ref{prop:Ctil_Bsite}
that $m_{\bN,\bN'}$ is an epimorphism in $\cCotu{d}$.
Hence we have $\epsilon_i \circ \gamma'_i
= r_{\bN',N_i}$.
We thus obtain the commutative 
diagram \eqref{eqn:diagram1}.

We prove that the middle square of the
diagram \eqref{eqn:diagram1} is cartesian.
The homomorphism 
$\Hom_{\cO_X}(N'_i,\bN'') \to \Aut_{\cO_X}(\wt{N}_i)$
which maps $f$ to $(n \mapsto n+f(n))$ induces an
isomorphism $\phi\colon  H_i \cong \wt{H}_i$ satisfying
$r_{\bN,\wt{N}_i} \circ \alpha =
\phi(\alpha) \circ r_{\bN,\wt{N}_i}$. 
Since $H'_i$ is a normal subgroup of $H$,
it follows from Lemma 4.2.6 (2) and (3) of \cite{Grids}
that the morphism $\delta_i$ is a Galois
covering with Galois group isomorphic to
$H/H'_i$.
Since the inclusion $H_i \inj H$
induces an isomorphism $H_i \cong H/H'_i$,
it follows from Lemma \ref{lem:last}
that the middle square in \eqref{eqn:diagram1} 
is cartesian.

%
%

\subsubsection{ }
By the definition of 
$\kappa_{\bN,(b_j)}$, we have
\[
\kappa_{\bN,(b_j)}
=(\beta_1 \times \cdots\times \beta_d)^* \prod_{j=1}^d
\check{\pr}_j^*
(r_{\wt{N}_j,N_j} \gamma_j)^* 
(\alpha_j g'_j (N_j) [b_j])
\]
where
$\check{\pr}_j\colon  \overline{N}_1 \times_{\bN'} 
\cdots \times_{\bN'} \overline{N}_d \to \overline{N}_j$
denotes the $j$-th projection.  Hence,
\[
\begin{array}{ll}
(m_{\bN, \bN'})_*
\kappa_{\bN,(b_j)} &
=(\delta_1 \times \cdots \times \delta_d)_*
\displaystyle\prod_{j=1}^d
\check{\pr}_j^*
(r_{\wt{N}_j,N_j} \gamma_j)^*
(\alpha_j g'_j (N_j) [b_j])
\\
& = \displaystyle\prod_{j=1}^d
(\gamma'_j)^*
(\delta'_j)_{*} r_{\wt{N}_j,N_j}^* 
(\alpha_j g'_j (N_j) [b_j]).
\end{array}
\]

Let $y$ be an element in $\Gamma(X,\wt{N}_i)$,
and $y'$ be its image in $\Gamma(X,N'_i)$.  Suppose that 
$N'_i$ is generated by $y'$.  
Let 
$[y]\in \BS^*(\wt{N}_i)\subset \BS(\wt{N}_i)$, 
$[y']\in \BS^*(N'_i)\subset \BS(N'_i)$
be the sections corresponding to $y$, $y'$.  Then we have
$$
\delta^{\prime *} \delta'_* ([y])
= \sum_{x\in \Ker(\wt{N}\to N')} [y+x]
= m_{\wt{N},N'}^*([y']).
$$
The first equality is because 
the group $\wt{H}_i$ is isomorphic to $\Hom(N'_i,\Ker(\wt{N}_i \to N'_i))$,
and the second equality follows from the definition of the
Schwartz-Bruhat sheaf.  Since $\delta'$ is a \fibr,
it follows that $\delta'_*([y]) = \epsilon^*[y']$.
Applying this, we then have
\[
\begin{array}{ll}
(m_{\bN, \bN'})_*
\kappa_{\bN,(b_j)} &
= \prod_{j=1}^d
r_{\bN',N'_j}^*
(\alpha_j g'_j (N'_j) [b'_j])
\\
&
=\kappa_{\bN',(b'_j)}.
\end{array}
\]
This proves Theorem~\ref{main theorem}(1).
\qed



\subsection{Reduction to the local case}
\label{sec:3_4}
The first step in our proof of Theorem~\ref{main theorem} (2)
is to reduce the case when $X$ is a local scheme.
In the succeeding paragraphs, 
we will prepare arguments necessary for this reduction step.

\subsubsection{ }
We will consider the category
$\cCo{d}$, $\cCotu{d}$, $\wt{\cC}_m^{d,\mu}$ and $\cFCotu{d}$ for
various $X$. To avoid confusion, 
we will write $\cCo{d}$, $\cCotu{d}$, $\wt{\cC}_m^{d,\mu}$
and $\cFCotu{d}$ for $X$ by $\cCo{d}_X$,
$\cCot{d}_{m,X}$, $\cCot{d,\mu}_{m,X}$ and $\cFCot{d}_{m,X}$, respectively.

\subsubsection{ }
Let $\wp$ be a closed point of $X$.
We set $X_{\wp} = \Spec\, \cO_{X,\wp}$.

From now on until the end of Section \ref{sec:3_4}
we assume that $|X| \setminus \{\wp\}$ is non-empty.
We let $U \subset X$ denote the unique open subscheme
satisfying $|U| = |X| \setminus \wp$.
%
We let $j^\wp \colon  U \inj X$ denote the canonical inclusion
and $i_\wp \colon  X_\wp \inj X$ denote the canonical morphism.
%
%
%
%
Let us consider the functors $(j^\wp)^*\colon  \cCo{d}_X
\to \cCo{d}_{U}$ and $i_\wp^* \colon \cCo{d}_{X} \to \cCo{d}_{X_\wp}$
which associate, to each object $N$ in $\cCo{d}_X$,
the pullbacks $(j^\wp)^*N$ and $i_\wp^* N$, respectively.
The pair $((j^\wp)^*,i_\wp^*)$ gives
an equivalence of categories from
the category $\cCo{d}_{X}$ to the product category 
$\cCo{d}_{U} \times \cCo{d}_{X_\wp}$.
The following lemma can be checked easily.
We omit the proof.

\begin{lem}\label{lem:reduction1_1}
Let $f\colon N \to N'$ be a morphism in $\cCo{d}_X$.
Then $f$ is a Galois covering 
(\resp a \fibr) in $\cCo{d}_X$ if and only if
the morphism $(j^\wp)^*(f)$ is a Galois covering
(\resp a \fibr) in $\cCo{d}_U$ and 
the morphism $i_\wp^*(f)$ is a Galois covering 
(\resp a \fibr) in $\cCo{d}_{X_\wp}$.
Moreover the functors $(j^\wp)^*$ and $i_\wp^*$
give an isomorphism $\Aut_{N'}(N) \xto{\cong}
\Aut_{N'|_U}(N|_U) \times \Aut_{N'_\wp}(N'_\wp)$
of groups.
\qed
\end{lem}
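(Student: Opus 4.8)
The plan is to derive all three assertions from the equivalence of categories $\cCo{d}_X \xto{\cong} \cCo{d}_U \times \cCo{d}_{X_\wp}$ furnished by the pair $((j^\wp)^*, i_\wp^*)$, which was recorded just before the lemma. Under this equivalence a morphism $f$ of $\cCo{d}_X$ corresponds to the pair $((j^\wp)^*(f), i_\wp^*(f))$, so it is enough to check, for a product category $\cA \times \cB$, that $\Aut_{(b_1,b_2)}((a_1,a_2)) = \Aut_{b_1}(a_1) \times \Aut_{b_2}(a_2)$, and that $f$ (in $\cCo{d}_X$) is a fibration, resp.\ a Galois covering, if and only if both $(j^\wp)^*(f)$ and $i_\wp^*(f)$ are. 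Here the notions of fibration and Galois covering in the two factors are the ones attached to $\cCo{d}_U$ and $\cCo{d}_{X_\wp}$, which are $E$-categories — just as $\cCo{d}_X$ is, by Lemma~\ref{lem:epi} — so the pseudo-torsor characterization of Galois coverings is available for all three categories.

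The automorphism statement and the compatibility of fibrations with the projections are the easy parts. For automorphisms one uses only $\Hom_{\cA \times \cB}((a,b),(a',b')) = \Hom_\cA(a,a') \times \Hom_\cB(b,b')$. For fibrations I would argue via Lemma~\ref{lem:4_0}: a morphism $f : N \to N'$ of $\cCo{d}$ is encoded by a triple $(N_1,N_2,\alpha)$ and is a fibration exactly when $N_2 = N$; passing to the $U$- and $X_\wp$-parts carries this triple to the triples encoding $(j^\wp)^*(f)$ and $i_\wp^*(f)$, and $N_2 = N$ holds for an $\cO_X$-module of finite length if and only if $N/N_2$ vanishes on $U$ and at $\wp$, i.e.\ if and only if the corresponding equality holds in both factors. (Equivalently: in a representative $N' \twoheadleftarrow N'' \hookrightarrow N$ the inclusion is an isomorphism iff both of its components are.)

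For Galois coverings I would use that, in an $E$-category, $g : Y \to Z$ is a Galois covering precisely when for every object $W$ the composition map $\Hom(W,Y) \to \Hom(W,Z)$ is a pseudo-$\Aut_Z(Y)$-torsor (\cite[Def.\ 3.1.2]{Grids}). In $\cA \times \cB$ this map, for $W = (W_1,W_2)$, is the product of the corresponding maps for $\cA$ and $\cB$, the acting group is $\Aut_{b_1}(a_1)\times\Aut_{b_2}(a_2)$, and the action is componentwise; since a product of two pseudo-torsors is a pseudo-torsor, $g = (f_1,f_2)$ is a Galois covering whenever $f_1$ and $f_2$ are. For the converse, given that $(f_1,f_2) : (Y_1,Y_2) \to (Z_1,Z_2)$ is a Galois covering, I would test against $W = (W_1, Y_2)$ for arbitrary $W_1$ and against $W = (Y_1, W_2)$ for arbitrary $W_2$; because $\Hom_\cB(Y_2,Y_2)$ and $\Hom_\cA(Y_1,Y_1)$ contain the identity and so are non-empty, the pseudo-torsor property of the product map forces that of each factor map, whence $f_1$ and $f_2$ are Galois coverings.

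None of this presents a genuine obstacle; the only point requiring a little care is the behaviour of empty $\Hom$-sets in the product category, which is exactly why, in the converse direction for Galois coverings, the test object must keep its component $Y_1$ (resp.\ $Y_2$) on the side opposite to the factor being controlled, so that the other factor of the $\Hom$-set is non-empty.
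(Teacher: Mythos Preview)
Your proposal is correct. The paper omits the proof entirely (it writes ``The following lemma can be checked easily. We omit the proof'' and places a \qed at the end of the statement), so there is no argument to compare against; your verification via the product-category equivalence is precisely the sort of routine check the authors had in mind.
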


\subsubsection{ }
For a presheaf $F^\wp$ on $\cCo{d}_{U}$ 
and for a presheaf $F_\wp$ on $\cCo{d}_{X_\wp}$,
we define $F^\wp \boxtimes F_\wp$ as the presheaf 
on $\cCo{d}_X$ which associates, 
to each object $N$ in $\cCo{d}$, the
set $F^\wp(N|_U) \times F_\wp(N_\wp)$.

\begin{lem}\label{lem:reduction1_2}
Let $F^\wp$ be a presheaf on $\cFCo{d}_{U}$ and let
$F_\wp$ be a presheaf on $\cFCo{d}_{X_\wp}$.
Then there exists an isomorphism
$\theta_{F^\wp,F_\wp} \colon  (F^\wp \boxtimes F_\wp)^a
\xto{\cong} (F^\wp)^a \boxtimes (F_\wp)^a$
of presheaves on $\cCo{d}_X$
such that for each object $N$ in $\cCo{d}_X$,
the diagram
\begin{equation}\label{eq:boxtimes}
\begin{CD}
(F^\wp \boxtimes F_\wp)(N)
@=
F^\wp((j^\wp)^*(N)) \times 
F_\wp(i_\wp^*(N_\wp)) \\
@VVV @VVV \\
(F^\wp \boxtimes F_\wp)^a(N)
@>{\theta_{F^\wp,F_\wp}(N)}>>
(F^\wp)^a((j^\wp)^*(N)) \times 
(F_\wp)^a(i_\wp^*(N_\wp))
\end{CD}
\end{equation}
is commutative.
Here the vertical arrows in the diagram are the maps
induced by the morphisms 
$F^\wp \to (F^\wp)^a$,
$F_\wp \to (F_\wp)^a$, and
$F^\wp \boxtimes F_\wp \to 
(F^\wp \boxtimes F_\wp)^a$.
In particular, the presheaf $(F^\wp)^a \boxtimes (F_\wp)^a$
on $\cFCo{d}_X$ is a sheaf.
\end{lem}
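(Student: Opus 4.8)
The plan is to realize the external product $F^\wp\boxtimes F_\wp$ as a binary product, in $\Presh(\cCo{d}_X)$, of the pullbacks of $F^\wp$ and $F_\wp$ along the two projection functors, and then to reduce the statement to two facts about sheafification: that it commutes with finite products, and that it commutes with each of those pullback functors. Write $\Phi_U=(j^\wp)^*\colon\cCo{d}_X\to\cCo{d}_U$ and $\Phi_\wp=i_\wp^*\colon\cCo{d}_X\to\cCo{d}_{X_\wp}$, with induced pullback functors $\Phi_U^*\colon\Presh(\cCo{d}_U)\to\Presh(\cCo{d}_X)$ and $\Phi_\wp^*\colon\Presh(\cCo{d}_{X_\wp})\to\Presh(\cCo{d}_X)$ given by composition with $\Phi_U^\op$, resp.\ $\Phi_\wp^\op$. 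Unwinding the definition of $\boxtimes$, one has $F^\wp\boxtimes F_\wp=\Phi_U^*F^\wp\times\Phi_\wp^*F_\wp$ and, likewise, $(F^\wp)^a\boxtimes(F_\wp)^a=\Phi_U^*((F^\wp)^a)\times\Phi_\wp^*((F_\wp)^a)$. Since $U$ and $X_\wp$ are again regular noetherian schemes of pure Krull dimension one whose residue fields at closed points are finite, Theorem~\ref{thm:section2} shows that $\cCo{d}_U$ and $\cCo{d}_{X_\wp}$, with their topologies $J^d_m$, are $Y$-sites; in particular the explicit colimit description of sheafification recalled in the proof of Lemma~\ref{lem:aJ_properties} is available for all three sites, and I write $(-)^a$ for sheafification on each.

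The first main step is to produce, for each presheaf $G$ on $\cCo{d}_U$, a natural isomorphism $(\Phi_U^*G)^a\cong\Phi_U^*(G^a)$ that is compatible with the canonical morphisms out of $\Phi_U^*G$, together with the analogous statement for $\wp$. Evaluating on an object $N$ of $\cCo{d}_X$, the colimit formula gives $(\Phi_U^*G)^a(N)\cong\varinjlim_{(M,f)}G(M|_U)^{\Aut_N(M)}$, where $(M,f)$ runs over the category $\Gal/N$ of Galois coverings of $N$ in $\cCo{d}_X$. By Lemma~\ref{lem:reduction1_1}, the pair $(\Phi_U,\Phi_\wp)$ identifies $\Gal/N$ with the product category $\Gal/(N|_U)\times\Gal/N_\wp$ and identifies $\Aut_N(M)$ with $\Aut_{N|_U}(M|_U)\times\Aut_{N_\wp}(M_\wp)$ in such a way that the second factor acts trivially on $G(M|_U)$; hence $G(M|_U)^{\Aut_N(M)}=G(M|_U)^{\Aut_{N|_U}(M|_U)}$. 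Since a projection from a product of filtered categories onto a factor is cofinal, the colimit then collapses to $\varinjlim_{(M',f')\in\Gal/(N|_U)}G(M')^{\Aut_{N|_U}(M')}\cong G^a(N|_U)=\Phi_U^*(G^a)(N)$, naturally in $N$.

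For the second step I would invoke Lemma~\ref{lem:aJ_properties}, according to which $(-)^a$ commutes with finite limits and in particular with binary products. Hence $(F^\wp\boxtimes F_\wp)^a\cong(\Phi_U^*F^\wp)^a\times(\Phi_\wp^*F_\wp)^a$, and applying the first step to each factor gives $(\Phi_U^*F^\wp)^a\times(\Phi_\wp^*F_\wp)^a\cong\Phi_U^*((F^\wp)^a)\times\Phi_\wp^*((F_\wp)^a)=(F^\wp)^a\boxtimes(F_\wp)^a$; the composite is the desired isomorphism $\theta_{F^\wp,F_\wp}$. Each arrow in this chain is compatible with the canonical morphism out of $F^\wp\boxtimes F_\wp$ because the adjunction unit $F\to F^a$ is, and the identifications are built from its naturality and from the universal property of products; spelling this out gives exactly the commutativity of diagram~\eqref{eq:boxtimes}, which also pins down $\theta_{F^\wp,F_\wp}$ uniquely. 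The final assertion is then immediate, since $(F^\wp\boxtimes F_\wp)^a$ is a sheaf by construction.

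The step I expect to be the main obstacle is the bookkeeping in the second paragraph: checking carefully, from Lemma~\ref{lem:reduction1_1}, that $(\Phi_U,\Phi_\wp)$ yields an equivalence $\Gal/N\simeq\Gal/(N|_U)\times\Gal/N_\wp$ of categories, and not merely a bijection on objects; that the induced decomposition of the automorphism groups is equivariant for the presheaf actions with the $\wp$-factor acting trivially; and that the colimit appearing in the sheafification formula is filtered, so that the cofinality argument is legitimate. The remaining points---unwinding $\boxtimes$, tracking the canonical morphisms, and deducing the commutativity of diagram~\eqref{eq:boxtimes}---are routine.
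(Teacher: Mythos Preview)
Your proof is correct and uses the same core ingredients as the paper: the equivalence $\Gal/N\simeq\Gal/(N|_U)\times\Gal/N_\wp$ coming from Lemma~\ref{lem:reduction1_1}, and the explicit colimit description of sheafification from \cite{Grids}. The paper computes $(F^\wp\boxtimes F_\wp)^a(N)$ directly in one pass, splitting the colimit over the product category and the invariants under the product group simultaneously; you instead factor the argument into two independent pieces, first proving $(\Phi_U^*G)^a\cong\Phi_U^*(G^a)$ via the cofinality of the projection, and then invoking Lemma~\ref{lem:aJ_properties} for the compatibility of sheafification with finite products. Your decomposition is slightly more modular and isolates a reusable statement (pullback along each projection commutes with sheafification), while the paper's version is shorter because it does not pause to name that intermediate fact; in substance the two arguments coincide.
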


\begin{proof}
Let $N$ be an object in $\cCo{d}$.
Let us consider the category $\Gal/N$ introduced in
Section 4.4.2 of \cite{Grids}.
It follows from Lemma \ref{lem:reduction1_2}
that the functor which associates, to each
object $f\colon M \to N$ in $\Gal/N$, the pair
$((j^\wp)^*(f),i_\wp^*(f))$ gives an equivalence
of categories from $\Gal/N$ to 
the product category $\Gal/(j^\wp)^*N
\times \Gal/i_\wp^* N$.
Hence, by the last statement in
Lemma \ref{lem:reduction1_2} and by the description of the 
functor $(-)^a$ given in
Section 4.4.2 of \cite{Grids}, we have a bijection
$\theta_{F^\wp,F_\wp}(N) \colon  (F^\wp \boxtimes F_\wp)^a(N)
\xto{\cong} (F^\wp)^a((j^\wp)^*(N)) \times 
(F_\wp)^a(i_\wp^*(N_\wp))$ such that the diagram \eqref{eq:boxtimes}
is commutative.
We can check, by using the construction of 
the pullback maps, given in Section 4.4.4 of \cite{Grids},
with respect to a morphism in $\cCo{d}$,
that the bijection $\theta_{F^\wp,F_\wp}(N)$ is functorial in $N$.
Hence the collection of bijections $\theta_{F^\wp,F_\wp}(N)$
for all objects $N$ in $\cCo{d}_X$ gives an isomorphism
of presheaves on $\cCo{d}_X$. This completes the proof.
\end{proof}

\subsubsection{ }
Let us fix an object $N^\wp$ in $\cCo{d}_{U}$.
Let us consider the functor $\eta_{N^\wp}\colon \cCo{d}_{X_\wp}
\to \cCo{d}_X$ which associates, to each 
object $N_\wp$ of $\cCo{d}_{X_\wp}$,
an object $N^\wp \oplus N_\wp$ of $\cCo{d}_X$.
Here the symbol $N^\wp \oplus N_\wp$ stands for the object 
$j^\wp_* N^\wp \oplus (i_\wp)_* N_\wp$ of $\cCo{d}_X$.

Let $F$ be an object in $\cCot{d}_{m,X_\wp}$.
Let us write $F=\quot{N_\wp}{H}$, where
$N_\wp$ is an object in $\cCo{d}_{X_\wp}$
and $H \subset \Aut_{\cCo{d}_{X_\wp}}(N_\wp)$ 
is a subgroup.
Via the functor $\eta_{N^\wp}$, 
we sometimes regard $H$ as a subgroup of $\eta_{N^\wp}(N_\wp)$.
We set $\wt{\eta}_{N^\wp}(F)
= \quot{\eta_{N^\wp}(N_\wp)}{H}$. 
This is an object in $\cCot{d}_{m,X}$.
By using Lemma \ref{lem:reduction1_1}, we can check that
the presheaf 
$\quot{\Hom_{\cCo{d}_X}(-,\eta_{N^\wp}(N_\wp))}{H}$ on $\cCo{d}_{X}$
is isomorphic to the presheaf 
$(\quot{\Hom_{\cCo{d}_{X_\wp}}(-,N_\wp)}{H})
\boxtimes \Hom_{\cCo{d}_U}(-, N^\wp)$.
Hence it follows from Lemma \ref{lem:reduction1_2}
that $\wt{\eta}_{N^\wp}(F)$ is isomorphic,
as a presheaf on $\cCo{d}_{X}$, 
to the presheaf $\quot{N_\wp}{H} \boxtimes \quotid{N^\wp}$.
Via this isomorphism, one can construct
a morphism $\wt{\eta}_{N^\wp}(f) \colon  
\wt{\eta}_{N^\wp}(F) \to \wt{\eta}_{N^\wp}(F')$
in $\cCot{d}_{m,X}$ for each morphism $f\colon F \to F'$
in $\cCot{d}_{m,X_\wp}$, which gives a functor $\wt{\eta}_{N^\wp}$ 
from $\cCot{d}_{m,X_\wp}$ to $\cCot{d}_{m,X}$.
By abuse of notation, we denote by the same symbol
$\wt{\eta}_{N^\wp}$ the functor $\cFCot{d}_{m,X_\wp} \to \cFCot{d}_{m,X}$
induced by $\wt{\eta}_{N^\wp}$.

\begin{lem}\label{lem:reduction1_3}
Let $N^\wp$ be an object in $\cCo{d}_{U}$.
\begin{enumerate}
\item For any diagram $F_1 \to F' \leftarrow F_2$ in
$\cFCot{d}_{m,X_\wp}$, the map
$\Hom_{F'}(F_1,F_2) \to \Hom_{\wt{\eta}_{N^\wp}(F')}
(\wt{\eta}_{N^\wp}(F_1), \wt{\eta}_{N^\wp}(F_2))$
given by the functor $\wt{\eta}_{N^\wp}$ is bijective.
\item The functor $\wt{\eta}_{N^\wp}$ commutes with 
fiber products.
\item The functor $\wt{\eta}_{N^\wp}$ sends any \fibr in 
$\cFCot{d}_{m,X_\wp}$ to a \fibr in $\cFCot{d}_{m,X}$.
\end{enumerate}
\end{lem}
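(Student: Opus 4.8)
The plan is to make systematic use of the two descriptions already at hand. By Lemma~\ref{lem:reduction1_1} the functors $(j^\wp)^*$ and $i_\wp^*$ identify $\cCo{d}_X$ with the product category $\cCo{d}_U\times\cCo{d}_{X_\wp}$, carrying fibrations (resp.\ Galois coverings) to pairs of fibrations (resp.\ Galois coverings), hence $J^d_m$ to the product topology, so that $\Shv(\cCo{d}_X,J^d_m)\simeq\Shv(\cCo{d}_U,J^d_m)\times\Shv(\cCo{d}_{X_\wp},J^d_m)$; and by Lemma~\ref{lem:reduction1_2} this equivalence carries $F^\wp\boxtimes F_\wp$ to the pair whose $U$-component is $F^\wp$ and whose $X_\wp$-component is $F_\wp$, compatibly with sheafification. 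First I would check, directly against the construction of $\wt{\eta}_{N^\wp}$ in the text, the following compatibilities: $\wt{\eta}_{N^\wp}$ commutes with finite coproducts (hence induces a bijection $\pi_0(F)\xto{\cong}\pi_0(\wt{\eta}_{N^\wp}(F))$ functorial in $F$, and sends connected objects to connected objects); on a connected object $\quot{N_\wp}{H}$ it corresponds, under the product equivalence, to the pair with $U$-component $\quotid{N^\wp}$ and $X_\wp$-component $\quot{N_\wp}{H}$; it intertwines $\imath^\str\circ\eta_{N^\wp}$ with $\wt{\eta}_{N^\wp}\circ\imath^\str$, and the canonical quotient morphisms $q_{\eta_{N^\wp}(N_\wp),H}$ with $\wt{\eta}_{N^\wp}(q_{N_\wp,H})$; and, as a consequence of Lemma~\ref{lem:reduction1_1}, a morphism of $\cCo{d}_X$ of the form $\eta_{N^\wp}(h)$ lies in $\cT(J^d_m)$ if and only if $h$ does, the $U$-component of $\eta_{N^\wp}(h)$ being $\id_{N^\wp}$.

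\emph{Parts (1) and (2).} For part~(1), decomposing $F_1$ and $F_2$ into connected components and using compatibility with coproducts reduces the claim to the case in which $F_1$, $F_2$, and also $F'$ are connected (a morphism out of a connected object to $F'$ factors through a unique component of $F'$, and ``over $F'$'' then means ``over that component''). In that case both $\wt{\eta}_{N^\wp}(F_1)\to\wt{\eta}_{N^\wp}(F')$ and $\wt{\eta}_{N^\wp}(F_2)\to\wt{\eta}_{N^\wp}(F')$ correspond, under the product equivalence, to pairs whose $U$-component is $\id_{\quotid{N^\wp}}$; since $\Hom$-sets in a product category are products of the component $\Hom$-sets, and the only endomorphism of $\quotid{N^\wp}$ over $\quotid{N^\wp}$ via the identity is the identity, the map in question is identified with the identity of $\Hom_{F'}(F_1,F_2)$, whence bijectivity. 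For part~(2), by Lemma~\ref{lem:cFC_limit} and its corollary a fiber product in $\cFCot{d}_{m,X_\wp}$ or $\cFCot{d}_{m,X}$, when it exists, agrees with the one computed in $\Presh(\cC^d)$, hence in $\Shv(\cC^d,J^d_m)$; so it suffices to show that $\wt{\eta}_{N^\wp}$ takes a fiber-product square in $\cFCot{d}_{m,X_\wp}$ to a fiber-product square of sheaves, the vertex of which lies again in $\cFCot{d}_{m,X}$. Transported through the product equivalence this holds because limits in a product category are computed componentwise, because $\quotid{N^\wp}\times_{\quotid{N^\wp}}\quotid{N^\wp}=\quotid{N^\wp}$, and because the coproduct bookkeeping on the $U$-side is governed by Corollary~\ref{cor:fiber_product}; membership of the vertex in $\cFCot{d}_{m,X}$ is immediate since $\wt{\eta}_{N^\wp}$ lands there.

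\emph{Part (3).} A fibration in $\cFCot{d}_{m,X_\wp}$ is a morphism of $\wt{\cFT}^*$, i.e.\ one whose $\pi_0$ is surjective and all of whose components lie in $\wt{\cT}$. Using compatibility of $\wt{\eta}_{N^\wp}$ with coproducts and with $\pi_0$, it is enough to treat connected source and target, i.e.\ a morphism $f:\quot{N_{\wp,1}}{H_1}\to\quot{N_{\wp,2}}{H_2}$ in $\cCot{d}_{m,X_\wp}$ belonging to $\wt{\cT}$. Choosing a representative $(f;m,f')$ with $f'\in\cT(J^d_m)$ and applying $\eta_{N^\wp}$, the recorded compatibilities with $\imath^\str$ and with the quotient morphisms show that $(\wt{\eta}_{N^\wp}(f);\eta_{N^\wp}(m),\eta_{N^\wp}(f'))$ is a representative of $\wt{\eta}_{N^\wp}(f)$; and $\eta_{N^\wp}(f')\in\cT(J^d_m)$ by the last compatibility of the first paragraph, since its $X_\wp$-component is $f'$. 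Hence $\wt{\eta}_{N^\wp}(f)\in\wt{\cT}$, and reassembling components (using that $\pi_0$ is preserved) gives that $\wt{\eta}_{N^\wp}$ maps $\wt{\cFT}^*$ into $\wt{\cFT}^*$.

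\emph{Main obstacle.} The three arguments above are essentially formal once the preliminary identifications are in place; the real work is that preliminary step, namely verifying that $\wt{\eta}_{N^\wp}$ corresponds, on connected objects, to ``$\quotid{N^\wp}$ on the $U$-component'' under the product equivalence, and is compatible with sheafification, finite coproducts, the functor $\imath^\str$, the canonical quotient morphisms, and the formation of representatives. Since $\wt{\eta}_{N^\wp}$ is defined in the text through the presheaf-level isomorphism of Lemma~\ref{lem:reduction1_2}, each of these has to be checked against that construction rather than simply quoted; I expect this bookkeeping, more than any single one of (1)--(3), to be the crux.
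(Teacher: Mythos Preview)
Your proposal is correct and essentially matches the paper's proof. For (1) the paper works directly with the presheaf-level description $\wt{\eta}_{N^\wp}(F)\cong F\boxtimes\quotid{N^\wp}$ and observes that $\Hom_{F'_\wp}(F_{\wp,1},F_{\wp,2})\to\Hom_{F'_\wp\boxtimes F^\wp}(F_{\wp,1}\boxtimes F^\wp,F_{\wp,2}\boxtimes F^\wp)$ is bijective, which is your product-category argument in presheaf language; for (3) the paper does exactly what you do, passing to a representative and invoking Lemma~\ref{lem:reduction1_1}. The only real divergence is (2): the paper simply records it as ``an immediate consequence of the claim (1)'' (fiber products being determined by the relative $\Hom$-sets, together with the sectionwise nature of $\boxtimes$), whereas you argue more explicitly via Lemma~\ref{lem:cFC_limit} and componentwise limits in a product of sheaf categories; both are fine, and yours makes the dependence on Lemma~\ref{lem:cFC_limit} visible.
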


\begin{proof}
For any diagram $F_{\wp,1} \to F'_\wp \leftarrow F_{\wp,2}$
of presheaves in $\cCo{d}_{X_\wp}$ and for any presheaf
$F^\wp$ on $\cCo{d}_U$, one can check easily that the map
$\Hom_{F'_\wp}(F_{\wp,1},F_{\wp,2}) \to 
\Hom_{F'_\wp \boxtimes F^\wp}(F_{\wp,1}\boxtimes F^\wp,
F_{\wp,2} \boxtimes F^\wp)$, which sends 
a morphism $f\colon F_{\wp,1} \to F_{\wp,2}$ over $F'_\wp$
to the morphism induced by $f$ and $\id_{F^\wp}$,
is bijective.
Applying this to the case when the diagram
$F_{\wp,1} \to F'_\wp \leftarrow F_{\wp,2}$ is
equal to the diagram $F_1 \to F' \leftarrow F_2$
and when $F^\wp = \quotid{N^\wp}$,
we obtain the claim (1).

The claim (2) is an immediate consequence of the claim (1).

We prove the claim (3). It suffices to show that
the functor $\wt{\eta}_{N^\wp}$ sends any \fibr in 
$\cCot{d}_{m,X_\wp}$ to a \fibr in $\cCot{d}_{m,X}$.
Suppose that $f\colon F \to F'$ is a \fibr in $\cCot{d}_{m,X_\wp}$.
Let us write $F=\quot{N_\wp}{H}$ and $F'=\quot{N'_\wp}{H'}$, 
where $N_\wp$ and $N'_\wp$ are objects in $\cCo{d}_{X_\wp}$
and $H \subset \Aut_{\cFCo{d}_{X_\wp}}(N_\wp)$ and
$H' \subset \Aut_{\cFCo{d}_{X_\wp}}(N'_\wp)$
are subgroups.
Let us denote by $f^\mu$ the morphism $(N_\wp,H) \to (N'_\wp,H')$
in $\wt{\cC}^{d,\mu}_{m,X_\wp}$ corresponding to $f$.
Let us take a model $(f^\mu;m,f')$ of $f^\mu$.
Then $f'$ is a \fibr in $\cCo{d}_{X_\wp}$.
Let us denote by $\eta_{N^\wp}^\mu$ the functor
$\wt{\cC}^{d,\mu}_{m,X_\wp} \to \wt{\cC}^{d,\mu}_{m,X}$
induced by $\eta_{N^\wp}$.
Then the triple $(\eta_{N^\wp}^\mu(f^\mu);\eta_{N^\wp}(m),\eta_{N^\wp}(f'))$
is a model of the morphism $\wt{\eta}_{N^\wp}^\mu(f^\mu)$ in
$\wt{\cC}^{d,\mu}_{m,X}$. It follows from Lemma \ref{lem:reduction1_1} (2)
that $\eta_{N^\wp}(f')$ is a \fibr.
Hence $\wt{\eta}_{N^\wp}^\mu(f^\mu)$ is a \fibr in
$\wt{\cC}^{d,\mu}_{m,X}$, which proves the claim (3).
\end{proof}

\begin{lem}\label{lem:reduction1_4}
Let $N^\wp$ be an object in $\cCo{d}_{U}$.
For a presheaf $G$ on $\cFCot{d}_{m,X}$,
let $(\wt{\eta}_{N^\wp})^* G$ denote the presheaf on 
$\cFCot{d}_{m,X_\wp}$ given by the composite 
of $G$ with $\wt{\eta}_{N^\wp}$.
\begin{enumerate}
\item Suppose that $G$ is a sheaf on $\cFCot{d}_{m,X}$.
Then $(\wt{\eta}_{N^\wp})^* G$ is a sheaf on
$\cFCot{d}_{m,X_\wp}$.
\item Suppose that $G$ is an abelian presheaf
(\resp a presheaf of rings) with transfers on $\cFCot{d}_{m,X}$.
Then the presheaf $(\wt{\eta}_{N^\wp})^* G$
together with the map $\wt{\eta}_{N^\wp}(f)_*$ 
for each \fibr $f$ in $\cFCot{d}_{m,X_\wp}$
is an abelian presheaf (\resp a presheaf of rings) 
with transfers on $\cFCot{d}_{m,X_\wp}$.
\end{enumerate}
\end{lem}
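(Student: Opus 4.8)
The plan is to deduce Lemma \ref{lem:reduction1_4} directly from the structural properties of the functor $\wt{\eta}_{N^\wp}$ that are already collected in Lemma \ref{lem:reduction1_3}, together with the characterization of sheaves on the ``decomposed'' site $\cCo{d}_X \simeq \cCo{d}_U \times \cCo{d}_{X_\wp}$ provided by Lemma \ref{lem:reduction1_1} and Lemma \ref{lem:reduction1_2}. First I would reduce (1) to a statement purely about sheaves on $\cFCotu{d}$, using Proposition \ref{prop:cFT_main}: a presheaf is a sheaf for the topology $\iota'_* J$ exactly when its restriction to $\wt{\cC}$ is a sheaf and Conditions (1) and (2) in Section \ref{sec:cond12} hold. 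Condition (1) (sending an initial object to a point) and Condition (2) (compatibility with finite coproducts) for $(\wt{\eta}_{N^\wp})^* G$ follow formally from the fact, implicit in the construction of $\wt{\eta}_{N^\wp}$ via $\quot{N_\wp}{H} \boxtimes \quotid{N^\wp}$ and Lemma \ref{lem:reduction1_2}, that $\wt{\eta}_{N^\wp}$ sends initial objects to initial objects and finite coproducts to finite coproducts. For the sheaf condition on the connected part, I would use that a covering family in $\wt{\cC}^{d}_{m,X_\wp}$ (with respect to $\imath_* J^d_m$) is carried by $\wt{\eta}_{N^\wp}$ to a covering family in $\wt{\cC}^{d}_{m,X}$: this is where Lemma \ref{lem:reduction1_3}(3) is used, since a generating cover is given by a single \fibr and $\wt{\eta}_{N^\wp}$ preserves \fibrs; combined with Lemma \ref{lem:reduction1_3}(2) (preservation of fiber products) this shows that the image of a \v{C}ech diagram under $\wt{\eta}_{N^\wp}$ is again a \v{C}ech diagram, so the equalizer condition defining a sheaf transfers.

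For part (2), the key point is that the defining data of an abelian presheaf with transfers — the maps $f_*$ attached to \fibrs, Conditions (1)(2) of Definition \ref{def:transfer}, and the identity $f_* f^* = \deg f$ — all pull back along a functor that preserves \fibrs, fiber products, and degrees of \fibrs. Preservation of \fibrs and of \fibr composition is Lemma \ref{lem:reduction1_3}(3); preservation of cartesian squares (needed for Condition (2) of Definition \ref{def:transfer}) is Lemma \ref{lem:reduction1_3}(2); so the only genuinely new verification is that $\wt{\eta}_{N^\wp}$ preserves the degree of a \fibr, i.e. $\deg \wt{\eta}_{N^\wp}(f) = \deg f$ pointwise on $\pi_0$. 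This I would check by Lemma \ref{lem:reduction1_1}(2): for a Galois \fibr $f$ in $\cCo{d}_{X_\wp}$, the automorphism groups satisfy $\Aut_{N'}(\eta_{N^\wp}(N)) \cong \Aut_{N'_\wp}(N_\wp) \times \Aut_{(N^\wp)}(N^\wp)$ but the second factor is trivial since $N^\wp = N^\wp$ is attached with the identity group, so \eqref{eq:deg_defn} gives the same value; for a general \fibr one passes to a Galois \fibr dominating it using Lemma \ref{lem:reduction1_3}(3) again. Having checked this, Conditions (1), (2), (3) of Definition \ref{def:transfer} for $(\wt{\eta}_{N^\wp})^* G$ become immediate consequences of the corresponding conditions for $G$, and the ring case follows since $\wt{\eta}_{N^\wp}(f)^*$ is a ring homomorphism and the projection formula transports verbatim.

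The main obstacle I anticipate is not any single hard computation but the bookkeeping around the target category: the functor $\wt{\eta}_{N^\wp}$ was defined on $\cCot{d}_{m,X_\wp}$ and $\cFCot{d}_{m,X_\wp}$, but Definition \ref{def:transfer} and Proposition \ref{prop:transfer} are formulated on the full category $\wt{\cFC}$, and one must be careful that the ``moderate'' subcategory conventions of Section \ref{sec:Ctil} are respected — i.e. that $\wt{\eta}_{N^\wp}$ actually lands in the chosen full subcategory $\wt{\cFC}$ rather than only in $\wt{\cFC^m}$, and that \fibrs in the source are identified with \fibrs in $\wt{\cFC}$ in the correct sense. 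I would address this by noting that the description $\wt{\eta}_{N^\wp}(\quot{N_\wp}{H}) \cong \quot{\eta_{N^\wp}(N_\wp)}{H}$ from Lemma \ref{lem:reduction1_2} exhibits the image as a quotient of an object of $\cCo{d}_X$ by a moderate subgroup (moderateness is inherited because $H$ is moderate in $\cCo{d}_{X_\wp}$ and $\eta_{N^\wp}$ sends a bounding morphism to a bounding morphism), so the image genuinely lies in $\cCot{d}_{m,X}$, and then the assertions are stated for that category as required. Once that is pinned down, both parts of the lemma are short formal deductions, and I would present them as such rather than spelling out the routine diagram chases.
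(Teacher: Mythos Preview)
Your proposal is correct and follows essentially the same approach as the paper. The paper dismisses (1) as ``obvious'' and for (2) checks the three conditions of Definition~\ref{def:transfer} exactly as you outline, with the only minor difference being the verification of $\deg f = \deg \wt{\eta}_{N^\wp}(f)$: the paper uses the bijection of Hom-sets over a common base from Lemma~\ref{lem:reduction1_3}(1) together with the description of degree as a cardinality of $\Hom_{F'}(F_1,F)$ (extracted from the proof of Proposition~\ref{prop:transfer}), whereas you appeal to the automorphism-group decomposition of Lemma~\ref{lem:reduction1_1}(2) and formula~\eqref{eq:deg_defn}. Both routes are valid and nearly equivalent; the paper's is slightly more direct since Lemma~\ref{lem:reduction1_3}(1) already lives at the level of $\cFCot{d}$, while your use of Lemma~\ref{lem:reduction1_1} strictly speaking requires first reducing from $\cCot{d}_m$ to $\cCo{d}$ via representatives---a routine step you gesture at but do not spell out.
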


\begin{proof}
The claim (1) is obvious.

We prove the claim (2).
The claim when $G$ is a presheaf of rings with transfers 
immediately follows from the claim when $G$ is an abelian presheaf
with transfers. Hence it suffices to show that, when
$G$ is an abelian presheaf with transfers,
the presheaf $(\wt{\eta}_{N^\wp})^* G$
together with the map $\wt{\eta}_{N^\wp}(f)_*$ 
for each \fibr $f$ in $\cFCot{d}_{m,X_\wp}$
satisfies the three conditions in Definition \ref{def:transfer}.
It is clear that Condition (1) is satisfied.
Condition (2) is satisfied since the functor
$\wt{\eta}_{N^\wp}$ commutes with fiber products.
To prove that Condition (3) is satisfied,
it suffices to prove that, for any \fibr
$f\colon F \to F'$ in $\cCot{d}_{m,X_\wp}$, we have
$\deg f = \deg \wt{\eta}_{N^\wp}(f)$.
It follows from Proposition \ref{prop:cFCot_enoughGalois},
the category $\cFCot{d}_{m,X_\wp}$ has enough Galois coverings
which are \fibrs. 
Hence there exists a Galois covering $g\colon F_1 \to F'$
in $\cCot{d}_{m,X_\wp}$ such that $\Hom_{F'}(F_1,F)$ is non-empty.
It follows from Lemma \ref{lem:reduction1_3} (2) that
the morphism $\wt{\eta}_{N^\wp}(g)$ is a Galois
covering in $\cCot{d}_{m,X}$.
It follows from Lemma \ref{lem:reduction1_3} (1) that
the functor $\wt{\eta}_{N^\wp}$ induces a bijection
$\Hom_{F'}(F_1,F) \xto{\cong} 
\Hom_{\wt{\eta}_{N^\wp}(F')}(\wt{\eta}_{N^\wp}(F_1),
\wt{\eta}_{N^\wp}(F))$.
An argument in the proof of Proposition \ref{prop:transfer} 
shows that $\deg f$ and $\deg \wt{\eta}_{N^\wp}(f)$
are equal to the cardinalities of the sets
$\Hom_{F'}(F_1,F)$ and
$\Hom_{\wt{\eta}_{N^\wp}(F')}(\wt{\eta}_{N^\wp}(F_1),
\wt{\eta}_{N^\wp}(F))$, respectively.
Thus we have $\deg f = \deg \wt{\eta}_{N^\wp}(f)$.
This completes the proof.
\end{proof}

\subsection{Proof of Theorem~\ref{main theorem} (2)}
\label{sec:3.4}
We now start the proof of Theorem~\ref{main theorem} (2). 
We finish the proof at the end of Section~\ref{sec:3.4}.
By using Lemma \ref{lem:reduction1_4} if necessary,
we may and will assume that $X=X_\wp=\Spec(\cO_{X,\wp})$
and that we are in Situation I.
By decomposing the surjection $N_i \surj N'_i$ 
into the composite of two surjections and 
by using Theorem~\ref{main theorem} (1), 
we may and will assume that
$e \ge 1$, that $N_1=\cdots =N_e =N \cong \kappa(\wp)$,
$N'_i=0$ for $i=1,\ldots,e$, and that
$N_i=N'_i \neq 0$ for $i=e+1,\ldots,d$.
%
%
We set $\bF = \quotid{\bN}$.

\subsubsection{ }
\label{f_r}
For $i=1,\ldots,d$, we have either
$(N'_i,N''_i) =(0,N_i)$ or $(N'_i,N''_i) =(N_i,0)$. 
Hence we have a canonical isomorphism
$N_i \cong N'_i \oplus N''_i$.
We set $\bN'' =\bigoplus_{i=1}^{e} N''_i
\cong \kappa(\wp)^{\oplus e}$.
The isomorphisms $N_i \cong N'_i \oplus N''_i$ for 
$i=1,\ldots,d$ give a canonical isomorphism $\bN \cong \bN' \oplus \bN''$.

For $r=0,\ldots,e$, we define
two objects $E_r$, $E'_r$ in $\cFCotu{d}$
as follows. We set $\bM''_r=\bigoplus_{i=1}^r N''_i$.
Let $H''_r$ denote the group of
the automorphisms of the $\cO_X$-module $\bN''$
that stabilize $\bM''_r$. 
%
For $h''_r\in H''_r$, the restriction of $h''_r$
to $\bM''_r$ induces an $\cO_X$-automorphism
of $\bM''_r$, which we denote by
$h''_r|_{\bM''_r}\colon  \bM''_r \to \bM''_r$.
%
For $h''_r \in H''_r$, we regard $\id_{\bN'} \oplus h''_r\colon 
\bN' \oplus \bN'' \xto{\cong} \bN' \oplus \bN''$ as
an automorphism of the $\cO_X$-module $\bN$
via the canonical isomorphism 
$\bN \cong \bN' \oplus \bN''$ above.
We use the symbol $z(-)$ introduced in Section \ref{sec:notationz}.
We set 
$H_r = \{ z(\id_{\bN'} \oplus h''_r) \ |\ h''_r \in H''_r \}
\subset \Aut_{\cCo{d}}(\bN)$.
We define $E'_r$ to be $\quot{\bN}{H_r}$.
We denote by $c_r$ the canonical morphism
$\bF \to E'_r$ in $\cCotu{d}$.
It follows from Lemma \ref{lem:Galois2} the morphism 
$c_r$ is a Galois covering in $\cCotu{d}$ 
with Galois group $H_r$.

Let us consider the functor 
$M_{H_r}\colon  \cB(H_r) \to \cFCot{d}_{m,/E'_r}$ 
constructed in Section \ref{sec:functor M_G}, 
that sends an object $S$ in $\cB(H_r)$
to the object $\quot{\coprod_{s \in S} \bN}{H_r}$.
We set $S_r =\Hom_{\cO_X}(N,\bM''_r)$.
We regard the set $S_r$ as an object in $\cB(H_r)$ by setting
$z(\id_{\bN'} \oplus h''_r) \cdot s_r = h''_r|_{\bM''_r}\circ s$ for 
$h''_r \in H''_r$ and for $s \in S_r$.
We denote the object $\wt{M}_{H_r}(S_r)$
in $\cFCot{d}_{m,/E'_r}$ by $\eta_r \colon  E_r \to E'_r$.

Let $\bj''_r\colon  \bM''_r \inj \bM''_e = \bN''$ denote the inclusion 
homomorphism of $\cO_X$-modules.
The map $\bj''_r \circ -\colon S_r \to r^{H_e}_{H_r}(S_e)$
given by the composition with $\bj''_r$ is a morphism in $\cB(H_r)$.
Applying the functor $M_{H_r}$, we obtain a morphism
$E_r \to M_{H_r}(r^{H_e}_{H_r}(S_e))$ in $\cFCotu{d}$ over $E'_r$.
%
Since $H_r$ is a subgroup of $H_e$,
we have the morphisms
$c^{/H_r}_{/H_e}\colon E'_r \to E'_e$,
which we denote by $f_r'$,
and $c^{/H_r}_{/H_e, S_e}\colon M_{H_r}(r^{H_e}_{H_r}(S_e)) \to E_e$
introduced in Section \ref{sec:M_G_H}.
We denote by $f_r\colon E_r\to E_e$
the composite $E_r \to M_{H_r}(r^{H_e}_{H_r}(S_e)) 
\xto{c^{/H_r}_{/H_e, S_e}} E_e$.
%
We have the following commutative diagram in $\cFCotu{d}$:
$$
\begin{CD}
E_r @>{\eta_r}>> E'_r \\
@V{f_r}VV @VV{f'_r}V \\
E_e @>{\eta_e}>> E'_e.
\end{CD}
$$

\subsubsection{ }
It follows from the construction of the functor
$M_{H_e}$ that the inclusion $\bN \inj \coprod_{h \in H_e} \bN$
of the component at $1 \in H_e$ induces an isomorphism
\begin{equation}\label{eq:q-calculus_2}
\bF \xto{\cong} M_{H_e}(H_e)
\end{equation}
in $\cFCotu{d}$.
For each $i=1, \dots,e$, let $\mu_i\colon N = N''_i \inj \bN''=\bM''_e$
be the $\cO_X$-homomorphism given by the inclusion into the $i$-th factor.
Let us consider the map $w_i\colon H_e \to S_e$
that sends $z(\id_{\bN'} \oplus h''_e) \in H_e$ to the composite
$h''_e \circ \mu_i$.
Then $w_i$ is a morphism in $\cB(H_e)$.
%
By applying the functor $M_{H_e}$ and composing with the isomorphism
\eqref{eq:q-calculus_2}, we obtain the morphism
$\alpha'_i\colon \bF \to E_e$ in $\cFCotu{d}$ over $E'_e$.
We let $\bE_e=E_e \times_{E'_e} \cdots \times_{E'_e} E_e$ 
be the $e$-fold fiber product of $\eta_e$, and 
$\bi\colon  \bF \to \bE_e$ be the morphism
$(\alpha_1', \ldots, \alpha_e')$.

\subsubsection{ }

For each $r=0,\dots,e$, we let $\bE_r$ denote the
$e$-fold fiber product 
$E_r\times_{E'_r} \cdots \times_{E'_r} E_r$
of $E_r$ over $E'_r$, and let
$\mathbf{f}_r=f_r\times \cdots \times f_r\colon  \bE_r \to \bE_e$.

\begin{prop}\label{q-calculus}
Let the notations be as above. 
The morphisms $\bi$ and $\mathbf{f}_{j}$ for each $j=0, \dots, e$
are \fibrs, and we have 
\[
\deg\bi =\sum_{r=0}^e (-1)^r
q_\wp^{r(r-1)/2} \deg \mathbf{f}_{e-r}.
\]
\end{prop}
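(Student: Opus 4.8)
The plan is to reduce the identity to a purely combinatorial count over subspaces of the residue field, by base-changing everything along the Galois covering $c_e:\bF\to E'_e$ (recall $E'_0=E'_e$, since $\bM''_0=0$ and $\bM''_e=\bN''$ impose no condition on automorphisms, so $H_0=H_e$), and then to invoke the classical $q$-binomial identity. Throughout, one exploits that $E_r=\wt M_{H_r}(S_r)$ over $E'_r$ and that $\wt M_{H_r}$ commutes with fibre products (Lemma~\ref{lem:M_fiber_products}): consequently the $e$-fold fibre product $\bE_r$ is canonically $\wt M_{H_r}(S_r^{e})$ over $E'_r$, and via \eqref{eq:q-calculus_2} one has $\bF\cong\wt M_{H_e}(H_e)$, under which $\bi$ becomes $\wt M_{H_e}$ applied to the map of $H_e$-sets $w=(w_1,\dots,w_e):H_e\to S_e^{e}$, while each $\mathbf f_{e-r}$ is described through the composition law (Lemmas~\ref{lem:M_G_H}--\ref{lem:M_G_H3}) together with $\wt M_{H_{e-r}}$ of the structure map $S_{e-r}^{e}\to\{\ast\}$.

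First I would dispatch the fibration claims: since the above presents $\bi$ and $\mathbf f_j$ as $\wt M_{H_r}$ of morphisms of finite $H_r$-sets, composed along fibre products of fibrations, Lemma~\ref{lem:M_G_fibr}, Proposition~\ref{prop:FC_product}, Lemma~\ref{lem:cFT semi-localizing}, and Lemma~\ref{lem:M_fiber_products} show that $\bi$ and the $\mathbf f_j$ belong to $\wt{\cFT}$, so their degrees are defined. Then I would compute the degrees. By Corollary~\ref{cor:deg_limit} it suffices to compute $\deg\bi$ and $\deg\mathbf f_{e-r}$ after pulling back along $c_e$, i.e.\ along $g:\bE_e\times_{E'_e}\bF\to\bE_e$. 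Using Lemma~\ref{lem:M_base_change} and $\Gal(c_e)=H_e\cong\GL_e(\kappa(\wp))$, one gets $\bE_e\times_{E'_e}\bF\cong\coprod_{S_e^{e}}\bF$, with $\pi_0=S_e^{e}\cong\Mat_e(\kappa(\wp))$ under the identification $S_e=\Hom_{\cO_X}(N,\bN'')\cong\kappa(\wp)^{e}$ (evaluation at the generator $b_i$). The base change of $\bi$ is then a coproduct of identity morphisms whose $\pi_0$-map is $h\mapsto$ (the matrix whose $i$-th column is the image of $\mu_i$ under $h$), i.e.\ the inclusion $\GL_e(\kappa(\wp))\hookrightarrow\Mat_e(\kappa(\wp))$; hence $\deg\bi$ is the indicator of the locus where the $e$ ``column'' sections span $\kappa(\wp)^{e}$. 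For $\mathbf f_{e-r}$ one base-changes further along $E'_{e-r}\to E'_e$ (which contributes a factor $\coprod_{H_e/H_{e-r}}$, where $H_{e-r}$ is isomorphic to the parabolic of $\GL_e(\kappa(\wp))$ stabilizing $\bM''_{e-r}$), obtaining the $\pi_0$-map $(\bar h,(v_1,\dots,v_e))\mapsto(h\,\bj''_{e-r}v_1,\dots,h\,\bj''_{e-r}v_e)$ on $(\GL_e(\kappa(\wp))/H_{e-r})\times(\bM''_{e-r})^{e}$; counting fibres shows that $\deg\mathbf f_{e-r}$ at a point whose column sections span a subspace $U\subseteq\kappa(\wp)^{e}$ equals the number of $(e-r)$-dimensional subspaces containing $U$, namely the Gaussian binomial coefficient $\binom{e-\dim U}{r}_{q_\wp}$.

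Finally, writing $m=e-\dim U$ and $q=q_\wp$, the asserted equality of degree functions becomes, at every point, $\mathbf 1[m=0]=\sum_{r=0}^{e}(-1)^{r}q^{r(r-1)/2}\binom{m}{r}_{q}$; since the terms with $r>m$ vanish this is exactly the $q$-binomial identity $\sum_{r=0}^{m}(-1)^{r}q^{r(r-1)/2}\binom{m}{r}_{q}=\prod_{j=1}^{m}(1-q^{j-1})$, which equals $0$ for $m\ge 1$ and $1$ for $m=0$. This completes the proof.

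The main obstacle will be the bookkeeping in the middle step: correctly transporting $\bi$ and the $\mathbf f_j$ through the functors $M_{H_r}$, the base-change isomorphisms of Lemma~\ref{lem:M_base_change}, and the composition law of Lemmas~\ref{lem:M_G_H}--\ref{lem:M_G_H3}, in order to identify the resulting maps of finite index sets in explicit linear-algebraic terms — in particular, pinning down that the base change of $\bi$ is precisely the inclusion $\GL_e(\kappa(\wp))\hookrightarrow\Mat_e(\kappa(\wp))$, and that the base change of $\mathbf f_{e-r}$ realizes the full family of $(e-r)$-dimensional subspaces of $\kappa(\wp)^{e}$ through the $H_e$-orbit of $\bM''_{e-r}$. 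Once these identifications are in place, the proposition is a finite count governed by the subspace lattice of $\kappa(\wp)^{e}$ and the $q$-binomial theorem.
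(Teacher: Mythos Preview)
Your proposal is correct and follows essentially the same route as the paper. The paper packages your base-change step into Lemma~\ref{lem:M_G_deg} (whose proof is exactly the base change of Lemma~\ref{lem:M_base_change} you invoke), and it records your identifications $\bi=M_{H_e}(\bw)$ and $\mathbf f_r=M_{H_e}(\bt_r)$ as Lemma~\ref{lem:diag_q-calculus}; your matrix language $\GL_e(\kappa(\wp))\hookrightarrow\Mat_e(\kappa(\wp))$ and the paper's subspace language via the set $\bT_r=\{(W,(\nu_i)):W\in\Gr(\bN'',r),\ \nu_i\in\Hom(N,W)\}$ are two descriptions of the same fibre count, and both finish with Gauss' $q$-binomial identity.
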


Before giving a proof of Proposition \ref{q-calculus},
we introduce some notation.
%
For $r=0,\ldots,e$ and for $i=1,\ldots,e$, let
$\pr_{r,i} \colon  \bE_r \to E_r$
denote the projection to the $i$-th factor.
%

For $r=0,\ldots,e$,
%
%
we set
$$
t_r = \iota^{H_e}_{H_r}(\bj''_r \circ -)\colon  H_e \times^{H_r} S_r \to S_e.
$$
Here the notation $\iota^{H_e}_{H_r}(-)$ be as in Section \ref{sec:M_G_H}.
Let $\bS_r = \prod_{i=1}^e S_r$.
We endow $\bS_r$ with the diagonal left action of the
group $H_r$ and regard it as an object in $\cB(H_r)$.
The map $\bj''_r \circ - \colon  S_r \to r^{H_e}_{H_r}(S_e)$ induces a map
$\bS_r \to r^{H_e}_{H_r}(\bS_e)$, 
which we denote by $(\bj''_r \circ -)_{1 \le i \le e}$.
We set
$$
\bt_r =  \iota^{H_e}_{H_r}((\bj''_r \circ -)_{1 \le i \le e}) \colon 
H_e \times^{H_r} \bS_r \to \bS_e.
$$ 
%
%
We denote by $\bw \colon  H_e \to \bS_e$ the morphism in $\cB(H_e)$ 
which sends $h_e \in H_e$ to $(w_i(h_e))_{1 \le i \le e}$.
Then $\bw$ is a morphism in $\cB(H_e)$.

\begin{lem}\label{lem:diag_q-calculus}
There exist isomorphisms 
$\bE_e \xto{\cong} M_{H_e}(\bS_e)$ and
$\bE_r \xto{\cong} M_{H_e}(H_e \times^{H_r} \bS_r)$
for $r=0,\ldots,e$
such that the diagrams
\begin{equation}\label{diag:q-calculus_1}
\begin{CD}
\bE_r @>{\mathbf{f}_r}>> \bE_e \\
@V{\cong}VV @VV{\cong}V \\
M_{H_e}(H_e \times^{H_r} \bS_r)
@>{M_{H_e}(\bt_r)}>>
M_{H_e}(\bS_e)
\end{CD}
\end{equation}
for $r=0,\ldots,e$ and
\begin{equation}\label{diag:q-calculus_2}
\begin{CD}
\bF @>{\bi}>> \bE_e \\
@V{\cong}VV @VV{\cong}V \\
M_{H_e}(H_e)
@>{M_{H_e}(\bw)}>>
M_{H_e}(\bS_e)
\end{CD}
\end{equation}
are commutative. 
Here 
the left vertical arrow in the diagram \eqref{diag:q-calculus_2}
is the isomorphism \eqref{eq:q-calculus_2}.
\end{lem}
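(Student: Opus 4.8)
The plan is to transport the whole picture back to the categories $\cB(H_r)$ and $\cB(H_e)$ of finite group-sets via the compact induction functors $\wt{M}_{H_r}$ and $\wt{M}_{H_e}$ of Sections~\ref{sec:functor M_G} and~\ref{sec:M_G_H}, where every morphism in sight becomes a morphism of finite sets with group action, and then to read off the two commuting squares from the formal properties of those functors. First I would observe that for a finite group $G$ the category $\cB(G)$ has finite products (direct product of sets with the diagonal $G$-action), and that $\wt{M}_G$ sends the terminal object of $\cB(G)$ to $\quot{\bN}{G}$, i.e.\ to the terminal object of the overcategory in which $\wt{M}_G$ takes values. Consequently the $e$-fold fiber product $\bE_r = E_r\times_{E'_r}\cdots\times_{E'_r} E_r$ is, by Lemma~\ref{lem:M_fiber_products}, canonically isomorphic to $\wt{M}_{H_r}$ applied to the $e$-fold product of $S_r$ in $\cB(H_r)$, which is precisely $\bS_r$ with its diagonal action; thus $\bE_r\cong M_{H_r}(\bS_r)$ over $E'_r$, and likewise $\bE_e\cong M_{H_e}(\bS_e)$ over $E'_e$.

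Next I would treat the square~\eqref{diag:q-calculus_1}. Since $f'_r = c^{/H_r}_{/H_e}$, Lemma~\ref{lem:M_G_H} furnishes a canonical isomorphism between $\wt{M}_{H_r}(\bS_r)$ regarded over $E'_e$ via $f'_r$ and $\wt{M}_{H_e}(H_e\times^{H_r}\bS_r)$; composing with the isomorphism from the previous step yields the isomorphism $\bE_r\cong M_{H_e}(H_e\times^{H_r}\bS_r)$ required by the statement. To see that $\mathbf{f}_r$ corresponds to $M_{H_e}(\bt_r)$, recall that $f_r$ is by construction the composite of $M_{H_r}(\bj''_r\circ-)$ with $c^{/H_r}_{/H_e,S_e}$, and apply Lemma~\ref{lem:M_G_H3} with $(G,H,S,T,f)=(H_e,H_r,S_r,S_e,\bj''_r\circ-)$: this identifies that composite with $M_{H_e}(\iota^{H_e}_{H_r}(\bj''_r\circ-))=M_{H_e}(t_r)$. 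Passing to $e$-fold fiber powers and using the naturality of the fiber-product isomorphisms of Lemma~\ref{lem:M_fiber_products} together with functoriality of $\wt{M}_{H_e}$, the $e$-tuple of copies of $t_r$ assembles into $\bt_r=\iota^{H_e}_{H_r}((\bj''_r\circ-)_{1\le i\le e})$, which gives the commutativity of~\eqref{diag:q-calculus_1}.

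For the square~\eqref{diag:q-calculus_2} I would argue in the same spirit: under~\eqref{eq:q-calculus_2} we have $\bF\cong M_{H_e}(H_e)$, each $\alpha'_i$ is by construction $M_{H_e}(w_i)$ followed by~\eqref{eq:q-calculus_2}, and since $\bE_e\cong M_{H_e}(\bS_e)$ with $\bS_e$ the $e$-fold product, the morphism $\bi=(\alpha'_1,\dots,\alpha'_e)$ into the fiber power is the image under $M_{H_e}$ of the morphism $H_e\to\bS_e$ whose components are the $w_i$, i.e.\ of $\bw$; this is forced by the universal property of the product in $\cB(H_e)$ and the fact that $\wt{M}_{H_e}$ preserves it.

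The hard part will not be any single conceptual point — everything of substance is already contained in Lemmas~\ref{lem:M_fiber_products}, \ref{lem:M_G_H} and~\ref{lem:M_G_H3} — but rather the bookkeeping: keeping straight which overcategory ($\cFCot{d}_{m,/E'_r}$ versus $\cFCot{d}_{m,/E'_e}$) each object lives in, and checking that the chain of canonical isomorphisms (those expressing that $\wt{M}$ commutes with fiber products, those of Lemma~\ref{lem:M_G_H}, and~\eqref{eq:q-calculus_2}) is mutually compatible. The single most delicate verification will be that the functor $H_e\times^{H_r}(-)$ interacts with $e$-fold products exactly so that the tuple $(\bj''_r\circ-)_{1\le i\le e}$ is carried to $\bt_r$, so that $M_{H_e}(\bt_r)$ — and not some other morphism — is the one that appears; this uses the explicit description of $H_e\times^{H_r}S$ and the adjunction between $H_e\times^{H_r}(-)$ and $r^{H_e}_{H_r}$ recorded in Section~\ref{sec:M_G_H}.
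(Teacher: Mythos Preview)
Your proposal is correct and follows essentially the same route as the paper: both arguments first use Lemma~\ref{lem:M_fiber_products} to identify $\bE_r \cong M_{H_r}(\bS_r)$, then Lemma~\ref{lem:M_G_H} to pass to $M_{H_e}(H_e\times^{H_r}\bS_r)$, and finally Lemma~\ref{lem:M_G_H3} together with the componentwise description $\pr_{e,i}\circ\mathbf{f}_r = f_r\circ\pr_{r,i}$ (respectively $\pr_{e,i}\circ\bi = \alpha'_i$) to match $\mathbf{f}_r$ with $M_{H_e}(\bt_r)$ and $\bi$ with $M_{H_e}(\bw)$. The paper handles the point you flag as ``most delicate'' exactly as you suggest, by checking the projections one factor at a time rather than invoking any general compatibility of $H_e\times^{H_r}(-)$ with products.
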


\begin{proof}
Let $r$ be an integer with $0 \le r \le e$.
Since $E_r = M_{H_r}(S_r)$,
it follows from Lemma \ref{lem:M_fiber_products}
and Lemma \ref{lem:M_G_H} that we have isomorphisms
\begin{equation}\label{eq:q-calculus_1}
\bE_r
\cong M_{H_r}(\bS_r)
\cong
M_{H_e}(H_e \times^{H_r} \bS_r)
\end{equation}
in $\cFCotu{d}$. When $r=e$, we in particular obtain
an isomorphism
\begin{equation}\label{eq:q-calculus_3}
\bE_e
\cong
M_{H_e}(\bS_e).
\end{equation}
%

We prove that the isomorphisms
\eqref{eq:q-calculus_3} and
\eqref{eq:q-calculus_1}
satisfy the desired properties.

Let us fix an integer $r$ with $0 \le r \le e$.
We prove that the diagram \eqref{diag:q-calculus_1}
is commutative.
By Lemma \ref{lem:M_G_H}, we have an isomorphism
$\gamma\colon  M_{H_e}(H_e \times^{H_r} S_r) \cong E_r$.
It follows from Lemma \ref{lem:M_G_H3} that
the composite of $\gamma$ with $f_r \colon  E_r
\to E_e$ is equal to $M_{H_e}(t_r)$.
%
Since the composite $\pr_{e,i} \circ \mathbf{f}_r$ is equal to
the composite $f_r \circ \pr_{r,i}$ for $i=1,\ldots,e$,
the morphism $\mathbf{f}_r$ is equal to
the composite of 
the isomorphism \eqref{eq:q-calculus_1} 
with the morphism $M_{H_e}(\bt_r)$.
Hence the diagram \eqref{diag:q-calculus_1}
is commutative.

It follows from the construction of the morphism $\bi$
that the composite $\pr_{e,i} \circ \bi \colon \bF \to E_e$
is equal to the composite of the isomorphism \eqref{eq:q-calculus_2}
with $M_{H_e}(w_i)$ for $i=1,\ldots,r$.
Hence the morphism $\bi$ is equal to
the composite of 
the isomorphism \eqref{eq:q-calculus_2} 
with the morphism $M_{H_e}(\bw)$.
This shows that the diagram \eqref{diag:q-calculus_2}
is commutative, which completes the proof.
\end{proof}

\begin{proof}[Proof of Proposition \ref{q-calculus}]
By Lemma \ref{lem:diag_q-calculus},
we have $\deg \mathbf{f}_r = \deg M_{H_e}(\bt_r)$
for $r=0,\ldots,e$ and $\deg \bi = \deg M_{H_e}(\bw)$.
We apply Lemma \ref{lem:M_G_deg} to the morphisms
$\bt_r$ for $r=0,\ldots,e$ and to the morphism $\bw$.
Then, to prove the claim, 
it suffices to show the equality
\begin{equation}\label{eq:cardinality}
\sharp \bw^{-1}(\bs) = \sum_{r=0}^e (-1)^r
q_\wp^{r(r-1)/2} \sharp \bt_{e-r}^{-1}(\bs)
\end{equation}
holds for any $\bs \in \bS_e$.

By assumption $\bN''$ is an $e$-dimensional $\kappa(\wp)$-vector space.
Let $\mathrm{Gr}(\bN'',r)$ be the set of $r$-dimensional subspaces
of $\bN''$.  
Let $\bT_r$ denote the set of pairs 
$(W,(\nu_i)_{1 \le i \le e})$ of
$W \in \mathrm{Gr}(\bN'',r)$ and
$(\nu_i)_{1 \le i \le e} \in \prod_{i=1}^e \Hom_{\cO_X}(N,W)$.
For $h''_e \in H''_e$, for $W \in \mathrm{Gr}(\bN'',r)$,
and for $(\nu_i)_{1 \le i \le e} \in \prod_{i=1}^e \Hom_{\cO_X}(N,W)$,
we set $h''_e \cdot W = h''_e(W)$ and 
$h''_e \cdot (W, (\nu_i)_{1 \le i \le e})
=(h''_e(W), (h''_e \circ \nu_i))$. 
Here $h''_e \circ \nu_i$ denotes the composite of $\nu_i$ with
the isomorphism $W \xto{\cong} h''_e(W)$ induced by $h''_e$.
This gives an action of the group $H''_e$ 
on the sets $\mathrm{Gr}(\bN'',r)$ and $\bT_r$.
It can be checked easily that the group
$H''_e$ acts transitively on the set $\mathrm{Gr}(\bN'',r)$
and the stabilizer of $\bM''_r \in \mathrm{Gr}(\bN'',r)$
is equal to $H''_r$.
This shows that the map
$$
\psi_r \colon  H_e \times^{H_r} \bS_r \to \bT_r
$$
which sends the class of
$(z(\id_{\bN'}\oplus h''_e),(s_i)_{1\le i \le e})
\in H_e \times \bS_r$
to the element $(h''_e(\bM''_r), (h''_e \circ \bj''_r \circ s_i)_{1 \le i \le e})$
is an isomorphism in $\cB(H_e)$.
Composing the isomorphism \eqref{eq:q-calculus_1}
with $M_{H_e}(\psi_r)$, we obtain an isomorphism
$$
\bE_r \cong M_{H_e}(\bT_r).
$$
Observe that the composite 
$\bt_r \circ \psi_r^{-1} \colon \bT_r \to \bS_e$ 
is equal to the
map which sends $(W,(\nu_i)_{1 \le i \le e}) \in \bT_r$
to $(j_W \circ \nu_i)_{1 \le i \le e}$. 
Here 
$j_W\colon W \inj \bN''=\bM''_e$ denotes the inclusion homomorphism
of $\cO_X$-modules.
Hence for each $\bs = (s_i)_{1 \le i \le e} \in \bS_e$, 
the cardinality of
$\bt_r^{-1}(\bs)$ is equal to the number of
$W \in \mathrm{Gr}(\bN'',r)$ satisfying
$W \supset V$, where $V=\sum_{i=1}^e \mathrm{Im}\, s_i$.

We recall
some notations in $q$-calculus.
For non-negative integers $j, m, n \ge 0$ with $m \le n$, we let
$[j]=\frac{{q_{\wp}}^j-1}{q_{\wp}-1}$,
$[j]!=[j][j-1]\cdots [1]$,
and
$\displaystyle{n \brack m}
=\dfrac{[n]!}{[m]![n-m]!}$.
Since $\displaystyle{n \brack m}$ 
is the number of $m$-dimensional subspaces 
in an $n$-dimensional $\kappa(\wp)$-vector space,
we have
\[
\begin{array}{l}
\sharp \bt_r^{-1}(\bs)
=
\left\{
\begin{array}{ll}
\displaystyle{e - \dim_{\kappa(\wp)} V 
\brack r - \dim_{\kappa(\wp)} V},
& \text{ if } \dim_{\kappa(\wp)}V \le r
\\
0, & \text{ if } \dim_{\kappa(\wp)}V > r.
\end{array}
\right.
\end{array}
\]
Applying Gauss' binomial formula
\cite[(5.5)]{Kac},
we have
\[
\sum_{r=0}^e (-1)^r q_\wp^{r(r-1)/2} 
\bt_{e-r}^{-1} (\bs)
=\left\{
\begin{array}{ll}
0, & \text{ if }V \neq \bN'' \\
1, & \text{ if }V = \bN''.
\end{array}
\right.
\]
On the other hand, it can be checked easily that
for each $\bs = (s_i)_{1 \le i \le e} \in \bS_e$,
the cardinality of $\bw^{-1}(\bs)$ is
\[
\sharp \bw^{-1}(\bs)
=\left\{
\begin{array}{ll}
0, & \text{ if }V \neq \bN'' \\
1, & \text{ if }V = \bN''.
\end{array}
\right.
\]
We thus obtain the desired equality 
\eqref{eq:cardinality}
for each $\bs \in \bS_e$. This completes the proof.
\end{proof}

\subsubsection{ }
\label{sec:3.4.4}
Let $b \in \Gamma(X,N)$. Recall that
$S_e = \Hom_{\cO_X}(N,\bN'')$. 
For an element $s \in S_e$, let us
consider $s(b) \in \Gamma(X,\bN'')$. 
We regard $s(b)$ as an element in $\Gamma(X,\bN)$
via the embedding $\Gamma(X,\bN'') \inj
\Gamma(X,\bN)$ induced by the inclusion
$\bN'' \inj \bN' \oplus \bN'' \cong \bN$.
The element 
\begin{equation}\label{eq:sb}
([s(b)])_{s \in S_e}
\in \BS(\coprod_{s\in S_e} \bF)
\end{equation}
is invariant under the action of $H_e$.
Here we let the group $H_e$ act on
$\coprod_{s \in S_e} \bF$
in such a way that for each 
$h_e =z(\id_{\bN'} \oplus h''_e) \in H_e$,
the action $h_e\colon 
\coprod_{s \in S_e} \bF \to \coprod_{s \in S_e} \bF$
of $h_e$ is the morphism characterized by the following
property: the map $\pi_0(h_e)\colon S_e \to S_e$ 
is given by the composition with $h''_e$,
and the component at $s$ of $h_e$ is
equal to the automorphism $h_e\colon \bF \to \bF$.
%
Since $\BS$ is a sheaf, the element \eqref{eq:sb}
defines an element in 
$\BS(E_e)$, which we denote by $\wt{b}$.
For $j=1,\ldots,e$, 
let us consider the element
$\wt{g}'_j = g_j(E_e)(\wt{b_j}) \in F_j(E_e)$.

\begin{lem}\label{lem:wtg_i}
The element $\wt{g}'_j \in F_j(E_e)$
belongs to the subset $F'_j(E_e) \subset F_j(E_e)$.
\end{lem}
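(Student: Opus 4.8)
The plan is to verify the membership $\wt{g}'_j \in F'_j(E_e)$ by restricting along the canonical covering of $E_e$ by a finite coproduct of objects of $\cC^d$, on which the relevant section of $\BS$ becomes an honest section of the presheaf $\BS'$ and $g'_j$ takes values in $F'_j$ by construction.

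First I would set up the covering. By Section~\ref{f_r} and Lemma~\ref{lem:M_base_change}, the canonical morphism $q\colon \coprod_{s\in S_e}\bF \to E_e = M_{H_e}(S_e)$ is the base change along $\eta_e\colon E_e \to E'_e$ of the Galois covering $c_e\colon \bF \to E'_e$ of Galois group $H_e$. Since $\cFCotu{d}$ has fiber products (Proposition~\ref{prop:FC_product}) and a base change of a Galois covering is again a Galois covering (the remark following Proposition~\ref{prop:FC_product}), $q$ is a Galois covering belonging to $\wt{\cFT}^*$; in particular $q$ is a covering for $\iota'_* J$, so that $q^*\colon F_j(E_e) \to \prod_{s\in S_e} F_j(\bF)$ is injective. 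By the definition of $\wt{b_j}$ in Section~\ref{sec:3.4.4} we have $q^*\wt{b_j} = ([s(b_j)])_{s\in S_e}$, and each component $[s(b_j)]\in \BS(\bF)$ is the image of the basis element of $\BS'(\bN)$ attached to $s(b_j)\in \Gamma(X,\bN)$ under the canonical map $\BS'(\bN)\to (\BS')^a(\bN)=\iota'^* \BS(\bN)=\BS(\bF)$, which is injective because $\cC^d$ is an $E$-category. By the construction of the distribution $g_j$ in Section~\ref{sec:Situations} out of $g'_j\colon \BS'\to \iota'^* F'_j$, the restriction of $g_j(\bF)\colon \BS(\bF)\to F_j(\bF)$ to the subgroup $\BS'(\bN)$ coincides with $g'_j(\bN)$ followed by the inclusion $\iota'^* F'_j(\bN)=F'_j(\bF)\subset F_j(\bF)$. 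Hence $q^*\wt{g}'_j = (g'_j(\bN)([s(b_j)]))_{s\in S_e}$ lies in $\prod_{s\in S_e} F'_j(\bF) = F'_j(\coprod_{s\in S_e}\bF)$.

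It then remains to descend this, i.e.\ to deduce $\wt{g}'_j\in F'_j(E_e)$ from $\wt{g}'_j\in F_j(E_e)$ together with $q^*\wt{g}'_j\in F'_j(\coprod_{s\in S_e}\bF)$. Here I would appeal to the factorization $\BS\xrightarrow{h_j} F'_j\to F_j$ of $g_j$ recorded in Section~\ref{sec:Situations}: since $g_j = (\text{incl})\circ h_j$, the element $\wt{g}'_j = g_j(E_e)(\wt{b_j})$ is the image under the inclusion $F'_j(E_e)\hookrightarrow F_j(E_e)$ of $h_j(E_e)(\wt{b_j})$, whence $\wt{g}'_j\in F'_j(E_e)$. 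I expect this descent to be the only delicate point of the argument: since $F'_j$ is merely a presheaf with transfers and Galois descent for such presheaves fails over $\Z$ — pushing forward via $q$ only yields $(\deg q)\,\wt{g}'_j \in F'_j(E_e)$ — the conclusion cannot be obtained by a purely formal descent, and one genuinely uses the way $\wt{g}'_j$ arises from $\BS$ through $g'_j$, which is precisely what the Section~\ref{sec:Situations} factorization encodes. The computations in the middle paragraph are routine.
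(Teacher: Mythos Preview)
Your descent step relies on the factorization $g_j = (\mathrm{incl}) \circ h_j$ with $h_j \colon \BS \to F'_j$ that you cite from Section~\ref{sec:Situations}. Taken literally as a morphism of presheaves on all of $\cFCotu{d}$, this factorization is not justified: $\BS$ is a sheaf, $F'_j$ is only a presheaf with transfers, and $g_j$ is obtained by sheafifying $g'_j$ and transporting along $\nu$, so for an object such as $E_e$ an element of $\BS(E_e)$ represented by a Galois-invariant section upstairs need only descend into the sheaf value $F_j(E_e)$, not into the subpresheaf value $F'_j(E_e)$. If that factorization held on the nose, the lemma would be a one-line consequence and your first paragraph would be superfluous; the fact that the paper supplies a substantive proof is itself a signal that it cannot be invoked as a black box. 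And as you yourself note, combining the computation of $q^*\wt{g}'_j$ with the transfer along $q$ only yields $(\deg q)\,\wt{g}'_j \in F'_j(E_e)$, which does not suffice over $\Z$.

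The paper avoids descent by working on the connected components of $E_e$. Since $S_e$ has exactly two $H_e$-orbits, namely $\{0\}$ and $S_e \setminus \{0\}$, one has $E_e = E_{e,0} \amalg E^0_e$ with $E_{e,0} \cong \quot{\bN}{H_e}$ and $E^0_e \cong \quot{\bN}{H_{e,j}}$ for $H_{e,j}$ the stabilizer of $\mu_j$. The key point is that on each component the restriction of $\wt{b_j}$ is the \emph{pullback along a morphism in $\cFCotu{d}$} of an element of $\BS'$ living on an object of $\cCo{d}$: on $E_{e,0}$ it is the pullback of $[0] \in \BS'(\{0\})$ along a morphism $r_{E_{e,0},\{0\}} \colon E_{e,0} \to \quotid{\{0\}}$, and on $E^0_e$ it is the pullback of $[b_j] \in \BS'(N)$ along $\overline{\pr}_j \colon E^0_e \to \quotid{N}$ induced by $\pr_j$. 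On an object $\quotid{X}$ with $X \in \cCo{d}$, elements in the image of $\BS'(X) \to \BS(\quotid{X})$ do map into $F'_j(\quotid{X})$ via $g'_j(X)$, and since $F'_j$ is a presheaf on $\cFCotu{d}$, pullback preserves membership in $F'_j$. So the missing idea is to exhibit $\wt{b_j}$ componentwise as a pullback \emph{from below}, where $g'_j$ applies directly, rather than to try to descend from the cover $\coprod_s \bF$.
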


\begin{proof}
The set
$S_e$ consists of two
$H_e$-orbits $\{0\}$ and
$S_e \setminus \{0\}$.
Let $E_{e,0}$ and $E^0_e$ denote the components of $E_e$ 
corresponding to the $H_e$-orbits
$\{0\}$ and $S_e \setminus \{0\}$, respectively.
We then have $E_e = E_{e,0} \amalg E^0_e$.
For $j=1,\ldots,e$, let $\wt{g}'_{j,0} \in F_j(E_{e,0})$ and 
$\wt{g}'^0_j \in F_j(E^0_e)$ denote the
pullback of $\wt{g}'_j$ with respect to the
inclusions $E_{e,0} \to E_e$ and 
$E^0_e \to E_e$, respectively.
It then suffices to show 
$\wt{g}'_{j,0} \in F'_j(E_{e,0})$ and
$\wt{g}'^0_j \in F'_j(E^0_e)$.

Since the set $\{0\}$ consists of the single element $0$,
the inclusion $\bN \to \coprod_{0 \in \{0\}} \bN$
is an isomorphism and induces 
an isomorphism $\quot{\bN}{H_e} \cong E_{e,0}$.
Since $r_{\bN,\{0\}} \circ h = r_{\bN,\{0\}}$
for any $h \in H_e$, it follows from
Lemma \ref{lem:quot_FCd} the morphism 
$r_{\bN,\{0\}} \colon  \bF \to \quotid{\{0\}}$
in $\cCotu{d}$ factors through the canonical morphism
$\bF \to \quot{\bN}{H_e}$.
We denote by $r_{E_{e,0}, \{0\}} \colon E_{e,0} \to \{0\}$
the composite of the induced morphism
$\quot{\bN}{H_e} \to \{0\}$ with the inverse of
the isomorphism $\quot{\bN}{H_e} \cong E_{e,0}$ above.
Let $j$ be an integer with $1 \le j \le e$.
It is easy to check that the pullback
of $\wt{b_j} \in \BS(E_e)$ with respect to
the inclusion $E_{e,0} \to E_e$ is equal to the
pullback of the image of $[0] \in \BS(\quotid{\{0\}})$
with respect to $r_{E_{e,0}, \{0\}}$.
Hence $\wt{g}'_{j,0}$ is equal to the
pullback of $g'(\{0\})([0]) \in F'_j(\quotid{\{0\}})$
with respect to $r_{E_{e,0}, \{0\}}$.
This shows that the element $\wt{g}'_{j,0} \in F_j(E_{e,0})$ 
belongs to the subset $F'_j(E_{e,0}) \subset F_j(E_{e,0})$.

For $j=1,\ldots,e$, let $H_{e,j} \subset H_e$ denote the
stabilizer of $\mu_j\in S_e$.
Then the map $H_e \to S_e$ that sends
$z(\id_{\bN'} \oplus h''_e) \in H_e$ to $h''_e \cdot \mu_j$ gives an
isomorphism $H_e/H_{e,j} \xto{\cong} S_e \setminus \{0\}$.
By applying the functor $M_{H_e}$, we obtain an
isomorphism $\phi_j\colon \quot{\bN}{H_{e,j}} \xto{\cong} E^0_e$.
We denote by $\bc_e \colon \bF \to \quot{\bN}{H_{e,j}}$ the canonical morphism.
It can be checked easily that we have
$\pr_j \circ h = \pr_j$ for any $h \in H_{e,j}$.
Hence the morphism $\pr_j\colon \bF \to \quotid{N_j} = \quotid{N}$
factors through the morphism $\bc_e$. Taking the composite
with the inverse of $\phi_j$, we obtain
the morphism $\overline{\pr}_j \colon  E^0_e \to \quotid{N}$.
Let $b \in \Gamma(X,N)$.
It follows from the definition of the element $\wt{b}
\in \BS(E_e)$ that the pullback of $\wt{b}$
with respect to the inclusion
$E^0_e \to E_e$ is equal to pullback of 
$[b] \in \BS(\quotid{N})$ with respect to
the morphism $\overline{\pr}_j$.
Hence $\wt{g}'^0_j$ is equal to the pullback of
$g'_j(\quotid{N})([b]) \in F'_j(\quotid{N})$ 
with respect to the morphism $\overline{\pr}_j$.
This shows that the element $\wt{g}'^0_j \in F_j(E^0_e)$ 
belongs to the subset $F'_j(E^0_e) \subset F_j(E^0_e)$.
This completes the proof.
\end{proof}
We set $\bF'=\quotid{\bN'}$.
We set $\wt{g}_{j}=\alpha_j(\wt{g}'_j) \in G(E_e)$.
We also set 
$$
\kappa' = \prod_{j=e+1}^d 
r_{\bN',N'_j}^{*}\alpha_j g'_j(N_j)([b_j])
\in G(\bF').
$$

Let us consider the morphisms
$r_{\bN,\bN'},m_{\bN,\bN'}\colon \bF \to \bF'$.
Since we have $r_{\bN,\bN'} \circ h = r_{\bN,\bN'}$
and $m_{\bN,\bN'} \circ h = m_{\bN,\bN'}$
for any $h \in H_e$, it follows from
Lemma \ref{lem:quot_FCd} that the morphisms 
$r_{\bN,\bN'}$ and $m_{\bN,\bN'}$
induce morphisms $E'_e \to \bF'$ which are denoted by
$r_{E'_e,\bN'}$, $m_{E'_e,\bN'}$ respectively.
For $r=1,\ldots,e$, we set
$r_{E'_r,\bN'}= r_{E'_e,\bN'} \circ f'_r$ and 
$m_{E'_r,\bN'}= m_{E'_e,\bN'} \circ f'_r$.
Let $r_{\bE_e,\bN'}$,
$m_{\bE_e,\bN'}$ denote the 
morphisms 
$r_{E'_e,\bN'}\circ(\eta_e \times\cdots \times \eta_e)$,
$m_{E'_e,\bN'}\circ(\eta_e \times\cdots \times \eta_e)
\colon \bE_e \to \bF'$ respectively.

\begin{lem}\label{lem:3_5_4_1}
We have
\begin{align}
& (m_{\bN,\bN'})_* 
\kappa_{\bN,(b_j)} \\
= & \sum_{r=0}^e (-1)^r q_\wp^{r(r-1)/2}
(m_{\bE_e,\bN'})_*
(\mathbf{f}_{e-r})_* \mathbf{f}_{e-r}^*
((\prod_{j=1}^e
\pr_{e,j}^* \wt{g}_j)\cdot 
r_{\bE_e,\bN'}^* \kappa').
\notag
\end{align}
\end{lem}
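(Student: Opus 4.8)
The plan is to combine the $q$-calculus degree identity of Proposition \ref{q-calculus} with the transfer formalism and the compatibilities established in Sections \ref{sec:Ctil}--\ref{sec:descent}. First I would recall that, by the construction of $\kappa_{\bN,(b_j)}$ and the factorization of $\bN$ into $\bN' \oplus \bN''$, the element $\kappa_{\bN,(b_j)}$ can be rewritten in terms of pullbacks along the morphism $\bi:\bF \to \bE_e$. More precisely, I would show that $\kappa_{\bN,(b_j)}$ equals $\bi^* \bigl((\prod_{j=1}^e \pr_{e,j}^* \wt{g}_j)\cdot r_{\bE_e,\bN'}^* \kappa'\bigr)$; this uses Lemma \ref{lem:wtg_i} (so that $\wt{g}_j = \alpha_j(\wt{g}'_j)$ makes sense in $G(E_e)$), the compatibility of $g_j$ with pullback, and the explicit description of the element $\wt{b_j} \in \BS(E_e)$ given in Section \ref{sec:3.4.4} together with the factorization of $\bi$ through the components $\alpha'_i$.

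Next I would apply the pushforward $(m_{\bN,\bN'})_*$ and use functoriality: since $m_{\bN,\bN'}$ factors (up to the quotient morphisms $c_e$, $\eta_e$ and the morphisms $m_{E'_e,\bN'}$, $m_{\bE_e,\bN'}$) through $\bE_e$, I would commute $(m_{\bN,\bN'})_*$ with $\bi^*$ by repeatedly invoking Condition (1) of Definition \ref{def:transfer} (composability of transfers) and Condition (2) (base change for transfers in cartesian squares), reducing the left-hand side to $(m_{\bE_e,\bN'})_* \circ \bi_* \bigl(\bi^*(\cdots)\bigr)$. Then I would use that $\bi$ is a fibration (Proposition \ref{q-calculus}) so that $\bi_* \bi^*$ acts as multiplication by $\deg \bi$ on the appropriate components (Condition (3) of Definition \ref{def:transfer}).

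The crucial step is then to insert the identity $\deg \bi = \sum_{r=0}^e (-1)^r q_\wp^{r(r-1)/2} \deg \mathbf{f}_{e-r}$ from Proposition \ref{q-calculus}, and to recognize that $\deg \mathbf{f}_{e-r}$ times the relevant element equals $(\mathbf{f}_{e-r})_* \mathbf{f}_{e-r}^*$ applied to it; this again uses Condition (3) of Definition \ref{def:transfer}, now for the fibrations $\mathbf{f}_{e-r}$ (shown to be fibrations in Proposition \ref{q-calculus}), together with the commutative diagram relating $\mathbf{f}_r$ and $\eta_r$ to $\eta_e$ from Section \ref{f_r} so that the bookkeeping of components $\pi_0$ matches up. I expect the main obstacle to be the careful verification that the various squares one needs to be cartesian (relating $\bi$, $\mathbf{f}_{e-r}$, the $\eta$'s, and the morphisms down to $\bF'$) are indeed cartesian in $\cFCotu{d}$, so that the base-change axiom for transfers applies — this requires Lemma \ref{lem:last}, Lemma \ref{lem:M_base_change}, Lemma \ref{lem:M_fiber_products} and Corollary \ref{cor:fiber_product}, and the degree-on-components subtlety (since $\deg$ is a $\Z$-valued function on $\pi_0$, not a scalar) must be tracked via Corollary \ref{cor:deg_limit} and Lemma \ref{lem:M_G_deg}.

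Finally I would assemble these pieces: substituting the degree identity produces exactly the claimed sum $\sum_{r=0}^e (-1)^r q_\wp^{r(r-1)/2} (m_{\bE_e,\bN'})_* (\mathbf{f}_{e-r})_* \mathbf{f}_{e-r}^*\bigl((\prod_{j=1}^e \pr_{e,j}^* \wt{g}_j)\cdot r_{\bE_e,\bN'}^* \kappa'\bigr)$, after checking that the projection-formula manipulations (Lemma \ref{lem:ring_transfer}, and the defining property of presheaves of rings with transfers) let one move the factor $r_{\bE_e,\bN'}^*\kappa'$ and the $\wt{g}_j$'s past the relevant pushforwards without changing the expression. The remaining routine part, carried out in the subsequent paragraphs of the paper, is to evaluate $(m_{\bE_e,\bN'})_* (\mathbf{f}_{e-r})_* \mathbf{f}_{e-r}^*$ of this element and identify it with $T_{[\kappa(\wp)^{\oplus (e-r)}]}\kappa_{\bN',(b'_j)}$, completing the proof of Theorem \ref{main theorem}(2).
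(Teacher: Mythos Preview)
Your approach is essentially the same as the paper's and is correct. The paper's proof does exactly what you outline: (i) show $\kappa_{\bN,(b_j)} = \bi^*\bigl((\prod_{j=1}^e \pr_{e,j}^* \wt{g}_j)\cdot r_{\bE_e,\bN'}^* \kappa'\bigr)$ using the factorization $\pr_{e,j}\circ\bi$ through $E^0_e$ and $\overline{\pr}_j$ from the proof of Lemma \ref{lem:wtg_i}; (ii) use $m_{\bN,\bN'}=m_{\bE_e,\bN'}\circ\bi$ and Condition (1) of Definition \ref{def:transfer} to write $(m_{\bN,\bN'})_*=(m_{\bE_e,\bN'})_*\circ\bi_*$; (iii) apply Condition (3) so that $\bi_*\bi^*$ and $(\mathbf f_{e-r})_*(\mathbf f_{e-r})^*$ act as multiplication by their degrees, then invoke Proposition \ref{q-calculus}. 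Two small remarks: your phrase ``commute $(m_{\bN,\bN'})_*$ with $\bi^*$'' is not what happens --- there is no commutation, only the functoriality $(m_{\bE_e,\bN'}\circ\bi)_*=(m_{\bE_e,\bN'})_*\circ\bi_*$ --- and Condition (2), the cartesian-square worries, and the projection formula are not used in this lemma at all (they enter only in Lemma \ref{lem:3_5_4_2} and later). The argument is more direct than you anticipate.
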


\begin{proof}
Let the notation be as in the proof of Lemma \ref{lem:wtg_i}.
It follows from the construction of the morphism 
$\bi\colon \bF \to \bE_e$ that the composite
$\bF \xto{\bc_e} \quot{\bN}{H_{e,j}} \xto{\phi_j} E^0_e \to E_e$,
where the last morphism is the inclusion,
is equal to the composite $\pr_{e,j} \circ \bi$.
Hence, by the argument in the proof of Lemma \ref{lem:wtg_i},
we have the equality
$$
\bi^* \pr_{e,j}^* \wt{g}_j = \pr_j^* (\alpha_j g'_j(N_j)([b_j]))
$$
for $j=1,\ldots, e$.

For $j=e+1,\ldots,d$, the composite
$r_{\bN',N'_j} \circ r_{\bE_e,\bN'} \circ \bi$
is equal to $\pr_j\colon \bF \to \quotid{N_j}$.
Hence we have
$$
\kappa_{\bN,(b_j)}
= \bi^* ((\prod_{j=1}^e
\pr_{e,j}^* \wt{g}_j)
\cdot r_{\bE_e,\bN'}^* \kappa').
$$
Since $m_{\bN,\bN'} = m_{\bE_e,\bN'} \circ \bi$,
we have
$$
(m_{\bN,\bN'})_* 
\kappa_{\bN,(b_j)}
=
(m_{\bE_e,\bN'})_*
\bi_* \bi^*
((\prod_{j=1}^e
\pr_{e,j}^* \wt{g}_j)
\cdot r_{\bE_e,\bN'}^* \kappa').
$$
%
By Proposition~\ref{q-calculus},
it is equal to
\[
\sum_{r=0}^e (-1)^r q_\wp^{r(r-1)/2}
(m_{\bE_e,\bN'})_*
(\mathbf{f}_{e-r})_* \mathbf{f}_{e-r}^*
((\prod_{j=1}^e
\pr_{e,j}^* \wt{g}_j)\cdot 
r_{\bE_e,\bN'}^* \kappa').
\]
This proves the claim.
\end{proof}

\begin{lem}\label{lem:3_5_4_2}
Let $r$ be an integer with $0 \le r \le e$.
Then we have
\begin{align}
\label{eqn:by parts}
& (m_{\bE_e,\bN'})_*
(\mathbf{f}_r)_* \mathbf{f}_r^*
((\displaystyle\prod_{j=1}^e
\pr_{e,j}^*\wt{g}_j) \cdot
r_{\bE_e,\bN'}^{*}\kappa') \\
= & (m_{E'_r,\bN'})_* 
((\displaystyle\prod_{j=1}^e
(\eta_r)_* f_r^*
\wt{g}_j) \cdot
r_{E'_r,\bN'}^{*}\kappa').
\notag
\end{align}
\end{lem}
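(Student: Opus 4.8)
The plan is to reduce \eqref{eqn:by parts} to the projection formula for the ring presheaf $G$ together with Lemma~\ref{lem:ring_transfer}, all the substance being a careful bookkeeping of the morphisms involved. First I would record the relevant compatibilities. Write $p_r:\bE_r\to E'_r$ and $p_e:\bE_e\to E'_e$ for the structure morphisms of the $e$-fold fibre products, so that $p_r=\eta_r\circ\pr_{r,j}$ and $p_e=\eta_e\circ\pr_{e,j}$ for every $j$, and $p_e$ is exactly the morphism $\eta_e\times\cdots\times\eta_e$ occurring in the definitions of $r_{\bE_e,\bN'}$ and $m_{\bE_e,\bN'}$. From the defining property of $\mathbf{f}_r=f_r\times\cdots\times f_r$ one has $\pr_{e,j}\circ\mathbf{f}_r=f_r\circ\pr_{r,j}$, hence, using the commutative square relating $\eta_r,\eta_e,f_r,f'_r$, also $p_e\circ\mathbf{f}_r=f'_r\circ p_r$; combined with $r_{E'_r,\bN'}=r_{E'_e,\bN'}\circ f'_r$ and $m_{E'_r,\bN'}=m_{E'_e,\bN'}\circ f'_r$ this gives
\[
r_{\bE_e,\bN'}\circ\mathbf{f}_r=r_{E'_r,\bN'}\circ p_r,\qquad
m_{\bE_e,\bN'}\circ\mathbf{f}_r=m_{E'_r,\bN'}\circ p_r .
\]
I would also check that all the morphisms along which we push forward below are \fibrs: $\mathbf{f}_r$ by Proposition~\ref{q-calculus}, $\eta_r$ by Lemma~\ref{lem:M_G_fibr}, the structure morphisms $p_r,p_e$ and the projections $\pr_{r,j}$ by stability of \fibrs\ under composition and base change (Proposition~\ref{prop:FC_product}), and $m_{E'_e,\bN'}$ --- whence $m_{E'_r,\bN'}=m_{E'_e,\bN'}\circ f'_r$ and $m_{\bE_e,\bN'}=m_{E'_e,\bN'}\circ p_e$ --- by the cancellation property of \fibrs\ applied to the factorisations $m_{\bN,\bN'}=m_{E'_e,\bN'}\circ c_e$ and $c_e=f'_r\circ c_r$ noted in the construction. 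Thus every transfer map below is defined and Definition~\ref{def:transfer}(1) and the projection formula of Definition~\ref{def:ring transfer} apply.

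Next I would carry out the main reduction. Since $\mathbf{f}_r^*$ is a ring homomorphism, the two identities above give
\[
\mathbf{f}_r^*\Bigl(\bigl(\textstyle\prod_{j=1}^e\pr_{e,j}^*\wt{g}_j\bigr)\cdot r_{\bE_e,\bN'}^{*}\kappa'\Bigr)
=\Bigl(\textstyle\prod_{j=1}^e\pr_{r,j}^*f_r^*\wt{g}_j\Bigr)\cdot p_r^{*}r_{E'_r,\bN'}^{*}\kappa' .
\]
Applying $(m_{\bE_e,\bN'})_*(\mathbf{f}_r)_*=(m_{\bE_e,\bN'}\circ\mathbf{f}_r)_*=(m_{E'_r,\bN'}\circ p_r)_*=(m_{E'_r,\bN'})_*(p_r)_*$ and then the projection formula for $p_r$ to pull $r_{E'_r,\bN'}^{*}\kappa'$ out of $(p_r)_*$, the left-hand side of \eqref{eqn:by parts} becomes
\[
(m_{E'_r,\bN'})_*\Bigl( (p_r)_*\bigl(\textstyle\prod_{j=1}^e\pr_{r,j}^*f_r^*\wt{g}_j\bigr)\cdot r_{E'_r,\bN'}^{*}\kappa'\Bigr).
\]
It therefore remains to prove the identity $(p_r)_*\bigl(\prod_{j=1}^e\pr_{r,j}^*f_r^*\wt{g}_j\bigr)=\prod_{j=1}^e(\eta_r)_*f_r^*\wt{g}_j$ in $G(E'_r)$, with the products taken in the stated order; substituting it back yields exactly the right-hand side of \eqref{eqn:by parts}.

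This last identity I would obtain by induction on $e$ from Lemma~\ref{lem:ring_transfer}. Writing $\bE_r$ as $\bE_r^{(e-1)}\times_{E'_r}E_r$, where $\bE_r^{(e-1)}$ is the $(e-1)$-fold fibre product with structure morphism $q:\bE_r^{(e-1)}\to E'_r$, associativity of fibre products identifies $\prod_{j=1}^{e-1}\pr_{r,j}^*f_r^*\wt{g}_j$ with the pull-back along the first projection of the corresponding element $w\in G(\bE_r^{(e-1)})$, and $\pr_{r,e}^*f_r^*\wt{g}_e$ with the pull-back along the second projection of $f_r^*\wt{g}_e\in G(E_r)$; since $q$ and $\eta_r$ are \fibrs, Lemma~\ref{lem:ring_transfer} gives $(p_r)_*\bigl(\prod_{j=1}^e\pr_{r,j}^*f_r^*\wt{g}_j\bigr)=q_*(w)\cdot(\eta_r)_*f_r^*\wt{g}_e$, and $q_*(w)=\prod_{j=1}^{e-1}(\eta_r)_*f_r^*\wt{g}_j$ by the inductive hypothesis (the base case $e=1$ being $p_r=\eta_r$). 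I expect the genuinely delicate part of the write-up to be not this homological manipulation but the verification of the commutativities $\pr_{e,j}\circ\mathbf{f}_r=f_r\circ\pr_{r,j}$, $p_e\circ\mathbf{f}_r=f'_r\circ p_r$ and $c_e=f'_r\circ c_r$, together with the confirmation that each auxiliary morphism lies in $\wt{\cFT}^*$; once those are in place, no further calculation is needed.
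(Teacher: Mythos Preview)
Your proposal is correct and follows essentially the same route as the paper: both arguments rest on the commutative diagram relating $E_r,\bE_r,E'_r$ to $E_e,\bE_e,E'_e$, the projection formula for the presheaf of rings with transfers, and Lemma~\ref{lem:ring_transfer} for the identity $(p_r)_*\prod_j\pr_{r,j}^*f_r^*\wt g_j=\prod_j(\eta_r)_*f_r^*\wt g_j$. The only differences are organizational---you collapse the paper's detour through $E'_e$ by passing straight to $E'_r$ via $(m_{\bE_e,\bN'}\circ\mathbf f_r)_*=(m_{E'_r,\bN'}\circ p_r)_*$, and you make explicit the induction implicit in the paper's citation of Lemma~\ref{lem:ring_transfer} for the $e$-fold product.
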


\begin{proof}
For each $i=1,\dots, e$,
consider the following diagram:
$$
\xymatrix{
E_r
\ar[d]_{f_r}
& \bE_r 
\ar[l]_-{\pr_{r,i}}
\ar[rr]^-{\eta_r \times\cdots\times\eta_r}
\ar[d]^{\mathbf{f}_r}
& & E'_r 
\ar[d]^{f'_r}
\ar[dr]^{m_{E'_r, \bN'}}
& 
\\
E_e
& \bE_e
\ar[l]^{\pr_{e,i}}
\ar[rr]^{\eta_e \times\cdots\times\eta_e}
\ar@(dr,dl)[rrr]_{m_{\bE_e,\bN'}}
& & E'_e
\ar[r]^{m_{E'_e,\bN'}}
& \bF'
\\
}
$$
where 
%
$\eta_r$ and $f'_r$ are as in Section~\ref{f_r}.
It can be checked easily that the diagram is commutative.
Hence we have
$$
\begin{array}{rl}
& (m_{\bE_e,\bN'})_*
(\mathbf{f}_r)_* \mathbf{f}_r^*
((\displaystyle\prod_{j=1}^e
\pr_{e,j}^*\wt{g}_j) \cdot
r_{\bE_e,\bN'}^{*}\kappa')
\\
=& (m_{\bE_e,\bN'})_*
(((\mathbf{f}_r)_* \mathbf{f}_r^*
\displaystyle\prod_{j=1}^e
\pr_{e,j}^*\wt{g}_j) \cdot
r_{\bE_e,\bN'}^{*}\kappa')
\\
=& (m_{E'_e,\bN'})_* 
(((f'_r)_*
(\eta_r \times \cdots \times \eta_r)_*
\displaystyle\prod_{j=1}^e
\pr_{r,j}^* f_r^*
\wt{g}_j) \cdot
r_{E'_e,\bN'}^{*}\kappa').
\end{array}
$$
By Lemma \ref{lem:ring_transfer} we have
$$
(\eta_r \times \cdots \times \eta_r)_*
\displaystyle\prod_{j=1}^e
\pr_{r,j}^* f_r^*
= \displaystyle\prod_{j=1}^e
(\eta_r)_* f_r^*
\wt{g}_j.
$$
Hence
\begin{align}
& (m_{\bE_e,\bN'})_*
(\mathbf{f}_r)_* \mathbf{f}_r^*
((\displaystyle\prod_{j=1}^e
\pr_{e,j}^*\wt{g}_j) \cdot
r_{\bE_e,\bN'}^{*}\kappa')
\notag
\\
= &(m_{E'_e,\bN'})_* 
(((f'_r)_*
\displaystyle\prod_{j=1}^e
(\eta_r)_* f_r^*
\wt{g}_j) \cdot
r_{E'_e,\bN'}^{*}\kappa')
\notag
\\
= & (m_{E'_r,\bN'})_* 
((\displaystyle\prod_{j=1}^e
(\eta_r)_* f_r^*
\wt{g}_j) \cdot
r_{E'_r,\bN'}^{*}\kappa').
\notag
\end{align}
This proves the claim.
\end{proof}

\subsubsection{ }
For $r=0,\ldots,e$, we set
$\bMbar''_r=\bigoplus_{i=r+1}^{e} N''_i$.
Let $c'_r\colon  \quotid{\bN' \oplus \bMbar''_r} \to \bF' \oplus [\bMbar''_r]$
denote the canonical morphism (see Section~\ref{sec:exam Hecke} for notation).
We regard $\bN' \oplus \bMbar''_r$ as a quotient $\cO_X$-module
of $\bN$ via the projection homomorphism $p_r\colon \bN \surj  
\bigoplus_{j=e+1}^d N_j \oplus \bigoplus_{i=r+1}^e N_i
= \bN' \oplus \bMbar''_r$.
Observe that the kernel of $p_r$ is equal to $\bM''_r$.
Hence for any $h''_r \in H''_r$, there exists an
automorphism $\overline{h''}_r$ of the $\cO_X$-module $\bMbar''_r$
satisfying $p_r \circ (\id_{\bN'} \oplus h''_r) = 
(\id_{\bN'} \oplus \overline{h''}_r) \circ p_r$.
This shows that the equality $c'_r \circ m_{\bN,\bN' \oplus \bMbar''_r}
\circ h_r = c'_r \circ m_{\bN,\bN' \oplus \bMbar''_r}$
holds for any $h_r \in H_r$.
Hence it follows from Lemma \ref{lem:quot_FCd} that
the composite $c'_r \circ m_{\bN,\bN' \oplus \bMbar''_r}$
factors through the canonical morphism $\bF \to E'_r$.
We denote by $\delta_r\colon  E'_r \to \bF' \oplus [\bMbar''_r]$
the induced morphism.
We then have the following commutative diagram
$$
\xymatrix{
E_e &
E_r \ar[l]_{f_r}
\ar[r]^{\eta_r} &
E'_r \ar[r]^-{\delta_r}
\ar[d]_{r_{E'_r,\bN'}}
\ar@(ur,ul)[rrr]^{m_{E'_r,\bN'}}
& \bF' \oplus[\bMbar''_r]
\ar[rr]_-{m_{\bF'\oplus[\bMbar''_r],\bF'}}
\ar[d]^{r_{\bF'\oplus[\bMbar''_r],\bF'}}
& & \bF' \\
& & 
\bF'
& \bF'. & &
}
$$

\begin{lem}\label{Lemma 8_1}
Let the notations be as above.
Then for $j=1,\ldots,e$, we have
$(\eta_r)_* {f_r}^* (\wt{b_j}) = 
\delta_r^* 
r_{\bF'\oplus [\bMbar''_r],\bF'}^* ([0])$ 
in $\BS(E'_r)$.
\end{lem}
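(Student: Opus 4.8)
The plan is to unwind both sides of the asserted equality as elements of $\BS(E'_r)$ and match them using the sheaf property of $\BS$ together with the explicit description of the morphisms $f_r$, $\eta_r$, $\delta_r$ in terms of the functors $M_{H_r}$ and the quotient constructions of Section~\ref{sec:M_G_H}. The key observation is that $\wt{b_j}\in\BS(E_e)$ was defined (Section~\ref{sec:3.4.4}) via the $H_e$-invariant element $([s(b_j)])_{s\in S_e}\in\BS(\coprod_{s\in S_e}\bF)$, so the pullback $f_r^*(\wt{b_j})$ along $f_r:E_r\to E_e$ can be computed componentwise once we know the effect of $f_r$ on the index sets via $\pi_0$. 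Recall $E_r=M_{H_r}(S_r)$ and that $f_r$ factors as $E_r\to M_{H_r}(r^{H_e}_{H_r}(S_e))\xrightarrow{c^{/H_r}_{/H_e,S_e}}E_e$, where the first map comes from applying $M_{H_r}$ to $\bj''_r\circ-:S_r\to r^{H_e}_{H_r}(S_e)$. So on the level of connected components $f_r$ is controlled by $s\mapsto\bj''_r\circ s$, i.e.\ the inclusion $\bM''_r\hookrightarrow\bN''$ postcomposed onto $\cO_X$-homomorphisms from $N$.

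First I would identify the pullback $f_r^*(\wt{b_j})\in\BS(E_r)$ explicitly. Since $\BS$ is a sheaf and $E_r$ is a coproduct of quotients of $\bF$ indexed (via Lemma~\ref{lem:M_G_2}) by $\pi_0(S_r)=\quot{S_r}{H_r}$, the element $f_r^*(\wt{b_j})$ is given on the component indexed by the $H_r$-orbit of $s\in S_r$ by $[(\bj''_r\circ s)(b_j)]=[\bj''_r(s(b_j))]$, regarded inside $\Gamma(X,\bN)$ via $\bN''\hookrightarrow\bN$; but $\bj''_r(s(b_j))$ lies in $\bM''_r\subset\bN''\subset\bN$, and crucially its image under the projection $p_r:\bN\surj\bN'\oplus\bMbar''_r$ is $0$ since $\ker p_r=\bM''_r$. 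Next I would push forward along the \fibr $\eta_r:E_r\to E'_r$. Here one uses the definition of transfer for the Schwartz--Bruhat sheaf: for a \fibr the pushforward on $\BS$-sections is given by the ``sum over the fiber of the $\cO_X$-quotient'' formula, exactly as in the identity $\delta'^*\delta'_*([y])=\sum_{x\in\Ker}[y+x]$ used in the proof of Theorem~\ref{main theorem}~(1). Since $\eta_r$ is the object $\wt M_{H_r}(S_r)=M_{H_r}(S_r)\to E'_r$ whose base change along $\bF\to E'_r$ is $\coprod_{s\in S_r}\bF$ (Lemma~\ref{lem:M_base_change}), the pushforward $(\eta_r)_*$ sums over $s\in S_r=\Hom_{\cO_X}(N,\bM''_r)$ the sections $[\bj''_r(s(b_j))]$; after applying $p_r$ every such section becomes $[0]$, and the sum $\sum_{s\in\Hom_{\cO_X}(N,\bM''_r)}[0]$ is precisely what the pushforward along $m_{\bN,\bN'\oplus\bMbar''_r}$ produces from $[0]\in\BS(\quotid{\bN'\oplus\bMbar''_r})$, since the fiber of that \fibr over $0$ is exactly $\Hom_{\cO_X}(N,\bM''_r)=\Hom_{\cO_X}(N,\ker p_r)$.

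Thus I would assemble the chain: $(\eta_r)_*f_r^*(\wt{b_j})$ equals the pullback along $E'_r\xrightarrow{\delta_r}\bF'\oplus[\bMbar''_r]$ of the pushforward along $m_{\bF'\oplus[\bMbar''_r],\bF'}$ of $[0]\in\BS(\bF')$, which by definition of $r_{\bF'\oplus[\bMbar''_r],\bF'}$ (the inclusion of the first summand, see Section~\ref{sec:exam Hecke}) is $r_{\bF'\oplus[\bMbar''_r],\bF'}^*([0])$. This gives $(\eta_r)_*f_r^*(\wt{b_j})=\delta_r^*r_{\bF'\oplus[\bMbar''_r],\bF'}^*([0])$. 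The main obstacle I anticipate is the bookkeeping needed to verify that the factorization $c'_r\circ m_{\bN,\bN'\oplus\bMbar''_r}$ through $E'_r$ (used to define $\delta_r$) is compatible, under $\pi_0$ and on each component, with the composite $M_{H_r}$-description of $\eta_r$ and $f_r$ — i.e.\ that the ``sum over the fiber'' produced by $(\eta_r)_*$ genuinely matches the fiber of $m_{\bN,\bN'\oplus\bMbar''_r}$ over $0$ as $H_r$-sets. This requires a careful application of Lemma~\ref{lem:M_base_change} and the cartesianness statements together with the commutative diagram preceding the lemma, and invoking that $\BS$ is a sheaf so that checking the equality on the cover $\bF\to E'_r$ (equivalently, on the pulled-back coproduct $\coprod_{s}\bF$) suffices. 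Once the component-wise match is established the equality in $\BS(E'_r)$ follows immediately.
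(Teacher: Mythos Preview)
Your overall strategy matches the paper's: pull back to the cover $\bF\to E'_r$ using the cartesian square of Lemma~\ref{lem:M_base_change}, compute componentwise, and conclude by injectivity of $c_r^*$ since $\BS$ is a sheaf. But the middle of your argument has a genuine gap.

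After base change, the element $c_r^*(\eta_r)_*f_r^*(\wt{b_j})$ is the sum $\sum_{s\in S_r}[s(b_j)]$ in $\BS(\bF)$, where $s(b_j)\in\bM''_r\subset\bN$. At this point you write that ``after applying $p_r$ every such section becomes $[0]$'' and then match with ``the pushforward along $m_{\bN,\bN'\oplus\bMbar''_r}$ of $[0]$''. Both steps are wrong as stated. There is no ``applying $p_r$'' operation on elements of $\BS(\bF)$; you must identify the sum $\sum_{s}[s(b_j)]$ directly as the \emph{pullback} $m_{\bN,\bN'\oplus\bMbar''_r}^*([0])=\sum_{x\in\ker p_r}[x]=\sum_{x\in\bM''_r}[x]$. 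For this you need the observation you never make: since $N_j=N\cong\kappa(\wp)$ and $b_j$ is a generator, the evaluation map $S_r=\Hom_{\cO_X}(N,\bM''_r)\to\bM''_r$, $s\mapsto s(b_j)$, is a \emph{bijection}. That is what turns $\sum_{s\in S_r}[s(b_j)]$ into $\sum_{x\in\bM''_r}[x]$. Your claim that ``the fiber of that \fibr over $0$ is exactly $\Hom_{\cO_X}(N,\bM''_r)$'' conflates these two sets; the fiber is $\bM''_r$, and it is only through the bijection above that the index set $S_r$ matches it. Once you insert this step and replace ``pushforward'' by ``pullback'' in your chain, the argument goes through exactly as in the paper.
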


\begin{proof}
It follows from Lemma \ref{lem:M_base_change} that
the following diagram in $\cFCotu{d}$ is cartesian:
$$
\xymatrix{
\displaystyle{\coprod_{s \in S_r}} \bF 
\ar[r]^{\ (1)}
\ar[d]_{(2)}
\ar@{}[dr]|\square
& \bF
\ar[d]^{c_r} \\
E_r
\ar[r]_{\eta_r}
& E'_r.
}
$$
Here (1) is the morphism in $\cFCotu{d}$ 
every component of which is the
identity morphism on $\bF$, and (2) is the composite of the
canonical isomorphism 
$\coprod_{s \in S_r} \bF \cong 
\quotid{\coprod_{s \in S_r} \bN}$
with the canonical morphism 
$$
\quotid{\coprod_{s \in S_r} \bN} \to E_r.
$$
By Condition (2) of Definition \ref{def:transfer} we have 
$$
\begin{array}{rl}
& c_r^* (\eta_r)_{*}{f_r}^* (\wt{b}_j)
= 
\displaystyle{\sum_{s \in S_r}} [s(b_j)]
= \displaystyle{\sum_{x\in \bM''_r}} [x] \\
= & m_{\bN, \bMbar''_r\oplus \bN'}^{*}([0])
= c_r^* \delta_r^{*}r_{\bF'\oplus[\bMbar''_r],\bF'}^* ([0]).
\end{array}
$$
Since $c_r$ is a fiber and $\BS$ is a sheaf on $\cFCotu{d}$,
the claim follows.
\end{proof}

\subsubsection{ }
Let $G'_r$ be the subgroup of $\Aut_{\cO_X}(\bN)$ of
elements $g'$ such that the equalities $g'(\bN'')=\bN''$,
$g'(\bM''_r)=\bM''_r$, 
and $g'(\bN' \oplus \bM''_r) = \bN' \oplus \bM''_r$ hold, 
and that the automorphism of $(\bN' \oplus \bM''_r)/\bM''_r$
induced by $g'$ is equal to the identity.
%
%
%
We set $G_r = \{ z(g')\ |\ g' \in G'_r \} \subset \Aut_{\cCo{d}}(\bN)$.
It follows from Lemma \ref{lem:Gal} that
the morphism $m_{\bN,\bN'\oplus \bMbar''_r}\colon  \bN \to \bN'\oplus \bMbar''_r$ 
is a Galois covering in $\cCo{d}$ which is a \fibr.
Hence it follows from Lemma \ref{lem:i_Galois} that
the morphism $m_{\bN,\bN'\oplus \bMbar''_r}\colon \bF \to 
\quotid{\bN'\oplus \bMbar''_r}$ is a Galois
covering in $\cFCotu{d}$ which is a \fibr.
We can check, by using Lemma \ref{lem:4_0_2}, that
the group $\Aut_{\bN'\oplus \bMbar''_r}(\bN)$ is equal to 
$z(G''_r)$, where $G''_r$ denotes the group of automorphisms 
$g''_r$ of the $\cO_X$-module $\bN$ satisfying $p_r \circ g''_r = p_r$.
This shows that $\Aut_{\bN'\oplus \bMbar''_r}(\bN)$ is a
subgroup of $G_r$.
For $g'_r \in G'_r$, let $g'_r|_{\bMbar''_r}$ denote the
the automorphism of the $\cO_X$-module 
$\bN''/\bM''_r$ induced by $g'_r$, regarded as an
automorphism of $\bMbar''_r$ via the isomorphism
$\bMbar''_r \cong \bN''/\bM''_r$ given by the composite
$\bMbar''_r \inj \bN'' \surj \bN''/\bM''_r$.
Then the map $G_r \to \Aut_{\cCo{d}}(\bMbar''_r)$
which sends $z(g'_r) \in G_r$ to $z(g'_r|_{\bMbar''_r})$
gives a short exact sequence
$$
1 \to \Aut_{\bN'\oplus \bMbar''_r}(\bN) \to G_r
\to \Aut_{\cCo{d}}(\bMbar''_r) \to 1.
$$
Hence by Lemma \ref{lem:double_quot}, we obtain an isomorphism
$\quot{\bN}{G_r} \xto{\cong} \bF'\oplus[\bMbar''_r]$.

Observe that $H_r$ is a subgroup of $G_r$.
Hence the canonical morphism $\bF \to \quot{\bN}{G_r}$
induces a morphism $E'_r = \quot{\bN}{H_r} \to \quot{\bN}{G_r}$.
Then $\delta_r\colon E'_r \to \bF' \oplus [\bMbar''_r]$ is equal to
the composite of the morphism $E'_r \to \quot{\bN}{G_r}$
with the isomorphism $\quot{\bN}{G_r} \xto{\cong} \bF'\oplus[\bMbar''_r]$.

\subsubsection{ }
Let us consider the functor $M_{G_r}\colon \cB(G_r) \to \cFCotu{d}$.
It follows from Lemma \ref{lem:M_G_H} that we have
an isomorphism $M_{G_r}(G_r/H_r) \xto{\cong} E'_r$.
Let us consider the abelian group $\Hom_{\cO_X}(\bN', \bM''_r)$.
For $g'_r \in G'_r$ the restriction of
$g'_r$ to the $\cO_X$-submodule $\bM''_r$ gives an automorphism
of $\bM''_r$, which we denote by $g'_r|_{\bM''_r}$.
Let $\bj'\colon  \bN' \inj \bN' \oplus \bN'' \cong \bN$ denote the
inclusion homomorphism of $\cO_X$-modules.
For $g'_r \in G'_r$, the image of the difference $g'_r|_{\bN'} - \bj'\colon 
\bN' \to \bN$ is contained in $\bM''_r$.
Hence it induces a homomorphism $\bN' \to \bM''_r$
of $\cO_X$-modules, which we denote by $D(g'_r)$.
For $z(g'_r) \in G_r$ and for $\mu \in \Hom_{\cO_X}(\bN', \bM''_r)$,
we set $z(g'_r) \cdot \mu = g'_r|_{\bM''_r} \circ \mu + D(g'_r)$.
This gives an action from the left of $G_r$ on 
$\Hom_{\cO_X}(\bN', \bM''_r)$.
In this way, we regard $\Hom_{\cO_X}(\bN', \bM''_r)$ as an object
in $\cB(G_r)$.
For a homomorphism $\mu\colon \bN' \to \bM''_r$ of $\cO_X$-modules,
we denote by $\alpha_\mu \colon  \bN'\oplus \bN'' \to \bN' \oplus \bN''$
the endomorphism of the $\cO_X$-module $\bN' \oplus \bN''$
such that the restriction of $\alpha_\mu$ to $\bN''$ is equal to the inclusion
$\bN'' \inj \bN' \oplus \bN''$ and that the restriction
$\alpha_\mu$ to $\bN'$ is equal to the sum of the inclusion
$\bN' \inj \bN' \oplus \bN''$ and the composite
$\bN' \xto{\mu} \bM''_r \inj \bN'' \inj \bN' \oplus \bN''$.
Since $\alpha_\mu \circ \alpha_{-\mu} = \alpha_{-\mu}
\circ \alpha_\mu = \id_{\bN' \oplus \bN''}$,
the endomorphism $\alpha_\mu$ is an automorphism of 
the $\cO_X$-module $\bN' \oplus \bN''$.
We regard $\alpha_\mu$ as an automorphism of the
$\cO_X$-module $\bN$ via the isomorphism 
$\bN \cong \bN' \oplus \bN''$.
The map 
\begin{equation}\label{eq:alpha}
\alpha \colon  \Hom_{\cO_X}(\bN', \bM''_r) \to G_r
\end{equation}
which sends $\mu$ to $z(\alpha_\mu)$ is an injective homomorphism
of groups.
It can be checked easily that the composite
of $\alpha$ with the quotient map $G_r \to G_r/H_r$
gives an isomorphism 
\begin{equation}\label{eq:alpha2}
\Hom_{\cO_X}(\bN', \bM''_r) \xto{\cong} G_r/H_r
\end{equation}
in $\cB(G_r)$.

\subsubsection{ }
Let $i$ be an integer with $e+1 \le i \le d$.
Let us consider the abelian group
$S_{r,i} = \Hom_{\cO_X}(N'_i, \bM''_r)$. 
For $z(g'_r) \in G_r$ and for $\mu \in S_{r,i}$,
we set $z(g'_r) \cdot \mu = g'_r|_{\bM''_r} \circ \mu + D(g'_r)_i$.
Here $D(g'_r)_i$ denote the restriction of $D(g'_r)$ to the
$i$-th component $N'_i \subset \bN'$.
This gives an action from the left of $G_r$ on $S_{r,i}$ and
$S_{r,i}$ is an object in $\cB(G_r)$.
We set $E'_{r,i}=M_{G_r}(S_{r,i})$.
We denote by $\epsilon'_i \colon  E'_{r,i} \to \quot{\bN}{G_r} 
\cong \bF' \oplus [\bMbar''_r]$ the structure morphism
of $\wt{M}_{G_r}(S_{r,i})$.
%
%
Since $S_{r,i}$ is isomorphic to a quotient of
$\Hom_{\cO_X}(\bN', \bM''_r) \cong G_r/H_r$, the group $G_r$ acts
transitively on $S_{r,i}$.
Let $G_{r,i} \subset G_r$ denote the stabilizer
of $0 \in S_{r,i}$.
Since $S_{r,i} \cong G_r/G_{r,i}$, we have
$E'_{r,i} \cong \quot{\bN}{G_{r,i}}$. We denote by
$\beta_{r,i}\colon \bF \to E'_{r,i}$ the composite of
the canonical morphism $\bF \to \quot{\bN}{G_{r,i}}$ with this
isomorphism.
It follows from the definition of the action of $G_r$ on
$S_{r,i}$ that we have $r_{\bN,N_i}
\circ g_{r,i} = r_{\bN,N_i}$ for any $g_{r,i} \in G_{r,i}$.
Hence by Lemma \ref{lem:quot_FCd}, the morphism $r_{\bN,N_i}$
in $\cFCotu{d}$ factors through the morphism
$\beta_{r,i} \colon  \bN \to \quot{\bN}{G_{r,i}} \cong E'_{r,i}$. We denote
by $c_{r,i} \colon  E'_{r,i} \to \quotid{N_i'}$ the induced morphism.


\begin{lem}\label{Lemma 8_2}
For $i=e+1,\ldots,d$, we have
$(\epsilon'_i)_* c_{r,i}^* ([b_i])
= r_{\bF'\oplus [\bMbar''_r],\bF'}^* r_{\bN',N'_i}^* 
([b_i])$
in $\BS(\bF' \oplus[\bMbar''_r])$.
\end{lem}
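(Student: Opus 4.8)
The plan is to imitate the proof of Lemma~\ref{Lemma 8_1}. Write $c':\bF\to\quot{\bN}{G_r}$ for the canonical quotient morphism, and recall from the paragraph preceding the statement that $\delta_r$ factors as $E'_r\to\quot{\bN}{G_r}\xrightarrow{\ \cong\ }\bF'\oplus[\bMbar''_r]$, the last isomorphism coming from Lemma~\ref{lem:double_quot}. By Lemma~\ref{lem:Galois2}, $c'$ is a Galois covering belonging to $\wt{\cT}$, in particular a fibration, so (as $\BS$ is a sheaf on $\cFCotu{d}$) the pullback $(c')^{*}$ is injective on sections of $\BS$ and it suffices to prove the claimed identity after applying $(c')^{*}$.

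For the left-hand side, I would apply Lemma~\ref{lem:M_base_change} to the Galois covering $c'$ (Galois group $G_r$) and to the object $S_{r,i}$ of $\cB(G_r)$: this gives a cartesian square with vertices $\coprod_{s\in S_{r,i}}\bF$, $\bF$, $E'_{r,i}=M_{G_r}(S_{r,i})$, $\bF'\oplus[\bMbar''_r]$, in which $c'$ appears as one edge and $\epsilon'_i$, together with the canonical morphism $(2)\colon\coprod_s\bF\to E'_{r,i}$, as the opposite pair; here $\epsilon'_i$ is a fibration by Lemma~\ref{lem:M_G_fibr}, being $M_{G_r}$ of the terminal map $S_{r,i}\to\ast$ in $\cB(G_r)$. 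Transposing the square and applying Condition~(2) of Definition~\ref{def:transfer} gives $(c')^{*}(\epsilon'_i)_{*}=(1)_{*}(2)^{*}$, where $(1)$ has all components $\id_\bF$; since each such component has degree one, Lemma~\ref{lem:sum_formula} yields
$$
(c')^{*}(\epsilon'_i)_{*}c_{r,i}^{*}([b_i])=\sum_{s\in S_{r,i}}\bigl(c_{r,i}\circ(2)_{s}\bigr)^{*}([b_i]),
$$
where $(2)_{s}\colon\bF\to E'_{r,i}$ is the component of $(2)$ at $s$. Identifying $S_{r,i}=\Hom_{\cO_X}(N'_i,\bM''_r)$ with $G_r/G_{r,i}$ as in the text, I would check, using the construction of $M_{G_r}$ and Lemma~\ref{lem:4_0_2}, that for $s=\mu$ the morphism $c_{r,i}\circ(2)_{s}$ is the \cofibr $\bN\to N'_i$ represented by $N'_i\xleftarrow{\ \cong\ }L_\mu\hookrightarrow\bN$, with $L_\mu$ the image of $N'_i$ under $\alpha_{\wt{\mu}}$ for any lift $\wt{\mu}$ of $\mu$ to $\Hom_{\cO_X}(\bN',\bM''_r)$, so that $(c_{r,i}\circ(2)_{s})^{*}([b_i])=[\,b_i-\mu(b_i)\,]$ by the defining pullback formula for $\BS'$. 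Hence the left-hand side equals $\sum_{\mu}[\,b_i-\mu(b_i)\,]$ in $\BS(\bF)$, the sum over $\mu\in\Hom_{\cO_X}(N'_i,\bM''_r)$.

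For the right-hand side, I would unfold the commutative diagram displayed before Lemma~\ref{Lemma 8_1} together with the description of $\delta_r$: via the isomorphism $\quot{\bN}{G_r}\cong\bF'\oplus[\bMbar''_r]$ one has $c'=c'_r\circ m_{\bN,\bN'\oplus\bMbar''_r}$, with $c'_r\colon\quotid{\bN'\oplus\bMbar''_r}\to\bF'\oplus[\bMbar''_r]$ the canonical morphism, whence $r_{\bN',N'_i}\circ r_{\bF'\oplus[\bMbar''_r],\bF'}\circ c'=r_{\bN'\oplus\bMbar''_r,N'_i}\circ m_{\bN,\bN'\oplus\bMbar''_r}$. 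By Lemma~\ref{lem:4_0_2} this composite $\bN\to N'_i$ is represented by $N'_i\twoheadleftarrow(N'_i\oplus\bM''_r)\hookrightarrow\bN$, the left arrow being the projection with kernel $\bM''_r$, so the pullback formula for $\BS$ gives $(c')^{*}r_{\bF'\oplus[\bMbar''_r],\bF'}^{*}r_{\bN',N'_i}^{*}([b_i])=\sum_{z\in\bM''_r}[\,b_i+z\,]$.

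It remains to match the two sums. Since $N_i$ is an object of $\cCo{1}$ and $b_i$ generates $N_i=N'_i$, evaluation at $b_i$ identifies $\Hom_{\cO_X}(N'_i,\bM''_r)$ with $\bM''_r[\Ann(b_i)]$; in the reduced situation $X=X_\wp$ the module $\bM''_r\cong\kappa(\wp)^{\oplus r}$ is annihilated by $\fram_\wp$, while $\Ann(b_i)=\Ann(N_i)\subseteq\fram_\wp$ because $N_i$ is a non-zero $\cO_{X,\wp}$-module of finite length, so $\bM''_r[\Ann(b_i)]=\bM''_r$ and $\mu\mapsto\mu(b_i)$ is a bijection $\Hom_{\cO_X}(N'_i,\bM''_r)\xrightarrow{\ \cong\ }\bM''_r$. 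Therefore $\sum_{\mu}[\,b_i-\mu(b_i)\,]=\sum_{z\in\bM''_r}[\,b_i+z\,]$, and, $(c')^{*}$ being injective, the lemma follows. The main obstacle I anticipate is the bookkeeping in the second paragraph: identifying the component morphisms $(2)_{s}$ and the twisted \cofibrs $c_{r,i}\circ(2)_{s}$ precisely through the construction of $M_{G_r}$ and Lemma~\ref{lem:4_0_2}, and correspondingly the small diagram chase in the third paragraph; both are routine but must be carried out with care about which submodules and quotients occur.
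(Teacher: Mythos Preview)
Your proposal is correct, and the overall strategy---pull back to $\bF$ via the Galois covering $c'$, compute both sides there, match the sums, and conclude by injectivity of $(c')^*$---is exactly what the paper does. The difference lies only in which cartesian square is used to compute $(c')^*(\epsilon'_i)_*$. You invoke Lemma~\ref{lem:M_base_change} directly (so your argument runs in close parallel to that of Lemma~\ref{Lemma 8_1}), obtaining $\coprod_{s\in S_{r,i}}\bF$ as the fiber product and a sum indexed by $s\in S_{r,i}$. The paper instead exploits the product decomposition $G_r\cong G_r/G_{r,i}\times G_r/H$ in $\cB(G_r)$, with $H\subset G_r$ the image of $S_{r,i}=\Hom_{\cO_X}(N'_i,\bM''_r)$ under $\alpha$; via Lemma~\ref{lem:M_fiber_products} this yields a cartesian square with $\quot{\bN}{H}$ in one corner, and the resulting sum $\sum_{g\in H}g^*r_{\bN,N_i}^*[b_i]$ runs over $g\in H$. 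The two sums coincide under the identification $H\cong S_{r,i}$, and the paper's step $\sum_{g\in H}g^*r_{\bN,N_i}^*[b_i]=m_{\bN,\bN/\bM''_r}^*r_{\bN/\bM''_r,N_i}^*[b_i]$ implicitly uses the bijection $\mu\mapsto\mu(b_i)$ that you make explicit in your final paragraph. Your route has the virtue of recycling Lemma~\ref{lem:M_base_change} verbatim; the paper's buys a description of the fiber product as a single object of $\cCot{d}_m$. One small point: with the conventions in the text the component computation actually gives $[b_i+\mu(b_i)]$ rather than $[b_i-\mu(b_i)]$, but since you sum over all of $\Hom_{\cO_X}(N'_i,\bM''_r)$ this is immaterial.
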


\begin{proof}
It follows from the definition of the group $G_{r,i}$
that the composite
$$
\Hom_{\cO_X}(N_i,\bM''_e) \xto{(1)} 
\Hom_{\cO_X}(\bN',\bM''_e)
\xto{(2)} G_r/H_r \xto{(3)} G_r/G_{r,i}
$$
is an isomorphism in $\cB(G_r)$. 
Here the map (1) is the homomorphism
given by the composition with the projection
$\bN' \to N_i$, the map (2) is the isomorphism
\eqref{eq:alpha2}, and the map (3) is the natural
quotient map.
Let $H$ denote the image of
the composite of the homomorphism (1)
with the homomorphism $\alpha$ in \eqref{eq:alpha}.
Then the two quotient maps $G_r \to G_r/G_{r,i}$ and
$G_r \to G_r/H$ gives an isomorphism
$G_r \xto{\cong} G_r/G_{r,i} \times G_r/H$ in $\cB(G_r)$.
Hence it follows from Lemma \ref{lem:M_fiber_products} that
%
the following diagram is cartesian:
$$
\xymatrix{
\bF \ar[r]^{(4)}
\ar[d]_{\beta_{r,i}}
\ar@{}[dr]|\square
& \quot{\bN}{H}
\ar[d]^{(5)} \\
E'_{r,i}
\ar[r]_-{\epsilon'_i}
& \bF'\oplus [\bMbar''_r].
}
$$
Here (4) is the canonical morphism and
(5) is the morphism induced by the canonical
morphism $\bF \to \quot{\bN}{G_r} \cong \bF'\oplus [\bMbar''_r]$.
Hence 
$$
\begin{array}{rl}
\beta_{r,i}^* (\epsilon'_i)^*
(\epsilon'_i)_* c_{r,i}^{*}([b_i])
& = \displaystyle{\sum_{g\in H}} 
g^* r_{\bN,N_i}^*[b_i] \\
& = m_{\bN,\bN/\bM''_r}^* r_{\bN/\bM''_r,N_i }^* ([b_i]) \\
& = \beta_{r,i}^* (\epsilon'_i)^* r_{\bN'\oplus[\bMbar''_r],\bN'}^*
r_{\bF',N_i}^* ([b_i]).
\end{array}
$$
Since $\epsilon'_i \circ \beta_{r,i}$ is a \fibr
and $\BS$ is a sheaf on $\cFCotu{d}$, 
the claim follows.
\end{proof}

\begin{proof}[Proof of Theorem~\ref{main theorem}(2)]
Let $r$ be an integer with $0 \le r \le e$.
By Lemma \ref{lem:3_5_4_1} and Lemma \ref{lem:3_5_4_2},
it suffices to prove that the equality
$$
(m_{E'_r,\bN'})_* 
((\displaystyle\prod_{j=1}^e
(\eta_r)_* f_r^*
\wt{g}_j) \cdot
r_{E'_r,\bN'}^{*}\kappa')
= (m_{\bF'\oplus [\bMbar''_r],\bF'})_*
(r_{\bF'\oplus [\bMbar''_r],\bF'})^* \kappa_{\bN',(b'_j)}
$$
holds.

By Lemma~\ref{Lemma 8_1}, we have
$$
\begin{array}{l}
(m_{E'_r,\bN'})_* 
((\displaystyle\prod_{j=1}^e
(\eta_r)_* f_r^*
\wt{g}_j) \cdot
r_{E'_r,\bN'}^{*}\kappa')
\\
=(m_{\bF'\oplus [\bMbar''_r],\bF'})_*
((\displaystyle\prod_{j=1}^e
r_{\bF' \oplus [\bMbar''_r],\bF'}^* 
\alpha_j g_j(\bN')([0]))
\cdot (\delta_r)_* r_{E'_r,\bN'}^{*}\kappa').
\end{array}
$$
Hence it suffices to prove that the equality
$(\delta_r)_* r_{E'_r,\bN'}^{*}\kappa'
= r_{\bF'\oplus [\bMbar''_r],\bF'}^* \kappa'$
holds.

Let $i$ be an integer with $e+1 \le i \le d$.
Let us consider the homomorphism 
$\beta''_i \colon  \Hom_{\cO_X}(\bN', \bM''_r) \to S_{r,i}$ 
given by the composition with  the 
inclusion $N'_i \inj \bN'$ of the $i$-th component.
The homomorphism $\beta''_i$ is a morphism in $\cB(G_r)$.
Hence we obtain, by applying the functor $M_{G_r}$ to $\beta''_i$, 
a morphism $E'_r \to E'_{r,i}$ in $\cFCotu{d}$, which we denote by
$\beta'_i$.
The homomorphism $\beta''_i$ for each $i=e+1,\dots,d$
induces an isomorphism 
$\Hom_{\cO_X}(\bN', \bM''_r) \xto{\cong} 
\prod_{i=e+1}^d S_{r,i}$.
Hence it follows from Lemma \ref{lem:M_fiber_products} that
the morphism $\beta_{e+1}'\times \cdots \times \beta_d'\colon E'_r \to
E'_{r,e+1}\times_{\bF'\oplus[\bMbar''_r]}\cdots 
\times_{\bF'\oplus[\bMbar''_r]} E'_{r,d}$
is an isomorphism.

Let us consider the diagram
$$
\xymatrix{
E'_r \ar[r]^{\beta'_i}
\ar[d]_{r_{E'_r,\bN'}}
& E'_{r,i}
\ar[r]^-{\epsilon'_i}
\ar[d]^{c_{r,i}}
& \bF'\oplus[\bMbar''_r] 
\ar[rr]^-{r_{\bF'\oplus [\bMbar''_r],\bF'}}
& & \bF'
\ar[d]_{r_{\bN',N'_i}}
\\
\bF' \ar[r]_-{r_{\bN',N'_i}}
& \quotid{N'_i}  & & & \quotid{N'_i}.
}
$$
The left square of this diagram is commutative since
we have $r_{\bN', N'_i} \circ r_{E'_r,\bN'} \circ c_r
= c_{r,i} \circ \beta'_i \circ c_r$
and since $c_r$ is an epimorphism in $\cCotu{d}$.

By Lemma~\ref{Lemma 8_2}, we have
$$
\begin{array}{l}
(\delta_r)_* r_{E'_r,\bN'}^{*}\kappa'
\\
= (\epsilon'_{e+1}\times \cdots \times \epsilon'_{d})_*
(\beta'_{e+1}\times \cdots \times \beta'_{d})_*
r_{E'_r,\bN'}^{*}\kappa'
\\
= \displaystyle{\prod_{i=e+1}^{d}}
(\epsilon'_i)_* c_{r,i}^*
(\alpha_i g'_i(N'_i)([b_i]))
\\
= \displaystyle{\prod_{i=e+1}^{d}}
r_{\bF'\oplus [\bMbar''_r],\bF'}^* r_{\bN',N'_i}^*
(\alpha_i g'_i(N'_i)([b_i]))
\\
= r_{\bF'\oplus [\bMbar''_r],\bF'}^* \kappa'.
\end{array}
$$
Hence Theorem~\ref{main theorem}(2) 
is proved.
\end{proof}

\chapter{Other examples of $Y$-sites}
\label{cha:examples}
The purpose of this chapter is to present three 
kinds of examples of $Y$-sites.    
In Section~\ref{sec:classical groups}, 
we construct $Y$-sites and grids 
whose absolute Galois monoids are classical groups,
in the spirit similar to the category $\cC^d$ of 
Chapter~\ref{cha:Cd}.

In Section~\ref{sec:conn loc noeth},
we construct a $Y$-site for a connected locally noetherian 
scheme over a base scheme.    When the scheme equals the 
base, we recover the \'etale fundamental group.   
In general, the absolute Galois monoids may not be profinite.
We give several examples.   

In Section~\ref{sec:Riemannian}, 
we have an example for Riemannian symmetric spaces.
We see that we recover the group structure but not the topology.
In order to recover the topology, we suspect that enriching 
the category to topological spaces might help.   

\section{Generalities}

\subsection{The absolute Galois monoid as a monoid of endomorphisms} \label{sec:hyp_End}

Let $(\cC,J)$ be a $Y$-site and let $(\cC_0,\iota_0)$ be a grid.
Let $I$ be the order dual of the poset $\cC_0$ and
let us consider the pro-object
$$
Y = (\iota_0(X))_{X \in I}
$$
in the category $\cC$ 
indexed by the filtered poset $I$. 

We construct a homomorphism
\begin{equation} \label{eq:hyp_End}
\Phi \colon  M_\Cip \to \End_{\Pro(\cC)} (Y)
\end{equation}
from the absolute Galois monoid $M_\Cip$
to the monoid $\End_{\Pro(\cC)} (Y)$ of endomorphism of 
$Y$ in the category $\Pro(\cC)$ of filtered pro-objects of $\cC$
as follows. Let $(\alpha, \gamma_\alpha) \in M_\Cip$.
For each object $X$ of $\cC_0$, the composite of the projection
$Y \to \iota_0(\alpha(X))$ and the inverse of the isomorphism
$\gamma_\alpha(X) \colon  \iota_0(X) \xto{\cong} \iota_0(\alpha(X))$
gives an element $f_X \in \Hom_{\Pro(\cC)}(Y,\iota_0(X))$.
For any morphism $h\colon  X' \to X$ in $\cC_0$, we have a commutative
diagram
$$
\begin{CD}
Y @>>> \iota_0(\alpha(X')) @<{\gamma_\alpha(X')}<{\cong}< \iota_0(X') \\
@| @V{\iota_0(\alpha(h))}VV @VV{\iota_0(h)}V \\
Y @>>> \iota_0(\alpha(X)) @<{\gamma_\alpha(X)}<{\cong}< \iota_0(X).
\end{CD}
$$
This shows that $(f_X)_{X \in I}$ is an element of
$\varprojlim_{X \in I}\Hom_{\Pro(\cC)}(Y,\iota_0(X))
= \End_{\Pro(\cC)}(Y)$.
We define $\Phi$ in \eqref{eq:hyp_End} to be the map
that sends $(\alpha,\gamma_\alpha)$ to $(f_X)_{X \in I}$.
It is easy to see that the map $\Phi$
is a homomorphism of monoids.

\begin{prop} \label{prop:hyp_top}
The homomorphism $\Phi$ is an isomorphism of
topological monoids. Here we equip
$\Hom_{\Pro(\cC)}(Y, \iota_0(X)) = \varinjlim_{X' \in I} 
\Hom_{\cC}(\iota_0(X'), \iota_0(X))$
with the discrete topology for each $X \in I$, 
and $\End_{\Pro(\cC)}(Y) = \varprojlim_{X \in I}
\Hom_{\Pro(\cC)}(Y,\iota_0(X))$
with the limit topology.
\end{prop}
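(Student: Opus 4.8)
The plan is to show that $\Phi$ is a continuous bijection with continuous inverse, by reducing everything to the description of $M_\Cip$ in terms of the grid. First I would recall the explicit description of $M_\Cip$ from \cite{Grids}: an element $(\alpha,\gamma_\alpha)$ is determined by the system of compatible isomorphisms $\gamma_\alpha(X):\iota_0(X)\xto{\cong}\iota_0(\alpha(X))$ for $X\in I$, and two topological monoid structures have matching fundamental systems of open subgroups given by the $\bK_{Y_0}$'s. On the other side, $\End_{\Pro(\cC)}(Y)=\varprojlim_{X\in I}\varinjlim_{X'\in I}\Hom_\cC(\iota_0(X'),\iota_0(X))$, and I would unwind that an element of this limit is precisely a compatible family $(f_X)_{X\in I}$, where each $f_X$ is (the class of) a morphism from some $\iota_0(X')$ in $\cC_0$, and compatibility is with respect to the morphisms of $\cC_0$.

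\textbf{Injectivity of $\Phi$.} Suppose $\Phi(\alpha,\gamma_\alpha)$ is the identity of $\End_{\Pro(\cC)}(Y)$, i.e.\ each $f_X$ equals the projection $Y\to\iota_0(X)$. Since $(\cC,J)$ is a $Y$-site with grid $(\cC_0,\iota_0)$, each projection $\iota_0(X')\to\iota_0(X)$ used to represent $f_X$ is an epimorphism in $\cC$ (every morphism in a $Y$-site is an epimorphism, as $\cC$ is an $E$-category), so from $f_X$ being the bare projection I would deduce $\alpha(X)=X$ and $\gamma_\alpha(X)=\id_{\iota_0(X)}$ for all $X$ cofinally, hence for all $X\in I$ by naturality. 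This forces $(\alpha,\gamma_\alpha)$ to be the unit, proving injectivity.

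\textbf{Surjectivity of $\Phi$.} Given a compatible family $(f_X)_{X\in I}$, I would use Condition (4) in the definition of a grid (\cite[Definition 5.5.3]{Grids}) --- the overcategory/undercategory equivalences $\iota_{0,X/}:\cC_{0,X/}\xto{\cong}\cC_{\iota_0(X)/}$ --- to upgrade each representative morphism $\iota_0(X')\to\iota_0(X)$ into the data of an object $\alpha(X)$ of $\cC_0$ together with an isomorphism $\gamma_\alpha(X):\iota_0(X)\xto{\cong}\iota_0(\alpha(X))$; the point is that a morphism in $\cC$ with target $\iota_0(X)$ coming from the grid factors through a unique isomorphism onto some $\iota_0(X')$ for $X'\in\cC_0$, because $\cC_0$ realizes all objects of the $\Lambda$-connected component and the $f_X$ are genuine endomorphisms (not merely morphisms into a smaller object). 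Functoriality of $\alpha$ and naturality of $\gamma_\alpha$ then follow from the compatibility of the family $(f_X)$ with the transition morphisms of $I$. Checking that $\alpha$ is an automorphism (not just an endomorphism) of $\cC_0$ uses that $\End_{\Pro(\cC)}(Y)$-elements arising this way are invertible --- or rather, one shows directly that the constructed $(\alpha,\gamma_\alpha)$ lies in $M_\Cip$, which by definition requires $\alpha$ to be an equivalence; here I expect to invoke that the inverse family is constructed symmetrically and the two compose to the identity.

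\textbf{Topology.} Finally I would match the topologies. By \cite[Corollary 11.1.3]{Grids} (or the analogous statement used to topologize $M_\Cip$), a fundamental system of open neighborhoods of the unit in $M_\Cip$ is $\{\bK_{Y_0}\}_{Y_0\in\cC_0}$, where $\bK_{Y_0}$ consists of those $(\alpha,\gamma_\alpha)$ with $\alpha(Y_0)=Y_0$ and $\gamma_\alpha(Y_0)=\id$. On the $\Pro(\cC)$ side, the limit topology on $\varprojlim_X\Hom_{\Pro(\cC)}(Y,\iota_0(X))$ has as basic open neighborhoods of the identity the preimages of $\{\,\text{projection}\,\}\subset\Hom_{\Pro(\cC)}(Y,\iota_0(Y_0))$; under $\Phi$ this preimage is exactly $\bK_{Y_0}$ by the injectivity argument applied ``at level $Y_0$''. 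Hence $\Phi$ and $\Phi^{-1}$ are both continuous, and $\Phi$ is an isomorphism of topological monoids. The main obstacle I anticipate is the surjectivity step: one must be careful that a compatible pro-endomorphism of $Y$ really does globalize to a pair $(\alpha,\gamma_\alpha)$ with $\alpha$ an \emph{auto}-equivalence of $\cC_0$ --- this is where the $\Lambda$-connectivity and the full force of the grid axioms, rather than just the pregrid axioms, are needed, and where I would spend the most care.
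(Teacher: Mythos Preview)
Your overall strategy matches the paper's, but there are three genuine issues.

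\textbf{Injectivity.} You only check that $\Phi$ has trivial kernel. Since $M_\Cip$ is a priori a monoid, not a group, this is insufficient: a monoid homomorphism with trivial kernel need not be injective. The paper instead supposes $\Phi(\alpha,\gamma_\alpha)=\Phi(\alpha',\gamma'_{\alpha'})$ and argues directly. The mechanism is the same as yours, but it rests on a sharper statement than you wrote down: for any $Z$ in $\cC$ and any $f\in\Hom_{\Pro(\cC)}(Y,Z)$, there is a \emph{unique} pair $(X,\beta)$ with $X\in\cC_0$ and $\beta:\iota_0(X)\xto{\cong}Z$ an isomorphism such that $f$ is the class of $\beta$. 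The paper isolates this as a separate lemma (using the grid axiom that $\cC_{0,X/}\to\cC_{\iota_0(X)/}$ is an equivalence); both injectivity and surjectivity then follow cleanly from its uniqueness and existence parts respectively.

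\textbf{Surjectivity.} Your anticipated ``main obstacle'' is not an obstacle, because you have misread the definition of $M_\Cip$. An element is a pair $(\alpha,\gamma_\alpha)$ where $\alpha:\cC_0\to\cC_0$ is merely a functor and $\gamma_\alpha:\iota_0\xto{\cong}\iota_0\circ\alpha$ a natural isomorphism; $\alpha$ is \emph{not} required to be an auto-equivalence (this is why $M_\Cip$ is only a monoid in general). So once you have extracted $\alpha(X)$ and $\gamma_\alpha(X)$ from each $f_X$ via the lemma above and checked naturality, you are done; there is no inverse to construct.

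\textbf{Topology.} Because $M_\Cip$ is only a monoid, matching fundamental systems of neighborhoods of the identity does not immediately give a homeomorphism: you cannot freely translate open sets. The paper works instead with the full open basis $\{U_{X,g}\}$ of $\End_{\Pro(\cC)}(Y)$, where $U_{X,g}$ is the fiber over $g\in\Hom_{\Pro(\cC)}(Y,\iota_0(X))$ for $X$ an edge object, and shows (again via the key lemma) that $\Phi^{-1}(U_{X,g})$ is a right coset $x\bK_X$. Since these cosets form an open basis of $M_\Cip$, this gives the homeomorphism.
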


To prove Proposition \ref{prop:hyp_top}, we need
the following lemma.

\begin{lem} \label{lem:hyp_aux1}
Let $Z$ be an object of $\cC$.
Then for any $f \in \Hom_{\Pro(\cC)}(Y,Z)$,
there exists a unique pair $(X,\beta)$ of
$X \in I$ and an isomorphism $\beta \colon  \iota_0(X) \xto{\cong} Z$
in $\cC$ such that $f$ is equal to the class of
$\beta \in \Hom_{\cC}(\iota_0(X),Z)$ in $\Hom_{\Pro(\cC)}(Y,Z)$.
\end{lem}

\begin{proof}
First, we prove the existence.
Let us choose an element $X' \in I$ and a morphism
$f'\colon  \iota_0(X') \to Z$ which represents $f$.
Since $\cC_{0,X'/} \to \cC_{\iota_0(X')/}$ is an equivalence
of categories, there exists an object
$h\colon  X' \to X$ of $\cC_{0,X'/}$ such that $\iota_0(h)$
is isomorphic to $f'$ as an object of $\cC_{\iota_0(X')/}$,
i.e., there exists an isomorphism $\beta \colon  \iota_0(X) \xto{\cong} Z$
satisfying $f' = \beta \circ \iota_0(h)$.
Then the pair $(X,\beta)$ satisfies the desired property.

Next we prove the uniqueness.
Suppose that $f$ is the class of an isomorphism
$\beta\colon  \iota_0(X) \xto{\cong} Z$ and
an isomorphism $\beta' \colon \iota_0(X') \xto{\cong} Z$
at the same time.
Then there exists
a diagram $X \xleftarrow{h} X'' \xto{h'} X'$
in $\cC_0$ satisfying 
$\beta \circ \iota_0(h) = \beta' \circ \iota_0(h')$.
The composite $\beta'^{-1} \circ \beta$ gives
an isomorphism from $\iota_0(h)\colon  \iota_0(X'') \to \iota_0(X)$
to $\iota_0(h') \colon \iota_0(X'') \to \iota_0(X')$ in
the undercategory $\cC_{\iota_0(X'')/}$.
Since $\cC_{0,X''/} \to \cC_{\iota_0(X'')/}$ is an equivalence
of categories, we have $X=X'$ and $h=h'$.
Since $\cC$ is an $E$-category, the equality
$\beta \circ \iota_0(h) = \beta' \circ \iota_0(h')$
implies $\beta = \beta'$.
Thus we have $(X,\beta) = (X',\beta')$, which proves the claim.
\end{proof}

\begin{proof}[Proof of Proposition \ref{prop:hyp_top}]
First we prove that $\Phi$ is injective.
Suppose that two elements 
$(\alpha,\gamma_\alpha), (\alpha',\gamma'_{\alpha'})$
are sent to the same element of $\End_{\Pro(\cC)}(Y)$.
Let $X \in I$. Then the composite
$Y \to \iota_0(\alpha(X)) \xto{\gamma_\alpha(X)^{-1}} \iota_0(X)$
is equal to the composite
$Y \to \iota_0(\alpha'(X)) \xto{\gamma'_{\alpha'}(X)^{-1}} \iota_0(X)$.
By Lemma \ref{lem:hyp_aux1}, we have
$\alpha(X) = \alpha'(X)$ and $\gamma_\alpha(X) = \gamma'_{\alpha'}(X)$.
Hence we have $(\alpha,\gamma_\alpha) = (\alpha',\gamma'_{\alpha'})$,
which proves the injectivity of $\Phi$.

Next we prove that $\Phi$ is surjective.
Let $f = (f_X)_{X \in I} \in \End_{\Pro(\cC)}(Y)$.
For each $X \in I$, it follows from Lemma \ref{lem:hyp_aux1} that
there exists a unique pair $(Z,\beta)$ 
of an object $Z$ of
$\cC_0$ and an isomorphism $\beta\colon  \iota_0(Z) \xto{\cong} \iota_0(X)$
such that $f_X$ is equal to the composite
$Y \to \iota_0(Z) \xto{\beta} \iota_0(X)$.
Let us write $Z =\alpha(X)$ and $\beta^{-1} = \gamma_\alpha(X)$.
Let $h\colon  X' \to X$ be a morphism in $\cC_0$.
Since $f_X = \iota_0(h) \circ f_{X'}$, the composite
$Y \to \iota_0(\alpha(X')) \xto{\gamma_\alpha(X')^{-1}} 
\iota(X') \xto{\iota_0(h)} \iota_0(X)$
is equal to the composite
$Y \to \iota_0(\alpha(X)) \xto{\gamma_\alpha(X)^{-1}} 
\iota_0(X)$.
Hence there exists 
a diagram $\alpha(X) \xleftarrow{g} X'' \xto{g'} \alpha(X')$
in $\cC_0$ satisfying 
$\gamma_\alpha(X)^{-1} \circ \iota_0(g) 
= \iota_0(h) \circ \gamma_\alpha(X')^{-1} \circ \iota_0(g')$.
Then $\gamma_\alpha(X) \circ \iota_0(h) 
\circ \gamma_\alpha(X')^{-1}$ is a morphism from
$\iota_0(g')$ to $\iota_0(g)$ in the undercategory
$\cC_{\iota_0(X'')/}$.
Since $\cC_{0,X''/} \to \cC_{\iota_0(X'')/}$ is an equivalence
of categories, there exists a morphism
$h' \colon  \alpha(X') \to \alpha(X)$ satisfying
$\gamma_\alpha(X) \circ \iota_0(h) 
\circ \gamma_\alpha(X')^{-1} = \iota_0(h')$.
Let us denote this $h'$ by $\alpha(h)$.
Then we have $\gamma_\alpha(X) \circ \iota_0(h) 
= \iota_0(\alpha(h)) \circ \gamma_\alpha(X')$.
Thus, we obtain an element $(\alpha,\gamma_\alpha) \in M_\Cip$
satisfying $\Phi(\alpha,\gamma_\alpha) = f$,
which proves the surjectivity of $\Phi$.

Finally we prove that $\Phi$ is a homeomorphism.
Let $I'$ denote the subset of $I$ that consists of
edge objects of $\cC_0$.
Since edge objects are cofinal in $\cC_0$, we have
$\End_{\Pro(\cC)}(Y) 
= \varprojlim_{X \in I'} \Hom_{\Pro(\cC)}(Y,\iota_0(X))$.
For $X \in I'$ and $g \in \Hom_{\Pro(\cC)}(Y,\iota_0(X))$,
let $U_{X,g} \subset \End_{\Pro(\cC)}(Y)$ 
denote the subset of elements $f \in \End_{\Pro(\cC)}(Y)$
such that the composite $Y \xto{f} Y \to \iota_0(X)$
is equal to $g$. Then the family $(U_{X,g})_{X,g}$ forms
an open basis of $\End_{\Pro(\cC)}(Y)$.
It follows from Lemma \ref{lem:hyp_aux1}
that there exists a unique pair
$(X',\beta)$ of an object $X'$ of $\cC_0$ and
an isomorphism $\beta\colon  \iota_0(X') \xto{\cong} \iota_0(X)$
such that $g$ is equal to the composite $Y \to \iota_0(X')
\xto{\beta} \iota_0(X)$.
Then the subset $\Phi^{-1}(U_{X,g})$ of $M_\Cip$ is
equal to the set of elements $(\alpha,\gamma_\alpha) \in M_\Cip$
satisfying $\alpha(X) = X'$ and $\gamma_\alpha(X) = \beta^{-1}$.
Suppose that $\Phi^{-1}(U_{X,g})$ is nonempty and choose
$x \in \Phi^{-1}(U_{X,g})$. Then it follows from
Lemma 8.1.5 of \cite{Grids} that $\Phi^{-1}(U_{X,g})$
is equal to $x \bK_X$.
Since the family $(x \bK_X)_{X,x}$ forms an open basis
of $M_\Cip$, it follows that the map $\Phi$ is
a homeomorphism. This completes the proof.
\end{proof}

\begin{lem} \label{lem:hyp_surj}
Let $f\colon Z' \to Z$ be a morphism in $\cC$. Then the map
$$
\Hom_{\Pro(\cC)}(Y,Z') \to \Hom_{\Pro(\cC)}(Y,Z)
$$
is surjective.
\end{lem}

\begin{proof}
Let $g \in \Hom_{\Pro(\cC)}(Y,Z)$.
Let us choose an object $X$ of $\cC_0$
and a morphism $g_0\colon  \iota_0(X) \to Z$ in $\cC$
whose class in $\Hom_{\Pro(\cC)}(Y,Z)$
is equal to $g$.
Since $\cC$ is semi-cofiltered, there exist
an object $W$ of $\cC$ and morphisms
$f'\colon  W \to \iota_0(X)$ and $g'_0 \colon  W \to Z'$
in $\cC$ satisfying $f \circ g'_0 = g_0 \circ f'$.
Since $(\cC_0,\iota_0)$ is a grid, 
it follows from the axiom of a grid that
there exist a morphism $h\colon  X' \to X$ and
an isomorphism $\beta\colon  \iota_0(X') \xto{\cong} W$
in $\cC$ satisfying $\iota_0(h) = f' \circ \beta$.
Let $g' \in \Hom_{\Pro(\cC)}(Y,Z)$ 
denote the class of the composite
$\iota_0(X') \xto{\beta} W \xto{g'_0} Z'$.
Then the map 
$\Hom_{\Pro(\cC)}(Y,Z') \to \Hom_{\Pro(\cC)}(Y,Z)$
sends $g'$ to $g$.
Since $g$ is arbitrary, the claim is proved.
\end{proof}

\begin{cor} \label{cor:hyp_pro_surj}
Let $X$ be an edge object of $\cC_0$. 
Suppose that the overcategory
$\cC(\cT(J))_{/\iota_0(X)}$ satisfies one of the
two cardinality conditions in Section 5.8.1 of \cite{Grids}.
Then the composite
$$
M_\Cip \cong \End_{\Pro(\cC)}(Y) \to 
\Hom_{\Pro(\cC)}(Y,\iota_0(X))
$$
is surjective.
\end{cor}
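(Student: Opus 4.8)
The goal is to show that the composite
\[
M_\Cip \cong \End_{\Pro(\cC)}(Y) \to \Hom_{\Pro(\cC)}(Y,\iota_0(X))
\]
is surjective when $X$ is an edge object of $\cC_0$ and the overcategory
$\cC(\cT(J))_{/\iota_0(X)}$ satisfies one of the two cardinality conditions
in Section 5.8.1 of \cite{Grids}. The isomorphism on the left is
Proposition \ref{prop:hyp_top}, and unwinding its construction, the composite
sends $(\alpha,\gamma_\alpha)$ to the class in $\Hom_{\Pro(\cC)}(Y,\iota_0(X))$
of the composite $Y \to \iota_0(\alpha(X)) \xrightarrow{\gamma_\alpha(X)^{-1}} \iota_0(X)$.
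So the task is equivalent to showing that every element of
$\Hom_{\Pro(\cC)}(Y,\iota_0(X))$ is of this form.

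First I would use Lemma \ref{lem:hyp_aux1}: any $g \in \Hom_{\Pro(\cC)}(Y,\iota_0(X))$
is the class of a unique isomorphism $\beta : \iota_0(X') \xrightarrow{\cong} \iota_0(X)$
for a unique object $X'$ of $\cC_0$, composed with the projection $Y \to \iota_0(X')$.
Thus it suffices to produce an element $(\alpha,\gamma_\alpha) \in M_\Cip$ with
$\alpha(X) = X'$ and $\gamma_\alpha(X) = \beta^{-1}$. This is exactly the assertion
that the subset $\Phi^{-1}(U_{X,g})$ appearing in the proof of Proposition \ref{prop:hyp_top}
is nonempty. In that proof it was observed that once $\Phi^{-1}(U_{X,g})$ is nonempty it
equals $x\bK_X$ for any $x$ in it, by Lemma 8.1.5 of \cite{Grids}; so the only remaining
point is nonemptiness. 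The natural approach is to produce, via Lemma \ref{lem:hyp_surj}
(surjectivity of $\Hom_{\Pro(\cC)}(Y,-)$ along any morphism of $\cC$), enough
compatible data to build a full element of $M_\Cip$: starting from $\beta$ at the level
of $X$, lift it successively along the cofinal system of edge objects to get a coherent
family of isomorphisms defining a functor $\alpha$ and a natural isomorphism $\gamma_\alpha$.

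Concretely, I would invoke the cardinality condition to control the lifting process:
it guarantees (through the relevant finiteness or smallness input cited in Section 5.8.1
of \cite{Grids}) that the inverse limit defining $M_\Cip$ is taken over a system with the
Mittag-Leffler-type property needed to lift the single isomorphism $\beta$ over $X$ to a
compatible family over all objects $\Lambda$-related below. More precisely, since $X$ is an
edge object, every morphism into $\iota_0(X)$ in $\cC$ is of type $J$, so the fibers
$\Hom_{\Pro(\cC)}(Y,\iota_0(X'))$ over $g$ are nonempty by Lemma \ref{lem:hyp_surj};
and the cardinality condition ensures each such fiber is finite, so the inverse limit of
nonempty finite sets along the cofinal system of edge objects $X'' \to X$ is nonempty.
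An element of that limit is precisely a coherent assignment of isomorphisms, which
Proposition \ref{prop:hyp_top} identifies with an element of $M_\Cip$ mapping to $g$.
I expect the main obstacle to be verifying that the transition maps in this inverse system
are surjective (not merely that the individual sets are nonempty): that is where Lemma
\ref{lem:hyp_surj} must be applied with care to the edge-object cofinal system, and where
the axioms of a grid (in particular the equivalence $\cC_{0,X''/}\to\cC_{\iota_0(X'')/}$,
already exploited in Lemma \ref{lem:hyp_aux1}) are needed to promote a morphism-level lift
to an object-level one, exactly as in the surjectivity argument inside the proof of
Proposition \ref{prop:hyp_top}. Once surjectivity of the transition maps and finiteness of
the fibers are in hand, nonemptiness of the limit is automatic, and the proof concludes.
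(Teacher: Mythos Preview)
Your proposal is correct and essentially matches the paper's proof: both express the map as the projection from $\varprojlim_{X'\ge X}\Hom_{\Pro(\cC)}(Y,\iota_0(X'))$, observe via Lemma~\ref{lem:hyp_surj} that the transition maps are surjective, and then use the cardinality hypothesis to conclude surjectivity of the limit. The paper skips your detour through Lemma~\ref{lem:hyp_aux1} and explicitly treats both branches of the cardinality condition (finite fibers, or a countable cofinal subset of the index poset), whereas you spell out only the finite-fiber case.
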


\begin{proof}
Let $I'$ denote the poset of elements $X' \in I$ satisfying
$X \le X'$. Then we have $\End_{\Pro(\cC)}(Y) =
\varprojlim_{X' \in I'} \Hom_{\Pro(\cC)}(Y,\iota_0(X'))$.
It follows from Lemma \ref{lem:hyp_surj} that
the transition maps in this limit are surjective.
By our assumption on $\cC(\cT(J))_{/\iota_0(X)}$,
we have either that the fibers of the map
$\Hom_{\Pro(\cC)}(Y,\iota_0(X')) \to 
\Hom_{\Pro(\cC)}(Y,\iota_0(X))$ are finite sets
for every $X' \in I'$, or that $I'$ has
a cofinal subset that is at most countable.
Hence the map $\End_{\Pro(\cC)}(Y) \to 
\Hom_{\Pro(\cC)}(Y,\iota_0(X))$ is surjective,
which proves the claim.
\end{proof}

\subsection{Finite products of $Y$-sites}

\subsubsection{Finite product of $A$-topologies.}
Let $r \ge 1$ be an integer.
For $i=1,\ldots,r$, let $\cC_i$ be a category equipped
with an $A$-topology $J_i$.
Let us consider the direct product 
$$
\cC = \cC_1 \times \cdots \times \cC_r
$$ 
of categories.
It is clear that the class
$\cT = \cT(J_1) \times \cdots \times \cT(J_r)$
of morphisms of $\cC$ is a semi-localizing 
collection of morphisms in $\cC$ in the sense of
\cite[Definition 2.3.1]{Grids}.
Hence $\cT$ gives an $A$-topology $J_\cT$ on $\cC$.
As is easily checked, we have $\wh{\cT} = \cT$,
where $\wh{\cT}$ is as in Definition 2.3.5 of \cite{Grids}.
Hence it follows from Proposition 2,4,3 of \cite{Grids}
that we have $\cT = \cT(J_\cT)$.

\subsubsection{ }

\begin{lem}
Suppose that $(\cC_i, J_i)$ is a $B$-site (\resp a $Y$-site)
for $i=1,\ldots,r$. Then so is $(\cC,J_{\cT})$.
\end{lem}

\begin{proof}
The claim for a $B$-site is clear since
$(\cC,J_\cT)$ obviously satisfies the three conditions
in Definition 4.2.1 of \cite{Grids}.

We prove the claim of a $Y$-site.
It is easy to check that $(\cC,J_\cT)$ satisfies the
first two conditions in Definition 5.4.2 of \cite{Grids}.
Observe that,
if $f = (f_1,\ldots,f_r)$ is a morphism of $\cC$,
then $f$ is a Galois covering in $\cC$
if and only if $f_i$ is a Galois covering in $\cC_i$
for $i=1,\ldots,r$.
This shows that $(\cC,J_\cT)$ satisfies the last
condition in Definition 5.4.2 of \cite{Grids}.
\end{proof}

\begin{lem}
Suppose that $(\cC_i, J_i)$ is a $Y$-site 
for $i=1,\ldots,r$. Let $(\cC_{i,0},\iota_{i,0})$
be a grid of $(\cC_i,J_i)$. Set
$\cC_0 = \cC_{1,0} \times \cdots \times \cC_{i,r}$
and $\iota_0 = (\iota_{1,0},\cdots,\iota_{r,0})
\colon  \cC_0 \to \cC$. Then the pair
$(\cC_0,\iota_0)$ is a grid of $(\cC,J_\cT)$.
\end{lem}

\begin{proof}
First observe that, if $X = (X_1,\ldots,X_r)$ is an object of
$\cC_0$, then $X$ is an edge object of $\cC_0$ if and only if
$X_i$ is an edge object of $\cC_{i,0}$ for $i=1,\ldots,r$.
This in particular shows that $(\cC,J_\cT)$ satisfies the second
condition in Definition 5.3.3 of \cite{Grids}.

It is easy to see that $(\cC,J_\cT)$ satisfies the remaining 
three conditions in Definition 5.3.3 of \cite{Grids}.
\end{proof}

\subsection{Functors between $Y$-sites}

Let $(\cC,J)$ and $(\cD,K)$ be Grothendieck sites.
As we have recalled in Chapter 1, Section \ref{sec:top_functor}
a functor $F\colon  \cC \to \cD$ is said to 
be cocontinuous
if for any object $X$ of $\cC$ and any sieve $S$ on
$F(X)$ that belongs to $K(F(X))$, there exists a sieve
$R$ on $X$ that belongs to $J(X)$ such that for any object
$f\colon  Y \to X$ of $R$, its image $F(f)$ under $F$ is an object
of $S$. Although the following statement is obvious from the
definition, we record to it as a lemma since we will refer it
several times.

\begin{lem} \label{lem:A_covering-lifting}
Let $(\cC,J)$ and $(\cD,K)$ be categories equipped with $A$-topologies, 
and $F \colon  \cC \to \cD$ a functor. Then $F$ 
is cocontinuous
if and only if
for any object $X$ of $\cC$ and for any
morphism $f'\colon  Y' \to F(X)$ in $\cD$ that belongs to $\cT(K)$,
there exist a morphism $f\colon  Y \to X$ in $\cC$ that belongs to
$\cT(J)$ and a morphism $g'\colon  F(Y) \to Y'$ in $\cD$
satisfying $F(f) = f' \circ g'$.
\qed
\end{lem}

Let $(\cC,J)$ and $(\cD,K)$ be $Y$-sites, and
let $(\cC_0,\iota_0)$ and $(\cD_0,\jmath_0)$ be grids
of $(\cC,J)$ and $(\cD,K)$, respectively.
Let $F\colon \cC \to  \cD$ and $F_0\colon  \cC_0 \to \cD_0$
be functors. We say that $F_0$ is a functor above
$F$ if $F \circ \iota_0$ is naturally isomorphic to
$\jmath_0 \circ F_0$.

\begin{prop} \label{prop:Y_covering-lifting}
Let $(\cC,J)$ and $(\cD,K)$ be $Y$-sites, and
let $(\cC_0,\iota_0)$ and $(\cD_0,\jmath_0)$ be grids
of $(\cC,J)$ and $(\cD,K)$, respectively.
Let $F\colon \cC \to  \cD$ be a functor, and 
$F_0\colon  \cC_0 \to \cD_0$ a functor above $F$.
Suppose that $F_0$ satisfies the following two conditions:
\begin{enumerate}
\item $F_0$ sends any edge object of $\cC_0$
to an edge object of $\cD_0$.
\item For any edge object $X_0$ of $\cC_0$
and for any morphism $Y'_0 \to F_0(X_0)$ in $\cD_0$,
there exists a morphism $Y_0 \to X_0$ in $\cC_0$
such that there exists a morphism from $F_0(Y_0)$ to
$Y'_0$ in $\cD_0$.
\end{enumerate}
Then the functor $F$ is cocontinuous.
\end{prop}

\begin{proof}
Let $X$ be an object of $\cC$ and $f'\colon Y' \to F(X)$
a morphism in $\cD$ that belongs to $\cT(K)$.
Choose an edge object $X_0$ of $\cC_0$ such that
$\iota_0(X_0)$ is isomorphic to $X$ in $\cC$.
Let us choose an isomorphism $\alpha \colon  \iota_0(X_0) \xto{\cong} X$
in $\cC$.
Set $X'_0 = F_0(X_0)$. Then $X'_0$ is an edge object of $\cD_0$.
Since $F_0$ is above $F$, the object $\jmath_0(X'_0)$ of $\cD$
is isomorphic to $F(X)$. Let us choose a functorial isomorphism
$\gamma \colon  \jmath_0 \circ F_0 \xto{\cong} F \circ \iota_0$.
Then the composite $\alpha'= F(\alpha)\circ \gamma(X_0)$
is an isomorphism from $\jmath_0(X_0)$ to $F(X)$ in $\cD$.
Since $(\cD_0, \jmath_0)$ is a grid of $(\cD,K)$,
there exist a morphism $f'_0\colon Y'_0 \to X'_0$ in $\cD_0$
and an isomorphism $\beta' \colon  \jmath_0(Y'_0) \xto{\cong} Y'$
satisfying $\alpha' \circ \jmath_0(f'_0) = f' \circ \beta'$.
It follows from our assumption that
there exists a morphism $f_0\colon  Y_0 \to X_0$ in $\cC_0$
such that there exists a morphism $g'_0 \colon  F_0(Y_0)
\to Y'_0$ in $\cD_0$.
Set $Y=\iota_0(Y_0)$ and $f=\alpha \circ \iota_0(f_0)\colon  Y \to X$.
Since $X_0$ is an edge object of $\cC_0$, the morphism $f$ of
$\cC$ belongs to $\cT(J)$.
By construction, we have $F(f) = f' \circ g'$, where
$g'$ is the composite $g'= \beta \circ \jmath_0(g'_0) \circ \gamma(Y_0)^{-1}\colon 
F(Y) \to Y'$.
Hence it follows from Lemma \ref{lem:A_covering-lifting}
that the functor $F$ is cocontinuous.
\end{proof}

\section{A $Y$-site for a parabolic subgroup of $\GL_d$}

\subsubsection{ }

Let $X$ be a regular noetherian scheme of pure Krull dimension one.
Let $d \ge 1$ be an integer.
Let us consider the category $\cC^d = \cC^d_X$ introduced in 
\ref{sec:Cd_defn}.
The aim of this paragraph is to introduce, for a partition $\bd$ of $d$,
a $Y$-site and its grid whose absolute Galois monoid is isomorphic to
the group of $\A_X$-valued points of the standard parabolic subgroup of
$\GL_d$ corresponding to $\bd$.

\subsubsection{ } 

Let $f\colon M \to N$ be a morphism in $\cC^d$.
Suppose that a decreasing filtration $\Fil^\bullet M$
of $M$ by $\cO_X$-submodules of $M$ is given. 
We define a decreasing filtration 
$(f_* \Fil)^\bullet N$ of $N$ by setting
$$
(f_* \Fil)^j N = p(i^{-1}(\Fil^j M))
$$
for each integer $j$, where we take a representative
$N \stackrel{p}{\twoheadleftarrow} M' 
\stackrel{i}{\hookrightarrow} M$ of the morphism $f$.
As is easily seen, the filtration $(f_* F)^\bullet N$
of $N$ is independent of the choice of a representative of $f$.

\begin{lem} \label{lem:201808_1}
Let
$$
\begin{CD}
M' @>{i'}>> N' \\
@V{p'}VV @VV{p}V \\
M @>{i}>> N
\end{CD}
$$
be a cartesian commutative diagram of coherent $\cO_X$-modules
of finite length.
Then for any $\cO_X$-submodule $N'_1$ of $N'$, we have
$$
i^{-1}(p(N'_1)) = p'(i'^{-1}(N'_1))
$$
as $\cO_X$-submodules of $M$.
\end{lem}

\begin{proof}
Since
$$
i(p'(i'^{-1}(N'_1)))
= p(i'(i'^{-1}(N'_1)))
\subset p(N'_1),
$$
we have $i^{-1}(p(N'_1)) \supset p'(i'^{-1}(N'_1))$.

It remains to prove $i^{-1}(p(N'_1)) \subset p'(i'^{-1}(N'_1))$.
It suffices to show that 
$\Gamma(X,i^{-1}(p(N'_1))) \subset 
\Gamma(X,p'(i'^{-1}(N'_1)))$.
Let us take $x \in \Gamma(X,i^{-1}(p(N'_1)))$.
Then there exists $y \in \Gamma(X,N'_1)$
satisfying $i(x) = p(y)$.
Then the pair $(x,y)$ gives an element
$z \in \Gamma(X,M')$ satisfying $p'(z)=x$ and
$i'(z) = y$. The existence of such $z$ shows that
$x$ belongs to $\Gamma(X,p'(i'^{-1}(N'_1)))$, which
completes the proof.
\end{proof}

\begin{lem} \label{lem:Fil_transitivity}
Let $M_1 \xto{f} M_2 \xto{g} M_3$ be a diagram in $\cC^d$
and let $\Fil^\bullet M_1$ be a decreasing filtration on $M_1$.
Then the two filtrations $(g_*(f_* \Fil))^\bullet M_3$
and $((g\circ f)_* \Fil)^\bullet M_3$ on $M_3$ coincide.
\end{lem}

\begin{proof}
Let $M_2 \stackrel{p}{\twoheadleftarrow} M'_1 
\stackrel{i}{\hookleftarrow} M_1$ and
$M_3 \stackrel{q}{\twoheadleftarrow} M'_2 
\stackrel{j}{\hookleftarrow} M_1$
be diagrams of $\cO_X$-modules representing
$f$ and $g$, respectively.
Then for any integer $k$, we have
$(g_*(f_* \Fil))^k M_3 = q(j^{-1}(p(i^{-1}(\Fil^k M_1))))$.

Let $N$ denote the fiber product of the diagram 
$M'_2 \xto{j} M_2 \xleftarrow{p} M'_1$ in the category of $\cO_X$-modules.
Let $p'\colon N \to M'_2$ and $j'\colon N \to M'_1$ denote the canonical
projection. Then the composite $g \circ f$ is represented by
a diagram $M_3 \stackrel{q \circ p'} {\twoheadleftarrow}
N \stackrel{i \circ j'}{\hookrightarrow} M_1$ of $\cO_X$-modules.
Then for any integer $k$, we have
$((g \circ f)_* \Fil)^k M_3 = q(p'(j'^{-1}(i^{-1}(\Fil^k M_1))))$.
It follows from Lemma \ref{lem:201808_1} that we have
$j^{-1}(p(i^{-1}(\Fil^k M_1))) = p'(j'^{-1}(i^{-1}(\Fil^k M_1)))$.
Hence we have $(g_*(f_* \Fil))^k M_3 = ((g \circ f)_* \Fil)^k M_3$,
which proves the claim.
\end{proof}

\subsubsection{}
For a partition $\bd=(d_1,\ldots,d_m)$,
$d=d_1 +\cdots + d_m$, $d_1,\ldots,d_m\ge 1$
of $d$, let $\cE^{\bd}$ denote the following
category. An object in $\cE^{\bd}$ is an
object $M$ in $\cC^{d}$ endowed with a decreasing
filtration 
$$
M = \Fil^1 M \supset \Fil^2 M \supset \cdots \supset 
\Fil^{m+1}M=0
$$
of $M$ by $\cO_X$-submodules such that
for each $i=1,\ldots,m$, $\gr^i M =\Fil^i M/\Fil^{i+1}M$
is an object in $\cC^{d_i}$. For two objects
$(M,\Fil^{\bullet})$, $(N,\Fil^{\bullet})$ in $\cE^{\bd}$,
a morphism from $(M,\Fil^{\bullet} M)$ to $(N,\Fil^{\bullet})$
is a morphism $f$ from $M$ to $N$ in $\cC^d$ 
such that
the filtration $\Fil^{\bullet} N$ coincides with
the the filtration on $(f_* \Fil)^\bullet N$.

We have the following diagram of categories
$$
\cC^{d_1}\times\cdots\times\cC^{d_m}
\xleftarrow{\gr} \cE^{\bd} \xrightarrow{\ffor} \cC^d,
$$
where, $\gr$ (\resp $\ffor$) denotes the functor
that sends an object $(M,\Fil^{\bullet})$ in $\cE^{\bd}$
to the object $(\Gr^1 M,\ldots,\Gr^m M)$ in 
$\cC^{d_1}\times\cdots\times\cC^{d_m}$ 
(\resp the object $M$ in $\cC^d$).

\subsection{Properties of the functor $\ffor$}
\label{sec:property_ffor}
In this paragraph, we give some basic properties of the
functor $\ffor$. Using some of the properties, we prove that
the category $\cE^\bd$ equipped with the atomic topology is
a $Y$-site.

\begin{lem} \label{lem:Ed_Ecat}
The category $\cE^\bd$ is an $E$-category.
\end{lem}

\begin{proof}
It follows from the construction of the functor 
$\ffor$ is faithful. Since $\cC^d$ is an $E$-category,
the claim follows.
\end{proof}

\begin{lem} \label{lem:for_under_equiv}
Let $(M,\Fil^\bullet M)$ be an arbitrary object of $\cE^\bd$.
Then the functor $\cE^\bd_{(M,\Fil^\bullet M)/}
\to \cC^d_{M/}$ induced by $\ffor$
is an equivalence of categories.
\end{lem}

\begin{proof}
Any morphism $f\colon M \to N$ in $\cC^d$ gives
a morphism $(M,\Fil^\bullet M) \to
(N, (f_*\Fil)^\bullet N)$ in $\cE^\bd$. Let us denote
the latter morphism by $\alpha(f)$. It follows from
Lemma \ref{lem:Fil_transitivity} that any morphism
$g \colon  f_1 \to f_2$ in $\cC^d_{M/}$ is a morphism
from $\alpha(f_1)$ to $\alpha(f_2)$ in $\cE^\bd_{(M,\Fil^\bullet M)/}$. 
Thus, by sending $f$ to $\alpha(f)$ for each object $f$ of
$\cC^d_{M/}$, we obtain a functor $\alpha\colon  \cC^d_{M/}
\to \cE^\bd_{(M,\Fil^\bullet M)/}$. It is easy to check 
that the functor $\alpha$ is the inverse of the functor
$\cE^\bd_{(M,\Fil^\bullet M)/}
\to \cC^d_{M/}$ induced by $\ffor$.
\end{proof}

\begin{lem} \label{lem:for_Galois_lifts}
Let $f\colon (M,\Fil^\bullet M)
\to (N,\Fil^\bullet N)$ be a morphism in $\cE^\bd$.
Then $f$ is a Galois covering in $\cE^\bd$ if and only if 
$\ffor(f)$ is a Galois covering in $\cC^d$.
\end{lem}

\begin{proof}
Let $(M',\Fil^\bullet M')$ be an object of $\cE^\bd$
and $g_1, g_2 \colon  (M', \Fil^\bullet M') \to
(M,\Fil^\bullet M)$ two morphisms in $\cE^\bd$
satisfying $f \circ g_1 = f \circ g_2$.
Since $f$ is a Galois covering in $\cC^d$,
we have $g_2 = \alpha \circ g_1$ for some
$\alpha \in \Gal(f)$.
Since $g_1$ is an epimorphism in $\cE^\bd$,
such a morphism $\alpha$ is unique.
It follows from Lemma \ref{lem:for_under_equiv} that
$\alpha$ is an isomorphism in $\cE^\bd$.
This shows that $f$ is a Galois covering in $\cE^\bd$.
\end{proof}

Let $(\cC^d_0, \iota_0^d)$ be the grid of $\cC^d$
introduced in Section \ref{sec:Pair}.
By definition, an object of $\cC_0^d$ is a pair $(L_1,L_2)$
of $\cO_X$-lattices of $V=\cK_X^{\oplus d}$ 
satisfying $L_1 \subset L_2$, and the functor $\iota_0^d$
sends an object $(L_1,L_2)$ of $\cC_0^d$ to the object
$L_2/L_1$.

Let us consider the following decreasing filtration 
$\Fil^\bullet V$ on $V$: for $i=1,\ldots, r+1$,
we set $\Fil^i V = \cK^{\oplus d_i + \cdots + d_r}$,
which we regard as the direct sum of the
last $d_i + \cdots + d_r$ direct summands of $V$. 
For an object $(L_1,L_2)$ of $\cC_0^d$ and
for $i=1,\ldots, r+1$, we let $\Fil^i (L_2/L_1)$ denote the image of
$\Fil^i V \cap L_2$ under the surjection $L_2 \to L_2/L_1$.
Then the pair $\jmath_0^d(L_1,L_2) 
= (L_2/L_1, \Fil^\bullet (L_2/L_1))$ is an object of
$\cE^\bd$ and, by sending $(L_1,L_2)$ to $\jmath_0^d(L_1,L_2)$
for each object $(L_1,L_2)$ of $\cC_0^d$, we obtain a functor
$\jmath_0^d \colon  \cC_0^d \to \cE^\bd$. By construction
we have $\iota_0^d = \ffor \circ \jmath_0^d$.

In Section \ref{sec:Pair adele},  we introduced a category $\cC^d_{\A,0}$
and an isomorphism $\cC_0^d \xto{\cong} \cC^d_{\A,0}$ of
categories. By taking the composite of the inverse of this isomorphism
with $\jmath_0^d$, we obtain a functor $\cC_{\A,0}^d \to \cE^\bd$
that we denote by $\jmath^d_{\A,0}$. 
Recall that an object of $\cC^d_{\A,0}$ is a pair $(\bL_1,\bL_2)$
of $\wh{\cO}_X$-lattices of $V_\A = \A^d$ with $\bL_1 \subset\bL_2$.
For $i=1,\ldots, r+1$,
we set $\Fil^i V_\A = \A^{\oplus d_i + \cdots + d_r}$,
which we regard as the direct sum of the
last $d_i + \cdots + d_r$ direct summands of $V_\A$.
Let $\wh{X} =\Spec\, \wh{\cO}_X$ and
let $\nu_X \colon  \wh{X} \to X$ be as in 
Chapter 2, Section \ref{sec:nuX}.
For an $\wh{\cO}_X$-module $M$, let $M^\sim$
denote the quasi-coherent $\cO_{\wh{X}}$-module
associated with $M$.
Then. by construction, the functor $\jmath_{\A,0}^d$
is isomorphic to the functor that
sends an object $(\bL_1,\bL_2)$ of $\cC_{\A,0}^d$ to the object
$(\nu_{X,*} (\bL_2/\bL_1)^\sim, \Fil^\bullet \nu_{X,*} (\bL_2/\bL_1)^\sim)$,
where for $i=1,\ldots,r+1$, the $\cO_X$-submodule
$\Fil^i \nu_{X,*} (\bL_2/\bL_1)^\sim$ of $\nu_{X,*} (\bL_2/\bL_1)^\sim$
is obtained by applying $\nu_{X,*}$ to the image of
the composite $(\bL_2 \cap \Fil^i V_\A)^\sim
\to \bL_2^\sim \to (\bL_2/\bL_1)^\sim$.

\begin{lem} \label{lem:jmath_surj}
The functor $\jmath_0^d$ is essentially surjective.
\end{lem}

\begin{proof}
Let $(M,\Fil^\bullet M)$ be an arbitrary object of $\cE^\bd$.
For each $i=1,\ldots,r$, let us choose a surjective homomorphism
$\phi_i \colon  \cO_X^{\oplus d_i} \surj \gr^i M$. Since $M$
is of finite length, the map $\Gamma(X, \Fil^i M) \to \Gamma(X, \gr^i M)$
is surjective. Hence we can take a lift $\wt{\phi}_i \colon  \cO_X^{\oplus d_i}
\to \Fil^i M$ of $\phi_i$.  Let $L_1$ denote the kernel of
the homomorphism $\wt{\phi} = (\wt{\phi}_1, \ldots, \wt{\phi}_r) \colon  
\cO_X^d \to M$
and set $L_2 = \cO_X^{\oplus d}$. Then the pair $(L_1,L_2)$ is
an object of $\cC_0^d$ and the homomorphism $\wt{\phi}$
induces an isomorphism from $\jmath_0^d((L_1,L_2))$ to
$(M,\Fil^\bullet M)$ in $\cE^\bd$.
\end{proof}

The following proposition will be used to
show that the functor $\ffor$ is cocontinuous
with respect to the atomic topologies.

\begin{prop} \label{prop:for_over_surj}
For any object $Y$ of $\cE^\bd$, the functor
$\cE^\bd_{/Y} \to \cC^d_{/\ffor(Y)}$
induced by $\ffor$ is essentially surjective.
\end{prop}

\begin{proof}
Let $f\colon  N \to \ffor(Y)$ be an arbitrary object of $\cC^d_{/\ffor(Y)}$.
It follows from Lemma \ref{lem:jmath_surj} that
there exists an object $Y_0$ of $\cC_0^d$
and an isomorphism $\beta \colon  Y \xto{\cong} \jmath_0^d (Y_0)$
in $\cE^\bd$. 
Since $(\cC_0^d, \iota_0^d)$ is a grid of $\cC^d$, it follows from 
Condition (3) in the definition of grid in \cite[Definition 5.5.3]{Grids} that
there exists a morphism $g_0\colon  Z_0 \to Y_0$ in $\cC_0^d$ such that
$\iota_0^d(g_0)$ is isomorphic to $\ffor(\beta) \circ f$ in
$\cC^d_{/\iota_0(Y_0)}$.
Set $Z = \jmath_0^d(Z_0)$ and
$g = \beta^{-1} \circ \jmath_0^d(g_0) \colon  Z \to Y$.
Then $\ffor(g)$ is isomorphic to $f$ in $\cC^d_{/\ffor(Y)}$.
\end{proof}

\begin{cor} \label{cor:Ed_enough_Galois}
The category $\cE^\bd$ has enough Galois coverings.
\end{cor}

\begin{proof}
Let $f\colon  Z \to Y$ be an arbitrary morphism in $\cE^\bd$.
Since $\cC^d$ has enough Galois coverings, there exists
a morphism $g\colon  W \to \ffor(Z)$ such that the composite
$\ffor(f) \circ g$ is a Galois covering in $\cC^d$.
It follows from Proposition \ref{prop:for_over_surj} that
there exists a morphism $g' \colon  W' \to Z$ in $\cE^\bd$
such that $\ffor(g')$ is isomorphic to $g$ in the category
$\cC^d_{/\ffor(Z)}$. Then $\ffor(f\circ g')$ is a Galois
covering in $\cC^d$ since it is isomorphic to $\ffor(f) \circ g$.
In $\cC^d_{/\ffor(Y)}$.
Hence it follows from Lemma \ref{lem:for_Galois_lifts} that
$f \circ g'$ is a Galois covering in $\cE^\bd$.
\end{proof}

\begin{prop} \label{prop:jmath_over_surj}
For any object $Y_0$ of $\cC_0^d$, the functor
$\cC^d_{/Y_0} \to \cE^\bd_{/\jmath_0^d(Y_0)}$
induced by $\jmath_0^d$ is essentially surjective.
\end{prop}

To prove Proposition \ref{prop:jmath_over_surj}, we need some lemmas.
For an object $M$ of $\cC^d$, we say that $M$ is generated by
$e_1,\ldots,e_s \in \Gamma(X,M)$ if the homomorphism $\cO_X^{\oplus s} \to M$ 
given by $e_1,\ldots, e_s$ is surjective.

\begin{lem} \label{lem:jmath_aux_1}
\begin{enumerate}
\item Let $M$ be an object of $\cC^d$
and $p \colon  M \surj N$ a surjective homomorphism of $\cO_X$-modules.
Suppose that elements $e_1,\ldots, e_d \in \Gamma(X,N)$
which generate $N$ over $\cO_X$ are given.
Then there exist elements $\wt{e}_1, \ldots, \wt{e}_d \in \Gamma(X,M)$
such that $p(\wt{e}_i) = e_i$ for $i=1,\ldots,d$ and
that $M$ is generated over $\cO_X$ by $\wt{e}_1, \ldots, \wt{e}_d$.
\item Let $L$ be an $\wh{\cO}_X$-module generated by at most
$d$ elements, and $p \colon  L \surj N$ a surjective homomorphism of 
$\wh{\cO}_X$-modules.
Suppose that elements $e_1,\ldots, e_d \in N$
which generate $N$ over $\wh{\cO}_X$ are given.
Then there exist elements $\wt{e}_1, \ldots, \wt{e}_d \in L$
such that $p(\wt{e}_i) = e_i$ for $i=1,\ldots,d$ and
that $L$ is generated over $\wh{\cO}_X$ by $\wt{e}_1, \ldots, \wt{e}_d$.
\end{enumerate}
\end{lem}

\begin{proof}
We only give a proof of the claim (1). One can prove the claim (2) 
in a similar manner.

We may assume that $X = \Spec\,R$ for some discrete valuation ring $R$.
We regard $M$ and $N$ as $R$-modules of finite length.
Let us choose a uniformizer $\pi \in R$.
Set $k = R/\pi R$, $s =\dim_k N/\pi N$, and $t = \dim_k M/\pi M$.
By changing the order of $e_1,\ldots,e_d$, we may and will assume that
$e_1 \mod{\pi N}, \ldots, e_s \mod{\pi N}$ is a basis of
$N/\pi N$ over $k$.

Let $M'$ denote the kernel of $p \colon  M \to N$.
We have an exact sequence $M'/\pi M' \to M/\pi M \to N/\pi N \to 0$.
Let us choose $\delta_{s+1}, \ldots, \delta_t \in M'$ such that
the images of $\delta_{s+1} \mod{\pi M'}, \ldots,
\delta_t \mod{\pi M'}$ under the homomorphism
$M'/\pi M' \to M/\pi M$ form a basis over $k$ of the kernel of
$M/\pi M \to N/\pi N$.

It follows from Nakayama's lemma that $N$ is generated over $R$ by
$e_1, \ldots, e_s$. For $i=s+1, \ldots, d$, let us choose
$a_{i,1}, \ldots, a_{i,s} \in R$ satisfying
$e_i = \sum_{j=1}^s a_{i,j} e_j$.

For $i=1,\ldots,s$, choose an arbitrary lift $\wt{e}_i \in M$ of $e_i$.
For $i=s+1, \ldots, t$, set
$\wt{e}_i = \delta_i + \sum_{j=1}^s a_{i,j}\wt{e}_j$.
For $i=t+1, \ldots, d$, set
$\wt{e}_i = \sum_{j=1}^s a_{i,j}\wt{e}_j$.
Then it is straightforward to check that the elements
$\wt{e}_1,\ldots,\wt{e}_d$ have the desired property.
\end{proof}

\begin{lem} \label{lem:jmath_aux_2}
Let $M$ be an object of an abelian category $\cA$,
and $M_1, M_2 \subset M$ two subobjects of $M$.
Then the morphism $M \to (M/M_1) \times_{M/(M_1+M_2)}
(M/M_2)$ induced by the commutative diagram
$$
\begin{CD}
M @>>> M/M_1 \\
@VVV @VVV \\
M/M_1 @>>> M/(M_1+M_2)
\end{CD}
$$
is an epimorphism.
\end{lem}

\begin{proof}
We may and will assume that $\cA$ is the category of
left $R$-modules for some associative ring $R$.
Let $(x,y)$ be an element of $(M/M_1) \times_{M/(M_1+M_2)}
(M/M_2)$. Let us choose representatives $\wt{x}, \wt{y} \in M$
of $x$ and $y$, respectively.
Since $\wt{x} \equiv \wt{y}$ modulo $M_1 + M_2$,
there exists $x' \in M_1$ and $y' \in M_2$ satisfying
$\wt{x} - \wt{y} = x'+y'$. Set $z = \wt{x} - x' = \wt{y} + y' \in M$.
Then the image of $z$ under the morphism
$M \to (M/M_1) \times_{M/(M_1+M_2)}
(M/M_2)$ is equal to $(x,y)$. This completes the proof.
\end{proof}

\begin{lem} \label{lem:jmath_aux3}
\begin{enumerate}
\item Let $M$ be an object of $\cC^d$
and $p \colon  M \surj N$ a surjective homomorphism of $\cO_X$-modules.
Let $M' \subset M$ and $N' \subset N$ be $\cO_X$-submodules
satisfying $p(M') =N'$.
Let $s$ be an integer with $1 \le s \le d$.
Suppose that elements $\wt{e}_1,\ldots, \wt{e}_s \in \Gamma(X,M')$
and $e_{s+1}, \ldots, e_d \in \Gamma(X,N)$ are given in such a way that
$M'$ is generated over $\cO_X$ by 
$\wt{e}_1, \ldots, \wt{e}_s$.
and that $N$ is generated over $\cO_X$ by $N'$ and $e_{s+1}, \ldots, e_d$.
Then there exist elements $\wt{e}_{s+1}, \ldots, \wt{e}_d \in \Gamma(X,M)$
such that $p(\wt{e}_i) = e_i$ for $i=s+1,\ldots,d$ and
that $M$ is generated over $\cO_X$ by $\wt{e}_1, \ldots, \wt{e}_d$.
\item Let $L$ be an $\wh{\cO}_X$-module generated by at most $d$ elements
and $p \colon  M \surj N$ a surjective homomorphism of $\wh{\cO}_X$-modules.
Let $L' \subset L$ and $N' \subset N$ be $\wh{\cO}_X$-submodules
satisfying $p(L') =N'$.
Let $s$ be an integer with $1 \le s \le d$.
Suppose that elements $\wt{e}_1,\ldots, \wt{e}_s \in L'$
and $e_{s+1}, \ldots, e_d \in N$ are given in such a way that
$L'$ is generated over $\wh{\cO}_X$ by 
$\wt{e}_1, \ldots, \wt{e}_s$.
and that $N$ is generated over $\wh{\cO}_X$ by $N'$ and $e_{s+1}, \ldots, e_d$.
Then there exist elements $\wt{e}_{s+1}, \ldots, \wt{e}_d \in L$
such that $p(\wt{e}_i) = e_i$ for $i=s+1,\ldots,d$ and
that $L$ is generated over $\wh{\cO}_X$ by $\wt{e}_1, \ldots, \wt{e}_d$.
\end{enumerate}
\end{lem}

\begin{proof}
We only give a proof of the claim (1). One can prove the claim (2) 
in a similar manner.

By applying Lemma \ref{lem:jmath_aux_1} to the surjective
homomorphism $p' \colon  M/M' \to N/N'$ induced by $p$, we may find elements
$\wt{e}'_{s+1}, \ldots, \wt{e}'_d \in \Gamma(X,M/M')$ 
such that $p'(\wt{e}'_i) = e_i \mod{\Gamma(X,N')}$
for $i=s+1,\ldots, d$ and that
$M/M'$ is generated over $\cO_X$ by $\wt{e}_{s+1}, \ldots, \wt{e}_d$.

For $i=s+1, \ldots, d$, the pair $(e_i, \wt{e}'_i)$ gives
an element of $\Gamma(X, N\times_{N/N'} (M/M'))$,
where $N\times_{N/N'} (M/M')$ is the fiber product of
$N$ and $M/M'$ over $N/N'$ in the category of $\cO_X$-modules.
It follows from Lemma \ref{lem:jmath_aux_2} that
the homomorphism $M \to N\times_{N/N'} (M/M')$
of $\cO_X$-modules is surjective. Hence there exists
an element $\wt{e}_i \in \Gamma(X,M)$ satisfying
$p(\wt{e}_i) = e_i$ and $\wt{e}_i \mod{\Gamma(X,M')} = \wt{e}'_i$.
Then the elements $\wt{e}_{s+1}, \ldots,\wt{e}_d$
of $\Gamma(X,M)$ have the desired property.
\end{proof}

\begin{proof}[Proof of Proposition  \ref{prop:jmath_over_surj}]
It suffices to prove similar statements for $(\cC_{\A,0}^d,
\jmath_{\A,0}^d)$.
Let $Y_0$ be an object of $\cC_{\A,0}^d$, and let
$f\colon  Z \to \jmath_0^d(Y_0)$ be a morphism in $\cE^\bd$.
We prove that there exists a morphism $f_0 \colon  Z_0 \to Y_0$
in $\cC_{\A,0}^d$ and an isomorphism $\beta\colon  \jmath_0^d (Z_0)
\xto{\cong} Z$ satisfying $\jmath_0^d (f_0) = f \circ \beta$.
We are easily reduced to the cases where $f$ is either 
a fibration or a cofibration in the sense of 
Chapter 2, Section \ref{sec:modelstr}.
Using the dual functor $\bD_0$ introduced in
Chapter 2, Section \ref{sec:dual},
one can reduce the case where $f$ is 
a cofibration to the case where $f$ is a fibration.
Thus we may assume that $f$ is a fibration.
Let us write $Z=(M,\Fil^\bullet M)$, $Y_0 = (\bL_1,\bL_2)$
and $\jmath_0^d(Y_0) = (N, \Fil^\bullet N)$.
Then $f$ is represented by a diagram $N \stackrel{p}{\twoheadleftarrow}
M \stackrel{\id_M}{\hookrightarrow} M$
of $\cO_X$-modules for some surjective homomorphism
$p\colon  M \to N$ of $\cO_X$-modules.

Let us choose $e_1,\ldots,e_d \in \Gamma(X,N)$ in such a way
that for $i=1,\ldots,r$, the elements
$e_{d_1+\cdots+d_{i-1}+1} \mod{\Gamma(X,\Fil^{i+1}N)}$,
$\ldots$, $e_{d_1+\cdots+d_{i}} \mod{\Gamma(X,\Fil^{i+1}N)}$
generate $\Gr^{i} N$ over $\cO_X$.
By applying Lemma \ref{lem:jmath_aux3} repeatedly,
one can find elements
$\wt{e}_1,\ldots,\wt{e}_d \in \Gamma(X,M)$ such that
$p(\wt{e}_i)= e_i$ for $i=1,\ldots,d$ and that
for $i=1,\ldots,r$, the elements
$\wt{e}_{d_1+\cdots+d_{i-1}+1} \mod{\Gamma(X,\Fil^{i+1}M)}$,
$\ldots$, $\wt{e}_{d_1+\cdots+d_{i}} \mod{\Gamma(X,\Fil^{i+1}M)}$
generate $\Gr^{i} M$ over $\cO_X$.

By construction, we have a canonical isomorphism
$\gamma \colon  \Gamma(X,N) \cong \bL_2/\bL_1$ of $\wh{\cO}_X$-modules,
where we regard $\Gamma(X,N)$ as an $\wh{\cO}_X$-module
in the way as was explained in Lemma \ref{lem:XN}.
For $i=1,\ldots,r$, let $\Fil^i(\bL_2/\bL_1)$ denote
the image of $\bL_2 \cap \Fil^i V_\A$ under the
homomorphism $\bL_2 \to \bL_2/\bL_1$.
Then $\Gr^i (\bL_2/\bL_1)$ is generated over $\wh{\cO}_X$
by $\gamma(e_{d_1+\cdots+d_{i-1}+1}) \mod \Fil^{i+1}(\bL_2/\bL_1)$, 
$\ldots$, $\gamma(e_{d_1+\cdots+d_{i-1}+1}) \mod \Fil^{i+1}(\bL_2/\bL_1)$.

By applying Lemma \ref{lem:jmath_aux3} repeatedly,
one can find elements $\delta_1,\ldots,\delta_d \in \bL_2$
such that $\gamma(e_i) = \delta_i \mod{\bL_1}$ for
$i=1,\ldots,d$ and that 
for $i=1,\ldots,r$, the elements
$\delta_{d_1+\cdots+d_{i-1}+1} \mod{\bL_2 \cap \Fil^{i+1} V_\A}$,
$\ldots$, $\delta_{d_1+\cdots+d_{i}} \mod{\bL_2 \cap \Fil^{i+1} V_\A}$
generate $(\bL_2 \cap \Fil^i V_\A)/(\bL_2 \cap \Fil^{i+1} V_\A)$ 
over $\wh{\cO}_X$.
This in particular implies that
$\bL_2$ is a free $\wh{\cO}_X$-module of rank $d$ with basis
$\delta_1, \ldots, \delta_d$.
Let us consider the homomorphism $\phi\colon \bL_2 \to \Gamma(X,M)$
of $\wh{\cO}_X$-modules that sends $\delta_i$ to
$\wt{e}_i$ for $i=1,\ldots,d$.
Let $\bL'_1$ denote the kernel of $\phi$ and set
$Z_0 = (\bL'_1,\bL_2)$. Then $Z_0$ is an object of $\cC_{\A,0}^d$
and there exists a unique morphism $f_0\colon Z_0 \to Y_0$.
Since $\phi$ is surjective, $\phi$ induces
an isomorphism $\bL_2/\bL'_1 \cong \Gamma(X,M)$ of $\wh{\cO}_X$-modules,
that gives an isomorphism $\beta\colon \iota_{\A,0}^d(Z_0) \cong M$ in $\cC^d$.
It follows from the construction that 
for $i=1,\ldots,d+1$, the image of $\bL_2 \cap \Fil^i V_\A$
under $\phi$ is equal to $\Fil^i M$.
This implies that $\beta$ is an isomorphism from $\jmath_{\A,0}^d(Z_0)$
to $Z$ in $\cE^\bd$. Since it is straightforward to check that
$\jmath_{\A,0}^d(f_0) = f \circ \beta$, the claim is proved.
\end{proof}

\begin{cor} \label{cor:Ed_semi-cofiltered}
The category $\cE^\bd$ is semi-cofiltered.
\end{cor}

\begin{proof}
Let $Y_1 \xto{f_1} Y_3 \xleftarrow{f_2} Y_2$ be
a diagram in $\cE^\bd$. It follows from 
Lemma \ref{lem:jmath_surj} that there exists
an object $Y_{0,3}$ and an isomorphism
$\beta \colon  \jmath_0^d(Y_{0,3}) \xto{\cong} Y_3$ in
$\cE^\bd$. It then follows from Proposition \ref{prop:jmath_over_surj}
that, for $i=1,2$, there exist a morphism $f_{0,i} \colon  Y_{0,i} \to Y_{0,3}$
in $\cC^d_0$ and an isomorphism $\beta_i \colon  \jmath_0^d(Y_{0,i}) \xto{\cong} Y_i$
in $\cE^\bd$ satisfying $f_i \circ \beta_i = \beta \circ \jmath_0^d (f_{0,i})$. 

Since $\cC_0^d$ is $\Lambda$-connected, there exist an object
$Z_0$ of $\cC_0^d$ and morphisms $g_{0,i} \colon  Z_0 \to Y_{0,i}$
for $i=1,2$. We set $Z = \jmath_0^d(Z_0)$ and
$g_i = \beta_i \circ \jmath_0^d(g_{0,i}) $ for $i=1,2$.
Since $\cC_0^d$ is a poset, we have
$f_{0,1} \circ g_{0,1} = f_{0,2} \circ g_{0,2}$.
This implies that $f_1 \circ g_1 = f_2 \circ g_2$.
Thus $\cE^\bd$ is semi-cofiltered. 
\end{proof}

\begin{prop} \label{prop:Ed_Y-site}
$\cE^\bd$ equipped with the atomic topology is a $Y$-site.
\end{prop}

\begin{proof}
It follows from Corollary \ref{cor:Ed_semi-cofiltered} that
one can consider the atomic topology on $\cE^\bd$.
It follows from Lemma \ref{lem:Ed_Ecat} that
$\cE^\bd$, equipped with the atomic topology, is a $B$-site.

It is clear from the definition that the category $\cE^\bd$
is essentially small.
Let $\Fil^\bullet 0$ denote the unique decreasing
filtration on the zero object $0$ in the category of $\cO_X$-modules.
Since any object $Y$ of $\cE^\bd$ admits a
(not necessarily unique) morphism to $(0, \Fil^\bullet 0)$,
It follows from Corollary \ref{cor:Ed_semi-cofiltered} that
$\cE^\bd$ is $\Lambda$-connected. 
Hence it follows from Corollary \ref{cor:Ed_enough_Galois}
that $\cE^\bd$, equipped with the atomic topology, is a $Y$-site.
\end{proof}

\begin{lem} \label{lem:Ed_grid}
The pair $(\cC_0^d, \jmath_0^d)$ is a grid of $\cE^\bd$
equipped with the atomic topology.
\end{lem}

\begin{proof}
It suffices to show that $(\cC_0^d,\jmath_0^d)$ satisfies the four
conditions in the definition of a pregrid given in
\cite[Definition 5.5.1]{Grids}. Since $(\cC_0^d, \iota_0^d)$ is a pregrid
of $\cC^d$, Condition (1) is satisfied and Condition (4) follows from
Lemma \ref{lem:for_under_equiv}. 
It follows from Lemma \ref{lem:jmath_surj}
that Condition (2) is satisfied. It follows from Proposition \ref{prop:jmath_over_surj}
that Condition (3) is satisfied.
\end{proof}

\begin{prop} \label{prop:for_covering_lifting}
The functor $\ffor$ is cocontinuous (cf.\ Section \ref{sec:top_functor})
with respect to the atomic topologies.
\end{prop}

\begin{proof}
This follows from Proposition \ref{prop:for_over_surj} 
and Proposition \ref{prop:Y_covering-lifting}.
\end{proof}

Let $(-) \circ \ffor \colon  \Presh(\cC^d) \to \Presh(\cE^\bd)$
denote the functor given by the composite with $\ffor$.
Since $\cE^\bd$ is essentially small and the category of sets
has small limits and colimits, it follows from \cite[Expose I]{SGA4}
that the functor $(-) \circ \ffor$ has both a left
adjoint and a right adjoint.

Let $J^d$, $J^\bd$ denote the atomic topologies on
$\cC^d$, and $\cE^\bd$, respectively.
Let $\ffor^* \colon  \Shv(\cC^d,J^d) \to \Shv(\cE^\bd,J^\bd)$
denote the composite of the inclusion functor
$\Shv(\cC^d,J^d) \inj \Presh(\cC^d)$, the functor
$(-)\circ \ffor$, and the sheafification functor
$a_{J^\bd}\colon  \Presh(\cE^\bd) \to \Shv(\cE^\bd,J^\bd)$.

The following statements are consequences of 
Proposition \ref{prop:for_covering_lifting} 
together with \cite[III, Prop.\ 2.3 and IV, 4.7]{SGA4}
or \cite[C2.3.18]{Johnstone}.

\begin{cor} \label{cor:geometric1}
\begin{enumerate}
\item The right adjoint of the functor 
$(-) \circ \ffor$ sends a sheaf to a sheaf.
\item Let $\ffor_* \colon  \Shv(\cE^\bd,J^\bd) \to \Shv(\cC^d,J^d)$
denote the functor induced by a right adjoint of $(-)\circ\ffor$.
Then $\ffor_*$ is right adjoint to the functor $\ffor^*$.
\item The pair $(\ffor^*,\ffor_*)$ is a geometric morphism
from $\Shv(\cE^\bd,J^\bd)$ to $\Shv(\cC^d,J^d)$.
\end{enumerate}
\qed
\end{cor}

\subsection{Properties of the functor $\gr$}
In this paragraph, we give some basic properties of the
functor $\gr$. 

\begin{lem} \label{lem:gr_surj}
The functor $\gr$ is essentially surjective.
\end{lem}

\begin{proof}
Let $(M_1, \ldots, M_r)$ be an arbitrary object of 
$\cC^{d_1} \times \cdots \times \cC^{d_r}$.
We set $M = M_1 \oplus \cdots \oplus M_r$ and
$\Fil^i M = M_i \oplus \cdots \oplus M_r$ for
$i=1,\ldots, r+1$. Then
$(M,\Fil^\bullet M)$ is an object of $\cE^\bd$ and
$\gr((M,\Fil^\bullet M))$ is isomorphic to
$(M_1,\ldots,M_r)$ in $\cC^{d_1} \times \cdots
\times \cC^{d_r}$. This proves the claim.
\end{proof}
Let
$\gr_0 \colon  \cC_0^d \to \cC_0^{d_1} \times \cdots \times \cC_0^{d_r}$
denote the functor that sends
$(L_1,L_2)$ to $((\Gr^1 L_1, \Gr^1 L_2),
\ldots, (\Gr^r L_1, \Gr^r L_2))$,
where $\Gr^i L_j = (\Fil^i V \cap L_j)/(\Fil^{i+1} V \cap L_j)$.
We regard $\Gr^i L_j$ as 
an $\cO_X$-lattice of $\cK_X^{d_i}$ via the injection $\Gr^i L_j \inj \Fil^i V/\Fil^{i+1} V \cong \cK_X^{d_i}$.

The diagram
$$
\begin{CD}
\cC_0^d @>{\gr_0}>> \cC_0^{d_1} \times \cdots \times \cC_0^{d_r} \\
@V{\jmath_0^d}VV @VV{\iota_0^{d_1} \times \cdots \times \iota_0^{d_r}}V \\
\cE^\bd @>{\gr}>> \cC^{d_1}\times \cdots \times \cC^{d_r}
\end{CD}
$$
of categories is commutative up to
natural equivalences.

\begin{lem} \label{prop:gr0_over_surj}
The functor $\gr_0$ satisfies the two conditions in
Proposition \ref{prop:Y_covering-lifting}.
\end{lem}

\begin{proof}
Condition (1) is automatically satisfied 
since the topology on $\cC^d$ is atomic.
We prove that Condition (2) is satisfied.

%
Let $Y_0$ be an arbitrary object of $\cC^d_0$.
Let $Z'_0 \to \gr_0(Y_0)$ be a morphism in 
$\cC_0^{d_1} \times \cdots \times \cC_0^{d_r}$.
It suffices to show that there exists a morphism
$Z_0 \to Y_0$ in $\cC^d_0$ such that there
exists a morphism from $\gr_0(Z_0)$ to
$Z'_0$ in 
$\cC_0^{d_1} \times \cdots \times \cC_0^{d_r}$.

Let us write $Y_0 = (L_1,L_2)$ and
$Z'_0 = ((L'_{1,1},L'_{1,2}), \ldots,
(L'_{r,1},L'_{r,2}))$.
For each $i=1,\ldots,r$, let us choose a sufficiently small
$\cO_X$-lattice $L_{i,1} \subset \cK_X^{\oplus d_i}$
and a sufficiently large
$\cO_X$-lattice $L_{i,2} \subset \cK_X^{\oplus d_i}$
satisfying 
$$
L_{1,1} \oplus \cdots \oplus L_{r,1}
\subset L_1 \subset L_2
\subset
L_{1,2} \oplus \cdots \oplus L_{r,2}.
$$
Then the pair
$$
Z_0 = ((L_{1,1}\cap L'_{1,1})\oplus
\cdots \oplus (L_{r,1} \cap L'_{r,1}),
(L_{1,2} + L'_{1,2}) \oplus
\cdots \oplus (L_{r,1}+L'_{r,2}))
$$
satisfy the desired property.
\end{proof}

\begin{prop} \label{prop:gr_covering_lifting}
The functor $\gr$ is cocontinuous (cf.\ Section \ref{sec:top_functor})
with respect to the atomic topologies.
\end{prop}

\begin{proof}
This follows from Proposition \ref{prop:gr0_over_surj} and
Proposition \ref{prop:Y_covering-lifting}.
\end{proof}

Let $(-) \circ \gr \colon  \Presh(\cC^{d_1}\times \cdots \times \cC^{d_r}) 
\to \Presh(\cE^\bd)$
denote the functor given by the composite with $\gr$.
Since $\cE^\bd$ is essentially small and the category of sets
has small limits and colimits, it follows from \cite[Expose I]{SGA4}
that the functor $(-) \circ \gr$ has both a left
adjoint and a right adjoint.

Let $J$ denote the atomic topology on
$\cC^{d_1} \times \cdots \times \cC^{d_r}$.
Let $\gr^* \colon  \Shv(\cC^{d_1} \times \cdots \times \cC^{dr},J) 
\to \Shv(\cE^\bd,J^\bd)$
denote the composite of the inclusion functor
$\Shv(\cC^{d_1} \times \cdots \times \cC^{d_r},J) 
\inj \Presh(\cC^d)$, the functor
$(-)\circ \gr$, and the sheafification functor
$a_{J^\bd}\colon  \Presh(\cE^\bd) \to \Shv(\cE^\bd,J^\bd)$.

The following statements are consequences of 
Proposition \ref{prop:for_covering_lifting} 
together with \cite[III, Prop.\ 2.3 and IV, 4.7]{SGA4}
or \cite[C2.3.18]{Johnstone}.

\begin{cor} \label{cor:geometric2}
\begin{enumerate}
\item The right adjoint of the functor 
$(-) \circ \gr$ sends a sheaf to a sheaf.
\item Let $\gr_* \colon  \Shv(\cE^\bd,J^\bd) \to 
\Shv(\cC^{d_1} \times \cdots \times \cC^{d_r},J)$
denote the functor induced by a right adjoint of $(-)\circ\gr$.
Then $\gr_*$ is right adjoint to the functor $\gr^*$.
\item The pair $(\gr^*,\gr_*)$ is a geometric morphism
from $\Shv(\cE^\bd,J^\bd)$ to 
$\Shv(\cC^{d_1} \times \cdots \times \cC^{d_r},J)$.
\end{enumerate}
\qed
\end{cor}

\begin{defn}
Let $f \colon  M \to N$ be a morphism in $\cC^d$ represented by
the diagram $N \stackrel{p}{\twoheadleftarrow} M' \stackrel{i}{\inj} M$
of $\cO_X$-modules. We say that $f$ is Galois of special type if
the following two conditions are satisfied:
\begin{enumerate}
\item As an $\cO_X$-module, 
$M$ is isomorphic to $M_1^{\oplus d}$ for some object $M_1$ of $\cC^1$.
\item There exists a coherent ideal sheaf $I \subset \cO_X$ satisfying
$i(\Ker\, p) \subset IM \subset i(M')$.
\end{enumerate}
\end{defn}

We note that, if $f \colon  M \to N$ is Galois of special type, then
it follows from Lemma \ref{lem:Gal} that $f$ is a Galois covering in $\cC^d$.

\begin{prop} \label{prop:gr_Galois_surj}
Let $f\colon  Z \to Y$ be a morphism in $\cE^\bd$
such that $\ffor(f)$ is Galois of special type.
Then $f$ and $\gr(f)$ are Galois coverings in 
$\cE^\bd$ and $\cC^{d_1} \times
\cdots \times \cC^{d_r}$, respectively, and the
homomorphism $\Gal(f) \to \Gal(\gr(f))$ is
surjective.
\end{prop}

To prove Proposition \ref{prop:gr_Galois_surj}, we need 
the following lemma.

\begin{lem} \label{lem:gr_Galois_surj_aux1}
Let $f\colon Z \to Y$ be a morphism in $\cE^\bd$. 
Suppose that $\ffor(f)$ is a cofibration and is Galois of special type.
Then the assertion of Proposition \ref{prop:gr_Galois_surj}
is true, i.e., $f$ and $\gr(f)$ are Galois coverings in 
$\cE^\bd$ and $\cC^{d_1} \times
\cdots \times \cC^{d_r}$, respectively, and the homomorphism 
$\Gal(f) \to \Gal(\gr(f))$ is surjective.
\end{lem}

\begin{proof}
First, we note that Lemma \ref{lem:for_Galois_lifts}
implies that $f$ is a Galois covering in $\cE^\bd$.
Let us write $\gr(f) = (f_1,\ldots,f_r)$.
Since $f$ is Galois of special type, it follows from the
definition that $f_i$ is Galois of special type as a morphism in
$\cC^{d_i}$ for $i=1,\ldots,d$.
This in particular implies that 
$\gr(f)$ is a Galois covering in $\cC^{d_1} \times
\cdots \times \cC^{d_r}$,

It remains to prove that the homomorphism 
$\Gal(f) \to \Gal(\gr(f))$ is surjective.
We are easily reduced to the case where $X$ is the spectrum
of a discrete valuation ring.
By induction on the length $r$ of the filtration, we may 
moreover assume that $r=2$.
Then the claim follows from Lemma \ref{lem:gr_Galois_surj_aux2} below.
\end{proof}

\begin{lem} \label{lem:gr_Galois_surj_aux2}
Let $R$ be a discrete valuation ring, $\wp \subset R$ the maximal ideal,
$M$ an $R$-module of finite length, 
and $N, M_2 \subset M$ two submodules. Set $M_1 = M/M_2$,
$N_2 = N \cap M_2$, and let $N_1$ denote the image of $N$
under the surjection $M \surj M_1$.
Let $d_1,d_2 \ge 1$ be two integers and set $d=d_1 + d_2$.
Suppose that there exists an integer $n \ge 0$ such that
$M_1$, $M_2$, and $M$ are isomorphic to $(R/\wp^n)^{\oplus d_1}$,
$(R/\wp^n)^{\oplus d_2}$, and $(R/\wp^n)^{\oplus d}$, respectively.
For $i=1,2$, let $\alpha_i$ be an automorphism of $M_i$
satisfying $\alpha_i(x) =x$ for any $x \in N_i$.
Then there exists an automorphism $\alpha$ of $M$
such that $\alpha(M_2) = M_2$, $\alpha(x)=x$ for
any $x \in N$, and that for $i=1,2$,
the automorphism of $M_i$ induced by $\alpha$ is
equal to $\alpha_i$.
\end{lem}

\begin{proof}
Let us choose a uniformizer $\pi \in \wp$.
It follows from the theory of elementary divisors that
there exist a basis $x_1, \ldots, x_{d_1}$ of
the $R/\wp^n$-module $M_1$ and integers
$0 \le m_1, \ldots, m_{d_1} \le n$ such that
$\pi^{m_1}x_1 ,\ldots, \pi^{m_{d_1}} x_{d_1}$
generate $N_1$ as an $R$-module.
For $i=1,\ldots,d_1$, choose an element $y_i \in N$
whose image under the surjection $N \surj N_1$ is
equal to $\pi^{m_i} x_i$.

It follows from the assumption that the short exact sequence
$$
0 \to M_2 \to M \to M_1 \to 0
$$
splits. Let us fix a splitting $M_1 \to M$ and
regard $M$ as the direct sum $M_1 \oplus M_2$.
For $i=1,\ldots,d_1$, let us write
$y_i = (\pi^{m_i} x_i, y'_i)$ where $y'_i \in M_2$.
Then, since $\pi^n x_i =0$, we have $\pi^{n-m_i} y'_i \in N_2$.
Hence we have $\alpha_2(\pi^{n-m_i} y'_i) = \pi^{n-m_i} y'_i$.
This implies that $y'_i - \alpha_2(y'_i) \in \pi^{m_i} M_2$.
Choose an element $z_i \in M_2$ satisfying 
$y'_i - \alpha_2(y'_i) \in \pi^{m_i} z_i$.
Let $\beta \colon  M_1 \to M_2$ denote the homomorphism of $R$-modules
that sends $x_i$ to $z_i$ for $i=1,\ldots,d_1$.
Let $\alpha \colon  M_1 \oplus M_2 \to M_1 \oplus M_2$ 
denote the endomorphism of the $R$-module
that sends $(x,y) \in M_1 \oplus M_2$ to 
$(\alpha_1(x),\alpha_2(y)+\beta(x))$.
Then $\alpha$ is an automorphism since
$\alpha(M_2) \subset  M_2$ and the endomorphisms of
$M_2$ and $M_1 = (M_1 \oplus M_2)/M_2$ induced by $\alpha$
are equal to $\alpha_2$ and $\alpha_1$, respectively.
It follows from the definition of $\beta$ that
we have $\alpha(\pi^{m_i} x_i, y'_i) = (\pi^{m_i} x_i, y'_i)$
for $i=1,\ldots,d_1$.
Thus, if we regard $\alpha$ as an automorphism of $M$, then
$\alpha$ satisfies the desired property.
\end{proof}

\begin{proof}[Proof of Proposition \ref{prop:gr_Galois_surj}]
By the argument in the proof of Lemma \ref{lem:gr_Galois_surj_aux1},
we are reduced to showing that
$\Gal(f) \to \Gal(\gr(f))$ is surjective in the case when
$X$ is local and $r=2$.
Then the claim follows from Lemma \ref{lem:gr_Galois_surj_aux4} below.
\end{proof}

\begin{lem} \label{lem:gr_Galois_surj_aux4}
Let $R$ be a discrete valuation ring, $\wp \subset R$ the maximal ideal,
$M$ an $R$-module of finite length, 
and $M',N, M_2 \subset M$ three submodules satisfying
$N \subset M'$. Set $M_1 = M/M_2$, $M'_1 = M \cap M_2$
$N_2 = N \cap M_2$, and let $M'_1$, $N_1$ denote the images of 
$M'$ and $N$, respectively, under the surjection $M \surj M_1$.
Let $d_1,d_2 \ge 1$ be two integers and set $d=d_1 + d_2$.
Suppose that there exist integers $n \ge n' \ge 0$ such that
$N \subset \wp^{n'} M \subset M'$ and that
$M_1$, $M_2$, and $M$ are isomorphic to $(R/\wp^n)^{\oplus d_1}$,
$(R/\wp^n)^{\oplus d_2}$, and $(R/\wp^n)^{\oplus d}$, respectively.
For $i=1,2$, let $\alpha_i$ be an automorphism of $M_i$
satisfying $\alpha(M'_i) = M'_i$,
$\alpha(N_i) = N_i$, and $\alpha_i(x) \equiv x$ modulo 
$N_i$ for any $x \in M'_i$.
Then there exists an automorphism $\alpha$ of $M$
such that $\alpha(M_2) = M_2$, $\alpha(M')=M'$,
$\alpha(N)=N$ and $\alpha(x) \equiv x$ modulo $N$ for
any $x \in M'$, and that for $i=1,2$,
the automorphism of $M_i$ induced by $\alpha$ is
equal to $\alpha_i$.
\end{lem}

\begin{proof}
Set $\Mbar = M/\wp^{n'} M$,
$\Mbar_1 = M_1/\wp^{n'} M_1$, and
$\Mbar_2 = M_2/\wp^{n'} M_2$.
We then have the short exact sequence
$$
0 \to \Mbar_2 \to \Mbar \to \Mbar_1 \to 0.
$$
Via this short exact sequence, we regard $\Mbar_2$
as an $R$-submodule of $\Mbar$.
By applying Lemma \ref{lem:gr_Galois_surj_aux2} to
the $R$-module $\Mbar$ and its two submodules
$M'+\wp^{n'} M/\wp^{n'} M$ and 
$\Mbar_2$, we obtain an automorphism
$\overline{\alpha}\colon  \Mbar \to \Mbar$
of the $R$-module $\Mbar$ such that
$\alpha'(\Mbar_2) = \Mbar_2$ and that the
automorphisms of $\Mbar_1$ and $\Mbar_2$
induced by $\overline{\alpha}$ are equal to
$\alpha_1$ modulo $\wp^{n'} M_1$ and
$\alpha_2$ modulo $\wp^{n'} M_2$, respectively.

It follows from the assumption that the short exact sequence
$$
0 \to M_2 \to M \to M_1 \to 0
$$
splits. Let us fix a splitting $M_1 \to M$ and
regard $M$ as the direct sum $M_1 \oplus M_2$.
Then $\Mbar$ is regarded as the direct sum 
$\Mbar_1 \oplus \Mbar_2$.
Let $\overline{\beta} \colon  \Mbar_1 \to \Mbar_2$ 
denote the composite
$$
\Mbar_1 \inj \Mbar_1 \oplus \Mbar_2 \cong
\Mbar \xto{\overline{\alpha}} \Mbar
\cong \Mbar_1 \oplus \Mbar_2 \surj \Mbar_2
$$
and choose an arbitrary $R$-linear lift $\beta'\colon  M_1 \to M_2$
of $\overline{\beta}$. Then for any $(x,y) \in M' 
\subset M = M_1 \oplus M_2$, we have 
$(\alpha_1(x), \alpha_2(y) + \beta'(x)) \equiv (x,y)$
modulo $\pi^{n'} M$.
Using an argument similar to that in the beginning of the 
proof of Lemma \ref{lem:gr_Galois_surj_aux2},
we choose a basis $x_1, \ldots, x_{d_1}$ of
the $R/\wp^n$-module $M_1$ and integers
$0 \le m_1, \ldots, m_{d_1} \le n'$ such that
$\pi^{m_1}x_1 ,\ldots, \pi^{m_{d_1}} x_{d_1}$
generates $M'_1$ as an $R$-module.
For $i=1,\ldots,d_1$, choose an element $y_i \in M'$
whose image under the surjection $M' \surj M'_1$ is
equal to $\pi^{m_i} x_i$.
Let us write
$y_i = (\pi^{m_i} x_i, y'_i)$ where $y'_i \in M_2$.
Then we have $\alpha_2(y'_i) + \beta'(\pi^{m_i} x_i)
\equiv y'_i$ modulo $\pi^{n'} M_2$.
Let us choose $w_i \in M_2$ satisfying
$y'_i - \alpha_2(y'_i) = \beta'(\pi^{m_i} x_i) + \pi^{n'}w_i$
By assumption, we have $\pi^{m_i} x_i - \alpha_1(\pi^{m_i} x_i)
\in N_1$. Let us choose an element $z_i \in N$
whose image under the surjection $N \surj N_1$ is
equal to $\pi^{m_i} x_i - \alpha_1(\pi^{m_i} x_i)$.
Let us write
$z_i = (\pi^{m_i} x_i- \alpha_1(\pi^{m_i} x_i), z'_i)$ 
where $z'_i \in M_2$.
Since $z_i \in N$, we have $z'_i \in \pi^{n'} M_2$.
Let us choose $z''_i \in M_2$ satisfying 
$z'_i = \pi^{n'} z''_i$.

Let $\beta''\colon M_1 \to M_2$ denote the homomorphism
of $R$-modules that sends $x_i$ to
$\pi^{n'-m_i}(w_i - z''_i)$ for $i=1,\ldots,d_1$.
Set $\beta = \beta' + \beta''$.
Then, by definition, we have
\begin{equation} \label{eq:gr_eq1}
(\alpha_1(\pi^{m_i} x_i), \alpha_2(y'_i) + \beta(\pi^m_i x_i))
=(\pi^{m_i} x_i, y'_i) - (\pi^{m_i} x_i- \alpha_1(\pi^{m_i} x_i), z'_i)
= y_i - z_i
\end{equation}
for $i=1,\ldots,d_1$.
Let $\alpha \colon  M_1 \oplus M_2 \to M_1 \oplus M_2$ 
denote the endomorphism of the $R$-module $M_1 \oplus M_2$ 
that sends $(x,y) \in M_1 \oplus M_2$ to 
$(\alpha_1(x),\alpha_2(y)+\beta(x))$.
Then $\alpha$ is an automorphism since
$\alpha(M_2) \subset  M_2$ and the endomorphisms of
$M_2$ and $M_1 = (M_1 \oplus M_2)/M_2$ induced by $\alpha$
are equal to $\alpha_2$ and $\alpha_1$, respectively.
It follows from the equality \eqref{eq:gr_eq1} that
we have $\alpha(\pi^{m_i} x_i, y'_i) \equiv (\pi^{m_i} x_i, y'_i)$
modulo $N$ for $i=1,\ldots,d_1$.
Thus, if we regard $\alpha$ as an automorphism of $M$, then
$\alpha$ satisfies the desired property.
\end{proof}

\begin{prop} \label{prop:gr_sheaf}
Let $F$ be a sheaf of sets on $\cC^{d_1} \times \cdots \times \cC^{d_r}$
with respect to the atomic topology, Then the presheaf $F\circ \gr$ on
$\cE^\bd$ is a sheaf with respect to the atomic topology.
\end{prop}

\begin{proof}
Let $f\colon Z \to Y$ be a morphism in $\cE^\bd$ that is a Galois covering.
It suffices to prove that the map 
$F(\gr(Y)) \to F(\gr(Z))^{\Gal(f)}$ is bijective.
It follows from Proposition \ref{prop:for_over_surj} that 
there exists a morphism $g\colon W \to Z$
in $\cE^\bd$ such that $\ffor(f\circ g)$ is Galois of special type.
Then $\ffor(g)$ is also Galois of special type.
It then follows from Proposition \ref{prop:gr_Galois_surj} that
$f \circ g$ and $\gr(f \circ g)$ are Galois coverings in $\cE^\bd$
and in $\cC^{d_1} \times \cdots \times \cC^{d_r}$, respectively, 
and the homomorphism $\Gal(f \circ g) \to \Gal(\gr(f\circ g))$
is surjective.
This implies that the map $F(\gr(Y)) \to F(\gr(W))^{\Gal(f \circ g)}$
is bijective since $F$ is a sheaf and $F(\gr(W))^{\Gal(f \circ g)}
= F(\gr(W))^{\Gal(\gr(f \circ g))}$.
Similarly, the morphism $g$ is a Galois covering in $\cE^\bd$ and the
map $F(\gr(Z)) \to F(\gr(W))^{\Gal(g)}$ is bijective.
Thus, the short exact sequence
$$
1 \to \Gal(g) \to \Gal(f \circ g) \to \Gal(f) \to 1
$$
shows that the map 
$F(\gr(Y)) \to F(\gr(Z))^{\Gal(f)}$ is bijective, as desired.
\end{proof}

\begin{cor}
The functor $\gr^*$ has a left adjoint.
\end{cor}

\begin{proof}
Since $\cE^\bd$ is essentially small and the category of sets
has small limits and colimits, it follows from \cite[Expose I]{SGA4}
that the functor 
$(-)\circ \gr \colon  \Presh(\cC^{d_1} \times \cdots \times \cC^{d_r}) 
\to \Presh(\cE^\bd)$ given by the composite with $\gr$ has
a left adjoint, which we denote by $\gr'_!$.
Let $\gr_!\colon  \Shv(\cE^\bd,J^\bd) \to 
\Shv(\cC^{d_1} \times \cdots \times \cC^{d_r}, J)$
denote the functor that sends a sheaf $F$ on
$(\cE^\bd,J^\bd)$ to the 
sheaf on $(\cC^{d_1} \times \cdots \times \cC^{d_r},J)$
associated with the
presheaf $\gr'_! F$.
Then for any sheaf $F$ on $(\cE^\bd,J^\bd)$ and for any
sheaf $G$ on $(\cC^{d_1} \times \cdots \times \cC^{d_r},J)$, we have
\begin{align*}
& \Hom_{\Shv(\cC^{d_1}\times \cdots \times \cC^{d_r},J)}(\gr_! F, G) \\
\cong &
\Hom_{\Presh(\cC^{d_1}\times \cdots \times \cC^{d_r})}(\gr'_! F, G) \\
\cong & \Hom_{\Presh(\cE^\bd)}(F, G\circ \gr)
= \Hom_{\Shv(\cE^\bd,J^\bd)}(F,\gr^* G)
\end{align*}
since if follows from Proposition \ref{prop:gr_sheaf} that we have
$\gr^* G = G \circ \gr$.
This shows that the functor $\gr_!$ is left adjoint to the functor
$\gr^*$.
\end{proof}

\subsection{Computation of absolute Galois monoids}

Let us compute the absolute Galois monoid $M_{(\cC_0^d,\jmath_0^d)}$.
We note that $M_{(\cC_0^d,\jmath_0^d)}$ is a topological group
since $J^\bd$ is the atomic topology.
Since $\iota_0^d = \ffor \circ \jmath_0^d$, we have
$(\alpha,\ffor(\gamma_\alpha)) \in M_{(\cC_0^d,\iota_0^d)}$
for any $(\alpha,\gamma_\alpha) \in M_{(\cC_0^d,\jmath_0^d)}$.
Here we denote by $\ffor(\gamma_\alpha)$ the natural
isomorphism $\iota_0^d \xto{\cong} \iota_0^d \circ \alpha$
that associates the isomorphism $\ffor(\gamma_\alpha(Y_0))$
to any object $Y_0$ of $\cC_0^d$.

Since the functor $\ffor$ is faithful, it follows that
the map $M_{\ffor} \colon  M_{(\cC_0^d,\jmath_0^d)} \to M_{(\cC_0^d,\iota_0^d)}$
that sends $(\alpha,\gamma_\alpha) \in M_{(\cC_0^d,\jmath_0^d)}$
to $(\alpha,\ffor(\gamma_\alpha)) \in M_{(\cC_0^d,\iota_0^d)}$
is an injective homomorphism of groups.
For each object $Y_0$ of $\cC_0^d$, we have two different groups
denoted by the same symbol $\bK_{Y_0}$. 
One is a compact open subgroup of $M_{(\cC_0^d,\jmath_0^d)}$ 
and the other is a compact open subgroup of $M_{(\cC_0^d,\iota_0^d)}$.
To avoid confusion, we denote the former group by $\bK^\bd_{Y_0}$
and the latter group by $\bK^d_{Y_0}$.
Then, by definition, we have $\bK^\bd_{Y_0} =
M_{\ffor}^{-1}(\bK^d_{Y_0})$.
This shows that the homomorphism $M_{\ffor}$ is continuous and
the topology on $M_{(\cC_0^d,\jmath_0^d)}$ is equal to the
topology induced from that on $M_{(\cC_0^d,\iota_0^d)}$
via $M_{\ffor}$.

It is clear from the definition of
$M_{\ffor}$ that an element 
$(\alpha,\gamma_\alpha) \in M_{(\cC_0^d,\iota_0^d)}$
belongs to the image of $M_{\ffor}$ if and only if,
for any object $Y_0$ of $\cC_0^d$, the isomorphism
$\gamma_\alpha(Y_0) \colon  \iota_0^d(Y_0) \xto{\cong}
\iota_0^d(\alpha(Y_0))$ are compatible with the
decreasing filtrations on $\iota_0^d(Y_0)$ and
$\iota_0^d(\alpha(Y_0))$ given by 
$\jmath_0^d(Y_0)$ and
$\jmath_0^d(\alpha(Y_0))$.

Let us recall from Chapter 2, Section \ref{sec:hom_phi}
that we have constructed an isomorphism
$\phi \colon  \GL_d(\A_X) \xto{\cong} M_{(\cC_0^d,\iota_0^d)}$
of topological groups.
Let $P_\bd \subset \GL_d$ denote the standard parabolic 
subgroup scheme (over $\cK_X$) of block-upper-triangular
invertible matrices corresponding to the partition $\bd$ of $d$.
It is then straightforward from the definition of
$\phi$ that the isomorphism $\phi$ induces an isomorphism
from $P_\bd(\A_X)$ to the image of $M_{\ffor}$.
Hence the restriction of $\phi$ to $P_{\bd}(\A_X)$
induces an isomorphism
$$
\phi_{P_\bd} \colon  P_{\bd}(\A_X) \xto{\cong} 
M_{(\cC_0^d,\jmath_0^d)}
$$
of topological groups.
As a consequence, we have the following.

\begin{prop}
Let us consider the equivalences
$\omega_{(\cC_0^d,\jmath_0^d)} \colon  \Shv(\cE^\bd,J^\bd)
\xto{\cong} (P_\bd(\A_X)\text{-sets})_\sm$ and
$\omega_{(\cC_0^d,\iota_0^d)} \colon  \Shv(\cC^d,J^d)
\xto{\cong} (\GL_d(\A_X)\text{-sets})_\sm$ of categories
given by the fiber functors with respect to the 
grids $(\cC_0^d,\jmath_0^d)$ and $(\cC_0^d,\iota_0^d)$, respectively.
Then the diagram
$$
\begin{CD}
\Shv(\cC^d,J^d) @>{\omega_{(\cC_0^d,\iota_0^d)}}>> 
(\GL_d(\A_X)\text{-sets})_\sm \\
@V{\ffor^*}VV @VV{\res}V \\
\Shv(\cE^\bd,J^\bd) @>{\omega_{(\cC_0^d,\jmath_0^d)}}>>
(P_\bd(\A_X)\text{-sets})_\sm
\end{CD}
$$
is commutative up to natural isomorphisms,
where $\res$ is the restriction functor.
\qed
\end{prop}

By definition, we have a canonical isomorphism
\begin{equation} \label{eq:canisom}
M_{(\cC_0^{d_1}\times \cdots \times \cC_0^{d_r}, \iota_0^{d_1}
\times \cdots \times \iota_0^{d_r})}
\cong M_{(\cC_0^{d_1},\iota_0^{d_1})} \times
\cdots \times M_{(\cC_0^{d_r},\iota_0^{d_r})}
\end{equation}
of topological groups.
Let $M_\bd = \GL_{d_1} \times \cdots \times \GL_{d_r}$
and let $\phi_\bd \colon  M_\bd(\A_X) \xto{\cong} 
M_{(\cC_0^{d_1}\times \cdots \times \cC_0^{d_r}, \iota_0^{d_1}
\times \cdots \times \iota_0^{d_r})}$ denote
the composite of the isomorphism
$$
M_\bd(\A_X) \xto{\cong} 
M_{(\cC_0^{d_1},\iota_0^{d_1})} \times
\cdots \times M_{(\cC_0^{d_r},\iota_0^{d_r})}
$$
given by the isomorphisms
$\phi \colon  \GL_{d_i}(\A_X) \xto{\cong} M_{(\cC_0^{d_i},
\iota_0^{d_i})}$ in Chapter 2, Section \ref{sec:hom_phi}
for $i=1,\ldots,r$
with the inverse of the isomorphism \eqref{eq:canisom}.

Let $q_\bd \colon  P_\bd \to M_\bd$
denote the projection to the diagonal blocks.
Let $g \in P_\bd(\A_X)$.
Observe that the automorphism of
$\A_X^{\oplus d}$ given by the right multiplication by $g^{-1}$
preserves the decreasing filtration
$\Fil^\bullet V_\A$ of $V_\A = \A_X^{\oplus d}$.
This implies that, if we set
$(\alpha,\gamma_\alpha) = \phi_{P_\bd}(g) \in 
M_{(\cC_0^d,\jmath_0^d)}$ and
$(\beta,\gamma_\beta) = \phi_\bd(q_\bd(g))
\in M_{(\cC_0^{d_1}\times \cdots \times \cC_0^{d_r}, \iota_0^{d_1}
\times \cdots \times \iota_0^{d_r})}$, then
the diagram
$$
\begin{CD}
\cC_0^d @>{\alpha}>> \cC_0^d \\
@V{\gr_0}VV @VV{\gr_0}V \\
\cC_0^{d_1}\times \cdots \times \cC_0^{d_r} 
@>{\beta}>> 
\cC_0^{d_1}\times \cdots \times \cC_0^{d_r}
\end{CD}
$$
is commutative, and moreover for any object
$Y_0$ of $\cC_0^d$, we have
$\gr(\gamma_\alpha(Y_0)) = \gamma_\beta(\gr_0(Y_0))$.
Hence by sending $(\alpha,\gamma_\alpha)$ to 
$(\beta,\gamma_\beta)$, we obtain
a map 
$\gr_M \colon  M_{(\cC_0^d,\jmath_0^d)}
\to M_{(\cC_0^{d_1}\times \cdots \times \cC_0^{d_r}, \iota_0^{d_1}
\times \cdots \times \iota_0^{d_r})}$
that makes the diagram
$$
\begin{CD}
P_\bd(\A_X) @>{\phi_{P_\bd}}>> 
M_{(\cC_0^d,\jmath_0^d)} \\
@V{q_\bd}VV @VV{\gr_M}V \\
M_\bd(\A_X)
@>{\phi_\bd}>> 
M_{(\cC_0^{d_1}\times \cdots \times \cC_0^{d_r}, \iota_0^{d_1}
\times \cdots \times \iota_0^{d_r})}
\end{CD}
$$
commutative.
As a consequence, we have the following.

\begin{prop}
Let us consider the equivalences
$\omega_{(\cC_0^d,\jmath_0^d)} \colon  \Shv(\cE^\bd,J^\bd)
\xto{\cong} (P_\bd(\A_X)\text{-sets})_\sm$ and
$\omega_{(\cC_0^{d_1}\times \cdots \times \cC_0^{d_r}, \iota_0^{d_1}
\times \cdots \times \iota_0^{d_r})} \colon  
\Shv(\cC_0^{d_1}\times \cdots \times \cC_0^{d_r},J)
\xto{\cong} (M_\bd(\A_X)\text{-sets})_\sm$ of categories
given by the fiber functors with respect to the 
grids $(\cC_0^d,\jmath_0^d)$ and $(\cC_0^{d_1}\times \cdots \times \cC_0^{d_r}, \iota_0^{d_1}\times \cdots \times \iota_0^{d_r})$, respectively.
Then the diagram
$$
\begin{CD}
\Shv(\cC_0^{d_1}\times \cdots \times \cC_0^{d_r},J)
@>{\omega_{(\cC_0^{d_1}\times \cdots \times \cC_0^{d_r}, \iota_0^{d_1}
\times \cdots \times \iota_0^{d_r})}}>> 
(M_\bd(\A_X)\text{-sets})_\sm \\
@V{\gr^*}VV @VV{\mathrm{Inf}}V \\
\Shv(\cE^\bd,J^\bd) @>{\omega_{(\cC_0^d,\jmath_0^d)}}>>
(P_\bd(\A_X)\text{-sets})_\sm
\end{CD}
$$
is commutative up to natural isomorphisms,
where $\mathrm{Inf}$ is the inflation functor.
\qed
\end{prop}


\section{$Y$-sites for classical similitude groups}
\label{sec:classical groups}

\subsection{Notation and terminology} \label{sec:classicalgp_notation}
Let $K$ be a non-archimedean local field whose
residue field is finite.
Let $L$ be either $K$ or a separable quadratic extension of $K$.
Let $\cO_K$ and $\cO_L$ denote the ring of integers of $K$ and $L$,
respectively. Let $\fram_L \subset \cO_L$ denote the maximal ideal
and $* \colon  L \to L$ the generator of $\Gal(L/K)$.

Let $D$ be a one-dimensional $L$-vector space,
and $\eps \colon  D \to D$ a $*$-semilinear map satisfying 
$\eps \circ \eps = \id_D$.

\begin{lem}
Let $I$ be an $\cO_L$-lattice of $D$.
Then we have $\eps(I)=I$.
If moreover the extension $L/K$ is at most tamely ramified,
then there exists an $\cO_L$-basis $e$ of $I$ satisfying
$\eps(e) \in \{e,-e\}$.
\end{lem}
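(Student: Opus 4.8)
The plan is to treat the two assertions separately. For the first, I would fix an $\cO_L$-basis $e$ of $I$ and write $\eps(e) = \lambda e$ for some $\lambda \in L^\times$; the condition $\eps\circ\eps = \id_D$ forces $\lambda \cdot \lambda^* = 1$, so $\lambda$ is a norm-one element. Since the valuation satisfies $v_L(\lambda) + v_L(\lambda^*) = 0$ and $v_L(\lambda) = v_L(\lambda^*)$ (the extension $L/K$ preserves valuation up to the ramification index in a way that makes these equal for conjugate elements — more precisely $v_K\circ N_{L/K}(\lambda)=0$ implies $v_L(\lambda)=0$), we get $\lambda \in \cO_L^\times$. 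Hence $\eps(I) = \eps(\cO_L e) = \cO_L \eps(e) = \cO_L \lambda e = \cO_L e = I$, since $\cO_L$ is $*$-stable and $\lambda$ is a unit.

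For the second assertion I would use Hilbert 90 for the extension $L/K$ together with the tameness hypothesis to adjust the basis. We have $\lambda \in \cO_L^\times$ with $N_{L/K}(\lambda) = \lambda\lambda^* = 1$, so by Hilbert's Theorem 90 there exists $\mu \in L^\times$ with $\lambda = \mu^*/\mu$ (or $\mu/\mu^*$, depending on convention). The issue is that $\mu$ need not be a unit, so replacing $e$ by $\mu e$ would change the lattice. The remedy: when $L/K$ is at most tamely ramified, one can choose $\mu \in \cO_L^\times$. I would argue this by reducing modulo $\fram_L$: the residue extension $\ell/k$ is separable, Lang's theorem (or Hilbert 90 for the finite cyclic extension $\ell/k$, or just the surjectivity of the norm on finite fields) gives a unit whose image in $\ell^\times$ realizes the class of $\lambda$ mod $\fram_L$, and then a successive-approximation / Hensel-type argument lifts this to an actual unit $\mu \in \cO_L^\times$ with $\mu^*/\mu = \lambda$; tameness is what guarantees that the norm map $\cO_L^\times \to \cO_K^\times$ (and the relevant Hilbert-90 cohomology) behaves well enough — in the wildly ramified case $H^1$ with unit coefficients can obstruct this. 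Setting $e' = \mu e$ then gives an $\cO_L$-basis of $I$ (since $\mu$ is a unit) with $\eps(e') = \mu^* \lambda^{-1}\cdot\lambda e'/\mu \cdot \mu$... let me be careful: $\eps(\mu e) = \mu^* \eps(e) = \mu^* \lambda e = (\mu^*\lambda/\mu)\, \mu e$, and $\mu^*\lambda/\mu = \mu^*(\mu/\mu^*)/\mu = 1$ if $\lambda = \mu/\mu^*$. So $\eps(e') = e'$.

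Finally I would address the sign. The above only produces $\eps(e')=e'$; to get $\eps(e)\in\{e,-e\}$ one simply notes this is already an instance (the $+e$ case). The $-e$ case arises from the fact that $\mu$ in Hilbert 90 is determined only up to $K^\times$, and the argument could equally well have been set up with a sign twist; but in fact once we have \emph{one} basis $e'$ with $\eps(e') = e'$, any other basis is $ue'$ for $u\in\cO_L^\times$, with $\eps(ue') = u^* e'$, so we can realize $\eps(e'')\in\{e',-e'\}$-type elements, but the clean statement $\eps(e)\in\{e,-e\}$ is satisfied by $e'$ itself. I expect the main obstacle to be the lifting step: showing that tameness lets one choose the Hilbert-90 element $\mu$ to be a unit. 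This is where I would invoke tameness most essentially — perhaps citing that for tamely ramified $L/K$ the norm $N_{L/K}:\cO_L^\times\to\cO_K^\times$ is surjective and $\widehat{H}^{-1}(\Gal(L/K),\cO_L^\times)$ vanishes, or giving a direct uniformizer-and-residue-field computation. The unramified and tamely ramified cases can also be handled by hand (choosing a uniformizer $\varpi_L$ with $\varpi_L^* = \pm\varpi_L$ and Teichmüller-type representatives), which may be the cleanest route to write out.
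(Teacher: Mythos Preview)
Your argument for $\eps(I)=I$ is fine and matches the paper's. The gap is in the second assertion. You claim that under tameness one can choose $\mu\in\cO_L^\times$ with $\lambda=\mu/\mu^*$, equivalently that $\widehat H^{-1}(\Gal(L/K),\cO_L^\times)$ vanishes. This is false in the ramified tame case. Take $L=K(\varpi)$ with $\varpi^2$ a uniformizer of $K$ and residue characteristic $\neq 2$, so $\varpi^*=-\varpi$. For $v\in\cO_L^\times$ one has $v\equiv a\pmod{\fram_L}$ with $a\in\cO_K^\times$, hence $v^*/v\in 1+\fram_L$; in particular $-1$ is a norm-one unit that is \emph{not} of the form $v^*/v$ with $v\in\cO_L^\times$. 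Concretely, with $D=L$, $\eps(x)=-x^*$, and $I=\cO_L$, every unit basis $e$ satisfies $\eps(e)=-e^*$, and $\eps(e)=e$ would force $e^*=-e$, impossible for a unit. So your stronger conclusion $\eps(e')=e'$ is unattainable here; the $-e$ alternative in the lemma is not redundant but genuinely forced.

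The fix is exactly what the paper does (and what your final parenthetical hints at): write the Hilbert~90 element as $w=\varpi^n v$ with $v\in\cO_L^\times$; then $\lambda=w^*/w=(-1)^n v^*/v$, so setting $e=v^{-1}e'$ gives $\eps(e)=(-1)^n e$. In the unramified case (or $L=K$) a uniformizer of $K$ serves as $\varpi$ and the sign is $+1$; in the ramified tame case the parity of $n$ is determined by $\lambda$ and can be odd. You should also treat $L=K$ separately, since there $*=\id$ and Hilbert~90 is vacuous, but $\lambda^2=1$ gives $\lambda=\pm1$ directly.
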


\begin{proof}
Let us choose an arbitrary $\cO_L$-basis $e'$ of $I$.
Since $e'$ is an $L$-basis of $D$, there exists
$u \in L$ satisfying $\eps(e')= u e'$.
The equality $\eps \circ \eps = \id_D$ implies
that $u u^* =1$.
In particular we have $u \in \cO_L^\times$.
Hence we have $\eps(I) = I$.

Suppose moreover that
the extension $L/K$ is at most tamely ramified.
Then we can choose $v \in \cO_L^\times$ satisfying
$uv^*/v \in \{\pm1\}$.
Then the $\cO_L$ basis $e = ve'$ of $I$ satisfies
the desired property.
\end{proof}

Let $V$ be a finite dimensional $L$-vector space,
and $\psi\colon  V \times V \to D$ a $K$-bilinear map.
We say that $\psi$ is a non-degenerate 
$\eps$-hermitian form on $V$ if $\psi$ satisfies the
following conditions:
\begin{itemize}
\item $\psi$ is $*$-sesquilinear, i.e., 
$\psi(av,w) = \psi(v,a^*  w) = a \psi(v,w)$ for any $v,w \in V$
and for any $a \in L$,
\item $\psi(w,v) = \eps(\psi(v,w))$ for any $v,w \in D$,
\item $\{v \in V\ |\ \psi(v,w) = 0 \text{ for all }w \in V \}=\{0\}$.
\end{itemize}

\subsection{Aim}
Let $K$, $L$, $*$, $D$, and $\eps$ be as in 
Section \ref{sec:classicalgp_notation}.
Let us assume that the characteristic of the residue field
of $K$ is odd.
Suppose that a finite dimensional $L$-vector space
$V$ and non-degenerate 
$\eps$-hermitian form $\psi\colon V \times V \to D$ 
on $V$ are given.

The goal of this section is to construct a $Y$-site whose 
absolute Galois monoid is isomorphic to the classical similitude group
$$
G(\psi) = \{ (g,\lambda) \in \GL_K(V)
\times K^\times \ |\ \psi(gv,gw)= \lambda \psi(v,w)
\text{ for all }v,w \in V \}.
$$

First, we have a simple candidate $\wt{\cC}_0^\psi$ for the grid of the $Y$-site.
However, to construct the $Y$-site, for technical reasons, 
we do not directly construct the $Y$-site.
We use a subcategory $\cC_0^\psi\subset \wt{\cC}_0^\psi$
and construct a $Y$-site $(\Bil^\psi, J)$ such that a grid is given by $\cC_0^\psi$.
Then construct a $Y$-site $(\wh{\Bil}^\psi, J)$ such that a grid is given by $\wt{\cC}_0^\psi$.

One justification for using $\wt{\cC}_0^\psi$ as a grid is given in Section~\ref{sec:Okazaki}.
We consider the case where $G(\psi) \cong \GSp_4(K)$ and compute the compact
open subgroups associated with a strongly cyclic object 
(to be defined there) of $\wt{\cC}_0^\psi$.
We see that they are conjugates of the exactly quasi-paramodular groups 
introduced by Okazaki \cite{Okazaki} in his study of local new forms.

The aim of this section is to introduce
a semi-cofiltered category $\Bil^\psi$,
a poset category $\cC_0^\psi$, and a functor
$\iota_0 \colon  \cC_0^\psi \to \Bil^\psi$ in a way
as ``less artificial" as possible such that
$\Bil^\psi$ equipped with the atomic topology $J$
is a $Y$-site, that the pair $(\cC_0^\psi,\iota_0)$
is a grid for $(\Bil^\psi,J)$, and the monoid
$M_{(\cC_0^\psi,\iota_0)}$ is isomorphic to the group
$$
G(\psi) = \{ (g,\lambda) \in \GL_K(V)
\times K^\times \ |\ \psi(gv,gw)= \lambda \psi(v,w)
\text{ for all }v,w \in V \}.
$$

\subsection{The category $\Bil$}
Let $K$, $L$ and $*$ be as in Section \ref{sec:classicalgp_notation}.
In this section we introduce a category $\Bil$.

In a later section we will construct the desired
category $\Bil^\psi$ as a full subcategory of $\Bil$
under the assumption that a finite dimensional $L$-vector space
$V$ and non-degenerate 
$\eps$-hermitian form $\psi\colon V \times V \to D$ 
on $V$ are given and that $K$ is of odd residue characteristic.
We note that we do not require the datum $(V,\psi)$ 
in the definition of $\Bil$.

Let $\cC'$ denote the following category.
The objects of $\cC'$ are $\cO_L$-modules of finite length.
For two objects $M$, $M'$ of $\cC'$, the morphisms from
$M$ to $M'$ in $\cC'$ are the isomorphism classes of the
diagrams
$$
M' \stackrel{p}{\twoheadleftarrow} M'' 
\stackrel{i}{\inj}  M
$$
in the category of $\cO_L$-modules such that
$i$ is injective and $p$ is surjective.
Here two such diagrams are regarded to be isomorphic
if they satisfy the condition in Section \ref{sec:Cd_defn}.
For an integer $d \ge 1$, we let $\cC'^d$ denote the full subcategory of
$\cC'$ whose objects are the $\cO_L$-modules of finite length
generated by at most $d$ elements.

Let $\Bil$ denote the following category.
The objects of $\Bil$ are the quadruples
$(M,N,\vep,\phi)$, where $M$ is an object of $\cC'$, 
$N$ is an object of $\cC'^1$, $\vep\colon  N \to N$
is a $*$-semilinear map
satisfying $\vep \circ \vep = \id_N$,
and $\phi$ is
an $\cO_K$-bilinear map $\phi\colon  M \times M\to N$ 
satisfying $\phi(am,n) = \phi(m,a^* n) = a\phi(m,n)$
and $\phi(n,m) = \vep(\phi(m,n))$ for any
$a \in \cO_L$, $m,n \in M$.
For two objects $(M,N,\vep,\phi)$ and
$(M',N',\vep',\phi')$ of $\Bil$, 
the morphisms from $(M,N,\vep,\phi)$ to
$(M',N',\vep',\phi')$ in $\Bil$ are the 
pairs $(\alpha,\beta) \in \Hom_{\cC'}(M,M')
\times \Hom_{\cC'^1}(N,N')$ satisfying the
following condition: if 
$M' \stackrel{p}{\twoheadleftarrow} M'' 
\stackrel{i}{\inj}  M$ and
$N' \stackrel{q}{\twoheadleftarrow} N'' 
\stackrel{j}{\inj}  N$ are diagrams of
$\cO_L$-modules which represent $\alpha$
and $\beta$ respectively, then $\phi$ satisfies
\begin{itemize}
\item $\phi(i(M'')\times i(M'')) \subset j(N'')$,
\item $\phi(i(\Ker\, p)\times i(M'')) \subset j(\Ker\, q)$,
\item $\phi(i(M'')\times i(\Ker\, p)) \subset j(\Ker\, q)$,
\item for any $x,x' \in N''$ satisfying
$j(x') = \vep(j(x))$, we have $q(x') = \vep'(q(x))$,
\end{itemize}
and the $\cO_K$-bilinear map
$$
i(M'')/i(\Ker\, p) \times i(M'')/i(\Ker\, p)
\to j(N'')/j(\Ker\, q)
$$
induced by $\phi$ corresponds to $\phi'$
via the isomorphisms $M' \cong
M''/(\Ker\, p) \cong i(M'')/i(\Ker\, p)$
and $N' \cong
N''/(\Ker\, q) \cong j(N'')/i(\Ker\, q)$.

Now let us consider the functor 
$F \colon  \Bil \to \cC' \times \cC'^1$
that sends $(M,N,\vep,\phi)$ to $(M,N)$.
It follows from the definition of morphisms in $\Bil$ that
the functor $F$ is faithful.

\begin{lem} \label{lem:classicalgp_epi}
$\Bil$ is an $E$-category, i.e, any morphism in $\Bil$
is an epimorphism.
\end{lem}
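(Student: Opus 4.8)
The statement to prove is Lemma~\ref{lem:classicalgp_epi}: the category $\Bil$ is an $E$-category, i.e.\ every morphism in $\Bil$ is an epimorphism. The plan is to reduce this to the corresponding (already essentially known) fact for the underlying module categories $\cC'$ and $\cC'^1$. Recall that $\cC'$ and $\cC'^1$ are built in exactly the same way as the categories $\cCo{d}$ of Chapter~\ref{cha:Cd}: objects are $\cO_L$-modules of finite length and morphisms are Quillen-type diagrams $M' \twoheadleftarrow M'' \hookrightarrow M$. By the same argument as in Lemma~\ref{lem:epi} (where the analogous statement is proved for $\cCo{d}$ via the anti-equivalence with the poset $\Pair(N)$), every morphism in $\cC'$ and in $\cC'^1$ is an epimorphism. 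So the strategy is: a morphism $(\alpha,\beta)$ in $\Bil$ with $\alpha$ epi in $\cC'$ and $\beta$ epi in $\cC'^1$ must be epi in $\Bil$, using faithfulness of the forgetful functor $F:\Bil \to \cC' \times \cC'^1$.

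First I would spell out what it means for $(\alpha,\beta):(M,N,\vep,\phi) \to (M',N',\vep',\phi')$ to be a morphism in $\Bil$, and observe that $F(\alpha,\beta) = (\alpha,\beta)$ with $\alpha \in \Mor(\cC')$ and $\beta \in \Mor(\cC'^1)$. Then, given two parallel morphisms $(\gamma_1,\delta_1), (\gamma_2,\delta_2): (M',N',\vep',\phi') \to (M'',N'',\vep'',\phi'')$ in $\Bil$ with $(\gamma_1,\delta_1)\circ(\alpha,\beta) = (\gamma_2,\delta_2)\circ(\alpha,\beta)$, I would apply $F$: since $F$ is a functor, this gives $\gamma_1 \circ \alpha = \gamma_2 \circ \alpha$ in $\cC'$ and $\delta_1 \circ \beta = \delta_2 \circ \beta$ in $\cC'^1$. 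Because $\alpha$ is an epimorphism in $\cC'$ and $\beta$ is an epimorphism in $\cC'^1$, we conclude $\gamma_1 = \gamma_2$ and $\delta_1 = \delta_2$, hence $(\gamma_1,\delta_1) = (\gamma_2,\delta_2)$. This shows $(\alpha,\beta)$ is an epimorphism in $\Bil$.

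The one genuine point that needs care — and the place I expect the only real (though still minor) obstacle — is justifying that \emph{every} morphism $\alpha$ of $\cC'$ (resp.\ $\cC'^1$) is an epimorphism, i.e.\ that Lemma~\ref{lem:epi} applies verbatim to these categories. The categories $\cC'$ and $\cC'^1$ differ from $\cCo{d}$ only in that the coefficient ring is $\cO_L$ and there is no global bound $d$ on the number of generators for $\cC'$; the proof via the poset $\Pair(M)$ of pairs of $\cO_L$-submodules of $M$ and the anti-equivalence $\jmath_M : \Pair(M)^{\mathrm{op}} \cong \cC'_{M/}$ goes through unchanged (any full subcategory, such as $\cC'^1$, inherits the property since epimorphisms are detected on hom-sets and a full subcategory has the same hom-sets). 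I would state this as a short remark citing Lemma~\ref{lem:epi} rather than reproving it. With that in hand the lemma follows from the two paragraphs above; the proof is genuinely formal once the forgetful functor's faithfulness (already noted before the lemma statement) and the epi property of the module categories are available.

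\begin{proof}
Since any full subcategory has the same hom-sets as the ambient category, a morphism of $\cC'^1$ is an epimorphism in $\cC'^1$ if and only if it is an epimorphism in $\cC'$. The same argument as in the proof of Lemma~\ref{lem:epi}, applied with the poset $\Pair(M)$ of pairs of $\cO_L$-submodules of an object $M$ of $\cC'$, shows that every morphism in $\cC'$ is an epimorphism; hence every morphism in $\cC'^1$ is an epimorphism as well.

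Let $(\alpha,\beta):(M,N,\vep,\phi) \to (M',N',\vep',\phi')$ be a morphism in $\Bil$, and let $(\gamma_1,\delta_1),(\gamma_2,\delta_2):(M',N',\vep',\phi') \to (M'',N'',\vep'',\phi'')$ be two morphisms in $\Bil$ satisfying $(\gamma_1,\delta_1)\circ(\alpha,\beta) = (\gamma_2,\delta_2)\circ(\alpha,\beta)$. Applying the faithful functor $F:\Bil \to \cC' \times \cC'^1$, which sends $(\alpha,\beta)$ to the pair $(\alpha,\beta)$ with $\alpha \in \Mor(\cC')$ and $\beta \in \Mor(\cC'^1)$, we obtain $\gamma_1 \circ \alpha = \gamma_2 \circ \alpha$ in $\cC'$ and $\delta_1 \circ \beta = \delta_2 \circ \beta$ in $\cC'^1$. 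Since $\alpha$ is an epimorphism in $\cC'$ and $\beta$ is an epimorphism in $\cC'^1$, we conclude $\gamma_1 = \gamma_2$ and $\delta_1 = \delta_2$. As $F$ is faithful, this gives $(\gamma_1,\delta_1) = (\gamma_2,\delta_2)$. Hence $(\alpha,\beta)$ is an epimorphism in $\Bil$, which proves the claim.
\end{proof}
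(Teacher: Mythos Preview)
Your proof is correct and follows essentially the same approach as the paper: the paper's proof is the one-liner ``Since $\cC' \times \cC'^1$ is an $E$-category, the claim follows from the faithfulness of $F$,'' and you have simply unpacked this by (i) justifying that $\cC'$ and $\cC'^1$ are $E$-categories via the $\Pair(M)$ argument of Lemma~\ref{lem:epi}, and (ii) spelling out explicitly that faithful functors reflect epimorphisms. One small remark: your sentence ``a morphism of $\cC'^1$ is an epimorphism in $\cC'^1$ if and only if it is an epimorphism in $\cC'$'' is stronger than what full-subcategory-ness alone gives (in general only the implication from the ambient category to the full subcategory holds), but since you subsequently prove that \emph{every} morphism in $\cC'$ is an epimorphism, only that one direction is needed and the argument goes through.
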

\begin{proof}
Since $\cC' \times \cC'^1$ is an $E$-category, 
the claim follows from the faithfulness of $F$.
\end{proof}

The following statement is obvious from the
definition of morphisms in $\Bil$. 
We state it as a lemma since we would like to 
use it for reference.
\begin{lem} \label{lem:classicalgp_aux1}
Let $X=(M,N,\vep,\phi)$ be an object of $\Bil$. 
Let $f=(\alpha,\beta)\colon  (M,N) \to (M',N')$
be a morphism in $\cC' \times \cC'^1$.
Let
$M' \stackrel{p}{\twoheadleftarrow} M'' 
\stackrel{i}{\inj}  M$ and
$N' \stackrel{q}{\twoheadleftarrow} N'' 
\stackrel{j}{\inj}  N$ be diagrams of
$\cO_L$-modules which represent $\alpha$
and $\beta$ respectively.
Then there exists an object $Y$ of $\Bil$
such that $F(Y) = (M',N')$ and
$f$ is a morphism from $X$ to $Y$ in $\Bil$
if and only if $\phi(i(M''),i(M'')) \subset j(N'')$
and $\phi(i(M''),i(\Ker\, p)) \subset j(\Ker\, q)$.
Moreover, such an object $Y$ is unique if it exists.
\qed
\end{lem}

\begin{lem} \label{lem:classicalgp_under}
Let $X$ be an object of $\Bil$. Then the
functor $\Bil_{X/} \to (\cC'\times \cC'^1)_{F(X)/}$
induced by $F$ between the undercategories is
fully faithful.
\end{lem}

\begin{proof}
It follows from Lemma \ref{lem:classicalgp_epi}
that the categories $\Bil_{X/}$ and $(\cC'\times \cC'^1)_{F(X)/}$
are quasi-posets. Hence it suffices to prove that
the functor $\Bil_{X/} \to (\cC'\times \cC'^1)_{F(X)/}$
is full.

Let $f\colon  X \to Y$ and $g \colon  X \to Z$ be morphisms in
$\Bil$ and let $h\colon  F(Y) \to F(Z)$ be a morphism
in $\cC'\times \cC'^1$ satisfying
$F(g) = h \circ F(f)$. It suffices to prove that
$h$ comes from  a morphism $Y \to Z$  in $\Bil$.
For $S \in \{X,Y,Z\}$, let us write 
$S=(M_S,N_S,\vep_S,\phi_S)$.

For $t \in \{f,g,h\}$, let us write
$t = (\alpha_t,\beta_t)$.
Let
$M_T \stackrel{p_t}{\twoheadleftarrow} M_t 
\stackrel{i_t}{\inj}  M_S$ and
$N_T \stackrel{q_t}{\twoheadleftarrow} N_t 
\stackrel{j_t}{\inj}  N_S$ be diagrams of
$\cO_L$-modules which represent $\alpha_t$
and $\beta_t$ respectively.
Here $S$ and $T$ denote the domain 
and codomain of $t$.
Since $g = h \circ f$, it follows that
$i_g(M_g)$, $i_g(\Ker\, p_g)$, $j_g(N_g)$ 
and $j_g(\Ker\, q_g)$ are equal to 
$i_f(p_f^{-1}(i_h(M_h)))$,
$i_f(p_f^{-1}(i_h(\Ker\, p_h)))$,
$j_f(q_f^{-1}(j_h(N_h)))$ and
$j_f(q_f^{-1}(j_h(\Ker\, q_h)))$,
respectively.

Hence we have
\begin{align*}
& \phi_Y(i_h(M_h) \times i_h(M_h)) \\
= & q_f(j_f^{-1}(\phi_X(i_f(p_f^{-1}(i_h(M_h))) \times
i_f(p_f^{-1}(i_h(M_h)))))) \\
= & q_f(j_f^{-1}(\phi_X(i_g(M_g) \times i_g(M_g)))) \\
\subset & q_f(j_f^{-1}(j_g(N_g))) \\
= & q_f(j_f^{-1}(j_f(q_f^{-1}(j_h(N_h))))) \\
= & j_h(N_h)
\end{align*}
and
\begin{align*}
& \phi_Y(i_h(M_h) \times i_h(\Ker\, p_h)) \\
= & q_f(j_f^{-1}(\phi_X(i_f(p_f^{-1}(i_h(M_h))) \times
i_f(p_f^{-1}(i_h(\Ker\, p_h)))))) \\
= & q_f(j_f^{-1}(\phi_X(i_g(M_g) \times i_g(\Ker\, p_g)))) \\
\subset & q_f(j_f^{-1}(j_g(\Ker\, q_g))) \\
= & q_f(j_f^{-1}(j_f(q_f^{-1}(j_h(\Ker\, q_h))))) \\
= & j_h(\Ker\, q_h).
\end{align*}
Hence it follows from Lemma \ref{lem:classicalgp_aux1}
that there exists an object $Z'$ of $\Bil$ with
$F(Z') = (M_Z,N_Z)$ such that $h$ is a morphism
from $Y$ to $Z'$ in $\Bil$.
Since $g = h \circ f$ is a morphism from $Y \to Z$
in $\Bil$ and a morphism from $Y$ to $Z'$ in $\Bil$
at the same time.
Hence it follows from \ref{lem:classicalgp_aux1}
that we have $Z'=Z$.
This completes the proof.
\end{proof}

Let $d \ge 1$ be an integer and let
$\Bil^d$ denote the full-subcategory of $\Bil$
whose objects are objects $X=(M,N,\vep,\phi)$
of $\Bil$ such that $M$ is an object of $\cC'^d$.

\begin{lem} \label{lem:classicalgp_Galois}
Let $X=(M,N,\vep,\phi)$, $Y=(M_0,N_0,\vep_0,\phi_0)$
be objects of $\Bil^d$ and let $f = (\alpha,\beta)$ be
a morphism from $X$ to $Y$ in $\Bil$.
Suppose that $\phi_0$ is not identically zero and that
the morphism $\alpha \colon  M \to M_0$ is a Galois covering in 
$\cC'^d$. Then $f$ is a Galois covering in $\Bil^d$ in the
sense of Definition 3.1.2 of \cite{Grids}. 
\end{lem}

\begin{proof}
It follows from Lemma \ref{lem:classicalgp_epi} that
$\Bil^d$ is an $E$-category.

Let $X'=(M',N',\vep',\phi')$ be an arbitrary object of
$\Bil^d$. To prove that $f$ is a Galois covering
in $\Bil^d$,
it suffices to show that for any two morphisms $g,g' \colon  X' \to X$
in $\Bil^d$ with $f \circ g = f \circ g'$, there exists
an automorphism $h\colon  X \to X$ in $\Bil^d$ 
satisfying $g' = h \circ g$.
In fact, since $\Bil^d$ is an $E$-category, such an automorphism
$h$ is unique if exists, and is automatically an automorphism over $Y$.

Let us write $g = (\gamma,\delta)$ and 
$g'=(\gamma',\delta')$. Since
$\alpha \circ \gamma = \alpha \circ \gamma'$ and
$\alpha$ is a Galois covering in $\cC'^d$, there exists
a unique automorphism $\eta\colon M \to M$ in $\cC'^d$ over $M_0$
satisfying $\gamma' = \eta \circ \gamma$.
It suffices to show that there exists an automorphism
$\kappa\colon  N \to N$ in $\cC'^1$ such that
$\delta' = \kappa \circ \delta$ and
$(\eta,\kappa)$ is an automorphism of $X$ in $\Bil^d$.
Let $N' \stackrel{q}{\twoheadleftarrow} N'' 
\stackrel{j}{\inj}  N$ and
$N' \stackrel{q'}{\twoheadleftarrow} N''' 
\stackrel{j'}{\inj}  N$ be diagrams of $\cO_L$-modules
which represent $\delta$ and $\delta'$, respectively.
Since $N$ is an object of $\cC'^1$, there exists
an integer $r \in \Z$ satisfying the following
property:
\begin{itemize}
\item If $r \ge 0$, then we have $j'(N''') = \fram_L^r j(N'')$
and $j'(\Ker\, q') = \fram_L^r j(\Ker\, q)$,
\item If $r \le 0$, then we have $\fram_L^{-r} j'(N''') = j(N'')$
and $\fram_L^{-r} j'(\Ker\, q') = j(\Ker\, q)$.
\end{itemize}
Let
$M' \stackrel{p}{\twoheadleftarrow} M'' 
\stackrel{i}{\inj}  M$ and
$M' \stackrel{p'}{\twoheadleftarrow} M''' 
\stackrel{i'}{\inj}  M$ be diagrams of $\cO_L$-modules
which represent $\gamma$ and $\gamma'$, respectively.
The existence of the automorphism $\eta$ implies
that we have $i(M'') = i'(M''')$ and
$i(\Ker\, p) = i'(\Ker\, p')$.

Let $N_1$ denote the image of the map $\phi\colon M \times M \to N$.
We then have $N_1 = \fram_L^r N_1$ if $r \ge 0$
and $\fram_L^{-r} N_1 = N_1$ if $r \le 0$. 
By our assumption on $\phi$, we have $N_1 \neq 0$.
This implies $r=0$.
Since $j'(N''') = j(N'')$ and $j'(\Ker\, q') = j(\Ker\, q)$,
there exists an automorphism
$\kappa\colon  N \to N$ in $\cC'^1$ such that
$\delta' = \kappa \circ \delta$.

It follows from Lemma \ref{lem:classicalgp_under}
that $(\eta,\kappa)$ is an automorphism of $X$ in $\Bil^d$.
This proves that $f$ is a Galois covering in $\Bil^d$.
\end{proof}

We introduce some terminology which we will use in later
paragraphs. For a finitely generated $\cO_L$-module $M$
and an integer $d$, we say that $M$ is generated exactly by
$d$ elements if $M/\fram_L M$ is a $d$-dimensional
$\cO_L/\fram_L$-vector space.

\begin{defn}
Let $X = (M,N,\vep,\phi)$ be an object of $\Bil$.
For $m \in M$, let $\phi(m,-)$
denote the $*$-semilinear map 
from $M$ to $N$ that sends $n \in M$ to $\phi(m,n)$.
Let $\Hom_{*}(M,N)$ denote the set of $*$-semilinear
maps from $M$ to $N$. We endow $\Hom_{*}(M,N)$ with a
structure of an $\cO_L$-module by setting
$(af)(m) = a(f(m))$ for any $a \in \cO_L$, 
$f \in \Hom_{*}(M,N)$.
Let us consider the map $b_\phi \colon  M \to \Hom_{*}(M,N)$
that sends $m \in M$ to $\phi(m,-)$.
It is easy to see that $b_\phi$ is $\cO_L$-linear.
\begin{enumerate}
\item We say that $X$ is non-degenerate if
the map $b_\phi$ is injective.
\item Let $d$ be an integer. We say that $X$ is
$d$-good if both $M$ and the image of $b_\phi$ 
are generated exactly by $d$ elements.
\end{enumerate}
\end{defn}

\begin{lem} \label{lem:classicalgp_lift}
Suppose that $K$ is of odd residue characteristic.
Let $d \ge 1 $ an integer and
$X = (M,N,\vep,\phi)$ an object 
of $\Bil^d$ which is $d$-good and non-degenerate.
Let $p \colon  \wt{M} \to M$ (\resp $q\colon  \wt{N} \to N$)
be a surjective homomorphism
of $\cO_L$-modules such that
$\wt{M}$ (\resp $\wt{N}$) is generated exactly by
$d$ elements (\resp one element) and
$\Ker\, p$ (\resp $\Ker\, q$) is isomorphic to
$(\cO_L/\fram_L \cO_L)^{\oplus d}$
(\resp $\cO_L/\fram_L \cO_L$) as an $\cO_L$-module.
Let $\alpha \colon  \wt{M} \to M$ and
$\beta \colon  \wt{N} \to N$ be morphisms in $\cC'$
represented by the diagrams
$M \stackrel{p}{\twoheadleftarrow} \wt{M} 
\xto{\id}  \wt{M}$ and
and
$N \stackrel{q}{\twoheadleftarrow} \wt{N} 
\xto{\id} \wt{N}$,
respectively.
Let $\wt{X}$ and $\wt{X}'$ be objects of $\Bil^d$
such that $F(\wt{X}) = F(\wt{X}') = (\wt{M},\wt{N})$ and
$f=(\alpha,\beta)$ is a morphism from $\wt{X}$ to
$X$ and a morphism from $\wt{X}'$ to $X$ at the
same time.
Then there exists a morphism $g \colon \wt{X} \to \wt{X}'$
in $\Bil^d$ satisfying $f = f \circ g$.
\end{lem}

\begin{proof}
Let us choose a uniformizer $\varpi \in \cO_L$
and $c \in \{\pm 1\}$ satisfying $\varpi^* = c \varpi$.
We may assume that 
$M = \bigoplus_{i=1}^d \cO_L/\fram_L^{n_i}\cO_L$,
$N = \cO_L/\fram_L^{n} \cO_L$, for positive integers
$n_i$ ($1\le i\le d$) and $n$, and that there exist
$c' \in \{\pm 1\}$ such that 
$\vep(a \mod{\fram_L^{n} \cO_L})
= c' a^* \mod{\fram_L^{n} \cO_L}$ for any $a \in \cO_L$.
By assumption we have $1 \le n_1,\ldots,n_d \le n$.

Let us write $\wt{X} = (\wt{M}, \wt{N}, \wt{\vep}, \wt{\phi})$
and $\wt{X}' = (\wt{M}, \wt{N}, \wt{\vep}', \wt{\phi}')$.
We may further assume that
$\wt{M} = \bigoplus_{i=1}^d \cO_L/\fram_L^{n_i+1}\cO_L$,
$N = \cO_L/\fram_L^{n+1} \cO_L$, that
$p$ and $q$ are canonical surjections, and that
$\wt{\vep}(a \mod{\fram_L^{n+1} \cO_L})
= \wt{\vep}'(a \mod{\fram_L^{n+1} \cO_L})
= c' a^* \mod{\fram_L^{n+1} \cO_L}$ for any $a \in \cO_L$.
Let $e_1,\ldots,e_d \in \wt{M}$ denote the standard
generators of $\wt{M}$ and let 
$G = (\wt{\phi}(e_i,e_j)), G'= (\wt{\phi}'(e_i,e_j))
\in \Mat_d(\cO/\fram_L^{n+1} \cO)$ denote the
matrix representation of $\wt{\phi}$, $\wt{\phi}'$
respectively.
Let $S = \diag(\varpi^{n-n_1},\cdots, \varpi^{n-n_d})$
and $T = \diag(\varpi^{n_1},\ldots,\varpi^{n_d})$.
It suffices to prove that there exists $X \in \Mat_d(\cO_L)$
such that $G' = (1 + {}^t (T X)^*) G (1+TX)$.
Let us choose lifts $\wt{G}, \wt{G}' \in \Mat_d(\cO_L)$
of $G$, $G'$ in such a way that
${}^t \wt{G}^* = c' \wt{G}$ and ${}^t \wt{G}'^* = c \wt{G}'$.
We have $\wt{G}' \equiv \wt{G} \mod{\fram_L^{n} \Mat_d(\cO)}$.
Let us write $\wt{G}' - \wt{G} = \varpi^{n} Y$ for some $Y \in \Mat_d(\cO)$.
Then we have ${}^t Y^* = c^{n} c' Y$.

By assumption there exists $H \in \Mat_d(\cO_L)$
such that $\wt{G} = S H$. Moreover by 
the non-degeneracy of $\phi$, we have $H \in \GL_d(\cO_L)$.
Since ${}^t \wt{G}^* = c' \wt{G}$, we have
$\wt{G} = c' S^* {}^t H^*$.
Then for any $X \in \Mat_d(\cO)$, we have
$(1 + {}^t (T X)^*) G (1+TX)
= \wt{G} + \varpi^{n} (c^{n} c' {}^t(HX)^* + HX) \mod{\fram_L^{n} \Mat_d(\cO_L)}$.
Hence if we set $X = \frac{1}{2} H^{-1} Y$, then
we have the desired equality
$G' = (1 + {}^t (T X)^*) G (1+TX)$.
\end{proof}

%

\subsection{A pair $(\wt{\cC}_0^\psi, \wt{\iota}_0)$}
Let $K$, $L$, $*$, $D$, and $\eps$ be as in 
Section \ref{sec:classicalgp_notation}.
Suppose that a finite dimensional $L$-vector space
$V$ and a non-degenerate $\eps$-hermitian
form $\psi \colon  V \times V \to D$ on $V$ are given.

In this section we introduce a poset category
$\wt{\cC}_0^\psi$ and a functor 
$\wt{\iota}_0 \colon  \wt{\cC}_0^\psi \to \Bil$.
In a later section we construct the desired 
category $\cC_0^\psi$ as a full subcategory of
$\wt{\cC}_0^\psi$, and the desired functor $\iota_0$ 
as a functor induced by the restriction of
$\wt{\iota}_0$ to $\cC_0^\psi$ so that
$(\cC_0^\psi, \iota_0)$ is a grid.

Let $\Pair^\psi$ denote the following poset.
The elements of $\Pair^\psi$ are the quadruples
$(L_1,L_2,I_1,I_2)$ of $\cO_L$-lattices
$L_1,L_2 \subset V$ and $I_1,I_2 \subset L$
satisfying $L_1 \subset L_2$, $I_1 \subset I_2$,
$\psi(L_2 \times L_2) \subset I_2$, and
$\psi(L_1 \times L_2) \subset I_1$.
For two elements $(L_1,L_2,I_1,I_2)$ and
$(L'_1,L'_2,I'_1,I'_2)$ of $\Pair^\psi$, we have
$(L_1,L_2,I_1,I_2) \le (L'_1,L'_2,I'_1,I'_2)$
if and only if $L'_1 \subset L_1 \subset L_2
\subset L'_2$ and
$I'_1 \subset I_1 \subset I_2 \subset I'_2$.
Let $\wt{\cC}_0^\psi$ denote the poset category
corresponding to the order dual of $\Pair^\psi$.
Observe that, for an object $X=(L_1,L_2,I_1,I_2)$
of $\wt{\cC}_0^\psi$, the $*$-semilinear map $\eps$ 
induces a $*$-semilinear map $\eps_X \colon  I_2/I_1 
\to I_2/I_1$ and the $\eps$-hermitian form $\psi$
induces an $\cO_K$-bilinear map
$L_2/L_1 \times L_2/L_1 \to I_2/I_1$
which we denote by $\psi_X$.
Let $\wt{\iota}_0\colon  \wt{\cC}_0^\psi \to \Bil$ denote
the functor that sends an object
$X=(L_1,L_2,I_1,I_2)$ of $\wt{\cC}_0^\psi$ to 
$(L_2/L_1,I_2/I_1,\eps_X,\psi_X)$, and that sends a
morphism from $(L_1,L_2,I_1,I_2)$ to 
$(L'_1,L'_2,I'_1,I'_2)$
in $\wt{\cC}_0^\psi$
to the morphism in $\Bil$ represented by the diagrams
$L'_2/L'_1 \twoheadleftarrow L'_2/L_1 \inj L_2/L_1$
and $I'_2/I'_1 \twoheadleftarrow I'_2/I_1 \inj I_2/I_1$.

\begin{defn} \label{defn:classicalgp_expander}
Let $X = (L_1,L_2,I_1,I_2)$ be an object of $\wt{\cC}_0^\psi$ 
and $n \ge 0$ be an integer. We set
$S^n(X) = (\fram_L^n L_1, L_2, \fram_L^n I_1, I_2)$ 
and $T^n(X) = (\fram_L^{2n} L_1, 
\fram_L^{-n} L_2, \fram_L^n I_1, \fram_L^{-2n} I_2)$.
Then $S^n(X)$ and $T^n(X)$ are objects of $\wt{\cC}_0^\psi$
and we have a commutative diagram
$$
\begin{CD}
\cdots @>>> T^2(X) @>>> T^1(X) @>>> T^0(X) @= X \\
@. @VVV @VVV @| \\
\cdots @>>> S^2(X) @>>> S^1(X) @>>> S^0(X) @= X.
\end{CD}
$$
in $\wt{\cC}_0^\psi$.
\end{defn}

\subsection{The category $\Bil^\psi$}
In this section we construct the category $\Bil^\psi$.

Let us denote by $\wt{\Bil}^\psi$ the full subcategory
of $\Bil$ whose collection of objects is the 
essential image of the functor $\wt{\iota}_0$.

\begin{lem} \label{lem:classicalgp_under2}
Let $X$ be an object of $\wt{\cC}_0^\psi$.
Then the functor 
$\wt{\iota}_{0,X/}\colon 
\wt{\cC}^\psi_{0,X/} \to \wt{\Bil}^\psi_{\wt{\iota}_0(X)/}$ 
between the undercategories induced by $\wt{\iota}_0$
is an equivalence of categories.
\end{lem}

\begin{proof}
It is easy to see that the composite of
$\wt{\iota}_{0,X/}$ with the functor
$F_{\wt{\iota}_0(X)/}\colon  \wt{\Bil}^\psi_{\wt{\iota}_0(X)/}
\to (\cC' \times \cC'^1)_{F(\wt{\iota}_0(X))/}$
induced by $F$ is fully faithful.
Hence it follows from Lemma \ref{lem:classicalgp_under}
that $\wt{\iota}_{0,X/}$ is fully faithful.

Lemma \ref{lem:classicalgp_aux1} implies that the
functor $\wt{\iota}_{0,X/}$ is essentially surjective.
Thus $\wt{\iota}_{0,X/}$ gives an equivalence of 
categories.
\end{proof}

\begin{lem} \label{lem:classicalgp_Galois2}
Let $X=(M,N,\vep,\phi)$ be an object of $\wt{\Bil}^\psi$
such that $\phi$ is not identically zero.
Then, for any morphism
$f\colon  Y \to X$ in $\wt{\Bil}^\psi$,
there exists a morphism $g\colon  Z \to Y$
in $\wt{\Bil}^\psi$ such that the composite
$f \circ g$ is a Galois covering in $\wt{\Bil}^\psi$.
\end{lem}

\begin{proof}
We may assume that $Y = \wt{\iota}_0(Y_0)$ for some
object $Y_0$ of $\wt{\cC}_0^\psi$.
It follows from Lemma \ref{lem:classicalgp_under2} that
there exists a morphism $f_0 \colon  Y_0 \to X_0$ in
$\wt{\cC}_0^\psi$ such that $f$ is equal to the
composite of $\wt{\iota}_0(f_0)$ with an isomorphism
$\wt{\iota}_0(X_0) \xto{\cong} X$ in $\wt{\Bil}^\psi$.
We may assume that $X = \wt{\iota}_0(X_0)$
and $f = \wt{\iota}_0(f_0)$.

Let us write $X_0 =(L_1,L_2,I_1,I_2)$
and $Y_0=(L'_1,L'_2,I'_1,I'_2)$.
Choose a sufficiently large integer $n \ge 1$
so that $\fram_L^{2n} L_2 \subset L'_1$,
$L'_2 \subset \fram_L^{-n} L_2$,
$\fram_L^n I_2 \subset I'_1$, and
$I'_2 \subset \fram_L^{-2n} I_2$.
Then $Z_0 = T^n(X_0)$
is an object of $\wt{\cC}_0^\psi$
and there exists a morphism $g_0\colon  Z_0 \to Y_0$ in $\wt{\cC}_0^\psi$.
We set $Z = \wt{\iota}_0(Z_0)$ and
$g = \wt{\iota}_0(g_0)$.

Set $d =\dim_L V$.
It follows from Lemma \ref{lem:Gal} that the morphism
from $\fram_L^{-n} L_2 /\fram_L^{2n} L_2$ to $L_2/L_1$
give by the diagram
$$
L_2/L_1 \twoheadleftarrow L_2/\fram_L^{2n} L_2
\inj  \fram_L^{-n} L_2/\fram_L^{2n} L_2
$$
is a Galois covering in $\cC'^d$.
Since $\phi$ is not identically zero, it follows from
Lemma \ref{lem:classicalgp_Galois} that $g \circ f$
is a Galois covering in $\Bil^d$.
This in particular shows that $g \circ f$ is a Galois covering
in $\wt{\Bil}^\psi$, which completes the proof.
\end{proof}

We note that the category $\wt{\Bil}^\psi$ is
not necessarily semi-cofiltered. The following lemma
gives a counterexample.

\begin{lem} \label{lem:classicalgp_Lambda}
Let $X=(M,N,\vep,\phi)$ be an object of
$\wt{\Bil}^\psi$. Suppose that $\phi$ is identically zero.
Then the overcategory $\wt{\Bil}^\psi_{/X}$
is not $\Lambda$-connected.
\end{lem}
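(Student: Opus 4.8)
The plan is to refute $\Lambda$-connectedness of $\wt{\Bil}^\psi_{/X}$ by exhibiting two objects over $X$ that have no common predecessor in $\wt{\Bil}^\psi_{/X}$ (this is the operative consequence of $\Lambda$-connectedness used elsewhere in the paper, e.g.\ in the proof that the base category is semi-cofiltered). The first step is to reduce the problem to pure lattice combinatorics. Every object of $\wt{\Bil}^\psi$ is isomorphic to $\wt{\iota}_0(W_0)$ for some $W_0$ in $\wt{\cC}_0^\psi$, and by Lemma~\ref{lem:classicalgp_under2} the functor $\wt{\iota}_{0,Z_0/}$ between undercategories is an equivalence for every $Z_0$. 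Hence, writing $X\cong\wt{\iota}_0(X_0)$ with $X_0=(L_1,L_2,I_1,I_2)\in\Pair^\psi$, a common predecessor of $\wt{\iota}_0(Y_{1,0})$ and $\wt{\iota}_0(Y_{2,0})$ over $X$ exists if and only if there is $Z_0\in\Pair^\psi$ with $Z_0\ge Y_{1,0}$ and $Z_0\ge Y_{2,0}$ (such a $Z_0$ then automatically satisfies $Z_0\ge X_0$). So it suffices to produce $Y_{1,0},Y_{2,0}\ge X_0$ in $\Pair^\psi$ admitting no common upper bound there. Note that the hypothesis $\phi\equiv0$ says precisely $\psi(L_2\times L_2)\subseteq I_1$, which is what will provide the slack needed below; for $\dim_L V\le1$ all lattices in $V$ are totally ordered and the reduced assertion is easily checked, so one restricts to $\dim_L V\ge2$, which is the case of interest.

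The construction enlarges $L_2$ in two incompatible directions. Choose an $\cO_L$-basis of $L_2$ (which is free over the discrete valuation ring $\cO_L$); using non-degeneracy of $\psi$ one may arrange two of the basis vectors $v_1,v_2$ so that the cross value $\psi(v_1,v_2)$ is nonzero. Fix a large integer $N$ and set $L_2^{(1)}=\fram_L^{-N}\cO_L v_1+(\text{the remaining part of }L_2)$ and $L_2^{(2)}=\fram_L^{-N}\cO_L v_2+(\text{the remaining part of }L_2)$. Because $\psi(L_2\times L_2)\subseteq I_1\subseteq I_2$, one checks that by shrinking $L_1$ to a suitable $L_1^{(i)}\subseteq L_1$ and enlarging $I_1,I_2$ to $I_1^{(i)}\subseteq I_1\subseteq I_2\subseteq I_2^{(i)}$ by the bounded amount dictated by the new cross terms, the quadruples $Y_{i,0}=(L_1^{(i)},L_2^{(i)},I_1^{(i)},I_2^{(i)})$ lie in $\Pair^\psi$ and satisfy $Y_{i,0}\ge X_0$; moreover one may take $I_2^{(1)}=I_2^{(2)}$. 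Then any common upper bound $Z_0=(L_1^Z,L_2^Z,I_1^Z,I_2^Z)$ would have to satisfy $L_2^Z\supseteq L_2^{(1)}+L_2^{(2)}\supseteq\fram_L^{-N}\cO_L v_1+\fram_L^{-N}\cO_L v_2$ and $I_2^Z\subseteq I_2^{(1)}$. But the defining inequality $\psi(L_2^Z\times L_2^Z)\subseteq I_2^Z$ of $\Pair^\psi$ then forces $\psi(\fram_L^{-N}v_1,\fram_L^{-N}v_2)=\fram_L^{-2N}\psi(v_1,v_2)\subseteq I_2^{(1)}$, which fails once $N$ is large since $I_2^{(1)}$ was enlarged only by an $N$-linear amount. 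Hence no such $Z_0$ exists, and $\wt{\Bil}^\psi_{/X}$ is not $\Lambda$-connected.

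The only delicate point is the bookkeeping: one must verify all four defining inclusions of $\Pair^\psi$ for $Y_{1,0}$ and $Y_{2,0}$ ($\psi(L_2^{(i)}\times L_2^{(i)})\subseteq I_2^{(i)}$ and $\psi(L_1^{(i)}\times L_2^{(i)})\subseteq I_1^{(i)}$) together with the four order relations $Y_{i,0}\ge X_0$, doing so for a general non-degenerate $\eps$-hermitian form and not merely the split one, and to keep the valuation estimate on $\psi(\fram_L^{-N}v_1,\fram_L^{-N}v_2)$ uniform enough that it outgrows the enlargement of $I_2$ needed to make the $Y_{i,0}$ admissible. This is where essentially all of the (routine) work lies; once an explicit admissible triple $(X_0,Y_{1,0},Y_{2,0})$ is fixed, the non-existence of a common upper bound is immediate from the displayed inclusion chain.
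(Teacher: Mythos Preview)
Your reduction to the poset $\Pair^\psi$ is incorrect, and this is fatal. Lemma~\ref{lem:classicalgp_under2} tells you that morphisms \emph{out of} $\wt{\iota}_0(Z_0)$ are parametrised by elements $W_0\le Z_0$ of $\Pair^\psi$ together with an isomorphism $\wt{\iota}_0(W_0)\cong Y_i$ in $\wt{\Bil}^\psi$. It does \emph{not} say that a morphism $\wt{\iota}_0(Z_0)\to\wt{\iota}_0(Y_{i,0})$ over $X$ forces $Z_0\ge Y_{i,0}$; it only forces $Z_0\ge W_{i,0}$ for some $W_{i,0}$ with $\wt{\iota}_0(W_{i,0})\cong\wt{\iota}_0(Y_{i,0})$ over $X$, and there are many such $W_{i,0}$ (any similitude of $\psi$ acting on the quadruple gives one). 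In fact $\Pair^\psi$ is a \emph{directed} poset: given any two quadruples one can shrink $L_1,I_1$ and enlarge $L_2,I_2$ to produce a common upper bound --- this is exactly the assertion, in the proof of Corollary~\ref{cor:classicalgp_cofiltered}, that $\wt{\cC}_0^\psi$ is cofiltered. So the problem you reduce to, finding $Y_{1,0},Y_{2,0}\ge X_0$ with no common upper bound in $\Pair^\psi$, has no solution. Your construction witnesses this: the claimed constraint $I_2^Z\subseteq I_2^{(1)}$ has the wrong direction; from $Z_0\ge Y_{1,0}$ the definition of the order gives $I_2^Z\supseteq I_2^{(1)}$, so $I_2^Z$ may be taken large enough to absorb $\psi(L_2^Z\times L_2^Z)$.

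The paper's proof works precisely by exploiting the non-injectivity of $\wt{\iota}_0$ that you overlooked. It keeps $L_1,L_2$ fixed and uses the isomorphism $\gamma:\wt{\iota}_0(L_1,L_2,\varpi_K^{-1}I_1,\varpi_K^{-1}I_2)\xto{\cong}\wt{\iota}_0(L_1,L_2,I_1,I_2)=X$ given by multiplication by $\varpi_K$ on the $D$-side (invisible in $\wt{\Bil}^\psi$) to manufacture a second morphism $f':Y'\to X$ that does \emph{not} arise from an inequality in $\Pair^\psi$ above $X_0$. A putative common cover $Z$ then forces the image of the bilinear form $\phi'$ in a suitable cyclic quotient $j(N'')/\fram_L^n j(N'')$ to be simultaneously nonzero (via $f\circ g$, using $\psi(L_2,L_2)\not\subset\fram_L^n I_2$) and equal to its own $\varpi_K$-multiple (via $f'\circ g'$), contradicting Nakayama's lemma.
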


\begin{proof}
We may assume that $X = \wt{\iota}_0(X_0)$
for some object $X_0$ of $\wt{\cC}_0^\psi$.
Let us write $X_0 = (L_1,L_2,I_1,I_2)$.
Since $\phi$ is identically zero, we have
$\psi(L_2,L_2) \subset I_1$.
Let us choose a uniformizer $\varpi_K \in \cO_K$.
Let us choose an integer $n \ge 1$
satisfying $\fram_L^n I_2 \subset I_1$ and
$\psi(L_2,L_2) \not\subset \fram_L^n I_2$.
Then $Y_0 = S^n(X_0)$ is an object
of $\wt{\cC}_0^\psi$ and there exists a morphism
$f_0 \colon  Y_0 \to X_0$ in $\wt{\cC}_0^\psi$.

We also set 
$X'_0 = (L_1, L_2, \varpi_K^{-1} I_1, \varpi_K^{-1} I_2)$
and $Y'_0 = S^n(X_0)$.
Then $X'_0$ and $Y'_0$ are objects of $\wt{\cC}_0^\psi$
and there exists a morphism
$f'_0 \colon  Y'_0 \to X'_0$ in $\wt{\cC}_0^\psi$.
Set $Y = \wt{\iota}_0(Y_0)$, $Y' = \wt{\iota}_0(Y'_0)$,
and $X' = \wt{\iota}_0(X'_0)$.
The pair of the identity map on $L_2/L_1$ and the
isomorphism $\fram_K^{-1} I_2/ \fram_K^{-1} I_1
\xto{\cong} I_2/I_1$ given by the multiplication by $\varpi_K$
induces an isomorphism 
$\gamma \colon  X' \xto{\cong} X$ in $\wt{\Bil}^\psi$.
Set $f = \wt{\iota}_0(f_0)$ and 
$f' = \gamma \circ \wt{\iota}_0(f'_0)$.
Let us consider the diagram
$Y \xto{f} X \xleftarrow{f'} Y'$ in $\wt{\Bil}^\psi$.

Assume that there exist an object $Z$ of $\wt{\Bil}^\psi$
and morphisms $g \colon  Z \to Y$, $g' \colon  Z \to Y'$
satisfying $f \circ g = f' \circ g'$.
Let us write $Z = (M',N',\vep',\phi')$
and $f \circ g = (\alpha,\beta)$.
Let us choose diagrams
$$
M \stackrel{p}{\twoheadleftarrow} M''
\stackrel{i}{\inj} M'
$$
and
$$
N \stackrel{q}{\twoheadleftarrow} N''
\stackrel{j}{\inj} N'
$$
of $\cO_L$-modules which represent 
$\alpha$ and $\beta$, respectively.

Then we have $\phi'(i(M''),i(M'')) \subset j(N'')$.
Let $N_1$ denote the $\cO_L$-submodule of
$j(N'')/\fram_L^n j(N'')$
generated by the image of the composite
$i(M'') \times i(M'') \xto{\phi'} j(N'') \to j(N'')/\fram_L^n j(N'')$.
Since $f \circ g = (\alpha,\beta)$, we have
$N_1 \neq \{0\}$. Since $f' \circ g' = (\alpha,\beta)$,
we have $N_1 = \varpi_K N_1$.
Hence by Nakayama's lemma we have $N_1 = \{0\}$,
which leads to contradiction.
This proves that 
$\wt{\Bil}^\psi_{/X}$ is not $\Lambda$-connected.
\end{proof}

Let $d = \dim_L V$.
We let $\Bil^\psi$ denote the
full subcategory of $\wt{\Bil}^\psi$ whose
objects are the objects $X=(M,N,\vep,\phi)$ of 
$\wt{\Bil}^\psi$ such that $\phi$ is $d$-good and
the overcategory $\wt{\Bil}^\psi_{/X}$ is $\Lambda$-connected.

\begin{prop} \label{prop:classicalgp_etale}
Suppose that $K$ is of odd residue characteristic.
Let $X$ be an object 
in $\wt{\Bil}^\psi$ which is $d$-good and non-degenerate.
Then $X$ is an object of $\Bil^\psi$.
\end{prop}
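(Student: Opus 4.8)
The plan is to verify directly the two conditions in the definition of $\Bil^\psi$: that $\phi$ is $d$-good, and that the overcategory $\wt{\Bil}^\psi_{/X}$ is $\Lambda$-connected. The first is exactly the hypothesis, so all the work goes into $\Lambda$-connectivity. First I would record two easy consequences of the hypotheses: since $X$ is $d$-good, the underlying $\cO_L$-module $M$ is generated exactly by $d\ge 1$ elements, hence $M\neq 0$; and since $X$ is non-degenerate, $b_\phi$ is injective, so $\phi$ is not identically zero. This is precisely the situation covered by Lemma \ref{lem:classicalgp_Galois2}: every morphism $Y\to X$ in $\wt{\Bil}^\psi$ admits a refinement $Z\to Y$ whose composite $Z\to X$ is a Galois covering. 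Consequently, to prove that two objects $Y_1\to X$ and $Y_2\to X$ of $\wt{\Bil}^\psi_{/X}$ admit a common lower bound, it suffices to produce a common refinement over $X$ of two Galois coverings $Z_1\to X$ and $Z_2\to X$; and by Lemma \ref{lem:classicalgp_Galois} together with the realization functor $\wt{\iota}_0$ we may further assume these arise from Galois coverings on the $M$-coordinate.

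The heart of the argument is then the explicit construction of such a common refinement inside the poset $\wt{\cC}_0^\psi$ of quadruples of lattices. Using Lemma \ref{lem:classicalgp_under2} (and the faithfulness of $\wt{\iota}_0$) I would realize $X$, $Z_1$, $Z_2$ as $\wt{\iota}_0$ of objects $X_0=(L_1,L_2,I_1,I_2)$, $Z_{1,0}$, $Z_{2,0}$ of $\wt{\cC}_0^\psi$ equipped with morphisms over $X_0$; since $\wt{\cC}_0^\psi$ is a poset, any element of $\Pair^\psi$ lying below both $Z_{1,0}$ and $Z_{2,0}$ maps compatibly to $X_0$ automatically. On the $M$-coordinate the natural candidate uses $L_2^{(1)}+L_2^{(2)}$ and $L_1^{(1)}\cap L_1^{(2)}$; the obstruction is that the bilinearity conditions defining $\Pair^\psi$ force one to adjust the $I$-coordinates in a way that is a priori incompatible with the maps to $Z_{1,0}$ and $Z_{2,0}$ — exactly the phenomenon isolated in Lemma \ref{lem:classicalgp_Lambda} for $\phi$ identically zero. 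Here I would invoke the expanders $S^n$, $T^n$ of Definition \ref{defn:classicalgp_expander}, together with the $d$-good and non-degeneracy hypotheses on $X$, to control the $I$-coordinate: these hypotheses pin down $I_1$ and $I_2$ in terms of $L_1$, $L_2$ and $\psi$ up to a bounded $\fram_L$-power ambiguity (roughly, $I_2$ is comparable to $\psi(L_2,L_2)+I_1$, and $I_1$ is controlled by $L_1$ and $\psi$), so that after replacing each $Z_{i,0}$ by a suitable $T^n(Z_{i,0})$ the $I$-coordinates become large enough for the lattice sums and intersections on the $M$-side to be matched by the corresponding $\psi$-dual lattices on the $I$-side. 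The assumption that $K$ has odd residue characteristic enters exactly at this adjustment step (and in the supporting Lemma \ref{lem:classicalgp_lift}), through the division by $2$ needed to modify hermitian forms along such expansions.

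I expect the main obstacle to be precisely this last point: establishing that "$d$-good and non-degenerate" is strong enough to forbid the independent rescaling of the $I$-coordinate that destroys $\Lambda$-connectivity in general, and then propagating this control through the expanders $T^n$ and through the Galois refinements furnished by Lemmas \ref{lem:classicalgp_Galois2} and \ref{lem:classicalgp_lift}. Concretely, the crux is a stability statement: for a $d$-good non-degenerate $X_0$, the dual-lattice description of $(I_1,I_2)$ survives passage to the covers occurring in those lemmas. Once that is in hand, the common refinement is obtained simply by taking intersections and sums of lattices in both coordinates, and the remaining verifications — that the resulting quadruple lies in $\Pair^\psi$ and that its image under $\wt{\iota}_0$ maps over $X$ to each $Z_i$ — are routine.
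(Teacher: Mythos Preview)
Your reduction to Galois coverings via Lemma~\ref{lem:classicalgp_Galois2} is fine but not needed, and the real gap is in the sentence ``I would realize $X$, $Z_1$, $Z_2$ as $\wt{\iota}_0$ of objects $X_0,Z_{1,0},Z_{2,0}$ equipped with morphisms over $X_0$.'' Lemma~\ref{lem:classicalgp_under2} is an \emph{under}category equivalence: fixing a lift $Z_{1,0}$ of $Z_1$ lets you realize $Z_1\to X$ as $Z_{1,0}\to X_0$ in $\wt{\cC}_0^\psi$, and fixing $Z_{2,0}$ gives $Z_{2,0}\to X'_0$; but there is no reason for $X_0$ and $X'_0$ to coincide. (The essential surjectivity of the \emph{over}category functor is condition~(3) in the proof of Proposition~\ref{prop:classicalgp_Y}, which is proved only after and using the present proposition.) So the step ``any element of $\Pair^\psi$ below both $Z_{1,0}$ and $Z_{2,0}$ maps compatibly to $X_0$'' begs exactly the question at hand: identifying the two lattice models of $X$.

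The paper's argument is built around this point. Starting from $Y\to X\leftarrow Y'$ one obtains two realizations $X_0,X'_0$ of $X$ and then applies Lemma~\ref{lem:classicalgp_lift} \emph{iteratively} along the towers $S^n(X_0)$, $S^n(X'_0)$ to produce compatible isomorphisms $\alpha_n:\wt{\iota}_0(S^n(X_0))\xto{\cong}\wt{\iota}_0(S^n(X'_0))$. Passing to the limit, these $\alpha_n$ assemble into $\cO_L$-linear isomorphisms $\gamma:L_2\xto{\cong}L'_2$, $\delta:I_2\xto{\cong}I'_2$ compatible with $\psi$ and $\eps$, hence (after $L$-linear extension) an isometry $(\wt{\gamma},\wt{\delta})$ of $(V,\psi,D,\eps)$. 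This isometry carries $T^n(X_0)$ to $T^n(X'_0)$, and for $n$ large the images $\wt{\iota}_0(T^n(X_0))$ dominate $Y$ and, via $(\wt{\gamma},\wt{\delta})$, also $Y'$; that gives the common refinement. So Lemma~\ref{lem:classicalgp_lift} is not a device for tweaking the $I$-coordinate as you suggest, but the engine that manufactures, one step at a time, an element of $G(\psi)$ identifying the two lattice models; the hypotheses $d$-good, non-degenerate, and odd residue characteristic are exactly what make that lemma applicable at every stage.
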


\begin{proof}
Let $f \colon  Y \to X$ and $f' \colon  Y' \to X$ be 
two morphisms in $\wt{\Bil}^\psi$.
We prove that there exist an object $Z$ of $\wt{\Bil}^\psi$
and two morphisms $g\colon  Z \to Y$ and $g' \colon  Z \to Y'$
satisfying $f\circ g = f' \circ g'$.
We may assume that $Y = \wt{\iota}_0(Y_0)$ and
$Y' =\wt{\iota}_0(Y'_0)$ for some objects
$Y_0$, $Y'_0$ of $\wt{\cC}_0^\psi$.

It follows from Lemma \ref{lem:classicalgp_under2}
that there exists a unique tuple $(f_0,f'_0,\alpha,\alpha')$
of morphisms $f_0 \colon  Y_0 \to X_0$, $f'_0\colon  Y_0 \to X'_0$ in
$\wt{\cC}_0^\psi$ and isomorphisms
$\alpha\colon  \wt{\iota}_0(X_0) \xto{\cong} X$,
$\alpha' \colon  \wt{\iota}_0(X'_0) \xto{\cong} X$
satisfying $f = \alpha \circ \wt{\iota}_0(f_0)$
and $f' = \alpha' \circ \wt{\iota}_0(f'_0)$.
Let us write 
$X_0 = (L_1,L_2,I_1,I_2)$,
$X'_0 = (L'_1,L'_2,I'_1,I'_2)$,
$Y_0 = (M_1,M_2,J_1,J_2)$, and
$Y'_0 = (M'_1,M'_2,J'_1,J'_2)$.
For an integer $n \ge 1$, set
$X_n = S^n(X_0)$
and 
$X'_n = S^n(X'_0)$.
Then $X_n$ and $X'_n$ are objects of $\wt{\cC}_0^\psi$
and we have two projective systems
$\cdots \to X_2 \to X_1 \to X_0$ and
$\cdots \to X'_2 \to X'_1 \to X'_0$
in the category $\wt{\cC}_0^\psi$.

By successively applying Lemma \ref{lem:classicalgp_lift},
we have an isomorphism 
$\alpha_n\colon  \wt{\iota}_0(X_n) \xto{\cong} \wt{\iota}_0(X'_n)$ 
in $\wt{\Bil}^\psi$ for each $n \ge 1$ such that the
diagram
$$
\begin{CD}
\cdots @>>> \wt{\iota}_0(X_2) @>>> 
\wt{\iota}_0(X_1) @>>> \wt{\iota}_0(X_0) \\
@. @V{\alpha_2}VV @V{\alpha_1}VV @VV{\alpha'^{-1} \circ \alpha}V \\
\cdots @>>> \wt{\iota}_0(X'_2) @>>> 
\wt{\iota}_0(X'_1) @>>> \wt{\iota}_0(X'_0)
\end{CD}
$$
in $\wt{\Bil}^\psi$ is commutative.
The family $(\alpha_n)_{n \ge 1}$ of isomorphisms
induces an $\cO_L$-linear isomorphisms 
$\gamma \colon  L_2 \xto{\cong} L'_2$
and $\delta \colon  I_2 \xto{\cong} I'_2$
satisfying the equalities 
$\psi(\gamma(x),\gamma(x'))
= \delta(\psi(x,x'))$ 
and $\eps(\delta(y)) = \delta(\eps(y))$
for any $x,x' \in L_2$ and $y \in I_2$.
By extending $\gamma$ and $\delta$ by
$L$-linearity, we obtain
$L$-linear isomorphisms 
$\wt{\gamma} \colon  V \xto{\cong} V$
and $\wt{\delta} \colon  D \xto{\cong} D$
satisfying the equalities 
$\psi \circ (\wt{\gamma}, \wt{\gamma})
= \wt{\delta} \circ \psi$ 
and $\eps \circ \wt{\delta} = \wt{\delta} \circ \eps$.

For an integer $n \ge 1$, set
$Z_n = T^n(X_0)$
and 
$Z'_n = T^n(X'_0)$.
Then $Z_n$ and $Z'_n$ are objects of $\wt{\cC}_0^\psi$
and the pair $(\wt{\gamma},\wt{\delta})$ induces
an isomorphism $\gamma_n \colon  \wt{\iota}_0(Z_n) \xto{\cong}
\wt{\iota}_0(Z'_n)$ in $\wt{\Bil}^\psi$.
Choose a sufficiently large integer $m \ge 1$ so that
there exist morphisms $g_0\colon  Z_m \to Y_0$ and $g'_0 \colon  Z'_m \to Y'_0$
in $\wt{\cC}_0^\psi$. Then the triple 
$(Z, g,g') 
= (\wt{\iota}_0(Z_m),\wt{\iota}_0(g_0), \wt{\iota}_0(g'_0) \circ \gamma_m)$
has the desired property.
\end{proof}

For $\cO_L$-lattices $L' \subset V$ and $I \subset D$,
we let $L'^\perp_{/ I}$ denote the subset
$$
L'^\perp_{/ I} = \{ x \in V\ |\ \psi(x,y) \in I
\text{ for all } y \in L' \}
$$
of $V$. Then $L'^\perp_{/ I}$ is an $\cO_L$-lattice
of $V$.

Let $L' \subset V$ be an $\cO_L$-lattice and
$I_1, I_2 \subset D$ be $\cO_L$-lattices.
We say that the triple $(L',I_1,I_2)$ is
admissible if $\psi(L',L') \subset I_2$,
$I_1 \subset I_2$, and 
$L'^\perp_{/I_1} \subset \fram_L L'$.
For an admissible triple $(L',I_1,I_2)$, we set
$X_{L',I_1,I_2} = (L'^\perp_{/I_1},L',I_1,I_2)$.

Let $\cC_0^\psi \subset \wt{\cC}_0^\psi$ denote 
the full subcategory whose objects are the objects
$X$ of $\wt{\cC}_0^\psi$ such that 
$\wt{\iota}_0^\psi(X)$ is an object of $\Bil^\psi$.
\begin{lem} \label{lem:classicalgp_triple}
Let $(L',I_1,I_2)$ be an admissible triple.
Then $X_{L',I_1,I_2}$ is an object of $\cC_0^\psi$.
\end{lem}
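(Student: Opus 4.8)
\textbf{Proof plan for Lemma \ref{lem:classicalgp_triple}.}
The goal is to show that given an admissible triple $(L',I_1,I_2)$, the object $X_{L',I_1,I_2} = (L'^\perp_{/I_1},L',I_1,I_2)$ lies in $\cC_0^\psi$, which by definition will be the full subcategory of $\wt{\cC}_0^\psi$ (or more precisely the relevant grid category) whose image under $\wt{\iota}_0$ lies in $\Bil^\psi$. So the plan is to reduce this to checking two things about the object $X = \wt{\iota}_0(X_{L',I_1,I_2})$ of $\wt{\Bil}^\psi$: that the associated bilinear form $\phi$ on $M = L'/L'^\perp_{/I_1}$ is $d$-good and non-degenerate, where $d = \dim_L V$. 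Once both are established, Proposition \ref{prop:classicalgp_etale} applies directly (using the hypothesis that $K$ is of odd residue characteristic) to conclude that $X$ is an object of $\Bil^\psi$, which is exactly what is needed.

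First I would verify non-degeneracy. For $X = (M,N,\vep,\phi)$ with $M = L'/L'^\perp_{/I_1}$, $N = I_2/I_1$, and $\phi$ induced by $\psi$, the map $b_\phi : M \to \Hom_*(M,N)$ sends the class of $m \in L'$ to the semilinear map $n + L'^\perp_{/I_1} \mapsto \psi(m,n) + I_1$. An element $m \in L'$ lies in the kernel of $b_\phi$ precisely when $\psi(m,n) \in I_1$ for all $n \in L'$, i.e.\ when $m \in L'^\perp_{/I_1}$; hence $b_\phi$ is injective, which is the definition of non-degeneracy. This is the easy part and uses only the unwinding of definitions.

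The main obstacle is the $d$-goodness: I must show that both $M = L'/L'^\perp_{/I_1}$ and the image of $b_\phi$ in $\Hom_*(M,N)$ are generated exactly by $d$ elements, i.e.\ that $M/\fram_L M$ and $(\Image\, b_\phi)/\fram_L(\Image\, b_\phi)$ are $d$-dimensional over $\cO_L/\fram_L$. Since $L'$ is an $\cO_L$-lattice in the $d$-dimensional $L$-vector space $V$, $L'$ is free of rank $d$; the admissibility condition $L'^\perp_{/I_1} \subset \fram_L L'$ then forces $L'^\perp_{/I_1} \subset \fram_L L'$, so the quotient map $L' \to L'/\fram_L L'$ factors through $M$ and $M/\fram_L M \cong L'/\fram_L L'$ is $d$-dimensional, giving the first half. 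For the second half, the non-degeneracy of $\psi$ itself (together with the fact that $\psi$ takes values in $D$, so $b_\psi : V \to \Hom_*(V,D)$ is an $L$-linear isomorphism) implies $b_\phi$ is injective, hence $\Image\, b_\phi \cong M$ as $\cO_L$-modules, so it too is generated exactly by $d$ elements. I expect the care needed here to be in checking that the admissibility hypothesis is precisely strong enough to force $L'^\perp_{/I_1} \subseteq \fram_L L'$ to propagate correctly through the quotient and to keep the image of $b_\phi$ from collapsing modulo $\fram_L$; this is where one genuinely uses $L'^\perp_{/I_1} \subset \fram_L L'$ rather than a weaker containment. With $d$-goodness and non-degeneracy in hand, Proposition \ref{prop:classicalgp_etale} finishes the proof.
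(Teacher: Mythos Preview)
Your proposal is correct and follows exactly the paper's approach: verify that $\wt{\iota}_0(X_{L',I_1,I_2})$ is $d$-good and non-degenerate, then invoke Proposition~\ref{prop:classicalgp_etale}. In fact you supply more detail than the paper, which simply asserts $d$-goodness and non-degeneracy without unpacking them; your explanations (using $L'^\perp_{/I_1} \subset \fram_L L'$ for the first half of $d$-goodness, and the injectivity of $b_\phi$ to identify its image with $M$ for the second) are the natural verifications the paper leaves to the reader.
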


\begin{proof}
$X_{L',I_1,I_2}$ is an object of
$\wt{\cC}_0^\psi$ such that
$\wt{\iota}_0(X_{L',I_1,I_2})$ is $d$-good and non-degenerate.
Hence it follows from Proposition \ref{prop:classicalgp_etale}
that $X_{L',I_1,I_2}$ is an object of $\cC_0^\psi$.
\end{proof}

\begin{cor} \label{cor:classicalgp_aux2}
Let $X$ be an object of $\wt{\Bil}^\psi$.
Then there exists a morphism
$f\colon  Y \to X$ in $\wt{\Bil}^\psi$
such that $Y$ is an object of $\Bil^\psi$.
\end{cor}

\begin{proof}
We may assume that $X = \wt{\iota}_0(X_0)$
for some object $X_0$ of $\wt{\cC}_0^\psi$.
Let us write $X_0=(L_1,L_2,I_1,I_2)$.
Choose a sufficiently large integer $n \ge 1$
so that the $\cO_L$-submodule
$L'_1 = L_{2,/\fram_L^n I_2}^\perp$
of $V$ is contained in $\fram_L L_2 \cap L_1$.
Then $(L_2, \fram_L^n I_2,I_2)$ is an admissible triple.
Set $Y_0 = X_{L_2, \fram_L^n I_2,I_2}$.
Then $Y_0$ is an object of $\cC_0^\psi$ 
and there exists a morphism
$f_0 \colon  Y_0 \to X_0$ in $\wt{\cC}_0^\psi$.
This proves the claim.
\end{proof}

\begin{cor} \label{cor:classicalgp_cofiltered}
Suppose that $K$ is of odd residue characteristic.
Then the category $\Bil^\psi$ is $\Lambda$-connected 
and semi-cofiltered.
\end{cor}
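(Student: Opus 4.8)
\textbf{Proof proposal for Corollary~\ref{cor:classicalgp_cofiltered}.}
The plan is to deduce both assertions from the material already assembled, chiefly Corollary~\ref{cor:classicalgp_aux2} and the definition of $\Bil^\psi$. Recall that by definition every object $X$ of $\Bil^\psi$ satisfies that the overcategory $\wt{\Bil}^\psi_{/X}$ is $\Lambda$-connected. Since $\Bil^\psi$ is a full subcategory of $\wt{\Bil}^\psi$, the overcategory $\Bil^\psi_{/X}$ is a full subcategory of $\wt{\Bil}^\psi_{/X}$; so to show $\Bil^\psi_{/X}$ is $\Lambda$-connected it suffices to show that given two objects $g_1 : Y_1 \to X$ and $g_2 : Y_2 \to X$ of $\Bil^\psi_{/X}$, there is a third object of $\Bil^\psi_{/X}$ mapping to both. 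First I would use $\Lambda$-connectivity of $\wt{\Bil}^\psi_{/X}$ to find an object $h : W \to X$ of $\wt{\Bil}^\psi_{/X}$ together with morphisms $W \to Y_1$ and $W \to Y_2$ over $X$; then I would apply Corollary~\ref{cor:classicalgp_aux2} to the object $W$ of $\wt{\Bil}^\psi$ to produce a morphism $f : Z \to W$ in $\wt{\Bil}^\psi$ with $Z$ an object of $\Bil^\psi$. The composite $Z \to W \to X$ exhibits $Z$ as an object of $\Bil^\psi_{/X}$ dominating both $Y_1$ and $Y_2$. This proves that $\Bil^\psi_{/X}$ is $\Lambda$-connected for every object $X$ of $\Bil^\psi$.

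Next I would establish that $\Bil^\psi$ itself is $\Lambda$-connected. Given two objects $X_1$, $X_2$ of $\Bil^\psi$, I would first note that $\wt{\Bil}^\psi$ is $\Lambda$-connected: any two objects of $\wt{\Bil}^\psi$ are of the form $\wt{\iota}_0(X_{0,1})$ and $\wt{\iota}_0(X_{0,2})$ for objects $X_{0,1}, X_{0,2}$ of $\wt{\cC}_0^\psi$, and since the poset $\Pair^\psi$ is filtered downward (one intersects lattices and intersects the $I$'s, using that $\psi$ is bilinear so the constraints are preserved under the operations in Definition~\ref{defn:classicalgp_expander}, in the same way Lemma~\ref{lem:pair_directed_V} is proved), one obtains a common object $X_{0,3}$ of $\wt{\cC}_0^\psi$ with morphisms to both, hence $\wt{\iota}_0(X_{0,3})$ maps to both $\wt{\iota}_0(X_{0,1})$ and $\wt{\iota}_0(X_{0,2})$ in $\wt{\Bil}^\psi$. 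Then, applying Corollary~\ref{cor:classicalgp_aux2} to $\wt{\iota}_0(X_{0,3})$, I obtain an object $Z$ of $\Bil^\psi$ with a morphism $Z \to \wt{\iota}_0(X_{0,3})$, and composing gives morphisms $Z \to X_1$ and $Z \to X_2$ in $\Bil^\psi$ (the composites land in $\Bil^\psi$ because $Z$ and $X_i$ are already objects of $\Bil^\psi$ and $\Bil^\psi$ is full in $\wt{\Bil}^\psi$). Hence $\Bil^\psi$ is $\Lambda$-connected.

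For semi-cofilteredness, I would recall that a $\Lambda$-connected category is semi-cofiltered once every overcategory $\Bil^\psi_{/X}$ is itself $\Lambda$-connected (this is essentially the definition; cf. the usage in Section~\ref{sec:classicalgp_notation} and the conventions of \cite{Grids}). The previous two paragraphs supply exactly these two inputs. I would also remark that $\Lambda$-connectivity of $\Bil^\psi_{/X}$ amounts to: any pair of morphisms into $X$ from objects of $\Bil^\psi$ can be completed to a commutative square with a third object of $\Bil^\psi$ at the apex, together with the analogous ``equalizing'' clause for pairs of parallel morphisms; the equalizing clause follows because $\Bil^\psi$ is an $E$-category (Lemma~\ref{lem:classicalgp_epi}), so parallel morphisms with a common equalizing postcomposition automatically coincide whenever they agree after composing with something in the category, and one combines this with the coinitiality established above. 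The main obstacle I anticipate is purely bookkeeping: carefully checking that all the auxiliary objects constructed via Corollary~\ref{cor:classicalgp_aux2} genuinely lie in the full subcategory $\Bil^\psi$ rather than only in $\wt{\Bil}^\psi$, and that the relevant morphisms are indeed over $X$; once the fullness of $\Bil^\psi \subset \wt{\Bil}^\psi$ is used systematically this is routine, and the odd-residue-characteristic hypothesis enters only through its use in Corollary~\ref{cor:classicalgp_aux2} (via Proposition~\ref{prop:classicalgp_etale}).
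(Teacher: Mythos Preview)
Your proposal is correct and follows essentially the same approach as the paper: use $\Lambda$-connectedness of $\wt{\Bil}^\psi$ (resp.\ of $\wt{\Bil}^\psi_{/X}$, which holds by the very definition of $\Bil^\psi$) to produce a common source in $\wt{\Bil}^\psi$, then apply Corollary~\ref{cor:classicalgp_aux2} to replace that source by an object of $\Bil^\psi$. The paper simply presents these two steps in the opposite order and phrases the second directly as ``$\Bil^\psi$ is semi-cofiltered'' rather than ``each overcategory $\Bil^\psi_{/X}$ is $\Lambda$-connected,'' but the content is identical. Your third paragraph's worry about an ``equalizing clause'' is unnecessary: in this paper semi-cofiltered means only the span-completion condition (cf.\ the proof of Lemma~\ref{lem:hyp_E}), so once you have established that every diagram $Y_1 \to X \leftarrow Y_2$ in $\Bil^\psi$ can be completed to a commutative square, you are done.
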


\begin{proof}
First we prove that $\Bil^\psi$ is $\Lambda$-connected.
It is easy to see that $\wt{\cC}_0^\psi$ is a cofiltered poset.
This implies that $\wt{\Bil}^\psi$ is $\Lambda$-connected.
Let $X,Y$ be two objects of $\Bil^\psi$.
Since $\wt{\Bil}^\psi$ is $\Lambda$-connected, there exists
a diagram $X \leftarrow Z' \to Y$ in $\wt{\Bil}^\psi$.
It follows from Corollary \ref{cor:classicalgp_aux2} that
there exists a morphism $Z \to Z'$ in $\wt{\Bil}^\psi$ such that
$Z$ is an object of $\Bil^\psi$.
Thus we obtain a diagram $X \leftarrow Z \to Y$ in $\Bil^\psi$.
This proves that $\Bil^\psi$ is $\Lambda$-connected.

Next we prove that $\Bil^\psi$ is semi-cofiltered.
Let $Y_1 \xto{f_1} X \xleftarrow{f_2} Y_2$ be a
diagram in $\Bil^\psi$.
Since $\wt{\Bil}^\psi_{/X}$ is $\Lambda$-connected,
there exist an object $Z'$ of $\wt{\Bil}^\psi$
and morphisms $g'_1\colon  Z' \to Y_1$ and $g'_2\colon Z' \to Y_2$
in $\wt{\Bil}^\psi$ satisfying $f_1 \circ g'_1 = f_2 \circ g'_2$.
It follows from Corollary \ref{cor:classicalgp_aux2} that
there exists a morphism $h\colon  Z \to Z'$ in $\wt{\Bil}^\psi$ such that
$Z$ is an object of $\Bil^\psi$.
By setting $g_1 = g'_1 \circ h$ and $g_2 = g'_2 \circ h$,
we obtain a diagram $Y_1 \xleftarrow{g_1} Z \xto{g_2} Y_2$ in $\Bil^\psi$
satisfying $f_1 \circ g_1 = f_2 \circ g_2$.
This proves that $\Bil^\psi$ is semi-cofiltered.
\end{proof}

\begin{cor} \label{cor:classicalgp_B}
Suppose that $K$ is of odd residue characteristic.
Then the category $\Bil^\psi$
equipped with the atomic topology $J$ is a $B$-site.
\end{cor}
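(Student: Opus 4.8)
The plan is to verify the three axioms in the definition of a $B$-site from \cite[Def.\ 4.2.1]{Grids}, just as was done for $(\cC^d, J^d_m)$ in the proof of Proposition \ref{prop:grid_atomic}. By definition a $B$-site requires that the Grothendieck topology be an $A$-topology, that the underlying category be an $E$-category, and that the composability condition (3) on morphisms of type $\cT$ hold. The atomic topology $J$ on a category in which every morphism is an epimorphism and which is semi-cofiltered is automatically an $A$-topology, so the first point reduces to checking that $\Bil^\psi$ is semi-cofiltered (so that $J$ is well-defined and atomic) together with standard facts about atomic topologies.

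First I would invoke Lemma \ref{lem:classicalgp_epi}, which shows that $\Bil$, and hence its full subcategory $\Bil^\psi$, is an $E$-category: every morphism is an epimorphism. This immediately settles the $E$-category requirement. Next I would invoke Corollary \ref{cor:classicalgp_cofiltered}, which under the hypothesis that $K$ has odd residue characteristic shows $\Bil^\psi$ is $\Lambda$-connected and semi-cofiltered. The semi-cofilteredness is exactly what makes the atomic topology $J$ a legitimate Grothendieck topology on $\Bil^\psi$ and automatically an $A$-topology; so condition (1) of \cite[Def.\ 4.2.1]{Grids} holds. For condition (3), with the atomic topology the set $\cT(J)$ of morphisms of type $J$ is the set of all morphisms of $\Bil^\psi$ (since $\Bil^\psi$ is $\Lambda$-connected, every covering sieve is generated by a single morphism and every morphism generates a covering sieve); therefore $\cT(J) = \Mor(\Bil^\psi)$, and the statement ``$f\circ g \in \cT(J)$ iff $f, g \in \cT(J)$'' is trivially true because all three morphisms always lie in $\cT(J)$.

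With these ingredients the proof is essentially an assembly: $\Bil^\psi$ is an $E$-category (Lemma \ref{lem:classicalgp_epi}), it is semi-cofiltered and $\Lambda$-connected (Corollary \ref{cor:classicalgp_cofiltered}), the atomic topology is therefore an $A$-topology, and condition (3) is automatic since $\cT(J)$ is the whole morphism set. I do not anticipate a genuine obstacle here, since all the substantive work — in particular the delicate lifting argument of Lemma \ref{lem:classicalgp_lift} and the semi-cofilteredness of Corollary \ref{cor:classicalgp_cofiltered}, which rely on $K$ having odd residue characteristic through the use of the factor $\tfrac{1}{2}$ — has already been carried out. The only care needed is to cite \cite[Def.\ 4.2.1]{Grids} correctly and to note explicitly that, for an atomic topology on a $\Lambda$-connected semi-cofiltered category, $\cT(J)$ coincides with $\Mor(\Bil^\psi)$, so that there is nothing further to check for condition (3). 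Thus the proof reads: by Lemma \ref{lem:classicalgp_epi} the category $\Bil^\psi$ is an $E$-category; by Corollary \ref{cor:classicalgp_cofiltered} it is $\Lambda$-connected and semi-cofiltered, hence the atomic topology $J$ is an $A$-topology; and condition (3) of \cite[Def.\ 4.2.1]{Grids} holds trivially since $\cT(J)$ is the set of all morphisms. Hence $(\Bil^\psi, J)$ is a $B$-site.
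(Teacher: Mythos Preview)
Your proposal is correct and follows essentially the same route as the paper: invoke Corollary \ref{cor:classicalgp_cofiltered} for semi-cofilteredness (so the atomic topology is defined) and Lemma \ref{lem:classicalgp_epi} for the $E$-category property, after which the $B$-site axioms are automatic. One small remark: $\Lambda$-connectedness is not needed to see that $\cT(J)=\Mor(\Bil^\psi)$ for the atomic topology---every morphism generates a nonempty sieve, which is covering by definition---so you can drop that citation from the argument for condition (3).
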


\begin{proof}
Since it follows from Corollary \ref{cor:classicalgp_cofiltered}
that $\Bil^\psi$ is semi-cofiltered,
one can consider the atomic topology $J$ on $\Bil^\psi$.
It follows from Lemma \ref{lem:classicalgp_epi} 
that $\Bil^\psi$ is an $E$-category.
Hence $(\Bil^\psi,J)$ is a $B$-site.
\end{proof}

\subsection{The pair $(\cC_0^\psi,\iota_0)$}

The restriction of the functor $\wt{\iota}_0$ 
to $\cC_0^\psi$ factors through the inclusion functor $\Bil^\psi
\inj \wt{\Bil}^\psi$. Let $\iota_0\colon  \cC_0^\psi \to \Bil^\psi$
denote the induced functor.

\begin{prop} \label{prop:classicalgp_Y}
Suppose that $K$ is of odd residue characteristic.
Then 
\\
(1) The category $\Bil^\psi$
equipped with the atomic topology $J$ is a $Y$-site,
\\
(2) The pair $(\cC_0^\psi,\iota_0)$ is a grid of
$(\Bil^\psi,J)$,
\\
(3) The topological monoid $M_{(\cC_0^\psi,\iota_0)}$ 
is canonically isomorphic to the locally profinite group 
$$
G(\psi) = \{ (g,\lambda) \in \GL_L(V)
\times K^\times \ |\ \psi(gv,gw)= \lambda \psi(v,w)
\text{ for all }v,w \in V \}.
$$
Here we equip $G(\psi)$ with the topology
induced from the topology on $\GL_L(V)
\times K^\times$.
\end{prop}

\begin{proof}[Proof of (1)]
First we prove that $(\Bil^\psi,J)$ is a $Y$-site.
It follows from Corollary \ref{cor:classicalgp_B}
that $(\Bil^\psi,J)$ is a $B$-site.
It follows from Corollary \ref{cor:classicalgp_cofiltered} 
that $\Bil^\psi$ is $\Lambda$-connected.
It is obvious that $\Bil^\psi$ is essentially small since
$\wt{\Bil}^\psi$ is essentially small.
Hence to prove that $(\Bil^\psi,J)$ is a $Y$-site,
it suffices to prove that the collection of morphisms
in $\Bil^\psi$ has enough Galois covering.
Let $f\colon Y \to X = (M,N,\vep,\phi)$ be a morphism in $\Bil^\psi$.
We prove that there exists a morphism $g\colon Z \to Y$ in $\Bil^\psi$
such that the composite $f \circ g$ is a Galois covering in $\Bil^\psi$.
By Lemma \ref{lem:classicalgp_under2}, we may assume that
$f = \iota_0(f_0)$ for some morphism $f_0\colon  Y_0 \to X_0$ in
$\cC_0^\psi$.
Let us write $X_0 = (L_1,L_2,I_1,I_2)$ and
$Y_0 = (L'_1,L'_2,I'_1,I'_2)$.
Since $X$ is $d$-good,
the triple $(L_2, I_1, I_2)$ is admissible.
Set $Z_0 = X_{L_2, I_1, I_2}$. By definition
$Z_0 =(L''_1,L_2,I_1,I_2)$ where
$L''_1 = L^\perp_{2,/I_1}$.
It follows from Lemma \ref{lem:classicalgp_triple}
that $Z_0$ is an object of $\cC_0^\psi$.
Let us choose an integer $n \ge 0$ satisfying
$\fram_L^{2n} L''_1 \subset L'_1$, $L'_2 \subset \fram_L^{-n} L_2$, 
$\fram_L^n I_1 \subset I'_1$, and $I'_2 \subset \fram_L^{-2n} I_2$.
Then 
$Z_n = T^n(Z_0)$ is an object of $\cC_0^\psi$.
Let $g_0$ denote the unique morphism from $Z_n$ to $Y_0$ in $\cC_0^\psi$
and set $g=\iota_0(g_0)$.
It follows from Lemma \ref{lem:classicalgp_Galois} that
$f \circ g$ is a Galois covering in $\Bil^\psi$.
This proves that $(\Bil^\psi,J)$ is a $Y$-site.
\end{proof}

\begin{proof}[Proof of (2)]
Next we prove that $(\cC_0^\psi,\iota_0)$ is a grid of
$(\Bil^\psi,J)$. Since $J$ is the atomic topology, it suffice
to show that $(\cC_0^\psi,\iota_0)$ is a pregrid of $\Bil^\psi$.
We will check that $(\cC_0^\psi,\iota_0)$ satisfies the four conditions
in Definition 5.5.1 of \cite{Grids}.
It is easy to check that the poset $\cC_0^\psi$ is $\Lambda$-connected.
It is clear from the definition that the functor 
$\wt{\iota}_0 \colon  \wt{\cC}_0^\psi \to \wt{\Bil}^\psi$
is essentially surjective.
Hence the functor $\iota_0 \colon  \cC_0^\psi \to \Bil^\psi$
is essentially surjective.
Let $X$ be an object of $\cC_0^\psi$.
It follows from Lemma \ref{lem:classicalgp_under2} that
the functor $\wt{\cC}^\psi_{0,X/} \to \wt{\Bil}^\psi_{\wt{\iota}_0(X)/}$
induced by $\wt{\iota}_0$ is an equivalence of categories.
This implies that the functor
$\cC^\psi_{0, X/} \to \Bil^\psi_{\iota_0(X)/}$
induced by $\iota_0$ is an equivalence of categories.
Hence $(\cC_0^\psi,\iota_0)$ satisfies (1), (2), and (4)
in Definition 5.5.1 of \cite{Grids}.

To prove that $(\cC_0^\psi,\iota_0)$ is a grid of
$(\Bil^\psi,J)$, 
it remains to show that $(\cC_0^\psi,\iota_0)$ satisfies (3)
in Definition 5.5.1 of \cite{Grids}.
Let $X_0$ be an object of $\cC_0^\psi$ and set $X = \iota_0(X_0)$.
Let $f\colon  Y \to X$ be a morphism in $\Bil^\psi$.
Let us write $X_0 = (L_1,L_2,I_1,I_2)$ 
and $Y=(M,N,\vep,\phi)$.
Let us take an integer $n \ge 0$ satisfying
$\fram_L^{2n} (L_{2,/I_2}^\perp) \subset L_1$,
$\fram_L^n M =\{0\}$, and $\fram_L^n N = \{0\}$.
Set $Z_0 = T^n(X_{L_2,I_1,I_2})$.
Then it follows from Lemma \ref{lem:classicalgp_triple} that
$Z_0$ is an object of $\cC_0^\psi$.
By construction there exists a morphism
$g_0 \colon  Z_0 \to X_0$. 
Set $Z = \iota_0(Z_0)$ and $g = \iota_0(g_0)$.
Since $\Bil^\psi$ is
semi-cofiltered, there exist an object
$W$ of $\Bil^\psi$ and
morphisms $f'\colon  W \to Y$, $g'\colon  W \to Z$
in $\Bil^\psi$ satisfying $f \circ f' = g \circ g'$.
It follows from the choice of $n$ that
the morphism $f'$ factors through $g'$, i.e,
there exists a morphism $h\colon  Z \to Y$
satisfying $f' = h \circ g'$.
We have already proved that
$\cC^\psi_{0, Z_0/} \to \Bil^\psi_{Z/}$ is an
equivalence of categories.
Hence there exist an object $Y_0$ of $\cC_0^\psi$,
morphisms $h_0\colon  Z_0 \to Y_0$ and 
$f_0\colon  Y_0 \to X_0$ in $\cC_0^\psi$,
and an isomorphism $\beta\colon  \iota_0(Y_0) \xto{\cong} Y$
in $\Bil^\psi$ satisfying $h = \beta \circ \iota_0(h_0)$.
Since $\Bil^\psi$ is an $E$-category, this implies that
$\iota_0(f_0) = f \circ \beta$.
This proves that the functor
$\cC^\psi_{0, /X_0} \to \Bil^\psi_{/X}$
induced by $\iota_0$ is essentially surjective.
Hence $(\cC_0^\psi,\iota_0)$ is a grid of
$(\Bil^\psi,J)$.
\end{proof}

\begin{proof}[Proof of (3)]
Now let us compute the monoid
$M_{(\cC_0^\psi,\iota_0)}$.
For $(g,\lambda) \in G(\psi)$, let 
$\wt{\alpha}_{g,\lambda} \colon  \wt{\cC}_0^\psi \to \wt{\cC}_0^\psi$
denote the functor that sends an object $(L_1,L_2,I_1,I_2)$
of $\wt{\cC}_0^\psi$ to $(g(L_1), g(L_2), \lambda I_1, \lambda I_2)$.
Then for each object $(L_1,L_2,I_1,I_2)$
of $\wt{\cC}_0^\psi$, we have isomorphisms 
$L_2/L_1 \xto{\cong} g(L_2)/g(L_1)$
and $I_2/I_1 \xto{\cong} \lambda I_2/\lambda I_1$ of
$\cO_L$-modules given by $g$ and the multiplication by 
$\lambda$, respectively.
These isomorphisms induce a natural isomorphism
$\wt{\iota}_0 \xto{\cong} \wt{\iota}_0 \circ \wt{\alpha}_{g,\lambda}$
which we denote by $\wt{\gamma}_{g,\lambda}$.
Since the full subcategory $\Bil^\psi$ of $\wt{\Bil}^\psi$
is strict, the functor $\wt{\alpha}_{g,\lambda}$ and
the natural isomorphism $\wt{\gamma}_{g,\lambda}$ induce
a functor $\alpha_{g,\lambda} \colon  \cC_0^\psi \to \cC_0^\psi$
and a natural isomorphism
$\gamma_{g,\lambda} \colon  \iota_0 \xto{\cong} 
\iota_0 \circ \alpha_{g,\lambda}$.
By sending $(g,\lambda)$ to the pair 
$(\alpha_{g,\lambda},\gamma_{g,\lambda})$, we obtain
a map $\Phi \colon  G(\psi) \to M_{(\cC_0^\psi,\iota_0)}$.
It is easy to see that $\Phi$ is a homomorphism of monoids.

Next we prove that $\Phi$ is bijective.
Let us fix $\cO_L$-lattices $L_0 \subset V$
and $I_0 \subset D$ satisfying $\psi(L_0 \times L_0) \subset I_0$ 
and set $L'_1 = L_{0,/I_0}^\perp$.
Let us choose an integer $n_0 \ge 0$ such that
$\fram_L^{n_0} L'_1 \subset \fram_L L_0$.
Then for any $n \ge n_0$, the triple
$(L_0,\fram_L^n I_0, I_0)$ is an admissible triple.
Hence it follows from Lemma \ref{lem:classicalgp_triple} that 
$X_n = X_{L_0,\fram_L^n I_0, I_0}$ is
an object of $\cC_0^\psi$.
Let $(g,\lambda), (g',\lambda') \in G(\psi)$ and suppose
that $\Phi(g,\lambda) = \Phi(g',\lambda')$.
Then for any $n \ge n_0$ we have
$g(L_0) = g'(L_0)$, $g(\fram_L^n L'_1)= g'(\fram_L^n L'_1)$,
$\lambda I_0 = \lambda' I_0$, and the pair of 
isomorphisms
$L_0/\fram_L^n L'_1 \xto{\cong} g(L_0)/g(\fram_L^n L'_1)$
given by $g$ 
and $I_0/\fram_L^n I_0 \xto{\cong} \lambda I_0/\fram_L^n \lambda I_0$
given by $\lambda$ 
is equal to the pair
of isomorphisms
$L_0/\fram_L^n L'_1 \xto{\cong} g(L_0)/g(\fram_L^n L'_1)$
given by $g'$ 
and $I_0/\fram_L^n I_0 \xto{\cong} \lambda I_0/\fram_L^n \lambda I_0$
given by $\lambda'$.
Since $\bigcap_{n \ge n_0} g(\fram_L^n L'_1) = \{0\}$
and $\bigcap_{n \ge n_0} \fram_L^n \lambda I_0 = \{0\}$,
we have $(g,\lambda) = (g',\lambda')$.
This proves that $\Phi$ is injective.
Let $(\alpha,\gamma_\alpha)$ be an arbitrary element of 
$M_{(\cC_0^\psi,\iota_0)}$. For any $n \ge n_0$,
set $\alpha(X_n) = (L_{1,n},L_{2,n},I_{1,n},I_{2,n})$.
Then $\gamma_\alpha$ induces $\cO_L$-linear isomorphisms
$g_n \colon  L_0/\fram_L^n L'_1 \cong L_{2,n}/L_{1,n}$
and $\lambda_n \colon  I_0/\fram_L^n I_0 \cong I_{2,n}/I_{1,n}$.
The naturality of $\gamma_\alpha$ implies 
$L_{2,n} = L_{2,n_0}$,
$L_{1,n} = \fram_L^{n-n_0} L_{1,n_0}$,
$I_{2,n} = I_{2,n_0}$,
and $I_{1,n} = \fram_L^n I_{2,n_0}$.
Hence these $\cO_L$-linear isomorphisms induces
$\cO_L$-linear isomorphisms $L_0 \xto{\cong} L_{2,n_0}$
and $I_0 \xto{\cong} I_{2,n_0}$.
By extending these isomorphisms by $L$-linearity,
we obtain $L$-linear isomorphisms
$g\colon  V \to V$ and $\lambda \colon  D \to D$.
Since each $\lambda_n$ is compatible with 
$\eps$, we have $\lambda \in K^\times$.
Since the pair $(g_n,\lambda_n)$ gives an isomorphism
from $\iota_0(X_n)$ to $\iota_0(\alpha(X_n))$ in
$\Bil^\psi$, it follows that the pair
$(g,\lambda)$ belongs to $G(\psi)$.
For any $n \ge n_0$, the triple 
$(\fram_L^{-n} L_0,\fram_L^n I_0, \fram_L^{-2n} I_0)$
is an admissible triple.
Hence it follows from Lemma \ref{lem:classicalgp_triple} that 
$Y_n = X_{\fram_L^{-n} L_0,\fram_L^n I_0, \fram_L^{-2n} I_0}$
is an object of $\cC_0^\psi$.
One then can check that we have
$\alpha(Y_n) = \alpha_{g,\lambda}(Y_n)$
and the isomorphism 
$\gamma_\alpha(Y_n) \colon  \iota_0(Y_n) \xto{\cong}
\iota_0(\alpha(Y_n))$ is equal to
$\gamma_{g,\lambda}(Y_n)$.
Since the family $(Y_n)_{n \ge 0}$ of objects of $\cC_0^\psi$
is cofinal in $\cC_0^\psi$, it follows that
$\alpha = \alpha_{g,\lambda}$ and $\gamma_{\alpha} = \gamma_{g,\lambda}$.
This proves that $\Phi$ is surjective.

Finally we prove that $\Phi$ is an isomorphism of
topological monoids.
For $\cO_L$-lattices $L_1,L_2 \subset V$ and
$I_1,I_2 \subset D$ satisfying 
$L_1 \subset L_2$ and $I_1 \subset I_2$,
let $H(L_1,L_2,I_1,I_2)$ denote the subgroup
of $\GL_L(V) \times K^\times$ that consists of
the elements $(g,\lambda) \in \GL_L(V) \times K^\times$
satisfying $(g(L_1),g(L_2), \lambda I_1, \lambda I_2)
= (L_1,L_2,I_1,I_2)$ and the $\cO_L$-linear automorphisms of
$L_2/L_1$ and $I_2/I_1$ given by $g$ and $\lambda$
respectively are equal to the identity morphisms.
If $X = (L_1,L_2,I_1,I_2)$ is an object of $\cC_0^\psi$,
then the subgroup $\Phi^{-1}(\bK_X)$ of $G(\psi)$
is equal to $G(\psi) \cap H(L_1,L_2,I_1,I_2)$.
In particular $\Phi^{-1}(\bK_X)$ 
is an open subgroup of $G(\psi)$.
Conversely, Let $H \subset G(\psi)$ be an open subgroup.
Then there exists $\cO_L$-lattices $L_1,L_2 \subset V$ and
$I_1,I_2 \subset D$ satisfying 
$L_1 \subset L_2$ and $I_1 \subset I_2$
and $G(\psi) \cap H(L_1,L_2,I_1,I_2) \subset H$.
Let us choose a sufficiently large $n \ge n_0$ satisfying
$\fram_L^{2n} L_0 \subset L_1 \subset L_2 \subset \fram_L^{-n} L_0$
and $\fram_L^n I_0 \subset I_1 \subset I_2 \subset \fram_L^{-2n} I_0$.
Then we have $\Phi(H) \supset \bK_{Y_n}$, which implies that
$\Phi(H)$ is an open subgroup of $M_{(\cC_0^\psi,\iota_0)}$.
This proves that $\Phi$ is an isomorphism of topological
groups. 
\end{proof}

\begin{rmk}
One also obtains a monoid version of the statements above
by introducing an appropriate non-atomic topology on $\Bil^\psi$.
\end{rmk}

\subsection{A variant}
In some applications, it seems more convenient to introduce
a variant of the pair $((\Bil^\psi,J),(\cC_0^\psi,\iota_0))$.
In this section, we will give a definition of the variant,
which will be denoted by 
$((\wh{\Bil}^\psi,J),(\wt{\cC}_0^\psi,\wh{\iota}_0))$.

To this end, we will introduce a notion of a blowup of a category.

\subsection{Blowup of a category}
Let $\cC$ be a category 
and let $\cD$ be its full subcategory. 
Suppose that $\cD$ is essentially small and semi-cofiltered.
Under this situation we will introduce a category
$\Bl(\cC,\cD)$ and call it the blowup of $\cC$ at 
the complement of $\cD$. The category 
$\Bl(\cC,\cD)$ has the following property:
the inclusion functor $\cD \to \cC$ factors as
$$
\cD \xto{\iota} \Bl(\cC,\cD) \to \cC
$$
with $\iota$ fully faithful.
\begin{defn}
We say that a morphism $f\colon X\to Y$ in $\cC$ is 
a {\em move from $\cD$ in $\cC$} if its domain $X$ is an
object of $\cD$. 
We say that two moves $f\colon  X\to Y$ and $f'\colon X' \to Y'$
from $\cD$ in $\cC$ are {\em equivalent} if $Y=Y'$ and
there exists a diagram $f \leftarrow g \to f'$ in the
overcategory $\cC_{/Y}$.
\end{defn}%
By using the assumption that $\cD$ is essentially small and semi-cofiltered,
one sees that this notion gives an equivalence relation on
the collection of moves from $\cD$ in $\cC$, and that
for any object $Y$ of $\cC$, 
the equivalence classes of moves
whose codomain is $Y$ forms a small set.

We let $\Bl(\cC,\cD)$ denote the following category.
\begin{defn}
The objects of $\Bl(\cC,\cD)$ are the equivalence classes
of moves from $\cD$ in $\cC$. 
Let $Z$ and $Z'$ be two objects of $\Bl(\cC,\cD)$. 
Let us choose moves $f\colon X \to Y$
and $f'\colon X' \to Y'$ from $\cD$ in $\cC$ which represent $Z$ and $Z'$,
respectively. Then the morphisms from $Z$ to $Z'$ in $\Bl(\cC,\cD)$
are the morphisms $g\colon Y \to Y'$ in $\cC$ such that the two moves 
$g \circ f$ and $f'$ are equivalent.
\end{defn}

One can check easily that the category $\Bl(\cC,\cD)$ is well-defined
and is semi-cofiltered. We call the category $\Bl(\cC,\cD)$ 
the {\em blowup of $\cC$ at the complement of $\cD$}.

By associating the class of a move $f$ to its codomain,
we obtain a functor $\Bl(\cC,\cD) \to \cC$.
Moreover by associating $\id_X\colon  X \to X$ to
each object $X$ of $\cD$, we obtain a functor 
$\cD \to \Bl(\cC,\cD)$ which is fully faithful. 

\begin{rmk}
Later we will use this notion when
$\cC = \wt{\Bil}^\psi$ and $\cD = \Bil^\psi$ and 
define the category  $\wh{\Bil}^\psi$ as the blowup
$\Bl(\wt{\Bil}^\psi,\Bil^\psi)$.

When constructing a $Y$-site for classical groups,
it was not so convenient to use $\wt{\Bil}^\psi$
and we used the full subcategory $\Bil^\psi$
as the underlying category.
However, in this process, we eliminate basic 
objects
like $(\{0\}, \{0\}, 0\colon \{0\} \times \{0\} \to \{0\})$.
In applications, we suspect that this type of objects should
be included.   The category $\wh{\Bil}^\psi$
is introduced to circumvent this type of problem.
In $\wh{\Bil}^\psi$, such objects regenerate in a 
proper manner, and moreover, we can use this 
as the underlying category of a $Y$-site such 
that the toposes are equivalent.

We called this new category a blowup, because the 
sequence 
$\cD \to \Bl(\cC, \cD) \to \cC$
resembles the sequence 
$U \to \Bl_Z(X) \to X$
of schemes where $Z \subset X$ is a closed 
subscheme, $U=X \setminus Z$,
 and $\Bl_Z(X)$ is the blowup of $X$ at $Z$.
\end{rmk}

\subsubsection{}
We define the category $\wh{\Bil}^\psi$ 
to be the blowup $\Bl(\wt{\Bil}^\psi,\Bil^\psi)$.
%
One then can check that the fully faithful functor
$\Bil^\psi \to \wh{\Bil}^\psi$ induces an equivalence
$$
\Shv(\wh{\Bil}^\psi,J) \xto{\cong} \Shv(\Bil^\psi,J)
$$
of categories.
We note that, any move from $\Bil^\psi$ in $\wt{\Bil}^\psi$ 
is a morphism 
in $\wt{\Bil}^\psi$. Hence we have 
$\wh{\Bil}^\psi = \Bl(\wt{\Bil}^\psi,\Bil^\psi)$.
The functor $\wt{\iota}_0$ induces a functor 
$\Bl(\wt{\cC}_0^\psi, \cC_0^\psi) \to \wh{\Bil}^\psi$.
Since the functor $\Bl(\wt{\cC}_0^\psi, \cC_0^\psi) \to \wt{\cC}_0^\psi$
is an isomorphism of categories, we obtain a functor
$\wt{\cC}_0^\psi \to \wh{\Bil}^\psi$
which we denote by $\wh{\iota}_0^\psi$.
Let $J$ denote the atomic topology on the semi-cofiltered category
$\wh{\Bil}^\psi$. Then one can check that $(\wh{\Bil}^\psi,J)$ is a $Y$-site
and $(\wt{\cC}_0^\psi,\wh{\iota}_0)$ is a grid for $(\wh{\Bil}^\psi,J)$.
Moreover the commutative diagram
$$
\begin{CD}
\cC_0^\psi @>>> \wt{\cC}_0^\psi \\
@V{\iota_0}VV @VV{\wh{\iota}_0}V \\
\Bil^\psi @>>> \wh{\Bil}^\psi
\end{CD}
$$
of categories and functors induces an isomorphism
$M_{(\wt{\cC}_0^\psi,\wh{\iota}_0)} \to M_{(\cC_0^\psi,\iota_0)}$ 
of groups.

\begin{rmk}
Let $\wt{\Psi}\colon G(\psi) \to M_{(\wt{\cC}_0^\psi, \wh{\iota}_0)}$ denote the composite of the isomorphism $\Psi \colon G(\psi) \to M_{(\cC_0^\psi,\iota_0)}$ with the inverse of the isomorphism $M_{(\wt{\cC}_0^\psi,\wh{\iota}_0)} \to M_{(\cC_0^\psi,\iota_0)}$ above.
Let $X=(L_1,L_2,I_1,I_2)$ be an object of $\wt{\cC}_0^\psi$. Let us consider the subgroup $H(L_1,L_2,I_1,I_2)$ of $\GL_L(V) \times K^\times$ introduced in the proof of Proposition~\ref{prop:classicalgp_Y}.
Then the subgroup $\wt{\Psi}^{-1}(\bK_X)$ of $G(\psi)$ is equal to $G(\psi) \cap H(L_1,L_2,I_1,I_2)$.
\end{rmk}

\subsubsection{An example: compact open subgroups of symplectic similitude groups}
\label{sec:Okazaki}
In this paragraph, we assume $L=K$, $D=K$, and $\epsilon=-1$. 
Then $\psi \colon V \times V \to K$ is a non-degenerate skew-symmetric
bilinear form. This implies $V$ is even-dimensional.
Let us write $\dim_K V = 2n$. One can find a $K$-basis 
$e_1,\ldots,e_{2n}$ of $V$ with respect to which
the Gram matrix of $\psi$ is given by the matrix
\[
\left(\begin{array}{ccc|ccc}
\multicolumn{3}{c|}{\multirow{3}{*}{${\Huge O}_n$}} & & & -1 \\
 & & & & \reflectbox{$\ddots$} & \\
 & & & -1 & & \\
\hline
 & & 1 & \multicolumn{3}{c}{\multirow{3}{*}{${\Huge O}_n$}} \\
 & \reflectbox{$\ddots$} & & & \\
1 & & & & &
\end{array}\right).
\]
Fix such a basis $e_1,\ldots,e_{2n}$ 
and regard $V$ as the space $K^{\oplus 2n}$ of row vectors
via the isomorphism $K^{\oplus 2n} \xto{\cong} V$
that sends $(a_1,\ldots,a_{2n}) \in K^{\oplus 2n}$
to $\sum_i a_i e_i \in V$.
The group $\GL_{2n}(K)$ acts on $V$ from the left by the map
$\GL_{2n}(K) \times V \to V$ that sends $(g,v)$ to $v g^{-1}$.
This action gives an isomorphism 
$\jmath \colon \GL_{2n}(K) \xto{\cong} \GL_K(V)$.
By sending $(\jmath(g),\lambda) \in G(\psi)$
to $(g,\lambda^{-1})$, we obtain an isomorphism
$G(\psi) \xto{\cong} \GSp_{2n}(K)$.

Let $X = (L_1,L_2,I_1,I_2)$ be an object of
$\wt{C}_0^\psi$. We say that $X$ is strongly cyclic
if the $\cO_K$-module $L_2/L_1$ is generated by at most one
element and the homomorphism $L_1 \otimes_{\cO_K} L_1 \to I_2$
induced by $\psi$ is surjective.

We expect that, for any irreducible generic smooth complex
representation $\pi$ of $G(\psi)$, there exists a 
strongly cyclic object $X$ of $\wt{\cC}_0^\psi$
such that the $\wt{\Psi}^{-1}(\bK_X)$-invariant part 
of $\pi$ is
one-dimensional.

In the rest of this paragraph we assume $n=2$.
We will give a complete list, up to conjugation, 
of the subgroups of $G(\psi) \cong \GSp_4(K)$
of the form $\wt{\Psi}^{-1}(\bK_X)$ for a
strongly cyclic object $X$ of $\wt{\cC}_0^\psi$.

Let $X=(L_1,L_2,I_1,I_2)$ be a strongly cyclic object of
$\wt{\cC}_0^\psi$. Let us fix a uniformizer $\varpi \in \cO_K$.
Then one can choose an $\cO_K$-basis
$f_1,f_2,f_3,f_4$ of $L_1$ with respect to which the
Gram matrix of $\psi$ is given by the matrix
$$
\begin{pmatrix}
0 & 0 & 0 & -\varpi^{b_1} \\
0 & 0 & -\varpi^{b_2} & 0 \\
0 & \varpi^{b_2} & 0 & 0 \\
\varpi^{b_1} & 0 & 0 & 0
\end{pmatrix}
$$
for some integers $b_1,b_2$ satisfying $b_1 \ge b_2$.
Then we have $I_1=I_2= \varpi^{b_2} \cO_K$.
Let $\fram_K^e$ denote the annihilator of the
$\cO_K$-module $L_2/L_1$. Then one can choose
$v \in L_1$ such that $L_2$ is generated by
$L_1$ and $\varpi^{-e} v$.
Let us write $v = \sum_{i=1}^4 a_i f_i$.
We note that at least one of $a_1,\ldots, a_4$ is a
unit of $\cO_K$.
We set $\ell = b_1 - b_2$.
The equality $I_1= \varpi^{b_2} \cO_K$ shows that
$\psi(\varpi^{-e} v, L_1) \subset \varpi^{b_2} \cO_K$.
This implies $0 \le e \le \ell$ and 
$a_2,a_3 \in \varpi^e \cO_K$.
By replacing $v$ with an element of $v + \varpi^e L_1$,
we may assume $a_2 = a_3 = 0$.
By replacing $v$ with its multiple by a unit of $\cO_K$,
we may also assume that either $a_1=1$ or $a_4=1$.
By replacing $f_1, f_4$ with $-f_4, v$ if $a_1=1$, and
by replacing $f_4$ with $v$ otherwise,
we may assume, without changing the Gram matrix of $\psi$
with respect to the basis $f_1,\ldots,f_4$, 
that $v = f_4$.

Let $h$ denote the $4$-by-$4$ matrix
whose first row is $\varpi^{-\ell} f_1$, and
whose $i$-th row is $f_i$ for $i=2,3,4$.
Then $h \in \GL_4(K)$ and the pair
$\wt{h} = (\jmath(h), \varpi^{-b_2})$ is an element 
of $G(\psi)$.

We set $m = \ell + e$.
Since $\ell \ge e$, we have $m \ge 2e$.
For $i=1,2$ let $\bK_i \subset \GL_4(K)$ denote the subgroup of elements $g \in \GL_4(K)$ satisfying $L_i g = L_i$.
Then $h \bK_1 h^{-1}$ consists of the $4$-by-$4$ matrices $g$ such that
\[
g \in \begin{pmatrix}
\cO_K & \fram_K^{-\ell} & \fram_K^{-\ell} & \fram_K^{-\ell} \\
\fram_K^\ell & \cO_K & \cO_K & \cO_K \\
\fram_K^\ell & \cO_K & \cO_K & \cO_K \\
\fram_K^\ell & \cO_K & \cO_K & \cO_K
\end{pmatrix}
\]
and $\det(g) \in \cO_K^\times$, and
$h \bK_2 h^{-1}$ consists of the $4$-by-$4$ matrices $g$ such that
\[
g \in \begin{pmatrix}
\cO_K & \fram_K^{-\ell} & \fram_K^{-\ell} & \fram_K^{-m} \\
\fram_K^\ell & \cO_K & \cO_K & \fram_K^{-e} \\
\fram_K^\ell & \cO_K & \cO_K & \fram_K^{-e} \\
\fram_K^m & \fram_K^e & \fram_K^e & \cO_K
\end{pmatrix}
\]
and $\det(g) \in \cO_K^\times$.
Note that $H(L_1,L_2,I_1,I_2)$ is equal to the kernel of the homomorphism 
$\jmath(\bK_1 \cap \bK_2) \times \cO_K^\times \to \Aut_{\cO_K}(L_2/L_1) = (\cO_K/\fram_K^e)^\times$
that sends $(\jmath(g),\lambda) \in \jmath(\bK_1 \cap \bK_2) \times \cO_K^\times$ to the $(4,4)$-entry of $h g h^{-1}$ modulo $\fram_K^e$.
It follows from the descriptions of
$\bK_1$ and $\bK_2$ above that
$h(\bK_1 \cap \bK_2)h^{-1}$ consists 
of the $4$-by-$4$ matrices $g$ such that
\[
g \in \begin{pmatrix}
\cO_K & \fram_K^{-\ell} & \fram_K^{-\ell} & \fram_K^{-\ell} \\
\fram_K^\ell & \cO_K & \cO_K & \cO_K \\
\fram_K^\ell & \cO_K & \cO_K & \cO_K \\
\fram_K^m & \fram_K^e & \fram_K^e & \cO_K
\end{pmatrix}
\]
and $\det(g) \in \cO_K^\times$.
We claim that, for any element $(\jmath(g),\lambda)$ of
$G(\psi) \cap (\jmath(\bK_1 \cap \bK_2)\times \cO_K^\times)$, the matrix
$h g h^{-1}$ is an element of
\[
\begin{pmatrix}
\cO_K & \cO_K & \cO_K & \fram_K^{-\ell} \\
\fram_K^\ell & \cO_K & \cO_K & \cO_K \\
\fram_K^\ell & \cO_K & \cO_K & \cO_K \\
\fram_K^m & \fram_K^\ell & \fram_K^\ell & \cO_K
\end{pmatrix}.
\]
The claim is clear if $\ell=0$.
Suppose that $\ell \ge 1$.
Let us write $h g h^{-1} =(s_{i,j})_{1 \le i, j \le 4}$.
Let $g'=(s_{i,j})_{2 \le i,j \le 3}$ denote the 
$\{2,3\} \times \{2,3\}$ submatrix
of $h g h^{-1}$. 
For $i=1,\ldots,4$ let $v_i$ denote the $i$-th row of $h g h^{-1}$.
Then $\psi(v_2,v_3)$ belongs to 
$\cO_K^\times$. Since we have assumed $\ell \ge 1$, this implies that
$g'$ belongs to $\GL_2(\cO_K)$.
Thus the equalities 
$\psi(v_2,v_4) = \psi(v_3,v_4)=0$ show that the second and the third entries of $v_4$ belong to $\cO_K$.
Similarly, the equalities 
$\psi(v_1,v_2) = \psi(v_1,v_3)=0$ show that the second and the third entries of $v_1$ belong to $\fram_K^\ell$.
This completes the proof of the claim.

Thus, the subgroup 
$\wt{h} \wt{\Psi}^{-1}(\bK_X) \wt{h}^{-1}$ of 
$G(\psi) \cong \GSp_4(K)$ is equal to
the quasi-paramodular group $\mathbf{K}_1(m,e)
\subset \GSp_4(K)$ of level $m$ 
introduced by Okazaki \cite{Okazaki} in his study of 
local newforms for generic representations of $\GSp_4(K)$.

\section{A $Y$-site for a connected locally noetherian scheme over a scheme}
\label{sec:conn loc noeth}
In this section we introduce, for a scheme $S$
and a connected locally noetherian scheme $X$ over $S$,
a semi-cofiltered category $\SCH_{S,X}$.
The category $\SCH_{S,X}$ contains the category $\Et^0(X)$ 
of connected objects of the small \'etale site $\Et(X)$, and is contained in
the category of schemes over $S$.
Let $J$ denote the atomic topology on $\SCH_{S,X}$. 
We will prove that $(\SCH_{S,X},J)$ is a $Y$-site and
that for any morphism $f\colon Y \to Z$ in $\SCH_{S,X}$,
there are only finitely many automorphisms of $Y$ over $Z$.
Hence it follows from Proposition 6.2.1 of \cite{Grids} that there exists
a grid $(\cC_0,\iota_0)$ of the $Y$-site $(\SCH_{S,X},J)$.
We will give a more explicit construction of a grid
$(\cC_0,\iota_0)$ of $(\SCH_{S,X},J)$.

Let $M_{S,X}$ denote the absolute Galois monoid $M_\Cip$.
Later we will see in Corollary \ref{cor:hyp_M} that
$M_{S,X}$ is a locally profinite group
and is independent, up to isomorphisms, of the choice of
a grid $(\cC_0,\iota_0)$.
It follows from Theorem 5.8.1 of \cite{Grids} that 
the fiber functor $\omega_\Cip$ gives an equivalence
of categories from $\Shv(\SCH_{S,X},J)$ to the 
category of smooth left $M_{S,X}$-sets.

Let $(\cC_0,\iota_0)$ be a grid of $(\SCH_{S,X},J)$.
Then for an object $Y_0$ of $\cC_0$,
the open subgroup $\bK_{Y_0}$ of $M_{S,X}$ 
is a profinite group isomorphic to the 
\'etale fundamental group $\iota_0(Y_0)$.
In particular $M_{S,X}$ is a locally profinite group.

In the case when $S=X$, the locally profinite group $M_{S,X}$ 
is profinite and is isomorphic to the 
\'etale fundamental group $X$.
However, in general case,
the locally profinite group $M_{S,X}$ is not necessarily profinite.
In Section \ref{sec:hyp_counter}, we will give three
examples. One is given by a semi-abelian variety.
The others are given by a Shimura curve and
by an affine curve of positive characteristic.

\subsection{Preliminary} \label{sec:hyp_prelim}
Let $\cC$ be a category. Two objects $X$, $Y$ of $\cC$
are called $\Lambda$-related in $\cC$ if there exists a diagram
$X \leftarrow Z \to Y$ in $\cC$. 
So the category $\cC$ is $\Lambda$-connected if and only if
any two objects of $\cC$ are $\Lambda$-related in $\cC$.
If $\cC$ is semi-cofiltered, 
then $\Lambda$-relationship in $\cC$ gives an equivalence relation on 
the collection of objects of $\cC$.
For an object $X$ of $\cC$, the full subcategory of $\cC$ whose objects
are the objects of $\cC$ which are $\Lambda$-related to $X$ in $\cC$ 
is called the $\Lambda$-connected component of $X$ in $\cC$.
If $\cC_{/X}$ is $\Lambda$-connected, then
the $\Lambda$-connected component of $X$ in $\cC$ is $\Lambda$-connected.
If $\cC$ is semi-cofiltered, then
the $\Lambda$-connected component of $X$ in $\cC$ is $\Lambda$-connected
and semi-cofiltered.

\subsection{The $Y$-site $\SCH_{S,X}$}
Let $S$ be a scheme.
Let $\SCH_S$ be the following category. 
The objects of $\SCH_S$ are schemes over $S$
which are nonempty, connected, and locally noetherian.
For two objects $X$ and $Y$ of $\SCH_S$, the morphisms
from $X$ to $Y$ in $\SCH_S$ are the finite \'etale morphisms 
$X \to Y$ of schemes over $S$.
\begin{lem} \label{lem:hyp_E}
The category $\SCH_S$ is a semi-cofiltered $E$-category.
\end{lem}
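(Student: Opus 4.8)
The statement to prove is that $\SCH_S$ is a semi-cofiltered $E$-category. I would break this into two independent claims: first, that every morphism in $\SCH_S$ is an epimorphism (the $E$-category condition), and second, that $\SCH_S$ is semi-cofiltered, i.e.\ that for any diagram $Y_1 \xto{f_1} X \xleftarrow{f_2} Y_2$ in $\SCH_S$ there exists an object $Z$ with morphisms $Z \to Y_1$ and $Z \to Y_2$ making the square commute.

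\textbf{The $E$-category part.} Let $f: X \to Y$ be a finite \'etale morphism between nonempty connected locally noetherian $S$-schemes, and suppose $g_1, g_2: Y \to W$ are morphisms in $\SCH_S$ with $g_1 \circ f = g_2 \circ f$. I would argue that $f$, being finite \'etale and with $X$ nonempty, is in particular surjective and faithfully flat (finite flat of positive rank, since the fibers are nonempty finite \'etale schemes); hence $f$ is an epimorphism of schemes, and therefore $g_1 = g_2$. The one point to check is that $f$ really is surjective: a finite \'etale morphism has locally constant rank on $Y$, and since $Y$ is connected the rank is a constant $n \geq 0$; since $X$ is nonempty, $n \geq 1$, so every fiber is nonempty and $f$ is surjective. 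Faithfully flat descent (or just the fact that a faithfully flat morphism is an epimorphism in the category of schemes) then gives the cancellation property. This step is routine.

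\textbf{The semi-cofiltered part.} Given $f_1: Y_1 \to X$ and $f_2: Y_2 \to X$ finite \'etale morphisms in $\SCH_S$, I would form the fiber product $Y_1 \times_X Y_2$ over $X$ (which exists in the category of schemes). Finite \'etale morphisms are stable under base change, so both projections $Y_1 \times_X Y_2 \to Y_1$ and $Y_1 \times_X Y_2 \to Y_2$ are finite \'etale, and $Y_1 \times_X Y_2$ is locally noetherian (finite over a locally noetherian scheme). The issue is that $Y_1 \times_X Y_2$ need not be connected, nor even nonempty. To handle this, I would pick a connected component $Z$ of $Y_1 \times_X Y_2$; since $Y_1 \times_X Y_2$ is locally noetherian, it is locally connected, so its connected components are open (and closed), hence are themselves locally noetherian schemes, and the composites $Z \inj Y_1 \times_X Y_2 \to Y_i$ are finite \'etale. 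The remaining gap is nonemptiness: one must know that $Y_1 \times_X Y_2$ is nonempty, equivalently that $f_1$ and $f_2$ have a common point in some geometric fiber — but this follows because both $f_1$ and $f_2$ are surjective (by the rank argument above), so for any point $x \in X$ the fiber $(Y_1)_x \times_{\kappa(x)} (Y_2)_x$ is a nonempty product of nonempty finite \'etale $\kappa(x)$-schemes, hence nonempty. Thus $Z$ is a genuine object of $\SCH_S$ and gives the required span.

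\textbf{Main obstacle.} The only genuinely substantive point, and the one I would be most careful about, is the handling of connectedness and nonemptiness of the fiber product: the naive fiber product lands outside $\SCH_S$, and one must check that passing to a connected component stays within the category (using local noetherianity to get openness of components) and that such a component exists (using surjectivity of $f_1, f_2$). Everything else — stability of finite \'etale morphisms under base change and composition, and the epimorphism property of faithfully flat morphisms — is standard and I would simply cite it. I expect the author's proof to proceed along exactly these lines, possibly phrasing the semi-cofilteredness via $Y_1 \times_X Y_2$ and its connected components.
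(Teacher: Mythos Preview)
Your proposal is correct and follows essentially the same route as the paper's proof: show finite \'etale morphisms between nonempty connected schemes are surjective and faithfully flat (hence epimorphisms), then take a connected component of the fiber product for semi-cofilteredness. The only cosmetic difference is that the paper argues surjectivity via the image of $f$ being both open (flat, locally of finite presentation) and closed (finite) in the connected target, whereas you use the locally constant rank argument; both are standard and equivalent here.
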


\begin{proof}
First, we prove that any morphism in $\SCH_S$ is faithfully flat
and quasi-compact. Let $f\colon  X \to Y$ be a morphism in $\SCH_S$.
Then $f$ is flat since $f$ is \'etale, and $f$ is quasi-compact
since $f$ is finite and is in particular affine.
If suffices to show that $f$ is surjective.
Since $f$ is finite, it follows from Proposition 6.1.10 of \cite{EGAII}
that $f$ is universally closed.
Since $f$ is finite \'etale, $f$ is flat and is locally of
finite presentation. Hence it follows from Th\'eor\`eme 2.4.6 of \cite{EGAIV}
that $f$ is universally open.
Since $Y$ is connected, this implies that $f$ is surjective.

It follows from Corollaire 5.3 of \cite[Expos\'e VIII]{SGA1} that
any morphism in $\SCH_S$ is an epimorphism in the category of
schemes. This implies that $\SCH_S$ is an $E$-category.

Next, we prove that $\SCH_S$ is semi-cofiltered.
Let $Y_1 \to Z \leftarrow Y_2$ be a diagram in $\SCH_S$.
Let us consider the fiber product
$W = Y_1 \times_Z Y_2$ in the category of schemes.
Then $W$ is finite and surjective over a locally noetherian
scheme $Y_1$. Hence $W$ is nonempty and locally noetherian.
Let $W_0$ be a connected component of $W$.
Since $W$ is locally noetherian, it follows from
Corollaire 6.1.9 of \cite{EGAI} that $W_0$ is open and closed in $W$.
Hence the restriction to the projections
$W \to Y_i$ to $W_0$ is finite and \'etale.
Thus, we have a commutative diagram
$$
\begin{CD}
W_0 @>>> Y_2 \\
@VVV @VVV \\
Y_1 @>>> Z
\end{CD}
$$
in the category $\SCH_S$. This proves that $\SCH_S$ is
semi-cofiltered.
\end{proof}

Let $X$ be an object of $\SCH_S$ and consider the 
$\Lambda$-connected component $\SCH_{S,X}$ of $X$ in $\SCH_S$.
\begin{lem} \label{lem:hyp_conn}
The category 
$\SCH_{S,X}$ is $\Lambda$-connected and semi-cofiltered.
\end{lem}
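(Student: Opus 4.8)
\textbf{Proof proposal for Lemma~\ref{lem:hyp_conn}.}
The statement asserts that the $\Lambda$-connected component $\SCH_{S,X}$ of $X$ in $\SCH_S$ is $\Lambda$-connected and semi-cofiltered. The plan is to deduce this from the general facts on $\Lambda$-connected components collected in Section~\ref{sec:hyp_prelim} together with Lemma~\ref{lem:hyp_E}. Recall that Section~\ref{sec:hyp_prelim} records the following: if $\cC$ is semi-cofiltered, then $\Lambda$-relationship in $\cC$ is an equivalence relation on objects, and moreover the $\Lambda$-connected component of any object of a semi-cofiltered category is again $\Lambda$-connected and semi-cofiltered. By Lemma~\ref{lem:hyp_E} the ambient category $\SCH_S$ is a semi-cofiltered $E$-category, so this general principle applies directly with $\cC = \SCH_S$.

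The main step is therefore simply to verify that the general statement from Section~\ref{sec:hyp_prelim} is being invoked with its hypotheses satisfied. First I would note that $\SCH_S$ is semi-cofiltered by Lemma~\ref{lem:hyp_E}, so that $\Lambda$-relationship is an equivalence relation and the notion of ``$\Lambda$-connected component of $X$'' is well posed. Then $\SCH_{S,X}$ is by definition the full subcategory of $\SCH_S$ on those objects $\Lambda$-related to $X$; this is precisely the $\Lambda$-connected component of $X$ in $\SCH_S$. The two assertions to be proved are then exactly the content of the last sentence of Section~\ref{sec:hyp_prelim}: for a semi-cofiltered category, the $\Lambda$-connected component of an object is $\Lambda$-connected and semi-cofiltered. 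Hence the claim follows.

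If one wished to spell out the argument rather than quote Section~\ref{sec:hyp_prelim}, the two verifications would run as follows. For $\Lambda$-connectedness: given $Y_1, Y_2 \in \SCH_{S,X}$, both are $\Lambda$-related to $X$, say via diagrams $Y_1 \leftarrow W_1 \to X$ and $Y_2 \leftarrow W_2 \to X$; using that $\SCH_S$ is semi-cofiltered, complete $W_1 \to X \leftarrow W_2$ to a diagram $W_1 \leftarrow W \to W_2$, and then $Y_1 \leftarrow W \to Y_2$ exhibits $Y_1$ and $Y_2$ as $\Lambda$-related, with $W$ lying in $\SCH_{S,X}$ since it is $\Lambda$-related to $X$. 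For semi-cofilteredness: given a diagram $Y_1 \to Z \leftarrow Y_2$ in $\SCH_{S,X}$, it is a diagram in $\SCH_S$, which is semi-cofiltered, so it can be completed to a commutative square with a new object $W$; since $W$ admits a morphism to $Y_1 \in \SCH_{S,X}$, it is $\Lambda$-related to $X$ and hence lies in $\SCH_{S,X}$, so the completion takes place inside $\SCH_{S,X}$.

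I do not expect any serious obstacle here: the lemma is a formal consequence of Lemma~\ref{lem:hyp_E} and the generalities of Section~\ref{sec:hyp_prelim}, and the only point requiring a moment's care is to check that the auxiliary object produced by semi-cofilteredness of $\SCH_S$ genuinely lies back in the component $\SCH_{S,X}$, which holds because it maps to an object already known to be $\Lambda$-related to $X$.
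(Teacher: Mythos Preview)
Your proposal is correct and matches the paper's approach exactly: the paper's proof is the single sentence ``It follows from Lemma~\ref{lem:hyp_E} and the last sentence in Section~\ref{sec:hyp_prelim},'' which is precisely what you invoke. Your additional spelled-out verifications are correct but unnecessary for the paper's purposes.
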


\begin{proof}
It follows from Lemma \ref{lem:hyp_E} and the last sentence
in Section \ref{sec:hyp_prelim}.
\end{proof}

Let $J$ denote the atomic topology on $\SCH_{S,X}$.
\begin{prop} \label{prop:hyp_Y}
The pair $(\SCH_{S,X},J)$ is a $Y$-site.
Moreover, for any morphism $f\colon Y \to Z$ in $\SCH_{S,X}$,
there are only finitely many automorphisms of $Y$ over $Z$.
\end{prop}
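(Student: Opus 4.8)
The plan is to verify the two defining properties of a $Y$-site for $(\SCH_{S,X},J)$ with $J$ the atomic topology, namely that $\SCH_{S,X}$ is a $B$-site whose collection $\cT(J)$ of morphisms of type $J$ has enough Galois coverings, together with $\Lambda$-connectivity, and then to prove the finiteness statement about $\Aut_Z(Y)$. By Lemma~\ref{lem:hyp_conn}, the category $\SCH_{S,X}$ is $\Lambda$-connected and semi-cofiltered, and it is an $E$-category as a $\Lambda$-connected component of the $E$-category $\SCH_S$ (Lemma~\ref{lem:hyp_E}). It is essentially small, since connected finite-\'etale covers of $X$, and more generally of any object $Z$, form an essentially small category (e.g. by bounding degrees and using Noetherianness). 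Hence $(\SCH_{S,X},J)$ is a $B$-site, and for the $Y$-site condition it remains to establish that $\cT(J)$ has enough Galois coverings, which with the atomic topology means: for any morphism $f:Y\to Z$ in $\SCH_{S,X}$ there is a morphism $g:W\to Y$ such that $f\circ g$ is a Galois covering.

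The core of the argument is the construction of a Galois closure. Given a finite \'etale morphism $f:Y\to Z$ between connected locally Noetherian $S$-schemes, I would first reduce to the case where $Z$ is Noetherian by working \'etale-locally, or more simply invoke the classical theory: over a connected Noetherian base $Z$, for a finite \'etale $Y\to Z$ of degree $n$ there is a connected finite \'etale $W\to Z$, contained in a connected component of the $n$-fold fiber product $Y\times_Z\cdots\times_Z Y$ (the component corresponding to the ``complement of the diagonals''), such that $W\to Z$ is Galois — this is exactly the Galois closure of the cover, and $\mathrm{Aut}_Z(W)$ acts simply transitively on $\Hom_Z(W,Y\times_Z\cdots)$-data in the appropriate sense. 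The subtlety when $Z$ is only locally Noetherian is that $Y\times_Z\cdots\times_Z Y$ need not be connected, but each of its connected components is open and closed (Corollaire 6.1.9 of \cite{EGAI}, as already used in the proof of Lemma~\ref{lem:hyp_E}) and finite \'etale over $Z$, so one picks the right component $W$; the Galois property is then checked on the level of $\Hom_W$ and $\Hom_Z$ torsors exactly as in the Noetherian case. One then verifies $W\in\SCH_{S,X}$ (it is $\Lambda$-related to $Y$, hence to $X$, via the projection $W\to Y$) and that $f\circ(W\to Y)$ is a Galois covering in the sense of Definition 3.1.2 of \cite{Grids}, using that $\SCH_{S,X}$ is an $E$-category so that a morphism which is a ``pseudo-torsor'' really is a Galois covering.

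For the finiteness of $\Aut_Z(Y)$: if $f:Y\to Z$ is a morphism in $\SCH_{S,X}$, then any $\sigma\in\Aut_Z(Y)$ is in particular a finite \'etale $Z$-automorphism of $Y$. The number of such automorphisms is bounded by the number of $Z$-sections of $Y\times_Z Y\to Y$ (graph of $\sigma$ over the first factor), equivalently by the number of connected components of $Y\times_Z Y$ mapping isomorphically to $Y$; since $Y\to Z$ is finite of some degree $n$ (locally constant, hence constant as $Z$ is connected), $Y\times_Z Y$ is finite of degree $n$ over $Y$, so has at most $n$ connected components, whence $\#\Aut_Z(Y)\le n<\infty$. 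Alternatively, pass to the Galois closure $W\to Z$ just constructed with group $G$ and note $\Aut_Z(Y)$ is a subquotient of $G$, which is finite.

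The main obstacle I anticipate is handling the non-Noetherian base $Z$ cleanly in the Galois-closure construction: one must make sure that ``connected component of the iterated fiber product'' is again an object of $\SCH_{S,X}$ (connected, locally Noetherian, finite \'etale over $Z$) and that the combinatorial description of the Galois group and of $\Hom_Z(W,Y)$ survives when the full fiber product is disconnected. This is where the openness-and-closedness of components over a locally Noetherian base (Corollaire 6.1.9 of \cite{EGAI}) and universal openness/closedness of finite \'etale maps (already invoked in Lemma~\ref{lem:hyp_E}) do the work; once those are in place the rest is the standard Galois-closure bookkeeping together with the formal fact, available from the $E$-category structure, that a pseudo-torsor morphism is a Galois covering.
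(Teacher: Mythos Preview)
Your proposal is correct, but the paper takes a shorter and more conceptual route. Rather than constructing the Galois closure of $f:Y\to Z$ by hand inside the iterated fiber product, the paper observes that the overcategory $(\SCH_S)_{/Z}$ is exactly the category $\Et^0(Z)$ of connected finite \'etale covers of $Z$, and that $\Et(Z)$ is a Galois category, so $\Et^0(Z)$ is equivalent to the category $BG$ of finite transitive $G$-sets for some profinite group $G$. Enough Galois coverings then reduces to the group-theoretic fact that every open subgroup of a profinite group contains an open normal subgroup, and the finiteness of $\Aut_Z(Y)$ reduces to the trivial fact that $\Hom_{BG}(S,T)$ is finite for any two finite $G$-sets. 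This sidesteps entirely the non-Noetherian subtleties you worry about: the existence of the Galois category $\Et(Z)$ for $Z$ connected locally Noetherian is standard (SGA1), and once you are in $BG$ no further scheme theory is needed.

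Your approach has the virtue of being self-contained --- you never invoke the \'etale fundamental group or the Galois-category formalism --- and your degree bound $\#\Aut_Z(Y)\le\deg f$ is more explicit than the paper's. But the price is exactly the bookkeeping you anticipate: you must verify that the relevant connected component of $Y\times_Z\cdots\times_Z Y$ is again an object of $\SCH_{S,X}$ and that the categorical Galois-covering condition of \cite{Grids} is satisfied. None of this is hard, but the paper's route avoids it entirely by packaging it all into the single citation ``$\Et(Z)$ is a Galois category''.
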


\begin{proof}
First we prove that $(\SCH_{S,X},J)$ is a $Y$-site.
It follows from Lemma \ref{lem:hyp_E} that
$\SCH_{S,X}$ is an $E$-category.
It follows from Lemma \ref{lem:hyp_conn} that
$\SCH_{S,X}$ is $\Lambda$-connected and semi-cofiltered.
Hence it suffices to prove that
the collection of morphisms in $\SCH_{S,X}$ has enough 
Galois coverings.
Let $Y$ be an object of $\SCH_S$.
It suffices to prove that
the collection of morphisms in the overcategory 
$(\SCH_S)_{/Y}$ has enough Galois coverings.
The category $(\SCH_S)_{/Y}$ is
equal to the category $\Et^0(Y)$ of 
connected \'etale coverings of $Y$.
Since $\Et^0(Y)$ is the full subcategory of the Galois category
$\Et(Y)$ of \'etale coverings of $Y$, 
it is equivalent to the category $BG$
of nonempty finite sets with continuous transitive 
left $G$-actions for some profinite group $G$.
Observe that for any open subgroup $H \subset G$,
there exists an open normal subgroup of $G$ contained
in $H$.
Hence the collection of morphisms in $\Et^0(Y)$
has enough Galois coverings.
This proves that $(\SCH_{S,X},J)$ is a $Y$-site.

The last assertion follows since for any
two objects $S,T$ of $BG$, there exist only 
finitely many morphisms from $S$ to $T$ in $BG$.
\end{proof}

As we have seen in the proof of Proposition \ref{prop:hyp_Y},
the overcategory $(\SCH_{S,X})_{/X}$ is equivalent to
$BG$ for some profinite group $G$.
Let us fix an equivalence $\alpha\colon BG \to (\SCH_{S,X})_{/X}$.
Let $I$ denote the filtered poset of open subgroups of $G$.
For $H \in I$, let us write $\alpha(G/H) \colon  X_H \to X$.
Then $\wt{X} = (X_H)_{H \in I}$ is a pro-object in the category
$\SCH_{S,X}$.
Let $\cC'_0$ denote the following category.
An object of $\cC'_0$ is a pair
$(Y,f)$ of an object $Y$ of $\SCH_{S,X}$ and a morphism
$f\colon  \wt{X} \to Y$ in the category of pro-objects in $\SCH_{S,X}$,
i.e., $f$ is an element of the filtered colimit
$\varinjlim_{H \in I} \Hom_{\SCH_{S,X}}(X_H,Y)$.
For two objects $(Y,f)$ and $(Y',f')$ of $\cC'_0$,
a morphism from $(Y,f)$ to $(Y',f')$ in $\cC'_0$
is a morphism $g\colon  Y \to Y'$ in $\SCH_{S,X}$ 
satisfying $f' = g \circ f$ in the 
category of pro-objects in $\SCH_{S,X}$.

Since $\SCH_{S,X}$ is an $E$-category, it follows that
$\cC'_0$ is a quasi-poset.
Let $\iota'_0 \colon  \cC'_0 \to \SCH_{S,X}$ 
denote the functor that sends $(Y,f)$ to $Y$.

Let us choose a skeletal full subcategory $\cC_0$
of $\cC'_0$ and let $\iota_0$ denote the restriction
of $\iota'_0$ to $\cC_0$.

\begin{prop} \label{prop:hyp_grid}
The pair $(\cC_0,\iota_0)$ is a grid of
$(\SCH_{S,X},J)$.
\end{prop}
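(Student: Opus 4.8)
The plan is to verify that the pair $(\cC_0,\iota_0)$ satisfies the four conditions in Definition 5.5.1 of \cite{Grids} for being a pregrid of the $\Lambda$-connected category $\SCH_{S,X}$, and then invoke the fact that for a $Y$-site equipped with the atomic topology, being a pregrid is equivalent to being a grid. The key structural input is the observation already made in the proof of Proposition \ref{prop:hyp_Y}: the overcategory $(\SCH_{S,X})_{/X}$ is equivalent to $BG$ for a profinite group $G$, via the chosen equivalence $\alpha$, so that the pro-object $\wt{X} = (X_H)_{H \in I}$ plays the role of a ``universal covering'' of $X$.

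First I would check that $\cC_0$ is $\Lambda$-connected. Given two objects $(Y,f)$ and $(Y',f')$ of $\cC_0$, the morphisms $f,f'$ are represented by morphisms $X_H \to Y$ and $X_{H'} \to Y'$ for suitable $H, H' \in I$; taking $H'' = H \cap H' \in I$, the scheme $X_{H''}$ maps to $X_H$ and $X_{H'}$, so both $f$ and $f'$ factor through the pro-object point $\wt{X} \to X_{H''}$, and $(X_{H''}, \wt{X}\to X_{H''})$ dominates both $(Y,f)$ and $(Y',f')$ in $\cC_0$. Second, essential surjectivity of $\iota_0$: an arbitrary object $Y$ of $\SCH_{S,X}$ is $\Lambda$-related to $X$, so there is a diagram $X \leftarrow W \to Y$ in $\SCH_{S,X}$; since $(\SCH_{S,X})_{/X} \simeq BG$, the object $W \to X$ is dominated by some $X_H \to X$, giving a morphism $X_H \to W \to Y$, hence an object $(Y, \wt{X} \to Y)$ of $\cC'_0$; passing to the skeleton $\cC_0$ yields an object whose image under $\iota_0$ is isomorphic to $Y$. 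Third and fourth, I would show that for each object $(Y,f)$ of $\cC_0$ the induced functors on under- and over-categories $\cC_{0,(Y,f)/} \to (\SCH_{S,X})_{Y/}$ and $\cC_{0,/(Y,f)} \to (\SCH_{S,X})_{/Y}$ are equivalences. For the over-category statement, the essential point is Lemma \ref{lem:hyp_surj} (surjectivity of $\Hom_{\Pro(\cC)}(\wt{X},-)$ on morphisms), which shows any object $Z \to Y$ of $(\SCH_{S,X})_{/Y}$ lifts to a pro-object morphism $\wt{X} \to Z$ compatible with $f$, hence comes from an object of $\cC'_0$ over $(Y,f)$; fully faithfulness then follows because both categories are quasi-posets (using that $\SCH_{S,X}$ is an $E$-category) and the comparison respects the ordering. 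The under-category statement is essentially the same argument run upward, or can be deduced from the fact that $(\SCH_{S,X})_{Y/}$ is again of the form $BG_Y$ and the pro-object $\wt{X}$ maps to $Y$.

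The main obstacle I anticipate is the careful bookkeeping for conditions (3) and (4), namely matching the abstract over/under-category conditions of a pregrid against the concrete structure of finite \'etale morphisms; in particular one must be attentive that $\cC'_0$ (and hence $\cC_0$) is only a quasi-poset, not a poset, so ``equivalence of categories'' rather than ``isomorphism'' is the right statement, and one must use Lemma \ref{lem:hyp_surj} rather than any naive lifting. Once the pregrid conditions are established, I would cite the fact (used already in, e.g., the proof of Theorem \ref{thm:section2} and Proposition \ref{prop:classicalgp_Y}) that a pregrid of a $Y$-site with the atomic topology is automatically a grid, completing the proof. As a final remark, combined with Proposition \ref{prop:hyp_top} this identifies $M_\Cip$ with $\End_{\Pro(\SCH_{S,X})}(\wt{X})$, which, since each transition map $X_{H'} \to X_H$ is a Galois covering with finite Galois group $H/H'$, recovers the profinite group $G = \varprojlim G/H$ as the stabilizer $\bK$ of $X$ inside $M_{S,X}$.
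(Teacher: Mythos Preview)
Your overall strategy matches the paper's: verify the four pregrid conditions of Definition 5.5.1 in \cite{Grids} and then use that for the atomic topology a pregrid is a grid. However, there are two genuine problems in your treatment of condition (3), the essential surjectivity of $\cC_{0,/(Y,f)} \to (\SCH_{S,X})_{/Y}$.

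First, citing Lemma \ref{lem:hyp_surj} is circular. That lemma is stated and proved in Section \ref{sec:hyp_End} under the standing hypothesis that $(\cC_0,\iota_0)$ is already a grid, and its proof explicitly invokes the grid axiom (precisely condition (3)) to produce the lift. So you cannot use it here. The paper instead argues directly: represent $f$ by $f':X_{H_0}\to Y$; given $g:Y'\to Y$, use semi-cofilteredness to get $Z$ with morphisms $g':Z\to X_{H_0}$ and $f'':Z\to Y'$ over $Y$; then use the equivalence $\alpha:BG\simeq (\SCH_{S,X})_{/X}$ to find $H_1\subset H_0$ and an isomorphism $X_{H_1}\cong Z$ \emph{over} $X_{H_0}$ (this last compatibility is the delicate point, handled by choosing a preimage of $1\cdot H_0$ under the map in $BG$). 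This yields the required object $(Y',h)$ of $\cC'_0$ over $(Y,f)$.

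Second, your claim that $(\SCH_{S,X})_{/Y}$ is a quasi-poset is false: if $Z\to Y$ is a Galois covering with nontrivial Galois group, the automorphisms of $Z$ over $Y$ give multiple endomorphisms of this object in the overcategory. Consequently the functor $\cC_{0,/(Y,f)} \to (\SCH_{S,X})_{/Y}$ is not an equivalence, and your argument for full faithfulness breaks down. This is harmless for the proof, since condition (3) only demands essential surjectivity; but you should drop the equivalence claim. (By contrast, the \emph{under}category $(\SCH_{S,X})_{Y/}$ \emph{is} a quasi-poset, because the structural morphism $Y\to Y'$ is an epimorphism in the $E$-category $\SCH_{S,X}$; this is why condition (4) is the stronger ``equivalence'' and the paper can say it is clear from the definition.)
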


\begin{proof}
Since $J$ is the atomic topology, it suffices
to show that $(\cC_0,\iota_0)$ is a pregrid of $\SCH_{S,X}$.
We will check that $(\cC_0,\iota_0)$ satisfies the four conditions
in Definition 5.5.1 of \cite{Grids}.
It is easy to check that the poset $\cC_0$ is $\Lambda$-connected.
We note that, for any morphism $f\colon  Z \to X$ in $\SCH_{S,X}$, there
exists an element $H \in I$ such that $f$ and $\alpha(H)$ are
isomorphic in the overcategory $(\SCH_{S,X})_{/X}$.
Let $Y$ be an arbitrary object of $\SCH_{S,X}$.
By definition, we have a diagram $Y \xleftarrow{g} Z \xto{f} X$
in $\SCH_{S,X}$.
Let us choose an element $H \in I$ and an isomorphism
$\beta \colon  X_H \xto{\cong} Z$ from $\alpha(H)$ to $f$ in $(\SCH_{S,X})_{/X}$.
Then the pair $\wt{Y}$ of $Y$ and the class of $g \circ \beta$ is an object
of $\cC'_0$ satisfying $\iota'_0(\wt{Y}) = Y$.
This shows that the functor 
$\iota'_0 \colon  \cC'_0 \to \SCH_{S,X}$
is essentially surjective.
Hence the functor $\iota_0 \colon  \cC_0 \to \SCH_{S,X}$
is essentially surjective.
Let $(Y,f)$ be an object of $\cC_0$.
Then it is clear from the definition that
the functor $\cC_{0,(Y,f)/} \to (\SCH_{S,X})_{Y/}$
induced by $\iota_0$ is an equivalence of categories.
Hence $(\cC_0,\iota_0)$ satisfies (1), (2), and (4)
in Definition 5.5.1 of \cite{Grids}.

It remains to prove that $(\cC_0,\iota_0)$ satisfies (3)
in Definition 5.5.1 of \cite{Grids}.
Let $(Y,f)$ be an object of $\cC_0$ and let us choose
$H_0 \in I$ such that $f$ is the class of some morphism
$f'\colon X_{H_0} \to Y$ in $\SCH_{S,X}$.
Let us consider
the functor $\cC_{0,/(Y,f)} \to (\SCH_{S,X})_{/Y}$
induced by $\iota_0$.
Let $g\colon Y' \to Y$ be a morphism in $\SCH_{S,X}$.
Since $\SCH_{S,X}$ is semi-cofiltered, there exist
an object $Z$ of $\SCH_{S,X}$ and morphisms
$g' \colon  Z \to X_{H_0}$, $f'' \colon  Z \to Y'$ satisfying
$f' \circ g' = g \circ f''$.

Let $\phi\colon  S \to G/H_0$ be a morphism in $BG$.
Then $\phi$ is surjective. Choose $s \in \phi^{-1}(1 \cdot H_0)$.
Then the stabilizer $H'$ of $s$ in $G$ is an open subgroup of $G$
contained in $H_0$ and the map $G \to S$ that sends $g$ to $gs$
induces an isomorphism $\gamma\colon  G/H' \to S$ such that
$\phi \circ \gamma$ is equal to the canonical projection
$G/H' \to G/H_0$.
Since $\alpha$ is an equivalence of categories, this
in particular implies that there exist an 
element $H_1 \in I$ with $H_1 \subset H_0$ and
an isomorphism $\gamma_1 \colon  X_{H_1} \cong Z$ in $\SCH_{S,X}$
satisfying that the composite $g' \circ \gamma_1$
is equal to the transition morphism $X_{H_1} \to X_{H_0}$.
Let $h$ denote the class of the morphism
$f'' \circ \gamma_1$ in 
$\varinjlim_{H \in I} \Hom_{\SCH_{S,X}}(X_H,Y')$.
Then the pair $(Y',h)$ is an object of $\cC'_0$
and the morphism $g$ is a morphism from
$(Y',h)$ to $(Y,f)$ in $\cC'_0$.
This proves that
the functor $\cC_{0,/(Y,f)} \to (\SCH_{S,X})_{/Y}$
induced by $\iota_0$ is essentially surjective.
Hence $(\cC_0,\iota_0)$ satisfies (3)
in Definition 5.5.1 of \cite{Grids}.
This completes the proof.
\end{proof}

We define $M_{S,X}$ to be the absolute Galois monoid $M_\Cip$.

It follows from Proposition \ref{prop:hyp_top}
that,
as a topological monoid, $M_{S,X}$ is isomorphic to
the monoid $\End_{\Pro(\SCH_{S,X})}(\wt{X})$ of endomorphism of 
$\wt{X} =(X_{H})_{H \in I}$
in the category $\Pro(\SCH_{S,X})$ of pro-objects of $\SCH_{S,X}$.
Here we equip 
$$\Hom_{\Pro(\SCH_{S,X})}(\wt{X}, X_H) 
= \varinjlim_{H' \in I} \Hom_{\SCH_{S,X}}(X_{H'}, X_H)$$
with the discrete topology for each $H \in I$, 
and 
$$\End_{\Pro(\SCH_{S,X})}(\wt{X}) = \varprojlim_{H \in I}
\Hom_{\Pro(\SCH_{S,X})}(\wt{X},X_H)$$
with the limit topology.

\begin{lem} \label{lem:hyp_pi_1}
Let $(Y,f)$ be an object of $\cC_0$.
Then the open subgroup $\bK_{(Y,f)}$ of $M_{S,X}$ is
isomorphic to the \'etale fundamental group of $Y$.
\end{lem}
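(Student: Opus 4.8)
The plan is to identify the open subgroup $\bK_{(Y,f)}$ inside the absolute Galois monoid $M_{S,X}$ with the automorphism group of a suitable Galois pro-cover of $Y$, and then to recognize the latter as the \'etale fundamental group $\pi_1^{\mathrm{et}}(Y)$. By Proposition \ref{prop:hyp_top} and the discussion following Proposition \ref{prop:hyp_grid}, we may work with the concrete model $M_{S,X} \cong \End_{\Pro(\SCH_{S,X})}(\wt{X})$, where $\wt{X} = (X_H)_{H \in I}$. First I would recall the general description of $\bK_{(Y,f)}$: by Lemma 8.1.5 of \cite{Grids} (which was used in the proof of Proposition \ref{prop:hyp_top}), $\bK_{(Y,f)}$ is the stabilizer in $M_{S,X}$ of the morphism $f: \wt{X} \to Y$ under the natural action, i.e.\ the set of endomorphisms $\varphi$ of $\wt{X}$ in $\Pro(\SCH_{S,X})$ such that $f \circ \varphi = f$. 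Equivalently, using the bijection of Lemma \ref{lem:hyp_aux1}, $\bK_{(Y,f)}$ is the subgroup of those $\varphi \in \End_{\Pro(\SCH_{S,X})}(\wt{X})$ that are automorphisms over $Y$ via $f$.

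The key step is then to pass from the pro-object $\wt{X}$ to a pro-object over $Y$. Using that $(Y,f)$ is an object of $\cC_0$, choose $H_0 \in I$ and a morphism $f': X_{H_0} \to Y$ in $\SCH_{S,X}$ representing $f$. The subposet $I_{\le H_0} = \{H \in I : H \subset H_0\}$ is cofinal in $I$, so $\wt{X} \cong (X_H)_{H \in I_{\le H_0}}$ as pro-objects, and each $X_H$ for $H \subset H_0$ is naturally a finite \'etale cover of $Y$ via $f'$ composed with the transition morphism $X_H \to X_{H_0}$. I would show, using that $\alpha : BG \to (\SCH_{S,X})_{/X}$ is an equivalence and that the overcategory $(\SCH_{S,X})_{/Y}$ is itself equivalent to $BG_Y$ for the profinite group $G_Y = \pi_1^{\mathrm{et}}(Y)$ (by the Galois-category description used in the proof of Proposition \ref{prop:hyp_Y}), that the family $(X_H)_{H \in I_{\le H_0}}$, viewed in $(\SCH_{S,X})_{/Y}$, is cofinal among the connected Galois covers of $Y$; concretely, $f': X_{H_0} \to Y$ corresponds to an open subgroup $U_0 \subset G_Y$ and $X_H \to Y$ to some open subgroup of $U_0$, and cofinality follows because open normal subgroups of $G$ contained in $H$ are cofinal in $I$ while their images exhaust a cofinal system of open subgroups of $G_Y$. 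Then $\End_{\Pro((\SCH_{S,X})_{/Y})}$ of this pro-object, computed as an inverse limit of the finite automorphism groups $\Aut_Y(X_H)$, is precisely $\bK_{(Y,f)}$, and this inverse limit is $\varprojlim G_Y/\mathrm{(normal open)} = G_Y = \pi_1^{\mathrm{et}}(Y)$ by the standard description of the fundamental group as the automorphism group of the universal pro-cover.

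Concretely, I would organize the argument as: (i) identify $\bK_{(Y,f)}$ with $\{\varphi \in \End_{\Pro(\SCH_{S,X})}(\wt{X}) : f\varphi = f\}$ via Lemma 8.1.5 of \cite{Grids} and Lemma \ref{lem:hyp_aux1}; (ii) show this equals $\End_{\Pro((\SCH_{S,X})_{/Y})}(\wt{X}, f)$, since an endomorphism of $\wt{X}$ over $Y$ is the same as an endomorphism of the corresponding pro-object in the overcategory, and such endomorphisms are automatically isomorphisms because each $X_H \to Y$ is an epimorphism and $I$ is filtered; (iii) invoke the equivalence $(\SCH_{S,X})_{/Y} \simeq BG_Y$ with $G_Y = \pi_1^{\mathrm{et}}(Y)$, under which $\wt{X}$ (restricted to $H \subset H_0$) maps to a cofinal pro-system of transitive $G_Y$-sets $G_Y/U_H$, and the cofinality lets us arrange the $U_H$ to run through a cofinal system of open \emph{normal} subgroups; (iv) compute $\End_{BG_Y}$ of a cofinal pro-system $(G_Y/U_H)$ with $U_H$ open normal as $\varprojlim_H G_Y/U_H = G_Y$. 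The main obstacle I anticipate is step (iii): showing that the specific pro-system $(X_H)$ coming from the grid $\cC_0$ — which is built from open subgroups of $G$, not of $G_Y$ — is cofinal among connected covers of $Y$, and in particular that one can pass to a normal cofinal subsystem compatibly with the identification $M_{S,X} \cong \End_{\Pro(\SCH_{S,X})}(\wt{X})$. Once cofinality is in hand, everything reduces to the standard fact that $\pi_1^{\mathrm{et}}(Y)$ is the automorphism group of the universal cover, so the rest is routine.
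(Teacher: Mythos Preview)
Your proposal is correct, and the cofinality obstacle you flag in step (iii) is not a real difficulty: since the $X_H$ for $H \subset H_0$ run, via the equivalence $\alpha: BG \to (\SCH_{S,X})_{/X}$, through all connected finite \'etale covers of $X_{H_0}$, and since for any connected finite \'etale $Z \to Y$ a connected component of $Z \times_Y X_{H_0}$ is such a cover mapping to $Z$, the family $(X_H)_{H \subset H_0}$ is cofinal among connected finite \'etale covers of $Y$.

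Your route and the paper's are the same idea in different packaging. The paper observes that the overcategory functor $\iota_0^{(Y,f)} : (\cC_0)_{/(Y,f)} \to (\SCH_{S,X})_{/Y}$ is again a grid for the atomic topology on $(\SCH_{S,X})_{/Y}$, identifies $\bK_{(Y,f)}$ with the absolute Galois monoid of this overcategory grid (a general fact about $\bK$-groups from \cite{Grids}), and then notes that the associated category $\wt{\cFC}$ is $\Et(Y)$ with the restricted $\omega$ a fiber functor of that Galois category---whence the monoid is $\pi_1^{\mathrm{et}}(Y)$. Your argument unwinds exactly this: staying with the pro-object model of Proposition~\ref{prop:hyp_top}, you compute $\bK_{(Y,f)}$ directly as $\End$ of $\wt{X}$ over $Y$, and your cofinality step is precisely what makes the restricted fiber functor a Galois fiber functor for $\Et(Y)$. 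The paper's version is shorter because it invokes the grid machinery wholesale; yours is more explicit and self-contained at the pro-object level. Either way the substance is identical.
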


\begin{proof}
Let us consider the functor $\iota_0^{(Y,f)}\colon 
(\cC_0)_{/(Y,f)} \to (\SCH_{S,X})_{/Y}$ induced by $\iota_0$.
Then $(\SCH_{S,X})_{/Y}$ equipped with the atomic topology $J_Y$
is a $Y$-site, the pair $((\cC_0)_{/(Y,f)},\iota_0^{(Y,f)})$
is a grid of $((\SCH_{S,X})_{/Y},J_Y)$, and
$\bK_{(Y,f)}$ can be identified with the absolute Galois monoid
$M_{((\cC_0)_{/(Y,f)},\iota_0^{(Y,f)})}$.
Note that the Galois category $\Et(Y)$ 
is canonically equivalent to the category $\wt{\cFC}$ 
in Section \ref{sec:wtcFC} for $\cC =(\SCH_{S,X})_{/Y}$.
It is easy to see from the construction of
$\cC_0$ that the fiber functor
$\omega_{((\cC_0)_{/(Y,f)},\iota_0^{(Y,f)})}$
restricted to $\wt{\cFC}$ gives a fiber functor
of the Galois category $\Et(Y)$.
From this we can see that 
$\bK_{(Y,f)} \cong M_{((\cC_0)_{/(Y,f)},\iota_0^{(Y,f)})}$
is isomorphic to the \'etale fundamental group of $Y$.
\end{proof}

\begin{cor} \label{cor:hyp_M}
The absolute Galois monoid $M_{S,X}$ is a locally profinite group
and is independent, up to isomorphisms, of the choice of
a grid $(\cC_0,\iota_0)$.
Moreover, the fiber functor $\omega_\Cip$ gives an equivalence
of categories from $\Shv(\SCH_{S,X},J)$ to the 
category of smooth left $M_{S,X}$-sets.
\end{cor}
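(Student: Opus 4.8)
The plan is to deduce Corollary~\ref{cor:hyp_M} from the two results just established, namely Proposition~\ref{prop:hyp_grid} (the pair $(\cC_0,\iota_0)$ is a grid of $(\SCH_{S,X},J)$) and Lemma~\ref{lem:hyp_pi_1} (each open subgroup $\bK_{(Y,f)}$ is isomorphic to an \'etale fundamental group, hence profinite). First I would observe that $(\SCH_{S,X},J)$ is a $Y$-site by Proposition~\ref{prop:hyp_Y}, so by the general theory (and in particular Proposition~\ref{prop:hyp_top}) the absolute Galois monoid $M_{S,X} = M_\Cip$ is a well-defined topological monoid, isomorphic to $\End_{\Pro(\SCH_{S,X})}(\wt X)$. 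To see that $M_{S,X}$ is a \emph{group}, I would use Lemma~\ref{lem:hyp_surj}: for any morphism $f: Z' \to Z$ in $\SCH_{S,X}$, the induced map $\Hom_{\Pro(\cC)}(Y,Z') \to \Hom_{\Pro(\cC)}(Y,Z)$ is surjective. Applying this along the cofinal system of edge objects together with Corollary~\ref{cor:hyp_pro_surj}, every endomorphism of $\wt X$ in $\Pro(\SCH_{S,X})$ restricts compatibly to a \emph{surjective} system of finite \'etale self-maps; since a finite \'etale surjection between connected normal-enough objects of a Galois category that admits a section is an isomorphism, one concludes each component is invertible, hence so is the endomorphism. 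Thus $M_{S,X}$ is a topological group, and by Lemma~\ref{lem:hyp_pi_1} it has a fundamental system of open neighborhoods of $1$ consisting of profinite subgroups $\bK_{(Y,f)}$, so $M_{S,X}$ is locally profinite.

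Next I would address independence of the grid. Here I would appeal to the fact that the absolute Galois monoid attached to a $Y$-site is, up to isomorphism, independent of the chosen grid — this is part of the general formalism of \cite{Grids} (the grid only serves to present the same topos via continuous $M$-sets, and by Theorem~5.8.1 of \cite{Grids} the category $\Shv(\SCH_{S,X},J)$ is equivalent to smooth left $M_\Cip$-sets for any grid). Two grids $(\cC_0,\iota_0)$ and $(\cC_0',\iota_0')$ thus yield categories of smooth $M_\Cip$-sets and smooth $M_\Cip'$-sets that are both equivalent to $\Shv(\SCH_{S,X},J)$; matching the canonical fiber functors and using that a locally profinite group is recovered from its category of smooth sets (via the automorphisms of the fiber functor, or directly from Proposition~\ref{prop:hyp_top}, which identifies $M_\Cip$ with $\End_{\Pro(\SCH_{S,X})}(\wt X)$ — an invariant having nothing to do with the grid) gives a canonical topological isomorphism $M_{S,X} \cong M_{S,X}'$. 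Concretely, since Proposition~\ref{prop:hyp_top} expresses $M_\Cip$ as $\End_{\Pro(\SCH_{S,X})}(\wt X)$ where $\wt X = (X_H)_{H\in I}$ is built only from the overcategory $(\SCH_{S,X})_{/X}$ (a Galois category), and since $\wt X$ is a pro-universal object independent of the grid, the identification with $\End_{\Pro(\SCH_{S,X})}(\wt X)$ already exhibits the grid-independence, modulo checking that a different grid produces a cofinally equivalent pro-object $\wt X'$.

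The last assertion — that $\omega_\Cip$ gives an equivalence $\Shv(\SCH_{S,X},J) \xrightarrow{\cong} (M_{S,X}\text{-set})_{\mathrm{sm}}$ — follows immediately from Theorem~5.8.1 of \cite{Grids} once we verify its hypotheses for $(\SCH_{S,X},J,\cC_0,\iota_0)$: that this is a $Y$-site with a grid (Propositions~\ref{prop:hyp_Y} and \ref{prop:hyp_grid}) and that the relevant overcategories $\cC(\cT(J))_{/X}$ satisfy one of the two cardinality conditions of \cite[5.8.1]{Grids}. The second point holds because, as shown in the proof of Proposition~\ref{prop:hyp_Y}, each overcategory $(\SCH_{S,X})_{/Y}$ is equivalent to $BG$ for a profinite group $G$, in which there are only finitely many morphisms between any two objects — so the finiteness condition (1) of \cite[5.8.1]{Grids} is satisfied. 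I expect the main obstacle to be the bookkeeping in the grid-independence claim: one must be careful that ``$M_\Cip$ is independent of the grid up to isomorphism'' is really a statement about the pro-object $\wt X$ and its endomorphism monoid, and that the topology (as opposed to merely the abstract group) is also preserved — this is exactly what Proposition~\ref{prop:hyp_top} buys us, so the proof should reduce to citing Proposition~\ref{prop:hyp_top}, Lemma~\ref{lem:hyp_pi_1}, Lemma~\ref{lem:hyp_surj}, Corollary~\ref{cor:hyp_pro_surj}, and Theorem~5.8.1 of \cite{Grids}, with the group-versus-monoid distinction handled by the surjectivity lemmas.
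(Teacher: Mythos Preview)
Your proposal reaches the right conclusions but takes a longer route than the paper and contains a genuine gap in the argument that $M_{S,X}$ is a group.

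The paper's proof is much more direct. For the group property it simply invokes Lemma~8.1.7 of \cite{Grids}, which says that for a $Y$-site with the atomic topology the absolute Galois monoid is automatically a group. For grid-independence it cites Proposition~9.1.1 of \cite{Grids}, which gives independence (up to isomorphism) under exactly the finiteness condition on $\Aut_Z(Y)$ already established in Proposition~\ref{prop:hyp_Y}. The locally profinite conclusion then follows from Lemma~\ref{lem:hyp_pi_1} as you say, and the equivalence of categories from Theorem~5.8.1 of \cite{Grids}.

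Your argument for the group property has a real gap. Lemma~\ref{lem:hyp_surj} and Corollary~\ref{cor:hyp_pro_surj} assert that certain maps \emph{between Hom-sets} are surjective; they say nothing about the morphisms in $\SCH_{S,X}$ that represent a given endomorphism of $\wt X$ being isomorphisms. In fact every morphism in $\SCH_{S,X}$ is already finite \'etale and surjective (proof of Lemma~\ref{lem:hyp_E}), yet most are not isomorphisms. Your appeal to ``a finite \'etale surjection \ldots that admits a section is an isomorphism'' does not apply: nothing in your argument produces a section, and an endomorphism of the pro-object $\wt X$ need not restrict to self-maps $X_H \to X_H$ of individual levels. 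So this step does not go through as written.

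Your grid-independence argument via Proposition~\ref{prop:hyp_top} is also incomplete, as you yourself flag: the pro-object $Y = (\iota_0(Z))_{Z \in I}$ is built from the grid, so two grids give two a priori different pro-objects, and showing they are isomorphic in $\Pro(\SCH_{S,X})$ is essentially the content you are trying to prove. The paper avoids this entirely by citing Proposition~9.1.1 of \cite{Grids}.
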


\begin{proof}
It follows from Proposition \ref{prop:hyp_Y}
that $(\SCH_{S,X},J)$ is a $Y$-site 
such that for any morphism $f\colon Y \to Z$ in $\SCH_{S,X}$,
there are only finitely many automorphisms of $Y$ over $Z$.
Hence it follows from Proposition 9.1.1 of \cite{Grids} that
$M_{S,X}$ is independent, up to isomorphisms, of the choice of
a grid $(\cC_0,\iota_0)$.
Since $J$ is the atomic topology, it follows from
Lemma 8.1.7 of \cite{Grids} that $M_{S,X}$ is a group.
Hence it follows from Lemma \ref{lem:hyp_pi_1} that
$M_{S,X}$ is a locally profinite group.

The last assertion follows from 
Theorem 5.8.1 of \cite{Grids}.
\end{proof}

Since all morphisms in the category $\SCH_S$ are affine,
there exists a limit 
\begin{equation} \label{eq:univ_cov}
\wh{X} = \varprojlim_{H\in I} X_H
\end{equation}
in the category of schemes over $S$. 
For schemes $Y,Z$ over $S$, let
$\Hom_S(Y,Z)$ denote the set of morphisms from $Y$ to $Z$
in the category of schemes over $S$.

\begin{prop} \label{prop:hyp_wh}
Let us consider the monoid $\End_S(\wh{X})$ of
endomorphisms of the scheme $\wh{X}$ over $S$.
For each $H \in I$, we endow $\Hom_S(\wh{X},X_H)$
with the discrete topology and $\End_S(\wh{X})
= \varprojlim_H \Hom_S(\wh{X},X_H)$ with the limit
topology.

Then both as a monoid and as a topological space, 
$M_{S,X}$ is equal to the
submonoid of endomorphisms $\sigma \in \End_S(\wh{X})$
satisfying the following property:
There exists $H_0 \in I$ and a morphism $f\colon  X_{H_0} \to X$
in $\SCH_{S,X}$ such that the diagram
\begin{equation} \label{eq:hyp_comm1}
\begin{CD}
\wh{X} @>{\sigma}>> \wh{X} \\
@VVV @VVV \\
X_{H_0} @>{f}>> X,
\end{CD}
\end{equation}
where the vertical arrows are the canonical projections,
is commutative.
\end{prop}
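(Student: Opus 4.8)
The idea is to compare the two presentations of the absolute Galois monoid: the ``intrinsic'' one $M_{S,X}$, which Proposition~\ref{prop:hyp_top} identifies with $\End_{\Pro(\SCH_{S,X})}(\wt X)$, and the ``scheme-theoretic'' one coming from the limit scheme $\wh X = \varprojlim_H X_H$ of \eqref{eq:univ_cov}. First I would pass through the homeomorphism of topological monoids
\[
M_{S,X} \cong \End_{\Pro(\SCH_{S,X})}(\wt X) = \varprojlim_{H\in I}\Hom_{\Pro(\SCH_{S,X})}(\wt X, X_H)
\]
given by Proposition~\ref{prop:hyp_top} applied to the grid $(\cC_0,\iota_0)$ of Proposition~\ref{prop:hyp_grid}, noting that the pro-object $Y=(\iota_0(X))_{X\in I}$ of that proposition is cofinally the same as $\wt X = (X_H)_{H\in I}$. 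So it suffices to produce a homeomorphism of monoids between $\End_{\Pro(\SCH_{S,X})}(\wt X)$ and the submonoid $\End_S(\wh X)'$ of $\End_S(\wh X)$ described in the statement, compatible with the projections to $\Hom_{\Pro(\SCH_{S,X})}(\wt X,X_H)$ and $\Hom_S(\wh X,X_H)$ respectively, for each $H\in I$.

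The key tool is the identification of Hom-sets out of the pro-object $\wt X$ with Hom-sets out of the scheme $\wh X$. Concretely: for a fixed $H\in I$, I claim the canonical map
\[
\Hom_{\Pro(\SCH_{S,X})}(\wt X, X_H) = \varinjlim_{H'\in I}\Hom_{\SCH_{S,X}}(X_{H'},X_H)
\;\longrightarrow\; \Hom_S(\wh X, X_H)
\]
is injective, with image exactly the set of $S$-morphisms $g:\wh X\to X_H$ that ``factor through some finite level'', i.e.\ for which there is $H'\in I$ and a morphism $g':X_{H'}\to X_H$ in $\SCH_{S,X}$ with $g = g'\circ(\wh X\to X_{H'})$. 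Injectivity uses that each transition morphism $X_{H'}\to X_{H''}$ is an epimorphism (Lemma~\ref{lem:hyp_E}, $\SCH_S$ is an $E$-category) and that the projection $\wh X\to X_{H'}$ is faithfully flat (being a limit of finite faithfully flat morphisms) hence an epimorphism of schemes; the ``image'' description is the content of the factorization statement. Then taking the limit over $H$ and using Lemma~\ref{lem:hyp_aux1} to pin down that an element of $M_{S,X}$ corresponds to an endomorphism together with the data of a level-$H_0$ factorization through some $f:X_{H_0}\to X$, I would match up $\End_{\Pro(\SCH_{S,X})}(\wt X)$ with the submonoid of $\End_S(\wh X)$ cut out by the commutativity of \eqref{eq:hyp_comm1}. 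The monoid structures agree because composition of pro-morphisms is computed levelwise and matches composition of $S$-endomorphisms of $\wh X$; the topologies agree by construction (both are inverse limits of discrete sets over $I$, and the comparison maps are by the above a bijection onto a subset at each level).

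The main obstacle, I expect, is the ``factors through a finite level'' claim: given an arbitrary $S$-morphism $g:\wh X\to X_H$ with $X_H$ of finite type (finite \'etale) over $X$, one must produce a finite level $H'$ and a morphism $X_{H'}\to X_H$ realizing $g$. This is a standard ``limit of schemes'' / Grothendieck spreading-out argument (cf.\ \cite[\S 8]{EGAIV}): since $X_H$ is of finite presentation over $X$ and $\wh X = \varprojlim_{H'} X_{H'}$ with affine (indeed finite) transition maps over the locally noetherian $X$, any $X$-morphism from $\wh X$ to $X_H$ descends to some $X_{H'}$; but one must take care that $\wh X$ need not be noetherian, so the cofinality/quasi-compactness bookkeeping over the filtered poset $I$ of open subgroups of $G$ must be done carefully, and one must check the descended morphism lands in $\SCH_{S,X}$ (i.e.\ $X_{H'}\to X_H$ is again finite \'etale, which follows since it becomes so after the faithfully flat base change $\wh X\to X_{H'}$, or directly from the Galois-category structure of $\Et^0(X)$). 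Once this spreading-out is in hand, everything else is formal chasing through the identifications already available from the excerpt.
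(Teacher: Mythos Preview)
Your approach is essentially the paper's: identify $M_{S,X}$ with $\End_{\Pro(\SCH_{S,X})}(\wt X)$ via Proposition~\ref{prop:hyp_top}, inject this levelwise into $\End_S(\wh X)$, and use a spreading-out argument to describe the image. There is, however, one point where your sketch glosses over exactly the content of the condition~\eqref{eq:hyp_comm1}, and this is where a real gap could occur.

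In your ``main obstacle'' paragraph you start with an \emph{arbitrary} $S$-morphism $g:\wh X\to X_H$ and then invoke EGA~IV spreading-out by writing ``any $X$-morphism from $\wh X$ to $X_H$ descends''. But these are different things: spreading-out applies to morphisms over a base relative to which the target is of finite presentation, and $X_H$ is of finite presentation over $X$, not over $S$ (and $S$ need not even be locally noetherian). A general $S$-morphism $\wh X\to X_H$ has no reason to respect the canonical $X$-structure on either side, so the argument does not apply as stated. The paper uses the hypothesis~\eqref{eq:hyp_comm1} precisely here: it endows $\wh X$ with a \emph{new} structure of $X$-scheme, namely via $\wh X\to X_{H_0}\xto{f} X$, and one checks that with respect to this structure each component $\sigma_H:\wh X\to X_H$ \emph{is} an $X$-morphism. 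Only then does the finite-presentation spreading-out over $X$ produce the factorization $f_H:X_{H'}\to X_H$ at every level $H$. Without making this explicit, your argument would appear to prove that $M_{S,X}$ is all of $\End_S(\wh X)$, which is not what the proposition asserts.

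A minor secondary point: your justification that the descended map $X_{H'}\to X_H$ is finite \'etale (``since it becomes so after the faithfully flat base change $\wh X\to X_{H'}$'') does not parse as written---it is unclear what morphism you are base-changing. The paper instead notes that $f_H$ sits in a commutative square over $X$ in which both $X_{H'}\to X$ (via the new structure through $f$) and $X_H\to X$ are finite \'etale, and deduces that $f_H$ is finite \'etale from \cite[Expos\'e~I, Cor.~4.8]{SGA1}. Your alternative ``directly from the Galois-category structure of $\Et^0(X)$'' would also work once you know $f_H$ is a morphism over $X$, which again requires the step above.
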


\begin{proof}
It follows from Proposition \ref{prop:hyp_top} that we have
an isomorphism $M_{S,X} = \varprojlim_{H} \varinjlim_{H'}
\Hom_{\SCH_{S,X}} (X_{H'}, X_H)$.
For each $H \in I$, the set 
$\varinjlim_{H'} \Hom_{\SCH_{S,X}} (X_{H'}, X_H)$ is a subset
of $\Hom_S (\wh{X}, X_H)$.
Hence we obtain an injective map from
$M_{S,X}$ to $\End_S(\wh{X}) = \varprojlim_{H} \Hom_S(\wh{X}, X_H)$.
Let $\sigma \in \End_S(\wh{X})$ and suppose that
there exist $H_0 \in I$ and a morphism $f\colon  X_{H_0} \to X$
in $\SCH_{S,X}$ such that the diagram
\eqref{eq:hyp_comm1} is commutative.
For each $H \in I$, let $\sigma_H$ denote the
composite of $\sigma$ with the projection $\wh{X} \to X_H$.

Let $I'$ denote the set of $H' \in I$ with $H' \subset H_0$.
For each $H' \in I'$, let $X'_{H'}$ denote $X_{H'}$ regarded
as a scheme over $X$ via the composite of $X_{H'} \to X_{H_0}$ with $f$.
Then for any $H \in I$, the morphism $\sigma_H$ can be regarded as
a morphism from $\wh{X}' = \varprojlim_{H' \in I'} X'_{H'}$ 
to $X_H$ in the category of schemes over $X$.
Since $X_H$ is finite \'etale over $X$ and $X$ is locally noetherian, 
the morphism $X_H \to X$ is locally of finite presentation.
Hence there exists an element $H' \in I'$ such that
$\sigma_H$ factors through the projection $\wh{X}' \to X'_{H'}$
and that the resulting morphism $f_H \colon  X'_{H'} \to X_H$ makes the diagram
$$
\begin{CD}
X_{H'} @>{f_H}>> X_H \\
@VVV @VVV \\
X_H @>{f}>> X
\end{CD}
$$
commutative, where the the vertical arrows are the canonical projections.
Since the vertical arrows and $f$ are finite \'etale, it follows from
Corollaire 4.8 of \cite[Expos\'e I]{SGA1} that $f_H$ is finite \'etale.
This shows that $\sigma_H \in \Hom_S(\wh{X},X_H)$ comes from
an element of $\varinjlim_{H'} \Hom_{\SCH_{S,X}}(X_{H'},X_H)$.
Hence $\sigma$ comes from an element of $M_{S,X}$.

The claim on the topology $M_{S,X}$ follows immediately from the
definition of the topology on $M_{S,X}$.
This completes the proof.
\end{proof}

\begin{lem} \label{lem:hyp_core}
Let $S$ be a scheme and let $X$ be a connected, locally noetherian scheme
over $S$. Then the locally profinite group $M_{S,X}$ is profinite
if and only if there exists a morphism $Y \to X$ and
a finite subgroup $K$ of
the group $\Aut_S(Y)$ of automorphisms of the scheme $Y$ over $S$ 
such that $M_{S,X}$ is isomorphic to the \'etale fundamental
group of the Deligne-Mumford stack $K \backslash Y$.
\end{lem}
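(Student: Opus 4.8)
\textbf{Proof proposal for Lemma \ref{lem:hyp_core}.}
The plan is to analyze when the locally profinite group $M_{S,X} = \End_{\Pro(\SCH_{S,X})}(\wt{X})$ is profinite by unwinding the relation between $M_{S,X}$ and the \'etale fundamental groups $\pi_1^{\text{\'et}}(Y)$ for objects $Y$ of $\SCH_{S,X}$. Recall from Lemma \ref{lem:hyp_pi_1} that for each object $(Y,f)$ of $\cC_0$, the open compact subgroup $\bK_{(Y,f)} \subset M_{S,X}$ is isomorphic to $\pi_1^{\text{\'et}}(Y)$; in particular every such open subgroup is profinite. Since $M_{S,X}$ is Hausdorff and has a neighborhood basis at the identity consisting of the compact open subgroups $\bK_{(Y,f)}$, the group $M_{S,X}$ is profinite if and only if it is compact, which happens if and only if $M_{S,X} = \bK_{(Y_0,f_0)}$ for some single object $(Y_0,f_0)$ of $\cC_0$, i.e.\ if and only if the tower $\wt{X} = (X_H)_{H \in I}$ becomes ``constant above $Y_0$'' in the appropriate sense.

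First I would make the ``if'' direction precise. Suppose $g: Y \to X$ is a morphism in $\SCH_{S,X}$ and $K \subset \Aut_S(Y)$ is a finite subgroup such that $M_{S,X}$ is isomorphic to $\pi_1^{\text{\'et}}(K \backslash Y)$, where $K \backslash Y$ denotes the quotient Deligne--Mumford stack. Since $Y$ is connected, locally noetherian, and finite \'etale over $X$, and $K$ is finite, the stack quotient $K \backslash Y$ has a finite \'etale atlas $Y$, so its \'etale fundamental group is profinite. Hence $M_{S,X}$ is profinite in this case. This direction should be essentially immediate.

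The substantive direction is ``only if''. Assume $M_{S,X}$ is profinite, hence compact. By compactness and the neighborhood basis remark above, there is an object $(Y_0,f_0)$ of $\cC_0$ with $M_{S,X} = \bK_{(Y_0,f_0)}$. I would pass to the universal cover description of Proposition \ref{prop:hyp_wh}: $M_{S,X}$ is the submonoid of $\End_S(\wh{X})$ of endomorphisms $\sigma$ for which the projection $\wh{X} \to X$ factors through $\wh{X} \xto{\sigma} \wh{X} \to X_{H_0}$ for some $H_0$. Using that $M_{S,X}$ now equals $\bK_{(Y_0,f_0)}$, which acts on $\wh X$ fixing the image in $Y_0$ (after replacing $Y_0$ by a cofinal representative we may assume $X_{H_0} \to X$ factors through $Y_0 \to X$, and then through $\wh{X} \to X_{H_0} \to Y_0$), one sees that every element of $M_{S,X}$ is an automorphism of $\wh X$ over $Y_0$; this recovers the standard fact that $\bK_{(Y_0,f_0)} = \pi_1^{\text{\'et}}(Y_0)$ acts on the pro-universal cover $\wh X$ of $X$ with quotient (as a stack) equal to $Y_0$ — more precisely $\wh X$ is a $\pi_1^{\text{\'et}}(Y_0)$-torsor over $Y_0$ in the pro-\'etale sense. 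Now I would descend along the finite \'etale morphism $Y_0 \to X$: since $M_{S,X} = \pi_1^{\text{\'et}}(Y_0)$ is an open subgroup of itself and the original $\pi_1$ of $X$ sits inside via a finite-index-type relation governed by $\Aut_X(Y_0) \subset \Aut_S(Y_0)$, we can take $Y = Y_0$, $g = f_0$, and $K$ to be a suitable finite subgroup of $\Aut_S(Y_0)$ — concretely the group of automorphisms of $Y_0$ over $S$ that descend compatibly, whose order records the ``index'' of $M_{S,X}$ in the ambient Galois group — so that $K \backslash Y_0$ has \'etale fundamental group $M_{S,X}$. Finiteness of $K$ follows from the last assertion of Proposition \ref{prop:hyp_Y} applied to the morphism $g: Y_0 \to X$.

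The main obstacle I anticipate is the bookkeeping in the ``only if'' direction: translating the statement ``$M_{S,X}$ is compact, hence equals some $\bK_{(Y_0,f_0)}$'' into a clean geometric statement about a quotient stack $K \backslash Y$ requires carefully identifying $K$ and checking that the \'etale fundamental group of $K \backslash Y$ — rather than of $Y/K$ as a scheme, which may differ — is what appears. One must be careful that $K$ acts on $Y_0$ over $S$ but not necessarily over $X$, and that $\pi_1^{\text{\'et}}(K \backslash Y_0)$ fits into an extension $1 \to \pi_1^{\text{\'et}}(Y_0) \to \pi_1^{\text{\'et}}(K\backslash Y_0) \to K \to 1$ only when the action is free on a dense open, so in general one needs the full stacky $\pi_1$; I would invoke the standard theory of the \'etale fundamental group of a Deligne--Mumford stack (e.g.\ Noohi's work) to make this rigorous, and check that in our situation $M_{S,X}$, being already equal to $\pi_1^{\text{\'et}}(Y_0)$, forces $K$ to act trivially on $\pi_1$, so the extension is split and $K$ can be read off as an automorphism group.
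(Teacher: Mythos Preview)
There is a genuine gap in the ``only if'' direction. You assert that if $M_{S,X}$ is compact then $M_{S,X} = \bK_{(Y_0,f_0)}$ for some object $(Y_0,f_0)$, on the grounds that the $\bK_{(Y,f)}$ form a neighborhood basis of the identity. This inference is false: a compact group need not equal any member of a basis of open subgroups at the identity. Concretely, $\bK_{(X,\id_X)} \cong \pi_1^{\text{\'et}}(X)$ is always an open subgroup of $M_{S,X}$, but $M_{S,X}$ can strictly contain it while remaining profinite. This is precisely why the statement involves a quotient \emph{stack} $K\backslash Y$ with possibly nontrivial $K$: if one always had $M_{S,X} = \pi_1^{\text{\'et}}(Y_0)$ for some scheme $Y_0$ in the category, the finite group $K$ would be superfluous. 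Your subsequent paragraph, which tries to locate a nontrivial $K$ after having already set $M_{S,X} = \pi_1^{\text{\'et}}(Y_0)$, is accordingly incoherent: under that hypothesis $K$ would have to be trivial.

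The paper's argument supplies the missing idea. Since $M_{S,X}$ is profinite and $\bK_X \cong \pi_1^{\text{\'et}}(X)$ is open, one may choose an open subgroup $H \subset \pi_1^{\text{\'et}}(X)$ that is \emph{normal} in $M_{S,X}$ (this is where profiniteness is used). Then $H$ corresponds to a connected finite \'etale cover $X_H \to X$, and normality of $H$ guarantees that every $\sigma \in M_{S,X}$ descends from $\wh{X}$ to an automorphism $f_\sigma$ of $X_H$ over $S$. This yields an injective homomorphism $M_{S,X}/H \hookrightarrow \Aut_S(X_H)$; take $Y = X_H$ and $K$ its image. The pair $(Y,K)$ then satisfies $\pi_1^{\text{\'et}}(K\backslash Y) \cong M_{S,X}$ via the extension $1 \to H \to M_{S,X} \to M_{S,X}/H \to 1$, with $H \cong \pi_1^{\text{\'et}}(X_H)$. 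The key step you are missing is the passage to an open \emph{normal} subgroup, which produces the finite group $K$ as a genuine quotient rather than trying to read it off after the fact.
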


\begin{proof}
By Lemma \ref{lem:hyp_pi_1}, we have an open subgroup
$\bK_X$ of $M_{S,X}$ isomorphic to the \'etale
fundamental group $G$ of $X$. Let us choose an open
subgroup $H \subset G$ such that $H$ is an open normal
subgroup of $M_{S,X}$.

By using the theory of fpqc descent, one can prove that
the pair of $X_H$ and the canonical projection 
$\wt{X} \to X_H$ is a quotient of
$\wt{X}$ by $H$ in the category $\Pro(\SCH_{S,X})$.
Hence it follows from the construction of
$\bK_X \inj M_{S,X}$ that for any $\sigma \in M_{S,X}$,
there exists a unique endomorphism $f_\sigma\colon  X_H \to X_H$ in $\SCH_{S,X}$ 
such that the diagram
$$
\begin{CD}
\wh{X} @>{\Phi(\sigma)}>> \wh{X} \\
@VVV @VVV \\
X_H @>{f_\sigma}>> X_H
\end{CD}
$$
is commutative. By the uniqueness we have
$f_{\sigma \tau} = f_\sigma \circ f_\tau$
for any $\sigma,\tau \in M_{S,X}$ and
we have $f_\sigma = \id_{X_H}$ if and only if $\sigma \in H$.
By sending $\sigma$ to $f_\sigma$ we obtain an
injective homomorphism $M_{S,X}/H \to \Aut_S(X_H)$
of groups.
Set $Y = X_H$ and let $K$ denote the image of
the homomorphism $G \cong M_{(\cC_0,\iota_0)}$.
Then the pair $(Y,K)$ satisfies the desired property.
\end{proof}

\subsection{Functoriality of $M_{S,X}$}

\subsubsection{Functoriality of $M_{S,X}$ with respect to $X$}
Let $S$ be a scheme and let $X$ and $Y$ be objects of $\SCH_S$.
Then we have locally profinite groups $M_{S,X}$ and $M_{S,Y}$
that contain the \'etale fundamental groups $\pi_1(X)$ and $\pi_1(Y)$,
respectively, as open compact subgroups.
Suppose that a morphism $f \colon  X \to Y$ of schemes over $S$ is given.
Then the covariant functoriality of the \'etale fundamental groups 
give a homomorphism $\pi_1(f) \colon  \pi_1(X) \to \pi_1(Y)$ that is canonical
up to conjugation by the elements of $\pi_1(Y)$.
If $f$ is a morphism in $\SCH_S$, then 
the homomorphism $\pi_1(f)$ extends
to an isomorphism $M_{S,X} \xto{\cong} M_{S,Y}$ 
of locally profinite groups.
However, for general $f$, 
the homomorphism $\pi_1(f)$ does not always extend
to a homomorphism $M_{S,X} \to M_{S,Y}$ 
of locally profinite groups.
So $M_{S,X}$ does not have a good functoriality with respect to $X$.

\subsubsection{Functoriality of $M_{S,X}$ with respect to $S$}
Let $g\colon  T \to S$ be a morphism of schemes
and let $X$ be an object of $\SCH_T$. Then it follows from
Proposition \ref{prop:hyp_wh} that we have an injective,
continuous homomorphism 
$g_* \colon  M_{T,X} \to M_{S,X}$ of topological monoids
whose restriction to $\pi_1(X)$ gives the identity morphism
on $\pi_1(X)$.

Below we study the image of the homomorphism 
$g_* \colon  M_{T,X} \to M_{S,X}$ when both $S$ and $T$ are
spectra of fields.

\begin{lem} \label{lem:hyp_AM}
Let $S$ be a locally noetherian scheme and
$X$ a connected scheme which is of finite type over $S$.
Then $X$ is an object of $\SCH_{S}$ and any object
of $\SCH_{S,X}$ is of finite type over $S$.
\end{lem}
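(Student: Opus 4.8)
The statement is a purely topological and finiteness assertion about schemes, so the plan is to unwind the definition of $\SCH_S$ (and hence $\SCH_{S,X}$) and check the two claims directly. First I would verify that $X$ is an object of $\SCH_S$: by hypothesis $X$ is connected and of finite type over the locally noetherian scheme $S$, and a scheme of finite type over a locally noetherian scheme is itself locally noetherian (this is standard, e.g.\ covering $S$ by noetherian affines and $X$ by affines of finite type over them, and using that a finitely generated algebra over a noetherian ring is noetherian). Since $X$ is moreover nonempty (being connected, hence nonempty by convention) and connected, it satisfies the three conditions in the definition of $\Obj(\SCH_S)$, so $X \in \Obj(\SCH_S)$. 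This also places $X$ in its own $\Lambda$-connected component $\SCH_{S,X}$.

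Next I would show that every object $Y$ of $\SCH_{S,X}$ is of finite type over $S$. By definition of the $\Lambda$-connected component, $Y$ is $\Lambda$-related to $X$ in $\SCH_{S}$, i.e.\ there is a diagram $X \xleftarrow{f} Z \xrightarrow{g} Y$ with $f$ and $g$ morphisms in $\SCH_S$, that is, finite \'etale morphisms of schemes over $S$. The morphism $f : Z \to X$ is finite, hence of finite type, and $X$ is of finite type over $S$; composing, $Z$ is of finite type over $S$. Now $g : Z \to Y$ is finite \'etale, hence (by Proposition~6.1.10 and Th\'eor\`eme~2.4.6 of the \cite{EGAII}, \cite{EGAIV} references already invoked in Lemma~\ref{lem:hyp_E}, or simply because finite flat surjective morphisms are in particular faithfully flat and quasi-compact) it is an fpqc cover; and being finite \'etale it is in particular a faithfully flat morphism locally of finite presentation. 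The property ``of finite type over $S$'' descends along faithfully flat quasi-compact morphisms: since $Z = Z \times_Y Y \to Y$ is such a morphism and $Z$ is of finite type over $S$, it follows that $Y$ is of finite type over $S$. Alternatively, and perhaps more elementarily, one observes that $g$ being finite and surjective with $Z$ of finite type over $S$ already forces $Y$ to be of finite type over $S$ by a direct Noetherian argument on affine charts.

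The main obstacle — to the extent there is one — is making the descent step in the second part rigorous: one needs that ``finite type over $S$'' is a property that can be checked after a faithfully flat quasi-compact base change, or equivalently that descends along $g$. This is a standard application of fpqc descent for morphisms of finite type (cf.\ the same circle of ideas used in \cite{SGA1} Expos\'e~VIII and in the proof of Lemma~\ref{lem:hyp_E}), but it requires quoting the correct descent statement. If one prefers to avoid descent entirely, the alternative route is: $g$ finite surjective, $Z \to S$ of finite type, hence $g_* \cO_Z$ is a coherent $\cO_Y$-algebra which is faithful, and a Noetherian local argument shows $Y \to S$ is of finite type; this stays within the toolkit already used in the excerpt. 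Either way the argument is short and routine, so I would present it in two or three sentences per claim rather than belaboring the commutative-algebra details.
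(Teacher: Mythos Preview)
Your outline for the first claim is correct and matches the paper exactly: a scheme of finite type over a locally noetherian base is locally noetherian, so $X$ is an object of $\SCH_S$.

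For the second claim your overall strategy---take a roof $X \leftarrow Z \to Y$, observe $Z$ is of finite type over $S$, then push this down to $Y$---is the same as the paper's. However, your primary justification via ``fpqc descent'' is not quite right as phrased. Standard fpqc descent for the property ``of finite type'' concerns a cover of the \emph{base}: if $S' \to S$ is fpqc and $Y \times_S S' \to S'$ is of finite type, then $Y \to S$ is. What you need here is different: you have a finite surjective $g : Z \to Y$ over $S$ with $Z \to S$ of finite type, and you want $Y \to S$ of finite type. This is not descent along a base change; it is the Artin--Tate lemma (if $A \to B \to C$ with $C$ finite over $B$ and $C$ of finite type over $A$, with $A$ noetherian, then $B$ is of finite type over $A$).

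The paper takes exactly your ``alternative'' route and makes it precise: it reduces to $S = \Spec A$ affine noetherian, picks an affine open $U = \Spec B \subset Y$, notes $V = g^{-1}(U) = \Spec C$ is affine (since $g$ is finite), checks carefully that $C$ is a finitely generated $A$-algebra, and then invokes Proposition~7.8 of Atiyah--Macdonald (the Artin--Tate lemma) to conclude $B$ is finitely generated over $A$. So your instinct in the alternative paragraph is the correct one; just drop the descent language and name Artin--Tate directly.
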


\begin{proof}
It follows from Proposition 6.3.7 of \cite{EGAI} that
$X$ is locally noetherian.
Hence $X$ is an object of $\SCH_{S}$.
To prove the second assertion, we may assume that
$S$ is the spectrum of a noetherian ring $A$.
Let $Y$ be an object of $\SCH_{S,X}$.
Then there exists a scheme $Z$ over $S$
and finite \'etale morphisms $f\colon Z \to Y$ and
$Z \to X$ over $S$.
Let us choose an affine open subscheme $U \subset Y$
and set $V = f^{-1}(U)$. Since $f$ is finite,
$V$ is an affine open subscheme of $Z$.
Since $Z$ is of finite type over $S$,
it follows from Lemma 6.3.2.1 of \cite{EGAI} that
for any $x \in V$, there exists an affine open
neighborhood $V'$ of $x$ in $V$ such that $V'$
is of finite type over $S$.
Since $V$ is quasi-compact, it follows that
$V$ is of finite type over $S$.
Set $V=\Spec C$. Then it follows from Proposition 6.3.3
of \cite{EGAI} that $C$ is a finitely generated $A$-algebra.
Thus the claim follows from Proposition 7.8 of \cite{AtiMac}.
\end{proof}

\begin{lem} \label{lem:hyp_ring1}
Let $k$ be a field, 
$X$ a scheme of finite type over $\Spec k$,
and $a$ an element of $\Gamma(X,\cO_X)$.
Suppose that there exists a subfield of 
the ring $\Gamma(X,\cO_X)$ that contains $a$.
Then $a$ is algebraic over $k$.
\end{lem}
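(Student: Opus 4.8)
The plan is to reduce to a statement about a morphism to the affine line and then argue by contradiction using Chevalley's theorem.

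First I would dispose of trivialities: if $\Gamma(X,\cO_X)$ is the zero ring it contains no subfield, so we may assume $X\neq\emptyset$. Write $R=\Gamma(X,\cO_X)$ and let $K\subseteq R$ be a subfield with $a\in K$. The subtle point at the outset is that $K$ need not contain the image of $k$ in $R$, and its identity $e=1_K$ need not equal $1_R$; so the next step is to normalize this. The element $e$ is a nonzero idempotent of $R$, which produces a decomposition $X=X_e\sqcup X_{1-e}$ into clopen (hence still finite type over $k$) subschemes and $R=\Gamma(X_e,\cO_{X_e})\times\Gamma(X_{1-e},\cO_{X_{1-e}})$ carrying $e$ to $(1,0)$; thus $K$ embeds as a subfield of $\Gamma(X_e,\cO_{X_e})$ with $1_K$ mapping to the identity, $X_e$ is nonempty, and $a$ lies in the first factor. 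I would then check that if $a|_{X_e}$ is a root of a monic $g\in k[T]$ over $\Gamma(X_e,\cO_{X_e})$, then $g(a)$ equals $(0,c_0\cdot 1)$ in $R$ with $c_0$ the constant term of $g$, so $g(a)^2=c_0\,g(a)$ and $a$ is a root of the monic polynomial $g^2-c_0g\in k[T]$. Hence we may assume $1_K=1_R$; in particular $K$ contains the prime field $k_0$ of $R$.

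Now suppose, for contradiction, that $a$ is transcendental over $k$. Then $k[a]\subseteq R$ is a polynomial ring, which gives a $k$-morphism $\phi\colon X\to\A^1_k=\Spec k[T]$ with $\phi^{\#}(T)=a$. Since $X$ is of finite type over $k$, the morphism $\phi$ is quasi-compact and of finite type between Noetherian schemes, so $\phi(X)$ is constructible in $\A^1_k$ by Chevalley's theorem; and since $k[T]\to R$ is injective and $\phi$ is quasi-compact, the scheme-theoretic image of $\phi$ is all of $\A^1_k$, so $\phi(X)$ is dense. A dense constructible subset of the irreducible Noetherian scheme $\A^1_k$ contains a nonempty open subset, so $\phi(X)$ omits only finitely many closed points of $\A^1_k$. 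On the other hand, for every nonzero $h\in k_0[T]$ the element $h(a)$ lies in $K$ (its coefficients lie in $k_0\subseteq K$ and $a\in K$) and is nonzero (as $a$ is transcendental over $k\supseteq k_0$), hence is invertible in $R$; therefore $\phi^{-1}(V(h))=V(h(a))=\emptyset$, so $\phi(X)$ is disjoint from $V(h)\subseteq\A^1_k$. Letting $h$ run over the infinitely many monic irreducibles of $k_0[T]$ — which are pairwise coprime in $k_0[T]$, hence in $k[T]$, so have pairwise disjoint nonempty zero loci $V(h)$ consisting of closed points of $\A^1_k$ — exhibits infinitely many closed points omitted by $\phi(X)$, a contradiction. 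Therefore $a$ is algebraic over $k$.

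The step I expect to require the most care is the normalization in the second paragraph: one must see precisely why the hypothesis becomes usable only after replacing $X$ by the clopen piece $X_e$ (so that $K$ genuinely contains the prime field of the coordinate ring), and why passing to that piece does not weaken the conclusion. The geometric core — combining Chevalley's theorem with the fact that the closed points of $\A^1_k$ whose coordinate is algebraic over the prime field form an infinite set — is routine once this is in place.
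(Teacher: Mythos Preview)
Your proof is correct and follows essentially the same strategy as the paper: assume $a$ transcendental, use Chevalley's theorem on the induced morphism to the affine line, and derive a contradiction from the fact that every nonzero element of $k_0[a]$ (with $k_0$ the prime field) is a unit in $\Gamma(X,\cO_X)$.

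There are two minor differences in execution. First, you include a normalization step reducing to the case $1_K=1_R$; the paper does not address this, implicitly taking ``subfield'' to mean a subring sharing the identity, under which $k_0\cdot 1_R\subseteq K$ is automatic. Your extra care is not wrong, merely more cautious than the paper. Second, the paper packages the contradiction by passing to the composite $X\to\A^1_k\to\A^1_{k_0}$: the image in $\A^1_{k_0}$ must be the generic point alone (since nonzero elements of $k_0[a]$ are units), yet by the constructibility upstairs and surjectivity of $\A^1_k\to\A^1_{k_0}$ it must also contain a closed point. You instead stay in $\A^1_k$ and observe directly that the infinitely many loci $V(h)$, $h\in k_0[T]$ irreducible, are pairwise disjoint nonempty sets of closed points avoided by $\phi(X)$. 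The two arguments are equivalent rearrangements of the same idea.
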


\begin{proof}
Suppose that $a$ is transcendental over $k$.
Let $k_0$ denote the prime subfield of $k$ and
let $B$ (\resp $B_0$) denote the $k$-subalgebra
(\resp $k_0$-subalgebra) of $\Gamma(X,\cO_X)$ 
generated by $a$.
Let us consider the morphism
$f\colon  X \to \Spec B$.
Since $a$ is transcendental over $k$,
it follows that $\Spec B$ and $\Spec B_0$ are 
non-empty and the image of $f$ is Zariski dense.
Since $X$ is of finitely type over $k$,
it follows from Chevalley's theorem that
the image of $f$ is a constructible subset
of $\Spec B$.
Hence there exists a finite set $S$ of
closed points of $\Spec B$ whose complement
is equal to the image of $f$.

Note that the morphism $g\colon  \Spec B \to \Spec B_0$
is surjective. This implies that the image of the composite
$g \circ f$ contains the complement of $g(S)$.
In particular the image of $g \circ f$ contains a
closed point of $\Spec B_0$.
It follows from our assumption that any non-zero element of
$B_0$ is invertible in $\Gamma(X,\cO_X)$. 
Hence the composite $g \circ f$
factors through the morphism $\Spec \Frac B_0 \to \Spec B_0$.
This implies that the image of the composite
$g \circ f$ is a singleton that consists of 
the generic point of $\Spec B_0$, which gives a contradiction.
\end{proof}

\begin{cor} \label{cor:hyp_field}
Let $k$ be a field and 
$X$ an integral scheme 
which is of finite type over $\Spec k$.
Let $L$ denote the set of elements of $\Gamma(X,\cO_X)$
that are algebraic over $k$.
Then $L$ is a finite extension of $k$ and
any subfield of $\Gamma(X,\cO_X)$ is contained in $L$.
\end{cor}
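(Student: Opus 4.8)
<br>

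The plan is to deduce Corollary~\ref{cor:hyp_field} from Lemma~\ref{lem:hyp_ring1}, using that an integral scheme of finite type over $k$ is connected and has a well-behaved ring of global sections. First I would observe that $L$ is a subring of $\Gamma(X,\cO_X)$: it clearly contains $k$, it is closed under addition and multiplication since a sum and product of elements algebraic over $k$ are algebraic over $k$, and it is closed under additive inverses. Moreover $L$ is a field: if $a \in L$ is nonzero, then $a$ satisfies a monic polynomial relation over $k$, and since $\Gamma(X,\cO_X)$ may have zero divisors in general one must use that $X$ is integral, so $\Gamma(X,\cO_X)$ is an integral domain (here we use that $X$ is integral, hence irreducible and reduced, so its ring of global sections injects into the function field $k(X)$ and is in particular a domain); then the minimal polynomial of $a$ over $k$ has nonzero constant term, which exhibits $a^{-1}$ as a polynomial expression in $a$ with coefficients in $k$, hence $a^{-1} \in L$.

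Next I would show $L$ is finite over $k$. Since $X$ is of finite type over $k$ and integral, $\Gamma(X,\cO_X)$ is a finitely generated $k$-algebra that is a domain (this is a standard consequence of covering $X$ by finitely many affine opens, each with a finitely generated coordinate ring, and intersecting; it can also be extracted from Proposition 6.3.3 and Lemma 6.3.2.1 of \cite{EGAI} as in the proof of Lemma~\ref{lem:hyp_AM}). A subalgebra of a finitely generated $k$-algebra domain need not be finitely generated in general, but $L$ is a field, and a field that is integral over $k$ and contained in a finitely generated $k$-algebra is a finite extension of $k$: indeed every element of $L$ is algebraic over $k$ by construction, so $L/k$ is an algebraic field extension, and any algebraic subextension of the fraction field $k(X)$ of a finitely generated $k$-algebra domain of transcendence degree $r$ has finite degree over $k$ (the algebraic closure of $k$ in $k(X)$ is finite over $k$). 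Thus $L$ is a finite extension of $k$.

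Finally I would prove the last assertion: any subfield $k' \subset \Gamma(X,\cO_X)$ is contained in $L$. Let $a \in k'$. Then $k'$ is a subfield of $\Gamma(X,\cO_X)$ containing $a$, so Lemma~\ref{lem:hyp_ring1} applies (with the given hypothesis ``there exists a subfield of $\Gamma(X,\cO_X)$ that contains $a$'' witnessed by $k'$) and shows that $a$ is algebraic over $k$; hence $a \in L$ by definition of $L$. Since $a$ was arbitrary, $k' \subseteq L$. I expect the only genuine point requiring care is the finiteness of $L$ over $k$, i.e.\ verifying that an algebraic field extension of $k$ sitting inside a finitely generated $k$-algebra domain must be finite over $k$; the rest is formal manipulation of the domain property of $\Gamma(X,\cO_X)$ (which rests on integrality of $X$) and a direct appeal to Lemma~\ref{lem:hyp_ring1}.
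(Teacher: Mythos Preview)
Your approach is essentially the same as the paper's: show $L$ is a field using that $\Gamma(X,\cO_X)$ is a domain, then show $L/k$ is finite by embedding $L$ in a finitely generated field extension of $k$, then invoke Lemma~\ref{lem:hyp_ring1} for the last assertion. However, there is a genuine error in your finiteness step.

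You assert that $\Gamma(X,\cO_X)$ is a finitely generated $k$-algebra because $X$ is of finite type over $k$. This is false in general: for a non-affine integral scheme of finite type over $k$, the ring of global sections need not be finitely generated (counterexamples go back to Rees and Nagata). The references you cite from \cite{EGAI} and the proof of Lemma~\ref{lem:hyp_AM} do not establish this; in Lemma~\ref{lem:hyp_AM} the conclusion is only that $V=f^{-1}(U)$ is of finite type, which is an \emph{affine} scheme, not that global sections of an arbitrary finite-type scheme are finitely generated. Your subsequent sentence then identifies $k(X)$ with ``the fraction field of a finitely generated $k$-algebra domain,'' which leans on the false claim.

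The paper repairs exactly this point: it picks an affine open $U=\Spec A\subset X$, so that $A$ is genuinely a finitely generated $k$-algebra, notes that $L\subset\Gamma(X,\cO_X)\subset A$ and hence $L\subset\Frac A$, and then applies the standard fact (cited as Theorem~3.1.4 of \cite{Nagata}) that a subextension of a finitely generated field extension is finitely generated. Your argument becomes correct once you make this same move: embed $L$ in the function field $k(X)$, which is finitely generated over $k$ because it equals $\Frac A$ for any affine open $\Spec A\subset X$.
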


\begin{proof}
Since $X$ is an integral scheme, it follows that
$L$ is a subfield of the integral domain $\Gamma(X,\cO_X)$.
Let us choose an affine open $U = \Spec A$ of $X$
so that $A$ is a finitely generated $k$-algebra.
Then $L/k$ is a subextension of the finitely generated
field extension $\Frac\, A/k$. Hence it follows 
from Theorem 3.1.4 of \cite{Nagata} that 
$L/k$ is a finitely generated field extension.

Since $L/k$ is algebraic, 
$L$ is a finite extension of $k$.
It follows from Lemma \ref{lem:hyp_ring1} that
any subfield of $\Gamma(X,\cO_X)$ is contained in $L$.
Hence the claim follows.
\end{proof}

\begin{cor} \label{cor:hyp_LX}
Let $k$ be a field, and let $X$ and $Y$ be 
integral schemes
that are of finite type over $\Spec k$.
For $S \in \{X,Y\}$, let 
$L(S)$ denote the set of elements of $\Gamma(S,\cO_S)$
that are algebraic over $k$.
Then $L(X)$ and $L(Y)$ are finite extensions of $k$ and 
for any morphism $f\colon X \to Y$,
there exists a unique morphism $g\colon  \Spec L(X) \to \Spec L(Y)$
that makes the diagram
$$
\begin{CD}
X @>{f}>> Y \\
@VVV @VVV \\
\Spec L(X) @>{g}>> \Spec L(Y)
\end{CD}
$$
commutative.
\end{cor}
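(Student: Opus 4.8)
The plan is to reduce the statement to Corollary~\ref{cor:hyp_field}, which already provides the finiteness of $L(X)$ and $L(Y)$, and then to construct the morphism $g$ by the universal property of $\Spec L(Y)$ applied to a suitable ring homomorphism. First I would record that $L(X)$ and $L(Y)$ are finite field extensions of $k$: this is immediate from Corollary~\ref{cor:hyp_field} applied to $X$ and to $Y$ separately, using that $X$ and $Y$ are integral and of finite type over $\Spec k$.

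Next, the morphism $f:X\to Y$ induces a ring homomorphism $f^\sharp:\Gamma(Y,\cO_Y)\to \Gamma(X,\cO_X)$ of $k$-algebras. I claim that $f^\sharp$ carries $L(Y)$ into $L(X)$. Indeed, if $b\in\Gamma(Y,\cO_Y)$ is algebraic over $k$, satisfying a monic polynomial relation with coefficients in $k$, then $f^\sharp(b)$ satisfies the same relation in $\Gamma(X,\cO_X)$, because $f^\sharp$ is a $k$-algebra homomorphism; hence $f^\sharp(b)$ is algebraic over $k$, i.e.\ $f^\sharp(b)\in L(X)$. Thus $f^\sharp$ restricts to a $k$-algebra homomorphism $L(Y)\to L(X)$, which in turn gives a morphism $g:\Spec L(X)\to\Spec L(Y)$ of schemes over $\Spec k$. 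Commutativity of the square is then a matter of unwinding the definitions: the left vertical arrow $X\to\Spec L(X)$ is induced by the inclusion $L(X)\hookrightarrow\Gamma(X,\cO_X)$, similarly for $Y$, and the equality $g\circ(\text{left arrow})=(\text{right arrow})\circ f$ at the level of rings is exactly the statement that $f^\sharp$ restricted to $L(Y)$ factors through $L(X)$ — which is how $g$ was defined. Here one should note that the morphism $X\to\Spec L(X)$ itself exists precisely because $L(X)\subset\Gamma(X,\cO_X)=\Hom_{\mathrm{Sch}}(X,\mathbb{A}^1)$-compatibly, i.e.\ it is the morphism corresponding to the $k$-algebra map $L(X)\to\Gamma(X,\cO_X)$ via the adjunction between global sections and $\Spec$.

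For uniqueness, suppose $g'$ is another morphism over $\Spec k$ making the square commute. Since $\Spec L(X)$ and $\Spec L(Y)$ are affine, $g$ and $g'$ correspond to $k$-algebra homomorphisms $L(Y)\to L(X)$; composing either with the inclusion $L(X)\hookrightarrow\Gamma(X,\cO_X)$ gives, by commutativity of the square, the restriction of $f^\sharp$ to $L(Y)$. As $L(X)\hookrightarrow\Gamma(X,\cO_X)$ is injective (because $X$ is integral, so $\Gamma(X,\cO_X)$ is a domain and $L(X)$ is a subfield of it), the two homomorphisms $L(Y)\to L(X)$ coincide, whence $g=g'$.

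The only genuinely non-routine point is the reduction of the finiteness of $L(X)$ to Corollary~\ref{cor:hyp_field}, but that corollary is already available in the excerpt, so the main obstacle here is essentially notational: keeping straight which arrows are induced by $\Spec$ of which $k$-algebra maps and verifying that the adjunction $\Gamma\dashv\Spec$ makes the relevant square commute. I expect no serious difficulty; the argument is a formal consequence of the affineness of $\Spec L(X)$ and $\Spec L(Y)$ together with Corollary~\ref{cor:hyp_field}.
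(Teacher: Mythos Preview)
Your proposal is correct and follows essentially the same approach as the paper: reduce finiteness of $L(X)$ and $L(Y)$ to Corollary~\ref{cor:hyp_field}, then show that the $k$-algebra map $f^\sharp:\Gamma(Y,\cO_Y)\to\Gamma(X,\cO_X)$ carries $L(Y)$ into $L(X)$, which yields $g$. The only minor difference is that you verify $f^\sharp(L(Y))\subset L(X)$ by the direct observation that $k$-algebra maps preserve algebraicity, whereas the paper notes that the image of $L(Y)$ is a subfield of $\Gamma(X,\cO_X)$ and invokes the second clause of Corollary~\ref{cor:hyp_field}; your route is slightly more elementary but otherwise the arguments coincide.
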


\begin{proof}
It follows from 
Corollary \ref{cor:hyp_field} that $L(X)$ and
$L(Y)$ are finite extensions of $k$.
Since $L(Y)$ is a field and
$\Gamma(X,\cO_X)$ is not equal to $\{0\}$,
the composite
$\beta \colon  L(Y) \inj \Gamma(Y,\cO_Y) \to \Gamma(X,\cO_X)$
is injective.
Hence it follows from Corollary \ref{cor:hyp_field} that
the image of $\beta$ is contained in $L(X)$.
This completes the proof.
\end{proof}

Let $K$ be a perfect field and $k$ a subfield of $K$.
Set $S = \Spec k$ and $T = \Spec K$.
Let $X$ be an integral scheme which is of finite type over $T$.
Let $I$ denote the filtered poset of open subgroups of $\pi_1(X)$ and
let us consider the scheme $\wh{X} = \varprojlim_{H \in I} X_H$ as
in \eqref{eq:univ_cov}.
For $H \in I$, let $L_H$ denote 
the set of elements of $\Gamma(X_H,\cO_{X_H})$
that are algebraic over $K$.
Then each $L_H$ is a finite separable extension
of $K$ and $\wh{L} = \varinjlim_{H \in I} L_H$ is
a separable closure of $K$.
\begin{prop}
Let $K$, $k$, $S$, $T$, $X$, and $\wh{L}$ 
be as above. Then there exists a canonical homomorphism
$\phi \colon  M_{S,X} \to \Aut(\wh{L}/k)$ of groups such that
the image of the homomorphism $M_{T,X} \to M_{S,X}$
is equal to $\phi^{-1}(\Gal(\wh{L}/K))$.
The homomorphism $\phi$ is surjective if at least one
of the following conditions is satisfied.
\begin{enumerate}
\item There exists a scheme $X_0$ over $S$ such that
$X$ is isomorphic to $X_0 \times_S T$ as a scheme over $T$.
\item $X$ is geometrically connected over $L_{\pi_1(X)}$
and for any $\sigma \in \Aut(\wh{L}/k)$, there exist
$H \in I$ and a morphism $\phi_\sigma \colon  X_H \to X$ of schemes over $S$ 
such that $\sigma(L_{\pi_1(X)}) \subset L_H$ and 
the diagram
$$
\begin{CD}
X_H @>{\phi_{\sigma}}>> X \\
@VVV @VVV \\
\Spec \sigma(L_{\pi_1(X)})
@>{\sigma^*}>> \Spec L_{\pi_1(X)}
\end{CD}
$$
is commutative.
\end{enumerate}
\end{prop}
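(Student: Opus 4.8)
The plan is to extract $\phi$ from the concrete description of $M_{S,X}$ in Proposition \ref{prop:hyp_wh}, read off the two equalities from the behaviour on global sections, and handle the two surjectivity hypotheses separately. First I would construct $\phi$. Since $J$ is atomic, $M_{S,X}$ is a group (Corollary \ref{cor:hyp_M}). Let $\sigma\in M_{S,X}$; by Proposition \ref{prop:hyp_wh}, $\sigma$ is an endomorphism of $\wh X=\varprojlim_{H\in I}X_H$ over $S=\Spec k$ such that for each $H\in I$ the composite $\wh X\to\wh X\to X_H$ factors, by the local‑finite‑presentation argument in the proof of that proposition, through a finite \'etale morphism $X_{H'}\to X_H$; passing to global sections gives a $k$‑algebra map $\Gamma(X_H,\cO_{X_H})\to\Gamma(X_{H'},\cO_{X_{H'}})$. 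Now $L_H$ is the \emph{maximal} subfield of $\Gamma(X_H,\cO_{X_H})$: this is exactly Corollary \ref{cor:hyp_field} applied over the base field $K$ (for $X$ normal the $X_H$ are integral; in general one passes to connected components, where the same statement holds). A ring homomorphism carries a subfield isomorphically onto a subfield, so the map above restricts to a $k$‑embedding $L_H\hookrightarrow L_{H'}$; taking the colimit over $I$ yields a $k$‑algebra endomorphism of $\wh L=\varinjlim_H L_H$, invertible because $\sigma^{-1}$ induces its inverse. This is $\phi(\sigma)\in\Aut(\wh L/k)$; functoriality makes $\phi$ a homomorphism, and $\phi(\sigma)$ is characterised by the commutativity of the square formed by $\sigma$ and $\Spec\phi(\sigma)$ over the canonical map $\wh X\to\Spec\wh L$ induced by $\wh L\subset\Gamma(\wh X,\cO_{\wh X})$.

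Next I would identify $\phi^{-1}(\Gal(\wh L/K))$. Applying Proposition \ref{prop:hyp_wh} over $T=\Spec K$ shows that $\sigma\in M_{S,X}$ lies in the image of $M_{T,X}$ if and only if $\sigma$ is a morphism of schemes over $\Spec K$ (the auxiliary $f$ is then automatically a $K$‑morphism, since $\wh X\to X_{H_0}$ is an epimorphism). If $\sigma$ is a $K$‑morphism, the maps $L_H\hookrightarrow L_{H'}$ are $K$‑algebra maps, so $\phi(\sigma)$ fixes $K$; conversely, if $\phi(\sigma)$ fixes $K$ then $\Spec\phi(\sigma)$ is a morphism over $\Spec K$, and the characterising square forces $\wh X\xrightarrow{\sigma}\wh X\to\Spec K$ to equal the structure morphism, so $\sigma$ is a $K$‑morphism. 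Hence $\phi^{-1}(\Gal(\wh L/K))$ equals the image of $M_{T,X}\to M_{S,X}$.

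For surjectivity the strategy, in both cases, is: given $\tau\in\Aut(\wh L/k)$, build a compatible system of finite \'etale $S$‑morphisms between covers of $X$ realising $\tau$ on the constant fields $L_H$, assemble it by Proposition \ref{prop:hyp_wh} into $\sigma\in M_{S,X}$, and check $\phi(\sigma)=\tau$. Under hypothesis (1), writing $X=X_0\times_S T$, the constant‑field‑extension covers $X_0\times_S\Spec L'$ (with $L'/K$ finite separable inside $\wh L$) are cofinal among covers of $X$ modulo the covers of the geometric fibre, and $\tau$ acts on them by $X_0\times_S\Spec L'\to X_0\times_S\Spec\tau(L')$; propagating this along the cartesian square (using geometric connectedness of $X_0$ over $S$ where needed) gives $\sigma$. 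Under hypothesis (2), the assumption directly supplies, for each $\tau$, a morphism $\phi_\tau:X_H\to X$ over $S$ covering $\tau$ on $L_{\pi_1(X)}$; using geometric connectedness of $X$ over $L_{\pi_1(X)}$ one shows $\phi_\tau$ is finite \'etale (a base‑change criterion, together with Corollaire 4.8 of \cite[Expos\'e I]{SGA1} as in the proof of Proposition \ref{prop:hyp_wh}), hence a morphism of $\SCH_{S,X}$, and that pulling covers of $X$ back along $\phi_\tau$ extends it up the tower above $X_H$, producing $\sigma\in M_{S,X}$. Then $\phi(\sigma)$ agrees with $\tau$ on $L_{\pi_1(X)}$, and the remaining discrepancy lies in $\Gal(\wh L/L_{\pi_1(X)})$, which lies in the image of $\phi$ because $\phi(\pi_1(X))=\Gal(\wh L/L_{\pi_1(X)})$ by Lemma \ref{lem:hyp_pi_1} and the fundamental exact sequence for the \'etale fundamental group of the geometrically connected $X$; correcting $\sigma$ by a suitable element of $M_{T,X}$ gives an element mapping exactly to $\tau$.

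The hard part will be this last surjectivity step, especially case (2): the genuinely delicate points are (i) verifying that $\phi_\tau$ and all of its lifts up the tower are finite \'etale $S$‑morphisms — i.e.\ morphisms of $\SCH_{S,X}$ — and that, with compatible choices of connected components (base points), they assemble into a bona fide endomorphism of $\wh X$ of the form demanded by Proposition \ref{prop:hyp_wh}; and (ii) upgrading ``$\phi(\sigma)$ agrees with $\tau$ on $L_{\pi_1(X)}$'' to agreement on all of $\wh L$, which is precisely where geometric connectedness is indispensable. A preliminary matter to settle is the (mild) integrality issue for the $X_H$: for $X$ normal they are integral and Corollary \ref{cor:hyp_field} applies verbatim, while in general one replaces each use of integral schemes by the corresponding statement on each connected component of the reduced cover.
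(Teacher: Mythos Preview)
Your construction of $\phi$ and your identification of $\phi^{-1}(\Gal(\wh{L}/K))$ match the paper's approach exactly; the paper packages the first step as Corollary~\ref{cor:hyp_LX} (you essentially rederive it from Corollary~\ref{cor:hyp_field}) and then sets $\phi(\sigma)=\tau_\sigma^{-1}$, the inverse being needed so that $\phi$ is a homomorphism rather than an anti-homomorphism---a point you should record.

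For surjectivity the paper's entire argument is the single sentence ``a consequence of Corollary~\ref{cor:hyp_pro_surj}''. What this buys you is that once you have produced, under hypothesis (1) or (2), an element of $\Hom_{\Pro(\SCH_{S,X})}(\wh{X},X)=\varinjlim_{H}\Hom_{\SCH_{S,X}}(X_H,X)$ realizing $\tau|_{L_{\pi_1(X)}}$, the corollary hands you a lift $\sigma\in M_{S,X}$ for free; you do not need to ``extend $\phi_\tau$ up the tower'' by hand. Your plan to pull back covers along $\phi_\tau$ is precisely a re-derivation of Corollary~\ref{cor:hyp_pro_surj}, so you can shorten that step considerably. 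The correction by an element of $\pi_1(X)$ (using that $\phi(\pi_1(X))=\Gal(\wh{L}/L_{\pi_1(X)})$ via the fundamental exact sequence for geometrically connected $X$) is exactly right and is a detail the paper suppresses.

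What remains genuinely nontrivial---and what the paper leaves implicit---is the point you flag as (i): showing that the morphism $\phi_\tau:X_H\to X$ supplied by hypothesis (2) is finite \'etale, so that it actually lies in $\SCH_{S,X}$ and hence defines an element of the pro-Hom to which Corollary~\ref{cor:hyp_pro_surj} applies. Your instinct to factor $\phi_\tau$ through the fibre product $X\times_{\Spec L_{\pi_1(X)},\tau^*}\Spec\tau(L_{\pi_1(X)})$ is the right one; geometric connectedness ensures that this fibre product is connected, and then the morphism from $X_H$ to it is a morphism over $\Spec\tau(L_{\pi_1(X)})$ between connected finite-type schemes whose base change to the separable closure becomes finite \'etale, from which finite-\'etaleness descends. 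Your concern about integrality of the $X_H$ is unnecessary here: the standing hypothesis is that $X$ is integral, and any connected finite \'etale cover of an integral scheme whose global sections form a domain (as follows from Corollary~\ref{cor:hyp_field} applied over $K$, since $L_H$ is a field) is itself integral.
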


\begin{proof}
Using Proposition \ref{prop:hyp_wh}, we regard
$M_{S,X}$ as a submonoid of $\End_S(\wh{X})$.
It follows from Corollary \ref{cor:hyp_LX} that
any $\sigma \in M_{S,X}$, there exists 
unique $\tau_\sigma \in \End_k(\wh{L})$
that makes the diagram
$$
\begin{CD}
\wh{X} @>{\sigma}>> \wh{X} \\
@VVV @VVV \\
\Spec \wh{L}
@>{\tau_\sigma^*}>> \Spec \wh{L}
\end{CD}
$$
is commutative.
Since $M_{S,X}$ is a group, we have
$\tau_\sigma \in \Aut_k(\wh{L})$.
By sending $\sigma$ to $\tau_\sigma^{-1}$,
we obtain a homomorphism
$\phi \colon  M_{S,X} \to \Aut(\wh{L}/k)$ that has
the desired property.
The claim on the surjectivity of $\phi$ is 
a consequence of Corollary \ref{cor:hyp_pro_surj}.
\end{proof}

\subsection{Examples} \label{sec:hyp_counter}

\subsubsection{ }
Let $S$ be a scheme and let $X$ be a connected,
locally noetherian scheme over $S$.
Suppose that the \'etale fundamental group of $X$ is trivial.
Then it follows easily from Proposition \ref{prop:hyp_wh}
that we have $M_{S,X} = \Aut_S(X)$
with discrete topology.

For example, let us take  $S= \Spec\, \Q$ and $X = \Spec\, \C$.
Then it follows easily from Proposition \ref{prop:hyp_wh}
that we have $M_{\Spec\, \Q, \Spec\, \C} = \Aut(\C/\Q)$
with discrete topology.

\subsubsection{ }
Let $S$ be a scheme and let $X$ be a connected,
locally noetherian scheme over $S$.

\begin{lem} \label{lem:hyp_non}
Suppose that there exists a finite \'etale endomorphism
$f\colon X \to X$ over $S$ of degree $n \ge 2$. Then
$M_{S,X}$ is not profinite.
\end{lem}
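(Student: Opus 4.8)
The plan is to exhibit an infinite transitive smooth $M_{S,X}$-set and conclude that $M_{S,X}$ cannot be compact, hence cannot be profinite. First I would record that, being a finite \'etale morphism of schemes over $S$, the map $f:X\to X$ is a morphism in $\SCH_{S,X}$, and that its degree in the sense of Section \ref{sec:degree}, taken with respect to the $Y$-site $(\SCH_{S,X},J)$ (which is legitimate by Proposition \ref{prop:hyp_Y}), is again $n$: choosing $g:Z\to X$ in $\SCH_{S,X}$ with $f\circ g$ a Galois covering (possible since $\cT(J)$ has enough Galois coverings), $\deg f$ is the number of morphisms $Z\to X$ over $f\circ g$, which equals the generic fibre cardinality of $f$, namely $n$.

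Next I would transport the picture to $\wt{\cC}=\wt{\SCH_{S,X}}$ via the functor $\imath$ of Section \ref{sec:Ctil}. Since $J$ is the atomic topology, every morphism of $\SCH_{S,X}$ lies in $\cT(J)$, so $\imath(f):\imath(X)\to\imath(X)$ lies in $\wt{\cT}$, and $\deg\imath(f)=\deg f=n\ge 2$ because the two notions of degree are compatible under $\imath$ (via Lemma \ref{lem:i_Galois}). By Lemma \ref{lem:degree is multiplicative}(1) no isomorphism has degree greater than $1$, so $\imath(f)$ is not an isomorphism in $\wt{\cC}$; by Lemma \ref{lem:mono} a morphism of $\wt{\cT}$ that is a monomorphism is an isomorphism, hence $\imath(f)$ is not a monomorphism in $\wt{\cC}$, and therefore, $\wt{\cC}$ being a full subcategory of $\Shv(\SCH_{S,X},J)$, not a monomorphism of sheaves. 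On the other hand, $\imath(f)\in\wt{\cT}$ implies by Lemma \ref{lem:TJ_surj} that $\omega(\imath(f))$ is surjective as a map of sets.

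Now I invoke Corollary \ref{cor:hyp_M}: the fibre functor $\omega=\omega_\Cip$ gives an equivalence $\Shv(\SCH_{S,X},J)\simeq(M_{S,X}\text{-set})_{\mathrm{sm}}$, under which monomorphisms correspond to injective maps and the object $\imath(X)$, which is connected in $\Shv(\SCH_{S,X},J)$ by Lemma \ref{lem:wtcC_conn}, corresponds to a transitive smooth $M_{S,X}$-set $S_X$. By the previous paragraph $\omega(\imath(f)):S_X\to S_X$ is surjective but not injective; a surjective self-map of a finite set is injective, so $S_X$ is infinite. Writing $S_X\cong M_{S,X}/\bK$ with $\bK$ the stabilizer of the base point, which is compact open (compact by Lemma \ref{lem:hyp_pi_1}, applied to the grid object $X_0$ with $\iota_0(X_0)\cong X$, and its conjugates), we obtain $[M_{S,X}:\bK]=\infty$. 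If $M_{S,X}$ were profinite it would be compact, and the partition of $M_{S,X}$ into the cosets of the open subgroup $\bK$ would be a disjoint open cover, hence finite; this contradicts $[M_{S,X}:\bK]=\infty$. Therefore $M_{S,X}$ is not profinite.

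The degree bookkeeping and the passage to $\wt{\cC}$ are routine. The one point to be careful about is the identification of $\omega(\imath(X))$ with a coset space $M_{S,X}/\bK$ of a compact open subgroup: this uses that the grid objects have profinite $\bK$ (Lemma \ref{lem:hyp_pi_1}) together with the compatibility of the various fibre functors through the topos equivalences (Lemma 10.1.2 of \cite{Grids}); I expect no genuine difficulty beyond assembling these compatibilities.
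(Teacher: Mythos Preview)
Your argument is correct, though it takes a more circuitous route than the paper's three-line proof. The paper simply notes that $H=\bK_X\cong\pi_1(X)$ is open in $M_{S,X}$ by Lemma~\ref{lem:hyp_pi_1}, and that the $m$-th iterate $f^m:X\to X$ (of degree $n^m$) produces an open subgroup $H^{(m)}\supset H$ with $[H^{(m)}:H]=n^m$; hence $[M_{S,X}:H]\ge n^m$ for every $m$, and $M_{S,X}$ cannot be compact. Your approach reaches the same conclusion $[M_{S,X}:\bK_X]=\infty$ by transporting $f$ through the fibre functor and arguing that $S_X=\omega(\imath(X))$ admits a surjective non-injective self-map, hence is infinite. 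Both are valid; the paper's version stays inside the group and avoids the categorical machinery, while yours packages the iteration into a single ``surjective but not injective'' step.

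One simplification: your detour through Lemma~\ref{lem:mono} and the monomorphism-in-full-subcategory argument is unnecessary. Lemma~\ref{lem:deg_omega} already tells you that every fibre of $\omega(\imath(f))$ has cardinality $\deg\imath(f)=n\ge 2$, so $\omega(\imath(f))$ is visibly not injective; combined with surjectivity from Lemma~\ref{lem:TJ_surj}, you are done.
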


\begin{proof}
Lemma \ref{lem:hyp_pi_1} gives an open subgroup $H$ of $M_{S,X}$
isomorphic to the \'etale fundamental group $\pi_1(X)$ of $X$.
Then any integer $m \ge 1$, the $m$-th iteration of $f$
gives an open subgroup $H^{(m)}$ of $M_{S,X}$
such that $H^{(m)}$ contains $H$ as a subgroup of index $n^m$.
This in particular shows that $M_{S,X}$ is not profinite.
\end{proof}

\subsubsection{ }
Let $k$ be field and $A$ a semi-abelian variety over $k$.
Then for a positive integer $n \ge 2$ not divisible by 
the characteristic of $k$, the endomorphism $A \to A$
given by the multiplication by $n$ is finite \'etale.
Hence it follows from Lemma \ref{lem:hyp_non} that
if $A$ is not zero-dimensional, then 
$M_{\Spec\, k,A}$ is not profinite.

Suppose moreover that $k$ is an algebraically
closed field of characteristic zero.
Then the \'etale fundamental group $\pi_1(A)$ of $A$
is a free $\wh{\Z}$-module of finite rank.
In this case, 
it follows easily from Proposition \ref{prop:hyp_wh}
that we have 
$$
M_{\Spec\, k,A} = (\pi_1(A) \otimes_{\Z} \Q) \rtimes
(\End_k(A) \otimes_{\Z} \Q)^\times.
$$
Here $\End_k(A)$ denotes the ring of endomorphisms of
the commutative groups schemes $A$ over $\Spec\, k$,
and we equip $\pi_1(A) \otimes_{\Z} \Q$ 
(\resp $(\End_k(A) \otimes_{\Z} \Q)^\times$)
with the adelic (\resp the discrete) topology.

\subsubsection{ }
Let us consider the case when $S$ is the spectrum of subfield $k$
of the field $\C$ of complex numbers and
$X$ is a smooth, geometrically irreducible hyperbolic 
curve over $k$.
Then it follows from Lemma \ref{lem:hyp_core} 
that the group $M_{\Spec\, k, X}$ is profinite 
if and only if $X$ is not $k$-arithmetic in the sense 
of \cite[Remark 2.1.1]{Mochizuki}.

Let $(G,\frX)$ be a Shimura datum such that
a connected component of $\frX$ is isomorphic to 
the complex upper half plane.
Suppose that $k$ contains the reflex field of $(G,\frX)$
and that $X$ is a connected component a Shimura curve
for $(G,\frX)$ with respect to some sufficiently small 
compact open subgroup of $G(\A^\infty_\Q)$.
In this case $X$ is not $k$-arithmetic in the sense 
of \cite[Remark 2.1.1]{Mochizuki}.
Hence $M_{\Spec\, k, X}$ is not a profinite group.

\subsubsection{ }
Let $p$ be a prime number and let $k = \Fbar_p$ 
an algebraic closure of $\F_p$.
Let us consider the case when
$S = \Spec \F_p$ and $X = \A^1_k = \Spec k[t]$.
The Artin-Schreier morphism $f\colon  X \to X$ over $\Spec k$,
given by the homomorphism $k[t] \to k[t]$ of $k$-algebras
that sends $t$ to $t^p-t$, is finite \'etale of degree $p$.
Hence Lemma \ref{lem:hyp_non} shows that $M_{S, X}$
is not profinite.

It follows from Theorem 0.7 of \cite{Tama} that
any connected affine smooth curve $Y$ over $k$
is $\Lambda$-related to $X$ in the category $\SCH_S$.
In particular $Y$ is $\Lambda$-related to $X$ in $\SCH_S$.
Hence we have $M_{S,Y} \cong M_{S,X}$
for any connected affine smooth curve $Y$ over $k$.
This seems to suggest that it is important to study the
locally profinite group $M_{S, X}$.

In Conjecture 1.13 (ii) of \cite{Tama0}, 
Tamagawa made the following conjecture:
for any two algebraically closed fields $l$, $l'$ of characteristic
$p$ and for any
connected smooth affine curves $Y$ and $Y'$ over $l$ and $l'$, respectively,
the natural map $\Isom_{\F_p}(Y,Y') \to \Isom(\pi_1(Y),\pi_1(Y'))
/\Inn(\pi_1(Y'))$ is an isomorphism.

We would like to propose the following variant of
Tamagawa's conjecture when $l=l'=\Fbar_p$.
Our conjecture can be described 
purely in terms of the locally profinite group $M_{S,X}$.

\begin{conj} \label{conj:tama}
Let $S =\Spec\, \F_p$ and $X = \A^1_k$ be as above.
Let $\bK, \bK'$ be two open compact subgroups of $M_{S,X}$.
Suppose that an isomorphism $\gamma \colon  \bK \xto{\cong} \bK'$
of profinite groups are given.
Then there exists an element $g \in M_{S,X}$ such that
$\bK' = g \bK g^{-1}$ and that $\gamma$ is equal to
the isomorphism $\bK \to \bK'$ given by the conjugation by $g$.
\end{conj}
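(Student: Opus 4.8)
The plan is to \emph{reduce} Conjecture \ref{conj:tama} to the (still open) ``isomorphism version'' of Tamagawa's anabelian conjecture for affine curves over $\Fbar_p$, i.e. to Conjecture 1.13 (ii) of \cite{Tama0} in the special case $l=l'=\Fbar_p=k$. The bridge is the dictionary set up in this section between open compact subgroups of $M_{S,X}$ and objects of $\SCH_{S,X}$. First I would recall that, by Corollary \ref{cor:hyp_M}, $M_{S,X}$ is a locally profinite \emph{group} and $\omega_\Cip$ identifies $\Shv(\SCH_{S,X},J)$ with smooth left $M_{S,X}$-sets; under this equivalence the sheaf $a_J\frh_{\SCH_{S,X}}(Y)$ attached to an object $Y$ of $\SCH_{S,X}$ is a transitive smooth $M_{S,X}$-set, and a choice of ``base move'' $f:\wh X\to Y$ (an element of $\varinjlim_H\Hom_{\SCH_{S,X}}(X_H,Y)$, i.e. an object $(Y,f)$ of a grid $\cC_0$ as in Proposition \ref{prop:hyp_grid}) exhibits it as $M_{S,X}/\bK_{(Y,f)}$. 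By Lemma \ref{lem:hyp_pi_1} the profinite group $\bK_{(Y,f)}$ is canonically isomorphic to $\pi_1(Y,\bar y)$, where $\bar y$ is the geometric point of $Y$ determined by $f$; by Theorem 0.7 of \cite{Tama} \emph{every} connected smooth affine curve over $k$ occurs as such a $Y$, since all of them lie in the single $\Lambda$-component $\SCH_{S,X}$. Dually, isomorphisms of transitive smooth left $M_{S,X}$-sets $M_{S,X}/\bK\xto{\cong}M_{S,X}/\bK'$ are exactly translations by elements $g\in M_{S,X}$ with $g\bK g^{-1}=\bK'$, and via $\omega_\Cip$ these correspond to isomorphisms $Y\xto{\cong}Y'$ in $\SCH_{S,X}$.

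Starting from the data of the conjecture, I would pick objects $(Y,f)$, $(Y',f')$ of $\cC_0$ with $\bK=\bK_{(Y,f)}$ and $\bK'=\bK_{(Y',f')}$; then $Y,Y'$ are connected smooth affine curves over $k$ and $\gamma:\bK\xto{\cong}\bK'$ becomes an isomorphism $\pi_1(Y,\bar y)\xto{\cong}\pi_1(Y',\bar y')$ of profinite groups. Invoking the surjectivity part of Conjecture 1.13 (ii) of \cite{Tama0} with $l=l'=k$ — the one genuinely external input — produces an $\F_p$-scheme isomorphism $\phi:Y\xto{\cong}Y'$ whose associated outer isomorphism $\pi_1(Y)\xto{\cong}\pi_1(Y')$ equals the class of $\gamma$. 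Since an isomorphism of $S$-schemes is in particular a finite \'etale morphism of $S$-schemes and $Y,Y'$ are objects of the same $\Lambda$-component $\SCH_{S,X}$, the morphism $\phi$ lies in $\SCH_{S,X}$; by Yoneda and sheafification it induces an isomorphism of the representable sheaves, hence through $\omega_\Cip$ an isomorphism $M_{S,X}/\bK\xto{\cong}M_{S,X}/\bK'$, that is, a translation by some $g_0\in M_{S,X}$ with $g_0\bK g_0^{-1}=\bK'$ (the left/right normalization being adjusted by replacing $g_0$ with $g_0^{-1}$ if necessary).

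It then remains to match $\gamma$ exactly. Here I would use the functoriality in Lemma \ref{lem:hyp_pi_1} — concretely, that the restriction of $\omega_\Cip$ to the Galois subcategory $\Et^0(Y)\subset\SCH_{S,X}$ is a fibre functor with automorphism group $\bK_{(Y,f)}\cong\pi_1(Y,\bar y)$ — to identify the profinite-group automorphism $\bK\to\bK'$, $k\mapsto g_0kg_0^{-1}$, with $\phi_*$ up to an inner automorphism of $\bK'$. Because $\bK'\subseteq M_{S,X}$, every inner automorphism of $\bK'$ is realized by conjugation by some $k_1\in\bK'$, and replacing $g_0$ by $k_1g_0$ (which still satisfies $(k_1g_0)\bK(k_1g_0)^{-1}=\bK'$, since $k_1\bK'k_1^{-1}=\bK'$) modifies $k\mapsto g_0kg_0^{-1}$ by $\Inn(k_1)$ on the target side; a further correction $g_0\mapsto g_0k_0$ with $k_0\in\bK$ absorbs any residual inner automorphism on the source side. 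Choosing $k_0,k_1$ suitably yields $g:=k_1g_0k_0$ with $g\bK g^{-1}=\bK'$ and $\gamma=(k\mapsto gkg^{-1})$, as required.

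The hard part is, of course, the step invoking Conjecture 1.13 (ii) of \cite{Tama0}: the argument is really a \emph{translation} showing Conjecture \ref{conj:tama} is implied by (a special case of) Tamagawa's anabelian conjecture. I do not expect the formal steps — the $M$-set dictionary and the inner-automorphism bookkeeping — to present real difficulty, though care is needed to track base points (geometric points versus ``moves'') throughout, and one should explicitly unwind the constructions of $\omega_\Cip$ and of the grid in Proposition \ref{prop:hyp_grid} to confirm that the identification $\bK_{(Y,f)}\cong\pi_1(Y)$ is compatible with isomorphisms of objects in the expected way. It is also worth noting, using Proposition \ref{prop:hyp_wh}, that $g_0$ can be described directly as the endomorphism of $\wh X=\varprojlim_H X_H$ induced by $\phi$, which may streamline the verification in the last step.
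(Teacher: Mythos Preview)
The statement is a \emph{conjecture}; the paper does not prove it. What the paper does, in the discussion immediately following Conjecture~\ref{conj:tama}, is show that for open compact subgroups of the form $\bK_{Y_0}$, $\bK_{Y'_0}$ (with $Y_0,Y'_0$ objects of a fixed grid), the conjecture is \emph{equivalent} to Tamagawa's anabelian conjecture for the corresponding curves $Y,Y'$. Your proposal gives essentially the same argument for the direction Tamagawa $\Rightarrow$ Conjecture~\ref{conj:tama}: you translate $\gamma$ into an isomorphism of \'etale fundamental groups via Lemma~\ref{lem:hyp_pi_1}, invoke Tamagawa's conjecture to get a scheme isomorphism $\phi$, and lift $\phi$ to an element of $M_{S,X}$. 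The paper lifts via Corollary~\ref{cor:hyp_pro_surj} (producing $g$ with $\alpha(Y_0)=Y'_0$ and $\gamma_\alpha(Y_0)=\beta$) whereas you go through $\omega_\Cip$ and the transitive $M_{S,X}$-set picture; these are equivalent. Your inner-automorphism bookkeeping is in fact more explicit than the paper's ``it is then straightforward to check''.

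Two remarks. First, the paper also proves the converse (Conjecture~\ref{conj:tama} for $\bK_{Y_0},\bK_{Y'_0}$ $\Rightarrow$ Tamagawa for $Y,Y'$), which you do not address; this is what justifies calling Conjecture~\ref{conj:tama} a genuine reformulation rather than merely a consequence. Second, your opening step ``pick $(Y,f)$ with $\bK=\bK_{(Y,f)}$'' presumes that every open compact subgroup of $M_{S,X}$ arises as a point-stabilizer $\bK_{(Y,f)}$; this is not argued, and the paper likewise restricts its equivalence claim to subgroups of that shape rather than to arbitrary open compact subgroups.
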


Let us fix a grid $(\cC_0,\iota_0)$ of the site 
$\SCH_{S,X}$ with the atomic topology.
Let $Y$ and $Y'$ be 
connected smooth affine curves over $k$.
Then it follows from Theorem 0.7 of \cite{Tama}
that $Y$ and $Y'$ are objects of $\SCH_{S,X}$.
Let us choose objects $Y_0$, $Y'_0$ of $\cC_0$
such that there exist isomorphisms
$\iota_0(Y_0) \cong Y$ and $\iota_0(Y'_0) \cong Y'$
in $\SCH_{S,X}$.
These isomorphisms induce isomorphisms
$\pi_1(Y) \cong \bK_{Y_0}$ and
$\pi_1(Y') \cong \bK_{Y'_0}$ that
are uniquely determined up to inner automorphisms.

Let $Y$, $Y'$, $Y_0$, $Y'_0$ as above.
We claim that Tamagawa's conjecture for $Y$ and $Y'$
is equivalent to Conjecture \ref{conj:tama} for
$\bK_{Y_0}$ and $\bK_{Y'_0}$.
Suppose that Tamagawa's conjecture is true for $Y$ and $Y'$.
Let $\gamma \colon  \bK_{Y_0} \xto{\cong} \bK_{Y'_0}$ be an
isomorphism of topological groups.
Let us consider the composite
$\gamma' \colon  \pi_1(Y) \cong \bK_{Y_0}
\xto{\gamma} \bK_{Y'_0} \cong \pi_1(Y')$.
By assumption, the isomorphism $\gamma'$
comes, up to inner automorphisms, from an
isomorphism $\beta' \colon  Y \xto{\cong} Y'$ of schemes
over $S$.
Let $\beta$ denote the composite
$\iota_0(Y_0) \cong Y \xto{\beta'} Y' \cong \iota_0(Y'_0)$.
It follows from Corollary \ref{cor:hyp_pro_surj} that
there exists an element $g = (\alpha,\gamma_\alpha) \in M_{S,X}$ 
such that $\alpha(Y_0) = Y'_0$ and 
$\gamma_\alpha(Y_0) = \beta$.
It is then straightforward to check that we have
$\bK_{Y'_0} = g \bK_{Y_0} g^{-1}$ and that $\gamma$ is equal to
the isomorphism $\bK_{Y_0} \to \bK_{Y'_0}$ given by the conjugation by $g$.
Hence Tamagawa's conjecture for $Y$ and $Y'$
implies Conjecture \ref{conj:tama} for
$\bK_{Y_0}$ and $\bK_{Y'_0}$.
Conversely, suppose that Conjecture \ref{conj:tama} is true for
$\bK_{Y_0}$ and $\bK_{Y'_0}$.
Suppose that an isomorphism
$\gamma' \colon  \pi_1(Y) \xto{\cong} \pi_1(Y')$ 
of topological groups is given.
Let us consider the composite
$\gamma \colon  \bK_{Y_0} \cong \pi_1(Y) 
\xto{\gamma'} \pi_1(Y') \cong \bK_{Y'_0}$.
By assumption, there exists an element 
$g \in M_{S,X}$ 
such that $\bK_{Y'_0} = g \bK_{Y_0} g^{-1}$ and that 
$\gamma$ is equal to
the isomorphism $\bK_{Y_0} \to \bK_{Y'_0}$ 
given by the conjugation by $g$.
Let $I$ denote the order dual of $\cC_0$.
Then $\iota_0(Y_0)$ and $\iota_0(Y'_0)$ are quotient objects
of the limit 
$\wh{X} = \varprojlim_{Z \in I} \iota_0(Z)$
in the category of schemes over $S$ with respect to
the actions of the groups $\bK_{Y_0}$ and $\bK_{Y'_0}$, respectively.
By Proposition \ref{prop:hyp_wh} we regard $g$ as an automorphism
of $\wh{X}$. Then $g$ induces automorphism
$\beta \colon  \iota_0(Y_0) \xto{\cong} \iota_0(Y'_0)$.
Let $\beta'$ denote the composite
$Y \cong \iota_0(Y_0) \xto{\beta} \iota_0(Y'_0) \cong Y'$.
It is then straightforward to check that we have
the isomorphism $\gamma'$ is equal, up to conjugation by
elements of $\pi_1(Y')$, to the isomorphism
$\pi_1(Y) \xto{\cong} \pi_1(Y')$ given by $\beta'$.
Hence Conjecture \ref{conj:tama} for
$\bK_{Y_0}$ and $\bK_{Y'_0}$ implies 
Tamagawa's conjecture for $Y$ and $Y'$.

\section{$Y$-sites for Riemannian symmetric spaces}
\label{sec:Riemannian}
A Riemannian manifold is called
a Riemannian locally symmetric space 
if its curvature tensor is invariant under 
the parallel transforms. 
It follows from Theorem 1.3 of \cite[Ch.\ IV]{Helgason}
that this definition is equivalent to that given in
\cite[p.\ 200]{Helgason}.

We say that a Riemannian manifold $X$ is geodesically
connected if for any two points $p,q \in X$,
there exists a finite geodesic segment from $p$ to $q$.
Let $X$ be a geodesically connected Riemannian manifold
and let $p \in X$. We say that $X$ is bounded 
if there exists a real number $r$ such that 
for any $q \in X$, the length of a geodesic from
$p$ to $q$ is at most $r$. It is easy to see that
this notion is independent of the choice of a point
$p \in X$.
Let us consider the following category $\LS$.
The objects of $\LS$ are the Riemannian 
locally symmetric spaces of non-positive curvature which are
geodesically connected, simply connected, and bounded. For two objects $U$, $V$
of $\LS$, the morphisms from $U$ to $V$ are the injective 
local isometries from $V$ to $U$. 
(Here local isometries are assumed to be open mappings.) 
Be careful of that the morphisms from $U$ to $V$
are not maps from $U$ to $V$, but maps from $V$ to $U$.

\begin{lem} \label{lem:RSS_0}
Let $U$ and $V$ be Riemannian locally symmetric space
and $\varphi_1, \varphi_2 \colon  U \to V$ local isometries.
Suppose that $U$ is path-connected and $\varphi_1$ and
$\varphi_2$ coincide on a non-empty open subset of $U$.
Then we have $\varphi_1 = \varphi_2$.
\end{lem}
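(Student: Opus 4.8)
The statement is a rigidity principle: two local isometries of Riemannian locally symmetric spaces that agree on an open set agree everywhere, provided the source is path-connected. The plan is to run an open--closed argument on the subset of $U$ where $\varphi_1$ and $\varphi_2$ agree \emph{to first order}, i.e. the set
\[
A = \{ x \in U \mid \varphi_1(x) = \varphi_2(x) \text{ and } d\varphi_1|_x = d\varphi_2|_x \}.
\]
First I would observe that $A$ is nonempty: on the given nonempty open set where $\varphi_1 = \varphi_2$, the differentials automatically coincide as well, since a local isometry is determined near a point by its restriction to any open neighborhood. Next, $A$ is closed in $U$ by continuity of $\varphi_i$ and of $d\varphi_i$ (both maps are smooth). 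The crux is showing $A$ is open. Since $U$ is path-connected, hence connected, openness plus closedness plus nonemptiness forces $A = U$, and in particular $\varphi_1 = \varphi_2$ everywhere.

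For the openness step, I would use the exponential map together with the fact that local isometries intertwine geodesics. Fix $x \in A$. Choose a normal neighborhood $W$ of $x$ in $U$ on which $\exp_x$ is a diffeomorphism from a ball in $T_xU$. For any $y \in W$, write $y = \exp_x(v)$; then the geodesic $t \mapsto \exp_x(tv)$ joins $x$ to $y$, and because $\varphi_i$ is a local isometry it sends this geodesic to the geodesic in $V$ with initial point $\varphi_i(x)$ and initial velocity $d\varphi_i|_x(v)$. Since $x \in A$, the two initial data agree, so $\varphi_1$ and $\varphi_2$ send $y$ to the same point; moreover, differentiating this relation (or applying the same argument to nearby geodesics / using that $d\varphi_i$ is parallel along these geodesics, which holds because isometries preserve the Levi-Civita connection) shows $d\varphi_1$ and $d\varphi_2$ also agree at $y$. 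Hence $W \subset A$, proving openness.

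I do not expect to need the \emph{symmetric} hypothesis in an essential way for this particular lemma — the rigidity of local isometries under agreement of $1$-jets is really a consequence of geodesic completeness-type normal-neighborhood arguments and the analyticity/uniqueness of geodesics, valid for any Riemannian manifold. The locally symmetric condition and the global hypotheses (bounded, simply connected, geodesically connected) are packaged into the definition of objects of $\LS$ but are not what makes this step work; the only properties invoked are path-connectedness of $U$ and that $\varphi_i$ are local isometries (hence smooth, connection-preserving, geodesic-preserving). The main obstacle, such as it is, is purely bookkeeping: being careful that ``local isometry'' in the paper's convention means an open map that is a local diffeomorphism and an isometry on tangent spaces, so that the exponential-map comparison is legitimate, and confirming that agreement of the differentials (not just the values) propagates — this is what lets the argument iterate rather than stall. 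Once the set $A$ above (agreement of full $1$-jets) is set up correctly, the connectedness argument closes it out immediately.
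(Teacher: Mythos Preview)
Your argument is correct and is a genuinely different route from the paper's. The paper's proof is a two-line citation: by Helgason, local isometries between Riemannian locally symmetric spaces are real analytic, and two real analytic maps on a connected domain that agree on a nonempty open set are equal. Your approach instead runs the standard $1$-jet rigidity argument for local isometries, via an open--closed argument on the set where value and differential coincide, using that local isometries intertwine the exponential map and parallel transport. Your version is more elementary and, as you correctly note, does not use the locally symmetric hypothesis at all --- it is the classical fact (found e.g.\ in do Carmo or Kobayashi--Nomizu) that a local isometry of Riemannian manifolds is determined on a connected source by its $1$-jet at a single point. The paper's route is shorter to write because it offloads everything to Helgason, but it invokes analyticity machinery that is strictly stronger than what is needed here. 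One small point of polish: the clause ``$d\varphi_i$ is parallel along these geodesics'' should be unpacked as the statement that $d\varphi_i|_y = P^V_{\varphi_i\circ\gamma} \circ d\varphi_i|_x \circ (P^U_\gamma)^{-1}$, i.e.\ local isometries commute with parallel transport; once said precisely, the openness step is clean.
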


\begin{proof}
It follows from Proposition 5.5 of \cite[Ch.\ IV]{Helgason}
that any local isometry $U \to V$ is real analytic.
Hence it follows from Lemma 4.3 \cite[Ch.\ VI]{Helgason}
that we have $\varphi_1 = \varphi_2$.
\end{proof}

\begin{lem} \label{lem:RSS_1}
Let $U$ be an object of $\LS$ and let $p \in U$.
Let $X$ be a complete, simply connected Riemannian globally symmetric space.
Suppose that there exists an open neighborhood $V$ of $p$ in $U$
and a local isometry $\varphi\colon  V \to X$.
Then $\varphi$ can uniquely be extended to a local isometry
$\wt{\varphi} \colon  U \to X$. Moreover $\wt{\varphi}$ is injective.
\end{lem}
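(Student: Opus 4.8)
The plan is to produce $\wt\varphi$ by analytic continuation along geodesics, using the boundedness of $U$ to guarantee that every point of $U$ is reached, the simple connectedness of $U$ to guarantee that the continuation is well-defined (path-independent), and the fact that $X$ is complete and globally symmetric so that local isometries defined near $p$ extend along any geodesic of $U$. Uniqueness of the extension is immediate from Lemma \ref{lem:RSS_0}, since $U$ is path-connected and $\wt\varphi$ would agree with $\varphi$ on the nonempty open set $V$; so the real content is existence and injectivity.

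First I would fix $p\in V$ and, after shrinking $V$, assume $V$ is a geodesically convex normal ball around $p$ on which $\varphi$ is an isometry onto its image. For an arbitrary $q\in U$, choose (by geodesic connectedness) a geodesic segment $\sigma:[0,1]\to U$ from $p$ to $q$; boundedness of $U$ bounds its length uniformly, which will matter for the covering-type argument. The key local step is: if $\psi$ is a local isometry defined on a normal ball $B$ around a point $\sigma(t_0)$ into $X$, then because $X$ is globally symmetric and complete, $\psi$ extends along $\sigma$ a definite further distance — concretely, one transports the differential $d\psi$ by parallel transport along $\sigma$ and integrates it using the exponential map of $X$ (this is the standard rigidity/continuation statement for locally symmetric spaces mapping into a globally symmetric one; cf. the arguments behind Theorem 1 of \cite[Ch.~VI]{Helgason} and the analyticity in Lemma \ref{lem:RSS_0}). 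Iterating finitely many times along $\sigma$ — finitely many because the length of $\sigma$ is finite — yields a local isometry on a neighborhood of $q$, and in particular a well-defined value at $q$. To see this value is independent of the chosen geodesic $\sigma$, I would invoke simple connectedness of $U$: any two such paths are homotopic, and the continuation of a local analytic isometry is invariant under homotopy (the monodromy argument), using Lemma \ref{lem:RSS_0} to see that continuations agreeing on an open set agree everywhere they are both defined. This produces a well-defined map $\wt\varphi:U\to X$, which is a local isometry since it is so in a neighborhood of each point by construction, and which restricts to $\varphi$ on $V$.

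For injectivity, suppose $\wt\varphi(q_1)=\wt\varphi(q_2)=x\in X$. Pick geodesics from $q_1$ and $q_2$ back to $p$ and apply $\wt\varphi$; since $\wt\varphi$ is a local isometry it sends geodesics to geodesics, so we get two geodesics in $X$ starting at $x$. Here I would use that $U$ is simply connected of non-positive curvature, so (Cartan--Hadamard) it is diffeomorphic to $\R^n$ via $\exp_p$ and geodesics from $p$ are unique; tracking lengths and the fact that $\wt\varphi\circ\exp_p^U$ agrees near $0$ with $\exp_{\varphi(p)}^X\circ(d\varphi_p)$ — a global isometry of tangent spaces extended to a global diffeomorphism by Cartan--Hadamard on the $X$ side as well — shows $\wt\varphi$ factors as $\exp^X_{\varphi(p)}\circ(\text{linear iso})\circ(\exp^U_p)^{-1}$, which is a composite of bijections, hence injective. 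The main obstacle I anticipate is the well-definedness (monodromy) step: making precise that the geodesic-by-geodesic continuation is genuinely path-independent requires a careful homotopy-lifting argument combining the analytic rigidity of local isometries (Lemma \ref{lem:RSS_0}) with the completeness of $X$, and one must check that boundedness of $U$ is exactly what prevents the continuation from "running off to infinity" before reaching a given target point. The injectivity argument is cleaner once one commits to using Cartan--Hadamard on both $U$ and $X$.
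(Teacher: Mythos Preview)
Your existence argument is essentially the paper's: analytic continuation of the local isometry along paths, with simple connectedness of $U$ killing monodromy and Lemma~\ref{lem:RSS_0} giving uniqueness. The paper simply cites Helgason, Ch.~I, Propositions~11.3 and~11.4 for the continuation and monodromy steps rather than sketching them. One correction: boundedness of $U$ plays no role here. What makes the continuation succeed along an arbitrary path is the completeness of the \emph{target} $X$, not any finiteness on the source; the paper explicitly remarks that Helgason's Proposition~11.3 nominally assumes completeness of both domain and codomain but only uses the latter in the proof.

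For injectivity the approaches differ, and yours has a misstep. You invoke Cartan--Hadamard to say $\exp_p^U:T_pU\to U$ is a global diffeomorphism, but $U$ is bounded and hence not complete, so Cartan--Hadamard does not apply to $U$ directly. Your strategy is salvageable: the identity $\wt\varphi\circ\exp_p^U=\exp^X_{\varphi(p)}\circ d\wt\varphi_p$ holds on the (star-shaped) domain of $\exp_p^U$, and since $X$ \emph{is} complete, simply connected, and of non-positive curvature, the right-hand side is injective; this already forces $\exp_p^U$ to be injective, and combined with its surjectivity (geodesic connectedness of $U$) you get the factorization you want. But the paper's route is much shorter: if $\wt\varphi(q)=\wt\varphi(q')$ with $q\neq q'$, take a geodesic segment $\delta$ in $U$ from $q$ to $q'$; then $\wt\varphi\circ\delta$ is a geodesic in $X$ from a point to itself, hence constant by Cartan--Hadamard on $X$, so $\delta$ is constant, a contradiction.
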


\begin{proof}
Let $q \in U$. Then there exists a continuous curve
$\gamma \colon  [0,1] \to U$ with $\gamma(0)=p$ and $\gamma(1)=q$.
Then it follows from Proposition 11.3 of \cite[Ch.\ I]{Helgason}
that $\varphi$ is extendable along $\gamma$.
Precisely speaking, in the statement of Proposition 11.3 
of \cite[Ch.\ I]{Helgason}, it is assumed that
both the domain and the codomain of $\wt{\varphi}$ are complete.
However, the completeness of the domain is not used in the
proof.
Since $U$ is simply connected, it follows from
Proposition 11.4 of \cite[Ch.\ I]{Helgason}
that the value at $q$ of the extended map 
does not depend on the choice of $\gamma$.
Hence we obtain a local isometry $\wt{\varphi}\colon U \to X$
that extends $\varphi$.
The uniqueness of $\wt{\varphi}$ follows 
from Lemma \ref{lem:RSS_0}.

It remains to prove that $\wt{\varphi}$ is injective. 
Suppose that there exist $q,q' \in U$ satisfying $q \neq q'$
and $\wt{\varphi}(q) = \wt{\varphi}(q')$.
Let us choose a finite geodesic segment
$\delta \colon  [0,1] \to U$ satisfying $\delta(0)=q$ and
$\delta(\ell)=q'$. Then $\wt{\varphi} \circ \delta$ 
is a finite geodesic segment from $\wt{\varphi}(q)$
to itself. Since $X$ is of non-positive curvature,
it follows that $\wt{\varphi}\circ \delta$ 
is a constant map. Hence $\delta$ is a constant map,
which gives a contradiction.
\end{proof}

\begin{lem}
The category $\LS$ is semi-cofiltered.
\end{lem}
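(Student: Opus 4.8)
The plan is to verify the definition of semi-cofiltered directly: given a diagram $U_1 \xleftarrow{\varphi_1} W \xrightarrow{\varphi_2} U_2$ in $\LS$ (recall that morphisms in $\LS$ go ``the wrong way'', so a diagram $U_1 \to W \leftarrow U_2$ in $\LS$ is a pair of injective local isometries $W \to U_1$ and $W \to U_2$... more precisely I should set up the variance carefully), I must produce an object $V$ of $\LS$ and morphisms completing the square. Concretely, a cospan in $\LS$ with apex $W$ unwinds to two objects $U_1, U_2$ receiving injective local isometries from $W$; I want a third object $V$ together with injective local isometries $V \to U_1$, $V \to U_2$ that agree on $V$.

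First I would invoke the classical fact (Helgason, Ch.\ V) that every Riemannian locally symmetric space of non-positive curvature that is geodesically connected and simply connected admits, via the developing map, a canonical injective local isometry into a complete simply connected Riemannian globally symmetric space $X$ of non-positive type — this is exactly the content of Lemma \ref{lem:RSS_1}. So fix such an $X$ for $W$ and an injective local isometry $\iota_W : W \to X$. Composing (or rather, using Lemma \ref{lem:RSS_1} to extend) along the given maps $W \to U_1$ and $W \to U_2$, I would produce injective local isometries $\iota_1 : U_1 \to X$ and $\iota_2 : U_2 \to X$ extending $\iota_W$ in the appropriate sense; here I use that $U_1, U_2$ are simply connected, geodesically connected, and that the local isometry from $W$ into $U_i$ lets one transport the developing map. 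Then I would take $V$ to be (a geodesically convex, bounded, simply connected open subset of) the intersection $\iota_1(U_1) \cap \iota_2(U_2)$ inside $X$, which is again a Riemannian locally symmetric space of non-positive curvature and hence an object of $\LS$; the inclusions $V \hookrightarrow \iota_i(U_i) \cong U_i$ give the required morphisms, and they agree on $V$ because both equal the inclusion $V \hookrightarrow X$ followed by the identifications with $U_i$, using Lemma \ref{lem:RSS_0} for the uniqueness that makes the square commute.

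The main obstacle I expect is twofold. First, one must be careful that the intersection $\iota_1(U_1) \cap \iota_2(U_2)$ is nonempty (it contains $\iota_W(W)$, so this is fine) and that one can extract from it an open subset that is itself geodesically connected, simply connected, and bounded — boundedness is inherited from $U_1$ (or $U_2$), geodesic connectedness requires passing to a geodesically convex component or using that a small enough neighborhood works, and simple connectedness may force shrinking; the point is that $W$ itself maps into this intersection, so one can always find such a $V$ containing (the image of) $W$, which also guarantees the completed square is compatible with the original diagram. Second, the variance bookkeeping in $\LS$ (morphisms are reverse-direction injective local isometries) has to be tracked so that the diagram one completes is genuinely the one required by the definition of semi-cofiltered as used in \cite{Grids}; this is routine but easy to get backwards. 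I would close by remarking that commutativity of the resulting square follows from Lemma \ref{lem:RSS_0}, since all the relevant local isometries agree on the nonempty open set coming from $W$.
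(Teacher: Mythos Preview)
You have the variance backwards, exactly the pitfall you flagged. Semi-cofiltered requires that every cospan $U_1 \to W \leftarrow U_2$ in $\LS$ be completed by an object $Z$ with $\LS$-morphisms $Z \to U_1$ and $Z \to U_2$. Since an $\LS$-morphism $Z \to U_i$ is by definition an injective local isometry $U_i \to Z$, the object $Z$ must be \emph{larger} than both $U_i$ in the geometric sense: it must receive injective local isometries from $U_1$ and from $U_2$ that agree on the image of $W$. Your proposal instead constructs $V$ as a subset of the intersection $\iota_1(U_1) \cap \iota_2(U_2)$, which comes with inclusions $V \hookrightarrow U_i$; those are $\LS$-morphisms $U_i \to V$, the wrong direction. (And note that ``agree on $V$'' does not yield a commutative square with the given data: the geometric maps $W \to U_i$ and $V \to U_i$ cannot be composed.)

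The paper's argument uses the same developing-map setup you describe: embed $U_1$ and $U_2$ into the complete globally symmetric space $X$ via Lemma~\ref{lem:RSS_1}, obtaining injective local isometries $g_i: U_i \to X$ with $g_1 \circ f_1 = g_2 \circ f_2$ on $W$. Then, since $U_1$ and $U_2$ are bounded, choose $r$ large enough that the geodesic ball $B(g(p), r)$ contains $g_1(U_1) \cup g_2(U_2)$, and take $Z = B(g(p), r)$. The $g_i$ are now injective local isometries $U_i \to Z$, i.e., $\LS$-morphisms $Z \to U_i$, and the square commutes because $g_1 \circ f_1 = g_2 \circ f_2$ by the uniqueness in Lemma~\ref{lem:RSS_1}. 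So the correct move is a big ball containing both images, not a piece of the intersection.
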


\begin{proof}
Let $U_1 \xto{f'_1} V \xleftarrow{f'_2} U_2$ be a diagram in $\LS$.
This diagram gives injective local isometries $f_1\colon  V \to U_1$ and
$f_2 \colon V \to U_2$.
Let us fix a point $p \in V$.
Then there exist an open neighborhood $W$ of $p$ in $V$
and a simply connected complete Riemannian globally symmetric space
$X$ such that there exists a local isometry $g\colon  W \to X$.
Since $U_1$ and $U_2$ are objects of $\LS$, it follows
from Lemma \ref{lem:RSS_1} that there exists
a unique local isometry $g_i \colon  U_i \to X$ such that
the restriction of $g_i \circ f_i$ to $W$ is equal to $g$.
By the uniqueness we have $g_1 \circ f_1 = g_2 \circ f_2$.
Choose a sufficiently large $r$ such that the open ball
$B(g(p),r)$ in $X$ contains the images of $g_1$ and $g_2$.
Then $g_i$ gives a morphism $g'_i \colon  B(g(p),r) \to U_i$
and we have $f'_1 \circ g'_1 = f'_2 \circ g'_2$.
This completes the proof.
\end{proof}

In particular, the $\Lambda$-relationship among the objects of $\LS$
gives an equivalence relation on the collection of objects of $\LS$.
Now let us fix an object $U_0$ of $\LS$ and let $\LS(U_0)$ denote the
full-subcategory of $\LS$ whose objects are the objects of $\LS$ which
are $\Lambda$-related to $U_0$.
Let $\cC$ be an arbitrary full subcategory of $\LS(U_0)$ satisfying 
the following cofinality condition: For any object $U$ of $\LS(U_0)$, 
there exists an object $V$ of $\cC$ and a morphism from $V$ to $U$ 
in $\LS(U_0)$.
\begin{prop}
$\cC$ is semi-cofiltered and
the pair $(\cC,J)$ of $\cC$ and the atomic topology $J$ on $\cC$ is 
a $Y$-site.
\end{prop}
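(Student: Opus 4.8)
The plan is to verify that $(\cC, J)$ satisfies the three conditions in the definition of a $Y$-site (\cite[Definition 5.4.2]{Grids}): that $\cC$ is essentially small, that $(\cC, J)$ is a $B$-site, and that $\cC(\cT(J))$ is $\Lambda$-connected, together with the condition that $\cT(J)$ has enough Galois coverings. We already know from the preceding lemma that $\LS$ is semi-cofiltered, and we have just fixed $\cC$ to be a full subcategory of $\LS(U_0)$ satisfying the stated cofinality condition; so the first task is to transfer semi-cofilteredness from $\LS(U_0)$ to $\cC$. This follows from the cofinality condition: given a diagram $U_1 \to V \leftarrow U_2$ in $\cC$, form the completion $U_1 \leftarrow W' \to U_2$ in $\LS(U_0)$ (using that $\LS$, hence $\LS(U_0)$, is semi-cofiltered), then replace $W'$ by an object $W$ of $\cC$ admitting a morphism $W \to W'$ in $\LS(U_0)$; since morphisms in $\cC$ are the same as morphisms in $\LS$ between objects of $\cC$, this composite diagram lives in $\cC$.

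Next I would establish that $(\cC, J)$ is a $B$-site in the sense of \cite[Def.\ 4.2.1]{Grids}. The topology $J$ is the atomic topology, hence an $A$-topology by construction. For the $E$-category condition (every morphism is an epimorphism), one uses Lemma~\ref{lem:RSS_0}: a morphism in $\LS$ from $U$ to $V$ is an injective local isometry $V \to U$, and two parallel morphisms $g_1, g_2$ with $f \circ g_1 = f \circ g_2$ agree on the image of a morphism $f$, which is a non-empty open subset; since the objects are path-connected (geodesically connected implies path-connected), Lemma~\ref{lem:RSS_0} forces $g_1 = g_2$. The third $B$-site condition, that $f \circ g \in \cT(J)$ iff both $f, g \in \cT(J)$, is automatic since $\cT(J) = \Mor(\cC)$ for the atomic topology. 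The essential smallness of $\cC$ should follow from the fact that bounded simply connected Riemannian locally symmetric spaces of non-positive curvature embed, by Lemma~\ref{lem:RSS_1}, into complete simply connected globally symmetric spaces $X$ (classified up to isometry), and each such $U$ is determined up to isometry by a bounded open subset of some $X$; one then checks the isometry classes form a set.

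The substantive remaining points are that $\cC(\cT(J))$ is $\Lambda$-connected and that $\cT(J)$ has enough Galois coverings. Since $\cT(J) = \Mor(\cC)$, $\Lambda$-connectedness of $\cC(\cT(J))$ is just $\Lambda$-connectedness of $\cC$, which holds because any two objects of $\cC$ are $\Lambda$-related in $\LS(U_0)$ and, using the cofinality condition and semi-cofilteredness of $\cC$, a connecting diagram can be pulled back inside $\cC$. For enough Galois coverings: given a morphism $f : V \to U$ in $\cC$, I would pass to a complete simply connected globally symmetric space $X$ via Lemma~\ref{lem:RSS_1}, so that $U$ embeds isometrically as a bounded open subset of $X$; then choose a large geodesic ball $B = B(p, r) \subset X$ containing (the image of) $U$, replace it by an object $W$ of $\cC$ dominating it via the cofinality condition, and argue that $W \to U$ together with $f$ gives a Galois covering — the key being that the isometry group of $X$ acts transitively enough on such balls that $\Aut_U(W)$ acts simply transitively on $\Hom_U(W', W)$ for all test objects $W'$. \textbf{The main obstacle} I anticipate is precisely this last point: verifying the Galois covering property requires controlling the automorphisms $\Aut_U(W)$ of a ball over a sub-object, and showing they suffice to make the relevant $\Hom$-sets into torsors. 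This is where the rigidity Lemma~\ref{lem:RSS_0} and the extension/injectivity statement Lemma~\ref{lem:RSS_1} must be combined carefully — essentially one is reconstructing a "Galois-theoretic" picture for locally symmetric spaces analogous to the scheme-theoretic one of Section~\ref{sec:conn loc noeth}, and the bookkeeping of which local isometries extend to which objects of $\cC$ is delicate.
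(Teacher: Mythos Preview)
Your treatment of semi-cofilteredness and $\Lambda$-connectedness matches the paper's. The gap is in the ``enough Galois coverings'' step, where you work much harder than necessary and in a direction that cannot succeed.

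The key observation you are missing is that Lemma~\ref{lem:RSS_0} makes every morphism in $\LS$ a \emph{monomorphism}: if $g_1, g_2 : W \to V$ satisfy $g_1 \circ h = g_2 \circ h$ for some $h$, then (passing to the underlying injective local isometries) $g_1$ and $g_2$ agree on a non-empty open set, hence everywhere. In an $E$-category, a monomorphism $f : X \to Y$ is automatically a Galois covering with trivial Galois group: for any test object $Z$, the map $\Hom(Z,X) \to \Hom(Z,Y)$ is injective, i.e.\ each fibre is empty or a singleton, which is exactly a pseudo-torsor for the trivial group. So every morphism in $\cC$ is already a Galois covering, and the ``enough Galois coverings'' condition holds trivially; this is precisely what the paper does.

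Your proposed route --- building a large ball $W$ and arguing that $\Aut_U(W)$ acts transitively on $\Hom_U(W',W)$ --- cannot work as stated, because the same monomorphism argument forces $\Aut_U(W) = \{\id\}$ for \emph{every} morphism $W \to U$ in $\LS$. There are no nontrivial automorphisms over a base object in this category, so any attempt to produce Galois coverings via nontrivial symmetry groups is doomed; fortunately, as above, the trivial group already suffices.
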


\begin{proof}
It is clear that $\LS$ is an $E$-category.
Hence $\cC$ is an $E$-category.

Let us prove that $\cC$ is semi-cofiltered.
Let $U_1 \xto{f_1} V \xleftarrow{f_2} U_2$ be a diagram
in $\cC$.
As we mentioned in Section \ref{sec:hyp_prelim}, the category
$\LS(U_0)$ is semi-cofiltered, there exist
an object $T$ of $\LS(U_0)$ and morphisms
$g_1 \colon  T \to U_1$, $g_2\colon T \to U_2$ satisfying
$f_1 \circ g_1 = f_2 \circ g_2$.
Let us choose an object $T'$ of $\cC$ and a
morphism $h\colon  T' \to T$ in $\LS(U_0)$.
Then we have $f_1 \circ g_1 \circ h = f_2 \circ g_2 \circ h$.
This proves that $\cC$ is semi-cofiltered.
In a similar manner, one can prove that the
category $\cC$ is $\Lambda$-connected.

It follows from Lemma \ref{lem:RSS_0}
that any morphism in the category $\LS$ is a monomorphism.
This implies that
any morphism in $\cC$ is a Galois covering and its
Galois group is the trivial group that 
consists of the identity morphism.
This in particular shows the class of morphisms in $\cC$
has enough Galois coverings.
This complete the proof.
\end{proof}

\begin{rmk}
Let $X$ be a simply connected complete Riemannian globally symmetric space 
and let $p \in X$
be a point. Fix a subset $S \subset \R_{>0}$ with $\sup S = \infty$.
Then the full subcategory $\cC$ of $\LS$ whose set of objects is 
$\{ B(p,r)\ |\ r \in S \}$ gives an example of such a category.
Here $B(p,r)$ denotes the geodesic open ball of radius $r$ 
with center $p$, and we regard $B(p,r)$ just as a Riemannian manifold, 
by forgetting the inclusion $B(p,r) \inj X$.
In this case, we can take any object of $\cC$ as $U_0$.
\end{rmk}

Now let us construct a grid for $\cC$.
Let $\frg$ denote the Lie algebra over $\R$ of Killing vector fields on $U_0$. 
Let us choose a point $p \in U_0$. Then the geodesic symmetry with respect to $p$ on
a neighborhood of $p$ induces an involution $s$ of $\frg$, and the pair
$(\frg,s)$ is an effective orthogonal Lie algebra in the sense of
p.213 of Helgason's book \cite{Helgason}. 
Let $X$ be a simply connected complete Riemannian globally symmetric space 
associated with $(\frg,s)$.
Let $\cC_0$ denote the following category. The objects are the open subsets of $X$
which is isometric to an object of $\cC$. For two objects $U$, $V$ of $\cC_0$, there
is at most one morphism from $U$ to $V$ in $\cC_0$, and the morphism exists if and only
if $V \subset U$ as subsets of $X$.
The tautological assignment $U \mapsto U$ gives a functor from $\cC_0$ to $\cC$
which we denote by $\iota_0$. Then the pair $(\cC_0,\iota_0)$ is a grid for the
$Y$-site $(\cC,J)$.
Let $G = \mathrm{Isom}(X,X)$ denote the group of isometries from $X$ to itself.
We then have an isomorphism $G \cong M_{(\cC_0,\iota_0)}$ of groups.
We note that $G$ is a Lie group whose Lie algebra is isomorphic to $\frg$.

\begin{rmk}
In this settings, topology on $M_{(\cC_0,\iota_0)}$
is discrete. Hence the isomorphism
$G \cong M_{(\cC_0,\iota_0)}$ is not an isomorphism
of topological groups.
\end{rmk}

\chapter{Applications of the norm relation theorem}
\label{cha:applications}
In this chapter, we give applications of the main abstract 
norm relation theorem
 (Theorem~\ref{main theorem}).
The main (new) result is the theorem on Drinfeld modular schemes 
(Theorem~\ref{thm:Drinfeld Euler}).

We present the details of the computations in the cyclotomic case
in Section~\ref{sec:cyclotomic Euler}.

We defined Hecke operators in a general manner 
in Section~\ref{sec:def Hecke},
and used them in the statement of
Theorem~\ref{thm:Drinfeld Euler}.
In Section~\ref{sec:Hecke double}, 
we show that those Hecke operators coincide with 
the usual ones defined using double cosets.

It happens that if the presheaf of transfers $G$ in Theorem~\ref{main theorem}
is a sheaf, then the proof of the norm relations is simplified much.
We address this issue in Section~\ref{sec:universal Euler}.

\section{Cyclotomic units}
\label{sec:cyclotomic Euler}
In this section, we present the Euler system of cyclotomic units using our main theorem.
The norm relation of cyclotomic units boils down to the equation
$\mathrm{Norm}(1-\zeta_{np})=1-\mathrm{Frob}_p \zeta_n$
where Norm is the norm map for the elements in the field extension $\Q(\zeta_{np})$ over $\Q(\zeta_n)$.
The reader will see that the following proof using our theorem is rather long, 
but this is a proof that generalizes to higher ranks.    

Of course there is nothing new.    Our aim is to work out the details and present how we do the explicit computations in our category $\cC^1$.

\subsection{Some Galois group computations}
\subsubsection{}
Set $X=\Spec \Z$ so that 
$\A_X=\wh{\Z} \otimes_\Z \Q$
is the ring of finite adeles and $\wh{\cO}_X=\wh{\Z}=\prod_{p} \Z_p$
where $p$ runs over all primes.
Let $d=1$.
We study the category $\cCo{1}$.
We will identify an object $N$ 
of $\cCo{1}$ and the abelian group
$\Gamma(X, N)$.
By definition, an object of 
$\cCo{1}$ is isomorphic to $\Z/n\Z$ for some $n \ge 1$.
(If $n=1$, the object is 0.)
\begin{lem}
Let $n,n'$ be positive integers.
Then 
\[
\begin{array}{ll}
(1) &
\Hom_{\cCo{1}}(\Z/n\Z, \Z/ n'\Z)\cong
\left\{
\begin{array}{ll}
\displaystyle\coprod_{d|(n/n'), d \ge 1} 
(\Z/n' \Z)^\times
& \text{if } n' | n,\\
\emptyset  & \text{if } n'\not|\, n. 
\end{array}
\right.
\\
(2) & \Aut_{\cCo{1}}(\Z/n\Z) \cong (\Z/n\Z)^\times
\end{array}
\]
\end{lem}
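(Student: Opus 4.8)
The plan is to compute the set $C(N,N')$ of triples from Lemma~\ref{lem:4_0} directly and then invoke the bijection $C(N,N') \cong \Hom_{\cCo{1}}(N,N')$. Write $N = \Z/n\Z$ and $N' = \Z/n'\Z$. An element of $C(N,N')$ is a triple $(N_1,N_2,\alpha)$ of submodules $N_1 \subset N_2 \subset \Z/n\Z$ together with an isomorphism $\alpha : N_2/N_1 \xto{\cong} \Z/n'\Z$. Since every subgroup of $\Z/n\Z$ is cyclic of the form $(m)/(n) = m\Z/n\Z$ for a unique divisor $m \mid n$ (with $m\Z/n\Z$ of order $n/m$), the pair $(N_1,N_2)$ with $N_1 \subset N_2$ corresponds to a pair of divisors $m_1 \mid n$, $m_2 \mid n$ with $m_2 \mid m_1$, and then $N_2/N_1 \cong \Z/(m_1/m_2)\Z$. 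An isomorphism $\alpha$ to $\Z/n'\Z$ exists if and only if $m_1/m_2 = n'$, and in that case there are exactly $|(\Z/n'\Z)^\times| = \varphi(n')$ such isomorphisms. So I need to count pairs of divisors $m_2 \mid m_1 \mid n$ with $m_1/m_2 = n'$; writing $m_2 = d$, this forces $m_1 = dn'$, and the constraints become $d \ge 1$ and $dn' \mid n$, i.e.\ $d \mid (n/n')$ — which already requires $n' \mid n$. This yields exactly the claimed description
\[
C(\Z/n\Z, \Z/n'\Z) \cong \coprod_{d \mid (n/n'),\, d \ge 1} (\Z/n'\Z)^\times
\]
when $n' \mid n$, and $C(\Z/n\Z,\Z/n'\Z) = \emptyset$ otherwise. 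Part (1) then follows from Lemma~\ref{lem:4_0}.

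For part (2), I would specialize $n' = n$ in part (1): then $n/n' = 1$, the only admissible divisor is $d = 1$, and the coproduct degenerates to a single copy of $(\Z/n\Z)^\times$, giving $\Hom_{\cCo{1}}(\Z/n\Z,\Z/n\Z) \cong (\Z/n\Z)^\times$. To upgrade this bijection of \emph{sets} to an isomorphism of \emph{groups} $\Aut_{\cCo{1}}(\Z/n\Z) \cong (\Z/n\Z)^\times$, I would note that the triple corresponding to $\beta \in (\Z/n\Z)^\times$ is $(0, \Z/n\Z, \beta)$ — in the notation of Section~\ref{sec:notationz}, this is exactly $z(\beta)$, the endomorphism represented by $\Z/n\Z \xleftarrow{\beta} \Z/n\Z \xto{=} \Z/n\Z$ — and the map $z : \Aut_{\Z}(\Z/n\Z) \to \Aut_{\cCo{1}}(\Z/n\Z)$ is the isomorphism of groups recorded there (the remark in Section~\ref{sec:notationz} and Example~\ref{ex:automorphisms}). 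Since $\Aut_{\Z}(\Z/n\Z) = (\Z/n\Z)^\times$ canonically, this gives the group isomorphism directly; alternatively one checks composition via Lemma~\ref{lem:4_0_2}.

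The computation is routine; the only point requiring a little care is bookkeeping in the correspondence between subgroups of $\Z/n\Z$ and divisors of $n$, and making sure the "$d \ge 1$" and the divisibility condition "$dn' \mid n$" are recorded correctly (in particular that the empty case is precisely $n' \nmid n$). I do not anticipate a genuine obstacle here; this lemma is a warm-up establishing the concrete shape of Hom-sets and automorphism groups in $\cCo{1}$, which will be used to identify the relevant Galois groups and Hecke operators in the cyclotomic example.
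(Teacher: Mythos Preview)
Your proposal is correct and takes essentially the same approach as the paper: both enumerate the intermediate subquotients of $\Z/n\Z$ isomorphic to $\Z/n'\Z$ and count the isomorphisms. The only difference is cosmetic---you invoke the triple description $C(N,N')$ from Lemma~\ref{lem:4_0}, while the paper works directly with equivalence classes of diagrams $\Z/n'\Z \twoheadleftarrow d\Z/n\Z \hookrightarrow \Z/n\Z$; your version is slightly more explicit about deriving the constraint $d \mid (n/n')$.
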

\begin{proof}
An element of 
$\Hom_{\cCo{1}}(\Z/n \Z,
\Z/n' \Z)$ is represented by a diagram
$\Z/n' \Z
\stackrel{p}{\twoheadleftarrow} 
N
\stackrel{i}{\hookrightarrow} \Z/n \Z$
for some $N \in \cCo{1}$.
If $n' \not| \, n$,
such a diagram  cannot exist,
hence the claim in this case follows.
Suppose $n'| n$.
The diagram forces $N \cong d\Z/n\Z$
for some $d|n$
and $i$ to be the canonical inclusion.
Two such diagrams (with $p_1$ and $p_2$ as the surjections) 
are equivalent if and only if 
there exists an isomorphism $\beta$ such that the diagram
$$
\begin{array}{ccccc}
\Z/n' \Z  & \stackrel{p_1}{\twoheadleftarrow} 
& d\Z/n \Z & \inj & \Z/ n \Z \\
{\Large \parallel} & & 
\beta {\Large \downarrow} \cong & & 
{\Large \parallel} \\
\Z/n' \Z & \stackrel{p_2}{\twoheadleftarrow} & 
d \Z/n \Z & \inj & \Z/ n \Z
\end{array}
$$
is commutative.
This commutativity forces $\beta$ to be the identity map.
Hence the two diagrams are equivalent if and only if 
$p_1=p_2$.  
Now the set of such surjections is isomorphic to $(\Z/n'\Z)^\times$.
The claim (1) follows.
The claim (2) follows immediately from (1).
\end{proof}

\subsubsection{}
Let $n$ and $n'$ be positive integers.
Then any morphism $f \in \Hom_{\cCo{1}}
(\Z/n\Z, \Z/ n'\Z)$
is a Galois covering by Lemma~\ref{lem:Gal}.
\begin{lem}
\label{lem:Gal d=1}
The Galois group of $f$ is canonically isomorphic to
$\Ker[(\Z/n\Z)^\times \to (\Z/n'\Z)^\times]$
where the map is the modulo $n'$ map.
\end{lem}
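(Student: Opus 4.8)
The plan is to reduce everything to the description of $\Aut_{N'}(N)$ given in Example~\ref{ex:automorphisms}. First I would fix a convenient representative of $f$. Since $\Hom_{\cCo{1}}(\Z/n\Z,\Z/n'\Z)$ is nonempty we have $n'\mid n$, and by (the proof of) the previous lemma, after composing with a suitable element of $\Aut_{\cCo{1}}(\Z/n'\Z)$ if necessary, the morphism $f$ is represented by a diagram
$$
\Z/n'\Z \stackrel{p}{\twoheadleftarrow} d\Z/n\Z \stackrel{\subset}{\hookrightarrow} \Z/n\Z
$$
for a unique divisor $d$ of $n/n'$, with $p$ the canonical surjection. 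In the language of triples (Lemma~\ref{lem:4_0}), the triple $(N_1,N_2,\alpha)\in C(\Z/n\Z,\Z/n'\Z)$ corresponding to $f$ has $N_2=d\Z/n\Z$ and $N_1=\Ker p = dn'\Z/n\Z$; note $N_2/N_1$ has order $n'$. Since the normalizing automorphism of the target does not change $\Aut_{N'}(N)$, and since the answer will turn out to be independent of $d$, this loses no generality.

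Second, I would invoke Example~\ref{ex:automorphisms}: via the isomorphism $z:\Aut_{\cO_X}(\Z/n\Z)\xto{\cong}\Aut_{\cCo{1}}(\Z/n\Z)$ of Section~\ref{sec:notationz}, and the identification $\Aut_{\cO_X}(\Z/n\Z)=(\Z/n\Z)^\times$, the Galois group $\Aut_{N'}(N)$ is carried onto the subgroup of those $b\in(\Z/n\Z)^\times$ with $b N_i = N_i$ for $i=1,2$ and such that the automorphism of $N_2/N_1$ induced by multiplication by $b$ is the identity. The first condition is automatic, because every $\cO_X$-submodule of $\Z/n\Z$ has the form $m\Z/n\Z$ with $m\mid n$ and is therefore stable under every unit.

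Third, I would unwind the remaining condition. Multiplication by $b$ restricted to $N_2=d\Z/n\Z$ sends $d\cdot j\mapsto d\cdot(bj)$, so on the quotient $N_2/N_1=(d\Z/n\Z)/(dn'\Z/n\Z)\cong\Z/n'\Z$ — with the canonical identification $[d\cdot j]\mapsto j\bmod n'$ — it is multiplication by the residue of $b$ modulo $n'$; this uses Lemma~\ref{lem:4_0_2} to compute the induced morphism on the subquotient. Hence $z(b)\in\Aut_{N'}(N)$ if and only if $b\equiv 1\pmod{n'}$, so $z^{-1}$ identifies $\Aut_{N'}(N)$ with $\Ker\bigl[(\Z/n\Z)^\times\to(\Z/n'\Z)^\times\bigr]$, the reduction-modulo-$n'$ map. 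This isomorphism is canonical, and the computation visibly does not depend on $d$ or on the chosen representative. I do not expect any genuine obstacle here; the only delicate point is the bookkeeping that identifies $N_1,N_2$ explicitly and verifies that the induced action on $N_2/N_1$ is literally reduction mod $n'$.
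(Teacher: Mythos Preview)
Your proof is correct and matches the alternative argument the paper itself sketches immediately after this lemma in Section~\ref{sec:4_0_d=1}, which explicitly says ``the previous lemma may be deduced from Example~\ref{ex:automorphisms} in the following way'' and then carries out essentially your computation. The paper's primary proof, however, is a direct diagram chase: it takes an automorphism $\alpha$ represented by $\Z/n\Z \xleftarrow{=} \Z/n\Z \xto{\alpha_0} \Z/n\Z$, computes the composite $f\circ\alpha$ explicitly via the cartesian-square rule for composition in $\cCo{1}$, and then checks when the resulting diagram is equivalent to the one representing $f$. Your route via Example~\ref{ex:automorphisms} is cleaner and more conceptual (it isolates exactly which conditions on $\beta\in(\Z/n\Z)^\times$ matter), while the paper's direct proof has the virtue of being self-contained and not requiring the reader to unpack the triple description. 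Both arrive at the same place with the same amount of work.
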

\begin{proof}
The Galois group is the subgroup of 
elements $\alpha$ of $\Aut_{\cCo{1}}(\Z/n\Z)$
such that 
$f \circ \alpha =f$.
Suppose 
that $\alpha$ is represented by the diagram
$\Z/n \Z
\stackrel{=}{\twoheadleftarrow} 
\Z/n \Z
\stackrel{\alpha_0}{\hookrightarrow} \Z/n \Z$
for some $\alpha_0 \in 
\Aut_{{\cO}_X}(\Z/n \Z)$.
The composite $f \circ \alpha$ is computed using the diagram
\[
\begin{array}{ccccc}
 &&&& \Z/n \Z \\
 &&&& \uparrow \alpha_0 \\
 && \Z/n \Z  &\stackrel{=}{\twoheadleftarrow} & \Z/n \Z \\
 &&\uparrow \iota & \circ & \uparrow \iota\\
 \Z/n' \Z & \stackrel{\iota}{\twoheadleftarrow} & d\Z/n\Z 
& \stackrel{=}{\twoheadleftarrow} & 
d\Z/n \Z 
 \end{array}
 \]
and one can see that it is represented by the diagram
$\Z/n' \Z
\stackrel{p}{\twoheadleftarrow} 
d\Z/n \Z
\stackrel{\alpha_0 \circ \iota}{\hookrightarrow} \Z/n \Z$.
Hence $f \circ \alpha=f$ if there exists 
$\beta \in \Aut_{\cO_X}(d\Z/n\Z)$ such that the diagram
$$
\begin{array}{ccccc}
\Z/n' \Z  & \stackrel{p}{\twoheadleftarrow} & 
d \Z/n \Z & \stackrel{\alpha_0 \circ \iota}{\inj}
& \Z/ n \Z \\
{\Large \parallel} & & 
\beta {\Large \downarrow} \cong & & 
{\Large \parallel} \\
\Z/ n' \Z & \stackrel{p}{\twoheadleftarrow} 
& d \Z/n \Z & \stackrel{\iota}{\inj} & \Z/ n \Z
\end{array}
$$
is commutative.
Note that 
the commutativity of the left square 
forces $\beta \equiv \id \modx n'$.
So if $\alpha_0 \not\equiv \id \modx n'$, 
then such a $\beta$ does not exist.
If $\alpha_0 \equiv \id \modx n'$,
then one can construct such $\beta$.
This proves the claim.
\end{proof}

\subsubsection{ } \label{sec:4_0_d=1}
The previous lemma may be deduced from 
Example~\ref{ex:automorphisms} in the following 
way.

Let us assume $d=1$.
Let $f \colon  N \to N'$ be a morphism in 
$\cCo{1}$ and let $(N_1,N_2,\alpha) \in C(N, N')$ be the corresponding triple.
It follows from Lemma~\ref{lem:Gal}
that $f$ is a Galois covering.
Let us compute its Galois group $G$.
It follows from Lemma 3.1.3 of \cite{Grids} that
the group $G$ is equal to 
$\Aut_{N'}(N)$. 
Observe that any $\cO_X$-submodule of $N$ is stable
under any $\cO_X$-linear automorphism of $N$.
This gives a canonical homomorphism
$\varphi\colon  \Aut_{\cO_X}(N) \to \Aut_{\cO_X}(N_2/N_1)$ and
the group $G$ is isomorphic to its kernel.
Let $I, J \subset \cO_X$ denote
the annihilator ideal sheaf of $N$, $N'$, respectively.
The $\cO_X$-action on $N$ and $N_2/N_1$ induce isomorphisms
$\Aut_{\cO_X}(N) \cong \Gamma(X,\cO_X/I)^\times$
and $\Aut_{\cO_X}(N_2/N_1) \cong \Gamma(X,\cO_X/J)^\times$.
The homomorphism $\varphi$
is then identified with the homomorphism
$\Gamma(X,\cO_X/I)^\times
\to \Gamma(X,\cO_X/J)^\times$. Hence 
the Galois group $G$ is isomorphic to the kernel of
the last homomorphism.

\subsection{On Cyclotomic Units}
In this subsection, we give an application of our result to the cyclotomic case.
We set $X=\Spec \Z$ and $d=1$.   
We consider Situation II in Section~\ref{sec:Situations}.
We remark here that 
to actually meet the conditions in Situation II, 
we need to define $G(-)$ to be the ring of K-groups, that is,
$\oplus_n K_n(-)$, 
but we only work with $K_1(-)$ below, for simplicity.

\subsubsection{}
We define a cyclotomic field as the coordinate ring 
of the moduli of level structures on $\Gm$.
What follows below may seem familiar if the reader 
has seen how a Hecke algebra acts as algebraic correspondences
on the moduli of elliptic curves or abelian varieties with level structures.
The treatment may seem elaborate, but this suits well with the 
formalism we have developed in earlier sections. 

Let $N \in \cCo{1}$.
Let $F_N$ denote the functor from the category of 
$\Q$-schemes to the category of sets, that 
sends a $\Q$-scheme $S$
to the set of isomorphism 
classes of monomorphisms 
$\psi\colon  N_S \to {\Gm}_{,S}$
of $S$-group schemes.
Here $N_S$ is the constant $S$-group
scheme with the group structure $N$.
The functor is representable by a scheme,
which we also denote $F_N$.  Note that 
$N \cong \Z/n\Z$ for some integer $n \ge 1$.
Then $F_N$ is isomorphic to 
$\Spec \Q(\zeta_n)$ for a primitive $n$-th
root of unity $\zeta_n$.

We construct a covariant functor $G'$ from 
 $\cFCo{1}$ to the category of schemes as follows.
For an object $\coprod_i N_i \in \cFCo{1}$ with 
each $N_i \in \cCo{1}$, we set $G'(N)=\coprod_i F_{N_i}$.
Let $f$ be a morphism in $\cCo{1}$ 
represented by the diagram
$N
\stackrel{p}{\twoheadleftarrow} 
N'' 
\stackrel{\iota}{\hookrightarrow} 
N'.
$
Let $S$ be a $\Q$-scheme and
let $\psi \in G'(N')(S)$.
As $\psi(\iota(\Ker p))$ is identified 
with $\mu_h \subset {\Gm}_{,S}$
where $h$ is some integer and $\mu_h$ is the group scheme of 
$h$-th roots of unity, we have an isomorphism
${\Gm}_{,S}/\psi(\iota(\Ker p)) 
\xto{h\text{-th} \cong} {\Gm}_{,S}$
where the isomorphism is the map induced by the 
$h$-th power map ${\Gm}_{,S} \to {\Gm}_{,S}$.
We set  
$\varphi\colon N_S \xto{\psi \circ \iota \circ p^{-1}}
{\Gm}_{,S}/\psi(\iota(\Ker p)) \cong {\Gm}_{,S}$.
Then $\varphi$ belongs to $G'(N)(S)$
 and this defines a morphism 
$G'(N') \xto{G'(f)} G'(N)$ 
of schemes.

We need to know that if $f$ is a Galois covering,
then $G'(f)$ is also a Galois covering (or a Galois extension of fields).
We showed in Lemma~\ref{lem:Gal} 
that any morphism $f\colon  \Z/n\Z \to 
\Z/n'\Z$ in $\cFCo{d}$ is a Galois covering.
(Of course, the extension of fields $\Q(\zeta_n) \supset \Q(\zeta_{n'})$
is a Galois extension.)
The Galois group for the covering $f$ is canonically 
isomorphic to $\Ker ((\Z/n'\Z)^\times 
\to (\Z/n\Z)^\times)$ 
as we showed in Lemma~\ref{lem:Gal d=1}.
One then can check directly that 
$G(f')$ gives a canonical isomorphism between the 
Galois group of $f$ and the 
Galois group of the field extension $G'(f)$.

\subsubsection{}
Now we construct a punctured distribution.
Let $R$ be the presheaf on $\cFCo{1}$ 
obtained by taking the 
first K-group of each section of $G'$
so that $R(N)=K_1(G'(N))=K_1(F_N)$ for 
$N \in \cCo{1}$.  As the $K_1$ of a field is its
multiplicative group, if $N=\Z/n\Z$ 
with $n \ge 2$,
then $R(N)$ is isomorphic to 
$\Q(\zeta_n)^\times$ where 
$\zeta_n$ is a primitive $n$-th root of unity.
We remarked in the previous section that $G'$ preserves
Galois coverings.  It then follows that $R$ is a sheaf.

Let 
$m_{\Z/n\Z, \Z/n'\Z}\colon  \Z/n' \Z
\stackrel{q}{\twoheadleftarrow} 
\Z/n\Z 
\stackrel{=}{\hookrightarrow} 
\Z/n\Z
$
be a morphism in $\cCo{1}$.
Here $q$ sends $1 \modx n$ 
to $1 \modx n'$.  
Take 
$b\in \Gamma(\cO_X, \Z/n'\Z)\setminus \{0\}$.
We take $0 \le b \le n'-1$ and regard it as an element of 
$\Z$ or of $\Z/n\Z$ by abuse of notation.

Let $\psi_{\Z/n\Z}\colon  (\Z/n\Z)_{F_{\Z/n\Z}} 
\to {\Gm}_{,F_{\Z/n\Z}}$ 
denote the universal object for 
$F_{\Z/n\Z}$.
Let $\wt{b} \in q^{-1}(b)$.
Then the restriction of $\psi_{\Z/n\Z}$ 
to the section
$F_{\Z/n\Z} \subset (\Z/n\Z)_{F_{\Z/n\Z}}$
corresponding to $\wt{b}$ gives an element
in $K_1(F_{\Z/n\Z})= R(\Z/n\Z)$. We denote
this $n$-th root of unity by $\psi_{\Z/n\Z}(\wt{b})$.

\subsubsection{}
We compute $m_{\Z/n\Z, \Z/n'\Z}^*(\psi_{\Z/n'\Z}(b))$.
\begin{lem}
We have 
$m_{\Z/n\Z, \Z/n'\Z}^*(\psi_{\Z/n'\Z}(b))
=\psi_{\Z/n\Z}(\wt{b})^{\frac{n}{n'}}$.
\end{lem}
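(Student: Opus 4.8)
The plan is to unwind the definition of $m_{\Z/n\Z,\Z/n'\Z}^*$ via the functor $G'$ and the first $K$-group, reducing the claim to a concrete identity about roots of unity in the cyclotomic field $F_{\Z/n\Z}$. First I would trace through the construction of $G'$ applied to the morphism $m_{\Z/n\Z,\Z/n'\Z}$, which is represented by the diagram $\Z/n'\Z \stackrel{q}{\twoheadleftarrow} \Z/n\Z \stackrel{=}{\hookrightarrow} \Z/n\Z$. By the recipe defining $G'(f)$, given a $\Q$-scheme $S$ and a monomorphism $\psi: (\Z/n'\Z)_S \to {\Gm}_{,S}$, the image $G'(m_{\Z/n\Z,\Z/n'\Z})(\psi)$ is obtained by the composite $(\Z/n\Z)_S \xto{\psi \circ \id \circ q^{-1}} {\Gm}_{,S}/\psi(\Ker q) \xto{h\text{-th}} {\Gm}_{,S}$, where here $h = n/n'$ since $\Ker[q:\Z/n\Z \to \Z/n'\Z]$ has order $n/n'$, so $\psi(\Ker q) \cong \mu_{n/n'}$. (Note the ``$\iota$'' in the general construction is the identity here, so there is no quotient-on-the-source subtlety.)

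Next I would apply this at $S = F_{\Z/n\Z}$ with $\psi$ the pullback to $F_{\Z/n\Z}$ of the universal object $\psi_{\Z/n'\Z}$ over $F_{\Z/n'\Z}$ along the map $F_{\Z/n\Z} \to F_{\Z/n'\Z}$ induced by $m_{\Z/n\Z,\Z/n'\Z}$. By universality, $G'(m_{\Z/n\Z,\Z/n'\Z})(\psi_{\Z/n\Z}) $ is (isomorphic to) this pullback $\psi$, i.e., the scheme map $F_{\Z/n\Z}\to F_{\Z/n'\Z}$ classifies $\psi_{\Z/n\Z}$ sent through the $G'$-recipe. Now I evaluate both sides at the element $b \in \Gamma(\cO_X,\Z/n'\Z)$: the element $\psi_{\Z/n'\Z}(b)$ is the value of the universal monomorphism at the section $b$, and $m_{\Z/n\Z,\Z/n'\Z}^*$ of it is, by the computation above, the value at $b$ of the composite ${\Gm}/\mu_{n/n'} \xto{(n/n')\text{-th}} {\Gm}$ applied to $\psi_{\Z/n\Z}(q^{-1}(b))$. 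Concretely, writing things multiplicatively: $q^{-1}(b) = \wt{b} \in \Z/n\Z$ maps under $\psi_{\Z/n\Z}$ to $\psi_{\Z/n\Z}(\wt{b})$, and the $(n/n')$-th power map carries this to $\psi_{\Z/n\Z}(\wt{b})^{n/n'}$. Since the identification ${\Gm}/\mu_{n/n'} \cong {\Gm}$ is precisely the $(n/n')$-th power isomorphism, and since $K_1$ of a field is just the multiplicative group with pullback being inclusion of subfields, this yields $m_{\Z/n\Z,\Z/n'\Z}^*(\psi_{\Z/n'\Z}(b)) = \psi_{\Z/n\Z}(\wt{b})^{n/n'}$, which is exactly the claim.

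I expect the main obstacle to be bookkeeping rather than conceptual: one must check carefully that the composite $\psi \circ p^{-1}$ followed by the quotient isomorphism in the definition of $G'(f)$ really does, on the level of the chosen section $b$ (and its lift $\wt{b}$), produce the $(n/n')$-th power of $\psi_{\Z/n\Z}(\wt{b})$, and in particular that the answer is independent of the choice of lift $\wt{b} \in q^{-1}(b)$ — which it is, because any two lifts differ by an element of $\Ker q = \mu_{n/n'}$ (inside ${\Gm}$), and that difference dies under the $(n/n')$-th power map. A secondary point to verify is that the identification $R = K_1 \circ G'$ is indeed functorial for the morphism $m_{\Z/n\Z,\Z/n'\Z}$ in the way used, i.e., that $m_{\Z/n\Z,\Z/n'\Z}^*$ on $R$ is induced by $G'(m_{\Z/n\Z,\Z/n'\Z})^*$ on $K_1$; this is immediate from the definition of $R$ as the presheaf obtained by applying $K_1$ sectionwise. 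Once these are settled the identity follows.
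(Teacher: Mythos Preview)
Your proof is correct and follows essentially the same approach as the paper: both identify the pullback of the universal object $\psi_{\Z/n'\Z}$ along $G'(m_{\Z/n\Z,\Z/n'\Z})$ with $G'(m_{\Z/n\Z,\Z/n'\Z})(\psi_{\Z/n\Z})$ via universality, then evaluate at $b$ using the explicit recipe $\psi_{\Z/n\Z}\circ q^{-1}$ followed by the $(n/n')$-th power map. One notational slip in your first paragraph: on $S$-points the functor $G'(m_{\Z/n\Z,\Z/n'\Z})$ takes a monomorphism $(\Z/n\Z)_S \to \Gm$ as input and produces one with source $(\Z/n'\Z)_S$ (you wrote it the other way around), but your second paragraph silently uses the correct direction and the computation goes through as written.
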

\begin{proof}
By definition, $m_{\Z/n\Z, \Z/n'\Z}^*(\psi_{\Z/n'\Z}(b))$
is given by the composite
$$
F_{\Z/n\Z} \inj (\Z/n'\Z)_{F_{\Z/n\Z}}
\xto{\psi} {\Gm}_{,F_{\Z/n\Z}}
$$
where the first morphism is the section corresponding to
$b$ and the second morphism $\psi$ is the base change of
$\psi_{\Z/n'\Z}$ with respect to 
$G'(m_{\Z/n\Z, \Z/n'\Z})\colon F_{\Z/n\Z} \to F_{\Z/n'\Z}$.
When regarded as an element
in $F_{\Z/n'\Z}(F_{\Z/n\Z})$, the morphism $\psi$
is the image under $G'(m_{\Z/n\Z,\Z/n'\Z})$ of 
the universal object 
$\psi_{\Z/n\Z} \in F_{\Z/n\Z}(F_{\Z/n\Z})$.

Hence it follows from the definition of 
$G'(m_{\Z/n\Z,\Z/n'\Z})$ that $\psi$ is equal to
the composite
$$
(\Z/n'\Z)_{F_{\Z/n\Z}}
\xto{\psi_{\Z/n\Z} \circ p^{-1}}
{\Gm}_{,F_{\Z/n\Z}}/\psi_{\Z/n\Z}(\Ker\, p)
\xto{\frac{n}{n'}} {\Gm}_{,F_{\Z/n\Z}}.
$$
Hence we have 
$m_{\Z/n\Z, \Z/n'\Z}^*(\psi_{\Z/n'\Z}(b))
=\psi_{\Z/n\Z}(\wt{b})^{\frac{n}{n'}}$.
\end{proof}

We construct a punctured distribution
$g\colon {\BS^*}' \to R$ as follows.
For $N\in \cCo{1}$, we set 
$g(N)\colon {\BS^*}'(N) \to 
\Z[\Gamma(\cO_X, N) \setminus \{0\}] \to K_1(F_N)=R(N)$
to be the map that sends 
$b\in \Gamma(\cO_X, N) \setminus \{0\}$ 
to 
$\psi_N(0) -\psi_N(b)=1-\psi_N(b)$.
This extends naturally to any object in $\cFCo{1}$.
We check that it is a morphism of presheaves.
Let $f\colon  N
\stackrel{p}{\twoheadleftarrow} 
N'' 
\stackrel{\iota}{\hookrightarrow} 
N'
$
be a morphism in $\cFCo{1}$.
We show that the diagram
\[
\begin{CD}
{\BS^*}'(N)    @>{g(N)}>>  R(N)
\\
@A{f^*}AA                    @A{f^*}AA
\\
{\BS^*}'(N')    @>{g(N')}>>  R(N')
\end{CD}
\]
is commutative.
We may assume that $f$ is a morphism in $\cCo{1}$.
We may further assume that either $\iota=\id$
or $p=\id$ since any morphism is a composite
of those two types of morphisms.
The commutativity in the case $p=\id$ is easy 
and is omitted.
Suppose $\iota=\id$.
We may assume 
$N=\Z/n\Z$ and $N'=\Z/n'\Z$ for some
$n, n' \in \Z$ 
and $p$ sends 1 to 1.

Then for $b \in \Gamma(\cO_X, \Z/n'\Z)\setminus \{0\}
\subset {\BS^*}'(\Z/n'\Z)$,
we have
\[
\begin{array}{lll}
g(\Z/n\Z)(f^*(b))
=g(\Z/n\Z)(\sum_{p(y)=b} y)\\
=\prod_{p(y)=b}(1-\psi_{\Z/n\Z}(y))
=\prod_{1 \le a \le n'}(1-\psi_{\Z/n\Z}
(\frac{n}{n'}a)\psi_{\Z/n\Z}(b))
\\
=1-\psi_{\Z/n\Z}(b)^{\frac{n}{n'}}
=f^*(1-\psi_{\Z/n'\Z}(b))
=f^*(g(\Z/n'\Z)(b)).
\end{array}
\]
At one point above, we used the identity
\[1-X^k=\prod_{1 \le a \le k}(1-\zeta_k^a X)\]
where $k \ge 1$ is an integer and $\zeta_k$ is 
a primitive $k$-th root of unity.
This proves that $g$ is a morphism of presheaves,
hence a punctured distribution.

\subsubsection{}
We now have Situation II.
Let us give the statement of the theorem in this case.
Suppose $n=n'p$ for some prime number $p$
such that $(n',p)=1$.
We defined $\kappa_{\Z/n\Z, b} \in R(\Z/n\Z)\cong \Q(\zeta_n)^\times$.
By definition, this is of the form $1-\psi_{\Z/n\Z}(b)$, 
that is, a cyclotomic unit.
Concerning these elements, we have the following corollary
of our main theorem.
\begin{cor}
We have
\[
m_{\Z/n\Z, \Z/n'\Z*}
\kappa_{\Z/n\Z,\wt{b}}
=(T_{[0]}-T_{[\Z/p\Z]}) 
\kappa_{\Z/n'\Z,b}.
\]
\end{cor}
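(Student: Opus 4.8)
The plan is to deduce the corollary directly from Theorem~\ref{main theorem}(2), applied with $X = \Spec\Z$ and $d = 1$ to the data assembled above. Concretely I would take $F_1 = F_1' = R$, equipped with its canonical structure of presheaf with transfers (Proposition~\ref{prop:transfer}, $R$ being a sheaf), $G = \bigoplus_m K_m(G'(-))$ as in the remark at the beginning of this section, $\alpha_1 \colon R = K_1(G'(-)) \hookrightarrow G$ the natural inclusion, $g_1' = g \colon {\BS^*}' \to \iota'^* R$ the punctured distribution constructed above, $N_1 = \Z/n\Z$, $b_1 = \wt b$, and $N_1' = \Z/n'\Z$ with the canonical surjection $q\colon \Z/n\Z \to \Z/n'\Z$; then $N_1'' = \Ker q \cong \Z/p\Z$. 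Since $X = \Spec\Z$ satisfies the standing finiteness hypotheses (Corollary~\ref{cor:finite}) and $g$ has already been checked to be a morphism of presheaves, we are genuinely in Situation II, so Theorem~\ref{main theorem}(2) applies once its support hypothesis is verified.

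Next I would verify that hypothesis and read off the combinatorial data. As $n = n'p$ with $(n',p) = 1$, the module $N_1'' \cong \Z/p\Z$ is supported exactly at $\wp = (p)$, and $p \mid n$ forces $\wp \in \Supp(\Z/n\Z) = \Supp(N_1)$; hence $\Supp(N_1'') \subset \{\wp\} \subset \Supp(N_1)$, as required. The integer $e$ of the theorem is the number of indices $i$ with $\wp \notin \Supp(N_i')$; since $(n',p) = 1$ gives $\wp \notin \Supp(\Z/n'\Z)$ and $d = 1$, we get $e = 1$. The residue field $\kappa(\wp) = \F_p$ has cardinality $q_\wp = p$ and is isomorphic, as an object of $\cCo{1}$, to $\Z/p\Z$, while $\kappa(\wp)^{\oplus 0}$ is the zero object of $\cCo{1}$.

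Finally I would substitute into the conclusion of Theorem~\ref{main theorem}(2). Using that the image $b_1'$ of $b_1 = \wt b$ under $q$ equals $b$, so that $\kappa_{\bN', (b_j')} = \kappa_{\Z/n'\Z, b}$, and that $\kappa_{\bN, (b_j)} = \pr_1^*\bigl(\alpha_1 g(\Z/n\Z)([\wt b])\bigr) = \kappa_{\Z/n\Z, \wt b}$ (since $\pr_1 = \id_{\Z/n\Z}$), the theorem gives
\[
(m_{\Z/n\Z,\Z/n'\Z})_* \kappa_{\Z/n\Z, \wt b}
= \sum_{r=0}^{1} (-1)^r p^{r(r-1)/2}\, T_{[\kappa(\wp)^{\oplus r}]}\, \kappa_{\Z/n'\Z, b}
= \bigl(T_{[0]} - T_{[\Z/p\Z]}\bigr)\,\kappa_{\Z/n'\Z, b},
\]
which is exactly the asserted identity. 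The only point requiring genuine care is the translation between the abstract data of Theorem~\ref{main theorem} and the concrete cyclotomic objects — in particular that $g$ really is a morphism of presheaves (established above via $1 - X^k = \prod_{1 \le a \le k}(1 - \zeta_k^a X)$), so that Situation II truly applies — together with the evaluation of the $q$-binomial coefficients appearing in the theorem, which in the case $e = 1$ degenerate to the two coefficients $1$ and $-1$; beyond this bookkeeping there is no obstacle.
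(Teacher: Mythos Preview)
Your proposal is correct and takes exactly the same approach as the paper, which simply says ``This follows from Theorem~\ref{main theorem}.'' Your version just spells out the bookkeeping (verifying Situation~II, computing $e=1$, and substituting $q_\wp = p$) that the paper leaves implicit.
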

\begin{proof}
This follows from Theorem~\ref{main theorem}.
\end{proof}
Below we compute explicitly the action of the Hecke operators 
that appeared above.
Of course, $T_{[0]}$ is the identity map.
\subsubsection{}
Let $N=\Z/n\Z\in \cCo{1}$ be of order prime to $p$.
Let 
$m_{N\oplus \Z/p\Z, N}\colon 
N
\stackrel{p}{\twoheadleftarrow} 
N \oplus \Z/ p \Z
\stackrel{=}{\hookrightarrow} 
N \oplus \Z/ p \Z$
and
$r_{N\oplus \Z/p\Z, N}\colon 
N
\stackrel{=}{\twoheadleftarrow} 
N 
\stackrel{\iota}{\hookrightarrow} 
N \oplus \Z/ p \Z$
where $p$ is the projection and 
$\iota(x)=(x,0)$.
\begin{lem}
\label{lem:4.2.3.1}
We have
\begin{enumerate}
\item
$m^*_{N\oplus \Z/p\Z,N}\kappa_{N,b}
=\kappa_{N\oplus \Z/p\Z, (pb,0)}$
\item
$r^*_{N \oplus \Z/p\Z, N} \kappa_{N,b}
=\kappa_{N \oplus \Z/p\Z, (b,0)}$
\end{enumerate}
\end{lem}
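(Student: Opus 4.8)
The plan is to compute both pullbacks directly from the moduli description $G'$ of the previous paragraphs together with the explicit recipe for $G'$ on morphisms. Recall that in the present $d=1$ situation $\kappa_{N,b}=g(N)([b])=1-\psi_N(b)\in R(N)=K_1(G'(N))$, where $\psi_N$ is the universal embedding $N_{G'(N)}\hookrightarrow\Gm_{,G'(N)}$ and $\psi_N(b)$ is the root of unity obtained by restricting $\psi_N$ along the section of $N_{G'(N)}$ indexed by $b$ (so in particular $b$ is understood to be a nonzero section, and the right-hand sides of (1) and (2) are defined because $\gcd(n,p)=1$). Since $R=K_1\circ G'$ and $G'$ is covariant while $K_1$ is contravariant on schemes, the map $m^*_{N\oplus\Z/p\Z,N}$ (resp.\ $r^*_{N\oplus\Z/p\Z,N}$) is $K_1$ applied to the morphism of schemes $G'(m_{N\oplus\Z/p\Z,N})$ (resp.\ $G'(r_{N\oplus\Z/p\Z,N})$); so it suffices to identify the pullback of $\psi_N$ along that morphism as an embedding over $G'(N\oplus\Z/p\Z)$, which by construction is exactly the embedding produced from the universal $\psi_{N\oplus\Z/p\Z}$ by the recipe defining $G'$ on morphisms.

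For assertion (1) I would feed the diagram $N\stackrel{\pr}{\twoheadleftarrow}N\oplus\Z/p\Z\stackrel{=}{\hookrightarrow}N\oplus\Z/p\Z$ representing $m_{N\oplus\Z/p\Z,N}$, with $\pr$ the projection onto the first factor, into that recipe: here $\Ker(\pr)=\{0\}\oplus\Z/p\Z$, so $\psi_{N\oplus\Z/p\Z}(\{0\}\oplus\Z/p\Z)=\mu_p$, the induced isomorphism $\Gm/\mu_p\cong\Gm$ is the $p$-th power map, and one reads off that $G'(m_{N\oplus\Z/p\Z,N})^*\psi_N$ is the embedding $x\mapsto\psi_{N\oplus\Z/p\Z}((x,0))^p$. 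Evaluating the induced map on $K_1$ at the section $b$ and using that $\psi_{N\oplus\Z/p\Z}$ is a homomorphism of group schemes gives $m^*_{N\oplus\Z/p\Z,N}\psi_N(b)=\psi_{N\oplus\Z/p\Z}((b,0))^p=\psi_{N\oplus\Z/p\Z}((pb,0))$, whence $m^*_{N\oplus\Z/p\Z,N}\kappa_{N,b}=1-\psi_{N\oplus\Z/p\Z}((pb,0))=\kappa_{N\oplus\Z/p\Z,(pb,0)}$.

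Assertion (2) is the same computation but simpler: the diagram $N\stackrel{=}{\twoheadleftarrow}N\stackrel{\iota}{\hookrightarrow}N\oplus\Z/p\Z$ with $\iota(x)=(x,0)$ representing $r_{N\oplus\Z/p\Z,N}$ has trivial kernel, so no $p$-th power map intervenes, $G'(r_{N\oplus\Z/p\Z,N})^*\psi_N$ is the embedding $x\mapsto\psi_{N\oplus\Z/p\Z}((x,0))$, and therefore $r^*_{N\oplus\Z/p\Z,N}\kappa_{N,b}=1-\psi_{N\oplus\Z/p\Z}((b,0))=\kappa_{N\oplus\Z/p\Z,(b,0)}$.

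I do not expect a genuine obstacle; the proof is essentially a bookkeeping exercise. The two points that need attention are the direction conventions (morphisms of $\cCo{1}$ run opposite to Quillen's and $K_1$ is contravariant, so that $m^*_{N\oplus\Z/p\Z,N}$ and $r^*_{N\oplus\Z/p\Z,N}$ genuinely land in $R(N\oplus\Z/p\Z)$) and the role of the hypothesis that $n$ is prime to $p$, which makes $N\oplus\Z/p\Z$ cyclic, hence an object of $\cCo{1}$, and makes $p$ invertible modulo $n$, so that $(pb,0)$ is a nonzero element of the same type whenever $b$ is; these are exactly the identities used in the subsequent evaluation of the Hecke operator $T_{[\Z/p\Z]}$ on $\kappa_{\Z/n'\Z,b}$.
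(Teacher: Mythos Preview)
Your argument is correct and follows the same route as the paper: both compute the pullback of the universal embedding $\psi_N$ along $G'(m_{N\oplus\Z/p\Z,N})$ and $G'(r_{N\oplus\Z/p\Z,N})$ using the moduli recipe for $G'$ on morphisms, then specialize at the section $b$. Your write-up is in fact a bit more explicit than the paper's (you spell out the step $\psi_{N\oplus\Z/p\Z}((b,0))^p=\psi_{N\oplus\Z/p\Z}((pb,0))$ via the homomorphism property, and you record why the coprimality hypothesis is needed), but there is no substantive difference in method.
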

\begin{proof}
Let $\psi_N$
denote the universal 
object for $F_N$.
Its pullback by 
$G'(m_{N\oplus \Z/p\Z, N})$ then equals
\[
\psi\colon N_{F_{N\oplus \Z/p\Z}}
\xto{\psi_{N\oplus \Z/p\Z}\circ p^{-1}}
{\Gm}_{,F_{N\oplus \Z/p\Z}}/\psi_{N\oplus \Z/p\Z}(\Ker\, p)
\xto{\cong \text{p-th}}
{\Gm}_{,F_{N\oplus \Z/p\Z}}.
\]
Hence 
$m_{N\oplus \Z/p\Z, N}^*
\psi_N(b)
=\psi_{N\oplus \Z/p\Z}(p,0)$
and the claim (1) follows.

Similarly, the pullback by 
$G'(r_{N \oplus \Z/p\Z,N})$
of the universal object 
equals 
$\psi_{N\oplus \Z/p\Z}\circ \iota$.
Hence 
$r^*_{N\oplus \Z/p\Z}
\psi_N(b)
=\psi_{N\oplus \Z/p\Z}(b,0)$
and the claim (2) follows.
\end{proof}

\subsubsection{}
Now let us compute the action of the Hecke operators.
We have seen that $R$ is a sheaf.
Then by Lemma~\ref{lem:transfer_nonGal},
the map $m_{\Z/n\Z\oplus \Z/p\Z, \Z/n\Z*}$
is the composite
\[
R(N \oplus \Z/p\Z) \xto{\varphi}
R(N\oplus \Z/p\Z)^H
\xto{\overline{m}_{N\oplus \Z/p\Z,N}^{*-1}}
R(N)
\]
where $H=\Gal(m_{N\oplus \Z/p, N})$ is the 
Galois group and 
$\varphi(x)=\sum_{\sigma\in H}\sigma^*(x)$.
The definition of the Hecke operator tells us 
to identify
$R(N\oplus \Z/p)^H$
and $R(N\oplus [\Z/p])$.
Then we see that 
$m_{N\oplus[\Z/p\Z],N*}
=(\overline{m}_{N\oplus \Z/p\Z, N}^*)^{-1}$.

\begin{cor}
\label{cor:4.2.4.1}
Let $bp^{-1}\in \Z/n\Z$ denote the element 
such that $p (bp^{-1})=b \in \Z/n\Z$.
We have
\[
T_{[\Z/p\Z]} \kappa_{N,b}=\kappa_{N,bp^{-1}}.
\]
\end{cor}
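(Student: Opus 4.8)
The plan is to unwind the definition of $T_{[\Z/p\Z]}$ and then feed in the two formulas of Lemma~\ref{lem:4.2.3.1} together with the identification $(m_{N\oplus[\Z/p\Z],N})_* = (\overline{m}_{N\oplus\Z/p\Z,N}^*)^{-1}$ established just before the corollary. Recall that $T_{[\Z/p\Z]} = (m_{N\oplus[\Z/p\Z],N})_*\circ r_{N\oplus[\Z/p\Z],N}^*$ acting on $R(\quotid{N}) = R(N)$, that $N\oplus[\Z/p\Z]$ is the object $\quot{(N\oplus\Z/p\Z)}{H}$ of $\cFCo{1}$ with $H = \{\id_N\}\times\Aut_{\cO_X}(\Z/p\Z)$, and that, writing $c\colon N\oplus\Z/p\Z\to \quot{(N\oplus\Z/p\Z)}{H}$ for the canonical quotient morphism, one has $r_{N\oplus[\Z/p\Z],N}\circ c = r_{N\oplus\Z/p\Z,N}$ and $m_{N\oplus[\Z/p\Z],N}\circ c = m_{N\oplus\Z/p\Z,N}$ in $\cFCo{1}$, where on the right the morphisms are those of $\cCo{1}$ appearing in Lemma~\ref{lem:4.2.3.1}. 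Since $N$ has order prime to $p$, one checks (as in Section~\ref{sec:4_0_d=1}) that $\Gal(m_{N\oplus\Z/p\Z,N}) = H$. Because $R$ is a sheaf and $c$ is a Galois covering with group $H$ (Lemma~\ref{lem:Galois2}), the pullback $c^*\colon R(\quot{(N\oplus\Z/p\Z)}{H})\to R(N\oplus\Z/p\Z)$ is injective with image the $H$-invariants; this is what lets me compute elements of $R(N\oplus[\Z/p\Z])$ via their images under $c^*$.

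First I would compute $r_{N\oplus[\Z/p\Z],N}^*\kappa_{N,b}$. Applying $c^*$ and using $r_{N\oplus[\Z/p\Z],N}\circ c = r_{N\oplus\Z/p\Z,N}$ together with Lemma~\ref{lem:4.2.3.1}(2) gives $c^*r_{N\oplus[\Z/p\Z],N}^*\kappa_{N,b} = r_{N\oplus\Z/p\Z,N}^*\kappa_{N,b} = \kappa_{N\oplus\Z/p\Z,(b,0)}$. One sees directly that $\kappa_{N\oplus\Z/p\Z,(b,0)} = 1-\psi_{N\oplus\Z/p\Z}((b,0))$ is $H$-invariant, since $a\in\Aut_{\cO_X}(\Z/p\Z)$ acts on the universal section through $(\id_N\times a)^{\pm 1}$, which fixes $(b,0)$. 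Hence, by injectivity of $c^*$, the element $r_{N\oplus[\Z/p\Z],N}^*\kappa_{N,b}$ of $R(N\oplus[\Z/p\Z]) = R(N\oplus\Z/p\Z)^H$ is $\kappa_{N\oplus\Z/p\Z,(b,0)}$.

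Next I would apply $(m_{N\oplus[\Z/p\Z],N})_*$, which equals $(\overline{m}_{N\oplus\Z/p\Z,N}^*)^{-1}$ under $R(N\oplus[\Z/p\Z]) = R(N\oplus\Z/p\Z)^H$, where $\overline{m}_{N\oplus\Z/p\Z,N}$ is characterised by $m_{N\oplus\Z/p\Z,N} = \overline{m}_{N\oplus\Z/p\Z,N}\circ c$ (and is an isomorphism of degree $1$, so Lemma~\ref{lem:transfer_isom} applies). Thus it suffices to check $\overline{m}_{N\oplus\Z/p\Z,N}^*\kappa_{N,bp^{-1}} = \kappa_{N\oplus\Z/p\Z,(b,0)}$, i.e., after applying $c^*$, that $m_{N\oplus\Z/p\Z,N}^*\kappa_{N,bp^{-1}} = \kappa_{N\oplus\Z/p\Z,(b,0)}$. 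This is immediate from Lemma~\ref{lem:4.2.3.1}(1) with $bp^{-1}$ in place of $b$, since $p\cdot(bp^{-1}) = b$ in $\Z/n\Z$; note that $bp^{-1}$ is well defined, and non-zero so that $\kappa_{N,bp^{-1}}$ makes sense, because $p$ is prime to $n$ and $b\neq 0$. Combining the two steps yields $T_{[\Z/p\Z]}\kappa_{N,b} = (\overline{m}_{N\oplus\Z/p\Z,N}^*)^{-1}\kappa_{N\oplus\Z/p\Z,(b,0)} = \kappa_{N,bp^{-1}}$.

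I do not expect a genuine obstacle here; the only delicate point is the bookkeeping between the morphisms of $\cCo{1}$ in Lemma~\ref{lem:4.2.3.1} and their images in $\cFCo{1}$ after passing to the quotient by $H$ — in particular verifying that $\kappa_{N\oplus\Z/p\Z,(b,0)}$ is $H$-invariant, so that it really defines a section over $N\oplus[\Z/p\Z] = \quot{(N\oplus\Z/p\Z)}{H}$, and that the two structural morphisms $r_{N\oplus[\Z/p\Z],N}$, $m_{N\oplus[\Z/p\Z],N}$ descend as stated along $c$. As a consistency check one can translate the result into classical language: together with the preceding corollary it asserts that the norm from $\Q(\zeta_{np})^\times$ to $\Q(\zeta_n)^\times$ of the cyclotomic unit $1-\psi_{N\oplus\Z/p\Z}((b,0))$ equals $(1-\psi_N(b))(1-\psi_N(bp^{-1}))^{-1}$, which is the usual Frobenius-at-$p$ form of the norm relation for cyclotomic units.
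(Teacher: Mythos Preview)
Your proposal is correct and follows essentially the same approach as the paper: unwind $T_{[\Z/p\Z]}=(m_{N\oplus[\Z/p\Z],N})_*\circ r_{N\oplus[\Z/p\Z],N}^*$, apply Lemma~\ref{lem:4.2.3.1}(2) for the pullback step and Lemma~\ref{lem:4.2.3.1}(1) together with the identification $(m_{N\oplus[\Z/p\Z],N})_*=(\overline{m}_{N\oplus\Z/p\Z,N}^*)^{-1}$ from the preceding discussion for the pushforward step. The paper's proof is just a two-line pointer to these ingredients; you have written out the bookkeeping (the descent along $c$ and the $H$-invariance of $\kappa_{N\oplus\Z/p\Z,(b,0)}$) that the paper leaves implicit.
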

\begin{proof}
By definition, 
$T_{[\Z/p\Z]}=m_{\Z/n\Z\oplus [\Z/p\Z]*}
r^*_{\Z/n\Z\oplus [\Z/p\Z]}$.
The two maps on the right-hand side can be computed using
Lemma~\ref{lem:4.2.3.1} and the discussion preceding this 
corollary.
\end{proof}
In this way we have avoided making the choice of roots of 
unity and yet have stated the norm relations.

\section{Hecke operators of double cosets}
\label{sec:Hecke double}
In this section, we show how our
Hecke operators defined in Section~\ref{sec:def Hecke}
coincide with the usual Hecke operators
defined using double cosets.

\subsection{}
\label{sec:Hecke comparison}
Let us consider the following setup.
Let $K$ be a nonarchimedean local field
and $\cO \subset K$ be the ring of integers.
Fix a positive integer $d$. 
Let $\bK=\GL_d(\cO) \subset \GL_d(K)$.
Let $\cH=\cH(\GL_d(K), \bK)$ denote the 
Hecke algebra of bi-$\bK$-invariant $\Z$-valued 
functions on $\GL_d(K)$.

Let $X=\Spec \cO$.
Let $F$ be an abelian sheaf on $\cFCot{d}$.
Set $V=\omega(F)$ so that it is a $\GL_d(K)$-module.
We have an action of $\cH$ on $V^\bK$.
Let $\varpi \in \cO$ be a uniformizer.
For $0\le r \le d$,
let 
$g_r=\mathrm{diag}(\varpi^{-1}, \dots, \varpi^{-1}, 1, \dots, 1)
\in \GL_d(K)$ 
denote the diagonal matrix 
with $\varpi^{-1}$ appearing
$r$ times and $1$ appearing $d-r$-times.
We let $T_{g_r} \in \cH$ denote the Hecke operator 
corresponding to (the characteristic function of)
the double coset $[\bK g_r \bK]$.
(This does not depend on the choice of the uniformizer.)

\subsubsection{}
Let $L_1 \subset L_2 \subset K^{\oplus d}$
be $\cO$-lattices.
Let $H \subset \Aut_{\cO}(L_2/L_1) \cong \Aut_{\cC^d}(L_2/L_1)$
be a subgroup.
Then by Proposition~\ref{prop:old cat equiv},
we have an isomorphism
$c_{L_1,L_2,H}\colon F((L_2/L_1)/H) \xto{\sim} V^{\bK_{L_1,L_2,H}}$.
We simply write $c_{L_1,L_2}=c_{L_1,L_2,\{e\}}$ when the group $H$ 
is the trivial group $\{e\}$.

\subsubsection{}
Set $L_0=\cO^{\oplus d}$.
We let $\varphi\colon  L_0/L_0 \to 0$ denote the morphism
in $\cFCo{d}$ represented by the diagram
$$
0 {\twoheadleftarrow} 0 
\inj L_0/L_0.
$$
(Remark: Of course $L_0/L_0 \cong 0$ but 
it is important to note explicitly the choice of the 
two lattices (in this case $L_0$ and $L_0$),
when considering the comparison isomorphisms
such as $c_{L_0, L_0}$.)  

Let $\wp \subset \cO$ be the maximal ideal and 
let $\kappa(\wp)=\cO/\wp$ be the residue field.
Recall that we defined Hecke operators $T_{[\kappa(\wp)^{\oplus r}]}$
for each 
$0 \le r \le d$.
It acts on $F(0)$.

We remark here that $\bK_{L_0,L_0}=\GL_d(\cO)$ by definition.
Hence $T_{g_r}$ acts on $V^{\bK_{L_0, L_0}}$.

We are ready to state the compatibility statement.
\begin{prop}
\label{prop:Hecke compatibility}
For each $0 \le r \le d$, the following diagram
\[
\begin{CD}
F(0)  @>{c_{L_0, L_0} \circ \varphi^*}>>  V^{\bK_{L_0, L_0}}
\\
@V{T_{[\kappa(\wp)^{\oplus r}]}}VV     @VV{T_{g_r}}V
\\
F(0)  @>{c_{L_0, L_0} \circ \varphi^*}>>    V^{\bK_{L_0, L_0}}
\end{CD}
\]
is commutative.
\end{prop}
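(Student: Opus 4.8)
The plan is to transport everything to the category of smooth $\GL_d(K)$-modules via the fibre functor $\omega=\omega_{(\cC_0^d,\iota_0^d)}$. By Theorem~\ref{cor:M_isom} and Corollary~\ref{cor:main_section3} (here $X=\Spec\cO$, so $\A_X=K$ and the relevant monoid is $\GL_d(K)$, and the site is $(\cC^d,J^d)$), $\omega$ gives an equivalence from $\Shv(\cC^d,J^d)$, hence from abelian sheaves on $\cFCot{d}$, to smooth $\GL_d(K)$-modules, with $V=\omega(F)$. First I would record how the transfer structure on a sheaf translates: for a sheaf $G$ of abelian groups one has $G(E)\cong\Hom_{\GL_d(K)}(\omega(E),\omega(G))$ naturally (a consequence of the equivalence together with Proposition~\ref{prop:old cat equiv}, extended to all of $\cFCot{d}$ since its objects are finite coproducts of quotient objects), so $f^{*}$ becomes precomposition with $\omega(f)$, while for a fibration $f$ the transfer $f_{*}$ becomes the fibrewise-sum map $\psi\mapsto\bigl(x\mapsto\sum_{y\in\omega(f)^{-1}(x)}\psi(y)\bigr)$. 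The last point I would justify by noting that $\omega(f)$ is surjective with finite fibres of cardinality $\deg f$ (Corollary~\ref{cor:deg_omega_F}, Lemma~\ref{lem:deg_omega}), that fibrewise-sum satisfies the three axioms of Definition~\ref{def:transfer}, and that the transfer structure on a sheaf is unique by Proposition~\ref{prop:transfer}; for the morphisms at hand it also drops out of Lemma~\ref{lem:transfer_nonGal} after passing to a Galois covering. Consequently, writing $N=\kappa(\wp)^{\oplus r}$ and identifying $v\in V^{\bK}$ with the $\GL_d(K)$-map $\tilde v\colon\GL_d(K)/\bK\to V$, $\tilde v([g\bK])=gv$, the operator $T_{[N]}=(m_{[N],0})_{*}r_{[N],0}^{*}$ evaluated at the base point reads
\[
T_{[\kappa(\wp)^{\oplus r}]}(v)=\sum_{z\in\omega(m_{[N],0})^{-1}([\bK])}\tilde v\bigl(\omega(r_{[N],0})(z)\bigr).
\]

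The heart of the proof is the computation of the $\GL_d(K)$-equivariant maps $\omega(r_{[N],0}),\omega(m_{[N],0})\colon\omega([N])\to\omega(0)=\GL_d(K)/\bK$. I would realise $[N]=\quot{\kappa(\wp)^{\oplus r}}{\Aut_{\cO}(\kappa(\wp)^{\oplus r})}$ through the lattice pair $(L_1,L_0)$ with $L_1=\wp^{\oplus r}\oplus\cO^{\oplus(d-r)}$; since vectors are rows and $\GL_d(K)$ acts on the right, $L_1=L_0g_r^{-1}$, so $\mathrm{Stab}(L_1)=g_r\bK g_r^{-1}$, and as $\Aut_{\cO}(L_0/L_1)=\GL_r(\kappa(\wp))$ is the full automorphism group, Proposition~\ref{prop:old cat equiv}(1) identifies $\omega([N])$ with $\GL_d(K)/\bK_1$, $\bK_1=\bK_{L_1,L_0,\GL_r(\kappa(\wp))}=\bK\cap g_r\bK g_r^{-1}$, the base point going to $[\bK_1]$. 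Now $r_{[N],0}$ and $m_{[N],0}$ are the descents of the cofibration and the fibration $\kappa(\wp)^{\oplus r}\to 0$ in $\cC^d$, and unwinding the definition of $\iota_0^d$ shows these are $\iota_0^d$ applied to the $\cC_0^d$-morphisms $(L_1,L_0)\to(L_1,L_1)$ and $(L_1,L_0)\to(L_0,L_0)$ respectively. Tracing the base point of $\omega([N])$ through the colimit defining $\omega$, I expect to obtain: for the fibration, $\omega(m_{[N],0})$ carries the base point to the base point---its target coordinatisation of $0$ being $(L_0,L_0)$---so $\omega(m_{[N],0})([g\bK_1])=[g\bK]$ is the natural projection; for the cofibration, $\omega(r_{[N],0})([\bK_1])$ is the base point of $\omega(0)$ computed in the coordinatisation $(L_1,L_1)$, and since $L_1=L_0g_r^{-1}$ the grid automorphism by $g_r$ carries the $(L_0,L_0)$-coordinatisation to the $(L_1,L_1)$-coordinatisation (the twist on the zero module being trivial), whence $\omega(r_{[N],0})([g\bK_1])=[gg_r\bK]$. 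Establishing these two identities cleanly from Proposition~\ref{prop:old cat equiv} and the colimit presentation of $\omega$---above all pinning down that the cofibration leg acquires exactly the translation by $g_r$, with no spurious central factor---is the step I expect to be the main obstacle, since it requires keeping the left/right conventions in $\bK_{L_1,L_2,H}$, in the grid action, and in $g_r=\diag(\varpi^{-1},\dots,\varpi^{-1},1,\dots,1)$ mutually consistent.

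Granting the two identities, I would finish with a standard $p$-adic computation. From the displayed formula, $\omega(m_{[N],0})^{-1}([\bK])=\{[u\bK_1]:u\in\bK\}$, parametrised by the left cosets $\bK=\coprod_i u_i\bK_1$ (finitely many, as $\bK_1$ is open), and for $z=[u_i\bK_1]$ one has $\tilde v(\omega(r_{[N],0})(z))=\tilde v([u_ig_r\bK])=u_ig_rv$, so $T_{[\kappa(\wp)^{\oplus r}]}(v)=\sum_i u_ig_rv$. Since $\bK_1=\bK\cap g_r\bK g_r^{-1}$, the cosets $u_ig_r\bK$ for $u_i$ ranging over $\bK/\bK_1$ are pairwise distinct and exhaust the left $\bK$-cosets in $\bK g_r\bK$, so $\sum_i u_ig_rv=T_{g_r}v$ by the single-coset description of the double-coset operator. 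Finally I would note that the isomorphism $F(0)\cong V^{\bK}$ used throughout is precisely $c_{L_0,L_0}\circ\varphi^{*}$, which is immediate from the base-point normalisation in Proposition~\ref{prop:old cat equiv} (the morphism $\varphi$ serving only to pass between the object $0$ and its coordinatisation $L_0/L_0$). This closing paragraph is routine and parallels the rank-one computation of Corollary~\ref{cor:4.2.4.1}.
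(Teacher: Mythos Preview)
Your proposal is correct, and the final coset computation matches the paper's exactly once unwound; the route, however, is genuinely different from the paper's. The paper does not transport the Hecke operator through $\omega$ to coset spaces. Instead it works directly with the sections $F(-)$ and the comparison isomorphisms $c_{L_1,L_2,H}$: it first proves three compatibility lemmas (\ref{lem:c_pullback}, \ref{lem:c_Rg}, \ref{lem:c_pushforward}) showing that the $c$-isomorphisms intertwine pullback with inclusion $V^{\bK'}\hookrightarrow V^{\bK''}$ and transfer with the trace map; then it realises $[\kappa(\wp)^{\oplus r}]$ via the pair $(L_0,L_0g_r)$ (not your $(L_0g_r^{-1},L_0)$), so that the fibration $\overline{m}$ lands in $L_0g_r/L_0g_r$ rather than in $L_0/L_0$, and introduces an explicit reparametrisation morphism $R_g\colon L_0/L_0\to L_0g/L_0g$ whose pullback $R_g^*$ matches the map $v\mapsto gv$ in the adelic picture. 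Two big commutative diagrams---one comparing the row $\overline{r}^*,\overline{m}_*,R_g^*$ with the classical decomposition $V^{\bK}\to V^{\bK\cap g^{-1}\bK g}\xrightarrow{\mathrm{tr}}V^{g^{-1}\bK g}\xrightarrow{g\cdot}V^{\bK}$ of $T_{g_r}$, the other identifying that row with $T_{[\kappa(\wp)^{\oplus r}]}$ via $\varphi^*$---are then chased. Your approach collapses this into a single computation on $\GL_d(K)$-sets: the twist that the paper isolates in $R_g$ appears for you as the translation by $g_r$ in $\omega(r_{[N],0})$, coming from the grid automorphism $\alpha_{g_r}$ sending $(L_0,L_0)$ to $(L_1,L_1)$. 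That argument is sound (and your caution about conventions is warranted but resolves as you predict: the $(L_1,L_1)$-basepoint of $\omega(0)$ is $g_r\cdot[\bK]$ in the $(L_0,L_0)$-normalisation). What your approach buys is directness and a cleaner endgame via the single-coset decomposition $\bK g_r\bK=\coprod_{u\in\bK/(\bK\cap g_r\bK g_r^{-1})}ug_r\bK$; what the paper's buys is modularity---the three compatibility lemmas are stated for arbitrary lattice pairs and are reusable---and an argument that never leaves the level of sections $F(-)$.
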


\subsection{}
We first prove a series of lemmas concerning compatibilities
of the isomorphisms $c_{L_1, L_2, H}$
with respect to pullbacks and 
the $\GL_d(K)$-action.
Then we come back to the proof of Proposition later in 
Section~\ref{sec:Hecke compatibility}.
\begin{lem}
\label{lem:c_pullback}
Let $L_1 \subset L_2 \subset K^{\oplus d}$
and $H \subset \Aut_{\cC^d}(L_2/L_1)$
be a subgroup.
Then the diagram
\[
\begin{CD}
F((L_2/L_1)/H)   @>{(1)}>>   F(L_2/L_1)      \\
@V{\cong}V{c_{L_1,L_2,H}}V       @V{\cong}V{c_{L_1, L_2}}V  \\
V^{\bK_{L_1,L_2,H}}    @>{(2)}>>   V^{\bK_{L_1,L_2}}
\end{CD}
\]
is commutative.
Here  the map (1) is the pullback by the canonical quotient map
$L_2/L_1 \to (L_2/L_1)/H$.
The map (2) is the canonical inclusion (induced by the identity map on $V$).
\end{lem}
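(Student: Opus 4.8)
The statement to prove is Lemma~\ref{lem:c_pullback}, which asserts the compatibility of the comparison isomorphisms $c_{L_1,L_2,H}$ and $c_{L_1,L_2}$ with the pullback along the canonical quotient morphism $q\colon L_2/L_1 \to (L_2/L_1)/H$ on the geometric side and the inclusion $V^{\bK_{L_1,L_2,H}} \hookrightarrow V^{\bK_{L_1,L_2}}$ on the $\GL_d(K)$-side. The whole point is that both $c$-maps are concrete incarnations of the same functor $\omega$ applied to different presheaves, so the lemma should really be a bookkeeping exercise unwinding the definitions from Proposition~\ref{prop:old cat equiv} (and the references to \cite{Grids} therein).

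\textbf{Step 1: reduce to a statement about $\omega$.} First I would recall that, by Proposition~\ref{prop:old cat equiv}(2), the isomorphism $c_{L_1,L_2,H}\colon F((L_2/L_1)/H) \xrightarrow{\cong} V^{\bK_{L_1,L_2,H}}$ is by construction the composite
\[
F((L_2/L_1)/H) \cong \Hom_{\cFCot{d}}(H\backslash(L_2/L_1), F^a) \to \omega(F^a) \cong \omega(F) = V,
\]
whose image is exactly $V^{\bK_{L_1,L_2,H}}$; and similarly for $c_{L_1,L_2}$ with $H=\{e\}$. Under the Yoneda-type identifications, pulling back a morphism $H\backslash(L_2/L_1)\to F^a$ along $q$ corresponds precisely to the natural map $F((L_2/L_1)/H)\to F(L_2/L_1)$, i.e.\ the map (1). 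So the task is to check that the induced map on the $\omega$-images is the inclusion $V^{\bK_{L_1,L_2,H}}\hookrightarrow V^{\bK_{L_1,L_2}}$, i.e.\ the map (2).

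\textbf{Step 2: identify the two $\omega$-images as submodules of the same $V$.} The key observation is that $q\colon L_2/L_1 \to (L_2/L_1)/H$ is a morphism in $\cFCot{d}$ whose image under $\omega$ is the canonical $\GL_d(K)$-equivariant surjection
\[
\GL_d(K)/\bK_{L_1,L_2} \twoheadrightarrow \GL_d(K)/\bK_{L_1,L_2,H},
\]
given by the equivariant isomorphisms of Proposition~\ref{prop:old cat equiv}(1) (both for $H$ and for $\{e\}$), since $\bK_{L_1,L_2}\subset \bK_{L_1,L_2,H}$ and $q$ respects the distinguished base sections. Dualizing, a morphism $H\backslash(L_2/L_1)\to F^a$ evaluated at the base point lands in $V^{\bK_{L_1,L_2,H}}$, and its pullback along $q$ is the same element of $V$ but now viewed as fixed by the smaller group $\bK_{L_1,L_2}$; that is exactly what the inclusion $V^{\bK_{L_1,L_2,H}}\hookrightarrow V^{\bK_{L_1,L_2}}$ does. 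Assembling these identifications gives the commutativity of the square.

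\textbf{Main obstacle.} There is no serious mathematical difficulty here; the genuine work is notational. The delicate point is keeping the two lattices $L_1,L_2$ (and hence the chosen base sections and the compact open subgroups $\bK_{L_1,L_2,H}$) rigidly fixed while passing between the presheaf side and the $M$-set side, so that one does not accidentally conjugate $\bK$ by an element of $\GL_d(K)$. Concretely, I expect the one step that needs care is verifying that the equivariant isomorphism of Proposition~\ref{prop:old cat equiv}(1) for $H$ is compatible with the one for $\{e\}$ via the obvious projection of coset spaces — this requires checking that the element $\id_{L_2/L_1}\in H\backslash\Hom(L_2/L_1,L_2/L_1)$ maps to the class of the identity matrix under both, which is built into the normalization in Proposition~\ref{prop:old cat equiv}. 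Once that compatibility is in hand, the lemma follows by diagram-chasing through the functor $\omega$ and the construction of the $c$-maps, with no further computation.
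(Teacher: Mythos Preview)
Your proposal is correct and follows exactly the approach the paper intends: the paper's proof is the single line ``This follows from the construction of the isomorphisms $c_{L_1,L_2}$ and $c_{L_1,L_2,H}$ given in Proposition~\ref{prop:old cat equiv},'' and your write-up is precisely the unpacking of that construction via the normalization at $\id_{L_2/L_1}$ in Proposition~\ref{prop:old cat equiv}(1)--(2). No further ingredients are needed.
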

\begin{proof}
This follows from the construction of the isomorphisms 
$c_{L_1,L_2}$ and $c_{L_1, L_2, H}$ given in 
Proposition~\ref{prop:old cat equiv}.
\end{proof}

\begin{lem}
\label{lem:c_Rg}
Let $L_1' \subset L_1 \subset L_2 \subset L_2'$
be $\cO$-lattices.
Then the diagram
\[
\begin{CD}
F(L_2/L_1)   @>{(1)}>>   F(L_2'/L_1')   \\
@V{c_{L_1, L_2}}VV      @VV{c_{L_1', L_2'}}V\\
V^{\bK_{L_1, L_2}}    @>{(2)}>>   V^{\bK_{L_1', L_2'}}
\end{CD}
\]
is commutative.
Here the map (1) is the pullback by
$L_2/L_1   \twoheadleftarrow L_2/L_1'  \inj L_2'/L_1'$.
The map (2) is the inclusion induced by the identity map on $V$.
\end{lem}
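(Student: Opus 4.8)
\textbf{Proof plan for Lemma \ref{lem:c_Rg}.} The plan is to reduce the statement to the compatibility of the comparison isomorphisms $c_{L_1,L_2}$ with the two basic types of morphisms in $\cFCo{d}$ — \cofibrs and \fibrs — since the morphism (1) is represented by the diagram $L_2/L_1 \twoheadleftarrow L_2/L_1' \hookrightarrow L_2'/L_1'$, which factors as the composite of the \cofibr $r_{L_2'/L_1', L_2/L_1'}$ with (the inverse direction of) a \fibr. More precisely, I would first rewrite the morphism (1) as the composition of the morphism $L_2/L_1 \to L_2/L_1'$ coming from $L_1' \subset L_1$ (which is a \fibr in the sense of Lemma \ref{lem:pre_model}) with the morphism $L_2/L_1' \to L_2'/L_1'$ coming from $L_2/L_1' \subset L_2'/L_1'$ (a \cofibr). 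By functoriality of $\omega$ and of the assignment $(L_1,L_2)\mapsto \bK_{L_1,L_2}$, it then suffices to treat each of these two elementary cases separately.

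For the elementary cases, I would unfold the explicit description of $c_{L_1,L_2}$ from the proof of Proposition \ref{prop:old cat equiv}: there the isomorphism $\omega(H\backslash(L_2/L_1)) \cong \GL_d(\A_X)/\bK_{L_1,L_2,H}$ is pinned down by sending the class of $\id_{L_2/L_1}$ to the class of the identity matrix. Thus, for the \fibr (resp.\ \cofibr) case, one needs only to check that the relevant morphism in $\cC^d$, together with the inclusion or quotient of subgroups $\bK_{L_1,L_2} \subset \bK_{L_1',L_2'}$ (resp.\ the reverse), is compatible with the distinguished base points; and this is a direct computation with lattices since $\bK_{L_1',L_2'}$ is manifestly contained in $\bK_{L_1,L_2}$ whenever $L_1' \subset L_1 \subset L_2 \subset L_2'$, the containment of subgroups being exactly what makes the inclusion map (2) well-defined. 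The key computational input is that the right action of $\GL_d(\A_X)$ on lattices intertwines the pullback map $F(f)$ for a \cofibr or \fibr $f$ with the natural $M$-equivariant structure maps on $\omega$, which is what Section 4.4.4 of \cite{Grids} records; I would cite this rather than reprove it.

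I expect the main obstacle to be purely bookkeeping: one has four lattices and must keep careful track of which ``identity'' morphism $L_2/L_1 = L_2/L_1$ or $L_2/L_1' = L_2/L_1'$ serves as the reference point for each comparison isomorphism, exactly the subtlety flagged in the parenthetical remark after $\varphi$ in Section \ref{sec:Hecke comparison} (that $L_0/L_0 \cong 0$ but the pair of lattices matters). Once the morphism (1) is decomposed into a \fibr followed by a \cofibr and each piece is matched against the correct base point via Proposition \ref{prop:old cat equiv}, the commutativity follows from the naturality statement already built into that proposition together with Lemma \ref{lem:c_pullback}. No genuinely new ingredient is required beyond the explicit form of $\omega$ and the functoriality of pullbacks; the proof is a diagram chase once the reductions are in place.
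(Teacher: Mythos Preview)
Your approach is correct but considerably more elaborate than necessary. The paper's proof is a single sentence: it follows from the construction of $c_{L_1,L_2}$ and $c_{L_1',L_2'}$ in Proposition~\ref{prop:old cat equiv}. The point is that by part (2) of that proposition, $c_{L_1,L_2}$ is nothing other than the canonical map $F(L_2/L_1) \to \omega(F)=V$ into the colimit $\omega(F)=\varinjlim_{(L_1,L_2)\in\cC_0} F(L_2/L_1)$, landing in $V^{\bK_{L_1,L_2}}$. Since $(L_1,L_2)\le(L_1',L_2')$ in $\Pair^d$, the map (1) is \emph{precisely} the transition map in this colimit, so the equality $c_{L_1',L_2'}\circ(1)=(\text{inclusion})\circ c_{L_1,L_2}$ is a tautology about filtered colimits. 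No decomposition into \fibrs and \cofibrs is needed: the general morphism is handled at once.

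Your factoring into a \fibr and a \cofibr is valid (modulo a reversal of arrows: the \fibr is $L_2/L_1'\to L_2/L_1$, not $L_2/L_1\to L_2/L_1'$; you seem to be conflating the morphism in $\cC^d$ with the induced pullback on $F$), and treating each piece separately would work. But each piece is just a special case of the same colimit compatibility, so the decomposition buys nothing. The invocation of Section~4.4.4 of \cite{Grids} and the base-point bookkeeping you anticipate are likewise unnecessary here, since the base points for both $c_{L_1,L_2}$ and $c_{L_1',L_2'}$ sit compatibly in the same directed system by construction.
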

\begin{proof}
This also follows from the construction of the isomorphisms 
$c_{L_1, L_2}$ and $c_{L_1', L_2'}$
given in Proposition~\ref{prop:old cat equiv}.
\end{proof}

\begin{lem}
Let $L_1' \subset L_1 \subset L_2 \subset L_2'$
 be $\cO$-lattices and $H \subset \Aut_{\cC^d}(L_2/L_1)$ 
be a subgroup.
Suppose that 
the composite map $f\colon  L_2'/L_1' \to L_2/L_1 \to (L_2/L_1)/H$,
where the first arrow is the map represented by 
$L_2/L_1 \twoheadleftarrow L_2/L_1' \inj L_2'/L_1'$
and the second arrow is the canonical quotient map,
is Galois.
Then $\bK_{L_1', L_2'} \subset \bK_{L_1,L_2, H}$
is a normal subgroup and the Galois group is canonically
isomorphic to $\bK_{L_1,L_2,H}/\bK_{L_1', L_2'}$.
\end{lem}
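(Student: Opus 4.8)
The plan is to identify the Galois group of the morphism $f: L_2'/L_1' \to (L_2/L_1)/H$ with the quotient $\bK_{L_1,L_2,H}/\bK_{L_1',L_2'}$ by a direct functorial translation, using the equivalence of categories between sheaves on $\cFCot{d}$ and smooth $\GL_d(\A_X)$-sets (Corollary~\ref{cor:main_section3}, or rather its local analogue obtained by taking $X = \Spec\,\cO$). First I would recall from Proposition~\ref{prop:old cat equiv} that $(L_2/L_1)/H$ corresponds, under the fiber functor $\omega$ applied to representable sheaves, to the coset space $\GL_d(\A_X)/\bK_{L_1,L_2,H}$, and that $L_2'/L_1'$ corresponds to $\GL_d(\A_X)/\bK_{L_1',L_2'}$, compatibly: the morphism $f$ corresponds to the canonical projection $\GL_d(\A_X)/\bK_{L_1',L_2'} \to \GL_d(\A_X)/\bK_{L_1,L_2,H}$ (this is where the hypotheses $L_1' \subset L_1 \subset L_2 \subset L_2'$ and $\bK_{L_1',L_2'} \subset \bK_{L_1,L_2,H}$ enter, the inclusion of the latter groups being a consequence of Lemma~\ref{lem:basic_bK}~(2)).

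Next I would invoke Lemma~\ref{lem:Galoiscovering} together with the description of Galois coverings in $\wt{\cFC}$: since $f$ is assumed to be a Galois covering, its Galois group $\Aut_{(L_2/L_1)/H}(L_2'/L_1')$ acts freely and transitively on the fibers. Translating through $\omega$, the automorphism group of the object $\GL_d(\A_X)/\bK_{L_1',L_2'}$ over $\GL_d(\A_X)/\bK_{L_1,L_2,H}$ in the category of smooth $\GL_d(\A_X)$-sets is, by a standard computation, the group $N_{\bK_{L_1,L_2,H}}(\bK_{L_1',L_2'})/\bK_{L_1',L_2'}$ acting by right translation; the hypothesis that $f$ is Galois forces this normalizer to be all of $\bK_{L_1,L_2,H}$, so $\bK_{L_1',L_2'}$ is normal in $\bK_{L_1,L_2,H}$ and the Galois group is canonically $\bK_{L_1,L_2,H}/\bK_{L_1',L_2'}$. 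Alternatively, and perhaps more in the spirit of the earlier sections, I would use Lemma~\ref{lem:i_Galois} and Lemma~\ref{lem:double_quot} to realize $f$ directly as a quotient morphism of the form $\quot{M}{\wt{H}} \to \quot{M}{H'}$ for a suitable Galois covering $M$ in $\cC^d$, whence the Galois group is read off as $H'/(\text{image of }\wt{H})$ and one identifies this with $\bK_{L_1,L_2,H}/\bK_{L_1',L_2'}$ via the isomorphism $\phi$ of Theorem~\ref{cor:M_isom}.

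I expect the main obstacle to be bookkeeping the precise identification of the two descriptions of the covering: on one side $f$ is built from the diagram $L_2/L_1 \twoheadleftarrow L_2/L_1' \hookrightarrow L_2'/L_1'$ composed with a quotient by $H$, and on the other side it must be recognized as the projection of coset spaces, with the group $\bK_{L_1,L_2,H}$ acting on $L_2/L_1$ through $H$ by Proposition~\ref{prop:old cat equiv}~(1). Getting the compatibility of these actions exactly right — in particular checking that the stabilizer computations match and that no spurious conjugation is introduced — is the delicate point; once that is in place, the normality and the canonical isomorphism of Galois groups follow formally from the fact that $f$ is a Galois covering. The remaining verifications (that $\bK_{L_1',L_2'}$ is open, that the quotient is finite) are immediate from Lemma~\ref{lem:basic_bK}.
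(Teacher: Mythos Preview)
Your proposal is correct and follows essentially the same route as the paper: apply the fiber functor $\omega$ (via Proposition~\ref{prop:old cat equiv}) to identify $f$ with the projection $\GL_d(K)/\bK_{L_1',L_2'} \to \GL_d(K)/\bK_{L_1,L_2,H}$, compute the automorphism group over the target as $N_{\bK_{L_1,L_2,H}}(\bK_{L_1',L_2'})/\bK_{L_1',L_2'}$, and then use the Galois hypothesis to force the normalizer to be all of $\bK_{L_1,L_2,H}$. The paper makes this last step explicit via the index count $[\bK_{L_1,L_2,H}:\bK_{L_1',L_2'}] = \deg f = \#\Aut_{(L_2/L_1)/H}(L_2'/L_1')$, which is precisely what underlies your sentence ``the hypothesis that $f$ is Galois forces this normalizer to be all of $\bK_{L_1,L_2,H}$''; your alternative route through Lemma~\ref{lem:double_quot} is not used.
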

\begin{proof}
We have 
\[
\begin{array}{ll}
\Aut_{(L_1/L_2)/H}(L_1'/L_2')
\cong
\Aut_{\omega((L_1/L_2)/H)}(\omega(L_1'/L_2'))
\\
\cong 
\Aut_{\GL_d(K)/\bK_{L_1,L_2,H}}
(\GL_d(K)/\bK_{L_1',L_2'})
=
N_{\bK_{L_1,L_2,H}}(\bK_{L_1',L_2'})
/\bK_{L_1', L_2'}
\end{array}
\] 
where $N$ means the normalizer.
The first isomorphism uses the equivalence of categories
induced by $\omega$, and the second follows from the 
computation of $\omega$.

Since $f$ is Galois,
we have 
\[
[\bK_{L_1,L_2,H}:\bK_{L_1', L_2'}]
=\deg f
=\#\Aut_{(L_1/L_2)/H}(L_1'/L_2')
=\#N_{\bK_{L_1,L_2,H}}(\bK_{L_1', L_2'})
/\bK_{L_1', L_2'}.
\]
Here the first equality is by definition,
and the second equality follows easily
from 
Lemma~\ref{lem:transfer_nonGal}.
This implies that $\bK_{L_1', L_2'}$ 
is a normal subgroup of 
$\bK_{L_1, L_2, H}$
and the Galois group is isomorphic to 
$\bK_{L_1, L_2,H}/\bK_{L_1', L_2'}$ as claimed.
\end{proof}

\begin{lem}
\label{lem:c_pushforward}
Let $L_1' \subset L_1 \subset L_2 \subset L_2'$
be $\cO$-lattices.
Let $H \subset \Aut_{\cFCo{d}}(L_2/L_1)$ and 
$H' \subset \Aut_{\cFCo{d}}(L_2'/L_1')$
be subgroups.
Suppose there is a morphism  
$f\colon  (L_2'/L_1')/H' \to (L_2/L_1)/H$
in $\cFCot{d}$ induced by the morphism
\[
L_2/L_1 \twoheadleftarrow  L_2/L_1'  \inj L_2'/L_1',
\]
where the arrows are induced by the identity map on $L_2$
and the inclusion $L_2 \subset L_2'$ respectively.
Then the diagram
\[
\begin{CD}
F((L_2'/L_1')/H'))  @>{f_*}>>  F((L_2/L_1)/H))    \\
@V{c_{L_1', L_2', H'}}V{\cong}V       @V{\cong}V{c_{L_1, L_2, H}}V   \\
V^{\bK_{L_1', L_2', H'}}   @>{\tr}>>    V^{\bK_{L_1, L_2, H}}  
\end{CD}
\]
is commutative.  
Here $\tr(x)= \sum_{\sigma \in \bK_{L_1,L_2, H}/\bK_{L_1', L_2', H'}} \sigma (x)$
is the trace (norm, transfer) map.
\end{lem}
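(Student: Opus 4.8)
The plan is to deduce the lemma from the characterization of the transfer homomorphism in Lemma~\ref{lem:transfer_nonGal}, by testing both sides of the asserted identity against the pullback along a suitably chosen Galois refinement. As a preliminary I would record the following compatibility: for any morphism $\varphi\colon (L_2'/L_1')/H' \to (L_2/L_1)/H$ in $\cFCot{d}$ arising from the canonical diagram $L_2/L_1 \twoheadleftarrow L_2/L_1' \inj L_2'/L_1'$ followed by the quotient maps (so in particular $\bK_{L_1',L_2',H'} \subset \bK_{L_1,L_2,H}$), the square relating $\varphi^*$ to the inclusion $V^{\bK_{L_1,L_2,H}} \hookrightarrow V^{\bK_{L_1',L_2',H'}}$ through $c_{L_1,L_2,H}$ and $c_{L_1',L_2',H'}$ is commutative. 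This follows by factoring $\varphi$ through $L_2/L_1 \to (L_2/L_1)/H$ (Lemma~\ref{lem:c_pullback}), then a lattice change $L_2/L_1 \twoheadleftarrow L_2/L_1' \inj L_2'/L_1'$ (Lemma~\ref{lem:c_Rg}), then $L_2'/L_1' \to (L_2'/L_1')/H'$ (Lemma~\ref{lem:c_pullback} again), together with the functoriality of $\omega$.

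Now set $\wt{f}_* := c_{L_1,L_2,H}^{-1} \circ \tr \circ c_{L_1',L_2',H'}$; the goal is $\wt{f}_* = f_*$. Using Lemma~\ref{lem:imath_enough_Galois} together with the explicit Galois-covering criterion Lemma~\ref{lem:Gal} and Lemma~\ref{lem:i_Galois} on the grid, I would choose $\cO$-lattices $L_1'' \subset L_1'$ and $L_2' \subset L_2''$ so small and large, respectively, that the morphism $g\colon L_2''/L_1'' \to (L_2/L_1)/H$ obtained by composing $f$ with the canonical morphism $h\colon L_2''/L_1'' \to (L_2'/L_1')/H'$ (built from the lattice change $L_2'/L_1' \twoheadleftarrow L_2'/L_1'' \inj L_2''/L_1''$ and the quotient by $H'$) is a Galois covering in $\cFCot{d}$. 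Applying the preliminary compatibility to $g$, the pullback $g^*$ corresponds under $c_{L_1,L_2,H}$ and $c_{L_1'',L_2''}$ to the inclusion $V^{\bK_{L_1,L_2,H}} \hookrightarrow V^{\bK_{L_1'',L_2''}}$; in particular $g^*$ is injective, so it suffices to prove $g^* f_* = g^* \wt{f}_*$.

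On one side, $g^* f_* = \sum_{h'} h'^*$, the sum running over $\Hom_{(L_2/L_1)/H}(L_2''/L_1'',\,(L_2'/L_1')/H')$, by Lemma~\ref{lem:transfer_nonGal} applied to the Galois covering $g = f\circ h$. On the other side, transporting $g^* \wt{f}_*$ to $V$ gives the composite of $\tr$ with the fixed-point inclusion, which a direct computation rewrites as $\sum_\gamma (v\mapsto \gamma v)$ over a set of representatives $\gamma$ of $\bK_{L_1,L_2,H}/\bK_{L_1',L_2',H'}$. To match the two sums I would invoke Proposition~\ref{prop:old cat equiv}: it identifies $\omega$ of the objects in play with $\GL_d(K)/\bK_{\bullet}$, hence identifies $\Hom_{(L_2/L_1)/H}(L_2''/L_1'',(L_2'/L_1')/H')$ with $\bK_{L_1,L_2,H}/\bK_{L_1',L_2',H'}$ — using that $\bK_{L_1'',L_2''}$ is normal in $\bK_{L_1,L_2,H}$ since $g$ is Galois — and under this identification each $h'^*$ becomes translation by the corresponding coset representative. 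Matching conventions, the two sums coincide, hence $g^* f_* = g^* \wt{f}_*$ and the lemma follows.

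The main obstacle I expect is precisely this final matching: keeping track of left versus right cosets, of the direction of morphisms against their images under $\omega$, and of which of the two compact open subgroups plays which role, so that the bijection between the relevant $\Hom$-set and the coset space $\bK_{L_1,L_2,H}/\bK_{L_1',L_2',H'}$ really carries $\sum h'^*$ to the restricted trace. A secondary technical point is checking that the Galois refinement $g$ can be taken of the explicit lattice-quotient form required so that Lemma~\ref{lem:c_pullback}, Lemma~\ref{lem:c_Rg} and the preliminary compatibility apply to it verbatim; this is routine in the style of the proof of Theorem~\ref{main theorem}, where Galois coverings of exactly this shape are produced by enlarging and shrinking the relevant lattices.
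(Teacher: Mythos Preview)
Your approach is correct and is precisely the natural unpacking of the paper's one-line proof, which reads in its entirety ``This follows from Lemma~\ref{lem:transfer_nonGal}.'' You have supplied the details the paper leaves implicit: pass to a Galois refinement $g$, use Lemma~\ref{lem:transfer_nonGal} to express $g^*f_*$ as a sum over $\Hom_{(L_2/L_1)/H}(L_2''/L_1'',(L_2'/L_1')/H')$, and match this with the coset sum defining $\tr$ via the identification from Proposition~\ref{prop:old cat equiv}; the injectivity of $g^*$ (from the pullback compatibility you record at the outset) then finishes.
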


\begin{proof}
This follows from Lemma~\ref{lem:transfer_nonGal}.
\end{proof}

\subsection{}
\subsubsection{}
\label{sec:Hecke compatibility}
We give a proof of Proposition~\ref{prop:Hecke compatibility}.

Set $L_0=\cO^{\oplus d}$
and $\bK=\bK_{L_0, L_0}=\GL_d(\cO)$.
Fix $0 \le r \le d$ and write $g=g_r$ for short.
By definition, the action of $T_g$
on $V$ is given as the composite
\[
V^\bK  \xto{(1)}
V^{\bK \cap g^{-1} \bK g}
\xto{\tr}
V^{g^{-1} \bK g}
\xto{g\cdot -}
V^\bK
\]
where $(1)$ is the canonical inclusion and 
$\tr(v)=\sum_{\sigma \in g^{-1} \bK g \backslash \bK \cap g^{-1} \bK g} \sigma(v)$.

\subsubsection{}
Note that $L_0 g= (\wp^{-1})^{\oplus r} \oplus \cO^{\oplus d}$.
Let 
$$[L_0 g/L_0]
=(L_0 g/L_0)/\Aut_{\cC^d}(L_0g/L_0)
\in \cFCot{d}.$$

We have a map
$r\colon  L_0/L_0 =L_0/L_0 \inj L_0 g/L_0$
in $\cFCo{d}$.  (Remark again: Of course $L_0/L_0 \cong 0$
but it will be important to specify the lattice(s) when considering
the compatibility map such as $c_{L_1, L_2}$.)
We write $\overline{r}\colon  [L_0 g/L_0] \to L_0/L_0$
for the morphism
in $\cFCot{d}$
induced by $r$.
We have another map
\[
m\colon L_0g/L_0 g \twoheadleftarrow  L_0g/L_0 \xto{=}  L_0 g/L_0
\]
where the arrow is the map induced by the identity map on $L_0g$.
We write 
$\overline{m}\colon  [L_0g/L_0] \to L_0g/L_0g$
for the morphism in $\cFCot{d}$ induced by $m$.

We consider the following diagram.
{\small{
\[
\begin{CD}
\label{eqn:c_L0}
F(L_0/L_0) @>{\overline{r}^*}>>  F([L_0g/L_0])   @>{\overline{m}_*}>>  
F(L_0g/L_0g)   @>{R_g^*}>>F(L_0/L_0)  \\
@V{\cong}V{c_{L_0, L_0}}V 
@V{\cong}V{f}V 
@V{\cong}V{c_{L_0g,L_0g}}V 
@V{\cong}V{c_{L_0, L_0}}V \\
V^\bK   @>>>   V^{\bK \cap g^{-1}\bK g}  @>{\tr}>> 
V^{g^{-1}\bK g}   @>{g\cdot -}>>   V^\bK
\end{CD}
\]
}}
where the map $f$ is $c_{L_0g, L_0g, \Aut_{\cC^d}(L_0g/L_0)}$.
We see that the left square is commutative 
since 
$\bK \cap  g^{-1}\bK g=
\bK_{L_0, L_0g, \Aut_{\cC^d}(L_0g/L_0)}$
and by the injectivity of 
$V^{\bK \cap g^{-1} \bK g}
\to V^{\bK_{L_0, L_0g}}$
using Lemmas~\ref{lem:c_pullback},~\ref{lem:c_Rg}.
The square in the middle is commutative by 
Lemma~\ref{lem:c_pushforward}.
The commutativity of the right
square follows from
the definition of $R_g^*$.

\subsubsection{}
Now consider the following diagram in $\cFCo{d}$.
{\small{
\[
\begin{CD}
L_0/L_0  @<{r}<<
L_0g/L_0  @>{m}>>
L_0g/L_0g  @<{R_g}<<
L_0/L_0
\\
@A{\varphi}AA
@A{(1)}AA
@A{(2)}AA
@A{\varphi}AA
\\
0    @<{r_{\kappa(\wp)^{\oplus r}, 0}}<<
\kappa(\wp)^{\oplus r}   @>{m_{\kappa(\wp)^{\oplus r},0}}>>
0   @<{\id}<<
0
\end{CD}
\]
}}
The map (2) is defined in a similar manner as $\varphi$.
For the map (1), take any isomorphism 
$\alpha\colon  \kappa(\wp)^{\oplus r} 
\xto{\cong} L_0g /L_0$
of $\cO$-modules.
Then the map (1) is defined to be the map represented by
\[
L_0g/L_0 \xleftarrow{\alpha}
\kappa(\wp)^{\oplus r}
\xto{=} \kappa(\wp)^{\oplus r}.
\]
One can check directly that the two left squares are commutative.
The third square is commutative since
$\Hom_{\cFCo{d}}(0, L_0g/L_0g) \cong
\Hom_{\cFCo{d}}(0,0)$
consists of one element.

By Lemma~\ref{lem:quot_FCd},
$[L_0g/L_0]$ is a 
quotient object.
We thus obtain maps $\overline{m}$
and $\overline{r}$ as in the diagram below from $m$ 
and $r$ respectively
by the property of a quotient object.
We obtain the following commutative diagram in 
$\cFCot{d}$:
{\small{
\[
\begin{CD}
L_0/L_0    @<{\overline{r}}<<
[L_0g/L_0] @>{\overline{m}}>>
L_0g/L_0g @<{R_g}<<
L_0/L_0
\\
@A{\varphi}AA
@A{(1)'}AA
@A{(2)}AA
@A{\varphi}AA
\\
0                @<{r_{0, [\kappa(\wp)^{\oplus r}]}}<<
[\kappa(\wp)^{\oplus r}]   @>{m_{0, [\kappa(\wp)^{\oplus r}]}}>>
0
                  @<{\id}<<
0
\end{CD}
\]
}}
where $(1)'$ is the map induced by $(1)$.

We take the sections of $F$ of this diagram and obtain a 
commutative diagram:
 {\small{
\[
\begin{CD}
F(L_0/L_0)   @>{\overline{r}^*}>>
F([L_0g/L_0])   @>{\overline{m}_*}>>
F(L_0g/L_0g)   @>{R_g^*}>>
F(L_0/L_0)
\\
@V{\varphi^*}VV
@V{(1)'^*}VV
@V{(2)^*}VV
@V{\varphi^*}VV
\\
F(0)     
 @>{r^*_{0, [\kappa(\wp)^{\oplus r}]}}>>
F([\kappa(\wp)^{\oplus r}])
@>{m_{0,[\kappa(\wp)^{\oplus r}]*}}>>
F(0)
@>{=}>>
F(0).
\end{CD}
\]}}
By definition, the bottom row is 
$T_{[\kappa(\wp)^{\oplus r}]}$
on $F(0)$.
Using the commutative diagram above with
the commutative diagram in Section \ref{eqn:c_L0},
we obtain the proposition.
\qed

\section{The universal Euler system of Distribution sheaves}
\label{sec:universal Euler}
Recall that in the notation of our main theorem 
(Theorem~\ref{main theorem}), 
we took $G$ to be a presheaf with transfers.
In this section, we look at what happens if $G$ is a sheaf (recall that a sheaf has a canonical structure as a presheaf with transfers, and a presheaf with transfers with values in $\Q$-vector spaces is naturally a sheaf).
In this sheaf case, there is a universal setup for the theorem (to be made precise below), which can be constructed 
using the presheaf of distributions.
In the universal setup, we see the connection more directly with the (usual) distributions of Schwartz-Bruhat functions on the ring of finite adeles, justifying the use of the notation $\BS$ for our (pre)sheaf of distributions.

This is the formulation that appears in Colmez's article \cite{Colmez}
on Kato's Euler systems (i.e., $d=2$).
We do not include the proof here, but in this universal case, there is a much simpler proof of the norm relation theorem.   This proof appeared in Colmez for $d=2$, and in Grigorov \cite{Grigorov} 
and in Kondo-Yasuda \cite{Lepsilon}.

\subsection{}
We give an example of Situation I of Section~\ref{sec:Situations}.
Via the equivalence mentioned in Secion~\ref{sec:BS functions},
we see that our theorem translates (when $d=2$) to the statement
in Colmez's article (the proof of Prop.\ 1.10 in \cite{Colmez}).

\subsubsection{}
Let 
$G_\BS$ denote the sheaf of rings associated 
with the presheaf $\oplus_{r \ge 0} \BS'^{\otimes r}$
of tensor algebra on $\BS'$.
Set $G=G_\BS$.  For $1 \le i \le d$, 
let $F_i=F_i'=\BS$ and 
let
$\alpha_i\colon  \BS' \to F_i$ 
be the canonical map of 
a presheaf to its associated sheaf.
Each map $g_i\colon  F_i \to G_\BS$ 
is the inclusion into the degree 1 part.
Then the data $(G, F_i, F_i', \alpha_i, g_i)$
is an example of Situation I.
One can also obtain a Situation II example 
by replacing $\BS$ by $\BS^*$ above.
\subsubsection{}
This example is universal in the following sense.
Suppose we are given the data
$(G, F_i, F_i', \alpha_i, g_i)$
for Situation I.
Assume that  
(1) $G$ is a sheaf, and 
(2) $F_i=F$, $\alpha_i=\alpha$, and  $g_i=g$ for all $i$
 (for some $F$, $\alpha$, and $g$).
Let $F^a$ denote the sheaf associated with $F$ 
and 
$c\colon F \to F^a$ the canonical map.
Let $\alpha^a\colon F^a \to G^a=G$
denote the map associated with $\alpha$.
Then the data 
$(G, \BS, \BS, \id_\BS, \alpha^a \circ (c \circ g)^a)$
is a setup for Situation I.

One can check easily that the elements 
$\kappa_{\bN, (b_j)} \in G(\bN)$ constructed 
using the data 
$(G, F, F_i', \alpha, g)$ and 
those using the data
$(G, \BS, \BS, \id_\BS, \alpha^a \circ (c \circ g)^a)$
are the same.

From the map $(\alpha^a \circ c \circ g)^a\colon \BS \to G$
and the universality of the tensor algebra,
one obtains a map 
$\eta\colon G_\BS \to G$ 
of sheaves and 
a commutative diagram of presheaves:
\[
\begin{CD}
G_\BS  @>{\eta}>>  G  
\\
@AAA      @AA{\alpha^a}A 
\\
\BS    @>{(c \circ g )^a}>>    F^a
\\
@AAA         @AA{c \circ g}A 
\\
\BS'    @>{=}>>   \BS'
\end{CD}
\]
Here the left upper vertical arrow is the inclusion at degree 1
and the left lower vertical arrow is the canonical map.
Let $\bN$ and $b_j$ be as in Section~\ref{sec:Situations}.
Using this diagram, 
one can check that the element 
$\kappa_{\bN, (b_j)} \in G(\bN)$
is the image by $\eta(\bN)$ of 
$\kappa_{\bN, (b_j)} \in G_\BS(N)$.
As the map $\eta$ is compatible with 
transfers and pullbacks,
the norm relation for $G_\BS$
(i.e., the statement of Theorem~\ref{main theorem})
gives the relation for $G$.
Therefore, it suffices to prove the theorem in the 
case $(G_\BS, \BS, \BS, \id_\BS, \beta)$, where 
$\beta$ is the canonical inclusion to the degree 1 part,
to cover the cases where
the assumptions (1) and (2) hold.
\subsubsection{}
Let us write the statement for $G_\BS$
in terms of the Bruhat-Schwartz functions $S(\A^d)$
using the isomorphisms induced by the functor 
$\omega$ (see Section~\ref{sec:BS functions}).
We treat the simplest but essential case
(see the beginning of the proof of Theorem~\ref{main theorem}(2) in
Section~\ref{sec:3.4})
where 
$N_i=0, N_i'=A/ \wp, \bN=\oplus_{i=1}^d N_i=0$,
$\bN' =\oplus_{i=1}^d N_i' = (A/\wp)^d$.
Let $L=L_0=\wh{\cO}_X^d$ 
and $L_1=\wp \wh{\cO}_X^d$ be 
$\wh{\cO}^d_X$-lattices in $\cA^d$ so that
$\bN=L/L_0$ and $\bN'=L/L_1$.
By definition, 
$\bK_{L, L_0}=\GL_d(\wh{\cO}_X)$
and $\bK_{L, L_1}=
\Ker[\GL_d(\wh{\cO}_X) 
\to \GL_d(\wh{\cO}_X / \wp \wh{\cO}_X)]$.
We have 
$\BS(\bN)=\BS(L/L_0) \cong S(\A_X^{\oplus d})^{\bK_{L, L_0}}$
and  
$\BS(\bN')=\BS(L/L_1) \cong S(\A_X^{\oplus d})^{\bK_{L,L_1}}$.
The characteristic function $\chi_L$ on $L$ lies
in $\BS(\bN)$ and 
the characteristic
functions $\chi_{b+L_1}$ with $b \in L/L_1$
belong to $\BS(\bN')$.
The Galois group of 
$\bN\stackrel{p}{\twoheadleftarrow} \bN'
\stackrel{=}{\hookrightarrow} \bN'$
is isomorphic to $\GL_d(A/\wp)$ 
(see Lemma~\ref{lem:Gal d=1} for the 
case $d=1$; the general case is similar).
An element 
$g \in \GL_d(A/I)$ 
acts on $\BS(\bN')$ as 
$g \circ \chi_{b+L_1}=\chi_{bg+L_1}$.
Now consider $\BS^{\otimes d}$.
We have
$\BS^{\otimes d}(\bN')=
\BS^{\otimes d}(L/L_1)
\cong (S(\A_X^{\oplus d})^{\bK_{L,L_1}})^{\otimes d}
=S(\Mat_d(\A_X))^{\bK_{L,L_1}}
$
and 
$\BS^{\otimes d}(\bN) \cong S(\Mat_d(\A_X))^{\bK_{L,L_0}}$.
The element $\kappa_{N,(0)}$ is then 
the characteristic function 
$\chi_{\Mat_d(\wh{\cO}_X)}$.
Let 
$b_i=(\delta_{ij})_{1 \le j \le d} \in L/L_1$.
Then the element 
$\kappa_{\bN', (b_j)}$
is the characteristic function 
$\chi_{1+ \wp \Mat_d(\wh{\cO}_X)}$.
Here 1 means the identity matrix.
Hence we obtain 
$m_* \kappa_{\bN', (b_j)}= \chi_{\GL_d(\wh{\cO}_X)}
\in S(\Mat_d(\A_X))^{\bK_{L,L_0}}$.
Hence the theorem amounts to 
describing the relation 
between 
$\chi_{\GL_d(\wh{\cO}_X)}$
and $\chi_{\Mat_d(\wh{\cO}_X)}$
using Hecke operators.
(We refer to Section~\ref{sec:Hecke comparison}
for the comparison
of the usual Hecke operators and the 
ones defined in this article.)

The proof of the statements in Theorem~\ref{main theorem} 
in this universal sheaf case
is much simpler than the one presented in this article.
The proof for $e=d$ is found in Grigorov~\cite[Prop. 1.4.2]{Grigorov}, and 
for $e=d-1, d$ is found in our other article~\cite[Section 5]{Lepsilon}.  
The method of the proof in loc.\ cit.\
applies to general $e$.  
We remark here that 
this example for $d=2$ is alluded to in the article 
by Colmez~\cite[Proof of Prop. 1.10]{Colmez}.   
We hope this explains why he considers 
distributions (of Bruhat-Schwartz functions).

\section{In the motivic cohomology of Drinfeld modular schemes}
\label{sec:Drinfeld Euler}

We proved in previous sections that certain (abstract) elements in 
some presheaves with transfers constructed out of distributions
form an Euler system (a norm compatibility system).

In this section, we give an application.    The presheaves with transfers 
are motivic cohomology groups of Drinfeld modular schemes 
(defined over an open subset $U$ of $\Spec A$, see text below 
for notation) with integral coefficients.   The distributions are those of Siegel units.
The main theorem states that the elements in motivic cohomology constructed as
products of Siegel units form an Euler system.

Prior to this work, we have constructed elements 
in the rational K-theory of Drinfeld modular varieties
over $k$ and showed the norm compatibility.
That is, we improve our previous result in two 
respects: to work over $U$ rather than over $k$,
and to work with coefficients in $\Z$ rather than in $\Q$.
The first point should have been realized back when
we worked on our paper, but somehow it escaped 
us that the Siegel units we have been considering
are actually defined over $U$ (we always worked 
away from the level).
We have never written down the second point until now,
because there is not really an arithmetic application of 
this result.   (For the zeta elements over $k$, we did use the fact
that the elements form an Euler system to compute 
the regulator.)   We hope to see some applications of 
this result.

We also remark here that our result concerns motivic cohomology
and not algebraic K-theory.   There is a technical issue,
which disappears with coefficients in $\Q$, that we do not know how to avoid.   


\subsection{Drinfeld modular varieties}
\label{sec:nu Drinfeld modules}

\subsubsection{Notations}
\label{nu notation3}
Let $C$ be a smooth projective geometrically
irreducible curve over a finite
field $\F_q$ of $q$ elements.
Let $F$ denote the function field of $C$.
Fix a closed point $\infty$ of $C$.
Let $q_{\infty}$, $F_{\infty}$,
$|\ |_{\infty}\colon F_{\infty}\to q_{\infty}^{\Z}\cup\{0\}$
denote the cardinality of
the residue field of $C$ at $\infty$,
the completion of $F$ at $\infty$, and absolute value
at $\infty$, respectively.
Let $A=\Gamma(C\setminus \{\infty\},\cO_C)$ be the coordinate
ring of the affine $\F_q$-scheme
$C \setminus \{\infty\}$.

\subsubsection{}
We fix an integer $d \ge 1$.
\begin{defn}[\cite{Dr}]
Let $S$ be a scheme over $\Spec\, A$. A {\em Drinfeld module}
of rank $d$ over $S$ is a scheme $E$ in $A$-modules over
$S$ satisfying the following conditions:
\begin{enumerate}
\item Zariski locally on $S$, $E$ is isomorphic to $\Ga$
as a commutative group scheme.
\item If we denote the $A$-action on $E$ by
$\varphi \colon A\to \End_{S\text{-group}}(E)$,
then, for every $a \in A\setminus \{0\}$, the $a$-action
$\varphi(a)\colon E\to E$ on $E$ is finite, locally free
of constant degree $|a|_{\infty}^{d}$.
\item The $A$-action on $\Lie \,E$ induced by $\varphi$
coincides with the $A$-action on $\Lie\, E$ which comes
from the structure homomorphism $A\to \Gamma(S,\cO_S)$.
\end{enumerate}
\end{defn}

\subsubsection{}
Let $N$ be a torsion $A$-module.
Let $S$ be an $A$-scheme 
and $(E, \varphi)$ be a Drinfeld module of 
rank $d$ over $S$.

\begin{defn}
A {\it{structure of level $N$}}
on $(E, \varphi)$
is a morphism $\psi$ of $A$-modules
\[
\psi\colon N \to \Hom_{S\text{-schemes}}(S, E)
\]
such that there is an equality
\[
\sum_{m \in \Ker a\colon N \to N}
\psi(m)
=
\Ker(a\colon  E \to E)
\]
as effective Cartier divisors of $E$ for each
nonzero $a \in A$.
\end{defn}

Let $d \ge 1$ be an integer. Unless $N$ is cofinitely generated 
by at most $d$ elements as an $A$-module, there exists
no level $N$-structure on a Drinfeld module of rank $d$.
Below we only consider the structure of level $N$ for an $A$-module
$N$ of finite length.

\subsubsection{}
We recall the representability and regularity of moduli spaces from our other paper \cite{Regularity}.
Let $U \subset \Spec A$
be an open subscheme.
Let $\cM_{N, U}^d$ denote 
the functor
\[
(U\text{-scheme})
\to
(Set)
\]
that sends a $U$-scheme $S$ to 
the set of isomorphism classes 
of Drinfeld modules of rank $d$ over $S$
with structure of level $N$.

\begin{prop}
\label{prop:level N moduli}
Let $N$ be a nonzero finitely generated torsion $A$-module. 
\begin{enumerate}
\item
Suppose $N$ is a finitely generated $A$-module and 
$|\Supp N| \ge 2$.
Let $U \subset \Spec A$ 
be an open subscheme.
Then the functor $\cM_{N,U}^d$ is representable
by a regular affine $U$-scheme.
\item
Let $Z \subset \Supp N$ be a nonempty subset.
Let $U \subset \Spec A \setminus Z$ 
be an open subscheme.
Then the functor $\cM_{N,U}^d$
is representable by a regular affine 
$U$-scheme.
\end{enumerate}
\end{prop}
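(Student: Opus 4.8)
The plan is to deduce the proposition from Drinfeld's representability theorem \cite{Dr} together with the regularity theorem of \cite{Regularity}, after a reduction of the general module $N$ to the free case. First I would record this reduction. Let $I=\Ann_A(N)\subset A$; since $N\neq 0$ is torsion and finitely generated over the Dedekind domain $A$, it has finite length and $I\neq 0$. By a standard argument (realising $\cM^d_{N,U}$ in terms of $\cM^d_{(A/I)^d,U}$: a level $(A/I)^d$ structure $\psi$ on $E$ produces, for each $A$-linear surjection $(A/I)^d\twoheadrightarrow N$, a level $N$ structure on a suitable quotient of $E$, and this procedure is governed by closed/étale-locally-trivial conditions), representability of $\cM^d_{(A/I)^d,U}$ by an affine, resp.\ regular, scheme implies the same for $\cM^d_{N,U}$. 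I would carry out this bookkeeping carefully enough to see that affineness and regularity are transported, and from then on assume $N=(A/I)^d$.

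Next I would treat representability and affineness for $N=(A/I)^d$. The point common to both hypotheses is a rigidity statement: for every geometric point $x$ of $U$ there is a maximal ideal $\wp'\in\Supp N$ whose underlying point of $\Spec A$ is not the image of $x$ — in case (1) because $|\Supp N|\geq 2$, and in case (2) because any $\wp'\in Z\subset\Supp N$ works, $x$ lying over $U\subset\Spec A\setminus Z$. The $\wp'$-primary part of the level structure is then étale and nonzero at $x$, it is a full basis of the $\wp'$-adic torsion, and since the $\wp'$-adic Tate module is a faithful module over the endomorphism algebra (at a place away from the characteristic), no automorphism of $(E,\varphi)$ other than the identity can fix it; in particular the scalar automorphisms $\F_q^\times=A^\times\subset\Aut(E,\varphi)$ are killed. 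Thus the moduli problem has trivial isotropy over $U$, so by \cite{Dr} $\cM^d_{(A/I)^d}$ is representable by an affine scheme, smooth over $\Spec A[1/I]$ of relative dimension $d-1$ and with étale level structure over $\Spec A\setminus\Supp N$. Base-changing along $U\to\Spec A$ and applying the reduction of the previous paragraph gives that $\cM^d_{N,U}$ is representable by an affine $U$-scheme.

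It then remains to prove regularity, and here I would split $U$ into two parts. Over $U\setminus\Supp N$ the level structure is étale, so $\cM^d_{N,U}$ is smooth over $U\setminus\Supp N$; since $\Spec A$ is an open subscheme of a smooth curve over $\F_q$ it is regular, hence so is $U\setminus\Supp N$, and a scheme smooth over a regular base is regular. Over the finitely many closed points of $\Supp N\cap U$ the moduli scheme need not be smooth over the base, and this is exactly where I would invoke the main result of \cite{Regularity}, which asserts that the local rings of $\cM^d_{N,U}$ at these points are regular. Combining the two parts yields regularity everywhere. In case (2), if $\Supp N=Z$ then $\Supp N\cap U=\emptyset$ and only the first part is needed; if $\Supp N\supsetneq Z$ one applies the second part at the points of $(\Supp N\setminus Z)\cap U$.

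The main obstacle is the regularity along $\Supp N$: over a place dividing the level the moduli scheme fails to be smooth over the base, and one must control its local rings directly through a deformation-theoretic analysis of Drinfeld modules equipped with level structure at a place of bad reduction — in particular along the supersingular and the more degenerate Newton strata. This is precisely what is carried out in \cite{Regularity}, and I would simply quote it. A secondary, routine but non-trivial, point to be checked is that the reduction from general $N$ to $(A/I)^d$ is made precise enough to preserve both affineness and regularity.
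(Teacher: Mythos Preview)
The paper's proof consists of a single sentence: it is a direct citation of \cite[Prop.\ 4.2.1]{Regularity}. So there is no argument in the paper to compare against; you are in effect sketching what you expect the cited proof to contain. Your outline is reasonable in shape --- rigidity gives representability via Drinfeld, smoothness away from the level gives regularity there, and the serious work is regularity at primes dividing the level, which you correctly defer to \cite{Regularity}. Your observation that both hypotheses (1) and (2) guarantee, for every geometric point of $U$, a prime $\wp'\in\Supp N$ of good reduction is the right unifying point for the rigidity step.

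The one place where your sketch is genuinely underdetermined is the reduction from general $N$ to $(A/I)^d$. You describe producing a level-$N$ structure on a \emph{quotient} of $E$ from a level-$(A/I)^d$ structure, but the more natural and cleaner route is to fix an embedding $N\hookrightarrow (A/I)^d$ (which exists since $N$ is generated by at most $d$ elements and has annihilator $I$) and \emph{restrict} the level structure, giving a morphism $\cM^d_{(A/I)^d,U}\to\cM^d_{N,U}$ on the same Drinfeld module. Either way, you must say precisely why this morphism transports regularity down to the target: away from $\Supp N$ it is a quotient by a finite group acting freely, hence finite \'etale, but over $\Supp N\cap U$ one needs either an explicit local argument or to note that \cite{Regularity} already treats general $N$ directly. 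As written, ``governed by closed/\'etale-locally-trivial conditions'' and ``transport affineness and regularity'' are assertions, not arguments. Since the entire proposition is already the content of \cite[Prop.\ 4.2.1]{Regularity}, the cleanest fix is simply to cite that result wholesale, as the paper does, rather than reduce to a special case and then cite it anyway.
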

\begin{proof}
This is \cite[Prop. 4.2.1]{Regularity}.
\end{proof}

\subsubsection{}
Let $U \subset \Spec A$ be a nonempty open subscheme.
Let $\cC^d_U$ denote the full subcategory of $\cC^d$
consisting of objects $N$ such that the pair $(N, U)$ 
meets the condition of (1) or (2) of Proposition~\ref{prop:level N moduli}.
We consider the functor 
\[
\cC^d_U \to (U\text{-schemes})
\]
that sends an object $N \in \cC^d_U$ to $\cM^d_{N,U}$.
Let $h\colon  N_1 \to N_2$ be a morphism in $\cC^d_U$.
Suppose it is represented by the diagram:
\[
N_2 \stackrel{g}{\twoheadleftarrow} N_3 \stackrel{f}{\hookrightarrow} N_1.
\]
There is a morphism of schemes $\cM^d_{N_1,U} \to \cM^d_{N_3, U}$
corresponding to the injection $f$, and a 
morphism of schemes $\cM^d_{N_3, U} \to \cM^d_{N_2,U}$ corresponding 
to the surjection $g$ (recalled later in 
Section~\ref{sec:distribution Siegel}).   
We associate the 
composite morphism to the morphism $h$ and obtain 
the functor above.

\subsubsection{}
We consider the map 
\[
\mathrm{ob}\,\wt{\cC^m} \to (U\text{-schemes})
\]
where we set $\cC=\cC^d_U$,
that sends 
an object $H \backslash N$ of $\wt{\cC^m}$
to the quotient scheme $H \backslash \cM^d_{N,U}$.
We let  
$\wt{\cC}$
denote the full subcategory of 
$\wt{\cC^m}$ 
consisting of objects $H \backslash N$
such that $H \backslash \cM^d_{N,U}$
is regular.

\begin{rmk}
We proved regularity of certain types of quotients 
$H\backslash \cM_{N,U}^d$
in our paper \cite{Regularity}.
The most important examples 
for our application are
those quotients by parabolic subgroups
of $\GL_d(A/I)$
of the moduli $\cM^d_{(A/I)^d,U}$.
These will appear in the proof of our main theorem.
\end{rmk}

\subsubsection{}
\label{sec:functor M^d}
Let $(U\text{-schemes})_{ff}$
denote the subcategory where all the morphisms 
are assumed to be finite flat.
We construct a functor 
\[
\cM^d_{-, U}\colon \wt{\cC} \to (U\text{-schemes})_{ff}
\]
as follows.   
An object $H\backslash N$ is sent to
the object $H\backslash \cM^d_{N,U}$.
Let $H \backslash X \to K \backslash Y$
be a morphism in $\wt{\cC}$.   
The corresponding morphism is constructed
as follows.
Take a representative
\[X \xleftarrow{m} Z \xto{r} Y.
\]
such that 
$J \backslash Z \cong H \backslash X$
for some subgroup $J \subset \Aut_\cC(Z)$.
The quotient morphism 
$\cM_{Z,U}^d \to J \backslash \cM_{Z,U}^d$ 
is flat since $\cM_{Z,U}^d$ and 
$J \backslash \cM_{Z,U}^d$ are regular 
and any finite map between 
regular schemes is 
flat (see \cite[V, p.95, 3.6]{AK}).
It is faithfully flat since it is surjective.
Hence the morphism $r$ descends,
giving us a morphism $H \backslash 
\cM^d_{X,U} 
\cong J \backslash \cM^d_{Z,U}
\to K \backslash \cM^d_{Y,U}$.
Note that finiteness, flatness
and surjectivity
also descend.
This gives the desired functor.

Let $\wt{\cFC}$ denote the category 
obtained from $\wt{\cC}$ by 
adding finite coproducts (i.e., the 
procedure of Section~\ref{sec:add coproduct}).
This functor extends naturally to a functor
$\wt{\cFC} \to (U\text{-schemes})_{ff}$.
We also denote this functor by $\cM^d_{-,U}$.

\subsection{Theta functions}
\label{sec:theta functions}
Let $(E,\varphi)$ be a Drinfeld module of rank $d$
over an $A$-scheme $S$.
Let $\pi\colon E\to S$ denote the
structure morphism. We regard $S$ as a closed subscheme
of $E$ via the zero section $S\inj E$.
\begin{defn}[pre-theta function]
\label{def:pretheta}
Let $f\in \Gamma(E\setminus S, \cO_E^{\times})$.
We say $f$ is a pre-theta function if 
the following two conditions are satisfied:
\begin{enumerate}
\item For $a\in A\setminus \{0\}$, let
$N_a \colon  \cO_{E\setminus \Ker\varphi(a)}^{\times} \to
\cO_{E\setminus S}^{\times}$ denote the norm map
with respect to the finite locally free morphism
$\varphi(a)\colon E\setminus \Ker \varphi(a) \to E\setminus S$.
Then $N_a(f)=f$ for any $a\in A\setminus \{0\}$.
\item The order $\ord_{S}(f)$ of zero of $f$
at the closed subscheme $S$ is equal to $q_{\infty}^d -1$.
\end{enumerate}
\end{defn}

\begin{lem}[cf. {\cite[Prop 1.3, p.121]{Kato}} and {\cite[Thm 1.2.1, p.384]{Scholl}}]
\label{lem:theta}
Let the notations be as above. Suppose $S$
 is reduced. 
Then there exists a pre-theta function and 
is unique up to $\mu_{q^d_\infty-1}(S)$.
\end{lem}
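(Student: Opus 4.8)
\textbf{Proof plan for Lemma \ref{lem:theta}.}
The plan is to reduce the statement to a purely local computation on $E$ along the zero section, combined with the uniqueness mechanism coming from the norm compatibility. First I would settle uniqueness, since it drives the construction. Suppose $f_1$ and $f_2$ are both pre-theta functions. Then $g = f_1/f_2 \in \Gamma(E \setminus S, \cO_E^\times)$ satisfies $N_a(g) = g$ for all $a \in A \setminus \{0\}$ and $\ord_S(g) = 0$, so in fact $g$ extends to a unit on all of $E$ (here I use that $S$ is reduced, hence $E$ is reduced, and that a rational function on $E$ that is a unit away from the zero section and has zero order along it is a global unit — this is where $E \cong \Ga$ Zariski-locally and the structure of $\cO_E = \cO_S[t]$ locally enters). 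A global unit $g$ on $E = \Ga_S$ (locally) is pulled back from $S$, i.e. $g \in \Gamma(S,\cO_S^\times)$. Then the relation $N_a(g) = g$ becomes $g^{\deg \varphi(a)} = g$, i.e. $g^{|a|_\infty^d - 1} = 1$. Choosing $a$ so that $|a|_\infty^d - 1$ is divisible by exactly $q_\infty^d - 1$ (and taking $\gcd$ over several such $a$, using that $q_\infty^d - 1 = \gcd_a(|a|_\infty^d-1)$, which follows because $|a|_\infty$ ranges over $q_\infty^{\Z_{>0}}$), I get $g^{q_\infty^d-1} = 1$, i.e. $g \in \mu_{q_\infty^d-1}(S)$. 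This gives uniqueness up to $\mu_{q_\infty^d-1}(S)$.

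For existence, the plan is to first treat the universal case and then descend/specialize. Concretely, I would work Zariski-locally on $S$ so that $E \cong \Ga_S = \Spec \cO_S[t]$ and the zero section is $\{t = 0\}$; then a natural candidate is $f = t^{q_\infty^d-1} \cdot u$ for a suitable unit $u$, but this will not be norm-invariant on the nose, so the real content is to correct it. The standard approach (as in Kato \cite{Kato} and Scholl \cite{Scholl} in the elliptic case, and in the Drinfeld setting) is: for a fixed nonzero $a \in A$, consider the finite flat map $\varphi(a)$ and the ``relative theta function'' built from the $a$-torsion, i.e. essentially $\prod_{m \in \Ker\varphi(a) \setminus S}(t - m)$ normalized appropriately; one checks this has the right order along $S$ and transforms correctly under $\varphi(b)$ for $b$ prime to $a$. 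Then one takes a limit / gcd construction over $a$ to produce a function norm-invariant under all of $A \setminus\{0\}$ simultaneously. A cleaner route, which I would prefer, is to first do the construction over the base where a nontrivial torsion structure exists (so $E$ acquires rigidity) and $S$ is the spectrum of a field or a normal domain, then use uniqueness to glue the local pieces and to descend from a cover; reducedness of $S$ is exactly what lets uniqueness force the local constructions to agree on overlaps, yielding a global $f$.

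The main obstacle I anticipate is the existence half, specifically producing a function with norm-invariance under \emph{all} $a \in A\setminus\{0\}$ at once, rather than a compatible system indexed by $a$. The order condition $\ord_S(f) = q_\infty^d - 1$ is not an accident: $q_\infty^d - 1 = |\{\text{nonzero } \Ker\varphi(a)\text{-points in the formal nbhd}\}|/\ldots$ — more precisely it is the unique exponent making the naive product over $a$-torsion, after removing the $|a|_\infty^d$ total zeros and dividing out, consistent across different $a$; verifying this compatibility is the crux. I would handle it by reducing to the case $S$ reduced and irreducible, passing to the generic point (a field), where one can use the analytic/lattice uniformization of Drinfeld modules over a complete field to exhibit $f$ explicitly via the exponential and the associated theta-like product, then spreading out and using reducedness plus uniqueness to extend over $S$. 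The compatibility with the various $N_a$ then follows from the functional equation of that explicit theta function, and the order computation is a residue computation at $S$. Everything else — that the quotient of two such functions is a root of unity, that $N_a$ is multiplicative, that reducedness allows the extension — is routine.
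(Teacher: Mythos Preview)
Your uniqueness argument is correct and is essentially the paper's computation of $\Hom_{\Z[A\setminus\{0\}]_S}(\Z_S,\cO_S^\times)\cong\mu_{q_\infty^d-1}(S)$ unwound elementwise; the identification of where reducedness enters (units of $\cO_S[t]$ are pulled back from $S$) is also the same.

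For existence you take a genuinely different route, and it has a gap. The paper packages everything into the short exact sequence
\[
0 \to \cO_S^\times \to \pi_{*}\cO_{E\setminus S}^\times \xto{\ord_S} \Z_S \to 0
\]
of Zariski sheaves of $\Z[A\setminus\{0\}]$-modules on $S$, with $A\setminus\{0\}$ acting by $N_a$. A pre-theta function is exactly a $\Z[A\setminus\{0\}]$-splitting of this sequence after pulling back along $[q_\infty^d-1]:\Z_S\to\Z_S$. Since each $a$ acts trivially on $\Z_S$ and by the $|a|_\infty^d$-th power on $\cO_S^\times$, the extension class in $\Ext^1_{\Z[A\setminus\{0\}]_S}(\Z_S,\cO_S^\times)$ is killed by every $|a|_\infty^d-1$, hence by their gcd $q_\infty^d-1$. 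So the pulled-back sequence splits \emph{globally}, and existence is immediate.

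Your plan instead builds $f$ locally (via torsion products, or via analytic uniformization at a complete point followed by spreading out) and then glues using uniqueness. The gap is the gluing step: uniqueness only says the local choices $f_i$ agree on overlaps up to $\mu_{q_\infty^d-1}$, so what you obtain is a \v{C}ech $1$-cocycle with values in $\mu_{q_\infty^d-1}$, and its class in $H^1(S,\mu_{q_\infty^d-1})$ is the obstruction to a global $f$. You never show it vanishes. (It does vanish---the paper's argument proves it---but not for the reason you give; ``reducedness lets uniqueness force agreement on overlaps'' is simply false as stated.) The analytic-uniformization route has a second problem: Drinfeld's lattice description lives over fields complete at $\infty$, which the residue or function fields of an arbitrary reduced $A$-scheme $S$ need not be, and extending a unit from the generic fiber to all of $E\setminus S$ over $S$ still requires an argument. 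The paper's $\Ext^1$ trick bypasses both issues.
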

\begin{proof}
Let us consider the exact sequence
$$
0 \to \cO_S^{\times} \to \pi_{*}\cO_{E \setminus S}^{\times}
\xto{\ord_{S}} \Z_S \to 0
$$
of Zariski-sheaves on $S$.
The multiplicative monoid $A\setminus\{0\}$
acts on $\cO_{E\setminus S}$ by the norm
map $N_a$ for $a\in A\setminus\{0\}$.
The above exact sequence induces the structure
of $A\setminus\{0\}$-module on $\Z_S$ and on $\cO_S^{\times}$,
becomes an exact sequence of
$A\setminus\{0\}$-modules, and defines an element of
the extension module
$\Ext^1_{\Z[A\setminus \{0\}]_S}(\Z_S,\cO_S^{\times})$
in the abelian category of
Zariski sheaves of $A\setminus \{0\}$-modules on $E$.
Since $A\setminus \{0\}$ acts trivially
on $\Z_S$ and via the character
$|\ |_{\infty}^{d}\colon A\setminus \{0\} \to q_{\infty}^{d\Z_{\ge 0}}$
on $\cO_S^{\times}$,
we have
$(|a|_{\infty}^{d} -1)
\Ext^1_{\Z[A\setminus \{0\}]_S}(\Z_S,\cO_S^{\times})
=0$ for any $a \in A\setminus\{0\}$.
Since the greatest common divisor of
$|a|^d_\infty-1$ as $a$ runs through $A\setminus\{0\}$
is $q_\infty^d-1$, 
the extension group
$\Ext^1_{\Z[A\setminus \{0\}]_S}(\Z_S,\cO_S^{\times})$
is annihilated by $q_{\infty}^d -1$.
In particular, the above exact sequence splits
after pulling back by $q_{\infty}^d -1 \colon \Z_S \to \Z_S$.
Now let $f$ be the image of $1\in \Z_S$ by the section
which gives the splitting.

The choice of such an $f$ 
is unique up to
$\Hom_{\Z[A\setminus \{0\}]_S}(\Z_S,\cO_S^{\times})
\cong \mu_{q_{\infty}^d -1}(S)$. 
\end{proof}

\begin{defn}[theta function]
\label{def:theta}
Let $f$ be a pre-theta function 
for a Drinfeld module $(E, \varphi)$ 
of rank $d$
over a reduced scheme $S$.
We set $\theta_{E/S}=f^{q_\infty^d-1}$.
By Lemma~\ref{lem:theta}, 
$\theta_{E/S}$ is uniquely determined,
independently of the choice of the pre-theta
function $f$.
We call $\theta_{E/S}$ the theta function.
\end{defn}

\subsection{Distribution property of Siegel units}
\label{sec:distribution Siegel}
\begin{lem}
\label{lem:pullback theta}
Consider the cartesian diagram
\[
\begin{CD}
E'=E \times_{S'}S     @>{h}>>   E
\\
@VVV     @VVV
\\
S'    @>>> S
\end{CD}
\]
where $E$ is a Drinfeld module 
over $S$ and $E'$ is a Drinfeld module over $S'$,
and $S$, $S'$ are reduced schemes.
Then
\[
h^* \theta_{E/S}
=
\theta_{E'/S'}.
\]
\end{lem}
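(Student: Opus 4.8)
The statement asserts that theta functions are compatible with base change of reduced schemes. The natural approach is to use the \emph{uniqueness} part of Lemma~\ref{lem:theta} (together with Definition~\ref{def:theta}) rather than to attempt a direct construction. First I would record that, since $E' = E \times_S S'$, the morphism $h : E' \to E$ restricts to a morphism $E' \setminus S' \to E \setminus S$ (the zero section of $E'$ is the pullback of the zero section of $E$, as base change commutes with taking the zero section), so the pullback $h^* f$ of a pre-theta function $f$ on $E \setminus S$ is a well-defined element of $\Gamma(E' \setminus S', \cO_{E'}^\times)$. The goal then reduces to checking that $h^* f$ is a pre-theta function for $E'/S'$: once this is known, Definition~\ref{def:theta} gives $\theta_{E'/S'} = (h^* f)^{q_\infty^d - 1} = h^*(f^{q_\infty^d-1}) = h^* \theta_{E/S}$, using that pullback is a ring homomorphism.

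So the two conditions in Definition~\ref{def:pretheta} must be verified for $h^* f$. For Condition (1), the key point is that the formation of the norm map along a finite locally free morphism commutes with flat (indeed arbitrary) base change. Concretely, for $a \in A \setminus \{0\}$ the diagram relating $\varphi(a) : E \setminus \Ker\varphi(a) \to E \setminus S$ to its analogue over $S'$ is cartesian, and $\varphi(a)$ is finite locally free of constant degree $|a|_\infty^d$; hence $h^*(N_a(f)) = N'_a(h^* f)$ where $N'_a$ is the norm for $E'$. Since $N_a(f) = f$, we get $N'_a(h^* f) = h^* f$. For Condition (2), I need $\ord_{S'}(h^* f) = q_\infty^d - 1$. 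Here one uses that $\ord_S(f) = q_\infty^d - 1$ and that the zero divisor $\divi_S(f)$, being an effective Cartier divisor supported on $S$, pulls back under $h$ to the corresponding divisor on $S'$ with the same multiplicity along $S'$ — this is precisely because $S' \to S$ is obtained by base change and the closed immersion $S \hookrightarrow E$ pulls back to $S' \hookrightarrow E'$; the order of vanishing is a local notion along the Cartier divisor and is preserved. (One should be a little careful that $S'$ reduced and $S$ reduced ensures $\theta$ is unambiguously defined on both sides, which is exactly the hypothesis made.)

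\textbf{Main obstacle.} The genuinely nontrivial point is the compatibility of the norm map with base change, i.e.\ the identity $h^*(N_a(f)) = N'_a(h^* f)$. This is a standard fact for norms along finite locally free morphisms, but to be rigorous one wants to reduce to the affine case where $E \setminus \Ker\varphi(a) \to E \setminus S$ corresponds to a finite locally free algebra extension $R \to B$ with $B$ free of rank $|a|_\infty^d$ over $R$, and then observe that $N_{B/R}(b) \otimes_R R' = N_{B \otimes_R R'/R'}(b \otimes 1)$ because a basis of $B$ over $R$ base-changes to a basis of $B \otimes_R R'$ over $R'$, and the norm is the determinant of multiplication in that basis. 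The order-of-vanishing statement is comparatively routine once one notes the zero locus of $f$ is the Cartier divisor $(q_\infty^d - 1)\cdot S$ inside $E$ and that Cartier divisors pull back compatibly along $h$. I would present the norm compatibility as a short lemma (or cite it as well known) and then assemble the two conditions, concluding via the uniqueness in Lemma~\ref{lem:theta}.
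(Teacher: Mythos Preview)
Your proposal is correct and follows exactly the paper's approach: take a pre-theta function $f$ for $E/S$, verify that $h^*f$ satisfies Conditions (1) and (2) of Definition~\ref{def:pretheta}, and then raise to the $(q_\infty^d-1)$-st power. The paper's proof simply asserts that Conditions (1) and (2) are ``easy to check'' for $h^*f$, whereas you spell out the reasons (base-change compatibility of the norm, preservation of the order of vanishing along the zero section), but the strategy is identical.
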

\begin{proof}
Let $f$ be a pre-theta function for $E/S$.
It is easy to check that 
Conditions (1)(2) of Definition~\ref{def:pretheta} hold
for $h^* f$, i.e., 
$h^* f$ is a pre-theta function for $E'/S'$.
Taking the $(q_\infty^d-1)$-st power, we obtain
the claim.
\end{proof}

\begin{lem}
\label{lem:pushforward theta}
Let $E' \xto{h} E$
be an isogeny of Drinfeld module over a reduced base scheme $S$.
Then
\[
h_* \theta_{E'/S}=\theta_{E/S}
\]
\end{lem}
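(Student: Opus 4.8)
We must show that for an isogeny $h : E' \to E$ of Drinfeld modules of rank $d$ over a reduced base scheme $S$, we have $h_* \theta_{E'/S} = \theta_{E/S}$, where $h_*$ denotes the norm map $\cO_{E' \setminus h^{-1}(S)}^\times \to \cO_{E \setminus S}^\times$ associated with the finite locally free morphism $h$ (restricted away from the zero sections), and the equality is meant in $\Gamma(E \setminus S, \cO_E^\times)$.

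\begin{proof}
Since both sides of the claimed equality are sections of $\cO_E^\times$ over $E \setminus S$ and $S$ is reduced, the equality can be checked after further faithfully flat base change; hence by Lemma~\ref{lem:pullback theta} we may reduce to a convenient situation and, in particular, we may assume that there exists a pre-theta function $f' \in \Gamma(E' \setminus S, \cO_{E'}^\times)$ for $E'/S$, so that $\theta_{E'/S} = f'^{q_\infty^d - 1}$. The plan is to show that $N_h(f') := h_*(f')$, the norm of $f'$ along the finite locally free morphism $h : E' \setminus h^{-1}(S) \to E \setminus S$ (note $h^{-1}(S) = \Ker h$, a finite subgroup scheme of $E'$), is a pre-theta function for $E/S$; the desired identity then follows from Definition~\ref{def:theta} and the uniqueness in Lemma~\ref{lem:theta} applied to $E/S$, after raising to the $(q_\infty^d-1)$-st power and using that $N_h$ commutes with taking powers.

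First I would check Condition~(1) of Definition~\ref{def:pretheta} for $N_h(f')$. For $a \in A \setminus \{0\}$, the $a$-multiplication maps on $E$ and $E'$ are compatible with $h$: one has $h \circ \varphi'(a) = \varphi(a) \circ h$ as isogenies $E' \to E$. This gives a commutative square of finite locally free morphisms, and the norm maps are transitive and compatible with base change along such squares; combining this with the fact that $f'$ satisfies $N_a'(f') = f'$ (its own Condition~(1)), one deduces $N_a(N_h(f')) = N_h(N_a'(f')) = N_h(f')$. The slightly delicate point here is bookkeeping with the domains of definition (removing $\Ker h$, $\Ker \varphi(a)$, $\Ker \varphi'(a)$ and their various preimages), but no new idea is needed: the relevant square $\varphi'(a)^{-1}(E' \setminus \Ker h) \rightrightarrows (E \setminus S)$ is Cartesian up to the finite subgroups involved, and transitivity of norms handles it.

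Next I would verify Condition~(2): $\ord_S(N_h(f')) = q_\infty^d - 1$. Here the key computation is that the norm map along $h$ multiplies the order of vanishing along the zero section in a controlled way. Concretely, $h^{-1}(S) = \Ker h$ meets the zero section $S \subset E'$ exactly in $S$ (the identity component contribution), and since $h$ restricted to a neighborhood of the identity is finite locally free, the order of $N_h(f')$ at $S \subset E$ equals $\ord_S(f')$ times the local degree of $h$ at the identity section — but because $h$ is étale at the generic points of $S$ in the relevant sense, or more precisely because the relevant ramification is trivial along the zero section for separable isogenies and one reduces to that case, this local degree is $1$; alternatively one argues via the exact-sequence / extension-module formalism in the proof of Lemma~\ref{lem:theta} that $N_h(f')$ is again a splitting of the analogous sequence for $E$, which forces $\ord_S = q_\infty^d - 1$. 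I expect \emph{this step — pinning down exactly how $h_*$ interacts with $\ord_S$, especially in the presence of inseparable isogenies in characteristic $p$ — to be the main obstacle}; the cleanest route is probably to bypass an explicit ramification computation entirely and instead observe that $N_h(f')$ automatically satisfies both conditions because it is the image of $f'$ under a map of the short exact sequences $0 \to \cO_S^\times \to \pi'_*\cO_{E'\setminus S}^\times \to \Z_S \to 0$ and its analogue for $E$, a map which is the identity on $\Z_S$; then the characterization of pre-theta functions via these sequences, together with Lemma~\ref{lem:theta}, gives $N_h(f')$ is a pre-theta function for $E/S$, and raising to the power $q_\infty^d - 1$ yields $h_*\theta_{E'/S} = N_h(f')^{q_\infty^d-1} = \theta_{E/S}$. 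Finally I would remove the auxiliary base change by Lemma~\ref{lem:pullback theta} and the uniqueness of $\theta_{E/S}$.
\end{proof}
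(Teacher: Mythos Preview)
Your approach is exactly the paper's: take a pretheta function $f'$ on $E'$, show that $h_* f'$ is a pretheta function on $E$, and raise to the $(q_\infty^d-1)$-st power. The paper's proof is correspondingly brief --- Condition~(1) follows from $h \circ \varphi'(a) = \varphi(a) \circ h$ and transitivity of norms, and for Condition~(2) it simply says ``one can check (2) directly''.

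Two remarks on your extra steps. The initial base change is unnecessary: Lemma~\ref{lem:theta} already guarantees existence of a pretheta function over any reduced $S$, so no reduction is needed. And Condition~(2) is not the obstacle you make it out to be. One has $\divi(h_* f') = h_*\divi(f') = (q_\infty^d-1)\,h_*[S_{E'}]$, and since $h$ restricted to the zero section is the identity $S \to S$, the cycle pushforward gives $h_*[S_{E'}] = [S_E]$ regardless of whether $h$ is separable; inseparability is a red herring here. Your proposed exact-sequence detour does not actually bypass this computation: asserting that the induced map on $\Z_S$ is the identity \emph{is} the statement that $\ord_{S_E}(h_* f') = \ord_{S_{E'}}(f')$, so you still owe the same one-line divisor pushforward.
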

\begin{proof}
Because $h$ is an isogeny,
it commutes with the $A$-action,
i.e.,
\[
h \circ \varphi_a
=\varphi'_a \circ h
\]
where $\varphi\colon  A \to \End(E)$, 
$\varphi'\colon  A \to \End(E')$
are the structures of Drinfeld modules.
Let $f$ be a pre-theta function for $E'/S$.
It follows that $h_*f$ satisfies 
Definition~\ref{def:pretheta}(1).

One can check (2) directly so that $h_* f$
is a pre-theta function. 
Taking the $(q_\infty^d-1)$-st power, we obtain
the claim.
\end{proof}

\subsubsection{}
Let $d \ge 1$.
Let $(N, U)$ be a pair satisfying the condition of 
Proposition~\ref{prop:level N moduli}.
We let 
$E_{N,U}^d \to \cM_{N, U}^d$
denote the universal Drinfeld module.
Since
$\cM_{N,U}^d$
is regular (in particular reduced),
we have a pre-theta function 
in 
$\cO(E_{N, U}^d \setminus \{0\})^\times$.
We write
$\theta_{N,U}^{(d)'}$
for any one of the pre-theta functions.
We write 
$\theta_{N,U}^{(d)}=(\theta_{N,U}^{(d)'})^{q_\infty^d-1}$
for the theta function.

\subsubsection{}
Let $\psi\colon N \to
\Hom(
\cM_{N,U}^d, 
E_{N,U}^d)
$
denote
the universal level structure.

For $n \in N$, 
we set
\[
g_{N, U, n}^{(d)'}
=
\psi(n)^*
\theta_{N,U}^{(d)'}
\]
and
\[
g_{N, U, n}^{(d)}
=
\psi(n)^*
\theta_{N,U}^{(d)}.
\]
These are elements of 
$\cO(\cM_{N,U}^d)$.
Suppose 
$\psi(n)(\cM_{N,U}^d) \cap \psi(0)(\cM_{N,U}^d)=\emptyset$.
Then 
$g_{N, U, n}^{(d)'}$ and 
$g_{N, U, n}^{(d)}$
belong to 
$\cO(\cM_{N, U}^d)^\times$.
We call them Siegel units and pre-Siegel units.

Let $N_1, N_2 \in \cC_U^d$ and
$N_1 \xto{\alpha} N_2$
be an injection of $A$-modules.
We obtain a cartesian diagram:
\[
\begin{CD}
E_{N_2, U}^d
@>{r_E}>>
E_{N_1, U}^d
\\
@VVV    @VVV
\\
\cM_{N_2, U}^d
@>>{r}>
\cM_{N_1, U}^d.
\end{CD}
\]
Here the morphism $r$ is that given by the morphism of functors that 
sends
$(E \to S, \psi\colon  N_2 \to \Hom(S, E))$
to $(E \to S, \psi'\colon  N_1 \xto{\alpha} N_2 \xto{\psi} \Hom(S,E))$.

\begin{lem}
Let 
$n_1 \in N_1$
and set 
$n_2= \alpha(n_1) \in N_2$.
We have
\[
r^*
g_{N_1, U, n_1}^{(d)}
=
g_{N_2, U, n_2}^{(d)}.
\]
\end{lem}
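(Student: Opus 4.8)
The plan is to reduce the statement to two compatibilities that are already available: the behaviour of theta functions under base change (Lemma~\ref{lem:pullback theta}), and the tautological compatibility of the universal level structures with the morphism $r$ that comes from the very definition of $r$ as a morphism of moduli functors. First I would record that, by Proposition~\ref{prop:level N moduli}, both $\cM_{N_1,U}^d$ and $\cM_{N_2,U}^d$ are regular, hence reduced, so that the theta functions $\theta_{N_1,U}^{(d)}$ and $\theta_{N_2,U}^{(d)}$ are well defined and canonical (Definition~\ref{def:theta}). Since the square
\[
\begin{CD}
E_{N_2, U}^d @>{r_E}>> E_{N_1, U}^d \\
@VVV @VVV \\
\cM_{N_2, U}^d @>>{r}> \cM_{N_1, U}^d
\end{CD}
\]
is cartesian, Lemma~\ref{lem:pullback theta} gives $r_E^* \theta_{N_1,U}^{(d)} = \theta_{N_2,U}^{(d)}$ (as sections on the complements of the zero sections; alternatively one may carry this out first for a pre-theta function and then raise to the $(q_\infty^d-1)$-st power, the ambiguity by $\mu_{q_\infty^d-1}$ disappearing).

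Next I would unwind the definition of $r$ recalled in Section~\ref{sec:distribution Siegel}: $r$ classifies the Drinfeld module $E_{N_2,U}^d$ together with the level-$N_1$ structure $\psi_2 \circ \alpha$, where $\psi_2$ denotes the universal level-$N_2$ structure. Consequently $r^*\psi_1 = \psi_2 \circ \alpha$, and in particular, evaluating at $n_1$ and writing $n_2 = \alpha(n_1)$, the pullback of the section $\psi_1(n_1):\cM_{N_1,U}^d \to E_{N_1,U}^d$ along $r$ is the section $\psi_2(n_2):\cM_{N_2,U}^d \to E_{N_2,U}^d$. In terms of the cartesian square, this says precisely that $r_E \circ \psi_2(n_2) = \psi_1(n_1) \circ r$. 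Combining this with the previous step:
\[
r^* g_{N_1, U, n_1}^{(d)}
= r^* \psi_1(n_1)^* \theta_{N_1,U}^{(d)}
= \psi_2(n_2)^* r_E^* \theta_{N_1,U}^{(d)}
= \psi_2(n_2)^* \theta_{N_2,U}^{(d)}
= g_{N_2, U, n_2}^{(d)},
\]
which is the asserted equality.

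The argument is essentially formal, so there is no serious obstacle; the only points requiring care are bookkeeping ones. One must make sure that the sections $\psi_i(n_i)$ do factor through the complements of the zero sections so that the pullbacks of the theta functions are defined in $\cO(\cM_{N_i,U}^d)$ (this is implicit in the setup of Section~\ref{sec:distribution Siegel}, and is anyway automatic after passing to the $(q_\infty^d-1)$-st power), and one must make sure that the identification ``$r^*\psi_1 = \psi_2\circ\alpha$'' is read off correctly from the functorial description of $r$ together with the cartesianness of the square of Drinfeld modules stated just before the lemma. Neither of these presents a difficulty.
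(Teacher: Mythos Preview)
Your proof is correct and follows exactly the same approach as the paper: the paper's proof simply invokes Lemma~\ref{lem:pullback theta} to obtain $r_E^* \theta_{N_1,U}^{(d)} = \theta_{N_2,U}^{(d)}$ and then writes ``The claim follows by diagram chasing,'' which is precisely the chain $r^* \psi_1(n_1)^* = \psi_2(n_2)^* r_E^*$ that you have spelled out in detail.
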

\begin{proof}
From Lemma~\ref{lem:pullback theta},
we have
\[
r_E^* \theta_{N_1, U}^{(d)}
=
\theta_{N_2, U}^{(d)}.
\]
The claim follows by diagram chasing.
\end{proof}

\subsubsection{}
Let $N_1, N_2 \in \cC_U^d$
be nonzero objects and 
$N_1 \xto{\alpha} N_2$ be 
a surjection of $A$-modules.
We obtain a morphism
$\cM_{N_1, U}^d 
\to 
\cM_{N_2, U}^d$
by sending
$(E \to S, \psi\colon  N_1 \to \Hom(S, E))$
to 
$(E/\psi(\Ker \alpha) \to S,
\psi'\colon  N_2
\to \Hom(S, E/\psi(\Ker \alpha))$
where $\psi'$ is that induced by $\psi$.

From the definition of moduli functors, 
we obtain a commutative diagram
\[
\begin{CD}
E_1 @>{m_1}>> E_1/\psi(\Ker \alpha)
@>{\cong}>{m_2}>  
E_2 \times_{\cM^d_{N_2, U}} \cM^d_{N_1, U}
@>{m_3}>>
E_2
\\
@VVV @VVV @VVV @VVV
\\
\cM^d_{N_1, U}
@>{=}>>
\cM^d_{N_1, U}
@>{=}>>
\cM^d_{N_1, U}
@>{m}>>
\cM^d_{N_2, U}
\end{CD}
\]
where the last square is cartesian.
Here, we write $E_1=E_{N_1, U}^d$ and 
$E_2=E_{N_2, U}^d$ for short.

\begin{lem}
Let $n_2 \in N_2$.
Then
\[
m^*
g_{N_2, U, n_2}
=
\prod_{\alpha(n_1)=n_2}
g_{N_1, U, n_1}.
\]
\end{lem}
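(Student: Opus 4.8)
The plan is to reduce the assertion to the two compatibility properties of theta functions established above — Lemma~\ref{lem:pullback theta} for pullbacks along cartesian squares and Lemma~\ref{lem:pushforward theta} for norms along isogenies — applied to the commutative diagram with rows $E_1 \xrightarrow{m_1} E_1/\psi(\Ker\alpha) \xrightarrow{m_2} E_2\times_{\cM_{N_2,U}^d}\cM_{N_1,U}^d \xrightarrow{m_3} E_2$ displayed before the statement, together with the base change formula for norms along finite locally free morphisms.

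Write $\psi_1:N_1\to\Hom(\cM_{N_1,U}^d,E_1)$ and $\psi_2:N_2\to\Hom(\cM_{N_2,U}^d,E_2)$ for the universal level structures and $\theta_i:=\theta_{N_i,U}^{(d)}$ for the theta functions, so that $g_{N_i,U,n}=\psi_i(n)^*\theta_i$; since $\theta_i$ vanishes along the zero section (to order $(q_\infty^d-1)^2$), it extends to a regular function on all of $E_i$, which lets us ignore questions of invertibility throughout. First I would apply Lemma~\ref{lem:pushforward theta} to the quotient isogeny $m_1$, obtaining $(m_1)_*\theta_1=\theta_{(E_1/\psi(\Ker\alpha))/\cM_{N_1,U}^d}^{(d)}$, where $(m_1)_*$ denotes the norm (pushforward) of Lemma~\ref{lem:pushforward theta}; and apply Lemma~\ref{lem:pullback theta} to the cartesian square
\[
\begin{CD}
E_2\times_{\cM_{N_2,U}^d}\cM_{N_1,U}^d @>{m_3}>> E_2 \\
@VVV @VVV \\
\cM_{N_1,U}^d @>{m}>> \cM_{N_2,U}^d
\end{CD}
\]
obtaining $m_3^*\theta_2=\theta_{(E_2\times_{\cM_{N_2,U}^d}\cM_{N_1,U}^d)/\cM_{N_1,U}^d}^{(d)}$. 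Transporting the latter along the canonical isomorphism $m_2$ then identifies $m_2^*m_3^*\theta_2$ with $(m_1)_*\theta_1$ as functions on $E_1/\psi(\Ker\alpha)$.

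The next step is to pull this identity back along the section $\psi'(n_2):\cM_{N_1,U}^d\to E_1/\psi(\Ker\alpha)$ given by the level structure $\psi'$ induced by $\psi_1$; by the definition of the moduli morphism $m$, the section $m_2\circ\psi'(n_2)$ is the base change of $\psi_2(n_2)$, and $\psi'(n_2)=m_1\circ\psi_1(n_1)$ for any $n_1\in\alpha^{-1}(n_2)$. On the right the cartesian square turns $\psi'(n_2)^*m_2^*m_3^*\theta_2$ into $m^*(\psi_2(n_2)^*\theta_2)=m^*g_{N_2,U,n_2}$. On the left, base change of the norm along $\psi'(n_2)$ turns $\psi'(n_2)^*(m_1)_*\theta_1$ into $N_{T/\cM_{N_1,U}^d}(\theta_1|_T)$, where $T$ is the cartesian pullback of $m_1$ along $\psi'(n_2)$, i.e. the preimage $m_1^{-1}(\psi'(n_2)(\cM_{N_1,U}^d))\subset E_1$. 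Since $m_1$ is the quotient isogeny by $\psi_1(\Ker\alpha)=\Ker m_1$, translating the level structure axiom $\Ker m_1=\sum_{k\in\Ker\alpha}\psi_1(k)(\cM_{N_1,U}^d)$ by the section $\psi_1(n_1)$ identifies $T$, as an effective Cartier divisor in $E_1$, with $\sum_{\alpha(n_1)=n_2}\psi_1(n_1)(\cM_{N_1,U}^d)$. Filtering $\cO_T$ by $\cO_{\cM_{N_1,U}^d}$-submodules with graded pieces the $\psi_1(n_1)_*\cO_{\cM_{N_1,U}^d}$ and using multiplicativity of the norm over such a filtration gives $N_{T/\cM_{N_1,U}^d}(\theta_1|_T)=\prod_{\alpha(n_1)=n_2}\psi_1(n_1)^*\theta_1=\prod_{\alpha(n_1)=n_2}g_{N_1,U,n_1}$, which is the desired equality.

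The step I expect to be the main obstacle is this last one: the identification of the fibre $T=m_1^{-1}(\psi'(n_2)(\cM_{N_1,U}^d))$, as an effective Cartier divisor, with $\sum_{\alpha(n_1)=n_2}\psi_1(n_1)(\cM_{N_1,U}^d)$, and the resulting norm computation, which must be carried out via the filtration of $\cO_T$ rather than as a product over a disjoint union, since over the fibres where the sections $\psi_1(n_1)$ collide $T$ is non-reduced. This is where the structure of Drinfeld level structures on $\psi_1$ (the identification of $\sum_{k\in\Ker\alpha}\psi_1(k)$ with the kernel subgroup scheme $\Ker m_1$) and the base change compatibility of norms along finite locally free morphisms genuinely enter; the remainder is formal diagram chasing built from Lemma~\ref{lem:pullback theta} and Lemma~\ref{lem:pushforward theta}.
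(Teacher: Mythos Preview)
Your argument is correct and is exactly the paper's: identify $m_2^*m_3^*\theta_2$ with $(m_1)_*\theta_1$ via Lemmas~\ref{lem:pullback theta} and~\ref{lem:pushforward theta}, then pull back along the section $\psi'(n_2)$. The paper compresses your last two paragraphs into the single clause ``specializing this equation at $n_2$, we obtain the claim''; the non-reduced-fibre issue you flag is real but can be sidestepped more cheaply than by your filtration, by observing that $\cM_{N_1,U}^d$ is integral, so the desired equality of global functions can be checked over the dense open where the sections $\psi_1(n_1)$ are pairwise disjoint.
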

\begin{proof}
By Lemma~\ref{lem:pullback theta},
$m^*_2 m^*_3 \theta_{N_2, U}^{(d)}$
is the theta function for the Drinfeld module
\[
E_1/\psi(\Ker \alpha)
\to \cM_{N_1, U}^d.
\]
By Lemma~\ref{lem:pushforward theta},
this also equals 
$m_{1*}
\theta_{N_1, U}^{(d)}.$
Specializing this equation at $n_2$, 
we obtain the claim.
\end{proof}

\subsection{Elements in motivic cohomology}
We use the motivic cohomology of 
Mazza-Voevodsky-Weibel
from \cite{MVW}.

For $i=1, \dots, d$,
let $N_i \in \cC^d_U$
be a nonzero $A$-module of finite length
that is generated by one element.
Let $b_i \in N_i \setminus \{0\}$.
Set $\bN=\bigoplus_{i=1}^d N_i$.
For $i=1, \dots, d$,
let 
$\iota_i\colon N_i \to \bN$
be the inclusion into the $i$-th factor.
We take $b_i$'s so that 
$g_{N_i, b_i}^{(d)}$ is a unit.
Each $\iota_i$ induces
$f_i\colon  \cM_{\bN, U}^d \to \cM_{N_i, U}^d$.

We identify the group of units
$\cO(X)^\times$ and the 
motivic cohomology
$H^1_\cM(X, \Z(1))$ of a scheme $X$
smooth over a field.
Using the product structure 
$H^1_\cM(X, \Z(1))^{\otimes d} 
\to H^d_\cM(X, \Z(d))$
for positive integers $d$
of motivic cohomology, we set
\[
\kappa'_{\bN, U, (b_i)}=
f_1^* g_{N_1, (b_1), U}^{(d)'}
\cdots
f_d^* g_{N_d, (b_d), U}^{(d)'}
\in
H^d_\cM(\cM_{\bN, U}^d,
\Z(d))
\]
and
\[
\kappa_{\bN, U, (b_i)}
=(q_\infty^d-1)
\kappa'_{\bN, U, (b_i)}
\in
H^d_\cM(\cM_{\bN, U}^d,
\Z(d)).
\]
We note that 
the element $\kappa'_{\bN, U, (b_i)}$ depends on the choice of
the pre-Siegel units involved in the definition, but the
element
$\kappa_{\bN, U, (b_i)}$
is independent of the choice
because the pre-Siegel units are determined 
up to 
$(q_\infty^d-1)$-st roots of unity.

We remark here that the product of Siegel units 
$f_1^* g_{N_1, (b_1), U}^{(d)}
\cdots
f_d^* g_{N_d, (b_d), U}^{(d)}$
is uniquely 
determined, but it is $(q_\infty^d-1)^{d-1}$ times
$\kappa_{\bN, (b_i)}$, and is less sharp.

\subsubsection{}
We use the notation above.
Let $N_i'$ be a nonzero quotient $A$-module of $N_i$
for $i=1, \dots, d$.
(In particular, $N_i' \in \cC_U^d$.)
Let $b_i'$ denote the 
image of $b_i$ in $N_i'$.
We write
$\bN'=\bigoplus_{j=1}^d N_j'$
and $N_i''=\Ker(N_i \to N_i')$.
Let 
$m\colon  \cM^d_{\bN, U} \to \cM^d_{\bN', U}$
be the morphism induced by the surjection
$\bN \to \bN'$.
Since $m$ is finite surjective, we have the pushforward map
\[
m_*\colon 
H^d_\cM(\cM_{\bN, U}^d, \Z(d))
\to
H^d_\cM(\cM_{\bN', U}^d, \Z(d))
\]
between the motivic cohomology groups.

Our main theorem is as follows.   The proof will be given in
Section~\ref{sec:proof of Drinfeld Euler}.
\begin{thm}
\label{thm:Drinfeld Euler}
The following statements hold.
(1) If $\Supp N_i'' \subset \Supp N_j'$
for any 
$1 \le i,j \le d$, then
\[
m_* \kappa_{\bN, (b_j), U}
=
\kappa_{\bN', (b_j'), U}
\]

(2) Let $\wp$ be a closed point of $C$.
Suppose that
$\Supp N_i'' \subset \{\wp\}
\subset \Supp N_i$
for every $i$.
Let $e$ denote the number of $i$'s
with $\wp \notin \Supp N_i'$.
Then
\[
m_*
\kappa_{\bN, (b_i), U}=
\sum_{r=0}^e
(-1)^r
q_\wp^{r(r-1)/2}
T_{[\wp^{\oplus r}]}
\kappa_{\bN', (b_j'), U}
\]
where $T_{[\wp^{\oplus r}]}$
are the Hecke operators as 
defined in Section~\ref{sec:exam Hecke}.
\end{thm}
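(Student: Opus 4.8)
The plan is to deduce Theorem~\ref{thm:Drinfeld Euler} from the abstract norm relation theorem (Theorem~\ref{main theorem}) by exhibiting the motivic cohomology of Drinfeld modular schemes as a presheaf of rings with transfers on the appropriate $\cFCotu{d}$-type category, and the products of Siegel units as the element $\kappa_{\bN,(b_j)}$ coming from a collection of punctured distributions. First I would set up the $Y$-site: take $(\cC,J)=(\cC^d_U, J^d_{m,U})$, where $\cC^d_U$ is the full subcategory of $\cC^d$ introduced in Section~\ref{sec:nu Drinfeld modules}, verify it is a $Y$-site (this is essentially Theorem~\ref{thm:section2} restricted to the subcategory, using Proposition~\ref{prop:level N moduli} to see the subcategory is closed under the relevant operations), and form $\wt{\cC}$, $\wt{\cFC}$ with the restrictions dictated by the regularity condition on quotient moduli. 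Then I would define $G = \bigoplus_n H^n_\cM(\cM^d_{-,U}, \Z(n))$ as a presheaf of graded rings on $\wt{\cFC}$: contravariant functoriality is pullback along the finite flat morphisms $\cM^d_{-,U}$ built in Section~\ref{sec:functor M^d}, and the transfer maps $f_*$ for fibrations are the pushforwards along finite flat (hence proper) morphisms. The three axioms of Definition~\ref{def:transfer} must be checked: functoriality of pushforward, the base change formula $g_2^*\circ f'_* = f_*\circ g_1^*$ for cartesian squares, and $f_*f^* = \deg f$. The base change and degree formulas hold because the morphisms $\cM^d_{-,U}$ of fibrations are finite locally free and, by Corollary~\ref{cor:deg_limit}, the categorical degree agrees with the scheme-theoretic degree; the projection formula (needed for $G$ to be a presheaf of rings with transfers) is the standard projection formula for proper pushforward in motivic cohomology.

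Next I would produce the distribution data. For each $i$ I take $F_i=F_i'$ to be the sheaf $N \mapsto \cO(\cM^d_{N,U})^\times = H^1_\cM(\cM^d_{N,U},\Z(1))$ (restricted and extended to $\wt{\cFC}$ via $\nu$ as in Section~\ref{sec:iota'}), which is a sheaf since $\cO(-)^\times$ satisfies Galois descent for finite flat Galois covers; the transfer structure is norm of units. The morphism $\alpha_i : F_i' \to G$ is the inclusion of the degree-one part $H^1_\cM \hookrightarrow \bigoplus_n H^n_\cM$. The punctured distribution $g_i : {\BS^*}' \to \iota'^* F_i'$ sends a nonzero section $n$ of $N$ to the Siegel unit $g^{(d)}_{N,U,n}$; the fact that this is a morphism of presheaves is precisely the distribution property of Siegel units established in Section~\ref{sec:distribution Siegel} (the two lemmas on pullback and pushforward of $\theta$). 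This puts us in Situation~II of Section~\ref{sec:Situations}, with $b_i$ and $N_i'$ as in the theorem statement. By construction, the element $\kappa_{\bN,(b_j)} \in G(\quotid{\bN})$ of Section~\ref{sec:Situations} is exactly $(q_\infty^d-1)$ times the cup product of the preSiegel units, i.e.\ the $\kappa_{\bN,(b_j),U}$ of the theorem --- here I would carefully match the $(q_\infty^d-1)$ normalization, noting that the distribution $g_i$ is valued in the theta-function (i.e.\ $(q_\infty^d-1)$-st power) level, so the extra factor is absorbed correctly and the resulting element is independent of the choice of preSiegel units. Then Theorem~\ref{main theorem}(1) and (2) give Theorem~\ref{thm:Drinfeld Euler}(1) and (2) directly, once one identifies the abstract Hecke operators $T_{[\kappa(\wp)^{\oplus r}]}$ with the operators $T_{[\wp^{\oplus r}]}$ of Section~\ref{sec:exam Hecke} (which is a matter of unwinding notation, since $\kappa(\wp) \cong A/\wp$).

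I would then address the one genuinely nontrivial verification: that $G$ is actually a presheaf with transfers on $\wt{\cFC}$, which requires $\cM^d_{-,U}$ to take fibrations in $\wt{\cFC}$ to finite flat morphisms of schemes. This is where the regularity results of \cite{Regularity} (cited via Proposition~\ref{prop:level N moduli} and used to define $\wt{\cC}$) are essential: a finite morphism between regular schemes is automatically flat (\cite[V, 3.6]{AK}), so once both source and target of $\cM^d_{-,U}(f)$ are regular --- which is exactly the defining condition of the subcategory $\wt{\cC}$ --- flatness, and hence the existence of a well-behaved pushforward with base change and a good notion of degree, follows. One must check that the relevant quotients appearing in the definition of the Hecke operators $T_{[\wp^{\oplus r}]}$ (quotients of $\cM^d_{(A/\wp)^d \oplus \cdots}$ by parabolic-type subgroups, which show up through the combinatorial reduction in the proof of Theorem~\ref{main theorem}(2)) indeed land in $\wt{\cFC}$, i.e.\ are regular --- this is covered by the regularity statements for such quotients proved in \cite{Regularity} and flagged in the remark after Section~\ref{sec:functor M^d}.

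The main obstacle I expect is precisely this last point: ensuring that every scheme that arises --- not just the moduli $\cM^d_{N,U}$ themselves but all the intermediate quotients $\wt{N}_i/\wt{H}_i$, $E'_r$, $\eta_r$, $M_{H_r}(S_r)$, etc.\ that appear inside the proof of Theorem~\ref{main theorem}(2) --- corresponds to an object of $\wt{\cFC}$ (hence has regular moduli and finite flat transition maps), so that the abstract computation can be run in this concrete category. Secondary to that is the bookkeeping of the $(q_\infty^d-1)$ twist, making sure the theta-function normalization of the distribution matches the stated normalization of $\kappa_{\bN,(b_j),U}$ and that the resulting class genuinely lies in integral (not just rational) motivic cohomology; the subtlety here is that for $\Q$-coefficients the presheaf $G$ is automatically a sheaf (Corollary~\ref{cor:Q-coefficient is sheaf}) and the argument is easy, but over $\Z$ one really needs the presheaf-with-transfers formalism and the regularity input, and one should confirm there is no lurking obstruction (e.g.\ the technical issue with algebraic $K$-theory versus motivic cohomology mentioned in the section introduction does not affect the motivic cohomology statement, since we only use cup products and proper pushforward, both of which are available integrally for motivic cohomology of smooth schemes over a field).
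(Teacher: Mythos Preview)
Your overall strategy matches the paper's, but there is a genuine gap in the $(q_\infty^d-1)$ normalization that you flag as ``secondary bookkeeping''---it is not. If your distribution $g_i$ sends $n$ to the Siegel unit $g^{(d)}_{N,U,n} = (q_\infty^d-1)\cdot g^{(d)'}_{N,U,n}$ in $H^1_\cM$, then the abstract element $\kappa_{\bN,(b_j)}$ of Theorem~\ref{main theorem} is the cup product of $d$ such classes, hence equals $(q_\infty^d-1)^d \kappa'_{\bN,U,(b_j)} = (q_\infty^d-1)^{d-1}\kappa_{\bN,U,(b_j)}$, not $\kappa_{\bN,U,(b_j)}$. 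Your argument therefore proves Theorem~\ref{thm:Drinfeld Euler} only after multiplying both sides by $(q_\infty^d-1)^{d-1}$, which the paper explicitly remarks is ``less sharp''. You cannot simply use the preSiegel units $g^{(d)'}$ instead, because they are defined only up to $\mu_{q_\infty^d-1}$ and so do not assemble into a well-defined morphism of presheaves ${\BS^*}' \to F'_i$.

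The paper's fix is to take $G$ to be motivic cohomology \emph{modulo $(q_\infty^d-1)$-torsion}. In this quotient, multiplication by $(q_\infty^d-1)$ on $H^1_\cM$ is an isomorphism onto the subgroup of $(q_\infty^d-1)$-st powers, so one can invert it: $F'_i$ is taken to be this subgroup, the distribution lands there via the well-defined Siegel units, and the map $\alpha_i:F'_i \to G$ is this inverse followed by the inclusion. The resulting abstract $\kappa$ is then $\kappa'_{\bN,U,(b_j)}$ modulo torsion. Theorem~\ref{main theorem} gives the relation for $\kappa'$ in $G$; one lifts arbitrarily to integral motivic cohomology and multiplies by $(q_\infty^d-1)$, killing the ambiguity of the lift and yielding the relation for $\kappa = (q_\infty^d-1)\kappa'$. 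This is precisely why Situation~II allows a proper subpresheaf $F'_i \subsetneq F_i$.

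A smaller point: your verification of axiom~(2) in Definition~\ref{def:transfer} assumes that $\cM^d_{-,U}$ sends cartesian squares in $\wt{\cFC}$ to cartesian squares of schemes. It does not in general---the categorical fiber product (computed as in Proposition~\ref{prop:FC_product}) maps to a diagram of schemes that is cartesian only over the dense open locus where the covers are \'etale. The paper supplies a separate lemma showing that generic cartesianness suffices for the base-change identity $g^* f_* = f'_* g'^*$ in motivic cohomology, argued at the level of finite correspondences.
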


\subsection{A Lemma on motivic cohomology}
This section contains a lemma concerning
motivic cohomology groups.   
The lemma is stated generally, but
applied only to Drinfeld modular schemes.   
We use it in Lemma~\ref{lem:transfer Drinfeld} 
to show that the presheaf of motivic 
cohomology groups is equipped with transfers.
The notation and content of this section 
are independent of the rest of the sections.

\subsubsection{}
Let $k$ be a field.  
(It will be a finite field in our application.)
For smooth schemes $X$, $Y$ over $k$, let $Cor_k(X,Y)$ denote
the abelian group of finite correspondence from $X$ to $Y$.
By definition, $Cor_k(X,Y)$ is the free abelian group
generated by the integral closed subschemes 
$W \subset X \times_{\Spec k} Y$
such that the composite $W \inj X \times_{\Spec k} Y \to X$ 
is finite and surjective over a connected component of $X$.
The motivic cohomology groups are presheaves with 
transfers (see \cite[Example 2.7]{MVW}).
Let $f\colon X \to Y$ be a morphism of smooth $k$-schemes.
Then the graph $\Gamma_f$ of $f$ belongs to $Cor_k(X,Y)$.
We call the action (see the remark after 
\cite[Definition 2.1]{MVW})
by $\Gamma_f$ the pullback by $f$ and will denote it by $f^*$.
For a morphism $g\colon  X \to Y$ which is finite and surjective over 
a connected component of $Y$, 
the transpose ${}^t\Gamma_g$
of the 
graph of $g$
belongs to $Cor_k(Y, X)$.   The action by 
${}^t\Gamma_g$ will be called the pushforward by $g$
and will be denoted $g_*$.

\begin{lem}
Let
\begin{equation} \label{diag1}
\begin{CD}
X' @>{g'}>> X \\
@V{f'}VV @VV{f}V \\
Y' @>{g}>> Y
\end{CD}
\end{equation}
be a commutative diagram of equidimensional schemes 
which are smooth over $k$. Suppose that 
$f$ has the following property:
For any connected component of $X$, 
the restriction of $f$ to the component
is finite and surjective onto a connected component of $Y$.
Suppose $f'$ has a similar property.
Assume that $g$ satisfies (*).  
Suppose moreover that there exists a
dense open subscheme 
$U \subset Y$ such that the
base change
\begin{equation} \label{diag2}
\begin{CD}
X' \times_Y U @>{g'|_U}>> X \times_Y U \\
@V{f'|_U}VV @VV{f|_U}V \\
Y'\times_Y U @>{g|_U}>> U
\end{CD}
\end{equation}
of the diagram \eqref{diag1} is cartesian. Then the diagram
\begin{equation} \label{diag3}
\begin{CD}
H_\cM^i(X', \Z(j)) @<{g'^*}<< H_\cM^i(X, \Z(j)) \\
@V{f'_*}VV @VV{f_*}V \\
H_\cM^i(Y',\Z(j)) @<{g^*}<< H_\cM^i(Y,\Z(j))
\end{CD}
\end{equation}
of abelian groups is commutative.
\end{lem}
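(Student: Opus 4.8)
The plan is to recast the commutativity of \eqref{diag3} as a single identity in the group of finite correspondences and then to verify that identity by restricting to a dense open subscheme over which the relevant square is a genuine fibre product. Since $H^i_\cM(-,\Z(j))$ is a presheaf with transfers, the four maps appearing in \eqref{diag3} are the actions of the finite correspondences $\Gamma_g$, $\Gamma_{g'}$, ${}^t\Gamma_f$, ${}^t\Gamma_{f'}$ (the transposed graphs make sense because $f$, resp.\ $f'$, restricts to a finite surjective morphism on each connected component). By functoriality of the presheaf-with-transfers structure, the composite $g^*\circ f_*$ is the action of $\gamma_1:={}^t\Gamma_f\circ\Gamma_g\in Cor_k(Y',X)$ and $f'_*\circ g'^*$ is the action of $\gamma_2:=\Gamma_{g'}\circ{}^t\Gamma_{f'}\in Cor_k(Y',X)$, the compositions being taken in the category $Cor_k$ of \cite{MVW}. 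Thus it suffices to prove $\gamma_1=\gamma_2$ in $Cor_k(Y',X)$.

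Next I would reduce to a dense open of $Y'$. A finite correspondence from $Y'$ to $X$ is a $\Z$-linear combination of integral closed subschemes of $Y'\times_k X$ each finite and dominant over a connected component of $Y'$, and such a cycle is determined by its restriction along any dense open immersion $V\hookrightarrow Y'$, since each generating subscheme is the closure of its intersection with $V\times_k X$. By the hypothesis $(*)$ on $g$ (the same property imposed on $f$ and $f'$), $g$ is dominant from each connected component of $Y'$ onto a connected component of $Y$; as $U$ is dense open in $Y$ and meets every component of $Y$, the subscheme $V:=Y'\times_Y U=g^{-1}(U)$ is dense open in $Y'$. Moreover the support of $\gamma_1$ and of $\gamma_2$ consists of points $(y',x)$ with $g(y')=f(x)$, so over $V$ it lies inside $V\times_k(X\times_Y U)$. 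Hence it is enough to prove that $\gamma_1$ and $\gamma_2$ have the same restriction to $V$, i.e.\ the analogous identity for the base-changed diagram \eqref{diag2}.

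Over $U$ the square \eqref{diag2} is cartesian, so $X'\times_Y U\cong(X\times_Y U)\times_U(Y'\times_Y U)$, and $f|_U\colon X\times_Y U\to U$ is finite and surjective between regular equidimensional $k$-schemes, hence flat by miracle flatness (\cite[V, p.95, 3.6]{AK}). With $f|_U$ flat and \eqref{diag2} cartesian, the intersection product computing $\gamma_1|_V$ is proper and equals the flat pullback of the cycle ${}^t\Gamma_{f|_U}$ along $g|_U$, which is the cycle of the closed subscheme $X'\times_Y U$ viewed inside $V\times_k(X\times_Y U)$ via $(f'|_U,g'|_U)$; by the definition of composition of finite correspondences, this same cycle is $\gamma_2|_V=(f'|_U,g'|_U)_*[X'\times_Y U]$. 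Therefore $\gamma_1|_V=\gamma_2|_V$, hence $\gamma_1=\gamma_2$ in $Cor_k(Y',X)$, and applying $H^i_\cM(-,\Z(j))$ gives the commutativity of \eqref{diag3}.

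The step I expect to be the main obstacle is this last one: identifying both composed correspondences over $U$ with the single cycle coming from the fibre product, i.e.\ the flat base-change/Tor-independence statement for composition of finite correspondences. The point where care is needed is the properness of the intersection defining $\gamma_1$, which is exactly what flatness of $f|_U$ together with cartesianness of \eqref{diag2} supplies; the reduction to $U$ in the second step is precisely what lets us import that flatness, since $f$ itself is only assumed finite and surjective on components and need not be flat over all of $Y$.
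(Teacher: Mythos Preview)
Your proposal is correct and follows essentially the same approach as the paper: reduce the commutativity of \eqref{diag3} to the identity ${}^t\Gamma_f\circ\Gamma_g=\Gamma_{g'}\circ{}^t\Gamma_{f'}$ in $Cor_k(Y',X)$, use the hypothesis on $g$ to restrict to the dense open lying over $U$, and verify the identity there using that \eqref{diag2} is cartesian. The paper's proof is terser in the last step, simply asserting that the equality over $U$ ``can be checked easily by using the cartesian diagram,'' whereas you spell out the mechanism (miracle flatness for $f|_U$ and the resulting flat base change for composition of correspondences); this extra detail is sound and makes explicit what the paper leaves to the reader.
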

\label{lem:motivic cohomology}
\begin{proof}
Let 
$\Gamma_f \in Cor_k(X,Y)$,
$\Gamma_g \in Cor_k(Y',Y)$, 
$\Gamma_{f'} \in Cor_k(X', Y')$,
and $\Gamma_{g'} \in Cor_k(X',X)$ denote the graph of
$f$, $g$, $f'$, and $g'$, respectively. 
Let ${}^t \Gamma_f \in Cor_k(Y,X)$,
${}^t \Gamma_{f'} \in Cor_k(Y',X')$
denote the transposes.
Then we have
$g^* \circ f_* = \Gamma_g^* \circ {}^t \Gamma_f^*
= ({}^t \Gamma_f \circ \Gamma_g)^*$ and
$f'_* \circ g'^* = {}^t \Gamma_{f'}^* \circ \Gamma_{g'}^*
= (\Gamma_{g'} \circ {}^t \Gamma_{f'})^*$.
Hence it suffices to show the equality
${}^t \Gamma_f \circ \Gamma_g = \Gamma_{g'} \circ {}^t \Gamma_{f'}$
in $Cor_k(Y',X)$.
Our assumption on $g$ implies that 
$g$ maps the generic point of any connected component of
$Y'$ to the generic point of some connected component of $Y$.
It follows that, for any integral closed subscheme 
$W \subset Y' \times_{\Spec k} X$
which is finite and surjective over a connected component of $Y'$,
the image of the generic point $\xi_W$ of $W$ under the morphism
$W \to Y'$ belongs to $Y' \times_Y U$.
Hence to prove the claim, it suffices to prove the
${}^t \Gamma_{f|_U} \circ \Gamma_{g|_U} 
= \Gamma_{g'|_U} \circ {}^t \Gamma_{f'|_U}$
in $Cor_k(Y'\times_Y U,X \times_Y U)$.
The last claim can be checked easily by using the cartesian
diagram \eqref{diag2}.
\end{proof}

\subsubsection{}
\begin{lem}[projection formula]
\label{lem:projection formula}
Let $f\colon X \to Y$ be a finite morphism of
equidimensional schemes which are smooth over $k$.
Then for any $x \in H^i_\cM(X,\Z(j))$ 
and for any
$y \in H_\cM^{i'}(Y,\Z(j'))$, 
we have the projection formula
$$
f_*(x \cdot f^* y) = (f_*x) \cdot y.
$$
where the dot denotes the product in motivic cohomology.
\end{lem}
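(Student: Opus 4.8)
The plan is to deduce the projection formula from a single compatibility between the transfer and the diagonal, and to verify that compatibility by a direct computation with finite correspondences, in the same spirit as the proof of Lemma~\ref{lem:motivic cohomology}. First I would recall, following \cite{MVW}, the descriptions of the three operations at the level of motives: writing $M(X)$, $M(Y)$ for the motives of $X$, $Y$ and $\Z(j)$ for the motivic complexes, one has $H^i_\cM(X,\Z(j))=\Hom_{DM(k)}(M(X),\Z(j)[i])$; the pullback $f^*$ is precomposition with the morphism $M(f)\colon M(X)\to M(Y)$ induced by the graph $\Gamma_f$; since $f$ is a finite morphism between smooth $k$-schemes it is flat, so the pushforward $f_*$ is precomposition with the finite transfer morphism $\mathrm{tr}_f\colon M(Y)\to M(X)$ induced by the transpose correspondence ${}^t\Gamma_f$; and the cup product is induced by the comultiplication $\delta_X\colon M(X)\to M(X)\otimes M(X)=M(X\times_k X)$ coming from the diagonal, together with the multiplication on $\Z(*)$.

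With these descriptions, I would unwind both sides of the asserted identity. Using $f^*y=y\circ M(f)$ one gets $f_*(x\cdot f^*y)=(x\otimes y)\circ(\mathrm{id}_{M(X)}\otimes M(f))\circ\delta_X\circ\mathrm{tr}_f$ and $(f_*x)\cdot y=(x\otimes y)\circ(\mathrm{tr}_f\otimes\mathrm{id}_{M(Y)})\circ\delta_Y$, as morphisms $M(Y)\to\Z(j+j')[i+i']$. Hence the lemma follows, for all $x$ and $y$ simultaneously, once one knows the identity
\begin{equation*}
(\mathrm{id}_{M(X)}\otimes M(f))\circ\delta_X\circ\mathrm{tr}_f \;=\; (\mathrm{tr}_f\otimes\mathrm{id}_{M(Y)})\circ\delta_Y
\end{equation*}
of morphisms $M(Y)\to M(X)\otimes M(Y)=M(X\times_k Y)$ in $DM(k)$, which expresses that $\mathrm{tr}_f$ is compatible with the coalgebra structures.

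I would then prove this identity as an equality in $Cor_k(Y,X\times_k Y)$. Computing the two composites of finite correspondences --- the left side is $Y\xrightarrow{{}^t\Gamma_f}X\xrightarrow{\Gamma_{\Delta_X}}X\times X\xrightarrow{\Gamma_{\mathrm{id}_X\times f}}X\times Y$, and the right side is $Y\xrightarrow{\Gamma_{\Delta_Y}}Y\times Y\xrightarrow{{}^t\Gamma_f\boxtimes\Gamma_{\mathrm{id}_Y}}X\times Y$ --- one finds that both equal the cycle associated to the locally closed subscheme $\Sigma=\{(f(x),x,f(x))\}\subset Y\times_k X\times_k Y$, i.e.\ the image of the immersion $X\hookrightarrow Y\times_k X\times_k Y$, $x\mapsto(f(x),x,f(x))$, each irreducible component occurring with multiplicity one. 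The points to check are that $\Sigma$ is a legitimate element of $Cor_k(Y,X\times_k Y)$ (its projection to $Y$ is $X\to Y$, $x\mapsto f(x)$, hence finite and surjective over the relevant components of $Y$), and that no excess multiplicities arise in either composition --- which holds because in each of the two composites one of the two schemes being fibered is the graph of a morphism, so the intersection is transverse; as in the proof of Lemma~\ref{lem:motivic cohomology}, it suffices to check this after restricting to a dense open of $Y$ over which $f$ is finite flat, where the computation is immediate. Feeding this identity of correspondences back into the reduction of the previous paragraph yields $f_*(x\cdot f^*y)=(f_*x)\cdot y$.

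The main obstacle I anticipate is precisely this correspondence computation: carefully tracking the connected components of $X$ and $Y$, the equidimensionality hypotheses, and the intersection multiplicities in the composition of finite correspondences, so that the two sides are seen to coincide as cycles and not merely as supports. Once that identity of correspondences is established, everything else is formal manipulation in $DM(k)$.
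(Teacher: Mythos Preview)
The paper's own proof of this lemma is simply ``Omitted.'' Your proposal therefore supplies more than the paper does: you give an actual argument, reducing the projection formula to a single identity of finite correspondences $(\mathrm{id}\times f)\circ\Delta_X\circ{}^t\Gamma_f={}^t\Gamma_f\boxtimes\mathrm{id}\circ\Delta_Y$ in $Cor_k(Y,X\times_k Y)$ and verifying it by identifying both sides with the graph cycle of $x\mapsto(f(x),x,f(x))$. This is the standard route and is correct; the flatness of $f$ that you invoke follows from miracle flatness (finite morphism between regular equidimensional schemes), which is available under the stated hypotheses. So your write-up stands on its own and there is no discrepancy with the paper beyond the fact that the paper declines to give any argument at all.
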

\begin{proof}
Omitted.
\end{proof}

\subsection{Motivic cohomology presheaf of rings with
transfers}
For $x \in \wt{\cFC}$ (recall that this category depends 
on a choice of $U$),
we set
\[
G(x)=
\bigoplus_{i \ge 0}
H_\cM^i
(\cM_{-,U}^d(x),
\Z(i))/((q_\infty^d-1)\text{-torsion}).
\]
Ideally, we do not want to divide out the 
$(q_\infty^d-1)$-torsion.  
The reason we mod out by 
$(q_\infty^d-1)\text{-torsion}$
arises from that the pre-Siegel units are defined only
up to $(q_\infty^d -1)$-st roots of unity.
We will obtain a result without modding out by 
$(q_\infty^d -1)$-torsion in the end.

Now, $G$ defined as above is a presheaf since the 
morphisms between the
$\cM_{-,U}^d(x)$'s
are flat 
and motivic cohomology is contravariant with respect to 
flat maps between regular schemes.
We have the pushforward maps, since the 
schemes $\cM_{-,U}^d(x)$ are regular and the
morphisms between them are finite (proper).
We show that these pushforward maps give the 
structure of transfers.
\begin{lem}
The presheaf $G$ is a presheaf of rings
with transfers.
\end{lem}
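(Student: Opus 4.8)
The plan is to verify, in order, the three axioms of a presheaf of rings with transfers from Definition \ref{def:ring transfer}: first that $G$ is an abelian presheaf with transfers on $\wt{\cFC}$ (i.e.\ satisfies Conditions (1) and (2) in Section \ref{sec:cond12} together with the three conditions in Definition \ref{def:transfer}), then that $G$ is a presheaf of rings, and finally that the projection formula holds. Throughout, the key point is to translate categorical statements about morphisms in $\wt{\cFC}$ (fibrations in $\wt{\cFT}^*$, cartesian squares, degrees) into geometric statements about the finite flat maps of regular $U$-schemes produced by the functor $\cM^d_{-,U}:\wt{\cFC} \to (U\text{-schemes})_{ff}$ of Section \ref{sec:functor M^d}, and then to invoke the two lemmas on motivic cohomology just proved: Lemma \ref{lem:motivic cohomology} (compatibility of pullback and pushforward along a square that is cartesian over a dense open) and Lemma \ref{lem:projection formula} (the projection formula for finite morphisms of equidimensional smooth schemes). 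I would handle the finite-base-field hypothesis by noting that the relevant moduli schemes are regular affine $U$-schemes with $U \subset \Spec A$, so étale-locally one works over $\F_q$, and motivic cohomology of Mazza--Voevodsky--Weibel applies.

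First I would record that $G$ satisfies Conditions (1) and (2) of Section \ref{sec:cond12}: the functor $\cM^d_{-,U}$ sends the initial object of $\wt{\cFC}$ to the empty scheme and a finite coproduct $\coprod_i F_i$ to the disjoint union $\coprod_i \cM^d_{-,U}(F_i)$, and motivic cohomology of a disjoint union is the product of the motivic cohomology groups (and of the empty scheme is the zero group, whose quotient by torsion is a final/zero object); quotienting each summand by its $(q_\infty^d-1)$-torsion is compatible with products. Condition (1) of Definition \ref{def:transfer} (functoriality of transfers $f_*$) follows from the functoriality of proper pushforward in motivic cohomology along composites of finite morphisms of regular schemes; Condition (2) (base-change compatibility $g_2^* f'_* = f_* g_1^*$ for a cartesian square with $f'$ a fibration) is exactly Lemma \ref{lem:motivic cohomology}, provided I check that a cartesian square in $\wt{\cFC}$ whose right vertical arrow lies in $\wt{\cFT}^*$ maps under $\cM^d_{-,U}$ to a square of $U$-schemes which is cartesian over a dense open of the base --- here I would use that the morphisms in $\wt{\cFT}^*$ correspond to finite flat surjective maps (by the descent argument in Section \ref{sec:functor M^d}: finiteness, flatness, surjectivity descend along the faithfully flat quotient maps) and that over the open locus where the relevant level structures are ``away from the level'' the moduli maps become étale, so the square is genuinely cartesian there. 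Condition (3) (that $f_* f^*$ is multiplication by $\deg f$ on each component) reduces, via Corollary \ref{cor:deg_limit} and the standard degree computation, to the geometric fact that for a finite flat surjective morphism of regular schemes of constant degree $n$ the composite $f_* f^*$ is multiplication by $n$ on motivic cohomology (a projection-formula consequence, applying Lemma \ref{lem:projection formula} with $y$ the unit class); one must match the degree $\deg f$ of a morphism in $\wt{\cFT}^*$ with the generic degree of $\cM^d_{-,U}(f)$, which follows from Corollary \ref{cor:deg_omega_F} / Lemma \ref{lem:deg_omega} together with the description of the moduli maps.

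Next, $G$ is a presheaf of rings because each $G(x)$ is the $(q_\infty^d-1)$-torsion-free quotient of the motivic cohomology ring $\bigoplus_i H^i_\cM(\cM^d_{-,U}(x),\Z(i))$ and cup product is natural for the pullback maps along the flat structure morphisms; one checks that the $(q_\infty^d-1)$-torsion subgroup is an ideal so the quotient is a ring and that pullbacks descend to ring maps on the quotients. Finally, the projection formula condition in Definition \ref{def:ring transfer} --- $f_*(x\cdot f^*y) = f_*(x)\cdot y$ and $f_*(f^*y\cdot x)=y\cdot f_*(x)$ for $f \in \wt{\cFT}$ --- follows from Lemma \ref{lem:projection formula} applied to the finite morphism $\cM^d_{-,U}(f)$ of regular schemes (commutativity of the motivic cohomology ring takes care of the second identity from the first), after reducing to the case of connected source and target via Lemma \ref{lem:sum_formula}-type decompositions. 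The main obstacle I anticipate is the verification of Condition (2): one must be careful that the square of $U$-schemes coming from a cartesian square in $\wt{\cFC}$ need not be literally cartesian (the moduli functors are not fully faithful and the maps are only finite flat, not étale, over the level), so the hypothesis of Lemma \ref{lem:motivic cohomology} --- cartesian after restriction to a dense open --- is exactly what must be checked, and this requires knowing that $\cM^d_{N,U}$ is étale over $U$ away from $\Supp N$, i.e.\ invoking the regularity and étaleness results of \cite{Regularity} on the appropriate open locus.
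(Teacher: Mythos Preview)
Your proposal is correct and follows essentially the same approach as the paper: Condition (1) is trivial, the projection formula comes from Lemma \ref{lem:projection formula}, Condition (2) is reduced to Lemma \ref{lem:motivic cohomology} by checking that the image square is cartesian over a dense open $U'\subset U$ where the level maps become \'etale, and Condition (3) matches the categorical degree with the geometric one. The only cosmetic difference is that for Condition (2) the paper first reduces to the explicit cartesian diagram of quotients $G_i\backslash Z$ appearing in the proof of Proposition \ref{prop:FC_product} (so that cartesianness over $U'$ becomes a pure group-theory check), whereas you argue the same point more abstractly; both routes land on the same verification.
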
 
\label{lem:transfer Drinfeld}
\begin{proof}
We need to check the three conditions 
in Definition~\ref{def:transfer} and 
the condition in Definition~\ref{def:ring transfer}.

Condition (1) holds trivially.
The condition in Definition~\ref{def:ring transfer}
holds because of Lemma~\ref{lem:projection formula}.

We show that Condition (2) holds.
Suppose we are given a cartesian diagram as in Condition (2).
Recall that the fiber product
in $\wt{\cFC}$ is 
computed explicitly in the proof of
Proposition~\ref{prop:FC_product}.
We are thus reduced to the case of the cartesian diagram
\eqref{diag:cartesian} in the proof of Proposition~\ref{prop:FC_product}.
In particular, 
the objects aside from the left upper corner are of the 
form $G_i \backslash Z$ for some $G_i$ with $i=1,2,3$.
Now consider the corresponding diagram 
of $U$-schemes via the functor
$\cM_{-,U}^d$.
We can choose a dense open $U' \subset U$
such that the morphisms in the restriction of the diagram of $U$-schemes to $U'$ are all \'etale.
Each morphism can be described purely in terms of the groups
$G_i$'s.   Using this description, 
one can check that the diagram of $U'$-schemes
is cartesian.
Then one 
can apply Lemma~\ref{lem:motivic cohomology} 
to deduce the claim. 

We show that Condition (3) holds.
We may assume that $f$ is a morphism in $\cC$.
Observe that the degree of a morphism 
between $A$-schemes equals that of a morphism between $k$-schemes.
The morphisms between Drinfeld modular varieties over $k$
are \'etale.   Hence,
the degree can be computed purely in terms
of the level structures, i.e., in terms of 
homomorphisms in the category of $A$-modules.
On the other hand, the degree of a morphisms in $\cC$
can be computed in terms of automorphisms groups in $\cC$
(see Section~\ref{sec:cyclotomic Euler} 
for the example computation when $d=1$)
and in turn in terms of $A$-modules.
One can check directly the cases of morphisms of type
$N_1=N_1 \hookrightarrow N_2$
and $N \twoheadleftarrow N_1 =N_1$
to deduce the claim.
\end{proof}

\subsection{}
\label{sec:proof of Drinfeld Euler}
Define a presheaf $H'$ on $\cFCo{d}$ by 
setting $H'(N)=\cO(\cM^d_N \times_{U_N} U_{\bN})^{\times}$ for
an object $N$ in $\cCo{d}$.
We claim that $H'$ is a sheaf.   
It suffices to check that for a Galois covering $N \to N'$ of Galois group $G$,
the sheaf property 
$H(N)^G=H(N')$ holds.
By the irreducibility of the moduli, 
it suffices to check the claim for unit groups 
of the corresponding moduli spaces over $k$.
There, the morphism is the quotient morphism
by the group $G$,
hence the claim follows.
We equip it with the canonical 
structure of transfers 
(see Section~\ref{sec:with transfers}).

Define a sub presheaf $H$ of $H'$ by setting
$H(\overline{N}) = 
\{x^{q_\infty^d-1}\, | \, x \in H'(\overline{N})\}
\subset H(\overline{N})$ 
for an object 
$\overline{N}$ in 
$\wt{\cFC}$.  The structure of transfers on $H$ induces a
structure of transfers on $H'$.

We define a morphism of presheaves
$\overline{g}\colon  \BS^{* \prime} \to H$.  
Suppose we are given an object $\overline{N} \in \wt{\cFC}$
and $\phi \in \BS^{* \prime}(\overline{N})$.  
Take a Galois covering $N \to \overline{N}$ of Galois group $G=\Gal(N/\overline{N})$ 
with an object $N=\coprod_{i=1}^m N_i \in \cFC^d$
such that the pullback of $\phi$ to $\BS^{* \prime}(N)$ is of the following form:
\[ 
\sum_{i=1}^r 
\sum_{g\in G/H_j} g a_j \phi_{N_{i(j)},b_j}.
\]
Here each $a_j$ is an integer, 
$b_j \in N_{i(j)} \setminus\{0\}$, 
and $H_j \subset G$ 
is the subgroup of elements which fix  $i(j) \in \{1,\dots, m\}=
\pi_0(N)$ and
further fix $b_j$ under the action given via $H_j \to \Aut(N_{i(j)})$.
Recall that we defined $\phi_{N_{i(j)}, b_i}$ to be the characteristic function
of $b_i$.

For each $j\in \{1,\dots, r\}$,
set 
$N_j'=\langle b_j \rangle \subset
N_{i(j)}$
to be the $A$-submodule generated by $b_j$.
Let 
$r_j\colon N_{i(j)} \to N_j'$
be the morphism represented by
$N_j'=N_j' \subset N_{i(j)}$.
We have
$r_j^* \phi_{N'_j,b_j}'=\phi_{N_{i(j)}, b_j}$.
Since $\Gal(r_j)=H_j$,
it follows that 
$\phi_{N_{i(j)}, b_j}$
is $H_j$-invariant.

Let
\[
\alpha=\prod_{i=1}^r \prod_{g\in G/H_j} g(r_j^*
\theta'_{N'_j, b_j})^{a_j}.
\]
It is an element of $H'(N)$.  Since
$r_j^*\theta'_{N_j, b_j}$ is $H_j$-invariant, the element $\alpha$ is
$G$-invariant.  Hence $\alpha^{q_\infty^d-1}$ defines an element of
$(H'(N)^G)^{q_\infty^d-1}=H(\overline{N})$.
We define $\overline{g}$ to be the morphism of presheaves that sends 
$\phi$ to $\alpha^{q_\infty^d-1}$.

We have maps
\[
\cO(\cM_{-,U}^d(\overline{N}))^\times
\xto{=}
H^1_\cM(\cM_{-,U}^d(\overline{N}), \Z(1))
\to
\bigoplus_{j \ge 1}
H^j_\cM(\cM_{-,U}^d(\overline{N}), \Z(j))
\]
where the first is the identification 
and the second is the canonical 
inclusion.
The multiplication-by-$(q_\infty^d-1)$ map 
induces an isomorphism
$\alpha\colon  \cO(\cM_{-,U}^d(\overline{N}))^\times/
((q_\infty^d-1)\text{-torsion})
\xto{\cong} 
(\cO(\cM_{-,U}^d(\overline{N}))^\times)^{q_\infty^d-1}.$
We define a morphism $H \to G$
by
\[
\begin{array}{ll}
H(\overline{N})=(\cO(\cM_{-,U}^d(\overline{N}))^\times)^{q_\infty^d-1}
\xto{\alpha^{-1}}
(\cO(\cM_{-,U}^d(\overline{N}))^\times)/((q_\infty^d-1)\text{-torsion}\\
=
\bigoplus_{j \ge 0}
H^j_\cM(\cM_{-,U}^d(\overline{N}), \Z(j))
/((q_\infty^d-1)\text{-torsion})
=G(\overline{N})
\end{array}
\]
for each $\overline{N} \in \wt{\cFC}$.

Now take $F_i=H, F_i'=H'$ in Situation II.
Then Theorem~\ref{main theorem} 
implies 
$$
m_*
\kappa'_{\bN,(b_j)}
=\sum_{r=0}^e (-1)^r q_\wp^{r(r-1)/2}
T_{[\wp^{\oplus r}]}
{\kappa'}_{\bN', (b'_j)}
$$
in $G(\bN')$.
(The theorem is stated only for 
presheaves with transfers on $\wt{\cFC^d}$,
but the argument there works for the category
$\wt{\cFC}$ that we are using.   
To use Theorem~\ref{main theorem},
we also need to check that the schemes occurring 
in the proof of the theorem are indeed regular.
This follows from Theorem 1.1.1 of \cite{Regularity}.)

Now, take any lift of the equation
to 
$\bigoplus_{j \ge 0}
H^j_\cM(\cM^d_{-,U}(\bN'), \Z(j))$.
Multiplying both sides by $(q_\infty^d-1)$  
gives the desired equality.
\qed

\section{Remark: Integrality of zeta elements}
This remark is independent of the other parts of this book.
We constructed what we call zeta elements in the rational 
$K$-theory of Drinfeld modular varieties and 
showed that the subspace generated by zeta elements 
surjects to the space of certain automorphic forms (automorphic
forms for Drinfeld modules) in \cite[p.531, Theorem 1.2]{KY:zeta}.
We addressed the question of the integrality of zeta elements
in \cite[p.532, Section 1]{KY:zeta}.   We answer this in 
the affirmative below.   

We recall that the zeta elements 
are generated by elements
$\kappa_{I, J, \gamma}$ in \cite[p.538, 2.4.7]{KY:zeta}.
These elements are in the rational K-theory of moduli spaces
$\cM_{I, J}^d$
where $\cM_{I,J}^d$ 
is $\cM^d_{(A/I)^{d-1} \oplus (A/J), \Spec A} \times_{\Spec A} \Spec F$ 
in this book
for ideals $I, J \subset A$.   
The elements $\kappa_{I, J, \gamma}$
can be expressed as a linear combination of elements
of the form
$\kappa_{N, \Spec F, (b_i)}$ 
using the notation in this book (but allowing the open subscheme
to be the limit), where $N$ may run through some modules, that descends to 
the rational $K$-theory of $\cM^d_{(A/I)^{d-1} \oplus (A/J),\Spec F}$.
As we have seen in this book, 
the elements $\kappa_{N, \Spec F, (b_i)}$ 
are indeed defined over $\Spec A$
in the sense that there is an element in 
the rational $K$-theory of $\cM^d_{(A/I)^{d-1} \oplus (A/J), \Spec A}$
that maps to  $\kappa_{N, \Spec F, (b_i)}$
via the restriction map.
This answers the integrality question in the affirmative.

\printindex

\end{document}